\numberwithin{equation}{section}
\numberwithin{figure}{section}
\DeclareMathAlphabet{\pazocal}{OMS}{zplm}{m}{n}
\newcommand{\leqnomode}{\tagsleft@true\let\veqno\@@leqno}
\newcommand{\reqnomode}{\tagsleft@false\let\veqno\@@eqno}
\newtheorem {theorem}    {Theorem}[section]
\theoremstyle{definition}
\newtheorem {lemma}      [theorem]    {Lemma}
\newtheorem {corollary}  [theorem]    {Corollary}
\theoremstyle{definition}
\theoremstyle{definition}
\newtheorem{remark}[theorem]{Remark}
\newcommand{\defeq}{\vcentcolon=}
\newcommand{\gen}[1]{\langle #1 \rangle}
\newcommand\numberthis{\addtocounter{equation}{1}\tag{\theequation}}
\def\a{\alpha}                
\def\b{\beta}
\def\eps{\varepsilon}
\def\a{\alpha}
\def\b{\beta}
\def\N{\mathbb{N}}     
\def\lab{{\text{Lab}}}
\def\vertexradius{.1}
\def\vertex(#1){\fill (#1) circle (\vertexradius)}
\begin{document}

\title{\bf Torsion Subgroups of Groups with Quadratic Dehn Function}
\maketitle
\begin{center}

Francis Wagner

\end{center}

\bigskip

\begin{center}

\textbf{Abstract}

\end{center}

We construct the first examples of finitely presented groups with quadratic Dehn function containing a finitely generated infinite torsion subgroup. These examples are ``optimal'' in the sense that the Dehn function of any such finitely presented group must be at least quadratic. Moreover, we show that for any $n\geq2^{48}$ such that $n$ is either odd or divisible by $2^9$, any infinite free Burnside group with exponent $n$ is a quasi-isometrically embedded subgroup of a finitely presented group with quadratic Dehn function satisfying the Congruence Extension Property.

\bigskip


\section{Introduction}

Let $\pazocal{A}$ be an alphabet and $\pazocal{R}$ be a set of reduced words in the alphabet $\pazocal{A}\cup\pazocal{A}^{-1}$. Letting $F(\pazocal{A})$ be the free group with basis $\pazocal{A}$, define the \textit{normal closure} of $\pazocal{R}$ in $F(\pazocal{A})$, denoted $\gen{\gen{\pazocal{R}}}$, to be the smallest normal subgroup of $F(\pazocal{A})$ containing $\pazocal{R}$. One can verify that this subgroup exists and is generated by the set of reduced words of the form $fRf^{-1}$, where $f\in F(\pazocal{A})$ and $R\in\pazocal{R}$. The group $F(\pazocal{A})/\gen{\gen{\pazocal{R}}}$ is then denoted by $\gen{\pazocal{A}\mid\pazocal{R}}$.

Given a group $G$ isomorphic to $F(\pazocal{A})/\gen{\gen{\pazocal{R}}}$, it is convenient to view $G$ as being generated by $\pazocal{A}$, so that elements of $G$ can be represented by reduced words over $\pazocal{A}$. With this interpretation, $G$ is said to have \textit{presentation} $\pazocal{P}=\gen{\pazocal{A}\mid\pazocal{R}}$. It follows immediately that a reduced word $W$ in the alphabet $\pazocal{A}\cup\pazocal{A}^{-1}$ represents the identity in $G$ if and only if there exist $k\in\N$, $f_1,\dots,f_k\in F(\pazocal{A})$, $R_1,\dots,R_k\in\pazocal{R}$, and $\eps_1,\dots,\eps_k\in\{\pm1\}$ such that $W=\prod\limits_{i=1}^k f_iR_i^{\eps_i}f_i^{-1}$ in $F(\pazocal{A})$. If $W=1$ in $G$, then its \textit{area} with respect to $\pazocal{P}$, $\text{Area}_\pazocal{P}(W)$, is the minimal value of $k$ so that there exists such a representation of $W$.

Alternatively, given a group $G$ with presentation $\pazocal{P}$, the area of a word $W$ representing the identity in $G$ can be defined as the minimal area of a van Kampen diagram $\Delta$ over $\pazocal{P}$ (see Section 2.1) such that $\lab(\partial\Delta)\equiv W$, where $\equiv$ represents (here and throughout the rest of this paper) letter-for-letter equality.

If both $\pazocal{A}$ and $\pazocal{R}$ are finite, then the group $G$ is called \textit{finitely presented}. First introduced in [7], the \textit{Dehn function} of the group $G$ with respect to its finite presentation $\pazocal{P}=\gen{\pazocal{A}\mid\pazocal{R}}$ is the function $\delta_{\pazocal{P}}:\N\to\N$ defined by $\delta_{\pazocal{P}}(n)=\max\{\text{Area}(W):|W|_{\pazocal{A}}\leq n\}$.

Dehn functions are defined up to an asymptotic equivalence $\sim$ taken on functions $\N\to\N$ defined by $f\sim g$ if and only if $f\preccurlyeq g$ and $g\preccurlyeq f$, where $f\preccurlyeq g$ if and only if there exists $C>0$ such that $$f(n)\leq Cg(Cn)+Cn+C$$ for all $n\in\N$. Given a finitely presented group $G$ with finite presentations $\pazocal{P}$ and $\pazocal{S}$, one can verify that $\delta_{\pazocal{P}}\sim\delta_{\pazocal{S}}$. So, given a finitely presented group $G$, we define the \textit{Dehn function} of $G$, $\delta_G$, as the Dehn function of any of its finite presentations.

The Dehn function is a useful invariant for studying finitely presented groups. Just two of the numerous examples of this are:

\begin{enumerate}[label=({\arabic*})]

\item The Dehn function is closely related to the solvability of the word problem in the group, with smaller Dehn functions corresponding to groups with more tractable word problems [4], [24]. 

\item If $G$ is the fundamental group of a compact Riemannian manifold $M$, then $\delta_G$ is equivalent to the smallest isoperimetric function of the universal cover $\tilde{M}$.

\end{enumerate}

Note that under the equivalence relation $\sim$, all polynomial functions of degree $d$ are equivalent to one another. Because of this, it makes sense to consider groups of linear Dehn function, groups of quadratic Dehn function, etc. A finitely presented group is word hyperbolic in the sense of Gromov if and only if its Dehn function is linear [7]. Moreover, any finitely presented group $G$ satisfying $\delta_G\prec n^2$ is word hyperbolic [7], [3], [13]. This `gap' in possible Dehn functions leads naturally to the following question: 

\begin{center}

What properties satisfied by hyperbolic groups are satisfied by groups with quadratic Dehn function?

\end{center}

For example, hyperbolic groups are known to have solvable conjugacy problem ([7], [4]), leading Rips to pose the question of the solvability of the conjugacy problem in groups with quadratic Dehn function in the early 1990s. In 2020, Ol'shanskii and Sapir [23] answered this problem in the negative, exhibiting groups with quadratic Dehn function and unsolvable conjugacy problem. A problem arising in a similar manner (using methods similar to those used in [23]) is what is addressed in this paper.

The Burnside problem, first posed in 1902, asked whether or not there exists a finitely generated infinite torsion group. Although the problem was solved in the affirmative by Golod and Shaferevich in 1964 [6], the constructed examples did not have finite exponent. This led to the Bounded Burnside problem, asking whether there exists a finitely generated infinite group of exponent $n$.

For any $n>1$ and any set $\pazocal{A}$, let $F(\pazocal{A})^n$ be the normal subgroup of $F(\pazocal{A})$ generated by all words of the form $w^n$. Then the group $F(\pazocal{A})/F(\pazocal{A})^n$ is the \textit{free group relative the class of groups of exponent $n$} (this class is also known as the Burnside variety $\pazocal{B}_n$). This terminology is justified by the universal property of relatively free groups: If $G$ is a group such that $g^n=1$ for all $g\in G$ and $G$ is generated by $\{g_i\}_{i\in I}$, then for $\pazocal{A}=\{a_i\}_{i\in I}$, there exists an epimorphism $\phi:F(\pazocal{A})/F(\pazocal{A})^n\to G$ such that $\phi(a_iF(\pazocal{A})^n)=g_i$ for all $i\in I$. For convenience, the group $F(\pazocal{A})/F(\pazocal{A})^n$ is called a \textit{free Burnside group} and is denoted $B(\pazocal{A},n)$, or simply $B(m,n)$ if $|\pazocal{A}|=m$.

Hence, the bounded Burnside problem essentially asks whether there exists $m,n\in\N$ such that $B(m,n)$ is infinite (and, if so, for which choices of $m,n$). Novikov and Adian were the first to give examples of $m,n$ such that $B(m,n)$ is infinite, specifically for all $m>1$ and $n\geq4381$ odd [11]. Adian later improved the bound on $n$ to $n\geq665$ in 1978 [1]. In 1982, Ol'shanskii provided a simpler geometric proof that $B(m,n)$ is infinite for $m>1$ and sufficiently large odd $n$ ($n>10^{10}$), as well as proving the existence of the so-called Tarski monster groups [12]. Ivanov then proved in 1994 an analogous result for $n$ even, divisible by $2^9$, and sufficiently large ($n\geq2^{48}$) [8].

Though the infinite torsion groups constructed in each of these papers have solvable word problems, it is established they cannot be finitely presented, i.e they cannot be presented by a finite number of relations. As such, one cannot speak of the Dehn function of $B(m,n)$ for sufficiently large $n$. However, an infinite free Burnside group may be isomorphic to a proper subgroup of a finitely presented group, begging the following question:

\begin{center}

If $B(m,n)$ is infinite and $G$ is a finitely presented group such that $B(m,n)$ embeds into $G$, then what can we say about $\delta_G$?

\end{center}

Ghys and de la Harpe proved in 1991 that no hyperbolic group contains an infinite torsion subgroup [5]. In particular, this means that the we must have $n^2\preccurlyeq\delta_G$. 

On the other hand, in 2000, Ol'shanskii and Sapir exhibited an embedding of $B(m,n)$ for sufficiently large odd $n$ into a finitely presented group $G$ satisfying $\delta_G\preccurlyeq n^{10}$ [17].

Let $\N^*$ be the subset of the natural numbers defined by $n\in\N^*$ if and only if $n\geq2^{48}$ and is either odd or divisible by $2^9$. Using similar methods to those used in [16] and [23] and the geometric methods of [8] and [12] we prove the following here.

\begin{theorem} \label{main theorem}

For $m>1$ and $n\in\N^*$, there exists a finitely presented group $G_{m,n}$ with quadratic Dehn function into which the free Burnside group $B(m,n)$ embeds. In particular, there exists a finitely presented group $G$ with quadratic Dehn function containing a finitely generated infinite torsion subgroup.

\end{theorem}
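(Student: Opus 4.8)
The plan is to embed $B(m,n)$ into a finitely presented group $G_{m,n}$ using the classical ``$S$-machine'' technology of Sapir--Birget--Rips as refined by Ol'shanskii--Sapir, while keeping the Dehn function quadratic. The starting point is that although $B(m,n)$ is not finitely presented, its word problem is solvable (by Novikov--Adian for odd $n$, by Ivanov for $n$ divisible by $2^9$); moreover, the relevant presentations are \emph{graded} and admit recursive descriptions of the relators introduced at each rank. First I would encode the word problem of $B(m,n)$ — or, more precisely, a suitable language associated with the graded presentation — by an $S$-machine (a nondeterministic multi-tape rewriting machine), chosen so that its time and space functions are controlled. The key quantitative point is that the Dehn function of the finitely presented group simulating an $S$-machine $\mathcal{M}$ is governed (up to equivalence) by $T_{\mathcal{M}}(n)^2$, where $T_{\mathcal{M}}$ is the time function; so to get a \emph{quadratic} Dehn function one needs an $S$-machine recognizing the Burnside relations whose accepting computations have \emph{linear} length in the input. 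This is the heart of the matter and the main obstacle: one must exploit the tower-of-ranks structure of the Burnside presentation (periodicity of relators, the bounded overlap / small-cancellation-type properties established by Ol'shanskii and Ivanov) to design a machine that verifies membership in $\langle\langle \pazocal{R}\rangle\rangle$ in linear time, rather than the naive iterated-rewriting time which would only give a polynomial of large degree (as in the $n^{10}$ bound of [17]).

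Granting such a linear-time $S$-machine $\mathcal{M}$, the next step is the group-theoretic simulation. Following the Ol'shanskii--Sapir scheme, one builds a finitely presented group $G'$ (a ``$\mathcal{S}$-machine group'') whose generators are the tape letters, state letters, and command letters of $\mathcal{M}$, and whose relators encode the transition rules; van Kampen diagrams over $G'$ correspond to computations of $\mathcal{M}$, and the standard ``trapezium/disc'' machinery translates the linear time bound into a quadratic isoperimetric estimate. One then arranges that $B(m,n)$ sits inside $G'$ as the subgroup generated by the tape alphabet subject exactly to the accepted words being trivial — this uses the Congruence Extension Property and the ``hub'' relation trick so that a word $w$ over $\pazocal{A}$ is trivial in the subgroup iff $\mathcal{M}$ accepts it iff $w \in F(\pazocal{A})^n$. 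The quasi-isometric embedding (and hence injectivity of $B(m,n)\hookrightarrow G_{m,n}$, proving the torsion subgroup is infinite) follows from the length distortion estimates built into the $S$-machine construction together with the fact that $B(m,n)$ is infinite for $n\in\N^*$.

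The remaining verification is that $\delta_{G_{m,n}}\preccurlyeq n^2$ globally, not merely on words coming from the tape alphabet: this requires the full apparatus of bands, annuli, and the treatment of ``big'' versus ``small'' subdiagrams, showing that an arbitrary minimal diagram over the presentation of $G_{m,n}$ decomposes into trapezia (each of quadratic cost by the time bound) plus a controlled number of hub-free pieces. Combined with the lower bound $n^2 \preccurlyeq \delta_{G_{m,n}}$, which is forced because $G_{m,n}$ contains the infinite torsion group $B(m,n)$ and hence cannot be hyperbolic by Ghys--de la Harpe [5] (so its Dehn function cannot be subquadratic by the gap theorem [7],[3],[13]), we conclude $\delta_{G_{m,n}}\sim n^2$. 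Taking $m>1$ and any $n\in\N^*$, the group $G = G_{m,n}$ is finitely presented with quadratic Dehn function and contains $B(m,n)$, a finitely generated infinite torsion subgroup, which is the assertion of the theorem. I expect the design of the linear-time $S$-machine recognizing the Burnside relations — reconciling the inherently tower-like, super-linear-looking rewriting of the Adian--Ol'shanskii--Ivanov theory with a linear time budget — to be by far the most delicate part, and the part where the hypothesis $n\geq 2^{48}$ (with the parity/divisibility condition) is genuinely needed.
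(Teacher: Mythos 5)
Your overall skeleton coincides with the paper's: an $S$-machine whose simulating group has quadratic Dehn function, with $B(m,n)$ sitting inside the input alphabet, and the lower bound forced by Ghys--de la Harpe plus the subquadratic gap theorem. But the step you single out as ``the heart of the matter'' is not what is done, and as stated it would not work. You propose a machine that decides membership in $\gen{\gen{\pazocal{R}}}$ --- i.e.\ the word problem of $B(m,n)$ --- in linear time by exploiting the tower-of-ranks structure of the Adian--Ol'shanskii--Ivanov theory. The machine $\textbf{M}$ of the paper does nothing of the sort: its accepted language is only the set of relators $\pazocal{L}=\{u^n: u\in F(\pazocal{A})\}$, which is recognized in linear time by an elementary copy-and-cancel routine ($\textbf{M}_1$); no Burnside rewriting is simulated at all, and this part of the construction works for every $n$. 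Triviality of arbitrary consequences of the relators is imposed by the group, not the machine: the two asymmetric copies of the rules ($\Theta_1$ versus $\Theta_2$, the second locking the ``special'' input sector) make both $I(u^n)$ and $J(u^n)$ accepted, so the hub/disk relations force $u^n=1$ in $G(\textbf{M})$ and von Dyck gives a homomorphism $B(m,n)\to G(\textbf{M})$. Injectivity is also not obtained the way you claim (``from the length distortion estimates \dots together with the fact that $B(m,n)$ is infinite''); it is proved by adjoining as $a$-relations the set $\pazocal{S}$ of all words trivial in $B(\pazocal{A},n)$, showing $G(\textbf{M})\cong G_{\pazocal{S}}(\textbf{M})$, and checking via the band/annulus analysis that a minimal diagram whose boundary label is a word over $\pazocal{A}$ consists only of $a$-cells, hence represents $1$ already in $B(\pazocal{A},n)$. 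Distortion is a separate, later theorem, not an ingredient of the embedding.

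Consequently you also misplace where the hypothesis $n\ge 2^{48}$ (odd or divisible by $2^9$) is genuinely needed. It is not needed to design the machine; it enters through the Burnside theory itself: (i) infiniteness of $B(m,n)$, and (ii) the quadratic bound on the mass of strictly reduced graded diagrams over Ivanov's presentation $\textbf{B}(\infty)$ (the $\rho(\Delta)\le|\partial\Delta|^2$ estimate), which is exactly what allows each $a$-cell of a minimal diagram over $G_{\pazocal{S}}(\textbf{M})$ to be filled at quadratic cost when converting back to the finite presentation, together with the periodicity of the relators used to handle the ``impeding'' trapezia. Without this quadratic filling of $a$-cells, your proposed decomposition of an arbitrary minimal diagram into trapezia plus controlled pieces does not close up: the whole novelty of the paper is precisely the interaction of $a$-cells with the $\theta$-bands, disks and shafts (quasi-rim bands, $G$-weight of big and impeding $a$-trapezia), not a linear-time recognition of the Burnside word problem. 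Finally, note that ``the Dehn function of the simulating group is governed by $T_{\mathcal M}^2$'' is not available off the shelf for a quadratic target; the upper bound here depends on the specific structural lemmas about $\textbf{M}$ (control of accepting computations, faulty bases, etc.) and the full weight/mixture machinery, as you partly acknowledge.
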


If $\pazocal{A}=\{a_i\}_{i=1}^\infty$, then denote the free Burnside group $B(\pazocal{A},n)$ simply by $B(\infty,n)$. For $n$ sufficiently large and odd, Shirvanyan exhibited an embedding of $B(\infty,n)$ in $B(2,n)$ [27], while Ivanov and Ol'shanskii exhibited such an embedding for $n\geq2^{48}$ and divisible by $2^9$ [9]. 

Thus, taking $G_n=G_{2,n}$, Theorem \ref{main theorem} immediately implies the following corollary:

\begin{corollary}

For all $n\in\N^*$, there exists a finitely presented group $G_n$ with quadratic Dehn function into which the free Burnside group $B(\infty,n)$ embeds. In particular, for $m\geq2$, $G_n$ contains a subgroup isomorphic to $B(m,n)$.

\end{corollary}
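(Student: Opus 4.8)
The plan is to deduce the corollary formally from Theorem \ref{main theorem} together with the known embeddings of $B(\infty,n)$ into a two-generated free Burnside group. Fix $n\in\N^*$. If $n$ is odd, then by Shirvanyan's theorem [27] there is an injective homomorphism $\iota\colon B(\infty,n)\hookrightarrow B(2,n)$; if instead $n$ is divisible by $2^9$ (and hence $n\geq2^{48}$, since $n\in\N^*$), the same conclusion holds by the result of Ivanov and Ol'shanskii [9]. Now set $G_n=G_{2,n}$, the finitely presented group with quadratic Dehn function produced by Theorem \ref{main theorem} in the case $m=2$, and let $j\colon B(2,n)\hookrightarrow G_n$ be the embedding it provides. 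Then $j\circ\iota\colon B(\infty,n)\to G_n$ is an injective homomorphism, so $B(\infty,n)$ embeds into $G_n$, and $\delta_{G_n}$ is quadratic by construction.

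For the final assertion, it suffices to observe that $B(m,n)$ embeds into $B(\infty,n)$ for every finite $m\geq2$. Writing $\pazocal{A}=\{a_i\}_{i=1}^\infty$ and $\pazocal{A}_m=\{a_1,\dots,a_m\}$, the inclusion $\pazocal{A}_m\hookrightarrow\pazocal{A}$ induces a homomorphism $B(m,n)\to B(\infty,n)$, while the map $\pazocal{A}\to B(m,n)$ sending $a_i\mapsto a_i$ for $i\leq m$ and $a_i\mapsto 1$ otherwise induces, by the universal property of $B(\infty,n)$, a retraction $B(\infty,n)\to B(m,n)$; the first map is therefore injective. Composing it with $j\circ\iota$ exhibits $B(m,n)$ as a subgroup of $G_n$. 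There is no real obstacle here: beyond Theorem \ref{main theorem} the only inputs are the cited Burnside-group embeddings, and the corollary is a purely formal consequence.
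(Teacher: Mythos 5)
Your proposal is correct and follows essentially the same route as the paper: take $G_n=G_{2,n}$ from Theorem \ref{main theorem} and compose with the embedding $B(\infty,n)\hookrightarrow B(2,n)$ due to Shirvanyan [27] for odd $n$ and to Ivanov--Ol'shanskii [9] for $n$ divisible by $2^9$. The final observation that $B(m,n)$ is a retract (hence a subgroup) of $B(\infty,n)$ matches the paper's own remark, so nothing further is needed.
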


A map $f:(X,d_X)\to(Y,d_Y)$ between two metric spaces is called a \textit{quasi-isometric embedding} if there exist $C\geq1$ and $K\geq0$ such that for all $x_1,x_2\in X$, 
$$\frac{1}{C}d_X(x_1,x_2)-K\leq d_Y(f(x_1),f(x_2))\leq Cd_X(x_1,x_2)+K$$
If $K=0$, then $f$ is called a \textit{bi-Lipschitz embedding}. Note that, unlike a quasi-isometric embedding, a bi-Lipschitz embedding is necessarily an embedding (as quasi-isometric embeddings need not be injective).

Let $G$ be a finitely generated group with finite generating set $X$. Then, $G$ can be viewed as a metric space with metric $d_X$ induced by the word norm $|\cdot|_X$. In other words, $d_X(g_1,g_2)=|g_1^{-1}g_2|_X$, i.e the word length of a shortest word in the alphabet $X\cup X^{-1}$ whose value in $G$ is $g_1^{-1}g_2$.


Now, suppose $G$ and $H$ are two finitely generated groups with finite generating sets $X$ and $Y$, respectively. Further, suppose there exists a monomorphism $\varphi:G\to H$. Then, it is clear that $\varphi$ is a bi-Lipschitz embedding if and only if there exists a $C\geq1$ such that for all $g\in G$,
$$\frac{1}{C}|g|_X\leq|\varphi(g)|_Y\leq C|g|_X$$
Letting $C_1=\max\{|\varphi(x)|_Y:x\in X\}$, it follows that for any $g\in G$, $|\varphi(g)|_Y\leq C_1|g|_X$. Hence, $\varphi$ is a bi-Lipschitz embedding if and only if there exists $C_2\geq1$ such that for any $g\in G$, $|g|_X\leq C_2|\varphi(g)|_Y$.

%
%
%

\begin{theorem} \label{distortion}

The embedding given in Theorem \ref{main theorem} is a bi-Lipschitz embedding (and so a quasi-isometric embedding) of the free Burnside group $B(m,n)$ into the finitely presented group $G_{m,n}$.

\end{theorem}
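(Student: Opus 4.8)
The plan is to establish the bi-Lipschitz property by verifying the inequality $|g|_{\pazocal{A}} \leq C_2|\varphi(g)|_X$ for all $g \in B(m,n)$, where $X$ is the finite generating set of $G_{m,n}$, $\pazocal{A}$ is the distinguished generating set of $B(m,n)$, and $\varphi$ is the embedding from Theorem \ref{main theorem}; by the remarks preceding this statement, this suffices. Since $G_{m,n}$ has quadratic (in particular, at most quadratic) Dehn function, the natural strategy is to combine a \emph{lower} bound on areas of words representing elements of the image subgroup with the \emph{upper} isoperimetric bound. Concretely, suppose $w$ is a geodesic word over $X$ with $\bar{w} = \varphi(g)$ in $G_{m,n}$ and $|w|_X = \ell$. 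Picking a word $u$ over $\pazocal{A}$ representing $g$ in $B(m,n)$ with $|u|_{\pazocal{A}} = |g|_{\pazocal{A}}$, the word $wu^{-1}$ (read through the embedding) represents the identity in $G_{m,n}$, so it bounds a van Kampen diagram $\Delta$ over the presentation $\pazocal{P}$ of $G_{m,n}$ with $\text{Area}_{\pazocal{P}}(wu^{-1}) \leq \delta_{G_{m,n}}(\ell + |g|_{\pazocal{A}})$.

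The heart of the argument is to exploit the specific structure of the presentation $\pazocal{P}$ constructed for Theorem \ref{main theorem}. As in the Ol'shanskii--Sapir framework ([16], [17], [23]), this presentation is graded/layered: the relations of $B(m,n)$ — including the Burnside relations $w^n$ — appear only at the highest rank, while lower-rank relations control an auxiliary machine (an $S$-machine or a similar device). The key structural fact I would prove about $\Delta$ is that the sub-word $u$, being a word purely in the Burnside generators, can only be ``absorbed'' by cells corresponding to Burnside relators, and that any path in $\Delta$ labeled by such a word must remain in a controlled region so that its length cannot be shortened by more than a bounded factor without invoking a super-quadratic number of cells. More precisely, I expect the argument to show that the number of cells of $\Delta$ is bounded below by a constant times $(|g|_{\pazocal{A}})^2$ unless $\ell \geq \text{const}\cdot|g|_{\pazocal{A}}$ — using that the free Burnside group itself has \emph{cubic} (or at least super-quadratic) Dehn behavior relative to the naive presentation by all $n$-th powers of length $\leq |g|_{\pazocal{A}}$, combined with the fact that the hyperbolic-like lower-rank machinery of $G_{m,n}$ cannot accelerate Burnside reductions. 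Setting $\ell + |g|_{\pazocal{A}} \geq N$ and comparing $c(|g|_{\pazocal{A}})^2 \leq \delta_{G_{m,n}}(N) \leq C N^2$ forces $\ell \geq c'|g|_{\pazocal{A}}$, which is exactly the desired bound with $C_2 = 1/c'$.

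I expect the main obstacle to be this lower-bound step: showing that a van Kampen diagram in $G_{m,n}$ with a pure-Burnside boundary word $u$ of length $k$ on part of its contour must have area growing like $k^2$ (or at least faster than linearly) when the complementary boundary is short. This requires a careful analysis of how the auxiliary machine's cells interact with Burnside cells along the boundary — essentially a ``no free lunch'' lemma asserting that the finitely presented envelope does not collapse the distortion of $B(m,n)$. Here I would lean heavily on the quasi-isometry-type estimates already developed in the proof of Theorem \ref{main theorem}, particularly the surgery/trimming operations on diagrams and the length/area estimates for discs and bands corresponding to the $S$-machine; the point is that any such band crossing the $u$-portion of the boundary contributes length to the $w$-portion as well, so $\ell$ cannot be much smaller than $|g|_{\pazocal{A}}$. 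The remaining verification — that $|g|_{\pazocal{A}}$ is itself the correct ``intrinsic'' length to compare against, and that no genuinely shorter word over $\pazocal{A}$ slipped by — is routine given the definition of the word metric and the injectivity of $\varphi$ established in Theorem \ref{main theorem}.
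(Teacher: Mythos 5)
Your reduction to the inequality $|g|_{\pazocal{A}}\leq C_2|\varphi(g)|_X$ is fine, but the central step of your argument fails, in two ways. First, the ``no free lunch'' lemma you want --- that a diagram over $G_{m,n}$ whose boundary contains a pure-$\pazocal{A}$ arc of length $k$ and a short complementary arc must have area growing faster than quadratically in $k$ --- is false for this construction: by Lemma \ref{a-cells are quadratic}, every word over $\pazocal{A}$ that is trivial in $B(\pazocal{A},n)$ bounds a diagram over $G(\textbf{M})$ of area at most $C_1k^2$ (this quadratic fillability of the Burnside relations is precisely what makes Theorem \ref{main theorem} possible), and a super-quadratic lower bound would contradict the quadratic Dehn function together with the Lipschitz inequality $|\varphi(g)|_X\leq C_1|g|_{\pazocal{A}}$. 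Second, even granting a quadratic lower bound $ck^2$, the comparison $ck^2\leq\delta_{G_{m,n}}(\ell+k)\leq C(\ell+k)^2$ imposes no constraint whatsoever on $\ell$ (it already holds with $\ell=0$ once $c\leq C$), so no inequality of the form $\ell\geq c'k$ can be extracted; area estimates of matching quadratic order simply cannot detect the length statement you need.

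The paper's proof is a length argument, not an area argument. For $g\in B(\pazocal{A},n)$ it considers $g$-minimal diagrams: minimal diagrams $\Delta$ with $\partial\Delta=\textbf{s}\textbf{t}$, where $\lab(\textbf{t})$ is a geodesic word over $\pazocal{A}$ representing $g^{-1}$, chosen to minimize the quantity $|\partial\Delta|+\sigma_\lambda(\Delta)$ built from the modified length function and the $\lambda$-shaft measure. The bulk of Section 12, culminating in Lemma \ref{distortion contradiction}, is a clove/spoke analysis parallel to Section 10 showing that such a diagram contains no disks; it then consists only of $a$-cells, so $\lab(\textbf{s})$ is itself a word over $\pazocal{A}$ representing $g$ and hence $|\partial\Delta|=2\delta|g|_{\pazocal{A}}$ (Lemma \ref{diskless distortion}). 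Comparing with a minimal diagram $\Gamma$ for $wv^{-1}$, where $w$ is a shortest $\pazocal{X}$-word for $\varphi(g)$ and $v$ a geodesic $\pazocal{A}$-word for $g$, and bounding $\sigma_\lambda(\Gamma)\leq C_1|\partial\Gamma|_\theta\leq C_1|w|$ by the design estimate (Lemma \ref{G_a design}), one gets $|w|\geq\delta(C_1+1)^{-1}|g|_{\pazocal{A}}$, i.e.\ $|g|_{\pazocal{X}}\geq(C_1+1)^{-1}|g|_{\pazocal{A}}$. None of this is recoverable from the isoperimetric inequality alone, so your outline would have to be replaced by this kind of direct diagrammatic length analysis.
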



A subgroup $G$ of a group $H$ satisfies the \textit{Congruence Extension Property} (CEP) if for any epimorphism $\eps:G\to G_1$, there exists an epimorphism $\bar{\eps}:H\to H_1$ for some group $H_1$ containing $G_1$ as a subgroup and such that the restriction of $\bar{\eps}$ to $G$ is $\eps$. In this case, we write $G\leq_{CEP}H$ and say that $G$ is a CEP-subgroup of $H$ or that $G$ is CEP-embedded in $H$.

There are two convenient reformulations of the definition of CEP:

\begin{enumerate}

\item $G$ is a CEP-subgroup of $H$ if and only if for any normal subgroup $N\triangleleft G$, there exists a normal subgroup $M\triangleleft H$ such that $M\cap G=N$

\item $G$ is a CEP-subgroup of $H$ if and only if for any subset $S\subseteq G$, $G\cap\gen{\gen{S}}^G=\gen{\gen{S}}^H$ (where the normal closure of a subset $T$ in a group $K$ is denoted $\gen{\gen{T}}^K$).

\end{enumerate}

It is clear from (1) that any retract of a group is a CEP-subgroup and that $\leq_{CEP}$ is a transitive relation. However, some examples are less obvious. For example, Sonkin proved that for sufficiently large odd $n$, there exists a CEP-embedding of $B(\infty,n)$ into the group $B(2,n)$ [28].

\begin{theorem} \label{CEP}

The embedding given in Theorem \ref{main theorem} is a CEP-embedding of the free Burnside group $B(m,n)$ into the finitely presented group $G_{m,n}$.

\end{theorem}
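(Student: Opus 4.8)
The plan is to exploit the same machinery that produces the embedding of Theorem \ref{main theorem} and to verify that the embedding it provides is automatically CEP. Recall the standard construction in this circle of ideas (cf.\ [16], [17], [23]): the group $G_{m,n}$ is obtained from $B(m,n)$ by a sequence of HNN-type or graded small-cancellation amalgamations, where at each stage one adjoins finitely many generators and relations in such a way that the previously-built group embeds into the new one as a subgroup satisfying a small-cancellation condition relative to the new relators. The key observation is that an embedding $G\hookrightarrow H$ produced by adding relations that satisfy a graded small-cancellation condition with respect to a presentation of $G$ is a CEP-embedding: this is precisely the content of Ol'shanskii's machinery for CEP-subgroups (the same principle Sonkin uses in [28] and Ol'shanskii--Sapir use in [16]). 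So first I would isolate, from the construction of $G_{m,n}$, the fact that $B(m,n)$ sits inside $G_{m,n}$ as a subgroup for which the defining relators of $G_{m,n}$ (beyond those of $B(m,n)$) satisfy the relevant small-cancellation hypothesis.

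Next I would use reformulation (2) of the CEP from the excerpt: $B(m,n)\leq_{CEP} G_{m,n}$ iff for every subset $S\subseteq B(m,n)$ one has $B(m,n)\cap \gen{\gen{S}}^{G_{m,n}}=\gen{\gen{S}}^{B(m,n)}$. The inclusion $\supseteq$ is automatic. For $\supseteq$ in the other direction — that is, $\subseteq$ — suppose $w$ is a word over the generators of $B(m,n)$ that is trivial in $G_{m,n}/\gen{\gen{S}}^{G_{m,n}}$. Then I would build a van Kampen diagram $\Delta$ over the presentation $\pazocal{P}'$ of $G_{m,n}$ augmented by the relators $S$, with $\lab(\partial\Delta)\equiv w$, and analyze its cells: cells labelled by relators of $B(m,n)$ or by elements of $S$, and cells labelled by the ``extra'' relators of $G_{m,n}$. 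Because the extra relators satisfy graded small cancellation relative to the $B(m,n)$-part, a standard surgery/contiguity argument (as in Ol'shanskii's book, Chapter on CEP, and as used in [16]) shows that any maximal subdiagram composed of extra cells can be excised and replaced, rewriting $\partial\Delta$ as a word over $B(m,n)$-generators equal to it in the free product $B(m,n)*F(S)$ modulo $B(m,n)$-relations; iterating removes all extra cells, leaving a diagram over $\gen{\pazocal{A}\mid \pazocal{R}_{B(m,n)}\cup S}$. This certifies $w\in\gen{\gen{S}}^{B(m,n)}$, giving the reverse inclusion.

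Finally I would package this: having shown $B(m,n)\cap\gen{\gen{S}}^{G_{m,n}}=\gen{\gen{S}}^{B(m,n)}$ for all $S$, reformulation (2) yields $B(m,n)\leq_{CEP}G_{m,n}$, which is the assertion of Theorem \ref{CEP}. If the actual construction of $G_{m,n}$ proceeds in several stages $B(m,n)=H_0\hookrightarrow H_1\hookrightarrow\cdots\hookrightarrow H_k=G_{m,n}$, I would instead verify the CEP for each single step $H_{i}\hookrightarrow H_{i+1}$ and then invoke transitivity of $\leq_{CEP}$ (noted in the excerpt, as is the fact that a retract is a CEP-subgroup — which may dispatch some steps immediately).

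\textbf{Main obstacle.} The technical heart is the diagram-surgery step: one must show that the ``extra'' relators of $G_{m,n}$ (which encode the $S$-machine / finitely-presented simulation that forces the Dehn function to be quadratic) genuinely satisfy a small-cancellation-type condition \emph{relative to} $B(m,n)$, so that cells over those relators can be peeled off a van Kampen diagram without creating new obstructions — and to do this while remaining compatible with the quadratic isoperimetric bound already established. In other words, the same quantitative control on bands/annuli in the diagrams of $G_{m,n}$ that gives the quadratic Dehn function must be re-deployed to show that consuming an $S$-word never drags in a hidden non-$B(m,n)$ relation. This is expected to be the bulk of the work and to mirror, with the requisite modifications for the even exponent case (via Ivanov's [8] geometry), the CEP arguments of [16].
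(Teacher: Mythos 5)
Your overall skeleton — use reformulation (2), adjoin the relators of $S$ to the presentation, take a van Kampen diagram for a word $w$ over $\pazocal{A}$, and show that in the end only Burnside-type cells survive so that $w\in\gen{\gen{S}}^{B(m,n)}$ — is indeed the shape of the paper's argument. But the mechanism you invoke to remove the ``extra'' cells is a genuine gap. You assert that the relators of $G_{m,n}$ beyond those of $B(m,n)$ satisfy a graded small-cancellation condition \emph{relative to} $B(m,n)$, so that machine cells can be peeled off by a contiguity/surgery argument. That premise is not available here: $G_{m,n}=G(\textbf{M})$ is presented by $(\theta,q)$-relations, $(\theta,a)$-relations and the hub, $B(m,n)$ sits inside as the subgroup generated by the tape letters of the `special' input sector, and the relations $u^n=1$ hold only through long simulations (two accepting computations of the $S$-machine differing by insertion of $u^n$, i.e.\ via disks), not through any small-cancellation structure over $B(m,n)$. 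There is no known surgery that excises machine cells from an arbitrary diagram over such a presentation, and the construction is not a tower of HNN/small-cancellation extensions to which transitivity of $\leq_{CEP}$ could be applied step by step.

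What actually closes the argument is the machinery already built for the auxiliary groups $G_\Omega(\textbf{M})$, where $\Omega$ was deliberately allowed to be \emph{any} set of words over the `special' input alphabet containing $\pazocal{S}$ — this generality exists precisely for the CEP proof. Given $T\subseteq B(\pazocal{A},n)$, one sets $\Sigma=\Sigma_1\cup\pazocal{S}$ for words $\Sigma_1$ representing $T$, so that $G_\Sigma(\textbf{M})\cong G_\pazocal{S}(\textbf{M})/\gen{\gen{T}}^{G_\pazocal{S}(\textbf{M})}$. If $g\in B(\pazocal{A},n)\cap\gen{\gen{T}}^{G_\pazocal{S}(\textbf{M})}$ is represented by a word $w$ over $\pazocal{A}$, then $w=1$ in $G_\Sigma(\textbf{M})$, and Lemma \ref{minimal exist} gives a minimal diagram $\Delta$ with $\lab(\partial\Delta)\equiv w$. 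Now no peeling is needed: since $\partial\Delta$ has no $t$-edges, Lemma \ref{graph} forbids disks; since it has no $q$- or $\theta$-edges, Lemmas \ref{M_a no annuli 1} and \ref{M_a no annuli 2} forbid $q$-bands and $\theta$-bands; hence every cell of $\Delta$ is an $a$-cell, i.e.\ $\Delta$ is a diagram over $\gen{\pazocal{A}\mid\Sigma}\cong B(\pazocal{A},n)/\gen{\gen{T}}^{B(\pazocal{A},n)}$, giving $g\in\gen{\gen{T}}^{B(\pazocal{A},n)}$. So the missing idea in your proposal is not quantitative control compatible with the quadratic bound, but the simple structural fact that a boundary word purely over $\pazocal{A}$ cannot support any disk, $q$-band or $\theta$-band in a minimal diagram over $G_\Sigma(\textbf{M})$ — which is why the $a$-relators were kept general throughout Sections 6--10.
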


As $B(m,n)$ is a retract of $B(\infty,n)$, Theorem \ref{CEP} immediately implies the following corollary:

\begin{corollary}

For all odd $n\in\N^*$, there exists a finitely presented group $G_n$ with quadratic Dehn function such that for any $m\geq2$, there is a CEP-embedding of $B(m,n)$ into $G_n$. Moreover, $G_n$ contains a CEP-subgroup isomorphic to $B(\infty,n)$.

\end{corollary}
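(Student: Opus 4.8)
The plan is to deduce the corollary from Theorem~\ref{CEP} (with $m=2$), the transitivity of $\leq_{CEP}$, and two facts about free Burnside groups recalled in the introduction: Sonkin's CEP-embedding $B(\infty,n)\leq_{CEP}B(2,n)$ for sufficiently large odd $n$ [28], and the fact that $B(m,n)$ is a retract of $B(\infty,n)$. Note that a \emph{single} group $G_n$ must serve for all $m$ simultaneously, so one cannot simply invoke Theorem~\ref{CEP} separately for each $m$; instead everything is funneled through $B(\infty,n)$.

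First I would set $G_n\defeq G_{2,n}$, so that Theorem~\ref{main theorem} makes $G_n$ finitely presented with quadratic Dehn function and Theorem~\ref{CEP} gives $B(2,n)\leq_{CEP}G_n$. For odd $n\in\N^*$ — in particular $n$ is then large enough for [28] — Sonkin's theorem provides $B(\infty,n)\leq_{CEP}B(2,n)$; composing this with the previous step via transitivity of $\leq_{CEP}$ yields $B(\infty,n)\leq_{CEP}G_n$, which is the ``moreover'' clause.

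For the first clause, fix $m\geq2$. The endomorphism of $B(\infty,n)$ that fixes the first $m$ free Burnside generators and kills the rest is well defined (its image has exponent $n$, so the Burnside relations are respected), is idempotent, and has image the subgroup generated by the first $m$ generators, which the universal property of relatively free groups identifies with $B(m,n)$; hence $B(m,n)$ is a retract of $B(\infty,n)$. As observed just after the two reformulations of CEP, every retract is a CEP-subgroup, so $B(m,n)\leq_{CEP}B(\infty,n)$, and transitivity of $\leq_{CEP}$ then gives $B(m,n)\leq_{CEP}G_n$, completing the argument.

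I do not expect a genuine obstacle here: the entire weight sits in Theorem~\ref{CEP}, and the remainder is a short chase through CEP-embeddings. The one point requiring attention is matching hypotheses on the exponent, namely confirming that ``sufficiently large odd $n$'' in [28] (and in [27], should one route through it) is subsumed by the condition $n\in\N^*$, i.e.\ that the relevant thresholds do not exceed $2^{48}$; if they do, one simply restricts to the odd integers above the common threshold.
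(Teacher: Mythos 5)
Your proposal is correct and follows exactly the route the paper intends: the paper deduces the corollary from Theorem~\ref{CEP} applied to $G_n=G_{2,n}$, Sonkin's CEP-embedding $B(\infty,n)\leq_{CEP}B(2,n)$ for large odd $n$, the retract argument for $B(m,n)\leq B(\infty,n)$, and transitivity of $\leq_{CEP}$. Your closing caveat about matching the "sufficiently large odd $n$" threshold of [28] with $n\in\N^*$ is the only point the paper leaves implicit, and your handling of it is fine.
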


As in [16] and [23], the construction of the groups of interest is through $S$-machines. $S$-machines were first introduced by Sapir in [24]; for a formal definition, see Section 3.1 below. Similar to the construction in those previous papers, we first create several auxiliary machines $\textbf{M}_1-\textbf{M}_4$ satisfying some desirable properties (see Section 4). The chief properties on which we base this construction are the following:

\begin{enumerate}[label=({\alph*})]

\item The language of accepted inputs is a set of relators for a presentation of the free Burnside group $B(m,n)$ (see Lemma \ref{M_3 language});

\item Any accepting computation of a word $u^n$ is linearly bounded by $\|u\|$ (see Lemma \ref{M_3 start to end});

\item The majority of an accepting computation is spent on one particular step (see Lemma \ref{M_3 input controlled}); and

\item The length of a computation in a specific class of bases is bounded by the length of the initial or terminal admissible words (see Lemma \ref{M_4 faulty}).

\end{enumerate}

Many copies of the machine $\textbf{M}_4$ are then `concatenated' to form our main machine $\textbf{M}$, a process that resembles the consideration of groups of interest in [16] and [23]. However, unlike in those sources, one copy of $\textbf{M}_4$ is deemed `special' and is operated upon in a different manner as the other copies, causing a distinct non-uniformity.

The purpose of this lack of symmetry is to allow our machine to accept two configurations which differ only in the insertion/deletion of an accepted input. As all accepted configurations are trivial in the group $G(\textbf{M})$ associated to the machine, this implies the relation $w=1$ for any word $w$ over the alphabet $\pazocal{A}$ of the input sector that represents the trivial element of $B(\pazocal{A},n)$.


Conversely, this asymmetry is also the source of several new obstacles not faced in [16] or [23]. For example, many statements in Section 5 are devoted to understanding the relationship between computations of one copy of $\textbf{M}_4$ and computations of the standard base of $\textbf{M}$ (for example, see Lemmas \ref{extending computations} and \ref{projection admissible configuration not}), a relationship that would be trivial had the rules operated with symmetry.

In Sections 6-11, we study diagrams over the groups associated to the $S$-machine $\textbf{M}$, culminating in the proof of Theorem \ref{main theorem}. The general method of study follows a similar path to those followed in [16] and [23], but with one major change: The consideration of the groups $M_\Omega(\textbf{M})$ and $G_\Omega(\textbf{M})$ constructed by the addition of extra relations, called $a$-relations, to the groups $M(\textbf{M})$ and $G(\textbf{M})$, respectively. The set of relators $\Omega$ corresponding to the $a$-relations consist of words over the alphabet of the input sector and contains the set $\pazocal{S}$ of all words that represent the trivial element of $B(\pazocal{A},n)$.

The cells of a diagram over $M_\Omega(\textbf{M})$ or $G_\Omega(\textbf{M})$ corresponding to elements of $\Omega$, referred to as $a$-cells, are invaluable to the proof that $\varphi$ is an embedding (see Lemma \ref{embedding}) but cause a new obstacle in virtually every diagrammatic consideration. For example, the consideration of rim $\theta$-bands of a diagram must be replaced with the consideration of quasi-rim $\theta$-bands, i.e a band that may have $a$-cells between it and the boundary (see Lemma \ref{6.18}).

The proof of Theorem \ref{distortion} is presented Section 12. Its makeup is similar to the diagrammatic arguments presented in Section 10, but is unique to this setting in that it has no analogue in [16] or [23].

We conclude with the proof of Theorem \ref{CEP} in Section 13. The proof is a consequence of the arguments pertaining to minimal diagrams introduced in Sections 6-9.

Finally, we mention here the importance of the group $B(m,n)$ to this construction and proof. In the context of the proof of Theorem \ref{main theorem}, it is clear that the following two properties of the embedded group were necessary for the proof to follow: The existence of a presentation of the group whose relators satisfy some linear bound as in (b) above and the existence of another presentation of the group such that any van Kampen diagram over this presentation satisfies some quadratic bound as in Lemma \ref{a-cells are quadratic B(m,n)}. However, there is a third, more subtle requirement: In the proof of Lemma \ref{impeding G-weight}, it is essential that the relators are periodic. Due to this demand, that we are studying a group in the Burnside variety is crucial to our construction.

\bigskip

\section{Maps and diagrams}

A vital tool for many of the arguments to come is the concept of van Kampen diagrams over group presentations, a notion introduced by its namesake in 1933 [29]. It is assumed that the reader is intimately acquainted with this concept. The following subsection functions to recall the most important definitions; for further reference, see [14], [10], and [26].

\subsection{van Kampen diagrams} \

Let $G$ be a group with presentation $\gen{\pazocal{A}\mid\pazocal{R}}$. Suppose $\Delta$ is an oriented 2-complex homeomorphic to a disk equipped with a \textit{labelling function}, i.e a function $\lab:E(\Delta)\to\pazocal{A}\cup\pazocal{A}^{-1}\cup\{1\}$ which satisfies $\lab(e^{-1})\equiv\lab(e)^{-1}$ for any edge $e\in E(\Delta)$ (with, of course, $1^{-1}\equiv1$). The label of a path in $\Delta$ is defined in the obvious way, that is $\lab(e_1\dots e_n)\equiv\lab(e_1)\dots\lab(e_n)$. For any edge $e$ in $\Delta$, $e$ is called a $0$-edge if $\lab(e)\equiv1$; otherwise, $e$ is called an $\pazocal{A}$-edge.

Suppose that for each cell $\Pi$ of $\Delta$, one of the following is true:

\begin{enumerate}[label=({\arabic*})]

\item omitting the label of any zero edges, $\lab(\partial\Pi)$ is visually equal to a cyclic permutation of $R^{\pm1}$ for some $R\in\pazocal{R}$

\item $\partial\Pi$ consists of $0$-edges and exactly two $\pazocal{A}$-edges $e$ and $f$, with $\lab(e)\equiv\lab(f^{-1})$

\item $\partial\Pi$ consists only of $0$-edges. 

\end{enumerate}


Then $\Delta$ is called a \textit{(disk) van Kampen diagram} (or simply a \textit{disk diagram}) over the presentation $\gen{\pazocal{A}\mid\pazocal{R}}$. The cells satisfying condition (1) above are called $\pazocal{R}$-cells, while the others are called 0-cells.

It is easy to see that the contour, $\partial\Delta$, of a disk diagram $\Delta$ has label equal to the identity in $G$. Conversely, van Kampen's Lemma (Lemma 11.1 of [13]) states that a word $W$ over $\pazocal{A}$ represents the identity of $G$ if and only if there exists a disk diagram $\Delta$ over the presentation $\gen{\pazocal{A}\mid\pazocal{R}}$ with $\lab(\partial\Delta)\equiv W$.

The \textit{area} of a disk diagram $\Delta$, denoted $\text{Area}(\Delta)$, is the number of $\pazocal{R}$-cells it contains, while the area of a word $W$ satisfying $W=1$ in $G$ is the minimal area of a diagram $\Delta$ satisfying $\lab(\partial\Delta)\equiv W$.

\begin{figure}[H]
\centering
\subcaptionbox{$\pazocal{R}$-cell corresponding to the relator $R=aba^{-1}b^{-1}$.}[0.33\textwidth]{\includegraphics[height=1.9in]{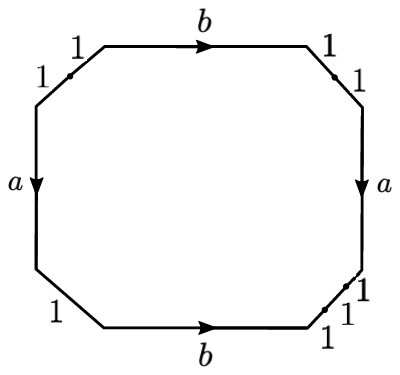}}\hfill
\subcaptionbox{$0$-cell of type (2), $a\in\pazocal{A}$.}[0.33\textwidth]{\includegraphics[height=1.9in]{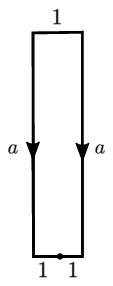}}\hfill
\subcaptionbox{$0$-cell of type (3).}[0.33\textwidth]{\includegraphics[height=1.9in]{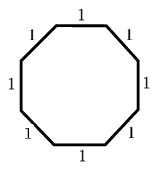}}\hfill
\caption{Cells in van Kampen diagrams}
\end{figure}

A \textit{0-refinement} of a disk diagram $\Delta$ is a disk diagram $\Delta'$ obtained from $\Delta$ by the insertion of 0-edges or 0-cells. Note that a 0-refinement has the same area as the diagram from which it arises.

Let $\Delta$ be a disk diagram and $\Pi_1$, $\Pi_2$ be two $\pazocal{R}$-cells in $\Delta$. Suppose there exists a simple path $t$ between the vertices $O_1,O_2$ of $\Pi_1,\Pi_2$, respectively, such that:

\begin{itemize}

\item $\lab(t)=1$ in $F(\pazocal{A})$ (that is, the free group with basis $\pazocal{A}$), and 

\item $\lab(\partial\Pi_1)$ read starting at $O_1$ is mutually inverse to $\lab(\partial\Pi_2)$ read starting at $O_2$ 

\end{itemize}

Then $\Pi_1$ and $\Pi_2$ are called \textit{cancellable} in $\Delta$.

\begin{figure}[H]
\centering
\includegraphics[scale=1.25]{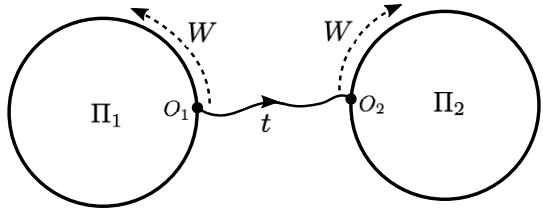}
\caption{Cancellable cells}
\end{figure}

This term is justified by the ability to `remove' the cells $\Pi_1$ and $\Pi_2$ from $\Delta$ without affecting the label of $\partial\Delta$, yielding a disk diagram $\Delta'$ satisfying $\lab(\partial\Delta')\equiv\lab(\partial\Delta)$ with $\text{Area}(\Delta')<\text{Area}(\Delta)$.

Naturally, a disk diagram is called \textit{reduced} if it has no pair of cancellable cells. By simply removing pairs of cancellable cells, any disk diagram over a presentation can be made reduced. This immediately implies a strengthened version of van Kampen's lemma: A word $W$ over $\pazocal{A}$ represents the identity in $G$ if and only if there exists a reduced disk diagram $\Delta$ over the presentation with $\lab(\partial\Delta)\equiv W$.

An annular (Schupp) diagram over the presentation $\gen{\pazocal{A}\mid\pazocal{R}}$ is defined in the analogous way. It is then an immediate consequence of van Kampen's lemma that two words $W$ and $V$ are conjugate in $G$ if and only if there exists a reduced annular diagram $\Delta$ with contour components $p$ and $q$ satsifying $\lab(p)\equiv W$ and $\lab(q)\equiv V^{-1}$. 

\begin{figure}[H]
\centering
\includegraphics[scale=0.9]{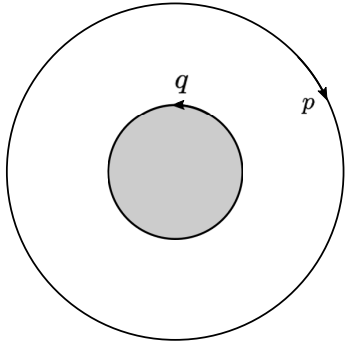}
\caption{Annular diagram}
\end{figure}

\smallskip


\subsection{Graded maps on a disk or annulus} \

The definitions and statements presented over the next several subsections can be found in [14] and [8]. Those relevant to the proof of Lemma \ref{a-cells are quadratic B(m,n)} are restated here for convenience, with reference given in place of proofs.

A \textit{map} $\Delta$ is a finite oriented planar graph on a disk which subdivides the surface into polygonal cells. In particular, by `forgetting' the labelling, one can interpret a van Kampen diagram as a map.

A map $\Delta$ is called \textit{graded} if each cell $\Pi$ in $\Delta$ is assigned a nonnegative integer $r(\Pi)$ called its \textit{rank}. The map $\Delta$ is called a \textit{map of rank at most k} if all its cells have rank $\leq k$. The minimal $k$ for which $\Delta$ is a map of rank at most $k$ is called the \textit{rank} of $\Delta$ and denoted $r(\Delta)$.

For $r(\Delta)=k$, the \textit{type} of $\Delta$, $\tau(\Delta)$, is the $(k+2)$-vector $(r(\Delta),\tau_0,\dots,\tau_k)$, where $\tau_i$ is the number of cells of rank $k-i$ in $\Delta$. The types of maps are ordered lexicographically, i.e for two maps $\Delta$ and $\Gamma$ with $\tau(\Delta)=(r(\Delta),\tau_0,\dots,\tau_k)$ and $\tau(\Gamma)=(r(\Gamma),\sigma_0,\dots,\sigma_\ell)$, $\tau(\Delta)\leq\tau(\Gamma)$ if the following three conditions hold:

\begin{itemize}

\item $r(\Delta)\leq r(\Gamma)$;

\item if $r(\Delta)=r(\Gamma)$, then $\tau_0\leq\sigma_0$;

\item for $1\leq i\leq r(\Delta)$, if $r(\Delta)=r(\Gamma)$ and $\tau_j=\sigma_j$ for all $j<i$, then $\tau_i\leq\sigma_i$.

\end{itemize}

For simplicity, the cells of rank 0 in a graded map are called \textit{0-cells}. All other cells are called \textit{$\pazocal{R}$-cells} (even though an alphabet $\pazocal{R}$ is not specified). 

The edges of the graph are divided into two disjoint sets, called the \textit{0-edges} and the \textit{$\pazocal{A}$-edges}. The \textit{length} of a path $p$ in a graded map $\Delta$, denoted $|p|$, is the number of $\pazocal{A}$-edges that comprise it. In particular, for $\partial\Pi$ the contour of a cell, $|\partial\Pi|$ is called the \textit{perimeter} of $\Pi$.

Motivated by the definition of van Kampen diagrams, the following three facts are assumed about graded maps:

\newpage

\begin{enumerate}[label=({\arabic*})]

\item the inverse edge of a 0-edge is also a 0-edge

\item the contour of a 0-cell either consists entirely of 0-edges or of exactly two $\pazocal{A}$-edges in addition to a number of 0-edges

\item if $\Pi$ is an $\pazocal{R}$-cell, then $|\partial\Pi|>0$

\end{enumerate}

If $\Delta$ is a graded map and $\Gamma$ is a subspace homeomorphic to a disk bounded by some edgepath of $\Delta$, then $\Gamma$ is called a \textit{submap} of $\Delta$.

It is assumed that the contour of a graded map has a fixed decomposition into at most eight distinct parts. In particular, if $\Delta$ is a graded map, then there is a standard factorization $p_1\dots p_k$ of $\partial\Delta$, with $k\leq8$ and each $p_i$ called a \textit{section} of the contour.

\smallskip


\subsection{0-Bonds and 0-contiguity submaps} \

Let $\Delta$ be a graded map and $\Pi$ be a 0-cell whose contour contains exactly two $\pazocal{A}$-edges, $e_1$ and $e_2$. Then the pair of edges $e_1,e_2^{-1}$ are called \textit{immediately adjacent} (as is the pair $e_1^{-1},e_2$). Two edges $e$ and $f$ of $\Delta$ are then said to be \textit{adjacent} if there exists a sequence of edges $e=e_1,e_2,\dots,e_{k+1}=f$ such that $e_i$ and $e_{i+1}$ are immediately adjacent for $i=1,\dots,k$.

Let $\Delta$ be a graded map with adjacent edges $e$ and $f$. Suppose $e$ belongs to the contour of the $\pazocal{R}$-cell $\Pi_1$ and $f^{-1}$ to the contour of some $\pazocal{R}$-cell $\Pi_2$. Per the definition, set $e=e_1,\dots,e_{k+1}=f$ with 0-cells $\pi_1,\dots,\pi_k$ such that the only two $\pazocal{A}$-edges of $\partial\pi_i$ are $e_i^{-1}$ and $e_{i+1}$.

We can then write $\partial\pi_i=e_i^{-1}p_ie_{i+1}s_i$ for $i=1,\dots,k$ such that $|p_i|=|s_i|=0$. With the aid of 0-refinement, we can assume that $p=p_1\dots p_k$ and $s=s_k\dots s_1$ are simple paths such that each intersects $\Pi_1,\Pi_2$ only on its endpoints.

Then, the submap $\Gamma$ with contour $p^{-1}es^{-1}f^{-1}$ consisting of the cells $\pi_1,\dots,\pi_k$ is called a \textit{0-bond} between $\Pi_1$ and $\Pi_2$. The edges $e$ and $f^{-1}$ are called the \textit{contiguity arcs} of the 0-bond $\Gamma$ and $p$ and $s$ the \textit{side arcs}.

\begin{figure}[H]
\centering
\includegraphics[scale=1]{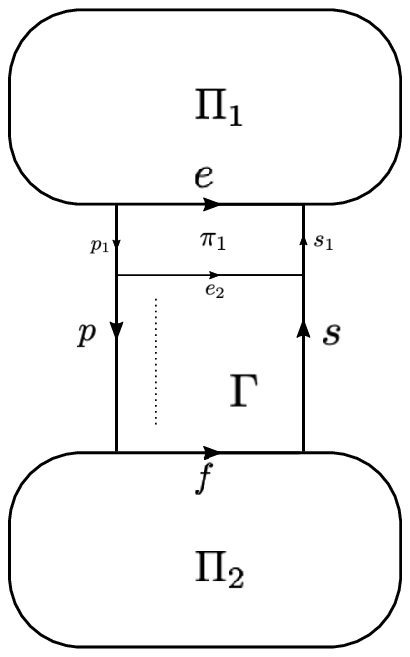}
\caption{A 0-bond between two $\pazocal{R}$-cells}
\end{figure}

Similarly, if $e$ and $f$ are adjacent edges with $e$ belonging to the contour of some $\pazocal{R}$-cell $\Pi$ and $f^{-1}$ belonging to some section $q$ of the contour, then a 0-bond between $\Pi$ and $q$ is defined. A 0-bond between two sections of the contour is defined analogously.

Now suppose $e_1,f_1$ and $e_2,f_2$ are two pairs of adjacent edges such that $e_1$ and $e_2$ belong to the contour of some $\pazocal{R}$-cell $\Pi_1$ and $f_1^{-1},f_2^{-1}$ to some $\pazocal{R}$-cell $\Pi_2$. Then, construct two 0-bonds, $\Gamma_1$ and $\Gamma_2$, between the two pairs, with $\partial\Gamma_i=z_ie_iw_if_i^{-1}$. If $\Gamma_1=\Gamma_2$, set $\Gamma=\Gamma_1$. Otherwise, there exist subpaths $y_1$ and $y_2$ of $\partial\Pi_1$ and $\partial\Pi_2$, respectively, such that $y_1=e_1pe_2$ and $y_2=f_2^{-1}uf_1^{-1}$ (or $y_1=e_2pe_1$ and $y_2=f_1^{-1}uf_2^{-1}$). Then let $\Gamma$ be the submap with contour $z_1y_1w_2y_2$ (or $z_2y_1w_1y_2$). If $\Gamma$ does not contain $\Pi_1$ or $\Pi_2$, then $\Gamma$ is called a \textit{0-contiguity submap} of $\Pi_1$ to $\Pi_2$. In this case, $y_1$ and $y_2$ are called the \textit{contiguity arcs} of $\Gamma$, denoted $y_i=\Gamma_{  ^\wedge}\Pi_i$. The paths $z_1$ and $w_2$ (or $z_2$ and $w_1$) are called the \textit{side arcs} of $\Gamma$. Note that both side arcs have zero length. The ratio $|y_1|/|\partial\Pi_1|$ is called the \textit{degree of contiguity} of $\Pi_1$ to $\Pi_2$ with respect to $\Gamma$ and is denoted $(\Pi_1,\Gamma,\Pi_2)$. Similarly, $(\Pi_2,\Gamma,\Pi_1)=|y_2|/|\partial\Pi_2|$ is the degree of contiguity of $\Pi_2$ to $\Pi_1$. 

\begin{figure}[H]
\centering
\includegraphics[scale=1.25]{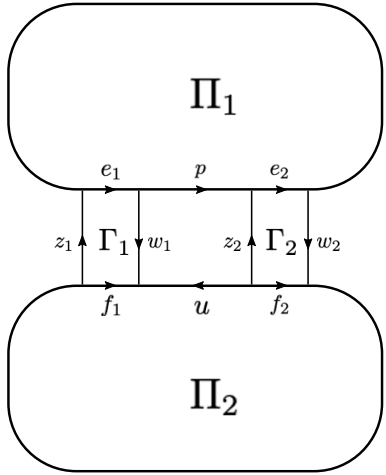}
\caption{A 0-contiguity submap between two $\pazocal{R}$-cells}
\end{figure}

Note, however, that if $\Pi_1=\Pi_2$, then $\Gamma_{ ^\wedge}\Pi_1$ represents two distinct arcs of $\partial\Pi_1$ and $(\Pi_1,\Gamma,\Pi_1)$ a pair of numbers.

As with 0-bonds, a 0-contiguity submap between an $\pazocal{R}$-cell and a section of $\partial\Delta$ is similarly defined, as is a 0-contiguity submap between two sections of $\partial\Delta$. The contiguity arcs, side arcs, and degree of contiguity of such 0-contiguity submaps are defined in the same way as as above; for example, if $\Gamma$ is a 0-contiguity submap between an $\pazocal{R}$-cell $\Pi$ and a section $q$ of the contour of $\partial\Delta$, then the degree of contiguity of $q$ to $\Pi$ is $(q,\Gamma,\Pi)=|\Gamma_{ ^\wedge}q|/|q|$.

Two 0-contiguity submaps $\Gamma_1$ and $\Gamma_2$ are \textit{disjoint} if they have no common cells, their contiguity arcs have no common points, and their side arcs have no common points.

\smallskip


\subsection{Bonds and contiguity submaps} \

In this subsection, $\eps\in(0,1)$ is taken to be a fixed constant. For the moment, one can think of this number as `sufficiently small', with this interpretation made precise in the next section.

Set $k>0$ and suppose the terms $j$-bond and $j$-contiguity submap have been defined for all $0\leq j<k$. Assume further that contiguity arcs, side arcs, and degrees of contiguity are defined for $j$-contiguity submaps in a way similar to how they were defined for 0-contiguity submaps. 

Two submaps $\Gamma_1,\Gamma_2$ such that $\Gamma_i$ is a $j_i$-contiguity submap for $j_i<k$ are called \textit{disjoint} if they have no common cells, their contiguity arcs have no common points, and their side arcs have no common points. Note that this definition agrees with that given for the case $j_1=j_2=0$.

Let $\pi$, $\Pi_1$, and $\Pi_2$ be cells of a graded map $\Delta$, $\Pi_1\neq\Pi_2$, satisfying the following:

\newpage

\begin{enumerate}[label=({\arabic*})]

\item $r(\pi)=k$, $r(\Pi_i)>k$ for $i=1,2$,

\item there are disjoint submaps $\Gamma_1,\Gamma_2$ such that $\Gamma_i$ is a $j_i$-contiguity submap of $\pi$ to $\Pi_i$ for $j_i<k$, $\Pi_1$ is not contained in $\Gamma_2$, and $\Pi_2$ is not contained in $\Gamma_1$,

\item $(\pi,\Gamma_i,\Pi_i)\geq\eps$ for $i=1,2$.

\end{enumerate}

For $i=1,2$, let $\partial\Gamma_i=v_is_i$ for $v_i={\Gamma_i}_{ ^\wedge}\pi$ and $\partial\pi=u_1v_1u_2v_2$. Letting $\Gamma$ be the submap with contour $s_1u_1^{-1}s_2u_2^{-1}$, $\Gamma$ is the \textit{$k$-bond} between $\Pi_1$ and $\Pi_2$ defined by the contiguity submaps $\Gamma_1$ and $\Gamma_2$ with \textit{principal cell} $\pi$. The \textit{contiguity arc} of $\Gamma$ to $\Pi_i$ is defined to be ${\Gamma_i}_{ ^\wedge}\Pi_i$ and denoted $\Gamma_{ ^\wedge}\Pi_i$. The \textit{side arcs} of $\Gamma$ are defined in the obvious way.

\begin{figure}[H]
\centering
\includegraphics[scale=1.25]{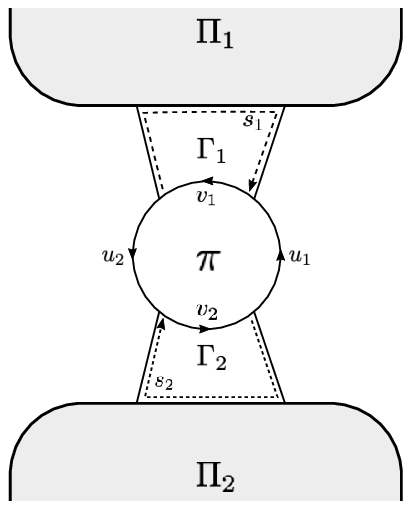}
\caption{A $k$-bond between two $\pazocal{R}$-cells}
\end{figure}

A $k$-bond between an $\pazocal{R}$-cell and a section of the contour or between two distinct sections of the contour is defined similarly.

Suppose $\Gamma_1$ is a $k$-bond between two cells $\Pi_1$ and $\Pi_2$ and $\Gamma_2$ is a $j$-bond between $\Pi_1$ and $\Pi_2$ for $j\leq k$. If $\Gamma_1=\Gamma_2$, then set $\Gamma=\Gamma_1$. Otherwise, if $\Gamma_1$ and $\Gamma_2$ are disjoint, then set $\partial\Gamma_i=z_iv_iw_is_i$ for $v_i={\Gamma_i}_{ ^\wedge}\Pi_1$ and $s_i={\Gamma_i}_{ ^\wedge}\Pi_2$. Then set $y_1$ as a subpath of $\partial\Pi_1$ of the form $v_1vv_2$ (or $v_2vv_1$) and $y_2$ as a subpath of $\partial\Pi_2$ of the form $s_2ss_1$ (or $s_1ss_2$). Setting $\Gamma$ as the submap with contour $z_1y_1w_2y_2$ (or $z_2y_1w_1y_2$), if $\Gamma$ does not contain $\Pi_1$ or $\Pi_2$, then it is called the \textit{$k$-contiguity submap} of $\Pi_1$ to $\Pi_2$ defined by the bonds $\Gamma_1$ and $\Gamma_2$. As with previous definitions, $y_i=\Gamma_{ ^\wedge}\Pi_i$ is called the \textit{contiguity arc} of $\Gamma$ to $\Pi_i$, $z_1$ and $w_2$ (or $w_1$ and $z_2$) are called the \textit{side arcs} of $\Gamma$, $(\Pi_1,\Gamma,\Pi_2)=|y_1|/|\Pi_1|$ is called the \textit{degree of contiguity} of $\Pi_1$ to $\Pi_2$ with respect to $\Gamma$.

A $k$-contiguity submap between an $\pazocal{R}$-cell and a section of the contour is defined similarly, as is a $k$-contiguity submap between two sections of the contour.

The number $k$ is often omitted when referring to $k$-contiguity submaps, so that there will be reference merely to a contiguity submap. Further, if $\Gamma$ is a contiguity submap between $\Pi_1$ and $\Pi_2$ and $\partial\Gamma=p_1q_1p_2q_2$ with $q_i=\Gamma_{ ^\wedge}\Pi_i$, then $\partial(\Pi_1,\Gamma,\Pi_2)$ denotes the \textit{standard decomposition} $p_1q_1p_2q_2$.

\smallskip


\subsection{Graded Presentations} \

Given an alphabet $\pazocal{A}$, let $\{\pazocal{S}_i\}_{i=1}^\infty$ be a collection of subsets of $F(\pazocal{A})$ such that if $W\in\pazocal{S}_i$ and $V$ is a cyclic permutation of $W$ or $W^{-1}$, then $V\notin\pazocal{S}_j$ for any $j\neq i$. 

Set $\pazocal{R}_j=\cup_{i=1}^j\pazocal{S}_i$ for $j\geq1$, $\pazocal{R}_0=\emptyset$, and $\pazocal{R}=\cup_{i=1}^\infty\pazocal{S}_i$. Further, define $G(j)=\gen{\pazocal{A}\mid\pazocal{R}_j}$ for all $j\geq0$. Note that $G(0)\cong F(\pazocal{A})$.

Then $\gen{\pazocal{A}\mid\pazocal{R}}$ is called a \textit{graded presentation} for the group $G=G(\infty)$. 

The words in $\pazocal{S}_i$ are called the \textit{relators of rank $i$}. For words $X,Y$ over $\pazocal{A}$, if $X=Y$ in $G(i)$, then $X$ and $Y$ are said to be \textit{equal in rank $i$}, with this relation denoted $X\stackrel{i}{=}Y$.

Given a disk diagram $\Delta$ over the presentation $\gen{\pazocal{A}\mid\pazocal{R}}$, let $\Pi$ be an $\pazocal{R}$-cell such that $\lab(\partial\Pi)$ is a cyclic permutation of a relator of rank $i$ (or the inverse of such a relator). Then $\Pi$ is called a \textit{cell of rank $i$}, denoted by the representative notation $r(\Pi)=i$. Naturally, the 0-cells of $\Delta$ are called cells of rank 0.

Note that if one forgets the labelling function of a disk diagram $\Delta$ over a graded presentation, then $\Delta$ is a graded map (with the ranks of cells assigned in the same way). A diagram satisfying this property is called a \textit{graded disk diagram}. It is then natural to define the \textit{rank} and \textit{type} of a graded disk diagram as the rank and type of the underlying map.

Let $\Delta$ be a graded disk diagram over $\gen{\pazocal{A}\mid\pazocal{R}}$ containing $\pazocal{R}$-cells $\Pi_1,\Pi_2$ with $r(\Pi_1)=r(\Pi_2)=j$. Suppose there exists a 0-refinement $\Delta'$ of $\Delta$ with copies $\Pi_1',\Pi_2'$ of $\Pi_1,\Pi_2$, respectively, and a simple path $t$ in $\Delta'$ between vertices $O_1,O_2$ of $\Pi_1',\Pi_2'$, respectively, such that:

\begin{itemize}

\item$\lab(t)\stackrel{j-1}{=}1$ and 

\item $\lab(\partial\Pi_1')$ read starting at $O_1$ is mutually inverse to $\lab(\partial\Pi_2')$ read starting at $O_2$.

\end{itemize} 

Then $\Pi_1$ and $\Pi_2$ are called a \textit{j-pair} in $\Delta$. 

This generalizes the concept of cancellable cells in a disk diagram over a presentation: If $\Delta$ is a graded disk diagram over a graded presentation with a $j$-pair $\Pi_1,\Pi_2$, then one can `remove' $\Pi_1$ and $\Pi_2$ from $\Delta$ at the cost of cells of rank $\leq j-1$, producing a graded disk diagram $\Delta''$ over the same presentation with $\lab(\partial\Delta'')\equiv\lab(\partial\Delta)$ and $\tau(\Delta'')<\tau(\Delta)$.

A graded disk diagram $\Delta$ over $\gen{\pazocal{A}\mid\pazocal{R}}$ is called \textit{reduced} if for any graded disk diagram $\Gamma$ over $\gen{\pazocal{A}\mid\pazocal{R}}$ satisfying $\lab(\partial\Delta)\equiv\lab(\partial\Gamma)$, the inequality $\tau(\Delta)\leq\tau(\Gamma)$ is satisfied. Similar to reduced disk diagrams over general presentations, one can make any graded disk diagram reduced simply by the removal of $j$-pairs (for varying $j$). As a result, van Kampen's Lemma can again be strengthened: Given a graded presentation $G=\gen{\pazocal{A}\mid\pazocal{R}}$, a word $W$ over $\pazocal{A}$ represents the identity in $G$ if and only if there exists a reduced graded disk diagram $\Delta$ over the presentation with $\lab(\partial\Delta)\equiv W$.

Graded annular diagrams are defined similarly.


\subsection{Auxiliary parameters} \

The arguments presented through the rest of this section rely on the \textit{lowest parameter principle} introduced in [14]. For this, we introduce the relation $>>$ on parameters defined as follows.

If $\a_1,\a_2,\dots,\a_k$ are (positive) parameters with $\a_1>>\a_2>>\dots>>\a_k$, then for $2\leq i\leq k$, it is understood that $\a_1,\dots,\a_{i-1}$ are assigned prior to the assignment of $\a_i$ and that the assignment of $\a_i$ is dependent on the assignment of its predecessors. The resulting inequalities are then understood as `$\a_i\leq$(any positive-valued expression involving $\a_1,\dots,\a_{i-1}$)'.

The principle makes the sequence of inequalities used throughout the rest of the section consistent without muddling the matter with the arithmetic of particular infinitesimals.

Specifically, the assignment of parameters used in this section is:
$$\b>>\gamma>>\delta>>\eps>>\zeta>>\iota$$
Note that $\eps$ is the parameter used to define contiguity submaps in the previous subsection. 

Further, one more restriction is imposed on the assignment of $\iota$, specifically that its inverse $n=1/\iota$ is an integer that is either odd or divisible by $2^9$ (and so, for small enough $\iota$, $n$ satisfies $(*)$).

In [8], these constants are labelled differently. Indeed, exact values are even given there, with $\b=0.05$, $\gamma=0.01$, $\delta=0.005$, $\eps=2^{-14}$, $\zeta=2^{-33}$, $\iota\leq2^{-48}$.

\smallskip


\subsection{The graded presentation of $B(\pazocal{A},n)$} \

Letting $\pazocal{A}$ be a finite alphabet, fix a total order $\prec$ on the set of words over $\pazocal{A}$ such that if $|X|<|Y|$, then $X\prec Y$. Also, set $\pazocal{R}_0=\emptyset$ and $\textbf{B}(0)=F(\pazocal{A})$.

With this terminology, inductively define $\pazocal{R}_i=\pazocal{R}_{i-1}\cup\{A_i^n\}$ where $A_i$ is the first (relative to $\prec$) with infinite order in $\textbf{B}(i-1)=\gen{\pazocal{A}\mid\pazocal{R}_{i-1}}$. 

The following is a main result of [8]:

\begin{lemma} \label{Theorem B}

\textit{(Theorem B of [8])}. For each $i$, $A_i$ exists, with $|A_i|\leq|A_{i-1}|+1$. Moreover, $\pazocal{R}=\cup\pazocal{R}_i$ can be taken as an independent set of defining relations of the free Burnside group $B(\pazocal{A},n)$ (and so defines a graded presentation $\textbf{B}(\infty)$ of $B(\pazocal{A},n)$).

\end{lemma}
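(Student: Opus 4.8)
The statement to be proved is Lemma \ref{Theorem B} (``Theorem B of [8]''), which is cited as an external result from Ol'shanskii–Ivanov's work on the Burnside problem. A genuine proof would recapitulate the entire machinery of [8]; what follows is a proposal for how such a proof is structured, i.e. an outline of the inductive argument underlying Ivanov's (and Ol'shanskii's) treatment of free Burnside groups of large even exponent.

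\medskip

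The plan is to prove the two assertions — the existence of $A_i$ with $|A_i| \leq |A_{i-1}| + 1$, and the fact that $\pazocal{R} = \cup \pazocal{R}_i$ is an independent set of defining relations for $B(\pazocal{A}, n)$ — by a single simultaneous induction on the rank $i$, carried along with a large package of auxiliary geometric statements about reduced diagrams over $\textbf{B}(i)$. The induction hypothesis at rank $i$ is not just ``$A_i$ exists'' but the full list of properties that the graded presentation of rank $i$ must satisfy: every reduced diagram over $\textbf{B}(i)$ obeys the relevant congruence-extension, contiguity, and small-cancellation-type estimates (the $A$-, $B$-, $C$-statements of [8], built on the parameter hierarchy $\b \gg \gamma \gg \delta \gg \eps \gg \zeta \gg \iota$ recorded above). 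The key structural tool is that each relator added is a single proper power $A_i^n$ with $A_i$ a \emph{period} — a cyclically reduced word that is not a proper power and is not conjugate in lower rank to a shorter word or to a power of an earlier period.

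First I would set up the combinatorics of periodic words: given the induction hypotheses through rank $i-1$, one shows that among all words of infinite order in $\textbf{B}(i-1)$ there is a $\prec$-least one, call it $A_i$, and that it may be chosen cyclically reduced and \emph{simple in rank $i-1$} (no cyclic shift equals, in $\textbf{B}(i-1)$, a shorter word or a power of $A_j$ for $j < i$). The bound $|A_i| \leq |A_{i-1}| + 1$ follows by a short argument: if every word of length $\leq |A_{i-1}|$ already had finite order in $\textbf{B}(i-1)$, one must still exhibit \emph{some} infinite-order word, and a word of length $|A_{i-1}|+1$ obtained by appending one letter to (a representative of) $A_{i-1}$ — which has order exactly $n$ in $\textbf{B}(i-1)$ but infinite order in $\textbf{B}(i-2)$ — can be shown, using the rank-$(i-1)$ diagram estimates, to have infinite order; minimality of $A_i$ then gives the inequality. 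This is essentially the ``period growth'' lemma, and it is cheap once the diagram machinery at lower ranks is in place.

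The main obstacle — and the overwhelming bulk of [8] — is propagating the geometric induction hypotheses from rank $i-1$ to rank $i$: one must show that adding the cell $A_i^n$ does not create reduced diagrams violating the contiguity estimates, and in particular that $A_i^n \neq 1$ in $\textbf{B}(i-1)$ (independence) and that $A_i$ has order exactly $n$ in $\textbf{B}(i)$. This is done by analysing a putative minimal-type counterexample diagram $\Delta$ over $\textbf{B}(i)$: using the lowest-parameter principle, one locates a cell $\pi$ of maximal rank with a large contiguity submap to $\partial\Delta$ or to another cell, derives via the periodicity of the relators a long common periodic subword (here the fact that relators are \emph{proper powers of periods} is indispensable — it is exactly the ``third, more subtle requirement'' flagged in the introduction), and concludes that two relator occurrences form a $j$-pair, contradicting minimality of $\tau(\Delta)$. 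Running this through all the sub-cases (cell-to-cell, cell-to-section, even-exponent phenomena requiring the divisibility-by-$2^9$ hypothesis for involutions and the structure of finite subgroups) is the heart of the matter. Granting the external result, I would simply cite [8] for this step; a self-contained proof would reproduce Chapters 18–19 of Ivanov's paper. Finally, independence of $\pazocal{R}$ as a defining set is a byproduct: $A_i^n \neq 1$ in $\textbf{B}(i-1)$ says no relator of rank $i$ is a consequence of the earlier ones, and $\textbf{B}(\infty) = \varinjlim \textbf{B}(i)$ is the free Burnside group $B(\pazocal{A},n)$ because every word of finite order has some power killed at some finite rank while every nontrivial element survives — both immediate from the rank-by-rank estimates.
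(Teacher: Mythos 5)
This lemma is quoted from [8] with the reference given in place of a proof (as stated at the start of Section 2.2), so the paper's own treatment is simply the citation, and your proposal—an outline of Ivanov's rank-by-rank induction with the core contiguity/diagram analysis deferred to [8]—matches that treatment and is accurate in its broad strokes. No gap to flag, since both you and the paper ultimately rest on Theorem B of [8] rather than reproving it.
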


Let $A$ be a freely cyclically reduced word over $\pazocal{A}$. Then a word $W$ over $\pazocal{A}$ is $A$-periodic if $W$ is a subword of a power $A^k$ for $k>0$, i.e $A^k\equiv Z_1WZ_2$ for some (perhaps empty) words $Z_1$ and $Z_2$. A decomposition $W\equiv W_1W_2$ is called \textit{phase} if there exist positive integers $k_1,k_2$ with $k_1+k_2=k$ such that $A^{k_1}\equiv Z_1W_1$ and $A^{k_2}\equiv W_2Z_2$.

If $\Delta$ is a graded diagram over $\textbf{B}(i)$, a section $q$ of $\partial\Delta$ is called $A$-periodic if $\lab(q)$ is an $A$-periodic word. In this case, a vertex of $q$ is called phase if the natural decomposition of $\lab(q)$ it defines is phase. Similarly, if $\Pi$ is an $\pazocal{R}$-cell with $r(\Pi)=j$, then a vertex $O$ of $\partial\Pi$ is called phase if $\lab(\partial\Pi)$ when read starting at $O$ is visually $A_j^{\pm n}$.

Denote $\pazocal{F}(A_i)$ as a finite subgroup of $\textbf{B}(i-1)$ that is maximal with respect to the property that $A_i$ normalizes $\pazocal{F}(A_i)$. Clearly, such a subgroup must exist as $A_i$ normalizes the trivial group. The following statement establishes the well-defined nature of this construction:

\begin{lemma}

\textit{(Lemma 18.5(a) of [8])}. The subgroup $\pazocal{F}(A_i)$ is uniquely defined and is a 2-group.

\end{lemma}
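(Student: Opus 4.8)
The plan is to work entirely inside the group $\textbf{B}(i-1)=\gen{\pazocal{A}\mid\pazocal{R}_{i-1}}$, a group of rank $i-1$ over which the machinery of $\eps$-contiguity submaps and the parameter principle $\b>>\gamma>>\delta>>\eps>>\zeta>>\iota$ applies, and to exploit that $A_i$ has infinite order in $\textbf{B}(i-1)$ (this is how $A_i$ was chosen). First I would set up the basic structure: let $\pazocal{F}$ be any finite subgroup of $\textbf{B}(i-1)$ normalized by $A_i$. Conjugation by $A_i$ gives an automorphism of $\pazocal{F}$; since $\pazocal{F}$ is finite, some power $A_i^N$ centralizes $\pazocal{F}$. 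The key geometric input — which I would import from the small-cancellation-style analysis of periodic words in $\textbf{B}(i-1)$ developed in [8] (the statements on $A$-periodic sections and phase vertices recalled just above) — is that the centralizer of the infinite-order element $A_i^N$ in $\textbf{B}(i-1)$ is essentially cyclic: more precisely, $C(A_i^N)$ has a cyclic subgroup $\gen{D}$ of finite index with $A_i\in\gen{D}$ up to a finite kernel, and the finite part is a normal $2$-subgroup. Granting this, $\pazocal{F}\cap C(A_i^N)$ has finite index in $\pazocal{F}$, hence equals $\pazocal{F}$, so $\pazocal{F}$ lies in this almost-cyclic centralizer; intersecting a finite subgroup with an almost-cyclic group whose torsion is a $2$-group forces $\pazocal{F}$ itself to be a $2$-group.

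The heart of the argument is therefore the structural claim about $C(A_i^N)$, equivalently about elements commuting with a power of the infinite-order word $A_i$. I would prove this by the standard diagram argument: if $g$ commutes with $A_i^N$, build a reduced graded annular diagram over $\textbf{B}(i-1)$ whose two contour labels are $A_i^{N}$ and $g A_i^{N} g^{-1}$, and analyze $A_i$-periodic bands in it. Using the contiguity-submap apparatus of Section 2 together with the fact that $A_i$ is not a proper power in $\textbf{B}(i-1)$ in the relevant sense (and has infinite order, so no rank-$\leq i-1$ relator is $A_i$-periodic of large enough exponent), one shows the annulus is "thin" — it reduces to a narrow $A_i$-periodic strip with possibly a bounded piece of rank-$0$ or low-rank "noise" — which yields that $g$ normalizes a virtually cyclic subgroup containing $A_i$, and that the finite obstruction is a $2$-group because all relators $A_j^n$ have the specific exponent $n$ with the divisibility/parity constraint built into $\iota$. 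This is exactly where one cites the already-developed theory of [8]: I would quote the relevant structural lemma on centralizers of periodic elements rather than reprove it.

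With the structural claim in hand, uniqueness and maximality follow formally. If $\pazocal{F}_1$ and $\pazocal{F}_2$ are both finite subgroups normalized by $A_i$ and maximal with this property, then both lie inside the almost-cyclic centralizer $C(A_i^{N})$ for a common $N$ (take $N$ a common multiple of the two periods); inside that almost-cyclic group the torsion subgroup $T$ is a well-defined finite normal $2$-group, $A_i$ normalizes $T$, every finite $A_i$-normalized subgroup is contained in $T$, and $T$ is itself finite and $A_i$-normalized, so $T$ is the unique maximal such subgroup. Hence $\pazocal{F}_1=\pazocal{F}_2=T=\pazocal{F}(A_i)$, and it is a $2$-group.

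The main obstacle I anticipate is the "thin annulus" step: controlling a reduced annular diagram over a group of rank $i-1$ whose boundary labels are powers of a periodic word, and extracting from it that the two boundary periodic bands must be compatible (hence the conjugator normalizes a virtually cyclic group). This requires careful bookkeeping with the degrees of contiguity $(\Pi,\Gamma,\Pi')$, the phase-vertex language, and the parameter inequalities — but it is precisely the kind of statement that is already established in [8] for $\textbf{B}(i-1)$, so in this paper it should be invoked as a cited lemma rather than carried out from scratch; the genuinely new content here is only the short deduction of uniqueness and the $2$-group property from it.
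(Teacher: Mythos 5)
The paper does not prove this statement at all: it is quoted verbatim from Ivanov's paper (Lemma 18.5(a) of [8]), exactly like the other background facts in Sections 2.7--2.8, which are explicitly ``restated here for convenience, with reference given in place of proofs.'' So the only faithful ``proof'' in the context of this paper is the citation itself.

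Measured as an independent argument, your sketch has a real gap that is essentially a circularity. The heart of your plan is the structural claim that the centralizer $C(A_i^N)$ in $\textbf{B}(i-1)$ is finite-by-cyclic with its finite part a normal $2$-group, which you propose to ``quote from [8].'' But in [8] that centralizer/normalizer structure, the properties of $\pazocal{F}(A)$, the classification of finite subgroups, and Lemma 18.5 itself are all established together by one simultaneous induction on the rank $i$; there is no prior, independently usable lemma saying the torsion of $C(A_i^N)$ is a $2$-group from which 18.5(a) could then be deduced --- that assertion is essentially the statement you are trying to prove. Two further points would need care even granting the structure: a virtually cyclic group need not have a well-defined finite torsion subgroup (infinite dihedral type occurs here, since $n$ is even and $\pazocal{F}(A_j)$-involutions inverting $A_j$ exist, as recalled in Section 2.7), so you must argue that the centralizer, as opposed to the normalizer, is finite-by-(infinite cyclic); and the $2$-group conclusion is tied to the specific factorization $n=n_1n_2$ with $n_1$ odd and $n_2=2^k\geq 2^9$ in Ivanov's setting, not merely to ``the parity constraint built into $\iota$.'' Given all this, the appropriate move in this paper is the one the author makes: cite [8] outright rather than reconstruct a proof whose key input cannot be extracted from [8] without importing the whole inductive machinery.
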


A word $J$ is called an \textit{$\pazocal{F}(A_i)$-involution} provided $J$ normalizes $\pazocal{F}(A_i)$ in $\textbf{B}(i-1)$, $J^2\in\pazocal{F}(A_i)$ in rank $i-1$, and $J^{-1}A_iJ\stackrel{i-1}= A_i^{-1}F$ with a word $F\in\pazocal{F}(A_i)$.

Let $\Delta$ be a diagram over $\textbf{B}(i)$ and $q_1,q_2$ be sections of $\partial\Delta$ or the contours of a cell in $\Delta$. Suppose $\lab(q_k)$ is $A_j^{\eps_k}$-periodic for $k\leq i$ and $\eps_k\in\{\pm1\}$. Then $q_1$ and $q_2$ are called \textit{$j$-compatible} in $\Delta$ if either:

\begin{enumerate}[label=(A{\arabic*})]

\item If $\eps_1\eps_2=-1$, then there are phase vertices $O_k\in q_k$ and a simple path $t$ between $O_1$ and $O_2$ such that $|t|<(1+\delta)|A_j|$ and $\lab(t)\stackrel{j-1}=T$ for some $T\in\pazocal{F}(A_j)$.

\item If $\eps_1\eps_2=1$, then there are phase vertices $O_k\in q_k$ and a simple path $t$ between $O_1$ and $O_2$ such that $|t|<(1+\delta)|A_j|$ and $\lab(t)\stackrel{j-1}=T$ for some $\pazocal{F}(A_j)$-involution $T$.

\end{enumerate}

A disk diagram $\Delta$ over $\textbf{B}(i)$ is called \textit{strictly reduced} if $\partial\Pi_1$ and $\partial\Pi_2$ are not $j$-compatible for cells $\Pi_1$ and $\Pi_2$ of rank $j$ (perhaps with $\Pi_1=\Pi_2$). (Note that in [8], such a diagram is simply called reduced).

This terminology is justified by Theorem C of [8], which essentially assures that cells whose contours are $j$-compatible can be removed from $\Delta$ and replaced with cells of rank $\leq j-1$, reducing the type of $\Delta$ (this is done in much the same way as it is done for $j$-pairs). As such, we may again strengthen van Kampen's Lemma, so that a word $W$ over $\pazocal{A}$ is trivial over $B(\pazocal{A},n)$ if and only if there exists $i\geq0$ and a strictly reduced diagram $\Delta$ over $\textbf{B}(i)$ with $\lab(\partial\Delta)\equiv W$.

\smallskip


\subsection{Tame diagrams} \

A word $X$ is \textit{cyclically reduced in rank $i$} if for any word $Y$ such that $X\stackrel{i}=ZYZ^{-1}$ (i.e $X$ and $Y$ are conjugate in rank $i$), then $|X|\leq|Y|$. The word $A$ is called \textit{simple in rank $i$} if $A$ is not conjugate in rank $i$ to $A_k^\ell F$ for any $k\leq i$, any integer $\ell$, and any $F\in\pazocal{F}(A_k)$.

For $\Delta$ a diagram over $\textbf{B}(i)$, an $A$-periodic section $q$ of $\partial\Delta$ is called \textit{smooth} if either:

\begin{enumerate}[label=(S{\arabic*})]

\item $A\equiv A_j^{\pm1}$ and there is no cell $\Pi$ in $\Delta$ such that $r(\Pi)=j$ and $\partial\Pi$ is $j$-compatible with $q$

\item $A$ is simple in rank $i$.

\end{enumerate}

If $q$ satisfies (S1), then the rank of $q$ is defined to be $r(q)=j$. Otherwise, we write $r(q)=\infty$.

Note that if $\Delta$ is a strictly reduced diagram containing an $\pazocal{R}$-cell $\Pi$, $\Gamma$ is a subdiagram of $\Delta$ such that a section $q$ of $\partial\Gamma$ is a subpath of $\partial\Pi$ in $\Delta$, and $\Pi$ is not contained in $\Gamma$, then $\Gamma$ is strictly reduced and $q$ is a smooth section of $\partial\Gamma$ with $r(q)=r(\Pi)$.

Finally, a strictly reduced diagram $\Delta$ over $\textbf{B}(i)$ is called \textit{tame} if it satisfies both:

\begin{enumerate}[label=(T{\arabic*})]

\item Let $p=\partial\Pi_1$ for some cell $\Pi_1$ in $\Delta$ and $q$ be a smooth section of $\partial\Delta$ or $q=\partial\Pi_2$ for some cell $\Pi_2$ in $\Delta$. If $\Gamma$ is a contiguity subdiagram between $p$ and $q$, then $r(\Gamma)<\min(r(p),r(q))$.

\item For any $\pazocal{R}$-cell $\Pi$ in $\Delta$, there is no $0$-bond in $\Delta$ from $\partial\Pi$ to itself.

\end{enumerate}

%
%
%
%
%
%
%

The following statements are proved in [8] and listed here for reference.

%
%

\begin{lemma} \label{tame}

\textit{(Lemma 9.2 of [8])}. Every strictly reduced diagram $\Delta$ over $\textbf{B}(i)$ is tame.

\end{lemma}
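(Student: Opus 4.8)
The plan is to reduce the statement to the corresponding result already established in [8], treating the lemma as a bookkeeping / terminology-translation statement rather than a substantive new argument. First I would recall that a \emph{strictly reduced} diagram over $\textbf{B}(i)$ is precisely what [8] calls a reduced diagram (as noted parenthetically after the definition of strict reduction in the excerpt), and that \emph{tameness} as defined here packages together conditions (T1) and (T2). Since the hypothesis ``$\Delta$ is strictly reduced'' is exactly the hypothesis of Lemma 9.2 of [8], the entire content is to verify that the two conclusions (T1) and (T2) are literally the conclusions proved there, after matching up the notions of contiguity subdiagram, rank of a cell, rank of a smooth section, and $j$-compatibility with the corresponding objects in [8]. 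So the proof is essentially: ``This is Lemma 9.2 of [8]; we only indicate the translation of terminology.''

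Concretely, the key steps I would carry out in order are: (1) observe that for a strictly reduced diagram, no two cells of the same rank $j$ have $j$-compatible contours (by definition of strict reduction), so the diagram meets the non-degeneracy hypotheses under which [8] works; (2) for (T2), note that a $0$-bond from $\partial\Pi$ to itself would, via a $0$-refinement, produce a path $t$ with $|t|=0$ and $\lab(t)=1$ realizing a self-$j$-compatibility of $\Pi$ (taking $T$ trivial, or the relevant involution trivial, depending on orientation) — contradicting strict reduction; this handles (T2) directly and cheaply. (3) For (T1), invoke the contiguity-rank estimates of [8]: the point is that if $\Gamma$ is a contiguity subdiagram between $p=\partial\Pi_1$ and a smooth section (or cell contour) $q$, and $r(\Gamma) \geq \min(r(p),r(q))$, then one can find within $\Gamma$ a bond whose principal cell, together with $\Pi_1$ or the cell bounding $q$, violates either strict reduction or smoothness of $q$; this is exactly the argument of Lemma 9.2 of [8]. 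I would cite this rather than reproduce it.

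The main obstacle — really the only non-routine part — is making sure the translation of definitions is faithful, in particular that ``contiguity subdiagram'' as built up inductively in Subsections 2.3--2.4 of this paper (via $k$-bonds and the constant $\eps$) coincides with the notion used in [8], and that the rank conventions ($r(\Pi)=i$ when $\lab(\partial\Pi)$ is a cyclic permutation of $A_i^{\pm n}$, $r(q)=j$ or $\infty$ for smooth sections) line up. Since the excerpt explicitly states that ``the definitions and statements presented over the next several subsections can be found in [14] and [8]'' and that the auxiliary constants are those of [8] (with the values $\b=0.05,\ \gamma=0.01,\ \delta=0.005,\ \eps=2^{-14},\ \zeta=2^{-33},\ \iota\le 2^{-48}$ recalled verbatim), this matching is routine, and the bulk of the ``proof'' is simply the citation. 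I would therefore write a short proof: note (T2) follows immediately from strict reduction as above, and (T1) is the content of Lemma 9.2 of [8] under the present (identical) hypotheses and terminology.
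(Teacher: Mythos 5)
Your proposal matches the paper exactly: the paper gives no independent proof of this lemma, stating explicitly that these statements "are proved in [8] and listed here for reference," so the entire content is the citation to Lemma 9.2 of [8] together with the terminology match-up you describe. Your extra sketch for (T2) is harmless but not needed, since the paper defers both conditions wholesale to [8].
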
 

\begin{lemma} \label{Lemma 5.7}

\textit{(Lemma 5.7 of [8])} Let $\Delta$ be a tame disk diagram over $\textbf{B}(i)$ whose contour is decomposed into the subsections $q_1,\dots,q_m$ with $m\leq 8$. Then, in $\Delta$, there exists an $\pazocal{R}$-cell $\pi$ and disjoint contiguity submaps $\{\Gamma_j\}_{j=1}^k$ of $\pi$ to these sections such that $$\sum_{j=1}^k|{\Gamma_j} _{ ^\wedge}\pi|>(1-\gamma)|\partial\pi|$$

\end{lemma}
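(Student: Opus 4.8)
The plan is to argue by induction on the type $\tau(\Delta)$ (so a lexicographic induction on rank followed by cell counts), invoking the lowest-parameter convention so that the numerical inequalities needed below hold for the parameters fixed above; concretely one uses only $\gamma\gg\eps$ (say $8\eps<\gamma$). By Lemma \ref{tame} every diagram in sight is tame, so (T1) and (T2) are available, and one also uses the graded small-cancellation condition of [8] in the form: if $\Gamma$ is a contiguity submap between \emph{distinct} $\pazocal{R}$-cells whose principal (contiguity) rank is below the ranks of both cells, then its degree of contiguity to each is less than $\eps$.

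The base of the induction is the case of at most one $\pazocal{R}$-cell. If there is none there is nothing to prove; if there is exactly one, say $\pi$, then since all other cells are $0$-cells, each $\pazocal{A}$-edge of $\partial\pi$ lies in a maximal chain of immediately adjacent edges terminating either on $\partial\Delta$ — hence on some section $q_\ell$ — or back on $\partial\pi$, the latter being forbidden by (T2). Grouping these chains into $0$-contiguity submaps and making them pairwise disjoint with a $0$-refinement yields disjoint contiguity submaps $\Gamma_1,\dots,\Gamma_k$ of $\pi$ to $q_1,\dots,q_m$ with $\sum_{j=1}^k|{\Gamma_j} _{ ^\wedge}\pi|=|\partial\pi|>(1-\gamma)|\partial\pi|$.

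For the inductive step I would assume $\Delta$ has at least two $\pazocal{R}$-cells and, for contradiction, that no $\pazocal{R}$-cell of $\Delta$ together with any family of disjoint contiguity submaps to $q_1,\dots,q_m$ has total contiguity exceeding $(1-\gamma)$ of its perimeter, and then run a descent. Set $\Delta_0=\Delta$. Given a tame subdiagram $\Delta_t$ whose contour is cut into at most eight sections and which contains an $\pazocal{R}$-cell: for $t=0$ pick any $\pazocal{R}$-cell $\pi_0$, while for $t\geq1$ use the inductive hypothesis (legitimate, as $\tau(\Delta_t)<\tau(\Delta)$) to pick an $\pazocal{R}$-cell $\pi_t\subseteq\Delta_t$ admitting disjoint contiguity submaps to the sections of $\partial\Delta_t$ of total degree exceeding $1-\gamma$; in either case enlarge the family to one maximizing the total contiguity to $\pi_t$, and let $D_t$ be the uncovered part of $\partial\pi_t$, so $|D_t|\geq\gamma|\partial\pi_0|$ when $t=0$ and $|D_t|<\gamma|\partial\pi_t|$ when $t\geq1$. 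Let $\Delta_{t+1}$ be a component of $\Delta_t$ with $\pi_t$ and the chosen submaps removed that abuts $D_t$; its contour consists of sub-arcs of the $q_\ell$, zero-length side arcs, and an arc of $\partial\pi_t$ inside $D_t$, and $\tau(\Delta_{t+1})<\tau(\Delta_t)$. One checks $\Delta_{t+1}$ contains an $\pazocal{R}$-cell: otherwise $D_t$ would be joined by $0$-bonds to some $q_\ell$ (against maximality at $\pi_t$), to $\partial\pi_t$ itself (against (T2)), or to $\partial\pi_{t-1}$ along all of $D_t$ (giving a $0$-contiguity submap between the distinct cells $\pi_t,\pi_{t-1}$ of degree $\geq\gamma$, impossible by the small-cancellation condition). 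Since types strictly decrease, the descent reaches some $\Delta_T$ with exactly one $\pazocal{R}$-cell $\pi_T$; applying the base case inside $\Delta_T$ gives disjoint contiguity submaps of $\pi_T$ to the sections of $\partial\Delta_T$ of total degree $1$. Those sections are sub-arcs of the $q_\ell$ — to which $\pi_T$ is contiguous with total at most $(1-\gamma)|\partial\pi_T|$ by the contradiction hypothesis, since such submaps are also disjoint contiguity submaps of $\pi_T$ to the $q_\ell$ in $\Delta$ — zero-length side arcs, and at most eight arcs on the contours of earlier cells $\pi_j$; hence $\pi_T$ is contiguous with total degree $\geq\gamma$ to these $\partial\pi_j$-arcs, so some $j<T$ gives a contiguity submap of $\pi_T$ to $\pi_j$ of degree $\geq\gamma/8$. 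But $\pi_j\notin\Delta_{j+1}\supseteq\Delta_T$, so $\pi_j\neq\pi_T$, and this submap has rank below $\min(r(\pi_T),r(\pi_j))$ by (T1), so the small-cancellation condition forces its degree below $\eps<\gamma/8$ — a contradiction.

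The step I expect to be the genuine obstacle is keeping the contour of each $\Delta_t$ cut into at most eight sections: cutting off a component along $D_t$ can split old sections and adds the $D_t$-arc together with side arcs, so the bookkeeping must be organized — by choosing $\pi_t$ and the maximizing family so that the cut-off region abuts as few boundary sections as possible, and by absorbing zero-length side arcs into neighbouring sections — so that the count never exceeds eight; making the exact constant work is the technical core of the argument in [8]. Two further points require care: extracting the precise small-cancellation inequality used above from the axioms of [8], and verifying that contiguity submaps produced inside a subdiagram $\Delta_t$ remain genuine disjoint contiguity submaps of the relevant cells in $\Delta$. An alternative that sidesteps some of this is to first prove, by a one-step argument, a crude version (some cell has a contiguity submap of degree $\geq\bar\eps$ to the boundary for a fixed constant $\bar\eps$) and then bootstrap to the sharp constant $1-\gamma$ by induction on the number of cells, peeling off the crudely contiguous cell at each stage.
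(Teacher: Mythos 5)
First, note that the paper does not prove this statement at all: it is quoted verbatim as Lemma 5.7 of Ivanov [8] and used as a black box, so your proposal has to be judged against what a complete proof would require rather than against anything in this text. Your overall strategy (induction on type, a descent that repeatedly cuts off a cell together with a maximal disjoint family of contiguity submaps, and a final contradiction with the graded small-cancellation bound $\eps$ on the contiguity degree between distinct $\pazocal{R}$-cells of a tame diagram) is the right general shape, and your base case is essentially fine. But there is a genuine gap exactly where you flag it, and it is not a technicality one can wave at: the inductive hypothesis applies only to diagrams whose contour is decomposed into at most eight sections, and your descent does not preserve this. When you remove $\pi_t$ and its submaps, the component $\Delta_{t+1}$ abutting $D_t$ need not have the boundary structure you assert. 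Its contour can contain several arcs of $D_t$ (one per component of the uncovered part), several sub-arcs of several $q_\ell$, sub-arcs of the contours of \emph{earlier} cells $\pi_0,\dots,\pi_{t-1}$ inherited from $\partial\Delta_t$, and the side arcs of the removed contiguity submaps — which, for submaps of positive rank, are \emph{not} of zero length (cf.\ the bound in Lemma \ref{Lemma 3.1}). So the section count can grow without bound along the descent, the inductive hypothesis becomes inapplicable, and your final step collapses: the claim that $\partial\Delta_T$ meets the contours of at most eight earlier cells is unjustified, and if the $\gamma$-excess of $\partial\pi_T$ is spread over many earlier cells (or over long side arcs, which your accounting omits entirely), every individual contiguity degree can stay below $\eps$ and no contradiction with the small-cancellation bound arises. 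Controlling this bookkeeping is precisely the content of the multi-page argument in [8] (and of the analogous $\gamma$-cell lemma in [14]), which is organized quite differently — as part of a simultaneous induction on rank with auxiliary estimates — rather than as the direct descent you describe.

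A second, smaller flaw: your argument that $\Delta_{t+1}$ contains an $\pazocal{R}$-cell is wrong as stated for $t\geq1$. There $|D_t|<\gamma|\partial\pi_t|$, so a $0$-contiguity submap of $\pi_t$ to $\pi_{t-1}$ along $D_t$ has degree $<\gamma$ with respect to $\pi_t$, which is perfectly compatible with the $\eps$-bound; no contradiction of the form ``degree $\geq\gamma$'' is available. This particular step can be repaired — since your family at $\pi_t$ was chosen maximal with respect to \emph{all} sections of $\partial\Delta_t$ (including the arc of $\partial\pi_{t-1}$), any $0$-bond from $D_t$ to any section would violate maximality, and a bond from $D_t$ back to $\partial\pi_t$ violates (T2) — but as written the case analysis does not establish what you need. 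In sum, the proposal is a plausible outline of the known strategy, but the step you yourself identify as ``the technical core'' is the whole difficulty of the lemma, and without it the argument does not go through.
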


The cell $\pi$ guaranteed by Lemma \ref{Lemma 5.7} is called a \textit{$\gamma$-cell}.

\begin{lemma} \label{Lemma 6.1}

\textit{(Lemma 6.1 of [8])} Let $\Delta$ be a tame disk diagram over $\textbf{B}(i)$ with contour $qt$. If $q$ is a smooth section, then $(1-\b)|q|\leq|t|$.

\end{lemma}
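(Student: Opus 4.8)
The plan is to argue by induction on the type $\tau(\Delta)$, using the lexicographic well-ordering of types. If $\Delta$ has no $\pazocal{R}$-cells then $\lab(\partial\Delta)$ is trivial in the free group, so $|q| = |t|$ (after cancelling $0$-edges), and the bound holds trivially since $\beta > 0$. So assume $\Delta$ has at least one $\pazocal{R}$-cell. The contour of $\Delta$ decomposes into the two sections $q$ and $t$ (so $m \le 2 \le 8$), hence Lemma~\ref{Lemma 5.7} applies and yields a $\gamma$-cell $\pi$ together with disjoint contiguity submaps $\{\Gamma_j\}_{j=1}^k$ of $\pi$ to $q$ and to $t$ with $\sum_j |{\Gamma_j}_{^\wedge}\pi| > (1-\gamma)|\partial\pi|$.

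The heart of the argument is to control the contiguity of $\pi$ to the smooth section $q$. Because $q$ is smooth, tameness condition (T1) forbids a high-rank contiguity submap from a cell $\Pi_1$ to $q$: any contiguity submap $\Gamma$ between $\partial\pi$ and $q$ has $r(\Gamma) < \min(r(\pi), r(q))$. This, together with the standard estimates from [8] on degrees of contiguity to a periodic/smooth section of a tame diagram (the analogues of the "$\gamma$-cell cannot hug a smooth section too tightly" lemmas — these give an upper bound of the form $(\pi,\Gamma,q)$ is bounded away from $1$, or more precisely the side/contiguity arcs on the $q$-side are short relative to $|\partial\pi|$), forces most of the perimeter of $\pi$ to be contiguous to $t$ rather than to $q$. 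Quantitatively, one shows $\sum_{\Gamma_j \to t} |{\Gamma_j}_{^\wedge}\pi| > (1-\gamma')|\partial\pi|$ for a suitable $\gamma'$ absorbed into $\beta$ via the parameter hierarchy $\b \gg \gamma \gg \delta \gg \eps$.

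From here I would "cut off" the cell $\pi$ and its contiguity submaps to $t$: removing $\pi$ and the bands connecting it to $t$ produces one or more subdiagrams $\Delta'$ of strictly smaller type, each still tame, each with a smooth section that is a subpath of the original $q$ (using the observation recorded just before Lemma~\ref{tame} that a subdiagram cut along part of a cell boundary inherits smoothness and rank), and with the remainder of $t$ as the complementary section. Applying the inductive hypothesis to each piece and reassembling, the portion of $|t|$ recovered from the genuine $t$-part of $\partial\Delta$ plus the contiguity of $\pi$ to $t$ (which contributes at least $(1-\gamma)|\partial\pi| - (\text{short }q\text{-side terms})$ to $|t|$) dominates $(1-\b)|q|$ after the parameter bookkeeping. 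The main obstacle is precisely this bookkeeping step: one must verify that the loss incurred at each inductive stage — the short contiguity/side arcs on the $q$-side, the possible splitting of $\Delta$ into several subdiagrams, and the $\gamma$ slack from Lemma~\ref{Lemma 5.7} — telescopes correctly and stays within the single factor $(1-\b)$ rather than degrading multiplicatively over the induction. This is handled by choosing $\b$ large relative to $\gamma$ (per $\b \gg \gamma$) and arranging the estimate as an additive deficit $|q| - |t| \le \b|q|$ that is re-established, not compounded, at each step; it is the same mechanism by which such linear isoperimetric-type bounds are proved in [14] and [8], and I would follow that template.
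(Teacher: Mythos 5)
The paper does not actually prove this statement: it is quoted verbatim as Lemma 6.1 of [8], with the reference standing in place of a proof, so there is no in-paper argument to compare against. Measured against the proof in [8] (and the parallel treatment in [14]), your outline follows the right general strategy — induct on the type $\tau(\Delta)$, produce a $\gamma$-cell $\pi$ via Lemma~\ref{Lemma 5.7}, show that $\pi$ cannot be highly contiguous to the smooth section $q$, excise $\pi$ together with its contiguity submap to $t$, and re-establish an additive deficit $|q|-|t|\leq\b|q|$ rather than a multiplicative one.

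However, as written there is a genuine gap at exactly the step that carries all the content. You invoke, as a ``standard estimate,'' an upper bound on the degree of contiguity $(\pi,\Gamma,q)$ of a cell to a smooth section (and the accompanying bound on the side arcs, the analogue of Lemma~\ref{Lemma 3.1}). But in [8] and [14] these contiguity estimates are not prior facts one can cite: they are proved \emph{simultaneously} with Lemma 6.1 (and with Lemma 6.2, Lemma 5.7, etc.) in one large joint induction on the rank, precisely because each of them is needed in the inductive step of the others. So ``bounding the hug of the smooth section'' cannot be outsourced; either you incorporate those statements into your induction hypothesis or your argument is circular. Relatedly, the bookkeeping you flag as ``the main obstacle'' — that the $q$-side losses (side arcs of length $<\zeta n|A_j|$, the $\gamma$-slack, possible splitting into several subdiagrams) sum to at most $\b|q|$ and do not compound across inductive steps — is the quantitative heart of the proof and is left undone; asserting that the parameter hierarchy $\b>>\gamma>>\dots$ makes it work is a plan, not a proof. (Two small further points: in the base case with no $\pazocal{R}$-cells you only get $|q|\leq|t|$, not $|q|=|t|$, by tracing $0$-bonds from $q$ to $t$, which suffices; and under the lowest parameter principle one chooses $\gamma$ small after $\b$, not ``$\b$ large relative to $\gamma$.'')
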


\begin{lemma} \label{Lemma 6.2}

\textit{(Lemma 6.2 of [8])} If $\Delta$ is a tame disk diagram over $\textbf{B}(i)$ such that $|\partial\Delta|\leq(1-\b)n|A_k|$ for some $k\leq i$, then $r(\Delta)<k$.

\end{lemma}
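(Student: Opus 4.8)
\textbf{Proof plan for Lemma \ref{Lemma 6.2}.}
The plan is to argue by contradiction: suppose $\Delta$ is a tame disk diagram with $|\partial\Delta|\leq(1-\b)n|A_k|$ but $r(\Delta)\geq k$. Among all such counterexamples, I would choose one minimizing the type $\tau(\Delta)$, and in particular I may assume $\Delta$ is reduced (in the graded sense) and that it actually contains a cell of rank $\geq k$. The key object to produce is a $\gamma$-cell: applying Lemma \ref{Lemma 5.7} (with the contour of $\Delta$ decomposed into at most eight sections — here just the single section $\partial\Delta$, or however the standard factorization dictates), there is an $\pazocal{R}$-cell $\pi$ together with disjoint contiguity submaps $\{\Gamma_j\}_{j=1}^k$ from $\pi$ to the boundary sections with $\sum_j|{\Gamma_j}_{^\wedge}\pi|>(1-\gamma)|\partial\pi|$.

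Next I would feed this $\gamma$-cell into the length estimate. For each contiguity submap $\Gamma_j$ between $\pi$ and a boundary section $q_j$, the tameness of $\Delta$ (T1) forces $r(\Gamma_j)<\min(r(\pi),r(q_j))$, and in particular $r(\Gamma_j)<r(\pi)$, so $\Gamma_j$ together with $\pi$ and the relevant boundary arc forms a diagram of strictly smaller rank than $r(\pi)$; this lets me apply the inductive hypothesis / Lemma \ref{Lemma 6.1}-type estimates to $\Gamma_j$. The upshot should be that the contiguity arc $\Gamma_{j}{}_{^\wedge}\pi$ along $\partial\pi$ is `copied', up to a bounded multiplicative distortion, onto an arc of the boundary $\partial\Delta$, and that these boundary arcs are (essentially) disjoint because the $\Gamma_j$ are disjoint. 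Summing over $j$ and using $\sum_j|{\Gamma_j}_{^\wedge}\pi|>(1-\gamma)|\partial\pi|$ yields a lower bound of the form $|\partial\Delta|\geq(1-\text{(something made of }\b,\gamma,\delta\text{)})|\partial\pi|$. Since $\pi$ has rank $\geq k$ (or at least, since $r(\pi)\geq $ the minimal rank realized, one reduces to the case $r(\pi)\geq k$ by peeling off lower-rank cells), its perimeter satisfies $|\partial\pi|\geq(1-\text{small})\,n|A_{r(\pi)}|\geq(1-\text{small})\,n|A_k|$, using that $|A_i|$ is nondecreasing in $i$ (Lemma \ref{Theorem B}) and that a rank-$j$ cell has contour label $A_j^{\pm n}$ up to a cyclically reduced form. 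Combining the two bounds contradicts $|\partial\Delta|\leq(1-\b)n|A_k|$ once the small error terms are dominated by $\b$ via the lowest-parameter convention $\b>>\gamma>>\delta>>\eps$.

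The main obstacle I expect is the careful bookkeeping in the middle step: turning ``$\Gamma_j$ is a low-rank contiguity submap with large degree of contiguity to $\pi$'' into ``a correspondingly large arc of $\partial\Delta$'', while controlling the side arcs and the overlaps between different $\Gamma_j$'s, and making sure the cumulative loss stays below the threshold $\b$. This is exactly where one needs the quantitative geometry of contiguity submaps from [8] (the fact that a contiguity submap of rank $<r(\pi)$ between $\pi$ and a smooth/boundary section is ``thin'' in the sense of Lemma \ref{Lemma 6.1}), and where the ordering $\b>>\gamma>>\delta$ is used in an essential way. A secondary subtlety is the reduction ensuring the $\gamma$-cell can be taken of rank exactly $\geq k$: if the $\gamma$-cell produced by Lemma \ref{Lemma 5.7} has rank $<k$, one should instead pass to a subdiagram, or re-run the argument inside a submap bounded away from the boundary, until a cell of rank $\geq k$ is exposed — alternatively, induct on $r(\Delta)$ and use minimality of type to rule out this possibility directly.
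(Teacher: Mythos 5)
First, note that the paper does not actually prove this statement: it is one of the results imported verbatim from Ivanov's paper (``The following statements are proved in [8] and listed here for reference''), so there is no in-paper argument to compare yours against, and what you have written is an attempted reconstruction from the other quoted lemmas (Lemmas \ref{Lemma 5.7}, \ref{Lemma 6.1}, \ref{Lemma 3.1}). As such a reconstruction it has a genuine quantitative gap that the parameter ordering cannot repair. From a $\gamma$-cell $\pi$ with total contiguity $>(1-\gamma)|\partial\pi|$ to the boundary, the quoted tools give at best $|\partial\Delta|\geq(1-\b)(1-\gamma)|\partial\pi|-2\zeta|\partial\pi|=((1-\b)(1-\gamma)-2\zeta)\,n|A_{r(\pi)}|$ --- this is exactly the computation carried out in Case 1 of the proof of Lemma \ref{a-cells are quadratic B(m,n)}. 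Since $\b$ is the \emph{largest} of the auxiliary parameters ($\b>>\gamma>>\delta>>\eps>>\zeta$), the factor $(1-\b)(1-\gamma)-2\zeta$ is strictly smaller than $1-\b$; so even in the most favorable case $r(\pi)\geq k$ with $|A_{r(\pi)}|=|A_k|$ (e.g.\ $r(\pi)=k$), you only obtain $|\partial\Delta|\gtrsim(1-\b-\gamma)\,n|A_k|$, which is perfectly compatible with the hypothesis $|\partial\Delta|\leq(1-\b)n|A_k|$ --- no contradiction. Your closing claim that ``the small error terms are dominated by $\b$'' is backwards: the dominant loss is the factor $(1-\b)$ coming from Lemma \ref{Lemma 6.1} itself, which is not small relative to $\b$. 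The actual proof in [8] therefore cannot be the composition ``Lemma \ref{Lemma 5.7} followed by Lemma \ref{Lemma 6.1}''; it relies on sharper estimates for the contiguity of a cell to the boundary (where the multiplicative loss is governed by $\gamma,\eps,\zeta$ rather than $\b$, using the machinery of periodic words and compatibility), which is precisely the machinery your sketch defers to but does not supply.

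The secondary issue you flag yourself --- that the $\gamma$-cell need not have rank $\geq k$ --- is real but fixable: one can excise $\pi$ together with its contiguity submaps (as in Case 1 of Lemma \ref{a-cells are quadratic B(m,n)}), note that the perimeter does not increase while every cell of rank $\geq k$ survives (the excised lobe has rank $<k$ in that case, and subdiagrams of tame diagrams are tame by Lemma \ref{tame}), and induct on the number of cells. As written, however, ``pass to a subdiagram until a cell of rank $\geq k$ is exposed'' is not an argument. Two minor points: $|\partial\pi|$ equals $n|A_{r(\pi)}|$ exactly, so no error term is needed there; and the monotonicity $|A_k|\leq|A_j|$ for $k\leq j$ is true but is not what Lemma \ref{Theorem B} states (it gives only $|A_i|\leq|A_{i-1}|+1$), so it deserves its own one-line justification.
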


\begin{lemma} \label{Lemma 3.1}

\textit{(Lemma 3.1 of [8])} Let $\Delta$ be a strictly reduced disk diagram over $\textbf{B}(i)$ and $\Gamma$ be a contiguity submap of a cell $\Pi$ to a section $q$ of $\partial\Delta$. If $\Gamma$ is a tame diagram with $\partial\Gamma=d_1p_1d_2q_1$ where $p_1=\Gamma_{ ^\wedge}\partial\Pi$ and $q_1=\Gamma_{ ^\wedge}q$. Then for $j=r(\Pi)$, $$\max(|d_1|,|d_2|)<2\eps^{-1}|A_j|\leq\zeta n|A_j|$$

\end{lemma}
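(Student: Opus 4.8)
The plan is to prove the first inequality by induction on the rank $r(\Gamma)$ of the contiguity submap, and then to read off the second inequality from the assignment of the auxiliary parameters. I would begin with two preliminary observations. First, since $p_1$ is a subpath of $\partial\Pi$ and $\Gamma$ does not contain $\Pi$, the arc $p_1$ is an $A_j$-periodic smooth section of $\partial\Gamma$ with $r(p_1)=j$ (the remark recorded just before Lemma \ref{tame}); consequently, by condition (T1) of Lemma \ref{tame} applied to $\Gamma$ inside $\Delta$, one gets $r(\Gamma)<r(\Pi)=j$, so every $\pazocal{R}$-cell of $\Gamma$ has rank $<j$. Second, the lengths $|A_i|$ are non-decreasing in $i$: if $|A_i|<|A_{i-1}|$, then $A_i$ would be an element of infinite order in $\textbf{B}(i-2)$ that is $\prec$-smaller than $A_{i-1}$, contradicting the choice of $A_{i-1}$ in Lemma \ref{Theorem B}. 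Hence $|A_k|\le|A_j|$ whenever $k<j$.

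The base case $r(\Gamma)=0$ is immediate, since a $0$-contiguity submap has side arcs of length $0$. For the inductive step I would set $k=r(\Gamma)\ge1$ and use that $\Gamma$, being a $k$-contiguity submap, is determined by a $k$-bond $\Gamma_1$ and a bond $\Gamma_2$ of rank $\le k$, in such a way that in the decomposition $\partial\Gamma=d_1p_1d_2q_1$ the arc $d_1$ is a side arc of $\Gamma_1$ and $d_2$ a side arc of $\Gamma_2$ (up to the symmetric choice permitted in the definition). Each of $\Gamma_1,\Gamma_2$ is in turn built from a principal cell $\pi$ with $r(\pi)\le k<j$ together with two contiguity submaps of $\pi$ of rank $<k$, one to $\partial\Pi$ and one to $q$, each of degree of contiguity $\ge\eps$. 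Unwinding this, $d_1$ has the form $d_1=x'\,u\,x''$, where $x'$ and $x''$ are side arcs of those two lower-rank contiguity submaps of $\pi$ and $u$ is the arc of $\partial\pi$ joining their two contiguity arcs with $\pi$. Applying the inductive hypothesis to $x'$ and $x''$ (the relevant periods are those of the lower-rank cell $\pi$, and $|A_{r(\pi)}|\le|A_j|$) bounds $|x'|$ and $|x''|$; what remains is to bound $|u|$ and to check that the three contributions sum to strictly less than $2\eps^{-1}|A_j|$.

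That last point is the main obstacle. The danger is that $u$, although an arc of the boundary of the rank-$k$ cell $\pi$, could a priori be long — of order $|\partial\pi|=n|A_{r(\pi)}|$ rather than $O(|A_j|)$ — in which case $\pi$ would carry two contiguity submaps of degree $\ge\eps$, one to the $A_j$-periodic smooth section $p_1$ and one to $q$, whose contiguity arcs with $\pi$ are separated on one side of $\partial\pi$ by the long arc $u$. I would rule this out in a strictly reduced diagram by exploiting that $\lab(\partial\pi)=A_{r(\pi)}^{\pm n}$ and $\lab(p_1)$ are both periodic, together with conditions (T1) and (T2) of Lemma \ref{tame} — the former forbidding a contiguity of rank $\ge\min(r(\pi),j)$ to a smooth section, the latter forbidding a $0$-bond of $\partial\pi$ to itself — after passing to a $0$-refinement if necessary; this is precisely where the lowest-parameter principle and the periodicity of the defining relators of $\textbf{B}(\infty)$ are essential, and it constitutes the bulk of the work (it is carried out in [8]). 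The conclusion is that $u$ has length comparable to $|A_j|$, and collecting the pieces gives $\max(|d_1|,|d_2|)<2\eps^{-1}|A_j|$. Finally, the second inequality is arithmetic: since $\zeta>>\iota$ and $n=\iota^{-1}$, the assignment may be taken so that $\iota\le\tfrac12\eps\zeta$, whence $\zeta n=\zeta\iota^{-1}\ge2\eps^{-1}$ — with the explicit constants of [8] this reads $2\eps^{-1}=2^{15}\le2^{-33}\cdot2^{48}\le\zeta n$ — so that $2\eps^{-1}|A_j|\le\zeta n|A_j|$, as claimed.
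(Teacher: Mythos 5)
A point of comparison first: the paper offers no proof of this lemma at all --- it is one of the statements from [8] that are ``restated here for convenience, with reference given in place of proofs'' --- so the only in-paper argument is the citation itself. Judged as a proof, your proposal has a genuine gap, and it sits exactly where you place it yourself: the estimate on the arc $u$ of the principal cell $\pi$, i.e.\ the fact that $|\partial\pi|=n|A_{r(\pi)}|$ cannot contribute a boundary arc of that order to a side arc of $\Gamma$, is the entire analytic content of the lemma, and you do not establish it --- you assert that it ``is carried out in [8].'' Since the statement being proved \emph{is} Lemma 3.1 of [8], deferring precisely that step to [8] reduces your argument to the citation the paper already gives. The surrounding scaffolding is correct bookkeeping (a side arc of the contiguity submap is a side arc of one of its defining bonds, hence of the form $x'ux''$ with $x',x''$ side arcs of lower-rank contiguity submaps of $\pi$ and $u$ an arc of $\partial\pi$; induction on the rank of the bond; monotonicity of $i\mapsto|A_i|$ via the choice of $A_i$ in Lemma \ref{Theorem B}; the arithmetic $2\eps^{-1}=2^{15}\le\zeta n$), but none of it touches the hard point, which is exactly the periodicity/small-cancellation argument you only gesture at.

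One smaller flaw: your opening appeal to (T1) of Lemma \ref{tame} to conclude $r(\Gamma)<j$ is not licit as stated, because (T1) governs contiguity of a cell to a \emph{smooth} section of $\partial\Delta$ or to another cell, and the section $q$ in this lemma is not assumed smooth. What you actually need --- that the principal cells arising in the bond hierarchy have rank $<j$, so that their periods are no longer than $|A_j|$ --- comes instead from the definition of a $k$-bond attached to the cell $\Pi$ (which requires $r(\pi)=k<r(\Pi)$), not from (T1). With that repaired, your skeleton is consistent with the strategy of [8], but as submitted the proposal does not prove the lemma; it reproves the easy reductions and cites [8] for the statement's core.
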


%
%

\smallskip

\smallskip

\subsection{Mass of a diagram} \

We now introduce a weighting on diagrams over the presentation $\textbf{B}(\infty)=\gen{\pazocal{A}\mid\pazocal{R}}$ of $B(\pazocal{A},n)$, generalizing the concept of the area of such a diagram.

For $\Pi$ an $\pazocal{R}$-cell in a reduced graded disk diagram $\Delta$ over the presentation $\textbf{B}(\infty)$, let $A^n$ be the relator associated to $\lab(\partial\Pi)$. Then, define the \textit{mass} of $\Pi$ as $\rho(\Pi)=|A|^2$. This definition is extended naturally to the mass of the entire diagram, taking $\rho(\Delta)$ to be the sum of the masses of its $\pazocal{R}$-cells.

\begin{lemma} \label{a-cells are quadratic B(m,n)}

If $\Delta$ is a strictly reduced graded disk diagram over the presentation $\textbf{B}(\infty)$ of $B(m,n)$, then $\rho(\Delta)\leq|\partial\Delta|^2$.

\end{lemma}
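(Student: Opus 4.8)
The plan is to induct on the type $\tau(\Delta)$ of the strictly reduced diagram $\Delta$, ordered lexicographically as in Section 2.5. If $\Delta$ has no $\pazocal{R}$-cells there is nothing to prove, since $\rho(\Delta)=0$. For the inductive step, I would extract a ``large'' cell using the machinery of the preceding subsections and peel it off, controlling how much mass it contributes against how much perimeter it ``uses up.'' By Lemma \ref{tame}, $\Delta$ is tame, so Lemma \ref{Lemma 5.7} (applied with the contour of $\Delta$ as a single section, $m=1$) produces a $\gamma$-cell $\pi$ together with disjoint contiguity submaps $\Gamma_1,\dots,\Gamma_k$ of $\pi$ to $\partial\Delta$ with $\sum_j|{\Gamma_j}_{^\wedge}\pi|>(1-\gamma)|\partial\pi|$. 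Write $A$ for the period with $\lab(\partial\pi)$ a cyclic permutation of $A^{\pm n}$, so $|\partial\pi|=n|A|$ and $\rho(\pi)=|A|^2$.

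The key geometric estimate is that cutting $\pi$ and the side arcs of the $\Gamma_j$ out of $\Delta$ decomposes $\Delta$ into $\pi$, the contiguity submaps $\Gamma_1,\dots,\Gamma_k$, and ``gap'' subdiagrams $\Delta_1,\dots,\Delta_\ell$ bounded by side arcs of consecutive $\Gamma_j$'s and arcs of $\partial\Delta$; each $\Gamma_j$ and each $\Delta_i$ is again a strictly reduced (indeed tame) diagram of strictly smaller type than $\Delta$, so the inductive hypothesis applies to each of them. I would then bound the perimeters: each side arc of a contiguity submap has length $O(n|A|)$ by Lemma \ref{Lemma 3.1} (the $d_i$ bound, $\max(|d_1|,|d_2|)<\zeta n|A_j|$), and the contiguity arcs ${\Gamma_j}_{^\wedge}\partial\Delta$ partition a portion of $\partial\Delta$, so the total length of $\partial\Delta$ consumed by the contiguity arcs to $\pi$ is at least a definite fraction — roughly $(1-\gamma)n|A|$ minus lower-order corrections coming from the side arcs — meaning $|\partial\Delta|\gtrsim n|A|$, hence $|\partial\Delta|^2\gtrsim n^2|A|^2$, which already dwarfs $\rho(\pi)=|A|^2$ since $n$ is enormous. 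Summing $\rho(\Gamma_j)\le|\partial\Gamma_j|^2$ and $\rho(\Delta_i)\le|\partial\Delta_i|^2$ from the inductive hypothesis and adding $\rho(\pi)$, the task reduces to an inequality among the perimeters: the perimeters of the pieces, together with the $|A|^2$ slack, must sum (after squaring) to at most $|\partial\Delta|^2$.

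I expect the main obstacle to be precisely this bookkeeping step — showing that $\rho(\pi)+\sum_j|\partial\Gamma_j|^2+\sum_i|\partial\Delta_i|^2\le|\partial\Delta|^2$. The danger is that each $\Gamma_j$ contributes two side arcs of length up to $\zeta n|A|$, so $|\partial\Gamma_j|$ could be comparable to $n|A|$, and if several such pieces occur their squared perimeters might a priori overshoot $|\partial\Delta|^2$. The resolution should come from two facts working together: first, the contiguity arcs ${\Gamma_j}_{^\wedge}\partial\Delta$ are disjoint subpaths of $\partial\Delta$, so their lengths add up to at most $|\partial\Delta|$; second, by Lemma \ref{Lemma 6.1} applied inside each $\Gamma_j$ (its contiguity arc to $\pi$ being a smooth section of rank $r(\pi)$ once $\pi$ is removed, per the remark before Lemma \ref{tame}) the arc ${\Gamma_j}_{^\wedge}\partial\Delta$ is nearly as long as ${\Gamma_j}_{^\wedge}\pi$, and the side arcs are lower order by $\zeta\ll1$. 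Thus $|\partial\Gamma_j|\le|{\Gamma_j}_{^\wedge}\partial\Delta|+|{\Gamma_j}_{^\wedge}\pi|+O(\zeta n|A|)\le(2+O(\zeta))|{\Gamma_j}_{^\wedge}\partial\Delta|/(1-\b)$ or similar, and the superadditivity defect of squaring is absorbed because $\sum_j|{\Gamma_j}_{^\wedge}\partial\Delta|$ plus the perimeters of the gap diagrams reconstitute $|\partial\Delta|$ with room to spare — the crucial point being that the ``extra'' $\rho(\pi)=|A|^2$ is negligible against the cross-terms $2|{\Gamma_1}_{^\wedge}\partial\Delta|\cdot|{\Gamma_2}_{^\wedge}\partial\Delta|$ appearing when one expands $|\partial\Delta|^2\ge(\sum_j|{\Gamma_j}_{^\wedge}\partial\Delta|+\cdots)^2$, since each such arc has length $\gtrsim n|A|$. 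I would carry out this estimate carefully with the parameter hierarchy $\b\gg\gamma\gg\delta\gg\eps\gg\zeta\gg\iota=1/n$ invoked via the lowest-parameter principle, so that all the $O(\zeta)$, $O(\gamma)$ corrections are swallowed and the final inequality holds with room to spare.
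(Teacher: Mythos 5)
There is a genuine gap, and it sits exactly where you flagged it: the final bookkeeping inequality is not just hard, it is false for the decomposition you propose. The perimeter of a contiguity submap $\Gamma_j$ is not made up of boundary of $\Delta$ alone: it contains the contiguity arc along $\partial\pi$, which is interior to $\Delta$. Lemma \ref{Lemma 6.1} (applied inside $\Gamma_j$, with the arc on $\pi$ as the smooth section) bounds that interior arc only by roughly $(1-\beta)^{-1}$ times the arc on $\partial\Delta$ plus side arcs, so $|\partial\Gamma_j|$ can be about $\bigl(1+(1-\beta)^{-1}\bigr)$ times the length of the arc of $\partial\Delta$ it occupies. In the worst case of a single contiguity submap $\Gamma_1$ whose arc on $\partial\Delta$ covers, say, $0.9\,|\partial\Delta|$, this gives $|\partial\Gamma_1|\approx 1.8\,|\partial\Delta|$, and the inductive bound $\rho(\Gamma_1)\leq|\partial\Gamma_1|^2$ already exceeds $3\,|\partial\Delta|^2$ by itself. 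Your proposed rescue via the cross-terms of $\bigl(\sum_j(\text{arcs})\bigr)^2$ only absorbs the additive term $\rho(\pi)=|A|^2$; it does nothing against this multiplicative blow-up, and when $k=1$ (one dominant contiguity submap) there are no cross-terms at all. So $\rho(\pi)+\sum_j|\partial\Gamma_j|^2+\sum_i|\partial\Delta_i|^2\leq|\partial\Delta|^2$ cannot be established from the quoted lemmas, and the peeling scheme with $m=1$ collapses.

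The missing idea — and the device the paper actually uses — is to partition $\partial\Delta$ into eight sections of nearly equal length \emph{before} invoking Lemma \ref{Lemma 5.7} (with $m=8$, not $m=1$), and to induct on $|\partial\Delta|$ rather than on type. This forces a dichotomy. If some section is missed by every contiguity submap of the $\gamma$-cell $\pi$, one cuts $\Delta$ along the short path formed by the uncovered part of $\partial\pi$ and two side arcs (of length $<(\gamma+2\zeta)|\partial\pi|$); this splits $\Delta$ into two subdiagrams each of perimeter strictly less than $|\partial\Delta|$ — note that $\pi$ and its contiguity submaps are \emph{not} peeled off but remain inside one of the two pieces, which is why the induction must run on perimeter (one piece may have the same type as $\Delta$) — and a direct quadratic computation using $|t_2|\leq\frac{9}{10}|\partial\Delta|$ closes the induction. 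If instead every section is touched, the contiguity submaps to the eight sections chop $\Delta$ into pieces whose perimeters are each at most a fixed fraction (at most $2/7$) of $|\partial\Delta|$, so the squared bounds sum to strictly less than $|\partial\Delta|^2$, with $\rho(\pi)=(\iota|\partial\pi|)^2$ negligible. Without sections of comparable length there is no way to guarantee that every piece of the decomposition is small relative to $|\partial\Delta|$, which is exactly what a quadratic (as opposed to linear) count requires.
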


\begin{proof}

The proof inducts on $|\partial\Delta|$, with the base case $|\partial\Delta|\leq(1-\b)n$. In this case, Lemma \ref{Lemma 6.2} implies that $r(\Delta)=0$, i.e $\Delta$ contains no $\pazocal{R}$-cells. But then $\rho(\Delta)=0$. Hence, we may assume that $|\partial\Delta|>(1-\b)n$ and $\Delta$ is a `minimal counterexample' to the lemma.

Partition $\partial\Delta$ into 8 sections, $q_1\dots q_8$, any two of which differ in length by at most 1. By Lemma \ref{tame}, $\Delta$ is a tame diagram. Applying Lemma \ref{Lemma 5.7}, there exists a $\gamma$-cell $\pi$ in $\Delta$ together with contiguity submaps $\Gamma_j$ of $\pi$ to $q_{\ell(j)}$ for $j=1,\dots,k$.

As $|\partial\Delta|>(1-\b)n$ and $\b<\frac{1}{2}$, $(\frac{1}{8}-\frac{2}{n})|\partial\Delta|<|q_j|<(\frac{1}{8}+\frac{2}{n})|\partial\Delta|$ for all $1\leq j\leq 8$.

We now proceed in two cases.

\ \textbf{1.} Suppose there exists $1\leq m\leq 8$ such that $\ell(j)\neq m$ for all $1\leq j\leq k$. Without loss of generality, say $m=1$, i.e no $\Gamma_j$ is a contiguity submap between $\pi$ and $q_1$.

For any $1\leq j\leq k$, write $\partial(\pi,\Gamma_j,q_{\ell(j)})=d_1^jp_1^jd_2^jq_1^j$. By Lemma \ref{Lemma 3.1}, $\max(|d_1^j|,|d_2^j|)<\zeta n|A_r|=\zeta|\partial\pi|$ for $r=r(\pi)$.

Now, let $\Gamma$ be the smallest subdiagram of $\Delta$ containing each $\Gamma_j$. Then, there exists a decomposition of the contour $\partial\Gamma=s_1t_1s_2t_2$ where $t_1$ is a subpath of $\partial\pi$, $t_2$ is a subpath of $\partial\Delta$, and each $s_\ell$ is a side arc of some $\Gamma_j$. Note that each $p_1^j$ is a subpath of $t_1$, so that $|t_1|\geq\sum|p_1^j|>(1-\gamma)|\partial\pi|$.

As $\Gamma$ is a tame disk diagram and $t_1$ is a smooth section of $\partial\Gamma$, Lemma \ref{Lemma 6.1} implies that 
$$(1-\b)|t_1|\leq|t_2|+|s_1|+|s_2|\leq|t_2|+2\zeta|\partial\pi|$$
It follows that $|t_2|\geq((1-\b)(1-\gamma)-2\zeta)|\partial\pi|$.


\begin{figure}[H]
\centering
\includegraphics[scale=1.25]{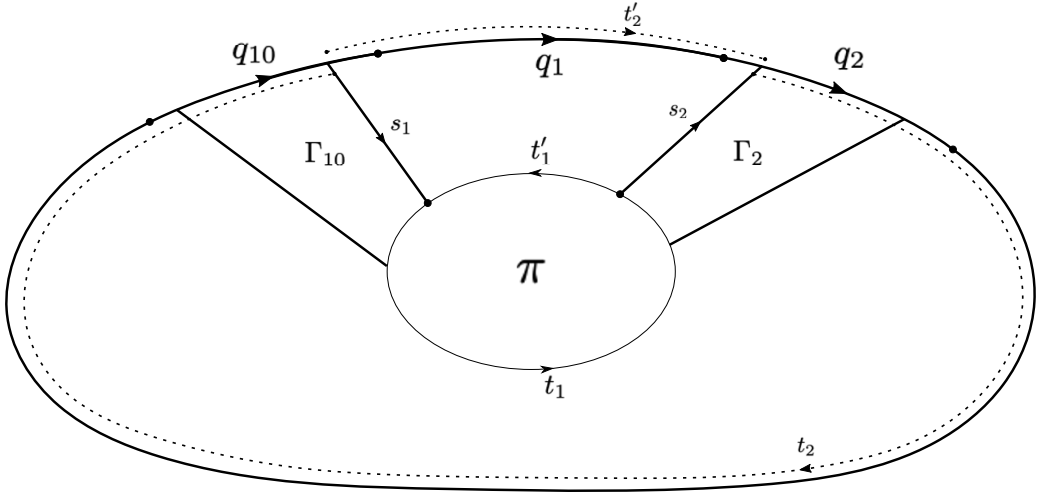}
\caption{$\Gamma_1$ is absent, $\Gamma_2$ and $\Gamma_{10}$ are present}
\end{figure}

Let $t_1'$ be the complement of $t_1$ in $\partial\pi$ so that $\partial\pi=t_1t_1'$. Further, let $t_2'$ be the complement of $t_2$ in $\partial\Delta$ so that $\partial\Delta=t_2t_2'$. Letting $u=s_2^{-1}t_1's_1^{-1}$, then $|t_1'|<\gamma|\partial\pi|$ and $|u|<(\gamma+2\zeta)|\partial\pi|$. 

Cutting $\Delta$ along $u$ yields two tame subdiagrams $\Delta_1$ and $\Delta_2$ with contours $u^{-1}t_2$ and $ut_2'$, respectively.

Then, by the parameter assignments, we have 
$$|u|<(\gamma+2\zeta)|\partial\pi|<2\gamma((1-\b)(1-\gamma)-2\zeta)^{-1}|t_2|<3\gamma|t_2|<3\gamma|\partial\Delta|$$ and $|t_2|<7(\frac{1}{8}+\frac{2}{n})|\partial\Delta|<\frac{9}{10}|\partial\Delta|$.

Hence, by the parameter assignment for $\gamma$,
$$|\partial\Delta_1|=|t_2|+|u|<(1+3\gamma)|t_2|<\frac{9}{10}(1+3\gamma)|\partial\Delta|<|\partial\Delta|$$
$$|\partial\Delta_2|=|u|+|t_2'|=|u|+|\partial\Delta|-|t_2|<|\partial\Delta|-(1-3\gamma)|t_2|<|\partial\Delta|$$

Applying the inductive hypothesis on both diagrams then yields 
$$\rho(\Delta_1)<(1+3\gamma)^2|t_2|^2$$
$$\rho(\Delta_2)<(|\partial\Delta|-(1-3\gamma)|t_2|)^2$$

As $\gamma$ is sufficiently small, note that $\frac{20}{9}(1-3\gamma)\geq(1+3\gamma)^2+(1-3\gamma)^2$. So,
$$|t_2|((1+3\gamma)^2+(1-3\gamma)^2)\leq\frac{20}{9}|t_2|(1-3\gamma)\leq2|\partial\Delta|(1-3\gamma)$$

Hence, $|t_2|^2(1+3\gamma)^2+|t_2|^2(1-3\gamma)^2-2|\partial\Delta||t_2|(1-3\gamma)\leq0$, and so
$$(|\partial\Delta|-(1-3\gamma)|t_2|)^2+(1+3\gamma)^2|t_2|^2\leq|\partial\Delta|^2$$

This final inequality yields $$\rho(\Delta)=\rho(\Delta_1)+\rho(\Delta_2)<|\partial\Delta|^2$$

\ \textbf{2.} Suppose that for every $1\leq m\leq 8$, there is a $j$ such that $\Gamma_j$ is a contiguity submap between $\pi$ and $q_m$.

For any $m\in\{1,\dots,8\}$, let $\Lambda_1^m,\dots,\Lambda_k^m$ be the collection of $\Gamma_j$ that are contiguity submaps between $\pi$ and $q_m$. Then, let $\Lambda_m$ be the smallest subdiagram of $\Delta$ containing each $\Lambda_j^m$.

\begin{figure}[H]
\centering
\includegraphics[scale=1.25]{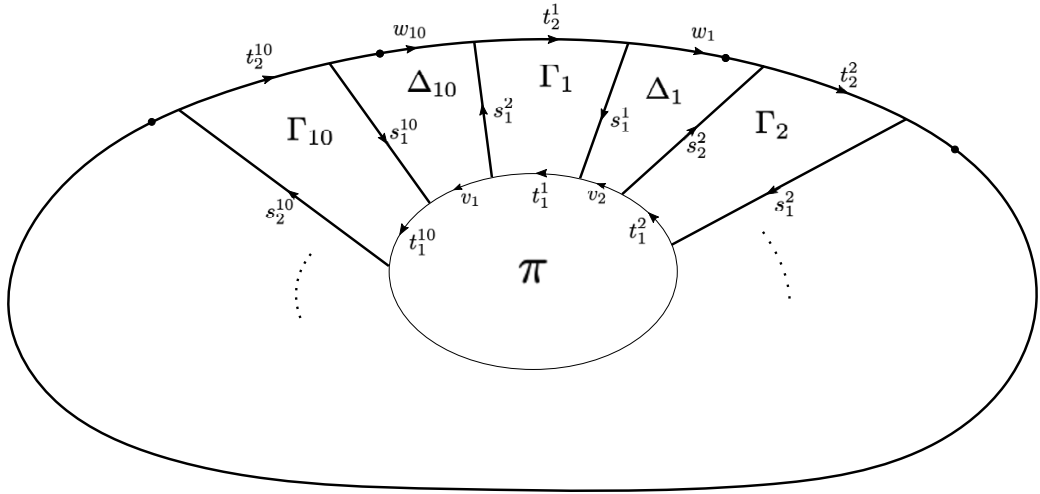}
\caption{All $\Gamma_i$ are are present}
\end{figure}

It follows that we may set $\partial\Lambda_m=s_1^m t_1^m s_2^m t_2^m$ for all $m$, $\partial\pi=t_1^8v_8t_1^7v_7\dots t_1^1v_1$, and $\partial\Delta=t_2^1w_1t_2^2w_2\dots t_2^8w_8$. Further, for $m=1,\dots,8$, let $\Delta_m$ be the subdiagram with contour $$w_m(s_2^{m+1})^{-1}v_{m+1}(s_1^m)^{-1}$$ (with indices $m$ counted mod 8).

As in the previous case, Lemma \ref{Lemma 3.1} implies that $|s_j^m|<\zeta|\partial\pi|$. Also, as $\b<\frac{1}{2}$, Lemma \ref{Lemma 6.2} implies $|\partial\pi|<2|\partial\Delta|$.

As $\Lambda_m$ is a tame subdiagram and $t_1^m$ is a smooth section of $\partial\Lambda_m$, Lemma \ref{Lemma 6.1} implies $$(1-\b)|t_1^m|<|s_1^m|+|s_2^m|+|t_2^m|$$ for all $m$. Further, since we also have $|t_2^m|\leq|q_m|<(\frac{1}{8}+\frac{2}{n})|\partial\Delta|$, it follows from the parameter choices that $$|s_1^m|+|s_2^m|+|t_2^m|<(\frac{1}{8}+\frac{2}{n}+4\zeta)|\partial\Delta|<\frac{1}{7}|\partial\Delta|$$ for all $m$. Hence, for all $m$, 
$$|\partial\Lambda_m|=|t_1^m|+|s_1^m|+|s_2^m|+|t_2^m|<\frac{1}{7}\left(1+\frac{1}{1-\b}\right)|\partial\Delta|<\frac{1}{6}|\partial\Delta|$$
So, applying the inductive hypothesis, $\rho(\Lambda_m)<\frac{1}{36}|\partial\Delta|^2$ for all $m$.

Further, $|w_m|<|q_m|+|q_{m+1}|<(\frac{1}{4}+\frac{4}{n})|\partial\Delta|$ and $|v_m|<\gamma|\partial\pi|<2\gamma|\partial\Delta|$, so that the parameter assignments yield
$$|\partial\Delta_m|<\bigg(\frac{1}{4}+\frac{4}{n}+4\zeta+2\gamma\bigg)|\partial\Delta|<\frac{2}{7}|\partial\Delta|$$ for all $m$. So, applying the inductive hypothesis yields $\rho(\Delta_m)<\frac{4}{49}|\partial\Delta|^2$ for all $m$.

Finally, note that since $\pi$ is an $\pazocal{R}$-cell, $\rho(\pi)=(\iota|\partial\pi|)^2<4\iota^2|\partial\Delta|^2<\frac{1}{9}|\partial\Delta|^2$. Thus,
$$\rho(\Delta)=\sum_{m=1}^{8}\rho(\Gamma_m)+\sum_{m=1}^{8}\rho(\Delta_m)+\rho(\pi)<\frac{2}{9}|\partial\Delta|^2+\frac{32}{49}|\partial\Delta|^2+\frac{1}{9}|\partial\Delta|^2<|\partial\Delta|^2$$

\end{proof} 

\medskip


\section{$S$-Machines}

\subsection{Definition of $S$-machine as a Rewriting System} \

There are many equivalent interpretations of $S$-machines [25]. Following the conventions of [2], [16], [18], [19], [21], [23], [24], and others, we approach them here as a rewriting system for words over group alphabets. 

Let $(Y,Q)$ be a pair of sets with $Q=\sqcup_{i=0}^N Q_i$ and $Y=\sqcup_{i=1}^N Y_i$ for some positive integer $N$. For convenience of notation, set $Y_0=Y_{N+1}=\emptyset$ in this setting.

The elements of $Q\cup Q^{-1}$ are called \textit{state letters} or \textit{$q$-letters}, while those of $Y\cup Y^{-1}$ are \textit{tape letters} or \textit{$a$-letters}. The sets $Q_i$ and $Y_i$ are called the \textit{parts} of $Q$ and $Y$, respectively. Note that the parts of the state letters are typically represented by capital letters, while their elements are represented by lowercase.

\newpage

The \textit{language of admissible words} for $(Y,Q)$ is the collection of reduced words of the form $q_0^{\eps_0}u_1q_1^{\eps_1}\dots u_kq_k^{\eps_k}$ where $\eps_i\in\{\pm1\}$ and each subword $q_{i-1}^{\eps_{i-1}}u_iq_i^{\eps_i}$ either:

\begin{enumerate}[label=({\arabic*})]

\item belongs to $(Q_{j-1}F(Y_j)Q_j)^{\pm1}$;

\item has the form $quq^{-1}$ for $q\in Q_j$ and $u\in F(Y_{j+1})$; or

\item has the form $q^{-1}uq$ for $q\in Q_j$ and $u\in F(Y_j)$

\end{enumerate}

For a reduced word $W\in F(Y\cup Q)$, define its \textit{$a$-length} $|W|_a$ as the number of $a$-letters that comprise it. The $q$-length of $W$ is defined similarly and is denoted $|W|_q$.

Let $W\equiv q_1u_1q_2u_2q_3\dots q_s$ be an admissible word with $q_i\in Q_{j(i)}^{\eps_i}$ for $\eps_i\in\{\pm1\}$  and $u_i\in F(Y)$. Then the \textit{base} of $W$ is $\text{base}(W)\equiv Q_{j(1)}^{\eps_1}Q_{j(2)}^{\eps_2}\dots Q_{j(s)}^{\eps_s}$, where these letters are merely representatives of their corresponding parts, and $u_i$ is called the $Q_{j(i)}^{\eps_i}Q_{j(i+1)}^{\eps_{i+1}}$-sector of $W$. Note that the base of an admissible word $W$ need not be a reduced word and that $W$ is permitted to have many sectors of the same name (for example, $W$ may contain many $Q_0Q_1$-sectors). 

The base $Q_0\dots Q_N$ is called the \textit{standard base}. An admissible word with the standard base is called a \textit{configuration}. 

Now, set $U_0,\dots,U_m$ and $V_0,\dots,V_m$ as a collection of reduced words over $Y\cup Q$ satisfying:

\begin{enumerate}[label=({\arabic*})]

\item $U_i$ and $V_i$ have base $Q_{\ell(i)}Q_{\ell(i)+1}\dots Q_{r(i)}$ with $\ell(i)\leq r(i)$ and such that both are subwords of admissible words

\item $\ell(i+1)=r(i)+1$ for all $i$

\item $U_0$ and $V_0$ start with letters from $Q_0$, while $U_m$ and $V_m$ end with letters from $Q_N$

\end{enumerate}

Define $Q(\theta)$ as the set of state letters appearing in some $U_i$. Note that $Q(\theta)$ contains exactly one state letter from each part.

Also, let $Y(\theta)=\cup Y_j(\theta)$ with $Y_j(\theta)\subseteq Y_j$ be some subset of the tape alphabet with the requirement that the set of tape letters appearing in $U_i$ or $V_i$ is a subset of $Y(\theta)^{\pm1}$. Each $Y_j(\theta)$ is called the \textit{domain} of $\theta$ in the corresponding sector of the standard base.

If $W$ is an admissible word with all its state letters contained in $Q(\theta)\cup Q(\theta)^{-1}$ and all its tape letters contained in $Y(\theta)\cup Y(\theta)^{-1}$, then define $W\cdot\theta$ as the result of simultaneously replacing every subword $U_i^{\pm1}\equiv(u_{\ell(i)}q_{\ell(i)}u_{\ell(i)+1}q_{\ell(i)+1}\dots q_{r(i)}u_{r(i)+1})^{\pm1}$ of $W$ by the subword $V_i^{\pm1}\equiv(v_{\ell(i)}q_{\ell(i)}'v_{\ell(i)+1}\dots q_{r(i)}'v_{r(i)+1})^{\pm1}$, followed by the necessary reduction to make the resulting word again admissible.

In this case, $\theta$ is called an \textit{$S$-rule} of $(Y,Q)$ and is denoted $\theta=[U_0\to V_0,\dots,U_m\to V_m]$. This notation fully describes the rule $\theta$ except for the corresponding sets $Y_j(\theta)$. Henceforth, $Y_j(\theta)$ is assumed to be either $Y_j$ or $\emptyset$ unless otherwise stated, with context making it clear which is chosen.

For any $S$-rule $\theta$, if $\theta$ is applicable to an admissible word $W$, then $W$ is called \textit{$\theta$-admissible}.

An important note to stress is that the application of an $S$-rule results in a reduced word, i.e reduction is not a separate step in the application of the $S$-rule.

If the $i$-th part of the $S$-rule $\theta$ is $U_i\to V_i$, $U_i$ and $V_i$ have base $Q_{\ell(i)}\dots Q_{r(i)}$, and $Y_{r(i)+1}(\theta)=\emptyset$, then this part of the rule is denoted $U_i\xrightarrow{\ell}V_i$ and $\theta$ is said to \textit{lock} the $Q_{r(i)}Q_{r(i)+1}$-sector.

Note that every $S$-rule $\theta$ has a natural inverse, namely $\theta^{-1}=[V_0\to U_0,\dots,V_m\to U_m]$ with $Y_j(\theta^{-1})=Y_j(\theta)$ for all $j$. 

An \textit{$S$-machine} $\textbf{S}$ with hardware $(Y,Q)$ is the defined to be the rewriting system whose software is a symmetric set of $S$-rules $\Theta(\textbf{S})=\Theta$, i.e $\theta\in\Theta$ if and only if $\theta^{-1}\in\Theta$. 

It is convenient to partition $\Theta$ into two disjoint sets, $\Theta^+$ and $\Theta^-$, such that $\theta\in\Theta^+$ if and only if $\theta^{-1}\in\Theta^-$. The elements of $\Theta^+$ are called the \textit{positive rules} and those of $\Theta^-$ the \textit{negative rules}.

For $t\geq0$, suppose $W_0,\dots,W_t$ are admissible words with the same base such that there exist $\theta_1,\dots,\theta_t\in\Theta$ satisfying $W_{i-1}\cdot\theta_i\equiv W_i$ for all $1\leq i\leq t$. Then the sequence of applications of rules $\pazocal{C}:W_0\to\dots\to W_t$ is called a \textit{computation} of \textit{length} or \textit{time} $t\geq0$ of $\textbf{S}$. The word $H=\theta_1\dots\theta_t$ is called the \textit{history} of $\pazocal{C}$ and the notation $W_t\equiv W_0\cdot H$ is used to represent the computation.

A computation is called \textit{reduced} if its history is a reduced word in $F(\Theta^+)$. Every computation can be made reduced without changing the initial and final admissible words of the computation simply by removing consecutive mutually inverse rules.

Typically, it is assumed that each part of the state letters contains two (perhaps the same) fixed elements, called the \textit{start} and \textit{end} state letters. A configuration is called a \textit{start} (or \textit{end}) configuration if all of its state letters are start (or end) letters.

A \textit{recognizing} $S$-machine is one with specified sectors called the \textit{input sectors}. If a start configuration has all sectors empty except for the input sectors, then it is called an \textit{input configuration} and its projection onto $Y\cup Y^{-1}$ is called its \textit{input}. The end configuration with every sector empty is called the \textit{accept configuration}.

A configuration $W$ is \textit{accepted} by a recognizing $S$-machine if there is an \textit{accepting computation}, i.e a computation whose initial configuration is $W$ and whose final configuration is the accept configuration. If $W$ is an accepted input configuration with input $u$, then $u$ is also said to be \textit{accepted}.

If the configuration $W$ is accepted by the $S$-machine $\textbf{S}$, then $T(W)$ is the minimal time of its accepting computations. For a recognizing $S$-machine \textbf{S}, its \textit{time function} is $$T_{\textbf{S}}(n)=\max\{T(W): W\text{ is an accepted input configuration of } \textbf{S}, \ |W|_a\leq n\}$$

\medskip



If two recognizing $S$-machines have the same language of accepted words and $\Theta$-equivalent time functions, then they are said to be \textit{equivalent}.

The following simplifies how one approaches the rules of a recognizing $S$-machine.

\medskip

\begin{lemma} \label{simplify rules}

\textit{(Lemma 2.1 of [16])} Every recognizing $S$-machine $\textbf{S}$ is equivalent to a recognizing $S$-machine that satisfies:

\begin{enumerate}[label=({\arabic*})]

\item Every part of every rule has a 1-letter base (i.e if $U_i\to V_i$ is a part of a rule $\theta$, then $U_i\equiv u_iq_iu_{i+1}$ and $V_i\equiv v_iq_i'v_{i+1}$ for $q_i,q_i'$ state letters in $Q_i$)

\item In every part $u_iq_iu_{i+1}\to v_iq_i'v_{i+1}$ of every rule, $\|u_i\|+\|v_i\|\leq1$ and $\|u_{i+1}\|+\|v_{i+1}\|\leq1$.

\item Moreover, with the terminology of (2), $\|u_i\|+\|v_i\|+\|u_{i+1}\|+\|v_{i+1}\|\leq1$.

\end{enumerate}

\end{lemma}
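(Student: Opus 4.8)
The plan is to prove the lemma by two successive reductions, at each step replacing the current recognizing $S$-machine by an equivalent one in a more normal form. The tool in both reductions is the standard one: introduce fresh state letters local to a single rule and break that rule into a chain of rules whose individual parts carry out the original substitution one tape letter (and, at the first stage, one sector) at a time.

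\textbf{Stage 1 (one-letter bases).} Let $\theta$ be a rule of $\textbf{S}$ with a part $U_i\to V_i$ over a base $Q_\ell\cdots Q_r$, $r>\ell$, say $U_i\equiv x_\ell q_\ell x_{\ell+1}q_{\ell+1}\cdots q_r x_{r+1}$ and $V_i\equiv y_\ell q_\ell' y_{\ell+1}\cdots q_r' y_{r+1}$. Here each of $x_{\ell+1},\dots,x_r$ (resp.\ $y_{\ell+1},\dots,y_r$) is the \emph{entire} content of an internal sector $Q_{j-1}Q_j$ before (resp.\ after) $\theta$, whereas $x_\ell,x_{r+1}$ are only a suffix, resp.\ prefix, of a boundary sector. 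I would replace $\theta$ by a chain of rules which, using fresh state letters, sweeps along the block $Q_\ell\cdots Q_r$ from left to right: when it reaches an internal sector it deletes the word $x_j$ one letter at a time from the side adjacent to $q_{j-1}$, then applies a rule whose domain in that sector is empty — and, since a sector carries only letters of its own part of $Y$, such a rule is applicable precisely when the sector is now empty, so it \emph{certifies} that the sector held nothing beyond $x_j$ — and then writes $y_j$ back one letter at a time; the boundary pieces $x_\ell\to y_\ell$ and $x_{r+1}\to y_{r+1}$ are processed the same way but without the emptiness check. The other parts of $\theta$ are carried along unchanged (and, being split into $1$-letter identity parts, cause no trouble), and every part of $\theta$ with a long base is handled this way. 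Each rule of the resulting machine $\textbf{S}_1$ then has only $1$-letter-base parts, and, because the fresh state letters occur nowhere else, a reduced computation of $\textbf{S}_1$ whose endpoints are free of them must run through the chains in complete blocks. Hence $\textbf{S}_1$ has exactly the accepted input configurations of $\textbf{S}$, each original rule-application being simulated by at most a fixed number $C=C(\textbf{S})$ of new ones; the time functions therefore differ by at most a constant factor, so $\textbf{S}_1$ is equivalent to $\textbf{S}$.

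\textbf{Stage 2 (short tape words).} Now every part of every rule of $\textbf{S}_1$ has the form $u_iq_iu_{i+1}\to v_iq_i'v_{i+1}$ with $u_i,v_i\in F(Y_i)$ and $u_{i+1},v_{i+1}\in F(Y_{i+1})$. I would replace such a rule by a chain of rules (again with fresh state letters) which in order: deletes $u_i$ one letter at a time beginning with the letter adjacent to $q_i$; deletes $u_{i+1}$ likewise; writes $v_i$ one letter at a time immediately left of $q_i$; writes $v_{i+1}$ one letter at a time immediately right of $q_i$; and finally relabels the state letter as $q_i'$. Each such part changes one tape letter on one side of one state letter, so $\|u_i\|+\|v_i\|+\|u_{i+1}\|+\|v_{i+1}\|\le1$, which yields both (2) and (3); parts over the other sectors stay identities, and sectors with empty domain are kept locked throughout the chain. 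By the same block-decomposition argument as in Stage~1, the resulting machine $\textbf{S}_2$ is equivalent to $\textbf{S}_1$, hence to $\textbf{S}$, and it satisfies (1)--(3).

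\textbf{The main difficulty.} Everything above is essentially bookkeeping; the one genuinely delicate point is \emph{equivalence} (as opposed to one-way simulation), namely checking that every reduced — in particular every minimal — accepting computation of the normalized machine projects to an accepting computation of the original. This reduces to showing that the fresh state letters can occur only `inside a block', so that such a computation passes through a well-defined sequence of checkpoint configurations over the standard base, consecutive ones joined by original rules. The other place that requires care is the handling of the domains $Y_j(\theta)$: using an empty domain to certify that a sector has been fully consumed in Stage~1, and preserving the locking status of sectors in both stages, so that the normalized machine is not strictly more permissive than $\textbf{S}$.
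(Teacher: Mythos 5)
Your proposal is correct in outline and matches the standard argument: this lemma is not proved in the paper at all (it is quoted from Lemma 2.1 of [16]), and the proof there is exactly the normalization you describe — replace each rule by a chain of rules over fresh, rule-specific state letters that process one sector and one tape letter at a time, use locked (empty-domain) sectors to certify that an internal sector held precisely the prescribed word, preserve the locking status of untouched sectors, and then observe that since each intermediate state admits a unique forward and unique backward rule, a reduced accepting computation traverses chains in complete blocks, so projecting to the checkpoint configurations recovers a computation of the original machine with time changed by at most a constant factor. So you take essentially the same route as the cited source, and the point you flag as the main difficulty is handled by precisely the block-traversal argument you sketch.
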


As a result of Lemma \ref{simplify rules}, we may assume that each part of every rule of an $S$-machine is of the form $q_i\to aq_i'b$ with $\|a\|+\|b\|\leq1$. However, it will be convenient to allow $\|a\|=\|b\|=1$ in the defining rules of some of the $S$-machines we construct.

\smallskip


\subsection{Some elementary properties of $S$-machines} \

The following is an immediate consequence of the definition of admissible words.

\begin{lemma} \label{locked sectors}

If the rule $\theta$ locks the $Q_iQ_{i+1}$-sector, i.e it has a part $q_i\xrightarrow{\ell}aq_i'$ for some $q_i,q_i'\in Q_i$, then the base of any $\theta$-admissible word has no subword of the form $Q_iQ_i^{-1}$ or $Q_{i+1}^{-1}Q_{i+1}$.

\end{lemma}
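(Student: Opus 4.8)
The plan is to unwind the definition of a $\theta$-admissible word together with the definition of locking. Recall that if $W\equiv q_0^{\eps_0}u_1q_1^{\eps_1}\dots u_kq_k^{\eps_k}$ is admissible, then each two-letter subword $q_{j-1}^{\eps_{j-1}}q_j^{\eps_j}$ of $\mathrm{base}(W)$ falls into one of the three listed types, and in particular a subword of the form $Q_iQ_i^{-1}$ can only arise as a ``return'' pattern $q u q^{-1}$ with $q\in Q_i$ and $u\in F(Y_{i+1})$, while $Q_{i+1}^{-1}Q_{i+1}$ can only arise as $q^{-1}uq$ with $q\in Q_{i+1}$ and $u\in F(Y_{i+1})$. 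The key point is that in both of these situations the sector that sits between the two copies of the state letter is the $Q_iQ_{i+1}$-sector (read left to right in the first case, right to left in the second), so its tape content lies in $F(Y_{i+1})$.

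First I would observe that for $\theta$ to be applicable to $W$, all tape letters of $W$ must lie in $Y(\theta)^{\pm1}$; in particular the $Q_iQ_{i+1}$-sector content must lie in $F(Y_{i+1}(\theta))$. Next I would invoke the hypothesis that $\theta$ locks the $Q_iQ_{i+1}$-sector, which by definition means the relevant part of $\theta$ is written $q_i\xrightarrow{\ell}aq_i'$ and that $Y_{i+1}(\theta)=\emptyset$. Hence $F(Y_{i+1}(\theta))$ is trivial, so the $Q_iQ_{i+1}$-sector of any $\theta$-admissible word must be empty. Then I would argue by contradiction: suppose $\mathrm{base}(W)$ contains $Q_iQ_i^{-1}$, realized by a subword $q u q^{-1}$ of $W$ with $q\in Q_i$; by the admissibility clause this forces $u\in F(Y_{i+1})$, and combined with the previous paragraph $u$ must be empty, so the subword is $q q^{-1}$, which is not reduced — contradicting that $W$ is a reduced admissible word. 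The case $Q_{i+1}^{-1}Q_{i+1}$, realized by $q^{-1}uq$ with $q\in Q_{i+1}$ and $u\in F(Y_{i+1})$, is symmetric: again $u$ is empty and $q^{-1}q$ is unreduced, a contradiction.

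I do not expect a serious obstacle here; the statement is essentially a bookkeeping consequence of the definitions. The one subtlety worth stating carefully is matching up indices: one must check that the tape alphabet available in the sector pinched off by a $Q_iQ_i^{-1}$ (respectively $Q_{i+1}^{-1}Q_{i+1}$) pattern is indeed $Y_{i+1}$, which is exactly what clauses (2) and (3) of the definition of admissible words record (a subword $quq^{-1}$ with $q\in Q_j$ has $u\in F(Y_{j+1})$, and a subword $q^{-1}uq$ with $q\in Q_j$ has $u\in F(Y_j)$). Taking $j=i$ in the first and $j=i+1$ in the second gives $u\in F(Y_{i+1})$ in both cases, which is the alphabet the locking rule empties. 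So the proof reduces to: applicability $\Rightarrow$ that sector is empty $\Rightarrow$ the putative base subword would be unreduced $\Rightarrow$ contradiction.
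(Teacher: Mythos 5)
Your argument is correct and is exactly the intended one: the paper offers no written proof, calling the lemma an immediate consequence of the definition of admissible words, and your unwinding (clauses (2) and (3) pin the pinched sector's alphabet to $Y_{i+1}$, locking gives $Y_{i+1}(\theta)=\emptyset$, so $\theta$-admissibility forces that sector to be empty and the word would fail to be reduced) is precisely that bookkeeping. No gaps; the index-matching you flag is the only point of care and you handle it correctly.
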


Through the rest of our discussion of $S$-machines, we will often use copies of words over disjoint alphabets. To be precise, let $A$ and $B$ be disjoint alphabets, $W\equiv a_1^{\eps_1}\dots a_k^{\eps_k}$ with $a_i\in A$ and $\eps_i\in\{\pm1\}$, and $\varphi:\{a_1,\dots,a_k\}\to B$ be an injection. Then the \textit{copy} of $W$ over the alphabet $B$ formed by $\varphi$ is the word $W'\equiv\varphi(a_1)^{\eps_1}\dots\varphi(a_k)^{\eps_k}$. Typically, the injection defining the copy will be contextually clear.

Alternatively, a copy of an alphabet $A$ is a disjoint alphabet $A'$ which is in one-to-one correspondence with $A$. For a word over $A$, its copy over $A'$ is defined by the correspondence between the alphabets.

The following are properties of some simple computations in $S$-machines that are fundamental to the proofs presented in the next two sections. They are stated here without proof, with a reference provided for their proofs in previous literature.

\medskip

\begin{lemma} \label{multiply one letter}

\textit{(Lemma 2.7 of [16])} Let $\pazocal{C}:W_0\to\dots\to W_t$ be a reduced computation, where $W_0$ is an admissible word with the two-letter base $Q_iQ_{i+1}$. Denote the tape word of $W_j$ as $u_j$ for each $0\leq j\leq t$. Suppose that each rule of $\pazocal{C}$ multiplies the $Q_iQ_{i+1}$-sector by a letter on the left (respectively right). Suppose further that different rules multiply this sector by different letters. Then:

\begin{enumerate}[label=({\alph*})]

\item the history $H$ of $\pazocal{C}$ is a copy of the reduced form of $u_tu_0^{-1}$ read from right to left (respectively $u_0^{-1}u_t$ read left to right). In particular, if $u_0\equiv u_t$, then $\pazocal{C}$ is empty.

\item $\|H\|\leq\|u_0\|+\|u_t\|$

\item if $\|u_{j-1}\|<\|u_j\|$ for some $1\leq j\leq t-1$, then $\|u_j\|<\|u_{j+1}\|$

\item $\|u_j\|\leq\max(\|u_0\|,\|u_t\|)$

\end{enumerate}

\end{lemma}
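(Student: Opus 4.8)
The plan is to reduce everything to a simple bookkeeping of how the length of the tape word in the $Q_iQ_{i+1}$-sector changes at each step. I treat the case where every rule multiplies the sector on the left by a letter; the ``right'' case is its mirror image. To each rule $\theta$ occurring in $\pazocal{C}$ attach the letter $a(\theta)\in Y\cup Y^{-1}$ by which it multiplies the sector; the hypotheses say $\theta\mapsto a(\theta)$ is injective on the rules occurring in $\pazocal{C}$ and that $a(\theta^{-1})\equiv a(\theta)^{-1}$. The fundamental observation is that, since $a(\theta_j)$ is a single letter and $u_j$ is the free reduction of $a(\theta_j)u_{j-1}$, exactly one of two cases occurs at step $j$: an \emph{up-step}, where $u_{j-1}$ does not begin with $a(\theta_j)^{-1}$, so that $u_j\equiv a(\theta_j)u_{j-1}$, $\|u_j\|=\|u_{j-1}\|+1$, and (crucially) $u_j$ begins with $a(\theta_j)$; or a \emph{down-step}, where $u_{j-1}\equiv a(\theta_j)^{-1}u_j$ and $\|u_j\|=\|u_{j-1}\|-1$. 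In particular $\|u_j\|$ changes by exactly $\pm1$ at each step, and this observation drives everything.

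I would prove (c) first, since (b) and (d) are combinatorial consequences of it. Suppose step $j$ is an up-step and $j\le t-1$, so $u_j$ is nonempty and begins with $a(\theta_j)$. If step $j+1$ were a down-step, then $u_j$ would begin with $a(\theta_{j+1})^{-1}$; comparing first letters gives $a(\theta_{j+1})\equiv a(\theta_j)^{-1}\equiv a(\theta_j^{-1})$, so injectivity of $\theta\mapsto a(\theta)$ forces $\theta_{j+1}=\theta_j^{-1}$, contradicting that the history of $\pazocal{C}$ is reduced. Hence step $j+1$ is an up-step, which is exactly (c). It follows that the up-steps form a terminal block: there is an index $m$ with $\|u_0\|>\dots>\|u_m\|$ (all down-steps) and $\|u_m\|<\dots<\|u_t\|$ (all up-steps), either block possibly empty. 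Then (d) is immediate, since every $\|u_j\|$ is at most $\|u_0\|$ or at most $\|u_t\|$; and (b) follows by counting steps: $\|H\|=t=(\|u_0\|-\|u_m\|)+(\|u_t\|-\|u_m\|)\le\|u_0\|+\|u_t\|$.

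For (a), iterating $u_j\equiv(\text{reduction of})\ a(\theta_j)u_{j-1}$ gives $u_tu_0^{-1}=a(\theta_t)a(\theta_{t-1})\cdots a(\theta_1)$ in $F(Y)$, and this word is already reduced: a cancellation between the consecutive letters $a(\theta_{j+1})$ and $a(\theta_j)$ would again give $a(\theta_{j+1})\equiv a(\theta_j^{-1})$, hence $\theta_{j+1}=\theta_j^{-1}$, contradicting reducedness of $\pazocal{C}$. So the reduced form of $u_tu_0^{-1}$ is exactly $a(\theta_t)\cdots a(\theta_1)$; reading it from right to left and replacing each $a(\theta_j)$ by $\theta_j$ recovers $H=\theta_1\cdots\theta_t$, which is the asserted statement, and when $u_0\equiv u_t$ this reduced word is empty, forcing $t=0$. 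The right-multiplication case is identical with ``begins with'' replaced by ``ends with'' and the order of the product reversed, giving $u_0^{-1}u_t$ read left to right.

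There is no genuine obstacle here; the whole proof rests on the single-letter structure of the multiplication together with the reducedness of the history, and the only care required is the bookkeeping around up-steps — specifically the fact that an up-step at position $j$ leaves $u_j$ beginning with $a(\theta_j)$, which is what lets reducedness (plus injectivity of $\theta\mapsto a(\theta)$) rule out an immediately following cancellation. Parts (b) and (d) then amount to reading off the resulting ``valley-shaped'' length profile, and part (a) is the global version of the same no-cancellation argument.
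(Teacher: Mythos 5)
Your proof is correct and is essentially the standard argument: the paper itself states this lemma without proof, citing Lemma 2.7 of [16], and the proof there proceeds along the same lines (each transition changes $\|u_j\|$ by exactly $\pm1$, and reducedness of the history together with the rule--letter correspondence rules out an up-step immediately followed by a down-step, giving the valley-shaped length profile and the no-cancellation statement in (a)). The one point to state carefully is that the injectivity of $\theta\mapsto a(\theta)$ must be understood over the rules together with their inverses (as the hypothesis intends), since you invoke it to pass from $a(\theta_{j+1})\equiv a(\theta_j^{-1})$ to $\theta_{j+1}=\theta_j^{-1}$, and $\theta_j^{-1}$ need not itself occur in $\pazocal{C}$.
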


%
%
%
%
%
%
%

\medskip

\begin{lemma} \label{unreduced base}

\textit{(Lemma 3.6 of [22])} Suppose $\pazocal{C}:W_0\to\dots\to W_t$ is a reduced computation of an $S$-machine with base $Q_iQ_i^{-1}$ (respectively $Q_i^{-1}Q_i$). For $0\leq j\leq t$, let $u_j$ be the tape word of $W_j$. Suppose each rule of $\pazocal{C}$ multiplies the $Q_iQ_{i+1}$-sector (respectively the $Q_{i-1}Q_i$-sector) by a letter from the left (respectively from the right), with different rules corresponding to different letters. Then $\|u_j\|\leq\max(\|u_0\|,\|u_t\|)$ for all $j$ and the history of $\pazocal{C}$ has the form $H_1H_2^\ell H_3$, where $\ell\geq0$, $\|H_2\|\leq\min(\|u_0\|,\|u_t\|)$, $\|H_1\|\leq\|u_0\|/2$, and $\|H_3\|\leq\|u_t\|/2$.

\end{lemma}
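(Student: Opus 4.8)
\textbf{Proof proposal for Lemma \ref{unreduced base}.}

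The plan is to analyze the two-letter base $Q_iQ_i^{-1}$ (the case $Q_i^{-1}Q_i$ being symmetric) by tracking how the single $a$-sector between the two $q$-letters changes under the rules of $\pazocal{C}$. Write the tape word of $W_j$ as $u_j$. Because the base has the form $Q_iQ_i^{-1}$, the word $W_j$ looks like $q u_j q^{-1}$ with the two occurrences of a $Q_i$-letter forced to be mutual inverses (up to the admissibility conditions (2)–(3) for repeated bases), so each rule $\theta$ applied to $W_j$ acts on the left copy by a part $q_i\to a q_i' b$ and on the right copy by the inverse part; by hypothesis only the left-multiplication data matters, i.e. $u_{j-1}\mapsto u_j$ is obtained by prepending a single letter $a_j^{\pm 1}$ to $u_{j-1}$ and then freely reducing, with distinct rules of $\pazocal{C}$ corresponding to distinct letters $a_j$. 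Thus the sequence $u_0, u_1, \dots, u_t$ is a sequence of reduced words each obtained from its predecessor by left-multiplication-then-reduction by a letter, and consecutive such letters are never inverse to one another \emph{unless the corresponding rules are inverse}, which cannot happen twice in a row since $\pazocal{C}$ is reduced.

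The key combinatorial observation is the following ``valley'' structure: as $j$ increases, $\|u_j\|$ changes by exactly $\pm 1$ at each step, and I claim it can decrease, then stay at a minimum pattern, then increase, but cannot genuinely go down–up–down. Concretely, I would prove that if $\|u_{j-1}\| > \|u_j\|$ (a cancellation step) and $\|u_j\| < \|u_{j+1}\|$ (an addition step), then the letter removed at step $j$ equals the letter added at step $j{+}1$, forcing $\theta_j$ and $\theta_{j+1}$ to multiply by the same letter from the left; combined with the fact that they act on the two-letter base $Q_iQ_i^{-1}$ — where the left letter of the base is simultaneously seen as a right letter of the mirrored sector — this forces $\theta_{j+1} = \theta_j^{-1}$, contradicting that $\pazocal{C}$ is reduced. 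This gives that once $\|u_j\|$ starts strictly increasing it never strictly decreases again; hence the length profile is unimodal (nonincreasing then nondecreasing), which immediately yields $\|u_j\| \le \max(\|u_0\|,\|u_t\|)$ for all $j$. Writing $j_0$ for a time at which the minimum length is attained, the initial descending portion has length at most $\|u_0\|$ but in fact the history letters there, being a copy of (a subword of) the freely reduced form of the word being cancelled off the left of $u_0$, can be grouped: the part $H_1$ before the "plateau" has $\|H_1\| \le \|u_0\|/2$ (each cancellation consumes one of the $\le \|u_0\|$ letters of $u_0$ but the descending run can be at most half, since a maximal descending run from $u_0$ to the minimum uses at most $\|u_0\|-\|u_{j_0}\| \le \|u_0\|$ steps, and the refinement to $\|u_0\|/2$ comes from pairing it against the symmetric behavior on the mirrored copy in the $Q_iQ_i^{-1}$ base), and symmetrically $\|H_3\| \le \|u_t\|/2$ for the ascending tail toward $u_t$. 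The middle block $H_2^\ell$ records the ``stationary'' rules acting at the minimal-length configuration; each such rule multiplies by a letter and its effect must be periodic — the same cyclic pattern repeats — because otherwise one could lengthen $u_{j_0}$, contradicting minimality at $j_0$; this forces the history there to be a power $H_2^\ell$ with $\|H_2\| \le \|u_{j_0}\| \le \min(\|u_0\|,\|u_t\|)$, invoking Lemma \ref{multiply one letter}(a) applied to the sub-computation on the plateau (where the tape word returns to itself).

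The main obstacle I anticipate is making the ``plateau'' analysis fully rigorous: showing that the stationary sub-computation genuinely has periodic history $H_2^\ell$ rather than some more complicated bounded-length excursion, and getting the constants $\|u_0\|/2$ and $\|u_t\|/2$ (rather than the cruder $\|u_0\|$, $\|u_t\|$) right. The halving is what requires using that the base is the \emph{unreduced} word $Q_iQ_i^{-1}$ rather than an honest two-letter base $Q_iQ_{i+1}$: the constraint that the left and right $q$-letters stay mutually inverse means the tape word is read off twice (once forward, once backward), effectively doubling the cancellation pressure, so a descending run of length $L$ from $u_0$ forces $2L$ letters' worth of agreement and hence $L \le \|u_0\|/2$. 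I would handle this by comparing $\pazocal{C}$ with the auxiliary genuine-two-letter-base computation supplied by Lemma \ref{multiply one letter}, transferring its conclusions (b)–(d) through the folding $Q_iQ_{i+1} \rightsquigarrow Q_iQ_i^{-1}$, and extracting the sharpened bounds from part (a) of that lemma applied to each monotone segment separately.
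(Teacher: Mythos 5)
The paper does not actually prove this statement; it is quoted verbatim as Lemma 3.6 of [22], so your attempt has to be measured against the standard argument for unreduced bases. Measured that way, there is a genuine gap, and it starts with your model of the dynamics. With base $Q_iQ_i^{-1}$ \emph{both} occurrences of the state letter are rewritten by the rule: a part $q_i\to aq_i'b$ turns $q_iu_{j-1}q_i^{-1}$ into $aq_i'\,b\,u_{j-1}\,b^{-1}\,q_i'^{-1}a^{-1}$, so $u_j$ is the reduced form of $b\,u_{j-1}\,b^{-1}$ — conjugation by a letter — and $\|u_j\|$ changes by $0$ or $\pm2$, not by ``exactly $\pm1$'' via prepending a letter. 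This is not cosmetic: under your one-sided model the computation would fall under Lemma \ref{multiply one letter} and its history would be a copy of $u_tu_0^{-1}$, so no long block $H_2^\ell$ could ever occur; the entire content of the present lemma is that once the tape word becomes cyclically reduced the rules can cycle it around indefinitely, and that cycling is precisely what produces the periodic middle $H_2^\ell$ with $\|H_2\|$ bounded by the cyclic length $m$. Your ``valley'' step is also both reversed and false: in the correct model a decrease followed by an increase does \emph{not} force $\theta_{j+1}=\theta_j^{-1}$ (e.g. $x^{-1}yx\to y\to xyx^{-1}$ applies the same rule twice, which is allowed in a reduced computation — note also that ``same letter'' means same rule, not inverse rule, by the injectivity hypothesis), and even if it held it would give a length profile with a maximum in the middle, the opposite of $\|u_j\|\le\max(\|u_0\|,\|u_t\|)$. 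What is actually true, and what you need, is the other implication: an increase, or a length-preserving step (which can occur only when the current word is cyclically reduced and is then a cyclic shift), can never be followed by a decrease without invoking the inverse rule; this yields the profile decrease--plateau--increase.

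The plateau is where your argument collapses entirely: applying Lemma \ref{multiply one letter}(a) to the subcomputation ``where the tape word returns to itself'' would conclude that this subcomputation is empty, which is exactly wrong here — the plateau can be arbitrarily long, and that lemma's hypotheses (a genuine two-letter base with one-sided multiplication) do not survive the folding $Q_iQ_{i+1}\rightsquigarrow Q_iQ_i^{-1}$, so none of its conclusions transfer. The correct argument shows that all plateau steps are cyclic shifts in the same direction (an about-face forces mutually inverse adjacent rules), so the conjugating letters, and hence the history, read off the cyclic word periodically with period $m\le\min(\|u_0\|,\|u_t\|)$. The constants then come out differently from what you describe: each decreasing step cancels one letter from each end of $u_0$, giving $\|H_1\|\le(\|u_0\|-m)/2$ for the decreasing block and symmetrically $(\|u_t\|-m)/2$ for the increasing block, and the remainder of the plateau (fewer than $m$ letters) is split and absorbed into $H_1$ and $H_3$ using the slack $m/2$ on each side, which is what produces the exact power $H_2^\ell$ together with the stated bounds $\|H_1\|\le\|u_0\|/2$ and $\|H_3\|\le\|u_t\|/2$. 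None of these steps are present in your proposal, so as written it does not prove the lemma.
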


\bigskip


\subsection{Parameters} \

The arguments spanning the rest of this paper are reliant on the \textit{highest parameter principle}, the obvious dual to the lowest parameter principle described in Section 2.5. In particular, we introduce the relation $<<$ on parameters defined as follows.

If $\a_1,\a_2,\dots,\a_n$ are parameters with $\a_1<<\a_2<<\dots<<\a_n$, then for all $2\leq i\leq n$, it is understood that $\a_1,\dots,\a_{i-1}$ are assigned prior to the assignment of $\a_i$ and that the assignment of $\a_i$ is dependent on the assignment of its predecessors. The resulting inequalities are then understood as `$\a_i\geq$(any expression involving $\a_1,\dots,\a_{i-1}$)'

Specifically, the assignment of parameters we use here is:
\begin{align*}
n&<<\lambda^{-1}<<c_0<<k<<c_1<<c_2<<c_3<<c_4<<c_5<<L_0<<L<<K_0 \\
&<<K<<J<<\delta^{-1}<<C_1<<C_2<<C_3<<N_1<<N_2<<N_3<<N_4<<N_5
\end{align*}

\newpage


\section{Auxiliary Machines}

\subsection{The machine $\textbf{M}_1$} \

Let $n$ be a positive integer and $\pazocal{A}$ be a finite set. Define the \textit{language of defining relations of $B(\pazocal{A},n)$} as the set $\pazocal{L}=\{u^n:u\in F(\pazocal{A})\}$.

For $0\leq i\leq4$, let $Q_i=\{q_i(j):j=1,\dots,2n\}$. Further, for $1\leq i\leq4$, let $Y_i=\{a_i:a\in\pazocal{A}\}$ be a copy of $\pazocal{A}$.

The recognizing $S$-machine $\textbf{M}_1$ has hardware $(\sqcup_{i=1}^4 Y_i,\sqcup_{i=0}^4 Q_i)$ and software the set of rules $\Phi$ defined below. The input sector is taken to be the $Q_0Q_1$-sector, while the letters $q_i(1)$ (respectively $q_i(2n)$) are the start (respectively end) letters.

The idea of the function of $\textbf{M}_1$ is the following. Consider an input configuration with input $u^n\in\pazocal{L}$. The machine removes one copy of $u$ and replaces it in the $Q_2Q_3$-sector. Next, this copy is moved to the $Q_1Q_2$-sector. It is then moved back to the $Q_2Q_3$-sector while another copy of $u$ is simultaneously erased from the input sector. The second and third steps are then repeated $n-2$ more times until the input sector is empty. In the final step of this iteration, though, the natural copy of $u^{-1}$ is written in the $Q_3Q_4$-sector. Finally, the copies of $u$ and $u^{-1}$ are erased from the $Q_2Q_3$- and $Q_3Q_4$-sectors, respectively.

The set of positive rules $\Phi^+$ is decomposed into $2n+1$ subsets, which are denoted $\Phi_1^+,\dots,\Phi_{2n}^+$, and $\{\sigma(i,i+1):i=1,\dots,2n-1\}$. 

For each $i$, the rules of $\Phi_i^+$ are in correspondence with $\pazocal{A}$, with the rule corresponding to $a\in\pazocal{A}$ denoted $\tau_i(a)$.

For simplicity, if a rule $\theta$ of $\textbf{M}_1$ does not lock the $Q_{i-1}Q_i$-sector, then we take $Y_i(\theta)=Y_i$.

\bigskip

$\bullet$ The rules of $\Phi_1^+$ are of the form

$\tau_1(a)=\begin{bmatrix}
	q_0(1)\to q_0(1), \  q_1(1)\xrightarrow{\ell}a_1^{-1}q_1(1), \ q_2(1)\to q_2(1)a_3, \ q_3(1)\xrightarrow{\ell} q_3(1), \ q_4(1)\to q_4(1)
\end{bmatrix}$

\textit{Comment:} The state letter $q_1(1)$ moves left, removing the copy of $a$ from the $Q_0Q_1$-sector and replacing its copy in the $Q_2Q_3$-sector.

\bigskip

$\bullet$ $\sigma(12)=\begin{bmatrix*}[l]
	q_0(1)\to q_0(2), \ q_1(1)\xrightarrow{\ell}q_1(2), \ q_2(1)\to q_2(2), \ q_3(1)\xrightarrow{\ell} q_3(2), \ q_4(1)\to q_4(2)
\end{bmatrix*}$

\textit{Comment:} The state letters are changed while the $Q_1Q_2$- and $Q_3Q_4$-sectors are locked.

\bigskip

$\bullet$ For $1\leq i\leq n-1$, the rules of $\Phi_{2i}^+$ are of the form

$\tau_{2i}(a)=\begin{bmatrix*}[l]
	&q_0(2i)\to q_0(2i), \ q_1(2i)\to q_1(2i), \ q_2(2i)\to a_2q_2(2i)a_3^{-1}, \\ 
	&q_3(2i)\xrightarrow{\ell}q_3(2i), \ q_4(2i)\to q_4(2i)
\end{bmatrix*}$

\textit{Comment:} The state letter $q_2(2i)$ moves right, removing the copy of $a$ from the $Q_2Q_3$-sector and replacing its copy in the $Q_1Q_2$-sector.

\bigskip

$\bullet$ For $1\leq i\leq n-1$,

$\sigma(2i,2i+1)=\begin{bmatrix*}[l]
	&q_0(2i)\to q_0(2i+1), \ q_1(2i)\to q_1(2i+1), \ q_2(2i)\xrightarrow{\ell} q_2(2i+1), \\
	&q_3(2i)\xrightarrow{\ell}q_3(2i+1), \ q_4(2i)\to q_4(2i+1)
\end{bmatrix*}$

\textit{Comment:} When $q_2(2i)$ reaches $q_3(2i)$, the state letters are changed.

\bigskip

$\bullet$ For $1\leq i\leq n-2$, the rules of $\Phi_{2i+1}^+$ are of the form

$\tau_{2i+1}(a)=\begin{bmatrix*}[l]
	&q_0(2i+1)\to q_0(2i+1), \ q_1(2i+1)\to a_1^{-1}q_1(2i+1), \\
	&q_2(2i+1)\to a_2^{-1}q_2(2i+1)a_3, \ q_3(2i+1)\xrightarrow{\ell} q_3(2i+1), \\ 
	&q_4(2i+1)\to q_4(2i+1)
\end{bmatrix*}$

\textit{Comment:} The state letter $q_2(2i+1)$ moves left, removing the copy of $a$ from the $Q_1Q_2$-sector and replacing its copy in the $Q_2Q_3$-sector. Simultaneously, the copy of $a$ is removed from the right of the $Q_0Q_1$-sector.

\bigskip

$\bullet$ For $1\leq i\leq n-2$,

$\sigma(2i+1,2i+2)=\begin{bmatrix*}[l]
	&q_0(2i+1)\to q_0(2i+2), \ q_1(2i+1)\xrightarrow{\ell} q_1(2i+2), \\ 
	&q_2(2i+1)\to q_2(2i+2), \ q_3(2i+1)\xrightarrow{\ell}q_3(2i+2), \\
	&q_4(2i+1)\to q_4(2i+2)
\end{bmatrix*}$

\textit{Comment:} When $q_2(2i+1)$ reaches $q_1(2i+1)$, the state letters are changed.

\bigskip

$\bullet$ The rules of $\Phi_{2n-1}^+$ are of the form

$\tau_{2n-1}(a)=\begin{bmatrix*}[l]
	&q_0(2i+1)\to q_0(2i+1), \ q_1(2i+1)\to a_1^{-1}q_1(2i+1), \\
	&q_2(2i+1)\to a_2^{-1}q_2(2i+1)a_3, \ q_3(2i+1)\to q_3(2i+1), \\ 
	&q_4(2i+1)\to a_4^{-1}q_4(2i+1)
\end{bmatrix*}$

\textit{Comment:} This rule functions similar to $\tau_{2i+1}(a)$ for $1\leq i\leq n-2$, but also inserts a copy of $a^{-1}$ in the $Q_3Q_4$-sector.

\bigskip

$\bullet$ $\sigma(2n-1,2n)=\begin{bmatrix*}[l]
	&q_0(2n-1)\xrightarrow{\ell}q_0(2n), \ q_1(2n-1)\xrightarrow{\ell} q_1(2n), \\
	&q_2(2n-1)\to q_2(2n), \ q_3(2n-1)\to q_3(2n), \\
	&q_4(2n-1)\to q_4(2n)
\end{bmatrix*}$

\textit{Comment:} When both $q_2(2n-1)$ reaches $q_1(2n-1)$ and $q_1(2n-1)$ reaches $q_0(2n-1)$, the state letters are changed.

\bigskip

$\bullet$ The rules of $\Phi_{2n}^+$ are of the form

$\tau_{2n}(a)=\begin{bmatrix*}[l]
	&q_0(2n)\xrightarrow{\ell}q_0(2n), \ q_1(2n)\xrightarrow{\ell} q_1(2n), \ q_2(2n)\to q_2(2n)a_3^{-1}, \\ 
	&q_3(2n)\to q_3(2n), \  q_4(2n)\to a_4q_4(2n)
\end{bmatrix*}$

\textit{Comment:} The letters in the $Q_2Q_3$- and $Q_3Q_4$-sectors are removed.

\bigskip


\subsection{Standard computations of $\textbf{M}_1$} \

The machine $\textbf{M}_1$ can be viewed as the \textit{composition} of $2n$ submachines, denoted $\textbf{M}_1(1),\dots,\textbf{M}_1(2n)$, which are concatenated by the rules $\sigma(i,i+1)^{\pm1}$. The set of positive rules of the machine $\textbf{M}_1(i)$ is $\Phi_i^+$ and each machine has a disjoint set of state letters. So, each $Q_j=Q_{j,1}\sqcup\dots\sqcup Q_{j,2n}$ where each $Q_{j,i}$ is the corresponding part of the hardware of $\textbf{M}_1(i)$ (in this machine, each such subset is a singleton).

Accordingly, the rules of the form $\sigma(i,i+1)^{\pm1}$ are called \textit{transition rules}, as their function is to force the steps to be carried out in the correct order. For clarity in later sections, these rules are henceforth referred to as \textit{$\sigma$-rules}. 

For simplicity of notation, denote the inverse of each $\sigma$-rule by switching the indices, so that $\sigma(i,i+1)^{-1}\equiv\sigma(i+1,i)$.

The history of a reduced computation of $\textbf{M}_1$ can be factored so that each factor is either a $\sigma$-rule or the history of a maximal subcomputation of $\textbf{M}_1(i)$ for some $i$. The \textit{step history} of a reduced computation is then defined so as to capture the order of the types of these factors. To do this, we denote the $\sigma$-rule $\sigma(i,j)$ by the pair $(ij)$ and a factor that is an element of $F(\Phi_i^+)$ simply by $(i)$.

For example, if $H\equiv H'H''H'''$ where $H'\in F(\Phi_2^+)$, $H''\equiv\sigma(23)$, and $H'''\in F(\Phi_3^+)$, then the step history of a computation with history $H$ is $(2)(23)(3)$. So, the step history of a reduced computation is some concatenation of the letters
$$\{(1), \ (2), \ \dots, \ (2n), \ (12), \ (23), \ \dots, \ (2n-1,2n), \ (21), \ (32), \ \dots, \ (2n,2n-1)\}$$
It is convenient to allow the omission of a letter representing a $\sigma$-rule in a step history when its existence is clear from its necessity. For example, given a reduced computation with step history (2)(23)(3), one can instead write the step history as $(2)(3)$, as the rule $\sigma(23)$ must occur for the maximal subcomputation with step history $(3)$ to be possible.

If the step history of a reduced computation is $(i-1,i)(i,i+1)$, it is also permitted for the step history to be written as $(i-1,i)(i)(i,i+1)$ even though the ‘maximal subcomputation’ with step history $(i)$ is empty.

A \textit{one-step computation} is a reduced computation of $\textbf{M}_1$ whose step history has exactly one factor corresponding to a maximal subcomputation of a submachine $\textbf{M}_1(i)$.

Certain subwords cannot appear in the step history of a reduced computation of $\textbf{M}_1$. For example, it is clear that it is impossible for the step history of a reduced computation to contain the subword $(1)(3)$. The next statement displays the impossibility of some less obvious potential subwords.

\begin{lemma} \label{M_1 step history}

Let $\pazocal{C}$ be a reduced computation with base $B$.

\renewcommand{\labelenumi}{(\alph{enumi})}

\begin{enumerate}

\item If $B$ contains a subword $B'$ of the form $(Q_2Q_3)^{\pm1}$, then the step history of $\pazocal{C}$ cannot be $(2i,2i+1)(2i+1)(2i+1,2i)$ or $(2i+1,2i)(2i)(2i,2i+1)$ for $1\leq i\leq n-1$.

\item If $B$ contains a subword $B'$ of the form $(Q_1Q_2)^{\pm1}$, then the step history of $\pazocal{C}$ cannot be $(2i-1,2i)(2i)(2i,2i-1)$ or $(2i+2,2i+1)(2i+1)(2i+1,2i+2)$ for $1\leq i\leq n-1$.

\item If $B$ contains a subword $B'$ of the form $(Q_3Q_4)^{\pm1}$, then the step history of $\pazocal{C}$ cannot be $(2n-2,2n-1)(2n-1)(2n-1,2n-2)$.

\end{enumerate}

\end{lemma}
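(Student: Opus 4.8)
The idea in each case is the same: a reduced computation whose step history has the form $(k,k{+}1)(k{+}1)(k{+}1,k)$ (or $(k{+}1,k)(k)(k,k{+}1)$) begins and ends with the application of mutually inverse $\sigma$-rules at the outer slots, and in between runs the submachine $\textbf{M}_1(\ell)$ entirely through rules of $\Phi_\ell^{\pm1}$. The point is that for the relevant $\ell$, every rule of $\Phi_\ell^+$ multiplies the sector named in $B'$ by a single letter on one fixed side, and distinct rules $\tau_\ell(a)$ multiply by distinct letters (namely the $Y_j$-copies of distinct $a\in\pazocal{A}$). So Lemma \ref{multiply one letter} (or Lemma \ref{unreduced base} if $B'$ is unreduced of the form $Q_iQ_i^{-1}$) applies to the maximal $\textbf{M}_1(\ell)$-subcomputation restricted to the two-letter base $B'$. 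Then I would compare the tape word of $B'$ at the start and end of this subcomputation: because the $\sigma$-rules immediately before and after lock exactly the sector $B'$, that sector is untouched by those two $\sigma$-rules, hence its tape word is the same at the very start of $\pazocal{C}$ and at the very end of $\pazocal{C}$ — but between those two instants the only thing that changes it is the $\textbf{M}_1(\ell)$-subcomputation, so the $\textbf{M}_1(\ell)$-subcomputation begins and ends with the same tape word in the sector $B'$. By part (a) of Lemma \ref{multiply one letter}, the subcomputation restricted to $B'$ is empty, i.e. no rule of $\Phi_\ell^{\pm1}$ is actually applied; but a maximal subcomputation of $\textbf{M}_1(\ell)$ with nonempty step history $(\ell)$ must apply at least one rule of $\Phi_\ell^{\pm1}$, a contradiction. (For the unreduced-base cases one uses Lemma \ref{unreduced base}, whose conclusion on the history again forces the subcomputation to be trivial when the two endpoint tape words coincide, since then $\min(\|u_0\|,\|u_t\|)$ and $\|u_0\|/2$, $\|u_t\|/2$ need not vanish — so here I would argue more carefully as in the next paragraph.)

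\textbf{Matching the cases to the right submachine.} For (a), $B'$ is a $(Q_2Q_3)^{\pm1}$-sector. In step history $(2i,2i{+}1)(2i{+}1)(2i{+}1,2i)$, the active submachine is $\textbf{M}_1(2i{+}1)$, and rule $\tau_{2i+1}(a)$ has part $q_2(2i{+}1)\to a_2^{-1}q_2(2i{+}1)a_3$, so on the $Q_2Q_3$-sector it multiplies by $a_3$ on the right, distinct $a$ giving distinct $a_3\in Y_3$. In step history $(2i{+}1,2i)(2i)(2i,2i{+}1)$ the active submachine is $\textbf{M}_1(2i)$, and $\tau_{2i}(a)$ has part $q_2(2i)\to a_2q_2(2i)a_3^{-1}$, multiplying the $Q_2Q_3$-sector by $a_3^{-1}$ on the right. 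In both cases Lemma \ref{multiply one letter} applies to the restriction to $B'$ (or Lemma \ref{unreduced base} if $B'\equiv Q_2Q_3^{-1}$ or $Q_3^{-1}Q_2$ appears inside a larger base, though as a standalone two-letter base one can also just cite Lemma \ref{multiply one letter}). For (b), $B'$ is a $(Q_1Q_2)^{\pm1}$-sector; in $(2i{-}1,2i)(2i)(2i,2i{-}1)$ the submachine is $\textbf{M}_1(2i)$ whose rule $\tau_{2i}(a)$ has part $q_2(2i)\to a_2q_2(2i)a_3^{-1}$, multiplying the $Q_1Q_2$-sector by $a_2$ on the right (distinct $a\mapsto$ distinct $a_2\in Y_2$); in $(2i{+}2,2i{+}1)(2i{+}1)(2i{+}1,2i{+}2)$ the submachine is $\textbf{M}_1(2i{+}1)$ whose $\tau_{2i+1}(a)$ has part $q_2(2i{+}1)\to a_2^{-1}q_2(2i{+}1)a_3$, multiplying the $Q_1Q_2$-sector by $a_2^{-1}$ on the left. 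For (c), $B'$ is a $(Q_3Q_4)^{\pm1}$-sector and the only submachine that ever alters the $Q_3Q_4$-sector is $\textbf{M}_1(2n{-}1)$, via $\tau_{2n-1}(a)$ with part $q_4(2n{-}1)\to a_4^{-1}q_4(2n{-}1)$, multiplying the $Q_3Q_4$-sector by $a_4^{-1}$ on the left; so the step history $(2n{-}2,2n{-}1)(2n{-}1)(2n{-}1,2n{-}2)$ falls to the same argument.

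\textbf{The locking input.} The one external fact I need is that the outer $\sigma$-rules lock the sector $B'$, so the tape word of $B'$ is unchanged by them; this is read directly off the rule definitions (e.g. $\sigma(2i,2i{+}1)$ has parts $q_2(2i)\xrightarrow{\ell}q_2(2i{+}1)$ and $q_3(2i)\xrightarrow{\ell}q_3(2i{+}1)$, locking the $Q_2Q_3$-sector; similarly for the others). Combined with the observation that a maximal $(\ell)$-subcomputation in a step history must be nonempty — this is exactly why the $\sigma$-letters were written — we get the contradiction.

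\textbf{The main obstacle.} The one subtlety is the unreduced-base bookkeeping. If $B$ itself contains $B'$ as a proper subword of a longer base, the ``sector'' $B'$ might actually be of unreduced form $Q_iQ_i^{-1}$ (for instance a $Q_2Q_3$ pattern can only arise reduced, but one must check which patterns are genuinely possible), and then one must invoke Lemma \ref{unreduced base} rather than Lemma \ref{multiply one letter}. In that case the conclusion is not immediately ``empty computation'': Lemma \ref{unreduced base} only bounds $\|H_1\|,\|H_3\|$ by $\|u_0\|/2,\|u_t\|/2$ and $\|H_2\|$ by $\min(\|u_0\|,\|u_t\|)$. But here $u_0\equiv u_t$ (the same tape word before the first $\sigma$-rule and after the last), and examining the proof of Lemma \ref{unreduced base} one sees $H$ has the form $H_1 H_2^\ell H_3$ where $u_t u_0^{-1}$-type comparison again forces $H_1, H_3$ to cancel against the corresponding halves and $H_2^\ell$ to be trivial; so the subcomputation is again empty. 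Getting this last reduction airtight — showing $u_0\equiv u_t$ genuinely forces emptiness even in the unreduced-base case — is the part I expect to require the most care, but it is standard and parallels arguments in [16] and [22]. Everything else is a direct read-off of the rule tables in Section 4.1.
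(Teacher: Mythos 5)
Your overall skeleton matches the paper's proof — restrict $\pazocal{C}$ to the two-letter base $B'$, note that the rules of the relevant $\Phi_\ell^{\pm1}$ multiply that sector by distinct letters on a fixed side, invoke Lemma~\ref{multiply one letter}(a) to force the middle subcomputation to be empty, and contradict reducedness because the outer $\sigma$-rules are mutually inverse. But the step that delivers the hypothesis of Lemma~\ref{multiply one letter}(a) is not justified as you wrote it. You argue: the outer $\sigma$-rules lock $B'$, so they do not change its tape word, ``hence its tape word is the same at the very start of $\pazocal{C}$ and at the very end of $\pazocal{C}$.'' That inference is a non sequitur: the $\sigma$-rules leaving the sector untouched says nothing about what the intervening $\textbf{M}_1(\ell)$-subcomputation does to it, and that subcomputation is exactly the part of $\pazocal{C}$ that can write and erase letters in $B'$. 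So equality of the $B'$-tape words at the two ends of the middle subcomputation — the only input Lemma~\ref{multiply one letter}(a) needs — is left unproved. The fact you actually need is stronger than ``unchanged'': a rule that locks a sector has empty domain there, so any word admissible for it must have \emph{empty} tape word in that sector. Since the first and last rules of $\pazocal{C}$ both lock $B'$, the restriction $W_0'\to\dots\to W_t'$ satisfies $|W_0'|_a=|W_t'|_a=0$; in particular the middle subcomputation begins and ends with the (same, empty) tape word, and Lemma~\ref{multiply one letter}(a) then forces it to be empty, so the history reduces to $\sigma\sigma^{-1}$, contradicting reducedness. This is precisely how the paper argues, and it is the one idea missing from your write-up.

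A secondary remark: the long digression about Lemma~\ref{unreduced base} is unnecessary. The statement specifies that $B'$ has the form $(Q_2Q_3)^{\pm1}$, $(Q_1Q_2)^{\pm1}$, or $(Q_3Q_4)^{\pm1}$, so $B'$ is never an unreduced word of the form $Q_jQ_j^{-1}$; the restriction to $B'$ is an ordinary two-letter-base computation and Lemma~\ref{multiply one letter} suffices in every case. (Your left/right bookkeeping for which side the letters are appended on is also off in a couple of places — e.g.\ $\tau_{2i}(a)$ multiplies the $Q_2Q_3$-sector on the left, not the right — but this is harmless since Lemma~\ref{multiply one letter} covers both sides.)
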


\begin{proof}

Assuming to the contrary, let $\pazocal{C}':W_0'\to\dots\to W_t'$ be the restriction of $\pazocal{C}$ to the subword $B'$. In each case, $|W_0'|_a=|W_t'|_a=0$ and the subcomputation $W_1'\to\dots\to W_{t-1}'$ satisfies the hypotheses of Lemma \ref{multiply one letter}. But then this subcomputation must be empty, contradicting the assumption that $\pazocal{C}$ is reduced.

\end{proof}

%
%
%
%
%
%
%
%
%
%

For an admissible word $W$, there is a natural projection of $W$ onto $F(\pazocal{A})$ given by sending each tape letter to its natural copy and each state letter to the identity. 

Note that any application of a rule of $\Phi_1$ preserves the projection of a configuration. Similarly, for $i=2,\dots,2n-1$, any application of a rule of $\Phi_i$ preserves the projection of an admissible word with base $Q_1Q_2Q_3$.

An application of these useful facts (or those similar in nature) is referred to as a \textit{projection argument}. 

\begin{lemma} \label{M_1 controlled}

For $w\in F(\pazocal{A})$ and $i\in\{2\dots,2n-1\}$, there exists a unique reduced computation $\pazocal{C}:W_0\to\dots\to W_t$ with base $Q_1Q_2Q_3$ such that

\renewcommand{\labelenumi}{(\alph{enumi})}

\begin{enumerate}

\item the step history of $\pazocal{C}$ is $(i-1,i)(i)(i,i+1)$, and

\item the projection of $W_0$ onto $F(\pazocal{A})$ is $w$.

\end{enumerate}

Moreover, for $H_i$ the history of the maximal subcomputation of $\pazocal{C}$ with step history $(i)$, $H_i$ is a copy of $w$ read left to right (resp read right to left) if $i$ is even (resp odd) and $|W_j|_a=\|w\|$ for all $0\leq j\leq t$.

\end{lemma}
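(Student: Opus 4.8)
The plan is to analyze the machine $\textbf{M}_1(i)$ directly using the structure of its rules, distinguishing the even case $i=2i'$ (where the relevant part is $q_2(2i')\to a_2q_2(2i')a_3^{-1}$) from the odd case $i=2i'+1$ (where the relevant part is $q_2(2i'+1)\to a_2^{-1}q_2(2i'+1)a_3$). In either case, focus on the $Q_1Q_2$-sector: the rules of $\Phi_i^+$ multiply this sector by a single letter from one side (on the right for even $i$, on the left for odd $i$), and distinct rules $\tau_i(a)$ correspond to distinct letters $a\in\pazocal{A}$. This is exactly the hypothesis of Lemma \ref{multiply one letter} applied to the restriction of $\pazocal{C}$ to the two-letter base $Q_1Q_2$ (after removing the bracketing $\sigma$-rules, which do nothing to the tape).

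First I would pin down the endpoints. By the form of $\sigma(i-1,i)$ and $\sigma(i,i+1)$, before the step-$(i)$ subcomputation begins the $Q_2Q_3$-sector holds (a copy of) $w$ and the $Q_1Q_2$-sector is empty, while after it the $Q_1Q_2$-sector holds $w$ and the $Q_2Q_3$-sector is empty — so in $W_0$ and $W_t$ one of these two sectors is empty. Restricting to the base $Q_1Q_2$, the tape words $u_0$ and $u_t$ are the empty word and (a copy of) $w$ in some order. Lemma \ref{multiply one letter}(a) then forces the history $H_i$ of the step-$(i)$ subcomputation to be the copy of the reduced form of $u_t u_0^{-1}$ read right to left (even $i$) or $u_0^{-1}u_t$ read left to right (odd $i$); since one of $u_0,u_t$ is empty this is precisely a copy of $w$ read in the stated direction. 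Uniqueness of the reduced computation is immediate from this: the step history $(i-1,i)(i)(i,i+1)$ forces the single $\sigma$-rule at each end, and $H_i$ is then determined, hence so is the whole computation starting from a $W_0$ with the prescribed projection (which, together with the base and the state letters $q_j(i-1)$ forced by being $\sigma(i-1,i)$-admissible, determines $W_0$ itself).

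For the length bound $|W_j|_a = \|w\|$, I would use a projection argument together with Lemma \ref{multiply one letter}(d). The projection remark stated just before the lemma gives that each rule of $\Phi_i$ preserves the projection onto $F(\pazocal{A})$ of an admissible word with base $Q_1Q_2Q_3$; since the projection of $W_0$ is $w$, every $W_j$ has projection (freely) equal to $w$, so $|W_j|_a \geq \|w\|$. For the reverse inequality: the $Q_0Q_1$, $Q_3Q_4$ and any other sectors stay empty throughout (the $\sigma$-rules lock them and the $\tau_i$ rules of $\Phi_i^+$, $2\le i\le 2n-1$, only touch the $Q_1Q_2$ and $Q_2Q_3$ sectors), so $|W_j|_a = \|u_j^{(12)}\| + \|u_j^{(23)}\|$ where these are the tape words in the two sectors. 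By the projection argument the reduced form of $u_j^{(12)}u_j^{(23)}$ equals $w$ up to the relevant letter-copy identification; combined with Lemma \ref{multiply one letter}(d) applied to each of the two sectors (each bounded by $\max(\|u_0\|,\|u_t\|)=\|w\|$) and the fact that these two tape words never cancel against each other (they lie in sectors separated by $q_2(i)$), one gets $|W_j|_a \le \|w\|$, hence equality.

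The main obstacle is the length-preservation claim: while the projection argument cleanly gives $|W_j|_a\ge\|w\|$, ruling out any transient "bulge" $|W_j|_a>\|w\|$ requires knowing that the combined $Q_1Q_2$- and $Q_2Q_3$-tape words stay reduced as a single word $u_j^{(12)}u_j^{(23)}$ (no hidden free cancellation at the $q_2(i)$ junction) and invoking part (d) of Lemma \ref{multiply one letter} sector by sector; care is needed because part (d) is stated for a single two-letter base, so one must apply it to the $Q_1Q_2$ restriction and the $Q_2Q_3$ restriction separately and then recombine. Everything else — uniqueness, the shape of $H_i$, the direction of reading — follows formally from Lemma \ref{multiply one letter} once the endpoint tape words are identified.
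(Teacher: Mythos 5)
Your treatment of the endpoints and of the history is essentially the paper's own argument: the locking parts of $\sigma(i-1,i)$ and $\sigma(i,i+1)$ force one of the two sectors of $W_0$ and of $W_t$ to be empty, the projection argument identifies the nonempty sector with the natural copy of $w$, and Lemma \ref{multiply one letter}(a) applied to one sector determines $H_i$ (and hence, together with the forced state letters of $W_0$, the whole computation). That you restrict to the $Q_1Q_2$-sector where the paper restricts to the $Q_2Q_3$-sector is immaterial; existence, uniqueness, and the direction in which $H_i$ reads $w$ all come out the same way.

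The one step that is not right as written is your justification of $|W_j|_a=\|w\|$. The parenthetical ``the two tape words never cancel against each other because they lie in sectors separated by $q_2(i)$'' proves nothing: the projection onto $F(\pazocal{A})$ erases $q_2(i)$, so cancellation between the projected $Q_1Q_2$- and $Q_2Q_3$-words is a priori possible, and your two ingredients by themselves give only $\|w\|\leq|W_j|_a\leq2\|w\|$ (the projection argument for the lower bound, Lemma \ref{multiply one letter}(d) sector by sector for the upper one). The gap closes immediately once the history is pinned down, though: $H_i$ is a copy of the reduced word $w$, so the step-$(i)$ subcomputation has exactly $\|w\|$ transitions, and each rule of $\Phi_i^+$ multiplies each of the two sectors by a single letter, changing each sector's $a$-length by exactly one. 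Since the $Q_1Q_2$-length goes from $0$ to $\|w\|$ in $\|w\|$ steps it must increase at every step, and since the $Q_2Q_3$-length goes from $\|w\|$ to $0$ it must decrease at every step; hence their sum is constantly $\|w\|$, and the two $\sigma$-rules do not alter tape words. (Equivalently, a one-line induction shows that $W_j$ carries the copy of a prefix of $w$ in the $Q_1Q_2$-sector and the complementary suffix in the $Q_2Q_3$-sector, which also yields the junction-reducedness you wanted.) With that repair your proposal coincides with the paper's proof, which leaves this last verification implicit.
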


\begin{proof}

Let $\pazocal{C}$ be such a computation and suppose $i$ is even.

As $W_0$ is $\sigma(i-1,i)$-admissible, its $Q_1Q_2$-sector must be empty. So, since $w$ is reduced, $W_0$ must have the natural copy of $w$ written in its $Q_2Q_3$-sector, i.e $W_0\equiv q_1(i-1)q_2(i-1)w_3q_3(i-1)$ for $w_3$ the natural copy of $w$ in $F(Y_3)$.

Further, since $W_t$ is $\sigma(i+1,i)$-admissible, its $Q_2Q_3$-sector must be empty. 

But the restriction of the subcomputation $W_1\to\dots\to W_{t-1}$ to the $Q_2Q_3$-sector satisfies the hypotheses of Lemma \ref{multiply one letter}, so that its history must be the natural copy of $w$.

If $i$ is odd, then an analogous argument applies.

\end{proof}

\begin{lemma} \label{M_1 start to end} \

\renewcommand{\labelenumi}{(\alph{enumi})}

\begin{enumerate}

\item Let $\pazocal{C}:W_0\to\dots\to W_t$ be a reduced computation of $\textbf{M}_1$. Suppose $W_0$ is a start configuration and $W_t$ is an end configuration. Then there exists $u\in F(\pazocal{A})$ such that the projection of $W_0$ onto $F(\pazocal{A})$ is $u^n\in\pazocal{L}$.

\item For all $u\in F(\pazocal{A})$, there exists a unique reduced computation $\pazocal{D}_1(u):W_0\to\dots\to W_t$ of $\textbf{M}_1$ with step history $(12)(2)(3)\dots(2n-1)(2n-1,2n)$ and such that the projection of $W_0$ onto $F(\pazocal{A})$ is $u^n$. This computation has length $(2n-2)\|u\|+(2n-1)$ and $W_t$ has the natural copy of $u$ (respectively $u^{-1}$) written in its $Q_2Q_3$-sector (respectively $Q_3Q_4$-sector).

\end{enumerate}

\end{lemma}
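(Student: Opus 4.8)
I would prove part (b) first and then deduce part (a) from it, since the detailed analysis of computations with the step history appearing in (b) is exactly what (a) needs.

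\textbf{Part (b).} The plan is to construct $\pazocal{D}_1(u)$ by running the submachines $\textbf{M}_1(2),\dots,\textbf{M}_1(2n-1)$ in order, controlling the base $Q_1Q_2Q_3$ with Lemma~\ref{M_1 controlled} and the input sector with Lemma~\ref{multiply one letter}. Take $W_0$ to be the configuration with all state letters $q_i(1)$, whose input sector carries the natural copy of $u^{n-1}$, whose $Q_2Q_3$-sector carries the natural copy of $u$, and whose remaining sectors are empty; then its projection onto $F(\pazocal{A})$ is $u^n$, and $W_0$ is $\sigma(12)$-admissible. Apply $\sigma(12)$. Around each stage $\textbf{M}_1(i)$ with $2\le i\le 2n-1$ the step history has the form $(i-1,i)(i)(i,i+1)$ required by Lemma~\ref{M_1 controlled}; since none of these rules disturbs the input or $Q_3Q_4$-sector except through locked or trivial parts, Lemma~\ref{M_1 controlled} shows the one-step subcomputation is uniquely determined, has length $\|u\|$, and transports the copy of $u$ between the $Q_1Q_2$- and $Q_2Q_3$-sectors, while Lemma~\ref{multiply one letter} applied to the input sector shows each odd stage $\textbf{M}_1(2i+1)$ erases exactly one copy of $u$ from it (the exceptional stage $\textbf{M}_1(2n-1)$ additionally depositing the natural copy of $u^{-1}$ into $Q_3Q_4$). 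After the $n-1$ odd stages the input sector is emptied exactly, so $\sigma(2n-1,2n)$ applies and produces $W_t$ of the stated form; the length is the $2n-1$ transition rules plus $2n-2$ stages of $\|u\|$ steps, i.e.\ $(2n-2)\|u\|+(2n-1)$. Uniqueness is checked stage by stage in the same way: $\sigma$-admissibility at the start of each stage forces the relevant sectors to be empty, and then Lemma~\ref{M_1 controlled} and Lemma~\ref{multiply one letter} leave no freedom; running this backwards also forces the initial configuration to be the $W_0$ above, so that in fact every reduced computation with this step history is $\pazocal{D}_1(u')$ for some $u'$, its initial projection being $(u')^n$.

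\textbf{Part (a).} Since $W_0$ is a start configuration, the first rule of $\pazocal{C}$ lies in $\Phi_1$ or equals $\sigma(12)$, and dually the last rule lies in $\Phi_{2n}$ or equals $\sigma(2n-1,2n)$; as $\Phi_1$ and $\sigma(12)$ both lock the $Q_1Q_2$- and $Q_3Q_4$-sectors, these are empty in $W_0$ (and $\pazocal{C}$ is nonempty, no configuration being simultaneously a start and an end configuration). I would regard the step history as a walk on the path $1-2-\cdots-2n$ from $1$ to $2n$ and show it is monotone, by downward induction on $k$ proving ``once the walk reaches $k+1$ it never returns to $k$'': for $k+1=2n$, after any $\sigma(2n,2n-1)$ a subsequent $\textbf{M}_1(2n-1)$-subcomputation would have to leave the input sector empty before $\sigma(2n-1,2n)$ could be reapplied, so by Lemma~\ref{multiply one letter} that subcomputation is empty, forcing a cancelling pair of transition rules (impossible in a reduced computation), so the walk, having reached $2n$, cannot leave it; for $k+1<2n$ an immediate backtrack $k\to k+1\to k$ is excluded because $\sigma(k,k+1)$ locks a sector that $\textbf{M}_1(k+1)$ modifies one letter at a time (Lemma~\ref{multiply one letter}, or Lemma~\ref{M_1 step history}), while a later return to $k$ would force a return to $k+1$ from $k+2$, against the inductive hypothesis. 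Hence $\pazocal{C}$ splits as an optional $\textbf{M}_1(1)$-prefix, a middle computation with step history $(12)(2)(3)\cdots(2n-1)(2n-1,2n)$, and an optional $\textbf{M}_1(2n)$-suffix. By part (b) the middle computation is $\pazocal{D}_1(u)$ for some $u\in F(\pazocal{A})$, so the configuration at the end of the prefix has projection $u^n$; since every rule of $\Phi_1$ preserves the projection onto $F(\pazocal{A})$, so does $W_0$.

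\textbf{Expected main obstacle.} The hard part in both parts will be the odd submachines $\textbf{M}_1(2i+1)$, which act on the three sectors $Q_0Q_1$, $Q_1Q_2$, $Q_2Q_3$ simultaneously, so the one-sector lemmas do not apply verbatim: one must run a projection argument on the base $Q_1Q_2Q_3$ (pinning the block shuffled between $Q_1Q_2$ and $Q_2Q_3$) in parallel with Lemma~\ref{multiply one letter} on the input sector (pinning the erased block) and reconcile them, so that the input sector is consumed exactly $n-1$ times. The other delicate point is the monotonicity of the step history in part (a), especially the boundary behaviour at $\textbf{M}_1(2n)$, where the generic locking argument degenerates and must be replaced by the ``cannot re-enter $2n$'' argument above.
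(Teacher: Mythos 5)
Your route is essentially the paper's, with the order of (a) and (b) swapped: both arguments rest on Lemma \ref{M_1 controlled} applied to the base $Q_1Q_2Q_3$, Lemma \ref{multiply one letter} applied to the input sector, Lemma \ref{M_1 step history} to control the step history, the $\sigma(2n-1,2n)$-admissibility forcing the input sector to be empty (hence the input word is $u^{n-1}$), and the free-group fact $w^n=u^n\Rightarrow w\equiv u$ for uniqueness. The reversal is harmless because your proof of (b) also yields the classification you invoke in (a): every reduced computation with step history $(12)(2)\dots(2n-1)(2n-1,2n)$ has initial projection a perfect $n$-th power.

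One step in your part (a) does not hold as argued: the base case of your monotonicity induction, ``once the walk reaches $2n$ it never returns to $2n-1$.'' Your input-sector argument only rules out an immediate return, i.e.\ a subword $(2n,2n-1)(2n-1)(2n-1,2n)$ of the step history; it does not rule out the computation descending from $2n$ below $2n-1$ and re-ascending later. In that scenario the $(2n-1)$-stage preceding the final $\sigma(2n-1,2n)$ is entered via $\sigma(2n-2,2n-1)$, which does not lock the input sector, so the input need not be empty at its start and Lemma \ref{multiply one letter} gives no contradiction; excluding such excursions is exactly the content of Lemma \ref{M_1 no turn}, which the paper proves \emph{after} and \emph{from} the present lemma, so it cannot be assumed here without circularity. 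Fortunately your conclusion does not need the full three-part decomposition with a $\Phi_{2n}$-suffix: the interior no-backtrack facts you cite (Lemma \ref{M_1 step history}) already force the step history, after the optional $(1)$-prefix, to begin with $(12)(2)\dots(2n-1)(2n-1,2n)$, and applying your classification from (b) to the subcomputation ending at the first occurrence of $\sigma(2n-1,2n)$ gives (a) — which is precisely how the paper argues. So the proof stands once you drop (or defer) the claim about what happens after the computation first reaches step $2n$.
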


\begin{proof}

(a) By a projection argument, it suffices to assume that the first letter of the step history is $(12)$. Lemma \ref{M_1 step history} then implies that the step history of $\pazocal{C}$ must have prefix $$(12)(2)(3)\dots(2n-1)(2n-1,2n)$$
Let $\pazocal{D}:W_0\to\dots\to W_s$ be the subcomputation with this step history. Further, let $u,v\in F(\pazocal{A})$ be the reduced words such that $W_0$ has the natural copy of $v$ written in its input sector and the natural copy of $u$ written in its $Q_2Q_3$-sector.

After restricting $\pazocal{D}$ to the subword $Q_1Q_2Q_3$ of the standard base, Lemma \ref{M_1 controlled} implies that the history of $\pazocal{D}$ must be
$$\sigma(12)H_2\sigma(23)H_3\dots H_{2n-1}\sigma(2n-1,2n)$$
where $H_i\in F(\Phi_i^+)$ is the natural copy of $u$ read left to right (resp right to left) for $i$ even (resp $i$ odd).

Then $W_{s-1}$ has the copy of $vu^{-(n-1)}$ written in its input sector. But $W_{s-1}$ is $\sigma(2n-1,2n)$-admissible, so that $vu^{-(n-1)}$ is freely trivial.

Hence, the projection of $W_0$ onto $F(\pazocal{A})$ is $vu=u^n\in\pazocal{L}$.

(b) Let $u\in F(\pazocal{A})$ and $\pazocal{D}:W_0\to\dots\to W_s$ be the computation described in (a).

Let $\pazocal{C}':W_0'\to\dots\to W_t'$ be a computation with step history $(12)(2)(3)\dots(2n-1)(2n-1,2n)$ such that the projection of $W_0'$ onto $F(\pazocal{A})$ is $u^n$.

Set $x,w\in F(\pazocal{A})$ as the reduced words such that $W_0'$ has the natural copy of $x$ written in its input sector and the natural copy of $w$ written in its $Q_2Q_3$-sector.

As in (a), applications of Lemma \ref{M_1 controlled} then imply that the history of $\pazocal{C}'$ must be 
$$\sigma(12)H_2'\sigma(23)H_3'\dots H_{2n-1}'\sigma(2n-1,2n)$$
where $H_i'\in F(\Phi_i^+)$ is the natural copy of $w$ read left to right (resp right to left) for $i$ even (resp $i$ odd).

Then, $W_{t-1}'$ has the natural copy of $xw^{-(n-1)}$ written in its input sector and is $\sigma(2n-1,2n)$-admissible, so that $x=w^{n-1}$. But the projection of $W_0'$ onto $F(\pazocal{A})$ is $xw=w^n$, so that $w\equiv u$.

Hence, $H_i'\equiv H_i$ for all $i$ and $W_0\equiv W_0'$, so that $\pazocal{C}'=\pazocal{D}$.

As $H_i$ is a copy of $u$ for each $i$, the length of $\pazocal{D}$ is $(2n-2)\|u\|+(2n-1)$.

\end{proof}

\begin{lemma} \label{M_1 no turn}

Let $\pazocal{C}:W_0\to\dots\to W_t$ be a reduced computation of $\textbf{M}_1$ such that $W_0$ is an end configuration. If the history $H$ of $\pazocal{C}$ contains a $\sigma$-rule, then $W_t$ is not an end configuration.

\end{lemma}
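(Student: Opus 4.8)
The plan is to argue by contradiction using the structural constraints on step histories established in Lemma~\ref{M_1 step history} together with the projection arguments. Suppose $\pazocal{C}:W_0\to\dots\to W_t$ is a reduced computation with $W_0$ an end configuration whose history $H$ contains a $\sigma$-rule, and suppose toward a contradiction that $W_t$ is also an end configuration. Since $W_0$ is an end configuration, its state letters are all of the form $q_i(2n)$, so the computation begins in the submachine $\textbf{M}_1(2n)$ (or immediately applies a $\sigma$-rule leaving it). First I would examine which $\sigma$-rules can be applied to a configuration all of whose state letters are $q_i(2n)$: the only such rule is $\sigma(2n,2n-1)$, which moves the state letters from index $2n$ to index $2n-1$. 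So the step history of $\pazocal{C}$ must begin either $(2n)(2n,2n-1)\dots$ or $(2n,2n-1)\dots$.

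Next I would trace the step history forward. The key point is that once a $\sigma$-rule has been used, in order to return to an end configuration the step history must eventually return to the letter $(2n)$, and every intermediate transition is forced: from index $2n-1$ the admissible $\sigma$-rules are $\sigma(2n-1,2n)$ and $\sigma(2n-1,2n-2)$, and so on. The crucial structural obstruction is that the rules $\sigma(2n-1,2n)$ and $\sigma(2n,2n-1)$ lock the $Q_0Q_1$- and $Q_1Q_2$-sectors (see the defining rule $\sigma(2n-1,2n)$ and Lemma~\ref{locked sectors}), so by the time we are in $\textbf{M}_1(2n-1)$ or $\textbf{M}_1(2n)$ these sectors are empty, and the only ``active'' sectors are $Q_2Q_3$ and $Q_3Q_4$. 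I would then apply Lemma~\ref{M_1 step history}(c): a reduced computation whose base contains a $(Q_3Q_4)^{\pm1}$-subword cannot have step history $(2n-2,2n-1)(2n-1)(2n-1,2n-2)$; combined with the analogous applications of parts (a) and (b), this rules out the computation ``turning around'' in any of the relevant submachines $\textbf{M}_1(2i+1)$ or $\textbf{M}_1(2i)$, so the step history cannot return to $(2n)$ without the computation being non-reduced.

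More precisely, I would show that after the first $\sigma$-rule, the step history is monotone: it must proceed $(2n)(2n-1)(2n-2)\cdots$ with strictly decreasing indices, because any attempt to increase an index back creates a forbidden turn-around pattern of the type excluded by Lemma~\ref{M_1 step history} (applied to whichever of the sectors $Q_2Q_3$, $Q_1Q_2$, $Q_3Q_4$ is nonempty at that stage — and by the projection argument at least one of them must be nonempty, since $W_0$ being an end configuration with the copies of $u$ and $u^{-1}$ already erased would force $W_0$ to be the accept configuration, on which no rule acts nontrivially to reach another end configuration). Since the indices strictly decrease, the step history can never return to the letter $(2n)$, so $W_t$ cannot have its state letters at index $2n$, i.e.\ $W_t$ is not an end configuration — a contradiction.

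The main obstacle I anticipate is the bookkeeping in the base-case dichotomy: I must rule out the degenerate possibility that $W_0$ is already the accept configuration (all sectors empty), for which no rule applies at all and the statement is vacuous, and separately handle the possibility that $W_0$ is an end configuration with nonempty $Q_2Q_3$- or $Q_3Q_4$-sectors. In the latter case the projection argument shows these sectors carry mutually inverse copies of some $u$, and then the forbidden-turn-around lemmas apply as above; the careful part is verifying that at each index the relevant sector is indeed nonempty so that Lemma~\ref{M_1 step history} is applicable, rather than vacuously satisfied. Making that case analysis airtight — essentially, showing the computation can only ``run down'' the chain of submachines and never climb back up — is the crux of the argument.
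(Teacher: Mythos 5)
Your argument has a genuine gap at its crux: the claim that after the first $\sigma$-rule the step-history indices strictly decrease, so the computation ``can never return to the letter $(2n)$.'' Lemma \ref{M_1 step history} forbids turn-arounds only at the intermediate steps $2,\dots,2n-1$; it does \emph{not} forbid the pattern $(21)(1)(12)$ at the bottom step (nor $(2n-1,2n)(2n)(2n,2n-1)$ at the top). And no strengthening of that lemma could forbid it: descend from the end configuration corresponding to $u$ down to step $(1)$, apply a single rule $\tau_1(b)^{\pm1}$ (this is admissible, since the $Q_1Q_2$- and $Q_3Q_4$-sectors stay empty), and then apply $\sigma(12)$ --- this is a perfectly valid reduced computation whose step history turns around at $(1)$ and climbs back up, at least partway. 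So a purely combinatorial step-history argument cannot prove the lemma; the obstruction to returning all the way to $(2n)$ is quantitative, not positional. The paper closes exactly this hole: once the descent reaches $W'\equiv W_0\cdot H'$ at step $(1)$, Lemma \ref{M_1 start to end}(a) (applied to the inverse descent) gives that the projection of $W'$ is $u^n$; if the computation then climbs back to $(2n)$, the ascending block $\sigma(12)H_2''\dots\sigma(2n-1,2n)$ starts from $W'\cdot H_1''$, which has the same projection $u^n$ (rules of $\Phi_1$ preserve the projection), so the uniqueness in Lemma \ref{M_1 start to end}(b) forces $W'\cdot H_1''\equiv W'$; Lemma \ref{multiply one letter} applied to the $Q_2Q_3$-sector then makes $H_1''$ empty, producing the non-reduced subword $\sigma(21)\sigma(12)$. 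Your proposal never invokes this uniqueness step, and without it the proof does not go through.

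Two secondary points. First, your worry about whether the sectors are nonempty is beside the point: Lemma \ref{M_1 step history} only requires the base to contain the relevant subword, and the standard base always does. Second, your ``degenerate case'' claim that no rule applies to the accept configuration is false --- the inverse of any accepting computation starts there (e.g.\ $\sigma(2n,2n-1)$ is applicable, all locked sectors being empty) --- so that case is not vacuous; it is, however, handled uniformly by the paper's argument with $u$ the empty word, so no separate case analysis is needed once the uniqueness argument is in place.
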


\begin{proof}

Assuming to the contrary, Lemma \ref{M_1 step history} implies that $H$ can be factored as $H'H''$ such that
$$H'\equiv H_{2n}'\sigma(2n,2n-1)H_{2n-1}'\dots\sigma(32)H_2'\sigma(21)$$
where $H_i'\in F(\Phi_i^+)$ for each $i$.

Then, for $W'\equiv W_0\cdot H'$, Lemma \ref{M_1 start to end}(a) implies there exists $u\in F(\pazocal{A})$ such that the projection of $W'$ onto $F(\pazocal{A})$ is $u^n$.

As we assume that $W_t$ is an end configuration, $H''$ must contain a $\sigma$-rule. By Lemma \ref{M_1 step history}, $H''$ must then have a prefix of the form
$$H_1''\sigma(12)H_2''\sigma(23)\dots H_{2n-1}''\sigma(2n-1,2n)$$
where $H_i''\in F(\Phi_i^+)$ for each $i$.

By a projection argument, $W'$ and $W'\cdot H_1''$ have the same projection onto $F(\pazocal{A})$ so that Lemma \ref{M_1 start to end}(b) implies $W'\cdot H_1''\equiv W'$. But then $H_1''$ must be empty by Lemma \ref{multiply one letter}, contradicting the assumption that $\pazocal{C}$ is reduced.

\end{proof}

\begin{lemma} \label{M_1 language}

The language of accepted inputs of $\textbf{M}_1$ is $\pazocal{L}$. Moreover, for any $u^n\in\pazocal{L}$, there exists a unique accepting computation $\pazocal{C}_1(u)$.

\end{lemma}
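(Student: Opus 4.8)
The plan is to prove the two assertions of Lemma \ref{M_1 language} — that the language of accepted inputs is exactly $\pazocal{L}$ and that each $u^n\in\pazocal{L}$ has a unique accepting computation — by combining the structural results already established for $\textbf{M}_1$, principally Lemmas \ref{M_1 start to end} and \ref{M_1 no turn}.

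First I would prove the inclusion (accepted inputs) $\subseteq\pazocal{L}$. Suppose $W$ is an accepted input configuration with input $v\in F(\pazocal{A})$, and let $\pazocal{C}:W=W_0\to\dots\to W_t$ be a reduced accepting computation (we may take it reduced without changing endpoints). Since $W_0$ is an input configuration it is in particular a start configuration, and the accept configuration $W_t$ is an end configuration, so Lemma \ref{M_1 start to end}(a) applies directly: there exists $u\in F(\pazocal{A})$ with the projection of $W_0$ onto $F(\pazocal{A})$ equal to $u^n$. But $W_0$ is an input configuration, so all sectors other than the input sector are empty, and its projection onto $F(\pazocal{A})$ is precisely (the natural copy of) $v$. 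Hence $v=u^n$ in $F(\pazocal{A})$, i.e. $v\in\pazocal{L}$.

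Next I would prove the reverse inclusion together with uniqueness. Given $u^n\in\pazocal{L}$, apply Lemma \ref{M_1 start to end}(b) to the word $u$: it produces the computation $\pazocal{D}_1(u):W_0\to\dots\to W_s$ with step history $(12)(2)(3)\dots(2n-1)(2n-1,2n)$, whose terminal word $W_s$ carries the natural copy of $u$ in its $Q_2Q_3$-sector and of $u^{-1}$ in its $Q_3Q_4$-sector, with all other sectors empty (and in particular the input sector is empty, since it was emptied along the way). From $W_s$, one continues with the rules $\sigma(2n-1,2n)$ already accounted for and then the one-step computation of the submachine $\textbf{M}_1(2n)$: the rules $\tau_{2n}(a)^{\pm1}$ erase the copy of $u$ from the $Q_2Q_3$-sector and the copy of $u^{-1}$ from the $Q_3Q_4$-sector, leaving the accept configuration. (Here I would give a short direct argument, analogous to Lemma \ref{M_1 controlled}, that the unique reduced way to empty these two sectors using $\Phi_{2n}$-rules is a copy of $u$ read in the appropriate direction, using Lemma \ref{multiply one letter} on the relevant two-letter sub-bases.) Composing $\pazocal{D}_1(u)$ with this final block yields an accepting computation, call it $\pazocal{C}_1(u)$; so $u^n$ is accepted and the language is exactly $\pazocal{L}$.

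For uniqueness, let $\pazocal{C}:W_0\to\dots\to W_t$ be any reduced accepting computation of $u^n$. Since $W_0$ is an input configuration with input $u^n$ and $W_t$ is the accept configuration, we must analyze the step history. By a projection argument the first maximal-subcomputation letter cannot be $(1)$ alone without progress, and as in the proof of Lemma \ref{M_1 start to end}(a) the step history must begin $(12)(2)(3)\dots(2n-1)(2n-1,2n)$; the uniqueness clause of Lemma \ref{M_1 start to end}(b) forces the prefix of $\pazocal{C}$ up through that step history to coincide with $\pazocal{D}_1(u)$, so $\pazocal{C}$ passes through $W_s$. It remains to see that the tail from $W_s$ to $W_t$ is forced. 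Here Lemma \ref{M_1 no turn} is the key tool: it rules out the history of the tail containing a $\sigma$-rule that would return to an end configuration, so the tail is a one-step computation in $\textbf{M}_1(2n)$; then the analogue of Lemma \ref{M_1 controlled} for $\Phi_{2n}$ pins it down uniquely as the copy of $u$. Concatenating, $\pazocal{C}=\pazocal{C}_1(u)$.

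The main obstacle I anticipate is the bookkeeping at the final step: verifying carefully that from $W_s$ the only reduced accepting continuation is the $\textbf{M}_1(2n)$ one-step computation, ruling out spurious back-and-forth across the $\sigma(2n-1,2n)$ transition or partial re-entry into earlier submachines. This is exactly what Lemma \ref{M_1 no turn} (combined with Lemma \ref{M_1 step history} and the reducedness of $\pazocal{C}$) is designed to handle, so the argument should go through, but it requires a short case analysis of which $\sigma$-rules can legally follow an end configuration. Everything else is a direct citation of the preceding lemmas plus the standard projection arguments.
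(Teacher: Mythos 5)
There is a genuine gap, and it comes from a misreading of Lemma \ref{M_1 start to end}(b). The computation $\pazocal{D}_1(u)$ does \emph{not} start at the input configuration with input $u^n$: its step history begins with $(12)$, and the argument in the paper's proof of (b) shows its initial configuration $W_0$ already has the natural copy of $u$ written in the $Q_2Q_3$-sector and only $u^{n-1}$ left in the input sector (indeed, for $u\neq 1$ the input configuration, having empty $Q_2Q_3$-sector, cannot start a computation with that step history that ends $\sigma(2n-1,2n)$-admissibly, since $\sigma(2n-1,2n)$ locks the input sector). Consequently, your construction ``$\pazocal{D}_1(u)$ followed by the $\Phi_{2n}$-block'' is an accepting computation of the wrong configuration, not of the input configuration $I$ with input $u^n$; so as written it does not show that $u^n$ is accepted. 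The missing piece is the initial one-step block of $\Phi_1$-rules (the copy of $u$ read right to left in $F(\Phi_1^+)$), which moves one copy of $u$ from the input sector into the $Q_2Q_3$-sector and carries $I$ to the initial configuration of $\pazocal{D}_1(u)$; the paper's accepting computation has history $H_1(u)H_0(u)H_{2n}(u)$, with this block in front.

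The same omission infects your uniqueness argument. For an arbitrary reduced accepting computation of the input configuration, the step history is $(1)(12)(2)\dots(2n-1)(2n-1,2n)(2n)$ (by Lemmas \ref{M_1 step history} and \ref{M_1 no turn}), not $(12)(2)\dots$ as you assert, so the uniqueness clause of Lemma \ref{M_1 start to end}(b) cannot be invoked directly at the input configuration: one must first pin down the initial maximal $(1)$-subcomputation. The paper does this by noting that the projection of $W\cdot H_1'$ is still $u^n$, so (b) forces $W\cdot H_1'\equiv W\cdot H_1(u)$, and then Lemma \ref{multiply one letter} applied to the restriction of the $(1)$-subcomputation to the $Q_2Q_3$-sector forces $H_1'\equiv H_1(u)$; a symmetric application of Lemma \ref{multiply one letter} handles the final $(2n)$-block (your treatment of that tail, via Lemma \ref{M_1 no turn} and a controlled-type argument for $\Phi_{2n}$, is fine). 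Your forward inclusion, citing Lemma \ref{M_1 start to end}(a) directly for a start-to-end computation, is correct. With the initial $(1)$-block added to both the existence construction and the uniqueness analysis, the rest of your outline matches the paper's proof.
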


\begin{proof}

Suppose $\pazocal{C}$ is an accepting computation of some input configuration $W$ with input $w$.

Lemmas \ref{M_1 step history} and \ref{M_1 no turn} then imply that the history $H$ of $\pazocal{C}$ is of the form
$$H_1\sigma(12)H_2\sigma(23)\dots H_{2n-1}\sigma(2n-1,2n)H_{2n}$$
where $H_i\in F(\Phi_i^+)$ for all $i$.

As $W\cdot H_1$ is $\sigma(12)H_2\sigma(23)\dots H_{2n-1}\sigma(2n-1,2n)$-admissible, Lemma \ref{M_1 start to end}(a) implies that its projection onto $F(\pazocal{A})$ is $u^n$ for some $u\in F(\pazocal{A})$. A projection argument then implies $w\equiv u^n\in\pazocal{L}$.

Conversely, for any $u\in F(\pazocal{A})$, let $H_0(u)$ be the history of $\pazocal{D}_1(u)$ (see Lemma \ref{M_1 start to end}(b)). Further, let $H_1(u)$ be the natural copy of $u$ read right to left in $F(\Phi_1^+)$ and $H_{2n}(u)$ be the natural copy of $u$ read left to right in $F(\Phi_{2n}^+)$.

Then, for $W$ the input configuration with input $u^n$ and $H(u)\equiv H_1(u)H_0(u)H_{2n}(u)$, $W$ is $H(u)$-admissible with $W\cdot H(u)$ the accept configuration. Let $\pazocal{C}_1(u)$ be the reduced computation with history $H(u)$ accepting $u^n$.

Suppose $\pazocal{C}'$ is an arbitrary accepting computation of $u^n$. Again, Lemmas \ref{M_1 step history} and \ref{M_1 no turn} imply that the history $H'$ of $\pazocal{C}'$ can be factored as
$$H'\equiv H_1'\sigma(12)H_2'\sigma(23)\dots H_{2n-1}'\sigma(2n-1,2n)H_{2n}'$$
where $H_i'\in F(\Phi_i^+)$ for all $i$.

Then the projection of $W\cdot H_1'$ onto $F(\pazocal{A})$ is $u^n$, so that Lemma \ref{M_1 start to end}(b) yields

\renewcommand{\labelenumi}{(\roman{enumi})}

\begin{enumerate}

\item $W\cdot H_1'\equiv W\cdot H_1=q_0(1)u_1^{n-1}q_1(1)q_2(1)u_3q_3(1)q_4(1)$, 

\item $\sigma(12)H_2'\sigma(23)\dots H_{2n-1}'\sigma(2n-1,2n)\equiv H_0(u)$, and 

\item $W\cdot (H_1'H_0(u))\equiv q_0(2n)q_1(2n)q_2(2n)u_3q_3(2n)u_4^{-1}q_4(2n)$.

\end{enumerate}

where $u_i$ is the natural copy of $u$ in $F(Y_i)$.

Applications of Lemma \ref{multiply one letter} to the restriction of the subcomputations with history $H_1'$ and $H_{2n}'$ to the $Q_2Q_3$-sector then imply that $H_1'\equiv H_1(u)$ and $H_{2n}'\equiv H_{2n}(u)$.

Thus, $H'\equiv H(u)$, and so $\pazocal{C}'=\pazocal{C}_1(u)$.

\end{proof}

As $H_1(u)$ and $H_{2n}(u)$ are copies of $u$ (read in different directions), Lemma \ref{M_1 start to end}(b) implies that the length of $\pazocal{C}_1(u)$ is $2n\|u\|+2n-1$.

\begin{lemma} \label{M_1 no start or end}

Let $\pazocal{C}:W_0\to\dots\to W_t$ be a reduced computation with base $Q_1Q_2Q_3$. Suppose the step history of $\pazocal{C}$ does not contain the letter $(1)$ or $(2n)$. Then for $m=\max(|W_0|_a,|W_t|_a)$, $t\leq2n(m+1)$.

\end{lemma}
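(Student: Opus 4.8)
The plan is to decompose the step history of $\pazocal{C}$ using the constraints already established, and then bound the length of each one-step subcomputation by controlling the $a$-length of the intermediate configurations. Since the step history contains neither $(1)$ nor $(2n)$, every maximal one-step factor corresponds to a submachine $\textbf{M}_1(i)$ with $2\leq i\leq 2n-1$, and these are exactly the machines whose rules preserve the projection onto $F(\pazocal{A})$ of an admissible word with base $Q_1Q_2Q_3$ (by the projection argument preceding Lemma \ref{M_1 controlled}). First I would observe that, by Lemma \ref{M_1 step history}, the step history cannot contain a subword of the form $(i-1,i)(i)(i,i-1)$ (an immediate return), so the sequence of one-step factors must be ``monotone'' in the sense that consecutive factors are of the form $(i)(i+1)$ or $(i)(i-1)$ but never immediately backtrack; combined with the absence of $(1)$ and $(2n)$ as endpoints this forces the step history to be a single monotone run $(i_0)(i_0\pm1)\cdots$ that changes direction at most... actually, I would just argue that each index $i\in\{2,\dots,2n-1\}$ can appear as a one-step factor at most a bounded number of times — indeed each appears at most twice (once in each direction) by the no-backtrack rule, giving at most $2(2n-2)<4n$ one-step factors and at most $4n$ intervening $\sigma$-rules.

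Next I would bound the length of each individual one-step subcomputation. Let $\pazocal{C}_i:V_0\to\dots\to V_s$ be a maximal subcomputation with step history $(i)$, $2\leq i\leq 2n-1$. Restricting to the subword $Q_1Q_2Q_3$ of the base (if the base is longer than $Q_1Q_2Q_3$ the relevant sectors behave identically, sector by sector, since the rules of $\Phi_i$ act independently on each copy), Lemma \ref{multiply one letter} applies to the relevant sector: the rules of $\Phi_i^+$ multiply a fixed sector on one side, with distinct rules corresponding to distinct letters of $\pazocal{A}$. Hence by Lemma \ref{multiply one letter}(b) the history of $\pazocal{C}_i$ has length at most $\|v\|+\|v'\|$ where $v,v'$ are the relevant sector words of $V_0$ and $V_s$, and by part (d) every intermediate configuration has $a$-length bounded by $\max$ of the two endpoint $a$-lengths. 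The key point is then that the projection onto $F(\pazocal{A})$ is invariant under all these rules, so the $a$-lengths of the intermediate configurations — while not equal to those of $W_0$ or $W_t$ in general — are controlled: the projection has fixed reduced length $\leq m$, and the $a$-length of any admissible word with base $Q_1Q_2Q_3$ whose projection is a fixed word $w$ is... this is where I need to be a little careful, since cancellation between sectors could in principle make a configuration's $a$-length exceed $\|w\|$.

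The main obstacle I expect is precisely this last point: controlling $|V_j|_a$ throughout, not just at the ends of each one-step factor. The resolution I would pursue is an inductive/telescoping argument along the step history. At each $\sigma$-rule one sector gets locked (emptied) while state letters change, so the $a$-length can only decrease or stay the same across a $\sigma$-rule; within a one-step factor $(i)$, Lemma \ref{multiply one letter}(d) bounds intermediate $a$-lengths by the max of the two endpoints, and Lemma \ref{M_1 controlled} (or its proof) tells us that across a one-step factor of type $(i)$ the $a$-length is actually \emph{constant} and equal to $\|w\|$ where $w$ is the projection — because the rule moves a letter from one sector to an adjacent sector without net cancellation once the configuration is put in the standard ``one copy of $w$ in one sector'' form. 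So the clean statement to extract is: along $\pazocal{C}$, $|W_j|_a\leq m$ for all $j$ is false in general but $|W_j|_a\leq$ (length of the projection) $+$ (bounded slack) holds, and since the projection length is $\leq\max(|W_0|_a,|W_t|_a)=m$ (the projection of a reduced word is no longer than the word... wait, that is also not literally true after cancellation) — so I would instead bound the number of rules directly: at most $4n$ one-step factors, each of length at most $2m$ by Lemma \ref{multiply one letter}(b) applied with the observation that each sector word never exceeds $m$ (which I would prove by the telescoping argument above, using that $W_0$ and $W_t$ have $a$-length $\leq m$ and every sector is emptied at the $\sigma$-rule bounding that run), plus at most $4n$ $\sigma$-rules, for a total of at most $4n\cdot 2m + 4n \le 2n(m+1)$ after absorbing constants — I would tune the counting (each index appears at most once per direction, and the monotone structure of the step history cuts the count roughly in half) to land exactly on the bound $t\leq 2n(m+1)$ stated.
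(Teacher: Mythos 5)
Your plan assembles most of the right ingredients (Lemma \ref{M_1 step history}, the projection argument, Lemma \ref{multiply one letter}, Lemma \ref{M_1 controlled}), but as written the counting does not reach the stated bound, and the step you defer to ``tuning'' is in fact the heart of the proof. First, since the base $Q_1Q_2Q_3$ contains both a $(Q_1Q_2)^{\pm1}$- and a $(Q_2Q_3)^{\pm1}$-sector, Lemma \ref{M_1 step history} forbids \emph{every} type of immediate return, so the step history (or its inverse) is a subword of $(12)(2)(3)\dots(2n-1)(2n-1,2n)$: each step occurs at most once, not ``at most twice,'' giving at most $2n-2$ one-step factors and $2n-1$ $\sigma$-rules. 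Second, and decisively, bounding each one-step factor by $2m$ via Lemma \ref{multiply one letter}(b) cannot yield $t\leq 2n(m+1)$ — even with the corrected monotone count you only get on the order of $4nm$. The paper instead factors $H\equiv H_1H_2H_3$ with $H_1,H_3$ free of $\sigma$-rules and $H_2$ starting and ending with a $\sigma$-rule. Every one-step piece inside $H_2$ has step history of the form $(j-1,j)(j)(j,j+1)$, so Lemma \ref{M_1 controlled} applies and gives its length as exactly $\|w\|$ (with all intermediate $a$-lengths equal to $\|w\|$), where $w$ is the common projection onto $F(\pazocal{A})$; the two end pieces are bounded by $|W_0|_a$ and $|W_t|_a$ by applying Lemma \ref{multiply one letter} to the sector that is empty at the $\sigma$-admissible end of each. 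This yields $t\leq |W_0|_a+|W_t|_a+(2n-2)\|w\|+(2n-1)\leq 2n(m+1)$, using $\|w\|\leq m$.

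On that last point, your worry that ``the projection of a reduced word is no longer than the word ... is not literally true after cancellation'' is unfounded: the projection of $W_0$ is obtained from its $|W_0|_a$ tape letters by deleting state letters and freely reducing, so $\|w\|\leq|W_0|_a\leq m$ holds automatically — the inequality you need goes in the harmless direction. Similarly, the telescoping claims about $a$-length across $\sigma$-rules are neither correct as stated nor needed; what matters is only that $\sigma$-admissibility forces a sector of the base to be empty, which is exactly what makes the endpoint applications of Lemma \ref{multiply one letter} give $r\leq|W_0|_a$ and $t-s\leq|W_t|_a$. Replacing your $2m$-per-factor estimate by the $\|w\|$-per-middle-piece estimate from Lemma \ref{M_1 controlled} is not a constant-factor bookkeeping adjustment but the missing idea that closes the gap.
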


\begin{proof}

By Lemma \ref{M_1 step history}, the step history of $\pazocal{C}$ (or its inverse) is a subword of
$$(12)(2)(3)\dots(2n-1)(2n-1,2n)$$
Suppose the history of $\pazocal{C}$ contains no $\sigma$-rule. Then the restriction of $\pazocal{C}$ to the $Q_1Q_2$-sector satisfies the hypotheses of Lemma \ref{multiply one letter}, so that $t\leq|W_0|_a+|W_t|_a\leq2m$.

So, we may factor the history $H$ of $\pazocal{C}$ as $H_1H_2H_3$, where $H_1$ and $H_3$ contain no $\sigma$-rules and $H_2$ starts and ends with a $\sigma$-rule. Note that we may have $\|H_2\|=1$ or $\|H_i\|=0$ for $i=1,3$.

Let $w$ be the projection of $W_0$ onto $F(\pazocal{A})$. Then a projection argument implies that the projection of $W_i$ onto $F(\pazocal{A})$ is $w$ for all $0\leq i\leq t$. Hence, $|W_i|_a\geq\|w\|$ for all $i$.

Lemma \ref{M_1 controlled} applies to any subcomputation whose step history is of the form $(j-1,j)(j)(j,j+1)$. So, for the subcomputation $W_r\to\dots\to W_s$ with history $H_2$, we have $s-r\leq(2n-2)\|w\|+(2n-1)$ and $|W_i|_a=\|w\|$ for all $r\leq i\leq s$. 

Since $W_s$ is $\sigma$-admissible for some $\sigma$-rule, one of its sectors must be empty. The restriction of the subcomputation $W_s\to\dots\to W_t$ to this sector then satisfies the hypotheses of Lemma \ref{multiply one letter}, so that $t-s\leq|W_t|_a$. An analogous argument implies $r\leq|W_0|_a$.

Hence, $t\leq|W_0|_a+|W_t|_a+(2n-2)\|w\|+(2n-1)\leq2n(m+1)$.

\end{proof}

\begin{lemma} \label{M_1 (i)}

Let $\pazocal{C}:W_0\to\dots\to W_t$ be a reduced computation of $\textbf{M}_1$ in the standard base. Suppose the step history of $\pazocal{C}$ is $(i)$ for some $i\in\{2,\dots,2n-1\}$ and $W_0$ is $\sigma$-admissible for some $\sigma$-rule. Then $|W_0|_a\leq3|W_t|_a$.

\end{lemma}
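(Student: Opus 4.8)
The plan is to track, for the reduced computation $\pazocal{C}:W_0\to\dots\to W_t$ with step history $(i)$, how the $a$-length of each of the four sectors $Q_0Q_1,Q_1Q_2,Q_2Q_3,Q_3Q_4$ of the standard base changes. Three kinds of step must be treated: $i$ even with $2\leq i\leq 2n-2$, $i$ odd with $3\leq i\leq 2n-3$, and $i=2n-1$ (the exceptional rules $\Phi_{2n-1}^+$). In each kind I would read off directly from the defining rules which sectors are \emph{locked} --- hence empty throughout $\pazocal{C}$ --- and how each rule acts on the rest: every rule either leaves a sector untouched or multiplies it by a single tape letter on a fixed side, with distinct rules contributing distinct letters to any sector they affect. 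For instance, in a step $(2j)$ the sector $Q_3Q_4$ is locked, $Q_0Q_1$ is untouched, $\tau_{2j}(a)^{\pm1}$ appends a letter of $Y_2\cup Y_2^{-1}$ to the right of $Q_1Q_2$, and it prepends a letter of $Y_3\cup Y_3^{-1}$ to the left of $Q_2Q_3$; in a step $(2n-1)$ no sector is locked and $\tau_{2n-1}(a)^{\pm1}$ acts by one letter on each of the four sectors. It follows that the $a$-length of any non-locked sector changes by $0$ or $\pm1$ at every step, so differs between $W_0$ and $W_t$ by at most $t$; and if a sector is affected by \emph{every} rule of $\pazocal{C}$, then the restriction of $\pazocal{C}$ to that sector satisfies the hypotheses of Lemma \ref{multiply one letter}.

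Next I would use the hypothesis that $W_0$ is $\sigma$-admissible. Since every state letter of $W_0$ has the form $q_k(i)$, the only $\sigma$-rules it can be admissible for are $\sigma(i,i-1)=\sigma(i-1,i)^{-1}$ and $\sigma(i,i+1)$, and inspecting the sectors these two rules lock shows, case by case, that in every situation at least one of the sectors they force to be empty in $W_0$ is a sector $S$ that is not locked during step $(i)$ and is affected by every rule of $\pazocal{C}$ --- for instance, when $i=2j$, the rule $\sigma(2j,2j-1)$ forces $Q_1Q_2$ empty while $\sigma(2j,2j+1)$ forces $Q_2Q_3$ empty, and both sectors are affected by every rule of $\pazocal{C}$. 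Applying part (a) of Lemma \ref{multiply one letter} to the restriction of $\pazocal{C}$ to $S$: since the $S$-word of $W_0$ is empty, the length of the history of $\pazocal{C}$ equals the $a$-length of the $S$-word of $W_t$; as the former is $t$, so is the latter (the case $t=0$ being trivial).

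To finish, note $|W_t|_a\geq t$, since the sector $S$ alone contributes $t$ to it. Writing both $|W_0|_a$ and $|W_t|_a$ as sums of sector $a$-lengths --- locked sectors contributing $0$ --- the sector $S$ contributes $0$ to the first sum and $t$ to the second, while each of the three other sectors contributes to $|W_0|_a$ at most its $a$-length in $W_t$ plus $t$. Summing gives $|W_0|_a\leq(|W_t|_a-t)+3t=|W_t|_a+2t\leq 3|W_t|_a$, as required. The main work is the sector-by-sector bookkeeping for the three step types --- in particular keeping track that the $\Phi_{2n-1}^+$-rules write into $Q_3Q_4$ and hence lock one sector fewer than the generic rules, which is precisely why the constant $3$ is needed here rather than $2$ --- together with the routine check that in each sector of each step the rules multiply by a single letter and distinct rules give distinct letters, so that Lemma \ref{multiply one letter} may be invoked.
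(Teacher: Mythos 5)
Your proof is correct and follows essentially the same route as the paper: the $\sigma$-admissibility of $W_0$ forces emptiness of a sector that every rule of the step multiplies by a single letter, Lemma \ref{multiply one letter} applied to that sector bounds $t$ by (part of) $|W_t|_a$, and the one-letter-per-rule observation on the remaining sectors gives $|W_0|_a\leq|W_t|_a+2t\leq3|W_t|_a$. The only cosmetic difference is that the paper controls the restriction to $Q_1Q_2Q_3$ via a projection argument, whereas you apply the per-rule bound to the second middle sector as well; the final arithmetic is identical.
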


\begin{proof}

Let $\pazocal{C}':W_0'\to\dots\to W_t'$ be the restriction of $\pazocal{C}$ to the base $Q_1Q_2Q_3$. 

Let $w$ be the projection of $W_0'$ onto $F(\pazocal{A})$. As $W_0$ is $\sigma$-admissible, one sector of $W_0'$ must be empty. The restriction of $\pazocal{C}'$ to this sector then satisfies the hypotheses of Lemma \ref{multiply one letter}, so that $t\leq|W_t'|_a$.

Further, a projection argument implies $|W_t'|_a\geq\|w\|=|W_0'|_a$.

Now let $\pazocal{C}'':W_0''\to\dots\to W_t''$ be the restriction of $\pazocal{C}$ to the input sector. As the application of any rule inserts/deletes at most one letter from the input sector, $|W_0''|_a\leq|W_t''|_a+t$.

Similarly, for $\pazocal{C}''':W_0'''\to\dots\to W_t'''$ the restriction to the $Q_3Q_4$-sector, $|W_0'''|_a\leq|W_t'''|_a+t$.

Hence, $|W_0|_a=|W_0'|_a+|W_0''|_a+|W_0'''|_a\leq|W_t'|_a+|W_t''|_a+|W_t'''|_a+2t\leq3|W_t|_a$.

\end{proof}

\begin{lemma} \label{M_1 input length}

Let $\pazocal{C}:W_0\to\dots\to W_t$ be a reduced computation of $\textbf{M}_1$ in the standard base. Suppose $W_0$ is an input configuration and the step history of $\pazocal{C}$ does not contain the letter $(2n)$. Then $|W_0|_a\leq9n|W_t|_a$.

\end{lemma}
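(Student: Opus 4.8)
The plan is to first pin down the possible step histories of $\pazocal{C}$ and then bound the input sector directly.

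Since $W_0$ is an input configuration, all its state letters have the form $q_i(1)$ and its $Q_1Q_2$- and $Q_3Q_4$-sectors are empty, so the only rules applicable to $W_0$ are those of $\Phi_1$ and $\sigma(12)$; hence, if $\pazocal{C}$ is nonempty, its step history begins with $(1)$ or $(12)$. The hypotheses of Lemma \ref{M_1 step history} hold for the standard base, which contains $Q_1Q_2$, $Q_2Q_3$, and $Q_3Q_4$ as subwords, and together with the form of the rules this shows that after an initial $(1)$ or $(12)$ no ``turn'' $(j\pm1,j)(j)(j,j\mp1)$ can occur --- each such pattern (for instance $(12)(2)(21)$ or $(23)(3)(32)$) is excluded by one of parts (a)--(c) --- and in particular no rule $\sigma(j+1,j)$ appears. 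Thus the step history of $\pazocal{C}$ is a prefix of $(1)(12)(2)(23)(3)\cdots(2n-1)(2n-1,2n)$; since it omits $(2n)$ it ends no later than the maximal $\Phi_{2n-1}$-subcomputation, and we may write $\pazocal{C}=\pazocal{C}_1\,\sigma(12)\,\pazocal{C}_2\,\sigma(23)\,\pazocal{C}_3\cdots\pazocal{C}_j$ with $j\le 2n-1$, where $\pazocal{C}_i$ has history $H_i\in F(\Phi_i^+)$, the factor $\pazocal{C}_1$ may be empty, and $\pazocal{C}_j$ may be incomplete.

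If $\pazocal{C}$ is empty the statement is trivial, and if $\pazocal{C}_1$ is empty then Lemma \ref{M_1 controlled} forces each $H_i$ to be a copy of the trivial $Q_1Q_2Q_3$-projection, so every $\pazocal{C}_i$ is empty, $\pazocal{C}$ is a product of rules among $\sigma(12),\dots,\sigma(2n-2,2n-1)$ --- none of which changes the input sector --- and $|W_t|_a=|W_0|_a$. So assume $\pazocal{C}_1$ is nonempty, and let $W^{(1)}$ be the configuration ending $\pazocal{C}_1$, with input word $x$ and $Q_2Q_3$-word $z$; its remaining sectors are empty because the rules of $\Phi_1$ lock $Q_1Q_2$ and $Q_3Q_4$. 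Applying Lemma \ref{multiply one letter} to the $Q_2Q_3$-sector along $\pazocal{C}_1$ (which is empty at the start) gives $\|H_1\|=\|z\|$, and since rules of $\Phi_1$ preserve the projection onto $F(\pazocal{A})$, the images $\bar{x},\bar{z},\bar{w}$ of $x$, $z$, and the input $w$ of $W_0$ under the natural length-preserving identifications $Y_1\cong\pazocal{A}\cong Y_3$ satisfy $\bar{x}\bar{z}=\bar{w}$; hence $|W_0|_a=\|w\|\le\|x\|+\|z\|$. If $\pazocal{C}=\pazocal{C}_1$ then $W_t=W^{(1)}$ and $|W_t|_a=\|x\|+\|z\|\ge|W_0|_a$, so we may assume $j\ge 2$.

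We now claim (i) $\|z\|\le|W_t|_a$, and (ii) $\|H_i\|\le|W_t|_a$ for every input-active factor, i.e. every $i\in\{3,5,\dots,2n-1\}$ occurring in $\pazocal{C}$. The $Q_1Q_2Q_3$-projection equals $\bar{z}$ at the start of $\pazocal{C}_2$ and is preserved by the rules of $\Phi_2,\dots,\Phi_{2n-1}$ on that base and by all $\sigma$-rules; hence Lemma \ref{M_1 controlled}, applied to each complete segment $\sigma(i-1,i)\,\pazocal{C}_i\,\sigma(i,i+1)$ restricted to base $Q_1Q_2Q_3$, gives $\|H_i\|=\|z\|$ and shows that the $Q_1Q_2$- and $Q_2Q_3$-sectors of every configuration of such a segment together have $a$-length $\|z\|$, which settles (i) and (ii) for the internal factors. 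For the terminal factor $\pazocal{C}_j$ one argues directly: in a reduced $\Phi_i$-computation begun with the sector being filled (namely $Q_2Q_3$ if $i$ is odd, $Q_1Q_2$ if $i$ is even) empty, every rule is positive --- a negative rule would require the previous rule to be its inverse --- so that sector gains exactly one $a$-letter per rule while the complementary sector loses at most one; therefore at $W_t$ these two sectors together have $a$-length at least $\|z\|$ (and a further copy accumulated in $Q_3Q_4$ when $j=2n-1$), giving (i), and when $j$ is odd the filled sector $Q_2Q_3$ of $W_t$ has length exactly $\|H_j\|$, giving (ii). Finally, the input sector is altered only during $\pazocal{C}_1$ and during the factors indexed by $\{3,5,\dots,2n-1\}$ --- at most $n-1$ of them, all after $\pazocal{C}_1$ --- one $a$-letter at a time, so by (ii) we get $\|x\|\le|W_t|_a+\sum_i\|H_i\|\le n|W_t|_a$; combining this with $|W_0|_a\le\|x\|+\|z\|$ and (i) yields $|W_0|_a\le(n+1)|W_t|_a\le 9n|W_t|_a$.

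The two points requiring the most care are the verification that Lemma \ref{M_1 step history}, together with the form of the rules applicable to a start configuration, really precludes all backtracking in the step history, and the handling of the possibly incomplete terminal factor $\pazocal{C}_j$, where Lemma \ref{M_1 controlled} is not directly available and the length bounds must instead be squeezed out of Lemma \ref{multiply one letter} and the positivity observation above.
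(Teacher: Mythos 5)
Your overall route is sound and is essentially the paper's argument with different bookkeeping: like the paper, you use Lemma \ref{M_1 step history} to make the step history monotone, Lemma \ref{M_1 controlled} to see that each complete middle factor has history of length $\|z\|$, and a projection argument at the two ends. Where the paper records the input after the controlled portion as the reduced form of $vu^{-\ell}$ with $\ell\le n$ and finishes with Lemma \ref{M_1 (i)}, you sum the lengths of the at most $n-1$ input-active factors and treat the incomplete terminal factor by hand; both give the stated bound (yours gives $(n+1)|W_t|_a$).

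Two of your local justifications, however, are false as written. First, in the terminal factor it is not true that ``every rule is positive'': with the filled sector empty, a history such as $\tau_j(b)^{-1}\tau_j(a)$ with $a\ne b$ is reduced and admissible, and it begins with a negative rule not preceded by its inverse. The conclusions you draw are nonetheless correct, but they should be obtained from Lemma \ref{multiply one letter} applied to the restriction of $\pazocal{C}_j$ to the filled sector (which starts empty): part (a) gives $\|H_j\|$ equal to the final $a$-length of that sector, and part (c) shows this length increases at every transition, so the sector gains exactly one letter per rule while the complementary sector loses at most one; alternatively, your claim (i) follows at once from the preserved $Q_1Q_2Q_3$-projection. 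Second, in the case where $\pazocal{C}_1$ is empty you assert that every $\pazocal{C}_i$ is empty and $|W_t|_a=|W_0|_a$; Lemma \ref{M_1 controlled} only forces the \emph{complete} internal factors to be empty, and the incomplete terminal factor can be a nonempty $\Phi_j$-computation, which for $j$ odd does alter the input sector, so both assertions can fail. This sub-case needs no separate treatment at all: your general argument applies verbatim with $x\equiv w$ and $z$ empty and yields $|W_0|_a\le n|W_t|_a$ there. With these two repairs the proof is correct.
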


\begin{proof}

Let $W_0\to\dots\to W_r$ be the maximal subcomputation with step history $(1)$. A projection argument implies $|W_0|_a\leq|W_r|_a$. So, it suffices to assume that $r<t$ and prove $|W_r|_a\leq6n|W_t|_a$.

By Lemma \ref{M_1 step history}, the step history of the subcomputation $\pazocal{C}':W_r\to\dots\to W_t$ must be a subword of $(12)(2)(3)\dots(2n-1)(2n-1,2n)$. 

Let $j\leq2n-1$ be the maximal index for which $\pazocal{C}'$ has a nonempty maximal subcomputation with step history $(j)$. As $\sigma$-rules do not alter the $a$-length of an admissible word, we may then assume that the step history of $\pazocal{C}'$ is $(12)(2)\dots(j)$.

Let $H'$ be the history of the subcomputation of $\pazocal{C}'$ with step history $(12)\dots(j-1,j)$.

Let $u,v\in F(\pazocal{A})$ be the reduced words such that $W_r$ has the natural copy of $u$ written in its $Q_2Q_3$-sector and the natural copy of $v$ in its input sector.

By Lemma \ref{M_1 controlled}, $W_s\equiv W_r\cdot H'$ has the natural copy of $u$ written in either its $Q_1Q_2$-sector (if $j$ is odd) or its $Q_2Q_3$-sector (if $j$ is even) and the natural copy of (the reduced form of) $vu^{-\ell}$ written in its input sector for some $\ell\leq n$.

If $2n\|u\|\geq\|v\|$, then $|W_r|_a=\|u\|+\|v\|\leq(2n+1)\|u\|\leq3n|W_s|_a$.

Otherwise, $\|vu^{-\ell}\|\geq\|v\|-\ell\|u\|\geq\|v\|-n\|u\|\geq\frac{1}{2}\|v\|$. So, $$|W_r|_a=\|u\|+\|v\|\leq2(\|u\|+\|vu^{-\ell}\|)\leq2|W_s|_a$$
As Lemma \ref{M_1 (i)} implies $|W_s|_a\leq3|W_t|_a$, wee have $|W_r|_a\leq9n|W_t|_a$.

\end{proof}

\begin{lemma} \label{M_1 end length}

Let $\pazocal{C}:W_0\to\dots\to W_t$ be a reduced computation of $\textbf{M}_1$ in the standard base. Suppose the first letter of the step history of $\pazocal{C}$ is $(2n,2n-1)$. Then $t\leq15n^2(|W_t|_a+1)$ and $|W_0|_a\leq12n|W_t|_a$.

\end{lemma}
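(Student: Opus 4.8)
The plan is to read off the structure of the step history from the hypothesis and then estimate submachine‑by‑submachine, in the spirit of the proofs of Lemmas~\ref{M_1 input length} and~\ref{M_1 no turn}. Since the first rule of $\pazocal{C}$ is $\sigma(2n,2n-1)$, every state letter of $W_0$ is an end letter $q_i(2n)$, so $W_0$ is an end configuration; and because $\sigma(2n,2n-1)$, like $\sigma(2n-1,2n)$, has empty $Y$‑domains over the first two sectors, the input‑ and $Q_1Q_2$‑sectors of $W_0$ are empty. Writing $u$ and $v$ for the $\pazocal{A}$‑words in the $Q_2Q_3$‑ and $Q_3Q_4$‑sectors of $W_0$, we thus have $|W_0|_a=\|u\|+\|v\|$.

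I would first pin down the step history. The standard base contains $Q_1Q_2$, $Q_2Q_3$ and $Q_3Q_4$ as subwords, so Lemma~\ref{M_1 step history}(a)--(c) forbids every ``turn‑around'' $(j\pm1,j)(j)(j,j\mp1)$ at a submachine $\textbf{M}_1(j)$ with $2\le j\le 2n-1$; and Lemma~\ref{M_1 no turn} forbids $\pazocal{C}$ from ever returning to an end configuration, hence from ever reaching $\textbf{M}_1(2n)$. Together with the prescribed first letter this forces the step history of $\pazocal{C}$ to be a prefix of the descending staircase $(2n,2n-1)(2n-1)(2n-1,2n-2)(2n-2)\cdots(21)(1)$, possibly continued after $\textbf{M}_1(1)$ by a prefix of the ascending staircase $(12)(2)(23)(3)\cdots(2n-2,2n-1)(2n-1)$; in particular $\pazocal{C}$ meets at most $4n$ submachines, uses at most $4n$ $\sigma$‑rules, and each $\sigma$‑rule on these staircases empties a pair of sectors the moment it is applied.

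Next I would estimate each maximal submachine subcomputation. In every one of them a fixed sector (the $Q_1Q_2$‑ or the $Q_2Q_3$‑sector, according to parity) is multiplied by distinct letters from one side, so Lemma~\ref{multiply one letter} bounds that subcomputation's length by the sum of the lengths of that sector at its two endpoints, and the projection argument for the base $Q_1Q_2Q_3$ — invariant under every $\Phi_j$‑rule with $2\le j\le 2n-1$ — identifies those lengths with values of a word that, all along the descent, is (a piece of) one fixed copy of $u$. If $\pazocal{C}$ never leaves $\textbf{M}_1(2n-1)$, then a short estimate with Lemma~\ref{multiply one letter}(b),(d), the identity $Q_1Q_2(W_t)\cdot Q_2Q_3(W_t)=u$, and the parallel refilling of the input‑sector give both $|W_t|_a\ge\|u\|+\|v\|=|W_0|_a$ and $t\le 1+\|u\|+\|v\|$. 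If $\pazocal{C}$ does leave $\textbf{M}_1(2n-1)$, the ensuing $\sigma$‑rule forces $W_{r_1}$ to have empty $Q_2Q_3$‑ and $Q_3Q_4$‑sectors; since the $Q_3Q_4$‑sector changes by exactly one letter per rule and, by Lemma~\ref{multiply one letter}(c),(d), cannot grow before it reaches $\emptyset$, this forces $\|v\|\le\|u\|$ (so $|W_0|_a\le 2\|u\|$) and forces $Q_1Q_2(W_{r_1})$ to be a copy of $u$; from then on every maximal subcomputation is of the type covered by Lemma~\ref{M_1 controlled} (it transports one copy of $u$, or of the sector grown at the $\textbf{M}_1(1)$‑step, between adjacent sectors), so its length is at most the length of that sector, and some sector of every $W_j$ still carries a piece projecting onto $u$, whence $\|u\|\le|W_t|_a$. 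Summing the at most $4n$ subcomputation lengths — each bounded by an endpoint sector‑length, which a running count bounds by a constant times $n\,|W_t|_a$ — and the at most $4n$ $\sigma$‑rules yields $t\le 15n^2(|W_t|_a+1)$, while in every case $|W_0|_a\le 2|W_t|_a\le 12n|W_t|_a$.

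The main obstacle is the $a$‑length bookkeeping along the descent (and the possible ascent back up to $\textbf{M}_1(2n-1)$): unlike in Lemma~\ref{M_1 input length}, where $W_0$ is an input configuration with its mass concentrated in one sector, here $W_0$ has two independent loaded sectors and the rules of $\textbf{M}_1(2n-1)$ modify four sectors at once, so cancellations between — for instance — the refilled input‑sector and the $Q_2Q_3$‑sector must be ruled out or quantified. The device that makes this manageable is to carry the base‑$Q_1Q_2Q_3$ projection alongside the one‑sided multiplication structure, so that Lemmas~\ref{multiply one letter} and~\ref{M_1 controlled} apply essentially verbatim to each maximal subcomputation; the staircase then guarantees that there are only $O(n)$ such pieces, each of length $O(|W_t|_a)$ apart from the deliberately wasteful factor $n^2$.
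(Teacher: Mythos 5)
Your reduction of the step history is the same as the paper's and is correct: by Lemmas \ref{M_1 step history} and \ref{M_1 no turn} the computation descends from $\textbf{M}_1(2n-1)$, turns around at most once and only at $\textbf{M}_1(1)$, and ascends without ever reaching $\textbf{M}_1(2n)$; likewise your identification $|W_0|_a=\|u\|+\|v\|$ with $\|v\|\le\|u\|$ once the computation leaves $\textbf{M}_1(2n-1)$ matches the paper. One small slip in the easy case: if the step history is just $(2n,2n-1)(2n-1)$, the bound $t\le 1+\|u\|+\|v\|$ is false (starting from the accept configuration, $u=v=1$, one can apply an arbitrarily long reduced word in $\Phi_{2n-1}^{\pm1}$, pumping all four sectors); the correct and sufficient bound there is $t\le|W_t|_a+1$, coming from the input sector, which grows by exactly one letter per rule from the empty word.

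The genuine gap is the sentence ``some sector of every $W_j$ still carries a piece projecting onto $u$, whence $\|u\|\le|W_t|_a$.'' After the turn at $\textbf{M}_1(1)$ this is simply not so: the word shuttled between the $Q_1Q_2$- and $Q_2Q_3$-sectors is the new word $v$ produced at the $(1)$-step, and the input sector carries the reduced form of $u^nv^{-\ell}$ for some $0\le\ell\le n$ (Lemma \ref{M_1 controlled}); no sector projects onto $u$, and the only available lower bound is $\|u^nv^{-\ell}\|\ge\|u^n\|-\ell\|v\|\ge\|u\|-n\|v\|$, which is vacuous when $\|v\|$ is of order $\|u\|/n$. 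Nothing in your outline (nor in Lemmas \ref{multiply one letter}, \ref{M_1 controlled}, \ref{M_1 no start or end}) yields $\|u\|\le|W_t|_a$, yet both of your final estimates rest on it: the claim that each of the $O(n)$ descent subcomputations (each of length about $\|u\|$) is bounded by a constant times $n|W_t|_a$, and the claim $|W_0|_a\le2|W_t|_a$. This is exactly where the paper has to work: it splits into the case $\|u\|\le2n\|v\|$, where $\|v\|\le|W_t|_a$ gives only $\|u\|\le2n|W_t|_a$ (this is the source of the $n^2$ in the time bound), and the case $\|u\|>2n\|v\|$, where the input of the final $\sigma$-free suffix $W_x\to\dots\to W_t$ equals $u^nv^{-\ell}$, so $|W_x|_a\ge\tfrac12\|u\|$, and Lemma \ref{M_1 (i)} converts this into $\|u\|\le6|W_t|_a$. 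Indeed, were your inequality $\|u\|\le|W_t|_a$ provable, the lemma would hold with a bound of order $n(|W_t|_a+1)$ instead of $15n^2(|W_t|_a+1)$; the $n$-dependent constants in the statement are precisely the trace of the weaker bound $\|u\|\lesssim n|W_t|_a$ that the cancellation in $u^nv^{-\ell}$ actually permits. To complete your argument you must replace that assertion by the paper's case analysis or an equivalent treatment of this cancellation.
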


\begin{proof}

Let $u\in F(\pazocal{A})$ be the reduced word such that $W_0$ has the natural copy of $u$ written in its $Q_2Q_3$-sector.

Suppose the step history of $\pazocal{C}$ is $(2n,2n-1)(2n-1)$. Then for $\pazocal{C}':W_0'\to\dots\to W_t'$ the restriction of $\pazocal{C}$ to the $Q_0Q_1$- or $Q_1Q_2$-sector, Lemma \ref{multiply one letter} implies $|W_t'|_a=|W_0'|_a+t-1$ and $t\leq|W_t'|_a+1$. As each of the rules of $\Phi_{2n-1}$ can decrease the length of an admissible word with base $Q_2Q_3$ or $Q_3Q_4$ by at most one, it then follows that $|W_0|_a\leq|W_t|_a$. 

So, Lemma \ref{M_1 step history} allows us to assume that $(2n,2n-1)(2n-1)(2n-1,2n-2)$ is a prefix of the step history of $\pazocal{C}$. As a result, $W_0$ has the natural copy of $u^{-1}$ written in its $Q_3Q_4$-sector, so that $|W_0|_a=2\|u\|$.

Next, suppose the step history of $\pazocal{C}$ is a subword of $(2n,2n-1)(2n-1)(2n-2)\dots(2)(21)$ and let $\pazocal{C}':W_0'\to\dots\to W_t'$ be the restriction to the base $Q_1Q_2Q_3$. Then the projection of $W_i'$ onto $F(\pazocal{A})$ is $u$ for all $i$, so that $\|u\|\leq|W_t'|_a$. Hence, $|W_0|_a\leq2|W_t'|_a$ and, by Lemma \ref{M_1 no start or end},  $t\leq2n(|W_t'|_a+1)$.

So, by Lemma \ref{M_1 step history}, we may assume that the step history of $\pazocal{C}$ has prefix
$$(2n,2n-1)(2n-1)(2n-2)\dots(2)(21)(1)$$
Let $W_0\to\dots\to W_s$ be the maximal subcomputation with this step history and $W_r\to\dots\to W_s$ be the maximal subcomputation with step history $(1)$.

By Lemma \ref{M_1 start to end}, the projection of $W_r$ onto $F(\pazocal{A})$ is $u^n$ and $r=(2n-2)\|u\|+(2n-1)$.

Let $v\in F(\pazocal{A})$ be the reduced word such that $W_s$ has the natural copy of $v$ written in its $Q_2Q_3$-sector. The restriction of $W_r\to\dots\to W_s$ to the $Q_2Q_3$-sector satisfies the hypotheses of Lemma \ref{multiply one letter}, so that $s-r\leq\|u\|+\|v\|$.

By Lemmas \ref{M_1 step history} and \ref{M_1 no turn}, we may apply Lemma \ref{M_1 no start or end} to the restriction of the subcomputation $W_s\to\dots\to W_t$ to the subword $Q_1Q_2Q_3$. So, $t-s\leq2n(|W_t|_a+1)$ and $\|v\|\leq|W_t|_a$.

Hence, $t\leq2n\|u\|+3n|W_t|_a+4n$.

Suppose $\|u\|\leq2n\|v\|$. Then $|W_0|_a\leq4n|W_t|_a$ and $t\leq(4n^2+3n)|W_t|_a+4n\leq7n^2(|W_t|_a+1)$.

Otherwise, let $W_x\to\dots\to W_t$ be the maximal suffix whose history contains no $\sigma$-rule. Lemma \ref{M_1 controlled} implies that for some $0\leq\ell\leq n$, $W_x$ has the natural copy of (the reduced form of) $u^nv^{-\ell}$ written in its input sector. Note that $\|u^nv^{-\ell}\|\geq\|u^n\|-\ell\|v\|\geq\|u\|-n\|v\|$, so that $|W_x|_a\geq\frac{1}{2}\|u\|$.

Lemma \ref{M_1 (i)} then implies that $\frac{1}{2}|W_0|_a=\|u\|\leq6|W_t|_a$, so that $t\leq15n|W_t|_a+4n\leq15n(|W_t|_a+1)$.

\end{proof}

\begin{lemma} \label{M_1 length}

For any reduced computation $\pazocal{C}:W_0\to\dots\to W_t$ of $\textbf{M}_1$ in the standard base, $t\leq c_0\max(\|W_0\|,\|W_t\|)$.

\end{lemma}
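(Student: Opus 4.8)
The plan is to cut $\pazocal{C}$ along its step history into a bounded number of simple pieces, bound each piece using the lemmas already established in this subsection, and sum up.

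First I would pin down the shape of the step history. Deleting the $\sigma$-rules (which are then determined by their neighbours) turns the step history into a walk on the indices $1,2,\dots,2n$ whose consecutive entries differ by $1$. Lemma \ref{M_1 step history}, applied with the standard base — which contains sectors of each of the three types $Q_1Q_2$, $Q_2Q_3$, $Q_3Q_4$ — together with reducedness (which already rules out an empty block $(j)$ between $\sigma(j-1,j)$ and its inverse), shows this walk can reverse direction only at index $1$ or index $2n$; and Lemma \ref{M_1 no turn} shows it visits index $2n$ in at most one block of consecutive rules. A short combinatorial check — between two reversals at index $1$ there must be a reversal in between, necessarily at index $2n$ — then bounds the number of reversals by three, in the worst case giving the block pattern $1,2n,1$. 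Hence I can write $\pazocal{C}=\pazocal{D}_0\pazocal{E}_1\pazocal{D}_1\pazocal{E}_2\pazocal{D}_2\pazocal{E}_3\pazocal{D}_3$ (absorbing the intervening $\sigma$-rules into the factors), where each $\pazocal{E}_j$ is a maximal one-step subcomputation at index $1$ or index $2n$ — with the machine-$1$ factors never adjacent to one another, being separated by the machine-$2n$ one — and each $\pazocal{D}_i$ is a (possibly empty) reduced computation with monotone step history avoiding $(1)$ and $(2n)$; the total number of $\sigma$-rules in $\pazocal{C}$ is $O(n)$.

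Next I would bound each factor. For a one-step factor $\pazocal{E}$ with step history $(j)$, inspection of $\Phi_j$ shows that on each sector not locked by $\Phi_j$ every rule of $\pazocal{E}$ multiplies that sector by a single tape letter, distinct rules using distinct letters, so Lemma \ref{multiply one letter} applies to the restriction of $\pazocal{E}$ to each such sector; its parts (b) and (d), together with the fact that the locked sectors stay constant, give $\mathrm{length}(\pazocal{E})\le|E^-|_a+|E^+|_a$ and $|V|_a\le|E^-|_a+|E^+|_a$ for every configuration $V$ occurring in $\pazocal{E}$, where $E^\pm$ denote the endpoints of $\pazocal{E}$. For a factor $\pazocal{D}_i$, restricting to the subword $Q_1Q_2Q_3$ of the standard base (which changes neither length nor history) and applying Lemma \ref{M_1 no start or end} gives $\mathrm{length}(\pazocal{D}_i)\le 2n(m_i+1)$, with $m_i$ the larger of the $a$-lengths of the two endpoints of $\pazocal{D}_i$. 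Since every endpoint of a $\pazocal{D}_i$ is either $W_0$, $W_t$, or some $E_j^\pm$, it remains only to bound $|W_0|_a$, $|W_t|_a$ and each $|E_j^\pm|_a$ by $C(n)\max(|W_0|_a,|W_t|_a,1)$ for a suitable $C(n)$; granting this, $t$ becomes a sum of $O(n)$ terms each of size $O\!\big(n\,C(n)\max(|W_0|_a,|W_t|_a,1)\big)$, and since $\|W\|\ge\max(|W|_a,1)$ and the parameter $c_0$ is chosen with $n\ll c_0$ (so $c_0$ exceeds any prescribed function of $n$), this gives $t\le c_0\max(\|W_0\|,\|W_t\|)$.

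The hard part is the last point — bounding the $a$-lengths of the ``turning'' configurations. For a factor $\pazocal{E}_j$ at index $2n$, both $E_j^-$ and $E_j^+$ are end configurations, and since $\pazocal{C}$ cannot re-enter index $2n$, the portion of $\pazocal{C}$ following $E_j^+$ (resp. preceding $E_j^-$, read backwards), if non-empty, has $(2n,2n-1)$ as the first letter of its step history, so Lemma \ref{M_1 end length} gives $|E_j^+|_a\le 12n\,|W_t|_a$ and $|E_j^-|_a\le 12n\,|W_0|_a$. For a factor $\pazocal{E}_j$ at index $1$, neither endpoint need be an input configuration, so Lemma \ref{M_1 input length} is unavailable; instead I would propagate the bound along the adjacent $\pazocal{D}$-factor one block at a time (in the direction from its far end towards $\pazocal{E}_j$, passing to the reverse computation if needed). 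Each such block has index in $\{2,\dots,2n-1\}$ and, for the standard base, begins at a $\sigma$-admissible configuration, so Lemma \ref{M_1 (i)} bounds the $a$-length at one end of the block by three times that at the other; chaining through the at most $2n$ blocks shows the $a$-length at the index-$1$ endpoint of $\pazocal{E}_j$ is at most $3^{2n}$ times the $a$-length at the far end of the $\pazocal{D}$-factor — and that far end is $W_0$, $W_t$, or an endpoint of the unique index-$2n$ factor, already bounded. This is exactly where the structural facts from the first step are needed: that index $2n$ is visited only once (so Lemma \ref{M_1 end length} may be invoked there without circularity) and that index-$1$ factors are pairwise non-adjacent (so the propagation terminates at a bounded configuration after traversing at most one $\pazocal{D}$-factor). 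Altogether $|E_j^\pm|_a\le 12n\cdot 3^{2n}\cdot\max(|W_0|_a,|W_t|_a,1)$, which is the bound required above and completes the argument.
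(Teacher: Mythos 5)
Your argument is correct, but it is organized differently from the paper's proof, so a comparison is worth recording. The paper splits into two cases according to whether the step history contains $(2n)$. If it does not, Lemma \ref{M_1 step history} leaves at most one $(1)$-block; the two flanking pieces are bounded by Lemma \ref{M_1 no start or end} after restricting to $Q_1Q_2Q_3$, and the crucial $a$-length control at the boundary of the $(1)$-block comes not from Lemma \ref{M_1 (i)} but from a projection argument: the boundary configuration is $\sigma(12)$-admissible, so its $a$-length equals the length of the invariant projection onto $F(\pazocal{A})$, hence is at most $|W_0|_a$ (resp. $|W_t|_a$). If $(2n)$ does occur, the paper applies Lemma \ref{M_1 end length} wholesale to the two flanks of the unique $(2n)$-block, which bounds both their lengths and the $a$-lengths at the block boundaries in one stroke, so internal $(1)$-blocks never need separate treatment. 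You instead make a uniform, finer decomposition into at most three extreme blocks and four monotone pieces, use Lemma \ref{M_1 end length} only for the $a$-lengths at the $(2n)$-boundaries, and replace the projection argument by chaining Lemma \ref{M_1 (i)} through the at most $2n$ one-step blocks of the adjacent piece (with the correct observation that every chaining endpoint except the far one is $\sigma$-admissible). The trade-off is in the constants: the paper gets $t\leq(30n^2+24n)\max(\|W_0\|,\|W_t\|)$, while your chaining produces a factor of order $12n\cdot3^{2n}$; this is harmless here, since $c_0$ is assigned after $n$ under the highest parameter principle and may dominate any expression in $n$, but the projection argument is what keeps the paper's bound polynomial and the proof shorter. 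Your structural analysis (reversals only at $1$ or $2n$, a single $(2n)$-block via Lemma \ref{M_1 no turn} since all configurations of a $(2n)$-block are end configurations, hence at most the pattern $1,2n,1$) is sound and slightly more explicit than what the paper writes down.
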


\begin{proof}

First, suppose the step history of $\pazocal{C}$ has no occurrence of $(2n)$.

By the parameter choice $c_0>>n$ and Lemma \ref{M_1 no start or end}, we may assume that the step history has an occurrence of $(1)$. Lemma \ref{M_1 step history} then implies that there is exactly one occurrence of $(1)$. Let $\pazocal{C}_1:W_r\to\dots\to W_s$ be the maximal subcomputation with step history $(1)$.

Further, let $\pazocal{C}_1':W_r'\to\dots\to W_s'$ be the restriction of $\pazocal{C}_1$ to the $Q_2Q_3$-sector and let $u$ and $v$ be the projections of $W_r'$ and $W_s'$, respectively, onto $F(\pazocal{A})$. Lemma \ref{multiply one letter} implies that $s-r\leq\|u\|+\|v\|$.

Next, let $\pazocal{C}_0'':W_0''\to\dots\to W_r''$ and $\pazocal{C}_t'':W_s''\to\dots\to W_t''$ be the restrictions of the corresponding subcomputations to the base $Q_1Q_2Q_3$. 

If $\pazocal{C}_0''$ is nonempty, then $W_r''$ must be $\sigma(12)$-admissible, so that $|W_r''|_a=\|u\|$. A projection argument then implies $|W_0''|_a\geq\|u\|=|W_r''|_a$, so that Lemma \ref{M_1 no start or end} yields $r\leq 2n(|W_0''|_a+1)$.

Similarly, if $\pazocal{C}_t''$ is nonempty, then $|W_t''|_a\geq\|v\|=|W_s''|_a$ and $t-s\leq2n(|W_t''|_a+1)$.

Hence, $t\leq(2n+1)(|W_0''|_a+|W_t''|_a)+4n\leq(4n+2)\max(|W_0|_a,|W_t|_a)+4n\leq6n\max(\|W_0\|,\|W_t\|)$.

Thus, we may assume that the step history of $\pazocal{C}$ has an occurrence of $(2n)$. Then, Lemma \ref{M_1 no turn} implies that there is exactly one occurrence of $(2n)$. Let $\pazocal{C}_{2n}:W_x\to\dots\to W_y$ be the maximal subcomputation with step history $(2n)$.

Similar to above, applying Lemma \ref{multiply one letter} to the restriction of $\pazocal{C}_{2n}$ to the $Q_2Q_3$-sector implies $s-r\leq|W_x|_a+|W_y|_a$. But then Lemma \ref{M_1 end length} implies that $r\leq 15n^2(|W_0|_a+1)$, $|W_r|_a\leq12n|W_0|_a$, $t-s\leq 15n^2(|W_t|_a+1)$, and $|W_s|_a\leq12n|W_t|_a$.

Hence, $t\leq 15n^2(|W_0|_a+|W_t|_a+2)+|W_r|_a+|W_s|_a\leq(30n^2+24n)\max(\|W_0\|,\|W_t\|)$.

\end{proof}

\begin{lemma} \label{M_1 width}

For any reduced computation $\pazocal{C}:W_0\to\dots\to W_t$ of $\textbf{M}_1$ in the standard base, $\|W_i\|\leq 3c_0\max(\|W_0\|,\|W_t\|)$ for all $i=0,\dots,t$.

\end{lemma}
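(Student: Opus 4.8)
The strategy is to control the width $\|W_i\|$ at each intermediate step of the computation by combining the length bound from Lemma \ref{M_1 length} with the observation that each rule of $\textbf{M}_1$ changes the $a$-length of an admissible word by a bounded amount. First I would note that, by definition of an $S$-rule (and the normalization coming from Lemma \ref{simplify rules}), applying a single rule $\theta$ to an admissible word inserts or deletes at most one $a$-letter per sector; since the standard base has a fixed finite number of sectors (five), applying one rule changes $|W|_a$ by at most a constant amount, say by at most $4$. Hence for any $0\le i\le t$ we have $|W_i|_a\le |W_0|_a + 4i \le \|W_0\| + 4t$. The $q$-length is constant along the computation (equal to the length of the standard base, namely $5$), so $\|W_i\| = |W_i|_a + |W_i|_q = |W_i|_a + 5$.

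Next I would invoke Lemma \ref{M_1 length}, which gives $t\le c_0\max(\|W_0\|,\|W_t\|)$. Plugging this into the bound above yields
$$\|W_i\| \le \|W_0\| + 4t + 5 \le \|W_0\| + 4c_0\max(\|W_0\|,\|W_t\|) + 5.$$
Since $\|W_0\|\le\max(\|W_0\|,\|W_t\|)$ and (using $c_0>>n$, so in particular $c_0\ge 2$ and absorbing the additive constant, which is harmless since one may assume $\max(\|W_0\|,\|W_t\|)\ge 1$ — an admissible word in the standard base has $q$-length $5$, so $\|W_0\|\ge 5$) the additive constants are dominated, one concludes $\|W_i\| \le 3c_0\max(\|W_0\|,\|W_t\|)$ for all $i$, which is the claim. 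One could alternatively apply Lemma \ref{M_1 length} symmetrically from the end of the computation, bounding $|W_i|_a \le |W_t|_a + 4(t-i)$, but this is not needed.

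The only mild subtlety — and the step I would be most careful about — is making the constant come out as exactly $3c_0$ rather than something like $(4c_0+2)$. This is purely bookkeeping: the constant $c_0$ is chosen via the highest parameter principle to be large relative to $n$, so there is ample room, and in fact the proof of Lemma \ref{M_1 length} already shows $t\le 30n^2\max(\|W_0\|,\|W_t\|)$-type bounds with $c_0$ chosen larger still; one simply needs $4c_0 + (\text{lower order}) \le 3c_0\cdot(\text{something})$, which follows once one is slightly generous, or one simply re-examines the constant in Lemma \ref{M_1 length} to see it is at most $c_0/2$, leaving the remaining budget for the $\|W_0\|$ and additive terms. There is no genuine obstacle here; the lemma is an immediate corollary of Lemma \ref{M_1 length} together with the Lipschitz-in-time behavior of $a$-length under a single rule application.
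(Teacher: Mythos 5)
Your overall strategy is the same as the paper's: each rule application changes the length of a configuration by a bounded amount (at most four), so the width at time $i$ is controlled by the boundary widths plus (constant)$\times$(elapsed time), and the elapsed time is controlled by Lemma \ref{M_1 length}. The difference is in how the constant $3c_0$ is reached. The paper splits the computation at its midpoint: for $i\leq t/2$ it measures the drift from $W_0$, and for $i\geq t/2$ from $W_t$, so that the drift is at most $4\cdot(t/2)\leq 2c_0\max(\|W_0\|,\|W_t\|)$ and the final bound $(2c_0+1)\max(\|W_0\|,\|W_t\|)\leq 3c_0\max(\|W_0\|,\|W_t\|)$ falls out of the black-box statement of Lemma \ref{M_1 length}. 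Your one-sided estimate $\|W_i\|\leq\|W_0\|+4t+\mathrm{const}$ gives roughly $(4c_0+1)\max(\|W_0\|,\|W_t\|)$, which overshoots $3c_0$; the remedy you propose — re-examining the proof of Lemma \ref{M_1 length} to see that the true constant is $(30n^2+24n)$, which may be taken $\leq c_0/2$ under the convention $c_0>>n$ — does work, so there is no genuine gap, but it requires re-opening the earlier lemma (or relying on parameter slack) rather than using it as stated. Note that the symmetric application "from the end of the computation," which you set aside as not needed, is precisely the device the paper uses: combined with the midpoint split it is what makes the constant come out as $3c_0$ cleanly, and it is the tidier route.
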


\begin{proof}

Note that the application of any rule of $\textbf{M}_1$ changes the length of a configuration by at most four.

For $i\leq t/2$, Lemma \ref{M_1 length} implies that the subcomputation $W_0\to\dots\to W_i$ has length at most $\frac{1}{2}c_0\max(\|W_0\|,\|W_t\|)$, so that $\|W_i\|\leq\|W_0\|+2c_0\max(\|W_0\|,\|W_t\|)$. 

For $i\geq t/2$, the analogous argument applies to the subcomputation $W_i\to\dots\to W_t$, so that $\|W_i\|\leq\|W_t\|+2c_0\max(\|W_0\|,\|W_t\|)$.

Hence, for any $i$, $\|W_i\|\leq(2c_0+1)\max(\|W_0\|,\|W_t\|)$.

\end{proof}

For $1\leq i\leq 2n$ and $1\leq j\leq 4$, suppose there exists an admissible word $W$ with base $Q_{j-1}Q_j$ and $\theta\in\Phi_i$ such that the tape word of $W\cdot\theta$ differs from that of $W$. Then the application of any rule of $\textbf{M}_1(i)$ to any admissible word with base $Q_{j-1}Q_j$ inserts/deletes one tape letter. Moreover, this insertion/deletion occurs on the same side of the tape word for fixed $i$ and $j$. 

If the insertion/deletion occurs on the left (resp right) of the tape word, then the subword $Q_{j-1}Q_j$ of the standard base of $\textbf{M}_1$ is called \textit{left-active} (resp \textit{right-active}) for $\textbf{M}_1(i)$.

\begin{lemma} \label{M_1 one-step}

For $i\in\{1,\dots,2n\}$, let $\pazocal{C}:W_0\to\dots\to W_t$ be a reduced computation of $\textbf{M}_1(i)$ in the standard base. Assume that for some index $j$, $|W_j|_a>4|W_0|_a$. Then there are $\ell,r\in\{1,2,3,4\}$ such that $Q_{\ell-1}Q_\ell$ is left-active, $Q_{r-1}Q_r$ is right-active, and for the restriction $W_0'\to\dots\to W_t'$ to either sector, $|W_j'|_a<|W_{j+1}'|_a<\dots<|W_t'|_a$.

\end{lemma}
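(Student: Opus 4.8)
The plan is to restrict $\pazocal{C}$ to the active sectors of $\textbf{M}_1(i)$ and squeeze the conclusion out of Lemma \ref{multiply one letter}(c) together with the hypothesis $|W_j|_a>4|W_0|_a$. First I would record two structural facts, obtained by inspecting the rules $\Phi_i$, that hold for every $i\in\{1,\dots,2n\}$: the standard base has exactly the four sectors $Q_0Q_1,Q_1Q_2,Q_2Q_3,Q_3Q_4$, and $\textbf{M}_1(i)$ always has at least one left-active sector and at least one right-active sector among them (indeed $Q_2Q_3$ is always left-active, and one of $Q_0Q_1$, $Q_1Q_2$, $Q_3Q_4$ is always right-active). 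Write $S$ for the set of active sectors, so $1\le|S|\le 4$, and $S$ is the disjoint union of a nonempty set of left-active sectors and a nonempty set of right-active ones. By the observation recorded immediately before the lemma, every rule of $\textbf{M}_1(i)$ inserts or deletes exactly one tape letter in each $s\in S$, always on the side fixed by the handedness of $s$, and distinct rules of $\textbf{M}_1(i)$ do so via distinct tape letters (the letter involved in a part $q\to aq'b$ is the copy of the rule's index in the relevant $Y_p$). Hence for each $s\in S$ the restriction $\pazocal{C}_s$ of $\pazocal{C}$ to $s$, with restricted words $W_0^{(s)},\dots,W_t^{(s)}$, is a reduced computation with two-letter base satisfying the hypotheses of Lemma \ref{multiply one letter}.

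Next I would extract the ``valley'' shape of each $\pazocal{C}_s$. Since every step of $\pazocal{C}_s$ changes $|W_k^{(s)}|_a$ by exactly $1$, and Lemma \ref{multiply one letter}(c) forbids this quantity from decreasing once it has increased, the sequence $|W_0^{(s)}|_a,\dots,|W_t^{(s)}|_a$ is strictly decreasing up to some index $m_s$ and strictly increasing thereafter; in particular $|W_k^{(s)}|_a=|W_0^{(s)}|_a-k$ for $0\le k\le m_s$, so $m_s\le|W_0^{(s)}|_a$, and $|W_{m_s}^{(s)}|_a<|W_{m_s+1}^{(s)}|_a<\dots<|W_t^{(s)}|_a$. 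Also $|W_k^{(s)}|_a\le|W_0^{(s)}|_a+k$ for every sector $s$ (active ones change by one letter per step, inactive ones not at all). Given this, it suffices to produce a left-active $s_L\in S$ and a right-active $s_R\in S$ with $m_{s_L}\le j$ and $m_{s_R}\le j$: for such a sector the valley shape yields $|W_j^{(s)}|_a<|W_{j+1}^{(s)}|_a<\dots<|W_t^{(s)}|_a$, and we take $Q_{\ell-1}Q_\ell=s_L$ and $Q_{r-1}Q_r=s_R$.

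Finally I would use the hypothesis to force such valley bottoms. We may assume $|W_0|_a\ge1$ (if $|W_0|_a=0$ then each $|W_0^{(s)}|_a=0$, so each $m_s=0\le j$ and there is nothing to prove), and then $j\ge1$. Suppose, for contradiction, that every left-active sector of $S$ has valley bottom $>j$. Fixing one such sector $s_L$ gives $j<m_{s_L}\le|W_0^{(s_L)}|_a\le|W_0|_a$. Now consider $|W_j|_a-|W_0|_a=\sum_s\big(|W_j^{(s)}|_a-|W_0^{(s)}|_a\big)$: for a left-active $s\in S$ the summand equals $-j\le0$ (it is still in its decreasing phase at time $j$), for an inactive $s$ it equals $0$, and for a right-active $s\in S$ it is at most $j$. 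Since $S$ contains at least one left-active sector and at most four sectors in total, there are at most three right-active sectors in $S$, whence $|W_j|_a\le|W_0|_a+3j<4|W_0|_a$, contradicting the hypothesis. Therefore some left-active $s_L\in S$ has $m_{s_L}\le j$, and by the symmetric argument (interchanging the roles of left- and right-active sectors) some right-active $s_R\in S$ has $m_{s_R}\le j$, which finishes the proof.

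The main obstacle is the bookkeeping in the last step: one must see that a left-active sector still shrinking at time $j$ forces $j<|W_0|_a$, that among the four sectors only the (at most three) right-active active sectors can push $|W_j|_a$ above $|W_0|_a$, and that each of those does so by at most $j<|W_0|_a$ — so that the constant $4$ in the hypothesis is precisely what is needed. Everything else reduces to a routine application of Lemma \ref{multiply one letter}(c) and the fact that a rule alters an active sector by a single letter.
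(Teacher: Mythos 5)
Your proof is correct and takes essentially the same route as the paper's: restrict $\pazocal{C}$ to each active sector, use Lemma \ref{multiply one letter}(c) to get the ``decrease then strictly increase'' shape, and, assuming the conclusion fails, note that the still-shrinking active sector forces $j\leq|W_0|_a$ while the at most three remaining sectors gain at most $j$ letters each, contradicting $|W_j|_a>4|W_0|_a$. Your explicit treatment of the symmetric left-active/right-active cases merely spells out what the paper's terser argument leaves implicit.
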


\begin{proof}

Let $\pazocal{C}':W_0'\to\dots\to W_t'$ be the restriction to any sector. Then by the definition of the rules, $\left| |W_k'|_a-|W_{k-1}'|_a \right|=1$ for all $1\leq k\leq t$. Moreover, if $|W_k'|_a-|W_{k-1}'|_a=1$, then Lemma \ref{multiply one letter} implies that $|W_m'|_a-|W_{m-1}'|_a=1$ for all $m\geq k$.

Note that for each $i$, there exists some sector of the standard base that is left-active and another that is right-active. So, assuming the statement is false, there must exist a sector for which the restriction $\pazocal{C}'$ satisfies $|W_j'|_a=|W_{j+1}'|_a+1$.

Then, we must have $|W_{k-1}'|_a=|W_k'|_a+1$ for all $k\leq j$, so that $j\leq|W_0'|_a$.

For any other sector, the restriction $\pazocal{C}'':W_0''\to\dots\to W_t''$ satisfies the hypotheses of Lemma \ref{multiply one letter}, so that $|W_j''|_a\leq|W_0''|_a+j$.

But as there are three such sectors, we have $|W_j|_a\leq|W_0|_a+3j\leq4|W_0|_a$, yielding a contradiction.

\end{proof}

\smallskip


\subsection{Primitive Machines} \

As in the constructions of [16] and [23], we introduce two machines, $\textbf{LR}(Y)$ and $\textbf{RL}(Y)$ for an alphabet $Y$, that will be used to alter $\textbf{M}_1$. These machines are called \textit{primitive machines}.

The standard base of $\textbf{LR}(Y)$ is $Q^{(1)}PQ^{(2)}$ with $Q^{(1)}=\{q^{(1)}\}$, $P=\{p^{(1)},p^{(2)}\}$, and $Q^{(2)}=\{q^{(2)}\}$. The letter $p^{(1)}$ is the start letter of $P$, while $p^{(2)}$ is the end letter. 

The tape alphabets are two disjoint copies of $Y$, denoted $Y^{(1)}$ and $Y^{(2)}$ and assigned in the natural way.

The positive rules of $\textbf{LR}(Y)$ come in the following three forms:

\ $\bullet$ $\zeta^{(1)}(a)=[q^{(1)}\to q^{(1)}, \ p^{(1)}\to a_1^{-1}p^{(1)}a_2, \ q^{(2)}\to q^{(2)}]$ for all $a\in Y$, where $a_i$ is its copy in $Y^{(i)}$.

\textit{Comment.} The state letter $p^{(1)}$ moves left, replacing a letter from the $Q^{(1)}P$-sector with its copy in the $PQ^{(2)}$-sector.

\medskip

\ $\bullet$ $\zeta^{(12)}=[q^{(1)}\xrightarrow{\ell} q^{(1)}, \ p^{(1)}\to p^{(2)}, \ q^{(2)}\to q^{(2)}]$

\textit{Comment.} When $p^{(1)}$ meets $q^{(1)}$, it switches to $p^{(2)}$. This is called the \textit{connecting rule} of the machine.

\medskip

\ $\bullet$ $\zeta^{(2)}(a)=[q^{(1)}\to q^{(1)}, \ p^{(2)}\to a_1p^{(2)}a_2^{-1}, \ q^{(2)}\to q^{(2)}]$ for all $a\in Y$, where $a_i$ is its copy in $Y^{(i)}$.

\textit{Comment.} The state letter $p^{(2)}$ moves right towards $q^{(2)}$ and replaces a letter in the $PQ^{(2)}$-sector with its copy in the $Q^{(1)}P$-sector.

\smallskip

The state letters of $P$ are called \textit{running state letters}. In practice, they `run' left to the adjacent state letter and then right to the other, as is indicated by the name of the machine.

%
%
%
%

\begin{lemma} \label{primitive computations}

\textit{(Lemma 3.1 of [16])} Let $\pazocal{C}:W_0\to\cdots\to W_t$ be a reduced computation of $\textbf{LR}(Y)$ in the standard base. Then:

\begin{enumerate}[label=({\arabic*})]

\item if $|W_{i-1}|_a<|W_i|_a$ for some $1\leq i\leq t-1$, then $|W_i|_a<|W_{i+1}|_a$

\item $|W_i|_a\leq\max(|W_0|_a,|W_t|_a)$ for each $i$

\item if $W_0\equiv q^{(1)}up^{(1)}q^{(2)}$ and $W_t\equiv q^{(1)}vp^{(2)}q^{(2)}$ for some $u,v\in F(Y^{(1)})$, then $u\equiv v$, $|W_i|_a=\|u\|\defeq\ell$ for each $i$, $t=2\ell+1$, and the $Q^{(1)}P$-sector is locked in the rule $W_\ell\to W_{\ell+1}$. Moreover, letting $\bar{u}$ be $u$ read right to left, the history $H$ of $\pazocal{C}$ is a copy of $\bar{u}\zeta^{(12)}u$

\item if $W_0\equiv q^{(1)}up^{(j)}q^{(2)}$ and $W_t\equiv q^{(1)}vp^{(j)}q^{(2)}$ for some $u,v$ and $j\in\{1,2\}$, then $u\equiv v$ and the computation is empty (i.e $t=0$)

\item if $W_0$ is of the form $q^{(1)}up^{(1)}q^{(2)}$, $q^{(1)}p^{(1)}uq^{(2)}$, $q^{(1)}up^{(2)}q^{(2)}$, or $q^{(1)}p^{(2)}uq^{(2)}$ for some word $u$, then $|W_i|_a\geq|W_0|_a$ for every $i$.

\end{enumerate}

\end{lemma}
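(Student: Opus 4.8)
The plan is to isolate a single structural fact about reduced computations of $\textbf{LR}(Y)$ in the standard base and then read off all five assertions from Lemma \ref{multiply one letter} applied to one-sector restrictions. Write a configuration in the standard base as $W=q^{(1)}up^{(\eps)}vq^{(2)}$ with $u\in F(Y^{(1)})$, $v\in F(Y^{(2)})$, $\eps\in\{1,2\}$, so $|W|_a=\|u\|+\|v\|$. Each rule $\zeta^{(\eps)}(a)^{\pm1}$ multiplies $u$ on the right and $v$ on the left, each by a single tape letter, and distinct choices of $(a,\pm)$ give distinct multipliers in each sector; the connecting rules $\zeta^{(12)},\zeta^{(21)}$ change only the running letter and, since each locks the $Q^{(1)}P$-sector, are applicable only to configurations with empty $Q^{(1)}P$-sector. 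The key step is that a reduced computation contains at most one connecting rule: if it contained two, choose two consecutive ones; between them the running letter is constant, so the intervening portion uses only $\zeta^{(1)}$- or only $\zeta^{(2)}$-rules and both its endpoints have empty $Q^{(1)}P$-sector, whence restricting that portion to the sub-base $Q^{(1)}P$ and invoking Lemma \ref{multiply one letter}(a) (the clause ``if $u_0\equiv u_t$ then $\pazocal{C}$ is empty'') forces it to be empty, making the two connecting rules consecutive and mutually inverse — contradicting reducedness. Hence a reduced computation in the standard base is one of: a single $\zeta^{(1)}$-phase, a single $\zeta^{(2)}$-phase, a $\zeta^{(1)}$-phase then $\zeta^{(12)}$ then a $\zeta^{(2)}$-phase, or a $\zeta^{(2)}$-phase then $\zeta^{(21)}$ then a $\zeta^{(1)}$-phase (phases possibly empty); and whenever a connecting rule occurs the $Q^{(1)}P$-sector is empty immediately before and after it.

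For (1) and (2), call a step \emph{ascending} if it raises $|W_i|_a$; since each non-connecting rule changes $\|u\|$ and $\|v\|$ by exactly $\pm1$ and a connecting step preserves $|W_i|_a$, a step is ascending precisely when both $\|u\|$ and $\|v\|$ increase. No ascending step lies in the first phase of a two-phase computation: by Lemma \ref{multiply one letter}(c) for the $Q^{(1)}P$-restriction of that phase, $\|u\|$ would then increase at every later step of the phase, contradicting that the phase ends with $\|u\|=0$. So all ascending steps lie in the last phase, and within it, if a step is ascending then both $\|u\|$ and $\|v\|$ increase there, hence (Lemma \ref{multiply one letter}(c) in each of the two sub-bases) at every later step of the phase, so every later step is ascending. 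That is (1); consequently $|W_i|_a$ is nonincreasing up to some index and strictly increasing afterward, so its maximum occurs at $i=0$ or $i=t$, which is (2).

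For (5), suppose a sector of $W_0$ is empty; restrict the first phase to that sector: it starts empty, so its length strictly increases at the first step and hence (Lemma \ref{multiply one letter}(c)) throughout the first phase, so no step there is \emph{descending} (lowering both sector lengths). If a second phase occurs it follows a connecting rule and so begins with empty $Q^{(1)}P$-sector; restricting it to $Q^{(1)}P$, that sector strictly increases throughout, so again no descending step. Thus $|W_i|_a$ is nondecreasing and $|W_i|_a\ge|W_0|_a$. For (4), $W_0$ and $W_t$ carry the same running letter, so by the key step there is no connecting rule and the computation is a single $\zeta^{(j)}$-phase whose $PQ^{(2)}$-endpoints are both empty; Lemma \ref{multiply one letter}(a) then gives $t=0$ and $u\equiv v$. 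For (3), the running letters differ, so the computation is a $\zeta^{(1)}$-phase, then $\zeta^{(12)}$, then a $\zeta^{(2)}$-phase. In the first phase the $PQ^{(2)}$-sector starts empty and the $Q^{(1)}P$-sector ends empty, so Lemma \ref{multiply one letter}(a) applied to the $Q^{(1)}P$-restriction yields phase length $\|u\|$, history a copy of $\bar u$, and $|W_i|_a=\|u\|$ for $0\le i\le\|u\|$, while the $PQ^{(2)}$-restriction identifies the $PQ^{(2)}$-content of $W_{\|u\|}$ with the copy of $u$; the rule $W_{\|u\|}\to W_{\|u\|+1}$ is $\zeta^{(12)}$, which locks the $Q^{(1)}P$-sector. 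The symmetric analysis of the second phase gives length $\|u\|$, history a copy of $u$, $v\equiv u$, and $|W_i|_a=\|u\|$ throughout. Assembling these gives $u\equiv v$, $|W_i|_a=\|u\|=:\ell$ for all $i$, $t=2\ell+1$, the stated locking step, and $H$ a copy of $\bar u\zeta^{(12)}u$.

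The main obstacle is the key step — ruling out a ``turn-around'' of the running state letter — after which every quantitative conclusion follows mechanically from Lemma \ref{multiply one letter}; the only remaining point requiring care is the behaviour at the connecting rule in the two-phase case, which is controlled by the observation that the $Q^{(1)}P$-sector is empty on both sides of it.
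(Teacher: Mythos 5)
Your proof is correct. Note that the paper does not prove this lemma at all — it is quoted as Lemma 3.1 of [16] — so there is no in-paper argument to compare against; your route (structure of a reduced computation via the observation that two connecting rules would force, by Lemma \ref{multiply one letter}(a) applied to the $Q^{(1)}P$-restriction of the portion between them, an empty intermediate segment and hence a cancellable pair $\zeta^{(12)}\zeta^{(21)}$, followed by reading off (1)--(5) from one-sector restrictions) is exactly the standard argument, in the same style the paper itself uses elsewhere (e.g.\ in Lemmas \ref{M_1 step history} and \ref{M_4 faulty}).
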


%
%
%
%
%

\begin{lemma} \label{primitive unreduced} 

\textit{(Lemma 3.4 of [23])} Suppose $W_0\to\dots\to W_t$ is a reduced computation of $\textbf{LR}(Y)$ with base $Q^{(1)}PP^{-1}(Q^{(1)})^{-1}$ (or $(Q^{(2)})^{-1}P^{-1}PQ^{(2)}$) such that $W_0\equiv q^{(1)}p^{(i)}u(p^{(i)})^{-1}(q^{(1)})^{-1}$ (or $W_0\equiv (q^{(2)})^{-1}(p^{(i)})^{-1}vp^{(i)}q^{(2)}$) for $i=1,2$ and some word $u$ (or $v$). Then $|W_0|_a\leq\dots\leq|W_t|_a$.

\end{lemma}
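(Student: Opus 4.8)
\textbf{Proof proposal for Lemma \ref{primitive unreduced}.}

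The plan is to argue exactly as in the proof of the companion statements about primitive machines, reducing everything to the monotonicity statement of Lemma \ref{primitive computations}(1). Consider the base $Q^{(1)}PP^{-1}(Q^{(1)})^{-1}$ (the other case is symmetric, read right to left). Write $W_0\equiv q^{(1)}p^{(i)}u(p^{(i)})^{-1}(q^{(1)})^{-1}$ and split each admissible word $W_j$ of the computation into its $Q^{(1)}P$-sector and its $P^{-1}(Q^{(1)})^{-1}$-sector. Since the two occurrences of $P$ carry the same running state letter $p^{(i)}$ and the two occurrences of $Q^{(1)}$ carry $q^{(1)}$, the history of the restriction of $\pazocal{C}$ to the $P^{-1}(Q^{(1)})^{-1}$-sector is the formal inverse (as a word over the rule alphabet, after the obvious relabelling) of the history of the restriction to the $Q^{(1)}P$-sector: whenever a positive rule multiplies the left sector on one side, the same rule acts on the right sector on the mirrored side. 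Thus it suffices to control the $a$-length of one of the two restrictions as a function of time, since $|W_j|_a$ is the sum of the two.

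First I would observe that, because $W_0$ has empty $Q^{(1)}P$-sector (the running letter $p^{(i)}$ sits immediately to the right of $q^{(1)}$ with nothing between them in $W_0$), the restriction $\pazocal{C}':W_0'\to\dots\to W_t'$ of $\pazocal{C}$ to the base $Q^{(1)}P$ starts from an admissible word of $a$-length $0$. A computation of $\textbf{LR}(Y)$ in the two-letter base $Q^{(1)}P$ is a computation in which each rule multiplies this sector by at most one letter (on one fixed side, determined by whether the active running letter is $p^{(1)}$ or $p^{(2)}$, and with different rules of a given $\Phi$-type corresponding to different letters), so Lemma \ref{multiply one letter} applies to each maximal one-running-direction stretch. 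In particular, by Lemma \ref{primitive computations}(1) applied to $\pazocal{C}'$, once the $a$-length of $W_j'$ strictly increases it keeps increasing; hence the sequence $|W_0'|_a,|W_1'|_a,\dots$ is unimodal with its minimum at the start, i.e. it is nondecreasing from $0$ — any decrease would have to be preceded by an increase, which is impossible at the very beginning. Therefore $|W_0'|_a\leq|W_1'|_a\leq\dots\leq|W_t'|_a$. The same holds for the restriction to $P^{-1}(Q^{(1)})^{-1}$ by the mirror symmetry noted above, and summing the two monotone sequences gives $|W_0|_a\leq|W_1|_a\leq\dots\leq|W_t|_a$, as claimed.

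The main obstacle is the bookkeeping that makes ``the two sectors evolve as mirror images of one another'' precise: one must check that the reduced word $u$ between the two $p^{(i)}$'s is what the machine reads, that no cancellation across the central $PP^{-1}$ junction can shorten a sector without lengthening the other (it cannot, since the two running letters are distinct symbols $p^{(i)}$ and $(p^{(i)})^{-1}$ and the connecting rule $\zeta^{(12)}$ locks $Q^{(1)}P$), and that the hypothesis of Lemma \ref{multiply one letter} (different rules multiply by different letters) genuinely holds for $\textbf{LR}(Y)$ in this base — which is immediate from the explicit form of $\zeta^{(1)}(a)$ and $\zeta^{(2)}(a)$. Once this is set up, the conclusion is a direct consequence of Lemma \ref{primitive computations}(1), with no computation needed; this mirrors the proof of Lemma \ref{unreduced base} for the standard $S$-machine setting. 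Accordingly, I would keep the write-up to a few lines, citing Lemma \ref{primitive computations} and Lemma \ref{multiply one letter} and leaving the mirror-symmetry observation as the only genuinely new ingredient.
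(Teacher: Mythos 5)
Your argument only ever looks at two of the three sectors of the admissible words, and that is a genuine gap, not a bookkeeping issue. The base $Q^{(1)}PP^{-1}(Q^{(1)})^{-1}$ has three sectors: the $Q^{(1)}P$-sector, the $P^{-1}(Q^{(1)})^{-1}$-sector, and the middle $PP^{-1}$-sector, whose tape word is exactly the $u$ of the hypothesis (it carries letters from $Y^{(2)}$, by clause (2) in the definition of admissible words). Your claim that ``$|W_j|_a$ is the sum of the two'' restrictions is therefore false: in $W_0$ both sectors you consider are empty and all of $|W_0|_a$ sits in the middle sector. Moreover the middle sector is precisely where the difficulty lies: each rule $\zeta^{(1)}(a)^{\pm1}$ (or $\zeta^{(2)}(a)^{\pm1}$) acts on it by conjugation by the single letter $a_2^{\pm1}$, so its $a$-length can strictly decrease, by up to $2$ per step (e.g. $u\equiv a_2^{-1}b_2a_2$ becomes $b_2$ under $\zeta^{(1)}(a)$). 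Hence monotonicity of the two outer sectors --- which is all your argument establishes, and which would make the lemma trivial if it were the whole story --- does not imply monotonicity of $|W_j|_a$.

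What the proof actually needs is quantitative, and your outer-sector analysis is the right ingredient for it: since $W_0$ has both outer sectors empty and the history is reduced, within a maximal stretch of rules with a fixed running letter each step appends to the $Q^{(1)}P$-sector one letter that cannot cancel (a cancellation would force the current rule to be the inverse of the previous one), and the $P^{-1}(Q^{(1)})^{-1}$-sector remains the mirror image of the $Q^{(1)}P$-sector; so every non-connecting step contributes exactly $+2$ to the outer part of $|W_j|_a$, while the conjugation of the middle sector costs at most $2$, giving $|W_j|_a\leq|W_{j+1}|_a$. One must also observe that $(\zeta^{(12)})^{\pm1}$ locks the outer sectors, and since these grow strictly from length $0$, a connecting rule can only occur as the very first rule, so the stretch analysis covers the whole computation. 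As written, your proposal never compares the outer gain with the possible middle loss (the remark about ``no cancellation across the central $PP^{-1}$ junction'' does not address the conjugation of $u$), so the final step ``summing the two monotone sequences gives $|W_0|_a\leq\dots\leq|W_t|_a$'' does not prove the statement. Two smaller points: Lemma \ref{primitive computations}(1) is stated only for the standard base, so the monotonicity of the restriction should be drawn from Lemma \ref{multiply one letter}(c) within each constant-state stretch; and the restriction to the right sector has the same history as the computation, not its formal inverse --- the correct statement is that its tape word stays the inverse of the left sector's. (For reference, the paper itself gives no proof here; it quotes Lemma 3.4 of [23].)
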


\begin{lemma} \label{primitive length}

Let $\pazocal{C}:W_0\to\dots\to W_t$ be a reduced computuation of $\textbf{LR}(Y)$ in the standard base. Then $t\leq|W_0|_a+|W_t|_a+1$.

\end{lemma}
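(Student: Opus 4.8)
The plan is to exploit the structure of reduced computations of $\textbf{LR}(Y)$, which is controlled by the single running state letter of $P$. First I would observe that a reduced computation of $\textbf{LR}(Y)$ in the standard base $Q^{(1)}PQ^{(2)}$ breaks into at most a few ``monotone'' pieces governed by the position of the running letter relative to the adjacent state letters $q^{(1)}$ and $q^{(2)}$. Indeed, by Lemma~\ref{primitive computations}(1), once $|W_{i-1}|_a<|W_i|_a$ the $a$-length is strictly increasing thereafter; so the sequence $|W_0|_a,\dots,|W_t|_a$ is first (weakly) decreasing and then strictly increasing, with a unique minimum. Combined with Lemma~\ref{primitive computations}(2), which says $|W_i|_a\le\max(|W_0|_a,|W_t|_a)$ for all $i$, this already constrains the shape of the $a$-length profile.

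The key step is then to bound the length of each monotone piece by the length of its endpoints. Since every rule of $\textbf{LR}(Y)$ changes the $a$-length of the whole configuration by exactly $1$ except the connecting rule $\zeta^{(12)}$, which leaves it unchanged, a strictly decreasing run of length $d$ drops the $a$-length by exactly $d$, and a strictly increasing run of length $u$ raises it by exactly $u$. The only subtlety is the connecting rule: in a reduced computation it can occur at most twice (once as $\zeta^{(12)}$ and once as its inverse), and in fact, using the structure described in Lemma~\ref{primitive computations}(3)--(5), I would argue that on the descending portion and on the ascending portion the running letter performs a ``left sweep then right sweep'' of bounded type, so that each sweep's length is at most the $a$-length at its extreme endpoint. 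Putting the pieces together: if the profile decreases from $|W_0|_a$ to its minimum $m$ and then increases to $|W_t|_a$, the total length is at most $(|W_0|_a-m)+(|W_t|_a-m)$ plus the number of occurrences of the connecting rule (at most $1$), which is at most $|W_0|_a+|W_t|_a+1$.

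I would carry out the steps in this order: (i) record that the $a$-length profile is decreasing-then-increasing with a unique minimum, citing Lemma~\ref{primitive computations}(1),(2); (ii) note that every non-connecting rule changes the total $a$-length by $\pm1$ while $\zeta^{(12)}$ (and $\zeta^{(21)}$) fixes it; (iii) split the history $H$ of $\pazocal{C}$ as $H_1 H_2$ where $H_1$ is the maximal prefix whose restriction has non-increasing $a$-length and $H_2$ the remaining (strictly increasing) suffix, so $\|H_1\|\le |W_0|_a - m + (\text{number of connecting rules in }H_1)$ and $\|H_2\|\le |W_t|_a - m + (\text{number of connecting rules in }H_2)$; (iv) argue that a reduced history contains at most one letter from $\{\zeta^{(12)},\zeta^{(21)}\}$ (two consecutive ones would reduce; and once $p^{(2)}$ appears the running letter cannot return to $p^{(1)}$ without an intervening $\zeta^{(21)}$, which by the monotonicity forces a contradiction with reducedness — or more simply, a second connecting rule would sit inside a strictly increasing segment where, by Lemma~\ref{primitive computations}(3)/(5), the only way to change $p^{(j)}$ again creates a cancelling pair); (v) conclude $t=\|H_1\|+\|H_2\|\le |W_0|_a+|W_t|_a+1$.

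The main obstacle I expect is step (iv), pinning down precisely how many times the connecting rule can occur in a reduced computation and where it can sit relative to the minimum of the $a$-length profile; the clean statements in Lemma~\ref{primitive computations}(3)--(5) are stated for computations whose endpoints have the running letter already adjacent to $q^{(1)}$ or $q^{(2)}$, so I would need to reduce the general case to those by chopping $\pazocal{C}$ at the moments the running letter hits $q^{(1)}$ or $q^{(2)}$ (equivalently, at applications of $\zeta^{(12)^{\pm1}}$ and at the turning points of the sweeps) and then assembling the per-piece bounds. Everything else is a bookkeeping argument on the $\pm1$ increments of $|W_i|_a$.
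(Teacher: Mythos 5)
Your step (iv) is fine (at most one occurrence of the connecting rule, exactly as the paper argues via Lemma~\ref{primitive computations}(4)), but the central counting mechanism in steps (ii)--(iii) has a genuine gap. It is not true that every non-connecting rule changes the $a$-length of the whole configuration by exactly $\pm1$: a rule $\zeta^{(1)}(a)^{\pm1}$ or $\zeta^{(2)}(a)^{\pm1}$ inserts or deletes one letter in \emph{each} of the two unlocked sectors, so the total $a$-length changes by $0$ or $\pm2$ at each step, and in particular it can stay constant for arbitrarily many steps. Lemma~\ref{primitive computations}(1),(2) only prevent a strict increase from being followed by a non-increase; they allow long plateaus. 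The standard sweep of Lemma~\ref{primitive computations}(3) is a concrete counterexample to your accounting: for $W_0\equiv q^{(1)}up^{(1)}q^{(2)}$ and $W_t\equiv q^{(1)}up^{(2)}q^{(2)}$ one has $|W_i|_a=\|u\|$ for all $i$ and $t=2\|u\|+1$, whereas your formula $(|W_0|_a-m)+(|W_t|_a-m)+1$ gives $1$. So bounding $t$ by the total drop plus total rise of the $a$-length profile cannot work; the bound $|W_0|_a+|W_t|_a+1$ is of a different nature (it is tight exactly on computations of constant $a$-length).

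The repair is to count per sector rather than globally, which is the paper's route. Restrict $\pazocal{C}$ to the $Q^{(1)}P$-sector: every non-connecting rule multiplies that sector by exactly one letter on one side, with different rules corresponding to different letters, so Lemma~\ref{multiply one letter}(b) bounds the length of any connecting-rule-free stretch by the sum of the initial and final $a$-lengths of that sector. If $H$ has no connecting rule this gives $t\leq|W_0|_a+|W_t|_a$ at once. If $H$ contains a connecting rule, it contains exactly one (your step (iv) / Lemma~\ref{primitive computations}(4)), say at the transition $W_r\to W_{r+1}$; since the connecting rule locks the $Q^{(1)}P$-sector, that sector is empty in $W_r$ and $W_{r+1}$, so the same lemma applied to the two halves gives $r\leq|W_0|_a$ and $t-r-1\leq|W_t|_a$, hence $t\leq|W_0|_a+|W_t|_a+1$. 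Your parenthetical remark that each sweep's length is controlled by the $a$-length at its extreme endpoint is the right intuition, but as written it is not integrated into the count, and the global $\pm1$ increment claim it is meant to support is false.
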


\begin{proof}

If the history $H$ of $\pazocal{C}$ contains no connecting rule, then the restriction of $\pazocal{C}$ to the $Q^{(1)}P$-sector satisfies the hypotheses of Lemma \ref{multiply one letter}. So, $t\leq|W_0|_a+|W_t|_a$.

By Lemma \ref{primitive computations}(4), we then assume that $H$ contains exactly one connecting rule. Let $W_r\to W_{r+1}$ be the transition corresponding to this connecting rule. Then both $W_r$ and $W_{r+1}$ have empty $Q^{(1)}P$-sector, so that  Lemma \ref{multiply one letter} yields $r\leq|W_0|_a$ and $t-r-1\leq|W_t|_a$.

\end{proof}

The machine $\textbf{RL}(Y)$ is the right analogue of $\textbf{LR}(Y)$. To be precise, the standard base of $\textbf{RL}(Y)$ is $Q^{(1)}RQ^{(2)}$ with $R=\{r^{(1)},r^{(2)}\}$, the tape alphabets are again two copies of $Y$ denoted $Y^{(1)}$ and $Y^{(2)}$, and the positive rules are:

\ $\bullet$ $\xi^{(1)}(a)=[q^{(1)}\to q^{(1)}, \ r^{(1)}\to a_1r^{(1)}a_2^{-1}, \ q^{(2)}\to q^{(2)}]$ for all $a\in Y$, where $a_i$ is its copy in $Y^{(i)}$

\ $\bullet$ $\xi^{(12)}=[q^{(1)}\to q^{(1)}, \ r^{(1)}\xrightarrow{\ell}r^{(2)}, \ q^{(2)}\to q^{(2)}]$

\ $\bullet$ $\xi^{(2)}(a)=[q^{(1)}\to q^{(1)}, \ r^{(2)}\to a_1^{-1}r^{(2)}a_2, \ q^{(2)}\to q^{(2)}]$ for all $a\in Y$, where $a_i$ is its copy in $Y^{(i)}$.

\smallskip

There are obvious analogues of Lemmas \ref{primitive computations}-\ref{primitive length} in the setting of $\textbf{RL}(Y)$, which can be verified in much the same ways.

When the alphabet $Y$ is contextually clear, it is convenient to omit it from the names of these machines. So, there will be reference in subsequent constructions to the machines $\textbf{LR}$ and $\textbf{RL}$.

\smallskip


\subsection{The machine $\textbf{M}_2$} \

The next machine in our construction, $\textbf{M}_2$, is the composition of copies of the submachines $\textbf{M}_1(i)$ with copies of the primitive machines $\textbf{LR}$ and $\textbf{RL}$.

Four new parts are added to the standard base of $\textbf{M}_1$, producing the standard base $$Q_0P_1Q_1R_1Q_2R_2Q_3P_4Q_4$$ 
However, the parts of the form $Q_i$ have more letters than the corresponding parts of the hardware of $\textbf{M}_1$. The makeup of each part is contextually clear from the definition of the positive rules below.

The tape alphabets of: 

\begin{itemize}

\item the $Q_0P_1$- and $P_1Q_1$-sectors are copies of $Y_1$, 

\item the $Q_1R_1$- and $R_1Q_2$-sectors are copies of $Y_2$, 

\item the $Q_2R_2$- and $R_2Q_3$-sectors are copies of $Y_3$, and 

\item the $Q_3P_4$- and $P_4Q_4$-sectors are copies of $Y_4$. 

\end{itemize}

The $Q_0P_1$-sector functions as the machine's input sector.

The idea of the function of $\textbf{M}_2$ is the following. The $Q_0P_1$-, $R_1Q_2$-, $R_2Q_3$-, and $Q_3P_4$-sectors are identified with the sectors of the standard base of $\textbf{M}_1$, so that a computation of $\textbf{M}_1$ may be carried out while all other sectors are locked. However, before a transition between two steps of such a computation can take place, every unlocked sector must take part in at least one copy of a standard computation of a primitive machine.

To be precise, $\textbf{M}_2$ is the concatenation of $4n-1$ submachines, which are denoted $\textbf{M}_2(2),\dots,\textbf{M}_2(4n)$. Consequently, each part of the state letters is the disjoint union of $4n-1$ sets corresponding to the hardware of these submachines. 

The submachines are concatenated in the natural order. As such, the state letters of $\textbf{M}_2(2)$ and $\textbf{M}_2(4n)$ function as the start and end letters, respectively. 

To force the correct order of this concatenation, we introduce transition rules $\theta(i,i+1)^{\pm1}$ for $2\leq i\leq 4n-1$. The rule $\theta(i,i+1)$ changes the state letters from the end letters of $\textbf{M}_2(i)$ to the start letters of $\textbf{M}_2(i+1)$. Further, a sector of the standard base is locked by $\theta(i,i+1)$ if and only if it is locked by every rule of $\textbf{M}_2(i)$ or every rule of $\textbf{M}_2(i+1)$. The domain in a sector that is unlocked is the entire corresponding tape alphabet.

For $1\leq i\leq 2n$, the submachine $\textbf{M}_2(2i)$ corresponds to the submachine $\textbf{M}_1(i)$, with each part of the state letters consisting of a singleton. For any $\theta\in\Phi_i^+$, the corresponding positive rule of $\textbf{M}_2(2i)$ locks the $P_1Q_1$-, $Q_1R_1$-, $Q_2R_2$-, and $P_4Q_4$-sectors and operates in the remaining sectors as $\theta$, identifying these sectors with the standard base of $\textbf{M}_1$ in the obvious way. As such, the positive rules of $\textbf{M}_2(2i)$ are identified with $\Phi_i^+$.

For $3\leq i\leq4n-3$ odd, the submachine $\textbf{M}_2(i)$ is the concatenation of two submachines, which are denoted $\textbf{M}_2(i^-)$ and $\textbf{M}_2(i^+)$ and concatenated in this order. To achieve this concatentation, we introduce more transition rules, denoted $\chi_i^{\pm1}$. To differentiate these transition rules for clarity, we henceforth refer to them as \textit{$\chi$-rules} rather than transition rules. The rule $\chi_i$ changes the state letters from the end letters of $\textbf{M}_2(i^-)$ to the start letters of $\textbf{M}_2(i^+)$ and has the same domain as the rule $\theta(i-1,i)$. 

The submachine $\textbf{M}_2(i^-)$ operates as the machine $\textbf{LR}=\textbf{LR}(Y_1)$. The subword $Q_0P_1Q_1$ is identified with the standard base of $\textbf{LR}$, with each rule operating on this subword as its corresponding rule. Each other part of the standard base consists of a singleton. Additionally, the $R_2Q_3$-sector (respectively $R_1Q_2$-sector) remains unlocked by every rule if $i$ is of the form $4\ell-1$ (respectively $4\ell+1$). Every remaining sector of the standard base is locked by every rule.

If $i$ is of the form $4\ell-1$ (with $\ell<n$), then the submachine $\textbf{M}_2(i^+)$ operates as the machine $\textbf{RL}=\textbf{RL}(Y_3)$. The subword $Q_2R_2Q_3$ is identified with the standard base of $\textbf{RL}$, with each rule operating on this subword as its corresponding rule. Each other part of the standard base consists of a singleton. Additionally, the $Q_0P_1$-sector is unlocked by every rule, while all remaining sectors of the standard base are locked by every rule.

If $i$ is of the form $4\ell+1$, then the submachine $\textbf{M}_2(i^+)$ operates as the machine $\textbf{RL}=\textbf{RL}(Y_2)$. The subword $Q_1R_1Q_2$ is identified with the standard base of $\textbf{RL}$, while each other part of the standard base consists of a singleton. Again, the $Q_0P_1$-sector is unlocked by every rule, while all remaining sectors of the standard base are locked by every rule.

Finally, the submachine $\textbf{M}_2(4n-1)$ is the concatenation of $k$ submachines, where $k$ is the parameter specified in Section 3.3. These submachines are denoted $\textbf{M}_2((4n-1)_1),\dots,\textbf{M}_2((4n-1)_k)$ and are concatenated in the natural way. So, the start letters of $\textbf{M}_2((4n-1)_1)$ and the end letters of $\textbf{M}_2((4n-1)_k)$ function as the start and end letters of $\textbf{M}_2(4n-1)$, respectively. 

To force the correct order of this concatenation, we introduce more transition rules, denoted $\chi(j,j+1)^{\pm1}$. The rule $\chi(j,j+1)$ changes the state letters from the end letters of $\textbf{M}_2((4n-1)_j)$ to the start letters of $\textbf{M}_2((4n-1)_{j+1})$, locking all sectors of the standard base except for the $R_2Q_3$- and $Q_3P_4$-sectors. 

As with the transition rules within other submachines, the rules $\chi(j,j+1)^{\pm1}$ are called $\chi$-rules. As a result, forthcoming references to `transition rules' of $\textbf{M}_2$ are implicitly restricted to those of the form $\theta(i,i+1)^{\pm1}$.

Each submachine $\textbf{M}_2((4n-1)_j)$ operates in parallel as a copy of $\textbf{RL}=\textbf{RL}(Y_3)$ on the subword $Q_2R_2Q_3$ and a copy of $\textbf{LR}=\textbf{LR}(Y_4^{-1})$ on the subword $Q_3P_4Q_4$. As such, for every $a\in\pazocal{A}$ and $i\in\{1,2\}$, there exists a positive rule of this submachine that simultaneously acts as $\xi^{(i)}(a)$ on the subword $Q_2R_2Q_3$ and as $\zeta^{(i)}(a^{-1})$ on the subword $Q_3P_4Q_4$. The subsets of $Q_0$, $P_1$, $Q_1$, and $R_1$ corresponding to this submachine are singletons, while the remaining sectors are locked by every rule.

Note that we may interpret $\textbf{M}_2$ as the concatenation of $6n-4+k$ submachines, which are concatenated in the following order:
\begin{align*}
&\textbf{M}_2(2), \ \textbf{M}_2(3^-), \ \textbf{M}_2(3^+), \ \dots, \ \textbf{M}_2(4n-4), \ \textbf{M}_2((4n-3)^-), \ \textbf{M}_2((4n-3)^+), \\
&\textbf{M}_2(4n-2), \ \textbf{M}_2((4n-1)_1), \ \dots, \ \textbf{M}_2((4n-1)_k), \ \textbf{M}_2(4n)
\end{align*}

\smallskip


\subsection{Standard computations of $\textbf{M}_2$} \

%

The step history of a reduced computation of $\textbf{M}_2$ is defined in much the same way as it is defined for a reduced computation of $\textbf{M}_1$. As such, we first factor the computation's history so that each factor is either the history of a maximal subcomputation of one of the defining submachines $\textbf{M}_2(i)$ or a transition rule $\theta(i,i+1)^{\pm1}$. Then, a factor corresponding to a computation of $\textbf{M}_2(i)$ is represented by $(i)$ and a factor corresponding to a transition rule $\theta(i,j)$ is represented by $(ij)$, where we take $\theta(i,i+1)^{-1}\equiv \theta(i+1,i)$.


The notational conventions described in Section 4.2 are used for step histories of this machine. For example, we may write the step history of a reduced computation of $\textbf{M}_2$ as $(2)(3)$, omitting reference to the rule $\theta(23)$ as its presence is clear from its necessity. 

Similarly, a one-step computation of $\textbf{M}_2$ is a reduced computation whose step history has exactly one letter corresponding to a computation of $\textbf{M}_2(i)$. 

The following statement is an immediate consequence of Lemma \ref{M_1 step history}.

\begin{lemma} \label{M_2 step history 1}

Let $\pazocal{C}$ be a reduced computation of $\textbf{M}_2$ with base $B$.

\renewcommand{\labelenumi}{(\alph{enumi})}

\begin{enumerate}

\item If $B$ contains a subword of the form $(R_2Q_3)^{\pm1}$, then the step history of $\pazocal{C}$ cannot be $(4i+1,4i+2)(4i+2)(4i+2,4i+1)$ or $(4i+1,4i)(4i)(4i,4i+1)$ for $1\leq i\leq n-1$.

\item If $B$ contains a subword of the form $(R_1Q_2)^{\pm1}$, then the step history of $\pazocal{C}$ cannot be $(4i-1,4i)(4i)(4i,4i-1)$ or $(4i+3,4i+2)(4i+2)(4i+2,4i+3)$ for $1\leq i\leq n-1$.

\item If $B$ contains a subword of the form $(Q_3P_4)^{\pm1}$, then the step history of $\pazocal{C}$ cannot be $(4n-3,4n-2)(4n-2)(4n-2,4n-3)$.

\end{enumerate}

\end{lemma}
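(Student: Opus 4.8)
The plan is to deduce each of (a)--(c) from Lemma~\ref{M_1 step history} via the identification, built into the construction of $\textbf{M}_2$, of the submachine $\textbf{M}_2(2j)$ with $\textbf{M}_1(j)$ and of the four active sectors $Q_0P_1$, $R_1Q_2$, $R_2Q_3$, $Q_3P_4$ of $\textbf{M}_2$ with the sectors $Q_0Q_1$, $Q_1Q_2$, $Q_2Q_3$, $Q_3Q_4$ of $\textbf{M}_1$, respectively. Under this dictionary each forbidden step history listed above is precisely the $\textbf{M}_2$-translation of one forbidden in Lemma~\ref{M_1 step history}: the $\textbf{M}_1$-submachine index $\ell$ becomes $2\ell$, and since in $\textbf{M}_2$ two consecutive $\textbf{M}_1$-type submachines $\textbf{M}_2(2\ell)$, $\textbf{M}_2(2\ell+2)$ are separated by a primitive submachine $\textbf{M}_2(2\ell+1)$, the transition letter shifts accordingly (for instance $(2i,2i+1)(2i+1)(2i+1,2i)$ becomes $(4i+1,4i+2)(4i+2)(4i+2,4i+1)$, and similarly for the other four). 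Likewise $(R_2Q_3)^{\pm1}$, $(R_1Q_2)^{\pm1}$, $(Q_3P_4)^{\pm1}$ in the base of $\textbf{M}_2$ correspond to $(Q_2Q_3)^{\pm1}$, $(Q_1Q_2)^{\pm1}$, $(Q_3Q_4)^{\pm1}$ in the base of $\textbf{M}_1$.

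Granting the dictionary, I would mimic the proof of Lemma~\ref{M_1 step history}. Suppose, for contradiction, that $\pazocal{C}$ is a reduced computation with base $B$ containing the indicated two-letter subword $B'$ and realizing one of the forbidden step histories. Its history factors as $\theta H \theta^{-1}$, where $\theta$ is the transition rule opening the step history and $H$ is the history of a (reduced) computation of the middle submachine $\textbf{M}_2(2\ell)$; on the sector corresponding to $B'$, every rule of $\textbf{M}_2(2\ell)$ multiplies by a single tape letter on a fixed side, with distinct rules contributing distinct letters, exactly as the corresponding rule of $\textbf{M}_1(\ell)$ acts on the matching sector of $\textbf{M}_1$. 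Restricting $\pazocal{C}$ to $B'$ yields $\pazocal{C}':W_0'\to\dots\to W_t'$; since $\theta$ (hence $\theta^{-1}$) locks the sector named by $B'$, we get $|W_0'|_a=|W_t'|_a=0$, so the middle subcomputation $W_1'\to\dots\to W_{t-1}'$ has empty initial and terminal tape words. Lemma~\ref{multiply one letter}(a) then forces $H$ to be empty, so the history of $\pazocal{C}$ is $\theta\theta^{-1}$, contradicting the reducedness of $\pazocal{C}$.

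The only step that is not purely formal is the claim that in each case the opening transition rule $\theta$ really does lock the sector named by $B'$. This uses the construction rule that $\theta(i,i+1)$ (and its inverse) locks a sector iff every rule of $\textbf{M}_2(i)$ or every rule of $\textbf{M}_2(i+1)$ locks it, together with a short inspection of which sectors the primitive submachines leave unlocked: in case (a), $R_2Q_3$ is locked by every rule of the primitive submachine $\textbf{M}_2(4i+1)$ (whose running sectors lie among $Q_0P_1$, $Q_1R_1$, $R_1Q_2$); in case (b), $R_1Q_2$ is locked by every rule of the adjacent submachine, which is a primitive submachine when $i<n-1$ and the special submachine $\textbf{M}_2(4n-1)$ when $i=n-1$; in case (c), $Q_3P_4$ is locked by every rule of the primitive submachine $\textbf{M}_2(4n-3)$. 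I expect this bookkeeping --- unwinding the definition of $\textbf{M}_2$ as a concatenation of $\textbf{M}_1$-type and primitive submachines and tracking which sectors each of them locks --- to be the only real obstacle; it is entirely routine, and with it in hand the statement is, as claimed, an immediate consequence of Lemma~\ref{M_1 step history}.
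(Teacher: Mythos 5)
Your proposal is correct and matches the paper's treatment: the paper simply declares the lemma an immediate consequence of Lemma \ref{M_1 step history}, and your argument is exactly the intended one — translate via the identification of $\textbf{M}_2(2\ell)$ with $\textbf{M}_1(\ell)$ and of the sectors $Q_0P_1$, $R_1Q_2$, $R_2Q_3$, $Q_3P_4$ with those of $\textbf{M}_1$, then rerun the restriction-to-$B'$ argument (locked endpoints, Lemma \ref{multiply one letter}(a), contradiction with reducedness), the only extra content being the routine check that the bounding transition rules lock the relevant sector. Your locking bookkeeping is accurate (aside from the harmless omission of $P_1Q_1$ among the active sectors of $\textbf{M}_2(4i+1)$), including the $i=n-1$ case of (b) handled via $\textbf{M}_2(4n-1)$.
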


Further, the following statement is an immediate consequence of Lemma \ref{primitive computations}(4).

%
%
%
%
%
%
%
%
%
%
%
%
%
%
%
%
%
%
%
%
%
%

\begin{lemma} \label{M_2 step history 2}

Let $\pazocal{C}$ be a reduced computation of $\textbf{M}_2$ in the standard base. Then the step history of $\pazocal{C}$ cannot be $(2i,2i+1)(2i+1)(2i+1,2i)$ or $(2i+2,2i+1)(2i+1)(2i+1,2i+2)$ for $1\leq i\leq 2n-1$.

\end{lemma}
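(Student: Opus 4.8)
The plan is to reduce Lemma \ref{M_2 step history 2} directly to Lemma \ref{primitive computations}(4), exactly as the phrase ``immediate consequence'' suggests. First I would look at the submachine $\textbf{M}_2(2i+1)$ indexed by an odd number $2i+1$ in the range $3\leq 2i+1\leq 4n-3$; such a submachine is a concatenation $\textbf{M}_2((2i+1)^-)\textbf{M}_2((2i+1)^+)$ (or, when $2i+1=4n-1$, a concatenation of $k$ copies of an $\textbf{RL}\times\textbf{LR}$ block), and by construction each of its constituent submachines operates as a copy of a primitive machine $\textbf{LR}$ or $\textbf{RL}$ on one three-letter subword of the standard base, with all other sectors locked. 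So a reduced one-step computation of $\textbf{M}_2$ with step history $(2i+1)$ restricts, on the appropriate three-letter subword, to a reduced computation of a primitive machine, while all other sectors are frozen.

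Next I would analyze the step history $(2i,2i+1)(2i+1)(2i+1,2i)$. Let $\pazocal{C}:W_0\to\cdots\to W_t$ be such a computation in the standard base. The rule $\theta(2i,2i+1)$ and its inverse $\theta(2i+1,2i)$ change only the state letters between the end letters of $\textbf{M}_2(2i)$ and the start letters of $\textbf{M}_2(2i+1)$; crucially, $W_0$ is $\theta(2i,2i+1)$-admissible and $W_t$ is $\theta(2i+1,2i)$-admissible, which forces the running state letter of $\textbf{M}_2(2i+1)$ (the copy of $p^{(1)}$ or $r^{(1)}$) to be in its \emph{start} position in both $W_1$ and $W_{t-1}$. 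Restricting the maximal subcomputation with step history $(2i+1)$ to the three-letter subword on which the primitive machine acts, we obtain a reduced computation of $\textbf{LR}$ (or $\textbf{RL}$) whose initial and terminal words both have the running state letter in the same ($p^{(1)}$-type) position. Lemma \ref{primitive computations}(4) then says this primitive computation is empty, hence the subcomputation with step history $(2i+1)$ is empty, contradicting the convention that a step-history letter $(2i+1)$ denotes a \emph{nonempty} maximal subcomputation (and contradicting reducedness of $\pazocal{C}$, since $\theta(2i+1,2i)$ would immediately cancel $\theta(2i,2i+1)$). The case $(2i+2,2i+1)(2i+1)(2i+1,2i+2)$ is symmetric: here the relevant submachine $\textbf{M}_2(2i+1)$ has its running letter in the \emph{end} position (type $p^{(2)}$ or $r^{(2)}$) in both $W_1$ and $W_{t-1}$, so again Lemma \ref{primitive computations}(4) (the $j=2$ alternative) forces emptiness.

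The one genuine bookkeeping point — which I expect to be the main obstacle — is handling the sub-concatenation structure of the odd submachines uniformly: $\textbf{M}_2(2i+1)$ is itself built from $\textbf{M}_2((2i+1)^-)$ and $\textbf{M}_2((2i+1)^+)$ (or $k$ blocks, for $2i+1=4n-1$), joined by $\chi$-rules, and each piece is a primitive machine on a \emph{different} three-letter subword. So I would first argue that in a reduced one-step computation with step history $(2i+1)$, if the history passes through a $\chi$-rule then \emph{some} constituent primitive submachine runs its connecting rule, and track which subword each piece acts on; in every alternative, the admissibility constraints imposed by the surrounding $\theta$-rules pin down the running state letter of the first and last active primitive block to be in a fixed ($p^{(1)}$- or $p^{(2)}$-type) position at the start and end of the one-step computation, and one applies Lemma \ref{primitive computations}(4) to that block. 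Once the correspondence ``step-history letter $(2i+1)$ at the boundary of two identical $\theta$-transitions $\Rightarrow$ a primitive subcomputation returning its running letter to the same position $\Rightarrow$ emptiness'' is set up, the proof is a two-line contradiction. I would write it as: assume such a $\pazocal{C}$ exists; restrict to the primitive submachine's base; invoke Lemma \ref{primitive computations}(4) to get an empty subcomputation; conclude the two bounding $\theta$-rules are mutually inverse and adjacent, contradicting that $\pazocal{C}$ is reduced.
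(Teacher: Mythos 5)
Your proposal is correct and follows exactly the route the paper intends: the paper gives no separate argument, stating only that the lemma is "an immediate consequence of Lemma \ref{primitive computations}(4)," which is precisely your reduction — the bounding transition rules force the running state letter of the odd submachine into the same (start or end) position at both ends of the $(2i+1)$-step, the restriction to the relevant three-letter subword is then an empty primitive computation by Lemma \ref{primitive computations}(4) (handling possible $\chi$-rules by the same cancellation argument), and emptiness contradicts reducedness. Your extra bookkeeping for the sub-concatenation of $\textbf{M}_2((2i+1)^\pm)$ and of $\textbf{M}_2(4n-1)$ is exactly the detail the paper leaves implicit.
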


Suppose $\pazocal{C}$ is a one-step computation of $\textbf{M}_2$ with step $(4n-1)$. Then the history of $\pazocal{C}$ is called \textit{controlled} if it (or its inverse) is of the form
$$\chi(j-1,j)H'\chi(j,j+1)$$
for $1\leq j\leq k$, where $H'$ contains no $\chi$-rule, $\chi(0,1)$ is taken to be $\theta(4n-2,4n-1)$, and $\chi(k,k+1)$ is taken to be $\theta(4n-1,4n)$.

\begin{lemma} \label{M_2 controlled}

Let $\pazocal{C}:W_0\to\dots\to W_t$ be a reduced computation of $\textbf{M}_2$ with controlled history $H$. Then the base $B$ of the computation is reduced and all configurations are uniquely defined by $H$ and $B$.
\newline
Moreover, if $\pazocal{C}$ is a computation in the standard base, then $|W_i|_a=|W_0|_a$ for all $0\leq i\leq t$, $\|H\|=|W_0|_a+3$, and $W_0$ is accepted by $\textbf{M}_2$.

\end{lemma}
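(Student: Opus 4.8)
<br>

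The plan is to analyze a reduced computation $\pazocal{C}$ with controlled history $H = \chi(j-1,j)H'\chi(j,j+1)$ by restricting to the relevant subwords of the base. First I would recall that $\textbf{M}_2((4n-1)_j)$ operates in parallel as a copy of $\textbf{RL}(Y_3)$ on the subword $Q_2R_2Q_3$ and as a copy of $\textbf{LR}(Y_4^{-1})$ on $Q_3P_4Q_4$, with all other sectors locked (including by the $\chi$-rules, which lock everything except $R_2Q_3$ and $Q_3P_4$). So the middle portion $H'$ can only change the tape words in the $R_2Q_3$- and $Q_3P_4$-sectors. The key step is to invoke the analogue of Lemma \ref{primitive computations}(3) for $\textbf{RL}$: since the state letter $r^{(i)}$ (resp.\ $p^{(i)}$) must pass from one adjacent state letter to the other within the single primitive subcomputation $H'$ and then the $\chi$-rule at the end forces it to a boundary position, the word $u$ written in one sector before $H'$ equals the word written in the other after, $\|u\|$ is constant throughout, the length of $H'$ is $2\|u\|+1$, and the corresponding sector is locked at the midpoint. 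This gives $\|H\| = \|H'\| + 2 = 2\|u\|+3$.

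Next I would argue that the base $B$ is reduced. The only way $B$ could fail to be reduced is if it contained a subword $Q_iQ_i^{-1}$ or $Q_i^{-1}Q_i$ (or the analogue with the running-state parts). Since $H$ contains the rule $\chi(j-1,j)$, which locks, say, the $Q_1R_1$-sector (among others) — more precisely every rule of $\textbf{M}_2((4n-1)_j)$ and the $\chi$-rules lock the $Q_2R_2$-, $P_1Q_1$-, $Q_0P_1$-, $Q_1R_1$-, $P_4Q_4$-sectors — Lemma \ref{locked sectors} forbids the base from containing the prohibited two-letter subwords across those locked sectors. Because the locked sectors border every part of the standard base, this rules out all mirror subwords, so $B$ is reduced. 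Given a reduced base and the history, the configurations are then determined step-by-step: each rule's action on a reduced admissible word with a fixed reduced base uniquely determines the result, so all $W_i$ are uniquely defined by $H$ and $B$.

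For the "moreover" clause, assume $\pazocal{C}$ is in the standard base $Q_0P_1Q_1R_1Q_2R_2Q_3P_4Q_4$. The $\chi$-rules and the rules of $\textbf{M}_2((4n-1)_j)$ lock every sector except $R_2Q_3$ and $Q_3P_4$, and on those two sectors the machine acts as $\xi^{(i)}(a)$ and $\zeta^{(i)}(a^{-1})$ simultaneously — these rules only move tape letters between the two halves of each primitive machine's base, never inserting or deleting net $a$-letters. Hence $|W_i|_a = |W_0|_a$ for all $i$. Writing $u$ for the word that the $\textbf{RL}$-copy processes (which by the parallel action is, up to the fixed copying, the same length as the word the $\textbf{LR}$-copy processes), the length computation from the primitive-machine analogue gives $\|H\| = 2\|u\| + 3$; and since in the standard base with all other sectors empty, $\|u\|$ accounts for all of $|W_0|_a$ (the only nonempty sectors are $R_2Q_3$ and $Q_3P_4$, carrying mutually-copied words of length $\|u\|$), wait — I must be careful here: I would instead identify directly that $|W_0|_a$ equals the relevant count and conclude $\|H\| = |W_0|_a + 3$ by matching the primitive-machine length formula against $|W_0|_a$. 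Finally, $W_0$ is accepted: run $H$ to reach $W_t$, which (by the primitive computation analogue, where after a full left-right or right-left pass the running letter returns to its starting adjacent letter) has the same tape content as $W_0$ but shifted; iterating the controlled computations across all $k$ copies $\textbf{M}_2((4n-1)_1), \dots, \textbf{M}_2((4n-1)_k)$ and then continuing through the remaining submachines down to $\textbf{M}_2(4n)$ drives the configuration to the accept configuration — more carefully, I would exhibit the explicit accepting continuation using the standard computations of the earlier submachines (as in Lemma \ref{M_1 language}) composed with the primitive passes.

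The main obstacle I anticipate is the bookkeeping in the "moreover" clause: pinning down exactly which sectors are nonempty in $W_0$ for a standard-base computation with controlled history, and thereby matching the primitive-machine length formula $\|H'\| = 2\|u\|+1$ to $|W_0|_a$ to get the clean identity $\|H\| = |W_0|_a + 3$, requires carefully tracking the parallel $\textbf{RL}(Y_3)$/$\textbf{LR}(Y_4^{-1})$ action and confirming that the $R_2Q_3$- and $Q_3P_4$-sectors carry words of equal length that together are not double-counted in a way that breaks the formula. The acceptance claim is conceptually routine given the earlier structural lemmas but needs the explicit continuation to be spelled out.
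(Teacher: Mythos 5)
Your overall route (locked-sector bookkeeping plus the structure of standard primitive computations from Lemma \ref{primitive computations}(3)) is the paper's route, but your argument that the base $B$ is reduced has a genuine gap. The bookkeeping is off: the working rules of $\textbf{M}_2((4n-1)_j)$ do \emph{not} lock the $Q_2R_2$- and $P_4Q_4$-sectors (these are exactly the sectors the two primitive copies write into), and, more importantly, the $\chi$-rules leave the $R_2Q_3$- and $Q_3P_4$-sectors unlocked. So your claim that ``the locked sectors border every part of the standard base'' fails precisely at $Q_3$ (and on the right of $R_2$ and the left of $P_4$): Lemma \ref{locked sectors} applied only to the $\chi$-rules cannot exclude subwords of the form $R_2R_2^{-1}$, $Q_3^{-1}Q_3$, $Q_3Q_3^{-1}$, or $P_4^{-1}P_4$ from $B$. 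The missing idea --- the one the paper uses --- is that $H'$ must contain the connecting rule of $\textbf{M}_2((4n-1)_j)$: after $\chi(j-1,j)$ the $R_2$- and $P_4$-letters are the start letters $r^{(1)},p^{(1)}$, while $\chi(j,j+1)$-admissibility forces the end letters $r^{(2)},p^{(2)}$, and only the connecting rule makes this switch; that rule locks exactly the two remaining sectors $R_2Q_3$ and $Q_3P_4$. Together with $\chi(j-1,j)$, every sector is then locked by some rule occurring in $H$, and Lemma \ref{locked sectors} excludes all unreduced two-letter subwords. You mention the connecting rule in your primitive-computation analysis but never deploy it where it is actually needed.

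A smaller but real defect is your justification of uniqueness: saying the configurations are ``determined step-by-step'' only shows that $W_1,\dots,W_t$ are determined by $W_0$ and $H$, which holds for every computation and is not the content of the claim; what must be shown is that $W_0$ itself is pinned down by $H$ and $B$. This is where Lemma \ref{primitive computations}(3) does the work, applied to the restrictions of $\pazocal{C}$ to the relevant subwords of $B$: the sectors locked by $\chi(j-1,j)$ are empty, and in the unlocked sectors the tape word is forced to be the copy of the appropriate half of $H'$ (the history of a standard primitive pass spells out the tape word). The remainder of your outline --- the standard-base count $\|H\|=2\|w\|+3=|W_0|_a+3$ with $w$ and $w^{-1}$ written in the $R_2Q_3$- and $Q_3P_4$-sectors, and acceptance by continuing through the remaining copies $\textbf{M}_2((4n-1)_{j'})$, applying $\theta(4n-1,4n)$, and erasing with the rules of $\textbf{M}_2(4n)$ --- matches the paper and goes through once the two points above are repaired.
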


\begin{proof}

Without loss of generality, suppose $H\equiv\chi(j-1,j)H'\chi(j,j+1)$. Then, any sector of the standard base not locked by $\chi(j-1,j)$ is locked by the connecting rule of $\textbf{M}_2((4n-1)_j)$. So, Lemma \ref{locked sectors} implies that the base must be reduced. Lemma \ref{primitive computations}(3) then implies that $\pazocal{C}$ is uniquely defined by $H$ and $B$.

If $\pazocal{C}$ is a computation in the standard base, then the parallel nature of the rules implies that there exists $w\in F(\pazocal{A})$ such that $W_0$ has the natural copy of $w$ written in its $R_2Q_3$-sector and the natural copy of $w^{-1}$ written in its $Q_3P_4$-sector. By Lemma \ref{primitive computations}(3), we then have $|W_i|_a=|W_0|_a$ for all $i$ and $\|H\|=2\|w\|+3=|W_0|_a+3$.

Using Lemma \ref{primitive computations}(3), we may construct a reduced computation $W_0\to\dots\to W_h$ of $\textbf{M}_2(4n-1)$ such that $W_h$ is $\theta(4n-1,4n)$-admissible. Then, $W\equiv W_h\cdot\theta(4n-1,4n)$ is the end configuration with the natural copy of $w$ written in its $R_2Q_3$-sector and the natural copy of $w^{-1}$ written in its $Q_3P_4$-sector. Setting $H''$ as the natural copy of $w$ in $F(\Phi_{2n}^+)$, $W\cdot H''$ is the accept configuration. Hence, $W_0$ is accepted.

\end{proof}

The following statement is a similar consequence of Lemma \ref{primitive computations}(3).

\begin{lemma} \label{M_2 semi-controlled}

Let $\pazocal{C}:W_0\to\dots\to W_t$ be a reduced computation of $\textbf{M}_2$ with history $H$. Suppose the step history of $\pazocal{C}$ is $(2i,2i+1)(2i+1)(2i+1,2i+2)$ for some $1\leq i\leq 2n-2$. Then the base $B$ of the computation is reduced and all configurations are uniquely defined by $H$ and $B$.
\newline
Moreover, if $\pazocal{C}$ is a computation in the standard base, then $|W_i|_a=|W_0|_a$ for all $0\leq i\leq t$ and $\|H\|=2|W_0|_a+5$.

\end{lemma}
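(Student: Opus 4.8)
The plan is to follow the proof of Lemma \ref{M_2 controlled} almost verbatim, the one difference being that the part of the computation governed by primitive machines now consists of a pass through the copy $\textbf{M}_2((2i+1)^-)$ of $\textbf{LR}$ followed by a pass through the copy $\textbf{M}_2((2i+1)^+)$ of $\textbf{RL}$, the two joined by the single $\chi$-rule $\chi_{2i+1}$ and bracketed by the transition rules $\theta(2i,2i+1)$ and $\theta(2i+1,2i+2)$.

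First I would determine the shape of $H$. Since the step history of $\pazocal{C}$ is $(2i,2i+1)(2i+1)(2i+1,2i+2)$, we have $H\equiv\theta(2i,2i+1)\,H'\,\theta(2i+1,2i+2)$, where $H'$ is the history of a reduced maximal subcomputation of $\textbf{M}_2(2i+1)=\textbf{M}_2((2i+1)^-)\,\chi_{2i+1}\,\textbf{M}_2((2i+1)^+)$. Applying $\theta(2i,2i+1)$ places the state letters at the start letters of $\textbf{M}_2((2i+1)^-)$, while $\theta(2i+1,2i+2)$ requires them at the end letters of $\textbf{M}_2((2i+1)^+)$; since $\chi_{2i+1}$ is the only rule crossing between the two halves, $H'$ must contain it, and one checks it occurs exactly once: if $\chi_{2i+1}^{\pm1}$ appeared at least three times, then between two consecutive occurrences there would be a reduced subcomputation of $\textbf{M}_2((2i+1)^+)$ or of $\textbf{M}_2((2i+1)^-)$ starting and ending at its start letters, which is empty by Lemma \ref{primitive computations}(4) or its $\textbf{RL}$-analogue, contradicting that $\pazocal{C}$ is reduced. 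Hence $H'\equiv H_-\chi_{2i+1}H_+$, with $H_-$ a reduced computation of $\textbf{M}_2((2i+1)^-)$ from its start to its end letters and $H_+$ likewise for $\textbf{M}_2((2i+1)^+)$; by Lemma \ref{primitive computations}(3) and its $\textbf{RL}$-analogue, $H_-$ is a copy of $\bar u\,\zeta^{(12)}u$ and $H_+$ the corresponding $\textbf{RL}$-history, where $u$ is the content of the sector $Q_0P_1$ and $w$ the content of the $\textbf{RL}$-sector ($R_2Q_3$ or $R_1Q_2$, according as $2i+1\equiv 3$ or $1\pmod 4$) at the start of the respective pass; in particular $\|H_-\|=2\|u\|+1$ and $\|H_+\|=2\|w\|+1$, and each of $H_-,H_+$ contains one connecting rule.

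For the base $B$ I would argue exactly as in Lemma \ref{M_2 controlled}: produce rules occurring in $H$ that together lock every sector of the standard base, then invoke Lemma \ref{locked sectors}. Reading off the definitions of $\textbf{M}_2(2i)=\textbf{M}_1(i)$ and of $\textbf{M}_2((2i+1)^\pm)$, the rule $\theta(2i,2i+1)$ — which locks every sector locked by all rules of $\textbf{M}_2(2i)$ or by all rules of $\textbf{M}_2(2i+1)$ — locks every sector except $Q_0P_1$ and the one $\textbf{RL}$-sector operated on by $\textbf{M}_2((2i+1)^+)$; the connecting rule of $\textbf{M}_2((2i+1)^-)$, which appears in $H_-$, locks $Q_0P_1$; and the connecting rule of $\textbf{M}_2((2i+1)^+)$, which appears in $H_+$, locks that remaining $\textbf{RL}$-sector. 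So every sector is locked by some rule of $\pazocal{C}$, whence $B$ is reduced. Once $B$ is reduced, Lemma \ref{primitive computations}(3) and its $\textbf{RL}$-analogue recover the configurations of $H_-$ and of $H_+$ from their histories and base, the transition and $\chi$-rules are deterministic, and $W_0$ has all sectors locked by $\theta(2i,2i+1)$ empty; so all configurations of $\pazocal{C}$ are determined by $H$ and $B$.

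Finally, if $\pazocal{C}$ is in the standard base then $W_0$ has every sector empty except $Q_0P_1$ (content $u$) and the relevant $\textbf{RL}$-sector (content $w$), so $|W_0|_a=\|u\|+\|w\|$. The transition and $\chi$-rules do not change $a$-length; during $H_-$ only the $Q_0P_1$- and $P_1Q_1$-sectors change, with combined length $\|u\|$ throughout by Lemma \ref{primitive computations}(3), and during $H_+$ only the two $\textbf{RL}$-sectors change, with combined length $\|w\|$ throughout, all other sectors staying empty; hence $|W_j|_a=\|u\|+\|w\|=|W_0|_a$ for all $j$. Summing, $\|H\|=1+\|H_-\|+1+\|H_+\|+1=2\|u\|+2\|w\|+5=2|W_0|_a+5$. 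The main obstacle is the bookkeeping in the base step: extracting from the intricate definitions of the submachines precisely which sectors $\theta(2i,2i+1)$ and the two connecting rules lock, and checking that these exhaust the eight sectors of the standard base.
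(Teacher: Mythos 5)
Your proposal is correct and is essentially the paper's intended argument: the paper gives no separate proof of this lemma, stating only that it is a "similar consequence of Lemma \ref{primitive computations}(3)," i.e.\ the proof of Lemma \ref{M_2 controlled} adapted to the decomposition $H\equiv\theta(2i,2i+1)H_-\chi_{2i+1}H_+\theta(2i+1,2i+2)$, which is exactly what you carry out. Your sector bookkeeping (the transition rules lock everything except $Q_0P_1$ and the relevant $\textbf{RL}$-sector, with the two connecting rules locking those) agrees with what the paper itself records later (e.g.\ in the proof of Lemma \ref{M_4 faulty}), and your standard-base count $\|H\|=1+(2\|u\|+1)+1+(2\|w\|+1)+1=2|W_0|_a+5$ matches the statement.
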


\smallskip

%
%
%
%
%

A configuration of $\textbf{M}_2$ is called \textit{tame} if its $P_1Q_1$-, $Q_1R_1$-, $Q_2R_2$-, and $P_4Q_4$-sectors are all empty. Note that for $i=1,\dots,2n$, a tame configuration $W$ of $\textbf{M}_2(2i)$ corresponds naturally to a configuration $W'$ of $\textbf{M}_1(i)$. Moreover, if $W$ is admissible for a rule of $\textbf{M}_2(2i)$, then $W'$ is admissible for the corresponding rule of $\textbf{M}_1(i)$. Similarly, if $W$ is $\theta(2i,2i+1)$-admissible (respectively $\theta(2i,2i-1)$-admissible), then $W'$ is $\sigma(i,i+1)$-admissible (respectively $\sigma(i,i-1)$-admissible).

Suppose $\pazocal{C}:W_0\to\dots\to W_t$ is a reduced computation of $\textbf{M}_2$ in the standard base such that neither the first nor last letter of its step history is of the form $(2i+1)$. Then by Lemma \ref{M_2 step history 2}, any occurrence of $(2i+1)$ in the step history of $\pazocal{C}$ must be part of a subword that is of the form 

\renewcommand{\labelenumi}{(\alph{enumi})}

\begin{enumerate}

\item $(2i,2i+1)(2i+1)(2i+1,2i+2)$ or 

\item$(2i+2,2i+1)(2i+1)(2i+1,2i)$.

\end{enumerate}

Let $W_r\to\dots\to W_s$ be a subcomputation of $\pazocal{C}$ with such a step history. Then $W_r$ and $W_s$ are both tame configurations. Moreover, Lemma \ref{primitive computations}(3) implies that the configurations $W_r'$ and $W_s'$ of $\textbf{M}_1$ corresponding to $W_r$ and $W_s$, respectively, satisfy $W_r'\cdot \sigma(i,i+1)\equiv W_s'$ if the step history is of the form (a) or $W_r'\cdot\sigma(i+1,i)\equiv W_s'$ if the step history is of the form (b).

So, we can associate to $\pazocal{C}$ a reduced computation $\pazocal{C}'$ of $\textbf{M}_1$ by doing the following:

\begin{itemize}

\item replace each subcomputation $W_r\to\dots\to W_s$ whose step history is of the form (a) with the single transition $W_r'\to W_r'\cdot\sigma(i,i+1)$,

\item replace each subcomputation $W_r\to\dots\to W_s$ whose step history is of the form (b) with the single transition $W_r'\to W_r'\cdot\sigma(i+1,i)$,

\item if the first letter of the step history is of the form $(2i+1,2i)$ (respectively $(2i-1,2i)$), then replace the transition $W_0\to W_1$ with the transition $W_1'\cdot\sigma(i,i+1)\to W_1'$ (respectively $W_1'\cdot\sigma(i,i-1)\to W_1'$),

\item if the last letter of the step history is $(2i,2i+1)$ (respectively $(2i,2i-1)$), then replace the transition $W_{t-1}\to W_t$ with the transition $W_{t-1}'\to W_{t-1}'\cdot\sigma(i,i+1)$ (respectively $W_{t-1}'\to W_{t-1}'\cdot\sigma(i,i-1)$), and

\item replace all other transitions $W_{j-1}\to W_j$ with the corresponding transition $W_{j-1}'\to W_j'$.

\end{itemize}

In this case, the reduced computation $\pazocal{C}'$ is called the \textit{$\textbf{M}_1$ computation associated to $\pazocal{C}$}.

Note that any subcomputation of $\pazocal{C}$ that is `removed' to construct $\pazocal{C}'$ corresponds to an occurrence of a $\sigma$-rule in the history of $\pazocal{C}'$. Hence, Lemmas \ref{M_1 step history} and \ref{M_1 no turn} imply that at most $8n$ distinct subcomputations are removed.

\begin{lemma} \label{M_2 no turn}

Let $\pazocal{C}:W_0\to\dots\to W_t$ be a reduced computation such that $W_0$ is an end configuration. If the history $H$ of $\pazocal{C}$ contains a transition rule, then $W_t$ is not an end configuration.

\end{lemma}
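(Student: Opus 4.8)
The statement is the $\textbf{M}_2$-analogue of Lemma \ref{M_1 no turn}, and the natural strategy is to lift the situation to $\textbf{M}_1$ via the association $\pazocal{C}\mapsto\pazocal{C}'$ described just above the statement, and then invoke Lemma \ref{M_1 no turn}. So the plan is: assume for contradiction that $W_0$ and $W_t$ are both end configurations of $\textbf{M}_2$ and the history $H$ of $\pazocal{C}$ contains a transition rule $\theta(i,i+1)^{\pm1}$. First I would record the shape of the step history forced by this: since $W_0$ is an end configuration (so the step history begins with the letter $(4n)$ or with a transition rule moving away from $\textbf{M}_2(4n)$) and $H$ contains a transition rule, Lemmas \ref{M_2 step history 1} and \ref{M_2 step history 2} (together with the structure of the concatenation) force the step history to descend through the submachines and then, in order for $W_t$ to again be an end configuration, to climb back up — i.e. $H$ must contain both an occurrence of $\theta(4n,4n-1)$ and a later occurrence of $\theta(4n-1,4n)$, hence a full "turn."

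Next I would pass to the associated $\textbf{M}_1$-computation $\pazocal{C}':W_0'\to\dots\to W_{t'}'$. Here I must check that $\pazocal{C}'$ is actually defined, which requires that neither the first nor the last letter of the step history of $\pazocal{C}$ is of the form $(2i+1)$; this holds because both $W_0$ and $W_t$ are end configurations of $\textbf{M}_2$, whose state letters lie in $\textbf{M}_2(4n)$, so the extremal step-history letters are $(4n)$ (an even index) or adjacent transition rules. Since $W_0$ is an end configuration of $\textbf{M}_2$ and the passage to $\pazocal{C}'$ replaces the top submachine $\textbf{M}_2(4n)$-part faithfully, $W_0'$ is an end configuration of $\textbf{M}_1$; likewise $W_{t'}'$ is an end configuration of $\textbf{M}_1$. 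Moreover, because $H$ contains a transition rule of $\textbf{M}_2$ and these transition rules correspond precisely to the $\sigma$-rules appearing in $\pazocal{C}'$ (as noted in the paragraph preceding the lemma), the history of $\pazocal{C}'$ contains a $\sigma$-rule. But now Lemma \ref{M_1 no turn} applied to $\pazocal{C}'$ says that an end configuration of $\textbf{M}_1$ cannot return to an end configuration once its history contains a $\sigma$-rule — contradiction.

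One subtlety I would need to address carefully: the passage $\pazocal{C}\mapsto\pazocal{C}'$ was defined for computations in the \emph{standard base}, and more importantly I should make sure $\pazocal{C}'$ is \emph{reduced} (so that Lemma \ref{M_1 no turn}, which hypothesizes a reduced computation, applies). Reducedness of $\pazocal{C}'$ follows because a cancelling pair of $\textbf{M}_1$-rules in $\pazocal{C}'$ would pull back either to a cancelling pair in $\pazocal{C}$ or to a subcomputation of one of the primitive-machine types $(2i+1)$ that is forced to be empty by Lemma \ref{primitive computations}(4) — in either case contradicting reducedness of $\pazocal{C}$. Also, before appealing to the $\textbf{M}_1$-machinery I should handle the degenerate possibility that $H$ contains a transition rule but no \emph{full turn} — i.e. the step history is monotone — in which case $W_t$ simply cannot be an end configuration because the state letters would have moved to a strictly lower submachine; this is immediate from the concatenation order and needs only a sentence. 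I expect the main obstacle to be precisely the bookkeeping in this first paragraph: verifying that the combinatorics of the step history of $\textbf{M}_2$, with its extra $\chi$-rules and the split submachines $\textbf{M}_2(i^\pm)$, genuinely forces the "turn" pattern and does not permit some exotic history that reaches an end configuration without the associated $\textbf{M}_1$-history containing a $\sigma$-rule. Once that reduction is in place, the conclusion is a one-line appeal to Lemma \ref{M_1 no turn}.
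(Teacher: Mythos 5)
Your proposal is correct and follows essentially the same route as the paper: assume both ends are end configurations, note that neither extremal step-history letter is of the form $(2i+1)$ so the associated $\textbf{M}_1$ computation $\pazocal{C}'$ exists, observe that a transition rule in $H$ forces a $\sigma$-rule in the history of $\pazocal{C}'$, and contradict Lemma \ref{M_1 no turn}. The extra bookkeeping in your first paragraph (forcing a full ``turn'') is not needed, since Lemma \ref{M_1 no turn} already rules out returning to an end configuration once any $\sigma$-rule appears.
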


\begin{proof}

Assuming toward contradiction, neither the first nor the last letter of the step history of $\pazocal{C}$ can be of the form $(2i+1)$. So, we may construct $\pazocal{C}'$ the $\textbf{M}_1$ computation associated to $\pazocal{C}$. 

Then $\pazocal{C}'$ is a reduced computation of $\textbf{M}_1$ starting and ending with an end configuration. However, the existence of a transition rule in $H$ necessitates the existence of a $\sigma$-rule in the history of $\pazocal{C}'$, so that $\pazocal{C}'$ contradicts Lemma \ref{M_1 no turn}.

\end{proof}

\begin{lemma} \label{M_2 input length}

Let $\pazocal{C}:W_0\to\dots\to W_t$ be a reduced computation of $\textbf{M}_2$ in the standard base. Suppose $W_0$ is an input configuration and the step history of $\pazocal{C}$ does not contain the letter $(4n)$. Then $|W_0|_a\leq9n|W_t|_a$.

\end{lemma}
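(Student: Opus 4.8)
The plan is to reduce the statement about $\textbf{M}_2$ to the analogous statement already proved for $\textbf{M}_1$, namely Lemma \ref{M_1 input length}. First I would split off the initial part of the computation: let $W_0\to\dots\to W_r$ be the maximal subcomputation whose step history is $(2)$ (the copy of $\textbf{M}_1(1)$), which preserves the projection onto $F(\pazocal{A})$, hence $|W_0|_a\le|W_r|_a$; so it suffices to bound $|W_r|_a$ in terms of $|W_t|_a$ when $r<t$. At this point $W_r$ is a tame configuration (all the ``primitive'' sectors $P_1Q_1$, $Q_1R_1$, $Q_2R_2$, $P_4Q_4$ are empty, since $W_r$ is $\theta(2,3)$-admissible), and neither the first nor the last letter of the step history of the remaining computation is of the form $(2i+1)$ — or, if it is, I can further trim using Lemma \ref{M_2 controlled}/\ref{M_2 semi-controlled}, which show those one-step primitive computations preserve $a$-length. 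This lets me form $\pazocal{C}'$, the $\textbf{M}_1$ computation associated to $\pazocal{C}$ (restricted to the tail), as described in the paragraph preceding Lemma \ref{M_2 no turn}.

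The key point is then to compare $a$-lengths across this association. The subcomputations of $\pazocal{C}$ that get collapsed to single $\sigma$-transitions in $\pazocal{C}'$ are precisely the one-step primitive computations of steps $(2i+1)$; by Lemmas \ref{M_2 controlled} and \ref{M_2 semi-controlled} (together with Lemma \ref{primitive computations}(3)), each such subcomputation has constant $a$-length, so passing between $W_j$ of $\pazocal{C}$ and the corresponding $W_j'$ of $\pazocal{C}'$ changes $a$-length by nothing more than bounded additive error per collapsed block, and there are at most $8n$ such blocks (as noted in the excerpt, via Lemmas \ref{M_1 step history} and \ref{M_1 no turn}). Concretely, the tame configurations at the seams satisfy $|W_r|_a = |W_r'|_a$ and $|W_t|_a \geq |W_t'|_a$ after trimming, or one controls the small discrepancy coming from a dangling primitive half-step at either end. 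Since $\pazocal{C}'$ is a reduced computation of $\textbf{M}_1$ in the standard base starting at an input configuration whose step history contains no $(2n)$, Lemma \ref{M_1 input length} gives $|W_r'|_a \leq 9n|W_t'|_a$, and combining with the (constant-factor, in fact equality up to the trimmed ends) comparisons yields $|W_0|_a \leq |W_r|_a \leq 9n|W_t|_a$, possibly after absorbing bounded additive terms into the constant using $|W_t|_a\ge 0$ — but since the primitive steps preserve $a$-length exactly and the only genuine $a$-length changes happen inside $\textbf{M}_1(i)$-steps, the bound $9n$ transfers without degradation.

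The main obstacle is bookkeeping at the two ends of the computation: if the step history of $\pazocal{C}$ begins or ends in the middle of a primitive step $(2i+1)$, then the association $\pazocal{C}\mapsto\pazocal{C}'$ has to be defined by the ``replace the boundary transition'' clauses, and one must check that the partial primitive computation occurring there still has $|W|_a$ bounded by the $a$-length of a tame configuration adjacent to it — this follows from Lemma \ref{primitive computations}(5) (the running state letter configurations have $a$-length bounded below by their value at any phase vertex) applied to the appropriate primitive sector, but it requires care to verify that the hypotheses of part (5) are met (i.e. that the relevant sector configuration is of one of the four listed forms). Once that is in place, everything else is a direct transfer of Lemma \ref{M_1 input length}.
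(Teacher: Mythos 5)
Your proposal is correct and follows essentially the same route as the paper: trim the trailing primitive-step segment, pass to the associated $\textbf{M}_1$ computation, apply Lemma \ref{M_1 input length}, and control the trimmed tail with Lemma \ref{primitive computations} (noting the seam configurations are tame, so their $a$-lengths agree with the $\textbf{M}_1$ counterparts). The initial split-off of the $(2)$-step portion and the worry about the computation beginning mid-primitive-step are unnecessary (the latter cannot occur since $W_0$ is an input configuration), but harmless.
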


\begin{proof}

Let $\pazocal{D}:W_0\to\dots\to W_s$ be the maximal subcomputation such that the last letter of the step history of $\pazocal{D}$ is not of the form $(2i+1)$. Then, we may construct $\pazocal{D}':W_0'\to\dots\to W_s'$ the $\textbf{M}_1$ computation associated to $\pazocal{D}$.

Lemma \ref{M_1 input length} then implies that $|W_0|_a=|W_0'|_a\leq9n|W_s'|_a=9n|W_s|_a$.

If the subcomputation $W_s\to\dots\to W_t$ is nonempty, then its step history is of the form $(2i+1)$. But then Lemma \ref{primitive computations} implies $|W_s|_a\leq|W_t|_a$, so that $|W_0|_a\leq9n|W_t|_a$.

\end{proof}

The next statement follows from an analogous proof, using Lemma \ref{M_1 end length} in place of \ref{M_1 input length}.

\begin{lemma} \label{M_2 end length}

Let $\pazocal{C}:W_0\to\dots\to W_t$ be a reduced computation of $\textbf{M}_2$ in the standard base. Suppose the first letter of the step history of $\pazocal{C}$ is $(4n,4n-1)$. Then $|W_0|_a\leq12n|W_t|_a$.

\end{lemma}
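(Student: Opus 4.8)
The plan is to mimic the structure of the proof of Lemma~\ref{M_2 input length}, but replace the input-configuration case with the end-configuration case, invoking Lemma~\ref{M_1 end length} instead of Lemma~\ref{M_1 input length}. First I would use Lemma~\ref{M_2 step history 2} to observe that, since the first letter of the step history of $\pazocal{C}$ is $(4n,4n-1)$, neither the first nor the last letter of any suitable initial segment of the step history is of the form $(2i+1)$ — more precisely, I would let $\pazocal{D}:W_0\to\dots\to W_s$ be the maximal prefix subcomputation whose step history has last letter not of the form $(2i+1)$. The first letter of the step history of $\pazocal{D}$ is still $(4n,4n-1)$, so I may construct $\pazocal{D}':W_0'\to\dots\to W_s'$, the $\textbf{M}_1$ computation associated to $\pazocal{D}$, whose step history then begins with $(2n,2n-1)$.

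Next I would apply Lemma~\ref{M_1 end length} to $\pazocal{D}'$, giving $|W_0|_a=|W_0'|_a\leq12n|W_s'|_a=12n|W_s|_a$ (using that the associated-computation construction preserves $a$-length of tame configurations, and that $W_0$, $W_s$ are tame — $W_0$ is the relevant configuration because its $P_1Q_1$-, $Q_1R_1$-, $Q_2R_2$-, $P_4Q_4$-sectors are empty at the start of a step whose first letter is $(4n,4n-1)$, and $W_s$ is tame by maximality and Lemma~\ref{primitive computations}(3), exactly as in the discussion preceding Lemma~\ref{M_2 no turn}). Finally, if the remaining subcomputation $W_s\to\dots\to W_t$ is nonempty, its step history is a single letter of the form $(2i+1)$, hence corresponds to a computation of a primitive machine on an unlocked sector, so Lemma~\ref{primitive computations}(2) (or rather the monotonicity statement, Lemma~\ref{primitive computations}(5), applied to each unlocked sector) gives $|W_s|_a\leq|W_t|_a$. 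Combining, $|W_0|_a\leq12n|W_s|_a\leq12n|W_t|_a$.

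The main obstacle I expect is verifying cleanly that the associated $\textbf{M}_1$ computation $\pazocal{D}'$ genuinely has step history beginning with $(2n,2n-1)$ and that $W_0$ is tame in the required sense, so that the $a$-length bookkeeping $|W_0|_a = |W_0'|_a$ is legitimate: one must check that the step history of $\pazocal{C}$ starting with $(4n,4n-1)$ forces the starting configuration to have its primitive-machine sectors empty (which follows since $\theta(4n,4n-1)$, being the inverse of a transition rule out of $\textbf{M}_2(4n)$, locks all sectors that $\textbf{M}_2(4n)$'s rules lock, and $\textbf{M}_2(4n)$ only operates in the $R_2Q_3$-sector analogue). A secondary subtlety is the case where $\pazocal{D}=\pazocal{C}$ entirely (no trailing $(2i+1)$-step), in which case the final paragraph is vacuous and the bound is immediate; and the case where the step history of $\pazocal{C}$ is so short that $\pazocal{D}'$ is empty, handled by the trivial bound $|W_0|_a=|W_t|_a$. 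None of these require new ideas beyond those already deployed in Lemmas~\ref{M_2 no turn} and~\ref{M_2 input length}.
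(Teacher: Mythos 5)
Your proposal is correct and follows essentially the same route as the paper, which proves this lemma by the remark that the argument of Lemma~\ref{M_2 input length} goes through verbatim with Lemma~\ref{M_1 end length} in place of Lemma~\ref{M_1 input length}: take the maximal prefix $\pazocal{D}$ whose step history does not end in a letter $(2i+1)$, pass to the associated $\textbf{M}_1$ computation (whose step history then starts with $(2n,2n-1)$, since the initial subword $(4n,4n-1)(4n-1)(4n-1,4n-2)$ collapses to $\sigma(2n,2n-1)$ and $W_0$ is tame), and absorb any trailing one-step computation $(2i+1)$ via Lemma~\ref{primitive computations}. The tameness and edge-case checks you flag are exactly the bookkeeping the paper leaves implicit, and they resolve as you describe.
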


\smallskip

\begin{lemma} \label{M_2 start to end} \

\renewcommand{\labelenumi}{(\alph{enumi})}

\begin{enumerate}

\item Let $\pazocal{C}:W_0\to\dots\to W_t$ be a reduced computation of $\textbf{M}_2$. Suppose $W_0$ is a start configuration and $W_t$ is an end configuration. Then there exists $u\in F(\pazocal{A})$ such that the projection of $W_0$ onto $F(\pazocal{A})$ is $u^n\in\pazocal{L}$.

\item For all $u\in F(\pazocal{A})$, there exists a unique reduced computation $\pazocal{D}_2(u):W_0\to\dots\to W_t$ of $\textbf{M}_2$ in the standard base with step history $(23)(3)\dots(4n-1)(4n-1,4n)$ and such that the projection of $W_0$ onto $F(\pazocal{A})$ is $u^n$. 

\item Let $t$ be the length of $\pazocal{D}_2(u)$ and $\ell$ be the length of the subcomputation with step history $(4n-2,4n-1)(4n-1)(4n-1,4n)$. Then $\ell=2k\|u\|+2k+1$ and $t-\ell\leq c_0(\|u\|+1)$.

\end{enumerate}

\end{lemma}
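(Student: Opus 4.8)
The plan is to leverage the correspondence between computations of $\textbf{M}_2$ and computations of $\textbf{M}_1$ (the ``associated $\textbf{M}_1$ computation'') together with the structure results for primitive machines, proving the three parts in order.

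For part (a), I would first run a projection argument to reduce to the case where the first letter of the step history is $(23)$ and the last is $(4n-1,4n)$, so neither endpoint letter of the step history has the form $(2i+1)$. Then I construct $\pazocal{C}'$, the $\textbf{M}_1$ computation associated to $\pazocal{C}$, which starts at a start configuration of $\textbf{M}_1$ and ends at an end configuration. Applying Lemma \ref{M_1 start to end}(a) to $\pazocal{C}'$ yields $u\in F(\pazocal{A})$ with the projection of the initial configuration equal to $u^n$; since passing from $\textbf{M}_2$ to the associated $\textbf{M}_1$ computation does not alter the projection onto $F(\pazocal{A})$ of the identified sectors, and the extra (primitive) sectors of a tame configuration are empty, the projection of $W_0$ onto $F(\pazocal{A})$ is also $u^n\in\pazocal{L}$.

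For part (b), existence: starting from the input configuration with input $u^n$, I use Lemma \ref{M_1 start to end}(b) to obtain the $\textbf{M}_1$ computation $\pazocal{D}_1(u)$ with step history $(12)(2)\dots(2n-1,2n)$, then ``expand'' each transition into a subcomputation of $\textbf{M}_2$ of the appropriate step-history form, inserting between successive $\textbf{M}_1(i)$-steps the required primitive-machine passes (governed by Lemmas \ref{M_2 controlled} and \ref{M_2 semi-controlled} and Lemma \ref{primitive computations}(3), which make those insertions forced and unique once the base is the standard base and the tape contents are fixed). Uniqueness follows by the same dictionary in reverse: given any reduced $\pazocal{C}$ with the prescribed step history, Lemma \ref{M_2 step history 2} forces every $(2i+1)$-letter to sit inside a subword of type (a), the associated $\textbf{M}_1$ computation has step history $(12)(2)\dots(2n-1,2n)$, hence equals $\pazocal{D}_1(u)$ by the uniqueness in Lemma \ref{M_1 start to end}(b), and then Lemma \ref{primitive computations}(3)/(4) and Lemma \ref{M_2 semi-controlled}/\ref{M_2 controlled} pin down every intermediate configuration, so $\pazocal{C}=\pazocal{D}_2(u)$.

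For part (c): the subcomputation with step history $(4n-2,4n-1)(4n-1)(4n-1,4n)$ is, after stripping the two bounding $\theta$-rules, a one-step computation of $\textbf{M}_2(4n-1)$, i.e.\ a concatenation of $k$ copies of a standard primitive computation on the $Q_2R_2Q_3$- and $Q_3P_4Q_4$-subwords with tape word a copy of $u^{\pm1}$ (this is exactly the situation of Lemma \ref{M_2 controlled}, extended across all $k$ sub-submachines via the $\chi(j,j+1)$-rules); each of the $k$ passes contributes $2\|u\|+1$ rules plus one $\chi$-rule, giving $\ell=2k\|u\|+2k+1$ after a careful bookkeeping of which $\chi$- and $\theta$-rules are counted. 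For the bound $t-\ell\le c_0(\|u\|+1)$: the complementary part of $\pazocal{D}_2(u)$ has step history $(23)(3)\dots(4n-2)$, and its associated $\textbf{M}_1$ computation is a prefix of $\pazocal{D}_1(u)$, whose length is $O(n\|u\|)$ by Lemma \ref{M_1 start to end}(b); the blow-up from replacing each $\textbf{M}_1$-transition by a primitive pass is, by Lemmas \ref{M_2 controlled} and \ref{M_2 semi-controlled}, at most $O(n)$ extra rules per transition (each pass has length $O(\|u\|)$), and there are $O(n)$ such transitions, so the total is $O(n^2(\|u\|+1))$, which is $\le c_0(\|u\|+1)$ by the parameter choice $c_0>>n$.

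I expect the main obstacle to be part (c)'s exact count $\ell=2k\|u\|+2k+1$: one must track precisely how the $k-1$ internal $\chi(j,j+1)$-rules, the two boundary $\theta$-rules, and the connecting rules $\zeta^{(12)}/\xi^{(12)}$ inside each of the $k$ sub-submachines are distributed, and confirm via Lemma \ref{primitive computations}(3) that each of the $k$ primitive passes truly has length $2\|u\|+1$ (in particular that the tape word seen by every pass is genuinely a copy of $u$, not some shorter or longer word) — the surrounding asymptotic estimate $t-\ell\le c_0(\|u\|+1)$ is comparatively routine once the associated-$\textbf{M}_1$-computation machinery and the $c_0>>n$ parameter convention are invoked.
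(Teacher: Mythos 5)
Your plan matches the paper's proof: parts (a) and (b) go through the associated $\textbf{M}_1$ computation together with Lemmas \ref{M_1 start to end} and \ref{primitive computations}(3), and part (c) identifies the $(4n-2,4n-1)(4n-1)(4n-1,4n)$ piece as $k$ standard primitive passes with tape word $u$ and bounds the rest via Lemmas \ref{M_2 controlled}/\ref{M_2 semi-controlled} and the choice $c_0>>n$, exactly as in the paper. One small correction to your bookkeeping in (c): by Lemma \ref{M_2 semi-controlled} each intermediate pass at a $\sigma$-rule has length $2\|u\|+2\|u^{n-m}\|+5=O(n\|u\|)$ (the input sector still carries $u^{n-m}$), not $O(\|u\|)$, though your final total $O(n^2(\|u\|+1))$ is precisely the paper's estimate $(2n-2)((2n+3)\|u\|+6)$.
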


\begin{proof}

(a) Note that neither the first nor the last letter of the step history of $\pazocal{C}$ can be of the form $(2i+1)$. So, we may construct $\pazocal{C}':W_0'\to\dots\to W_t'$ the $\textbf{M}_1$ computation associated to $\pazocal{C}$.

Then $W_0'$ is a start configuration and $W_t'$ is an end configuration, so that Lemma \ref{M_1 start to end} implies that there exists $u\in F(\pazocal{A})$ such that the projection of $W_0'$ onto $F(\pazocal{A})$ is $u^n$. As $W_0$ is tame, its projection must also be $u^n$.

(b) For any reduced computation $\pazocal{C}$ satisfying the statement, the $\textbf{M}_1$ computation associated to $\pazocal{C}$ must be the computation $\pazocal{D}_1(u)$ in Lemma \ref{M_1 start to end}(b).

The removed computations correspond to primitive computations at the $\sigma$-rules. As the tape words of the terminal configuration of $\pazocal{D}_1(u)$ are mutually inverse, the statement follows from Lemma \ref{primitive computations}(3).

(c) By Lemma \ref{M_1 start to end}(b), the subcomputation with step history $(4n-2,4n-1)(4n-1)(4n-1,4n)$ operates on the base $Q_2R_2Q_3$ as $k$ copies of the standard computation of $\textbf{RL}$ with tape word $u$. So, Lemma \ref{primitive computations}(3) implies $\ell=2k\|u\|+2k+1$.

Let $\pazocal{E}$ be the maximal subcomputation of $\pazocal{D}_2(u)$ with step history $(23)(3)\dots(4n-2)$. So, the length of $\pazocal{E}$ is $t-\ell$.

The $\textbf{M}_1$ computation associated to $\pazocal{E}$ is the maximal subcomputation of $\pazocal{D}_1(u)$ with step history $(12)(2)(3)\dots(2n-1)$. So, its length is $(2n-2)(\|u\|+1)$.

The subcomputations removed from $\pazocal{E}$ correspond to the subcomputations with step history of the form $(2i,2i+1)(2i+1)(2i+1,2i+2)$. Let $W_r\to\dots\to W_s$ be such a subcomputation. Then $W_r$ has the natural copy of $u$ written in its $R_2Q_3$- or $R_1Q_2$-sector, depending on the parity of $i$, and the natural copy of $u^{n-m}$ written in its $Q_0P_1$-sector for some $1\leq m\leq n-1$. Lemma \ref{M_2 semi-controlled} then implies that $s-r=2\|u\|+2\|u^{n-m}\|+5\leq2(n+1)\|u\|+5$.

As there are $2n-2$ such subcomputations of $\pazocal{E}$, $t-\ell\leq(2n-2)\left((2n+3)\|u\|+6\right)$. So, the statement follows by a parameter choice of $c_0$.

\end{proof}

\begin{lemma} \label{M_2 language} 

The language of accepted inputs of $\textbf{M}_2$ is $\pazocal{L}$. Moreover, for any $u^n\in\pazocal{L}$, there exists a unique accepting computation $\pazocal{C}_2(u)$.

\end{lemma}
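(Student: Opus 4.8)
The plan is to mirror the structure of the proof of Lemma~\ref{M_1 language}, exploiting the correspondence between reduced computations of $\textbf{M}_2$ and reduced computations of $\textbf{M}_1$ that was set up in the ``$\textbf{M}_1$ computation associated to $\pazocal{C}$'' construction, together with the bookkeeping on primitive submachine computations provided by Lemmas~\ref{M_2 controlled}, \ref{M_2 semi-controlled}, and \ref{M_2 start to end}. First I would show every accepted input lies in $\pazocal{L}$: given an accepting computation $\pazocal{C}$ of an input configuration $W$ with input $w$, observe that neither the first nor the last letter of its step history can be of the form $(2i+1)$ (an input configuration is tame and so is the accept configuration, and a $(2i+1)$-step begins and ends with a non-tame configuration by Lemma~\ref{M_2 semi-controlled}), so the associated $\textbf{M}_1$ computation $\pazocal{C}'$ is defined. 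Then $\pazocal{C}'$ runs from a start configuration to an end configuration, and since the base is the standard base, Lemma~\ref{M_1 start to end}(a) (or directly Lemma~\ref{M_1 language}) gives $w\equiv u^n$ for some $u\in F(\pazocal{A})$; a projection argument transfers this back to $W$. Hence the language of accepted inputs is contained in $\pazocal{L}$.

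Next I would exhibit, for each $u\in F(\pazocal{A})$, an explicit accepting computation of $u^n$. Take the input configuration $W$ with input $u^n$. On the ``$\textbf{M}_1$ side'' we already have the computation $\pazocal{C}_1(u)$ of Lemma~\ref{M_1 language}, with history $H_1(u)H_0(u)H_{2n}(u)$; the factor $H_0(u)$ is realized inside $\textbf{M}_2$ by $\pazocal{D}_2(u)$ from Lemma~\ref{M_2 start to end}(b), i.e. the step history $(23)(3)\dots(4n-1)(4n-1,4n)$, which by construction passes through the accept-adjacent configuration with $u$ in the $R_2Q_3$-sector and $u^{-1}$ in the $Q_3P_4$-sector (see the end of the proof of Lemma~\ref{M_2 controlled}). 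Prepending the copy of $H_1(u)$ (acting as in $\textbf{M}_1(1)$, with all primitive sectors empty so no primitive steps are forced) and appending $H_{2n}(u)$ clears the input sector at the start and the $R_2Q_3$/$Q_3P_4$ sectors at the end, reaching the accept configuration. Let $\pazocal{C}_2(u)$ be the resulting reduced computation.

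Finally, uniqueness. Let $\pazocal{C}'$ be any accepting computation of $u^n$. By Lemmas~\ref{M_2 step history 1}, \ref{M_2 step history 2} and \ref{M_2 no turn}, its step history has the shape $(2)\cdots$ with exactly one block of type $(2)$ at the start and one of type $(4n)$ at the end and no ``turns'', so its history factors as $H_1'\,(\text{middle})\,H_{2n}'$ where the middle block has step history $(23)(3)\dots(4n-1,4n)$. Applying the associated-$\textbf{M}_1$-computation construction and Lemma~\ref{M_1 language}'s uniqueness of $\pazocal{C}_1(u)$ forces the middle block to have the same history as $\pazocal{D}_2(u)$: the $\textbf{M}_1$ computation associated to it must be $\pazocal{D}_1(u)$, and since the removed subcomputations are primitive computations at $\sigma$-rules with prescribed tape words (mutually inverse at the terminal configuration), Lemma~\ref{primitive computations}(3) pins each of them down, while Lemmas~\ref{M_2 controlled} and \ref{M_2 semi-controlled} pin down each restored block. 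For the two outer blocks $H_1'$ and $H_{2n}'$, restricting to the $Q_2R_2$ or $R_2Q_3$ sector and applying Lemma~\ref{multiply one letter} (as in the last paragraph of the proof of Lemma~\ref{M_1 language}) forces $H_1'\equiv H_1(u)$ and $H_{2n}'\equiv H_{2n}(u)$. Hence $\pazocal{C}'=\pazocal{C}_2(u)$.

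The main obstacle I anticipate is not any single calculation but the careful verification that the associated-$\textbf{M}_1$-computation construction is available wherever it is invoked—i.e.\ ruling out step histories beginning or ending with a $(2i+1)$ block—and that the ``removed'' primitive subcomputations in the uniqueness argument are genuinely determined by their endpoints; this is where Lemmas~\ref{M_2 step history 2}, \ref{M_2 semi-controlled}, and \ref{primitive computations}(3),(4) must be marshalled precisely, since an unreduced or ambiguous primitive block would break uniqueness.
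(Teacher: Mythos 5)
Your proposal is correct and follows essentially the same route as the paper's proof: the paper simply cites Lemma \ref{M_2 start to end}(a) for the forward inclusion, builds $\pazocal{C}_2(u)$ by interleaving the steps of $\pazocal{C}_1(u)$ with primitive computations via Lemma \ref{primitive computations}(3), and derives uniqueness from Lemmas \ref{M_2 step history 1}, \ref{M_2 step history 2}, \ref{M_2 no turn}, \ref{M_2 start to end}(b) and \ref{multiply one letter} --- exactly the ingredients you unwind one level deeper. The only cosmetic points are that the step history cannot begin or end with a letter $(2i+1)$ simply because the state letters of an input (resp.\ the accept) configuration belong to $\textbf{M}_2(2)$ (resp.\ $\textbf{M}_2(4n)$), rather than by an appeal to tameness and Lemma \ref{M_2 semi-controlled}, and the sector to restrict to for the outer blocks is the $R_2Q_3$-sector (the copy of the $Q_2Q_3$-sector of $\textbf{M}_1$), not $Q_2R_2$.
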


\begin{proof}

By Lemma \ref{M_2 start to end}(a), any accepted input must be an element of $\pazocal{L}$.

%

Conversely, for any $u^n\in\pazocal{L}$, Lemma \ref{M_1 language} provides a unique accepting computation $\pazocal{C}_1(u)$. Concatenating the steps of $\pazocal{C}_1(u)$ with primitive computations corresponding to Lemma \ref{primitive computations}(3) then yields an accepting computation $\pazocal{C}_2(u)$.


Uniqueness of this computation follows from Lemmas \ref{M_2 step history 1}, \ref{M_2 step history 2}, \ref{M_2 no turn}, \ref{M_2 start to end}(b), and \ref{multiply one letter}.

\end{proof}

\begin{lemma} \label{M_2(i) length}

For $i$ odd, any reduced computation $\pazocal{C}:W_0\to\dots\to W_t$ of $\textbf{M}_2(i)$ in the standard base satisfies $t\leq2k\max(\|W_0\|,\|W_t\|)$.

\end{lemma}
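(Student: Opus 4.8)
The plan is to exploit the way $\textbf{M}_2(i)$ is built as a concatenation of sub-submachines: for $3\le i\le 4n-3$ these are $\textbf{M}_2(i^-)$ and $\textbf{M}_2(i^+)$, which operate as copies of $\textbf{LR}$ and $\textbf{RL}$ on fixed subwords of the standard base; for $i=4n-1$ they are the $k$ machines $\textbf{M}_2((4n-1)_1),\dots,\textbf{M}_2((4n-1)_k)$, each of which operates as a copy of $\textbf{RL}(Y_3)$ on $Q_2R_2Q_3$ together with a copy of $\textbf{LR}(Y_4^{-1})$ on $Q_3P_4Q_4$ in lockstep. The first step is a monotonicity statement: in a reduced computation $\pazocal{C}$, the $\chi$-rules joining consecutive sub-submachines can only be used "monotonically". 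Indeed, if the history contained a pair $\chi(j,j{+}1)^{\pm1}$ enclosing a nonempty subcomputation, that subcomputation would be a computation of a single sub-submachine running from its start state letters back to its start state letters (since every $\chi$-rule resets the state letters to the start letters of the next sub-submachine), hence empty by Lemma \ref{primitive computations}(4) and its $\textbf{RL}$-analogue, contradicting reducedness. So $\pazocal{C}$ traverses a contiguous block of the sub-submachines, each as a single maximal piece; there are $p$ pieces with $p\le k$ (and $p\le 2$ when $3\le i\le 4n-3$), together with exactly $p-1$ intervening $\chi$-transitions.

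Next I would analyze the pieces. Since every rule of a sub-submachine acts nontrivially on the corresponding subword, the length of a piece equals the length of its restriction to that subword. A piece lying strictly between two $\chi$-transitions runs (possibly after reversing) from the start state letters of its sub-submachine to its end state letters, so by Lemma \ref{primitive computations}(3), applied to each primitive component, it is a \emph{full run} during which the tape word is unchanged; its length is $2m+1$, where $m$ is the length of that tape word. Because a $\chi$-rule locks all but two sectors of the standard base, the definition of a locked sector forces every other sector to be empty at each transition configuration; hence, together with the preservation of tape words across full-run pieces, all interior pieces carry one common tape word $w$ of length $m$. For the first and last pieces (those containing $W_0$ and $W_t$), I would restrict to the relevant subword and bound their lengths, via Lemma \ref{primitive length} and its $\textbf{RL}$-analogue, by $(\text{restricted }a\text{-length at the start})+(\text{restricted }a\text{-length at the end})+1$; writing $b_0,b_p$ for the restricted $a$-lengths of $W_0,W_t$, and $s$ for the number of parts of the standard base, we have $b_0\le\|W_0\|-s$ and $b_p\le\|W_t\|-s$.

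The decisive point is to bound $m$ by $\max(\|W_0\|,\|W_t\|)$. Each transition configuration, restricted to the subword of the interior sub-submachine, has all its tape content on one side of the state letter (the other side being a locked, hence empty, sector), so it is one of the special configurations of Lemma \ref{primitive computations}(5). Applying part (5) to the first piece traversed backwards — which begins at the first transition configuration — shows that every configuration of the first piece has restricted $a$-length at least $m$; in particular $m\le|W_0|_a$, and symmetrically $m\le|W_t|_a$, so $m\le\max(\|W_0\|,\|W_t\|)-s$. (For $3\le i\le 4n-3$ a variant is needed: the two pieces sit on different subwords, so the transition configuration's content in the subword untouched by the first piece is simply carried over unchanged from $W_0$ and is thus $\le|W_0|_a$, while the content in the first piece's subword is controlled by the same Lemma \ref{primitive computations}(5) argument.) Summing the per-piece length bounds and the $p-1$ transitions then yields, for $p\ge2$,
\[ t \ \le\ b_0+b_p+2(p-1)m+(2p-1)\ \le\ 2k\bigl(\max(\|W_0\|,\|W_t\|)-s\bigr)+2k-1\ \le\ 2k\max(\|W_0\|,\|W_t\|), \]
using $s\ge1$ and $p\le k$; the case $p=1$ gives $t\le\|W_0\|+\|W_t\|+1$ directly, and the case $3\le i\le 4n-3$ gives $t\le 3\|W_0\|+\|W_t\|+O(1)$, both far below $2k\max(\|W_0\|,\|W_t\|)$ since $k$ is large.

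The main obstacle I expect is the bookkeeping around locked sectors: unwinding the stipulation that $\chi_i$ "has the same domain as $\theta(i-1,i)$" and the way each sub-submachine locks sectors, in order to determine precisely which sectors are empty at each transition configuration — and then feeding the resulting one-sided-content normal forms into Lemma \ref{primitive computations}(5) in the right direction so as to transfer the bound on $m$ back to $\|W_0\|$ and $\|W_t\|$. Everything else is a routine combination of the monotonicity of the $\chi$-rules with the primitive-machine lemmas of Section 4.4.
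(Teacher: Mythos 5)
Your proof is correct and follows essentially the same route as the paper: isolate the $\chi$-bounded middle portion (monotonicity via Lemma \ref{primitive computations}(4)), bound the prefix and suffix with Lemma \ref{primitive length} together with Lemma \ref{primitive computations}(5), and bound the middle by at most $k$ pieces whose lengths are tied to the common tape word. The only cosmetic difference is that the paper invokes Lemma \ref{M_2 controlled} to bound the interior pieces, whereas you rederive that fact directly from Lemma \ref{primitive computations}(3).
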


\begin{proof}

If the history contains no $\chi$-rules, then the statement follows from Lemma \ref{primitive length}. So, we may assume there exists a maximal subcomputation $W_r\to\dots\to W_s$ starting and ending with $\chi$-rules.

Suppose $i\neq4n-1$. Then Lemma \ref{primitive computations}(4) implies that there is exactly one $\chi$-rule in the history $H$ of $\pazocal{C}$, so that $s=r+1$. Lemmas \ref{primitive computations}(5) and \ref{primitive length} then imply that $t-s\leq2|W_t|_a+1$, $|W_s|_a\leq|W_t|_a$, $r\leq 2|W_0|_a+1$, and $|W_r|_a\leq|W_0|_a$. Hence, $t\leq4\max(\|W_0\|,\|W_t\|)$.

If $i=4n-1$, then there are at most $k$ copies of primitive computations in the subcomputation $W_r\to\dots\to W_s$. Lemma \ref{M_2 controlled} then implies that $s-r\leq k\|W_r\|=k\|W_s\|$. Again, Lemmas \ref{primitive computations}(5) and \ref{primitive length} imply that $t-s\leq 2\|W_t\|$, $\|W_s\|\leq\|W_t\|$, $r\leq2\|W_0\|$, and $\|W_r\|\leq\|W_0\|$. Hence, $t\leq(k+4)\max(\|W_0\|,\|W_t\|)$, so that the statement is implied by the parameter choice $k\geq4$.

\end{proof}

\begin{lemma} \label{M_2 length}

For any reduced computation $\pazocal{C}:W_0\to\dots\to W_t$ of $\textbf{M}_2$ in the standard base, $t\leq c_1\max(\|W_0\|,\|W_t\|)$.

\end{lemma}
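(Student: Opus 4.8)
The plan is to mimic the structure of the proof of Lemma \ref{M_1 length}, exploiting the $\textbf{M}_1$ computation associated to $\pazocal{C}$ together with the already-established length bounds for $\textbf{M}_2(i)$ and the primitive machines. First I would dispose of the case in which the step history of $\pazocal{C}$ contains no occurrence of $(4n)$ and no occurrence of $(2)$: in this situation neither endpoint involves a start or end configuration in the relevant sense, and I would construct $\pazocal{C}'$, the $\textbf{M}_1$ computation associated to $\pazocal{C}$ (this requires first splitting off any leading/trailing one-step computations of the form $(2i+1)$, which by Lemma \ref{primitive computations} only shorten admissible words and have length $\leq |W_0|_a + |W_t|_a$ by Lemma \ref{primitive length}). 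Applying the analogue of Lemma \ref{M_1 no start or end} bounds the length of $\pazocal{C}'$ linearly in $\max(\|W_0'\|,\|W_t'\|) = \max(\|W_0\|,\|W_t\|)$ by a constant multiple of $n$; then I must account for the subcomputations that were \emph{removed} in passing from $\pazocal{C}$ to $\pazocal{C}'$. By the remark preceding Lemma \ref{M_2 no turn}, at most $8n$ such subcomputations are removed, each with step history of the form (a) or (b); by Lemma \ref{M_2 semi-controlled} each has length $\leq 2|W_r|_a + 5$, and $|W_r|_a$ is controlled by $\max(\|W_0\|,\|W_t\|)$ via a projection/width argument (or directly, since $W_r$ is tame and its $R_2Q_3$ or $R_1Q_2$ sector stores a copy of the relevant tape word). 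This yields a bound of the shape $C n^2 \max(\|W_0\|,\|W_t\|)$.

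Next I would treat the case where $(2)$ occurs in the step history. Here $W_0$ or the configuration at the first occurrence of $(2)$ is (essentially) a start configuration; one isolates the maximal subcomputation with step history $(2)$, restricts to the relevant sector and invokes Lemma \ref{multiply one letter}, and then bounds the remaining portion by the previous case together with Lemma \ref{M_2 input length} (to control $|W_0|_a$ in terms of $|W_t|_a$). Finally, the case where $(4n)$ occurs: by Lemma \ref{M_2 no turn} there is exactly one occurrence of $(4n)$; one isolates the maximal $(4n)$-subcomputation, applies Lemma \ref{multiply one letter} to its restriction to the $R_2Q_3$-sector to bound its length by $|W_x|_a + |W_y|_a$, and applies Lemma \ref{M_2 end length} to the portion after (or before) the turn to get $|W_0|_a \leq 12n |W_t|_a$ and a length bound on that portion; the portion before the $(4n)$-step is handled by the first case. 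The subcomputation with step history $(4n-1)$, if present, contributes at most $2k\max(\|W_0\|,\|W_t\|)$ by Lemma \ref{M_2(i) length}. Summing all contributions gives $t \leq c_1 \max(\|W_0\|,\|W_t\|)$ after the parameter choice $c_1 >> n, k, c_0$, since every constant appearing is a fixed polynomial in $n$ and $k$ (indeed $O(n^2 + nk)$).

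The main obstacle I anticipate is bookkeeping the removed subcomputations: unlike in $\textbf{M}_1$, each $\sigma$-rule of the associated computation $\pazocal{C}'$ is ``inflated'' into a block of primitive computations in $\pazocal{C}$, and one must verify that the $a$-length of the configuration at the start of each such block is genuinely bounded by $\max(\|W_0\|,\|W_t\|)$ (up to a constant), not merely by some intermediate width that could a priori be large. This is where I would lean on a width estimate in the spirit of Lemma \ref{M_1 width} — possibly proving an auxiliary ``$\textbf{M}_2$ width'' statement first, or arguing that because each removed block is sandwiched between transition rules $\theta(2i,2i+1)$ and $\theta(2i+1,2i+2)$ the flanking tame configurations have $a$-length determined by a projection argument and hence bounded by the max of the endpoint lengths of the \emph{associated} $\textbf{M}_1$ computation, which Lemma \ref{M_1 width} already controls. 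Once that uniform bound on the ``waist'' of each removed block is in hand, the rest is assembling the linear estimates and choosing $c_1$ large enough.
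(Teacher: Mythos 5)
Your proposal rests on the same core mechanism as the paper's proof: pass to the $\textbf{M}_1$ computation associated to $\pazocal{C}$ (after splitting off a leading or trailing $(2i+1)$-step), account for the at most $8n$ removed blocks, and control the $a$-length at the ``waist'' of each removed block by applying Lemma \ref{M_1 width} to the associated computation at the tame flanking configurations -- the resolution you sketch in your last paragraph is exactly the one the paper uses. The organizational difference is that the paper performs no case analysis at the $\textbf{M}_2$ level at all: it applies Lemma \ref{M_1 length} as a black box to the associated computation (so occurrences of $(2)$ or $(4n)$ never need separate treatment), uses Lemma \ref{M_1 width} for the intermediate lengths, bounds every removed block uniformly by Lemma \ref{M_2(i) length} (length at most $2k\|W_r\|$, hence at most $6c_0k$ times the maximum endpoint length), and handles the flanking odd steps by Lemma \ref{M_2(i) length} together with tameness and Lemma \ref{primitive computations}. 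Re-running the $\textbf{M}_1$ case analysis as you propose would also work, but it creates two small frictions you would need to repair: Lemma \ref{M_2 semi-controlled} covers only the odd steps $(2i+1)$ with $i\leq 2n-2$, so a removed block with step history $(4n-2,4n-1)(4n-1)(4n-1,4n)$ -- which can occur even when $(4n)$ is absent, if the computation terminates at the transition rule -- has length on the order of $2k|W_r|_a$ (Lemma \ref{M_2 start to end}(c) or Lemma \ref{M_2(i) length}), not $2|W_r|_a+5$; and Lemma \ref{M_2 input length} presupposes that $W_0$ is an input configuration, which is not among the hypotheses here, so in your second case you should instead use the projection arguments appearing in the proof of Lemma \ref{M_1 length}. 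Both repairs are available with tools you already cite, and the parameter choice $c_1>>k>>c_0$ absorbs the extra factor of $k$, so there is no genuine gap -- only extra work that invoking Lemma \ref{M_1 length} wholesale avoids.
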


\begin{proof}

By Lemma \ref{M_2(i) length} and the parameter choice $c_1>>k$, we may assume that $\pazocal{C}$ is not a one-step computation with step $(2i+1)$. 

Let $\pazocal{D}:W_x\to\dots\to W_y$ be the maximal subcomputation of $\pazocal{C}$ such that neither the first nor the last letter of the step history of $\pazocal{D}$ is of the form $(2i+1)$. 

Then, let $\pazocal{D}':W_x'\to\dots\to W_y'$ be the $\textbf{M}_1$ computation associated to $\pazocal{D}$ and $\ell$ be the length of $\pazocal{D}'$ (note that $\ell$ may be less than $y-x$ if subcomputations are removed).

By Lemma \ref{M_1 length}, $\ell\leq c_0\max(\|W_x'\|,\|W_y'\|)\leq c_0\max(\|W_x\|,\|W_y\|)$. Moreover, by Lemma \ref{M_1 width}, $\|W_i'\|\leq3c_0\max(\|W_x'\|,\|W_y'\|)\leq3c_0\max(\|W_x\|,\|W_y\|)$ for all $x\leq i\leq y$ for which $W_i'$ is part of the computation $\pazocal{D}'$.

The difference between $y-x$ and $\ell$ arises from removed subcomputations $W_r\to\dots\to W_s$. By Lemma \ref{M_2(i) length}, the length of such a subcomputation is at most $2k\|W_r\|$. So, this removed subcomputation has length at most $6c_0k\max(\|W_x\|,\|W_y\|)$.

As there are at most $8n$ such removed subcomputations, we have $y-x-\ell\leq k^2\max(\|W_x\|,\|W_y\|)$ as $k>>c_0$.

Hence, $y-x\leq(k^2+c_0)\max(\|W_x\|,\|W_y\|)$.

If the subcomputation $W_y\to\dots\to W_t$ is nonempty, then its step history is of the form $(2i+1)$. Lemma \ref{M_2(i) length} then implies that $t-y\leq 2k\max(\|W_y\|,\|W_t\|)$. But $W_y$ is tame, so that Lemma \ref{primitive computations} implies $|W_y|_a\leq|W_t|_a$. Hence, $t-y\leq 2k\|W_t\|$.

By the analogous arguments, $|W_x|_a\leq|W_0|_a$ and $x\leq 2k\|W_0\|$.

Thus, $t\leq(k^2+4k+c_0)\max(\|W_0\|,\|W_t\|)$, so that the statement is implied by the parameter choices $c_1>>k>>c_0$.

\end{proof}

For $2\leq i\leq4n$, let $UV$ be a two-letter subword of the standard base of $\textbf{M}_2$. Suppose that the application of any rule of $\textbf{M}_2(i)$ to an admissible word with base $UV$ either leaves the tape word fixed or inserts/deletes one letter on the left of the tape word. Then $UV$ is called \textit{left-active} in $\textbf{M}_2(i)$. A \textit{right-active} two-letter subword is defined analogously.

For example, the subword $Q_0P_1$ is right-active for the submachine $\textbf{M}_2(3)$ even though applications of the rules of $\textbf{M}_2(3^+)$ do not alter an admissible word with base $Q_0P_1$.

Note that a two-letter subword of the standard base of $\textbf{M}_2$ is left-active (resp right-active) in $\textbf{M}_2(2i)$ if and only if it is operated upon as a sector of $\textbf{M}_1(i)$ and the corresponding two-letter subword of the standard base of $\textbf{M}_1$ is left-active (resp right-active) in $\textbf{M}_1(i)$. 

Further, a two-letter subword that is neither left-active nor right-active in $\textbf{M}_2(i)$ is locked by every rule of the submachine.

\begin{lemma} \label{M_2 one-step}

Let $\pazocal{C}:W_0\to\dots\to W_t$ be a reduced computation in the standard base of $\textbf{M}_2(i)$ for some $i$. Assume that for some index $j$, $|W_j|_a>4|W_0|_a$. Then there are subwords $U_\ell V_\ell$ and $U_rV_r$ of the standard base such that $U_\ell V_\ell$ is left-active in $\textbf{M}_2(i)$, $U_rV_r$ is right-active in $\textbf{M}_2(i)$, and for $W_0'\to\dots\to W_t'$ the restriction of $\pazocal{C}$ to either sector, $|W_j'|_a<|W_{j+1}'|_a<\dots<|W_t'|_a$.

\end{lemma}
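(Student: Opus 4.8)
The plan is to mimic the proof of Lemma \ref{M_1 one-step}, which handles the analogous statement for $\textbf{M}_1(i)$, and reduce to it wherever possible. First I would fix the submachine $\textbf{M}_2(i)$ and recall that every two-letter subword of the standard base is either left-active, right-active, or locked by every rule of $\textbf{M}_2(i)$, and that (as recorded just before the lemma) there is always at least one left-active subword and at least one right-active subword --- for the submachines $\textbf{M}_2(2i)$ this is inherited from $\textbf{M}_1(i)$, while for the primitive-type submachines $\textbf{M}_2(i^\pm)$, $\textbf{M}_2((4n-1)_j)$ it follows directly from the definition of $\textbf{LR}$ and $\textbf{RL}$ (the running state letter's sectors on either side of it). For a locked subword the restriction is constant, so the $a$-length growth must come from the active subwords.

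The core of the argument is the monotonicity of $a$-length in a single sector. For $W_0'\to\dots\to W_t'$ the restriction of $\pazocal{C}$ to any active two-letter subword $UV$, each rule either fixes the tape word or changes its length by exactly one, and the insertion/deletion always occurs on the same side of the tape word (left if $UV$ is left-active, right if right-active). Hence Lemma \ref{multiply one letter} (or Lemma \ref{unreduced base}, if the base happens to be of the form $UU^{-1}$ --- though in the standard base this does not arise) applies to this restriction: once $|W_{k-1}'|_a < |W_k'|_a$ we get $|W_m'|_a < |W_{m+1}'|_a$ for all $m \geq k$, and $|W_k'|_a \leq |W_0'|_a + k$ in general. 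This is really the only place the structure of $\textbf{M}_2$ enters: I would verify that for each type of defining submachine ($\textbf{M}_2(2i)$, $\textbf{M}_2(i^-)$, $\textbf{M}_2(i^+)$, $\textbf{M}_2((4n-1)_j)$), the rules acting on an active two-letter subword satisfy exactly the hypotheses of Lemma \ref{multiply one letter} (distinct rules multiply by distinct letters, because the rules are in bijection with $\pazocal{A}$ and each multiplies by the corresponding copy of that letter, with the connecting/transition rules leaving the active sector alone).

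Now suppose the conclusion fails. Since there is at least one left-active and at least one right-active subword, failure means there is some active two-letter subword $UV$ for which the restriction $\pazocal{C}'$ does \emph{not} satisfy $|W_j'|_a < |W_{j+1}'|_a < \dots < |W_t'|_a$. By the monotonicity just established, this forces $|W_k'|_a \geq |W_{k+1}'|_a$ for some $k \geq j$, and then (contrapositive of the monotonicity, applied backwards) $|W_{k-1}'|_a \geq |W_k'|_a$ for all $k \leq$ that index, hence in particular $|W_m'|_a \geq |W_{m+1}'|_a$ for all $m \leq j$; so $|W_j'|_a \leq |W_0'|_a$, i.e. the restriction to this sector does not grow up to time $j$. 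For \emph{every other} two-letter subword of the base --- whether locked or active --- the restriction satisfies the hypotheses of Lemma \ref{multiply one letter} (locked: constant; active: Lemma \ref{multiply one letter} directly), so $|W_j''|_a \leq |W_0''|_a + j$. Summing over all two-letter subwords of the standard base (there are finitely many parts; the bound $j \leq |W_0|_a$ comes from the non-growing sector as in the $\textbf{M}_1$ case) gives $|W_j|_a \leq |W_0|_a + (\text{number of sectors})\cdot j \leq C|W_0|_a$ for an absolute constant $C$.

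The one genuine wrinkle --- and the step I expect to need the most care --- is getting the constant to come out to exactly $4$ as in the statement, since the standard base of $\textbf{M}_2$ has many more sectors than that of $\textbf{M}_1$. I would resolve this by the same bookkeeping used in Lemma \ref{M_1 one-step}: in a fixed submachine $\textbf{M}_2(i)$, most sectors of the standard base are locked and so contribute nothing, and only a bounded number (in fact, for the primitive-type submachines essentially two, and for $\textbf{M}_2(2i)$ exactly the sectors that are active for $\textbf{M}_1(i)$, of which there are at most three) are active; the non-growing sector contributes $j \leq |W_0|_a$ and each of the (at most three) other active sectors contributes at most $|W_0|_a + j \leq 2|W_0|_a$, while locked sectors contribute their initial length which is absorbed. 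A careful count gives $|W_j|_a \leq 4|W_0|_a$, contradicting the hypothesis $|W_j|_a > 4|W_0|_a$. Hence the conclusion holds: there exist left-active $U_\ell V_\ell$ and right-active $U_r V_r$ with strictly increasing $a$-length from time $j$ onward.
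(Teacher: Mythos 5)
Your reduction to Lemma \ref{M_1 one-step} works for the even-indexed submachines (this is exactly what the paper does there), but the sector-by-sector use of Lemma \ref{multiply one letter} breaks down for the odd-indexed submachines, and that is where the genuine gap lies. In $\textbf{M}_2(i)$ with $i$ odd, an active two-letter subword is \emph{not} changed by every rule: the connecting rules and $\chi$-rules fix it, and so do all the working rules of the other primitive phase (e.g.\ every rule of $\textbf{M}_2(3^+)$ leaves the $Q_0P_1$- and $P_1Q_1$-sectors untouched, as the paper itself points out when defining ``right-active''). Consequently the hypotheses of Lemma \ref{multiply one letter} are simply not met for these restrictions (a further, smaller, failure: $\zeta^{(1)}(a)^{-1}$ and $\zeta^{(2)}(a)$ multiply the $Q^{(1)}P$-sector by the \emph{same} letter, so ``different rules, different letters'' is false), and, more importantly, your two key steps collapse. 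First, ``non-increasing up to time $j$'' no longer yields $j\leq|W_0'|_a$, because steps that fix the sector cost nothing; so the bound $|W_j|_a\leq|W_0|_a+3j\leq4|W_0|_a$ has no justification. Second, your negation is too weak: the conclusion asserts the existence of \emph{one} left-active and \emph{one} right-active sector that grow, so its failure means all left-active ones fail or all right-active ones fail --- whereas ``some active sector fails'' is compatible with the hypothesis. Indeed, for a computation of $\textbf{M}_2(3)$ running entirely inside $\textbf{M}_2(3^+)$ with $|W_j|_a>4|W_0|_a$, the sectors $Q_0P_1$ and $P_1Q_1$ stay constant (they ``fail''), yet the lemma holds via $Q_2R_2$ and $R_2Q_3$; your scheme would derive a contradiction in this legitimate situation.

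The paper avoids all of this in the odd case by a different route: inducting on $t$ it assumes $|W_1|_a>|W_0|_a$, then uses Lemma \ref{primitive computations}(1) to conclude that the whole computation is a computation of a single primitive submachine ($\textbf{M}_2(i^-)$, $\textbf{M}_2(i^+)$, or some $\textbf{M}_2((4n-1)_m)$), after which the two sectors of that three-letter subword (one left-active, one right-active) supply the conclusion. If you want to keep your counting strategy, you would first have to prove an analogue of Lemma \ref{multiply one letter} tailored to $\textbf{LR}$/$\textbf{RL}$ restrictions (handling connecting rules and the state-letter bookkeeping) and work with the correct negation of the statement; at that point you are essentially reproving Lemma \ref{primitive computations}, so invoking it directly, as the paper does, is the cleaner path.
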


\begin{proof}

If $i$ is even, then the statement is an immediate consequence of Lemma \ref{M_1 one-step}. So, we may assume $i$ is odd. 

Further, inducting on $t$, we may assume that $|W_1|_a>|W_0|_a$. 

If $i\neq4n-1$, then Lemma \ref{primitive computations}(1) implies that $\pazocal{C}$ is a computation of either $\textbf{M}_2(i^-)$ or $\textbf{M}_2(i^+)$, i.e there exists a three-letter subword of the standard base on which $\pazocal{C}$ operates as a primitive machine. As one of the corresponding sectors is left-active while the other is right-active, the statement follows.

If $i=4n-1$, then the $R_2Q_3$- and $P_4Q_4$-sectors are left-active, the $Q_2R_2$- and $Q_3P_4$-sectors are right-active, and all other sectors are locked. As any rule changes the $a$-length of any of the four sectors above by one, at least three must have their length increased at the first transition. Two of these three must then be operated upon by $\pazocal{C}$ as a copy of a primitive machine.

Hence, Lemma \ref{primitive computations}(1) implies that there exists $1\leq m\leq k$ such that $\pazocal{C}$ is a computation of $\textbf{M}_2((4n-1)_m)$. The statement then follows as above.

\end{proof}

\smallskip


\subsection{The machine $\textbf{M}_3$} \

The recognizing $S$-machine $\textbf{M}_3$ is the composition of $\textbf{M}_2$ with one more simple machine.

The standard base of $\textbf{M}_3$ is taken to be 
$$P_0Q_0P_1Q_1R_1Q_2R_2Q_3P_4Q_4$$
As in the construction of $\textbf{M}_2$, any part of this standard base given the same name as a part of the standard base of $\textbf{M}_2$ contains more letters than its predecessor. The makeup of these parts is clear from the definition of the rules below.

The tape alphabet of the $P_0Q_0$- and $Q_0P_1$-sectors are copies of $Y_1$, while all other tape alphabets naturally correspond to those of $\textbf{M}_2$. The $P_0Q_0$-sector is the input sector of the machine.

The idea of the function of $\textbf{M}_3$ is the following. Given an input configuration, an accepting computation first moves all the letters to the right into the $Q_0P_1$-sector while all other sectors are locked. Then, the subword $Q_0P_1Q_1R_1Q_2R_2Q_3P_4Q_4$ is operated upon as the standard base of $\textbf{M}_2$ while the $P_0Q_0$-sector is locked.

To be precise, we view $\textbf{M}_3$ as the concatenation of $4n$ submachines: The $4n-1$ submachines $\textbf{M}_3(2),\dots,\textbf{M}_3(4n)$ corresponding to the submachines of $\textbf{M}_2$ and the new machine $\textbf{M}_3(1)$.

The submachines corresponding to those of $\textbf{M}_2$ differ only in that the newly introduced part of the standard base consists of a single letter and the newly introduced sector remains locked.

For each part of the standard base, the subset corresponding to the submachine $\textbf{M}_3(1)$ is a singleton. The positive rules of this submachine are in correspondence with $\pazocal{A}$. For $a\in\pazocal{A}$, the corresponding rule has the part $q_0(1)\to a_1^{-1}q_0(1)a_1'$, where $q_0(1)\in Q_0$ and $a_1$ (respectively $a_1'$) is the copy of $a$ in the tape alphabet of the $P_0Q_0$-sector (respectively the $Q_0P_1$-sector). All other sectors of the standard base are locked by this rule.

We also introduce more transition rules, defined in the same way as for previous machines to force the natural order of the concatenation of these submachines. The transition rules $\theta(i,i+1)^{\pm1}$ for $2\leq i\leq 4n-1$ correspond to the rules of the same name in $\textbf{M}_2$, operating on the natural copy of the standard base of $\textbf{M}_2$ in the same way and locking all other sectors. Meanwhile, the transition rules $\theta(12)^{\pm1}$ connect $\textbf{M}_3(1)$ and $\textbf{M}_3(2)$, locking all sectors of the standard base of $\textbf{M}_3$ except for the $Q_0P_1$-sector.

\smallskip


\subsection{Standard computations of $\textbf{M}_3$} \

The step history of a reduced computation of $\textbf{M}_3$ is defined in a manner similar to how it was for reduced computations of $\textbf{M}_2$. The only new letters corresponding to this machine are $(1)$, $(12)$, and $(21)$, which correspond to maximal subcomputations of $\textbf{M}_3(1)$ and transition rules in the obvious way.

Further, for $3\leq i\leq 4n-3$ odd, we define the submachines $\textbf{M}_3(i^-)$ and $\textbf{M}_3(i^+)$ as the submachines of $\textbf{M}_3(i)$ in the same way as the corresponding submachines of $\textbf{M}_2(i)$. Similarly, for $1\leq j\leq k$, we define the submachines $\textbf{M}_3((4n-1)_j)$.

As a result, Lemmas \ref{M_2 step history 1} and \ref{M_2 step history 2} have obvious analogues in $\textbf{M}_3$. The following statement is similar in nature to those, dealing with the newly added steps. Its proof is identical to that of Lemma \ref{M_1 step history}.

\begin{lemma} \label{M_3 step history}

Let $\pazocal{C}$ be a reduced computation of $\textbf{M}_3$ with base $B$. 

\renewcommand{\labelenumi}{(\alph{enumi})}

\begin{enumerate}

\item If $B$ contains a subword $B'$ of the form $(P_0Q_0)^{\pm1}$, then the step history of $\pazocal{C}$ cannot be $(21)(1)(12)$.

\item If $B$ contains a subword $B'$ of the form $(R_2Q_3)^{\pm1}$, then the step history of $\pazocal{C}$ cannot be $(12)(2)(21)$.

\end{enumerate}

\end{lemma}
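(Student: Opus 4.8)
The plan is to mirror the proof of Lemma~\ref{M_1 step history} verbatim: argue by contradiction, restrict the computation to the two-letter subword $B'$ of the base, and invoke Lemma~\ref{multiply one letter} to collapse the forbidden step history down to two consecutive mutually inverse transition rules, contradicting that $\pazocal{C}$ is reduced. The only new bookkeeping is identifying which sectors the relevant transition rules lock and checking that the rules of $\textbf{M}_3(1)$ and $\textbf{M}_3(2)$ act on $B'$ by monotone multiplication by distinct letters; both are immediate from the definitions in the construction of $\textbf{M}_3$, which is precisely why one may say the proof is identical to that of Lemma~\ref{M_1 step history}.

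For part~(a), I would suppose the step history is $(21)(1)(12)$, so that $\pazocal{C}:W_0\to\dots\to W_t$ factors as $W_0\xrightarrow{\theta(21)}W_1\to\dots\to W_{t-1}\xrightarrow{\theta(12)}W_t$ with $W_1\to\dots\to W_{t-1}$ a nonempty computation of $\textbf{M}_3(1)$. Since $\theta(12)^{\pm1}$ lock every sector of the standard base except the $Q_0P_1$-sector, they lock the $P_0Q_0$-sector; hence $W_0$ and $W_{t-1}$ have empty $P_0Q_0$-sectors, and as transition rules do not insert or delete tape letters, the restriction $\pazocal{C}':W_0'\to\dots\to W_t'$ of $\pazocal{C}$ to $B'$ satisfies $|W_1'|_a=|W_{t-1}'|_a=0$. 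Each rule of $\textbf{M}_3(1)$ has single nontrivial part $q_0(1)\to a_1^{-1}q_0(1)a_1'$, so on $B'$ it multiplies the tape word by one letter on a fixed side, with distinct rules multiplying by distinct letters; thus $W_1'\to\dots\to W_{t-1}'$ meets the hypotheses of Lemma~\ref{multiply one letter}, and part~(a) of that lemma (applied with $u_1\equiv u_{t-1}$) forces this subcomputation, hence $W_1\to\dots\to W_{t-1}$, to be empty. But then $\theta(21)$ and $\theta(12)=\theta(21)^{-1}$ occur consecutively in the history of $\pazocal{C}$, contradicting reducedness.

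Part~(b) is handled the same way with $B'$ of the form $(R_2Q_3)^{\pm1}$ and the step history $(12)(2)(21)$: again $\theta(12)^{\pm1}$ lock the $R_2Q_3$-sector, so the restriction of the bracketing configurations to $B'$ has empty tape word, and $\textbf{M}_3(2)$ acts on the $R_2Q_3$-sector exactly as $\textbf{M}_1(1)$ acts on its $Q_2Q_3$-sector, namely via the part $q_2(1)\to q_2(1)a_3$ of $\tau_1(a)$, which multiplies that sector by one letter on a fixed side with distinct rules giving distinct letters. Lemma~\ref{multiply one letter} then forces the $\textbf{M}_3(2)$-subcomputation to be empty, and the consecutive mutually inverse transitions $\theta(12),\theta(21)$ again contradict reducedness.

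I do not expect any genuine obstacle: the content is entirely contained in Lemma~\ref{multiply one letter} together with the explicit rule definitions, and the two verifications flagged above (the transition rules $\theta(12)^{\pm1}$ locking $P_0Q_0$ and $R_2Q_3$, and the submachines $\textbf{M}_3(1)$, $\textbf{M}_3(2)$ acting on the relevant sector by monotone one-letter multiplication) are routine.
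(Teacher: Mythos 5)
Your proposal is correct and follows exactly the paper's argument: the paper simply notes that the proof is identical to that of Lemma \ref{M_1 step history}, i.e.\ restrict to the two-letter subword $B'$, observe that the bracketing transition rules $\theta(12)^{\pm1}$ lock the relevant sector so the restricted tape words at both ends are empty, and apply Lemma \ref{multiply one letter} to force the middle subcomputation to be empty, contradicting reducedness. Your two sector-locking and one-letter-multiplication verifications are precisely the routine checks the paper leaves implicit.
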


%
%
%

Much of the same terminology regarding reduced computations is carried over from $\textbf{M}_2$. 

For example, the history $H$ of a reduced computation $\pazocal{C}$ of $\textbf{M}_3$ is controlled if $\pazocal{C}$ is the natural copy of a reduced computation of $\textbf{M}_2$ whose history is controlled.

However, a configuration of $\textbf{M}_3$ is \textit{tame} if, in addition to its $P_1Q_1$-, $Q_1R_1$-, $Q_2R_2$-, and $P_4Q_4$-sectors being empty, its $P_0Q_0$-sector is also empty.

\begin{lemma} \label{M_3 no turn}

Let $\pazocal{C}:W_0\to\dots\to W_t$ be a reduced computation such that $W_0$ is an end configuration. If the history $H$ of $\pazocal{C}$ contains a transition rule, then $W_t$ is not an end configuration. 

\end{lemma}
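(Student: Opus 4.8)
The strategy is to reduce the statement to its analogue for $\textbf{M}_2$ (Lemma \ref{M_2 no turn}), exactly as Lemma \ref{M_2 no turn} itself was reduced to Lemma \ref{M_1 no turn} via the ``associated computation'' construction. Assume toward a contradiction that $W_0$ and $W_t$ are both end configurations and that $H$ contains a transition rule. First I would observe that since $W_0$ is an end configuration, every part of every state letter is an end letter; in particular no part of the state letters lies in the hardware of $\textbf{M}_3(1)$ (the machine $\textbf{M}_3(1)$ has only a single state letter in each part, so its "end letter" coincides with the end letter of $\textbf{M}_3(2)$ only through the transition rule $\theta(12)$). The key point is that $\textbf{M}_3(1)$ sits at the extreme left end of the concatenation, so the only way to leave or enter an end configuration while touching step $(1)$ is through $\theta(12)^{\pm1}$, and Lemma \ref{M_3 step history}(a) forbids the step history $(21)(1)(12)$ when the base contains a $(P_0Q_0)^{\pm1}$ subword.

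Next I would split into cases according to whether the step history of $\pazocal{C}$ involves the letter $(1)$. If it does not, then $\pazocal{C}$ is (a copy of) a reduced computation of $\textbf{M}_2$ in which $W_0$ and $W_t$ are end configurations and $H$ still contains a transition rule, so Lemma \ref{M_2 no turn} gives the contradiction directly. If the step history does involve $(1)$, then because $W_0$ and $W_t$ are end configurations the step history must begin with $(21)$ and end with $(12)$, i.e. it has the form $(21)(1)\cdots(1)(12)$ with possibly several maximal $\textbf{M}_3(1)$-subcomputations separated by excursions into $\textbf{M}_3(i)$ for $i\geq 2$. Since $W_0$ is tame (all of $P_1Q_1,Q_1R_1,Q_2R_2,P_4Q_4$ and $P_0Q_0$ are empty in an end configuration), each such excursion away from and back to $\textbf{M}_3(1)$ forces, at its return, a $\theta(21)$-admissible configuration, which is again tame; restricting to the subword $Q_0P_1Q_1R_1Q_2R_2Q_3P_4Q_4$ these pieces are end configurations of $\textbf{M}_2$, so Lemma \ref{M_2 no turn} shows no such excursion containing a transition rule can both start and end at an end configuration. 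Hence no excursion occurs at all, and the step history is simply $(21)(1)(12)$ — which is precisely what Lemma \ref{M_3 step history}(a) prohibits (the base of any computation whose step history contains $(1)$ necessarily contains a $(P_0Q_0)^{\pm1}$ subword). This is the desired contradiction.

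The main obstacle, and the step deserving the most care, is the bookkeeping in the case where the step history genuinely mixes $(1)$ with higher steps: one must verify that returning to an $\textbf{M}_3(1)$-admissible (equivalently $\theta(21)$- or $\theta(12)$-admissible) configuration really does force tameness of the $P_0Q_0$-sector and of the four $\textbf{M}_2$ sectors simultaneously, so that each intervening block is a bona fide $\textbf{M}_2$-computation between end configurations and Lemma \ref{M_2 no turn} applies. One should also record, as in the discussion preceding Lemma \ref{M_2 no turn}, that translating $\pazocal{C}$ (minus its $\textbf{M}_3(1)$-portions) into the associated $\textbf{M}_2$-computation does not destroy the presence of a transition rule: a transition rule of $\textbf{M}_3$ among the $\theta(i,i+1)^{\pm1}$ for $i\geq 2$ is exactly a transition rule of $\textbf{M}_2$, and a $\theta(12)^{\pm1}$ can only appear flanking an $\textbf{M}_3(1)$-block, which the argument has already excluded. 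Once these observations are in place the contradiction is immediate from Lemmas \ref{M_2 no turn} and \ref{M_3 step history}.
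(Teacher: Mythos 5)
Your overall strategy (reduce to Lemma \ref{M_2 no turn} and use Lemma \ref{M_3 step history}(a) to exclude step $(1)$) is the paper's, but the execution of the case where $(1)$ occurs contains concrete errors. First, the structural claim that ``the step history must begin with $(21)$ and end with $(12)$'' is inconsistent with the hypothesis: an end configuration of $\textbf{M}_3$ carries the end letters, i.e.\ state letters of $\textbf{M}_3(4n)$, so the step history of $\pazocal{C}$ begins and ends at the step $(4n)$ side of the concatenation, not adjacent to $\textbf{M}_3(1)$. Second, and more seriously, your treatment of the ``excursions'' between $(1)$-blocks misidentifies the configurations at their endpoints: a configuration admissible for $\theta(12)^{\pm1}$ has, on the subword $Q_0P_1Q_1R_1Q_2R_2Q_3P_4Q_4$, the \emph{start} letters of $\textbf{M}_2$ (those of $\textbf{M}_2(2)$), not its end letters. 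Hence Lemma \ref{M_2 no turn}, which is a statement about computations beginning and ending at \emph{end} configurations, does not apply to these pieces, and the step ``each intervening block is a bona fide $\textbf{M}_2$-computation between end configurations'' fails as written. (Ruling out such returns to a start configuration would instead require the step-history lemmas, e.g.\ the analogues of Lemmas \ref{M_2 step history 1} and \ref{M_2 step history 2}, not Lemma \ref{M_2 no turn}.)

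The repair is simpler than your case analysis and is what the paper does: since $W_0$ and $W_t$ are end configurations, no maximal subcomputation with step history $(1)$ can occur at the very beginning or end of $\pazocal{C}$; as $\textbf{M}_3(1)$ sits at the extreme end of the concatenation, any such block is therefore flanked as $(21)(1)(12)$, and since the computation is in the standard base (which contains $P_0Q_0$), Lemma \ref{M_3 step history}(a) applied to that subcomputation gives a contradiction. Consequently the step history contains no occurrence of $(1)$, $(12)$, or $(21)$ at all, so $\pazocal{C}$ may be viewed as a reduced computation of $\textbf{M}_2$ between end configurations whose history still contains a transition rule, contradicting Lemma \ref{M_2 no turn}. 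Your case~1 is essentially this final step, so once the flawed case~2 is replaced by the flanking argument your proof coincides with the paper's.
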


\begin{proof}

Assuming toward contradiction, Lemma \ref{M_3 step history}(a) implies that the step history of $\pazocal{C}$ has no occurrence of $(1)$, $(12)$, or $(21)$.

But then $\pazocal{C}$ can be viewed as a reduced computation of $\textbf{M}_2$, so that it contradicts Lemma \ref{M_2 no turn}.

\end{proof}

\begin{lemma} \label{M_3 input length}

Let $\pazocal{C}:W_0\to\dots\to W_t$ be a reduced computation of $\textbf{M}_3$ in the standard base. Suppose $W_0$ is an input configuration and the step history of $\pazocal{C}$ does not contain the letter $(4n)$. Then $|W_0|_a\leq9n|W_t|_a$.

\end{lemma}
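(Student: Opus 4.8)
The plan is to reduce the statement for $\textbf{M}_3$ to the corresponding statement for $\textbf{M}_2$, namely Lemma~\ref{M_2 input length}, by splitting off the maximal initial $\textbf{M}_3(1)$-phase of the computation. First I would apply Lemma~\ref{M_3 step history}(a): since $W_0$ is an input configuration, its base contains a subword of the form $(P_0Q_0)^{\pm1}$, so the step history of $\pazocal{C}$ cannot contain the pattern $(21)(1)(12)$. Combined with the fact that the only way to reach a step $(1)$ from a later step is via $(12)$ and $(21)$, this forces the step history of $\pazocal{C}$ to have at most one occurrence of the letter $(1)$, and if it occurs it must be an initial segment. Hence I can factor $\pazocal{C}$ as $\pazocal{C}_1:W_0\to\dots\to W_r$ followed by $\pazocal{C}_2:W_r\to\dots\to W_t$, where $\pazocal{C}_1$ is the maximal subcomputation with step history $(1)$ (possibly empty, with $r=0$) and $\pazocal{C}_2$ has step history beginning with $(12)$ and containing no further occurrence of $(1)$.

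Next I would handle the first phase. The rules of $\textbf{M}_3(1)$ each have the part $q_0(1)\to a_1^{-1}q_0(1)a_1'$, which shifts one $a$-letter from the $P_0Q_0$-sector to the $Q_0P_1$-sector and leaves all other sectors untouched; in particular the total $a$-length is preserved along $\pazocal{C}_1$, so $|W_0|_a=|W_r|_a$. Moreover $W_r$ is tame: the $P_0Q_0$-sector was emptied (the computation $\pazocal{C}_1$ must run until $q_0(1)$ meets $p_0\in P_0$ before $\theta(12)$ can fire, by Lemma~\ref{locked sectors} applied to the sector locked by $\theta(12)$), and the other four sectors $P_1Q_1$, $Q_1R_1$, $Q_2R_2$, $P_4Q_4$ were empty to begin with and remain locked throughout $\pazocal{C}_1$. (If $\pazocal{C}_1$ is empty then $r=0$, $W_0=W_r$ is already an input configuration with empty $P_0Q_0$-sector, which is vacuously the situation we need, and the argument below still applies.)

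Then I would reinterpret $\pazocal{C}_2$ as a computation of $\textbf{M}_2$: since $W_r$ is tame and its $P_0Q_0$-sector is empty, dropping the $P_0Q_0$-part identifies $W_r$ with an input configuration of $\textbf{M}_2$ in the standard base of $\textbf{M}_2$, and all rules used in $\pazocal{C}_2$ are (copies of) rules of $\textbf{M}_2$ since no step $(1)$ occurs; the step history of $\pazocal{C}_2$ does not contain $(4n)$ because that of $\pazocal{C}$ does not. Applying Lemma~\ref{M_2 input length} to $\pazocal{C}_2$ gives $|W_r|_a\leq 9n|W_t|_a$, and since $|W_0|_a=|W_r|_a$ we conclude $|W_0|_a\leq 9n|W_t|_a$, as desired.

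The only real obstacle is the bookkeeping at the junction: one must be careful that the step history really cannot return to step $(1)$ after leaving it — this is exactly what Lemma~\ref{M_3 step history}(a) rules out, since any such return would produce the forbidden subword $(21)(1)(12)$ in a computation whose base contains $(P_0Q_0)^{\pm1}$ — and that $W_r$ is genuinely tame so that the passage to $\textbf{M}_2$ is legitimate. Once these two points are nailed down the proof is a direct reduction with no estimates of its own beyond $|W_0|_a=|W_r|_a$.
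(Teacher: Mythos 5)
Your overall route is the same as the paper's: split off the maximal initial phase with step history $(1)$, compare $|W_0|_a$ with $|W_r|_a$, identify the remainder with a reduced computation of $\textbf{M}_2$, and quote Lemma~\ref{M_2 input length}. However, two of your justifications do not hold as stated. First, the claim that the total $a$-length is preserved along $\pazocal{C}_1$ is false: a rule of $\textbf{M}_3(1)$ corresponding to a letter $a$ replaces the $P_0Q_0$-sector word $w$ by the reduced form of $wa_1^{-1}$ and multiplies the $Q_0P_1$-sector by $a_1'$ on the left, so if $w$ does not end with $a_1$ the $a$-length \emph{increases} by $2$; a reduced computation of $\textbf{M}_3(1)$ need not simply transfer letters. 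What is true, and all you need, is the inequality $|W_0|_a\leq|W_r|_a$, which follows from a projection argument (the projection onto $F(\pazocal{A})$ is invariant under rules of step $(1)$ and equals the reduced input word), exactly as in the paper; your conclusion survives with ``$=$'' replaced by ``$\leq$''.

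Second, and more substantively, your exclusion of a return to step $(1)$ is incomplete. Lemma~\ref{M_3 step history}(a) forbids only the subword $(21)(1)(12)$, so it does not rule out a computation whose step history \emph{ends} with $(21)(1)$, i.e.\ a terminal return to step $(1)$; in that case the tail of $\pazocal{C}$ is not a computation of $\textbf{M}_2$ and your reduction breaks. To know that $(12)\dots(21)$ cannot occur at all in the standard base, one must forbid every turnaround of the step history in between: turnarounds at odd steps are excluded by the analogue of Lemma~\ref{M_2 step history 2}, turnarounds at even steps by the analogue of Lemma~\ref{M_2 step history 1} (the standard base contains the required subwords $(R_2Q_3)^{\pm1}$, $(R_1Q_2)^{\pm1}$, $(Q_3P_4)^{\pm1}$), and the minimal pattern $(12)(2)(21)$ by Lemma~\ref{M_3 step history}(b). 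This combination is exactly what the paper cites to identify $W_r\to\dots\to W_t$ with a reduced computation of $\textbf{M}_2$; with those citations added and the projection inequality in place of your equality, your argument coincides with the paper's proof.
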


\begin{proof}

If $\pazocal{C}$ is a one-step computation with step (1), then $|W_0|_a\leq|W_t|_a$ by a projection argument.

Otherwise, let $W_0\to\dots\to W_r$ be the subcomputation with step history $(1)(12)$. Then as above $|W_0|_a\leq|W_r|_a$.

By Lemmas \ref{M_2 step history 1}, \ref{M_2 step history 2}, and \ref{M_3 step history}, the subcomputation $W_r\to\dots\to W_t$ can be identified with a reduced computation of $\textbf{M}_2$. But then Lemma \ref{M_2 input length} implies $|W_r|_a\leq9n|W_t|_a$.

\end{proof}

An analogous proof immediately implies the following statement.

\begin{lemma} \label{M_3 end length}

Let $\pazocal{C}:W_0\to\dots\to W_t$ be a reduced computation of $\textbf{M}_3$ in the standard base. Suppose the first letter of the step history of $\pazocal{C}$ is $(4n,4n-1)$. Then $|W_0|_a\leq12n|W_t|_a$.

\end{lemma}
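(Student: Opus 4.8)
The plan is to transcribe the proof of Lemma \ref{M_3 input length}, with Lemma \ref{M_2 end length} playing the role of Lemma \ref{M_2 input length}. The point is that the submachines $\textbf{M}_3(2),\dots,\textbf{M}_3(4n)$ are, up to a permanently locked $P_0Q_0$-sector, exactly the submachines of $\textbf{M}_2$, so a reduced computation of $\textbf{M}_3$ whose step history never involves $(1)$, $(12)$, or $(21)$ restricts (on the suffix $Q_0P_1Q_1R_1Q_2R_2Q_3P_4Q_4$ of the standard base) to a reduced computation $\pazocal{C}^\flat$ of $\textbf{M}_2$ with the same step history. First I would use Lemma \ref{M_3 step history}(a) (which forbids $(21)(1)(12)$), Lemma \ref{M_3 step history}(b), and the $\textbf{M}_3$-analogues of Lemmas \ref{M_2 step history 1} and \ref{M_2 step history 2} to argue that, since the step history of $\pazocal{C}$ begins with the transition $(4n,4n-1)$, it contains no $(12)$ and at most one $(1)$, which if present is a terminal one-step subcomputation immediately preceded by $(21)$. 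Thus $\pazocal{C}$ is either already identified with a reduced computation of $\textbf{M}_2$, or it splits as $\pazocal{C}_0:W_0\to\cdots\to W_r$ (so identified) followed by a tail $W_r\to\cdots\to W_t$ with step history $(21)(1)$.

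Next I would apply Lemma \ref{M_2 end length} to $\pazocal{C}_0^\flat$ (whose step history still begins with $(4n,4n-1)$), obtaining $|W_0^\flat|_a\leq 12n|W_r^\flat|_a$; since every rule appearing in $\pazocal{C}_0$ locks the $P_0Q_0$-sector, its $a$-length is a constant $c$ with $|W_i|_a=|W_i^\flat|_a+c$, and because $12n\geq1$ this gives $|W_0|_a=|W_0^\flat|_a+c\leq 12n|W_r^\flat|_a+c\leq 12n(|W_r^\flat|_a+c)=12n|W_r|_a$. For the tail, every rule is a transition rule or a rule of $\textbf{M}_3(1)$; all of these fix the projection onto $F(\pazocal{A})$ and leave every sector other than $P_0Q_0$ and $Q_0P_1$ untouched, and Lemma \ref{multiply one letter} (applied to the restrictions to those two sectors, which are right- and left-multiplied respectively by distinct letters) controls their lengths, so a projection argument of the same flavour as the one used in the proof of Lemma \ref{M_3 input length} yields $|W_r|_a\leq|W_t|_a$. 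Combining the two inequalities gives $|W_0|_a\leq 12n|W_t|_a$.

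The hard part will be the step-history bookkeeping of the first paragraph --- verifying that a reduced computation in the standard base of $\textbf{M}_3$ whose step history begins with $(4n,4n-1)$ cannot wander down to $\textbf{M}_3(1)$ and back out again. This is exactly the phenomenon controlled by the chain of forbidden sub-patterns: $(2i,2i+1)(2i+1)(2i+1,2i)$ and $(2i+2,2i+1)(2i+1)(2i+1,2i+2)$ from the analogue of Lemma \ref{M_2 step history 2}, the sector-specific patterns of the analogue of Lemma \ref{M_2 step history 1}, and $(21)(1)(12)$ and $(12)(2)(21)$ from Lemma \ref{M_3 step history}; the same combination is what makes ``the subcomputation $W_r\to\dots\to W_t$ can be identified with a reduced computation of $\textbf{M}_2$'' legitimate in the proof of Lemma \ref{M_3 input length}. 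Once that is in hand, the rest is a direct transcription of the earlier proof with the locked $P_0Q_0$-sector carried through the bookkeeping.
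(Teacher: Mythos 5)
Your proposal is correct and is essentially the paper's intended argument: the paper proves this lemma by declaring it "analogous" to Lemma \ref{M_3 input length}, i.e. exactly your scheme of identifying the portion with step letters in $\{(2),\dots,(4n)\}$ with a reduced computation of $\textbf{M}_2$, applying Lemma \ref{M_2 end length} in place of Lemma \ref{M_2 input length}, and disposing of a possible terminal $(21)(1)$ tail by a projection argument. The step-history bookkeeping you flag (no $(12)$, at most one terminal $(1)$, via Lemma \ref{M_3 step history}(a) and reducedness of the history) is precisely what the paper leaves implicit, and your version of it is sound.
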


\begin{lemma} \label{M_3 start to end} \

\renewcommand{\labelenumi}{(\alph{enumi})}

\begin{enumerate}

\item Let $\pazocal{C}:W_0\to\dots\to W_t$ be a reduced computation of $\textbf{M}_3$. Suppose $W_0$ is a start configuration and $W_t$ is an end configuration. Then there exists $u\in F(\pazocal{A})$ such that the projection of $W_0$ onto $F(\pazocal{A})$ is $u^n\in\pazocal{L}$.

\item For all $u\in F(\pazocal{A})$, there exists a unique reduced computation $\pazocal{D}_3(u):W_0\to\dots\to W_t$ of $\textbf{M}_3$ in the standard base with step history $(12)(2)(3)\dots(4n-1)(4n-1,4n)$ and such that the projection of $W_0$ onto $F(\pazocal{A})$ is $u^n$.

\item Let $t$ be the length of $\pazocal{D}_3(u)$ and $\ell$ be the length of the subcomputation with step history $(4n-2,4n-1)(4n-1)(4n-1,4n)$. Then $\ell=2k\|u\|+2k+1$ and $t-\ell\leq (c_0+1)(\|u\|+1)$.

\end{enumerate}

\end{lemma}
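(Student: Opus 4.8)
The plan is to reduce everything to the corresponding facts about $\textbf{M}_2$ already established in Lemma \ref{M_2 start to end}, using that an accepting/start-to-end computation of $\textbf{M}_3$ decomposes cleanly as a single $\textbf{M}_3(1)$-phase (the ``pushing right'' step) followed by a computation that is literally a copy of an $\textbf{M}_2$-computation. For part (a), I would first note that if $W_0$ is a start configuration and $W_t$ an end configuration, then by Lemma \ref{M_3 step history}(a) the step history contains no subword $(21)(1)(12)$, and by an argument paralleling Lemma \ref{M_2 no turn}/\ref{M_3 no turn} (no ``turning around'' at an end configuration) the step history has the form $(1)(12)$ followed by a computation with no further occurrence of $(1)$. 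A projection argument shows the $\textbf{M}_3(1)$-phase preserves the projection onto $F(\pazocal{A})$, and the remaining phase is a start-to-end computation of $\textbf{M}_2$; Lemma \ref{M_2 start to end}(a) then gives $u$ with projection $u^n\in\pazocal{L}$.

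For part (b), existence: given $u\in F(\pazocal{A})$, I would take the start configuration whose $P_0Q_0$-sector contains the natural copy of $u^n$, apply the $\|u^n\|=n\|u\|$ rules of $\textbf{M}_3(1)$ in the unique order dictated by the requirement that the history be reduced (each rule moves one $a$-letter from the $P_0Q_0$-sector to the $Q_0P_1$-sector, and by the ``multiply one letter'' behaviour of Lemma \ref{multiply one letter} this order and the resulting configurations are forced), then $\theta(12)$, then the computation $\pazocal{D}_2(u)$ from Lemma \ref{M_2 start to end}(b). Uniqueness: any reduced computation with the prescribed step history $(12)(2)(3)\dots(4n-1)(4n-1,4n)$ must begin with a maximal $\textbf{M}_3(1)$-subcomputation whose restriction to the $P_0Q_0$-sector satisfies the hypotheses of Lemma \ref{multiply one letter} (different rules multiply by different letters, all on the left), forcing it to empty that sector and move the copy of $u^n$ into the $Q_0P_1$-sector with a uniquely determined history; the remaining subcomputation then has step history $(2)(3)\dots(4n-1)(4n-1,4n)$ and, once the $P_0Q_0$-sector is seen to stay empty, is exactly an $\textbf{M}_2$-computation to which Lemma \ref{M_2 start to end}(b) applies, pinning it down as $\pazocal{D}_2(u)$.

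For part (c), the subcomputation with step history $(4n-2,4n-1)(4n-1)(4n-1,4n)$ is, verbatim, the copy in $\textbf{M}_3$ of the corresponding subcomputation of $\pazocal{D}_2(u)$, so $\ell=2k\|u\|+2k+1$ is inherited directly from Lemma \ref{M_2 start to end}(c). For the bound on $t-\ell$: decompose $\pazocal{D}_3(u)$ as the $\textbf{M}_3(1)$-phase (length $n\|u\|$, by Lemma \ref{multiply one letter} applied to the $P_0Q_0$-sector), the rule $\theta(12)$ (length $1$), and the part of $\pazocal{D}_2(u)$ of length $t_2-\ell$ where $t_2$ is the length of $\pazocal{D}_2(u)$; Lemma \ref{M_2 start to end}(c) gives $t_2-\ell\le c_0(\|u\|+1)$, so $t-\ell \le n\|u\| + 1 + c_0(\|u\|+1)$, which is $\le (c_0+1)(\|u\|+1)$ by the parameter choice $c_0>>n$.

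The main obstacle, and where I would be most careful, is the uniqueness claim in (b): one must verify that the first phase is genuinely forced to be the ``move $u^n$ fully to the right'' computation — in particular that the history of the $\textbf{M}_3(1)$-subcomputation is uniquely the right-to-left copy of $u^n$ (via Lemma \ref{multiply one letter}, noting reducedness rules out backtracking) and that no $a$-letters reappear in the $P_0Q_0$-sector afterward, so that the tail really is an $\textbf{M}_2$-computation in the sense needed to invoke Lemma \ref{M_2 start to end}(b). Everything else is bookkeeping with step histories and the already-proved $\textbf{M}_2$-statements.
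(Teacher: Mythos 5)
Your part (a) is essentially the paper's argument and is fine, but parts (b) and (c) rest on a misreading of what $\pazocal{D}_3(u)$ is. The prescribed step history $(12)(2)(3)\dots(4n-1)(4n-1,4n)$ contains no letter $(1)$: the computation $\pazocal{D}_3(u)$ does \emph{not} include an $\textbf{M}_3(1)$-phase. Its first rule is $\theta(12)$, so its initial configuration $W_0$ is $\theta(12)$-admissible, meaning the $P_0Q_0$-sector is already empty and the copy of $u^n$ sits in the $Q_0P_1$-sector. Your existence construction (input configuration, then $n\|u\|$ rules of $\textbf{M}_3(1)$, then $\theta(12)$, then $\pazocal{D}_2(u)$) therefore builds a computation with step history $(1)(12)\dots$, not the one in the statement, and your uniqueness argument is internally inconsistent: a reduced computation with the prescribed step history cannot ``begin with a maximal $\textbf{M}_3(1)$-subcomputation'' because $(1)$ never occurs in it. Moreover, even granting your reading, the splice ``$\theta(12)$ then $\pazocal{D}_2(u)$'' does not type-check: after $\theta(12)$ the $Q_0P_1$-sector holds $u^n$ and the $R_2Q_3$-sector is empty, whereas by Lemma \ref{M_2 start to end}(b) the initial configuration of $\pazocal{D}_2(u)$ has $u^{n-1}$ in the input sector and $u$ in the $R_2Q_3$-sector. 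The missing piece is exactly the step-$(2)$ subcomputation (the copy of $\textbf{M}_1(1)$ of length $\|u\|$) that is present in $\pazocal{D}_3(u)$'s step history but absent from $\pazocal{D}_2(u)$'s; the paper's proof of (b) pins this prefix down with Lemma \ref{multiply one letter}(a) and then invokes Lemma \ref{M_2 start to end}(b) for the tail.

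The same misreading breaks the arithmetic in (c). Your estimate $t-\ell\leq n\|u\|+1+c_0(\|u\|+1)$ is not $\leq(c_0+1)(\|u\|+1)$: that would force $n\|u\|\leq\|u\|$, and ``$c_0>>n$'' cannot help because the budget $c_0(\|u\|+1)$ is already consumed by the $\textbf{M}_2$-part. With the correct reading there is no $(1)$-phase to account for: the maximal subcomputation with step history $(12)(2)$ has length exactly $\|u\|+1$ by Lemma \ref{multiply one letter}, and the remainder through step $(4n-2)$ is bounded by $c_0(\|u\|+1)$ via Lemma \ref{M_2 start to end}(c), giving $t-\ell\leq(c_0+1)(\|u\|+1)$. (Your inheritance of $\ell=2k\|u\|+2k+1$ from Lemma \ref{M_2 start to end}(c) is correct.)
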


\begin{proof}

(a) As an application of a rule of step history $(1)$ does not change the projection of a configuration onto $F(\pazocal{A})$, the statement follows from a projection argument and Lemma \ref{M_2 start to end}(a).

Statement (b) follows immediately from Lemmas \ref{M_2 start to end}(b),(c) and \ref{multiply one letter}(a).

(c) Let $\pazocal{E}$ be the maximal subcomputation of $\pazocal{D}_3(u)$ with step history $(12)(2)(3)\dots(4n-2)$ and $\pazocal{E}'$ be the maximal subcomputation with step history $(12)(2)$. Then the length of $\pazocal{E}$ is $t-\ell$ and, for $\ell'$ the length of $\pazocal{E}'$, Lemma \ref{M_2 start to end}(c) implies $t-\ell-\ell'\leq c_0(\|u\|+1)$.

But Lemma \ref{multiply one letter} implies that $\ell'=\|u\|+1$, so that the statement follows.

\end{proof}

\begin{lemma} \label{M_3 language}

The language of accepted inputs of $\textbf{M}_3$ is $\pazocal{L}$. Moreover, for any $u^n\in\pazocal{L}$, there exists a unique accepting computation $\pazocal{C}_3(u)$.

\end{lemma}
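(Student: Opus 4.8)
The plan is to prove Lemma \ref{M_3 language} by a two-part argument that mirrors the proofs of Lemmas \ref{M_1 language} and \ref{M_2 language}, leveraging the fact that $\textbf{M}_3$ is just $\textbf{M}_2$ with a preliminary ``sweep'' submachine $\textbf{M}_3(1)$ prepended. First I would show that every accepted input lies in $\pazocal{L}$: given an accepting computation $\pazocal{C}:W_0\to\dots\to W_t$ of an input configuration with input $w$, the input configuration is a start configuration and the accept configuration is an end configuration, so Lemma \ref{M_3 start to end}(a) immediately yields $u\in F(\pazocal{A})$ with the projection of $W_0$ onto $F(\pazocal{A})$ equal to $u^n$. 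Since the input $w$ is (by definition of an input configuration) exactly this projection, $w\equiv u^n\in\pazocal{L}$.

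For the converse and for uniqueness, fix $u^n\in\pazocal{L}$ and let $W$ be the input configuration with input $u^n$. The plan is to build the accepting computation explicitly: first apply the rules of $\textbf{M}_3(1)$ to move the content of the $P_0Q_0$-sector rightward into the $Q_0P_1$-sector (this is governed by a projection argument and Lemma \ref{multiply one letter}, exactly as the history $H_1(u)$ in the proof of Lemma \ref{M_1 language}), producing after the rule $\theta(12)$ an $\textbf{M}_2$-input configuration with input $u^n$ in its input ($Q_0P_1$) sector and all other sectors empty; then invoke Lemma \ref{M_2 language} (equivalently, run $\pazocal{C}_2(u)$) to accept it. Concatenating these gives an accepting computation $\pazocal{C}_3(u)$ of $u^n$.

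For uniqueness, suppose $\pazocal{C}'$ is an arbitrary accepting computation of $u^n$. The key structural step is to pin down the step history of $\pazocal{C}'$. Lemma \ref{M_3 no turn} (applied to suffixes, as in the proof of Lemma \ref{M_1 no turn}) forces that once the computation leaves the end-configuration regime it cannot return, and Lemma \ref{M_3 step history}(a) forbids the step history from containing $(21)(1)(12)$ with a $P_0Q_0$ base present; together with the analogues of Lemmas \ref{M_2 step history 1} and \ref{M_2 step history 2} these constrain the history of $\pazocal{C}'$ to factor as $H_1'\,\theta(12)\,H_0'$ where $H_1'\in F$ of the rules of $\textbf{M}_3(1)$ and $H_0'$ is (the natural copy of) a computation of $\textbf{M}_2$. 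A projection argument shows $W\cdot H_1'$ has projection $u^n$, is a tame $\textbf{M}_2$-input configuration, and by Lemma \ref{multiply one letter} (restricted to the $Q_0P_1$-sector, which is empty at the start) $H_1'$ must be exactly the natural copy of $u$ moving the letters across; hence $W\cdot H_1'$ is the $\textbf{M}_2$-input configuration with input $u^n$. Then Lemma \ref{M_2 language} gives $H_0'$ equal to the history of $\pazocal{C}_2(u)$, so $\pazocal{C}'=\pazocal{C}_3(u)$.

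The main obstacle I anticipate is the bookkeeping needed to rule out ``mixed'' step histories in $\pazocal{C}'$ — in particular verifying that the step $(1)$ of $\textbf{M}_3(1)$ occurs exactly once and only as a prefix, so that the computation genuinely decomposes as a $\textbf{M}_3(1)$-phase followed by an $\textbf{M}_2$-phase. This requires carefully combining Lemma \ref{M_3 step history}(a) with the no-return statement Lemma \ref{M_3 no turn} and the transition-rule structure, much as the interplay of Lemmas \ref{M_1 step history} and \ref{M_1 no turn} was used in the proof of Lemma \ref{M_1 language}; the rest is a routine transfer of the $\textbf{M}_2$ results via the natural identification of the standard base of $\textbf{M}_2$ with the subword $Q_0P_1Q_1R_1Q_2R_2Q_3P_4Q_4$ of the standard base of $\textbf{M}_3$.
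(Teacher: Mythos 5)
Your proposal is correct and follows essentially the same route as the paper: the forward direction via Lemma \ref{M_3 start to end}(a), the explicit construction of $\pazocal{C}_3(u)$ by prepending the $\textbf{M}_3(1)$ sweep (the copy of $u^n$) and $\theta(12)$ to $\pazocal{C}_2(u)$, and uniqueness by pinning down the step history and applying Lemma \ref{multiply one letter} to the sweep phase — the paper simply cites Lemma \ref{M_3 start to end}(b) where you invoke Lemma \ref{M_2 language}, which amounts to the same thing. (Only a trivial slip: the prefix history is the copy of $u^n$, not of $u$.)
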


\begin{proof}

Lemma \ref{M_3 start to end}(a) implies that any accepted input must be an element of $\pazocal{L}$.

Conversely, for $u^n\in\pazocal{L}$, let $H_1$ be the natural copy of $u^n$ read right to left in the rules of step history $(1)$ and $H_2$ be the history of $\pazocal{C}_2(u)$. Then, for $H\equiv H_1\theta(12)H_2$, the input configuration with input $u^n$ is $H$-admissible with $W\cdot H$ the accept configuration.

The uniqueness of this computation follows immediately from Lemmas \ref{M_3 start to end}(b) and \ref{multiply one letter}(a).

\end{proof}

\begin{lemma} \label{M_3 input controlled}

Let $\pazocal{C}:W_0\to\dots\to W_t$ be a reduced computation of $\textbf{M}_3$. Suppose $W_0$ is an input configuration and $W_t$ is either an input or the accept configuration. Then the sum of the lengths of the subcomputations of $\pazocal{C}$ whose step histories are of the form $(4n-2,4n-1)(4n-1)(4n-1,4n)$ or $(4n,4n-1)(4n-1)(4n-1,4n-2)$ is at least $\left(1-\frac{1}{c_0}\right)t$.

\end{lemma}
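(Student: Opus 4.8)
The statement asserts that in any reduced computation $\pazocal{C}$ of $\textbf{M}_3$ starting at an input configuration and ending at either an input or the accept configuration, the overwhelming majority of the time is spent in the subcomputations whose step history involves the `$(4n-1)$' step in the controlled manner, i.e.\ of the form $(4n-2,4n-1)(4n-1)(4n-1,4n)$ or its inverse. The plan is to decompose $\pazocal{C}$ at these distinguished subcomputations, bound the lengths of all the remaining pieces \emph{linearly} in $\|W_0\|$ (or $\|W_t\|$), and then bound $t$ from below in terms of $\|W_0\|$, so that the linearly-bounded pieces are a vanishing $\frac{1}{c_0}$-fraction of $t$ once $c_0$ is chosen large.

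\textbf{Step 1: Normal form of the step history.}
First I would use Lemmas \ref{M_3 no turn}, \ref{M_3 step history}, together with the analogues of \ref{M_2 step history 1}, \ref{M_2 step history 2} and \ref{M_1 no turn}, to put the step history of $\pazocal{C}$ into a near-canonical shape. Since $W_0$ is an input configuration (hence a start configuration) and $W_t$ is an input or the accept configuration (in either case a start or end configuration with no letters except possibly in the input sector), the computation can turn around at most a bounded number of times, and each ``pass'' through the standard base looks like a prefix/suffix of $(1)(12)(2)(3)\cdots(4n-1)(4n-1,4n)(4n)$ read forwards or backwards. Thus $\pazocal{C}$ decomposes into a bounded number ($\leq$ some constant depending only on $n$, in fact $O(n)$ by Lemmas \ref{M_1 step history}, \ref{M_1 no turn}) of one-step subcomputations, among which are the distinguished $(4n-1)$-subcomputations, plus the transition rules between them.

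\textbf{Step 2: Bounding the non-distinguished pieces.}
For each one-step subcomputation whose step is $(i)$ with $i\neq 4n-1$, I would invoke Lemma \ref{M_2(i) length} (and Lemma \ref{M_1 length} / \ref{M_2 length} as appropriate, plus the trivial bound for step $(1)$ via a projection argument) to get that its length is $\leq c_1\max(\|W_0'\|,\|W_t'\|)$ where $W_0',W_t'$ are its endpoints. The key sublemma needed here is that \emph{every} configuration occurring in $\pazocal{C}$ has $a$-length bounded linearly in $\max(|W_0|_a,|W_t|_a)=|W_0|_a$ (using $W_t$ input or accept, Lemmas \ref{M_3 input length}, \ref{M_3 end length}, \ref{primitive computations}(2), and \ref{M_1 width}/\ref{M_2 length}-type width arguments): since $W_0$ is an input configuration, $|W_0|_a$ controls everything. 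Granting this width bound, each non-distinguished one-step piece has length $O(|W_0|_a)$, and there are $O(n)$ of them, so their total length is at most $C n |W_0|_a$ for an absolute constant $C$ independent of $c_0$.

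\textbf{Step 3: Lower bound on $t$ and conclusion.}
Now I need a matching \emph{lower} bound $t\geq$ (something like) $2k\|W_0\|$ up to lower-order terms, coming from the distinguished subcomputations themselves. By Lemma \ref{M_3 start to end}(c) (and its inverse) each distinguished subcomputation has length $2k\|u\|+2k+1$ where $u$ is the relevant tape word, and by Lemma \ref{M_3 language} together with the structure of an accepting/input-to-input computation, at least one such subcomputation occurs with $\|u\|$ comparable to $|W_0|_a$ — indeed $\|u\|\geq c|W_0|_a$ for a constant $c$ depending only on $n$, since $|W_0|_a = n\|u\|$ for the input $u^n$. Hence $t\geq 2k\cdot\frac{1}{n}|W_0|_a$ at least. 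Combining: the total length of non-distinguished pieces is $\leq Cn|W_0|_a \leq \frac{Cn^2}{2k}\, t$, and since $k>>c_1>>n$, we have $\frac{Cn^2}{2k}\leq \frac{1}{c_0}$ by the parameter hierarchy $n<<c_0<<\dots<<k$; therefore the distinguished subcomputations account for at least $(1-\frac{1}{c_0})t$. The main obstacle I anticipate is Step 2 — carefully controlling the width (the $a$-length of intermediate configurations) across the whole computation and making sure the count of non-distinguished one-step pieces really is $O(n)$ rather than unbounded, which is exactly where the ``no turn'' lemmas (\ref{M_1 no turn}, \ref{M_2 no turn}, \ref{M_3 no turn}) and the step-history restrictions do the essential work; the final arithmetic with the parameters is then routine given the hierarchy $n<<c_0<<k<<c_1$.
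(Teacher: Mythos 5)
Your overall plan has the right shape and is close to the paper's (decompose at the distinguished $(4n-1)$-blocks, bound everything else linearly, compare using the parameter hierarchy), but two concrete points break the execution as written. First, the parameter hierarchy is $n\ll c_0\ll k\ll c_1$, i.e.\ $c_1$ is chosen \emph{after} $k$, not before; your ``$k>>c_1>>n$'' is backwards. This matters because the per-piece bounds you invoke in Step 2 carry exactly those constants: Lemma \ref{M_2 length} gives $c_1\max(\|W_0'\|,\|W_t'\|)$ and Lemma \ref{M_2(i) length} gives $2k\max(\|W_0'\|,\|W_t'\|)$, so the total length of the non-distinguished pieces you obtain is of order $n\,c_1\,|W_0|_a$ (or $n\,k\,|W_0|_a$), \emph{not} ``$Cn|W_0|_a$ for an absolute constant $C$''. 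Since the distinguished subcomputations contribute only about $2k(\|u\|+\|v\|)$, the resulting ratio is of order $n^2c_1/k$ (or $n^2$), which is nowhere near $1/c_0$; the claimed absolute constant is unsupported by the lemmas cited, and with the correct hierarchy the comparison fails. The missing idea is rigidity: because $W_0$ is an input configuration, Lemmas \ref{multiply one letter} and \ref{M_3 step history} force the step history to begin $(1)(2)\dots(4n-1)(4n-1,4n)$, and by Lemma \ref{M_3 start to end}(b) the subcomputation after the $(1)$-block is the canonical $\pazocal{D}_3(u)$, whose non-distinguished portion has length at most $\|u^n\|+(c_0+1)(\|u\|+1)$ by Lemma \ref{M_3 start to end}(c) --- constants depending only on $n$ and $c_0$, which $k\geq c_0^2$ then beats. (Alternatively one could replace your generic citations by bounds with absolute constants, e.g.\ Lemma \ref{primitive length} and Lemma \ref{multiply one letter} applied sector by sector inside the forced structure, but some appeal to the forced structure is unavoidable.)

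Second, your normalization $\max(|W_0|_a,|W_t|_a)=|W_0|_a$ is false in the input-to-input case: after the single $(4n)$-block the computation may return to an input configuration $v^n$ with $\|v\|$ arbitrarily larger than $\|u\|$, so the return pass is not controlled by $|W_0|_a$ alone, and your lower bound $t\geq \tfrac{2k}{n}|W_0|_a$ is then insufficient to dominate it. The repair is the symmetric treatment the paper uses: run the same argument on the inverse computation, count both distinguished blocks $\ell_1=2k\|u\|+2k+1$ and $\ell_2=2k\|v\|+2k+1$, bound the middle $(4n)$-block by $\|u\|+\|v\|$ via Lemma \ref{multiply one letter} (using Lemma \ref{M_3 no turn} to see there is only one such block), and compare $t-(\ell_1+\ell_2)\leq 2c_0(\|u\|+\|v\|+2)$ with $\ell_1+\ell_2\geq 2k(\|u\|+\|v\|+2)$.
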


\begin{proof}

By Lemmas \ref{multiply one letter} and \ref{M_3 step history}, the step history of $\pazocal{C}$ has prefix $(1)(2)\dots(4n-1)(4n-1,4n)$. Let $W_0\to\dots\to W_s$ be the subcomputation with this step history. Lemma \ref{M_3 start to end}(a) then implies that there exists $u\in F(\pazocal{A})$ such that the input of $W_0$ is $u^n$.

Let $W_0\to\dots\to W_r$ be the maximal subcomputation with step history $(1)$. Then Lemma \ref{multiply one letter} implies $r=\|u^n\|$.

By Lemma \ref{M_3 start to end}(b), the subcomputation $W_r\to\dots\to W_s$ must be $\pazocal{D}_3(u)$. Letting $\ell_1$ be the length of the subcomputation with step history $(4n-2,4n-1)(4n-1)(4n-1,4n)$, Lemma \ref{M_3 start to end}(c) then implies
$s-\ell_1\leq\|u^n\|+(c_0+1)(\|u\|+1)$ and $\ell_1=2k\|u\|+2k+1$.

If $W_t$ is the accept configuration, then Lemma \ref{multiply one letter} implies $t-s=\|u\|$, so that the parameter choice $c_0>>n$ yields
$$t-\ell_1\leq\|u^n\|+(c_0+2)(\|u\|+1)\leq2c_0(\|u\|+1)$$ while $\ell_1\geq2k(\|u\|+1)$. So, $\ell_1\geq\frac{k}{c_0}(t-\ell_1)$, implying $\ell_1\geq\frac{k}{k+c_0}=1-\frac{c_0}{k+c_0}$. The parameter choice $k\geq c_0^2$ then implies the statement.

Now suppose $W_t$ is an input configuration. Then we may apply the same arguments to the inverse subcomputation, so that:

\begin{itemize}

\item the input of $W_t$ is $v^n$ for some $v\in F(\pazocal{A})$, 

\item there exists a maximal subcomputation $\pazocal{E}:W_x\to\dots\to W_t$ of $\pazocal{C}$ whose step history is $(4n,4n-1)(4n-1)\dots(2)(1)$

\item for $\ell_2$ the length of the subcomputation with step history $(4n,4n-1)(4n-1)(4n-1,4n-2)$, $t-x-\ell_2\leq\|v^n\|+(c_0+n)(\|v\|+1)$ and $\ell_2=2k\|v\|+2k+1$.

\end{itemize}

By Lemma \ref{M_3 no turn}, the subcomputation $W_s\to\dots\to W_x$ has step history $(4n)$. So, Lemma \ref{multiply one letter} implies $x-s\leq\|u\|+\|v\|$. Combining these inequalities and taking $c_0>>n$ then yields
$$t-(\ell_1+\ell_2)\leq\|u^n\|+\|v^n\|+(c_0+1)\left(\|u\|+\|v\|+2\right)+\|u\|+\|v\|\leq2c_0(\|u\|+\|v\|+2)$$
while $\ell_1+\ell_2\geq2k(\|u\|+\|v\|+2)$. So, $\ell\geq\frac{k}{c_0}(t-\ell)$ for $\ell=\ell_1+\ell_2$, so that the statement follows as above.

\end{proof}

\begin{lemma} \label{M_3 length}

For any reduced computation $\pazocal{C}:W_0\to\dots\to W_t$ of $\textbf{M}_3$ in the standard base, $t\leq 2c_1\max(\|W_0\|,\|W_t\|)$.

\end{lemma}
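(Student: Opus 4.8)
The plan is to cut $\pazocal{C}$ along its step history into a reduced computation of $\textbf{M}_2$ with a (possibly empty) reduced computation of $\textbf{M}_3(1)$ attached at each end, bound the $\textbf{M}_2$-part by Lemma \ref{M_2 length} and the two outer parts via Lemma \ref{multiply one letter}. Since $\pazocal{C}$ is in the standard base, its base contains a subword $(P_0Q_0)^{\pm1}$, so Lemma \ref{M_3 step history}(a) forbids the subword $(21)(1)(12)$ in the step history of $\pazocal{C}$; as the letter $(1)$ can occur in a step history only adjacent to $(12)$ on its right, to $(21)$ on its left, or at an end of the computation, this forces every occurrence of $(1)$ into a maximal initial run or a maximal terminal run. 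Hence we may write $\pazocal{C}=\pazocal{C}_0\cdot\pazocal{C}'\cdot\pazocal{C}_1$, where $\pazocal{C}_0\colon W_0\to\dots\to W_a$ and $\pazocal{C}_1\colon W_b\to\dots\to W_t$ are reduced computations of $\textbf{M}_3(1)$ (each possibly empty, in which case $W_a=W_0$ or $W_b=W_t$), joined to $\pazocal{C}'$ by the transition rules $\theta(12),\theta(21)$ where present, and $\pazocal{C}'$ is identified with a reduced computation of $\textbf{M}_2$ in its standard base (dropping the frozen letter $P_0$ and the $P_0Q_0$-sector). If $\pazocal{C}_0$ and $\pazocal{C}_1$ are both empty this is precisely Lemma \ref{M_2 length}, which gives even the bound $c_1\max(\|W_0\|,\|W_t\|)$.

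For the general case I would first bound the two outer computations. Every rule of $\textbf{M}_3(1)$ multiplies the $P_0Q_0$-sector on the right by a letter, with distinct rules multiplying by distinct letters, so Lemma \ref{multiply one letter} applies to the restriction of $\pazocal{C}_0$ (resp.\ of $\pazocal{C}_1$) to this sector: its length is at most $|W_0|_{P_0Q_0}+|W_a|_{P_0Q_0}$ (resp.\ $|W_b|_{P_0Q_0}+|W_t|_{P_0Q_0}$), its history is a copy of the reduced word over $\pazocal{A}^{\pm1}$ recording the two endpoint $P_0Q_0$-words, and the $P_0Q_0$-length along it is bounded by the larger of its two endpoint values (Lemma \ref{multiply one letter}(d)). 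Crucially, the $P_0Q_0$-sector is the new sector of $\textbf{M}_3$, hence locked by every rule occurring in $\pazocal{C}'$ as well as by $\theta(12)$ and $\theta(21)$, so $|W_a|_{P_0Q_0}=|W_b|_{P_0Q_0}=:p$; and during $\pazocal{C}_0$ and $\pazocal{C}_1$ every sector other than $P_0Q_0$ and $Q_0P_1$ is locked, so $W_a$ (resp.\ $W_b$) agrees with $W_0$ (resp.\ $W_t$) outside those two sectors. Combining, the lengths of $\pazocal{C}_0$ and $\pazocal{C}_1$ together are at most $\|W_0\|+\|W_t\|+2p$, while Lemma \ref{M_2 length} bounds the length of $\pazocal{C}'$ by $c_1\max(\|W_a\|_{\textbf{M}_2},\|W_b\|_{\textbf{M}_2})$, where $\|\cdot\|_{\textbf{M}_2}$ denotes $a$-length as an $\textbf{M}_2$-configuration.

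The main obstacle is the last step: bounding $p$ and the $\textbf{M}_2$-norms $\|W_a\|_{\textbf{M}_2},\|W_b\|_{\textbf{M}_2}$ in terms of $\max(\|W_0\|,\|W_t\|)$, since a priori $\pazocal{C}_0$ could ``pump up'' both the $P_0Q_0$-sector and the $Q_0P_1$-sector before $\pazocal{C}'$ even begins. The point to exploit is that the pumped $Q_0P_1$-sector is, after $\theta(12)$, exactly the active input sector of $\pazocal{C}'$: when $\pazocal{C}_1$ is also present, $\pazocal{C}'$ carries a start configuration of $\textbf{M}_2$ to a start configuration of $\textbf{M}_2$, it cannot be a single step (Lemma \ref{M_3 step history}(b) rules out step history $(12)(2)(21)$) and cannot compress its input too quickly (the input-length estimates of Lemmas \ref{M_2 input length} and \ref{M_1 input length}), while $\pazocal{C}_1$ must then dismantle the pumped $P_0Q_0$-sector, which forces $p\le\|W_t\|+(\text{length of }\pazocal{C}_1)$ and a symmetric inequality on the $\pazocal{C}_0$ side; feeding these back through Lemma \ref{multiply one letter} pins $p$, $\|W_a\|_{\textbf{M}_2}$ and $\|W_b\|_{\textbf{M}_2}$ to a bounded multiple of $\max(\|W_0\|,\|W_t\|)$. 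Assembling the three estimates then gives $t\le(\text{const})\max(\|W_0\|,\|W_t\|)+c_1\max(\|W_0\|,\|W_t\|)$, and since the parameters satisfy $c_1\gg k\gg c_0\gg\dots\gg n$ the additive constant is absorbed, yielding $t\le 2c_1\max(\|W_0\|,\|W_t\|)$, with the factor $2$ (rather than $c_1$ itself) present precisely to cover the two $\textbf{M}_3(1)$-segments flanking the $\textbf{M}_2$-computation.
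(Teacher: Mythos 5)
Your decomposition is the same as the paper's: Lemma \ref{M_3 step history}(a) confines the letter $(1)$ to an initial and a terminal segment, the middle is a computation of $\textbf{M}_2$ bounded by Lemma \ref{M_2 length}, and the outer segments are bounded by Lemma \ref{multiply one letter} applied to the $P_0Q_0$-sector. But the step you yourself flag as ``the main obstacle'' --- bounding the junction configurations $W_a,W_b$ (and hence the input to Lemma \ref{M_2 length}) by $\max(\|W_0\|,\|W_t\|)$ --- is where the proposal has a genuine gap, and your proposed workaround does not close it. First, you treat $p=|W_a|_{P_0Q_0}=|W_b|_{P_0Q_0}$ as an unknown quantity to be ``pinned down,'' but a sector that is locked by a rule must be \emph{empty} in any word admissible for that rule, not merely unchanged; since $W_a$ is $\theta(12)$-admissible (and $W_b$ is $\theta(21)^{-1}$-admissible, and every rule of $\pazocal{C}'$ locks the $P_0Q_0$-sector), one has $p=0$ outright. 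Second, the scheme you sketch to bound $p$ and $\|W_a\|,\|W_b\|$ is circular: ``$p\le\|W_t\|+(\text{length of }\pazocal{C}_1)$'' combined with the Lemma \ref{multiply one letter} bound ``length of $\pazocal{C}_1\le p+|W_t|_a$'' is consistent with $p$ arbitrarily large, and the appeal to Lemmas \ref{M_1 input length} and \ref{M_2 input length} both requires hypotheses you have not verified (e.g.\ that the step history of $\pazocal{C}'$ avoids $(4n)$) and in any case bounds $|W_a|_a$ by a multiple of $|W_b|_a$, which is another quantity you are trying to control, not by $\|W_0\|$ or $\|W_t\|$.

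The missing idea is the paper's projection argument, which dissolves the obstacle in one line. Rules of step history $(1)$ and the transitions $\theta(12)^{\pm1}$ preserve the projection of a configuration onto $F(\pazocal{A})$, and in $W_a$ (resp.\ $W_b$) every sector except the $Q_0P_1$-sector is empty (all other sectors are locked by $\theta(12)$, and $P_0Q_0$ is locked as noted above). Hence $|W_a|_a$ equals the length of the projection of $W_0$, so $|W_a|_a\le|W_0|_a$, and symmetrically $|W_b|_a\le|W_t|_a$; moreover, since the $P_0Q_0$-sector is empty at the junctions, Lemma \ref{multiply one letter} gives length of $\pazocal{C}_0\le|W_0|_a+1$ and length of $\pazocal{C}_1\le|W_t|_a+1$. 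With these in hand your assembly goes through exactly as in the paper, giving $t\le(c_1+2)\max(\|W_0\|,\|W_t\|)\le2c_1\max(\|W_0\|,\|W_t\|)$.
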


\begin{proof}

If $\pazocal{C}$ is a one-step computation with step $(1)$, then the statement follows from Lemma \ref{multiply one letter}.

So, by Lemma \ref{M_3 step history}, we may assume there exists a maximal subcomputation $\pazocal{C}_2:W_r\to\dots\to W_s$ of $\pazocal{C}$ whose step history has no occurrence of the letters $(1)$, $(12)$, or $(21)$. 

Then, $\pazocal{C}_2$ can be viewed as a computation of $\textbf{M}_2$, so that we have $s-r\leq c_1\max(\|W_r\|,\|W_s\|)$ by Lemma \ref{M_2 length}.

If the subcomputation $W_s\to\dots\to W_t$ is nonempty, then it must be a one-step computation with step $(1)$. But then this implies $t-s\leq |W_t|_a+1$ and $|W_s|_a\leq|W_t|_a$.

The symmetric argument implies $r\leq|W_0|_a+1$ and $|W_r|_a\leq|W_0|_a$. 

Hence, $t\leq(c_1+2)\max(\|W_0\|,\|W_t\|)$, so that the statement follows by $c_1\geq2$.

\end{proof}

A two-letter subword of the standard base of $\textbf{M}_3$ is defined to be left-active (or right-active) in $\textbf{M}_3(i)$ in the same way as subwords of the standard base of $\textbf{M}_2$.

For example, the subwords $P_0Q_0$ and $Q_0P_1$ are right-active and left-active, respectively, in $\textbf{M}_3(1)$.

\begin{lemma} \label{M_3 one-step}

Let $\pazocal{C}:W_0\to\dots\to W_t$ be a reduced computation of $\textbf{M}_3(i)$ in the standard base for some $i$. Assume that for some index $j$, $|W_j|_a>4|W_0|_a$. Then there are subwords $U_\ell V_\ell$ and $U_rV_r$ of the standard base such that $U_\ell V_\ell$ is left-active in $\textbf{M}_3(i)$, $U_rV_r$ is right-active in $\textbf{M}_3(i)$, and for $W_0'\to\dots\to W_t'$ the restriction of $\pazocal{C}$ to either sector, $|W_j'|_a<|W_{j+1}'|_a<\dots<|W_t'|_a$.

\end{lemma}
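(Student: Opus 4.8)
The plan is to split on whether $i=1$ or $i\ge2$, exploiting that $\textbf{M}_3$ is the concatenation of the new submachine $\textbf{M}_3(1)$ with copies $\textbf{M}_3(2),\dots,\textbf{M}_3(4n)$ of the submachines of $\textbf{M}_2$.

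For $i\ge2$, I would reduce to Lemma \ref{M_2 one-step}. Since $\textbf{M}_3(i)$ differs from $\textbf{M}_2(i)$ only in carrying the extra part $P_0$ and the extra sector $P_0Q_0$, which every rule of $\textbf{M}_3(i)$ locks, the $P_0Q_0$-sector word of $\pazocal{C}$ is a fixed word, of length $c$ say; deleting the $P_0$-letter and this sector from every $W_k$ produces a reduced computation $\bar{\pazocal{C}}:\bar W_0\to\dots\to\bar W_t$ of $\textbf{M}_2(i)$ in the standard base of $\textbf{M}_2$ with $|W_k|_a=|\bar W_k|_a+c$ for all $k$. Then $|W_j|_a>4|W_0|_a$ gives $|\bar W_j|_a>4|\bar W_0|_a+3c\ge4|\bar W_0|_a$, so Lemma \ref{M_2 one-step} supplies subwords $U_\ell V_\ell$ and $U_rV_r$ of the standard base of $\textbf{M}_2$ that are left- and right-active in $\textbf{M}_2(i)$ and for which the restriction of $\bar{\pazocal{C}}$ to either is strictly increasing from index $j$. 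These subwords lie in the standard base of $\textbf{M}_3$, $\textbf{M}_3(i)$ acts on them exactly as $\textbf{M}_2(i)$ does (hence they are left-/right-active in $\textbf{M}_3(i)$), and the restrictions of $\pazocal{C}$ to them agree with those of $\bar{\pazocal{C}}$; this settles the case.

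For $i=1$ I would argue along the lines of Lemma \ref{M_1 one-step}. The standard base is $P_0Q_0P_1\dots Q_4$, every positive rule of $\textbf{M}_3(1)$ has its $Q_0$-part of the form $q_0(1)\to a_1^{-1}q_0(1)a_1'$ and locks every sector except $P_0Q_0$ (right-active) and $Q_0P_1$ (left-active), and the combined $a$-length in the locked sectors is a constant $c$, so $|W_k|_a=c+|W_k'|_a+|W_k''|_a$ for all $k$, writing $W_0'\to\dots\to W_t'$ and $W_0''\to\dots\to W_t''$ for the restrictions of $\pazocal{C}$ to $P_0Q_0$ and to $Q_0P_1$. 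The rule corresponding to $a$ right-multiplies the $P_0Q_0$-sector word by $a_1^{-1}$ and left-multiplies the $Q_0P_1$-sector word by $a_1'$, and distinct rules use distinct multipliers, so each of these restrictions satisfies the hypotheses of Lemma \ref{multiply one letter}; in particular each changes its $a$-length by exactly $\pm1$ at every step and, by part (c) of that lemma, once it strictly increases it does so at every later step. Now suppose the conclusion fails; since $P_0Q_0$ and $Q_0P_1$ are the only right- and left-active subwords, at least one of $W_0'\to\dots\to W_t'$, $W_0''\to\dots\to W_t''$ fails to be strictly increasing from index $j$ onward, say (the other case being symmetric) the former, so there is $k$ with $j\le k\le t-1$ and $|W_{k+1}'|_a=|W_k'|_a-1$. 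By the contrapositive of Lemma \ref{multiply one letter}(c) the restriction $W_0'\to\dots\to W_t'$ strictly decreases on each of steps $1,\dots,k+1$, whence $|W_0'|_a=|W_{k+1}'|_a+(k+1)>j$ and $|W_j'|_a=|W_0'|_a-j$. Since $W_0''\to\dots\to W_t''$ changes by at most $1$ per step, $|W_j''|_a\le|W_0''|_a+j$; therefore $|W_j|_a=c+|W_j'|_a+|W_j''|_a\le c+(|W_0'|_a-j)+(|W_0''|_a+j)=|W_0|_a$, contradicting $|W_j|_a>4|W_0|_a$.

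The only steps requiring care are the routine bookkeeping ones: for $i\ge2$, identifying $\textbf{M}_3(i)$ with $\textbf{M}_2(i)$ plus one extra locked sector and carrying the constant $c$ through the hypothesis; for $i=1$, the downward induction extracting strict monotonicity from Lemma \ref{multiply one letter}(c). Neither presents a genuine obstacle — indeed the $i=1$ analysis even yields the sharper bound $|W_j|_a\le|W_0|_a$ in the failure case, so the constant $4$ in the statement is forced by the $\textbf{M}_2(i)$ case rather than by this one.
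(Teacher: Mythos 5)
Your proposal is correct and follows the paper's own route: reduce the case $i\ge2$ to Lemma \ref{M_2 one-step} (the extra $P_0Q_0$-sector is locked by every rule of $\textbf{M}_3(i)$, so in fact it is empty and your constant $c$ is $0$), and settle $i=1$ by applying Lemma \ref{multiply one letter} to the two unlocked sectors $P_0Q_0$ and $Q_0P_1$. The paper states this in two lines; your write-up just supplies the bookkeeping, including the monotonicity argument via part (c) of Lemma \ref{multiply one letter}, which is exactly what is intended.
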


\begin{proof}

By Lemma \ref{M_2 one-step}, it suffices to assume that $i=1$. But then the statement follows immediately from Lemma \ref{multiply one letter}.

\end{proof}

\smallskip


\subsection{The machine $\textbf{M}_4$} \

The machine $\textbf{M}_4$ is the `circular' analogue of a simple tweak to the machine $\textbf{M}_3$.

The standard base of $\textbf{M}_4$ adds just one part to that of $\textbf{M}_3$. In particular, setting $B_3$ as the standard base of $\textbf{M}_3$, the standard base of $\textbf{M}_4$ is $\{t\}B_3$, where $\{t\}$ consists of a single letter (which, clearly, acts as both the start and end letter of its part). The tape alphabet of the new sector in the standard base, i.e the $\{t\}P_0$-sector, is empty. All other tape alphabets are carried over from $\textbf{M}_3$.

A major difference between $\textbf{M}_4$ and the machines constructed in previous sections is that a tape alphabet is assigned to the space after the final letter $Q_4$ of $B_3$, which corresponds to the $Q_4\{t\}$-sector. As such, it is possible for an admissible word of $\textbf{M}_4$ to have base
$$Q_0^{-1}P_0^{-1}\{t\}^{-1}Q_4^{-1}P_4^{-1}P_4Q_4\{t\}P_0Q_0$$
i.e it essentially `wraps around' the standard base. An $S$-machine with this property is called a \textit{cyclic machine}, as one can think of the standard base as being written on a circle.

In this machine, the tape alphabet assigned to the $Q_4\{t\}$-sector is empty. The positive rules of $\textbf{M}_4$ correspond to those of $\textbf{M}_3$, operating on the copy of the hardware of $\textbf{M}_3$ in the same way and locking the new sectors. 

As such, we define the submachines $\textbf{M}_4(i)$ as in $\textbf{M}_3$ and define the step history of a reduced computation in the natural way.

The input sector of $\textbf{M}_4$ is the same as that of $\textbf{M}_3$, i.e the $P_0Q_0$-sector.

There are obvious analogues of the statements from previous sections. Instead of reformulating them here, we reference the previous statements even when in reference to this machine.
%
%
%
%
%
%

The base of an admissible word of $\textbf{M}_4$ (or any cyclic $S$-machine) is called \textit{revolving} if:

\renewcommand{\labelenumi}{(\alph{enumi})}

\begin{enumerate}

\item it starts and ends with the same base letter, and

\item none of its proper subwords satisfy (a).

\end{enumerate}

An unreduced revolving base is called \textit{faulty}.

Suppose $W$ is an admissible word of a cyclic $S$-machine $\textbf{S}$ whose base $B\equiv xvx$ is revolving. If $v$ has the form $v_1yv_2$, for some letter $y$, then there exists a naturally formed admissible word $W'$ with revolving base $B'\equiv yv_2xv_1y$ and satisfying $|W'|_a=|W|_a$. In this case, $B'$ (respectively $W'$) is called a \textit{cyclic permutation} of $B$ (respectively of $W$).

Note that for any reduced computation $\pazocal{C}:W_0\to\dots\to W_t$ of $\textbf{S}$ with base $B$ and history $H$, there exists a reduced computation $\pazocal{C}':W_0'\to\dots\to W_t'$ with base $B'$ and history $H$ and so that $|W_j|_a=|W_j'|_a$ for all $0\leq j\leq t$.

\begin{lemma} \label{M_4 faulty}

For every reduced computation $\pazocal{C}:W_0\to\dots\to W_t$ of $\textbf{M}_4$ with faulty base $B$, $|W_j|_a\leq c_0\max(|W_0|_a,|W_t|_a)$ for all $0\leq j\leq t$.

\end{lemma}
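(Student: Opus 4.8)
The plan is to analyze a reduced computation over a faulty base and reduce its study to the already-understood machines $\textbf{M}_3$ and its submachines. Since $B$ is faulty, it is a revolving base that is unreduced, so it contains a subword of the form $Q_iQ_i^{-1}$ or $Q_i^{-1}Q_i$ (where here $Q_i$ ranges over the parts of the standard base of $\textbf{M}_4$, including $\{t\}$, $P_0$, $P_1$, $R_1$, $R_2$, $P_4$). By the cyclic permutation remark immediately preceding the statement, we may cyclically permute so that $B$ begins with such an unreduced pair; this does not change the $a$-length of any $W_j$, so it suffices to bound these. The key point is that the $\{t\}P_0$- and $Q_4\{t\}$-sectors have empty tape alphabet, and the letter $\{t\}$ acts as both start and end letter and is fixed by every rule; so any sector adjacent to $\{t\}$ contributes nothing, and effectively the ``wrap-around'' portion of an admissible word over a revolving base of $\textbf{M}_4$ only sees the sectors already present in $\textbf{M}_3$.

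The main case analysis is on \emph{which} part of the base is repeated (equivalently, which sector is locked ``from both sides''). First I would dispose of the step-history question: a reduced computation over a base containing an unreduced subword $Q_iQ_i^{-1}$ cannot use rules that lock the $Q_iQ_{i+1}$-sector, by Lemma~\ref{locked sectors}, and similarly for $Q_i^{-1}Q_i$. Combined with Lemmas~\ref{M_2 step history 1}, \ref{M_2 step history 2}, \ref{M_3 step history} (and their $\textbf{M}_4$-analogues), this severely restricts the possible step histories: many of the ``turning'' step histories such as $(2i,2i+1)(2i+1)(2i+1,2i)$ are forbidden. I would then split $\pazocal{C}$ into maximal one-step subcomputations (pieces of constant step, separated by transition rules), bound the length-$a$ variation within each one-step subcomputation using Lemmas~\ref{multiply one letter}, \ref{unreduced base}, and \ref{primitive unreduced}, and then glue. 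Concretely, within a one-step subcomputation of $\textbf{M}_4(i)$, the relevant two-letter (or three-letter, for the primitive submachines) subwords of the revolving base either carry tape alphabet and are governed by one of those three lemmas — each of which gives $\|u_j\| \le \max(\|u_0\|,\|u_t\|)$ — or are locked and contribute nothing.

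The subtle step — and the main obstacle — is that a revolving base of $\textbf{M}_4$ can be genuinely ``long'': it may traverse the standard base of $\textbf{M}_3$ more than once, interleaving sectors in an order not seen in honest $\textbf{M}_3$ computations (e.g.\ $\dots Q_4^{-1}P_4^{-1}P_4Q_4\{t\}P_0Q_0\dots$). So one cannot simply invoke an $\textbf{M}_3$-level lemma verbatim; instead I would argue that the revolving base, being minimal-length among subwords with repeated first/last letter, wraps around the circle at most a bounded number of times (in fact essentially once, modulo the degenerate $\{t\}$-sectors), so that $B$ is a subword of a bounded concatenation of copies of the standard base. Then a one-step subcomputation restricted to $B$ is a bounded concatenation of one-step computations of $\textbf{M}_3(i)$-type objects; applying the relevant monotonicity lemma to each constant-$a$-length-behaviour piece shows that within a one-step subcomputation, $|W_j|_a \le 2\max(|W_0|_a, |W_t|_a)$ or so (the factor absorbing the bounded number of wraps). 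Finally, to promote this to a bound over the \emph{entire} computation $\pazocal{C}$ rather than a single one-step piece, I would use the step-history restrictions above to bound the number of one-step subcomputations by a constant depending only on $n$ (at most $O(n)$ distinct steps, and by the no-turning-back phenomena of Lemmas~\ref{M_1 no turn}, \ref{M_2 no turn}, \ref{M_3 no turn} no step is revisited), then chain the per-step bounds: the maximal $a$-length along the whole computation is at most a product of the per-step constants times $\max(|W_0|_a,|W_t|_a)$ — but since $a$-length can only ``peak'' inside a single one-step subcomputation and is otherwise monotone across the relevant step-history segments (again by Lemmas~\ref{multiply one letter}, \ref{unreduced base}), the peaks are controlled by the endpoints, giving the clean bound $|W_j|_a \le c_0 \max(|W_0|_a,|W_t|_a)$ once $c_0$ is chosen large relative to $n$ (which is exactly the hierarchy $c_0 >> k >> \dots >> n$, here only needing $c_0 >> n$).
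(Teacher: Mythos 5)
Your outline assembles the right toolkit (Lemmas \ref{locked sectors}, \ref{multiply one letter}, \ref{unreduced base}, \ref{primitive computations}, \ref{primitive unreduced}, and a case analysis on which letter of the faulty base repeats), but the step where you ``glue'' the one-step pieces is exactly the heart of the lemma and, as written, it does not go through. First, the per-piece bounds are not all of the form $\|u_j\|\leq\max(\|u_0\|,\|u_t\|)$: Lemma \ref{primitive computations}(2) applies only to the standard three-letter base of a primitive machine, which a faulty base need not contain, and Lemma \ref{primitive unreduced} gives only \emph{one-sided} monotonicity $|W_0|_a\leq\dots\leq|W_t|_a$, in a direction that depends on where the connecting rule sits inside the piece. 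Second, the endpoints of the intermediate one-step subcomputations are interior configurations of $\pazocal{C}$, so chaining per-piece bounds yields nothing unless you can show that the $a$-length is monotone toward an endpoint of the \emph{whole} computation; you assert this ``by Lemmas \ref{multiply one letter}, \ref{unreduced base}'' but that is precisely what has to be proved. Third, your appeal to the no-turn lemmas (\ref{M_1 no turn}, \ref{M_2 no turn}, \ref{M_3 no turn}) to claim that no step is revisited is invalid here: those lemmas are about computations in the standard base between end configurations, whereas with a faulty base the history can perfectly well contain subwords of the form $\theta(i-1,i)H_i\theta(i,i-1)$, and ruling out $a$-length growth in such ``turns'' is a large part of the actual work; consequently your bound on the number of one-step subcomputations (and the ensuing product-of-constants argument needing $c_0>>n$) is unsupported.

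The paper's proof is organized around a different mechanism: assume a minimal counterexample, i.e.\ $|W_j|_a>\max(|W_0|_a,|W_t|_a)$ for all $0<j<t$, so that in particular neither the first nor the last rule is a transition, $\chi$-, or connecting rule. Then, in a long case analysis over which transition or connecting rules can occur, it uses Lemma \ref{locked sectors} together with the definition of faulty to pin down the cyclic form of $B$ explicitly (e.g.\ any subword $P_1P_1^{-1}$ must sit inside $Q_0P_1P_1^{-1}Q_0^{-1}$ in a cyclic permutation of $B$, etc.), shows that the relevant connecting rule occurs exactly once in a suffix of the history, and then applies Lemmas \ref{primitive computations}(5), \ref{primitive unreduced}, and \ref{multiply one letter} sector by sector to conclude $|W_r|_a\leq\dots\leq|W_t|_a$ on that suffix, contradicting the counterexample assumption (the only non-contradictory cases give $|W_j|_a\leq2\max(|W_0|_a,|W_t|_a)$ directly). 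To repair your argument you would need to replace the ``chain the per-step bounds'' step by this kind of minimal-counterexample/monotonicity-toward-the-boundary analysis, carried out for each possible transition rule and each forced cyclic shape of the faulty base.
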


\begin{proof}

Note that we may assume that $t>1$ and $|W_j|_a>\max(|W_0|_a,|W_t|_a)$ for all $0<j<t$, as otherwise the statement follows from an obvious inductive argument. In particular, since a transition rule (resp $\chi$-rule, connecting rule) does not alter the $a$-length of an admissible word, we may assume that neither the first nor the last letter of the history $H$ of $\pazocal{C}$ is a transition rule (resp $\chi$-rule, connecting rule.).

\ \textbf{1.} Suppose $\pazocal{C}$ is a computation of $\textbf{M}_4(i)$ for some $i\in\{1,2,4,\dots,4n\}$. Then the restriction $\pazocal{C}':W_0'\to\dots\to W_t'$ of $\pazocal{C}$ to any two-letter subword of $B$ has fixed $a$-length, satisfies the hypotheses of Lemma \ref{multiply one letter}, or satisfies the hypotheses of Lemma \ref{unreduced base}. In each case, $|W_j'|_a\leq\max(|W_0'|_a,|W_t'|_a)$. So, $|W_j|_a=\sum|W_j'|_a\leq\sum\max(|W_0'|_a,|W_t'|_a)\leq2\max(|W_0|_a,|W_t|_a)$ for all $0\leq j\leq t$.

\smallskip

\ \textbf{2.} Suppose $\pazocal{C}$ is a computation of $\textbf{M}_4(i^-)$ for some $i\in\{3,5,\dots,4n-3\}$.

If $H$ contains no connecting rule, then an identical argument to the one used in Step 1 applies. So, assume that $H$ contains such a connecting rule, locking the $Q_0P_1$-sector of the standard base.

If $B$ has no occurrence of the letters $P_1^{\pm1}$, then no rule of $H$ changes the $a$-length of an admissible word with base $B$. So, assuming that $B$ contains such a letter, Lemma \ref{locked sectors} and the definition of faulty imply that $B$ has a subword $UV$ of the form $(Q_0P_1)^{\pm1}$. 

If more than one connecting rule occurs in $H$, then $H$ must have a subword $\zeta H'\zeta^{-1}$, where $\zeta$ is a connecting rule and $H'$ contains no connecting rule. Letting $\pazocal{C}'$ be the subcomputation with history $H'$, the restriction of $\pazocal{C}'$ to the $UV$-sector satisfies the hypotheses of Lemma \ref{multiply one letter}. But then $H'$ must be empty, yielding a contradiction.

So, $H$ contains exactly one connecting rule. Let $W_{r-1}\to W_r$ be the subcomputation corresponding to this connecting rule.

By the definition of faulty, any subword of $B$ of the form $(P_1Q_1)^{\pm1}$ is contained in a subword of a cyclic permutation of $B$ of the form $(Q_0P_1Q_1)^{\pm1}$. Letting $\pazocal{C}':W_0'\to\dots\to W_t'$ be the restriction of (a cyclic permutation of) $\pazocal{C}$ to a subword of the form $(Q_0P_1Q_1)^{\pm1}$, Lemma \ref{primitive computations} implies $|W_r'|_a\leq\dots\leq|W_t'|_a$.

Further, any subword of $B$ of the form $P_1P_1^{-1}$ is contained in a subword of a cyclic permutation of $B$ of the form $Q_0P_1P_1^{-1}Q_0^{-1}$. Letting $\pazocal{C}':W_0'\to\dots\to W_t'$ be the restriction of (a cyclic permutation of) $\pazocal{C}$ to a subword of the form $Q_0P_1P_1^{-1}Q_0^{-1}$, Lemma \ref{primitive unreduced} implies the inequalities $|W_r'|_a\leq\dots\leq|W_t'|_a$.

The tape word of any other sector is fixed throughout $\pazocal{C}$, so that $|W_r|_a\leq|W_t|_a$. By assumption, we must then have $r=t$. But then the final letter of $H$ is a connecting rule, contradicting our assumption.

Analogous arguments yield the same inequalities if $\pazocal{C}$ is a computation of $\textbf{M}_4(i^+)$ for some $i$.

\smallskip

\ \textbf{3.} Suppose $\pazocal{C}$ is a computation of $\textbf{M}_4(i)$ for some $i\in\{3,5,\dots,4n-3\}$.

Suppose $H$ has a suffix $\chi H^-$ where $\chi$ is a $\chi$-rule and $H^-$ is the history of a maximal subcomputation of $\textbf{M}_4(i^-)$. Let $\pazocal{C}^-:W_r\to\dots\to W_t$ be the subcomputation with history $H^-$.

Note that if $i$ is of the form $4\ell-1$ (resp $4\ell+1$), then the only sectors of the standard base that $\chi$ does not lock are the $Q_0P_1$-sector and the $R_2Q_3$-sector (resp $R_1Q_2$-sector).

So, by the definition of faulty, any subword of $B$ of the form $(Q_0P_1)^{\pm1}$ is contained in a subword of a cyclic permutation of $B$ of the form $(Q_0P_1Q_1)^{\pm1}$. The restriction of (a cyclic permutation of) $\pazocal{C}^-$ to this subword then satisfies the hypotheses of Lemma \ref{primitive computations}.

Further, any subword of $B$ of the form $P_1^{-1}P_1$ is contained in a subword of a cyclic permutation of $B$ of the form $Q_1^{-1}P_1^{-1}P_1Q_1$. The restriction of (a cyclic permutation of) $\pazocal{C}^-$ to this subword then satisfies the hypotheses of Lemma \ref{primitive unreduced}.

All other sectors have fixed $a$-length throughout $\pazocal{C}^-$, so that $|W_r|_a\leq|W_t|_a$. But this contradicts our assumption.

So, $H$ has no suffix of the form $\chi H^-$. By Step 2, $H$ must contain a $\chi$-rule, so that it must have a suffix of the form $\chi H^+$ where $\chi$ is a $\chi$-rule and $H^+$ is the history of a maximal subcomputation of $\textbf{M}_4(i^+)$.

But then an analogous argument yields a similar contradiction.

\smallskip

\ \textbf{4.} Suppose $\pazocal{C}$ is a computation of $\textbf{M}_4((4n-1)_j)$ for some $1\leq j\leq k$.

As in Step 2, $H$ must contain a connecting rule, as otherwise we may apply the argument used in Step 1.

If $B$ has no occurrence of the letters $R_2^{\pm1}$ or $P_4^{\pm1}$, then no rule of $H$ changes the $a$-length of an admissible word with base $B$. So, we assume that $B$ has a subword of the form $(R_2Q_3)^{\pm1}$ or $(Q_3P_4)^{\pm1}$. 

As in Step 2, this implies that $H$ contains exactly one connecting rule. Let $W_{r-1}\to W_r$ be the subcomputation corresponding to this connecting rule.

Note the following consequences of the definition of faulty and Lemma \ref{locked sectors}:

\begin{itemize}

\item any subword of $B$ of the form $(Q_2R_2)^{\pm1}$ is contained in a cyclic permutation of $B$ of the form $(Q_2R_2Q_3)^{\pm1}$

\item any subword of $B$ of the form $(P_4Q_4)^{\pm1}$ is contained in a cyclic permutation of $B$ of the form $(Q_3P_4Q_4)^{\pm1}$

\item any subword of $B$ of the form $R_2^{-1}R_2$ is contained in a cyclic permutation of $B$ of the form $Q_3^{-1}R_2^{-1}R_2Q_3$

\item any subword of $B$ of the form $P_4P_4^{-1}$ is contained in a cyclic permutation of $B$ of the form $Q_3P_4P_4^{-1}Q_4^{-1}$

\end{itemize}

Then, the restriction of the subcomputation $W_r\to\dots\to W_t$ to any of the subwords above satisfies the hypotheses of Lemma \ref{primitive computations}(5) or Lemma \ref{primitive unreduced}. As any other sector has fixed tape word, this implies $|W_r|_a\leq\dots\leq|W_t|_a$. But this leads to a contradiction in the same way as in Step 2.

\smallskip

\ \textbf{5.} Suppose $\pazocal{C}$ is a computation of $\textbf{M}_4(4n-1)$.

By Step 4, $H$ must then have suffix $\chi H_j$ where $\chi$ is a $\chi$-rule and $H_j$ is the history of a maximal subcomputation of $\textbf{M}_4((4n-1)_j)$. Let $\pazocal{C}_j:W_r\to\dots\to W_t$ be the subcomputation with history $H_j$.

Note that $\chi$ locks every sector of the standard base except for the $R_2Q_3$- and $Q_3P_4$-sectors, while these sectors are locked by any connecting rule. So, $H_j$ must contain no connecting rule.

Then, any unreduced two-letter subword of $B$ must be of the form $R_2R_2^{-1}$, $Q_3^{-1}Q_3$, $Q_3Q_3^{-1}$, or $P_4^{-1}P_4$. We then have the following consequences of the definition of faulty and Lemma \ref{locked sectors}:

\begin{itemize}

\item any subword of $B$ of the form $(R_2Q_3)^{\pm1}$ is contained in a cyclic permutation of $B$ of the form $(Q_2R_2Q_3)^{\pm1}$

\item any subword of $B$ of the form $(Q_3P_4)^{\pm1}$ is contained in a cyclic permutation of $B$ of the form $(Q_3P_4Q_4)^{\pm1}$

\item any subword of $B$ of the form $R_2R_2^{-1}$ is contained in a cyclic permutation of $B$ of the form $Q_2R_2R_2^{-1}Q_2^{-1}$

\item any subword of $B$ of the form $P_4^{-1}P_4$ is contained in a cyclic permutation of $B$ of the form $Q_4^{-1}P_4^{-1}P_4Q_4$

\end{itemize}

As in Step 4, Lemmas \ref{primitive computations}(5) and \ref{primitive unreduced} imply $|W_r|_a\leq|W_t|_a$, yielding a contradiction.

Hence, it suffices to assume that $H$ contains some transition rule.

\smallskip

\ \textbf{6.} Suppose $H$ has a suffix of the form $\theta(i-1,i)H_i$ where $H_i$ is the history of a maximal subcomputation with step history $(i)$ for $3\leq i\leq 4n-3$ of the form $4\ell-1$. 

Let $\pazocal{C}_i:W_r\to\dots\to W_t$ be the subcomputation with history $H_i$. 

Then $\pazocal{C}_i$ is a reduced computation of $\textbf{M}_4(i)$, so that there exists a maximal subcomputation $W_r\to\dots\to W_s$ of $\pazocal{C}_i$ which operates as $\textbf{M}_4(i^-)$.

As $\theta(i-1,i)$ locks every sector of the standard base except for the $Q_0P_1$-sector and the $R_2Q_3$-sector, any unreduced two-letter subword of $B$ must be of the form $Q_0Q_0^{-1}$, $P_1^{-1}P_1$, $R_2R_2^{-1}$, or $Q_3^{-1}Q_3$.

By the definition of faulty, any subword of $B$ of the form $(Q_0P_1)^{\pm1}$ is contained in a subword of a cyclic permutation of $B$ of the form $(Q_0P_1Q_1)^{\pm1}$. Similarly, any subword of $B$ of the form $P_1P_1^{-1}$ is contained in a subword of a cyclic permutation of $B$ of the form $Q_1^{-1}P_1^{-1}P_1Q_1$. As in previous steps, Lemmas \ref{primitive computations} and \ref{primitive unreduced} then imply $|W_r|_a\leq\dots\leq|W_s|_a$. 

As a result, we may assume that $t>s$, so that $H_i$ contains the letter $\chi_i$. As $H_i$ must also contain the connecting rule of $\textbf{M}_4(i^-)$, every unreduced two-letter subword of $B$ must be of the form $R_2R_2^{-1}$ or $Q_3^{-1}Q_3$. In particular, $B$ must be a cyclic permutation of
$$Q_2R_2R_2^{-1}Q_2^{-1}R_1^{-1}Q_1^{-1}P_1^{-1}Q_0^{-1}P_0^{-1}\{t\}^{-1}Q_4^{-1}P_4^{-1}Q_3^{-1}Q_3P_4Q_4\{t\}P_0Q_0P_1Q_1R_1Q_2$$
Let $\pazocal{C}_i^+:W_{s+1}\to\dots\to W_x$ be the maximal subcomputation of $\pazocal{C}_i$ which operates as $\textbf{M}_4(i^+)$.  As the connecting rule of $\textbf{M}_4(i^+)$ locks the $R_2Q_3$-sector, the restriction of $\pazocal{C}_i^+$ to the $Q_2R_2$-sector satisfies the hypotheses of Lemma \ref{multiply one letter}. So, since $W_{s+1}$ is $\chi_i^{-1}$-admissible, $W_x$ cannot be. In particular, $x=t$.

Then, the restriction of $\pazocal{C}_i^+$ to the subword $Q_2R_2R_2^{-1}Q_2^{-1}$ satisfies the hypotheses of Lemma \ref{primitive unreduced}. Since the tape word of any other sector is fixed throughout $\pazocal{C}_i^+$, this implies $|W_s|_a\leq|W_t|_a$, yielding a contradiction.


\smallskip

\ \textbf{7.} Suppose $H$ has a suffix of the form $\theta(i-1,i)H_i$ where $H_i$ is the history of a maximal subcomputation with step history $(i)$ for $3\leq i\leq 4n-3$ of the form form $4\ell+1$.

Let $\pazocal{C}_i:W_r\to\dots\to W_t$ be the subcomputation with history $H_i$.

As in Step 6, $\pazocal{C}_i$ must have a maximal subcomputation $W_r\to\dots\to W_s$ which operates as $\textbf{M}_4(i^-)$ such that $s<t$. So, since $H_i$ must contain the connecting rule of $\textbf{M}_4(i^-)$, every unreduced two-letter subword of $B$ must be of the form $R_1R_1^{-1}$ or $Q_2^{-1}Q_2$. As a result, $B$ must be a cyclic permutation of
$$Q_1R_1R_1^{-1}Q_1^{-1}P_1^{-1}Q_0^{-1}P_0^{-1}\{t\}^{-1}Q_4^{-1}P_4^{-1}Q_3^{-1}R_2^{-1}Q_2^{-1}Q_2R_2Q_3P_4Q_4\{t\}P_0Q_0P_1Q_1$$
Let $\pazocal{C}_i^+:W_{s+1}\to\dots\to W_x$ be the maximal subcomputation of $\pazocal{C}_i$ which operates as $\textbf{M}_4(i^+)$. As the connecting rule of $\textbf{M}_4(i^+)$ locks the $R_1Q_2$-sector, we again have $x=t$. Applying Lemma \ref{primitive unreduced} to the restriction of $\pazocal{C}_i^+$ to the subword $Q_1R_1R_1^{-1}Q_1^{-1}$ then implies $|W_s|_a\leq|W_t|_a$.

\smallskip

\ \textbf{8.} Suppose $H$ has a suffix of the form $\theta(i+1,i)H_i$ where $H_i$ is the history of a maximal subcomputation with step history $(i)$ for $3\leq i\leq 4n-3$ odd.

Let $\pazocal{C}_i:W_r\to\dots\to W_t$ be the subcomputation with history $H_i$.

Letting $W_r\to\dots\to W_s$ be the maximal subcomputation with step history $(i^+)$, as in Steps 4 and 5 we have $|W_r|_a\leq\dots\leq|W_s|_a$. As a result, it suffices to assume that $s<t$.

So, any unreduced two-letter subword of $B$ must be of the form $Q_0Q_0^{-1}$ or $P_1^{-1}P_1$. In particular, $B$ must be a cyclic permutation of
$$Q_1^{-1}P_1^{-1}P_1Q_1R_1Q_2R_2Q_3P_4Q_4\{t\}P_0Q_0Q_0^{-1}P_0^{-1}\{t\}^{-1}Q_4^{-1}P_4^{-1}Q_3^{-1}R_2^{-1}Q_2^{-1}R_1^{-1}Q_1^{-1}$$
Let $\pazocal{C}_i^-:W_{s+1}\to\dots\to W_x$ be the maximal subcomputation of $\pazocal{C}_i$ which operates as $\textbf{M}_4(i^-)$. As in Steps 6 and 7, the presence of the subword $P_0Q_0$ implies that we must have $x=t$. But then an application of Lemma \ref{primitive unreduced} to the restriction of $\pazocal{C}_i^-$ to the subword $Q_1^{-1}P_1^{-1}P_1Q_1$ implies $|W_s|_a\leq|W_t|_a$.

Hence, we may assume that the last letter of the step history of $\pazocal{C}$ is not of the form $(i)$ for some $i\in\{3,5,\dots,4n-3\}$. Moreover, the symmetric argument allows the same assumption to be made about the first letter of the step history.

\smallskip

\ \textbf{9.} Suppose $H$ has a subword $\theta H_{4n-1}$ where $\theta$ is a transition rule and $H_{4n-1}$ is the history of a maximal subcomputation with step history $(4n-1)$.

Note that both $\theta(4n-2,4n-1)$ and $\theta(4n,4n-1)$ lock every sector of the standard base except for the $R_2Q_3$- and $Q_3P_4$-sectors. Meanwhile, for any $j$, the connecting rule of $\textbf{M}_4((4n-1)_j)$ locks these two sectors. So, by Lemma \ref{locked sectors}, no connecting rule can appear in $H_{4n-1}$.

As a result, $H_{4n-1}$ is the history of a computation of $\textbf{M}_4((4n-1)_1)$ if $\theta=\theta(4n-2,4n-1)$ or $\textbf{M}_4((4n-1)_k)$ if $\theta=\theta(4n,4n-1)$.

Note the following consequences of the definition of faulty and Lemma \ref{locked sectors}:

\begin{itemize}

\item any subword of $B$ of the form $(R_2Q_3)^{\pm1}$ is contained in a cyclic permutation of $B$ of the form $(Q_2R_2Q_3)^{\pm1}$

\item any subword of $B$ of the form $(Q_3P_4)^{\pm1}$ is contained in a cyclic permutation of $B$ of the form $(Q_3P_4Q_4)^{\pm1}$

\item any subword of $B$ of the form $R_2R_2^{-1}$ is contained in a cyclic permutation of $B$ of the form $Q_2R_2R_2^{-1}Q_2^{-1}$

\item any subword of $B$ of the form $P_4^{-1}P_4$ is contained in a cyclic permutation of $B$ of the form $Q_4^{-1}P_4^{-1}P_4Q_4$

\end{itemize}

Let $\pazocal{C}':W_r\to\dots\to W_s$ be the subcomputation with history $H_{4n-1}$. Then, as in previous steps, Lemmas \ref{primitive computations} and \ref{primitive unreduced} imply $|W_r|_a\leq\dots\leq |W_s|_a$. So, we must have $s<t$.

As no connecting rule can occur in $H_{4n-1}$, the subsequent rule of $H$ must be $\theta^{-1}$, i.e $H$ has a subword $\theta H_{4n-1}\theta^{-1}$.

If $B$ contains a subword of the form $(Q_2R_2)^{\pm1}$ or $(P_4Q_4)^{\pm1}$, then the restriction of $\pazocal{C}'$ to this sector satisfies the hypotheses of Lemma \ref{multiply one letter}. But then $H_{4n-1}$ must be empty, yielding a contradiction.

So, $B$ cannot contain such a subword. By the definition of faulty, it follows that $B$ cannot contain the letters $R_2^{\pm1}$ or $P_4^{\pm1}$. In particular, $B$ must be a cyclic permutation of $Q_3Q_3^{-1}Q_3$.

However, no rule alters the $a$-length of an admissible with such a base, so that $|W_j|_a=|W_0|_a$ for all $j$.

Hence, $H$ has no such subword. What's more, by the symmetric argument, $H$ has no subword of the form $H_{4n-1}\theta$. So, we may assume that the step history of $\pazocal{C}$ has no occurrence of $(4n-1)$, $(4n)$, $(4n-1,4n)$, $(4n,4n-1)$, $(4n-2,4n-1)$, or $(4n-1,4n-2)$.

\smallskip

\ \textbf{10.} Suppose the step history of $\pazocal{C}$ contains the letter $(12)$. 

Then $H$ has a subword $H_1\theta(12)H_2$ where $H_1$ and $H_2$ are histories of maximal subcomputations of step history $(1)$ and $(2)$, respectively.

As $\theta(12)$ locks all sectors of the standard base except for the $Q_0P_1$-sector, $B$ must be a cyclic permutation of 
$$P_0Q_0Q_0^{-1}P_0^{-1}\{t\}^{-1}Q_4^{-1}P_4^{-1}Q_3^{-1}R_2^{-1}Q_2^{-1}R_1^{-1}Q_1^{-1}P_1^{-1}P_1Q_1R_1Q_2R_2Q_3P_4Q_4\{t\}P_0$$ 
Let $\pazocal{C}_1:W_r\to\dots\to W_s$ be the subcomputation with history $H_1$. Then the restriction of $\pazocal{C}_1$ to the $P_0Q_0$-sector satisfies the hypotheses of Lemma \ref{multiply one letter}. So, since $W_s$ is $\theta(12)$-admissible, $W_r$ cannot be. In particular, $r=0$.

Let $\pazocal{C}_1':W_0'\to\dots\to W_s'$ be the restriction of $\pazocal{C}_1$ to the subword $P_0Q_0Q_0^{-1}P_0^{-1}$. Then we may apply Lemma \ref{primitive unreduced} to $\pazocal{C}_1'$, so that $|W_s'|_a\leq|W_0'|_a$.

But every other sector must have fixed $a$-length throughout $\pazocal{C}_1$, so that $|W_s|_a\leq|W_0|_a$.

Hence, we may assume that the step history of $\pazocal{C}$ has no occurrence of the letter $(12)$ and, by the symmetric argument, no occurrence of the letter $(21)$. In particular, $\pazocal{C}$ has no subcomputation with step history $(1)$ and every rule of $H$ locks the $P_0Q_0$-sector.

\smallskip

\ \textbf{11.} Suppose $H$ contains a connecting rule $\zeta$ of $\textbf{M}_4(i^-)$ for some $3\leq i\leq 4n-3$ of the form $4\ell-1$.

Note that every sector of the standard base is locked by at least one of $\zeta$, $\chi_i$, or the connecting rule of $\textbf{M}_4(i^+)$. So, by Steps 6-8, $H$ must contain a subword $\theta(i-1,i)H_i\theta(i,i-1)$ where $H_i$ contains $\zeta$ and is the history of a subcomputation $\pazocal{C}_i$ of $\textbf{M}_4(i)$.

The only sector of the standard base not locked by at least one of $\theta(i-1,i)$ or $\zeta$ is the $R_2Q_3$-sector. So, $B$ must be a cyclic permutation of
$$Q_2R_2R_2^{-1}Q_2^{-1}R_1^{-1}Q_1^{-1}P_1^{-1}Q_0^{-1}P_0^{-1}\{t\}^{-1}Q_4^{-1}P_4^{-1}Q_3^{-1}Q_3P_4Q_4\{t\}P_0Q_0P_1Q_1R_1Q_2$$
Suppose $H_i$ contains the letter $\chi_i$. As the connecting rule of $\textbf{M}_4(i^+)$ locks the $R_2Q_3$-sector, it cannot occur in $H_i$. So, $H_i$ must have a subword $\chi_iH_i'\chi_i^{-1}$, where $H_i'$ is the history of a subcomputation $\pazocal{C}_i'$ of $\textbf{M}_4(i^+)$ not containing a connecting rule. But then applying Lemma \ref{multiply one letter} to the restriction of $\pazocal{C}_i'$ to the $Q_2R_2$-sector implies that $H_i'$ is empty.

So, $\pazocal{C}_i$ must be a computation of $\textbf{M}_4(i^-)$. But then applying Lemma \ref{primitive computations}(4) to the restriction of $\pazocal{C}_i$ to the subword $Q_0P_1Q_1$ implies that $H_i$ is empty, yielding a contradiction.

\smallskip

\ \textbf{12.} Suppose $H$ contains a connecting rule $\zeta$ of $\textbf{M}_4(i^-)$ for some $3\leq i\leq 4n-3$ of the form $4\ell+1$.

As in Step 11, $H$ must contain a subword $\theta(i-1,i)H_i\theta(i,i-1)$ where $H_i$ contains $\zeta$ and is the history of a computation $\pazocal{C}_i$ of $\textbf{M}_4(i)$.

The only sector of the standard base not locked by at least one of $\theta(i-1,i)$ or $\zeta$ is the $R_1Q_2$-sector. So, $B$ must be a cyclic permutation of
$$Q_1R_1R_1^{-1}Q_1^{-1}P_1^{-1}Q_0^{-1}P_0^{-1}\{t\}^{-1}Q_4^{-1}P_4^{-1}Q_3^{-1}R_2^{-1}Q_2^{-1}Q_2R_2Q_3P_4Q_4\{t\}P_0Q_0P_1Q_1$$
Suppose $H_i$ contains the letter $\chi_i$. As in Step 11, $H_i$ must then have a subword $\chi_iH_i'\chi_i^{-1}$, where $H_i'$ is the history of a subcomputation $\pazocal{C}_i'$ of $\textbf{M}_4(i^+)$ not containing a connecting rule. But then applying Lemma \ref{multiply one letter} to the restriction of $\pazocal{C}_i'$ to the $Q_1R_1$-sector implies that $H_i'$ is empty.

So, $\pazocal{C}_i$ must be a computation of $\textbf{M}_4(i^-)$. But then applying Lemma \ref{primitive computations}(4) to the restriction of $\pazocal{C}_i$ to the subword $Q_0P_1Q_1$ implies that $H_i$ is empty, yielding a contradiction.

\smallskip

\ \textbf{13.} Suppose $H$ contains a connecting rule $\zeta$ of $\textbf{M}_4(i^+)$ for some $3\leq i\leq 4n-3$ odd.

Similar to the arguments in Steps 11 and 12, $H$ must contain a subword $\theta(i+1,i)H_i\theta(i,i+1)$, where $H_i$ contains $\zeta$ and is the history of a computation $\pazocal{C}_i$ of $\textbf{M}_4(i)$.

The only sector of the standard base not locked by at least one of $\theta(i+1,i)$ or $\zeta$ is the $Q_0P_1$-sector. So, $B$ must be a cyclic permutation of
$$\{t\}P_0Q_0Q_0^{-1}P_0^{-1}\{t\}^{-1}Q_4^{-1}P_4^{-1}Q_3^{-1}R_2^{-1}Q_2^{-1}R_1^{-1}Q_1^{-1}P_1^{-1}P_1Q_1R_1Q_2R_2Q_3P_4Q_4\{t\}$$
Suppose $H_i$ contains the letter $\chi_i^{-1}$. As the connecting rule of $\textbf{M}_4(i^-)$ locks the $Q_0P_1$-sector, it cannot occur in $H_i$. So, $H_i$ must have a subword $\chi_i^{-1}H_i'\chi_i$ where $H_i'$ is the history of a subcomputation $\pazocal{C}_i'$ of $\textbf{M}_4(i^-)$ not containing a connecting rule. Applying Lemma \ref{multiply one letter} to the restriction of $\pazocal{C}_i'$ to the $P_1Q_1$-sector then implies that $H_i'$ is empty, yielding a contradiction.

So, $\pazocal{C}_i$ must be a computation of $\textbf{M}_4(i^+)$. But then we may apply Lemma \ref{primitive computations}(4) to the restriction of $\pazocal{C}_i$ to the subword $Q_2R_2Q_3$ (respectively $Q_1R_1Q_2$) if $i$ is of the from $4\ell-1$ (respectively $4\ell+1)$, so that that $H_i$ must be empty.

Hence, by Steps 11-13, we may assume that $H$ contains no connecting rule.

\smallskip

\ \textbf{14.} Suppose the step history of $\pazocal{C}$ contains the letter $(4n-3,4n-2)$.

Then $H$ has a subword $H_{4n-3}\theta(4n-3,4n-2)H_{4n-2}$ where each $H_i$ is the history of a maximal subcomputation of step history $(i)$.

Note that the rule $\theta(4n-3,4n-2)$ locks every sector of the standard base except for the $Q_0P_1$- and $R_1Q_2$-sectors. So, every unreduced two-letter subword of $B$ must be of the form $Q_0Q_0^{-1}$, $P_1^{-1}P_1$, $R_1R_1^{-1}$, or $Q_2^{-1}Q_2$.

By Steps 6-8, $H_{4n-3}$ cannot be a prefix of $H$. So, since $H_{4n-3}$ contains no connecting rule, $\theta(4n-2,4n-3)H_{4n-3}\theta(4n-3,4n-2)$ must be a subword of $H$. 

Further, the subcomputation $\pazocal{C}_{4n-3}$ of history $H_{4n-3}$ must be a computation of $\textbf{M}_4((4n-3)^+)$. If $B$ contains a subword of the form $(Q_1R_1)^{\pm1}$, then the restriction of $\pazocal{C}_{4n-3}$ to this sector satisfies the hypotheses of Lemma \ref{multiply one letter}. But then $H_{4n-3}$ must be empty, so that $H$ is not reduced.

So, $B$ cannot contain the letters $R_1^{\pm1}$. This implies that $B$ must be a cyclic permutation of
$$P_0Q_0Q_0^{-1}P_0^{-1}\{t\}^{-1}Q_4^{-1}P_4^{-1}Q_3^{-1}R_2^{-1}Q_2^{-1}Q_2R_2Q_3P_4Q_4\{t\}P_0$$
Let $\pazocal{C}_{4n-2}:W_r\to\dots\to W_s$ be the subcomputation with history $H_{4n-2}$. The restriction of $\pazocal{C}_{4n-2}$ to the $R_2Q_3$-sector satisfies the hypotheses of Lemma \ref{multiply one letter}. So, since $W_r$ is $\theta(4n-2,4n-3)$-admissible, $W_s$ cannot be. By Step 9, this implies that $s=t$ and $H_{4n-2}$ is a suffix of $H$.

Let $\pazocal{C}_{4n-2}':W_r'\to\dots\to W_t'$ be the restriction of $\pazocal{C}_{4n-2}$ to the subword $Q_3^{-1}R_2^{-1}Q_2^{-1}Q_2R_2Q_3$. As every rule with step history $(4n-2)$ locks the $Q_2R_2$-sector, we may view the subwords with base $(Q_2R_2)^{\pm1}$ as a single state letter. With this view, $\pazocal{C}_{4n-2}'$ satisfies the hypotheses of Lemma \ref{primitive unreduced}, so that $|W_r'|_a\leq\dots\leq|W_t'|_a$.

For $\pazocal{C}_{4n-2}'':W_r''\to\dots\to W_t''$ the restriction of $\pazocal{C}_{4n-2}$ to a subword $(Q_3P_4)^{\pm1}$, Lemma \ref{multiply one letter} implies that $|W_r''|_a\leq\dots\leq|W_t''|_a$.

Any other sector must have fixed tape word throughout $\pazocal{C}_{4n-2}$. But then $|W_r|_a\leq|W_t|_a$, contradicting our assumption.

Hence, we may assume that the step history of $\pazocal{C}$ has no occurrence of $(4n-2)$, so that the $Q_3P_4$-sector is locked by every rule of $H$.

\smallskip

\ \textbf{15.} Suppose the step history of $\pazocal{C}$ contains the letter $(2i,2i+1)$ for some $1\leq i\leq 2n-2$ odd.

Note that $\theta(2i,2i+1)$ locks all sectors of the standard base except for the $Q_0P_1$-sector and the $R_2Q_3$-sector. So, any unreduced two-letter subword of $B$ must be of the form $Q_0Q_0^{-1}$, $P_1^{-1}P_1$, $R_2R_2^{-1}$, or $Q_3^{-1}Q_3$.

By Steps 6-8, the step history of $\pazocal{C}$ must then have a subword $(2i,2i+1)(2i+1)(2i+1,2i)$. Let $\pazocal{C}_{2i+1}$ be the maximal subcomputation with step history $(2i+1)$ in this subword. By Steps 11-13, the history $H_{2i+1}$ of $\pazocal{C}_{2i+1}$ contains no connecting rule. So, $\pazocal{C}_{2i+1}$ is a computation of $\textbf{M}_4((2i+1)^-)$.

If $B$ contains a subword of the form $(P_1Q_1)^{\pm1}$, then the restriction of $\pazocal{C}_{2i+1}$ to this subword satisfies the hypotheses of Lemma \ref{multiply one letter}, so that $H_{2i+1}$ must be empty. So, $B$ cannot contain such a subword and, by the definition of faulty, cannot contain the letters $P_1^{\pm1}$. In particular, $B$ must be a cyclic permutation of 
$$P_0Q_0Q_0^{-1}P_0^{-1}\{t\}^{-1}Q_4^{-1}P_4^{-1}Q_3^{-1}Q_3P_4Q_4\{t\}P_0$$
But then Steps 9, 10, and 14 imply that the application of any rule of $H$ does not alter the tape word of an admissible word with such a base, so that $|W_j|_a=|W_0|_a$ for all $j$.

\smallskip

\ \textbf{16.} Suppose the step history of $\pazocal{C}$ contains the letter $(2i,2i+1)$ for some $1\leq i\leq 2n-2$ even. 

Then $H$ has a subword $H_{2i}\theta(2i,2i+1)H_{2i+1}$ where $H_{2i}$ and $H_{2i+1}$ are histories of maximal subcomputations of step history $(2i)$ and $(2i+1)$, respectively.

Note that $\theta(2i,2i+1)$ locks all sectors of the standard base except for the $Q_0P_1$- and $R_1Q_2$-sectors. So, any unreduced two-letter subword of $B$ must be of the form $Q_0Q_0^{-1}$, $P_1^{-1}P_1$, $R_1R_1^{-1}$, or $Q_2^{-1}Q_2$.

As in Step 15, we then have that $H$ also contains a subword $\theta(2i,2i+1)H_{2i+1}\theta(2i+1,2i)$. Again, this implies that $B$ cannot contain the letters $P_1^{\pm1}$. So, $B$ must be a cyclic permutation of
$$P_0Q_0Q_0^{-1}P_0^{-1}\{t\}^{-1}Q_4^{-1}P_4^{-1}Q_3^{-1}R_2^{-1}Q_2^{-1}Q_2R_2Q_3P_4Q_4\{t\}P_0$$
Let $\pazocal{C}_{2i}:W_r\to\dots\to W_s$ be the subcomputation with history $H_{2i}$. The restriction of $\pazocal{C}_{2i}$ to the $R_2Q_3$-sector satisfies the hypotheses of Lemma \ref{multiply one letter}, so that $r=0$.

Let $\pazocal{C}_{2i}':W_0'\to\dots\to W_s'$ be the restriction of $\pazocal{C}_{2i}$ to the subword $Q_3^{-1}R_2^{-1}Q_2^{-1}Q_2R_2Q_3$. As in Step 14, Lemma \ref{primitive unreduced} then implies that $|W_s'|_a\leq|W_0'|_a$.

But all other sectors have fixed tape word throughout $\pazocal{C}_{2i}$, so that $|W_s|_a\leq|W_0|_a$.

Hence, Steps 15 and 16 imply that the step history $\pazocal{C}$ contains no letter of the form $(2i,2i+1)$. The symmetric argument further implies that it contains no letter of the form $(2i+1,2i)$.

\smallskip

\ \textbf{17.} Suppose the step history of $\pazocal{C}$ contains the letter $(2i,2i-1)$ for some $2\leq i\leq 2n-2$ even.

As $\theta(2i,2i-1)$ locks every sector of the standard base except for the $Q_0P_1$- and $R_2Q_3$-sectors, any unreduced two-letter subword of $B$ must be of the form $Q_0Q_0^{-1}$, $P_1^{-1}P_1$, $R_2R_2^{-1}$, or $Q_3^{-1}Q_3$.

By Steps 6-8, the step history of $\pazocal{C}$ must contain the subword $(2i,2i-1)(2i-1)(2i-1,2i)$. Let $\pazocal{C}_{2i-1}$ be the maximal subcomputation with step history $(2i-1)$ in this subword. By Steps 11-13, the history $H_{2i-1}$ of $\pazocal{C}_{2i-1}$ contains no connecting rule. So, $\pazocal{C}_{2i-1}$ is a computation of $\textbf{M}_4((2i-1)^+)$.

If $B$ contains a subword of the form $(Q_2R_2)^{\pm1}$, then the restriction of $\pazocal{C}_{2i-1}$ to this subword satisfies the hypotheses of Lemma \ref{multiply one letter}, so that $H_{2i-1}$ must be empty. So, $B$ cannot contain the letters $R_2^{\pm1}$. In particular, $B$ must be a cyclic permutation of 
$$P_0Q_0Q_0^{-1}P_0^{-1}\{t\}^{-1}Q_4^{-1}P_4^{-1}Q_3^{-1}Q_3P_4Q_4\{t\}P_0$$
As in Step 15, this implies $|W_j|_a=|W_0|_a$ for all $j$.

The same argument implies that the step history of $\pazocal{C}$ cannot contain a letter $(2i-1,2i)$ for $1\leq i\leq 2n-2$ even.

So, every transition rule of $H$ must be of the form $\theta(2i-1,2i)^{\pm1}$ for $2\leq i\leq 2n-2$ odd.

\smallskip

\ \textbf{18.} Finally, assume that $\pazocal{C}$ contains the letter $(2i,2i-1)$ for some $2\leq i\leq2n-2$ odd. 

Then $H$ must contain a subword $H_{2i}\theta(2i,2i-1)H_{2i-1}$ where $H_{2i-1}$ and $H_{2i}$ are histories of maximal subcomputations of step history $(2i-1)$ and $(2i)$, respectively.

Since $\theta(2i,2i-1)$ locks every sector of the standard base except for the $Q_0P_1$- and $R_1Q_2$-sectors, any unreduced two-letter subword of $B$ must be of the form $Q_0Q_0^{-1}$, $P_1^{-1}P_1$, $R_1R_1^{-1}$, or $Q_2^{-1}Q_2$.

As in Step 17, the step history of $\pazocal{C}$ must contain the subword $(2i,2i-1)(2i-1)(2i-1,2i)$. Let $\pazocal{C}_{2i-1}$ be the maximal subcomputation with step history $(2i-1)$ in this subword. By Steps 11-13, the history $H_{2i-1}$ of $\pazocal{C}_{2i-1}$ contains no connecting rule, so that $\pazocal{C}_{2i-1}$ is a computation of $\textbf{M}_4((2i-1)^+)$.

If $B$ contains a subword of the form $(Q_1R_1)^{\pm1}$, then the restriction of $\pazocal{C}_{2i-1}$ to this subword satisfies the hypotheses of Lemma \ref{multiply one letter}, so that $H_{2i-1}$ must be empty. So, $B$ cannot contain $R_1^{\pm1}$, and so is a cyclic permutation of
$$P_0Q_0Q_0^{-1}P_0^{-1}\{t\}^{-1}Q_4^{-1}P_4^{-1}Q_3^{-1}R_2^{-1}Q_2^{-1}Q_2R_2Q_3P_4Q_4\{t\}P_0$$
As in Step 16, letting $\pazocal{C}_{2i}:W_r\to\dots\to W_s$ be the subcomputation with history $H_{2i}$, we must then have $r=0$ and $|W_s|_a\leq|W_0|_a$. Thus, as we can apply the symmetric argument, we reach a final contradiction.

\end{proof}

\medskip


\subsection{The machines $\textbf{M}_{5,1}$ and $\textbf{M}_{5,2}$} \

The cyclic machine $\textbf{M}_{5,1}$ functions as the `parallel' composition of the machine $\textbf{M}_4$ with itself a large number of times.

Letting $\{t(i)\}B_3(i)$ be a copy of the standard base of $\textbf{M}_4$ for $i\in\{1,\dots,L\}$, the standard base of $\textbf{M}_{5,1}$ is
$$\{t(1)\}B_3(1)\{t(2)\}B_3(2)\dots\{t(L)\}B_3(L)$$
For any letter of $\{t(i)\}B_3(i)$ (or its inverse), the index $i$ is called its \textit{coordinate}.

The tape alphabet of any sector formed by a one-letter part $\{t(i)\}$ of the standard base (including the $Q_4(L)\{t(1)\}$-sector) is defined to be empty. The tape alphabets of all other sectors arise from $\textbf{M}_4$ in the natural way.

The rules of $\textbf{M}_{5,1}$ are in correspondence with those of $\textbf{M}_4$, with each rule operating in parallel on each of the copies of the standard base of $\textbf{M}_4$ in the same way as its corresponding rule.

The copies of the input sector are taken as the input sectors of the machine.

Naturally, there arise submachines $\textbf{M}_{5,1}(i)$ corresponding to the submachines $\textbf{M}_4$. As such, the definition of step history and controlled history extend to reduced computations of $\textbf{M}_{5,1}$.

The statements of Section 4.7 have natural analogues in $\textbf{M}_{5,1}$. For example, letting $I_5(w)$ be the input configuration with the natural copy of $w$ in each $P_0(i)Q_0(i)$-sector, then the following is the analogue of Lemma \ref{M_3 language}.

\begin{lemma} \label{M_{5,1} language}

An input configuration $W$ is accepted by $\textbf{M}_{5,1}$ if and only if $W\equiv I_5(u^n)$ for some $u^n\in\pazocal{L}$. Moreover, for any $u^n\in\pazocal{L}$, there exists a unique accepting computation $\pazocal{C}_{5,1}(u)$ of the input configuration $I_5(u^n)$.

\end{lemma}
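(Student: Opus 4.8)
The strategy is to exploit the fact that $\textbf{M}_{5,1}$ acts on its standard base as $L$ independent, perfectly synchronised copies of $\textbf{M}_4$, and then to invoke the $\textbf{M}_4$-analogue of Lemma \ref{M_3 language}. Since every configuration of $\textbf{M}_{5,1}$ has the standard base $\{t(1)\}B_3(1)\dots\{t(L)\}B_3(L)$, and since the $\{t(i)\}$-sectors (as well as the $Q_4(L)\{t(1)\}$-sector) have empty tape alphabet exactly as does the $Q_4\{t\}$-sector of $\textbf{M}_4$, restricting a configuration $W$ of $\textbf{M}_{5,1}$ to the subword $\{t(i)\}B_3(i)$ produces a well-defined configuration $W^{(i)}$ of $\textbf{M}_4$, and $W$ is recovered from the tuple $(W^{(i)})_{i=1}^{L}$.

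By construction each rule of $\textbf{M}_{5,1}$ acts on $\{t(i)\}B_3(i)$ precisely as the corresponding rule of $\textbf{M}_4$ acts on $\{t\}B_3$; hence a computation $\pazocal{C}:W_0\to\dots\to W_t$ of $\textbf{M}_{5,1}$ (necessarily in the standard base) projects, for each $i$, to a computation $\pazocal{C}^{(i)}:W_0^{(i)}\to\dots\to W_t^{(i)}$ of $\textbf{M}_4$ whose history is the copy in $\textbf{M}_4$ of the history $H$ of $\pazocal{C}$ --- in particular all $L$ projections carry the same history, and $\pazocal{C}^{(i)}$ is reduced whenever $\pazocal{C}$ is. Moreover, applicability of a rule of $\textbf{M}_{5,1}$ to $W$ is equivalent to simultaneous applicability of the corresponding rule of $\textbf{M}_4$ to every $W^{(i)}$; $W$ is an input configuration (respectively the accept configuration) of $\textbf{M}_{5,1}$ if and only if each $W^{(i)}$ is an input configuration (respectively the accept configuration) of $\textbf{M}_4$; and the $i$-th input sector of $W$ carries the input of $W^{(i)}$.

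With this dictionary in hand the lemma is nearly immediate. For sufficiency, given $u^n\in\pazocal{L}$ let $\pazocal{C}_4(u)$ be the unique accepting computation of the input $u^n$ in $\textbf{M}_4$ furnished by the analogue of Lemma \ref{M_3 language}, with history $H_4(u)$; each block of $I_5(u^n)$ projects to the input configuration of $\textbf{M}_4$ with input $u^n$, so running the lift of $H_4(u)$ from $I_5(u^n)$ is admissible at every step and terminates at a configuration whose every block is the accept configuration of $\textbf{M}_4$, i.e.\ at the accept configuration of $\textbf{M}_{5,1}$; call this computation $\pazocal{C}_{5,1}(u)$. For necessity and uniqueness, let $W$ be an accepted input configuration and $\pazocal{C}$ an accepting computation of $W$ (reduced, without loss of generality, since reduction changes neither the initial nor the final configuration). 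Each $\pazocal{C}^{(i)}$ is then an accepting computation of $\textbf{M}_4$ starting at an input configuration, so the analogue of Lemma \ref{M_3 language} gives that its input lies in $\pazocal{L}$, say $u_i^n$, and that $\pazocal{C}^{(i)}=\pazocal{C}_4(u_i)$; thus the common history of $\pazocal{C}$ equals $H_4(u_i)$ for all $i$. Since $H_4(u)$ begins with the natural copy of $u^n$ read right to left through the rules of step history $(1)$, distinct elements of $\pazocal{L}$ yield distinct such prefixes, so all $u_i$ coincide with a single $u$; hence $W\equiv I_5(u^n)$ and $\pazocal{C}=\pazocal{C}_{5,1}(u)$.

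The only point not completely formal is the bookkeeping in the projection step --- checking that block-restriction carries admissible words of $\textbf{M}_{5,1}$ to admissible words of $\textbf{M}_4$, is compatible with the reductions built into the application of an $S$-rule, and matches up the distinguished (start, end, input, accept) configurations --- but each of these is read off directly from the definition of $\textbf{M}_{5,1}$ as a parallel composition, so I expect no genuine obstacle here; all of the substantive content has already been established in Lemmas \ref{M_3 language} and \ref{M_3 start to end} and their transfer to $\textbf{M}_4$.
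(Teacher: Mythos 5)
Your proof is correct and follows exactly the route the paper intends: the paper states this lemma without proof, asserting it as the ``natural analogue'' of Lemma \ref{M_3 language} via the parallel-composition structure of $\textbf{M}_{5,1}$, and your block-projection argument (restricting to each $\{t(i)\}B_3(i)$, invoking the $\textbf{M}_4$-analogue of Lemma \ref{M_3 language}, and recovering a single $u$ from the common history) is precisely the justification left implicit there.
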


The cyclic machine $\textbf{M}_{5,2}$ is constructed in much the same way as $\textbf{M}_{5,1}$, but with one fundamental difference: Each rule locks the first input sector, i.e the $P_0(1)Q_0(1)$-sector.

The definitions of $\textbf{M}_{5,1}$ extend in an obvious way to $\textbf{M}_{5,2}$, and many of the statements of Section 4.7 again have natural analogues. For example, letting $J_5(w)$ be the input configuration that is obtained from emptying the $P_0(1)Q_0(1)$-sector of the natural copy of $I_5(w)$, the following is the analogue of Lemma \ref{M_3 language} (and Lemma \ref{M_{5,1} language}).

\begin{lemma} \label{M_{5,2} language}

An input configuration $W$ is accepted by $\textbf{M}_{5,2}$ if and only if $W\equiv J_5(u^n)$ for some $u^n\in\pazocal{L}$. Moreover, for any $u^n\in\pazocal{L}$, there exists a unique accepting computation $\pazocal{C}_{5,2}(u)$ of the input configuration $J_5(u^n)$.

\end{lemma}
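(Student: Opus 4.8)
The plan is to mimic, verbatim where possible, the proof of Lemma \ref{M_{5,1} language}, adapting only for the locking of the first input sector. The machine $\textbf{M}_{5,2}$ differs from $\textbf{M}_{5,1}$ only in that every rule locks the $P_0(1)Q_0(1)$-sector; hence the copy of $\textbf{M}_4$ indexed by the coordinate $1$ operates as $\textbf{M}_4$ would with the $P_0Q_0$-sector locked, while the copies indexed $2,\dots,L$ operate as unmodified $\textbf{M}_4$. First I would note that since every rule locks the $P_0(1)Q_0(1)$-sector, and an input configuration of $\textbf{M}_{5,2}$ is by definition one whose only nonempty sectors are the input sectors among coordinates $2,\dots,L$ (the $P_0(1)Q_0(1)$-sector being empty), an accepting computation must drive all coordinates $i\in\{2,\dots,L\}$ from an input configuration of $\textbf{M}_4$ to the accept configuration while coordinate $1$ stays inert. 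Since the rules act in parallel and identically on all coordinates with a nonlocked sector structure, a word $W$ is accepted precisely when its projection onto each coordinate $i\in\{2,\dots,L\}$ is an accepted input of $\textbf{M}_4$ (equivalently, of $\textbf{M}_3$, by Lemma \ref{M_3 language}); and because the rules are parallel these projections are forced to coincide. So $W\equiv J_5(u^n)$ for some $u^n\in\pazocal{L}$, and conversely each such $J_5(u^n)$ is accepted by taking the coordinatewise parallel lift of the accepting computation $\pazocal{C}_3(u)$ of $\textbf{M}_3$.

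The key steps, in order, would be: (1) Observe the inert-coordinate structure: every rule of $\textbf{M}_{5,2}$ locks $P_0(1)Q_0(1)$, and a fortiori the $\{t(i)\}$-sectors are always empty, so a reduced computation of $\textbf{M}_{5,2}$ in the standard base is determined by, and determines, reduced computations of $\textbf{M}_4$ in each coordinate $i$, with the coordinate-$1$ computation subject to the extra constraint that no rule touches its $P_0Q_0$-sector. (2) For the "only if" direction, suppose $\pazocal{C}$ is an accepting computation of an input configuration $W$; restrict to any coordinate $i\in\{2,\dots,L\}$ to obtain an accepting computation of $\textbf{M}_4$ (using that the $\textbf{M}_4$-analogue of Lemma \ref{M_3 language} holds, as stated in Section 4.11), hence the $i$-th projection of $W$ is $u_i^n$ for some $u_i^n\in\pazocal{L}$. (3) Because all rules act identically in parallel on coordinates $2,\dots,L$, any two such projections have identical histories, and since the history determines the initial configuration (via the $\textbf{M}_4$-analogue of Lemma \ref{M_3 start to end}(b) / Lemma \ref{multiply one letter}), the $u_i$ are all equal; call the common value $u$. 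Hence $W\equiv J_5(u^n)$. (4) For uniqueness, the history of any accepting computation of $J_5(u^n)$ projects in coordinate $2$ to the history of an accepting computation of $\textbf{M}_4$ on the input configuration with input $u^n$, which by the $\textbf{M}_4$-analogue of Lemma \ref{M_3 language} is the unique such history $H(u)$; since the full history of $\textbf{M}_{5,2}$ is (by the parallel structure and the fact that each rule is determined by its action on any one unlocked coordinate) determined by this projection, the accepting computation is unique; denote it $\pazocal{C}_{5,2}(u)$. (5) For the "if" direction, given $u^n\in\pazocal{L}$, take the accepting computation $\pazocal{C}_3(u)$ of $\textbf{M}_3$ (Lemma \ref{M_3 language}), lift it to $\textbf{M}_4$, and then to $\textbf{M}_{5,2}$ by applying the corresponding rules in parallel on all coordinates; since coordinate $1$ has its $P_0Q_0$-sector empty and all other sectors of coordinate $1$ behave exactly as in the lift (the rules of $\textbf{M}_3(1)$ act only on the $P_0Q_0$- and $Q_0P_1$-sectors, and with $P_0Q_0$ empty these are no-ops preserving admissibility), this lift is a valid computation carrying $J_5(u^n)$ to the accept configuration.

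The main obstacle, and the point requiring genuine care rather than pure analogy, is step (5): one must check that locking $P_0(1)Q_0(1)$ does not obstruct the accepting computation of the coordinate-$1$ copy. In $\textbf{M}_4$, an accepting computation begins with a step-$(1)$ phase that moves all input letters from $P_0Q_0$ into $Q_0P_1$; with $P_0Q_0$ already empty in coordinate $1$, this phase is simply vacuous, and $\theta(12)$ then applies, after which coordinate $1$ proceeds exactly as coordinates $2,\dots,L$ but with its own (now identical) data — except that in $\textbf{M}_{5,2}$ the coordinate-$1$ input is empty while the others carry $u^n$, so the coordinatewise lift is \emph{not} honestly parallel on the $P_0Q_0$-data. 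The resolution is that the parallel lift of $\pazocal{C}_3(u)$ should be taken with coordinate $1$ simply never participating in a step-$(1)$ or $\theta(12)$/$\theta(21)$ move in the way the others do; more cleanly, one observes that the lift of $\pazocal{C}_3(u)$ restricted to $\textbf{M}_2(i)$-type submachines ($i\geq 2$) acts identically on all coordinates including coordinate $1$ (since the coordinate-$1$ $Q_0P_1$-sector gets loaded with $u$ during a step-$(1)$ phase that I must argue still produces the right configuration). I would handle this by first applying, as a preliminary sub-lemma or remark, the observation that an accepting computation of $\textbf{M}_4$ with empty $P_0Q_0$-sector input has history $\theta(12)H_2(u)$ rather than $H_1(u)\theta(12)H_2(u)$ — i.e., the step-$(1)$ phase is empty — and noting that this shorter history is compatible with the full-input histories of the other coordinates after their own step-$(1)$ phases complete; since the step-$(1)$ rules of coordinates $2,\dots,L$ lock all of coordinate $1$'s sectors, the whole thing assembles. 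This bookkeeping — matching a coordinate running a truncated $\textbf{M}_4$-history against $L-1$ coordinates running the full $\textbf{M}_4$-history — is the only non-mechanical part; everything else is a direct transcription of the proof of Lemma \ref{M_{5,1} language} with "for all $i$" replaced by "for all $i\in\{2,\dots,L\}$".
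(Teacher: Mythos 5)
Your overall skeleton is the intended one: the paper gives no separate proof of this lemma (it is stated as a ``natural analogue'' of Lemmas \ref{M_3 language} and \ref{M_{5,1} language}), and your steps (1)--(4) -- restrict an accepting computation to each coordinate $i\in\{2,\dots,L\}$, invoke the $\textbf{M}_4$-analogue of Lemma \ref{M_3 language}, use parallelism plus Lemma \ref{multiply one letter} to force a common input $u^n$, and get uniqueness because the history is determined by its coordinate-$2$ projection -- are exactly the right transcription.

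The genuine gap is in your step (5), and your stated ``resolution'' is wrong about the machine itself. In $\textbf{M}_{5,2}$ a rule locks only the $P_0(1)Q_0(1)$-sector; a step-$(1)$ rule does \emph{not} lock the rest of coordinate $1$, and in particular it still writes its letter $a_1'$ into the $Q_0(1)P_1(1)$-sector (its coordinate-$1$ part is effectively $q_0(1)\xrightarrow{\ell}q_0(1)a_1'$). Consequently there is no ``truncated history $\theta(12)H_2(u)$ for coordinate $1$ matched against the full history of the other coordinates'': a computation of $\textbf{M}_{5,2}$ has a single history applied in parallel, and during the step-$(1)$ phase coordinate $1$ accumulates the same copy of $u^n$ in its $Q_0(1)P_1(1)$-sector that coordinates $2,\dots,L$ acquire by transfer; after that phase all coordinates coincide and the remainder of $\pazocal{C}_2(u)$ is honestly parallel. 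This is the mechanism you mention in passing (``the coordinate-$1$ $Q_0P_1$-sector gets loaded'') and then abandon, and it is the one the paper relies on later for the second machine of $\textbf{M}$ (e.g.\ the proof of Lemma \ref{M step history 2}(a) applies Lemma \ref{multiply one letter} to the $Q_0(1)P_1(1)$-sector for $(1)_2$-rules, and Lemmas \ref{accepted configuration a-length} and \ref{M width} track coordinate $1$'s $a$-length changing during such steps). If your assertion that ``the step-$(1)$ rules lock all of coordinate $1$'s sectors'' were true, the lift would in fact \emph{not} assemble: coordinate $1$ would enter the $\textbf{M}_2$-phase with empty $Q_0(1)P_1(1)$-sector, the parallel $\Phi_1$-type rules would then deposit a copy of $u^{-1}$ there instead of deleting $u$, and the configuration could never become admissible for the transition rule analogous to $\sigma(2n-1,2n)$, which locks that sector -- so the computation would never reach the accept configuration. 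Replacing your step (5) by the correct observation about what the step-$(1)$ rules do to coordinate $1$ repairs the argument; the rest stands.
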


\medskip


\section{The machine $\textbf{M}$}

\subsection{Definition of the machine} \

The final step of our construction is to combinine the machines $\textbf{M}_{5,1}$ and $\textbf{M}_{5,2}$ to create the cyclic machine $\textbf{M}$ that is sufficient for the proof of Theorem \ref{main theorem}.

Similar to $\textbf{M}_{5,1}$ and $\textbf{M}_{5,2}$, the standard base of $\textbf{M}$ is of the form $\{t(1)\}B_3(1)\dots\{t(L)\}B_3(L)$, with the sectors of the form $P_0(i)Q_0(i)$ taken to be the input sectors. However, each of the parts making up $B_3(i)$ consists of more state letters than its counterparts in $\textbf{M}_{5,1}$ and $\textbf{M}_{5,2}$. 

To be precise, any part of the standard base that is not a one-letter part $\{t(i)\}$ consists of a copy of the corresponding part of the standard base of $\textbf{M}_{5,1}$, a (disjoint) copy of the corresponding part of the standard base of $\textbf{M}_{5,2}$, and two new letters which function as the part's start and end letters. The accept configuration of $\textbf{M}$ is denoted $W_{ac}$.

The set of rules $\Theta$ of $\textbf{M}$ is partitioned into two symmetric sets, $\Theta_1$ and $\Theta_2$. The positive rules of each consist of a set of `working' rules and two more transition rules. Unlike in previous constructions, though, these two sets are not concatenated in order to force them to run sequentially, rather in order to force them to operate `one or the other'.

The rules of $\Theta_1^+$ are defined as follows:

\begin{itemize}

\item The transition rule $\theta(s)_1$ locks all sectors other than the input sectors. It switches the state letters from the start state of $\textbf{M}$ to the copy of the start state of $\textbf{M}_{5,1}$.

\item The positive `working' rules of $\Theta_1^+$ are copies of the positive rules of the machine $\textbf{M}_{5,1}$.

\item The transition rule $\theta(a)_1$ locks all sectors and switches the state letters from the copies of the end letters of $\textbf{M}_{5,1}$ to the end letters of $\textbf{M}$.

\end{itemize}

The rules of $\Theta_2^+$ are defined as follows:

\begin{itemize}

\item The transition rule $\theta(s)_2$ locks each of the sectors locked by $\theta(s)_1$, but also locks the $P_0(1)Q_0(1)$-sector. It switches the state letters from the start state of $\textbf{M}$ to the copy of the start state of $\textbf{M}_{5,2}$.

\item The positive `working' rules of $\Theta_2^+$ are copies of the positive rules of the machine $\textbf{M}_{5,2}$.

\item The transition rule $\theta(a)_2$ locks all sectors and switches the state letters from the copies of the end letters of $\textbf{M}_{5,2}$ to the end letters of $\textbf{M}$.

\end{itemize}

By the definition of the rules, one might infer that the first input sector $P_0(1)Q_0(1)$ is of particular significance. Hence, it is referred to as the \textit{`special' input sector}.

For $w\in F(\pazocal{A})$, the natural copy of $I_5(w)$ (respectively $J_5(w)$) in the hardware of this machine is $\theta(s)_1^{-1}$-admissible (respectively $\theta(s)_2^{-1}$-admissible). We denote $I(w)$ (respectively $J(w)$) as the input configuration satisfying $I(w)\equiv I_5(w)\cdot\theta(s)_1^{-1}$ (respectively $J(w)\equiv J_5(w)\cdot\theta(s)_2^{-1}$). Note that both $I(w)$ and $J(w)$ are $\theta(s)_1$-admissible, while $I(w)$ is not $\theta(s)_2$-admissible if $w\neq1$.

\smallskip


\subsection{Standard computations of $\textbf{M}$} \

Next, we adapt the definition of step history to computations of $\textbf{M}$. To this end, let the letters $(s)_j^{\pm1}$ and $(a)_j^{\pm1}$ represent the transition rules $\theta(s)_j^{\pm1}$ and $\theta(a)_j^{\pm1}$ of $\Theta_j$, respectively, and add the subscript $j$ to each letter of the step history of a maximal subcomputation whose history consists of working rules of $\Theta_j$.

So, an example of a step history of a reduced computation of $\textbf{M}$ is $(s)_1(1)_1(12)_1(2)_1$, while a general step history is some concatenation of the letters 
$$\begin{Bmatrix*}[l]
	&(1)_j, \ (2)_j, \ \dots, \ (4n)_j, \ (12)_j, \ (23)_j, \ \dots, \ (4n-1,4n)_j, \\ 
	&(21)_j, \ (32)_j, \ \dots, \ (4n,4n-1)_j, \ (s)_j^{\pm1}, \ (a)_j^{\pm1}; \ j=1,2
\end{Bmatrix*}$$
A one-step computation of $\textbf{M}$ is defined similar to how it was defined in previous machines. For example, reduced computations with step history $(s)_2^{-1}(s)_1(1)_1(12)_1$ or $(4n)_2(a)_2(a)_1^{-1}$ are one-step computations of $\textbf{M}$.

A reduced computation is called a \textit{one-machine computation} if every letter of its step history has the same index. If this index is $i$, then the computation is called a \textit{one-machine computation of the $i$-th machine}. 

For example, a reduced computation with step history $(s)_1(1)_1(12)_1(2)_1$ is a one-machine computation of the first machine, while a reduced computation with step history $(1)_1(s)_1^{-1}(s)_2(1)_2$ is not a one-machine computation, i.e it is a \textit{multi-machine} computation.

As with previous machines, some subwords clearly cannot appear in the step history of a reduced computation, while other impossibilities are less obvious. However, there are clear analogues of Lemmas \ref{M_2 step history 1}, \ref{M_2 step history 2}, and \ref{M_3 step history}(b) (after adding the same index to each letter of the step histories), as $\textbf{M}$ operates on the standard base as parallel copies of $\textbf{M}_4$ in any one-machine computation whose step history does not contain $(s)_i^{\pm1}$, $(a)_i^{\pm1}$, or $(1)_2$.

The following is the analogue of Lemma \ref{M_3 step history}(a) and is proved in exactly the same way.

\begin{lemma} \label{M step history 1}

Suppose the base $B$ of a reduced computation $\pazocal{C}$ of $\textbf{M}$ contains a subword $UV$ of the form $(P_0(i)Q_0(i))^{\pm1}$. Then the step history of $\pazocal{C}$ cannot be $(21)_1(1)_1(12)_1$. Moreover, if $i\neq 1$, then the step history of $\pazocal{C}$ cannot be $(21)_2(1)_2(12)_2$.

\end{lemma}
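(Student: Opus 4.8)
The plan is to mimic the proof of Lemma \ref{M_1 step history} (and hence Lemma \ref{M_3 step history}(a)), reducing everything to an application of Lemma \ref{multiply one letter}. Suppose, toward a contradiction, that $\pazocal{C}:W_0\to\dots\to W_t$ is a reduced computation of $\textbf{M}$ whose base $B$ contains a subword $UV$ of the form $(P_0(i)Q_0(i))^{\pm1}$ and whose step history is $(21)_1(1)_1(12)_1$ (the argument for $(21)_2(1)_2(12)_2$ with $i\neq1$ is identical, since the only reason to exclude $i=1$ there is that the rules of $\Theta_2^+$ lock the special input sector $P_0(1)Q_0(1)$, so no rule of that step can alter the tape word of a base containing $(P_0(1)Q_0(1))^{\pm1}$, making the claim vacuous — indeed in that degenerate case $W_0'\to\dots\to W_t'$ has constant $a$-length and there is nothing to prove, so we may assume $i\neq1$ there and $i$ arbitrary in the first case).

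The key observation is that a rule of step history $(1)_1$ is a copy of a rule of $\textbf{M}_4(1)$, which in turn is a copy of a rule of $\textbf{M}_3(1)$; such a rule has the part $q_0\to a_1^{-1}q_0 a_1'$ inserting/deleting a single tape letter on the left of the $Q_0(i)P_1(i)$-sector and on the right of the $P_0(i)Q_0(i)$-sector, and locks every other sector. The transition rules $\theta(21)_1$ and $\theta(12)_1$ do not alter the $a$-length of any sector. First I would restrict $\pazocal{C}$ to the subword $UV$ of the base, obtaining a computation $\pazocal{C}':W_0'\to\dots\to W_t'$. Because $W_0$ is $\theta(21)_1$-admissible and $W_t$ is $\theta(12)_1$-admissible, and $\theta(12)_1$ locks all sectors except the $Q_0(i)P_1(i)$-sector (in particular it locks the $P_0(i)Q_0(i)$-sector), a configuration admissible for $\theta(12)_1$ must have empty $P_0(i)Q_0(i)$-sector; similarly $\theta(21)_1$ locks that sector, so $W_0$ has empty $P_0(i)Q_0(i)$-sector too. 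Hence $|W_0'|_a=|W_t'|_a=0$.

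Next I would consider the maximal subcomputation $W_1'\to\dots\to W_{t-1}'$ (dropping the initial $\theta(21)_1$ and terminal $\theta(12)_1$ transitions), which is a reduced computation all of whose rules are copies of the rules of $\textbf{M}_3(1)$ acting on the two-letter base $(P_0(i)Q_0(i))^{\pm1}$. Each such rule multiplies this sector by a single tape letter from one fixed side (the right, if the base is $P_0(i)Q_0(i)$; the left, if it is $Q_0(i)^{-1}P_0(i)^{-1}$), and distinct rules of $\textbf{M}_3(1)$ correspond to distinct letters $a\in\pazocal{A}$. Thus the hypotheses of Lemma \ref{multiply one letter} are satisfied for this subcomputation. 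Since the initial and terminal tape words of the subcomputation are both trivial (equal, being empty), part (a) of Lemma \ref{multiply one letter} forces the subcomputation to be empty, whence the history of $\pazocal{C}$ is $\theta(21)_1\theta(12)_1 = \theta(21)_1\theta(21)_1^{-1}$, which is not reduced — contradicting the assumption that $\pazocal{C}$ is reduced.

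The only real subtlety, which I would flag as the main point to get right rather than a genuine obstacle, is the bookkeeping that a base subword of the form $(P_0(i)Q_0(i))^{-1} = Q_0(i)^{-1}P_0(i)^{-1}$ is handled by the ``respectively'' clauses of Lemma \ref{multiply one letter} (multiplication on the left rather than the right) and that, as in Lemma \ref{M_1 step history}, one should phrase the restriction so that the two-letter base of $\pazocal{C}'$ is genuinely $P_0(i)Q_0(i)$ or its inverse and not something that reduces; since $(P_0(i)Q_0(i))^{\pm1}$ is already reduced as written, this causes no difficulty. Everything else is a direct transcription of the proof of Lemma \ref{M_1 step history}, with ``$\sigma$-rule'' replaced by ``transition rule of $\Theta_1$'' and ``$\textbf{M}_1(i)$'' replaced by ``the copy of $\textbf{M}_3(1)$ inside $\textbf{M}$''.
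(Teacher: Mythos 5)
Your proposal is correct and follows exactly the paper's route: the paper proves this lemma "in exactly the same way" as Lemma \ref{M_3 step history}(a) (hence Lemma \ref{M_1 step history}), i.e.\ by restricting to the $(P_0(i)Q_0(i))^{\pm1}$-sector, noting the transition rules force $|W_0'|_a=|W_t'|_a=0$, and applying Lemma \ref{multiply one letter}(a) to conclude the middle subcomputation is empty, contradicting reducedness. Your remarks on why $i=1$ is excluded for the second machine (its rules lock the special input sector, so the restricted computation is constant and no contradiction arises) are also the right explanation.
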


\begin{lemma} \label{M step history 2}

Let $\pazocal{C}$ be a reduced computation of $\textbf{M}$ with base $B$.

\renewcommand{\labelenumi}{(\alph{enumi})}

\begin{enumerate}

\item If $B$ contains a subword $UV$ of the form $(Q_0(i)P_1(i))^{\pm1}$, then the step history of $\pazocal{C}$ cannot be $(s)_j(1)_j(s)_j^{-1}$ for $j=1,2$.

\item If $B$ contains a subword $UV$ of the form $(R_2(i)Q_3(i))^{\pm1}$ or $(Q_3(i)P_4(i))^{\pm1}$, then the step history of $\pazocal{C}$ cannot be $(a)_j^{-1}(4n)_j(a)_j$ for $j=1,2$.

\end{enumerate}

\begin{proof}

Both statements follow from an application of Lemma \ref{multiply one letter}(a) to the restriction of $\pazocal{C}$ to the $UV$-sector.

\end{proof}

\end{lemma}

\begin{lemma} \label{long step history}

Let $\pazocal{C}$ be a reduced computation with base $\{t(i)\}B_4(i)$ for some $i\in\{2,\dots,L\}$. Suppose $\pazocal{C}$ contains at least $8n$ distinct maximal one-step computations. Then $\pazocal{C}$ contains a subword of the form $(4n-2,4n-1)_j(4n-1)_j(4n-1,4n)_j$ or $(4n,4n-1)_j(4n-1)_j(4n-1,4n-2)_j$.

\end{lemma}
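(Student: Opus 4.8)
The plan is to argue by contradiction: assume that the step history of $\pazocal{C}$ contains \emph{no} subword of the form $(4n-2,4n-1)_j(4n-1)_j(4n-1,4n)_j$ or $(4n,4n-1)_j(4n-1)_j(4n-1,4n-2)_j$, and deduce that $\pazocal{C}$ can contain only a bounded number ($<8n$) of distinct maximal one-step computations. The key observation is that $\pazocal{C}$, having base $\{t(i)\}B_4(i)$ with $i\neq1$, operates on this base exactly as parallel copies of $\textbf{M}_4$ do on $\{t\}B_3$ in any one-machine computation not touching the transition rules $(s)_j^{\pm1}$, $(a)_j^{\pm1}$, or $(1)_2$ — and more importantly, the base $\{t(i)\}B_4(i)$ is \emph{not} revolving (it starts with $\{t(i)\}$ and ends with $Q_4(i)$, two different letters), so the machinery of reduced computations over honest (non-cyclic) bases applies; in particular Lemmas \ref{M_1 step history}, \ref{M_2 step history 1}, \ref{M_2 step history 2}, \ref{M_3 step history}, \ref{M step history 1}, \ref{M step history 2} constrain which step histories are admissible.

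The main step is a combinatorial analysis of the possible step histories. First I would observe that on the base $\{t(i)\}B_4(i)$ with $i\ne 1$, the sector $P_0(i)Q_0(i)$ plays the role of the special input sector only for the first coordinate, so none of the rules of $\Theta_2$ lock $P_0(i)Q_0(i)$ — hence there is no obstruction forcing one to avoid it, but also the analogues of Lemmas \ref{M_2 step history 1} and \ref{M_2 step history 2} forbid ``bounce'' subwords $(2m,2m+1)_j(2m+1)_j(2m+1,2m)_j$ and $(2m+2,2m+1)_j(2m+1)_j(2m+1,2m+2)_j$ in the standard base (and their analogues in bases containing $(R_2Q_3)^{\pm1}$, $(R_1Q_2)^{\pm1}$, $(Q_3P_4)^{\pm1}$, $(P_0Q_0)^{\pm1}$), and Lemma \ref{M step history 2} forbids $(s)_j(1)_j(s)_j^{-1}$ and $(a)_j^{-1}(4n)_j(a)_j$ whenever the base contains the relevant sector-subwords. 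Then, using the crude structure of the step-history alphabet — it consists of the ``column indices'' $1,\dots,4n$ together with transitions between consecutive columns, plus the four $S$-machine transitions $(s)_j^{\pm1},(a)_j^{\pm1}$ — one shows that after deleting transition letters the reduced step history is a reduced walk in the linear graph $1-2-\cdots-4n$ (for each fixed index $j$), together with at most a bounded number of jumps between the two machines (each jump is through $(s)_j^{\pm1}$ or $(a)_j^{\pm1}$, and $\pazocal{C}$ being reduced forces these to alternate in a controlled way). A reduced walk on a path graph that never ``turns around at the endpoint $4n-1/4n$'' — i.e.\ never contains the forbidden turnaround subwords near column $4n$ — together with the bounce-prohibitions near every other column, can only visit a uniformly bounded number of columns before it must exit via an $(s)_j^{\pm1}$ or $(a)_j^{\pm1}$; combining with the bound on the number of machine-switches gives at most $O(n)$ distinct maximal one-step pieces, and a careful count pins this to $<8n$.

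More concretely, I would run the following reduction. Factor the step history of $\pazocal{C}$ at each occurrence of $(s)_j^{\pm1}$ and $(a)_j^{\pm1}$; each factor is then the step history of a one-machine subcomputation on the (non-revolving) base $\{t(i)\}B_4(i)$ that, restricted to the relevant coordinate, is governed by the $\textbf{M}_4$-analogues of the step-history lemmas. Because $\pazocal{C}$ is reduced, consecutive $(s)/(a)$ transitions cannot cancel, so the number of such factors is bounded by a small constant (at most $4$ or so, depending on how many times one can legitimately pass between $\textbf{M}_{5,1}$ and $\textbf{M}_{5,2}$ before the base forces termination). Within a single factor, the walk on columns $1,\dots,4n$ is reduced, and the forbidden bounce-subwords mean it cannot reverse direction at any interior column and — by the hypothesis we are contradicting — cannot reverse at the $4n-1$ column either; hence within one factor it is a monotone (or nearly monotone, up to the two endpoint exceptions $(2i-1,2i)(2i)(2i,2i-1)$-type allowances spelled out in Lemmas \ref{M_1 step history}–\ref{M_2 step history 2}) traversal, contributing $O(n)$ distinct one-step pieces. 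Summing over the bounded number of factors yields a total of fewer than $8n$ distinct maximal one-step computations, contradicting the hypothesis.

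The hard part will be the careful bookkeeping in the combinatorial step: correctly enumerating which turnaround subwords are and are not permitted near \emph{each} column (the lemmas treat columns $2m$, $2m+1$, the $4n-1$ and $4n$ columns, and the $(1)_j,(12)_j,(21)_j$ columns somewhat differently), handling the cyclic-base subtlety (the base $\{t(i)\}B_4(i)$ is non-revolving, but $\textbf{M}$ is a cyclic machine, so one must confirm that no cyclic permutation of the base intervenes here — it does not, since the base is given literally as $\{t(i)\}B_4(i)$), and getting the constant down to exactly $8n$ rather than some larger multiple of $n$. I expect the bound $8n$ is chosen to match the ``at most $8n$ distinct subcomputations are removed'' bound already invoked after the construction of the associated $\textbf{M}_1$-computation, so the count should be organized to mirror that estimate: each ``turnaround'' of the column-walk is matched with an occurrence of a $\sigma$-type rule, and Lemmas \ref{M_1 step history} and \ref{M_1 no turn} bound those occurrences by $8n$.
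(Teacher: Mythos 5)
Your overall strategy (argue by contradiction, use the step-history lemmas to rule out reversals, and count the surviving one-step factors) is the paper's strategy, but there is a genuine gap in how you read the forbidden subwords, and it breaks the count. The subwords $(4n-2,4n-1)_j(4n-1)_j(4n-1,4n)_j$ and $(4n,4n-1)_j(4n-1)_j(4n-1,4n-2)_j$ are \emph{pass-throughs} of step $4n-1$, not turnarounds: the step before and after $(4n-1)_j$ are different. The genuine turnarounds at $4n-1$, namely $(4n-2,4n-1)_j(4n-1)_j(4n-1,4n-2)_j$ and $(4n,4n-1)_j(4n-1)_j(4n-1,4n)_j$, are already excluded by the analogue of Lemma \ref{M_2 step history 2} (take $2i+1=4n-1$). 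So under the contradiction hypothesis, step $4n-1$ becomes an impassable barrier: the step history can neither cross it nor reverse at it, hence $(4n-1)_j$ can only occur at the ends. Combined with the other no-bounce lemmas (\ref{M_2 step history 1}, \ref{M_3 step history}, \ref{M step history 1}, \ref{M step history 2}, which apply because the base contains $(P_0(i)Q_0(i))^{\pm1}$ with $i\neq1$, $(Q_0(i)P_1(i))^{\pm1}$, $(R_2(i)Q_3(i))^{\pm1}$, etc.), this forces the entire step history to be a subword of one of two explicit words: $(4n-1)_1(4n-2)_1\dots(1)_1(s)_1^{-1}(s)_2(1)_2\dots(4n-1)_2$, or $(4n-1)_1(4n)_1(a)_1(a)_2^{-1}(4n)_2(4n-1)_2$; each contains at most $8n-2$ one-step letters, which is the contradiction. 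In particular the $(s)$-switch and the $(a)$-switch cannot both occur, because getting from one to the other requires passing through step $4n-1$.

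Your reading loses exactly this. If the hypothesis only forbade \emph{reversals} at $4n-1$, the walk could legitimately pass through $4n-1$ and $4n$, switch machines at $(a)$, descend the other machine, switch at $(s)$, ascend again, and so on; reducedness of the history alone does not bound the number of such machine switches, so your claim that the number of factors is ``a small constant (at most $4$ or so)'' is unsupported, and even granting two full monotone factors of length $4n-1$ plus an $(a)$-switch factor you already exceed $8n$. The count only closes once you use the barrier at $4n-1$ (pass-through forbidden by hypothesis, turnaround forbidden by Lemma \ref{M_2 step history 2}) to confine the step history to one of the two words above. Also, the bound $8n$ here has nothing to do with the ``at most $8n$ removed subcomputations'' estimate for the associated $\textbf{M}_1$-computation; it is just the length $8n-2$ of the longer of the two admissible step-history words.
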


\begin{proof}

Assuming the step history has no such subword, Lemmas \ref{M_2 step history 1}, \ref{M_2 step history 2}, \ref{M_3 step history}, \ref{M step history 1}, and \ref{M step history 2} imply that the step history is a subword of

\begin{itemize}

\item $(4n-1)_1(4n-2)_1\dots(1)_1(s)_1^{-1}(s)_2(1)_2\dots(4n-2)_2(4n-1)_2$, or

\item $(4n-1)_1(4n)_1(a)_1(a)_2^{-1}(4n)_2(4n-1)_2$

\end{itemize}

But then $\pazocal{C}$ has at most $8n-2$ distinct maximal one-step computations.

\end{proof}

%
%
%
%
%
%
%
%
%

\begin{lemma} \label{turn}

Let $\pazocal{C}:W_0\to W_1\to W_2$ be a reduced computation with step history $((s)_1^{-1}(s)_2)^{\pm1}$ and base $(P_0(1)Q_0(1))^{\pm1}$. Then $|W_i|_a=0$ for $0\leq i\leq 2$. 

\end{lemma}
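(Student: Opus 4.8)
The computation $\pazocal{C}$ has step history $((s)_1^{-1}(s)_2)^{\pm1}$ and base $(P_0(1)Q_0(1))^{\pm1}$, so $\pazocal{C}$ consists of the single transition $W_0\to W_1$ applying (the appropriate sign of) $\theta(s)_1^{\mp1}$ followed by $W_1\to W_2$ applying $\theta(s)_2^{\pm1}$. I would work with the positive orientation $\pazocal{C}:W_0\to W_1\to W_2$ with step history $(s)_1^{-1}(s)_2$; the reverse case is symmetric. The key structural fact is that $\theta(s)_1$ locks all sectors other than the input sectors, so its inverse $\theta(s)_1^{-1}$ is admissible to $W_0$ only if $W_0$'s base is compatible with it, and $\theta(s)_2$ additionally locks the first (``special'') input sector $P_0(1)Q_0(1)$.

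The main observation is therefore: the rule $\theta(s)_2$ locks the $P_0(1)Q_0(1)$-sector, so by Lemma \ref{locked sectors} the base of any $\theta(s)_2$-admissible word has no subword of the form $P_0(1)P_0(1)^{-1}$ or $Q_0(1)^{-1}Q_0(1)$. Since $W_1$ is $\theta(s)_2$-admissible and its base is $(P_0(1)Q_0(1))^{\pm1}$, this forces the base to be exactly $P_0(1)Q_0(1)$ or its inverse (not the unreduced revolving possibilities). Moreover, $\theta(s)_2$ locks this sector means the tape word of the $P_0(1)Q_0(1)$-sector is fixed by $\theta(s)_2$; but crucially, $\theta(s)_2$ being applicable means the sector is already in the form required by the locked rule, i.e.\ empty (the locking notation $\xrightarrow{\ell}$ requires $Y_j(\theta)=\emptyset$ in that sector, and the transition changes state letters to the start state of $\textbf{M}_{5,2}$, for which an input configuration has the special sector empty). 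Hence $|W_1|_a=0$, and since $\theta(s)_2$ does not change $a$-length, $|W_2|_a=0$ as well.

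It remains to deduce $|W_0|_a=0$. Since $W_0\to W_1$ applies $\theta(s)_1^{-1}$, which locks all non-input sectors and acts on the $P_0(1)Q_0(1)$-sector without inserting or deleting $a$-letters (it merely switches state letters between the start state of $\textbf{M}$ and the copy of the start state of $\textbf{M}_{5,1}$, leaving the input sectors' tape words untouched), we get $|W_0|_a=|W_1|_a=0$. Thus $|W_i|_a=0$ for $0\le i\le 2$.

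\textbf{Main obstacle.} The only delicate point is pinning down precisely why $\theta(s)_2$-admissibility of $W_1$ forces the $P_0(1)Q_0(1)$-sector of $W_1$ to be \emph{empty} rather than merely fixed: one must invoke that $\theta(s)_2$ \emph{locks} this sector (in the technical sense of the $\xrightarrow{\ell}$ notation, with domain $\emptyset$), so that an admissible word for it has no tape letters at all in that sector — exactly as in Lemma \ref{locked sectors}. Once that is established, the rest is a routine bookkeeping argument using that transition rules do not alter $a$-length. I would also note that the base being the two-letter word $(P_0(1)Q_0(1))^{\pm1}$ (as opposed to a longer revolving base) is what makes the sector in question the \emph{only} sector carrying tape letters, so controlling it controls $|W_i|_a$ entirely.
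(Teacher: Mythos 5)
Your argument is correct and is exactly the intended (and essentially only) one — the paper states this lemma without proof as immediate from the definitions: since $\theta(s)_2^{\pm1}$ locks the $P_0(1)Q_0(1)$-sector, the admissible word to which it is applied must have empty tape word in the only sector of the base, and transition rules do not change $a$-length, so all three words have $a$-length zero. The detours through Lemma \ref{locked sectors} on unreduced base subwords and through input configurations of $\textbf{M}_{5,2}$ are unnecessary but harmless.
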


\begin{lemma} \label{return to start}

Let $\pazocal{C}:W_0\to\dots\to W_t$ of $\textbf{M}$ be a one-machine computation of the $i$-th machine in the standard base. Suppose the step history of $\pazocal{C}$ is of the form $(s)_ih_i(s)_i^{-1}$. Then there exist $u,v\in F(\pazocal{A})$ with $u\neq v$ such that

\begin{itemize}

\item $W_0\equiv I(u^n)$ and $W_t\equiv I(v^n)$ if $i=1$ or

\item $W_0\equiv J(u^n)$ and $W_t\equiv J(v^n)$ if $i=2$.

\end{itemize}

\end{lemma}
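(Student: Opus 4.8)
The plan is to analyze the step history $(s)_ih_i(s)_i^{-1}$ in detail, reduce the body $h_i$ to a standard form, and then extract the initial and terminal configurations by applying the accepting-computation structure established for $\textbf{M}_3$ (and hence, via the parallel construction, for $\textbf{M}_{5,i}$). First I would fix $i=1$ (the case $i=2$ is entirely analogous, replacing $I(\cdot)$ by $J(\cdot)$, $\textbf{M}_{5,1}$ by $\textbf{M}_{5,2}$, and noting that the special sector $P_0(1)Q_0(1)$ is locked throughout). Since $W_0$ is $\theta(s)_1$-admissible and the step history begins with $(s)_1$, the configuration $W_0\cdot\theta(s)_1$ is the natural copy in $\textbf{M}$ of a configuration of $\textbf{M}_{5,1}$; moreover $W_0$ being in the standard base and $\theta(s)_1$-admissible forces $W_0$ to be an input configuration of $\textbf{M}$, i.e. all sectors empty except the input sectors $P_0(j)Q_0(j)$. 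So $W_0\cdot\theta(s)_1$ is a copy of an input configuration of $\textbf{M}_{5,1}$; write $w$ for the common word in each $P_0(j)Q_0(j)$-sector (the parallel structure of $\textbf{M}_{5,1}$ forces all $L$ input sectors to carry the same word throughout any computation, as each rule acts identically on each copy).

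Next I would run the computation with history $h_i$, which is a computation of $\textbf{M}_{5,1}$ (no transition rules $\theta(s)_1^{\pm1},\theta(a)_1^{\pm1}$ occur inside $h_i$ by maximality/reducedness). By the analogue of Lemma \ref{M_3 step history}(a) for $\textbf{M}$ — namely Lemma \ref{M step history 1} — the presence of the $(P_0(i)Q_0(i))^{\pm1}$ sector in the base (which holds since we are in the standard base) rules out the step history $(21)_1(1)_1(12)_1$; combined with Lemma \ref{multiply one letter} and the $\textbf{M}_3$-analogues, the step history of the subcomputation with history $h_1$ must begin $(1)_1(12)_1(2)_1\dots$ and end $\dots(2)_1(21)_1(1)_1$ (a turn at step $(4n)$, by the analogue of Lemma \ref{M_3 no turn}, is excluded unless we pass through $(4n)$ exactly once). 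Then I would invoke the $\textbf{M}_{5,1}$-analogue of Lemma \ref{M_3 start to end}(a): for the subcomputation going from the input configuration forward to an end configuration (which must exist since the step history returns to $(1)_1$ only after passing through the full $\textbf{M}_3$-block), the input must be $u^n\in\pazocal{L}$ for some $u\in F(\pazocal{A})$; similarly, reading the tail of $h_1$ in reverse and applying the same lemma, the terminal input of the subcomputation is $v^n\in\pazocal{L}$ for some $v$. By Lemma \ref{multiply one letter} applied to the restrictions to the $P_0(j)Q_0(j)$-sectors of the initial and final $(1)_1$-blocks, $W_0\equiv I(u^n)$ (after undoing $\theta(s)_1$) and $W_t\equiv I(v^n)$.

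It remains to show $u\neq v$. The hard part will be precisely this non-triviality: a priori the computation could loop back to its own starting configuration. Here I would use reducedness of $\pazocal{C}$ together with the uniqueness statements. By Lemma \ref{M_{5,1} language} (the analogue of Lemma \ref{M_3 language}), the accepting computation $\pazocal{C}_{5,1}(u)$ of $I_5(u^n)$ is unique, and by the $\textbf{M}_{5,1}$-analogue of Lemma \ref{M_3 start to end}(b) the computation $\pazocal{D}_3$-portion going from the input configuration to the end configuration is uniquely determined by $u$. If $u=v$, then the forward half of $h_1$ (from $W_0$ to the end configuration) and the inverse of the backward half of $h_1$ (from $W_t$ to the end configuration) would be two computations with the same initial configuration $I_5(u^n)$ and the same terminal (end) configuration and the same step history shape, hence equal by uniqueness; but then $h_1$ would contain a subword $H\,H^{-1}$, contradicting that $\pazocal{C}$ is reduced. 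This forces $u\neq v$, completing the proof. I would double-check the edge case where $h_1$ does not reach an end configuration at all — but then the step history could not return to $(1)_1$ without creating a forbidden subword (by the $\textbf{M}_3$-analogues of Lemmas \ref{M_2 step history 1}, \ref{M_2 step history 2}), so this case is vacuous.
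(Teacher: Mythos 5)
Your argument up to the identification $W_0\equiv I(u^n)$ (resp.\ $J(u^n)$) and $W_t\equiv I(v^n)$ (resp.\ $J(v^n)$) follows essentially the same route as the paper: $\theta(s)_i$-admissibility forces an input configuration, the step-history lemmas force the prefix $(1)_i(2)_i\dots(4n-1)_i(4n-1,4n)_i$ of the interior step history (and the symmetric suffix), and Lemma \ref{multiply one letter} on the input sectors of the $(1)_i$-blocks together with Lemma \ref{M_3 start to end}(a) applied to the restriction to $B_3(2)$ gives the words $u^n,v^n\in\pazocal{L}$. (Your parenthetical claim that the parallel rules force all input sectors to carry the same word ``throughout any computation'' is not literally true in general; what you actually use, correctly, is that all input sectors are empty at the $\theta(12)_i$-admissible configuration ending the first $(1)_i$-block, and running Lemma \ref{multiply one letter} backwards from there shows they carried a common word in $W_0$.)

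The genuine gap is in the final step, $u\neq v$. Factor the history of $\pazocal{C}$ as $\theta(s)_iH_fH_mH_b\theta(s)_i^{-1}$, where $H_f$ is the history of the subcomputation $W_1\to\dots\to W_s$ with step history $(1)_i\dots(4n-1,4n)_i$, $H_b$ that of the symmetric terminal subcomputation $W_x\to\dots\to W_{t-1}$, and $H_m$ that of the middle subcomputation $W_s\to\dots\to W_x$, whose step history is $(4n)_i$ by Lemma \ref{M_3 no turn}. If $u=v$, uniqueness (Lemma \ref{M_3 start to end}(b)) indeed gives $H_b\equiv H_f^{-1}$ and $W_s\equiv W_x$, but this does \emph{not} put a subword $HH^{-1}$ into the history: the word $H_fH_mH_f^{-1}$ is reduced whenever $H_m$ is a non-empty reduced word of step-$(4n)_i$ rules, since no cancellation can occur at the junctions with $\theta(4n-1,4n)_i^{\pm1}$. (And if instead you split inside the $(4n)_i$ block so that the two ``halves'' are adjacent, the uniqueness lemma no longer controls the step-$(4n)_i$ portions, so ``same terminal configuration'' is unjustified.) So no contradiction with reducedness is obtained, and the proof stalls exactly at the point you flagged as the hard part. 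The missing ingredient — and the paper's actual argument — is to examine $H_m$ itself: it cannot be empty, since otherwise $\theta(4n-1,4n)_i$ would be immediately followed by its inverse; and a non-empty reduced computation with step history $(4n)_i$ changes the configuration by Lemma \ref{multiply one letter}, so $W_s\neq W_x$. Since $W_s$ and $W_x$ are determined by $u$ and $v$ respectively via Lemma \ref{M_3 start to end}(b), this forces $u\neq v$. With that replacement your proof goes through; as written, the concluding step is invalid.
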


\begin{proof}

Since $W_0$ is $\theta(s)_i$-admissible, it is an input configuration.

Further, as $h_i$ cannot be empty, Lemmas \ref{M_2 step history 1}, \ref{M_2 step history 2}, \ref{M_3 step history}(b), and \ref{M step history 2}(a) imply that it has prefix $$(1)_i(2)_i\dots(4n-1)_i(4n-1,4n)_i$$
Let $W_1\to\dots\to W_s$ be the subcomputation with this step history and $W_1\to\dots\to W_r$ be the subcomputation with step history $(1)_i$. Then $W_r$ is $\theta(12)_i$-admissible, so that its input sectors are empty.

If $i=1$, then Lemma \ref{multiply one letter}(a) implies that $W_0$ must have a copy of the same word $w\in F(\pazocal{A})$ written in each input sector. So, $W_0\equiv I(w)$.

If $i=2$, then the only difference is that the `special' input sector must be empty, i.e $W_0\equiv J(w)$.

The restriction of $W_1\to\dots\to W_s$ to $B_3(2)$ can be identified with a reduced computation of $\textbf{M}_3$. Lemma \ref{M_3 start to end}(a) then implies that there exists $u\in F(\pazocal{A})$ such that $w=u^n$.

The same argument applied to the inverse computation implies that there exists $v\in F(\pazocal{A})$ such that $W_t\equiv I(v^n)$ if $i=1$ and $W_t\equiv J(v^n)$ if $i=2$. Let $W_x\to\dots\to W_t$ be the subcomputation with step history $(4n,4n-1)_i(4n-1)_i\dots(2)_i(1)_i(s)_i^{-1}$. Then applying Lemma \ref{M_3 no turn} to the restriction of $W_s\to\dots\to W_x$ to $B_3(2)$ implies that its step history is $(4n)_i$. As this subcomputation cannot be empty, Lemma \ref{multiply one letter} implies that $W_s\neq W_x$. Hence, by Lemma \ref{M_3 start to end}(b), $u\neq v$.

\end{proof}

\begin{lemma} \label{M language}

An input configuration $W$ is accepted by the machine $\textbf{M}$ if and only if $W\equiv I(u^n)$ or $W\equiv J(u^n)$ for some $u^n\in\pazocal{L}$. Moreover, for any $u\in F(\pazocal{A})$, there exists a unique one-machine computation of the first (respectively second) machine accepting $I(u^n)$ (respectively $J(u^n)$).

\end{lemma}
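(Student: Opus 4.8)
The plan is to prove both directions by reducing to the already-established language results for the submachines $\textbf{M}_{5,1}$ and $\textbf{M}_{5,2}$ (Lemmas \ref{M_{5,1} language} and \ref{M_{5,2} language}) together with the control lemmas on the shape of step histories.

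First I would settle the ``if'' direction. Suppose $W\equiv I(u^n)$ for some $u^n\in\pazocal{L}$. By construction $I(u^n)$ is $\theta(s)_1$-admissible, and $I(u^n)\cdot\theta(s)_1\equiv I_5(u^n)$, the natural copy of the input configuration of $\textbf{M}_{5,1}$. By Lemma \ref{M_{5,1} language} there is an accepting computation $\pazocal{C}_{5,1}(u)$ of $\textbf{M}_{5,1}$ from $I_5(u^n)$; its history consists solely of working rules of $\textbf{M}_{5,1}$, which are copied verbatim as working rules of $\Theta_1$, so this computation runs identically inside $\textbf{M}$ and terminates at the copy of the accept configuration of $\textbf{M}_{5,1}$. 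That configuration is $\theta(a)_1$-admissible, and applying $\theta(a)_1$ yields $W_{ac}$. Prepending $\theta(s)_1$ and appending $\theta(a)_1$ gives a one-machine computation of the first machine accepting $I(u^n)$; its step history has the form $(s)_1 h (a)_1$ where $h$ is the (copied) step history of $\pazocal{C}_{5,1}(u)$. The case $W\equiv J(u^n)$ is identical using $\theta(s)_2$, $\theta(a)_2$, and Lemma \ref{M_{5,2} language}.

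Next the ``only if'' direction, which is the main obstacle. Let $\pazocal{C}:W_0\to\dots\to W_t$ be an accepting computation with $W_0$ an input configuration and $W_t\equiv W_{ac}$. Since $W_0$ is an input configuration its state letters are the start letters of $\textbf{M}$, so the first rule of $\pazocal{C}$ must be $\theta(s)_1$ or $\theta(s)_2$; likewise the last rule must be $\theta(a)_1$ or $\theta(a)_2$. The key structural point is that $\pazocal{C}$ must actually be a \emph{one-machine} computation: a multi-machine computation would have to pass through a step history letter $(s)_j^{\pm1}$ in its interior, and by Lemma \ref{return to start} any maximal one-machine subcomputation of the $i$-th machine with step history $(s)_i h_i (s)_i^{-1}$ returns to an input configuration $I(v^n)$ (or $J(v^n)$) with $v\neq u$ --- but then the only way to reach $W_{ac}$ is eventually to apply some $\theta(a)_j$, which requires first reaching an end configuration of the $j$-th submachine, and the analogues of Lemmas \ref{M_2 step history 1}--\ref{M_3 step history}, \ref{M step history 1}, and \ref{M step history 2} (combined as in Lemma \ref{long step history}) forbid a reduced step history that crosses back and forth between the two index-groups and still reaches an accept letter. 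So $\pazocal{C}$ lies entirely in $\Theta_1$ or entirely in $\Theta_2$; say $\Theta_1$ without loss of generality. Then deleting the initial $\theta(s)_1$ and terminal $\theta(a)_1$ produces a computation of $\textbf{M}_{5,1}$ (on the natural copy) from $I_5(w)$ to the accept configuration of $\textbf{M}_{5,1}$, where $w$ is the input of $W_0$. Lemma \ref{M_{5,1} language} forces $W_0\cdot\theta(s)_1\equiv I_5(u^n)$ for some $u^n\in\pazocal{L}$, hence $W_0\equiv I(u^n)$. The $\Theta_2$ case gives $W_0\equiv J(u^n)$ via Lemma \ref{M_{5,2} language}.

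Finally, uniqueness: for a given $u\in F(\pazocal{A})$, any one-machine computation of the first machine accepting $I(u^n)$ has, after stripping $\theta(s)_1$ and $\theta(a)_1$, a history that is an accepting computation of $\textbf{M}_{5,1}$ from $I_5(u^n)$; the uniqueness clause of Lemma \ref{M_{5,1} language} pins this down to $\pazocal{C}_{5,1}(u)$, and since $\theta(s)_1,\theta(a)_1$ are the only admissible transition rules at the two ends, the whole computation of $\textbf{M}$ is determined. The same applies to the second machine with Lemma \ref{M_{5,2} language}. The hard part, as noted, is justifying that no accepting computation can be genuinely multi-machine; this is where the catalogue of forbidden step-history subwords (the $\textbf{M}_i$ analogues plus Lemmas \ref{M step history 1}, \ref{M step history 2}, \ref{turn}, \ref{return to start}) must be assembled carefully, since a priori $\pazocal{C}$ could oscillate between $\Theta_1$ and $\Theta_2$ via the shared special input sector.
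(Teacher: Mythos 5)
Your ``if'' direction and your uniqueness argument are essentially the paper's, and they are fine. The gap is in the ``only if'' direction: you rest it entirely on the claim that a reduced accepting computation of an input configuration must be a one-machine computation, and you justify this only by a vague appeal to the catalogue of forbidden step-history subwords. Those lemmas do not deliver this. Lemmas \ref{M step history 1} and \ref{M step history 2} (and the $\textbf{M}_2$, $\textbf{M}_3$ analogues assembled in Lemma \ref{long step history}) forbid only specific short palindromic step histories for computations whose base contains particular subwords; none of them excludes a step history of the form $(s)_1h_1(s)_1^{-1}(s)_2h_2(a)_2$. In fact the machine is designed so that ``return to start'' one-machine computations with step history $(s)_ih_i(s)_i^{-1}$ do occur — Lemma \ref{return to start} does not say they are impossible, it says they carry $I(u^n)$ to $I(v^n)$ with $u\neq v$. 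In particular a first-machine computation can carry $I(u^n)$ to $I(1)\equiv J(1)$, after which the second machine accepts, giving a genuinely multi-machine accepting computation of the input configuration $I(u^n)$. So the dichotomy you want (one-machine or contradiction) is false, and it is telling that the lemma claims uniqueness only of the \emph{one-machine} accepting computation: multi-machine accepting computations are not being ruled out.

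The paper sidesteps all of this: let $\pazocal{C}'$ be the \emph{maximal one-machine prefix} of the accepting computation. Since $W$ is an input configuration and the computation ends at $W_{ac}$, the step history of $\pazocal{C}'$ is either $(s)_ih_i(a)_i$ or $(s)_ih_i(s)_i^{-1}$. In the first case $h_i$ is an accepting computation of the natural copy of $W\cdot\theta(s)_i$ in $\textbf{M}_{5,i}$, and Lemma \ref{M_{5,1} language} or \ref{M_{5,2} language} forces $W\equiv I(u^n)$ or $W\equiv J(u^n)$ with $u^n\in\pazocal{L}$. In the second case Lemma \ref{return to start} applied to $\pazocal{C}'$ already yields $W\equiv I(u^n)$ or $J(u^n)$ with $u^n\in\pazocal{L}$, so there is no need to analyze the remainder of the computation or to exclude switching between the two machines at all. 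Your argument needs to be replaced by this prefix analysis (or by some genuine new argument); as written, the step it defers is both unproved and untrue.
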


\begin{proof}

Let $\pazocal{C}$ be an accepting computation for $W$ and $\pazocal{C}'$ be the maximal one-machine computation serving as a prefix of $\pazocal{C}$.

Then the step history of $\pazocal{C}'$ must either be of the form $(s)_ih_i(s)_i^{-1}$ or $(s)_ih_i(a)_i$. By Lemma \ref{return to start}, it suffices to suppose the step history is of the form $(s)_ih_i(a)_i$.

The maximal subcomputation with step history $h_i$ must then be an accepting computation of the natural copy of $W\cdot\theta(s)_i$ in $\textbf{M}_{5,i}$. So, Lemma \ref{M_{5,1} language} implies that $W\equiv I(u^n)$ if $i=1$, while Lemma \ref{M_{5,2} language} implies $W\equiv J(u^n)$ if $i=2$.

The existence and uniqueness of an accepting one-machine computation similarly follow from Lemmas \ref{M_{5,1} language} and \ref{M_{5,2} language}.

\end{proof}

\bigskip


\subsection{Components of a configuration} \

For a configuration $W$ and $1\leq i\leq L$, the \textit{$i$-th component} of $W,$ $W(i)$, is defined to be the admissible subword of $W$ with base $\{t(i)\}B_3(i)$. So, since the tape alphabet of the $Q_4(i)\{t(i+1)\}$-sector is empty for each $i$, $W\equiv W(1)\dots W(L)$ for any configuration $W$. It is useful to note that if a rule $\theta$ is applicable to some configuration $W,$ then $\theta$ operates on each $W(j)$ in parallel for $j\geq2$ (but may not operate on $W(1)$ in the analogous way).

Particularly, for $1\leq i\leq L$, we denote the components $A(i)\equiv W_{ac}(i)$, $I(w,i)\equiv (I(w))(i)$, and $J(w,i)\equiv (J(w))(i)$ for all $w\in F(\pazocal{A})$.

The history $H$ of a reduced computation $\pazocal{C}$ of $\textbf{M}$ is called \textit{controlled} if $\pazocal{C}$ is a one-machine computation and $H$ corresponds to a controlled computation of $\textbf{M}_4$. As such, the next statement follows immediately from Lemma \ref{M_2 controlled}.

\begin{lemma} \label{M controlled}

Let $\pazocal{C}:W_0\to\dots\to W_t$ be a reduced computation of $\textbf{M}$ with controlled history $H$. Then the base of the computation is a reduced word and all configurations are uniquely defined by the history $H$ and the base of $\pazocal{C}$.\newline 
Moreover, if $\pazocal{C}$ is a computation in the standard base, then $|W_j|_a=|W_0|_a$ for all $0\leq j\leq t$, $\|H\|=|W_0(i)|_a+3$ for all $1\leq i\leq L$, and $W_0$ is accepted.

\end{lemma}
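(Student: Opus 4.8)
The plan is to reduce Lemma \ref{M controlled} to the machine-$\textbf{M}_4$ statement Lemma \ref{M_2 controlled} (which was stated for $\textbf{M}_2$ but whose analogue is asserted to carry over to $\textbf{M}_4$, and which is referenced here directly). First I would unwind the definition: saying $H$ is controlled means $\pazocal{C}$ is a one-machine computation of the $j$-th machine whose history, transported to $\textbf{M}_4$, is controlled in the sense of Section 4.11 — that is, $H$ (or its inverse) has the form $\chi(j-1,j)H'\chi(j,j+1)$ with $H'$ free of $\chi$-rules, where the endpoint $\chi$-rules are interpreted as the appropriate $\theta$-transitions. Since $\textbf{M}$ acts on each component $W(i)$ as a parallel copy of $\textbf{M}_4$ (for $i\geq 2$; for $i=1$ this still holds when we are in the first machine, and in the second machine the special sector is simply locked and contributes nothing), the behaviour of $\pazocal{C}$ on each component is governed by the corresponding $\textbf{M}_4$-computation.

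The key steps, in order: (1) Observe that by the controlled form, every sector of the standard base not locked by the leading transition $\chi(j-1,j)$ (equivalently $\theta(4n-2,4n-1)$, transported) is locked by the connecting rule of the relevant primitive submachine occurring in $H'$; invoke Lemma \ref{locked sectors} to conclude that the base $B$ of $\pazocal{C}$ has no subword $UU^{-1}$ or $U^{-1}U$, i.e. $B$ is reduced. (2) Apply Lemma \ref{primitive computations}(3) componentwise: in each component the nonlocked sectors each carry out $k$ copies of the standard computation of $\textbf{RL}$ (on the $Q_2R_2Q_3$-subword) paired with $\textbf{LR}$ (on $Q_3P_4Q_4$), whose tape words are mutually inverse; Lemma \ref{primitive computations}(3) then pins down every intermediate configuration uniquely from $H$ and $B$, giving the uniqueness assertion. (3) For the standard-base case, the parallel/mutually-inverse structure gives a word $w\in F(\pazocal{A})$ with $w$ in each $R_2(i)Q_3(i)$-sector and $w^{-1}$ in each $Q_3(i)P_4(i)$-sector, so Lemma \ref{primitive computations}(3) yields $|W_j|_a=|W_0|_a$ for all $j$ and $\|H\|=2\|w\|+3=|W_0(i)|_a+3$ for every $i$. (4) Finally, to see $W_0$ is accepted: as in the proof of Lemma \ref{M_2 controlled}, extend $\pazocal{C}$ to reach a $\theta(4n-1,4n)_j$-admissible configuration, apply that transition to land in the end configuration of $\textbf{M}_{5,j}$, then erase the copies of $w, w^{-1}$ with the natural copy of $w$ in the $\Phi_{2n}$-type rules, and apply $\theta(a)_j$ to reach $W_{ac}$; this exhibits an accepting computation.

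I expect the only real subtlety — the "main obstacle" — to be bookkeeping the asymmetry of the special component $W(1)$ and the two-machine split $\Theta_1/\Theta_2$: one must check that whichever of the two machines $\pazocal{C}$ runs in, the leading and trailing transition rules ($\theta(s)_j^{\pm1}$ or the $\theta(i,i+1)_j$ inside a component) lock exactly the sectors needed for the Lemma \ref{locked sectors} argument, and that in the second machine the locked special input sector $P_0(1)Q_0(1)$ does not disrupt the componentwise application of Lemma \ref{primitive computations}(3) (it simply stays empty throughout, since a controlled history of $\textbf{M}_4$ does not touch the $P_0Q_0$-sector). Once that is dispatched, the argument is a direct transcription of the proof of Lemma \ref{M_2 controlled} done in parallel across the $L$ components, so I would keep the write-up short: state the reduction, cite Lemmas \ref{locked sectors} and \ref{primitive computations}(3), and note that the acceptance construction is identical to that in Lemma \ref{M_2 controlled}.
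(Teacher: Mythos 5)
Your proposal is correct and follows the same route as the paper, which simply derives the lemma as an immediate consequence of Lemma \ref{M_2 controlled} (transported through $\textbf{M}_3$, $\textbf{M}_4$ and the parallel structure of $\textbf{M}$); your more detailed unwinding via Lemmas \ref{locked sectors} and \ref{primitive computations}(3), and your observation that the special sector is locked/empty during a controlled step so that $|W_0(1)|_a=|W_0(i)|_a$, is exactly the content being invoked.
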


Let $V$ be an admissible word with base $B$ and suppose there exists $i\in\{1,\dots,L\}$ such that every letter of $B$ has coordinate $i$. Then, a \textit{coordinate shift} of $V$ is an admissible word $V'$ obtained by changing each of the state letters' coordinates from $i$ to $j$ for some $j\in\{1,\dots,L\}$ and taking the natural copies of the tape words. For example, if $W$ is an accepted configuration, then $W(i)$ and $W(j)$ are coordinate shifts of one another for $i,j\geq2$, while $J(w,1)$ is not a coordinate shift of $J(w,2)$ if $w\neq1$.

\begin{lemma} \label{input sectors locked}

For $i\in\{2,\dots,L\}$, let $\pazocal{C}:A(i)\to\dots\to A(i)$ be a nonempty reduced computation of $\textbf{M}$. Then $\pazocal{C}$ is not a one-machine computation.

\end{lemma}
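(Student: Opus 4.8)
The plan is to argue by contradiction: suppose $\pazocal{C}$ is a nonempty reduced one-machine computation $A(i)\to\dots\to A(i)$ of the $i$-th machine for some $i\in\{2,\dots,L\}$. Since $A(i)$ is a coordinate shift of the $i$-th component of the accept configuration $W_{ac}$, in particular all its sectors are empty, so $A(i)$ is an end configuration of the relevant copy of $\textbf{M}_{5,j}$ (after applying $\theta(a)_j$) and equally a start configuration up to the transition rules. The idea is that $\textbf{M}$ acts on the component $W(i)$ (for $i\geq2$) as a parallel copy of $\textbf{M}_4$, so the computation $\pazocal{C}$ restricted to this component projects to a reduced computation of $\textbf{M}_4$ — indeed of $\textbf{M}_3$ on the subword $B_3(i)$ — that starts and ends with an end configuration.

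First I would dispose of the case where $H$ contains no transition rule of the relevant index: then $\pazocal{C}$, read on the component $\{t(i)\}B_3(i)$, is a reduced computation of a copy of $\textbf{M}_4$ whose first and last configurations are the accept (hence end) configuration. But $A(i)$ has every tape sector empty, so $|A(i)|_a = 0$; applying Lemma \ref{multiply one letter}(a) to the restriction to any sector that is not locked by every rule forces that restricted computation to be empty, and since a nonempty reduced computation must move at least one tape letter in at least one sector (no rule leaves every sector fixed except transition rules), this forces $H$ to consist entirely of transition/$\chi$/connecting rules, which cannot form a nonempty reduced computation between two copies of the same configuration. So $H$ must contain a transition rule.

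Next, with a transition rule present in $H$, I would invoke Lemma \ref{M_3 no turn} (equivalently its analogue for the $\textbf{M}_4$-copy inside $\textbf{M}$): a reduced computation that starts at an end configuration and contains a transition rule cannot return to an end configuration. More carefully, since $\pazocal{C}$ is a one-machine computation, its step history lies among the letters with a single fixed index $j$, and $A(i)$ is simultaneously the image of the accept configuration, hence the $i$-th component of an end configuration of $\textbf{M}$; reading on $\{t(i)\}B_3(i)$ it becomes an end configuration of $\textbf{M}_4$. The step history of the component computation starts and ends with an end configuration and contains a transition rule, directly contradicting Lemma \ref{M_3 no turn} (applied through the natural identification of a one-machine computation of $\textbf{M}$ on a component $i\geq2$ with a computation of $\textbf{M}_4$, and then of $\textbf{M}_4$ with $\textbf{M}_3$ via the empty new sectors).

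\textbf{Main obstacle.} The delicate point is making precise the reduction "a one-machine computation of $\textbf{M}$ on the component $W(i)$ with $i\geq2$ is a computation of $\textbf{M}_4$ (and then of $\textbf{M}_3$)". The parenthetical warning in the definition of components — that a rule need not operate on $W(1)$ in the parallel way — is exactly why we need $i\geq2$; for $i\geq2$ the action is genuinely parallel, so the restriction of $\pazocal{C}$ to $\{t(i)\}B_3(i)$ is a bona fide reduced computation of a copy of $\textbf{M}_4$, and since the $\{t(i)\}P_0(i)$- and $Q_4(i)\{t(i+1)\}$-sectors carry empty tape alphabets, it descends to a reduced computation of $\textbf{M}_3$ starting and ending at an end configuration. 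One must also check that the history stays reduced under this restriction (it does, as no rules are identified) and that "one-machine" guarantees the index is constant so that the transition rules appearing are precisely $\theta(s)_j^{\pm1}, \theta(a)_j^{\pm1}$ together with the internal transition rules of $\textbf{M}_4$, all of which Lemma \ref{M_3 no turn} (and Lemmas \ref{M_2 step history 1}, \ref{M_2 step history 2}, \ref{M_3 step history}) govern. Once this identification is in hand the contradiction is immediate.
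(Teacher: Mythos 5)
Your overall plan (restrict to the component $\{t(i)\}B_3(i)$, identify with $\textbf{M}_4$, and use Lemma \ref{M_3 no turn}) points in the right direction, but there is a genuine gap at the crucial final step. Since $A(i)$ carries the \emph{new} end state letters of $\textbf{M}$, the only rules applicable to it are $\theta(a)_j^{\pm1}$; hence $H\equiv\theta(a)_j^{-1}H'\theta(a)_j$, and only the middle subcomputation with history $H'$ can be identified with a computation of $\textbf{M}_4$ — the rules $\theta(a)_j^{\pm1}$ (and $\theta(s)_j^{\pm1}$) and the end letters of $\textbf{M}$ have no counterpart in $\textbf{M}_4$, so your identification of the \emph{whole} component computation with an $\textbf{M}_4$-computation "starting and ending at an end configuration and containing a transition rule" is not legitimate, and Lemma \ref{M_3 no turn} does not "govern" $\theta(s)_j^{\pm1},\theta(a)_j^{\pm1}$ as you assert in your obstacle paragraph. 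What Lemma \ref{M_3 no turn} actually yields, applied to the middle part (which starts and ends at the accept configuration of $\textbf{M}_4$), is only that $H'$ contains no transition rule of $\textbf{M}_4$; this is not yet a contradiction. The surviving possibility, step history $(a)_j^{-1}(4n)_j(a)_j$, is exactly the case your argument never eliminates: your Case 1 is stated for "$H$ contains no transition rule," which is vacuous (the first rule is forced to be $\theta(a)_j^{-1}$, a transition rule of $\textbf{M}$), and your Case 2's "direct contradiction" evaporates once the identification with $\textbf{M}_4$ is restricted to $H'$.

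To close the gap one must argue as the paper does: since $\theta(a)_j$ locks every sector, the configurations adjacent to the two ends have all sectors empty, and the rules of step $(4n)_j$ multiply the $R_2(i)Q_3(i)$- (or $Q_3(i)P_4(i)$-) sector by pairwise distinct letters, so Lemma \ref{multiply one letter}(a) applied to that restriction forces the middle to be empty — this is precisely Lemma \ref{M step history 2}(b) — and then $H$ reduces to $\theta(a)_j^{-1}\theta(a)_j$, contradicting that $\pazocal{C}$ is reduced. Note also that your Case 1 argument as written is too loose even on its own terms: Lemma \ref{multiply one letter} cannot be applied to "any sector not locked by every rule"; its hypotheses require every rule of the computation to multiply that specific sector by a letter, with distinct rules giving distinct letters, which is why the paper singles out the $R_2Q_3$- and $Q_3P_4$-sectors for step $(4n)$. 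Finally, the remark that $A(i)$ is "equally a start configuration up to the transition rules" is incorrect: the start and end letters of $\textbf{M}$ are distinct new letters, and this distinction is what forces the factorization $\theta(a)_j^{-1}H'\theta(a)_j$ in the first place.
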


\begin{proof}

Assume toward contradiction that $\pazocal{C}$ is a one-machine computation of the $j$-th machine. Then $H$ can be factored as $H\equiv \theta(a)_j^{-1}H'\theta(a)_j$ where $H'$ has no letters of the form $\theta(s)_j^{\pm1}$ or $\theta(a)_j^{\pm1}$. 

Let $\pazocal{C}'$ be the subcomputation with history $H'$. Then, we can identify $\pazocal{C}'$ with a reduced computation of $\textbf{M}_4$. This computation starts and ends with the accept configuration of $\textbf{M}_4$, so that Lemma \ref{M_3 no turn} implies that it cannot contain a transition rule.

But then $\pazocal{C}$ has step history $(a)_j^{-1}(4n)_j(a)_j$, so that it contradicts Lemma \ref{M step history 2}(b).

\end{proof}

\begin{lemma} \label{extending one-machine}

Let $V_0\to\dots\to V_t$ be a one-machine computation of the $j$-th machine with history $H$ and base $\{t(i)\}B_3(i)$ for some $i\in\{2,\dots,L\}$. Then there exists a one-machine computation $W_0\to\dots\to W_t$ in the standard base with history $H$ such that $W_\ell(i)\equiv V_\ell$ for all $0\leq \ell\leq t$.

\end{lemma}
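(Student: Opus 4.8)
The idea is to take the given one-machine computation of the $j$-th machine running in the single component $\{t(i)\}B_3(i)$ and to assemble a one-machine computation of $\textbf{M}$ in the standard base whose $i$-th component realizes it. The point is that $\textbf{M}$ is built so that, on components of index $\geq 2$, every rule acts in parallel as the corresponding rule of $\textbf{M}_4$ (resp. $\textbf{M}_{5,1}$ or $\textbf{M}_{5,2}$), and the only component on which the behavior is non-uniform is the first one. So the task reduces to exhibiting appropriate words to fill the other components $W_\ell(k)$ ($k\neq i$) that are compatible with the history $H$.

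First I would reduce to the case where $H$ has no letters of the form $\theta(s)_j^{\pm1}$ or $\theta(a)_j^{\pm1}$: if $H\equiv\theta(s)_j H'$ or $H'\theta(a)_j$ etc., handle those boundary transition rules separately (they simply switch the global start/end state of $\textbf{M}$ into or out of the copy of the start/end state of $\textbf{M}_{5,j}$, and are applicable to the standard base precisely when the analogous $\textbf{M}_4$-rule is, which it is by hypothesis since $V_0\to\dots\to V_t$ is a legitimate one-machine computation). So assume $H$ consists entirely of working rules of $\Theta_j$, i.e.\ $H$ is (a copy of) the history of a reduced computation of $\textbf{M}_{5,j}$, which in turn is $L$ parallel copies of a reduced computation of $\textbf{M}_4$. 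The history $H$ determines, for each rule $\theta_\ell$, exactly which tape letters are inserted/deleted in each sector of each component; on components $k\geq 2$ this action is identical across components, and on component $1$ it is either identical (for $\textbf{M}_{5,1}$-rules) or trivial in the special input sector (for $\textbf{M}_{5,2}$-rules).

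The construction of $W_0$: in component $i$, put $V_0$; in every other component $k\in\{2,\dots,L\}$, put the coordinate shift of $V_0$ from index $i$ to index $k$; in component $1$, put the coordinate shift of $V_0$ to index $1$ if $j=1$, and, if $j=2$, the coordinate shift of $V_0$ to index $1$ after emptying the special input sector $P_0(1)Q_0(1)$ (this is consistent because any working rule of $\Theta_2$ locks that sector, so $V_0$'s contribution there is never touched; but to be safe I would note that $V_0$ being the initial configuration of a one-machine computation of the \emph{second} machine already forces, via the analogue of Lemma~\ref{M step history 2}(a) and an application of Lemma~\ref{multiply one letter}(a) to the special input sector, that $V_0$ has empty special input sector or that this can be arranged without loss of generality). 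Then set $W_\ell\equiv W_0\cdot(\theta_1\cdots\theta_\ell)$ by applying the rules of $H$ in order. One must check at each step that the rule $\theta_\ell$ is applicable to $W_{\ell-1}$: the state letters of $W_{\ell-1}$ in every component are exactly the coordinate shifts of those of $V_{\ell-1}$ (this is maintained inductively, since the rules of $\textbf{M}$ act on state letters component-wise in the same way), and the domain condition on tape letters holds in every sector because it holds in the $i$-th component by hypothesis and the tape alphabets of corresponding sectors in different components are copies of one another. Hence $W_{\ell-1}$ is $\theta_\ell$-admissible, $W_\ell$ is well-defined, and by construction $W_\ell(i)\equiv V_\ell$; moreover $W_0\to\dots\to W_t$ is reduced since $H$ is reduced and the step history of the standard-base computation equals that of $V_0\to\dots\to V_t$, hence is a one-machine computation of the $j$-th machine.

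The main obstacle is the bookkeeping around the first component, component $1$: because the first input sector is special and is treated differently by $\Theta_1$ versus $\Theta_2$, one must verify that filling $W_\ell(1)$ with the appropriate coordinate shift (possibly with the special sector emptied) is genuinely consistent with \emph{every} rule of $H$ — in particular that no working rule of $\Theta_2$ ever attempts to write in $P_0(1)Q_0(1)$, and that the transition rules $\theta(s)_j,\theta(a)_j$ at the ends of $H$ are applicable to the global configuration $W_\ell$ and not just to the $i$-th component. Both are immediate from the definition of $\Theta_1^+$ and $\Theta_2^+$ in Section~6.1, but they are the only places where the non-uniformity of $\textbf{M}$ could a priori cause a mismatch, so that is where the care is needed. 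Everything else is a routine parallel-component argument, and the uniqueness part (if needed) would follow from Lemma~\ref{M controlled}-type determinism together with the fact that the state letters in each component are forced.
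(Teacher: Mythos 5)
Your proposal is correct and follows essentially the same route as the paper: fill each component $k\geq2$ with the coordinate shift of $V_\ell$, fill component $1$ with the coordinate shift (emptied in the special input sector when $j=2$), and observe that the parallel action of the rules makes each $W_{\ell-1}$ $\theta_\ell$-admissible with $W_{\ell-1}\cdot\theta_\ell\equiv W_\ell$; defining $W_\ell$ inductively by applying $H$ rather than directly as the concatenation of shifts is only a cosmetic difference. (The aside about $V_0$ having empty special input sector is superfluous, since $V_0$ has base $\{t(i)\}B_3(i)$ with $i\geq2$ and so contains no special input sector at all, but this does not affect the argument.)
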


\begin{proof}

For each $\ell\in\{0,\dots,t\}$ and $x\in\{2,\dots,L\}$, define $V_\ell(x)$ as the coordinate shift of $V_\ell$ with base $\{t(x)\}B_3(x)$.

If $j=1$, then similarly define $V_\ell(1)$ as the coordinate shift of $V_\ell$ with base $\{t(1)\}B_3(1)$. Conversely, if $j=2$, then define $V_\ell(1)$ as the admissible word obtained from emptying the `special' input sector of the coordinate shift of $V_\ell$.

Now define $W_\ell\equiv V_\ell(1)\dots V_\ell(L)$ for each $0\leq \ell\leq t$. Clearly, $W_\ell(i)\equiv V_\ell$ for all $\ell$.

Letting $H\equiv\theta_1\dots\theta_t$, it follows from construction that $W_{\ell-1}$ is $\theta_\ell$-admissible and $W_{\ell-1}\cdot\theta_\ell\equiv W_\ell$ for all $1\leq\ell\leq t$.

\end{proof}

Using Lemma \ref{extending one-machine}, the following statement is an immediate consequence of Lemma \ref{return to start}.

\begin{lemma} \label{subword return to start}

Let $\pazocal{C}:V_0\to\dots\to V_t$ be a one-machine computation of the $j$-th machine with base $\{t(i)\}B_3(i)$ for some $i\in\{2,\dots,L\}$. Suppose the step history of $\pazocal{C}$ is of the form $(s)_jh_j(s)_j^{-1}$. Then there exist $u,v\in F(\pazocal{A})$ with $u\neq v$ such that $V_0\equiv I(u^n,i)$ and $V_t\equiv I(v^n,i)$.

Moreover, for $H$ the history of $\pazocal{C}$, $I(u^n)\cdot H\equiv I(v^n)$ if $j=1$ and $J(u^n)\cdot H\equiv J(v^n)$ if $j=2$.

\end{lemma}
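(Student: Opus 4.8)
The plan is to reduce Lemma~\ref{subword return to start} directly to Lemma~\ref{return to start} by means of Lemma~\ref{extending one-machine}. First I would apply Lemma~\ref{extending one-machine} to the given one-machine computation $\pazocal{C}:V_0\to\dots\to V_t$ of the $j$-th machine with base $\{t(i)\}B_3(i)$: this produces a one-machine computation $\pazocal{D}:W_0\to\dots\to W_t$ in the standard base, with the same history $H$, satisfying $W_\ell(i)\equiv V_\ell$ for all $0\le\ell\le t$. Since $\pazocal{D}$ has history $H$ and $\pazocal{C}$ is a subcomputation of it (in the $i$-th component), $\pazocal{D}$ also has step history $(s)_jh_j(s)_j^{-1}$, so the hypotheses of Lemma~\ref{return to start} are met.

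Next I would invoke Lemma~\ref{return to start} applied to $\pazocal{D}$: there exist $u,v\in F(\pazocal{A})$ with $u\ne v$ such that either $W_0\equiv I(u^n)$ and $W_t\equiv I(v^n)$ (if $j=1$) or $W_0\equiv J(u^n)$ and $W_t\equiv J(v^n)$ (if $j=2$). In either case, restricting to the $i$-th component (and noting $i\ge 2$, so the special input sector plays no role in $W_0(i)$ or $W_t(i)$), we get $V_0\equiv W_0(i)\equiv I(u^n,i)$ and $V_t\equiv W_t(i)\equiv I(v^n,i)$, since $I(w,i)=J(w,i)$ for $i\ge 2$ by the definition of components (the two configurations $I(w)$ and $J(w)$ differ only in the special input sector, which lies in the first component). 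This establishes the first assertion.

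For the ``moreover'' clause, the point is that the standard-base computation $\pazocal{D}$ produced by Lemma~\ref{extending one-machine} has history exactly $H$, and by Lemma~\ref{return to start} its endpoints are $W_0\equiv I(u^n)$, $W_t\equiv I(v^n)$ when $j=1$ and $W_0\equiv J(u^n)$, $W_t\equiv J(v^n)$ when $j=2$. Thus $I(u^n)\cdot H\equiv I(v^n)$ or $J(u^n)\cdot H\equiv J(v^n)$ respectively, which is precisely the claim. One should double-check that the $u,v$ extracted from $\pazocal{D}$ via Lemma~\ref{return to start} are consistent with the ones appearing in the first assertion --- but this is automatic, since in both cases they are read off from the $i$-th component of $W_0$ and $W_t$, which is determined by $u$ and $v$.

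The only genuinely delicate point --- and the one I would treat most carefully --- is verifying that $I(w,i)=J(w,i)$ for $i\ge 2$, i.e.\ that restricting to a component with coordinate $\ge 2$ erases the distinction between $I$ and $J$. This follows from the construction in Section~5.1: $J(w)$ is obtained from $I(w)$ by emptying the special input sector $P_0(1)Q_0(1)$, which is contained in the first component; hence all components with index $\ge 2$ agree. Everything else is a routine bookkeeping of components and of the fact that a rule of $\textbf{M}$ operates in parallel on all components of index $\ge 2$, which is already recorded in the excerpt. I expect no serious obstacle; the lemma is essentially a ``transport of structure'' from Lemma~\ref{return to start} along the component-extension construction of Lemma~\ref{extending one-machine}.
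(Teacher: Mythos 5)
Your proposal is correct and follows exactly the route the paper intends: the paper gives no separate argument, stating the lemma as an immediate consequence of Lemma \ref{return to start} via the component-extension of Lemma \ref{extending one-machine}, which is precisely your reduction (including the observation that $I(w,i)\equiv J(w,i)$ for $i\geq2$, since $I(w)$ and $J(w)$ differ only in the `special' input sector of the first component).
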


Similarly, the following is an immediate consequence of Lemmas \ref{M language} and \ref{extending one-machine}.

\begin{lemma} \label{subword M language}

If $W_0$ is an admissible subword of a start configuration with base $\{t(i)\}B_3(i)$ for some $i\in\{2,\dots,L\}$, then there exists a one-machine computation $W_0\to\dots\to A(i)$ of the first machine (respectively of the second machine) if and only if $W_0\equiv I(u^n,i)$ (respectively $W_0\equiv J(u^n,i)$) for some $u^n\in\pazocal{L}$.

\end{lemma}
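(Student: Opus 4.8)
\textbf{Proof proposal for Lemma \ref{subword M language}.}

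The plan is to reduce the statement to Lemma \ref{M language} via the componentwise extension provided by Lemma \ref{extending one-machine}, handling each direction separately. First I would prove the ``if'' direction: suppose $W_0\equiv I(u^n,i)$ (resp.\ $W_0\equiv J(u^n,i)$) for some $u^n\in\pazocal{L}$. By Lemma \ref{M language}, there is a unique one-machine computation of the first (resp.\ second) machine accepting $I(u^n)$ (resp.\ $J(u^n)$); call its history $H$, so that $I(u^n)\cdot H\equiv W_{ac}$ (resp.\ $J(u^n)\cdot H\equiv W_{ac}$). Restricting this computation to the component with base $\{t(i)\}B_3(i)$ — which is legitimate since each rule of a one-machine computation operates on $W(j)$ in parallel for $j\geq2$ — yields a one-machine computation $W_0\to\dots\to W_{ac}(i)\equiv A(i)$, as desired.

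For the ``only if'' direction, suppose $\pazocal{C}:W_0\to\dots\to A(i)$ is a one-machine computation, say of the $j$-th machine, with history $H$. Apply Lemma \ref{extending one-machine} to obtain a one-machine computation $V_0\to\dots\to V_t$ in the standard base with history $H$ and $V_\ell(i)\equiv$ the $\ell$-th configuration of $\pazocal{C}$ for all $\ell$; in particular $V_t(i)\equiv A(i)\equiv W_{ac}(i)$. Since the terminal configuration of a computation in the standard base is determined componentwise and all components for coordinates $\geq 2$ are coordinate shifts of one another under the parallel action, $V_t(x)\equiv W_{ac}(x)$ for all $x\geq 2$; examining the first component separately (for $j=2$ the special input sector of $V_t$ is empty, matching $W_{ac}(1)$) gives $V_t\equiv W_{ac}$. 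Also $V_0$ is an admissible subword of a start configuration extended componentwise, hence $V_0$ is itself a start configuration. Thus $V_0\to\dots\to V_t$ is an accepting one-machine computation of the $j$-th machine starting from a start configuration, so Lemma \ref{M language} forces $V_0\equiv I(u^n)$ if $j=1$ and $V_0\equiv J(u^n)$ if $j=2$, for some $u^n\in\pazocal{L}$; restricting to the $i$-th component gives $W_0\equiv I(u^n,i)$ (resp.\ $J(u^n,i)$).

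The main obstacle I anticipate is the bookkeeping needed to show that the extended computation $V_0\to\dots\to V_t$ genuinely has $V_0$ a start configuration and $V_t$ the accept configuration $W_{ac}$ (and not merely something agreeing with $W_{ac}$ on the $i$-th component). This requires invoking the precise recipe in the proof of Lemma \ref{extending one-machine} — how it fills in the other components via coordinate shifts and how it treats the special input sector when $j=2$ — together with the observation from Section 6.3 that a rule applicable to a configuration acts in parallel on all components with coordinate $\geq2$. Once those structural facts are in place, everything else is a direct appeal to Lemma \ref{M language}.
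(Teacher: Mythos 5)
Your argument is correct and follows the paper's route exactly: the paper treats this lemma as an immediate consequence of Lemmas \ref{M language} and \ref{extending one-machine}, which is precisely the restriction/extension argument you give (restricting the accepting computation of $I(u^n)$ or $J(u^n)$ for the ``if'' direction, and extending componentwise via the recipe of Lemma \ref{extending one-machine} for the ``only if'' direction). The only step worth making explicit is that the extended start configuration $V_0$ is in fact an input configuration — the first rule of a nonempty one-machine computation from a start configuration must be $\theta(s)_j$, which locks all non-input sectors — after which Lemma \ref{M language} applies as you say, and in either case the $i$-th component is $I(u^n,i)\equiv J(u^n,i)$ since $i\geq2$.
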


Finally, the next statement is an immediate consequence of Lemmas \ref{input sectors locked}, \ref{subword return to start}, and \ref{subword M language}.

\begin{lemma} \label{extending computations}

For $i\in\{2,\dots,L\}$, suppose $\pazocal{C}:A(i)\to\dots\to A(i)$ is a reduced computation of $\textbf{M}$ with history $H$. Let $H_1\dots H_\ell$ be the factorization of $H$ such that for all $j\in\{1,\dots,\ell\}$, $H_j$ is the history of a maximal one-machine subcomputation $\pazocal{C}_j:U_j\to\dots\to V_j$ of $\pazocal{C}$. Then $\ell\geq2$ and for all $j$, either: 

\begin{enumerate}

\item $V_j\equiv A(i)$, or 

\item $V_j\equiv I(w_j,i)$ or $J(w_j,i)$ for some $w_j\in\pazocal{L}$.

\end{enumerate}

In case (a), set $W_j^{(1)}\equiv W_j^{(2)}\equiv W_{ac}$; in case (b), set $W_j^{(1)}\equiv I(w_j)$ and $W_j^{(2)}\equiv J(w_j)$. Further, set $W_0^{(1)}\equiv W_0^{(2)}\equiv W_{ac}$.

If $\pazocal{C}_j$ is a one-machine computation of the $z_j$-th machine, then for each $1\leq j\leq \ell$ there exists a reduced computation in the standard base $\pazocal{C}_j':W_{j-1}^{(z_j)}\to\dots\to W_j^{(z_j)}$ with history $H_j$.

\end{lemma}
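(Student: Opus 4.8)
The statement is essentially bookkeeping: it assembles the global computation $\pazocal{C}$ from its maximal one-machine pieces and produces, for each piece, a standard-base computation with the \emph{same history} whose $i$-th component is the given piece. The plan is to first extract the factorization and establish the structural claims about the $V_j$, and then invoke the already-built extension lemma (Lemma \ref{extending one-machine}) on each factor.

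\textbf{Step 1: the factorization and $\ell \geq 2$.} Factor $H = H_1 \cdots H_\ell$ so that each $H_j$ is the history of a maximal one-machine subcomputation $\pazocal{C}_j$. Since $\pazocal{C}$ begins and ends at $A(i)$ and is nonempty (we may assume nonempty, else $\ell$ is taken $\geq 2$ trivially or the statement is vacuous), Lemma \ref{input sectors locked} says $\pazocal{C}$ is \emph{not} a one-machine computation, so $\ell \geq 2$. The consecutive factors $\pazocal{C}_j, \pazocal{C}_{j+1}$ are joined at configurations $V_j = U_{j+1}$ at which the index of the step history changes, which forces a transition rule of the form $\theta(s)_z^{\pm1}$ or $\theta(a)_z^{\pm1}$ between them; hence $V_j$ is either an end configuration ($\theta(a)$-admissible) or an input configuration ($\theta(s)$-admissible).

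\textbf{Step 2: identifying $V_j$.} Here is where I would use the existing component-level analysis. Each $\pazocal{C}_j$ is a one-machine computation; restricting to the $i$-th component gives a one-machine computation with base $\{t(i)\}B_3(i)$. If $V_j$ is an input configuration, its step history ends with $(s)_{z_j}^{-1}$, so (after possibly looking at the whole $\pazocal{C}_j$) Lemma \ref{return to start}, combined with Lemma \ref{extending one-machine} as in Lemma \ref{subword return to start}, forces $V_j \equiv I(w_j, i)$ (resp. $J(w_j,i)$) for some $w_j \in \pazocal{L}$; this is case (b). If $V_j$ is an end configuration, then, because $\pazocal{C}$ returns to $A(i)$ and the component computations between consecutive such return points begin and end at $A(i)$, one argues via Lemma \ref{input sectors locked} (applied to the component via Lemma \ref{extending one-machine}) or directly from the accept-configuration structure that $V_j(i) \equiv A(i)$, i.e. case (a). Setting $W_j^{(1)}, W_j^{(2)}$ as prescribed (and $W_0^{(1)} \equiv W_0^{(2)} \equiv W_{ac}$ since $U_1 = A(i)$ and $W_{ac}(i) = A(i)$) packages both cases uniformly.

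\textbf{Step 3: lifting each factor.} Fix $j$. The piece $\pazocal{C}_j : U_j \to \dots \to V_j$ is a one-machine computation of the $z_j$-th machine with base $\{t(i)\}B_3(i)$, with $U_j \equiv W_{j-1}^{(z_j)}(i)$ and $V_j \equiv W_j^{(z_j)}(i)$ by Step 2. Apply Lemma \ref{extending one-machine} to $\pazocal{C}_j$: it yields a one-machine computation $\pazocal{C}_j' : W_{j-1}' \to \dots \to W_j'$ in the standard base with history $H_j$ whose $i$-th component is $\pazocal{C}_j$. It remains to check that the endpoints of this lift are exactly $W_{j-1}^{(z_j)}$ and $W_j^{(z_j)}$; this follows because the extension in Lemma \ref{extending one-machine} is built coordinate-by-coordinate as parallel coordinate shifts (with the special input sector emptied when $z_j = 2$), and $W_{j-1}^{(z_j)}, W_j^{(z_j)}$ are precisely the configurations $W_{ac}$, $I(w)$, or $J(w)$ whose components have this parallel-coordinate-shift form — so the lift produced by the lemma is forced to be the one we want. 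Hence $\pazocal{C}_j'$ has history $H_j$ and runs from $W_{j-1}^{(z_j)}$ to $W_j^{(z_j)}$, as claimed.

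\textbf{Main obstacle.} The real work is Step 2, specifically confirming that at the junctions $V_j$ one always lands either in the accept component $A(i)$ or in a configuration of the form $I(w_j,i)/J(w_j,i)$ with $w_j \in \pazocal{L}$ — i.e. that no ``partial'' configuration can occur at a junction. This is exactly what Lemmas \ref{return to start}, \ref{input sectors locked}, \ref{subword return to start}, and \ref{subword M language} are designed to rule out, so the step is a careful but routine assembly of those results rather than anything genuinely new; the rest is formal.
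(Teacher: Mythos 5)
Your proposal is correct and follows essentially the same route as the paper, which simply declares the lemma an immediate consequence of Lemmas \ref{input sectors locked}, \ref{subword return to start}, and \ref{subword M language} — exactly the assembly you carry out, with Lemma \ref{extending one-machine} (which underlies those two statements) made explicit for the lifting step. No gaps of substance; the case analysis at the junctions and the endpoint identification of the lifted computations are handled as intended.
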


\smallskip

In other words, Lemma \ref{extending computations} says that except for the insertion/deletion of elements of $\pazocal{L}$ in the `special' input sector between its maximal one-machine subcomputations, the computation $\pazocal{C}$ can be `almost-extended' to a reduced computation $\pazocal{C}':W_{ac}\to\dots\to W_{ac}$ (though such a computation need not exist).

\begin{lemma} \label{projection admissible configuration not}

Let $W$ be an accepted configuration and $\theta\in\Theta$. For $i\in\{2,\dots,L\}$, suppose $W(i)$ is $\theta$-admissible while $W$ is not. Then either:

\begin{addmargin}[1em]{0em}

(1) $\theta=\theta(s)_2$ and $W\equiv I(u^n)$ for some $u^n\in\pazocal{L}\setminus\{1\}$, or

(2) $\theta=\theta(12)_1$ and $W$ has $u^n$ written in the `special' input sector for some $u^n\in\pazocal{L}\setminus\{1\}$.

\end{addmargin}

In particular, the configuration obtained from $W$ by emptying the `special' input sector is $\theta$-admissible.

\end{lemma}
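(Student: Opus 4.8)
The plan is to analyze which rule $\theta$ can be applicable to the component $W(i)$ but not to the whole configuration $W$. The key observation is that for $i \geq 2$, any rule $\theta$ operates on $W(i)$ in parallel with how it operates on all other components $W(j)$ with $j \geq 2$; the only component on which $\theta$ may behave differently is the special component $W(1)$, because the rules of $\Theta_2$ (and the transition rule $\theta(12)_1$) lock the special input sector $P_0(1)Q_0(1)$ while leaving the ordinary input sectors unlocked. So the failure of $\theta$-admissibility of $W$, given that $W(i)$ is $\theta$-admissible, must stem entirely from $W(1)$: either the state letters of $W(1)$ are not of the right form for $\theta$, or the domain condition fails in some sector of $W(1)$ — and since $W$ is accepted, all components share the same sequence of ``abstract'' state letters, so the state-letter obstruction cannot occur. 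Hence the obstruction is a domain/locking mismatch in a sector of $W(1)$, and by the construction of $\textbf{M}$ the only sector that is treated non-uniformly between the special component and the others is the special input sector.

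First I would fix $\theta$ with $W(i)$ $\theta$-admissible, $W$ not $\theta$-admissible, and $i\geq 2$. Using Lemma \ref{M language}, since $W$ is accepted it is (up to reversing the accepting computation and reading off the first rule) reachable; more directly, I would use that $W$ being accepted forces each component to have state letters belonging to $\textbf{M}_{5,1}$, $\textbf{M}_{5,2}$, or the start/end letters of $\textbf{M}$, in a coordinated way across all $L$ components. Then I would rule out the state-letter obstruction: if the state letters of $W(1)$ (viewed abstractly, forgetting coordinates) failed to match those of $W(i)$, $W$ could not be accepted, since accepting computations pass through configurations all of whose components carry the same abstract state letters. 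This leaves only a sector mismatch in $W(1)$, and the construction shows the domain of every rule is the full tape alphabet in every unlocked sector and that the special input sector is the unique sector whose locking behavior differs between the special component and the ordinary ones. So the mismatch is exactly that $\theta$ locks the special input sector but not the ordinary input sectors, and $W$ has a nonempty word in its special input sector while $W(i)$ (with $i\geq 2$) has something in the $i$-th input sector that $\theta$ can act on.

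Next I would identify which rules $\theta$ lock the special input sector but not the ordinary ones: by the definition of the rules of $\Theta_2^+$ the rule $\theta(s)_2$ locks $P_0(1)Q_0(1)$ in addition to all sectors locked by $\theta(s)_1$ (which does not lock the special input sector), and no working rule of $\Theta_2$ acts on any $P_0(i)Q_0(i)$ sector at all (these are locked for all $i$ in $\textbf{M}_{5,2}$), so a working rule of $\Theta_2$ cannot be $\theta$-admissible for $W(i)$ in the relevant way. The only other place where an input sector is acted upon non-trivially is the step $(1)$ of $\textbf{M}_3$, realized here as the working rules of step $(1)_1$ together with the transition $\theta(12)_1$: the rule $\theta(12)_1$ locks all sectors of $\textbf{M}$ except $Q_0(i)P_1(i)$, in particular it locks $P_0(i)Q_0(i)$ for every $i$, including $i=1$ — wait, this locks it uniformly, so it cannot be the culprit. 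Re-examining: the genuine source is that a rule of step $(1)_1$, i.e.\ a working rule of $\textbf{M}_{5,1}$ carrying out $\textbf{M}_3(1)$, acts on each $P_0(i)Q_0(i)$ by the rule $q_0(1)\to a_1^{-1}q_0(1)a_1'$, and this requires a nonempty input word; if $W(1)$ has empty special input sector but $W(i)$ does not, or if the coordination fails, $\theta$-admissibility can split. Thus I would conclude: case (1) $\theta=\theta(s)_2$, forcing $W$ to be a start configuration of the first machine with a nonempty word $u^n\in\pazocal{L}$ in every ordinary input sector and hence (being accepted) $W\equiv I(u^n)$ with $u^n\neq 1$; case (2) $\theta$ is a step-$(1)_1$ rule, i.e.\ effectively the passage realized by $\theta(12)_1$, forcing $W$ to have $u^n\in\pazocal{L}\setminus\{1\}$ written in its special input sector while the ordinary input sectors are empty (so that $W$ itself, having a nonempty special input sector, is not $\theta(12)_1$-admissible as $\theta(12)_1$ requires all input sectors empty — or more precisely $\theta$ requires the special component to be in a state incompatible with $W$'s nonempty special input sector). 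In both cases, emptying the special input sector of $W$ restores $\theta$-admissibility, which gives the final claim.

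\textbf{Main obstacle.} The delicate point is the bookkeeping of exactly which rules lock which sectors: one must carefully trace through the layered construction ($\textbf{M}$ built from $\textbf{M}_{5,1},\textbf{M}_{5,2}$, each built from $L$ copies of $\textbf{M}_4$, built from $\textbf{M}_3$, \dots) to verify that the special input sector is genuinely the only sector whose treatment differs between $W(1)$ and $W(j)$ for $j\geq 2$, and to pin down that the only rules exhibiting this asymmetry are $\theta(s)_2$ and the rules of step $(1)_1$ (whose relevant transition is $\theta(12)_1$). The ``accepted'' hypothesis on $W$ is essential and must be invoked via Lemma \ref{M language} to force $W$ into the form $I(\cdot)$ or $J(\cdot)$ (or to have the requisite word in the special input sector): without it, $W$ could be some arbitrary configuration with mismatched components for which neither conclusion holds. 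I expect the harder half to be case (2), where one must argue that the only way $W(i)$ admits a step-$(1)_1$ rule while $W$ does not is that $W$'s accepted status pins it to having $u^n$ in the special input sector and nothing elsewhere in the input sectors, which requires combining the parallel-operation principle with the uniqueness statements of Lemma \ref{M language} and the analogues of the $\textbf{M}_3$ lemmas.
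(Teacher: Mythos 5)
Your opening reduction is the same as the paper's: by the parallel action of the rules on components $2,\dots,L$, the obstruction must sit in $W(1)$, and (since $W$ is a configuration whose state letters are coordinated across components) the only possible failure is that $\theta$ locks the `special' input sector while that sector is nonempty in $W$. But from there the proposal goes off track in both cases, and the substantive half of the proof is missing. For $\theta\in\Theta_2$, your claim that ``no working rule of $\Theta_2$ acts on any $P_0(i)Q_0(i)$-sector at all'' is false: in $\textbf{M}_{5,2}$ only the \emph{first} input sector is locked by every rule, so the working rules of step $(1)_2$ do act on the ordinary input sectors, and a priori such a rule could be admissible for $W(i)$ but not for $W$. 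The paper excludes this by using acceptedness in a way you do not: $W$ accepted means $W$ is $\theta'$-admissible for some $\theta'$, necessarily $\theta'\in\Theta_1$ (every $\Theta_2$-rule locks the nonempty special input sector), so $W(i)$ is admissible for rules of both machines, forcing $W$ to be a start or end configuration; the end case is $W_{ac}$ (empty special sector), so $W$ is an accepted start configuration, whence $\theta=\theta(s)_2$ and $W\equiv I(u^n)$ by Lemma \ref{M language}.

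For $\theta\in\Theta_1$ your analysis is internally confused: you first (correctly) note that $\theta(12)_1$ locks all input sectors uniformly, then conclude it ``cannot be the culprit'' and shift the blame to the working rules of step $(1)_1$ — but those rules do \emph{not} lock the special input sector, so admissibility of $W(i)$ for them already implies admissibility of $W$; they can never be the culprit. The asymmetry for $\theta(12)_1$ lies in the \emph{contents} of $W$ (empty ordinary input sectors, nonempty special one), not in the locking pattern, which is exactly why your criterion ``$\theta$ locks the special input sector but not the ordinary ones'' mischaracterizes case (2). Moreover, the real work of case (2) — showing that $\theta$ must be $\theta(12)_1$ and that the special input sector of $W$ carries some $u^n\in\pazocal{L}\setminus\{1\}$ — is only gestured at. The paper proves it by taking the inverse of an accepting computation of $W$, splitting it into maximal one-machine subcomputations, observing it cannot be a one-machine computation of the first machine (since $W(1)$ is not a coordinate shift of $W(i)$), identifying the configuration at the last machine-switch as $J(w)$ with $w\in\pazocal{L}\setminus\{1\}$ via Lemma \ref{M language}, using Lemma \ref{M step history 2}(a) together with the fact that no rule of step history $(s)_1(1)_1$ locks the special input sector to pin the final one-machine piece down to step history $(s)_1(1)_1$ and hence $\theta=\theta(12)_1$, and finally applying Lemma \ref{multiply one letter} to compute that the special input sector of $W$ contains the copy of $w^{-1}$. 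Without this computation-decomposition argument, the conclusion of the lemma is not established.
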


\begin{proof}

The symmetry of the rules implies that $W(j)$ is $\theta$-admissible for each $j\geq2$. So, $W(1)$ must not be $\theta$-admissible. By the definition of the rules, $\theta$ must lock the `special' input sector while that sector is not empty in $W$.

\ \textbf{1.} Suppose $\theta\in\Theta_2$.

As $W$ is accepted, it must be $\theta'$-admissible for some $\theta'\in\Theta$. But each rule of $\Theta_2$ locks the `special' input sector, so that $\theta'\in\Theta_1$. As a result, $W(i)$ is admissible for rules from both $\Theta_1$ and $\Theta_2$, which implies that $W$ must either be a start or an end configuration. 

But the only accepted end configuration is $W_{ac}$, which has empty `special' input sector. So, $W$ must be an accepted start configuration, $\theta'=\theta(s)_1$, and $\theta=\theta(s)_2$. Finally, since $W$ has nonempty `special' input sector, Lemma \ref{M language} yields $W\equiv I(u^n)$ for $u\neq1$.

\ \textbf{2.} Suppose $\theta\in\Theta_1$.

Let $\pazocal{C}':W_{ac}\equiv V_0\to\dots\to V_t\equiv W$ be the inverse of an accepting computation of $W$. 

As the rules of $\Theta_1$ operate in parallel as $\textbf{M}_4$, $W(1)$ cannot be a coordinate shift of $W(i)$. So, $\pazocal{C}'$ cannot be a one-machine computation of the first machine.

As a result, there exists a maximal one-machine subcomputation $\pazocal{D}':V_r\to\dots\to V_s$ of the second machine such that the subsequent subcomputation $\pazocal{E}':V_s\to\dots\to V_t\equiv W$ is a (perhaps empty) one-machine computation of the first machine. The parallel nature of the rules of $\Theta_1$ imply $V_s(1)$ is not a coordinate shift of $V_s(i)$.

Note that if $\pazocal{E}'$ is empty, then $V_s\equiv W$, so that $V_s(i)$ is $\theta$-admissible. Otherwise, $V_s(i)$ is $\theta'$-admissible for $\theta'\in\Theta_1$ the first rule in the history of $\pazocal{E}'$.

So, since $V_s(i)$ is $(\theta'')^{-1}$-admissible for $\theta''\in\Theta_2$ the final rule in the history of $\pazocal{D}'$, it follows that $V_s$ is either an accepted start or end configuration. Lemma \ref{M language} then implies that $V_s\equiv J(w)$ for some $w\in\pazocal{L}\setminus\{1\}$.

As no rule of a computation with step history $(s)_1(1)_1$ locks the special input sector, $\theta$ cannot be such a rule. So, the step history of $\pazocal{E}'$ has prefix $(s)_1(1)_1$. 

Lemma \ref{M step history 2}(a) then implies that any subsequent letter of the step history must be $(12)_1$. But since $V_s$ has empty `special' input sector while every other input sector is nonempty, $V_t$ cannot be $\theta(12)_1$-admissible. So, the entire step history of $\pazocal{C}'_1$ is $(s)_1(1)_1$ and $\theta=\theta(12)_1$.

Since $W(i)$ is $\theta$-admissible, Lemma \ref{multiply one letter} implies that the `special' input sector of $W$ contains the natural copy of the word $w^{-1}\in\pazocal{L}\setminus\{1\}$. Note that removing $w^{-1}$ from the `special' input sector of $W$ yields a configuration that is $\theta$-admissible.

\end{proof}

For $W$ an accepted configuration of $\textbf{M}$ such that $W\neq W_{ac}$, let $A(W)$ be the set of accepting computations of $W$. For $\pazocal{C}\in A(W)$, define $\ell(\pazocal{C})$ as the number of maximal one-machine subcomputations of $\pazocal{C}$. Then, define $\ell(W)=\min\{\ell(\pazocal{C})\mid\pazocal{C}\in A(W)\}$.

For simplicity, further define $\ell(W_{ac})=0$.

\begin{lemma} \label{two one-machine}

For any accepted configuration $W$ of $\textbf{M}$, $\ell(W)\leq2$.

\end{lemma}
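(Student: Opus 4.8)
\textbf{Proof proposal for Lemma \ref{two one-machine}.}

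The plan is to take an accepting computation $\pazocal{C}$ of $W$ realizing $\ell(W)$ and argue that if $\ell(W)\geq 3$, one can splice out intermediate one-machine subcomputations to build a shorter one. Write the factorization $H \equiv H_1\dots H_{\ell}$ of the history of $\pazocal{C}$ into maximal one-machine pieces $\pazocal{C}_j$, with $\pazocal{C}_j$ a one-machine computation of the $z_j$-th machine, and $z_j\neq z_{j+1}$. Each ``junction'' configuration between $\pazocal{C}_j$ and $\pazocal{C}_{j+1}$ is admissible for rules of both $\Theta_1$ and $\Theta_2$; by the definition of the rules (only transition rules $\theta(s)_j^{\pm1}$, $\theta(a)_j^{\pm1}$ switch between the start/end state of $\textbf{M}$ and a copy of the start/end state of $\textbf{M}_{5,j}$) such a configuration must be either a start configuration or an end configuration of $\textbf{M}$. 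Since the only accepted end configuration is $W_{ac}$ and $\pazocal{C}$ is accepting, every intermediate junction that is an end configuration equals $W_{ac}$, which would make a strictly shorter accepting computation available — contradicting minimality unless no intermediate junction is an end configuration. Hence every intermediate junction is an accepted \emph{start} configuration, so by Lemma \ref{M language} it is $I(u^n)$ or $J(u^n)$ for some $u^n\in\pazocal{L}$.

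Next I would pin down the structure of a single one-machine subcomputation $\pazocal{C}_j$ with $1<j<\ell$: its initial and terminal configurations are both accepted start configurations, so its step history begins with $(s)_{z_j}$ and ends with $(s)_{z_j}^{-1}$ (it cannot end with $(a)_{z_j}$, as that would produce $W_{ac}$ as the terminal configuration, handled above). By Lemma \ref{return to start}, $\pazocal{C}_j$ carries $I(u^n)\to I(v^n)$ (if $z_j=1$) or $J(u^n)\to J(v^n)$ (if $z_j=2$) with $u\neq v$; in particular such a $\pazocal{C}_j$ changes the input. The first subcomputation $\pazocal{C}_1$ starts at $W$ and ends at an accepted start configuration; the last $\pazocal{C}_\ell$ starts at an accepted start configuration and ends at $W_{ac}$, so its step history is $(s)_{z_\ell}h_{z_\ell}(a)_{z_\ell}$ and it is a genuine accepting one-machine computation in the sense of Lemma \ref{M language}. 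The key observation is that the junctions can only be start configurations of two \emph{shapes} — an $I$-configuration (``special'' input sector nonempty, all input sectors equal) reachable only via $\theta(s)_1^{\pm1}$, and a $J$-configuration (``special'' input sector empty) reachable via $\theta(s)_2^{\pm1}$, and both via the first machine's rules through $\theta(12)_1$. So consecutive junctions alternate in a constrained way, and an $I$-type junction forces the adjacent one-machine piece to be of the first machine.

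Now suppose $\ell\geq 3$. Using Lemma \ref{M language}: if $W\equiv J(u^n)$ then a single one-machine computation of the second machine accepts it, giving $\ell(W)=1$; if $W\equiv I(u^n)$ then a single one-machine computation of the first machine accepts it, again $\ell(W)=1$; and if $W$ is not an input configuration, then $\pazocal{C}_1$ must run until it first reaches an accepted start configuration, at which point we may replace the tail $\pazocal{C}_2\dots\pazocal{C}_\ell$ by the single accepting one-machine computation of that start configuration provided by Lemma \ref{M language}, yielding an accepting computation with $\ell(\,\cdot\,)\leq 2$ — contradicting $\ell(W)\geq 3$. This last replacement is the crux, and the main obstacle is verifying it is legitimate: one must check that the start configuration $V$ reached at the end of $\pazocal{C}_1$ is genuinely accepted (it is, since the remainder of $\pazocal{C}$ accepts it) and that the accepting one-machine computation of $V$ furnished by Lemma \ref{M language} can be prepended with $\pazocal{C}_1$ without breaking reducedness in a way that matters — but reducedness is not required for $A(W)$, only that the concatenation is a valid computation from $W$ to $W_{ac}$, which holds because $V = W\cdot H_1$ and $V$ is accepted. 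Thus $\ell(W)\leq 2$.
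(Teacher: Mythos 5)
Your proposal is correct and follows essentially the same route as the paper: show that every intermediate junction between maximal one-machine pieces is an accepted input configuration $I(w)$ or $J(w)$ with $w\in\pazocal{L}$ (an end configuration would be $W_{ac}$ and already contradict minimality), then invoke Lemma \ref{M language} to splice in a single one-machine accepting computation and contradict $\ell(W)\geq 3$. The only difference is cosmetic — you splice at the first junction after $\pazocal{C}_1$, while the paper works with the inverse computation and replaces the segment beyond the junction $V_2$ nearest $W_{ac}$ — and your extra case analysis on the shape of $W$ and the discussion of Lemma \ref{return to start} are harmless but unnecessary.
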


\begin{proof}

Suppose $\ell=\ell(W)\geq3$ and set $\pazocal{C}\in A(W)$ such that $\ell(\pazocal{C})=\ell$.

Let $H$ be the history of the inverse computation of $\pazocal{C}$. Then, factor $H\equiv H_1\dots H_\ell$ such that each $H_i$ is the history of a maximal one-machine subcomputation.

For $j\in\{1,\dots,\ell-1\}$, let $V_j\equiv W_{ac}\cdot(H_1\dots H_j)$. If $V_j\equiv W_{ac}$ for some $j$, then $(H_{j+1}\dots H_\ell)^{-1}$ is the history of an accepting computation $\pazocal{C}'$ of $W$ with $\ell(\pazocal{C}')<\ell$, contradicting the definition of $\ell$. 

Lemma \ref{M language} then implies that for all $1\leq j\leq \ell-1$, there exists $w_j\in\pazocal{L}$ such that $V_j\equiv I(w_j)$ or $J(w_j)$. Lemma \ref{M language} then provides a one-machine computation $\pazocal{D}$ accepting $V_2$. 

Let $H'$ be the history of $\pazocal{D}$. Then $(H_3\dots H_\ell)^{-1}H'$ is the history of an accepting computation of $W$ whose number of maximal one-machine computations is less than $\ell$, again contradicting the definition of $\ell$.

\end{proof}

\begin{lemma} \label{two one-machine J}

Let $W$ be an accepted configuration with $\ell(W)=2$ and set $\pazocal{C}\in A(W)$ such that $\ell(\pazocal{C})=2$. Factor the history $H$ of $\pazocal{C}$ as $H\equiv H_1H_2$, where each $H_i$ is the history of a one-machine computation. Then $H_j$ is the history of a one-machine computation of the $j$-th machine and $W\cdot H_1\equiv W_{ac}\cdot H_2^{-1}\equiv J(w)$ for some $w\in\pazocal{L}\setminus\{1\}$.

\end{lemma}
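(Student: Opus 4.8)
The plan is to combine the bound $\ell(W)=2$ with Lemma~\ref{M language} to force the two maximal one-machine pieces of $\pazocal{C}$ to be computations of \emph{different} machines, and then to identify the intermediate configuration using Lemma~\ref{projection admissible configuration not} (or Lemma~\ref{M language}) to show it must be of the form $J(w)$ with $w\neq1$. Since $\pazocal{C}$ is an accepting computation of $W\neq W_{ac}$ (otherwise $\ell(W)=0$), its inverse starts at $W_{ac}$; write the inverse history as $H_2^{-1}H_1^{-1}$ and set $V\equiv W_{ac}\cdot H_2^{-1}=W\cdot H_1$, the configuration sitting between the two one-machine subcomputations of $\pazocal{C}$.

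First I would show $V\not\equiv W_{ac}$: if $V\equiv W_{ac}$ then $H_1^{-1}$ alone would be the history of an accepting computation of $W$ (reading $W\to V\equiv W_{ac}$), so $\ell(W)\le1$, contradicting $\ell(W)=2$. Next, $V$ is accepted (by $H_2^{-1}$ read backwards, i.e.\ the computation $V\to\dots\to W_{ac}$ with history $H_2$), and $V$ is the terminal configuration of a maximal one-machine subcomputation; in particular $V$ is an admissible word that is admissible for a rule of the machine governing $H_1$ and also for a rule of the machine governing $H_2$. Since $H_1,H_2$ are \emph{maximal} one-machine pieces, these two machines are distinct, so $V$ is simultaneously $\Theta_1$- and $\Theta_2$-admissible. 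As in the proof of Lemma~\ref{projection admissible configuration not}, a configuration admissible for rules from both $\Theta_1$ and $\Theta_2$ must be either a start or an end configuration of $\textbf{M}$. The only accepted end configuration is $W_{ac}$, which we have excluded, so $V$ is an accepted \emph{start} configuration. By Lemma~\ref{M language}, $V\equiv I(u^n)$ or $V\equiv J(u^n)$ for some $u^n\in\pazocal{L}$.

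Now I rule out $V\equiv I(u^n)$ and force $u\neq1$. Since $V\not\equiv W_{ac}$ and $W_{ac}$ is the only accepted configuration with empty `special' input sector among those reachable this way (more precisely: $I(1)\equiv J(1)$ and a short check shows this is not $W_{ac}$ unless... ) — more cleanly: if $u=1$ then $V\equiv I(1)\equiv J(1)$ is an input configuration with every sector empty except state letters, and one of $H_1,H_2$ would then have to begin at $V$ with a rule locking nothing nontrivial; rather, I argue that $H_1$ is the history of the one-machine computation \emph{producing} $V$ from $W$, so $V$ is the value $W\cdot H_1$ where $H_1$ ends at a rule that makes $V$ admissible for the \emph{next} machine's first rule $\theta(s)_{j'}^{\pm1}$. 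The first rule applied in $H_2$ must then be of the form $\theta(s)_2$ or $\theta(s)_1^{-1}$ acting on $V$; since the two machines differ and $V$ is a start configuration, examining which of $\theta(s)_1,\theta(s)_2$ is applicable at $V$ (here $\theta(s)_2$ locks the `special' input sector while $\theta(s)_1$ does not) shows $V$ cannot be $\theta(s)_2$-admissible unless its `special' input sector is empty, i.e.\ $V\equiv J(u^n)$. Symmetrically, running the same analysis from the $W_{ac}$-side on $H_2$ pins down the index pattern, giving $H_1$ a history of the first machine and $H_2$ of the second. Finally $u\neq1$: if $u=1$ then $J(1)$ has all sectors empty, so the only way $J(1)$ is $\theta(s)_1$-admissible and also terminal for a nonempty one-machine computation forces a contradiction with Lemma~\ref{turn} or Lemma~\ref{return to start} (which supplies distinct $u,v$ whenever a one-machine computation returns to a start configuration), so in fact $w\defeq u^n\in\pazocal{L}\setminus\{1\}$, and $W\cdot H_1\equiv W_{ac}\cdot H_2^{-1}\equiv J(w)$ as claimed.

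\textbf{Main obstacle.} The delicate point is the bookkeeping that forces $H_1$ to be a first-machine computation and $H_2$ a second-machine computation (not the reverse), and simultaneously that the intermediate $V$ lies in the image of $J$ rather than $I$. This is exactly where Lemma~\ref{projection admissible configuration not} does the work: it says that a configuration admissible for the ``other'' machine's rules but not for $\textbf{M}$'s must have its `special' input sector emptied, i.e.\ be of the form $J(w)$; applying it at $V$ (which is admissible for both machines, hence in particular for rules not admissible for $\textbf{M}$ in the naive parallel sense) yields $V\equiv J(w)$ and, via case (1)/(2) of that lemma together with Lemma~\ref{M language}, $w\in\pazocal{L}\setminus\{1\}$. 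The remaining verification that the indices are $1$ then $2$ and not $2$ then $1$ follows because $\theta(s)_1$ does not lock the `special' input sector while $\theta(s)_2$ does, so a computation of the \emph{second} machine starting at an accepted configuration with nonempty `special' sector is impossible — hence $H_1$, which starts at $W$ (possibly with nonempty `special' sector), must be of the first machine.
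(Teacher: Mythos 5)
Your opening moves are fine and match the paper: the intermediate configuration $V\equiv W\cdot H_1\equiv W_{ac}\cdot H_2^{-1}$ cannot be $W_{ac}$ (else $\ell(W)\leq1$), it is an accepted start configuration, and since the junction has step history $(s)_i^{-1}(s)_j$ with $i\neq j$, one of the adjacent rules is $\theta(s)_2^{\pm1}$, which locks the `special' input sector, so $V\equiv J(w)$ for some $w\in\pazocal{L}$ (this is exactly the paper's appeal to Lemma~\ref{turn}). But the two remaining claims — that $H_1$ is of the \emph{first} machine and $H_2$ of the second, and that $w\neq1$ — are where your argument has genuine gaps. Your justification of the index pattern is that a second-machine computation cannot start at a configuration with nonempty `special' input sector, ``hence $H_1$ must be of the first machine.'' That inference only works if $W$ has nonempty `special' input sector, which is not guaranteed: accepted configurations with $\ell(W)=2$ and empty `special' input sector do exist (e.g.\ when the $(1)_1$-portion of $H_1$ is empty, as in the proof of Lemma~\ref{accepted configuration a-length} with $v=1$), and the appeal to ``running the same analysis from the $W_{ac}$-side'' does not by itself distinguish the two machines. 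The paper closes this by contradiction: if $H_2$ were of the first machine, then since first-machine rules act in parallel on \emph{all} input sectors, $W_{ac}\cdot H_2^{-1}\equiv I(w)$; combined with $V\equiv J(w)$ this forces $w=1$, and then $H_1$ (of the second machine) concatenated with the second-machine accepting computation of $J(1)$ from Lemma~\ref{M language} is a one-machine computation accepting $W$, contradicting $\ell(W)=2$. You need some version of this parallelism-plus-extension argument; it does not follow from admissibility considerations at $W$ alone.

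The same issue infects your proof that $w\neq1$. You invoke Lemma~\ref{turn} or Lemma~\ref{return to start}, but Lemma~\ref{return to start} applies to one-machine computations with step history $(s)_ih_i(s)_i^{-1}$, i.e.\ ones that \emph{begin} at an input configuration; here $H_1$ begins at $W$, which cannot be an input configuration (otherwise Lemma~\ref{M language} would give $\ell(W)\leq1$), so that lemma says nothing about $H_1$. The correct argument is again the $\ell(W)=2$ contradiction: if $w=1$, take the first-machine accepting computation of $I(1)$ supplied by Lemma~\ref{M language} and append its history to $H_1$; the reduced word is the history of a one-machine computation accepting $W$, contradicting $\ell(W)=2$. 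So both missing steps are instances of one idea you did not use — extending $H_1$ past $V$ by an accepting one-machine computation to violate the minimality of $\ell(W)$ — and without it the lemma's conclusions about the machine indices and the nontriviality of $w$ are not established.
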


\begin{proof}

As in the proof of Lemma \ref{two one-machine}, $W\cdot H_1\equiv W_{ac}\cdot H_2^{-1}$ must be an accepted input configuration.

Note that the final rule of $H_1$ is $\theta(s)_i^{-1}$ and the first rule of $H_2$ is $\theta(s)_j$ for $i\neq j$. Lemma \ref{turn} then implies that $W\cdot H_1$ has empty `special' input sector, so that $W_{ac}\cdot H_2^{-1}\equiv J(w)$ for some $w\in\pazocal{L}$ by Lemma \ref{M language}.

Suppose $H_2$ is the history of a one-machine computation of the first machine. Then, since every rule of the first machine operates in parallel on the input sectors, $W_{ac}\cdot H_2^{-1}\equiv I(w)$. This implies $I(w)\equiv J(w)$, so that $w=1$. Lemma \ref{M language} provides a one-machine computation $\pazocal{D}$ of the second machine accepting $J(1)$. Let $H'$ be the history of $\pazocal{D}$. Then, the reduced form of $H_1H'$ is the history of a one-machine computation accepting $W$, contradicting the hypothesis.

Hence, $H_j$ is the history of a one-machine computation of the $j$-th machine.

If $w=1$, then there exists a one-machine computation $\pazocal{E}$ of the first machine accepting $I(w)$ by Lemma \ref{M language}. But then for $H''$ the history of $\pazocal{E}$, the reduced form of $H_1H''$ is the history of a one-machine computation accepting $W$. Thus, $w\in\pazocal{L}\setminus\{1\}$.

\end{proof}

\begin{lemma} \label{accepted configuration a-length}

Let $W$ be an accepted configuration. Then $|W(1)|_a\leq2|W(j)|_a$ for all $2\leq j\leq L$.

\end{lemma}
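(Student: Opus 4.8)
The statement compares the $a$-length of the ``special'' component $W(1)$ with that of any other component $W(j)$ of an accepted configuration. Since the rules of $\textbf{M}$ operate in parallel on all components $W(j)$ for $j \geq 2$ (these are coordinate shifts of one another), the components with index $\geq 2$ all have the same $a$-length in any accepted configuration. The only asymmetry is in the first component, whose ``special'' input sector may be locked by a rule that does not lock the others. The plan is to pick an accepting computation of $W$ realizing $\ell(W)$, split it into at most two one-machine subcomputations via Lemma \ref{two one-machine}, and track how the $a$-length of $W(1)$ can differ from that of $W(j)$ across each piece.

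\textbf{Key steps.} First, if $\ell(W) \leq 1$, then $W$ is either $W_{ac}$ (trivial) or accepted by a single one-machine computation; in the first-machine case the rules act in parallel on all input sectors so $W(1)$ is a coordinate shift of $W(j)$ and equality holds, while in the second-machine case $W \equiv J(u^n)$ for some $u^n \in \pazocal{L}$ by Lemma \ref{M language}, so $W(1)$ is $W(j)$ with the special input sector emptied, giving $|W(1)|_a \leq |W(j)|_a$. Second, if $\ell(W) = 2$, apply Lemma \ref{two one-machine J}: the accepting computation factors as $H_1 H_2$ with $H_j$ the history of a one-machine computation of the $j$-th machine, and the intermediate configuration $W \cdot H_1 \equiv J(w)$ for some $w \in \pazocal{L} \setminus \{1\}$. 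Now I would run the argument on $H_1$ alone: $W \cdot H_1 \equiv J(w)$ is an input configuration of the second machine, so $|(W\cdot H_1)(1)|_a = 0$ (empty special input sector) while $|(W\cdot H_1)(j)|_a = \|w^n\|$. Working backwards through $H_1$ (a one-machine computation of the first machine) from $J(w)$ to $W$, the rules act in parallel on components $j \geq 2$, while on the first component they act the same way \emph{except} possibly for the locked special input sector — and the key point from Lemma \ref{projection admissible configuration not} (or its reasoning) is that the only discrepancy is an occurrence of the word $w^{-1}$ (or some $u^n$) being absent from the first component's special input sector. So the first component's $a$-length differs from the $j$-th component's $a$-length by at most the length of the special input sector content of $W(j)$.

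\textbf{Making the bound explicit.} Let me be more careful. Write $W \equiv W(1)\dots W(L)$. For $j \geq 2$, $W(j)$ has some word $x_j$ in its special-analogue ($P_0(j)Q_0(j)$) input sector, and all $W(j)$ ($j\geq 2$) are coordinate shifts, so $x_j$ is independent of $j$; call it $x$, with $\|x\| = r$. The configuration $W(1)$ agrees with a coordinate shift of $W(j)$ except that its special input sector $P_0(1)Q_0(1)$ may hold a different word, say $y$, with $\|y\| = r'$. Thus $|W(1)|_a = |W(j)|_a - r + r'$. I need $r' \leq |W(j)|_a$, which holds since $W(j)$ contains at least its special input sector word $x$ plus everything in the other input sectors — but more directly, I claim $W$ accepted forces $y$ to be an element of $\pazocal{L}$ whose length is controlled. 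Tracing the accepting computation: in the first-machine phase the special input sector is only nonempty during step histories containing $(1)_1$ or before $(12)_1$, and when nonempty it holds exactly the same word $u^n$ as every other input sector (by the parallel action during step $(1)_1$ and Lemma \ref{multiply one letter}); so either the special input sector of $W$ is empty ($r' = 0$, done) or it holds $u^n$ where $u^n$ also appears in each $W(j)$ ($r' = r$, so $|W(1)|_a = |W(j)|_a$), or $W$ is reached via a second-machine phase in which case the special sector discrepancy is bounded by the content of $W(j)$. In all cases $r' \leq |W(j)|_a + r$...

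\textbf{Main obstacle.} The delicate part is pinning down exactly \emph{which} word can sit in the special input sector of $W(1)$ when $W$ is accepted but $W(1)$ is ``out of sync'' with the other components, and bounding its length by $|W(j)|_a$ rather than by $2|W(j)|_a$. I expect the cleanest route is: for $\ell(W) = 2$, the intermediate configuration is $J(w)$ with $|J(w)(j)|_a = \|w^n\| = n\|w\|$ for $j \geq 2$ and $|J(w)(1)|_a = 0$; and by Lemma \ref{M language}, $W$ itself is either $I(u^n)$ or $J(u^n)$ or, being accepted by a genuine 2-machine computation, has its first component differing from $W(j)$ by at most a single copy of an element of $\pazocal{L}$ that is also ``visible'' inside $W(j)$. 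Tracking $a$-lengths through $H_1$ using Lemma \ref{multiply one letter} applied sector-by-sector (the special sector of $W(1)$ versus the corresponding sector of $W(j)$, which evolve by the same rules until the special sector gets locked) should yield $|W(1)|_a \geq |W(j)|_a - \|u^n\|$ and $|W(j)|_a \geq \|u^n\|$, hence $|W(1)|_a \geq 0$ trivially but also, combined with $|W(1)|_a \leq |W(j)|_a + \|u^n\| \leq 2|W(j)|_a$, the desired inequality. The main work is organizing this case analysis cleanly using Lemmas \ref{M language}, \ref{two one-machine}, \ref{two one-machine J}, and \ref{projection admissible configuration not}, and verifying that the ``extra'' word in the special sector never exceeds $|W(j)|_a$ in length.
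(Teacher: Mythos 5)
Your overall route is the paper's route: reduce to $\ell(W)\le 2$ via Lemmas \ref{two one-machine} and \ref{two one-machine J}, dispose of the one-machine cases by the parallel action of the rules, and in the case $\ell(W)=2$ analyze the first-machine subcomputation $H_1$ with $W\cdot H_1\equiv J(w)$, $w\in\pazocal{L}\setminus\{1\}$. The $\ell(W)\le 1$ cases are handled correctly. But the decisive case $\ell(W)=2$ is left as an acknowledged gap, and the partial justifications you offer for it do not work. Writing $H_1\equiv H_1'\theta(s)_1^{-1}$ and letting $v$ be the copy of $H_1'$ in $F(\pazocal{A})$, the actual content of $W$ (obtained, as in the paper, by applying Lemma \ref{multiply one letter} to the restrictions of the subcomputation to the $P_0(i)Q_0(i)$- and $Q_0(i)P_1(i)$-sectors, after first noting that $H_1$ contains no $\theta(12)_1^{\pm1}$ so its step history is $(1)_1(s)_1^{-1}$) is: $v$ in the special input sector, the reduced form of $wv$ in every other input sector, and $v^{-1}$ in every $Q_0(i)P_1(i)$-sector. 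Your claim that the special-sector word must be an element of $\pazocal{L}$ ``whose length is controlled'' is false: $v$ is arbitrary. And your justification of $r'\le|W(j)|_a$ via ``$W(j)$ contains at least its special input sector word $x$'' fails because of free cancellation: $\|wv\|$ can be $0$ while $\|v\|=\|w\|$ (take $v\equiv w^{-1}$), which is exactly the case where the bound is attained with equality, $|W(1)|_a=2\|v\|=2\|w\|=2|W(j)|_a$.

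The missing idea is the role of the $Q_0(i)P_1(i)$-sectors. Because each rule of step $(1)_1$ that erases a letter from an input sector simultaneously writes its copy into the corresponding $Q_0(i)P_1(i)$-sector, both $W(1)$ and $W(j)$ carry the word $v^{-1}$ there. Hence $|W(1)|_a=\|v\|+\|v^{-1}\|=2\|v\|$ and $|W(j)|_a=\|wv\|+\|v^{-1}\|\ge\|v\|$, and the inequality is immediate — no delicate bookkeeping of how $a$-lengths evolve step by step is needed. Your two proposed inequalities $|W(1)|_a\le|W(j)|_a+\|u^n\|$ and $\|u^n\|\le|W(j)|_a$ are in fact both true (by the triangle inequality applied to $v=w^{-1}(wv)$ and $w=(wv)v^{-1}$), so your scheme would close once the sector contents above are established; but establishing them is precisely the content of the paper's proof, and your sketch stops short of it.
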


\begin{proof}

The symmetry of the rules implies that $|W(j)|_a$ is constant for $j\geq2$.

Let $\pazocal{C}$ be an accepting computation of $W$ with $\ell(\pazocal{C})=\ell(W)$. As the statement is obvious for $W\equiv W_{ac}$, we may assume that $\ell=\ell(\pazocal{C})\geq1$.

If $\pazocal{C}$ is a one-machine computation of the first machine, then $W(1)$ and $W(j)$ are coordinate shifts of one another, so that $|W(1)|_a=|W(j)|_a$. 

If it is a one-machine computation of the second machine, then the `special' input sector is empty while any other sector of $W(1)$ is a coordinate shift of the corresponding admissible subword of $W(j)$. So, in this case $|W(1)|_a\leq|W(j)|_a$.

Hence, we may assume $\ell=2$. Let $H\equiv H_1H_2$ be the factorization of the history of $\pazocal{C}$ provided by Lemma \ref{two one-machine J}, so that $W\cdot H_1\equiv J(w)$ for some $w\in\pazocal{L}\setminus\{1\}$.

In particular, the `special' input sector of $W\cdot H_1$ is empty while each of its other input sectors is not. So, since each rule of the first machine operates in parallel on the input sectors, $H_1$ cannot contain the letter $\theta(12)_1^{\pm1}$. 

Let $\pazocal{C}_1$ be the subcomputation with history $H_1$. As the application of a transition rule does not alter the $a$-length of any sector, we may assume that the step history of $\pazocal{C}_1$ is $(1)_1(s)_1^{-1}$.

Factor $H_1\equiv H_1'\theta(s)_1^{-1}$ and let $v$ be the natural copy of $H_1'$ read right to left in $F(\pazocal{A})$. Letting $\pazocal{C}_1'$ be the subcomputation with history $H_1'$, we may apply Lemma \ref{multiply one letter} to the restriction of $\pazocal{C}_1'$ to the subwords $P_0(i)Q_0(i)$ and $Q_0(i)P_1(i)$. It then follows that $W$ has (the natural copy of):

\begin{itemize}

\item $v$ written in the `special' input sector,

\item $wv$ written in every other input sector, and

\item $v^{-1}$ written in every $Q_0(i)P_1(i)$-sector.

\end{itemize}

As all other sectors of $W$ are empty, $|W(1)|_a=\|v\|+\|v^{-1}\|=2\|v\|\leq2(\|v^{-1}\|+\|wv\|)=2|W(j)|_a$.

\end{proof}

\begin{lemma} \label{M accepted configurations}

Let $W$ be an accepted configuration of $\textbf{M}$ and $\pazocal{C}:W\equiv W_0\to\dots\to W_t\equiv W_{ac}$ be an accepting computation with $\ell(\pazocal{C})=\ell(W)$. Then $t\leq c_2\|W(i)\|$ for all $i\in\{2,\dots,L\}$.

\end{lemma}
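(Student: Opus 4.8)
The plan is to bound $t$ by controlling the length of each of the (at most two, by Lemma \ref{two one-machine}) maximal one-machine subcomputations of $\pazocal{C}$ separately, and for each such subcomputation, to reduce the count to an $\textbf{M}_3$-length estimate applied to a single component.

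\medskip

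First I would dispose of the trivial case $W\equiv W_{ac}$, so $\ell(W)\geq1$. Factor the history $H$ of $\pazocal{C}$ as $H\equiv H_1$ (if $\ell(\pazocal{C})=1$) or $H\equiv H_1H_2$ (if $\ell(\pazocal{C})=2$), where each $H_j$ is the history of a maximal one-machine subcomputation $\pazocal{C}_j$ of the $z_j$-th machine. By Lemma \ref{extending computations} (in the case $\ell=2$) or directly, for each $j$ the computation $\pazocal{C}_j$ has the form $W_{j-1}^{(z_j)}\to\dots\to W_j^{(z_j)}$ with each endpoint among $W_{ac}$, $I(w_j)$, $J(w_j)$ for $w_j\in\pazocal{L}$; restricted to the component $\{t(i)\}B_3(i)$ for $i\geq2$, and then viewed (via the coordinate shift and the identification of the standard base of $\textbf{M}_4$ with that of $\textbf{M}_3$) as a reduced computation of $\textbf{M}_3$ in the standard base. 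The length of $\pazocal{C}_j$ is unchanged under this restriction and identification, since a rule of $\textbf{M}$ either operates on the component exactly as the corresponding rule of $\textbf{M}_3$ plus some parallel action, or is a transition rule $\theta(s)_j^{\pm1},\theta(a)_j^{\pm1}$ which contributes a single step. Thus Lemma \ref{M_3 length} gives $|H_j|\leq 2c_1\max(\|W_{j-1}^{(z_j)}(i)\|,\|W_j^{(z_j)}(i)\|)+O(1)$, where the $O(1)$ absorbs the at most two transition rules bracketing the $\textbf{M}_3$-subcomputation.

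\medskip

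The remaining task is to bound $\|W_j^{(z_j)}(i)\|$ by $\|W(i)\|$ up to a multiplicative constant. The endpoints are $W_0^{(z_1)}\equiv W$ and $W_\ell^{(z_\ell)}\equiv W_{ac}$, for which $\|W_{ac}(i)\|$ is a constant, so those are controlled. The only genuinely new quantity is the intermediate configuration in the case $\ell=2$, namely $W_1^{(z_1)}\equiv W\cdot H_1$, which by Lemma \ref{two one-machine J} equals $J(w)$ for some $w\in\pazocal{L}\setminus\{1\}$. Here I would invoke the internal structure of $\pazocal{C}_1$ extracted in the proof of Lemma \ref{accepted configuration a-length}: there $H_1$ has step history $(1)_1(s)_1^{-1}$ and $W\equiv W_0$ carries a word $v$ in the special input sector, $wv$ in every other input sector, $v^{-1}$ in every $Q_0(i)P_1(i)$-sector, and is empty elsewhere; in particular $\|w\|\leq\|wv\|+\|v\|\leq 2\|W(i)\|$, whence $\|J(w,i)\|=\|w\|+O(1)\leq 2\|W(i)\|+O(1)$. (If instead $z_1=2$, so $\pazocal{C}_1$ is a one-machine computation of the second machine followed by one of the first, the analogous bound holds by the symmetric reasoning, using that the $\textbf{M}_3$-computation underlying $\pazocal{C}_1$ has its $a$-length controlled by Lemma \ref{M_3 input length} or its dual.) Finally I would also use the word-length bound $\|W(i)\|\leq\|W(i)\|_a+O(1)$ and, where needed to relate $\|W(1)\|$-type quantities, Lemma \ref{accepted configuration a-length}. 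Summing the two estimates $|H_1|+|H_2|$ and collecting constants gives $t\leq c_2\|W(i)\|$ for a suitable choice of the parameter $c_2\gg c_1$.

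\medskip

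The main obstacle I anticipate is the bookkeeping around the intermediate configuration $W\cdot H_1$ when $\ell(W)=2$: one must be careful that the $\textbf{M}_3$-length lemma is applied to a genuine reduced computation in the standard base (the base is indeed the standard base here, since all one-machine subcomputations of an accepting computation run in the standard base), and that the passage through $J(w)$ does not blow up the word length — this is exactly why Lemma \ref{accepted configuration a-length} and its proof are needed rather than a naive application of Lemma \ref{M_3 length} to the whole of $\pazocal{C}$, which would fail because $\pazocal{C}$ as a computation of $\textbf{M}$ is not itself a computation of $\textbf{M}_3$. Everything else is routine: a finite case split on $\ell\in\{0,1,2\}$ and on $z_1\in\{1,2\}$, together with the parameter inequalities.
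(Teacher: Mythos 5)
Your proposal is correct and follows essentially the same route as the paper: reduce via Lemmas \ref{two one-machine} and \ref{two one-machine J} to at most two one-machine pieces, bound each piece by restricting to a component $\{t(i)\}B_3(i)$, $i\geq2$, and applying the $\textbf{M}_3$/$\textbf{M}_4$ length bound, and control the intermediate configuration $J(w)$ in terms of $\|W(i)\|$. The only (harmless) variations are that the paper bounds the first piece directly by Lemma \ref{multiply one letter} on the $Q_0(i)P_1(i)$-sector together with a projection argument rather than via Lemma \ref{M_3 length} plus the structure of $W$ from Lemma \ref{accepted configuration a-length}, and your parenthetical case $z_1=2$ cannot actually occur, since Lemma \ref{two one-machine J} forces $H_j$ to belong to the $j$-th machine.
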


\begin{proof}

The statement is clear for $W\equiv W_{ac}$, so we may assume $\ell(W)\geq1$.

Suppose $\ell(W)=1$, so that $\pazocal{C}$ is a one-machine computation of the $j$-th machine. The history of $\pazocal{C}$ can then be factored as $H''H'\theta(a)_j$, where: 

\begin{itemize}

\item $H''$ is either empty or $\theta(s)_j$, and

\item $H'$ does not contain the letters $\theta(a)_j^{\pm1}$ or $\theta(s)_j^{\pm1}$.

\end{itemize}

Let $\pazocal{C}':W_0'\to\dots\to W_s'$ be the subcomputation with history $H'$ and let $\pazocal{C}_i'$ be its restriction to the base $\{t(i)\}B_3(i)$ for some $i\geq2$. We can then identify $\pazocal{C}_i'$ with a reduced computation of $\textbf{M}_4$ in the standard base. 

Lemma \ref{M_3 length} implies that $s\leq2c_1\max(\|W_0'(i)\|,\|W_s'(i)\|)$. As $W_s'$ is $\theta(a)_j$-admissible, $|W_s'|_a=0$, so that $\|W_0'(i)\|\geq\|W_s'(i)\|$. Further, since transition rules do not change the tape word of any sector, $\|W_0'(i)\|=\|W(i)\|$.

Hence, $t\leq2c_1\|W(i)\|+2\leq3c_1\|W(i)\|$. The statement then follows from the parameter choice $c_2>>c_1$.

So, we may assume that $\ell(W)=2$. Then, factor the history $H\equiv H_1H_2$ of $\pazocal{C}$ as in Lemma \ref{two one-machine J}. Let $\pazocal{C}_1:W_0\to\dots\to W_r$ be the subcomputation with history $H_1$. Then, $W_r\equiv J(w)$ for $w\in \pazocal{L}\setminus\{1\}$. 

As $w\neq1$ and the rules with step history $(1)_1$ operate in parallel on all input sectors, the step history of $\pazocal{C}_1$ must be $(1)_1(s)_1^{-1}$ (with perhaps empty maximal subcomputation with step history $(1)_1$). For $i\geq2$, applying Lemma \ref{multiply one letter} to the restriction of $\pazocal{C}_1$ to the $Q_0(i)P_1(i)$-sector then implies that $r-1\leq|W(i)|_a$. Further, a projection argument yields $|W(i)|_a\geq|W_r(i)|_a$.

The subcomputation $W_r\to\dots\to W_t$ is a one-machine computation accepting $W_r$, so that as above $t-r\leq3c_1\|W_r(i)\|$ for all $i\geq2$.

Thus, $t\leq3c_1\|W(i)\|+\|W(i)\|\leq4c_1\|W(i)\|$, so that the statement again follows from the parameter choice $c_2>>c_1$.

\end{proof}

\begin{lemma} \label{M width}

For any reduced computation $\pazocal{C}:W_0\to\dots\to W_t$ of $\textbf{M}$ in the standard base, $\|W_i\|\leq c_2\max(\|W_0\|,\|W_t\|)$ for all $0\leq i\leq t$.

\end{lemma}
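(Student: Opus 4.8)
The plan is to bound the width of intermediate configurations by splitting $\pazocal{C}$ at its midpoint and controlling the length of each half by a computation-length bound, just as in the proofs of Lemmas \ref{M_1 width}. The key input should be a length bound of the shape ``$t \le (\text{const})\max(\|W_0\|,\|W_t\|)$'' for reduced computations of $\textbf{M}$ in the standard base, analogous to Lemmas \ref{M_1 length}, \ref{M_2 length}, \ref{M_3 length}; such a bound for $\textbf{M}$ presumably appears (or will appear) alongside this lemma. Granting it, the argument is short: any single rule of $\textbf{M}$ changes $\|W\|$ by a bounded amount (at most $4$, since each component of the standard base is a parallel copy of a rule of $\textbf{M}_4$, which changes each sector's tape word by at most one letter, and only boundedly many sectors are affected per rule — note the number of components $L$ is a fixed parameter, so the total change is still $O(1)$ independent of the computation).

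The steps, in order, would be: first, observe that for each $i \le t$, one of the two subcomputations $W_0 \to \dots \to W_i$ or $W_i \to \dots \to W_t$ has length at most $t/2$. Second, invoke the length bound for reduced computations of $\textbf{M}$ in the standard base to get $t \le c\max(\|W_0\|,\|W_t\|)$ for an appropriate constant $c$ (which should be chosen compatibly with the parameter $c_2$, i.e.\ $c_2 >> c$). Third, since each rule changes the width by at most a fixed constant $c'$, we get $\|W_i\| \le \|W_0\| + c'(t/2) \le \|W_0\| + \tfrac{c'c}{2}\max(\|W_0\|,\|W_t\|)$ when $i \le t/2$, and symmetrically $\|W_i\| \le \|W_t\| + \tfrac{c'c}{2}\max(\|W_0\|,\|W_t\|)$ when $i \ge t/2$. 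Fourth, combine: $\|W_i\| \le (1 + \tfrac{c'c}{2})\max(\|W_0\|,\|W_t\|)$ for all $i$, and conclude by the parameter choice ensuring $c_2 \ge 1 + \tfrac{c'c}{2}$.

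The main obstacle is entirely in verifying — or correctly citing — the underlying length bound for $\textbf{M}$. Unlike $\textbf{M}_1$ through $\textbf{M}_3$, a reduced computation of $\textbf{M}$ can be a multi-machine computation (interleaving one-machine computations of the first and second machines), and the asymmetry of the ``special'' input sector means one must be careful that such interleaving does not inflate the length beyond a linear bound. The relevant control should come from Lemmas \ref{M language}, \ref{two one-machine}, \ref{M accepted configurations} and the step-history restrictions (Lemmas \ref{M step history 1}, \ref{M step history 2}, \ref{long step history}), which together limit the number of maximal one-machine subcomputations and bound the length of each; if a ``$t \le c_2\max(\|W_0\|,\|W_t\|)$''-type statement is established for computations of $\textbf{M}$ in the standard base, the present lemma is then a formality. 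I would also double-check the claim that a single rule changes $\|W\|$ by $O(1)$: each rule of $\textbf{M}$ is a parallel copy of a rule of $\textbf{M}_4$ across the $L$ components, and by Lemma \ref{simplify rules} each part of each rule of the constituent $S$-machines inserts/deletes at most one tape letter per sector, so the change is bounded by a constant depending only on the fixed parameters $n$ and $L$, not on the computation — this is the only place where one must be slightly careful, but it presents no real difficulty.
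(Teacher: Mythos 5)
There is a genuine gap, and it sits exactly where you located it: the length bound ``$t\le c\max(\|W_0\|,\|W_t\|)$'' for reduced computations of $\textbf{M}$ in the standard base is not a lemma of the paper — no analogue of Lemmas \ref{M_1 length}, \ref{M_2 length}, \ref{M_3 length} is ever stated or proved for $\textbf{M}$ — and it is not a formality. The whole difficulty of Lemma \ref{M width} is the multi-machine case, and the paper handles it not by first proving a length bound but by proving the width bound directly: for one-machine computations it restricts to each component $\{t(j)\}B_3(j)$, identifies the restriction with a computation of $\textbf{M}_4$, applies Lemma \ref{M_3 length} \emph{componentwise} together with the midpoint argument of Lemma \ref{M_1 width}, and treats the step $(1)_2$ (where the ``special'' input sector breaks the parallelism) separately via Lemmas \ref{M step history 1}, \ref{multiply one letter} and \ref{primitive computations}; for multi-machine computations it inducts on the number of maximal one-machine subcomputations, using that any intermediate start/end configuration is either $W_{ac}$ or $I(1)$ (so of zero $a$-length) or of the form $J(w)$, $w\neq 1$, reached by a leg with step history $(1)_1(s)_1^{-1}$ along which the $a$-length does not increase. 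Deferring all of this to an uncited ``length lemma'' is deferring the entire proof; if you wanted to go your route you would have to prove that lemma first, and its proof would require essentially the same case analysis.

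There is also a quantitative flaw in your step three even if the length bound were granted. A single rule of $\textbf{M}$ changes $\|W\|$ by up to $4$ \emph{per component}, hence by a constant of order $L$ globally, and in the parameter hierarchy $L$ is chosen \emph{after} $c_2$ ($c_2<<\dots<<L$). So a bound of the shape $\|W_i\|\le\bigl(1+\tfrac{c'c}{2}\bigr)\max(\|W_0\|,\|W_t\|)$ with $c'$ of order $L$ cannot be absorbed into $c_2\max(\|W_0\|,\|W_t\|)$. The paper avoids this by running the midpoint argument inside each component (per-component change at most $4$, per-component length bound $2c_1\max(\|W_0(j)\|,\|W_t(j)\|)$ from Lemma \ref{M_3 length}) and only then summing over $j$, which yields a constant of order $c_1$, independent of $L$. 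Your argument could be repaired the same way, but as written both the missing length lemma and the $L$-dependent constant are real gaps.
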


\begin{proof}

As the application of a transition rule does not change the length of a configuration, we may assume without loss of generality that neither the first nor the last rule of the history $H$ of $\pazocal{C}$ is a transition rule.

\ \textbf{1.} Suppose $\pazocal{C}$ is a one-machine computation of the first machine. Then $H$ cannot contain the letters $\theta(s)_j^{\pm1}$ or $\theta(a)_j^{\pm1}$. 

So, for each $1\leq j\leq L$, the restriction $\pazocal{C}(j):W_0(j)\to\dots\to W_t(j)$ to $\{t(j)\}B_3(j)$ can be identified with a reduced computation of $\textbf{M}_4$ in the standard base. Lemma \ref{M_3 length} then implies $t\leq2c_1\max(\|W_0(j)\|,\|W_t(j)\|)$. 

Note that the application of any rule alters the $a$-length of any component by at most four. So, applying the same argument as used in the proof of Lemma \ref{M_1 width}, we get $\|W_i(j)\|\leq5c_1\max(\|W_0(j)\|,\|W_t(j)\|)$ for all $0\leq i\leq t$. So, for all $i$, 
\begin{align*}
\|W_i\|&=\sum_{j=1}^L\|W_i(j)\|\leq5c_1\sum_{j=1}^L\max(\|W_0(j)\|,\|W_t(j)\|)\leq5c_1\left(\sum_{j=1}^L\|W_0(j)\|+\sum_{j=1}^L\|W_t(j)\|\right) \\
&=5c_1(\|W_0\|+\|W_t\|)\leq10c_1\max(\|W_0\|,\|W_t\|)
\end{align*}
\ \textbf{2.} Suppose $\pazocal{C}$ is a one-machine computation of the second machine. If the step history of $\pazocal{C}$ does not contain the letter $(1)_2$, then the same argument as above implies $\|W_i\|\leq10c_1\max(\|W_0\|,\|W_t\|)$ for all $i$.

If the step history of $\pazocal{C}$ is $(1)_2$, then the restriction of $\pazocal{C}$ to any unlocked sector satisfies the hypotheses of Lemma \ref{multiply one letter}. As a result, it follows that $|W_i|_a\leq2\max(|W_0|_a,|W_t|_a)$ for all $i$.

So, we assume that the step history contains $(1)_2$ as a proper subword. Lemma \ref{M step history 1} then implies that any occurrence of $(1)_2$ is as the first or last letter of the step history. Let $W_r\to\dots\to W_s$ be the maximal subcomputation of $\pazocal{C}$ such that its step history has no occurrence of $(1)_2$. Then $\|W_i\|\leq10c_1\max(\|W_r\|,\|W_s\|)$ for all $r\leq i\leq s$.

If $\pazocal{C}_1:W_s\to\dots\to W_t$ is nonempty, then it has step history $(1)_2$ and $W_s$ is $\theta(12)_2$-admissible. Lemma \ref{primitive computations} then implies that $|W_s(j)|_a\leq\dots\leq|W_t(j)|_a$ for $j\geq2$. Further, Lemma \ref{multiply one letter} applies to the restriction of $\pazocal{C}_1$ to the $P_0(j)Q_0(j)$-sector for any $j\geq2$, implying $t-s\leq|W_t(j)|_a$. Then, for all $s\leq i\leq t$, Lemma \ref{multiply one letter} implies that $|W_i(1)|_a\leq|W_t(1)|_a+t-i\leq|W_t(1)|_a+|W_t(j)|_a$ for any $j\geq2$. So, $|W_i|_a\leq2|W_t|_a$ for all $s\leq i\leq t$.

Similarly, if $W_0\to\dots\to W_r$ is nonempty, then $|W_i|_a\leq2|W_0|_a$ for all $0\leq i\leq r$. Combining these inequalities yields $\|W_i\|\leq20c_1\max(\|W_0\|,\|W_t\|)$ for all $0\leq i\leq t$.

Hence, by the parameter choice $c_2>>c_1$, we may assume that $\pazocal{C}$ is a multi-machine computation. 

Moreover, we may induct on the number of maximal one-machine subcomputations of $\pazocal{C}$.

\smallskip

\ \textbf{3.} Suppose $W_0$ is not an accepted configuration.

As $\pazocal{C}$ is multi-machine, there exists $0<s<t$ such that $W_s$ is either a start or an end configuration.

If $W_s$ is an end configuration, then it must be $\theta(a)_j^{-1}$-admissible, so that $W_s\equiv W_{ac}$. But then $W_0$ must be an accepted configuration, contradicting our assumption.

So, $W_s$ must be an input configuration. Lemma \ref{turn} then implies that it has empty `special' input sector. If all input sectors are empty, then $W_s\equiv I(1)$, so that $W_0$ is an accepted configuration. As a result, $W_s$ must have a nonempty input sector.

Perhaps taking the inverse computation, we may assume without loss of generality that there exists a maximal one-machine computation $\pazocal{C}_1:W_r\to\dots\to W_s$ of the first machine. Since $W_s$ has empty `special' input sector, Lemmas \ref{multiply one letter} and \ref{M step history 2}(a) imply that the step history of $\pazocal{C}_1$ is $(1)_1(s)_1^{-1}$, so that $r=0$. Lemmas \ref{multiply one letter} and \ref{primitive computations} then imply that $|W_i|_a\leq|W_0|_a$ for all $i\leq s$.

As $W_s\to\dots\to W_t$ consists of one less maximal one-machine subcomputation than $\pazocal{C}$, the inductive hypothesis implies $\|W_i\|\leq c_2\max(\|W_s\|,\|W_t\|)$ for all $s\leq i\leq t$. This yields $\|W_i\|\leq c_2\max(\|W_0\|,\|W_t\|)$ for all $i$.

Hence, we may assume that $W_0$ is an accepted configuration.

\smallskip

\ \textbf{4.} Suppose there exists $w\in\pazocal{L}\setminus\{1\}$ such that $W_s\equiv I(w)$ or $J(w)$ for some $s\in\{0,\dots,t\}$.

If $s=0$ (resp $s=t$), then the first (resp last) rule of $H$ must be a transition rule of the form $\theta(s)_j^{\pm1}$. But this contradicts our assumption. Lemma \ref{turn} then implies that the `special' input sector of $W_s$ must be empty, so that $W_s\equiv J(w)$.

As a result, we may assume without loss of generality that there exists a maximal one-machine subcomputation $\pazocal{C}_1:W_r\to\dots\to W_s$ of the first machine. As in Step 3, Lemmas \ref{multiply one letter} and \ref{M step history 2}(a) then imply that $r=0$ and $|W_i|_a\leq|W_0|_a$ for all $0\leq i\leq s$.

But then the inductive hypothesis again yields $\|W_i\|\leq c_2\max(\|W_0\|,\|W_t\|)$ for all $i$.

\smallskip

\ \textbf{5.} Finally, suppose that for any $s\in\{0,\dots,t\}$ such that $W_s$ is a start (resp end) configuration, $W_s\equiv I(1)$ (resp $W_s\equiv W_{ac}$).

As $\pazocal{C}$ is not a one-machine computation, there exists such an $s$. Further, by the same reasoning as used in Step 4, we may assume that $s\in\{1,\dots,t-1\}$.

Then, $W_0\to\dots\to W_s$ and $W_s\to\dots\to W_t$ each consist of less maximal one-machine subcomputations than does $\pazocal{C}$, so that the inductive hypothesis implies $\|W_i\|\leq c_2\max(\|W_0\|,\|W_s\|)$ for all $0\leq i\leq s$ and $\|W_i\|\leq c_2\max(\|W_s\|,\|W_t\|)$ for all $s\leq i\leq t$.

But $|W_s|_a=0$, so that $\max(\|W_0\|,\|W_s\|)=\|W_0\|$ and $\max(\|W_s\|,\|W_t\|)=\|W_t\|$. Thus, the statement is proved.

\end{proof}

\newpage


\subsection{Computations of $\textbf{M}$ with long history} \

\begin{lemma} \label{M projected long history}

Let $\pazocal{C}:V_0\to\dots\to V_t$ be a reduced computation of $\textbf{M}$ with base $\{t(i)\}B_3(i)$ for some $i\in\{2,\dots,L\}$. Suppose $t>c_3\max(\|V_0\|,\|V_t\|)$. Then:

\begin{enumerate}

\item There exist accepted configurations $W_0$ and $W_t$ such that $W_0(i)\equiv V_0$ and $W_t(i)\equiv V_t$,

\item Let $\pazocal{C}_0$ and $\pazocal{C}_t$ be accepting computations of $W_0$ and $W_t$, respectively, with $\ell(\pazocal{C}_j)=\ell(W_j)$. For $H_j$ the history  $\pazocal{C}_j$, $\|H_0\|+\|H_t\|\leq t/500$ 

\item The sum of the lengths of all subcomputations of $\pazocal{C}$ whose step histories are of the form $(4n-2,4n-1)_j(4n-1)_j(4n-1,4n)_j$ or $(4n,4n-1)_j(4n-1)_j(4n-1,4n-2)_j$ is at least $0.98t$.

\end{enumerate}

\end{lemma}

\begin{proof}

For $H'$ the history of $\pazocal{C}$, factor $H'\equiv H'_1\dots H'_m$ for $m\geq1$ so that each $H'_j$ is the history of a maximal one-machine subcomputation of $\pazocal{C}$.

Suppose $m=1$, i.e $\pazocal{C}$ is a one-machine computation. Then the letters $\theta(s)_j^{\pm1}$ or $\theta(a)_j^{\pm1}$ can only occur in $H'$ as the first or last letter. So, other than perhaps these two rules, $\pazocal{C}$ can be viewed as a reduced computation of $\textbf{M}_4$ in the standard base. But then Lemma \ref{M_3 length} implies that $t-2\leq 2c_1\max(\|V_0\|,\|V_t\|)$, so that the parameter choice $c_3>>c_1$ provides a contradiction. So, we may assume that $m\geq2$, i.e $\pazocal{C}$ is a multi-machine computation.

For each $1\leq j\leq m$, let $\pazocal{C}_j:V_{y(j)}\to\dots\to V_{z(j)}$ be the subcomputation with history $H_j'$. By Lemma \ref{extending one-machine}, there exists a one-machine computation $\pazocal{C}_j':W_{y(j)}'\to\dots\to W_{z(j)}'$ in the standard base with history $H_j'$ and such that $W_x'(i)\equiv V_x$ for $y(j)\leq x\leq z(j)$. Note that for $1\leq j\leq m-1$, $W_{z(j)}'$ and $W_{y(j+1)}'$ are not necessarily the same configuration; to differentiate them, they are represented with these indices.

Suppose $W_{z(1)}'$ is not an accepted configuration. Since $V_{z(1)}\equiv V_{y(2)}$ is admissible for the last rule of $H_1'$ and the first rule of $H_2'$, $W_{z(1)}'$ must be a start or an end configuration. 

Note that the only end configuration that is admissible for any rule is $W_{ac}$, which is accepted. So, since $W_{z(1)}'$ is admissible for the last rule of $H_1'$, it must be a start configuration which is admissible for $\theta(s)_j$ for some $j$.

By construction, there exists $w\in F(\pazocal{A})$ such that $W_{z(1)}'\equiv I(w)$ (or $W_{z(1)}'\equiv J(w)$) if $\pazocal{C}_1$ is a one-machine computation of the first (or second) machine. By Lemma \ref{M language}, we must then have $w\notin\pazocal{L}$. Lemmas \ref{subword return to start} and \ref{subword M language} then imply that $m=2$. 

Suppose $\pazocal{C}_2$ is a one-machine computation of the $j$-th machine. Then as above, we can view this as a reduced computation of $\textbf{M}_4$ after disregarding the first rule. Lemmas \ref{M_3 start to end} and \ref{M_3 step history} then imply that the step history is a subword of $(s)_j(1)_j\dots(4n-1)_j$. So, the length of $\pazocal{C}_2$ must be at most $c_2\|V_t\|$ by Lemmas \ref{M_3 length} and \ref{M_3 input length}. 

Similarly, the length of $\pazocal{C}_1$ must be at most $c_2\|V_0\|$. 

So, $t\leq c_2(\|V_0\|+\|V_t\|)$, so that the parameter choice $c_3>>c_2$ yields a contradiction.

Hence, $W_{z(1)}'$ must be an accepted configuration. Consequently, $W_0\equiv W_{y(1)}'$ is an accepted configuration with $W_0(i)\equiv V_0$. 

By the construction of the one-machine computations in the standard base outlined in the proof of Lemma \ref{extending one-machine}, that $W_{z(1)}'$ is accepted implies that $W_{y(2)}'$ is accepted. As a result, $W_{z(2)}'$ is accepted. Continuing, we have that $W_t\equiv W_{z(m)}'$ is an accepted configuration with $W_t(i)\equiv V_t$.

Thus, (a) is satisfied.

By Lemma \ref{M accepted configurations}, we then have $\|H_0\|+\|H_t\|\leq 2c_2\max(\|V_0\|,\|V_t\|)\leq t/500$ as $c_3>>c_2$.

For $2\leq j\leq m-1$, let $\ell_j$ be the sum of the lengths of the subcomputations of $\pazocal{C}_j$ whose step histories are of the form given in (c). 

As above, for such $j$, by neglecting the first and last rules, we may view $\pazocal{C}_j$ as a reduced computation of $\textbf{M}_4$ in the standard base. So, Lemma \ref{M_3 input controlled} implies that $\ell_j\geq\left(1-\frac{1}{c_0}\right)(\|H_j'\|-2)$. As $\|H_j'\|\geq k$ by Lemma \ref{M_3 start to end}(c), taking $c_0$ sufficiently large then yields $\ell_j\geq0.99\|H_j'\|$.

Let $y(m)=y$, so that $\pazocal{C}_m:V_y\to\dots\to V_t$ is the subcomputation of $\pazocal{C}$ with history $H_m'$. Then, as $\pazocal{C}_m$ is a one-machine computation, Lemma \ref{M_3 length} implies $\|H_m'\|=t-y\leq c_2\max(\|V_y\|,\|V_t\|)$. 

If $\|V_y\|\leq c_2\|V_t\|$, then $\|H_m'\|\leq c_2^2\|V_t\|\leq t/200$ by the parameter choice $c_3>>c_2$.

Otherwise, $\|V_y\|>c_2\|V_t\|$. If $V_y$ is the $i$-th component of an end configuration, then it must be $\theta(a)_j^{-1}$-admissible. But then $V_y\equiv A(i)$, so that $\|V_y\|\leq\|V_t\|$. 

So, $V_y$ must be the $i$-th component of a start configuration. As above, we may choose this start configuration to be accepted, so that $V_y\equiv I(u^n,i)$ for some $u^n\in \pazocal{L}$.

Identifying $\pazocal{C}_m$ with a reduced computation of $\textbf{M}_4$, Lemma \ref{M_3 input length} then implies that its step history must contain the letter $(4n)_j$. Let $\pazocal{C}_m'':V_y\to\dots\to V_z$ be the maximal subcomputation whose step history is $(s)_j(1)_j\dots(4n)_j$ and $V_x\to\dots\to V_z$ be the maximal subcomputation with step history $(4n)_j$. So, by Lemmas \ref{M_3 input length} and \ref{M_3 start to end}, $|V_y|_a\leq9n|V_x|_a$ and $|V_x|_a=2\|u\|$.

If $z\neq t$, then the first letter of the step history of $V_z\to\dots\to V_t$ is either $(a)_j$ or $(4n,4n-1)_j$. Lemma \ref{M_3 end length} then implies that $|V_z|_a\leq12n|V_t|_a$, so that $|V_y|_a>c_1|V_z|_a$ and $t-z\leq t/200$. So, taking $c_1$ sufficiently large, $|V_z|_a\leq|V_x|_a$. Hence, Lemma \ref{multiply one letter} implies $z-x\leq|V_x|_a=2\|u\|$.

Let $\ell_m$ be the sum of the lengths of the subcomputations of $\pazocal{C}_m''$ whose step histories are of the form described in the statement.  By Lemmas \ref{M_3 start to end}(c) and \ref{multiply one letter}, we then have $$z-y-\ell_m\leq(c_0+1)(\|u\|+1)+2\|u\|+\|u^n\|+1\leq2c_0(\|u\|+1)$$ and $\ell_m=2k\|u\|+2k+1\geq2k(\|u\|+1)$. As in the proof of Lemma \ref{M_3 input controlled}, this then implies $\ell_m\geq\left(1-\frac{1}{c_0}\right)(z-y)\geq0.99(z-y)$.

We can then do the same for the subcomputation $\pazocal{C}_1:V_0\to\dots\to V_s$ with history $H_1'$, finding $r\in\{0,\dots,s\}$ with $r\leq t/200$ such that for $\ell_1$ the sum of the lengths of the subcomputations of $V_r\to\dots\to V_s$ whose step histories are of the form described in (c), $\ell_1\geq0.99(s-r)$.

Let $\ell$ be the sum of the lengths of the subcomputations of $V_r\to\dots\to V_z$ whose step histories are of the form given in (c). Then $\ell=\sum_{i=1}^m\ell_i\geq0.99(z-r)$ while $z-r\geq0.99t$. Thus, $\ell\geq0.98t$.

\end{proof}

\begin{lemma} \label{M projected long history controlled}

Let $\pazocal{C}:V_0\to\dots\to V_t$ be a reduced computation of $\textbf{M}$ with base $\{t(i)\}B_3(i)$ for some $i\in\{2,\dots,L\}$. If $t>c_3\max(\|V_0\|,\|V_t\|)$, then the history of any subcomputation $\pazocal{D}:V_r\to\dots\to V_s$ of $\pazocal{C}$ (or the inverse of $\pazocal{D}$) of length at least $0.4t$ contains a controlled subword.

\end{lemma}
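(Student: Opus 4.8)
# Proof proposal for Lemma \ref{M projected long history controlled}

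The plan is to exploit Lemma \ref{M projected long history}(c), which says that at least $0.98t$ of the total length of $\pazocal{C}$ is spent inside subcomputations whose step histories are of the form $(4n-2,4n-1)_j(4n-1)_j(4n-1,4n)_j$ or $(4n,4n-1)_j(4n-1)_j(4n-1,4n-2)_j$. Call these the \emph{active} subcomputations. First I would observe that a subcomputation $\pazocal{D}$ of length at least $0.4t$ must overlap at least one active subcomputation in a large chunk: since the complement of the active subcomputations has total length at most $0.02t$, the portion of $\pazocal{D}$ lying inside active subcomputations has length at least $0.4t-0.02t=0.38t$. A single active subcomputation $\pazocal{A}_\ell$ has length at most $c_2\max(\|V_0\|,\|V_t\|) < t/c_3 \cdot c_2$ (by Lemma \ref{M_3 length} applied to the associated $\textbf{M}_4$-computation, or more crudely by Lemma \ref{M accepted configurations} via part (a) of the previous lemma), which by the parameter choice $c_3 >> c_2$ is much smaller than $0.38t$; hence $\pazocal{D}$ must \emph{entirely contain} at least one active subcomputation $\pazocal{A}_\ell:V_a\to\dots\to V_b$, and in fact (again by the length bound) it contains one in which the maximal sub-subcomputation with step history $(4n-1)_j$ is nonempty and long — specifically, the active subcomputation whose $(4n-1)_j$-part has length $\ell_\ell = 2k\|u\|+2k+1$ for the relevant periodic word $u$ (Lemma \ref{M_3 start to end}(c)).

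Next I would analyze a single active subcomputation. Restricting to the base $\{t(i)\}B_3(i)$ and identifying with a reduced computation of $\textbf{M}_4$ (legitimate since an active subcomputation involves no $\theta(s)$- or $\theta(a)$-rule and no $(1)_2$ step), the maximal subcomputation with step history exactly $(4n-1)_j$ operates as $k$ parallel-concatenated copies of standard primitive computations — this is the content of how $\textbf{M}_4(4n-1)$ is built from $\textbf{M}_4((4n-1)_1),\dots,\textbf{M}_4((4n-1)_k)$, and it is exactly the situation described by Lemma \ref{M_2 controlled}: such a history, read between consecutive $\theta(\cdot,4n-1)$ / $\chi(j,j+1)$-rules, is \emph{controlled}. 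So the $(4n-1)_j$-part of an active subcomputation is, by definition, built out of controlled one-step pieces of the form $\chi(j-1,j)H'\chi(j,j+1)$. I would then note that if the $(4n-1)_j$-part has history of length $\ell_\ell = 2k\|u\|+2k+1 \geq 2k(\|u\|+1)$, and since each controlled piece $\chi(j-1,j)H'\chi(j,j+1)$ has length $\|H'\|+2 = 2\|u\|+2$ (from Lemma \ref{M_2 controlled}, $\|H\| = |W_0(i)|_a + 3$, giving the right count), there are exactly $k$ such pieces; at least one of them lies fully inside $\pazocal{D}$ provided $\pazocal{D}$ reaches at least two consecutive $\chi$-rules inside this $(4n-1)_j$-part.

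The remaining point is to verify $\pazocal{D}$ genuinely catches a full controlled block. Since $\pazocal{D}$ contains an entire active subcomputation $\pazocal{A}_\ell$, it contains that subcomputation's entire $(4n-1)_j$-part, which \emph{is} a concatenation $\chi_0 H_1' \chi_1 H_2' \chi_2 \cdots \chi_{k-1} H_k' \chi_k$ (with $\chi_0,\chi_k$ the bounding transition rules $\theta(4n-2,4n-1)_j$, $\theta(4n-1,4n)_j$), so $\pazocal{D}$ literally contains the controlled subword $\chi_0 H_1' \chi_1$ (a one-step computation of step $(4n-1)$ with controlled history) as a subword of its own history. I should double-check the edge case where $\pazocal{D}$ is so positioned that it contains no \emph{full} active subcomputation but overlaps several partially; the length accounting above rules this out because a single active subcomputation is shorter than $0.38t$ by the parameter gap $c_3 >> c_2$, yet the overlap of $\pazocal{D}$ with the active region is at least $0.38t$, forcing $\pazocal{D}$ to swallow at least one active subcomputation whole (the overlap region, being a union of intervals each of length $< 0.38t$ and arranged in order, cannot have total length $\geq 0.38t$ inside $\pazocal{D}$ without $\pazocal{D}$ fully containing one of them — here one uses that the active intervals are disjoint and separated, so $\pazocal{D}$ covering $\geq 0.38t$ of their union and each being short forces full containment of an interior one).

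\textbf{Main obstacle.} The delicate step is the length bookkeeping: one must be careful that "length of a single active subcomputation $< 0.4t$" really follows from the parameter hierarchy. This uses Lemma \ref{M projected long history}(a) to produce accepted configurations $W_0, W_t$ with $W_0(i)\equiv V_0$, $W_t(i)\equiv V_t$, then Lemma \ref{M accepted configurations} (or Lemma \ref{M_3 length}) to bound the active subcomputation's length by $c_2\max(\|V_0\|,\|V_t\|)$, and finally the hypothesis $t > c_3\max(\|V_0\|,\|V_t\|)$ together with $c_3 >> c_2$ to conclude this length is at most $t/c_3 \cdot c_2 \ll 0.38t$. The only real subtlety is ensuring that one can \emph{choose} the active subcomputation that $\pazocal{D}$ contains to be one whose $(4n-1)_j$-part is nonempty (and hence contains a genuine controlled block with nonempty $H'$, not a degenerate one); but this is automatic, since by Lemma \ref{M_3 start to end}(c) every active subcomputation of the form in (c) has $(4n-1)_j$-part of length $2k\|u\|+2k+1 \geq 2k+1 > 0$, so every active subcomputation already carries a controlled subword of the required kind.
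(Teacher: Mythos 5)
There is a genuine gap, and it sits exactly where you flagged your "main obstacle": the claim that a single active subcomputation has length at most $c_2\max(\|V_0\|,\|V_t\|)$ is false. Lemmas \ref{M_3 length} and \ref{M accepted configurations} bound a computation's length in terms of the lengths of \emph{its own} initial and terminal configurations, and for an active subcomputation of $\pazocal{C}$ these are intermediate configurations of $\pazocal{C}$, which need not be comparable to $\|V_0\|$ or $\|V_t\|$. Concretely, take $\pazocal{C}$ with base $\{t(i)\}B_3(i)$ of the form $A(i)\to\dots\to I(u^n,i)\to\dots\to A(i)$, where the first half is the inverse of the accepting computation of the first machine and the second half is the accepting computation of the second machine, with $u$ very long. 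Here $\max(\|V_0\|,\|V_t\|)$ is a constant, $t>c_3\max(\|V_0\|,\|V_t\|)$ holds, and each of the two active subcomputations has length $2k\|u\|+2k+1\approx 0.49t$. A subcomputation $\pazocal{D}$ of length $0.4t$ centered at the turning configuration $I(u^n,i)$ overlaps both active subcomputations but contains neither in full, so your full-containment step, and with it the final paragraph that relies on it, breaks down.

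The fix is to argue proportionally rather than by containment, which is what the paper does: since the active subcomputations cover at least $0.98t$ and the total length is $t$, a pigeonhole count shows $\pazocal{D}$ must share with some active subcomputation $\pazocal{C}'$ (history $H'$) a subcomputation $\pazocal{D}'$ whose history $K'$ satisfies $\|K'\|\geq 0.3\|H'\|$ (if $\pazocal{D}$ fully contains an active subcomputation this is trivial; if it only meets one or two of them partially, the failure of the inequality for both would force $\|H'\|$-lengths summing to more than $t$). Your structural analysis of the $(4n-1)_j$ step is correct and is the other half of the paper's argument: $\pazocal{C}'$ repeats $k$ copies of a controlled history of length roughly $2\|u\|+2$, overlapping in one rule, so any consecutive $0.3$-fraction of $H'$ contains a full controlled block once $k$ is taken sufficiently large. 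With the containment claim replaced by this overlap estimate, the rest of your write-up goes through.
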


\begin{proof}

By Lemma \ref{M projected long history}(c), the sum of the lengths of all subcomputations of $\pazocal{C}$ with step histories of the form $(4n-2,4n-1)_j(4n-1)_j(4n-1,4n)_j$ or $(4n,4n-1)_j(4n-1)_j(4n-1,4n-2)_j$ is at least $0.98t$. So, there exists such a subcomputation $\pazocal{C}'$ such that $\pazocal{D}$ contains a subcomputation $\pazocal{D}'$ which is also a subcomputation of $\pazocal{C}'$. Moreover, for $H'$ and $K'$ the histories of $\pazocal{C}'$ and $\pazocal{D}'$, we may assume $\|K'\|\geq0.3\|H'\|$.

But $\pazocal{C}'$ repeats $k$ copies of a controlled history (with an overlap of one rule), so that taking $k$ sufficiently large implies that $K'$ must contain a controlled subword.

\end{proof}

A two-letter subword $UV$ of the standard base of $\textbf{M}$ is defined to be left-active (resp right-active) with respect to the step $(i)_j$ if any rule of step history $(i)_j$ that alters the tape word of an admissible word with base $UV$ inserts/deletes one letter on the left (resp right) of the tape word. 

Note that if $(i)_j$ is not $(1)_2$ or the subword does not correspond to the `special' input sector, then $UV$ is left-active (resp right-active) with respect to $(i)_j$ if and only if the corresponding subword of the standard base of $\textbf{M}_4$ is left-active (resp right-active) for $\textbf{M}_4(i)$. Hence, the following is an immediate consequence of Lemma \ref{M_3 one-step}.

\begin{lemma} \label{M one-step}

Let $\pazocal{C}:W_0\to\dots\to W_t$ be a reduced computation of $\textbf{M}$ with step history $(i)_m$ and base $B\equiv\{t(x)\}B_3(x)$ for some $2\leq x\leq L$. Assume that for some index $j$, $|W_j|_a>4|W_0|_a$. Then there are subwords $U_\ell V_\ell$ and $U_rV_r$ of $B$ such that $U_\ell V_\ell$ is left-active with respect to $(i)_m$, $U_rV_r$ is right-active with respect to the step $(i)_m$, and for $W_0'\to\dots\to W_t'$ the restriction of $\pazocal{C}$ to either sector, $|W_j'|_a<|W_{j+1}'|_a<\dots<|W_t'|_a$.

\end{lemma}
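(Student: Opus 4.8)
The plan is to reduce Lemma \ref{M one-step} to Lemma \ref{M_3 one-step} by passing through the identification of $\textbf{M}$ restricted to a non-special component with (parallel copies of) $\textbf{M}_4$, and then through the analogue of Lemma \ref{M_3 one-step} for $\textbf{M}_4$. Since the base $B\equiv\{t(x)\}B_3(x)$ with $x\geq2$, none of the sectors involved is the ``special'' input sector $P_0(1)Q_0(1)$. Thus the one caveat in the remark preceding the lemma — that the correspondence between active subwords of $\textbf{M}$ and active subwords of $\textbf{M}_4$ can fail only for the step $(1)_2$ or for the special input sector — does not arise here, \emph{provided} the step $(i)_m$ is not $(1)_2$. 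So first I would dispose of the case $(i)_m=(1)_2$ separately: in a computation with step history $(1)_2$ all sectors of $B_3(x)$ (for $x\geq2$) are treated by rules acting as a single copy of $\textbf{M}_3(1)$, i.e. each rule inserts/deletes at most one letter on one fixed side of each unlocked sector, with different rules corresponding to different letters, so the restriction to every unlocked sector satisfies the hypotheses of Lemma \ref{multiply one letter}; consequently $|W_j|_a\leq 2\max(|W_0|_a,|W_t|_a)\leq 4|W_0|_a$ for all $j$ whenever $|W_t|_a\leq|W_0|_a$ (and the hypothesis $|W_j|_a>4|W_0|_a$ would then force $|W_t|_a>|W_0|_a$, and one reads off monotonicity on the appropriate sector directly), so the conclusion holds — in fact the $P_0(x)Q_0(x)$-sector is right-active and the $Q_0(x)P_1(x)$-sector is left-active with respect to $(1)_2$ exactly as in $\textbf{M}_3(1)$.

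For the main case, where $(i)_m\neq (1)_2$, I would argue as follows. Since $\pazocal{C}$ has base $\{t(x)\}B_3(x)$ with $x\geq 2$, the restriction $\pazocal{C}(x):W_0(x)\to\dots\to W_t(x)$ is precisely $\pazocal{C}$ itself (the base has a single component), and because a one-machine step with step letter $(i)_m$ (with $(i)_m$ a working-rule letter, not $(s)_j^{\pm1}$ or $(a)_j^{\pm1}$, which is automatic once we are in a genuine ``step'' of the form $(i)_m$) operates on $\{t(x)\}B_3(x)$ with $x\geq2$ exactly as $\textbf{M}_4$ operates on one copy of its standard base, $\pazocal{C}$ can be identified with a reduced computation $\pazocal{C}_4:V_0\to\dots\to V_t$ of $\textbf{M}_4$ in the standard base with step history $(i)$. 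Under this identification $a$-lengths of sectors are preserved: $|W_j|_a=|V_j|_a$ and, for any two-letter subword $UV$ of $\{t(x)\}B_3(x)$, the restriction of $\pazocal{C}$ to the $UV$-sector matches the restriction of $\pazocal{C}_4$ to the corresponding sector of the standard base of $\textbf{M}_4$. In particular the hypothesis $|W_j|_a>4|W_0|_a$ transfers to $|V_j|_a>4|V_0|_a$.

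Now I would invoke the $\textbf{M}_4$-analogue of Lemma \ref{M_3 one-step}. The excerpt states (in the paragraph introducing $\textbf{M}_4$) that ``there are obvious analogues of the statements from previous sections,'' and Lemma \ref{M_3 one-step} is exactly such a statement; its proof for $\textbf{M}_3$ reduces via Lemma \ref{M_2 one-step} (and hence Lemma \ref{M_1 one-step}) to the combinatorial facts about $\Phi$-rules and primitive-machine rules, all of which hold verbatim for $\textbf{M}_4$ since $\textbf{M}_4$'s working rules are literally copies of those of $\textbf{M}_3$. Applying this analogue to $\pazocal{C}_4$ produces two-letter subwords $U_\ell'V_\ell'$, $U_r'V_r'$ of the standard base of $\textbf{M}_4$ with $U_\ell'V_\ell'$ left-active and $U_r'V_r'$ right-active for $\textbf{M}_4(i)$, such that on restriction the $a$-lengths are strictly increasing from index $j$ to index $t$. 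Transporting these subwords back along the identification to the corresponding two-letter subwords $U_\ell V_\ell$, $U_r V_r$ of $\{t(x)\}B_3(x)$, the remark preceding the lemma guarantees (because $(i)_m\neq(1)_2$ and none of these sectors is the special input sector) that $U_\ell V_\ell$ is left-active and $U_r V_r$ right-active with respect to $(i)_m$; and the strict monotonicity $|W_j'|_a<|W_{j+1}'|_a<\dots<|W_t'|_a$ on the restriction to either sector follows from the corresponding monotonicity for $\pazocal{C}_4$ via the $a$-length-preserving identification. This completes the proof.

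The main obstacle — really the only nontrivial point — is bookkeeping: making the identification of $\pazocal{C}$ (a one-step computation of $\textbf{M}$ on a single non-special component) with a computation of $\textbf{M}_4$ in the standard base completely precise, including checking that active two-letter subwords correspond correctly and, crucially, that the exceptional step $(1)_2$ and the special input sector $P_0(1)Q_0(1)$ are the \emph{only} places the correspondence can break, so that neither interferes here since $x\geq2$. Once that correspondence is pinned down the result is immediate from Lemma \ref{M_3 one-step} (via its $\textbf{M}_4$-analogue). Everything else is routine and already stated in the excerpt.
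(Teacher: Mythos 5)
Your proposal is correct and takes essentially the same route as the paper, which offers no separate proof: the lemma is derived immediately from Lemma \ref{M_3 one-step} (through its $\textbf{M}_4$-analogue) together with the preceding remark identifying left/right-active subwords of $\textbf{M}$ with those of $\textbf{M}_4$, exactly the reduction you carry out. The only superfluous step is your separate treatment of $(1)_2$: the remark's condition is a disjunction, so since no subword of $\{t(x)\}B_3(x)$ with $x\geq2$ corresponds to the `special' input sector, the correspondence with $\textbf{M}_4$ (and hence the conclusion) holds for the step $(1)_2$ as well, with no extra argument needed.
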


\smallskip


\subsection{Reverted Bases} \

Let $B$ be the base of an admissible word $W$ of $\textbf{M}$. Then the \textit{reversion} of $B$, denoted $\pi(B)$, is the word obtained from $B$ by `forgetting' the coordinates of its letters. In this case, $\pi(B)$ is called the \textit{reverted base} of $W$. 

For example, the reverted base of any configuration is the concatenation of $L$ copies of the standard base of $\textbf{M}_4$. Similarly, if $B=Q_4(1)\{t(2)\}P_0(2)Q_0(2)P_1(2)Q_1(2)Q_1(2)^{-1}P_1(2)^{-1}$, then $$\pi(B)=Q_4\{t\}P_0Q_0P_1Q_1Q_1^{-1}P_1^{-1}$$

\begin{lemma} \label{reverted base}

Let $B$ be the base of an admissible word $W$ of $\textbf{M}$. Then there exists an admissible word $W'$ of $\textbf{M}_4$ with base $\pi(B)$ and such that $|W'|_a=|W|_a$. \newline
Moreover, if none of the state letters of $W$ are start or end letters (or their inverses), then $W'$ can be chosen to be the natural copy of $W$ in the hardware of $\textbf{M}_4$.

\end{lemma}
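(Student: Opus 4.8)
The plan is to verify the claim directly from the definitions of $\textbf{M}$ and $\textbf{M}_4$, treating it essentially as a bookkeeping statement about how the state letters and tape alphabets of $\textbf{M}$ relate to those of $\textbf{M}_4$. Recall that $\textbf{M}$ is built so that its standard base is $\{t(1)\}B_3(1)\dots\{t(L)\}B_3(L)$, where each part not of the form $\{t(i)\}$ consists of a copy of the corresponding part of $\textbf{M}_{5,1}$, a disjoint copy of the corresponding part of $\textbf{M}_{5,2}$, and two new (start/end) letters; and each of $\textbf{M}_{5,1}$, $\textbf{M}_{5,2}$ is in turn a parallel composition of copies of $\textbf{M}_4$. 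So every non-start, non-end state letter of $\textbf{M}$ is, after forgetting its coordinate, the natural copy of a unique state letter of $\textbf{M}_4$, and the tape alphabet of each sector of $\textbf{M}$ (other than the empty $\{t(i)\}$-sectors and the $Q_4(i)\{t(i+1)\}$-sectors, which are also empty) is the natural copy of the tape alphabet of the corresponding sector of $\textbf{M}_4$.

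First I would handle the "moreover" clause, since it is the cleaner case and the general case reduces to it. Suppose $W\equiv q_0^{\eps_0}u_1q_1^{\eps_1}\dots u_mq_m^{\eps_m}$ is an admissible word of $\textbf{M}$ with base $B$ and none of the $q_i$ is a start or end letter of $\textbf{M}$ (nor an inverse of one). Then each $q_i$ lies in the copy of a part of $\textbf{M}_{5,1}$ or of $\textbf{M}_{5,2}$, hence corresponds under the coordinate-forgetting map composed with the $\textbf{M}_{5,\bullet}\to\textbf{M}_4$ identification to a well-defined state letter $q_i'$ of $\textbf{M}_4$; set $W'$ to be the word obtained by replacing each $q_i^{\eps_i}$ by $(q_i')^{\eps_i}$ and each tape subword $u_i$ by its natural copy over the $\textbf{M}_4$ tape alphabet of the relevant sector. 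I would then check that $W'$ is admissible for $\textbf{M}_4$ by going through the three cases in the definition of admissible word (Section 3.1): each two-letter subword $q_{i-1}^{\eps_{i-1}}u_iq_i^{\eps_i}$ of $W$ has one of the three admissible forms, and since the part structure of $\textbf{M}$ (modulo coordinates) and the sector tape alphabets both pull back faithfully from $\textbf{M}_4$, the corresponding subword of $W'$ has the same form. The base of $W'$ is $\pi(B)$ by construction, and $|W'|_a=|W|_a$ since the tape subwords are copied letter-for-letter. This requires care but no real difficulty, because $\pi$ was defined precisely to strip the coordinate data that distinguishes $\textbf{M}$'s parts from $\textbf{M}_4$'s.

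Next I would treat the general case, where some $q_i$ may be a start or end letter of a part of $\textbf{M}$ (or an inverse thereof). The only thing that changes is that such a $q_i$ is one of the two "new letters" added to the part and has no natural preimage in $\textbf{M}_4$. For each such letter, I would simply replace it by the start letter of the corresponding part of $\textbf{M}_4$ (any choice from that part will do — say the start letter $q$, or in the case of a running-state part like $P$ or $R$, its start letter $p^{(1)}$ or $r^{(1)}$). The resulting word $W'$ still has base $\pi(B)$ and still satisfies $|W'|_a=|W|_a$, since we have only altered state letters, not tape subwords; and it is still admissible because the defining conditions on a two-letter subword $q_{i-1}^{\eps_{i-1}}u_iq_i^{\eps_i}$ depend only on which parts the state letters belong to and on the sector tape alphabets, not on which particular representatives of those parts are used, and $\pi$ preserves the part structure. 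I expect the main obstacle — really the only subtle point — to be the admissibility check in the general case when a replaced start/end letter is adjacent to a non-empty tape subword: I must confirm that the tape alphabet of that sector of $\textbf{M}$ is indeed the natural copy of the tape alphabet of the corresponding $\textbf{M}_4$ sector (so that $u_i$'s copy is a legal tape word there), which follows from the fact that $\textbf{M}$'s rules are copies of those of $\textbf{M}_{5,1}$ and $\textbf{M}_{5,2}$, whose sector tape alphabets are in turn copied from $\textbf{M}_4$, with the new sectors (the $\{t(i)\}$-sectors) all assigned the empty tape alphabet.
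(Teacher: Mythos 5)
Your construction is essentially the paper's: copy each tape subword letter-for-letter (using that every sector alphabet of $\textbf{M}$, including the empty $\{t(i)\}$- and $Q_4(i)\{t(i+1)\}$-sectors, is a copy of the alphabet of the corresponding sector of the cyclic machine $\textbf{M}_4$), keep the natural copies of all state letters that are not start/end letters, and make some choice of $\textbf{M}_4$-letter for the start/end letters, which have no natural preimage. That is exactly the route the paper takes.

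There is, however, a flaw in the justification of admissibility in the general case. The claim that the conditions on a two-letter subword ``depend only on which parts the state letters belong to, not on which particular representatives of those parts are used'' (so that ``any choice from that part will do'') is false when the corresponding base subword is unreduced: conditions (2) and (3) in the definition of admissible words require the subword to have the form $quq^{-1}$ or $q^{-1}uq$ with literally the same state letter $q$ in both positions, and $q_1uq_2^{-1}$ with $q_1\neq q_2$ from the same part is not admissible. This is precisely the point at which the paper's proof is careful: when $V_0\equiv V_1^{-1}$ and the letters are start/end letters, it chooses the two new letters as a matched pair (some letter of $U_0$ together with its inverse), not independently. Your recipe does in fact survive, but for a reason you never state: the substitution is letter-wise consistent (each start/end letter of $\textbf{M}$ is sent to one fixed letter of the corresponding part of $\textbf{M}_4$), and admissibility of $W$ already forces the two state letters of any unreduced-base subword to be the same letter, so consistency automatically yields $q'u'(q')^{-1}$; likewise no new cancellation can appear, since distinct state letters over an unreduced base with empty tape word would already contradict the admissibility of $W$. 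Without adding that observation, the argument as written does not establish that $W'$ is admissible.
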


\begin{proof}

Let $W\equiv q_0w_1q_1\dots w_rq_r$, $B\equiv V_0\dots V_r$, and $\pi(B)\equiv U_0\dots U_r$.

Suppose $V_0\equiv V_1^{-1}$, so that $U_0\equiv U_1^{-1}$. Then the tape alphabet corresponding to the $V_0V_1$-sector is a copy of that corresponding to the $U_0U_1$-sector, so that there exists a natural copy $w_1'$ of $w_1$ in this alphabet.

Further, if neither $q_0$ nor $q_1$ corresponds to a start or end letter in $V_0$ or $V_1$, then we can choose $q_0'$ and $q_1'$ as the natural copies of $q_0$ and $q_1$ in $U_0$ and $U_1$, respectively. Otherwsise, either $q_0$ and $q_1$ both correspond to start letters or both correspond to end letters. In this case, we can choose $q_0'$ as any state letter of $U_0$ and $q_1'$ as its inverse.

Now suppose $V_0\neq V_1^{-1}$. Then since the tape alphabet of the $Q_s\{t\}$-sector of $\textbf{M}_4$ is empty, we again have that the tape alphabet corresponding to the $V_0V_1$-sector is a copy of that corresponding to the $U_0U_1$-sector. This allows us to construct $w_1'$ as a copy of $w_1$ as above.

Further, if neither $q_0$ nor $q_1$ corresponds to a start or end letter in $V_0$ or $V_1$, then we can again choose $q_0'$ and $q_1'$ as their natural copies in $U_0$ and $U_1$. Meanwhile, if $q_0$ (or $q_1$) corresponds to a start or end letter, then we can choose $q_0'$ (or $q_1'$) as any letter from $U_0$ (respectively $U_1$).

With $q_1'$ now chosen, we can apply the same construction to obtain $q_2'$ and $w_2'$.

Iterating, we construct the admissible word $W'\equiv q_0'w_1'q_1'\dots w_r'q_r'$ satisfying the statement.

\end{proof}

\newpage

\begin{lemma} \label{lifted rule}

Suppose $\pazocal{C}:W_0\to W_1$ is a one-rule computation of $\textbf{M}$ with history $\theta\in\Theta$, where $\theta\notin\{\theta(s)_i,\theta(a)_i:i=1,2\}^{\pm1}$. Further, suppose that either:

\begin{enumerate}

\item the step history of $\pazocal{C}$ is not $(1)_2$, or

\item the base of $\pazocal{C}$ does not contain a subword of the form $(P_0(1)Q_0(1))^{\pm1}$.

\end{enumerate}

Then there exists a one-rule computation of $\textbf{M}_4$ $\pazocal{C}':W_0'\to W_1'$ with history $\theta'$, where $\theta'$ is the natural copy of $\theta$ in $\Theta(\textbf{M}_4)$ and $W_0'$ and $W_1'$ are the natural copies of $W_0$ and $W_1$, respectively, in the hardware of $\textbf{M}_4$.

\end{lemma}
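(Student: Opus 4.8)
The statement is essentially a ``lifting'' lemma: a single rule application of $\textbf{M}$, under mild hypotheses, corresponds faithfully to a single rule application of $\textbf{M}_4$ once we forget coordinates. The plan is to apply Lemma~\ref{reverted base} to produce the candidate configurations $W_0'$ and $W_1'$ of $\textbf{M}_4$, and then verify that the natural copy $\theta'$ of $\theta$ in $\Theta(\textbf{M}_4)$ carries $W_0'$ to $W_1'$. First I would record that $\theta$, being neither a start nor an accept transition rule, acts on the state letters of $W_0$ only by replacing each state letter by a state letter lying in the copy of the hardware of $\textbf{M}_{5,i}$ (for the appropriate $i$) inside $\textbf{M}$, never by a start or end letter of a part of $\textbf{M}$; hence none of the state letters of $W_0$ or $W_1$ are start or end letters of $\textbf{M}$, so Lemma~\ref{reverted base} applies to both $W_0$ and $W_1$ and gives that $W_0'$, $W_1'$ may be taken to be the \emph{natural} copies of $W_0$, $W_1$ in the hardware of $\textbf{M}_4$, with $|W_j'|_a=|W_j|_a$.

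Next I would use the construction of $\textbf{M}$ from Section~5.1: every working rule of $\Theta_i^+$ is a copy of a positive rule of $\textbf{M}_{5,i}$, and every positive rule of $\textbf{M}_{5,i}$ acts in parallel as a copy of a positive rule of $\textbf{M}_4$ on each coordinate block (with the single modification, for $\Theta_2$, that the ``special'' input sector $P_0(1)Q_0(1)$ is locked). Define $\theta'\in\Theta(\textbf{M}_4)$ to be the $\textbf{M}_4$-rule of which $\theta$ is a copy. Then for any two-letter subword $UV$ of the reverted base $\pi(B)$ of $W_0$ that is \emph{not} the image of the special input sector, the action of $\theta$ on the $UV$-sector of $W_0$ is literally the action of $\theta'$ on the corresponding sector, under the natural identification of tape alphabets used in Lemma~\ref{reverted base}; since the application of an $S$-rule is defined sector-by-sector followed by free reduction, it follows that $W_0'$ is $\theta'$-admissible and $W_0'\cdot\theta'\equiv W_1'$ on all such sectors and on all state letters.

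The one delicate point --- which is exactly what the two alternative hypotheses are for --- is the treatment of any subword of $B$ of the form $(P_0(1)Q_0(1))^{\pm1}$, i.e.\ the special input sector, when $\theta\in\Theta_2$. If $\theta$ is a working rule of $\Theta_2$ other than a rule of step $(1)_2$, then $\theta$ locks this sector, and so does its $\textbf{M}_4$-copy $\theta'$ (every non-$(1)_2$ rule leaves the $P_0Q_0$-sector of $\textbf{M}_4$ fixed or, by Lemma~\ref{M step history 2}(a), the sector is empty in admissible words at that step), so there is no discrepancy; and a working rule of step $(1)_2$ acts on this sector exactly as the corresponding rule of $\textbf{M}_4(1)$ does on the $P_0Q_0$-sector, since the only modification built into $\textbf{M}_{5,2}$ versus $\textbf{M}_{5,1}$ affecting this sector is that it is locked by \emph{non-working} rules --- but here hypothesis (1) rules out exactly the problematic case (step $(1)_2$ together with the special sector present), leaving only cases where the sector is either absent (hypothesis (2)) or locked. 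Thus in every permitted case the behaviour of $\theta$ on the special input sector agrees with that of $\theta'$, and the identity $W_0'\cdot\theta'\equiv W_1'$ holds globally. The main obstacle is therefore purely bookkeeping: keeping straight which sectors are locked by which rule of $\textbf{M}$ versus $\textbf{M}_4$ and confirming that the sole asymmetry of $\textbf{M}_{5,2}$ (the locked special sector) is neutralised precisely by the stated hypotheses; once that is checked, the lemma follows from Lemma~\ref{reverted base} and the definition of an $S$-rule application.
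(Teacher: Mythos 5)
Your proposal is correct and takes essentially the same route as the paper: invoke Lemma~\ref{reverted base} (legitimate since $\theta$ is not a start/accept transition, so no start or end letters occur), take $\theta'$ to be the natural copy of $\theta$ in $\Theta(\textbf{M}_4)$, and check sector-by-sector that $\theta$ and $\theta'$ act identically, with the `special' input sector handled by the two hypotheses. One correction to your bookkeeping: the claim that a working rule of step $(1)_2$ acts on the special input sector exactly as the corresponding rule of $\textbf{M}_4(1)$ is false --- \emph{every} rule of $\textbf{M}_{5,2}$, working rules of step $(1)$ included, locks the $P_0(1)Q_0(1)$-sector, which is precisely what makes the case ``step $(1)_2$ with that sector present'' problematic and forces the hypotheses; your concluding case analysis (in every permitted case the sector is either absent from the base or locked by both $\theta$ and $\theta'$) is nevertheless exactly the paper's argument and closes the proof.
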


\begin{proof}

As $\theta\notin\{\theta(s)_i,\theta(a)_i:i=1,2\}^{\pm1}$, none of the state letters of $W_0$ or $W_1$ are start or end letters (or their inverses). So, applying Lemma \ref{reverted base}, we can find admissible words $W_0'$ and $W_1'$ which are the natural copies of $W_0$ and $W_1$, respectively, in the hardware of $\textbf{M}_4$.

Let $\theta'$ be the natural copy of $\theta$ in $\Theta(\textbf{M}_4)$. If (a) holds, then $\theta$ operates on each sector of the standard base of $\textbf{M}$ in the same way as $\theta'$ operates on the copy of the corresponding sector of the standard base of $\textbf{M}_4$. 

Conversely, if $\theta$ is a rule of step history $(1)_2$, then all sectors of the standard base of $\textbf{M}$ other than the `special' input sector are again operated on by $\theta$ in the same way as $\theta'$ operates on their copy. As $\theta$ locks the `special' input sector, Lemma \ref{locked sectors} implies that this sector is not present in $W_0$ if the base of $\pazocal{C}$ satisfies (b).

\end{proof}

The base $B$ of an admissible word of $\textbf{M}$ is called \textit{hyperfaulty} (or \textit{pararevolving}) if its reversion $\pi(B)$ is faulty (or revolving) as the base of an admissible word of $\textbf{M}_4$. Note that a base is hyperfaulty if and only if it is pararevolving and unreduced.

A hyperfaulty base is necessarily faulty, while a faulty base need not be hyperfaulty. For example, if
$B\equiv Q_0(3)Q_0(3)^{-1}P_0(3)^{-1}\{t(3)\}^{-1}\dots Q_4(1)^{-1}Q_4(1)\dots \{t(3)\}P_0(3)Q_0(3)$, where gaps correspond to strings of letters that follow the order of the standard base of $\textbf{M}$ or its inverse, then $B$ is faulty but not hyperfaulty.

Conversely, a pararevolving base that is not hyperfaulty (for example, $\{t(1)\}\dots\{t(2)\}$) is not revolving, while a revolving base that is not faulty (for example, $\{t(1)\}\dots\{t(1)\}$) is not pararevolving.

As the standard base of $\textbf{M}_4$ has length $11$, a pararevolving base has length at most $23$ while a revolving base of $\textbf{M}$ has length at most $22L+1$.

\begin{lemma} \label{one-machine hyperfaulty}

Suppose $\pazocal{C}:W_0\to\dots\to W_t$ is a one-machine computation of $\textbf{M}$ with hyperfaulty base $B$. Then for all $0\leq j\leq t$, $|W_j|_a\leq c_0\max(|W_0|_a,|W_t|_a)$.

\end{lemma}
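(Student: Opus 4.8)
The plan is to reduce this statement about a one-machine computation of $\textbf{M}$ with hyperfaulty base to the already-proved statement about $\textbf{M}_4$, namely Lemma \ref{M_4 faulty}. First I would recall that, since $\pazocal{C}$ is a one-machine computation of the $m$-th machine, its history $H$ can be factored as $H\equiv\theta^{\eps}H'\theta^{-\eps}$ or $H\equiv\theta^{\eps}H'$ or $H\equiv H'\theta^{-\eps}$ or $H\equiv H'$, where $\theta\in\{\theta(s)_m,\theta(a)_m\}$ and $H'$ contains no rule from $\{\theta(s)_i,\theta(a)_i:i=1,2\}^{\pm1}$; since transition rules of this form do not alter the $a$-length of any admissible word, it suffices to treat the subcomputation $\pazocal{C}'$ with history $H'$, i.e.\ I may assume $H$ itself contains no such rule.

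Next I would apply Lemma \ref{reverted base} to the initial configuration $W_0$: because no state letter of $W_0$ is a start or end letter, there is an admissible word $W_0'$ of $\textbf{M}_4$ which is the natural copy of $W_0$ in the hardware of $\textbf{M}_4$, with base $\pi(B)$ and $|W_0'|_a=|W_0|_a$. Since $B$ is hyperfaulty, $\pi(B)$ is a faulty base of $\textbf{M}_4$. Then I would use Lemma \ref{lifted rule} repeatedly: each rule $\theta_j$ of $H$ is not of the excluded transition type, and I must check that one of its hypotheses (a) or (b) holds. Hypothesis (a) asks that the step history not be $(1)_2$; if the step history of $\pazocal{C}$ involves only steps $\ne(1)_2$ this is automatic. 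If a step $(1)_2$ does occur, then I invoke (b): a hyperfaulty (hence faulty) base $B$ of $\textbf{M}$, if it contained a subword of the form $(P_0(1)Q_0(1))^{\pm1}$, would force $\pi(B)$ to contain $(P_0Q_0)^{\pm1}$, and — here I would argue as in Steps 10 and 15--18 of the proof of Lemma \ref{M_4 faulty}, or more simply observe that the coordinate of the `special' input sector is pinned to $1$ while a faulty base revolves — one sees that the `special' input sector cannot be a subword of a hyperfaulty base except in the degenerate situation where $B$ is a cyclic permutation of $P_0(1)Q_0(1)Q_0(1)^{-1}P_0(1)^{-1}\cdots$, in which case no rule of step history $(1)_2$ alters the $a$-length anyway and the lemma is trivial. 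Having verified the hypotheses, Lemma \ref{lifted rule} lifts each one-rule step $W_{j-1}\to W_j$ of $\textbf{M}$ to a one-rule step $W_{j-1}'\to W_j'$ of $\textbf{M}_4$ with $|W_j'|_a=|W_j|_a$, producing a reduced computation $\pazocal{C}'':W_0'\to\dots\to W_t'$ of $\textbf{M}_4$ with faulty base $\pi(B)$ and $|W_j'|_a=|W_j|_a$ for all $j$.

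Finally, applying Lemma \ref{M_4 faulty} to $\pazocal{C}''$ gives $|W_j'|_a\le c_0\max(|W_0'|_a,|W_t'|_a)$ for all $j$, which upon translating back is exactly $|W_j|_a\le c_0\max(|W_0|_a,|W_t|_a)$. The main obstacle I expect is the bookkeeping around hypothesis (b) of Lemma \ref{lifted rule} in the presence of a step $(1)_2$ in a one-machine computation of the second machine: one has to rule out that the `special' input sector $P_0(1)Q_0(1)$ sits inside a hyperfaulty base in a way that would genuinely make the lift fail, and then handle that edge case separately by a direct (and easy) $a$-length argument. Everything else is a straightforward reduction, since the reversion machinery (Lemmas \ref{reverted base} and \ref{lifted rule}) was built precisely to transport faulty-base estimates from $\textbf{M}_4$ up to $\textbf{M}$.
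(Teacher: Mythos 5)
Your main reduction is exactly the paper's: strip the rules $\theta(s)_m^{\pm1},\theta(a)_m^{\pm1}$ (which can only occur at the ends of a one-machine history and do not change $a$-lengths), lift the remaining computation to $\textbf{M}_4$ via Lemmas \ref{reverted base} and \ref{lifted rule}, note that $\pi(B)$ is faulty, and quote Lemma \ref{M_4 faulty}. The gap is in the exceptional case that you dismiss as degenerate: a one-machine computation of the second machine whose step history contains $(1)_2$ and whose base contains $(P_0(1)Q_0(1))^{\pm1}$. First, your claim that the `special' input sector can only occur in a hyperfaulty base in one degenerate configuration is not a consequence of hyperfaultiness alone (for instance $Q_0(1)P_1(1)P_1(1)^{-1}Q_0(1)^{-1}P_0(1)^{-1}P_0(1)Q_0(1)$ is hyperfaulty and contains the special input sector); in the paper the base is pinned down only after observing that a step history properly containing $(1)_2$ must contain $\theta(12)_2^{\pm1}$ and invoking Lemma \ref{locked sectors}, which forces $B$ to be a cyclic permutation of $P_0(1)Q_0(1)Q_0(1)^{-1}P_0(1)^{-1}\{t(1)\}^{-1}Q_4(L)^{-1}\dots P_1(L)^{-1}P_1(L)\dots Q_4(L)\{t(1)\}P_0(1)$. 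Second, and more seriously, for that base your triviality claim is false: it contains the sector $Q_0(1)Q_0(1)^{-1}$, whose tape alphabet is a copy of the unlocked $Q_0(1)P_1(1)$-alphabet, and a rule of step history $(1)_2$ (which locks only the $P_0(1)Q_0(1)$-sector itself, while still writing a letter to the right of $Q_0(1)$) conjugates the word in that sector by a letter and so does change $a$-lengths. Nor can you delegate this case back to Steps 10, 15--18 of Lemma \ref{M_4 faulty}: Lemma \ref{lifted rule} is precisely what breaks here, since the $\textbf{M}_4$-copy of a $(1)_2$-rule does not lock the $P_0Q_0$-sector while the rule of $\textbf{M}$ does, so admissibility and the action on the base need not correspond.

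What is actually needed (and what the paper does) is a direct argument in $\textbf{M}$ under the minimal-counterexample assumption $|W_r|_a>\max(|W_0|_a,|W_t|_a)$ for $0<r<t$. If the step history is exactly $(1)_2$, one argues sector by sector via Lemmas \ref{multiply one letter} and \ref{unreduced base} (this gives the bound with a factor $2$, not constancy of $a$-length). Otherwise $H$ (up to inversion) contains $H_1\theta(12)_2H_2$ and $B$ is the word displayed above; since $B$ contains $R_2(L)Q_3(L)$ and $P_1(L)Q_1(L)$, Lemma \ref{M_3 step history}(b) and Lemma \ref{multiply one letter} force the step history to continue as $(2)_2(23)_2(3)_2$ up to the end of $H$, and applications of Lemma \ref{primitive unreduced} to the restriction of the $(2)_2$-part to $Q_3(L)^{-1}R_2(L)^{-1}\dots P_1(L)^{-1}P_1(L)\dots R_2(L)Q_3(L)$ (viewing the locked blocks as single state letters) and of the $(3)_2$-part to $Q_1(L)^{-1}P_1(L)^{-1}P_1(L)Q_1(L)$ yield $|W_r|_a\leq|W_s|_a\leq|W_t|_a$, contradicting the counterexample assumption. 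Some argument of this kind must be supplied; as written, your proposal replaces it with an incorrect claim.
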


\begin{proof}

As in the proof of Lemma \ref{M_4 faulty}, we may assume that $|W_r|_a>\max(|W_0|_a,|W_t|_a)$ for all $0<r<t$. In particular, we assume that neither the first nor the last letter of the history $H$ of $\pazocal{C}$ is a transition rule. 

As $\pazocal{C}$ is a one-machine computation, any occurrence of a letter of the form $\theta(s)_i^{\pm1}$ or $\theta(a)_i^{\pm1}$ in $H$ would have to be either the first or the last letter. So, no such letter occurs in $H$.

Suppose the hypotheses of Lemma \ref{lifted rule} are satisfied by each rule of $\pazocal{C}$. Then, we obtain the reduced computation $\pazocal{C}':W_0'\to\dots\to W_t'$ of $\textbf{M}_4$ with base $\pi(B)$ such that $|W_j'|_a=|W_j|_a$ for all $0\leq j\leq t$. But $\pi(B)$ is faulty, so that Lemma \ref{M_4 faulty} implies that $|W_j|_a\leq c_0\max(|W_0|_a,|W_t|_a)$ for all $0\leq j\leq t$.

So, it suffices to assume that $\pazocal{C}$ is a one-machine computation of the second machine, that the step history of $\pazocal{C}$ contains the letter $(1)_2$, and that $B$ contains a subword of the form $(P_0(1)Q_0(1))^{\pm1}$.

Suppose that the step history of $\pazocal{C}$ is $(1)_2$. Then the restriction of $\pazocal{C}$ to any two-letter subword of $B$ has fixed tape word, satisfies the hypotheses of Lemma \ref{multiply one letter}, or satisfies the hypotheses of Lemma \ref{unreduced base}. As in Step 1 of the proof of Lemma \ref{M_4 faulty}, this implies the statement for $c_0\geq2$.

So, $H$ (or its inverse) must contain a subword of the form $H_1\theta(12)_2H_2$, where $H_i$ is the history of a maximal subcomputation with step history $(i)_2$. As $\theta(12)_2$ locks every sector of the standard base other than those of the form $(Q_0(i)P_1(i))^{\pm1}$, any unreduced two-letter subword of $B$ must be of the form $Q_0(i)Q_0(i)^{-1}$ or $P_1(i)^{-1}P_1(i)$. As a result, $B$ must be a cyclic permutation of
$$P_0(1)Q_0(1)Q_0(1)^{-1}P_0(1)^{-1}\{t(1)\}^{-1}Q_4(L)^{-1}\dots Q_1(L)^{-1}P_1(L)^{-1}P_1(L)Q_1(L)\dots Q_4(L)\{t(1)\}P_0(1)$$
As $B$ contains the subword $R_2(L)Q_3(L)$, Lemma \ref{M_3 step history}(b) implies that the step history of $\pazocal{C}$ cannot contain a subword of the form $(12)_2(2)_2(21)_2$.

Let $\pazocal{C}_2:W_r\to\dots\to W_s$ be the subcomputation with history $H_2$ and let $\pazocal{C}_2':W_r'\to\dots\to W_s'$ be its restriction to the subword 
$$Q_3(L)^{-1}R_2(L)^{-1}Q_2(L)^{-1}R_1(L)^{-1}Q_1(L)^{-1}P_1(L)^{-1}P_1(L)Q_1(L)R_1(L)Q_2(L)R_2(L)Q_3(L)$$ 
Note that every rule with step history $(2)_2$ locks each sector of an admissible subword with base $\left(P_1(L)Q_1(L)R_1(L)Q_2(L)R_2(L)\right)^{\pm1}$. So, we may view these subwords as a single state letter. With this view, we may apply Lemma \ref{primitive unreduced} to $\pazocal{C}_2'$, so that $|W_r'|_a\leq\dots\leq|W_s'|_a$.

As any other sector has fixed tape word throughout $\pazocal{C}_2$, this yields $|W_r|_a\leq|W_s|_a$. So, $H$ must contain a subword $H_2\theta(23)_2H_3$, where $H_3$ is the history of a maximal subcomputation with step history $(3)_2$.

Note that the connecting rule of $\textbf{M}_4(3^-)$ locks the $Q_0P_1$-sector, so that it cannot occur in $H_3$. So, letting $\pazocal{C}_3:W_{s+1}\to\dots\to W_x$ be the subcomputation with history $H_3$, the restriction of $\pazocal{C}_3$ to the $P_1(L)Q_1(L)$-sector satisfies the hypotheses of Lemma \ref{multiply one letter}. As a result, $W_x$ cannot be $\theta(32)_2$-admissible. Hence, $x=t$ and $\pazocal{C}_3$ is the copy of computation of $\textbf{M}_4(3^-)$.

Letting $\pazocal{C}_3':W_{s+1}'\to\dots\to W_t'$ be the restriction of $\pazocal{C}_3$ to the subword $Q_1(L)^{-1}P_1(L)^{-1}P_1(L)Q_1(L)$, Lemma \ref{primitive unreduced} implies $|W_{s+1}'|_a\leq\dots\leq |W_t'|_a$. But the tape word in each other sector remains unchanged throughout $\pazocal{C}_3$, so that $|W_s|_a\leq|W_t|_a$.

\end{proof}

\begin{lemma} \label{hyperfaulty}

Let $\pazocal{C}:W_0\to\dots\to W_t$ be a reduced computation of $\textbf{M}$ with hyperfaulty base $B$. Then for all $0\leq j\leq t$, $|W_j|_a\leq c_0\max(|W_0|_a,|W_t|_a)$.

\end{lemma}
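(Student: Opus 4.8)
The plan is to reduce the general case of a hyperfaulty base to the one-machine case already handled in Lemma \ref{one-machine hyperfaulty}, exactly in the spirit of how Lemma \ref{M_4 faulty} is bootstrapped and how many of the earlier ``turn'' lemmas proceed. As in the proof of Lemma \ref{one-machine hyperfaulty}, by an obvious induction on $t$ we may assume $t>1$ and $|W_j|_a>\max(|W_0|_a,|W_t|_a)$ for all $0<j<t$; in particular, since transition rules of the form $\theta(s)_i^{\pm1}$ and $\theta(a)_i^{\pm1}$ do not change the $a$-length of an admissible word, we may assume neither the first nor the last letter of the history $H$ of $\pazocal{C}$ is such a rule. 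If $\pazocal{C}$ is a one-machine computation we are done by Lemma \ref{one-machine hyperfaulty}, so we may assume $\pazocal{C}$ is a multi-machine computation.

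First I would factor $H\equiv H_1\dots H_m$ with $m\geq2$ so that each $H_j$ is the history of a maximal one-machine subcomputation $\pazocal{C}_j:W_{y(j)}\to\dots\to W_{z(j)}$. By Lemma \ref{one-machine hyperfaulty} applied to each $\pazocal{C}_j$ (whose base is still the hyperfaulty $B$), we get $|W_r|_a\leq c_0\max(|W_{y(j)}|_a,|W_{z(j)}|_a)$ for every $r$ with $y(j)\leq r\leq z(j)$. So it suffices to bound $|W_{z(j)}|_a=|W_{y(j+1)}|_a$ for the interior ``junctions'' $1\leq j\leq m-1$ in terms of $\max(|W_0|_a,|W_t|_a)$; by the running assumption $|W_{z(j)}|_a>\max(|W_0|_a,|W_t|_a)$ would have to hold, and the goal is to derive a contradiction, forcing $m$ small or the junctions controlled. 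At a junction, the last rule of $H_j$ and the first rule of $H_{j+1}$ are $\theta(s)_i^{-1}$ and $\theta(s)_{i'}$ with $i\neq i'$ (a change of machine cannot occur at $\theta(a)$, since the only $\theta(a)$-admissible configuration has all components equal to accept letters and hence $a$-length $0$, contradicting $|W_{z(j)}|_a>0$). So each junction configuration $W_{z(j)}$ is an input-type admissible word, and I would apply the analogue of Lemma \ref{turn}: restricting to a subword of $B$ of the form $(P_0(1)Q_0(1))^{\pm1}$ — which a hyperfaulty base always contains a copy of, up to cyclic permutation, if the ``special'' input sector is present at all — forces its $a$-length in that sector to vanish; and restricting to the other input-type sectors via Lemma \ref{multiply one letter} (since a one-machine computation with step history ending $(s)_i^{-1}$ has prefix of its nonempty part equal to $(1)_i(2)_i\dots$) shows that the $a$-length of $W_{z(j)}$ in those sectors is bounded by the length of the adjacent one-machine subcomputation — but since $W_{z(j)}$ is simultaneously $\theta(s)_{i'}$-admissible, those sectors must already be empty except possibly for a single word of $\pazocal{L}$ in the non-special input sectors. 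Tracking this through (as in Lemmas \ref{turn}, \ref{return to start}, \ref{two one-machine J}) gives $|W_{z(j)}|_a\leq \max(|W_{y(j)}|_a,|W_{z(j+1)}|_a)$ at every junction, and a downward induction on $m$ then propagates the bound from the two ends, completing the proof with the same constant $c_0$ (no new parameter is lost because Lemma \ref{one-machine hyperfaulty} already absorbs the multiplicative $c_0$).

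The only delicate point is the case analysis on which two-letter subwords of $B$ can be unreduced: as in the eighteen-step proof of Lemma \ref{M_4 faulty} and the final part of Lemma \ref{one-machine hyperfaulty}, the fact that the relevant transition rules $\theta(s)_i$, $\theta(12)_j$ lock almost every sector means a hyperfaulty $B$ that supports a genuine $a$-length increase across a junction must be a cyclic permutation of one of a short explicit list of words (built from pieces like $P_0(1)Q_0(1)Q_0(1)^{-1}P_0(1)^{-1}$, $P_1(L)^{-1}P_1(L)$, $R_2(L)R_2(L)^{-1}$, etc.), and on each such base Lemma \ref{primitive unreduced} (or Lemma \ref{multiply one letter}) forces the $a$-length to be monotone toward an endpoint, contradicting the running assumption. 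The main obstacle is therefore bookkeeping, not a new idea: one must check that the reversion $\pi(B)$ being faulty — equivalently $B$ being pararevolving and unreduced, so $|B|\leq 23$ — genuinely restricts the shape of $B$ enough that in every junction configuration the ``growing'' sectors are exactly the ones controlled by Lemmas \ref{multiply one letter} and \ref{primitive unreduced}, and that the presence or absence of the special input sector $(P_0(1)Q_0(1))^{\pm1}$ (the sole source of asymmetry in $\textbf{M}$) is handled by the second alternative of Lemma \ref{lifted rule}, precisely as in the last two paragraphs of the proof of Lemma \ref{one-machine hyperfaulty}.
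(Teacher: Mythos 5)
Your opening move (normalize so that interior $a$-lengths exceed both endpoints, dispose of the one-machine case by Lemma \ref{one-machine hyperfaulty}, and split a multi-machine computation at its machine changes) matches the paper, but the junction control you then rely on is not established, and that is where the whole content of the lemma lives. The inequality $|W_{z(j)}|_a\leq\max(|W_{y(j)}|_a,|W_{z(j+1)}|_a)$ is justified by appeal to Lemmas \ref{turn}, \ref{return to start} and \ref{two one-machine J}, but those statements are about computations in the standard base (accepted configurations, the language $\pazocal{L}$); here the admissible words have a hyperfaulty base of length at most $23$, and there is no reason their input-sector contents at a junction are empty or lie in $\pazocal{L}$ --- the dangerous case, step history $(1)_2(s)_2^{-1}(s)_1(1)_1$, is exactly a single junction whose $a$-length could a priori be large. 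Moreover, even granting your junction inequality, it does not propagate: writing $M_j=|W_{z(j)}|_a$, the relations $M_j\leq\max(M_{j-1},M_{j+1})$ are consistent with all interior $M_j$ equal to a common value exceeding $\max(|W_0|_a,|W_t|_a)$ (take $M_0=M_3=0$, $M_1=M_2=5$), so the ``downward induction on $m$'' does not close.

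What the paper does at the junction is structural rather than quantitative, and it makes the special input sector irrelevant, contrary to your plan. Since $B$ is unreduced and $\theta(a)_i^{\pm1}$ lock every sector, the machine change must occur as $((s)_1^{-1}(s)_2)^{\pm1}$ (Lemma \ref{locked sectors}); since the only sectors unlocked by both $\theta(s)_1$ and $\theta(s)_2$ are the $P_0(i)Q_0(i)$-sectors with $i\geq2$, every unreduced two-letter subword of $B$ is $P_0(i)P_0(i)^{-1}$ or $Q_0(i)^{-1}Q_0(i)$ with $i\geq2$ --- so Lemma \ref{turn} and the subword $(P_0(1)Q_0(1))^{\pm1}$ play no role here, and your ``explicit list'' of bases is the one from the one-machine/revolving analysis, not the right one. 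Consequently $(12)_j$ and $(21)_j$ cannot occur; $B$ must contain some $Q_0(i)^{\pm1}$ (otherwise no rule changes $a$-length), hence a subword $(Q_0(i)P_1(i))^{\pm1}$, and Lemma \ref{M step history 2}(a) then excludes $(s)_j(1)_j(s)_j^{-1}$, pinning the step history down to exactly $(1)_2(s)_2^{-1}(s)_1(1)_1$, i.e.\ $m=2$ and one junction. The contradiction is then obtained from monotonicity of $a$-length along the final $(1)_1$ piece, via Lemmas \ref{primitive computations} and \ref{primitive unreduced} applied to the subwords $(P_0(i)Q_0(i)P_1(i))^{\pm1}$ and $P_1(i)^{-1}Q_0(i)^{-1}Q_0(i)P_1(i)$ of a cyclic permutation of $B$, using that the $Q_0(i)P_1(i)$-sectors are empty immediately after $\theta(s)_1$. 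This is precisely the part you relegate to ``bookkeeping''; without pinning down the step history and carrying out that monotonicity argument, the proof is incomplete.
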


\begin{proof}

As in the proofs of Lemmas \ref{M_4 faulty} and \ref{one-machine hyperfaulty}, we assume that $|W_r|_a>\max(|W_0|_a,|W_t|_a)$ for all $0<r<t$, so that neither the first nor the last letter of the history $H$ of $\pazocal{C}$ is a transition rule.

By Lemma \ref{one-machine hyperfaulty}, it suffices to assume that $\pazocal{C}$ is a multi-machine computation. Further, as $B$ must be unreduced and the rules $\theta(a)_i^{\pm1}$ lock each sector of the standard base, the step history of $\pazocal{C}$ must contain a subword of the form $((s)_1^{-1}(s)_2)^{\pm1}$ by Lemma \ref{locked sectors}.

The only sectors of the standard base not locked by $\theta(s)_1^{\pm1}$ or $\theta(s)_2^{\pm1}$ are those of the form $P_0(i)Q_0(i)$ for $i\geq2$. As a result, any unreduced two-letter subword of $B$ must be of the form $P_0(i)P_0(i)^{-1}$ or $Q_0(i)^{-1}Q_0(i)$ for $i\geq2$.

As both $\theta(12)_1$ and $\theta(12)_2$ lock all $P_0(i)Q_0(i)$-sectors, the step history of $\pazocal{C}$ cannot contain the letters $(12)_j$ and  $(21)_j$.

So, if $B$ does not contain the letters $Q_0(i)^{\pm1}$, then no rule of $\pazocal{C}$ alters the $a$-length of an admissible word with base $B$. As a result, we may assume that $B$ contains such a letter.

By the definition of hyperfaulty, $B$ must contain a subword of the form $(Q_0(i)P_1(i))^{\pm1}$. Lemma \ref{M step history 2}(a) then implies that the step history of $\pazocal{C}$ cannot contain a subword of the form $(s)_j(1)_j(s)_j^{-1}$.

Hence, the step history of $\pazocal{C}$ (or its inverse) is $(1)_2(s)_2^{-1}(s)_1(1)_1$. Let $\pazocal{C}_1:W_s\to\dots\to W_t$ be the maximal subcomputation with step history $(1)_1$. 

Note that any subword of $B$ of the form $(P_0(i)Q_0(i))^{\pm1}$ is contained in a subword of a cyclic permutation of $B$ of the form $(P_0(i)Q_0(i)P_1(i))^{\pm1}$. Further, any subword of $B$ of the form $Q_0(i)^{-1}Q_0(i)$ is contained in a subword of a cyclic permutation of $B$ of the form $P_1(i)^{-1}Q_0(i)^{-1}Q_0(i)P_1(i)$.

So, since $W_s$ is $\theta(s)_1^{-1}$-admissible, Lemmas \ref{primitive computations} and \ref{primitive unreduced} imply $|W_s|_a\leq|W_t|_s$, contradicting our assumption.

\end{proof}

\begin{lemma} \label{M revolving}

Let $\pazocal{C}:W_0\to\dots\to W_t$ be a reduced computation with revolving base $B$. Then $\|W_j\|\leq c_4\max(\|W_0\|,\|W_t\|)$ for all $0\leq j\leq t$.

\end{lemma}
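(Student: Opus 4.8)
\textbf{Proof proposal for Lemma \ref{M revolving}.}

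The plan is to reduce the statement about a general revolving base to the cases already handled, namely one-machine computations with faulty/hyperfaulty base (Lemmas \ref{M_4 faulty}, \ref{one-machine hyperfaulty}, \ref{hyperfaulty}) and computations in the standard base (Lemma \ref{M width}), together with the basic one-step estimates (Lemma \ref{multiply one letter}). Since the application of a transition rule does not change the length of a configuration, as in previous proofs I would assume without loss of generality that neither the first nor the last rule of the history $H$ of $\pazocal{C}$ is a transition rule, and it suffices to bound $|W_j|_a$ in terms of $\max(|W_0|_a,|W_t|_a)$ (lengths and $a$-lengths of configurations with a fixed base differ by a bounded factor). A revolving base $B\equiv xvx$ either is pararevolving (its reversion $\pi(B)$ is revolving as a base of $\textbf{M}_4$), in which case I can try to lift $\pazocal{C}$ to $\textbf{M}_4$ via Lemmas \ref{reverted base} and \ref{lifted rule}, or it is not pararevolving, which forces $B$ to use letters of at least two different coordinates and gives structural control. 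The first reduction I would make: if $B$ is unreduced it is hyperfaulty (a base is hyperfaulty iff it is pararevolving and unreduced — but wait, a revolving non-pararevolving base can still be unreduced; I need to be careful), so I would split first on whether $\pazocal{C}$ is a one-machine computation.

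\emph{Case 1: $\pazocal{C}$ is a one-machine computation.} If $B$ is hyperfaulty, Lemma \ref{one-machine hyperfaulty} gives the bound directly. Otherwise $\pi(B)$ is revolving but not faulty, so $\pi(B)$ is a reduced revolving base of $\textbf{M}_4$, i.e.\ a cyclic permutation of the standard base of $\textbf{M}_4$ (of length $11$), with at most one ``extra'' repeated letter at the ends. After disregarding the first and last letters of the step history (which can only be $\theta(s)_i^{\pm1}$ or $\theta(a)_i^{\pm1}$, and these do not change $a$-length), I would lift $\pazocal{C}$ to a reduced computation of $\textbf{M}_4$ in (a cyclic permutation of) the standard base using Lemma \ref{lifted rule}, and then invoke Lemma \ref{M_3 length} together with the width argument of Lemma \ref{M_1 width} (or just Lemma \ref{M width} applied componentwise after extending via Lemma \ref{extending one-machine}) to get $\|W_j\|\le 2c_1\max(\|W_0\|,\|W_t\|)$ times a small constant. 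The one subtlety is handling the one extra repeated base letter $x$ at the two ends of the revolving base; restricting to the two-letter subwords involving $x$ gives computations governed by Lemma \ref{multiply one letter} or Lemma \ref{unreduced base}, each of which yields $|W_j'|_a\le\max(|W_0'|_a,|W_t'|_a)$, so these sectors do not spoil the estimate.

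\emph{Case 2: $\pazocal{C}$ is a multi-machine computation.} Here I would induct on the number of maximal one-machine subcomputations of $\pazocal{C}$, exactly as in the proof of Lemma \ref{M width}. The step history contains a subword of the form $((s)_1^{-1}(s)_2)^{\pm1}$, so there is an index $0<s<t$ with $W_s$ a start or end configuration; since the only end configuration admissible for any rule is $W_{ac}$, either $W_s\equiv W_{ac}$ or $W_s$ is an input configuration, and by Lemma \ref{turn} restricted to the revolving base its ``special'' input sector is empty. Using Lemma \ref{multiply one letter} on the subcomputation up to the turn (whose step history must be of a restricted form by Lemmas \ref{M step history 2}(a) and \ref{M_3 step history}) I would show $|W_i|_a$ is controlled by $|W_0|_a$ on the first block, and then apply the inductive hypothesis to $W_s\to\dots\to W_t$, which has strictly fewer one-machine subcomputations, giving $\|W_i\|\le c_4\max(\|W_s\|,\|W_t\|)$; since $|W_s|_a$ is small (it is $0$ or bounded by the earlier block) this closes the induction with room to spare, allowing the final constant to be absorbed by the parameter choice $c_4>>c_3>>c_2>>c_1$.

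\emph{Main obstacle.} The delicate point is Case 1 with a non-pararevolving, or pararevolving-but-not-faulty, base: one must verify that after stripping off the at-most-one extra repeated base letter the remainder genuinely is (a cyclic permutation of) the standard base of $\textbf{M}_4$, so that Lemma \ref{lifted rule} applies and the standard-base length bounds kick in — and one must separately dispatch the extra-letter sectors, including the possibility that they are unreduced (governed by Lemma \ref{unreduced base} or Lemma \ref{primitive unreduced}). Equivalently, the heart of the argument is bookkeeping which two-letter subwords of a revolving base can fail to be sectors of a bona fide $\textbf{M}_4$-base, and checking that each such subword's restricted subcomputation is still length-monotone or bounded. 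Everything else is an assembly of the previously established machinery.
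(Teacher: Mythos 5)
There is a genuine gap, and it sits exactly where you flagged your own unease. Your Case 1 dichotomy "hyperfaulty, or else $\pi(B)$ is reduced" is false: a revolving base of $\textbf{M}$ can be faulty (unreduced) without being hyperfaulty, because its reversion need not be pararevolving (the paper gives such an example right before Lemma \ref{one-machine hyperfaulty}), and this faulty-but-not-hyperfaulty situation is the main content of the lemma, not a boundary nuisance about "one extra repeated letter." Your proposed remedy for it — sector-by-sector monotonicity bookkeeping via Lemmas \ref{multiply one letter}, \ref{unreduced base}, \ref{primitive unreduced} — is not sufficient there, and the paper does something genuinely different: since $B$ is faulty but not hyperfaulty it contains a \emph{reduced pararevolving subword} $B'$, which can be taken of the form $\{t(i)\}B_3(i)\{t(i+1)\}$; restricting $\pazocal{C}$ to $B'$ gives a computation to which Lemma \ref{M projected long history} applies, and because $B$ is unreduced, Lemma \ref{M controlled} forbids the history from containing a controlled subword, so the contrapositive of Lemma \ref{M projected long history} bounds the whole computation time by $t\leq c_3\max(\|W_0\|,\|W_t\|)$, which immediately bounds every $\|W_j\|$. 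Neither Lemma \ref{M controlled} nor Lemma \ref{M projected long history} appears anywhere in your proposal, and without this length bound the faulty non-hyperfaulty case is not closed.

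Your Case 2 (multi-machine, induction modeled on Lemma \ref{M width}) also does not transfer. The proof of Lemma \ref{M width} leans on the computation being in the standard base: the word at a turn is an honest configuration, Lemma \ref{M language} identifies it as $I(w)$, $J(w)$ or $W_{ac}$, and acceptedness drives the estimates $|W_i|_a\leq|W_0|_a$ on the first block. For a general revolving (in particular faulty) base, the admissible word $W_s$ at a subword $((s)_1^{-1}(s)_2)^{\pm1}$ of the step history is not a configuration, need not be accepted, and its $a$-length is not "$0$ or bounded by the earlier block" — all input sectors with coordinate $\geq2$ are unlocked by $\theta(s)_j$ and may carry arbitrary tape words — so the inductive step as you state it does not close. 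The only residual case the paper handles by hand (every reduced pararevolving subword containing $(P_0(1)Q_0(1))^{\pm1}$ and the step history containing $(1)_2$) requires the explicit determination of the possible cyclic forms of $B$ and the chain of applications of Lemmas \ref{primitive unreduced}, \ref{primitive computations}, \ref{multiply one letter}, \ref{M_3 step history}(b) and \ref{M step history 2}(a) to contradict the assumption that some intermediate length exceeds $\max(\|W_0\|,\|W_t\|)$; your "main obstacle" paragraph points at the right kind of bookkeeping but does not supply this argument, and it cannot be replaced by the standard-base machinery you cite.
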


\begin{proof}

If $B$ is reduced, then the statement follows from Lemma \ref{M width} and the parameter choice $c_4>>c_2$. Similarly, if $B$ is hyperfaulty, then the statement follows from Lemma \ref{hyperfaulty} and the parameter choice $c_4>>c_0$.

So, it suffices to assume that $B$ is faulty but not hyperfaulty. As a result, $B$ contains a reduced pararevolving subword $B'$. Fix $i$ such that $B'$ contains a subword $(P_0(i)Q_0(i))^{\pm1}$. Since all non-input sectors are operated on in parallel across coordinates, we may assume that $B'$ is of the form $\{t(i)\}B_3(i)\{t(i+1)\}$ (where we take $L+1$ to be 1).

Let $\pazocal{C}':W_0'\to\dots\to W_t'$ be the restriction to $B'$ and suppose $i\neq1$. Since $B$ is unreduced, Lemma \ref{M controlled} implies that the history $H$ of $\pazocal{C}$ cannot contain a controlled subword. So, Lemma \ref{M projected long history} yields $t\leq c_3\max(\|W_0'\|,\|W_t'\|)\leq c_3\max(\|W_0\|,\|W_t\|)$. Hence, the statement follows from the parameter choice $c_4>>c_3$.

Further, if $i=1$ and the step history of $\pazocal{C}$ does not contain the letter $(1)_2$, then we may construct a coordinate shift of $\pazocal{C}'$, implying the statement in the same way.

So, we may assume that the step history of $\pazocal{C}$ contains the letter $(1)_2$ and that any reduced pararevolving subword of $B$ contains a subword $(P_0(1)Q_0(1))^{\pm1}$.

If the step history of $\pazocal{C}$ is $(1)_2$, then the restriction of $\pazocal{C}$ to any two-letter subword has fixed tape word, satisfies the hypotheses of Lemma \ref{multiply one letter}, or satisfies the hypotheses of Lemma \ref{unreduced base}. As in Step 1 of the proof of Lemma \ref{M_4 faulty}, this implies $\|W_j\|\leq2\max(\|W_0\|,\|W_t\|)$. 

So, $H$ must contain a transition rule.

As in previous proofs, we may assume that $\|W_r\|>\max(\|W_0\|,\|W_t\|)$ for all $0<r<t$, so that neither the first nor the last letter of $H$ is a transition rule (or $\chi$-rule or connecting rule).

Suppose the step history of $\pazocal{C}$ contains the letter $(12)_2$. Then, $H$ must have a subword of the form $H_1\theta(12)_2H_2$, where each $H_1$ (respectively $H_2$) is the history of a maximal subcomputation with step history $(1)_2$ (respectively $(2)_2$).

As $\theta(12)_2$ locks every sector of the standard base except for those of the form $Q_0(i)P_1(i)$, any unreduced two-letter subword of $B$ must be of the form $Q_0(i)Q_0(i)^{-1}$ or $P_1(i)^{-1}P_1(i)$. As a result, $B$ must be a cyclic permutation of
$$Q_0(2)Q_0(2)^{-1}\dots \{t(1)\}^{-1}Q_4(L)^{-1}\dots Q_1(L)^{-1}P_1(L)^{-1}P_1(L)Q_1(L)\dots Q_4(L)\{t(1)\}\dots Q_0(2)$$
Let $\pazocal{C}_2:W_r\to\dots\to W_s$ be the subcomputation of $\pazocal{C}$ with history $H_2$ and $\pazocal{C}_2':W_r'\to\dots\to W_s'$ be its restriction to the subword $$Q_3(L)^{-1}R_2(L)^{-1}Q_2(L)^{-1}R_1(L)^{-1}Q_1(L)^{-1}P_1(L)^{-1}P_1(L)Q_1(L)R_1(L)Q_2(L)R_2(L)Q_3(L)$$ 
As in the proof of Lemma \ref{one-machine hyperfaulty}, we may view the admissible subwords whose bases are of the form $(P_1(L)Q_1(L)R_1(L)Q_2(L)R_2(L))^{\pm1}$ as a single state letter. With this view, Lemma \ref{primitive unreduced} implies $|W_r'|_a\leq\dots\leq|W_s'|_a$.

Let $\pazocal{C}_2'':W_r''\to\dots\to W_s''$ be the restriction of $\pazocal{C}_2$ to a subword of the form $(\{t(1)\}B_3(1))^{\pm1}$. Then a projection argument implies $|W_r''|_a\leq\dots\leq|W_s''|_a$.

As any other sector has fixed tape word throughout $\pazocal{C}_2$, this yields $|W_r|_a\leq|W_s|_a$. So, we may assume $t>s$.

Since $B$ contains the subword $R_2(1)Q_3(1)$, Lemma \ref{M_3 step history}(b) implies that $H$ must have a subword $H_2\theta(23)_2H_3$, where $H_3$ is the history of a maximal subcomputation with step history $(3)_2$.

The connecting rule of $\textbf{M}_4(3^-)$ locks the $Q_0P_1$-sector, so that $H_3$ cannot contain an occurrence of a copy of this rule. So, since $B$ contains the subword $P_1(1)Q_1(1)$, Lemma \ref{multiply one letter} implies that $H_3$ must be a suffix of $H$.

Let $\pazocal{C}_3:W_{s+1}\to\dots\to W_t$ be the subcomputation with history $H_3$ and $\pazocal{C}_3':W_{s+1}'\to\dots\to W_t'$ be its restriction to a subword of the form $(Q_0(1)P_1(1)Q_1(1))^{\pm1}$. Then, Lemma \ref{primitive computations} implies $|W_{s+1}'|_a\leq\dots\leq |W_t'|_a$.

Letting $\pazocal{C}_3'':W_{s+1}''\to\dots\to W_t''$ be the restriction of $\pazocal{C}_3$ to the subword $Q_1(L)^{-1}P_1(L)^{-1}P_1(L)Q_1(L)$, Lemma \ref{primitive unreduced} implies $|W_{s+1}''|_a\leq\dots\leq|W_t''|_a$.

As any other sector has fixed tape word throughout $\pazocal{C}_3$, it follows that $|W_s|_a\leq|W_t|_a$, contradicting our assumption.

Hence, we may assume that the step history of $\pazocal{C}$ does not contain the letters $(12)_2$ or $(21)_2$. So, since the step history contains the letter $(1)_2$ and a letter corresponding to a transition rule, it must contain a letter of the form $(s)_2^{\pm1}$.

As $\theta(s)_2$ locks every sector of the standard base except for those of the form $P_0(i)Q_0(i)$ for $i\geq2$, $B$ must be a cyclic permutation of
$$P_0(2)P_0(2)^{-1}\dots\{t(1)\}^{-1}Q_4(L)^{-1}\dots P_1(L)^{-1}Q_0(L)^{-1}Q_0(L)P_1(L)\dots Q_4(L)\{t(1)\}\dots P_0(2)$$
As $B$ contains the subword $Q_0(L)P_1(L)$, Lemma \ref{M step history 2}(a) implies that the step history of $\pazocal{C}$ cannot contain a subword of the form $(s)_j(1)_j(s)_j^{-1}$. So, the step history of $\pazocal{C}$ (or its inverse) must be $(1)_2(s)_2^{-1}(s)_1(1)_1$. 

Let $\pazocal{C}_1:W_s\to\dots\to W_t$ be the maximal subcomputation of $\pazocal{C}$ with step history $(1)_1$ and $\pazocal{C}_1':W_s'\to\dots\to W_t'$ be its restriction to the subword $P_1(L)^{-1}Q_0(L)^{-1}Q_0(L)P_1(L)$. Then, Lemma \ref{primitive unreduced} implies $|W_s'|_a\leq\dots\leq|W_t'|_a$.

Letting $\pazocal{C}_1'':W_s''\to\dots\to W_t''$ be the restriciton of $\pazocal{C}_1$ to a subword of the form $(P_0(1)Q_0(1)P_1(1))^{\pm1}$, Lemma \ref{primitive computations} (or a projection argument) implies $|W_s''|_a\leq\dots\leq|W_t''|_a$.

As any other sector has fixed tape word throughout $\pazocal{C}_1$, $|W_s|_a\leq|W_t|_a$. Thus, we reach a final contradiction.

\end{proof}

\medskip


\section{Groups Associated to an $S$-machine and their Diagrams}

\subsection{The groups} \

As in previous literature (for example [16], [20], [23]), we now associate two finitely presented groups to a cyclic $S$-machine $\textbf{S}$. These groups are denoted $M(\textbf{S})$ and $G(\textbf{S})$ and `simulate' the work of $\textbf{S}$ in the precise sense described in Section 6.3.

Let $\textbf{S}$ be a cyclic recognizing $S$-machine with hardware $(Y,Q)$, where $Q=\sqcup_{i=0}^s Q_i$ and $Y=\sqcup_{i=1}^{s+1} Y_i$, and software the set of rules $\Theta=\Theta^+\sqcup\Theta^-$. For notational purposes, set $Q_0=Q_{s+1}$ and denote the accept word of $\textbf{S}$ by $W_{ac}$.

For $\theta\in\Theta^+$, Lemma \ref{simplify rules} allows us to assume that $\theta$ takes the form $$\theta=[q_0\to v_{s+1}q_0'u_1, \ q_1\to v_1q_1'u_2, \ \dots, \ q_{s-1}\to v_{s-1}q_{s-1}'u_s, \ q_s\to v_sq_s'u_{s+1}]$$ where $q_i,q_i'\in Q_i$, $u_i$ and $v_i$ are either empty or single letters in $Y_i^{\pm1}$, and some of the arrows may take the form $\xrightarrow{\ell}$. Note that if $\theta$ locks the $i$-th sector, then both $u_i$ and $v_i$ are necessarily empty.

Define $R=\{\theta_i: \theta\in\Theta^+,0\leq i\leq s\}$. For notational convenience, set $\theta_{s+1}=\theta_0$ for all $\theta\in\Theta^+$.

The group $M(\textbf{S})$ is then defined by taking the (finite) generating set $\pazocal{X}=Q\cup Y\cup R$ and subjecting it to the (finite number of) relations:

\begin{addmargin}[1em]{0em}

$\bullet$ $q_i\theta_{i+1}=\theta_i v_iq_i'u_{i+1}$ for all $\theta\in\Theta^+$ and $0\leq i\leq s$,

$\bullet$ $\theta_ia=a\theta_i$ for all $0\leq i\leq s$ and $a\in Y_i(\theta)$.

\end{addmargin}

As in the language of computations of $S$-machines, letters from $Q^{\pm1}$ are called \textit{$q$-letters} and those from $Y^{\pm1}$ are called \textit{$a$-letters}. Additionally, those from $R^{\pm1}$ are called \textit{$\theta$-letters}. The relations of the form $q_i\theta_{i+1}=\theta_iv_iq_i'u_{i+1}$ are called \textit{$(\theta,q)$-relations}, while those of the form $\theta_ia=a\theta_i$ are called \textit{$(\theta,a)$-relations}.

Note that the number of $a$-letters in any part of $\theta$, and so in any defining relation of $M(\textbf{S})$, is at most two.

To simplify these relations, it is convenient to omit reference to the indices of the letters of $R$. This notational quirk may make it appear as though $\theta$ commutes with the letters of $Y_i(\theta)$ and conjugates $q_i$ to $v_iq_i'u_{i+1}$ for each $i$; it should be noted that these statements are not strictly true. Further, it is useful to note that if $\theta$ locks the $i$-th sector, then $Y_i(\theta)=\emptyset$ so that $\theta$ has no relation with the elements of $Y_i$.

However, this group evidently lacks any reference to the accept configuration. To amend this, the group $G(\textbf{S})$ is constructed by adding one more relation to the presentation of $M(\textbf{S})$, namely the \textit{hub-relation} $W_{ac}=1$. In other words, $G(\textbf{S})\cong M(\textbf{S})/\gen{\gen{W_{ac}}}$.

Moreover, it is useful for the purposes of our construction to consider extra relations, called \textit{$a$-relations}, within the language of tape letters. If $\Omega$ is the set of relators defining these $a$-relations, then we denote the groups arising from the addition of $a$-relations by $M_\Omega(\textbf{S})$ and $G_\Omega(\textbf{S})$. Note that $M_\Omega(\textbf{S})\cong M(\textbf{S})/\gen{\gen{\Omega}}$ and $G_\Omega(\textbf{S})\cong G(\textbf{S})/\gen{\gen{\Omega}}$.

It is henceforth taken as an assumption that any $a$-relation adjoined to the groups associated to the machine $\textbf{M}$ correspond to words over the alphabet of the `special' input sector. 

For the purposes of Section 11 and the proof of Theorem \ref{main theorem}, the set of $a$-relators $\pazocal{S}$ is taken to be exactly the words that represent the trivial element in $B(\pazocal{A},n)$, where the tape alphabet of the sector is identified with $\pazocal{A}$. However, in the proof of Theorem \ref{CEP} presented in Section 13, the $a$-relators are taken to be a larger set of words. So, for the sake of generality, until Section 11, the set of $a$-relators $\Omega$ is taken to be some set of words over $\pazocal{A}$ containing $\pazocal{S}$ as a subset. 

Note that though they remain finitely generated, $M_\Omega(\textbf{S})$ and $G_\Omega(\textbf{S})$ may no longer be finitely presented. In fact, in all situations encountered in what follows, $M_\Omega(\textbf{M})$ and $G_\Omega(\textbf{M})$ are not finitely presented.

\smallskip


\subsection{Bands and annuli} \

Many of the arguments presented in the forthcoming sections rely on van Kampen diagrams (see Section 2.1) over the presentations of the groups introduced in Section 6.1. To present these arguments efficiently, we first differentiate between the types of edges and cells that arise in such diagrams in a way similar to that employed in [16] and [23]. 

For simplicity, we will often disregard the presence of $0$-cells in these diagrams. For example, we do not differentiate between adjacent edges, so that any edge not on the boundary of a diagram is on the boundary of two $\pazocal{R}$-cells (for $\pazocal{R}$ the defining relators of the corresponding group). Additionally, we will adopt the convention that the contour of any diagram, subdiagram, or cell is traced in the counterclockwise direction.

An edge labelled by a $q$-letter is called a \textit{$q$-edge}. Similarly, an edge labelled by an $a$-letter is called an \textit{$a$-edge} and one labelled by a $\theta$-letter is a \textit{$\theta$-edge}. 

For a path \textbf{p} in $\Delta$, the (combinatorial) length of $\textbf{p}$ is denoted $\|\textbf{p}\|$. Further, the path's \textit{$a$-length} $|\textbf{p}|_a$ is the number of $a$-edges in the path. The path's \textit{$\theta$-length} and \textit{$q$-length}, denoted $|\textbf{p}|_{\theta}$ and $|\textbf{p}|_q$, respectively, are defined similarly.

A cell whose contour label corresponds to a $(\theta,q)$-relation is called a \textit{$(\theta,q)$-cell}. Similarly, there are \textit{$(\theta,a)$-cells}, \textit{$a$-cells}, and \textit{hubs}.

In the general setting of a reduced diagram $\Delta$ over a presentation $\gen{X\mid\pazocal{R}}$, let $\pazocal{Z}\subseteq X$. For $m\geq1$, a sequence of (distinct) cells $\pazocal{B}=(\Pi_1,\dots,\Pi_m)$ in $\Delta$ is called a \textit{$\pazocal{Z}$-band} of length $m$ if:

\begin{itemize}

\item every two consecutive cells $\Pi_i$ and $\Pi_{i+1}$ have a common boundary edge $\textbf{e}_i$ labeled by a letter from $\pazocal{Z}^{\pm1}$ and

\item for every $i$, $\partial\Pi_i$ has exactly two edges labelled by a letter from $\pazocal{Z}^{\pm1}$, $\textbf{e}_{i-1}^{-1}$ and $\textbf{e}_i$, and $\text{Lab}(\textbf{e}_{i-1})$ and $\text{Lab}(\textbf{e}_i)$ are either both positive or both negative.

\end{itemize}

\begin{figure}[H]
\centering
\begin{subfigure}[b]{0.48\textwidth}
\centering
\raisebox{0.5in}{\includegraphics[scale=1.35]{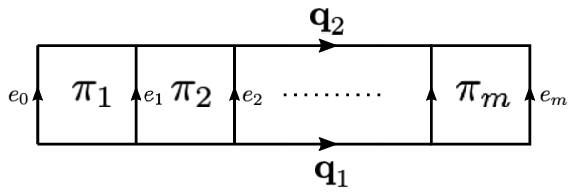}}
\caption{Non-annular $\pazocal{Z}$-band of length $m$}
\end{subfigure}\hfill
\begin{subfigure}[b]{0.48\textwidth}
\centering
\includegraphics[scale=1.35]{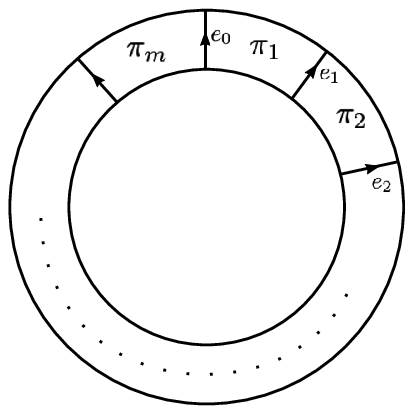}
\caption{Annular $\pazocal{Z}$-band of length $m$}
\end{subfigure}
\caption{ \ }
\end{figure}

For convenience, we extend this definition by saying that any edge labelled by a letter of $\pazocal{Z}^{\pm1}$ is a $\pazocal{Z}$-band of length zero.

A $\pazocal{Z}$-band is \textit{maximal} if it is not contained in any other $\pazocal{Z}$-band. Note that every edge labelled by a letter of  $\pazocal{Z}^{\pm1}$ is contained in a maximal $\pazocal{Z}$-band.

In a $\pazocal{Z}$-band $\pazocal{B}$ of length $m\geq1$ made up of the cells $(\Pi_1,\dots,\Pi_m)$, using only edges from the contours of $\Pi_1,\dots,\Pi_m$, there exists a closed path $\textbf{e}_0^{-1}\textbf{q}_1\textbf{e}_m\textbf{q}_2^{-1}$ such that $\textbf{q}_1$ and $\textbf{q}_2$ are simple (perhaps closed) paths. In this case, $\textbf{q}_1$ is called the \text{bottom} of $\pazocal{B}$, denoted $\textbf{bot}(\pazocal{B})$, while $\textbf{q}_2$ is called the \textit{top} of $\pazocal{B}$ and denoted $\textbf{top}(\pazocal{B})$. When $\textbf{q}_1$ and $\textbf{q}_2$ need not be distinguished, they are called the \textit{sides} of the band.

If $\textbf{e}_0=\textbf{e}_m$ in a $\pazocal{Z}$-band $\pazocal{B}$ of length $m\geq1$, then $\pazocal{B}$ is called a \textit{$\pazocal{Z}$-annulus}. If $\pazocal{B}$ is a non-annular $\pazocal{Z}$-band of length $m\geq1$, then $\textbf{e}_0^{-1}\textbf{q}_1\textbf{e}_m\textbf{q}_2^{-1}$ is called the \textit{standard factorization} of the contour of $\pazocal{B}$. If either $(\textbf{e}_0^{-1}\textbf{q}_1\textbf{e}_m)^{\pm1}$ or $(\textbf{e}_m\textbf{q}_2^{-1}\textbf{e}_0^{-1})^{\pm1}$ is a subpath of $\partial\Delta$, then $\pazocal{B}$ is called a \textit{rim $\pazocal{Z}$-band}.

A $\pazocal{Z}_1$-band and a $\pazocal{Z}_2$-band \textit{cross} if they have a common cell and $\pazocal{Z}_1\cap\pazocal{Z}_2=\emptyset$.

In particular, in a reduced diagram over the canonical presentations of the groups of interest, there exist \textit{$q$-bands} corresponding to bands arising from $\pazocal{Z}=Q_i^{\pm1}$ for some $i$, where every cell is a $(\theta,q)$-cell. Similarly, there exist \textit{$\theta$-bands} for $\theta\in\Theta^+$ and \textit{$a$-bands} for $a\in Y$. However, it is useful to restrict the definition of an $a$-band so that they consist only of $(\theta,a)$-cells.

Note that by definition, distinct maximal $q$-bands ($\theta$-bands, $a$-bands) cannot intersect.

Given an $a$-band $\pazocal{B}$, the makeup of the groups' relations dictates that the defining $a$-edges $\textbf{e}_0,\dots,\textbf{e}_m$ are labelled identically. Similarly, the $\theta$-edges of a $\theta$-band correspond to the same rule; however, the (suppressed) index of two such $\theta$-edges may differ.

If a maximal $a$-band contains a cell with an $a$-edge that is also on the contour of a $(\theta,q)$-cell, then the $a$-band is said to \textit{end} (or \textit{start}) on that $(\theta,q)$-cell and the corresponding $a$-edge is said to be the \textit{end} (or \textit{start}) of the band. This definition extends similarly, so that: 

\begin{itemize}

\item a maximal $a$-band can end on a $(\theta,q)$-cell, on an $a$-cell, or on the diagram's contour,

\item a maximal $\theta$-band can end only on the diagram's contour, and

\item a maximal $q$-band can end on a hub or on the diagram's contour.

\end{itemize}

Note that if a maximal $\theta$-band ($a$-band, $q$-band) ends as above in one part of the diagram, then it must also end in another part of the diagram as it cannot be a $\theta$-annulus ($a$-annulus, $q$-annulus).

\begin{figure}[H]
\centering
\includegraphics[scale=1.2]{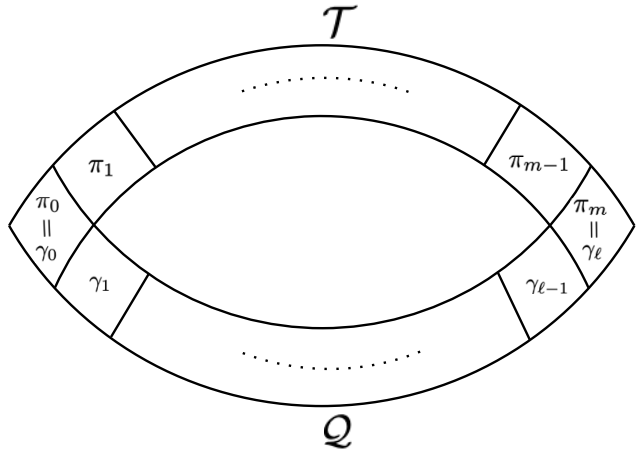}
\caption{$(\theta,q)$-annulus with defining $\theta$-band $\pazocal{T}$ and $q$-band $\pazocal{Q}$}
\end{figure}

The natural projection of the label of the top (or bottom) of a $q$-band onto $F(\Theta^+)$ is called the \textit{history} of the band; the \textit{step history} of the band is then defined in the obvious way. The natural projection (without reduction) of the top (or bottom) of a $\theta$-band onto the alphabet $\{Q_0,\dots,Q_s\}$ is called the \textit{base} of the band.

Let $\pazocal{T}$ be a maximal $\theta$-band in a reduced diagram $\Delta$ over $G_\Omega(\textbf{M})$ with two ends on $\partial\Delta$. Suppose that any cell between one side of $\pazocal{T}$ and $\partial\Delta$ is an $a$-cell. Then $\pazocal{T}$ is called a \textit{quasi-rim $\theta$-band}. Note that a rim $\theta$-band is a quasi-rim $\theta$-band.

Suppose the sequence of cells $(\pi_0,\pi_1,\dots,\pi_m)$ comprises a $\theta$-band and $(\gamma_0,\gamma_1,\dots,\gamma_\ell)$ a $q$-band such that $\pi_0=\gamma_0$, $\pi_m=\gamma_\ell$, and no other cells are shared. Suppose further that $\partial\pi_0$ and $\partial\pi_m$ both contain edges on the outer countour of the annulus bounded by the two bands. Then the union of these two bands is called a \textit{$(\theta,q)$-annulus} and $\pi_0$ and $\pi_m$ are called its \textit{corner} cells. A \textit{$(\theta,a)$-annulus} is defined similarly.

\smallskip

The following statement is proved in a more general setting in [15]:

\begin{lemma} \label{M(S) annuli}

\textit{(Lemma 6.1 of [15])} A reduced diagram over $M(\textbf{S})$ contains no:

\begin{enumerate}[label=({\arabic*})]

\item $(\theta,q)$-annuli

\item $(\theta,a)$-annuli

\item $a$-annuli

\item $q$-annuli

\item $\theta$-annuli

\end{enumerate}

\end{lemma}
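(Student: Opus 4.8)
The plan is to prove the five non-existence statements in a uniform way, by assuming toward contradiction that a reduced diagram over $M(\textbf{S})$ contains one of the listed configurations and then locating a pair of cancellable cells, contradicting reducedness. Since the lemma is quoted from [15] and the excerpt grants us free use of earlier results, I would keep the argument at the level of the standard band/annulus combinatorics rather than rederiving everything. First I would record the basic structural observation that underlies all five cases: in the canonical presentation of $M(\textbf{S})$, each defining relator of $(\theta,q)$-type has exactly two $\theta$-edges (corresponding to $\theta_i,\theta_{i+1}$) and exactly two $q$-edges (one from $Q_i$, one from $Q_i'\subseteq Q_i$), while each $(\theta,a)$-relator $\theta_i a = a \theta_i$ has exactly two $\theta$-edges and two $a$-edges, and the $a$-relators have no $\theta$- or $q$-edges. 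This is precisely what makes the band definitions well-posed, and it forces the boundary of any of the five annuli to carry a very constrained label.

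The core of the argument proceeds annulus-type by annulus-type, in roughly the order (5), (4), (3), (2), (1), since the later cases tend to be reduced to the earlier ones. For a $\theta$-annulus $\pazocal{T}$ (case 5): every cell of $\pazocal{T}$ is a $(\theta,q)$- or $(\theta,a)$-cell corresponding to the \emph{same} rule $\theta\in\Theta^+$ (the two $\theta$-edges shared with neighbours force this). Reading the inner boundary of $\pazocal{T}$, its label is a word in $q$- and $a$-letters that must equal $1$ in the free group after we strip $\theta$-edges — and a minimal-length $\theta$-annulus must then contain two consecutive cells forming a cancellable pair (same rule, mutually inverse boundary labels read from a suitable common vertex), because there is no room for a nontrivial $q$-band or $a$-band to separate them. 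Formally one takes the innermost $\theta$-annulus, observes the subdiagram it bounds has boundary label trivial in $F(\pazocal{X})$, and runs the standard "two mirror cells" argument. The cases of $q$-annuli (4), $a$-annuli (3) are handled the same way with the roles of $Q_i$ and $Y$ interchanged; here one uses that a $q$-band consists only of $(\theta,q)$-cells and an $a$-band only of $(\theta,a)$-cells, so the band's history word (for $q$) or common $a$-letter (for $a$) is constant, again producing a cancellable pair. For the mixed $(\theta,q)$-annulus (1) and $(\theta,a)$-annulus (2), I would argue that the corner cells of such an annulus belong simultaneously to a $\theta$-band and a crossing $q$-band (resp. $a$-band); tracing around the annular region and counting $\theta$-edges versus $q$-edges (resp. $a$-edges) on its two boundary components yields a parity/length contradiction unless one of the constituent bands is actually an annulus, reducing us to cases (3)–(5) already handled.

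The main obstacle I anticipate is bookkeeping the \emph{reducedness} step cleanly: "a minimal annulus yields a cancellable pair of cells" requires checking that the two candidate cells really do satisfy the definition of cancellable cells from Section 2.1 — i.e. that there is a simple path between appropriately chosen basepoints with freely trivial label and that the two boundary labels are genuinely mutually inverse when read from those basepoints. One has to be careful because the suppressed indices on $\theta$-letters mean two $\theta$-edges of "the same rule" can carry $\theta_i$ versus $\theta_j$ with $i\neq j$, so the mirror-cell matching must track which part $\theta_i\to\cdots$ of the rule each cell comes from. The cleanest route is to pass to the subdiagram bounded by an innermost annulus, note its contour label is trivial in $M(\textbf{S})$ (indeed freely trivial, since it is the product of the boundary labels of the annulus cells), and invoke the fact that in a reduced diagram this cannot happen when the annulus has positive length — but since this is exactly Lemma 6.1 of [15], I would ultimately cite [15] for the delicate combinatorial core and present here only the reduction showing our presentation satisfies its hypotheses (each $(\theta,q)$- and $(\theta,a)$-relator has the required two-$\theta$-edge / two-$q$- or $a$-edge shape, and $a$-relators involve no $q$- or $\theta$-letters).

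\begin{proof}
This is Lemma 6.1 of [15]; we indicate why the presentation of $M(\textbf{S})$ meets its hypotheses. In the canonical presentation, every $(\theta,q)$-relator $q_i\theta_{i+1}=\theta_i v_i q_i' u_{i+1}$ has exactly two $\theta$-edges and exactly two $q$-edges on its contour, every $(\theta,a)$-relator $\theta_i a = a\theta_i$ has exactly two $\theta$-edges and exactly two $a$-edges, and every $a$-relator (a word over the tape alphabet of the special input sector) has no $q$- and no $\theta$-edges. Consequently $q$-bands consist solely of $(\theta,q)$-cells, $\theta$-bands of $(\theta,q)$- and $(\theta,a)$-cells, and $a$-bands of $(\theta,a)$-cells, and the notions of $q$-, $\theta$-, $a$-band, and of the various annuli, are well defined exactly as in [15].

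Assume some reduced diagram $\Delta$ over $M(\textbf{S})$ contains one of the listed annuli, and choose one of minimal length. If it is a $\theta$-annulus (resp.\ $q$-annulus, $a$-annulus), all its cells correspond to a single rule $\theta\in\Theta^+$ (resp.\ lie in a single $q$-band, carry a single $a$-letter); reading the inner contour of the subdiagram $\Gamma$ it bounds, the label of $\partial\Gamma$ is the product of the boundary labels of the annulus cells and hence is freely trivial. By minimality there is no shorter band separating two consecutive annulus cells, so two of them form a cancellable pair (matching the parts $\theta_i\to\cdots$ and $\theta_{i+1}\to\cdots$ of the common rule via a length-$0$ path along the shared $\theta$-, $q$-, or $a$-edge), contradicting that $\Delta$ is reduced.

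If it is a $(\theta,q)$-annulus (resp.\ $(\theta,a)$-annulus) with corner cells $\pi_0,\pi_m$, then $\pi_0$ and $\pi_m$ lie on a $\theta$-band $\pazocal{T}$ and a crossing $q$-band (resp.\ $a$-band) $\pazocal{Q}$. Counting $\theta$-edges along the two boundary components of the annular region bounded by $\pazocal{T}\cup\pazocal{Q}$ against the $q$-edges (resp.\ $a$-edges), one of $\pazocal{T}$, $\pazocal{Q}$ must close up into an annulus, contradicting the previously treated cases. This exhausts all possibilities, and the details are as in Lemma 6.1 of [15].
\end{proof}
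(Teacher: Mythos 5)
The paper offers no proof of this lemma at all: it is quoted verbatim as Lemma 6.1 of [15] ("The following statement is proved in a more general setting in [15]"), so your decision to cite [15] for the combinatorial core and merely check that the presentation of $M(\textbf{S})$ has the required shape of relators is exactly the paper's treatment, and as a citation-plus-hypothesis-check your proof is acceptable.

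That said, the standalone sketch you embed in the proof would not survive scrutiny if it were load-bearing, and it is worth flagging why. For a $q$-annulus (or $a$-annulus) it is simply false that two consecutive cells of a minimal annulus form a cancellable pair: consecutive cells of a $q$-band share a $q$-edge but may correspond to distinct rules, and the history read along the annulus can be a reduced word, so no adjacent mirror pair need exist inside the annulus itself. The genuine argument runs in the opposite order from yours: one first rules out $(\theta,q)$- and $(\theta,a)$-annuli by an innermost argument in the bounded subdiagram (the defining $q$-band of a $(\theta,q)$-annulus has history $\theta w\theta^{-1}$; a maximal $\theta$-band starting on an interior $\theta$-edge must close into a smaller $(\theta,q)$-annulus, and iterating produces two adjacent cancellable cells of the $q$-band), and only then reduces $a$-, $q$- and $\theta$-annuli to the mixed cases, since every cell of such an annulus carries $\theta$-edges (resp.\ $q$- or $a$-edges) that spawn bands in the interior which must cross the annulus. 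This is precisely how the paper argues in its own Lemma \ref{M_a no annuli 1} and Lemma \ref{M_a no annuli 2} for the analogous statement over $G_\Omega(\textbf{M})$, so if you want a self-contained proof rather than a citation, you should model it on that argument rather than on the "minimal annulus contains a mirror pair" heuristic, which also makes your parity-counting reduction of cases (1)–(2) to (3)–(5) unnecessary.
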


As a result, in a reduced diagram $\Delta$ over $M(\textbf{S})$, if a maximal $\theta$-band and a maximal $q$-band (respectively $a$-band) cross, then their intersection is exactly one $(\theta,q)$-cell (respectively $(\theta,a)$-cell). Further, every maximal $\theta$-band and maximal $q$-band ends on $\partial\Delta$ in two places.

%
%
%
%
%
%

\smallskip


\subsection{Trapezia} \

Let $\Delta$ be a reduced diagram over the canonical presentation of $M(\textbf{S})$ whose contour is of the form $\textbf{p}_1^{-1}\textbf{q}_1\textbf{p}_2\textbf{q}_2^{-1}$, where $\textbf{p}_1$ and $\textbf{p}_2$ are sides of $q$-bands and $\textbf{q}_1$ and $\textbf{q}_2$ are maximal parts of the sides of $\theta$-bands whose labels start and end with $q$-letters. Then $\Delta$ is called a \textit{trapezium}.

In this case, $\textbf{q}_1$ and $\textbf{q}_2$ are called the \textit{bottom} and \textit{top} of the trapezium, respectively, while $\textbf{p}_1$ and $\textbf{p}_2$ are the \textit{left} and \textit{right} sides. Further, $\textbf{p}_1^{-1}\textbf{q}_1\textbf{p}_2\textbf{q}_2^{-1}$ is called the \textit{standard factorization} of the contour.

\begin{figure}[H]
\centering
\includegraphics[scale=1.25]{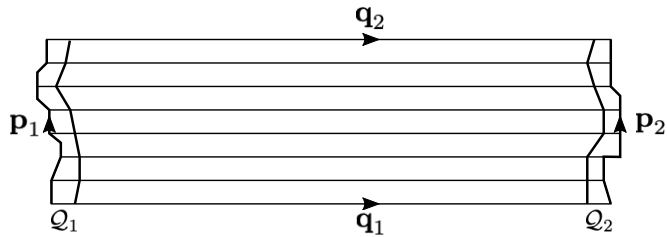}
\caption{Trapezium with side $q$-bands $\pazocal{Q}_1$ and $\pazocal{Q}_2$}
\end{figure}

The \textit{(step) history} of the trapezium is the (step) history of the rim $q$-band with $\textbf{p}_2$ as one of its sides and the length of this history is the trapezium's \textit{height}. The base of $\text{Lab}(\textbf{q}_1)$ is called the \textit{base} of the trapezium.

It's easy to see from this definition that a $\theta$-band $\pazocal{T}$ whose first and last cells are $(\theta,q)$-cells can be viewed as a trapezium of height 1 as long as its top and bottom start and end with $q$-edges. We extend this to all such $\theta$-bands by merely disregarding any $a$-edges of the top and bottom that precede the first $q$-edge or follow the final $q$-edge. The paths formed by disregarding these edges are called the \textit{trimmed} top and bottom of the band and are denoted $\textbf{ttop}(\pazocal{T})$ and $\textbf{tbot}(\pazocal{T})$.

\begin{figure}[H]
\centering
\includegraphics[scale=1.75]{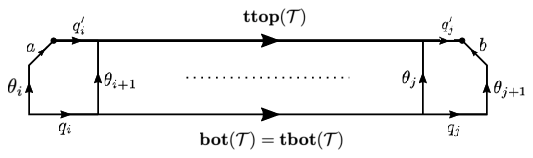}
\caption{$\theta$-band $\pazocal{T}$ with trimmed top}
\end{figure}

\begin{lemma} \label{theta-bands are one-rule computations}

Let $\pazocal{T}$ be a $\theta$-band in a reduced diagram $\Delta$ over the canonical presentation of $M(\textbf{S})$ whose first and last cells are $(\theta,q)$-cells. Then $\lab(\textbf{tbot}(\pazocal{T}))$ and $\lab(\textbf{ttop}(\pazocal{T}))$ are admissible words. Moreover, for $\theta$ the rule corresponding to the band $\pazocal{T}$, $\lab(\textbf{tbot}(\pazocal{T}))$ is $\theta$-admissible and $\lab(\textbf{tbot}(\pazocal{T}))\cdot\theta\equiv\lab(\textbf{ttop}(\pazocal{T}))$.

\end{lemma}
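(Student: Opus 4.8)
The plan is to read the structure of a $\theta$-band cell-by-cell and translate the defining relations of $M(\textbf{S})$ into the rewriting action of a single $S$-rule. First I would set up notation: let $\pazocal{T}=(\Pi_1,\dots,\Pi_m)$ be the $\theta$-band, all of whose cells are $(\theta,q)$-cells corresponding to a fixed $\theta\in\Theta^+$ (up to orientation we may assume $\theta$ is positive), and let $\textbf{e}_0,\textbf{e}_1,\dots,\textbf{e}_m$ be the consecutive $\theta$-edges, so that $\textbf{e}_{i-1}^{-1}$ and $\textbf{e}_i$ lie on $\partial\Pi_i$. Each $\Pi_i$ carries the relation $q\theta_{j+1}=\theta_j v q' u$ for the appropriate part of $\theta$; reading $\partial\Pi_i$ one sees that the bottom side contributes a subword $q_i$ (a single $q$-edge together with surrounding $a$-edges coming from the $u,v$ of the neighboring parts) and the top side contributes $v_i q_i' u_{i+1}$. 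Since consecutive cells share the $\theta$-edge $\textbf{e}_i$, the $q$-letter indices match up exactly as the indices $\ell(i),\dots,r(i)$ of the parts of $\theta$, and between the band's first and last cell every $q$-edge of the standard base appears in order along both the top and the bottom.

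The key steps, in order: (1) Argue that $\lab(\textbf{tbot}(\pazocal{T}))$ is a reduced word in $F(Y\cup Q)$ — this is where I use that $\Delta$ is reduced and Lemma \ref{M(S) annuli}, which rules out $a$-annuli and hence forbids a cancelling pair of $a$-edges inside the band's side; then check it satisfies the defining conditions (1)–(3) for admissible words by inspecting which tape letters can sit between which $q$-letters, using that the tape alphabet of the $i$-th sector is $Y_i$ and that $u_i,v_i\in Y_i^{\pm1}$. (2) Do the same for $\lab(\textbf{ttop}(\pazocal{T}))$. (3) Show $\lab(\textbf{tbot}(\pazocal{T}))$ is $\theta$-admissible: its state letters are exactly the $q$-letters appearing in the $U_i$'s of $\theta$ (i.e. in $Q(\theta)$) and its tape letters lie in the domains $Y_j(\theta)$ — the latter because any $a$-edge on the bottom side that is not part of a $(\theta,q)$-cell's $u,v$-data lies on a $(\theta,a)$-cell of the band's boundary, forcing the corresponding letter into $Y_j(\theta)$ (if $\theta$ locks that sector, $Y_j(\theta)=\emptyset$ and the relations of $M(\textbf{S})$ leave no room for such an edge). (4) Finally, verify $\lab(\textbf{tbot}(\pazocal{T}))\cdot\theta\equiv\lab(\textbf{ttop}(\pazocal{T}))$ by assembling the part-by-part rewrites $U_i\mapsto V_i$ read off from the cells $\Pi_i$ and checking that the simultaneous substitution followed by free reduction is precisely what reading $\textbf{ttop}$ letter-for-letter gives; the reduction built into $\cdot\theta$ corresponds exactly to the reduction of $\lab(\textbf{ttop}(\pazocal{T}))$ as a path label, since by construction of the trimmed top we have already discarded leading/trailing $a$-edges and interior cancellations are again excluded by reducedness and Lemma \ref{M(S) annuli}.

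I would carry out steps (1)–(3) largely by the same local inspection and treat (4) as the substantive content. The main obstacle I expect is bookkeeping the $a$-edges correctly at the interfaces between parts of $\theta$ that operate on adjacent sectors: a single $a$-edge on $\textbf{top}(\pazocal{T})$ can be the ``$u_{i+1}$'' of one part and the ``$v_{i+1}$'' of the next, and one must be careful that the notational suppression of the indices on $\theta$-letters (warned about explicitly in Section 6.1) does not hide a genuine index mismatch — here the shared $\theta$-edge $\textbf{e}_i$ forces the indices to be consistent, which is exactly why the band, rather than an arbitrary collection of $(\theta,q)$-cells, yields an admissible word. A secondary subtlety is the edge case $m=0$ or where the trimmed top/bottom is shorter than the full standard base (the band may not span all of $Q_0,\dots,Q_s$), but the statement is purely local to the $q$-letters actually present, so the base of $\lab(\textbf{tbot}(\pazocal{T}))$ is just $Q_{\ell}\cdots Q_r$ for the relevant range and the argument goes through verbatim on that subword.
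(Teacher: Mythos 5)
Your overall strategy---reading the band cell by cell and translating the $(\theta,q)$- and $(\theta,a)$-relations into the action of the single rule $\theta$---is the same as the paper's, but your outline has a genuine gap exactly where the paper's proof does its real work. You assert that the shared $\theta$-edges force the $q$-letters to ``match up exactly as the indices $\ell(i),\dots,r(i)$'' and that ``every $q$-edge of the standard base appears in order along both the top and the bottom,'' so that the base of $\lab(\textbf{tbot}(\pazocal{T}))$ is simply $Q_\ell\cdots Q_r$. That is false in general. A shared $\theta$-edge labelled $\theta_{i+1}$ only forces the index $i+1$ to occur in the next $(\theta,q)$-cell; that cell may carry the same part of $\theta$ read with the opposite orientation, so the next $q$-letter on the bottom can be $q_1^{-1}$ rather than a letter of $Q_{i+1}$. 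Admissible words explicitly allow subwords of the forms $quq^{-1}$ and $q^{-1}uq$ (conditions (2) and (3) of the definition), and bands whose bases are unreduced or wander back and forth are precisely the ones this lemma is later applied to (faulty and revolving bases, sectors such as $P_0(1)P_0(1)^{-1}$, etc.), so the ``in order'' case is not enough. The paper's proof is organized around exactly this dichotomy: either $q_2\in Q_{i+1}$ (condition (1)) or $q_2=q_1^{-1}$ (conditions (2)/(3)), and in the second case the reducedness of $\Delta$ is what guarantees a $(\theta,a)$-cell sits between the two $(\theta,q)$-cells---otherwise they would form a cancellable pair---so that $q_1wq_1^{-1}$ is a reduced, hence admissible, subword. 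Since your outline never treats this case, it does not establish admissibility, $\theta$-admissibility, or the identity $\lab(\textbf{tbot}(\pazocal{T}))\cdot\theta\equiv\lab(\textbf{ttop}(\pazocal{T}))$ for general bands.

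Two smaller points. Your setup declares that all cells of $\pazocal{T}$ are $(\theta,q)$-cells, whereas the hypothesis only requires the first and last cells to be; the $(\theta,a)$-cells in between are what produce the tape words between consecutive $q$-letters, and they are also how one sees that these tape words lie in $F(Y_{i+1}(\theta))$ (the fact you need for $\theta$-admissibility), since a $(\theta,a)$-relation involving $\theta_{i+1}$ exists only for letters of $Y_{i+1}(\theta)$. Also, the appeal to Lemma \ref{M(S) annuli} to get reducedness of the side labels is not the right tool: what rules out a cancelling pair of adjacent $a$-edges, or an empty sector between mutually inverse $q$-letters, is that the two cells of the band carrying them would be cancellable, contradicting the assumption that $\Delta$ is reduced.
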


\begin{proof}

Suppose $\theta\in\Theta^+$. 

Further, suppose $\pazocal{T}$ consists of one $(\theta,q)$-cell $\pi$. Then $\textbf{top}(\pazocal{T})$ and $\textbf{bot}(\pazocal{T})$ contain just one $q$-edge, which is a part of $\partial\pi$ (or its inverse). So, $\textbf{ttop}(\pazocal{T})$ and $\textbf{tbot}(\pazocal{T})$ each consist of this one $q$-edge. It follows from the definition of $(\theta,q)$-relations that $\lab(\textbf{tbot}(\pazocal{T}))\cdot\theta\equiv\lab(\textbf{ttop}(\pazocal{T}))$.

Now suppose $\pazocal{T}$ contains at least two $(\theta,q)$-cells and let $\textbf{e}_1,\textbf{e}_2$ be the first two $q$-edges of $\textbf{bot}(\pazocal{T})$ with $q_1=\lab(\textbf{e}_1)$ and $q_2=\lab(\textbf{e}_2)$. So, $\lab(\textbf{tbot}(\pazocal{T}))$ has prefix $q_1wq_2$ for some $w\in F(Y)$. For $j=1,2$, let $\pi_j$ be the $(\theta,q)$-cell of $\pazocal{T}$ so that $\textbf{e}_j$ is an edge of $\partial\pi_j$.

For $0\leq i\leq s$, suppose $q_1\in Q_i$. Then the $i$-th part of $\theta$ must be $q_1\to u_iq_1'v_{i+1}$ for some $q_1'\in Q_i$, $u_i\in F(Y_i(\theta))$, and $v_{i+1}\in F(Y_{i+1}(\theta))$ with $\|u_i\|,\|v_{i+1}\|\leq1$. So, we have $\lab(\partial\pi_1)\equiv\theta_i^{-1}q_1\theta_{i+1}v_{i+1}^{-1}(q_1')^{-1}u_i^{-1}$. If there exists any cell of $\pazocal{T}$ between $\pi_1$ and $\pi_2$, it must be a $(\theta,a)$-cell with an edge labelled by $\theta_{i+1}$ on its contour. Hence, $w\in F(Y_{i+1}(\theta))$. 

What's more, the label of $\partial\pi_2$ must have a subword $\theta_{i+1}^{-1}q_2$. By the definition of the $(\theta,q)$-relations, this means one of two things:

\begin{enumerate}

\item $q_2\in Q_{i+1}$ and the $(i+1)$-th part of $\theta$ is $q_2\to u_{i+1}q_2'v_{i+2}$ for some $q_2'\in Q_{i+1}$, $u_{i+1}\in F(Y_{i+1}(\theta))$, and $v_{i+2}\in F(Y_{i+2}(\theta))$ with $\|u_{i+1}\|,\|v_{i+2}\|\leq1$; or

\item $q_2=q_1^{-1}$

\end{enumerate}

In case (a), the subword $q_1wq_2$ of $\lab(\textbf{tbot}(\pazocal{T}))$ satisfies condition (1) in the requirements for subwords of admissible words (see Section 3.1).

In case (b), it satisfies condition (2) as long as there is some $(\theta,a)$-cell between them; but this is required in the band, as otherwise $\pi_1$ and $\pi_2$ would be a pair of cancellable cells.

Let $\pazocal{T}_1=(\pi_1,\dots,\pi_2)$ be the corresponding subband of $\pazocal{T}$. Then $\lab(\textbf{tbot}(\pazocal{T}_1))\equiv q_1wq_2$. The above arguments make it clear that $q_1wq_2$ is $\theta$-admissible. Further, it is easy to see that $\lab(\textbf{ttop}(\pazocal{T}_1))\equiv (q_1wq_2)\cdot\theta$.

If $q_1\in Q_i^{-1}$, then an analogous argument yields the same conclusion.

If $\textbf{tbot}(\pazocal{T})$ has more than two $q$-edges, then the argument above can be iterated to apply to the whole band, implying the statement.

Conversely, if $\theta\in\Theta^-$, then the analogous arguments apply to $\textbf{ttop}(\pazocal{T})$ to show that $\lab(\textbf{ttop}(\pazocal{T}))$ is $\theta^{-1}$-admissible with $\lab(\textbf{ttop}(\pazocal{T}))\cdot \theta^{-1}\equiv \lab(\textbf{tbot}(\pazocal{T}))$. 

But then $\lab(\textbf{tbot}(\pazocal{T}))$ is $\theta$-admissible with $\lab(\textbf{tbot}(\pazocal{T}))\cdot\theta\equiv\lab(\textbf{ttop}(\pazocal{T}))$.

\end{proof}

\begin{lemma} \label{one-rule computations are theta-bands}

Let $U\to V$ be a computation of $\textbf{S}$ with history $H$ of length $1$, so that $H=\theta\in\Theta$. Then there exists a $\theta$-band $\pazocal{T}$ corresponding to the rule $\theta$ whose first and last cells are $(\theta,q)$-cells such that $\lab(\textbf{tbot}(\pazocal{T}))\equiv U$ and $\lab(\textbf{ttop}(\pazocal{T}))\equiv V$.

\end{lemma}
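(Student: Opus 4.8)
The plan is to construct the desired $\theta$-band $\pazocal{T}$ directly from the single rule $\theta$ and the $\theta$-admissible word $U$, essentially reversing the analysis carried out in Lemma \ref{theta-bands are one-rule computations}. Write $U\equiv q_0w_1q_1w_2\dots w_sq_s$ as a $\theta$-admissible word (handling the two orientations of $\theta$ exactly as in the previous lemma, so assume $\theta\in\Theta^+$ without loss of generality). The first step is to build, for each $i$ with $0\le i\le s$, the $(\theta,q)$-cell $\pi_i$ whose contour label is the $(\theta,q)$-relation $q_i\theta_{i+1}=\theta_i v_iq_i'u_{i+1}$ (with the convention $\theta_{s+1}=\theta_0$). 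Then, for each maximal $a$-edge of $U$ lying in a sector on which $\theta$ does not lock — i.e.\ each letter of $w_{i+1}\in F(Y_{i+1}(\theta))$ — build the corresponding $(\theta,a)$-cell with contour $\theta_{i+1}a=a\theta_{i+1}$.

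Next I would glue these cells together in the obvious left-to-right order along their $\theta$-edges: the cell $\pi_i$ has a $\theta_{i+1}$-edge on its right, the chain of $(\theta,a)$-cells transporting $w_{i+1}$ across contributes a strip of $\theta_{i+1}$-edges, and $\pi_{i+1}$ has a $\theta_{i+1}$-edge on its left; identifying these matching edges produces a planar band $\pazocal{T}=(\pi_0,\dots,\pi_s)$ together with the intervening $(\theta,a)$-cells, which is by construction a $\theta$-band corresponding to $\theta$ whose first and last cells are the $(\theta,q)$-cells $\pi_0$ and $\pi_s$. The key verification is that this gluing is consistent: the $(\theta,a)$-relations commute each $a\in Y_{i+1}(\theta)$ past $\theta_{i+1}$, and because $U$ is admissible, every $a$-letter between $q_i$ and $q_{i+1}$ genuinely lies in $Y_{i+1}(\theta)$ when $\theta$ does not lock that sector; when $\theta$ does lock the $(i{+}1)$-st sector, admissibility forces $w_{i+1}$ to be empty, so no $(\theta,a)$-cells are needed there and $\pi_i,\pi_{i+1}$ share their $\theta$-edge directly. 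One should also note that the resulting diagram is reduced: no two adjacent cells form a cancellable pair, since consecutive $(\theta,a)$-cells carry distinct $a$-edges (the letters of the reduced word $w_{i+1}$) and a $(\theta,q)$-cell is never cancellable against a $(\theta,a)$-cell.

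Finally I would read off the trimmed bottom and top. By construction $\textbf{tbot}(\pazocal{T})$ is the concatenation of the $q$-edges of the $\pi_i$ and the $a$-edges of the $(\theta,a)$-cells read along the bottom, which spells exactly $U\equiv q_0w_1q_1\dots w_sq_s$. The trimmed top then spells the word obtained by replacing each $q_i$-subword $q_i$ by $v_iq_i'u_{i+1}$ (reading off the top sides of the $(\theta,q)$-cells) and carrying the $w_{i+1}$ across unchanged, which, after the free reduction built into the definition of $W\cdot\theta$, is precisely $U\cdot\theta$; since $U\to V$ with history $\theta$ means $V\equiv U\cdot\theta$, we get $\lab(\textbf{ttop}(\pazocal{T}))\equiv V$. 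Alternatively, once $\pazocal{T}$ is built one can simply invoke Lemma \ref{theta-bands are one-rule computations} to conclude $\lab(\textbf{tbot}(\pazocal{T}))\cdot\theta\equiv\lab(\textbf{ttop}(\pazocal{T}))$, so that it only remains to check $\lab(\textbf{tbot}(\pazocal{T}))\equiv U$, which is immediate from the construction.

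The main obstacle I anticipate is purely bookkeeping rather than conceptual: one must be careful that the planar gluing actually produces a disk (no unintended identifications, correct handling of the cyclic index $\theta_{s+1}=\theta_0$, and the edge-case where $U$ has $q$-edges but a sector is empty), and that the trimming convention interacts correctly with $a$-letters sitting outside the first or last $q$-edge. Since the group $\textbf{S}=\textbf{M}$ is cyclic, one should double-check that the construction respects the wrap-around at the $Q_4\{t\}$-sector, but as that sector's tape alphabet is empty this causes no difficulty. None of these points is deep; they are the same verifications that underlie Lemma \ref{theta-bands are one-rule computations}, now run in the reverse direction.
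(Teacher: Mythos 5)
Your construction is essentially the paper's own proof: one $(\theta,q)$-cell for each state letter of $U$, a strip of $(\theta,a)$-cells transporting each tape word across, gluing along the matching $\theta$-edges, and reading $U$ and $U\cdot\theta\equiv V$ off the trimmed bottom and top, with $\theta\in\Theta^-$ handled by building the band for $\theta^{-1}$ and inverting. The only detail the paper spells out that your write-up glosses over is the case of inverted state letters $q_i^{\eps_i}$ with $\eps_i=-1$ (which determines whether the intervening tape word lies in $Y_{j(i)}(\theta)$ or $Y_{j(i)+1}(\theta)$ and hence which $\theta$-letter the $(\theta,a)$-cells carry), but this is precisely the bookkeeping you flag and it goes through as in Lemma \ref{theta-bands are one-rule computations}.
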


\begin{proof}

Suppose $\theta\in\Theta^+$ and set $U\equiv q_0^{\eps_0}w_1q_1^{\eps_1}\dots w_\ell q_\ell^{\eps_\ell}$ so that for each $0\leq i\leq \ell$, $q_i\in Q_{j(i)}$ for some $0\leq j(i)\leq s$ and $\eps_i\in\{\pm1\}$.

Then $q_i\in Q(\theta)$ for each $0\leq i\leq\ell$, so that the $j(i)$-th part of $\theta$ takes the form $q_i\to u_{j(i)}q_i'v_{j(i)+1}$ for some $q_i'\in Q_{j(i)}$, $u_{j(i)}\in F(Y_{j(i)}(\theta))$, and $v_{j(i)+1}\in F(Y_{j(i)+1}(\theta))$. So, there are relations of $M(\textbf{S})$ of the form $R_i=\theta_{j(i)}^{-1}q_i\theta_{j(i)+1}(u_{j(i)}q_i'v_{j(i)+1})^{-1}$ for all $i$.

If $\eps_i=1$, then each letter of $w_{i+1}$ is an element of $Y_{j(i)+1}(\theta)$ since $U$ is $\theta$-admissible, so that there are $(\theta,a)$-relations of the form $\theta_{j(i)+1}^{-1}a\theta_{j(i)+1}a^{-1}$ for each letter $a$ of $w_{i+1}$. Gluing along the edges labelled by $\theta_{j(i)+1}^{\pm1}$, one can construct a $\theta$-band $\pazocal{T}_{i+1}$ with contour label $\theta_{j(i)+1}^{-1}w_{i+1}\theta_{j(i)+1}w_{i+1}^{-1}$.

If $\eps_i=-1$, then each letter of of $w_{i+1}$ is in $Y_{j(i)}(\theta)$ since $U$ is $\theta$-admissible. So, there are relations of $M(\textbf{S})$ of the form $\theta_{j(i)}^{-1}a\theta_{j(i)}a^{-1}$ for each letter $a$ of $w_{i+1}$. Then, gluing along the edges labelled by $\theta_{j(i)}^{\pm1}$, one can construct a $\theta$-band $\pazocal{T}_{i+1}$ with contour label $\theta_{j(i)}^{-1}w_{i+1}\theta_{j(i)}w_{i+1}^{-1}$.

Now, let $\pi_i$ be a cell with boundary labelled by $R_i^{\eps_i}$. For either possibility of $\eps_i$, one can glue $\pazocal{T}_i$ and $\pazocal{T}_{i+1}$ to the left and right of $\pi_i$, respectively.

After $0$-refinement (or gluing) to cancel any adjacent edges with mutually inverse labels, this process produces a $\theta$-band $\pazocal{T}$ corresponding to the rule $\theta$ with $\lab(\textbf{tbot}(\pazocal{T}))\equiv U$. By the makeup of the band, it is easy to see that $\lab(\textbf{ttop}(\pazocal{T}))\equiv V$.

If $\theta\in\Theta^-$, then the same construction forms a $\theta$-band $\pazocal{T}$ corresponding to the rule $\theta^{-1}$ with $\lab(\textbf{tbot}(\pazocal{T}))\equiv V$ and $\lab(\textbf{ttop}(\pazocal{T}))\equiv U$. Taking the `inverse' of this band (i.e inverting the label of each cell) produces a $\theta$-band corresponding to $\theta$ as in the statement.

\end{proof}

By Lemma \ref{M(S) annuli}, any trapezium $\Delta$ of height $h\geq1$ can be decomposed into $\theta$-bands $\pazocal{T}_1,\dots,\pazocal{T}_h$ connecting the left and right sides of the trapezium, with $\textbf{bot}(\pazocal{T}_1)$ and $\textbf{top}(\pazocal{T}_h)$ making up the bottom and top of $\Delta$, respectively. Moreover, the first and last cells of each $\pazocal{T}_i$ are $(\theta,q)$-cells and $\textbf{ttop}(\pazocal{T}_i)=\textbf{tbot}(\pazocal{T}_{i+1})$ for all $1\leq i\leq h-1$.

The following two statements are clear from the previous two and exemplify how the group $M(\textbf{S})$ simulates the work of the $S$-machine:

\begin{lemma} \label{trapezia are computations}

Let $\Delta$ be a trapezium with history $H\equiv\theta_1\dots\theta_h$ for $h\geq1$ and maximal $\theta$-bands $\pazocal{T}_1,\dots,\pazocal{T}_h$ enumerated from bottom to top. If $U_j\equiv\lab(\textbf{tbot}(\pazocal{T}_j))$ and $V_j\equiv\lab(\textbf{ttop}(\pazocal{T}_j))$ for all $j$, then $H$ is a reduced word, $U_j$ and $V_j$ are admissible words, and $V_j\equiv U_j\cdot\theta_j$ for all $j$.

\end{lemma}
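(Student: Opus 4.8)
The plan is to obtain the trapezium's decomposition into $\theta$-bands from Lemma \ref{M(S) annuli} and then apply Lemmas \ref{theta-bands are one-rule computations} and \ref{one-rule computations are theta-bands} band by band. First I would invoke the structural decomposition: since $\Delta$ is reduced over $M(\textbf{S})$, Lemma \ref{M(S) annuli} rules out $(\theta,q)$-, $(\theta,a)$-, $a$-, $q$-, and $\theta$-annuli, so (as recorded in the paragraph preceding this lemma) $\Delta$ decomposes into maximal $\theta$-bands $\pazocal{T}_1,\dots,\pazocal{T}_h$ running between the left and right sides $\textbf{p}_1,\textbf{p}_2$, enumerated from bottom to top, with $\textbf{bot}(\pazocal{T}_1)$ the bottom of $\Delta$, $\textbf{top}(\pazocal{T}_h)$ the top, the first and last cells of each $\pazocal{T}_j$ being $(\theta,q)$-cells, and $\textbf{ttop}(\pazocal{T}_j)=\textbf{tbot}(\pazocal{T}_{j+1})$ for $1\le j\le h-1$. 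By definition the history of $\Delta$ is the history of the rim $q$-band along $\textbf{p}_2$, which is exactly $\theta_1\cdots\theta_h$, the $j$-th letter being the rule attached to $\pazocal{T}_j$.

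Next, for each $j$ I would apply Lemma \ref{theta-bands are one-rule computations} to the $\theta$-band $\pazocal{T}_j$ (whose first and last cells are $(\theta,q)$-cells, so the lemma applies): this gives that $U_j\equiv\lab(\textbf{tbot}(\pazocal{T}_j))$ and $V_j\equiv\lab(\textbf{ttop}(\pazocal{T}_j))$ are admissible words, that $U_j$ is $\theta_j$-admissible, and that $V_j\equiv U_j\cdot\theta_j$. This is precisely the per-band content of the statement; the only remaining point is that $H\equiv\theta_1\cdots\theta_h$ is reduced as a word in $F(\Theta^+)$. For this I would argue that a cancellation $\theta_{j+1}=\theta_j^{-1}$ would force $\pazocal{T}_j$ and $\pazocal{T}_{j+1}$ to be mutually inverse bands glued along the common path $\textbf{ttop}(\pazocal{T}_j)=\textbf{tbot}(\pazocal{T}_{j+1})$; chasing the labels along this common path (using that $V_j\equiv U_j\cdot\theta_j$ and $U_{j+1}\equiv V_{j+1}\cdot\theta_{j+1}^{-1}$ with $\theta_{j+1}=\theta_j^{-1}$ forces $U_{j+1}\equiv U_j$) produces a pair of cancellable cells — either a $(\theta,q)$-pair at the ends or a $(\theta,a)$-pair in the interior — contradicting that $\Delta$ is reduced. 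Alternatively, and more cleanly, I would simply note that the two bands together would form a reducible configuration already excluded by the reducedness hypothesis, so $H$ is reduced.

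The main obstacle I anticipate is not any single deep step but rather being careful that the bookkeeping in the band decomposition lines up: that the trimmed tops and bottoms glue exactly ($\textbf{ttop}(\pazocal{T}_j)=\textbf{tbot}(\pazocal{T}_{j+1})$ after the appropriate $0$-refinements), that the history read off the right side $q$-band genuinely matches the sequence of rules of $\pazocal{T}_1,\dots,\pazocal{T}_h$ in order, and that the reducedness of $\Delta$ transfers to reducedness of $H$. None of these requires new ideas beyond Lemmas \ref{M(S) annuli}, \ref{theta-bands are one-rule computations}, and \ref{one-rule computations are theta-bands}, but they must be stated carefully. Since the decomposition into $\theta$-bands and the gluing relations were already established in the paragraph introducing trapezia, the proof is essentially an assembly of these facts, and I would keep it to a few lines: cite the decomposition, apply Lemma \ref{theta-bands are one-rule computations} to each band, and dispatch reducedness of $H$ by the cancellable-cell argument above.
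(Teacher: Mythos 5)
Your proposal is correct and follows exactly the route the paper intends: the paper offers no separate proof, declaring the lemma clear from the band decomposition recorded just before it and from Lemma \ref{theta-bands are one-rule computations}, which is precisely your assembly (decompose into $\theta$-bands via Lemma \ref{M(S) annuli}, apply Lemma \ref{theta-bands are one-rule computations} band by band, and get reducedness of $H$ from reducedness of $\Delta$). One small slip in your parenthetical label chase: with $\theta_{j+1}=\theta_j^{-1}$ one gets $V_{j+1}\equiv U_j$ rather than $U_{j+1}\equiv U_j$, but this does not affect your conclusion, since the correct mechanism — consecutive mirror $(\theta,q)$-cells of a side $q$-band forming a cancellable pair — is exactly the one you invoke.
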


\begin{lemma} \label{computations are trapezia}

For any reduced computation $U\to\dots\to U\cdot H\equiv V$ of the $S$-machine $\textbf{S}$ with $\|H\|\geq1$, there exists a trapezium $\Delta$ with trimmed bottom label $U$, trimmed top label $V$, and history $H$.

\end{lemma}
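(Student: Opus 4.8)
The plan is to build the trapezium $\Delta$ by stacking the $\theta$-bands produced by Lemma~\ref{one-rule computations are theta-bands}, one for each letter of the history, and then check that consecutive bands glue correctly along their trimmed top/bottom paths. Concretely, write $H\equiv\theta_1\dots\theta_h$ and let $U\equiv W_0\to W_1\to\dots\to W_h\equiv V$ be the given reduced computation, so $W_{j-1}$ is $\theta_j$-admissible and $W_{j-1}\cdot\theta_j\equiv W_j$ for each $j$. I would induct on $h$.

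\textbf{Key steps.} First, for the base case $h=1$, Lemma~\ref{one-rule computations are theta-bands} directly supplies a $\theta$-band $\pazocal{T}_1$ corresponding to $\theta_1$ whose first and last cells are $(\theta,q)$-cells with $\lab(\textbf{tbot}(\pazocal{T}_1))\equiv U$ and $\lab(\textbf{ttop}(\pazocal{T}_1))\equiv V$; regarding this band as a trapezium of height $1$ (as in the paragraph preceding Lemma~\ref{theta-bands are one-rule computations}, by trimming the $a$-edges of the top and bottom before the first and after the last $q$-edge) gives the desired diagram, and $H=\theta_1$ is vacuously reduced since $\pazocal{C}$ is a reduced computation. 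For the inductive step, apply the hypothesis to the reduced subcomputation $W_0\to\dots\to W_{h-1}$ with history $\theta_1\dots\theta_{h-1}$ to get a trapezium $\Delta'$ with trimmed bottom label $U$, trimmed top label $W_{h-1}$, and history $\theta_1\dots\theta_{h-1}$; separately apply Lemma~\ref{one-rule computations are theta-bands} to $W_{h-1}\to W_h$ to get a $\theta$-band $\pazocal{T}_h$ with $\lab(\textbf{tbot}(\pazocal{T}_h))\equiv W_{h-1}$ and $\lab(\textbf{ttop}(\pazocal{T}_h))\equiv W_h\equiv V$. Since $\textbf{ttop}$ of $\Delta'$ and $\textbf{tbot}$ of $\pazocal{T}_h$ carry the same label $W_{h-1}$ (letter for letter, a reduced admissible word), one glues the two diagrams along these paths, performing $0$-refinement to reconcile any $a$-edges of the untrimmed boundaries, obtaining a disk diagram $\Delta$ of the required form. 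Finally, note $H$ is reduced: $\pazocal{C}$ is a reduced computation, so its history is a reduced word in $F(\Theta^+)$ by definition, which is exactly what is needed; alternatively this also follows a posteriori from Lemma~\ref{trapezia are computations} applied to $\Delta$.

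\textbf{The main obstacle.} The routine parts are the bookkeeping of trimmed versus untrimmed tops and bottoms and the $0$-refinements needed when gluing; these are harmless but must be stated carefully so that the resulting $\Delta$ genuinely has contour of the form $\textbf{p}_1^{-1}\textbf{q}_1\textbf{p}_2\textbf{q}_2^{-1}$ with $\textbf{p}_1,\textbf{p}_2$ sides of $q$-bands and $\textbf{q}_1,\textbf{q}_2$ labelled by admissible words starting and ending with $q$-letters. The genuinely delicate point is verifying that the glued object is \emph{reduced}: cancellable pairs could in principle be created at the interface between $\Delta'$ and $\pazocal{T}_h$. This is where one uses that $\pazocal{C}$ is reduced, i.e. $\theta_h\neq\theta_{h-1}^{-1}$, together with the structure of $(\theta,q)$-cells, to rule out a mutually inverse pair of $(\theta,q)$-cells straddling the seam; any $(\theta,a)$-cell cancellation is likewise excluded because it would force a cancellation already inside $\Delta'$ or inside $\pazocal{T}_h$, contradicting that those are reduced. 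I would phrase this as: if the glued diagram were not reduced, reduce it and apply Lemma~\ref{trapezia are computations} to recover a reduced computation of the same history between the same endpoints, which is impossible since the history length would drop while $U,V$ are fixed — this sidesteps a case analysis and is the cleanest way to close the argument.
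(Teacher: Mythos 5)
Your construction is exactly what the paper intends: the paper gives no separate argument for Lemma \ref{computations are trapezia}, presenting it (together with Lemma \ref{trapezia are computations}) as an immediate consequence of Lemmas \ref{theta-bands are one-rule computations} and \ref{one-rule computations are theta-bands}, and your inductive stacking of one-rule $\theta$-bands glued along the common admissible word $W_{h-1}$ is that argument written out. One caution, though: the step you call the ``cleanest way to close the argument'' is not valid. If the glued diagram had a cancellable pair, removing it need not leave a trapezium (so Lemma \ref{trapezia are computations} does not directly apply), and even if it did, obtaining a computation from $U$ to $V$ with a shorter history is no contradiction at all — shorter computations between the same admissible words can perfectly well exist (for instance the empty one when $U\equiv V$), so ``the history length would drop while $U,V$ are fixed'' rules out nothing. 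You should therefore keep the direct seam analysis you sketched first: a cancellable pair cannot lie in a single band $\pazocal{T}_j$, since each band is built from a reduced admissible word (and the construction in Lemma \ref{one-rule computations are theta-bands} already cancels adjacent mutually inverse edges), and it cannot straddle the seam between $\pazocal{T}_{j}$ and $\pazocal{T}_{j+1}$, since mutually inverse boundary labels would force $\theta_{j+1}=\theta_j^{-1}$, contradicting that $H$ is reduced; cells in non-adjacent bands cannot be cancellable because any simple path joining them crosses each intermediate band an odd number of times, so its label contains $\theta$-letters and is not freely trivial. That case analysis is the actual content of the reducedness check and cannot be sidestepped.
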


\newpage


\section{Modified length and area functions}

\subsection{Modified length function} \

To assist with the proofs to come, we now modify the length function on words over the groups associated to an $S$-machine and paths in diagrams over their presentations. This is done in the same way as in [16] and [23]. The standard length of a word/path will henceforth be referred to as its \textit{combinatorial length} and the modified length simply as its \textit{length}.

Define a word consisting of no $q$-letters, one $\theta$-letter, and one $a$-letter as a \textit{$(\theta,a)$-syllable}. Then, define the length of:

\begin{itemize}

\item any $q$-letter as 1

\item any $\theta$-letter as 1

\item any $a$-letter as the parameter $\delta$ (as indicated in Section 3.3, this should be thought of as a very small positive number)

\item any $(\theta,a)$-syllable as 1

\end{itemize}

For a word $w$ over the generators of the canonical presentation of $G_\Omega(\textbf{S})$ (or any group associated to $\textbf{S}$), define a \textit{decomposition} of $w$ as a factorization of $w$ into a product of letters and $(\theta,a)$-syllables. The length of a decomposition of $w$ is then defined to be the sum of the lengths of the factors. 

Finally, the length of $w$, denoted $|w|$, is defined to be the minimum of the lengths of its decompositions.

The length of a path in a diagram over the presentations of the groups associated to $\textbf{S}$ is defined to be the length of its label.

The following gives some basic properties of the length function. Its proof is an immediate consequence of Lemma \ref{simplify rules}.

\begin{lemma} \label{lengths}

\textit{(Lemma 6.2 of [23])} Let \textbf{s} be a path in a diagram $\Delta$ over the canonical presentation of $G_\Omega(\textbf{S})$ (or any of the groups associated to $\textbf{S}$) consisting of $c$ $\theta$-edges and $d$ $a$-edges. Then:

\begin{enumerate}[label=({\alph*})]

\item $|\textbf{s}|\geq\max(c,c+(d-c)\delta)$

\item $|\textbf{s}|=c$ if $\textbf{s}$ is the top or a bottom of a $q$-band

\item For any product $\textbf{s}=\textbf{s}_1\textbf{s}_2$ of two paths in a diagram,
$$|\textbf{s}_1|+|\textbf{s}_2|-\delta\leq|\textbf{s}|\leq|\textbf{s}_1|+|\textbf{s}_2|$$

\item Let $\pazocal{T}$ be a $\theta$-band with base of length $l_b$. If $\textbf{top}(\pazocal{T})$ (or $\textbf{bot}(\pazocal{T})$) has $l_a$ $a$-edges, then the number of cells in $\pazocal{T}$ is between $l_a-l_b$ and $l_a+3l_b$.

\end{enumerate}

\end{lemma}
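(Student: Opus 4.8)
\textbf{Proof plan for Lemma \ref{lengths}.}

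The plan is to verify each of the four items by unpacking the definition of the modified length function and invoking the structural results already established. For part (a), I would start from an optimal decomposition of $\lab(\textbf{s})$ into letters and $(\theta,a)$-syllables. Each $\theta$-edge must be accounted for by exactly one factor containing a $\theta$-letter (either a bare $\theta$-letter, of length $1$, or a $(\theta,a)$-syllable, also of length $1$), so the decomposition has at least $c$ such factors, giving $|\textbf{s}|\geq c$. For the bound $|\textbf{s}|\geq c+(d-c)\delta$, I would observe that a $(\theta,a)$-syllable ``absorbs'' one $a$-edge at no extra cost beyond the $\theta$-letter it already pays for, so at most $c$ of the $d$ $a$-edges can be absorbed this way; the remaining at least $d-c$ $a$-edges (when $d>c$) each contribute $\delta$, and when $d\leq c$ the term $(d-c)\delta$ is nonpositive so the bound is subsumed by $|\textbf{s}|\ge c$. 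Taking the max of the two lower bounds gives (a).

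For part (b), I would use Lemma \ref{theta-bands are one-rule computations}: the trimmed top/bottom of a $q$-band — wait, more precisely the top or bottom of a $q$-band in a reduced diagram over $M(\textbf{S})$ (equivalently any of the associated groups) reads an admissible word, and crucially it consists only of $\theta$-edges and $q$-edges with no $a$-edges, since the sides of a $q$-band are built from the $\theta_i^{\pm1}$ and $q_i^{\pm1}$ letters appearing in $(\theta,q)$-relations. Hence $d=0$, every factor in an optimal decomposition is a single $\theta$- or $q$-letter of length $1$, and $|\textbf{s}|=c+(\text{number of }q\text{-edges})$; but actually the top/bottom of a $q$-band has no $q$-edges on it other than at its very ends — here I should be careful: by the convention that the top and bottom of a $q$-band are the $\textbf{q}_1,\textbf{q}_2$ in the factorization $\textbf{e}_0^{-1}\textbf{q}_1\textbf{e}_m\textbf{q}_2^{-1}$, these sides consist precisely of the $\theta$-edges shared with the crossing $\theta$-bands, so indeed $|\textbf{s}|=c$. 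For part (c), subadditivity $|\textbf{s}|\leq|\textbf{s}_1|+|\textbf{s}_2|$ is immediate by concatenating optimal decompositions of the two pieces; for the lower bound, given an optimal decomposition of $\textbf{s}$, the only obstruction to splitting it at the dividing vertex is a single $(\theta,a)$-syllable straddling that vertex, and cutting such a syllable into its $\theta$-letter and $a$-letter costs at most an extra $1+\delta-1=\delta$, whence $|\textbf{s}_1|+|\textbf{s}_2|\leq|\textbf{s}|+\delta$.

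For part (d), I would count the cells of the $\theta$-band $\pazocal{T}$ by tracking the $a$-edges along its top (the argument for the bottom being symmetric). Each $(\theta,a)$-cell of $\pazocal{T}$ is crossed by exactly one maximal $a$-band, and by Lemma \ref{M(S) annuli} that $a$-band cannot be an $a$-annulus, so it either reaches $\textbf{top}(\pazocal{T})$ or ends on one of the at most $l_b+1$ $(\theta,q)$-cells of $\pazocal{T}$ (one per base letter, roughly); using Lemma \ref{simplify rules}, each $(\theta,q)$-cell has at most two $a$-edges on its contour, hence is met by at most two $a$-bands from within $\pazocal{T}$, which bounds the number of $(\theta,a)$-cells not visible on $\textbf{top}(\pazocal{T})$ by something like $2l_b$, and the number of $(\theta,q)$-cells is at most $l_b$. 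Conversely every $a$-edge of $\textbf{top}(\pazocal{T})$ lies on a distinct $(\theta,a)$-cell, forcing at least $l_a-l_b$ cells total. Combining, the cell count lies between $l_a-l_b$ and $l_a+3l_b$. The main obstacle here is keeping the bookkeeping of $a$-edges consistent at the two ends of the band — specifically, correctly attributing the discrepancy between $l_a$ and the number of $(\theta,a)$-cells to the bounded number of $a$-bands that terminate on $(\theta,q)$-cells inside $\pazocal{T}$ rather than crossing all the way through; everything else is a direct unwinding of definitions. Since the paper cites this as Lemma 6.2 of [23], I would in fact simply reproduce the short argument from there, which follows exactly this outline.
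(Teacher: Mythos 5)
Your parts (a) and (c) are fine, but part (b) rests on a false structural claim, and this is precisely the point the lemma (and the paper's remark that it is "an immediate consequence of Lemma \ref{simplify rules}") is about. The top and bottom of a $q$-band are \emph{not} free of $a$-edges: a $(\theta,q)$-cell has boundary label of the form $\theta_i^{-1}q_i\theta_{i+1}u_{i+1}^{-1}(q_i')^{-1}v_i^{-1}$, and after gluing consecutive cells along their shared $q$-edges, each cell contributes to each side of the band one $\theta$-edge together with possibly one $a$-edge (coming from $u_{i+1}$ or $v_i$). So your assertion that $d=0$ fails, and once $d>0$ your own part (a) only yields $|\textbf{s}|\geq c$, not $|\textbf{s}|=c$. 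The correct argument is the one the $(\theta,a)$-syllable was invented for: by Lemma \ref{simplify rules}, the at most one $a$-edge a cell contributes to a given side is adjacent to that cell's $\theta$-edge on that side, so the label of $\textbf{top}$ or $\textbf{bot}$ of the $q$-band decomposes into factors each of which is a single $\theta$-letter or a $(\theta,a)$-syllable, each of length $1$; hence $|\textbf{s}|\leq c$, and equality follows from (a). (If the sides of $q$-bands really carried no $a$-edges, item (b) would say nothing beyond (a), and the syllable device would be pointless.)

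Part (d) arrives at bounds inside the stated range, but the mechanism is misdescribed and the detour through Lemma \ref{M(S) annuli} and maximal $a$-bands is unnecessary. Inside the $\theta$-band itself, every $(\theta,a)$-cell has exactly one $a$-edge on $\textbf{top}(\pazocal{T})$ and one on $\textbf{bot}(\pazocal{T})$ (none are "not visible"), while each of the $l_b$ $(\theta,q)$-cells contributes between $0$ and $2$ $a$-edges to the chosen side. Hence the number of $(\theta,a)$-cells lies between $l_a-2l_b$ and $l_a$, and adding the $l_b$ $(\theta,q)$-cells gives a total cell count between $l_a-l_b$ and $l_a+l_b\leq l_a+3l_b$. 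In particular your sentence "every $a$-edge of $\textbf{top}(\pazocal{T})$ lies on a distinct $(\theta,a)$-cell" is false as stated (such an edge may lie on a $(\theta,q)$-cell), and the discrepancy between $l_a$ and the number of $(\theta,a)$-cells is due to those edges, not to $a$-bands terminating on $(\theta,q)$-cells inside $\pazocal{T}$. With (b) and (d) rewritten as above, the whole lemma is indeed a direct unwinding of the definition of length plus Lemma \ref{simplify rules}, which is all the paper itself invokes.
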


\smallskip


\subsection{Disks and weights} \

Next, we add extra relations to the groups $G(\textbf{S})$ and $G_\Omega(\textbf{S})$ that will aid with later estimates. This is done in the same way as in [16] and [23] (though no group $G_\Omega(\textbf{S})$ was present in those sources).

These relations, called \textit{disk relations}, are of the form $W=1$ for any configuration $W$ accepted by the machine $\textbf{S}$.

\begin{lemma} \label{disks are relations}

If the configuration $W$ is accepted by the machine $\textbf{S}$ and $Y_{s+1}=\emptyset$, then the word $W$ is trivial over the groups $G(\textbf{S})$ and $G_\Omega(\textbf{S})$.

\end{lemma}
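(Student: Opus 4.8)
The statement is essentially the diagrammatic incarnation of the fact that $M(\textbf{S})$ already ``knows'' how to collapse an accepted configuration to the accept word, together with the extra hub relation $W_{ac}=1$ available in $G(\textbf{S})$. The plan is to read off a van Kampen diagram directly from an accepting computation. Since $W$ is accepted, there is a computation $\pazocal{C}: W\equiv W_0\to\dots\to W_t\equiv W_{ac}$ with history $H$ of some length $t\geq 0$. If $t=0$ then $W\equiv W_{ac}$ and the hub relation immediately gives $W=1$ in $G(\textbf{S})$ (and hence in $G_\Omega(\textbf{S})$), so we may assume $t\geq 1$.

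The first step is to invoke Lemma~\ref{computations are trapezia}: since $\pazocal{C}$ is a reduced computation (we may assume it is reduced, removing consecutive mutually inverse rules without changing $W_0$ or $W_t$) of length $\|H\|=t\geq 1$, there is a trapezium $\Delta_0$ over $M(\textbf{S})$ whose trimmed bottom label is $W_0\equiv W$, whose trimmed top label is $W_t\equiv W_{ac}$, and whose history is $H$. Here the hypothesis $Y_{s+1}=\emptyset$ is what guarantees the untrimmed top and bottom of the trapezium agree with $W$ and $W_{ac}$ on the nose: there are no stray $a$-edges from the $Q_sQ_0$-sector hanging off the ends, so the trimmed top/bottom equal the full top/bottom. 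Thus $\partial\Delta_0$ has the form $\textbf{p}_1^{-1}\textbf{q}_1\textbf{p}_2\textbf{q}_2^{-1}$ with $\lab(\textbf{q}_1)\equiv W$, $\lab(\textbf{q}_2)\equiv W_{ac}$, and $\lab(\textbf{p}_1),\lab(\textbf{p}_2)$ each equal to a copy of the history word $H$ in the $\theta$-letters (one side per $q$-band). In particular $\lab(\partial\Delta_0)\equiv \textbf{p}_1^{-1}\,W\,\textbf{p}_2\,W_{ac}^{-1}$.

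The second step is to cap off the diagram. Both $W_0$ and $W_t$ are configurations, i.e. admissible words with the standard base $Q_0\dots Q_s$; the leftmost and rightmost $q$-letters of each are $q_0(1)$-type and $q_s$-type state letters respectively, which are the \emph{same} letters at the bottom and top since the standard base does not change along a computation. Hence $\textbf{p}_1$ and $\textbf{p}_2$ are sides of the two extreme $q$-bands, whose histories are literally equal as words in $F(\Theta^+)$: $\lab(\textbf{p}_1)\equiv\lab(\textbf{p}_2)\equiv H$. Therefore, after identifying $\textbf{p}_1$ with $\textbf{p}_2$ (folding the trapezium into an annulus, legitimate because their labels coincide letter-for-letter), we obtain an annular diagram whose two boundary components are labelled $W$ and $W_{ac}^{-1}$; equivalently, we get a disk diagram $\Delta_1$ over $M(\textbf{S})$ with $\lab(\partial\Delta_1)\equiv W\cdot c\cdot W_{ac}^{-1}\cdot c^{-1}$ for a suitable connecting path $c$, which shows that $W$ is conjugate to $W_{ac}$ in $M(\textbf{S})$. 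Passing to $G(\textbf{S})=M(\textbf{S})/\gen{\gen{W_{ac}}}$, where $W_{ac}=1$, we conclude $W=1$ in $G(\textbf{S})$; gluing in one hub cell along the $\textbf{q}_2$-component makes this a genuine disk van Kampen diagram over $G(\textbf{S})$. Finally, since $G_\Omega(\textbf{S})\cong G(\textbf{S})/\gen{\gen{\Omega}}$ is a quotient of $G(\textbf{S})$, the relation $W=1$ persists in $G_\Omega(\textbf{S})$, completing the proof.

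\textbf{Main obstacle.} The only subtle point is the bookkeeping at the ends of the trapezium: one must be careful that the trimmed versus untrimmed top and bottom labels really coincide with $W$ and $W_{ac}$, and that the two side $q$-bands have identical history labels so the fold into an annulus (or the attachment of the connecting path $c$) is legal. This is exactly where $Y_{s+1}=\emptyset$ is used, and where one appeals to the fact that a configuration begins and ends with fixed state letters from $Q_0$ and $Q_s$. Everything else is a direct application of Lemma~\ref{computations are trapezia} and the definitions of $G(\textbf{S})$ and $G_\Omega(\textbf{S})$.
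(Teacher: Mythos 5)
Your proposal is correct and follows essentially the same route as the paper: build the trapezium from an accepting computation via Lemma~\ref{computations are trapezia}, note that $Y_{s+1}=\emptyset$ makes trimming unnecessary and forces the two side $q$-bands to carry identical labels (copies of $H$ with index $0$), conclude $W$ is conjugate to $W_{ac}$ in $M(\textbf{S})$, and finish with the hub relation, the conclusion passing to the quotient $G_\Omega(\textbf{S})$. The extra remarks about the $t=0$ case and folding into an annulus are just more explicit packaging of the same argument.
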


\begin{proof}

Let $\pazocal{C}$ be an accepting computation of $W$ and $H$ be its history. By Lemma \ref{computations are trapezia}, there exists a trapezium $\Delta$ corresponding to $\pazocal{C}$ with trimmed bottom label $W$ and trimmed top label $W_{ac}$. 

As this is a computation of the standard base and every rule locks the $Q_sQ_0$-sector, one can further assume that no trimming was necessary in $\Delta$, i.e the labels of the bottom and top of $\Delta$ are $W$ and $W_{ac}$, respectively. Finally, it follows that the sides of the trapezium are labelled identically; specifically, they are labelled by the copy of $H$ obtained by adding the index $0$ to each letter. 

So, $W$ and $W_{ac}$ are conjugate in $M(\textbf{S})$. Taking into account the hub relation in both $G(\textbf{S})$ and $G_\Omega(\textbf{S})$ then implies the relation $W=1$.

\end{proof}

As a result of Lemma \ref{disks are relations}, the presentation obtained by adding the disk relations to the group $G(\textbf{S})$ (respectively $G_\Omega(\textbf{S})$) defines a group isomorphic to $G(\textbf{S})$ (respectively $G_\Omega(\textbf{S})$). The presentation containing disk relations will be referred to in what follows as the \textit{disk presentation} of the group $G(\textbf{S})$ (respectively $G_\Omega(\textbf{S})$). A cell of a diagram over the disk presentation corresponding to a disk relation (or its inverse) is referred to simply as a \textit{disk}.

One should note the following when considering diagrams over a disk presentation rather than diagrams over a canonical presentation:

\begin{itemize}

\item The disk presentation of $G(\textbf{S})$ or of $G_\Omega(\textbf{S})$ need not be finite. In particular, there may be infinitely many disk relations in this presentation. In particular, the disk presentations of $G(\textbf{M})$ and of $G_\Omega(\textbf{M})$ are not finitely presented.

\item For a word $w\in F(\pazocal{X})$ that represents the trivial element of $G(\textbf{S})$, the minimal area of diagrams over the disk presentation with contour label $w$ can be drastically different than that of diagrams over the canonical presentation of $G(\textbf{S})$.

\item As in Section 6.2, we insist that an $a$-band in a diagram over the disk presentation of $G_\Omega(\textbf{S})$ consist only of $(\theta,a)$-cells. As a consequence, a maximal $a$-band may end on a disk in addition to the other possibilities outlined in Section 6.2.

\end{itemize}

%
%

Similar to how we modified the length function in Section 7.1, we now alter the definition of the area of a diagram over the disk presentations of $G(\textbf{S})$ and $G_\Omega(\textbf{S})$. 

We do this first by introducing a weight function on the cells of such diagrams, $\text{wt}$, defined by:

$
\begin{array}{ll}
      \bullet \ \text{wt}(\Pi)=1 & \ \text{if $\Pi$ is a $(\theta,q)$-cell or a $(\theta,a)$-cell} \\
      \bullet \ \text{wt}(\Pi)=C_1|\partial\Pi|^2 & \ \text{if $\Pi$ is a disk} \\
      \bullet \ \text{wt}(\Pi)=C_1\|\partial\Pi\|^2 & \ \text{if $\Pi$ is an $a$-cell}
   \end{array}
$

Naturally, we extend this to define the weight of a reduced diagram $\Delta$, $\text{wt}(\Delta)$, as the sum of the weights of its cells.

\smallskip


\subsection{Mixtures} \

We now recall an invariant of reduced diagrams over the relevant presentations, first introduced in [22], that will prove useful in future numerical estimates.

Let $O$ be a circle containing a finite two-colored set of points, with the two colors taken to be black and white. The circle $O$ is called a \textit{necklace} while the corresponding points are called \textit{white beads} and \textit{black beads}.

Let $P_j$ be the set of ordered pairs of distinct white beads, $(o_1,o_2)$, such that the counterclockwise simple arc on $O$ from $o_1$ to $o_2$ contains at least $j$ black beads.

Define $\mu_J(O)=\sum\limits_{j=1}^J \#P_j$ as the \textit{$J$-mixture} of $O$, where $J$ is the parameter specified in Section 3.3.

\begin{lemma} \label{mixtures}

\textit{(Lemma 6.1 of [22])} Let $O$ be a necklace with $x$ white beads and $y$ black beads.

\begin{enumerate}

\item $\mu_J(O)\leq J(x^2-x)$

\item If $O'$ is a necklace obtained from $O$ through the removal of one white bead, then for every $j$, $\# P_j-2x<\#P_j'\leq\#P_j$, and so $\mu_J(O)-2Jx<\mu_J(O')\leq\mu_J(O)$

\item If $O'$ is a necklace obtained from $O$ through the removal of one black bead, then for every $j$, $\#P_j'\leq\#P_j$, and so $\mu_J(O')\leq\mu_J(O)$

\item Suppose $v_1,v_2,v_3$ are three black beads on $O$ such that the counterclockwise arc from $v_1$ to $v_3$, $v_1 - v_3$, has at most $J$ black beads (excluding $v_1$ and $v_3$). Let $y_1$ and $y_2$ be the number of white beads on the counterclockwise arcs $v_1 - v_2$ and $v_2 - v_3$, respectively. If $O'$ is the necklace obtained from $O$ through the removal of $v_2$, then $\mu_J(O')\leq\mu_J(O)-y_1y_2$.

\end{enumerate}

\end{lemma}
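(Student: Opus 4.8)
\textbf{Proof plan for Lemma \ref{mixtures}.}

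The plan is to treat the four parts separately, all via direct bead-counting on the necklace, since the quantities $\#P_j$ and $\mu_J$ are purely combinatorial. For part (1): each ordered pair of distinct white beads contributes to $\#P_j$ for at most one range of $j$'s, so $\sum_{j=1}^J \#P_j \le J\cdot\#\{\text{ordered pairs of distinct white beads}\} = J(x^2-x)$; I would just observe that $\#P_j \le x^2 - x$ for every $j$ and sum over the $J$ values.

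For part (2), removing a white bead $o$: a pair $(o_1,o_2)$ of white beads of $O$ survives in $O'$ iff neither $o_1$ nor $o_2$ is $o$. The number of ordered pairs of white beads involving $o$ is $2(x-1) < 2x$, and each such lost pair decreases $\#P_j$ by at most $1$; no pair can gain membership in $P_j$ since removing a bead does not add black beads to any arc. Hence $\#P_j - 2x < \#P_j' \le \#P_j$, and summing over $j$ from $1$ to $J$ gives the stated bound on $\mu_J$. For part (3), removing a black bead: the white beads are unchanged, so the same set of ordered pairs is being counted, but each counterclockwise arc now contains no more black beads than before, so $(o_1,o_2)\in P_j'$ implies $(o_1,o_2)\in P_j$; thus $\#P_j'\le\#P_j$ and $\mu_J(O')\le\mu_J(O)$.

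Part (4) is the one that requires care and I expect it to be the main obstacle. Here we remove a black bead $v_2$ lying between black beads $v_1$ and $v_3$ on a short arc (at most $J$ black beads strictly between $v_1$ and $v_3$), with $y_1$ white beads on $v_1 - v_2$ and $y_2$ on $v_2 - v_3$. By part (3) we already know $\mu_J$ does not increase; the task is to produce the extra decrease of at least $y_1 y_2$. The idea is to exhibit $y_1 y_2$ distinct ordered white-bead pairs $(o_1,o_2)$ — with $o_1$ among the $y_1$ white beads on $v_1-v_2$ and $o_2$ among the $y_2$ white beads on $v_2-v_3$ — and an index $j = j(o_1,o_2) \in \{1,\dots,J\}$ such that $(o_1,o_2) \in P_{j}(O) \setminus P_{j}(O')$. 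For such a pair, the counterclockwise arc from $o_1$ to $o_2$ passes through $v_2$; let $b$ be the number of black beads on that arc in $O$. Since $o_1$ lies on $v_1-v_2$ and $o_2$ on $v_2-v_3$ and the arc $v_1-v_3$ carries at most $J$ interior black beads, we have $1 \le b \le J$ (the bead $v_2$ is counted, and the whole arc from $o_1$ to $o_2$ is contained in $v_1 - v_3$ together with at most its endpoints' worth of slack, so $b\le J$; here is exactly where the hypothesis that $v_1-v_3$ has at most $J$ black beads is used). Removing $v_2$ drops the black count on this arc to $b-1$, so $(o_1,o_2)\in P_b(O)$ but $(o_1,o_2)\notin P_b(O')$. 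These $y_1 y_2$ pairs are pairwise distinct, and combined with part (3)'s monotonicity on all other $P_j$'s, summing the per-pair losses over $j=1,\dots,J$ yields $\mu_J(O') \le \mu_J(O) - y_1 y_2$. The delicate points to get right are the off-by-one in the black-bead count (ensuring $b\ge 1$ and $b\le J$ simultaneously) and checking that no $P_j'$ gains any pair, which again follows because deleting a bead never increases any arc's black count.
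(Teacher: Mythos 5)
Your argument is correct: parts (1)--(3) follow from the counting you give, and in part (4) the key points — that the counterclockwise arc from any of the $y_1$ white beads on $v_1-v_2$ to any of the $y_2$ white beads on $v_2-v_3$ contains $v_2$ and only black beads strictly between $v_1$ and $v_3$, so its black count $b$ satisfies $1\le b\le J$ and the pair is lost from $P_b$ while no $P_j'$ gains any pair — are exactly what is needed. Note that the paper itself gives no proof, quoting the statement as Lemma 6.1 of [22]; your direct bead-counting is essentially the standard argument from that source, so nothing further is required.
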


%
%
%
%
%

Let $\Delta$ be a reduced diagram over a group associated to an $S$-machine $\textbf{S}$. Let $O$ be a circle partitioned by subarcs labeled by the edges of $\partial\Delta$. At the midpoint of a subarc labeled by a $\theta$-edge (respectively a $q$-edge), place a white bead (respectively a black bead). Then, define the \textit{mixture on $\Delta$} $\mu(\Delta)$ as the $J$-mixture of the corresponding necklace, i.e $\mu(\Delta)=\mu_J(O)$.

\medskip


\section{Diagrams without disks}

\subsection{$M$-minimal diagrams} \

In this section, we study diagrams over $M_\Omega(\textbf{M})$, with the ultimate goal of bounding the `size' of such a diagram in terms of its perimeter. To do this, we first define a special class of diagrams for which this bound will hold.

A reduced diagram $\Delta$ over the canonical presentation of $M_\Omega(\textbf{M})$ is called \textit{$M$-minimal} if:

\begin{addmargin}[1em]{0em}

\begin{enumerate}[label=(MM{\arabic*})]


\item for any $a$-cell $\pi$ and any $\theta$-band $\pazocal{T}$, at most half of the edges of $\partial\pi$ mark the start of an $a$-band that crosses $\pazocal{T}$, and

\item no maximal $a$-band ends on two different $a$-cells.

\end{enumerate}

\end{addmargin}

It follows immediately from this definition that a subdiagram of an $M$-minimal diagram is $M$-minimal.

\smallskip


\subsection{Annuli} \

Our first step is to rule out the possible existence of certain types of subdiagrams in an $M$-minimal diagram.

\begin{lemma} \label{M_a no annuli 1}

A reduced diagram $\Delta$ over $G_\Omega(\textbf{M})$ contains no:

\begin{addmargin}[1em]{0em}

(1) $(\theta,q)$-annuli

(2) $(\theta,a)$-annuli

(3) $a$-annuli

(4) $q$-annuli

\end{addmargin}

\end{lemma}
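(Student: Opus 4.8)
\textbf{Proof plan for Lemma \ref{M_a no annuli 1}.}

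The plan is to reduce the statement to Lemma \ref{M(S) annuli}, which rules out the same four types of annuli (plus $\theta$-annuli) in reduced diagrams over $M(\textbf{S})$ for an arbitrary $S$-machine $\textbf{S}$. The key observation is that an $a$-cell (corresponding to a relator in $\Omega$) is a word over the tape alphabet of the special input sector only, and the hub relator $W_{ac}$ involves only $q$-letters: neither kind of cell carries any $\theta$-edges, and neither carries $q$-edges of the shape forced on a $(\theta,q)$-cell. So $a$-cells and hubs cannot participate in a $(\theta,q)$-band, a $(\theta,a)$-band, or a $\theta$-band at all, and they can only occur in an $a$-band or a $q$-band as the \emph{terminating} object of a maximal such band (an $a$-band may end on an $a$-cell, a $q$-band may end on a hub), never as an interior cell of a $\pazocal{Z}$-band.

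With that in hand I would argue as follows. Suppose $\Delta$ is a reduced diagram over $G_\Omega(\textbf{M})$ containing one of the four annuli. For $(\theta,q)$- and $(\theta,a)$-annuli: by definition these are built from a $\theta$-band together with a $q$-band (resp.\ $a$-band) whose cells are all $(\theta,q)$-cells (resp.\ $(\theta,a)$-cells). Every such cell is a defining relator of $M(\textbf{M})$, so the subdiagram carved out by the annulus (including the annulus itself) is a reduced diagram over $M(\textbf{M})$, and Lemma \ref{M(S) annuli}(1),(2) gives a contradiction. For an $a$-annulus: a maximal $a$-band consists by our convention only of $(\theta,a)$-cells, so an $a$-annulus is again a subdiagram over $M(\textbf{M})$, contradicting Lemma \ref{M(S) annuli}(3). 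For a $q$-annulus: a $q$-annulus is a $q$-band $(\Pi_1,\dots,\Pi_m)$ with $\textbf{e}_0=\textbf{e}_m$; every $\Pi_i$ has exactly two $q$-edges, and the only relators with that property are the $(\theta,q)$-relators, so all the $\Pi_i$ are $(\theta,q)$-cells, the annulus lies in a subdiagram over $M(\textbf{M})$, and Lemma \ref{M(S) annuli}(4) applies.

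The one point requiring a little care — and the step I expect to be the main obstacle — is that $\Delta$ is a diagram over $G_\Omega(\textbf{M})$, not over $M(\textbf{M})$, so a priori the subdiagram bounded by (and containing) the annulus could contain $a$-cells or hubs in its interior, and then it would not be a diagram over $M(\textbf{M})$ and Lemma \ref{M(S) annuli} would not directly apply. This is handled by observing that Lemma \ref{M(S) annuli} is really a statement about $\pazocal{Z}$-bands, not about the ambient diagram: its proof (Lemma 6.1 of [15]) works with the band itself and the region it encloses only through the $q$-, $\theta$-, and $a$-bands crossing it, and these never pass through an $a$-cell or a hub. Concretely, I would phrase the reduction so that the relevant annular $\pazocal{Z}$-band together with the minimal subdiagram it bounds has all of its $\pazocal{Z}'$-bands (for the crossing families $\pazocal{Z}'$) terminating on the annulus or on $a$-cells/hubs, and then invoke the combinatorial core of Lemma \ref{M(S) annuli} — or simply note that since every cell of the annulus is a $(\theta,q)$-, $(\theta,a)$-, or (in the $a$-annulus case) $(\theta,a)$-cell, the annulus is itself a reduced annular diagram over $M(\textbf{M})$ whose existence Lemma \ref{M(S) annuli} forbids, regardless of what the diagram it sits inside looks like. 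Once the ``band lives over $M(\textbf{M})$'' point is made precise, each of the four cases is immediate.
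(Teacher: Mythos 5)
There is a genuine gap in the reduction. Your fallback ``simply note that \dots the annulus is itself a reduced annular diagram over $M(\textbf{M})$ whose existence Lemma \ref{M(S) annuli} forbids, regardless of what the diagram it sits inside looks like'' is not correct: Lemma \ref{M(S) annuli} forbids such annuli \emph{inside a reduced disk diagram over} $M(\textbf{S})$, not as free-standing annular diagrams, and its proof (like the argument needed here) essentially uses the fact that the disk bounded by the annulus is filled with cells of the ambient diagram; a bare $(\theta,q)$-annulus is just an annular (conjugacy-type) diagram and is not contradictory by itself. As you yourself observe, that filled region may contain $a$-cells and hubs, so it is not a diagram over $M(\textbf{M})$ and the lemma does not apply to it either. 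Your remaining fix --- ``invoke the combinatorial core'' of Lemma 6.1 of [15] --- is not carried out, and the justification offered for it (the $q$-, $\theta$- and $a$-bands crossing the annulus ``never pass through an $a$-cell or a hub'') overclaims: a $q$-band may end on a hub and an $a$-band on an $a$-cell, and this is exactly why $\theta$-annuli are \emph{not} excluded over $G_\Omega(\textbf{M})$ without the conditions (MM1)--(MM2) (see Lemma \ref{M_a no annuli 2}); your sweeping statement would equally ``prove'' the $\theta$-annulus case, so it cannot stand as written.

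What does work --- and what the paper does --- is to redo the band argument directly inside the subdiagram $\Delta_S$ bounded by the outer contour of the annulus, using only $\theta$-bands there. For (1): the history of the defining $q$-band $\pazocal{Q}$ is $\theta w\theta^{-1}$ and reduced, so $\pazocal{Q}$ has a cell $\pi$ neither of whose $q$-edges lies on $\partial\Delta_S$; the maximal $\theta$-band of $\Delta_S$ through $\pi$ cannot end on an $a$-cell or a hub (these carry no $\theta$-edges), hence returns to $\partial\Delta_S$, i.e.\ to another cell of $\pazocal{Q}$, producing a $(\theta,q)$-annulus whose $q$-band history is a proper subword of $w$; iterating yields a $\theta$-band joining two adjacent cells of $\pazocal{Q}$, which are then cancellable, contradicting reducedness. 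Case (2) is identical, and (3), (4) reduce to (2), (1) by taking the maximal $\theta$-band through a cell of the $a$- (resp.\ $q$-) annulus. So your central observation --- that only $\theta$-bands are needed inside the enclosed region and that they cannot meet $a$-cells or hubs --- is the right one, but it must be turned into this explicit descent rather than an appeal to Lemma \ref{M(S) annuli}, which applies neither to diagrams over $G_\Omega(\textbf{M})$ nor to detached annuli.
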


\begin{proof}

(1) Suppose $\Delta$ contains a $(\theta,q)$-annulus $S$. Let $\Delta_S$ be the subdiagram bounded by the outer component of the contour of $S$ and $\pazocal{Q}$ be the defining $q$-band (see Figure 8.1(a)).

By the definition of the annulus, the history $H$ of $\pazocal{Q}$ must be of the form $\theta w\theta^{-1}$ for some rule $\theta\in\Theta$ and some word $w\in F(\Theta^+)$. Since $H$ must be reduced, $w$ cannot be trivial, and so $\pazocal{Q}$ must contain a $(\theta,q)$-cell $\pi$ with neither $q$-edge on $\partial\Delta_S$.

Note that each cell of $\pazocal{Q}$ has a $\theta$-edge on its boundary that is shared with $\partial\Delta_S$. Indeed, all $\theta$-edges of $\partial\Delta_S$ arise in this way. 

\renewcommand{\thesubfigure}{\alph{subfigure}}
\begin{figure}[H]
\centering
\captionsetup[subfigure]{labelformat=parens}
\begin{subfigure}[b]{0.48\textwidth}
\centering
\raisebox{0.4in}{\includegraphics[scale=0.85]{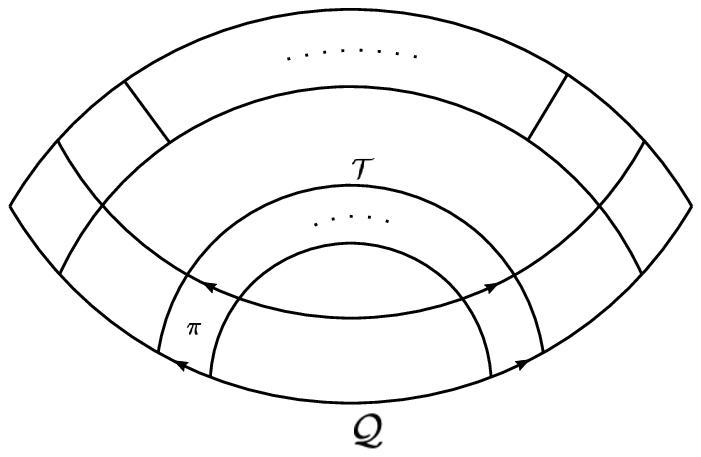}}
\caption{$\Delta_S$ for $S$ a $(\theta,q)$-annulus}
\end{subfigure}\hfill
\begin{subfigure}[b]{0.48\textwidth}
\centering
\includegraphics[scale=0.75]{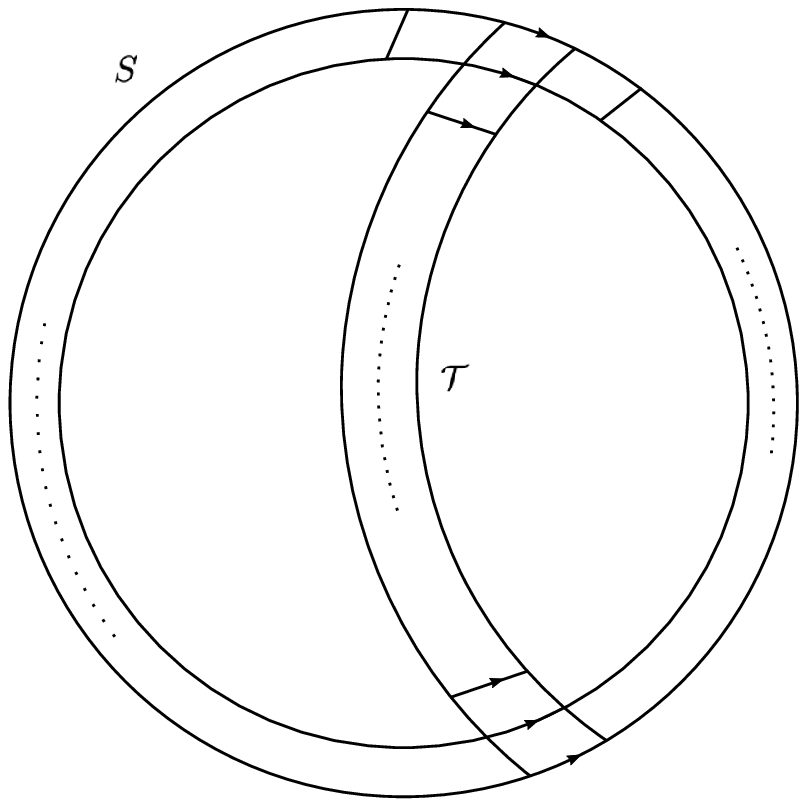}
\caption{$\Delta_S$ for $S$ an $a$-annulus}
\end{subfigure}
\caption{ \ }
\end{figure}

Let $\pazocal{T}$ be the maximal $\theta$-band in $\Delta_S$ containing $\pi$, so that $\pazocal{T}$ starts at the $\theta$-edge of $\partial\pi$ shared with $\partial\Delta_S$. Then $\pazocal{T}$ must also end on a $\theta$-edge of $\partial\Delta_S$, and so defines a $(\theta,q)$-annulus $S'$ with some subband of $\pazocal{Q}$. Note that the history of the $q$-band defining $S'$ is a proper subword of $w$.

Iterating, there exists a $\theta$-band that starts and ends on the boundary of adjacent cells of $\pazocal{Q}$. But then these two cells are cancellable, contradicting the assumption that $\Delta$ is reduced.

(2) is proved by an identical argument to (1).

(3) Suppose $\Delta$ contains an $a$-annulus $S$ and let $\Delta_S$ be the subdiagram bounded by the outer component of the contour of $S$ (see Figure 8.1(b)).

Recall that each cell comprising $S$ is a $(\theta,a)$-cell. By the definition of $(\theta,a)$-relations, each of these cells must have two $\theta$-edges on its boundary, one of which is shared with $\partial\Delta_S$. The maximal $\theta$-band $\pazocal{T}$ starting at such an edge must also end on $\partial\Delta_S$, i.e at a $\theta$-edge on the boundary of another cell of $S$.

But then $\pazocal{T}$ and a subband of $S$ form a $(\theta,a)$-annulus, contradicting (2).

%

(4) As each cell comprising a $q$-annulus must be a $(\theta,q)$-cell, an identical argument to (3) produces a $(\theta,q)$-annulus which contradicts (1).

\end{proof}

\begin{figure}[H]
\centering
\includegraphics[scale=1.25]{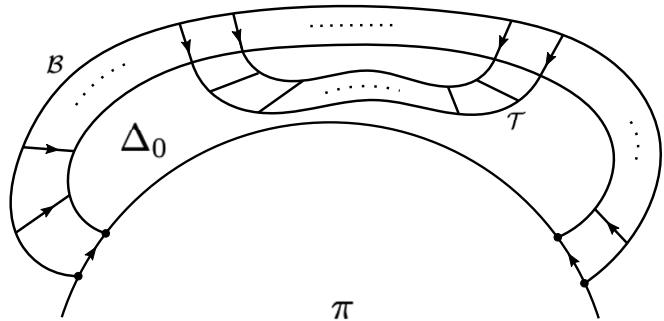}
\caption{$a$-band ending twice on an $a$-cell}
\end{figure}

\begin{lemma} \label{a-band on same a-cell}

For any $a$-cell $\pi$ in a reduced diagram $\Delta$ over $G_\Omega(\textbf{M})$, no $a$-band can have two ends on $\pi$.

\end{lemma}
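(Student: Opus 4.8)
The plan is to argue by contradiction in the style of Lemma~\ref{M_a no annuli 1}(3): suppose some $a$-band $\pazocal{B}$ has both of its ends on a single $a$-cell $\pi$. Recall that an $a$-band consists entirely of $(\theta,a)$-cells, each of which carries two $\theta$-edges on its boundary. Consider the closed region cut out by $\pazocal{B}$ together with the arc of $\partial\pi$ between the two $a$-edges where $\pazocal{B}$ ends; let $\Gamma$ be the subdiagram bounded by this closed curve (choosing the side that does not contain $\pi$ in its interior, or the minimal such subdiagram). The first step is to record that every cell of $\pazocal{B}$ has exactly one of its two $\theta$-edges on the boundary $\partial\Gamma$, exactly as in the proof of Lemma~\ref{M_a no annuli 1}(3).

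\textbf{Key steps.} First I would pick any cell $\tau$ of $\pazocal{B}$ and look at the $\theta$-edge $\textbf{e}$ of $\partial\tau$ lying on $\partial\Gamma$; let $\pazocal{T}$ be the maximal $\theta$-band starting at $\textbf{e}$ and running into $\Gamma$. Since $\pazocal{T}$ cannot be a $\theta$-annulus (Lemma~\ref{M_a no annuli 1}), it must end somewhere on $\partial\Gamma$. The boundary $\partial\Gamma$ is composed of $\theta$-edges coming from cells of $\pazocal{B}$ together with $a$-edges (and $q$-edges) coming from $\partial\pi$ — crucially it contains no other $\theta$-edges. A $\theta$-band cannot end on an $a$-edge or a $q$-edge, so $\pazocal{T}$ must end on a $\theta$-edge belonging to another cell $\tau'$ of $\pazocal{B}$ (it cannot return to $\tau$ itself without forcing $\tau$ to have two $\theta$-edges on $\partial\Gamma$, contradicting the first step, unless $\pazocal{T}$ has length $0$, which is impossible as $\textbf{e}$ is shared with $\tau$). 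Then $\pazocal{T}$ together with the subband of $\pazocal{B}$ running between $\tau$ and $\tau'$ forms a $(\theta,a)$-annulus, contradicting Lemma~\ref{M_a no annuli 1}(2). Alternatively, iterating the construction as in Lemma~\ref{M_a no annuli 1}(1), one finds a $\theta$-band starting and ending on the boundaries of two consecutive cells of $\pazocal{B}$, which makes those two cells cancellable and contradicts reducedness of $\Delta$.

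\textbf{Main obstacle.} The routine part is the band-chasing; the one point that needs genuine care is the topological bookkeeping for $\Gamma$ — ensuring that the curve formed by one side of $\pazocal{B}$ and the arc of $\partial\pi$ is actually simple (up to $0$-refinement), that $\Gamma$ is a genuine disk subdiagram, and that $\partial\Gamma$ really contains no $\theta$-edges other than those contributed by $\pazocal{B}$. This is where one must use that $\pi$ is an $a$-cell (so $\partial\pi$ carries no $\theta$-edges at all) and that $\Delta$ is reduced, and handle the degenerate possibility that $\pazocal{B}$ has length $0$ or length $1$ separately. Once the region $\Gamma$ is set up correctly, the contradiction follows from Lemma~\ref{M_a no annuli 1} exactly as sketched, with no new computation required.
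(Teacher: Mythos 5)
Your argument is correct and is essentially the paper's proof: cut out the subdiagram bounded by one side of $\pazocal{B}$ and the arc of $\partial\pi$, observe that its only $\theta$-edges come from that side of $\pazocal{B}$ (since $\partial\pi$ consists solely of $a$-edges), and conclude that a maximal $\theta$-band starting there must return to the side of $\pazocal{B}$, producing a $(\theta,a)$-annulus forbidden by Lemma \ref{M_a no annuli 1}(2); the length-zero case is excluded because $\lab(\partial\pi)$ is reduced. The only cosmetic slip is invoking Lemma \ref{M_a no annuli 1} to exclude a $\theta$-annulus (it does not treat $\theta$-annuli), but this is unnecessary anyway, since a band starting at a boundary edge of the subdiagram cannot close up and must end on its boundary.
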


\begin{proof}

Suppose $\pazocal{B}$ is an $a$-band ending twice on $\pi$. As $\lab(\partial\pi)$ is a reduced word, $\pazocal{B}$ must have nonzero length.

Consider the subdiagram $\Delta_0$ of $\Delta$ bounded by a side of $\pazocal{B}$ and the subpath of $\partial\pi$ whose initial and terminal edges correspond to the two ends of $\pazocal{B}$ (see Figure 8.2).

Since each cell of $\pazocal{B}$ is a $(\theta,a)$-cell, the portion of $\partial\Delta_0$ coinciding with a side of $\pazocal{B}$ is comprised entirely of $\theta$-edges. Moreover, as $\partial\pi$ is comprised entirely of $a$-edges, any $\theta$-edge of $\partial\Delta_0$ must lie on this side of $\pazocal{B}$.

So, a maximal $\theta$-band $\pazocal{T}$ of $\Delta_0$ starting on the side of $\pazocal{B}$ must also end on this side. But then $\pazocal{T}$ and a subband of $\pazocal{B}$ form a $(\theta,a)$-annulus in $\Delta$, contradicting Lemma \ref{M_a no annuli 1}(2).

\end{proof}

\begin{lemma} \label{M_a no annuli 2}

Let $\Delta$ be a reduced diagram over $M_\Omega(\textbf{M})$. 

\begin{enumerate}[label=({\arabic*})]

\item Suppose $\Delta$ contains a $\theta$-annulus $S$ and let $\Delta_S$ be the subdiagram of $\Delta$ bounded by the outer component of the contour of $S$. Then $\Delta_S$ contains no $(\theta,q)$-cells and $\lab(\partial\Delta_S)$ is a word over the tape alphabet of the `special' input sector.

\item If $\Delta$ is $M$-minimal, then it contains no $\theta$-annuli.

\end{enumerate}

\end{lemma}

\begin{proof}

(1) Suppose $\Delta_S$ contains a $(\theta,q)$-cell and let $\pazocal{Q}$ be the maximal $q$-band containing this cell. Lemma \ref{M_a no annuli 1}(4) then implies that $\pazocal{Q}$ must have two ends on $\partial\Delta_S$. But then $\pazocal{Q}$ and a subband of $S$ define a $(\theta,q)$-annulus in $\Delta$, contradicting Lemma \ref{M_a no annuli 1}(1). Hence, every cell of $S$ is a $(\theta,a)$-cell, so that $\partial\Delta_S$ consists entirely of $a$-edges.

Similarly, if a maximal $a$-band $\pazocal{B}$ of $\Delta_S$ has both ends on $\partial\Delta_S$, then $\pazocal{B}$ and a subband of $S$ define a $(\theta,a)$-annulus in $\Delta$, contradicting Lemma \ref{M_a no annuli 1}(2). So, every edge of $\partial\Delta_S$ is an $a$-edge marking the start of a maximal $a$-band in $\Delta_S$ which must end on an $a$-cell. 

Thus, as the boundary of an $a$-cell is labelled by tape letters from the `special' input sector and the $a$-edges of an $a$-band are labelled identically, the proof is complete.

\smallskip

(2) Suppose $\Delta$ contains a $\theta$-annulus. As $\theta$-bands cannot cross, the $\theta$-annuli of $\Delta$ are partially ordered by the relation: $$S'\leq S \text{ if } S' \text{ is contained in }\Delta_S$$ where $S$ and $S'$ are $\theta$-annuli in $\Delta$ and $\Delta_S$ is the subdiagram of $\Delta$ bounded by the outer contour of $S$. Since $\Delta$ is finite, it is clear that there exists a minimal $\theta$-annulus $T$ with respect to this partial order. 

Let $\Delta_T$ be the subdiagram of $\Delta$ bounded by the outer contour of $T$. If $\Delta_T\setminus T$ is empty, then $T$ must contain a pair of cancellable cells, contradicting the assumption that $\Delta$ is reduced. 

Suppose $\Delta_T\setminus T$ contains a $(\theta,a)$-cell $\pi$. Then, letting $T'$ be the maximal $\theta$-band of $\Delta_T$ containing $\pi$, $T'$ cannot cross $T$ and so must be a $\theta$-annulus. But then $T'<T$, contradicting the minimality of $T$.

Hence, by (1), any cell $\pi$ of $\Delta_T\setminus T$ must be an $a$-cell. Then, (MM2) and Lemma \ref{a-band on same a-cell} imply that every maximal $a$-band starting at an edge of $\partial\pi$ must either end on a cell of $T$ or cross $T$. But since (1) implies that $T$ consists entirely of $(\theta,a)$-cells, $\pi$ and $T$ form a counterexample to (MM1), contradicting the assumption that $\Delta$ is $M$-minimal.

\end{proof}


\subsection{Transpositions of a $\theta$-band with an $a$-cell} \

Let $\Delta$ be a reduced diagram over $G_\Omega(\textbf{M})$ containing an $a$-cell $\pi$ and a $\theta$-band $\pazocal{T}$ subsequently crossing some of the $a$-bands starting at $\pi$. As the cells shared by these bands and $\pazocal{T}$ are $(\theta,a)$-cells, the domain of the rule $\theta$ corresponding to $\pazocal{T}$ must be nonempty in the `special' input sector. So, by the definition of the rules of $\textbf{M}$, the domain of $\theta$ in this sector is the entire alphabet.

Suppose there are no other cells between $\pi$ and the bottom of $\pazocal{T}$, i.e there is a subdiagram formed by $\pi$ and $\pazocal{T}$.

Let $\textbf{s}_1$ be the maximal subpath of $\partial\pi$ so that each edge is on the boundary of a $(\theta,a)$-cell of $\pazocal{T}$. Further, let $\textbf{s}_2$ be the complement of $\textbf{s}_1$ in $\partial\pi$ so that $\partial\pi=\textbf{s}_1\textbf{s}_2$ and let $\pazocal{T}'$ be the subband of $\pazocal{T}$ satisfying $\textbf{bot}(\pazocal{T}')=\textbf{s}_1$.

\begin{figure}[H]
\centering
\begin{subfigure}[b]{0.48\textwidth}
\centering
\raisebox{0.3825in}{\includegraphics[scale=0.7]{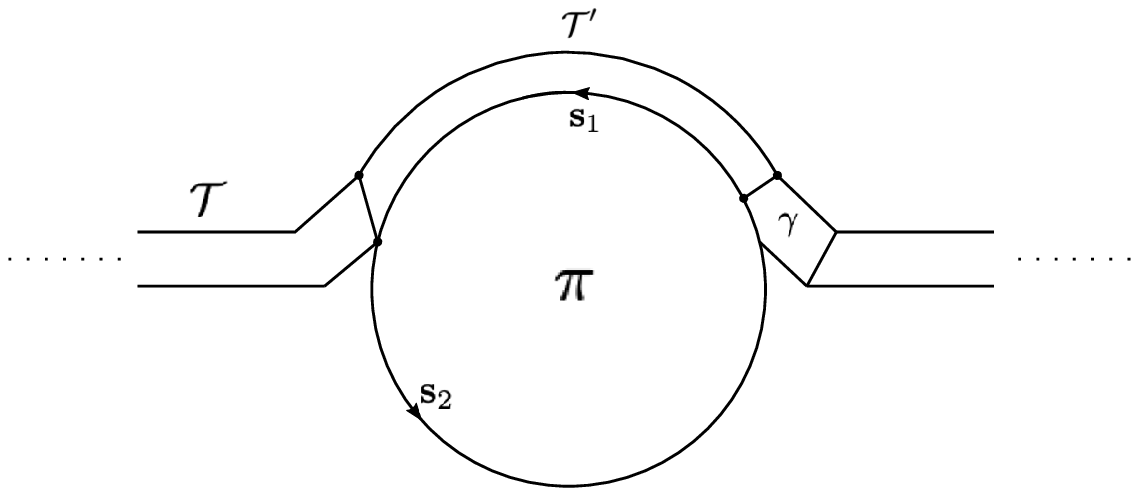}}
\caption{The subdiagram $\Gamma$}
\end{subfigure}\hfill
\begin{subfigure}[b]{0.48\textwidth}
\centering
\includegraphics[scale=0.7]{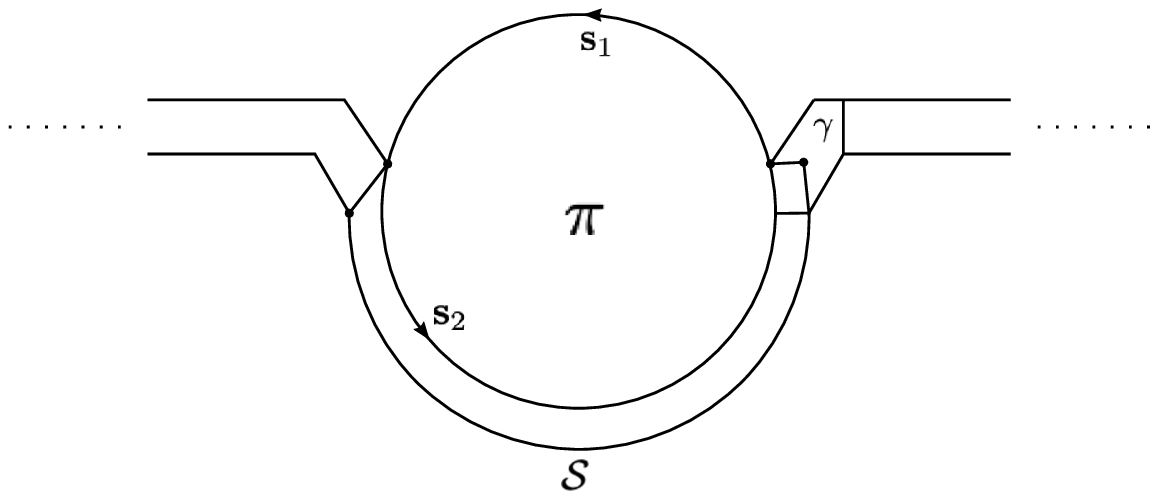}
\caption{The resulting subdiagram $\Gamma'$}
\end{subfigure}
\caption{The transposition of a $\theta$-band with an $a$-cell, $\gamma$ a $(\theta,q)$-cell}
\end{figure}

Let $V_1\equiv\text{Lab}(\textbf{s}_1)$ and $V_2\equiv\text{Lab}(\textbf{s}_2)$. Finally, let $\Gamma$ be the subdiagram formed by $\pi$ and $\pazocal{T}'$.

Then, we can construct the $\theta$-band $\pazocal{S}$ corresponding to $\theta$ consisting only of $(\theta,a)$-cells and with top label $V_2$. Let $\Gamma'$ be the subdiagram obtained by gluing a copy of $\pi$ to $\textbf{top}(\pazocal{S})$ in the clear way.

As $\text{Lab}(\textbf{top}(\pazocal{T}'))\equiv V_1^{-1}$, $\lab(\partial\Gamma)\equiv\lab(\partial\Gamma')$. So, we may replace the $\Gamma$ with $\Gamma'$, attaching the first and last cells of $\pazocal{S}$ to the complement of $\pazocal{T}'$ in $\pazocal{T}$ and making any necessary cancellations in the resulting band.

This process is called the \textit{transposition} of the $\theta$-band with the $a$-cell. 

Note that the diagram $\tilde{\Delta}$ resulting from the transposition has the same contour label as $\Delta$. Further, if a maximal $a$-band of $\Delta$ has one end on the $a$-cell $\pi$, then the other end is not changed by the transposition. 

Hence, if $\Delta$ is $M$-minimal, then $\tilde{\Delta}$ satisfies (MM2). However, $\tilde{\Delta}$ may not be $M$-minimal, as the transposed $\theta$-band may cross the maximal $a$-bands emanating from more than half of the $a$-edges on the boundary of the transposed $a$-cell.

Further, since the number of $(\theta,a)$-cells is altered by the transposition, the weight of the diagrams $\Delta$ and $\tilde{\Delta}$ may differ considerably. 

Despite these disadvantages, this process will prove valuable in forthcoming arguments.

\smallskip


\subsection{$a$-trapezia} \

We now generalize the concept of trapezium defined in Section 6.3 to the setting of $M$-minimal diagrams, allowing the existence of $a$-cells within the diagram.

To be specific, an \textit{$a$-trapezium} $\Delta$ is an $M$-minimal diagram with contour of the form $\textbf{p}_1^{-1}\textbf{q}_1\textbf{p}_2\textbf{q}_2^{-1}$, where each $\textbf{p}_i$ is the side of a $q$-band and each $\textbf{q}_i$ is the maximal subpath of the side of a $\theta$-band that starts and ends with $q$-edges. As with trapezia, the factorization $\textbf{p}_1^{-1}\textbf{q}_1\textbf{p}_2\textbf{q}_2^{-1}$ of the boundary is called the \textit{standard factorization} of $\partial\Delta$.

The \textit{history}, \textit{step history}, \textit{height}, and \textit{base} of an $a$-trapezium are defined in the same way they are defined for a trapezium. 

Note that the history of an $a$-trapezium must be reduced. Further, by Lemma \ref{theta-bands are one-rule computations}, the base of an $a$-trapezium must be the base of an admissible word. So, in an $a$-trapezium $\Delta$, the subdiagram $\Gamma$ bounded by two consecutive $q$-bands is an $a$-trapezium with base $UV$ corresponding to these $q$-bands' makeups. In this case, $\Gamma$ is called a \textit{$UV$-sector} in $\Delta$. As with admissible words, an $a$-trapezium may contain sectors of the same name.

\begin{lemma} \label{a-cells sector}

Suppose $\Delta$ is an $a$-trapezium containing an $a$-cell $\pi$. Then $\pi$ is contained in a $(P_0(1)Q_0(1))^{\pm1}$-, $P_0(1)P_0(1)^{-1}$-, or $Q_0(1)^{-1}Q_0(1)$-sector. Moreover, the step history of $\Delta$ must contain the letter $(1)_1$.

\end{lemma}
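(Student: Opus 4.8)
The statement has two parts: first, that any $a$-cell $\pi$ in an $a$-trapezium $\Delta$ sits inside a sector of base $(P_0(1)Q_0(1))^{\pm1}$, $P_0(1)P_0(1)^{-1}$, or $Q_0(1)^{-1}Q_0(1)$; and second, that the step history of $\Delta$ contains the letter $(1)_1$. The plan is to deduce both from the structure of the $a$-bands emanating from $\partial\pi$ together with the already-established constraints on where $a$-bands can end and which rules of $\textbf{M}$ interact nontrivially with the special input sector.

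First I would locate the sector containing $\pi$. Since $\Delta$ is an $a$-trapezium, its side $q$-bands cut $\Delta$ into sectors, each of which is itself an $a$-trapezium with a two-letter base $UV$; let $\Gamma$ be the sector containing $\pi$, with base $UV$. Every edge of $\partial\pi$ is an $a$-edge labelled by a tape letter of the special input sector (by the standing convention on $a$-relators, recalled in Section 6.1), and each such edge starts a maximal $a$-band of $\Delta$. I would argue that such a band cannot leave $\Gamma$: it cannot cross a side $q$-band of $\Gamma$ (an $a$-band and a $q$-band crossing would, via Lemma \ref{M_a no annuli 1}(1)-(2) and the annulus arguments already used in Lemmas \ref{M_a no annuli 2} and \ref{a-band on same a-cell}, force a $(\theta,q)$- or $(\theta,a)$-annulus, or else the $q$-band would have to terminate, which it cannot do inside a trapezium), and it cannot end on $\pi$ itself (Lemma \ref{a-band on same a-cell}) nor on a second $a$-cell (condition (MM2)). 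Hence each such band either crosses the top/bottom $\theta$-bands and exits through $\textbf{q}_1$ or $\textbf{q}_2$, or terminates on a $(\theta,a)$-cell lying on a $\theta$-band crossing $\Gamma$. In either case some $\theta$-band $\pazocal{T}$ of $\Gamma$ crosses an $a$-band out of $\pi$, so the rule $\theta$ corresponding to $\pazocal{T}$ has nonempty domain in the special input sector; by the definition of the rules of $\textbf{M}$ this forces $\theta$ to act nontrivially precisely on the special-input-sector copy, i.e. the base letters $U,V$ must be (a cyclic-permutation-respecting pair of) $P_0(1),Q_0(1)$ or their inverses — in unreduced form $P_0(1)P_0(1)^{-1}$ or $Q_0(1)^{-1}Q_0(1)$, and in reduced form $(P_0(1)Q_0(1))^{\pm1}$. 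This is the list in the statement.

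For the second part, once we know $\Gamma$ is a $(P_0(1)Q_0(1))^{\pm1}$- (or degenerate) sector whose $\theta$-bands genuinely touch the special input tape, I would identify, via Lemma \ref{theta-bands are one-rule computations} and Lemma \ref{trapezia are computations}, the history $H$ of $\Delta$ with a reduced computation of $\textbf{M}$, and observe that at least one letter $\theta$ of $H$ has nonempty special-input-sector domain. Inspecting the rules: the only rules of $\textbf{M}$ that do \emph{not} lock the special input sector $P_0(1)Q_0(1)$ are the copies of the working rules of $\textbf{M}_{5,1}$ with step history $(1)$ — since $\textbf{M}_{5,2}$ and the transition rules $\theta(s)_j$, $\theta(a)_j$, $\theta(12)_1$, $\theta(12)_2$ all lock it (and in $\textbf{M}_{5,1}$ only the $\textbf{M}_3(1)$-type rules, i.e. step $(1)$, operate on the $P_0Q_0$-sector rather than locking it). Hence $\theta$ must be a rule of step history $(1)_1$, so $(1)_1$ occurs in the step history of $\Delta$. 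I would phrase this as: the letter of $H$ transposing the special-input tape belongs to the submachine $\textbf{M}_{5,1}(1)$ inside $\Theta_1^+$, whose step letter is $(1)_1$.

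\textbf{Main obstacle.} The delicate point is the first part — ruling out that an $a$-band from $\pi$ escapes through a side $q$-band of $\Gamma$ or wanders into a sector with the wrong base. This requires carefully combining the no-annuli lemmas with the fact that $q$-bands in a trapezium run all the way between $\textbf{p}_1$ and $\textbf{p}_2$ and cannot terminate internally, so that an $a$-band meeting a $q$-band either crosses it (creating a forbidden annulus with a subband) or is blocked; and then one must use the precise description of which rules of $\textbf{M}$ have nonempty domain in the special input sector to pin the base to exactly the three listed possibilities rather than merely ``some sector touching $P_0(1)$ or $Q_0(1)$.'' The rest is bookkeeping with definitions already in place.
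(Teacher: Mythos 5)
Your treatment of the first assertion is essentially the paper's: locate the $a$-cell inside a sector, use Lemma \ref{a-band on same a-cell} and (MM2) to force every $a$-band from $\partial\pi$ to interact with the $\theta$-bands (or the bounding $q$-bands) of that sector, and then use Lemma \ref{theta-bands are one-rule computations} together with the disjointness of the tape alphabets to pin the base to $(P_0(1)Q_0(1))^{\pm1}$, $P_0(1)P_0(1)^{-1}$ or $Q_0(1)^{-1}Q_0(1)$. (Two small imprecisions there: an $a$-band cannot cross a $q$-band by definition, so no annulus argument is needed, and an $a$-band does not ``terminate on a $(\theta,a)$-cell'' --- the case you are describing is either crossing a $\theta$-band or ending on a $(\theta,q)$-cell of a bounding $q$-band; in the latter case the base restriction and even the letter $(1)_1$ follow at once, since only rules of step history $(1)_1$ have $(\theta,q)$-relations containing an $a$-letter of the `special' input sector.)

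The genuine gap is in the ``moreover'' clause. Your rule inventory is wrong: $\theta(s)_1$ does \emph{not} lock the special input sector --- by definition it locks all sectors \emph{other than} the input sectors --- so the rules of $\textbf{M}$ leaving $P_0(1)Q_0(1)$ unlocked are those of step history $(1)_1$ \emph{and} $\theta(s)_1^{\pm1}$. Consequently, knowing that some $\theta$-band of the sector is crossed by an $a$-band from $\pi$ only tells you its rule has step letter $(s)_1^{\pm1}$ or $(1)_1$, and your argument cannot exclude the case $\theta(s)_1^{\pm1}$. The paper closes exactly this loophole: $\pi$ sits between two \emph{consecutive} maximal $\theta$-bands $\pazocal{T}_j'$ and $\pazocal{T}_{j+1}'$ of its sector; if no $a$-band from $\partial\pi$ ends on a $(\theta,q)$-cell of these bands, then every such band crosses $\pazocal{T}_j'$ or $\pazocal{T}_{j+1}'$, and property (MM1) bounds the number crossing each single band by $\frac{1}{2}\|\partial\pi\|$, so \emph{both} bands must be crossed; both corresponding rules then fail to lock the special input sector, hence each has step letter $(s)_1^{\pm1}$ or $(1)_1$, and since the history of an $a$-trapezium is reduced the two rules cannot be a mutually inverse pair $\theta(s)_1^{\pm1}$ (nor can $\theta(s)_1$ occur twice in a row), forcing at least one occurrence of $(1)_1$. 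Your proposal never invokes (MM1) or the fact that $\pi$ is wedged between two $\theta$-bands, and without that step the second assertion does not follow.
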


\begin{proof}

Let $\Delta'$ be the sector of $\Delta$ containing $\pi$. Enumerate the maximal $\theta$-bands of $\Delta'$ as $\pazocal{T}_1',\dots,\pazocal{T}_h'$. Then, there exists $j\in\{1,\dots,h-1\}$ such that $\pi$ sits between $\pazocal{T}_j'$ and $\pazocal{T}_{j+1}'$.

By (MM2) and Lemma \ref{a-band on same a-cell}, each edge of $\partial\pi$ marks the start of a maximal $a$-band that must end on $\partial\Delta$ or on a $(\theta,q)$-cell of one of the $q$-bands bounding $\Delta'$. So, such a band must cross $\pazocal{T}_j'$, cross $\pazocal{T}_{j+1}'$, or end on a $(\theta,q)$-cell in one of these bands.

Suppose an $a$-band ends on a $(\theta,q)$-cell of $\pazocal{T}_j'$. Then by Lemma \ref{theta-bands are one-rule computations}, the rule corresponding to $\pazocal{T}_j'$ must be of step history $(1)_1$ and the tape alphabet corresponding to $\Delta'$ must be the same as that of the `special' input sector. So, the base of $\Delta'$ must be of the form $(P_0(1)Q_0(1))^{\pm1}$ or $Q_0(1)^{-1}Q_0(1)$.

If an $a$-band ends on a $(\theta,q)$-cell of $\pazocal{T}_{j+1}'$, then the same conclusion may be reached.

So, we may assume that all maximal $a$-bands with one end on $\pi$ must cross either $\pazocal{T}_j'$ or $\pazocal{T}_{j+1}'$. Taking $n\geq3$, property (MM1) implies that there must exist such $a$-bands crossing each of these $\theta$-bands.

Lemma \ref{theta-bands are one-rule computations} then implies that the base of $\Delta'$ is of one of the forms in the statement. Moreover, the rules corresponding to $\pazocal{T}_j'$ and $\pazocal{T}_{j+1}'$ cannot lock the `special' input sector, so that the step history corresponding to each rule is either of the form $(s)_1^{\pm1}$ or $(1)_1$. As the rules cannot be mutually inverse, at least one contributes to an occurrence of $(1)_1$ in the step history of $\Delta'$.

\end{proof}

As a result of Lemmas \ref{a-cells sector}, \ref{theta-bands are one-rule computations}, and \ref{locked sectors}, if $\Delta$ is an $a$-trapezium with base $B$ containing at least one $a$-cell, then every unreduced two-letter subword of $B$ must be of the form:

\begin{enumerate}

\item $P_0(i)P_0(i)^{-1}$ or $Q_0(i)^{-1}Q_0(i)$, or

\item $Q_0(i)Q_0(i)^{-1}$ or $P_1(i)^{-1}P_1(i)$.

\end{enumerate}

Note the following immediate consequences of Lemmas \ref{theta-bands are one-rule computations} and \ref{locked sectors}:

\renewcommand{\labelenumi}{(\roman{enumi})}

\begin{enumerate}

\item if $B$ contains a subword of form (a), then $H$ cannot contain $\theta(12)_j^{\pm1}$ for $j=1,2$

\item if $B$ contains a subword of form (b), then $H$ cannot contain $\theta(s)_1^{\pm1}$ or a copy of the connecting rule of $\textbf{M}_4(3^-)$

\end{enumerate}

\smallskip

An $a$-trapezium is called \textit{standard} if its base is pararevolving and its history contains a controlled subword. Note that the subdiagram of a standard $a$-trapezium bounded by the $\theta$-bands corresponding to the controlled subword of the history is a trapezium. 

Further, an $a$-trapezium is called \textit{big} if its base is revolving, it contains $a$-cells, and it contains a subdiagram that is a standard trapezium. Note that the base of a big $a$-trapezium is necessarily reduced.

An $a$-trapezium is called \textit{exceptional} if it contains $a$-cells and its base $B$ is a cyclic shift of either:

\begin{itemize}

\item $Q_0(1)Q_0(1)^{-1}Q_0(1)$, or

\item $P_0(1)Q_0(1)Q_0(1)^{-1}P_0(1)^{-1}\{t(1)\}^{-1}Q_4(L)^{-1}\dots P_1(L)^{-1}P_1(L)\dots Q_4(L)\{t(1)\}P_0(1)$

\end{itemize}

where gaps correspond to strings of letters that follow the order of the standard base (or its inverse).

Note that the base of an exceptional $a$-trapezium is hyperfaulty. 

A \textit{partition} of an $a$-trapezium $\Delta$ is a (finite) collection of subdiagrams $\{\Delta_i\}_{i=1}^m$ such that each $\Delta_i$ consists of a number of sectors of $\Delta$, $\Delta_i\cap\Delta_j$ is either empty or a $q$-band for $i\neq j$, and each sector is a subdiagram of some $\Delta_i$. Note that $\|\textbf{tbot}(\Delta)\|=\sum_i\|\textbf{tbot}(\Delta_i)\|-m$ and, similarly, $\|\textbf{ttop}(\Delta)\|=\sum_i\|\textbf{ttop}(\Delta_i)\|-m$. Moreover, as $\textbf{tbot}(\Delta)$ and $\textbf{ttop}(\Delta)$ each have at least $m$ $q$-edges, $\sum_i\|\textbf{tbot}(\Delta_i)\|\leq2\|\textbf{tbot}(\Delta)\|$ and $\sum_i\|\textbf{ttop}(\Delta_i)\|\leq2\|\textbf{ttop}(\Delta)\|$.

Clearly, given a partition $\{\Delta_i\}$ of an $a$-trapezium $\Delta$, $\text{wt}(\Delta)\leq\sum_i\text{wt}(\Delta_i)$.

Let $\Delta$ be an $a$-trapezium with revolving base $B$ and let $B'$ be a cyclic permutation of $B$. Then, there exists an $a$-trapezium $\Delta'$ with revolving base $B'$ such that $\text{wt}(\Delta')=\text{wt}(\Delta)$. This diagram is constructed by cutting along a maximal $q$-band $\pazocal{Q}$ of $\Delta$, pasting together the left and right $q$-bands of $\Delta$, and pasting a copy of $\pazocal{Q}$ onto the side of the diagram. As with reduced computations, $\Delta'$ is called a cyclic permutation of $\Delta$.

Note that by Lemma \ref{lengths}(d), for any maximal $\theta$-band $\pazocal{T}$ in an $a$-trapezium $\Delta$, the length of $\pazocal{T}$ is at most $|\textbf{tbot}(\pazocal{T})|_q+3|\textbf{tbot}(\pazocal{T})|_a\leq 3\|\textbf{tbot}(\pazocal{T})\|$.

\begin{lemma} \label{revolving trapezium}

Let $\Delta$ be a trapezium with history $H$ and revolving base $B$. Then for $h=\|H\|$,
$$\text{wt}(\Delta)\leq 3c_4h\max(\|\textbf{tbot}(\Delta)\|,\|\textbf{ttop}(\Delta)\|)$$

\end{lemma}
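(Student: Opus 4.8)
The goal is to bound the weight of a trapezium $\Delta$ (so with no $a$-cells) whose base is revolving. Since $\Delta$ has no $a$-cells, $\text{wt}(\Delta)$ is just the number of $(\theta,q)$- and $(\theta,a)$-cells, i.e.\ the sum over the $h$ maximal $\theta$-bands $\pazocal{T}_1,\dots,\pazocal{T}_h$ of their lengths. By Lemma \ref{lengths}(d) (and the remark immediately preceding this lemma in the excerpt), the length of each $\pazocal{T}_i$ is at most $3\|\textbf{tbot}(\pazocal{T}_i)\|$. So the whole statement reduces to a uniform bound on $\|\textbf{tbot}(\pazocal{T}_i)\|$ — equivalently on $|W_i|_a$ where $W_i$ is the admissible word $\lab(\textbf{tbot}(\pazocal{T}_i))$ — across the computation $\pazocal{C}:W_0\to\dots\to W_h$ that $\Delta$ encodes (via Lemma \ref{trapezia are computations}).

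First I would invoke Lemma \ref{trapezia are computations} to identify $\Delta$ with a reduced computation $\pazocal{C}:W_0\to\dots\to W_h$ of $\textbf{M}$ with base $B$, where $W_0\equiv\lab(\textbf{tbot}(\Delta))$ and $W_h\equiv\lab(\textbf{ttop}(\Delta))$. Since $B$ is revolving, Lemma \ref{M revolving} applies and gives $\|W_j\|\leq c_4\max(\|W_0\|,\|W_h\|)$ for all $0\leq j\leq h$; in particular $|W_j|_a\leq\|W_j\|\leq c_4\max(\|\textbf{tbot}(\Delta)\|,\|\textbf{ttop}(\Delta)\|)$. Now for each $i\in\{1,\dots,h\}$, the admissible word $\lab(\textbf{tbot}(\pazocal{T}_i))$ is $W_{i-1}$ (up to trimming of $a$-edges, which only decreases $a$-length), so $\|\textbf{tbot}(\pazocal{T}_i)\|\le \|W_{i-1}\|\le c_4\max(\|\textbf{tbot}(\Delta)\|,\|\textbf{ttop}(\Delta)\|)$. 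Combining with the per-band bound, the length of $\pazocal{T}_i$ is at most $3c_4\max(\|\textbf{tbot}(\Delta)\|,\|\textbf{ttop}(\Delta)\|)$.

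Finally I would sum over the $h$ maximal $\theta$-bands: since $\text{wt}(\Delta)$ equals the total number of cells (each of weight $1$), and $\Delta$ decomposes into $\pazocal{T}_1,\dots,\pazocal{T}_h$,
\[
\text{wt}(\Delta)=\sum_{i=1}^h(\text{number of cells in }\pazocal{T}_i)\le\sum_{i=1}^h 3c_4\max(\|\textbf{tbot}(\Delta)\|,\|\textbf{ttop}(\Delta)\|)=3c_4h\max(\|\textbf{tbot}(\Delta)\|,\|\textbf{ttop}(\Delta)\|),
\]
which is the claimed inequality. The one point requiring a little care — and the part I expect to be the main obstacle — is matching the combinatorial bookkeeping: the bound in Lemma \ref{M revolving} is stated in terms of $\|W_j\|$ (combinatorial length of configurations), whereas here the natural quantities are $\|\textbf{tbot}(\Delta)\|$ and $\|\textbf{ttop}(\Delta)\|$, which are lengths of paths in the diagram. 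These coincide once one observes that the trimmed top/bottom of $\Delta$ carry labels $W_h$, $W_0$ respectively; one must also check that trimming off leading/trailing $a$-edges of each $\textbf{top}/\textbf{bot}$ does not increase any length, so that $\|\textbf{tbot}(\pazocal{T}_i)\|\le\|W_{i-1}\|$ genuinely holds, and that no $a$-cells appear in a trapezium (so the weight really is just the cell count, with no $C_1\|\partial\Pi\|^2$ terms). All of these are routine given the definitions in Sections 6.3 and 7.2 and Lemma \ref{trapezia are computations}.
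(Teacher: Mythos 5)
Your proof is correct and follows essentially the same route as the paper: decompose $\Delta$ into its $h$ maximal $\theta$-bands, bound each band's length by $3\|\textbf{tbot}(\pazocal{T}_i)\|$ via Lemma \ref{lengths}(d), identify the trimmed bottoms with the configurations of the associated reduced computation, and apply Lemma \ref{M revolving} to the revolving base before summing. The bookkeeping points you flag (no $a$-cells in a trapezium, labels of trimmed tops/bottoms being the computation's configurations) are exactly the routine checks the paper leaves implicit.
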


\begin{proof}

Enumerate the maximal $\theta$-bands of $\Delta$ as $\pazocal{T}_1,\dots,\pazocal{T}_h$. Then, letting $\ell_i$ be the length of $\pazocal{T}_i$, the definition of weight implies $\text{wt}(\Delta)=\sum_i\ell_i\leq\sum_i 3\|\textbf{tbot}(\pazocal{T}_i)\|$.

By Lemmas \ref{computations are trapezia} and \ref{M revolving}, $\|\textbf{tbot}(\pazocal{T}_i)\|\leq c_4\max(\|\textbf{tbot}(\Delta)\|,\|\textbf{ttop}(\Delta)\|)$ for all $i$.

Hence, the statement follows.

\end{proof}

\begin{lemma} \label{a-trapezia hyperfaulty}

Let $\Delta$ be an $a$-trapezium with history $H$ and hyperfaulty base $B$. Then either $\Delta$ is exceptional or for $h=\|H\|$, $$\text{wt}(\Delta)\leq3c_0h\max(\|\textbf{tbot}(\Delta)\|,\|\textbf{ttop}(\Delta)\|)+C_1(|\textbf{tbot}(\Delta)|_a+|\textbf{ttop}(\Delta)|_a)^2$$

\end{lemma}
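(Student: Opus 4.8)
\textbf{Proof proposal for Lemma \ref{a-trapezia hyperfaulty}.}
The plan is to split into cases according to the possible shapes of the hyperfaulty base $B$, using the structural restrictions already established in Lemmas \ref{a-cells sector}, \ref{theta-bands are one-rule computations}, and \ref{locked sectors}. First I would observe that if $\Delta$ contains no $a$-cells, then $\Delta$ is a genuine trapezium, its base is unreduced, so by Lemma \ref{M controlled} the history $H$ contains no controlled subword; then the argument of Lemma \ref{M revolving} (more precisely the hyperfaulty case, Lemma \ref{hyperfaulty}) together with Lemma \ref{revolving trapezium}'s counting gives $\text{wt}(\Delta)\le 3c_0 h\max(\|\textbf{tbot}(\Delta)\|,\|\textbf{ttop}(\Delta)\|)$, which is even stronger than required. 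So I may assume $\Delta$ contains at least one $a$-cell. By Lemma \ref{a-cells sector} and the consequences listed after it, every unreduced two-letter subword of $B$ has one of the forms (a) $P_0(i)P_0(i)^{-1}$, $Q_0(i)^{-1}Q_0(i)$ or (b) $Q_0(i)Q_0(i)^{-1}$, $P_1(i)^{-1}P_1(i)$, and the step history contains $(1)_1$; moreover the implications (i) and (ii) restrict which transition/connecting rules can appear in $H$.

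The core of the argument is then a case analysis on $B$ that parallels the proof of Lemma \ref{M_4 faulty}/\ref{hyperfaulty}, but now carrying along the $a$-cells. I would partition $\Delta$ into its sectors and distinguish, for each sector, whether it contains $a$-cells. A sector containing an $a$-cell must, by Lemma \ref{a-cells sector}, be a $(P_0(1)Q_0(1))^{\pm1}$-, $P_0(1)P_0(1)^{-1}$-, or $Q_0(1)^{-1}Q_0(1)$-sector; in particular all $a$-cells of $\Delta$ sit in the `special' input coordinate. For such a sector the relevant maximal $a$-bands either run between the two bounding $q$-bands (contributing $(\theta,a)$-cells controlled by Lemma \ref{lengths}(d)) or end on an $a$-cell; using (MM1), (MM2), and Lemma \ref{a-band on same a-cell}, I bound the number of $(\theta,a)$-cells produced by each $a$-cell of boundary length $\|\partial\pi\|$ by $O(\|\partial\pi\|\cdot h)$ — but crucially one must instead route these through the $a$-edges actually present on $\textbf{tbot}(\Delta)$ and $\textbf{ttop}(\Delta)$ so that the total $(\theta,a)$-cell count is bounded by $C_1(|\textbf{tbot}(\Delta)|_a+|\textbf{ttop}(\Delta)|_a)^2$ plus the trapezium-type term. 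The weight of the $a$-cells themselves, $\sum C_1\|\partial\pi\|^2$, is absorbed into the same quadratic term since every $\partial\pi$-edge corresponds (via its maximal $a$-band, using (MM2) and Lemma \ref{a-band on same a-cell}) to an $a$-edge of $\textbf{tbot}(\Delta)$ or $\textbf{ttop}(\Delta)$, and distinct $a$-cells give disjoint such sets of boundary $a$-edges. For the sectors \emph{without} $a$-cells, I would invoke Lemma \ref{revolving trapezium} or the hyperfaulty bound Lemma \ref{hyperfaulty} on the corresponding sub-trapezium; the partition inequality $\sum_i\|\textbf{tbot}(\Delta_i)\|\le 2\|\textbf{tbot}(\Delta)\|$ and $\text{wt}(\Delta)\le\sum_i\text{wt}(\Delta_i)$ let me reassemble.

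The one configuration where the $a$-cell count cannot be controlled this way is exactly when $B$ is a cyclic shift of $Q_0(1)Q_0(1)^{-1}Q_0(1)$ or of the longer word listed in the definition of exceptional — these are precisely the hyperfaulty bases on which Lemma \ref{M_4 faulty}'s case analysis terminated in a configuration where "$|W_j|_a=|W_0|_a$ for all $j$" rather than reaching a contradiction, i.e. where $a$-bands in the special input coordinate can wrap around and support arbitrarily many $a$-cells relative to the boundary. In those cases $\Delta$ is declared exceptional and there is nothing to prove. So the case analysis should show: either $B$ is one of these two exceptional shapes, or the $a$-bands emanating from each $a$-cell are forced to reach $\textbf{tbot}(\Delta)\cup\textbf{ttop}(\Delta)$, giving the quadratic bound.

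I expect the main obstacle to be the bookkeeping in the second step: proving that, outside the exceptional cases, \emph{every} $a$-edge on the boundary of \emph{every} $a$-cell has its maximal $a$-band terminating on $\textbf{tbot}(\Delta)$ or $\textbf{ttop}(\Delta)$ (and not, say, cycling back into the diagram or terminating on a $(\theta,q)$-cell in a way that is not already counted), so that $\sum_\pi\|\partial\pi\|\le |\textbf{tbot}(\Delta)|_a+|\textbf{ttop}(\Delta)|_a$ and hence $\sum_\pi C_1\|\partial\pi\|^2\le C_1(|\textbf{tbot}(\Delta)|_a+|\textbf{ttop}(\Delta)|_a)^2$. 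This requires simultaneously using the $M$-minimality conditions (MM1), (MM2), Lemma \ref{a-band on same a-cell} (no $a$-band has two ends on one $a$-cell), Lemma \ref{M_a no annuli 2}(2) (no $\theta$-annuli, so $\theta$-bands reach the sides), and the restrictions (i)–(ii) on $H$ that force the bounding $\theta$-bands of an $a$-cell's sector to carry rules of step history $(1)_1$ or $(s)_1^{\pm1}$, which in turn pins down the base to one of a short explicit list of words — and then reading off from that list which are exceptional and which force the $a$-bands outward.
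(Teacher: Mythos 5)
Your overall skeleton matches the paper's proof: dispose of the $a$-cell-free case via Lemma \ref{trapezia are computations} and the computation bound of Lemma \ref{hyperfaulty}, then partition $\Delta$ into sectors, use Lemma \ref{a-cells sector} to confine $a$-cells to sectors with base $(P_0(1)Q_0(1))^{\pm1}$, $P_0(1)P_0(1)^{-1}$ or $Q_0(1)^{-1}Q_0(1)$, route maximal $a$-bands using (MM1), (MM2) and Lemma \ref{a-band on same a-cell}, and let the two bases in the definition of ``exceptional'' be the escape hatch. However, the step you yourself flag as the main obstacle is a genuine gap, and your proposed way of closing it would fail. You want to show that outside the exceptional cases every maximal $a$-band emanating from an $a$-cell terminates on $\textbf{tbot}(\Delta)$ or $\textbf{ttop}(\Delta)$, so that $\sum_\pi\|\partial\pi\|\leq|\textbf{tbot}(\Delta)|_a+|\textbf{ttop}(\Delta)|_a$. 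That is true in the $P_0(1)P_0(1)^{-1}$ case (the bounding $P_0(1)$-bands carry no $a$-edges from the `special' input alphabet), but it is false in the non-exceptional $Q_0(1)^{-1}Q_0(1)$ case: the rules of step history $(1)_1$ have the part $q_0(1)\to a_1^{-1}q_0(1)a_1'$, so each $(\theta,q)$-cell of the two bounding $Q_0(1)$-bands does carry an $a$-edge of the `special' input sector, and $a$-bands from $a$-cells may legitimately end there. A direct routing argument therefore only gives $\sum_\pi\|\partial\pi\|\leq|\textbf{tbot}(\Delta')|_a+|\textbf{ttop}(\Delta')|_a+2h_1$, where $h_1$ is the number of rules of step history $(1)_1$ in $H$, and $h_1$ is not a priori controlled by the boundary $a$-lengths; left unbounded it would force a term like $h^2$ or $h\cdot h_1$ that the statement does not allow.

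The missing idea is a transfer of $h_1$ to the boundary through the \emph{neighboring} sectors: because $B$ is hyperfaulty but not a cyclic shift of $Q_0(1)Q_0(1)^{-1}Q_0(1)$, it must be a cyclic permutation of $P_1(1)^{-1}Q_0(1)^{-1}Q_0(1)P_1(1)\dots$, so the offending sector is flanked by $(Q_0(1)P_1(1))^{\pm1}$-sectors. The subcomputation with step history $(1)_1$ restricted to such a sector satisfies the hypotheses of Lemma \ref{multiply one letter} (each rule multiplies that sector by a distinct letter), which gives $h_1\leq|\textbf{tbot}(\Delta_1'')|_a+|\textbf{ttop}(\Delta_1'')|_a$; this absorbs both the $+2h_1$ in the $a$-cell perimeter count and the corresponding excess in the $\theta$-band lengths into $|\textbf{tbot}(\Delta)|_a+|\textbf{ttop}(\Delta)|_a$, and it is precisely the absence of these flanking $P_1(1)$-letters in the base $Q_0(1)Q_0(1)^{-1}Q_0(1)$ (and in the long second exceptional base, where the $a$-cell sector is $(P_0(1)Q_0(1))^{\pm1}$) that forces those cases to be excluded as exceptional rather than estimated. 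Without this transfer, your case analysis does not close.
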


\begin{proof}

We proceed by cases:

\ \textbf{1.} Suppose $\Delta$ contains no $a$-cells. 

Then $\Delta$ is a trapezium, so that Lemma \ref{trapezia are computations} yields a corresponding reduced computation with a hyperfaulty base. For any maximal $\theta$-band $\pazocal{T}$ of $\Delta$, Lemma \ref{hyperfaulty} then implies $|\textbf{tbot}(\pazocal{T})|_a\leq c_0\max(|\textbf{ttop}(\Delta)|_a,|\textbf{tbot}(\Delta)|_a)$. Hence,
$$\text{wt}(\Delta)\leq3c_0h\max(\|\textbf{tbot}(\Delta)\|,\|\textbf{ttop}(\Delta)\|)$$

\smallskip

So, it suffices to assume that $\Delta$ contains at least one $a$-cell.

\ \textbf{2.} Suppose $B$ contains a subword $B'\equiv P_0(1)P_0(1)^{-1}$. 

The definition of hyperfaulty and Lemma \ref{a-cells sector} then imply that the $B'$-sector $\Delta'$ is the only sector containing $a$-cells.

By (i), the step history of $\Delta$ cannot contain $(12)_1^{\pm}$. Further, Lemmas \ref{theta-bands are one-rule computations} and \ref{locked sectors} imply that the step history cannot contain the letter $(s)_2^{\pm1}$. Hence, the step history must be a subword of $(s)_1(1)_1(s)_1^{-1}$.

Let $\Delta''$ be any sector of $\Delta$ other than $\Delta'$ and $\pazocal{T}''$ be a maximal $\theta$-band in $\Delta''$. Lemma \ref{trapezia are computations} yields a reduced computation $\pazocal{C}''$ corresponding to $\Delta''$ with the same history as $\Delta$. The maximal computation of $\pazocal{C}''$ with step history $(1)_1$ has fixed tape word, satisfies the hypotheses of Lemma \ref{multiply one letter} or satisfies the hypotheses of Lemma \ref{unreduced base}. In each case, $\|\textbf{tbot}(\pazocal{T}'')\|\leq\max(\|\textbf{tbot}(\Delta'')\|,\|\textbf{ttop}(\Delta'')\|)$, so that $\text{wt}(\Delta'')\leq3h\max(\|\textbf{tbot}(\Delta'')\|,\|\textbf{ttop}(\Delta'')\|)$.

By Lemma \ref{a-band on same a-cell} and (MM2), every maximal $a$-band starting on the boundary of an $a$-cell must have another end on $\textbf{tbot}(\Delta')$ or $\textbf{ttop}(\Delta')$. So, the sum of the combinatorial perimeters of all $a$-cells in $\Delta'$ is at most $|\textbf{tbot}(\Delta')|_a+|\textbf{ttop}(\Delta')|_a\leq|\textbf{tbot}(\Delta)|_a+|\textbf{ttop}(\Delta)|_a$.

Further, for $\pazocal{T}'$ a maximal $\theta$-band in $\Delta'$, every $a$-edge of $\textbf{bot}(\pazocal{T}')$ must be part of a maximal $a$-band which has at least one end on $\textbf{tbot}(\Delta')$ or $\textbf{ttop}(\Delta')$. So, $\|\textbf{tbot}(\pazocal{T}')\|\leq\|\textbf{tbot}(\Delta')\|+\|\textbf{ttop}(\Delta')\|$.

Combining these gives $\text{wt}(\Delta')\leq3h(\|\textbf{tbot}(\Delta')\|+\|\textbf{ttop}(\Delta')\|)+C_1(|\textbf{tbot}(\Delta)|_a+|\textbf{ttop}(\Delta)|_a)^2$.

The set of sectors $\{\Delta_i\}$ then form a partition of $\Delta$, so that
\begin{align*}
\text{wt}(\Delta)&\leq3h\sum(\|\textbf{tbot}(\Delta_i)\|+\|\textbf{ttop}(\Delta_i)\|)+C_1(|\textbf{tbot}(\Delta)|_a+|\textbf{ttop}(\Delta)|_a)^2 \\
&\leq6h(\|\textbf{tbot}(\Delta)\|+\|\textbf{ttop}(\Delta)\|)+C_1(|\textbf{tbot}(\Delta)|_a+|\textbf{ttop}(\Delta)|_a)^2 \\
&\leq12h\max(\|\textbf{tbot}(\Delta)\|,\|\textbf{ttop}(\Delta)\|)+C_1(|\textbf{tbot}(\Delta)|_a+|\textbf{ttop}(\Delta)|_a)^2
\end{align*}
The statement is then satisfied by the parameter choice $c_0\geq4$.

\smallskip

\ \textbf{3.} Suppose $B$ contains a subword $B'\equiv Q_0(1)^{-1}Q_0(1)$. 

Again, the $B'$-sector $\Delta'$ must be the only sector containing $a$-cells and the step history of $\Delta$ must be a subword of $(s)_1(1)_1(s)_1^{-1}$.

As $\Delta$ is not exceptional, $B$ must be a cyclic permutation of
$$P_1(1)^{-1}Q_0(1)^{-1}Q_0(1)P_1(1)\dots Q_4(1)\{t(2)\}P_0(2)P_0(2)^{-1}\{t(2)\}^{-1}Q_4(1)^{-1}\dots P_1(1)^{-1}$$
As in Step 2, we then have that $\text{wt}(\Delta'')\leq3h\max(\|\textbf{tbot}(\Delta'')\|,\|\textbf{ttop}(\Delta'')\|)$ for any sector $\Delta''$ other than $\Delta'$.

However, a maximal $a$-band in $\Delta'$ need not have one end on $\textbf{tbot}(\Delta')$ or $\textbf{ttop}(\Delta')$, as it may end on a $(\theta,q)$-cell of one of the bounding $q$-bands. Note that the rule corresponding to such a $(\theta,q)$-cell must be of step history $(1)_1$. Let $H_1$ be the maximal subword of $H$ comprising all letters with step history $(1)_1$ and $h_1=\|H_1\|$.

Then, Lemma \ref{a-band on same a-cell} and (MM2) imply that the sum of the combinatorial perimeters of all $a$-cells in $\Delta'$ is at most $|\textbf{tbot}(\Delta')|_a+|\textbf{ttop}(\Delta')|_a+2h_1$. Further, for $\pazocal{T}'$ a maximal $\theta$-band in $\Delta'$, $\|\textbf{tbot}(\pazocal{T}')\|\leq\|\textbf{tbot}(\Delta')\|+\|\textbf{ttop}(\Delta')\|+2h_1$.

Let $\Delta_1''$ be the $Q_0(1)P_1(1)$-sector of $\Delta$. By Lemma \ref{trapezia are computations}, there exists a reduced computation $\pazocal{C}_1''$ corresponding to $\Delta_1''$. The maximal subcomputation of $\pazocal{C}_1''$ with step history $(1)_1$ satisfies the hypotheses of Lemma \ref{multiply one letter}, so that $h_1\leq|\textbf{tbot}(\Delta_1'')|_a+|\textbf{ttop}(\Delta_1'')|_a$.

Similarly, letting $\Delta_2''$ be the $P_1(1)^{-1}Q_0(1)^{-1}$-sector of $\Delta$, $h_1\leq|\textbf{tbot}(\Delta_2'')|_a+|\textbf{ttop}(\Delta_2'')|_a$

So, the sum of the combinatorial perimeters of the $a$-cells in $\Delta'$ is at most 
$$|\textbf{tbot}(\Delta')|_a+|\textbf{ttop}(\Delta')|_a+\sum|\textbf{tbot}(\Delta_i'')|_a+|\textbf{ttop}(\Delta_i'')|_a\leq|\textbf{tbot}(\Delta)|_a+|\textbf{ttop}(\Delta)|_a$$
This implies that $\text{wt}(\Delta')\leq3h(\sum\|\textbf{tbot}(\pazocal{T}')\|)+C_1(|\textbf{tbot}(\Delta)|_a+|\textbf{ttop}(\Delta)|_a)^2$, where this sum is taken over all maximal $\theta$-bands $\pazocal{T}'$ in $\Delta'$.

Meanwhile, for $\pazocal{T}'$ a maximal $\theta$-band in $\Delta'$, $$\|\textbf{tbot}(\pazocal{T}')\|\leq\|\textbf{tbot}(\Delta')\|+\|\textbf{ttop}(\Delta')\|+\sum_{i=1}^2\|\textbf{tbot}(\Delta_i'')\|+\|\textbf{ttop}(\Delta_i'')\|$$
So, taking the set of sectors $\{\Delta_i\}$ as a partition of $\Delta$, we have
\begin{align*}
\text{wt}(\Delta)&\leq6h\sum(\|\textbf{tbot}(\Delta_i)\|+\|\textbf{ttop}(\Delta_i)\|)+C_1(|\textbf{tbot}(\Delta)|_a+|\textbf{ttop}(\Delta)|_a)^2 \\
&\leq12h(\|\textbf{tbot}(\Delta)\|+\|\textbf{ttop}(\Delta)\|)+C_1(|\textbf{tbot}(\Delta)|_a+|\textbf{ttop}(\Delta)|_a)^2 \\
&\leq24h\max(\|\textbf{tbot}(\Delta)\|,\|\textbf{ttop}(\Delta)\|)+C_1(|\textbf{tbot}(\Delta)|_a+|\textbf{ttop}(\Delta)|_a)^2
\end{align*}
so that the statement follows for $c_0\geq8$.

\smallskip

\ \textbf{4.} By Steps 2 and 3, $B$ must contain a subword of the form $(P_0(1)Q_0(1))^{\pm1}$. So, $B$ must be a cyclic permutation of
$$P_0(1)Q_0(1)Q_0(1)^{-1}P_0(1)^{-1}\{t(1)\}^{-1}Q_s(L)^{-1}\dots P_1(L)^{-1}P_1(L)\dots Q_s(L)\{t(1)\}P_0(1)$$
But we may assume that $\Delta$ contains an $a$-cell by Step 1, so that $\Delta$ is exceptional.

\end{proof}

\begin{lemma} \label{a-trapezia revolving}

Let $\Delta$ be an $a$-trapezium with history $H$ and revolving base $B$. If $\Delta$ is neither big nor exceptional, then for $h=\|H\|$,
$$\text{wt}(\Delta)\leq 3C_1h\max(\|\textbf{tbot}(\Delta)\|,\|\textbf{ttop}(\Delta)\|)+C_2(\|\textbf{tbot}(\Delta)\|+\|\textbf{ttop}(\Delta)\|)^2$$

\end{lemma}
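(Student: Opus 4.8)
The plan is to prove Lemma \ref{a-trapezia revolving} by induction on the number of $a$-cells in $\Delta$, reducing at each step to either a smaller $a$-trapezium, a trapezium with revolving base (handled by Lemma \ref{revolving trapezium}), or an $a$-trapezium with a hyperfaulty base (handled by Lemma \ref{a-trapezia hyperfaulty}). The base case is when $\Delta$ contains no $a$-cells: then $\Delta$ is an honest trapezium with revolving base, and Lemma \ref{revolving trapezium} gives $\text{wt}(\Delta)\leq 3c_4h\max(\|\textbf{tbot}(\Delta)\|,\|\textbf{ttop}(\Delta)\|)$, which is dominated by the right-hand side of the claimed inequality by the parameter choice $C_1>>c_4$.

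For the inductive step, suppose $\Delta$ contains at least one $a$-cell $\pi$. The first task is to locate $\pi$: by Lemma \ref{a-cells sector}, $\pi$ lies in a sector $\Delta'$ whose base is a cyclic shift of $(P_0(1)Q_0(1))^{\pm1}$, $P_0(1)P_0(1)^{-1}$, or $Q_0(1)^{-1}Q_0(1)$, and the step history of $\Delta$ contains $(1)_1$. I would then use a partition $\{\Delta_i\}$ of $\Delta$ into sectors (or small groups of sectors), applying $\text{wt}(\Delta)\leq\sum_i\text{wt}(\Delta_i)$ together with $\sum_i\|\textbf{tbot}(\Delta_i)\|\leq2\|\textbf{tbot}(\Delta)\|$ and the analogous bound for $\textbf{ttop}$. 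Sectors of $\Delta$ not containing $a$-cells are trapezia, and since $B$ is revolving while $\Delta$ is not big, the history of any such sector contains no controlled subword; hence by Lemma \ref{M controlled} (or directly Lemma \ref{M projected long history} via Lemma \ref{trapezia are computations} and Lemma \ref{M revolving}) their heights are linearly controlled, giving $\text{wt}(\Delta_i)\leq 3c_4 h\max(\|\textbf{tbot}(\Delta_i)\|,\|\textbf{ttop}(\Delta_i)\|)$ — or, if $\Delta_i$ has hyperfaulty base, Lemma \ref{a-trapezia hyperfaulty} applies (and $\Delta_i$ cannot be exceptional since $\Delta$ is not). The remaining sectors containing $a$-cells are those from Lemma \ref{a-cells sector}, i.e. the $(P_0(1)Q_0(1))^{\pm1}$-, $P_0(1)P_0(1)^{-1}$-, and $Q_0(1)^{-1}Q_0(1)$-sectors.

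For those sectors, I would argue much as in Steps 2 and 3 of the proof of Lemma \ref{a-trapezia hyperfaulty}: every maximal $a$-band emanating from $\pi$ ends, by Lemma \ref{a-band on same a-cell} and (MM2), either on $\textbf{tbot}(\Delta')$, on $\textbf{ttop}(\Delta')$, or on a $(\theta,q)$-cell of a bounding $q$-band whose rule has step history $(1)_1$; the number of the latter is bounded by $h_1=$ the number of $(1)_1$-letters in $H$, which in turn is bounded by the $a$-lengths of the adjacent $Q_0(1)P_1(1)$- and $P_1(1)^{-1}Q_0(1)^{-1}$-sectors via Lemma \ref{multiply one letter} applied to the computations supplied by Lemma \ref{trapezia are computations}. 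This yields a bound on the sum of the combinatorial perimeters of the $a$-cells in $\Delta'$ by $\|\textbf{tbot}(\Delta)\|+\|\textbf{ttop}(\Delta)\|$ plus contributions from neighboring sectors, hence $\sum_{\pi\subseteq\Delta'}C_1\|\partial\pi\|^2\leq C_1(\|\textbf{tbot}(\Delta)\|+\|\textbf{ttop}(\Delta)\|)^2$, and similarly a bound $\|\textbf{tbot}(\pazocal{T}')\|\leq\|\textbf{tbot}(\Delta')\|+\|\textbf{ttop}(\Delta')\|+2h_1$ for each maximal $\theta$-band $\pazocal{T}'$ in $\Delta'$, which bounds the weight of the $(\theta,a)$- and $(\theta,q)$-cells of $\Delta'$. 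Summing all the pieces and absorbing the quadratic cross-terms into $C_2(\|\textbf{tbot}(\Delta)\|+\|\textbf{ttop}(\Delta)\|)^2$ via the parameter choices $C_1>>c_4>>\dots$ and $C_2>>C_1$ gives the claimed inequality.

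The main obstacle I anticipate is handling the $(P_0(1)Q_0(1))^{\pm1}$-sector correctly when $B$ is genuinely revolving rather than hyperfaulty: here the full base is a cyclic permutation of a long word passing through one complete copy of the standard base, so I must confirm that the non-special sectors really do have controlled height — this is exactly the place where the hypothesis that $\Delta$ is \emph{not} big is used, since otherwise a standard trapezium (with a controlled history allowing unbounded growth) could be embedded and the linear bound would fail. I would need to check carefully that excluding ``big'' precisely excludes the appearance of a controlled subword in the history of every non-$a$-cell sector, so that Lemma \ref{M projected long history} / Lemma \ref{M width} applies uniformly; the bookkeeping of which neighboring sectors' $a$-lengths control $h_1$, and ensuring no double-counting across the partition, is the other delicate point, but it is routine once the structural picture from Lemma \ref{a-cells sector} is in hand.
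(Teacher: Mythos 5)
Your overall skeleton (locate the $a$-cells with Lemma \ref{a-cells sector}, split $\Delta$ into sectors, count $a$-bands, absorb the quadratic terms into $C_2(\cdot)^2$) is in the right spirit, but the central step is not justified, and it is exactly the step the paper's proof is organized around. You claim that since $\Delta$ is not big, the history of every non-$a$-cell sector contains no controlled subword and hence each such sector has linearly controlled height/width, i.e. $\|\textbf{tbot}(\pazocal{T}'')\|\leq c_4\max(\|\textbf{tbot}(\Delta'')\|,\|\textbf{ttop}(\Delta'')\|)$. Neither implication holds. Lemma \ref{M projected long history} applies only to computations with base $\{t(i)\}B_3(i)$, $i\geq2$, i.e. to a reduced pararevolving subword of $B$ that avoids the `special' input sector; it cannot be invoked sector-by-sector, and the absence of a controlled subword does not bound a sector's intermediate tape words by that sector's own top and bottom (a sector can be empty at both ends and long in the middle, its size being governed by neighboring sectors). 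When a pararevolving subword avoiding the special sector does exist, what the not-big hypothesis actually yields (the paper's Step 1) is a bound on the total height, $h\leq c_3\max(\|\textbf{tbot}(\Delta)\|,\|\textbf{ttop}(\Delta)\|)$, after which $\theta$-band lengths are bounded not sector-wise but by tracing each $a$-edge along its maximal $a$-band to $\textbf{tbot}(\Delta)$, $\textbf{ttop}(\Delta)$, or one of the at most $22L+1$ maximal $q$-bands, each consisting of $h$ $(\theta,q)$-cells.

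More seriously, in the remaining case --- every reduced pararevolving subword of $B$ contains $(P_0(1)Q_0(1))^{\pm1}$ --- the not-big hypothesis gives you nothing: there is no $a$-cell-free pararevolving subtrapezium to feed into Lemma \ref{M projected long history}, so you have no bound on $h$ at all, while your estimates for the sectors containing $a$-cells (and for the others) still involve $h$, not just the number $h_1$ of $(1)_1$-rules. The paper closes this case by a step-history analysis (the observations (i)--(ii) following Lemma \ref{a-cells sector}, Lemma \ref{M step history 2}, and Lemmas \ref{multiply one letter}, \ref{unreduced base}, \ref{primitive unreduced}): the step history of $\Delta$ is forced to be $(1)_1$ or a subword of $(1)_1(s)_1^{-1}(s)_2(1)_2$ --- the alternatives force $B$ to be hyperfaulty and $\Delta$ exceptional, which is excluded --- and then $h$ itself is bounded by the $a$-length of a $(Q_0(i)P_1(i))^{\pm1}$-sector via Lemma \ref{multiply one letter}. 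Your plan bounds only $h_1$ this way; without the step-history argument controlling all of $h$, the final inequality cannot be assembled. (A minor point: your induction on the number of $a$-cells is never actually used --- the ``inductive step'' is a direct partition --- so it adds nothing.)
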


\begin{proof}

By Lemma \ref{a-trapezia hyperfaulty}, we may assume that $B$ is not hyperfaulty. As a result, $B$ must contain a reduced pararevolving subword.

Further, Lemma \ref{revolving trapezium} allows us to assume that $\Delta$ contains an $a$-cell.

\ \textbf{1.} Suppose $B$ contains a reduced pararevolving subword $B'$ such that $B'$ has a subword of the form $(P_0(i)Q_0(i))^{\pm1}$ for some $i\geq2$. 

Let $\Delta'$ be the maximal subdiagram of $\Delta$ which is an $a$-trapezium with base $B'$. By Lemma \ref{a-cells sector}, $\Delta'$ is a trapezium.

Then, Lemma \ref{trapezia are computations} yields a reduced computation $\pazocal{C}':V_0'\to\dots\to V_h'$ with base $B'$ corresponding to $\Delta'$. By the parallel nature of the rules, we may assume that the base of this computation is $\{t(i)\}B_3(i)\{t(i+1)\}$.

If $h>c_3\max(\|V_0'\|,\|V_h'\|)$, then we may apply Lemma \ref{M projected long history} to $\pazocal{C}'$, so that its history must contain a controlled subword. But then Lemma \ref{M controlled} implies that $\Delta$ is a big $a$-trapezium.

So, $h\leq c_3\max(\|V_0'\|,\|V_h'\|)\leq c_3\max(\|\textbf{tbot}(\Delta)\|,\|\textbf{ttop}(\Delta)\|)$.

Let $\pazocal{T}$ be a maximal $\theta$-band in $\Delta$. By Lemma \ref{a-band on same a-cell} and (MM2), any $a$-edge of $\textbf{tbot}(\pazocal{T})$ is part of a maximal $a$-band which must have at least one end on $\textbf{tbot}(\Delta)$, on $\textbf{ttop}(\Delta)$, or on a maximal $q$-band of $\Delta$. 

By the definition of revolving, there are at most $22L+1$ maximal $q$-bands in $\Delta$, each of which consists of $h$ $(\theta,q)$-cells. Further, each such $(\theta,q)$-cell has at most two $a$-edges on its contour. So, $|\textbf{tbot}(\pazocal{T})|_a\leq|\textbf{tbot}(\Delta)|_a+|\textbf{ttop}(\Delta)|_a+(44L+2)h$.

As $C_1$ is chosen after $L$ and $c_3$, this implies $\|\textbf{tbot}(\pazocal{T})\|\leq C_1\max(\|\textbf{tbot}(\Delta)\|,\|\textbf{ttop}(\Delta)\|)$.

Similarly, any maximal $a$-band of $\Delta$ with one end on an $a$-cell has its other end on $\textbf{tbot}(\Delta)$, on $\textbf{ttop}(\Delta)$, or on a maximal $q$-band corresponding to a letter $Q_0(1)^{\pm1}$. As there are at most two such $q$-bands bounding a `special' input sector, the sum of the combinatorial perimeters of $a$-cells must be at most $|\textbf{tbot}(\Delta)|_a+|\textbf{ttop}(\Delta)|_a+2h\leq(2c_3+1)(\|\textbf{tbot}(\Delta)\|+\|\textbf{ttop}(\Delta)\|)$.

Hence, $\text{wt}(\Delta)\leq3C_1h\max(\|\textbf{tbot}(\Delta)\|,\|\textbf{ttop}(\Delta)\|)+C_1(2c_3+1)^2(\|\textbf{tbot}(\Delta)\|+\|\textbf{ttop}(\Delta)\|)^2$. The parameter choices $C_2>>C_1>>c_3$ then imply the statement.

Thus, we may assume that $B$ is faulty but not hyperfaulty and that every reduced pararevolving subword of $B$ contains a subword of the form $(P_0(1)Q_0(1))^{\pm1}$. As a result, $B$ has exactly two sectors corresponding to the `special' input sector, each of which is of this form.

As $\Delta$ must contain $a$-cells, Lemma \ref{a-cells sector} implies that its step history must contain the letter $(1)_1$.

\ \textbf{2.} Suppose the step history of $\Delta$ is $(1)_1$. 

For any sector $\Delta''$ not containing $a$-cells, the corresponding computation must have fixed tape word, satisfy the hypotheses of Lemma \ref{multiply one letter}, or satisfy the hypotheses of Lemma \ref{unreduced base}. So, for any maximal $\theta$-band $\pazocal{T}''$ of $\Delta''$, $\|\textbf{tbot}(\pazocal{T}'')\|\leq\max(\|\textbf{tbot}(\Delta'')\|,\|\textbf{ttop}(\Delta'')\|)$. Consequently, $\text{wt}(\Delta'')\leq3h\max(\|\textbf{tbot}(\Delta'')\|,\|\textbf{ttop}(\Delta'')\|)$.

Let $\Delta'$ be a sector containing $a$-cells and $\pazocal{T}'$ be a maximal $\theta$-band of $\Delta'$. 

Then any $a$-edge of $\textbf{tbot}(\pazocal{T}')$ is part of a maximal $a$-band which must end on $\textbf{tbot}(\Delta')$, on $\textbf{ttop}(\Delta')$, or on the maximal $q$-band corresponding to $Q_0(1)^{\pm1}$. This then implies that $|\textbf{tbot}(\pazocal{T}')|_a\leq|\textbf{tbot}(\Delta')|_a+|\textbf{ttop}(\Delta')|_a+h$. Similarly, the sum of the combinatorial perimeters of the $a$-cells in $\Delta'$ is at most $|\textbf{tbot}(\Delta')|_a+|\textbf{ttop}(\Delta')|_a+h$.

So, $\text{wt}(\Delta')\leq6h\max(\|\textbf{tbot}(\Delta')\|,\|\textbf{ttop}(\Delta')\|)+3h^2+C_1(|\textbf{tbot}(\Delta')|_a+|\textbf{ttop}(\Delta')|_a+h)^2$.

As $B$ must contain a reduced pararevolving subword, it must have a subword $B_0''$ of the form $(Q_0(i)P_1(i))^{\pm1}$. Let $\Delta_0''$ be the $B_0''$-sector of $\Delta$. Then the corresponding computaton satisfies the hypotheses of Lemma \ref{multiply one letter}, so that $h\leq|\textbf{tbot}(\Delta_0'')|_a+|\textbf{ttop}(\Delta_0'')|_a$.

Thus, letting $\{\Delta_i\}$ be the partition of $\Delta$ given by its sectors, we have
\begin{align*}
\text{wt}(\Delta)&\leq6h\sum(\|\textbf{tbot}(\Delta_i)\|+\|\textbf{ttop}(\Delta_i)\|)+C_1(|\textbf{tbot}(\Delta)|_a+|\textbf{ttop}(\Delta)|_a)^2 \\
&\leq12h(\|\textbf{tbot}(\Delta)\|+\|\textbf{ttop}(\Delta)\|)+C_1(|\textbf{tbot}(\Delta)|_a+|\textbf{ttop}(\Delta)|_a)^2 \\
&\leq24h\max(\|\textbf{tbot}(\Delta)\|,\|\textbf{ttop}(\Delta)\|)+C_1(|\textbf{tbot}(\Delta)|_a+|\textbf{ttop}(\Delta)|_a)^2
\end{align*}

\ \textbf{3.} Suppose the step history of $\Delta$ contains a letter $(12)_j$.

By (i), any unreduced two-letter subword of $B$ must be of form (b).

Suppose $B$ contains a subword of the form $(Q_0(1)P_1(1))^{\pm1}$. Then, by the definition of faulty, it must contain a reduced pararevolving subword of the form $(Q_0(1)\dots Q_0(2))^{\pm1}$. But then $B$ satisfies the hypothesis of Step 1, so that the statement follows.

Suppose $B$ contains a subword of the form $(Q_0(L)P_1(L))^{\pm1}$. Then by the definition of faulty, it must also contain a sector of the form $(P_0(L)Q_0(L))^{\pm1}$, so that it contains a reduced pararevolving subword of the form $(P_0(L)\dots P_0(1))^{\pm1}$. Again, $B$ then satisfies the hypothesis of Step 1.

But assuming these to be false, since $B$ must contain a subword of the form $(P_0(1)Q_0(1))^{\pm1}$, $B$ must be hyperfaulty and $\Delta$ exceptional.

So, the step history of $\Delta$ contains no letter of the form $(12)_j$ or, by symmetry, $(21)_j$.

\ \textbf{4.} By Steps 2 and 3, we assume that the step history of $\Delta$ contains a letter of the form $(s)_j^{\pm1}$.

By (ii), every unreduced two-letter subword must be of form (a).

So, $B$ must be a cyclic permutation of
$$P_0(2)P_0(2)^{-1}\dots P_1(L)^{-1}Q_0(L)^{-1}Q_0(L)P_1(L)\dots P_0(2)$$
where gaps correspond to strings of letters following the order of the standard base or its inverse written on a circle.

As $B$ contains a subword of the form $(Q_0(1)P_1(1))^{\pm1}$, then applying Lemma \ref{M step history 2}(a) to the corresponding reduced computation with such a base allows us to assume that the step history is a subword of $(1)_1(s)_1^{-1}(s)_2(1)_2$.

Let $\Delta''$ be the maximal subtrapezium of $\Delta$ with base $P_1(L)^{-1}Q_0(L)^{-1}Q_0(L)P_1(L)$. Further, let $W_0''\to\dots\to W_h''$ be the corresponding reduced computation and $W_0''\to\dots\to W_r''$ be the maximal subcomputation with step history $(1)_1$. Then, Lemma \ref{primitive unreduced} applies to the maximal subcomputations with step history $(1)_j$, so that $|W_r''|_a\leq\dots\leq|W_0''|_a$ and $|W_r''|_a\leq\dots\leq|W_h''|_a$. So, for any maximal $\theta$-band $\pazocal{T}''$ of $\Delta''$, $\|\textbf{tbot}(\pazocal{T}'')\|\leq\max(\|\textbf{tbot}(\Delta'')\|,\|\textbf{ttop}(\Delta'')\|)$, yielding $\text{wt}(\Delta'')\leq3h\max(\|\textbf{tbot}(\Delta'')\|,\|\textbf{ttop}(\Delta'')\|)$.

Let $\Delta'''$ be a sector of $\Delta$ with base of the form $(Q_0(1)P_1(1))^{\pm1}$. Then as above, Lemma \ref{multiply one letter} implies $\text{wt}(\Delta''')\leq3h\max(\|\textbf{tbot}(\Delta''')\|,\|\textbf{ttop}(\Delta''')\|)$ and $r=|\textbf{tbot}(\Delta''')|_a$.

Let $\Delta'$ be a sector of $\Delta$ with base of the form $(P_0(1)Q_0(1))^{\pm1}$. Then for any maximal $\theta$-band $\pazocal{T}'$, an $a$-edge of $\textbf{tbot}(\pazocal{T}')$ is part of a maximal $a$-band with one end on $\textbf{tbot}(\Delta')$, on $\textbf{ttop}(\Delta')$, or on a $(\theta,q)$-cell corresponding to the base letter $Q_0(1)^{\pm1}$ and a rule of step history $(1)_1$. So, $\|\textbf{tbot}(\pazocal{T}')\|\leq\|\textbf{tbot}(\Delta')\|+\|\textbf{ttop}(\Delta')\|+r$. Similarly, the sum of the combinatorial perimeters of the $a$-cells of $\Delta'$ is at most $|\textbf{tbot}(\Delta')|_a+|\textbf{ttop}(\Delta')|_a+r$. Hence, $$\text{wt}(\Delta')\leq3h(\|\textbf{tbot}(\Delta')\|+\|\textbf{ttop}(\Delta')\|+r)+C_1(|\textbf{tbot}(\Delta')|_a+|\textbf{ttop}(\Delta')|_a+r)^2$$
For $\Delta_0'$ any other sector of $\Delta$ not already accounted for, its tape word is fixed, so that $\text{wt}(\Delta_0')\leq3h\max(\|\textbf{tbot}(\Delta_0')\|,\|\textbf{ttop}(\Delta_0')\|)$.

Let $\{\Delta_i\}$ be the partition of $\Delta$ given by the subdiagrams detailed above. Then,
\begin{align*}
\text{wt}(\Delta)&\leq6h\sum(\|\textbf{tbot}(\Delta_i)\|+\|\textbf{ttop}(\Delta_i)\|)+C_1(|\textbf{tbot}(\Delta)|_a+|\textbf{ttop}(\Delta)|_a)^2 \\
&\leq12h(\|\textbf{tbot}(\Delta)\|+\|\textbf{ttop}(\Delta)\|)+C_1(|\textbf{tbot}(\Delta)|_a+|\textbf{ttop}(\Delta)|_a)^2 \\
&\leq24h\max(\|\textbf{tbot}(\Delta)\|+\|\textbf{ttop}(\Delta)\|)+C_1(|\textbf{tbot}(\Delta)|_a+|\textbf{ttop}(\Delta)|_a)^2
\end{align*}

Thus, the statement follows from the parameter choices $C_2>>C_1\geq8$.

\end{proof}

\smallskip


\subsection{Combs and Subcombs} \

Let $\Gamma$ be an $M$-minimal diagram containing a maximal $q$-band $\pazocal{Q}$ such that $\textbf{bot}(\pazocal{Q})$ is a subpath of $\partial\Delta$ and every maximal $\theta$-band of $\Delta$ ends at an edge of $\textbf{bot}(\pazocal{Q})$. Then $\Gamma$ is called a \textit{comb} and $\pazocal{Q}$ its \textit{handle}.

The number of cells in the handle of $\pazocal{Q}$ is the comb's \textit{height} and the maximal length of the bases of the $\theta$-bands its \textit{basic width}.

Note that every $a$-trapezium (or trapezium) may be viewed as a comb with either maximal side $q$-band its handle.

\begin{figure}[H]
\centering
\includegraphics[scale=1]{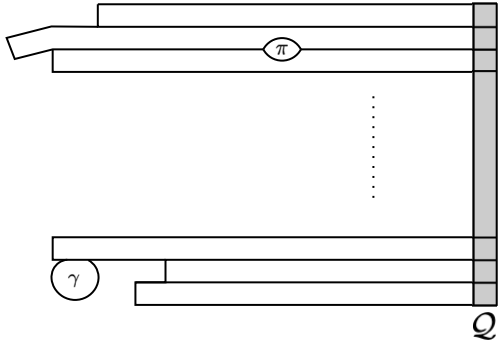}
\caption{Comb with handle $\pazocal{Q}$ containing $a$-cells $\pi$ and $\gamma$}
\end{figure}

\begin{lemma} \label{comb weights}

Let $\Gamma$ be a comb with height $h$, basic width $b$, and $|\partial\Gamma|_a=\a$. Let $\pazocal{T}_1,\dots,\pazocal{T}_h$ be the consecutive maximal $\theta$-bands of $\Gamma$ enumerated from bottom to top. Factor $\partial\Gamma=\textbf{y}\textbf{x}\textbf{z}$, where $\textbf{z}$ is the bottom of the handle of $\Gamma$ and $\textbf{x}$ is the maximal subpath below $\pazocal{T}_1$. Then: 

\begin{enumerate}[label=({\arabic*})]

\item $\text{wt}(\Gamma)\leq c_0bh^2+2\a h+C_1(bh+\a)^2$

\item $|\textbf{bot}(\pazocal{T}_1)|_a\leq|\textbf{y}|_a+4bh$

\end{enumerate}

\end{lemma}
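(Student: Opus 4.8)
The plan is to prove both parts by inducting on the height $h$ of the comb, peeling off the bottom $\theta$-band $\pazocal{T}_1$ at each step. The key structural observation is that removing $\pazocal{T}_1$ (together with any $a$-cells lying below it, between $\pazocal{T}_1$ and the subpath $\textbf{x}$) from $\Gamma$ yields a comb $\Gamma'$ of height $h-1$ with the same handle truncated by one cell, and with basic width at most $b$. So the induction will reduce $\text{wt}(\Gamma)$ to $\text{wt}(\Gamma')$ plus the weight contributed by $\pazocal{T}_1$ and the $a$-cells sitting on top of $\textbf{x}$, and similarly for part (2) it will relate $|\textbf{bot}(\pazocal{T}_1)|_a$ to the geometry one band up.

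\textbf{Part (2) first.} I would prove (2) before (1), since (1) will use the kind of bound (2) provides at every level. The point is that each $a$-edge of $\textbf{bot}(\pazocal{T}_1)$ either lies on $\textbf{y}\textbf{x}\textbf{z}$ below $\pazocal{T}_1$ or is the end of a maximal $a$-band emanating upward. By Lemma \ref{a-band on same a-cell} and (MM2) such a band cannot have both ends on a single $a$-cell and cannot end twice on $a$-cells, so (tracing it) it must terminate on $\textbf{y}$, on the handle $\pazocal{Q}$, or on an $a$-cell whose remaining boundary is controlled. Each maximal $q$-band has $h$ $(\theta,q)$-cells each carrying at most two $a$-edges; there is only one handle, contributing at most $2h$; and crossing the other $\theta$-bands above $\pazocal{T}_1$ there are at most $h-1$ of them, each of base length $\le b$, so the total number of $a$-edges the bands from $\pazocal{T}_1$ can reach is bounded by $|\textbf{y}|_a$ plus a term linear in $bh$. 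A slightly careful accounting (the factor $4bh$ leaves room for both the handle and the ``quasi-rim'' $a$-cells absorbed in the $M$-minimal setting) gives $|\textbf{bot}(\pazocal{T}_1)|_a\le|\textbf{y}|_a+4bh$.

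\textbf{Part (1).} With (2) in hand, enumerate $\pazocal{T}_1,\dots,\pazocal{T}_h$ from bottom to top. By Lemma \ref{lengths}(d), $\pazocal{T}_j$ has at most $|\textbf{bot}(\pazocal{T}_j)|_q+3|\textbf{bot}(\pazocal{T}_j)|_a\le b+3|\textbf{bot}(\pazocal{T}_j)|_a$ cells; more usefully, using the faulty/revolving-base length estimates (the comb analogue of Lemmas \ref{M_4 faulty}, \ref{hyperfaulty}, or the trapezium estimates with the $c_0$ constant) one bounds $|\textbf{bot}(\pazocal{T}_j)|_a$ by $c_0$ times the maximum of $\textbf{y}$-type data plus a contribution from the at most $bh$ $q$-band cells above; summing the band lengths over $j=1,\dots,h$ produces the $c_0bh^2$ term and the $2\a h$ term (the $2\a h$ coming from the $a$-edges that trace directly down to $\textbf{y}$ or $\textbf{x}$, of which there are at most $\a$, counted across $h$ bands). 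Finally the $(\theta,a)$-cells and $a$-cells together: every $a$-cell in $\Gamma$ has its boundary edges marking $a$-bands which by Lemma \ref{a-band on same a-cell} and (MM2) reach $\partial\Gamma$ or the handle, so the sum of combinatorial perimeters of all $a$-cells is at most $|\partial\Gamma|_a+2bh=\a+2bh$, whence by the weight assignment $\text{wt}(\Pi)=C_1\|\partial\Pi\|^2$ for $a$-cells their total weight is at most $C_1(\a+2bh)^2\le C_1(bh+\a)^2$ (adjusting the constant as $C_1$ is chosen large). Adding the three contributions gives (1).

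\textbf{Main obstacle.} The delicate point is the $a$-cell bookkeeping: in an $M$-minimal (rather than genuinely $a$-cell-free) comb, an $a$-band starting on an $a$-cell can be transposed past a $\theta$-band, and one must be sure that tracing maximal $a$-bands from the $\theta$-bands $\pazocal{T}_j$ does not overcount or loop. Controlling this is exactly where (MM1), (MM2), Lemma \ref{a-band on same a-cell}, and Lemma \ref{M_a no annuli 2} (no $\theta$-annuli, hence $\theta$-bands genuinely cross the handle and do not close up) are needed; getting the $a$-band destinations pinned down to ``$\textbf{y}$, the handle, or a controlled $a$-cell'' with the right multiplicities is the crux, and the rest is the summation already sketched. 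I would also need to handle the base case $h=0$ or $h=1$ separately, where $\Gamma$ is essentially the handle together with some $a$-cells and the bound is immediate.
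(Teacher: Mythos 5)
There is a genuine gap, in fact two. For part (2), you correctly name the ingredients (tracing maximal $a$-bands, (MM1)/(MM2), Lemma \ref{a-band on same a-cell}), but you defer exactly the step that carries the proof: an $a$-band starting on $\textbf{bot}(\pazocal{T}_1)$ may end on an $a$-cell sitting between two higher $\theta$-bands, and the edges of that $a$-cell in turn feed new $a$-bands, so the count can cascade upward through the comb. The paper closes this loop with a telescoping inequality extracted from (MM1): for each such $a$-cell $\pi$ at level $i$, at most $\tfrac12\|\partial\pi\|+b_\pi''$ of its edges contribute to the bottom of $\pazocal{T}_{i+1}$ being ``absorbed,'' which yields $\sum_i\b_i'\le\gamma_1+\sum_{i\ge2}\b_i+2bh$ together with $\sum_i\b_i\le\gamma_3+bh$, and only then does $|\textbf{bot}(\pazocal{T}_1)|_a\le|\textbf{y}|_a+4bh$ follow. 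Saying ``a slightly careful accounting gives the bound'' is precisely the missing content; without the (MM1)-based balancing the multiplicities do not close.

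For part (1), your proposed mechanism does not apply. A comb is not an $a$-trapezium: its maximal $\theta$-bands have different bases of varying lengths, end on the handle at different heights, and those bases need not be faulty, revolving, or hyperfaulty, so Lemmas \ref{M_4 faulty}, \ref{hyperfaulty}, and the trapezium estimates with the constant $c_0$ have no corresponding computation to act on; the $c_0$ in the statement is merely a generous constant absorbing a purely combinatorial count. The paper's argument is band-to-band: the sum of the perimeters of all $a$-cells is at most $bh+\a$ (each such $a$-band ends on a $(\theta,q)$-cell or on $\partial\Gamma$), the lengths of consecutive $\theta$-bands satisfy $|n_{i+1}-n_i|\le4b+\a_i'+\a_{i+1}+\b_i'+\b_{i+1}$ by Lemma \ref{lengths}(d), and summing these increments with $\sum(\a_i+\a_i')\le\a$, $\sum(\b_i+\b_i')\le bh+\a$ gives $n_i\le5bh+2\a$, hence $\mathrm{wt}(\Gamma)\le5bh^2+2\a h+C_1(bh+\a)^2$. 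Your alternative of inducting on $h$ by peeling off $\pazocal{T}_1$ also fails to close the quadratic term: after removing $\pazocal{T}_1$ and the $a$-cells below it, the new boundary contains $\textbf{top}(\pazocal{T}_1)$, whose $a$-length can exceed what was removed by far more than $b$, so the inductive bound $C_1\bigl(b(h-1)+\a'\bigr)^2$ need not fit inside $C_1(bh+\a)^2$. The global (non-inductive) bookkeeping is what makes both estimates work.
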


\begin{proof}

(1) Let $n_i$ be the length of $\pazocal{T}_i$.

By Lemma \ref{a-band on same a-cell} and (MM2), every $a$-band starting on an $a$-cell must either end on a $(\theta,q)$-cell or on $\partial\Gamma$. Since every $(\theta,q)$-relation contains at most one $a$-letter from the `special' input sector, the sum of the combinatorial perimeters of all $a$-cells in $\Gamma$ is at most $bh+\a$.

So, $\text{wt}(\Gamma)\leq C_1(bh+\a)^2+\sum\limits_{i=1}^h n_i$.

Let $\a_i$ (respectively $\a_i'$) be the number of (unoriented) $a$-edges of $\textbf{bot}(\pazocal{T}_i)$ (respectively $\textbf{top}(\pazocal{T}_i)$) that are shared  with $\partial\Gamma$. Similarly, let $\b_i$ (respectively $\b_i'$) be the number of $a$-edges of $\textbf{bot}(\pazocal{T}_i)$ (respectively $\textbf{top}(\pazocal{T}_i)$) that are on the boundary of an $a$-cell. Note that $\sum_{i=1}^h(\a_i+\a_i')\leq\a$ and $\sum_{i=1}^h(\b_i+\b_i')\leq bh+\a$.

By the definition of a comb, any cell of $\Gamma$ below $\pazocal{T}_1$ must be an $a$-cell. So, any $a$-edge of $\textbf{bot}(\pazocal{T}_1)$ that is not shared with $\partial\Gamma$ is on the boundary of an $a$-cell below $\pazocal{T}_1$. Hence, $|\textbf{bot}(\pazocal{T}_1)|_a=\a_1+\b_1$. 

Similarly, $|\textbf{top}(\pazocal{T}_h)|_a=\a_h'+\b_h'$.

Lemma \ref{lengths}(d) implies $\a_1+\b_1-b\leq n_1\leq\a_1+\b_1+3b$ and $\a_h'+\b_h'-b\leq n_h\leq \a_h'+\b_h'+3b$.

Suppose an $a$-edge of $\textbf{top}(\pazocal{T}_i)$ is not shared with $\textbf{bot}(\pazocal{T}_{i+1})$. Then either this edge is counted in $\a_i'$ or is on the boundary of an $a$-cell between $\pazocal{T}_i$ and $\pazocal{T}_{i+1}$, so that it is counted in $\b_i'$. Similarly, an $a$-edge of $\textbf{bot}(\pazocal{T}_{i+1})$ not shared with $\textbf{top}(\pazocal{T}_i)$ is either counted in $\a_{i+1}$ or in $\b_{i+1}$.

So, the difference in the number of $a$-edges of $\textbf{top}(\pazocal{T}_i)$ and $\textbf{bot}(\pazocal{T}_{i+1})$ is at most $\a_i'+\a_{i+1}+\b_i'+\b_{i+1}$. Lemma \ref{lengths}(d) then implies that $|n_{i+1}-n_i|\leq4b+\a_i'+\a_{i+1}+\b_i'+\b_{i+1}$ for $1\leq i\leq h-1$.

Hence, for all $1\leq i\leq h$, we have:
\begin{align*}
n_i&\leq3b+4b(i-1)+\a_1+\a_1'+\dots+\a_{i-1}+\a_{i-1}'+\a_i+\b_1+\b_1'+\dots+\b_{i-1}+\b_{i-1}'+\b_i \\
&\leq5bh+2\a
\end{align*}
Thus, $\text{wt}(\Gamma)\leq C_1(bh+\a)^2+5bh^2+2\a h$.


\bigskip

(2) For $i=1,2,3$, define $\textbf{A}_i$ as the subset of the set of (unoriented) $a$-edges of $\textbf{y}$ as follows:

\begin{itemize}

\item $\textbf{A}_1$ is the subset of edges that are on the boundary of an $a$-cell

\item $\textbf{A}_2$ is the subset of edges marking the start of a maximal $a$-band which ends on an edge of $\partial\Gamma$ shared with $\textbf{bot}(\pazocal{T}_1)$

\item $\textbf{A}_3$ is the subset of edges that mark the start of a maximal $a$-band of nonzero length which ends on an $a$-cell

\end{itemize}

Note that these three sets are disjoint, so that $\gamma_1+\gamma_2+\gamma_3\leq|\textbf{y}|_a$ for $\gamma_i=\#\textbf{A}_i$.

Let $\pi$ be an $a$-cell in $\Gamma$ such that some edge of $\partial\pi$ contributes to $\b_i'$. Let $\b_\pi'$ be the number of edges of $\partial\pi$ contributing to $\b_i'$ and $\b_\pi''$ be the number of such edges on the contour of a $(\theta,q)$-cell of $\pazocal{T}_i$. Property (MM1) implies $\b_\pi'\leq\frac{1}{2}\|\partial\pi\|+\b_\pi''$. So,
$$\sum_{i=1}^h\b_i'\leq\sum_\pi\b_\pi'\leq\sum_\pi\left(\frac{1}{2}\|\partial\pi\|+\b_\pi''\right)$$
Note that any edge of $\partial\pi$ not contributing to $\b_\pi'$ is either part of $\partial\Gamma$ or on $\textbf{bot}(\pazocal{T}_{i+1})$, and so contributes to $\gamma_1$ or $\b_{i+1}$, respectively. So, since there are at least $\frac{1}{2}\|\partial\pi\|-\b_\pi''$ such edges, 
$$\gamma_1+\sum_{i=2}^h\b_i\geq\sum_\pi\left(\frac{1}{2}\|\partial\pi\|-\b_\pi''\right)$$
As the contour of any $(\theta,q)$-cell contains at most one $a$-edge corresponding to the `special' input sector, we then have
$$\sum_{i=1}^h\b_i'\leq\gamma_1+\sum_{i=2}^h\b_i+2\sum_\pi\b_\pi''\leq\gamma_1+\sum_{i=2}^h\b_i+2bh$$
Next, let $\textbf{e}$ be an $a$-edge of $\textbf{bot}(\pazocal{T}_1)$ contributing to $\a_1$. Then the maximal $a$-band starting at $\textbf{e}$ ends on $\textbf{y}$, ends on a $(\theta,q)$-cell, or ends on an $a$-cell. Those that end on $\textbf{y}$ correspond to edges of $\textbf{A}_2$ while those that end on  an $a$-cell correspond to $a$-edges contributing to $\b_i'$ for some $i$. So,
$$\a_1\leq\gamma_2+bh+\sum_{i=1}^h\b_i'\leq\gamma_1+\gamma_2+\sum_{i=2}^h\b_i+3bh$$
Finally, let $\textbf{e}$ be an $a$-edge of $\textbf{bot}(\pazocal{T}_i)$ contributing to $\b_i$. By Lemma \ref{a-band on same a-cell} and (MM2), the maximal $a$-band starting on $\textbf{e}$ must end on $\textbf{y}$ or on a $(\theta,q)$-cell. As those that end on $\textbf{y}$ correspond to edges of $\textbf{A}_3$, we have $\sum_{i=1}^h\b_i\leq\gamma_3+bh$. Thus,
$$
|\textbf{bot}(\pazocal{T}_1)|_a=\a_1+\b_1\leq\gamma_1+\gamma_2+\sum_{i=1}^h\b_i+3bh\leq\gamma_1+\gamma_2+\gamma_3+4bh\leq|\textbf{y}|_a+4bh
$$

\end{proof}

A base word $B$ is \textit{tight} if it is of the form $uxvx$ for some letter $x$ and words $u$ and $v$, where: 

\begin{enumerate}[label=({\arabic*})]

\item $xvx$ is revolving, and

\item no letter from $u$ occurs in $xvx$.

\end{enumerate}

Note that any tight base has length at most $K_0=22L+1$, while any base with length at least $K_0$ must have a tight prefix.

A comb $\Delta$ is called \textit{tight} if:

\begin{enumerate}[label=(C{\arabic*})]

\item one of its maximal $\theta$-bands $\pazocal{T}$ has a tight base when read toward the handle, and

\item all maximal $\theta$-bands have tight bases or bases without tight prefixes

\end{enumerate}

\smallskip

If $\Delta$ is an $M$-minimal diagram over $M_\Omega(\textbf{M})$, then a subdiagram $\Gamma$ is a \textit{subcomb} of $\Delta$ if $\Gamma$ is a comb and its handle divides $\Delta$ into two parts, one of which is $\Gamma$.

Let $\Gamma$ be a comb with handle $\pazocal{C}$ and $\pazocal{B}$ be another maximal $q$-band in $\Gamma$. Then $\pazocal{B}$ cuts $\Gamma$ into two parts, where the part not containing $\pazocal{C}$ is a subcomb $\Gamma'$ with handle $\pazocal{B}$. Note that each maximal $\theta$-band $\pazocal{T}$ of $\Gamma$ crossing $\pazocal{B}$ has a subband $\pazocal{T}_0$ connecting $\pazocal{B}$ with $\pazocal{C}$. If $\pazocal{T}_0$ has no $(\theta,q)$-cells, then $\Gamma'$ is called a \textit{derivative subcomb} of $\Gamma$. 

Note that no maximal $\theta$-band of a comb can cross the handles of more than one derivative subcomb.

\begin{lemma} \label{tight subcomb}

\textit{(Compare with Lemma 6.6 of [16] and Lemma 6.10 of [23])} 

Let $\Delta$ be an $M$-minimal diagram such that $|\partial\Delta|_\theta>0$ and every quasi-rim $\theta$-band has base of length at least $K$. 
%
%
%
Then $\Delta$ contains a tight subcomb.

\end{lemma}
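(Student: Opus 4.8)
The statement asserts that an $M$-minimal diagram $\Delta$ with at least one $\theta$-edge on its boundary, all of whose quasi-rim $\theta$-bands have long ($\geq K$) bases, must contain a tight subcomb. The plan is to extract such a subcomb by an iterative descent through derivative subcombs, using the length bound $K\gg K_0$ to guarantee that we eventually land on a comb satisfying conditions (C1) and (C2).

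First I would produce an initial comb. Since $|\partial\Delta|_\theta>0$, there is at least one maximal $\theta$-band; pick a rim $\theta$-band $\pazocal{T}$, which exists because the $\theta$-bands of $\Delta$ can be ordered and the "outermost" one is rim. A rim $\theta$-band is in particular a quasi-rim $\theta$-band, so by hypothesis its base has length $\geq K\geq K_0=22L+1$; hence (by the remark following the definition of tight bases) its base has a tight prefix, so some maximal $q$-band $\pazocal{Q}$ crosses $\pazocal{T}$ at a position realizing that tight prefix, and cutting $\Delta$ along $\pazocal{Q}$ (keeping the side toward the $\partial\Delta$-portion of $\pazocal{T}$) produces a subcomb $\Gamma_0$ with handle $\pazocal{Q}$; this is the base of the induction. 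Then I would set up a descent: given a comb $\Gamma$ (obtained so far as a subcomb of $\Delta$, hence itself $M$-minimal), examine whether it is tight. If it already satisfies (C1) and (C2) we are done. If not, the failure must be that some maximal $\theta$-band of $\Gamma$ has a base with a tight prefix that is a \emph{proper} prefix — equivalently, that tight prefix is realized inside the comb by a $q$-band $\pazocal{B}$ strictly between the offending band's far end and the handle. Pass to the corresponding subcomb $\Gamma'$ with handle $\pazocal{B}$; the key point to verify is that $\Gamma'$ can be taken to be a derivative subcomb of $\Gamma$, so that $\Gamma'$ is genuinely a comb (all its maximal $\theta$-bands end on its handle) and is strictly smaller than $\Gamma$ in the number of cells. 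Iterating and using finiteness of $\Delta$, the process terminates, and the comb at which it terminates satisfies (C1) (its handle was chosen to realize a tight base, using that the relevant $\theta$-band's base is long, hence has a tight prefix by the remark) and (C2) (no maximal $\theta$-band has a \emph{proper} tight prefix, since otherwise the descent would have continued).

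The step I expect to be the main obstacle is the bookkeeping showing that, when we pass from $\Gamma$ to the sub-comb $\Gamma'$ realizing an interior tight prefix, $\Gamma'$ really is a comb — i.e. that every maximal $\theta$-band of $\Gamma'$ ends on the handle $\pazocal{B}$ — and in particular that we can arrange $\Gamma'$ to be a \emph{derivative} subcomb (the subband connecting $\pazocal{B}$ to the old handle $\pazocal{C}$ having no $(\theta,q)$-cells). One has to argue, using condition (2) in the definition of a tight base (no letter of the prefix $u$ occurs in $xvx$) together with the fact that no $\theta$-band crosses the handles of two derivative subcombs, that the $\theta$-bands of $\Gamma$ restrict correctly and that crossing $\pazocal{B}$ they do not re-enter with $(\theta,q)$-cells; this is where the hypothesis that all quasi-rim $\theta$-bands have bases of length $\geq K$ (with $K$ chosen after $K_0$) is used to keep every intermediate comb in the scope of the argument. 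The remaining steps — monotonicity of cell count under passing to a proper subcomb, and termination by finiteness — are routine, as is the final check that the terminal comb meets (C1)–(C2). This mirrors the structure of Lemma 6.6 of [16] and Lemma 6.10 of [23], so I would follow those proofs, adapting them to account for the possible presence of $a$-cells (handled by $M$-minimality, which is inherited by subdiagrams, so each $\Gamma$ in the descent is automatically $M$-minimal).
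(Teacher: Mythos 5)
There is a genuine gap at the very first step: you assert that cutting $\Delta$ along the maximal $q$-band $\pazocal{Q}$ realizing a tight prefix of the (quasi-)rim $\theta$-band's base ``produces a subcomb $\Gamma_0$ with handle $\pazocal{Q}$''. A subcomb is by definition a comb, i.e.\ \emph{every} maximal $\theta$-band of the cut-off subdiagram must end on $\pazocal{Q}$, and nothing in your construction guarantees this: a maximal $\theta$-band of that subdiagram may have both ends on $\partial\Delta$ and never cross $\pazocal{Q}$. Your later descent only treats the case where the current diagram already is a comb whose tightness fails via a proper tight prefix, so the non-comb case is never handled; yet establishing comb-ness is exactly the hard part of this lemma. (A minor related slip: with $a$-cells present, the ``outermost'' $\theta$-band need only be quasi-rim, not rim, but that is harmless since the hypothesis covers quasi-rim bands.)

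The paper closes this gap with an extremal argument that your plan does not contain. Because every quasi-rim $\theta$-band has base of length at least $K>2K_0$, its base has \emph{two disjoint} ends of length $K_0$, giving a tight prefix $B_1'$ and a tight (inverted) suffix $B_2'$, hence \emph{two} candidate pairs $(\pazocal{Q},\Gamma)$ where $\Gamma$ is bounded by a maximal $q$-band $\pazocal{Q}$ and contains a $\theta$-band whose base is tight read toward $\pazocal{Q}$. Among all such pairs one chooses $(\pazocal{Q},\Gamma)$ with $\mathrm{wt}(\Gamma)$ minimal. If some $\theta$-band of $\Gamma$ fails to cross $\pazocal{Q}$, there is a quasi-rim $\theta$-band inside $\Gamma$ not crossing $\pazocal{Q}$; applying the two-ended construction to it yields two disjoint new candidates, at least one of which does not contain $\pazocal{Q}$ and therefore has strictly smaller weight, contradicting minimality. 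This is where $K>2K_0$ is essential (a single tight prefix, as in your argument, does not let you control on which side of the new $q$-band the old handle lies, so no progress is guaranteed). The same minimality then kills any proper tight prefix, giving (C2); your descent inside a comb is essentially a reformulation of this last step, but without the weight-minimal choice and the two-sided trick the argument does not get off the ground.
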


\begin{proof}

As maximal $\theta$-bands cannot cross, there exists a quasi-rim $\theta$-band $\pazocal{T}_0$ in $\Delta$. Taking $K>2K_0$, the base of $\pazocal{T}_0$ has disjoint prefix and suffix, $B_1$ and $B_2$, of lengths $K_0$. As a result, $B_1$ has a prefix $B_1'$ which is tight, while $B_2$ has a suffix $B_2'$ such that $(B_2')^{-1}$ is tight.

Let $\pi$ be the $(\theta,q)$-cell of $\pazocal{T}_0$ corresponding to the last base letter of $B_1'$ and $\pazocal{Q}'$ be the maximal $q$-band of $\Delta$ containing $\pi$. Let $\Gamma'$ be the subdiagram of $\Delta$ bounded by $\pazocal{Q}'$ containing the subband of $\pazocal{T}_0$ with base $B_1'$. 

Note that we may do the same with $B_2'$ to construct a subdiagram $\Gamma''$.

Hence, there exists a maximal $q$-band $\pazocal{Q}$ such that for one of the subdiagrams $\Gamma$ of $\Delta$ bounded by $\pazocal{Q}$, there exists a maximal $\theta$-band $\pazocal{T}$ whose base is tight when read toward $\pazocal{Q}$. Choose such a $\pazocal{Q}$ and $\Gamma$ such that $\text{wt}(\Gamma)$ is minimal.

\begin{figure}[H]
\centering
\includegraphics[scale=1.4]{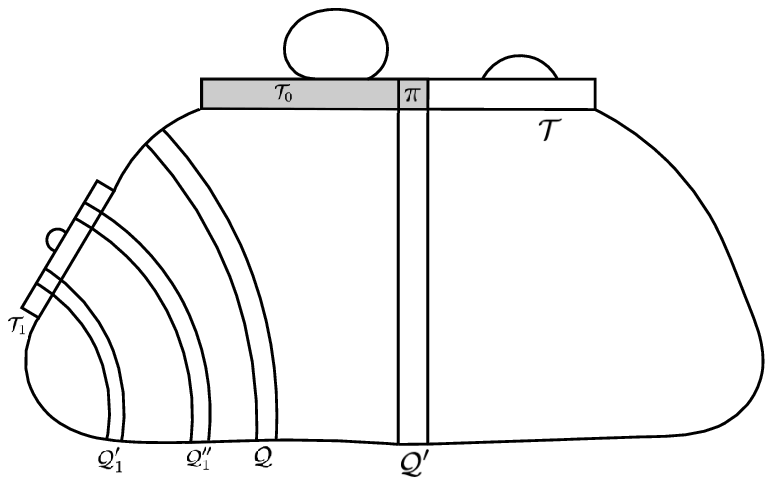}
\caption{Lemma \ref{tight subcomb}}
\end{figure}

Suppose there exists a $\theta$-band in $\Gamma$ which does not cross $\pazocal{Q}$. Then, there exists a quasi-rim $\theta$-band $\pazocal{T}_1$ not crossing $\pazocal{Q}$. As the base of $\pazocal{T}_1$ has length at least $K$, we may repeat the argument above. This produces disjoint subdiagrams $\Gamma_1'$ and $\Gamma_1''$ of $\Gamma$ bounded by the maximal $q$-bands $\pazocal{Q}_1'$ and $\pazocal{Q}_1''$, respectively, such that the subband of $\pazocal{T}_1$ which is a maximal $\theta$-band of $\Gamma_1'$ (resp $\Gamma_1''$) has tight base when read toward $\pazocal{Q}_1'$ (resp $\pazocal{Q}_1''$). One of these subdiagrams, say $\Gamma_1'$, does not contain $\pazocal{Q}$, and so is a subcomb of $\Delta$. But then $\text{wt}(\Gamma_1')<\text{wt}(\Gamma)$, so that $\pazocal{Q}_1'$ and $\Gamma_1'$ contradict the choice of $\pazocal{Q}$ and $\Gamma$.

Thus, $\Gamma$ is a comb with handle $\pazocal{Q}$ satisfying condition (C1).

Now suppose there exists a maximal $\theta$-band $\pazocal{T}'$ in $\Gamma$ with a tight proper prefix $B'$. Let $\pi'$ be the $(\theta,q)$-cell of $\pazocal{T}'$ corresponding to the last letter of $B'$ and $\pazocal{Q}'$ be the maximal $q$-band in $\Gamma$ containing $\pi'$. Then for $\Gamma'$ the subcomb of $\Gamma$ with handle $\pazocal{Q}'$, $\pazocal{Q}'$ and $\Gamma'$ contradict the choice of $\pazocal{Q}$ and $\Gamma$. Hence, $\Gamma$ must be a tight subcomb.

\end{proof}

\smallskip


\subsection{G-weight} \

The goal of this section is to bound the size of an $M$-minimal diagram over $M_\Omega(\textbf{M})$ in terms of its perimeter. However, this bound will not be given in terms of the area or weight of the diagram. Instead, we give the bound in terms of the artifical concept of \textit{$G$-weight} (adapted from the concept of $G$-area in [16] and [23]).

Let $\Gamma$ be an $a$-trapezium with base $B$ and history $H$. Suppose $B$ is of the form $(P_0(1)Q_0(1))^{\pm1}$ or $Q_0(1)^{-1}Q_0(1)$, the step history of $\Gamma$ is $(1)_1$, and $H$ has a factorization $H_1H_2^\ell H_3$ for some $\ell\geq0$. Then $\Gamma$ is called an \textit{impeding} $a$-trapezium.

In this case, let $\eta=\|H_1\|+n\|H_2\|+\|H_3\|$ and $h=\|H\|$. Then we define the $G$-weight of $\Gamma$, denoted $\text{wt}_G(\Gamma)$, to be the minimum of half its weight and:
$$3h\max(\|\textbf{tbot}(\Gamma)\|,\|\textbf{ttop}(\Gamma)\|)+3C_1h\eta+C_1(|\textbf{tbot}(\Gamma)|_a+|\textbf{ttop}(\Gamma)|_a+2\eta)^2$$
Similarly, if $\Gamma$ is a big $a$-trapezium with height $h$ then its $G$-weight is defined to be the minimum of half its weight and:
$$c_5\max(\|\textbf{ttop}(\Gamma)\|,\|\textbf{tbot}(\Gamma)\|)+4C_1(\|\textbf{tbot}(\Gamma)\|+\|\textbf{ttop}(\Gamma)\|)^2$$

Finally, any single cell in $\Gamma$ is assigned $G$-weight equal to its weight.

For a reduced diagram $\Delta$ over $G_\Omega(\textbf{M})$, consider a family of subdiagrams $\textbf{P}$ such that:

\begin{itemize}

\item if $P\in\textbf{P}$, then $P$ is a single cell, a big $a$-trapezium, or an impeding trapezium,

\item every cell of $\Delta$ belongs to an element of $\textbf{P}$, and

\item if there exist $P_1,P_2\in\textbf{P}$ with nonempty intersection, then both $P_1$ and $P_2$ are $a$-trapezia and this intersection is a $q$-band.

\end{itemize}

In this case, $\textbf{P}$ is called a \textit{covering} of $\Delta$. The $G$-weight of $\textbf{P}$, $\text{wt}_G(\textbf{P})$, is defined to be the sum of the $G$-weights of its elements. 

Note that any reduced diagram over $G_\Omega(\textbf{M})$ has a covering, namely the one given by its cells. So, we may define the $G$-weight of $\Delta$, $\text{wt}_G(\Delta)$, as the minimum of the $G$-weights of its coverings.

Further, since the $G$-weight of a big or impeding $a$-trapezium does not exceed half of its weight and any cell belongs to at most two elements of a covering, the inequality $\text{wt}_G(\Delta)\leq\text{wt}(\Delta)$ holds for all $\Delta$.

\begin{lemma} \label{G-weight subdiagrams}

Let $\Delta$ be a reduced diagram over $G_\Omega(\textbf{M})$ and suppose every cell $\pi$ of $\Delta$ belongs in one of the subdiagrams $\Delta_1,\dots,\Delta_m$, where any nonempty intersection $\Delta_i\cap\Delta_j$ is a $q$-band. Then $\text{wt}_G(\Delta)\leq\sum_{i=1}^m\text{wt}_G(\Delta_i)$.

\end{lemma}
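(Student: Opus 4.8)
The statement is a subadditivity property of $G$-weight with respect to a decomposition of $\Delta$ into subdiagrams whose pairwise intersections are $q$-bands. The natural approach is to take, for each $i$, a covering $\textbf{P}_i$ of $\Delta_i$ whose $G$-weight realizes (or approximates within $\eps$) $\text{wt}_G(\Delta_i)$, and to assemble these into a covering $\textbf{P}$ of the whole diagram $\Delta$. The only subtlety is what happens to cells lying on the shared $q$-bands $\Delta_i\cap\Delta_j$: such a cell may appear in both $\textbf{P}_i$ and $\textbf{P}_j$, and we must ensure the assembled family still satisfies the three conditions in the definition of a covering.

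First I would recall that each $\Delta_i\cap\Delta_j$ is a $q$-band, so its cells are $(\theta,q)$-cells; in particular, no such cell is an $a$-cell, a disk, or a $(\theta,a)$-cell, and hence none of these shared cells can be an interior cell of a big $a$-trapezium or of an impeding $a$-trapezium (those elements of a covering contain $(\theta,q)$-cells only along their bounding $q$-bands, where they are allowed to meet another $a$-trapezium). For a $(\theta,q)$-cell $\pi$ on a shared $q$-band, within the covering $\textbf{P}_i$ either $\pi$ is itself a singleton element of $\textbf{P}_i$, or $\pi$ lies on a bounding $q$-band of some $a$-trapezium $P\in\textbf{P}_i$. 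I would then define $\textbf{P}$ to be the union $\bigcup_i\textbf{P}_i$, after the following normalization: if a shared cell $\pi$ occurs as a singleton in one $\textbf{P}_i$ and inside an $a$-trapezium $P$ of another $\textbf{P}_j$, discard the singleton; if $\pi$ occurs as a singleton in more than one $\textbf{P}_i$, keep just one copy. Because all the $\Delta_i$ together cover every cell of $\Delta$ and overlap only along $q$-bands, every cell of $\Delta$ then belongs to some element of $\textbf{P}$, and any two elements of $\textbf{P}$ with nonempty intersection are either both $a$-trapezia meeting in a $q$-band (this is inherited from within a single $\textbf{P}_i$ or arises because two $a$-trapezia from different $\textbf{P}_i$'s share one of the bands $\Delta_i\cap\Delta_j$) — so $\textbf{P}$ is a genuine covering of $\Delta$.

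Next I would bound $\text{wt}_G(\textbf{P})$. Since $\textbf{P}\subseteq\bigcup_i\textbf{P}_i$ and the normalization only removes elements, $\text{wt}_G(\textbf{P})\le\sum_i\text{wt}_G(\textbf{P}_i)$; here I use that the $G$-weight of a covering is the sum of the $G$-weights of its elements and that every element has nonnegative $G$-weight. Choosing each $\textbf{P}_i$ to satisfy $\text{wt}_G(\textbf{P}_i)\le\text{wt}_G(\Delta_i)+\eps/m$ and letting $\eps\to0$ gives $\text{wt}_G(\Delta)\le\text{wt}_G(\textbf{P})\le\sum_{i=1}^m\text{wt}_G(\Delta_i)$, which is the claim. (In fact, since $G$-weights of coverings are finite sums of quantities drawn from a fixed formula, one can select coverings attaining the minimum outright, dispensing with the $\eps$.)

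\textbf{Main obstacle.} The genuinely delicate point is verifying that the assembled family $\textbf{P}$ still satisfies condition three of the definition of a covering — namely that whenever two of its elements intersect, they are both $a$-trapezia meeting in a $q$-band. The danger is a shared $q$-band $\Delta_i\cap\Delta_j$ where, on the $\Delta_i$ side, its cells are swallowed into a big or impeding $a$-trapezium $P$ while on the $\Delta_j$ side they are singletons (or swallowed into a different type of region). I expect this to be handled by the structural observation above: a big or impeding $a$-trapezium has a distinguished pair of bounding $q$-bands, and the band $\Delta_i\cap\Delta_j$ can only be one of those, so the region on the $\Delta_j$ side touching it is also an $a$-trapezium (or the band is entirely exterior to every non-singleton region, in which case its cells are singletons on both sides and we simply deduplicate). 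Making this case analysis airtight — using that the two maximal side $q$-bands of an $a$-trapezium are precisely where it is permitted to abut another $a$-trapezium, and that the decomposition hypothesis forces the $\Delta_i$ to meet only along such bands — is where the real work lies.
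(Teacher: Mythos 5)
Your proposal is correct and follows essentially the same route as the paper: take coverings of each $\Delta_i$ realizing their $G$-weights, form the union as a covering of $\Delta$, and bound its $G$-weight by the sum. The paper's proof is just a terser version of this (it notes that the minimum is attained, so no $\eps$ is needed); your extra care about deduplicating cells on the shared $q$-bands only removes elements and thus does not affect the bound.
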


\begin{proof}

Let $\textbf{P}_1,\dots,\textbf{P}_m$ be coverings of $\Delta_1,\dots,\Delta_m$, respectively, so that the $G$-weight of $\textbf{P}_i$ is equal to that of $\Delta_i$. Then $\textbf{P}=\textbf{P}_1\cup\dots\cup\textbf{P}_m$ is a covering of $\Delta$ with $\text{wt}_G(\textbf{P})\leq\sum_{i=1}^m\text{wt}_G(\textbf{P}_i)$, implying the statement.

\end{proof}

In particular, note that Lemma \ref{G-weight subdiagrams} implies that if $\{\Delta_i\}$ is a partition of the $a$-trapezium $\Delta$, then $\text{wt}_G(\Delta)\leq\sum\text{wt}_G(\Delta_i)$.

\begin{lemma} \label{revolving G-weight}

Suppose $\Delta$ is an $a$-trapezium with revolving base $B$ and history $H$. Then for $h=\|H\|$,
$$\text{wt}_G(\Delta)\leq C_2h\max(\|\textbf{tbot}(\Delta)\|,\|\textbf{ttop}(\Delta)\|)+C_2(\|\textbf{tbot}(\Delta)\|+\|\textbf{ttop}(\Delta)\|)^2$$

\end{lemma}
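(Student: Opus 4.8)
\textbf{Proof plan for Lemma \ref{revolving G-weight}.}

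The plan is to distinguish the cases already set up in Lemma \ref{a-trapezia revolving} and in the definition of $G$-weight, and to observe that in each case either the ordinary weight bound suffices (because $\text{wt}_G(\Delta)\le\text{wt}(\Delta)$) or else a controlled/standard subtrapezium is present, which lets us peel off a big $a$-trapezium whose $G$-weight is small by fiat. First I would dispose of the case where $\Delta$ is neither big nor exceptional: Lemma \ref{a-trapezia revolving} gives
$$\text{wt}(\Delta)\le 3C_1h\max(\|\textbf{tbot}(\Delta)\|,\|\textbf{ttop}(\Delta)\|)+C_2(\|\textbf{tbot}(\Delta)\|+\|\textbf{ttop}(\Delta)\|)^2,$$
and since $\text{wt}_G(\Delta)\le\text{wt}(\Delta)$, the parameter choice $C_2\gg C_1$ yields the claim directly. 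Next, if $\Delta$ is exceptional, its base is hyperfaulty (in particular a cyclic shift of one of the two listed words), and I would invoke Lemma \ref{a-trapezia hyperfaulty} combined with the decomposition of $H$ furnished by Lemma \ref{unreduced base} applied to the unique $(P_0(1)Q_0(1))^{\pm1}$- or $Q_0(1)^{-1}Q_0(1)$-sector: this exhibits that sector as an impeding $a$-trapezium and the remaining sectors as genuine trapezia over a reduced base, giving a partition $\{\Delta_i\}$ of $\Delta$ whose pieces are impeding $a$-trapezia or single-cell-covered trapezia. Applying Lemma \ref{G-weight subdiagrams} and the definition of the $G$-weight of an impeding $a$-trapezium (which is bounded by $3h\max(\cdots)+3C_1h\eta+C_1(\cdots+2\eta)^2$ with $\eta\le n\cdot h$, hence absorbed by $C_2h\max(\cdots)+C_2(\cdots)^2$ for $C_2\gg C_1\gg n$) closes this case, using also $\sum_i\|\textbf{tbot}(\Delta_i)\|\le 2\|\textbf{tbot}(\Delta)\|$ and the analogous inequality for tops.

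The remaining case is $\Delta$ big. Here the $G$-weight of $\Delta$ is, by definition, at most
$$c_5\max(\|\textbf{ttop}(\Delta)\|,\|\textbf{tbot}(\Delta)\|)+4C_1(\|\textbf{tbot}(\Delta)\|+\|\textbf{ttop}(\Delta)\|)^2,$$
and since the covering $\textbf{P}=\{\Delta\}$ is admissible (a big $a$-trapezium is an allowed element of a covering), this already beats the desired bound once $C_2\gg c_5$ and $C_2\gg C_1$. So the big case is essentially immediate from the definitions. In other words, the whole proof is a three-way split: ``not big, not exceptional'' reduces to Lemma \ref{a-trapezia revolving}; ``exceptional'' reduces to Lemma \ref{a-trapezia hyperfaulty} plus a partition into impeding $a$-trapezia and trapezia; ``big'' is the definition of $G$-weight.

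The main obstacle I anticipate is the exceptional case, specifically verifying that the impeding $a$-trapezium really does arise there: one must check that the step history of the relevant $Q_0(1)$-sector is indeed $(1)_1$ (this follows from Lemma \ref{a-cells sector} together with observations (i),(ii) after it, ruling out $\theta(12)_j^{\pm1}$, $\theta(s)_1^{\pm1}$, and connecting rules), that its history factors as $H_1H_2^\ell H_3$ (this is exactly the conclusion of Lemma \ref{unreduced base}, whose hypotheses — a reduced computation with base $Q_iQ_i^{-1}$ or $Q_i^{-1}Q_i$ and each rule multiplying from one side — are met by the sector's base $Q_0(1)Q_0(1)^{-1}$ or $Q_0(1)^{-1}Q_0(1)$ under the step-$(1)_1$ rules), and that $\eta=\|H_1\|+n\|H_2\|+\|H_3\|$ is controlled by $n\cdot h$ so that the $G$-weight of an impeding trapezium is absorbed. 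Care is also needed that the side-length bookkeeping for the partition only doubles $\|\textbf{tbot}\|$ and $\|\textbf{ttop}\|$, which is the content of the remarks preceding Lemma \ref{revolving trapezium}. None of these steps is deep, but assembling them correctly with the right parameter inequalities ($C_2\gg C_1\gg \delta^{-1}\gg\cdots\gg c_5\gg\cdots\gg n$) is where the attention must go.
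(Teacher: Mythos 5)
Your three-way split (not big/not exceptional, exceptional, big) is the same skeleton as the paper's proof, and the first and third cases are handled identically (Lemma \ref{a-trapezia revolving} plus $\text{wt}_G\le\text{wt}$, resp.\ the definitional $G$-weight of a big $a$-trapezium with the covering $\{\Delta\}$). But the exceptional case is the entire content of the paper's argument, and there your outline has a genuine quantitative gap. You claim that $\eta=\|H_1\|+n\|H_2\|+\|H_3\|$ is ``controlled by $n\cdot h$'' and that this suffices to absorb the impeding $G$-weight. It does not: for an impeding $a$-trapezium the height $h$ is in general \emph{not} bounded in terms of $\|\textbf{tbot}(\Delta)\|$ and $\|\textbf{ttop}(\Delta)\|$ (the power $H_2^\ell$ can be arbitrarily long while the boundary stays fixed -- that is exactly why impeding trapezia are assigned a reduced $G$-weight in the first place), so the terms $3C_1h\eta\le 3C_1nh^2$ and $C_1(|\textbf{tbot}|_a+|\textbf{ttop}|_a+2\eta)^2\ge C_1n^2\|H_2\|^2\ell^2$-type contributions are quadratic in $h$ and cannot be dominated by $C_2h\max(\|\textbf{tbot}(\Delta)\|,\|\textbf{ttop}(\Delta)\|)+C_2(\|\textbf{tbot}(\Delta)\|+\|\textbf{ttop}(\Delta)\|)^2$. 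What is needed, and what the paper extracts from the quantitative part of Lemma \ref{unreduced base} applied to the adjacent $Q_0(1)Q_0(1)^{-1}$-sector, is $\|H_1\|\le\tfrac12|W_0'|_a$, $\|H_3\|\le\tfrac12|W_h'|_a$, $\|H_2\|\le\min(|W_0'|_a,|W_h'|_a)$, hence $\eta\le\tfrac{n+1}{2}(|W_0'|_a+|W_h'|_a)\le c_0(|\textbf{tbot}(\Delta)|_a+|\textbf{ttop}(\Delta)|_a)$, i.e.\ $\eta$ is bounded by the boundary $a$-length \emph{independently of} $h$. (Note also that citing Lemma \ref{a-trapezia hyperfaulty} buys you nothing here: its conclusion is the alternative ``exceptional or bounded'', so it is vacuous precisely in the case you invoke it.)

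There is a second gap in the same case: you assume the step history of the relevant sector is $(1)_1$, deduced from Lemma \ref{a-cells sector} and observations (i),(ii). That is correct for the short exceptional base $Q_0(1)Q_0(1)^{-1}Q_0(1)$, but for the long exceptional base (the cyclic permutation through $\{t(1)\}$ and the $P_1(L)^{-1}P_1(L)$ pair) observations (i),(ii) only exclude $\theta(s)_j^{\pm1}$ and the copy of the connecting rule of $\textbf{M}_4(3^-)$; the step history may be any subword of $(3)_1(32)_1(2)_1(21)_1(1)_1(12)_1(2)_1(23)_1(3)_1$ containing $(1)_1$. The paper's proof devotes its Step 2 to exactly this: it isolates the maximal sub-$a$-trapezium $\Delta_1$ with step history $(1)_1$, uses (MM1) to show no $a$-cell can sit between the top $\theta$-band of $\Delta_1$ and the adjacent $\theta(12)_1$-band, and then uses Lemma \ref{M_3 step history}(b), Lemma \ref{multiply one letter} and Lemma \ref{primitive unreduced} to show the configuration lengths in the flanking trapezia are monotone toward $\partial\Delta$, so that $\|\textbf{tbot}(\Delta_1)\|\le\|\textbf{tbot}(\Delta)\|$, $\|\textbf{ttop}(\Delta_1)\|\le\|\textbf{ttop}(\Delta)\|$ and the flanking pieces have weight at most $3h\|\textbf{tbot}(\Delta)\|$, resp.\ $3h\|\textbf{ttop}(\Delta)\|$, before applying the $(1)_1$-case to $\Delta_1$. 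Without this reduction (and with the corrected bound on $\eta$) your estimate does not close; with both repairs your outline coincides with the paper's proof.
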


\begin{proof}

By Lemma \ref{a-trapezia revolving} and the assignment of $G$-weight to big trapezia, we may assume that $\Delta$ is exceptional.

\ \textbf{1.} Suppose the step history of $\Delta$ is $(1)_1$.

Let $\Delta'$ be the $Q_0(1)Q_0(1)^{-1}$-sector of $\Delta$ and let $\pazocal{C}':W_0'\to\dots\to W_h'$ be the reduced computation corresponding to $\Delta'$. Lemma \ref{unreduced base} implies that $|W_j'|_a\leq\max(|W_0'|_a,|W_h'|_a)$ for all $0\leq j\leq h$, so that $\text{wt}_G(\Delta')\leq\text{wt}(\Delta')\leq3h\max(\|\textbf{tbot}(\Delta')\|,\|\textbf{ttop}(\Delta')\|)$.

What's more, Lemma \ref{unreduced base} implies that the history $H$ of $\pazocal{C}'$ can be factored as $H_1H_2^\ell H_3$ for some $\ell\geq0$ with $\|H_1\|\leq\frac{1}{2}|W_0'|_a$, $\|H_3\|\leq\frac{1}{2}|W_h'|_a$, and $\|H_2\|\leq\min(|W_0'|_a,|W_h'|_a)$.

So, for $\Delta''$ any sector of $\Delta$ corresponding to the `special' input sector, $\Delta''$ is an impeding $a$-trapezium with $\eta\leq\frac{n+1}{2}(|W_0'|_a+|W_h'|_a)\leq c_0(|\textbf{tbot}(\Delta)|_a+|\textbf{ttop}(\Delta)|_a)$. 

Setting $S=|\textbf{tbot}(\Delta)|_a+|\textbf{ttop}(\Delta)|_a$, this implies:
$$\text{wt}_G(\Delta'')\leq 3h\max(\|\textbf{tbot}(\Delta'')\|,\|\textbf{ttop}(\Delta'')\|)+3c_0C_1hS+C_1(2c_0+1)^2S^2$$
Every sector $\Delta'''$ not of these forms is a trapezium whose corresponding computation has fixed $a$-length, so that $\text{wt}(\Delta''')\leq3h\max(\|\textbf{tbot}(\Delta''')\|,\|\textbf{ttop}(\Delta''')\|)$.

Let $\{\Delta_i\}$ be the partition of $\Delta$ given by its sectors. Then $\text{wt}_G(\Delta)\leq\sum\text{wt}_G(\Delta_i)$ by Lemma \ref{G-weight subdiagrams}. Recall that $\sum\max(\|\textbf{tbot}(\Delta_i)\|,\|\textbf{ttop}(\Delta_i)\|)\leq4\max(\|\textbf{tbot}(\Delta)\|,\|\textbf{ttop}(\Delta)\|)$.

Since at most two sectors correspond to the `special' input sector, the parameter choices $C_2>>C_1>>c_0$ yield:
\begin{align*}
\text{wt}_G(\Delta)&\leq12h\max(\|\textbf{tbot}(\Delta)\|,\|\textbf{ttop}(\Delta)\|)+6c_0C_1hS+2C_1(3c_0)^2S^2 \\
&\leq12(c_0C_1+1)h\max(\|\textbf{tbot}(\Delta)\|,\|\textbf{ttop}(\Delta)\|)+18c_0^2C_1(|\textbf{tbot}(\Delta)|_a+|\textbf{ttop}(\Delta)|_a)^2  \\
&\leq C_2h\max(\|\textbf{tbot}(\Delta)\|,\|\textbf{ttop}(\Delta)\|)+C_2(|\textbf{tbot}(\Delta)|_a+|\textbf{ttop}(\Delta)|_a)^2
\end{align*}
\ \textbf{2.} Thus, we may assume that the step history of $\Delta$ is not $(1)_1$. 

By (i) and (ii), we may then assume that the base $B$ of $\Delta$ is a cyclic permutation of
$$P_0(1)Q_0(1)Q_0(1)^{-1}P_0(1)^{-1}\{t(1)\}^{-1}Q_4(L)^{-1}\dots P_1(L)^{-1}P_1(L)\dots Q_4(L)\{t(1)\}P_0(1)$$
As a result, $H$ cannot contain a letter of the form $\theta(s)_j^{\pm1}$ or the copy of a connecting rule of $\textbf{M}_4(3^-)$. So, the step history of $\Delta$ must contain the letter $(12)_1$ or $(21)_1$.

Let $\Delta'$ be the $R_2(L)Q_3(L)$-sector of $\Delta$. Lemma \ref{a-cells sector} then implies that $\Delta'$ is a trapezium, so that Lemma \ref{trapezia are computations} gives a corresponding reduced computation. As a result, Lemma \ref{M_3 step history}(b) implies that the step history of $\Delta$ has no subword of the form $(12)_1(2)_1(21)_1$.

Similarly, as $B$ contains a subword $P_1(L)Q_1(L)$, Lemma \ref{multiply one letter} implies that the step history of $\Delta$ has no subword of the form $(23)_1(3)_1(32)_1$.

Hence, the step history of $\Delta$ is a subword of
$$(3)_1(32)_1(2)_1(21)_1(1)_1(12)_1(2)_1(23)_1(3)_1$$
containing the letter $(1)_1$.

Let $\Delta_1$ be the maximal subdiagram of $\Delta$ which is an $a$-trapezium with step history $(1)_1$.

Suppose $\textbf{top}(\Delta_1)$ does not coincide with $\textbf{top}(\Delta)$.

Let $\pazocal{T}_1$ be the maximal $\theta$-band of $\Delta_1$ such that $\textbf{top}(\pazocal{T}_1)=\textbf{top}(\Delta_1)$. Then, there exists a maximal $\theta$-band $\pazocal{T}_1'$ of $\Delta$ corresponding to the rule $\theta(12)_1$ and such that some edges of $\textbf{bot}(\pazocal{T}_1')$ coincide with those of $\textbf{top}(\pazocal{T}_1)$.

Suppose there is an $a$-cell $\pi$ in $\Delta$ between $\pazocal{T}_1$ and $\pazocal{T}_1'$. As $\theta(12)_1$ locks the `special' input sector, no $a$-band starting on $\partial\pi$ can cross $\pazocal{T}_1'$ or end on a $(\theta,q)$-cell of $\pazocal{T}_1'$. Lemma \ref{a-cells sector} implies that $\pi$ must belong to a $(P_0(1)Q_0(1))^{\pm1}$ sector of $\Delta$, so that at most one $a$-band starting on $\partial\pi$ can end on a $(\theta,q)$-cell of $\pazocal{T}_1$. Taking $n\geq3$, more than half of the $a$-bands starting on $\partial\pi$ must cross $\pazocal{T}_1$. But then $\pi$ and $\pazocal{T}_1$ contradict property (MM1). So, $\textbf{bot}(\pazocal{T}_1')=\textbf{top}(\pazocal{T}_1)$.

Let $\Delta_1'$ be the maximal subdiagram of $\Delta$ which is an $a$-trapezium with $\textbf{bot}(\Delta_1')=\textbf{bot}(\pazocal{T}_1')$. Lemma \ref{a-cells sector} then implies that $\Delta_1'$ is a trapezium, so that Lemma \ref{trapezia are computations} yields a corresponding reduced computation $\pazocal{C}:W_0\to\dots\to W_t$ with base $B$ and step history a prefix of $(12)_1(2)_1(23)_1(3)_1$. 

We now prove that $|W_0|_a\leq\dots\leq|W_t|_a$. Assuming toward contradiction, the step history of $\pazocal{C}$ cannot be $(12)_1$. Let $\pazocal{C}_2:W_0\to\dots\to W_r$ be the maximal subcomputation with step history $(12)_1(2)_1$ and $\pazocal{C}_2':W_0'\to\dots\to W_r'$ be the restriction to the subword 
$$Q_3(L)^{-1}R_2(L)^{-1}\dots Q_1(L)^{-1}P_1(L)^{-1}P_1(L)Q_1(L)\dots R_2(L)Q_3(L)$$
of $B$. As every rule of $\pazocal{C}_2'$ locks the $Q_1(L)R_1(L)$-, $R_1(L)Q_2(L)$-, and $Q_2(L)R_2(L)$-sectors, we may view the subwords of the form $(Q_1(L)R_1(L)Q_2(L)R_2(L))^{\pm1}$ as a single state letter. With this view, we may apply Lemma \ref{primitive unreduced} to $\pazocal{C}_2'$, so that $|W_0'|_a\leq\dots\leq|W_r'|_a$. As all other sectors have fixed tape word throughout $\pazocal{C}_2$, this implies $|W_0|_a\leq\dots\leq|W_r|_a$.

So, we may assume that $t>r$. As a result, there exists a subcomputation $\pazocal{C}_3:W_r\to\dots\to W_t$ with step history $(23)_1(3)_1$. Letting $\pazocal{C}_3':W_r'\to\dots\to W_t'$ be the restriction of $\pazocal{C}_3$ to the subword $Q_1(L)^{-1}P_1(L)^{-1}P_1(L)Q_1(L)$, Lemma \ref{primitive unreduced} implies $|W_r'|_a\leq\dots\leq|W_t'|_a$. As above, all other sectors have fixed tape word throughout $\pazocal{C}_3$, so that $|W_r|_a\leq\dots\leq|W_t|_a$.

As a result, for any maximal $\theta$-band $\pazocal{T}$ of $\Delta_1'$, $\|\textbf{tbot}(\pazocal{T})\|\leq\|W_t\|=\|\textbf{ttop}(\Delta_1')\|=\|\textbf{ttop}(\Delta)\|$. In particular, $\|\textbf{ttop}(\Delta_1)\|\leq\|\textbf{ttop}(\Delta)\|$. Hence, for $h_1'$ the height of $\Delta_1'$, $\text{wt}(\Delta_1')\leq 3h_1'\|\textbf{ttop}(\Delta)\|$.

Similarly, if $\textbf{bot}(\Delta_1)$ does not coincide with $\textbf{bot}(\Delta)$, then there exists a subdiagram $\Delta_1''$ of $\Delta$ which is a trapezium satisfying $\textbf{top}(\Delta_1'')=\textbf{bot}(\Delta_1)$ and $\textbf{bot}(\Delta_1'')=\textbf{bot}(\Delta)$. By analogous arguments, $\|\textbf{tbot}(\Delta_1)\|\leq\|\textbf{tbot}(\Delta)\|$ and $\text{wt}(\Delta_1'')\leq3h_1''\|\textbf{tbot}(\Delta)\|$ for $h_1''$ the height of $\Delta_1''$.

By Step 1, letting $h_1$ be the height of $\Delta_1$, we have
\begin{align*}
\text{wt}_G(\Delta_1)&\leq C_2h_1\max(\|\textbf{tbot}(\Delta_1)\|,\|\textbf{ttop}(\Delta_1)\|)+C_2(|\textbf{tbot}(\Delta_1)|_a+|\textbf{ttop}(\Delta_1)|_a)^2 \\
&\leq C_2h_1\max(\|\textbf{tbot}(\Delta)\|,\|\textbf{ttop}(\Delta)\|)+C_2(|\textbf{tbot}(\Delta)|_a+|\textbf{ttop}(\Delta)|_a)^2
\end{align*}
Thus, Lemma \ref{G-weight subdiagrams} yields
\begin{align*}
\text{wt}_G(\Delta)&\leq (C_2h_1+3h_1'+3h_1'')\max(\|\textbf{tbot}(\Delta)\|,\|\textbf{ttop}(\Delta)\|)+C_2(|\textbf{tbot}(\Delta)|_a+|\textbf{ttop}(\Delta)|_a)^2 \\
&\leq C_2h\max(\|\textbf{tbot}(\Delta)\|,\|\textbf{ttop}(\Delta)\|)+C_2(|\textbf{tbot}(\Delta)|_a+|\textbf{ttop}(\Delta)|_a)^2
\end{align*}

\end{proof}

\medskip


\subsection{Quadratic upper bound} \

Our goal throughout the rest of this section is to prove that for any $M$-minimal diagram $\Delta$, 
\reqnomode
\begin{equation} \label{counterexample}
\text{wt}_G(\Delta)\leq N_2|\partial\Delta|^2+N_1\mu(\Delta)
\end{equation}
for the parameters $N_1$ and $N_2$.

We do this by arguing toward contradiction, considering a `minimal counterexample' diagram $\Delta$. In other words, $\Delta$ is an $M$-minimal diagram over $M_\Omega(\textbf{M})$ satisfying $\text{wt}_G(\Delta)>N_2|\partial\Delta|^2+N_1\mu(\Delta)$, while (\ref{counterexample}) holds for all $M$-minimal diagrams $\Gamma$ over $M_\Omega(\textbf{M})$ satisfying $|\partial\Gamma|<|\partial\Delta|$. 

\bigskip

\begin{lemma} \label{no q-edge quadratic}

If $\Gamma$ is an $M$-minimal diagram over $M_\Omega(\textbf{M})$, with no $q$-edges on its boundary, then $\text{wt}_G(\Gamma)\leq C_2|\partial\Gamma|^2$.

\end{lemma}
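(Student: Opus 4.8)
The statement concerns an $M$-minimal diagram $\Gamma$ over $M_\Omega(\textbf{M})$ whose boundary contains no $q$-edges. First I would observe that since $|\partial\Gamma|_q=0$, every maximal $q$-band of $\Gamma$ must be a $q$-annulus, and every maximal $\theta$-band must be a $\theta$-annulus; but by Lemma \ref{M_a no annuli 1}(4) there are no $q$-annuli, so $\Gamma$ contains no $q$-edges at all, hence no $(\theta,q)$-cells and no hubs. So the only cells are $(\theta,a)$-cells and $a$-cells. Next, I would rule out $\theta$-edges on $\partial\Gamma$: if $|\partial\Gamma|_\theta>0$, a rim $\theta$-band would have to end on $\partial\Gamma$ in two places, but with no $q$-edges present such a band is a quasi-rim $\theta$-band whose base is empty, and the trimmed top/bottom machinery plus Lemma \ref{M_a no annuli 2}(2) (no $\theta$-annuli in an $M$-minimal diagram) force a contradiction — alternatively, a maximal $\theta$-band with no $q$-edges must be a $\theta$-annulus, again excluded by Lemma \ref{M_a no annuli 2}(2). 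Thus $\partial\Gamma$ consists entirely of $a$-edges, i.e. $|\partial\Gamma|=|\partial\Gamma|_a$.

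With $\Gamma$ now built only from $(\theta,a)$-cells and $a$-cells and bounded entirely by $a$-edges, the remaining task is purely a counting argument. Every maximal $\theta$-band of $\Gamma$ has empty base, so it is an annulus — already excluded — hence $\Gamma$ in fact contains no $(\theta,a)$-cells either, and every cell of $\Gamma$ is an $a$-cell. (If this is too strong, the fallback is: $\Gamma$ has no $\theta$-edges on its boundary, and since a maximal $\theta$-band with no $q$-cells is a $\theta$-annulus, Lemma \ref{M_a no annuli 2}(2) leaves no $(\theta,a)$-cells.) Now I would invoke the $M$-minimality conditions directly: by Lemma \ref{a-band on same a-cell} no $a$-band has two ends on a single $a$-cell, and by (MM2) no maximal $a$-band ends on two different $a$-cells; since there are no $(\theta,q)$-cells and no $\theta$-edges on the boundary, every edge of every $a$-cell $\partial\pi$ marks the start of a maximal $a$-band that must end on $\partial\Gamma$. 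This sets up an injection from the $a$-edges on the boundaries of $a$-cells into the $a$-edges of $\partial\Gamma$ (each boundary edge of an $a$-cell is the start of a distinct maximal $a$-band, and distinct such bands have distinct ends on $\partial\Gamma$ by (MM2)). Hence $\sum_\pi \|\partial\pi\| \le |\partial\Gamma|_a = |\partial\Gamma|$.

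The weight bound then follows: since $\text{wt}(\pi) = C_1\|\partial\pi\|^2$ for each $a$-cell, and $\sum_\pi \|\partial\pi\|^2 \le \left(\sum_\pi\|\partial\pi\|\right)^2 \le |\partial\Gamma|^2$, we get $\text{wt}(\Gamma) = \sum_\pi \text{wt}(\pi) = C_1\sum_\pi\|\partial\pi\|^2 \le C_1|\partial\Gamma|^2$. Finally $\text{wt}_G(\Gamma) \le \text{wt}(\Gamma) \le C_1|\partial\Gamma|^2 \le C_2|\partial\Gamma|^2$ by the parameter choice $C_2 \gg C_1$, which completes the proof. The main obstacle I anticipate is the bookkeeping at the start — making fully rigorous the claim that $\partial\Gamma$ contains no $\theta$-edges (hence no $q$- or $\theta$-cells survive), since one must carefully exclude quasi-rim $\theta$-bands with empty base using the no-$\theta$-annuli result for $M$-minimal diagrams; once the diagram is reduced to a disjoint union of $a$-cells whose boundary edges inject into $\partial\Gamma$, the inequality is immediate from convexity of $x\mapsto x^2$ under a fixed sum constraint.
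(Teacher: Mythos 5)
There is a genuine gap at the step where you rule out $\theta$-edges on $\partial\Gamma$ and hence all $(\theta,a)$-cells. The hypothesis only forbids $q$-edges on the boundary; $\theta$-edges are perfectly allowed, and a maximal $\theta$-band that crosses no $q$-band need not be a $\theta$-annulus — it can simply start and end on two $\theta$-edges of $\partial\Gamma$. Indeed, a single $(\theta,a)$-cell is already an $M$-minimal diagram whose boundary has no $q$-edges but does have $\theta$-edges, so your conclusion that "$\partial\Gamma$ consists entirely of $a$-edges" and that $\Gamma$ is a disjoint union of $a$-cells is false in general; the "fallback" you offer relies on the same unproved (and generally false) premise. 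Because of this, your argument never accounts for the $(\theta,a)$-cells, which is exactly the part of the weight the paper has to estimate.

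The paper's proof keeps the $(\theta,a)$-cells and bounds them by a band-counting argument: Lemma \ref{M_a no annuli 1} shows $\Gamma$ has no $q$-edges at all (your first step, which is fine), then Lemma \ref{M_a no annuli 2} gives that every maximal $\theta$-band ends twice on $\partial\Gamma$, so there are at most $\frac{1}{2}\|\partial\Gamma\|$ of them; since by Lemma \ref{a-band on same a-cell} and (MM2) every maximal $a$-band has at least one end on $\partial\Gamma$ and meets each $\theta$-band in at most one cell, each $\theta$-band has length at most $\|\partial\Gamma\|$, so the number of $(\theta,a)$-cells is at most $\frac{1}{2}\|\partial\Gamma\|^2$. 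Your estimate for the $a$-cells (sum of their combinatorial perimeters at most $\|\partial\Gamma\|$, hence total weight at most $C_1\|\partial\Gamma\|^2$) matches the paper and is correct, but note also the smaller slip $|\partial\Gamma|_a=|\partial\Gamma|$: with the modified length each $a$-edge contributes only $\delta$, so the bound is really in terms of $\|\partial\Gamma\|=\delta^{-1}|\partial\Gamma|$; this costs a factor $\delta^{-2}$, which (like the $(\theta,a)$-cell count) is absorbed by choosing $C_2$ after $C_1$ and $\delta$. To repair your proof you must drop the claim that no $(\theta,a)$-cells exist and add the $\theta$-band counting step above.
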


\begin{proof}

Since any $q$-edge in $\Gamma$ would give rise to a maximal $q$-band which, by Lemma \ref{M_a no annuli 1}, can only end on the boundary of the diagram, $\Gamma$ cannot have any $q$-edges. So, $\Gamma$ is comprised entirely of $(\theta,a)$-cells and $a$-cells. 

In particular, $\Gamma$ contains no $a$-trapezia (or trapezia), so that the only covering of $\Gamma$ is by single cells. Hence, $\text{wt}_G(\Gamma)=\text{wt}(\Gamma)$.

Lemma \ref{a-band on same a-cell} and (MM2) then imply that any maximal $a$-band with one end on an $a$-cell must have its other end on the boundary, so that the sum of the (combinatorial) perimeters of the $a$-cells is at most $\|\partial\Gamma\|$. It follows that the sum of the weights of the $a$-cells is at most $C_1\|\partial\Gamma\|^2$. 

Further, Lemma \ref{M_a no annuli 2} implies that any maximal $\theta$-band must start and end on $\partial\Gamma$, so that there are at most $\frac{1}{2}\|\partial\Gamma\|$ maximal $\theta$-bands in $\Gamma$. As Lemma \ref{M_a no annuli 1} implies that each maximal $a$-band must have at least one end on $\partial\Gamma$ and each $\theta$-band intersects each $a$-band in at most one cell, the length of each $\theta$-band is at most $\|\partial\Gamma\|$. So, the sum of the lengths of all maximal $\theta$-bands, and so the number of $(\theta,a)$-cells, is at most $\frac{1}{2}\|\partial\Gamma\|^2$.

Taking into account the modified definition of perimeter, the statement follows from an appropriate choice of $C_2$ in terms of $C_1$ and $\delta$.

\end{proof}

The parameter choice $N_2>>C_2$ and Lemma \ref{no q-edge quadratic} allow us to assume that $\partial\Delta$ consists of at least two $q$-edges, i.e $|\partial\Delta|\geq2$.

\begin{lemma} \label{a-cell in counterexample}

Let $\pi$ be an $a$-cell contained in $\Delta$. Suppose $\partial\pi$ has a subpath $\textbf{s}$ shared with $\partial\Delta$. Then $\|\textbf{s}\|\leq\frac{2}{3}\|\partial\pi\|$.

\end{lemma}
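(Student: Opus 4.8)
The plan is to argue by contradiction: suppose $\textbf{s}$ is a subpath of both $\partial\pi$ and $\partial\Delta$ with $\|\textbf{s}\|>\frac{2}{3}\|\partial\pi\|$, and derive a contradiction with the minimality of $\Delta$. First I would consider the complementary subpath $\textbf{t}$ of $\partial\pi$, so that $\partial\pi=\textbf{s}\textbf{t}$ with $\|\textbf{t}\|<\frac{1}{3}\|\partial\pi\|$, and let $\textbf{u}$ be the subpath of $\partial\Delta$ with $\partial\Delta=\textbf{s}\textbf{u}$ (cyclically). Removing the cell $\pi$ from $\Delta$ yields an $M$-minimal diagram $\Delta'$ with contour $\textbf{t}^{-1}\textbf{u}$ (up to a $0$-refinement), where $\textbf{t}^{-1}$ has the same label as $\textbf{s}$ in the group $B(\pazocal{A},n)$ since $\lab(\partial\pi)\in\Omega$. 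Crucially $\textbf{s}$, $\textbf{t}$, and $\textbf{u}$ consist entirely of $a$-edges from the `special' input sector (since $\partial\pi$ does, and $\partial\Delta$ inherits that locally), so $|\textbf{s}|=\delta\|\textbf{s}\|$, $|\textbf{t}|=\delta\|\textbf{t}\|$, and $|\partial\Delta'|\le|\partial\Delta|-|\textbf{s}|+|\textbf{t}|<|\partial\Delta|$. Here I use that $\textbf{s}$ contributes $q$-length zero, so removing it cannot remove the $q$-edges guaranteeing $|\partial\Delta|\ge2$, hence $|\partial\Delta'|<|\partial\Delta|$ strictly and the inductive hypothesis (\ref{counterexample}) applies to $\Delta'$.

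Next I would relate $\text{wt}_G(\Delta)$ to $\text{wt}_G(\Delta')$. The subdiagram decomposition $\Delta=\Delta'\cup\{\pi\}$ (with $\Delta'\cap\{\pi\}$ a path, not a $q$-band) gives, by the definition of covering and $G$-weight, $\text{wt}_G(\Delta)\le\text{wt}_G(\Delta')+\text{wt}_G(\pi)=\text{wt}_G(\Delta')+C_1\|\partial\pi\|^2$. Also $\mu(\Delta')\le\mu(\Delta)$ since removing $\pi$ only removes $a$-edges from $\partial\Delta$ and hence does not add any beads to the necklace (no $\theta$- or $q$-beads are created). Combining with the inductive bound on $\Delta'$:
$$\text{wt}_G(\Delta)\le N_2|\partial\Delta'|^2+N_1\mu(\Delta')+C_1\|\partial\pi\|^2\le N_2|\partial\Delta'|^2+N_1\mu(\Delta)+C_1\|\partial\pi\|^2.$$
It then suffices to show $N_2|\partial\Delta'|^2+C_1\|\partial\pi\|^2\le N_2|\partial\Delta|^2$, which would contradict $\text{wt}_G(\Delta)>N_2|\partial\Delta|^2+N_1\mu(\Delta)$.

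For the final estimate, write $P=|\partial\Delta|$, $\sigma=|\textbf{s}|=\delta\|\textbf{s}\|$, $\tau=|\textbf{t}|=\delta\|\textbf{t}\|$. Then $|\partial\Delta'|\le P-\sigma+\tau$ and, using $\|\textbf{s}\|>\frac{2}{3}\|\partial\pi\|$, $\|\textbf{t}\|<\frac13\|\partial\pi\|$, we have $\sigma>\frac23\delta\|\partial\pi\|$ and $\tau<\frac13\delta\|\partial\pi\|$, so $\sigma-\tau>\frac13\delta\|\partial\pi\|$; also $\sigma\le P$ (as $\textbf{s}\subseteq\partial\Delta$). Hence $|\partial\Delta|^2-|\partial\Delta'|^2\ge(P)^2-(P-(\sigma-\tau))^2=(\sigma-\tau)(2P-(\sigma-\tau))\ge(\sigma-\tau)P\ge\frac13\delta\|\partial\pi\|\cdot\sigma>\frac13\delta\|\partial\pi\|\cdot\frac23\delta\|\partial\pi\|=\frac{2}{9}\delta^2\|\partial\pi\|^2$. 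So $N_2(|\partial\Delta|^2-|\partial\Delta'|^2)\ge\frac{2}{9}N_2\delta^2\|\partial\pi\|^2\ge C_1\|\partial\pi\|^2$ by the parameter choice $N_2>>\delta^{-1}>>C_1$, giving the desired inequality and the contradiction. The main obstacle is being careful that $\textbf{t}^{-1}$ is genuinely freely-reducible/legitimate as part of a reduced diagram contour and that all edges involved really are `special'-input $a$-edges (so the length function collapses by a factor $\delta$); a secondary point is verifying $|\partial\Delta'|<|\partial\Delta|$ in the modified length, which hinges on $\textbf{s}$ carrying no $q$-length and $\sigma-\tau>0$. If it should turn out that $2P-(\sigma-\tau)$ cannot be bounded below by $P$ in some degenerate case, one instead uses $2P-(\sigma-\tau)\ge 2P-\sigma\ge P\ge 2$, which still works since $P\ge\sigma$.
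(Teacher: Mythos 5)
Your argument is correct and is essentially the paper's own proof: remove the $a$-cell $\pi$, note the perimeter drops by roughly $\delta(\|\textbf{s}\|-\|\textbf{t}\|)$ in the modified length, apply the inductive hypothesis to the resulting $M$-minimal subdiagram (whose mixture is unchanged), add back $\text{wt}(\pi)=C_1\|\partial\pi\|^2$ via Lemma \ref{G-weight subdiagrams}, and conclude by the choice of $N_2$ after $C_1$ and $\delta$. The only slips are cosmetic and harmless: the path $\textbf{u}$ need not consist of special-sector $a$-edges (you never use this), the additivity of lengths carries $O(\delta)$ junction corrections from Lemma \ref{lengths}(c) which the paper absorbs using $\|\textbf{s}\|>\frac{2}{3}n\geq8$, and the parameter order should read $N_2>>C_1>>\delta^{-1}$.
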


\begin{proof}

Let $\partial\pi=\textbf{s}\textbf{t}$ and $\partial\Delta=\textbf{s}\textbf{s}_0$.

Assuming toward contradiction that $\|\textbf{s}\|>\frac{2}{3}\|\partial\pi\|$, we have $\|\textbf{s}\|>2\|\textbf{t}\|$ and $\|\textbf{s}\|>\frac{2}{3}n\geq8$ by a parameter choice.

Let $\Delta_0$ be the subdiagram of $\Delta$ obtained by removing $\pi$. So, $\partial\Delta_0=\textbf{t}^{-1}\textbf{s}_0$.

By Lemma \ref{lengths}(c), $|\partial\Delta_0|\leq|\textbf{s}_0|+|\textbf{t}|=|\textbf{s}_0|+\delta\|\textbf{t}\|$ and $|\partial\Delta|\geq|\textbf{s}|+|\textbf{s}_0|-2\delta\geq|\textbf{s}_0|+\delta(\|\textbf{s}\|-2)$.

So, $|\partial\Delta|-|\partial\Delta_0|\geq\delta(\|\textbf{s}\|-\|\textbf{t}\|-2)\geq\frac{1}{2}\delta(\|\textbf{s}\|-4)\geq\frac{1}{4}\delta\|\textbf{s}\|>0$.

The inductive hypothesis then applies to $\Delta_0$, yielding 
$$\text{wt}_G(\Delta_0)\leq N_2|\partial\Delta_0|^2+N_1\mu(\Delta_0)\leq N_2(|\partial\Delta|-\delta\|\textbf{s}\|/4)^2+N_1\mu(\Delta_0)$$
As $\delta\|s\|/4\leq|\partial\Delta|$, $(|\partial\Delta|-\delta\|\textbf{s}\|/4)^2\leq|\partial\Delta|^2-\frac{1}{4}\delta\|\textbf{s}\||\partial\Delta|$.

By Lemma \ref{G-weight subdiagrams}, we have $\text{wt}_G(\Delta)\leq\text{wt}_G(\Delta_0)+\text{wt}(\pi)$. Further, the necklaces corresponding to $\partial\Delta$ and $\partial\Delta_0$ are identical, so that $\mu(\Delta)=\mu(\Delta_0)$. So, since the combinatorial perimeter of $\pi$ is $\|\textbf{s}\|+\|\textbf{t}\|$, Lemma \ref{G-weight subdiagrams} then implies:
$$\text{wt}_G(\Delta)\leq N_2|\partial\Delta|^2-\frac{1}{4}N_2\delta\|\textbf{s}\||\partial\Delta|+N_1\mu(\Delta)+C_1(\|\textbf{s}\|+\|\textbf{t}\|)^2$$
So, we reach the contradiction $\text{wt}_G(\Delta)\leq N_2|\partial\Delta|^2+N_1\mu(\Delta)$ if 
$$\frac{1}{4}N_2\delta\|\textbf{s}\||\partial\Delta|\geq C_1(\|\textbf{s}\|+\|\textbf{t}\|)^2$$
As $\|\textbf{t}\|<\frac{1}{2}\|\textbf{s}\|$, we have $C_1(\|\textbf{s}\|+\|\textbf{t}\|)^2\leq\frac{9}{4}C_1\|\textbf{s}\|^2$. So, since $|\partial\Delta|\geq\frac{1}{4}\delta\|\textbf{s}\|$, it suffices to show $N_2\delta^2\geq36C_1$. But this follows from the parameter choices $N_2>>C_1>>\delta^{-1}$.

\end{proof}

The following is the direct analogue of Lemma 6.12 of [16] and Lemma 6.16 of [23]. The method of proof is identical to the ones presented in those sources, though many of the estimates differ.

\bigskip

\begin{lemma} \label{6.16} \

\begin{enumerate}[label=({\arabic*})]

\item $\Delta$ has no two disjoint subcombs $\Gamma_1$ and $\Gamma_2$ of basic widths at most $K$ with handles $\pazocal{B}_1$ and $\pazocal{B}_2$ such that some ends of these handles are connected by a subpath $\textbf{x}$ of $\partial\Delta$ with $|\textbf{x}|_q\leq c_0$.

\item If $\Gamma$ is a subcomb of $\Delta$ with basic width $s\leq K$, $|\partial\Gamma|_q=2s$.

\end{enumerate}

\end{lemma}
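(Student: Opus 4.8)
The plan is to prove both parts of Lemma \ref{6.16} simultaneously by arguing toward a contradiction with the minimality of $\Delta$, following the strategy of the analogous lemmas in [16] and [23] but replacing area/$G$-area estimates with the $G$-weight estimates developed in Sections 8.4--8.6. Throughout I would use freely that a subcomb $\Gamma$ of basic width $s$ has exactly one maximal $q$-band in each ``column,'' so that the only issue in part (2) is showing no maximal $\theta$-band of $\Gamma$ has a $q$-edge on $\partial\Delta$ other than the two belonging to the handle-adjacent bands; once (1) is known, (2) follows because a $\theta$-band of $\Gamma$ contributing an extra pair of $q$-edges to $\partial\Gamma\cap\partial\Delta$ would, together with the handle, bound two disjoint subcombs of small basic width connected along $\partial\Delta$ by a short ($q$-length $0$) arc.

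For part (1), suppose $\Gamma_1,\Gamma_2$ are such disjoint subcombs with handles $\pazocal{B}_1,\pazocal{B}_2$ and connecting subpath $\textbf{x}$ with $|\textbf{x}|_q\le c_0$. First I would cut $\Delta$ along $\pazocal{B}_1$ and $\pazocal{B}_2$, writing $\Delta$ as the union of $\Gamma_1$, $\Gamma_2$, and a ``middle'' subdiagram $\Delta_0$, with the pairwise intersections being the $q$-bands $\pazocal{B}_1,\pazocal{B}_2$; by Lemma \ref{G-weight subdiagrams}, $\text{wt}_G(\Delta)\le\text{wt}_G(\Gamma_1)+\text{wt}_G(\Gamma_2)+\text{wt}_G(\Delta_0)$. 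The key geometric point, exactly as in [16], is that because $\textbf{x}$ has few $q$-edges one can surger $\Delta$: excise $\Gamma_1$ and $\Gamma_2$ and glue the handles' bottoms to $\textbf{x}$, producing an $M$-minimal diagram $\Delta'$ with $|\partial\Delta'|<|\partial\Delta|$ (the excised combs remove $q$-edges from the boundary, and the modified length function is used to control the $a$-edge contribution of $\textbf{x}$ and of $\textbf{bot}(\pazocal{B}_i)$, just as in Lemma \ref{a-cell in counterexample}). Applying the inductive hypothesis (\ref{counterexample}) to $\Delta'$, together with Lemma \ref{mixtures}(2),(3) to control the change in mixture, reduces the problem to bounding $\text{wt}_G(\Gamma_i)$ by a quadratic expression in $|\partial\Delta|$ plus a small multiple of $\mu(\Delta)$.

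The heart of the argument is therefore an estimate of $\text{wt}_G$ of a subcomb of basic width at most $K$ in terms of its own perimeter. Here I would invoke Lemma \ref{tight subcomb}: either every quasi-rim $\theta$-band of the comb has base of length at least $K$ --- in which case the comb contains a tight subcomb, hence (by the definition of tight base, condition (1)) an $a$-trapezium or trapezium with revolving base, to which Lemma \ref{revolving G-weight} applies giving a quadratic bound --- or there is a quasi-rim $\theta$-band of small base length, which one peels off and inducts on the height of the comb, using Lemma \ref{comb weights}(1) for the contribution of the peeled band and Lemma \ref{comb weights}(2) to control how the $a$-length of successive $\theta$-bands grows. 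In each peeling step the removal of a rim $\theta$-band strictly decreases the mixture by a controlled amount (Lemma \ref{mixtures}(4) applied to the $q$-edges at the two ends of the band), which is what feeds the $N_1\mu(\Delta)$ term; the quadratic-in-perimeter terms are absorbed using $N_2>>C_2$ and $N_1>>C_1, c_0$.

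The main obstacle I anticipate is the bookkeeping in the surgery step: one must verify that after excising the two subcombs and regluing along $\textbf{x}$ the resulting diagram is genuinely $M$-minimal (properties (MM1) and (MM2) can be delicate near the reglued region, since $a$-bands that previously ended on cells of $\Gamma_i$ must be rerouted) and that $|\partial\Delta'|$ is \emph{strictly} smaller --- this requires $|\textbf{x}|_q\le c_0$ to be compared against the number of $q$-edges in the two handles, and the modified length of the newly exposed $a$-edges must be shown not to overwhelm the saved $q$-edge length, which is where the smallness of $\delta$ (parameter choice $N_2>>\delta^{-1}$) and a bound on the handles' heights coming from the comb weight estimate are both needed. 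A secondary subtlety is handling the special input sector: if one of the subcombs meets a $(P_0(1)Q_0(1))$-type sector then its $G$-weight estimate must go through the impeding/big/exceptional $a$-trapezium machinery of Lemmas \ref{a-trapezia hyperfaulty}, \ref{a-trapezia revolving}, and \ref{revolving G-weight} rather than the plain trapezium bound, so the proof must be organized to funnel all such cases through Lemma \ref{revolving G-weight}.
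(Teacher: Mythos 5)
There is a genuine gap in your part (1), and it is the central mechanism of the lemma. Your plan is to excise the subcombs, note via Lemma \ref{mixtures}(2),(3) that the mixture does not increase, and then absorb $\text{wt}_G(\Gamma_i)$ into ``a quadratic expression in $|\partial\Delta|$ plus a small multiple of $\mu(\Delta)$.'' That accounting cannot close: removing a subcomb $\Gamma_1$ of height $h_1$ replaces $\textbf{y}_1$ by $\textbf{bot}(\pazocal{B}_1)$ in the boundary, so the perimeter decrease $\gamma$ may be as small as $1$, and the inductive inequality (\ref{counterexample}) applied to the surgered diagram only buys you about $N_2\gamma|\partial\Delta|$ — a linear quantity — while Lemma \ref{comb weights} bounds the comb's weight only by roughly $c_0Kh_1^2+2\a h_1+C_1(Kh_1+\a)^2$, which is genuinely quadratic in $h_1$ and not controlled by $|\partial\Delta|-|\partial\Delta'|$. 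Monotonicity of the mixture gives you nothing quantitative to spend. The point of the two-comb configuration, and the step your proposal omits, is that removing the handle $\pazocal{B}_1$ deletes a black bead lying between every $\theta$-edge of $\textbf{y}_1$ and every $\theta$-edge of $\textbf{y}_2$; since each such pair is separated by at most $4K+c_0\leq J$ $q$-edges (this is exactly where part (2), applied to $\Gamma_1,\Gamma_2$, is needed to bound $|\partial\Gamma_i|_q$ by $2K$), Lemma \ref{mixtures}(4) yields $\mu(\Delta)-\mu(\Delta')\geq h_1h_2\geq h_1^2$, and it is the term $N_1h_1h_2$ that pays for the comb's weight. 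Because of this, the two parts must be proved by a joint induction (on the weight of the subcombs involved), with (2) available for $\Gamma_1,\Gamma_2$ inside the proof of (1); your proposal instead derives (2) from (1) only, so the $q$-edge bound you need for the mixture pairing is never established. Your derivation of (2) from (1) via two adjacent small subcombs joined by a $q$-free arc is in the right spirit (the paper uses neighbor derivative subcombs), but it too must be run inside the weight induction to avoid circularity.

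Two secondary points. Removing both combs and ``gluing the handles' bottoms to $\textbf{x}$'' is unnecessary and muddies the mixture estimate: the paper removes only the comb of smaller height, keeping the other intact precisely so that the $\theta$-edges of $\textbf{y}_2$ survive to witness the drop $h_1h_2$. And the detour through Lemma \ref{tight subcomb} and Lemma \ref{revolving G-weight} to bound a small-width comb is not needed here (nor correct as stated — a tight comb gives a $\theta$-band with tight base, not directly a revolving-base $a$-trapezium): the bound you want is exactly Lemma \ref{comb weights}(1), applied directly to $\Gamma_1$, together with the side-by-side perimeter estimate $\gamma=\max(1,(\a-2h_1)\delta)$ and the case split $\a\leq4h_1$ versus $\a>4h_1$ that decides whether $N_1h_1h_2$ or $N_2\gamma|\partial\Delta|$ absorbs the excess.
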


\renewcommand\thesubfigure{\arabic{subfigure}}
\begin{figure}[H]
\centering
\begin{subfigure}[b]{0.48\textwidth}
\centering
\includegraphics[scale=1]{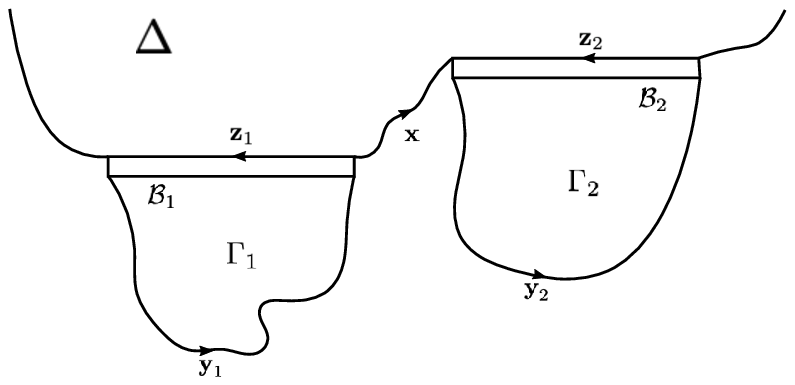}
\caption{ \ }
\end{subfigure}\hfill
\begin{subfigure}[b]{0.48\textwidth}
\centering
\includegraphics[scale=1.225]{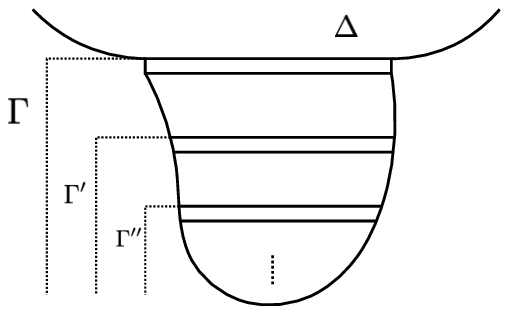}
\caption{ \ }
\end{subfigure}
\caption{Lemma \ref{6.16}}
\end{figure}

\begin{proof}

We prove (1) and (2) simultaneously, inducting on $W=\text{wt}(\Gamma_1)+\text{wt}(\Gamma_2)$ for (1) and $W=\text{wt}(\Gamma)$ for (2). In other words, we consider a counterexample to one of these two with minimal value of $W$.

Suppose the minimal counterexample is of the form (1). 

As $\text{wt}(\Gamma_i)<W$ for $i=1,2$, the inductive hypothesis implies that (2) holds for each. So, $\partial\Gamma_i$ has at most $2K$ $q$-edges.

Let $h_i$ be the height of $\Gamma_i$ and assume without loss of generality that $h_1\leq h_2$. For $i=1,2$, let $\partial\Gamma_i=\textbf{y}_i\textbf{z}_i$ where $\textbf{y}_i$ is a subpath of $\partial\Delta$ and $\textbf{z}_i=\textbf{bot}(\pazocal{B}_i)$. Without loss of generality, assume $\textbf{y}_1\textbf{x}\textbf{y}_2$ is a subpath of $\partial\Delta$.

Then each $\theta$-edge of $\textbf{y}_1$ is separated in $\partial\Delta$ from each $\theta$-edge of $\textbf{y}_2$ by at most $4K+c_0$ $q$-edges, and so by at most $J$ $q$-edges by the choice of parameters. Hence, each (correctly ordered) pair of such edges (or the white beads corresponding to these edges) makes a contribution to $\mu(\Delta)$.

Let $\Delta'$ be the diagram obtained by removing the subdiagram $\Gamma_1$ from $\Delta$. When passing from $\partial\Delta$ to $\partial\Delta'$, one replaces each $\theta$-edge of $\textbf{y}_1$ with the corresponding $\theta$-edge of $\textbf{z}_1$ belonging to the same $\theta$-band. But since $\pazocal{B}_1$ is removed, there is at least one less $q$-edge separating any of the $h_1h_2$ (correctly ordered) pairs of $\theta$-edges described above. So, $\mu(\Delta)-\mu(\Delta')\geq h_1h_2$ by Lemma \ref{mixtures}(d).

Letting $|\partial\Gamma_1|_a=\a$, Lemma \ref{comb weights} yields $\text{wt}_G(\Gamma_1)\leq\text{wt}(\Gamma_1)\leq c_0Kh_1^2+2\a h_1+C_1(Kh_1+\a)^2$.

By Lemma \ref{lengths}(b), we have $|\textbf{z}_1|=h_1$. Moreover, each of the $h_1$ $(\theta,q)$-cells of $\pazocal{B}_1$ contributes at most one $a$-edge to $\textbf{z}_1$.

So, $\textbf{y}_1$ consists of $h_1$ $\theta$-edges, at least two $q$-edges, and at least $\max(0,\a-h_1)$ $a$-edges. Lemma \ref{lengths}(a) then implies $|\textbf{y}_1|\geq\max(h_1+2,h_1+2+(\a-2h_1)\delta)$.

Letting $\textbf{s}$ be the complement of $\textbf{y}_1$ in $\partial\Delta$, $\textbf{s}$ is also the complement of $\textbf{z}_1^{-1}$ in $\partial\Delta'$. So, Lemma \ref{lengths}(c) implies that $|\partial\Delta'|\leq|\textbf{z}_1|+|\textbf{s}|=h_1+|\textbf{s}|$ and 
$$|\partial\Delta|\geq|\textbf{y}_1|+|\textbf{s}|-2\delta\geq h_1+|\textbf{s}|+2-2\delta+\max(0,(\a-2h_1)\delta)$$ 
Hence, taking $\delta^{-1}>2$, we have
\begin{equation} \label{6.16 difference in perimeter}
|\partial\Delta|-|\partial\Delta'|\geq\gamma=\max(1,(\a-2h_1)\delta)
\end{equation}
In particular, $|\partial\Delta'|<|\partial\Delta|$, so that the inductive hypothesis implies
$$\text{wt}_G(\Delta')\leq N_2|\partial\Delta'|+N_1\mu(\Delta')\leq N_2(|\partial\Delta|-\gamma)^2+N_1(\mu(\Delta)-h_1h_2)$$
Noting that $\gamma\leq|\partial\Delta|$, we have $(|\partial\Delta|-\gamma)^2\leq|\partial\Delta|^2-\gamma|\partial\Delta|$, so that
$$\text{wt}_G(\Delta')\leq N_2|\partial\Delta|^2-N_2\gamma|\partial\Delta|+N_1\mu(\Delta)-N_1h_1h_2$$
Combining this with the $G$-weight of $\Gamma_1$, Lemma \ref{G-weight subdiagrams} then implies:
$$\text{wt}_G(\Delta)\leq N_2|\partial\Delta|^2-N_2\gamma|\partial\Delta|+N_1\mu(\Delta)-N_1h_1h_2+c_0Kh_1^2+2\a h_1+C_1(Kh_1+\a)^2$$
So, in order to reach the contradiction $\text{wt}_G(\Delta)\leq N_2|\partial\Delta|^2+N_1\mu(\Delta)$, it suffices to show:
$$-N_2\gamma|\partial\Delta|-N_1h_1h_2+c_0Kh_1^2+2\a h_1+C_1(Kh_1+\a)^2\leq0$$
As $h_1\leq h_2$, this amounts to proving:
\begin{equation}\label{6.16 suffices 1}
(c_0K+C_1K^2)h_1^2+2(C_1K+1)\a h_1+C_1\a^2\leq N_2\gamma|\partial\Delta|+N_1h_1^2
\end{equation}
If $\a\leq4h_1$, then the inequality (\ref{6.16 suffices 1}) follows from the parameter choice of $N_1$, as it is chosen after $c_0$, $K$, and $C_1$.

Otherwise, we have $\a>4h_1$. The parameter choice $N_1\geq c_0K+C_1K^2$ means that it suffices only to show that:
\begin{equation} \label{6.16 suffices 2}
\left(\frac{C_1K+1}{2}+C_1\right)\a^2\leq N_2\gamma|\partial\Delta|
\end{equation}
But then $\a-2h_1\geq\a/2$, so that $\gamma\geq\frac{1}{2}\delta\a$. Hence, $N_2\gamma|\partial\Delta|\geq\frac{1}{4}N_2\delta^2\a^2$, so that (\ref{6.16 suffices 2}) follows from the parameter choices $N_2>>C_1>>\delta^{-1}>>K$.

Now suppose we have a minimal counterexample of the form (2).

As each derivative subcomb of $\Gamma$ is connected with the handle $\pazocal{B}$ of $\Gamma$ by $\theta$-bands, they can be ordered in a natural way. 

Consider two neighbor derivative subcombs, $\Gamma_1$ and $\Gamma_2$. The handle of $\Gamma_i$ is intersected by two disjoint collections of $\theta$-bands which connect them with $\pazocal{B}$. If there is any $\theta$-band between these two collections, then it cannot intersect any $q$-bands except for $\pazocal{B}$, as otherwise it intersects a derivative subcomb between $\Gamma_1$ and $\Gamma_2$. So, the subpath $\textbf{x}$ of $\partial\Delta$ between the handles of $\Gamma_1$ and $\Gamma_2$ satisfies $|\textbf{x}|_q=0$. 

Hence, $\Gamma_1$ and $\Gamma_2$ form a contradiction to (1). However, $\text{wt}_G(\Gamma_1)+\text{wt}_G(\Gamma_2)<\text{wt}_G(\Gamma)=W$ since they contain no cells of $\pazocal{B}$, contradicting the minimality of the counterexample.

Thus, $\Gamma$ contains at most one derivative subcomb $\Gamma'$. In turn, $\Gamma'$ contains at most one derivative subcomb $\Gamma''$, and so on. Thus, there are $s$ maximal $q$-bands in $\Gamma$, so that Lemma \ref{M_a no annuli 1} implies that $|\partial\Gamma|_q=2s$.

\end{proof}

Similarly, the next statement is a direct analogue of Lemma 6.14 in [16] and Lemma 6.17 in [23] with altered estimates.

\begin{lemma} \label{6.17}

Suppose $\Gamma$ is a subcomb of $\Delta$ whose basic width is at most $K_0$ and whose handle $\pazocal{B}$ has length $\ell$. If $\Gamma'$ is a subcomb of $\Gamma$ with handle $\pazocal{B}'$ of length $\ell'$, then $\ell'>\ell/2$.

\end{lemma}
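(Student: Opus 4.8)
The statement compares the handle length of a subcomb $\Gamma$ with basic width at most $K_0$ to the handle length of any subcomb $\Gamma'$ contained in it. The intuition is that $\Gamma'$ is built out of $\theta$-bands that also cross the handle $\pazocal{B}$ of $\Gamma$ (or are trimmed versions thereof), so $\ell' = \|\pazocal{B}'\|$ cannot be much smaller than $\ell = \|\pazocal{B}\|$, but in fact the direction we want is the harder one: we need $\ell'$ to be almost as large as $\ell$, i.e.\ at least half of it. The plan is to argue by contradiction: suppose $\ell' \le \ell/2$, and then remove the subcomb $\Gamma'$ (or a related subdiagram) from $\Delta$ to obtain a smaller $M$-minimal diagram $\Delta'$, to which the minimal-counterexample hypothesis (\ref{counterexample}) applies. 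The key is to account carefully for the change in perimeter and in mixture, so that the $G$-weight bookkeeping via Lemma \ref{G-weight subdiagrams}, together with the comb weight estimate Lemma \ref{comb weights} applied to $\Gamma'$, yields a contradiction with $\Delta$ being a counterexample.

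First I would set up the geometry: write $\partial\Gamma = \textbf{y}\,\textbf{z}$ with $\textbf{z} = \textbf{bot}(\pazocal{B})$ and $\textbf{y}$ a subpath of $\partial\Delta$, and similarly $\partial\Gamma' = \textbf{y}'\,\textbf{z}'$; since $\Gamma'$ is a subcomb of $\Delta$ (by transitivity of the subcomb relation, as $\Gamma$ is a subcomb of $\Delta$), $\textbf{y}'$ is a subpath of $\partial\Delta$. By Lemma \ref{6.16}(2), $|\partial\Gamma|_q = 2s$ and $|\partial\Gamma'|_q = 2s'$ where $s,s' \le K_0 \le K$ are the basic widths; these are bounded constants, so the $q$-edge contributions are negligible and the only thing that matters quantitatively is the interplay of $\theta$-lengths $\ell,\ell'$ and $a$-lengths. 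The handle $\pazocal{B}'$ of $\Gamma'$ has length $\ell'$, so $\textbf{z}'$ contributes $\ell'$ $\theta$-edges and at most $\ell'$ $a$-edges; by Lemma \ref{lengths}(a), $|\textbf{y}'| \ge \ell'$ (up to the constant $q$-contribution). The crucial point is that each of the $\ell$ $\theta$-bands crossing $\pazocal{B}$ either does or does not continue into $\Gamma'$; those that reach $\pazocal{B}'$ number exactly $\ell'$, and the assumption $\ell' \le \ell/2$ means at least $\ell/2$ of the $\theta$-bands of $\Gamma$ do \emph{not} cross $\pazocal{B}'$.

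Next I would remove $\Gamma'$ from $\Delta$, forming $\Delta'$, which is $M$-minimal (a subdiagram of an $M$-minimal diagram) and has strictly smaller perimeter: passing from $\partial\Delta$ to $\partial\Delta'$ replaces the $\theta$-edges of $\textbf{y}'$ by the corresponding $\theta$-edges of $\textbf{z}'$ belonging to the same $\theta$-bands, and as in the proof of Lemma \ref{6.16}(1), each such replacement brings two formerly-separated $\theta$-edges closer by a $q$-edge (here the $q$-edge is on $\pazocal{B}'$), so $\mu(\Delta) - \mu(\Delta')$ is at least a product of heights, quadratic in $\ell'$; meanwhile $|\partial\Delta| - |\partial\Delta'|$ is bounded below by a positive quantity $\gamma \ge \max(1,(\alpha'-2\ell')\delta)$ where $\alpha' = |\partial\Gamma'|_a$, exactly as in (\ref{6.16 difference in perimeter}). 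Applying the inductive hypothesis to $\Delta'$ and then Lemma \ref{G-weight subdiagrams} to combine $\text{wt}_G(\Delta') + \text{wt}_G(\Gamma')$, with Lemma \ref{comb weights}(1) giving $\text{wt}_G(\Gamma') \le \text{wt}(\Gamma') \le c_0 K_0 (\ell')^2 + 2\alpha'\ell' + C_1(K_0\ell'+\alpha')^2$, reduces the contradiction $\text{wt}_G(\Delta) \le N_2|\partial\Delta|^2 + N_1\mu(\Delta)$ to an inequality of the same shape as (\ref{6.16 suffices 1})--(\ref{6.16 suffices 2}), handled by splitting on whether $\alpha' \le 4\ell'$ and using the parameter hierarchy $N_1 \gg c_0, K, C_1$ and $N_2 \gg C_1 \gg \delta^{-1} \gg K$.

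The main obstacle I anticipate is that the naive deletion above does not \emph{use} the hypothesis $\ell' \le \ell/2$ — it would give a contradiction for \emph{any} subcomb $\Gamma'$, which is absurd since $\Gamma'$ genuinely sits inside $\Delta$ and contributes weight. So the real argument must be more delicate: the point of the $\ell'/\ell$ comparison is presumably that if $\ell'$ were too small relative to $\ell$, then the "leftover" region of $\Gamma$ between $\pazocal{B}$ and $\pazocal{B}'$ — a trapezium-like strip whose height is $\ell$ but whose base is bounded — would itself be too wide or too heavy, contradicting either Lemma \ref{6.16} applied to two disjoint subcombs split off from this strip, or the structure of admissible words (via Lemma \ref{theta-bands are one-rule computations} and the impossible-step-history lemmas such as Lemma \ref{M_3 step history} or Lemma \ref{M step history 2}). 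Concretely, I expect one constructs from the $\ge \ell/2$ $\theta$-bands of $\Gamma$ not meeting $\pazocal{B}'$ a pair of disjoint subcombs whose handles are connected along $\partial\Delta$ by a subpath with few $q$-edges, directly contradicting Lemma \ref{6.16}(1); making the "few $q$-edges" count work (it needs to be $\le c_0$, which is why the basic width bound $K_0$ and hence $|\partial\Gamma|_q \le 2K_0$ is invoked, with $c_0 \gg K_0$ from the parameter list) is the technical heart, and getting the heights of these auxiliary subcombs to be comparable to $\ell$ rather than $\ell'$ is exactly where $\ell' \le \ell/2$ enters.
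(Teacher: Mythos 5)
Your opening plan — delete $\Gamma'$, apply the inductive hypothesis to the remaining diagram, and offset $\text{wt}(\Gamma')$ (bounded via Lemma \ref{comb weights}) against a drop in perimeter and in mixture — is exactly the paper's route, but you abandon it at the decisive step, and your reason for abandoning it is mistaken. The naive deletion does \emph{not} ``give a contradiction for any subcomb'': without a mixture drop that is quadratic in $\ell'$, the term $c_0K_0(\ell')^2+C_1(K_0\ell'+\alpha')^2$ cannot be absorbed when $\alpha'\le 4\ell'$, since $N_2\gamma|\partial\Delta|$ only controls the $\alpha'$-dominated case and $\gamma$ may be as small as $1$. The missing idea — and precisely where $\ell'\le\ell/2$ enters — is how to produce that quadratic drop. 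In the paper one looks at the subband $\pazocal{C}$ of $\pazocal{B}$ formed by the $\ell'$ cells on which the $\theta$-bands through $\pazocal{B}'$ end; the two components of $\pazocal{B}\setminus\pazocal{C}$ are handles of combs $E_1,E_2$ of heights $\ell_1,\ell_2$ with $\ell_1+\ell_2=\ell-\ell'\ge\ell'$ (this inequality is the only use of the hypothesis). The subpath $\textbf{y}$ of $\partial\Delta$ on $\partial\Gamma$ carries $\ell'$ $\theta$-edges coming from the comb with handle $\pazocal{C}$ and $\ell_1+\ell_2$ $\theta$-edges coming from $E_1,E_2$; since the basic width of $\Gamma$ is at most $K_0$, Lemma \ref{6.16}(2) bounds $|\textbf{y}|_q$ by a constant $\le J$, so all $\ell'(\ell_1+\ell_2)$ suitably ordered pairs contribute to $\mu(\Delta)$, and removing the $q$-edges of $\pazocal{B}'$ when passing to $\Delta'$ gives, via Lemma \ref{mixtures}(4), $\mu(\Delta)-\mu(\Delta')\ge\ell'(\ell_1+\ell_2)\ge(\ell')^2$. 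That is the term your sketch asserts (``quadratic in $\ell'$'') but never justifies: the analogy with Lemma \ref{6.16}(1) does not supply the second family of $\theta$-edges; $E_1,E_2$ do.

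Your fallback plan — extract from the $\theta$-bands of $\Gamma$ missing $\pazocal{B}'$ two disjoint subcombs contradicting Lemma \ref{6.16}(1) — is not the paper's argument and does not obviously work: the natural candidates $E_1,E_2$ have handles that are subbands of $\pazocal{B}$, so they are not subcombs of $\Delta$ (their handles are not cutting $q$-bands of $\Delta$ with an end configuration as in Lemma \ref{6.16}(1)), and nothing forces their handle ends to be joined along $\partial\Delta$ by a path with at most $c_0$ $q$-edges. A secondary, minor omission: the paper first normalizes $\Gamma'$ to have minimal handle length and basic width $1$ (no proper subcombs), which is what lets the weight bound be applied cleanly and the argument close; your version keeps basic width $\le K_0$, which is harmless for the weight estimate but you should still note that a counterexample with $\ell'\le\ell/2$ persists after this normalization. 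As written, the proposal has a genuine gap at the mixture estimate and an incorrect diagnosis that leads away from the correct proof.
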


\begin{proof}

Assume toward contradiction that $\Gamma'$ is a subcomb of $\Gamma$ whose handle $\pazocal{B}'$ has length $\ell'\leq\ell/2$. Then, we can choose $\Gamma'$ so that $\ell'$ is minimal for all subcombs in $\Gamma$ and so that $\Gamma'$ has no proper subcombs, i.e the basic width of $\Gamma'$ is 1. Then, letting $\a=|\partial\Gamma'|_a$, Lemma \ref{comb weights} implies
$$\text{wt}_G(\Gamma')\leq\text{wt}(\Gamma')\leq c_0(\ell')^2+2\a\ell'+C_1(\ell'+\a)^2$$
Let $\Delta'$ be the diagram obtained from $\Delta$ by removing $\Gamma'$. Then the following inequality arises as the analogue of (\ref{6.16 difference in perimeter}):
\begin{equation} \label{6.17 difference in perimeter}
|\partial\Delta|-|\partial\Delta'|\geq\gamma=\max(1,(\a-2\ell')\delta)
\end{equation}
In particular, $|\partial\Delta'|<|\partial\Delta|$, so that
\begin{equation} \label{6.17 G-weight 1}
\text{wt}_G(\Delta')\leq N_2|\partial\Delta'|^2+N_1\mu(\Delta')\leq N_2(|\partial\Delta|-\gamma)^2+N_1\mu(\Delta')
\end{equation}

Every maximal $\theta$-band of $\Gamma$ passing through $\pazocal{B}'$ connects it to $\pazocal{B}$. The cells of $\pazocal{B}$ that such bands end on form a subband $\pazocal{C}$ of $\pazocal{B}$ with length $\ell'$. 

Then, the maximal $\theta$-bands of $\Gamma$ starting from $\pazocal{C}$ bound a comb with handle $\pazocal{C}$. So, there exists a maximal subdiagram $\Gamma''$ of $\Gamma$ which is a comb with handle $\pazocal{C}$. Note that $\Gamma''$ contains $\Gamma'$.

The components of $\pazocal{B}\setminus\pazocal{C}$ are handles of combs $E_1$ and $E_2$, respectively, which comprise the complement of $\Gamma''$ in $\Gamma$. Letting $\ell_i$ be the height of $E_i$, we then have $\ell_1+\ell_2=\ell-\ell'\geq\ell'$. 

Let $\partial\Gamma=\textbf{yz}$ be the factorization such that $\textbf{z}=\textbf{bot}(\pazocal{B})$ and $\textbf{y}$ is a subpath of $\partial\Delta$. So, there are $\ell_i$ $\theta$-edges on the common subpath $\textbf{x}_i$ of $\textbf{y}$ and $\partial E_i$ and $\ell'$ $\theta$-edges on the common subpath $\textbf{x}$ of $\textbf{y}$ and $\partial\Gamma''$. Further, as the basic width of $\Gamma$ is at most $K_0$, Lemma \ref{6.16}(2) implies that $\textbf{y}$ contains at most $K$ $q$-edges.

So, for any edge from $\textbf{x}$ and any edge from $\textbf{x}_i$, there are at most $K$ $q$-edges between the pair in $\textbf{y}$, and so at most $J$ such edges by the choice of parameters. Hence, each of these $\ell'(\ell_1+\ell_2)$ (appropriately ordered) pairs of edges contributes to $\mu(\Delta)$. 

When passing from $\Delta$ to $\Delta'$, the $\theta$-edges of $\textbf{y}$ are replaced with the corresponding edges of $\textbf{bot}(\pazocal{B}')$. However, the $q$-edges of $\pazocal{B}'$ are removed, so that there is at least one less $q$-edge between a $\theta$-edge corresponding to an edge of $\textbf{x}$ and one corresponding to an edge of $\textbf{x}_i$. So, Lemma \ref{mixtures}(d) implies $\mu(\Delta)-\mu(\Delta')\geq\ell'(\ell_1+\ell_2)$. Substituting this into (\ref{6.17 G-weight 1}) then implies
\begin{equation} \label{6.17 G-weight 2}
\text{wt}_G(\Delta')\leq N_2(|\partial\Delta|-\gamma)^2+N_1\mu(\Delta)-N_1\ell'(\ell_1+\ell_2)
\end{equation}

\begin{figure}[H]
\centering
\includegraphics[scale=1]{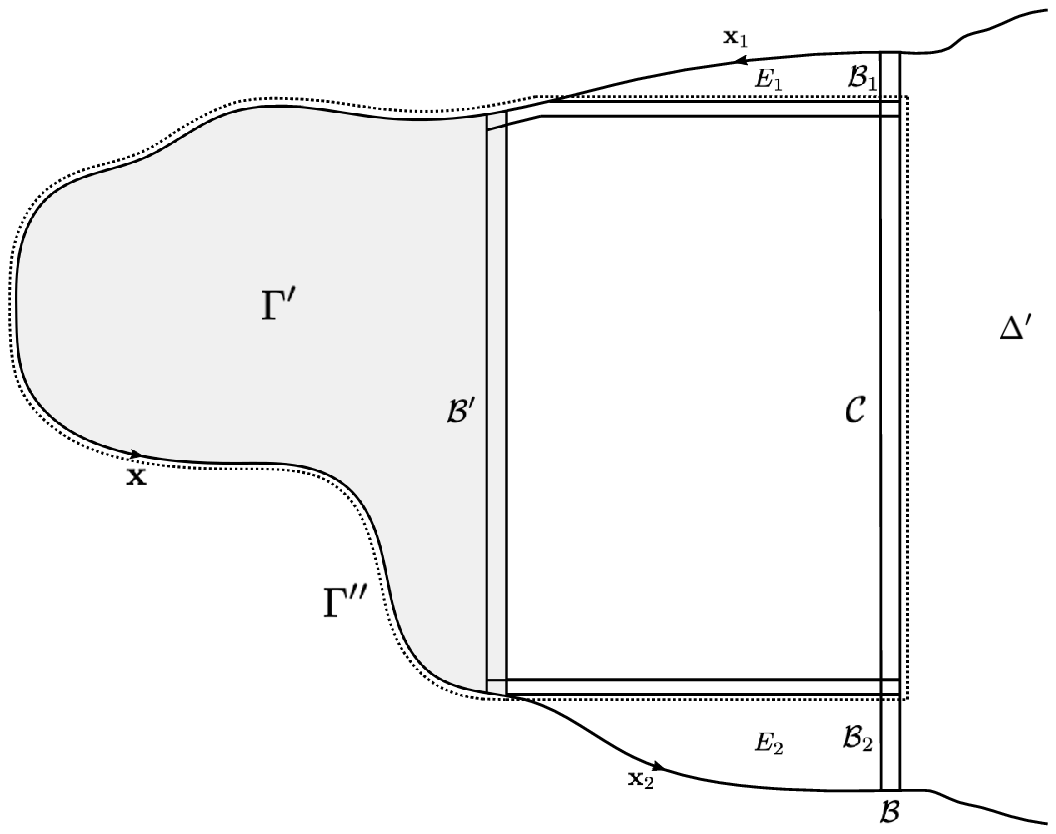}
\caption{Lemma \ref{6.17}}
\end{figure}

Note that $|\partial\Delta|\geq\gamma$, so that $(|\partial\Delta|-\gamma)^2\leq|\partial\Delta|^2-\gamma|\partial\Delta|$. Factoring in $\Gamma'$ and applying Lemma \ref{G-weight subdiagrams} then yields
$$
\text{wt}_G(\Delta)\leq N_2|\partial\Delta|^2-N_2\gamma|\partial\Delta|+N_1\mu(\Delta)-N_1\ell'(\ell_1+\ell_2)+c_0(\ell')^2+2\a\ell'+C_1(\ell'+\a)^2
$$
So, it suffices to show
\begin{equation} \label{suffices 1}
-N_2\gamma|\partial\Delta|-N_1\ell'(\ell_1+\ell_2)+c_0(\ell')^2+2\a\ell'+C_1(\ell'+\a)^2\leq0
\end{equation}

Suppose $\a\leq4\ell'$. Then $c_0(\ell')^2+2\a\ell'+C_1(\ell'+\a)^2\leq c_0(\ell')^2+8(\ell')^2+C_1(5\ell')^2$. As $\ell_1+\ell_2\geq\ell'$, (\ref{suffices 1}) then follows from the parameter choices $N_1>>C_1>>c_0$.

Otherwise, $\a>4\ell'$, so that $\gamma\geq\frac{1}{2}\delta\a$. Hence, $|\partial\Delta|\geq\gamma$ and (\ref{suffices 1}) imply that it suffices to show
\begin{equation} \label{suffices 2}
c_0(\ell')^2+2\a\ell'+C_1(\ell'+\a)^2\leq\frac{1}{4}N_2\delta^2\a^2+N_1(\ell')^2
\end{equation}

Note that $2\a\ell'+C_1(\ell'+\a)^2\leq\left(\frac{25C_1+8}{16}\right)\a^2$, so that (\ref{suffices 2}) follows by the parameter choices $N_1>>c_0$ and $N_2>>C_1>>\delta^{-1}$.

\end{proof}

\begin{lemma} \label{6.18}

If $\pazocal{T}$ is a quasi-rim $\theta$-band in $\Delta$, then the base of $\pazocal{T}$ has length $s>K$.

\end{lemma}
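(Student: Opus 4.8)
The plan is to argue by contradiction, assuming that some quasi-rim $\theta$-band $\pazocal{T}$ in the minimal counterexample $\Delta$ has base of length at most $K$. Since $\pazocal{T}$ is a quasi-rim band, one of its sides — together with its two end $q$-edges — cobounds with a portion of $\partial\Delta$ a subdiagram all of whose cells (other than those of $\pazocal{T}$ itself) are $a$-cells. Removing $\pazocal{T}$ from $\Delta$, together with the slab of $a$-cells between it and the boundary, should produce an $M$-minimal diagram $\Delta'$ with strictly smaller perimeter (in the modified length), and with $\mu(\Delta')\leq\mu(\Delta)$. The idea is then that the $G$-weight contributed by $\pazocal{T}$ and the removed $a$-cells is at most quadratic in the perimeter decrease — small enough to be absorbed — contradicting minimality. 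This is exactly the strategy used for the analogous rim-band lemmas in [16] and [23], adapted to account for the presence of $a$-cells via the quasi-rim notion.

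Here is how I would organize the steps. First, I would record the structure of a quasi-rim band: writing $\partial\pazocal{T}=\textbf{e}_0^{-1}\textbf{q}_1\textbf{e}_m\textbf{q}_2^{-1}$ in standard factorization, we may assume $\textbf{q}_2$ is the side facing $\partial\Delta$, so that the subdiagram $\Gamma$ cut off by $\textbf{e}_0^{-1}\textbf{q}_2^{-1}\textbf{e}_m$ (or rather the band together with the $a$-cells between $\textbf{q}_2$ and $\partial\Delta$) has all non-$(\theta,a)$-cells being $a$-cells. Second, I would bound the weight of this removed region: by Lemma \ref{a-band on same a-cell} and (MM2), every $a$-band from an $a$-cell in the slab has an end on $\partial\Delta$ or on a $(\theta,q)$-cell of $\pazocal{T}$, so the total combinatorial perimeter of the slab's $a$-cells is $O(K\cdot|\pazocal{T}| + |\partial\Delta\cap\text{slab}|_a)$; since the base has length $\leq K$ and by Lemma \ref{lengths}(d) the length $m$ of $\pazocal{T}$ is at most $|\textbf{tbot}(\pazocal{T})|_a+3K$, everything is controlled by the number $\a$ of $a$-edges of $\partial\Delta$ on the slab plus $O(K m)$. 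Third — and this is the key quantitative point — I would show the perimeter drops by at least a definite amount $\gamma\geq\max(1,(\a-cKm)\delta)$ for a suitable constant, mimicking inequality (\ref{6.16 difference in perimeter}): the $q$-edges $\textbf{e}_0,\textbf{e}_m$ and the $\theta$-edges of $\textbf{q}_2$ on $\partial\Delta$ are replaced by the (shorter, length-$m$) side $\textbf{q}_1$, while any excess $a$-edges on $\partial\Delta$ force a strictly positive $\delta$-weighted decrease. Fourth, I would apply the inductive hypothesis to $\Delta'$, add back the $G$-weight of the removed region via Lemma \ref{G-weight subdiagrams}, and verify that the resulting estimate still gives $\text{wt}_G(\Delta)\leq N_2|\partial\Delta|^2+N_1\mu(\Delta)$ — the contradiction. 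The arithmetic here splits into the cases $\a\leq 4m$ (absorbed by $N_1$ chosen after $K,C_1,c_0$) and $\a>4m$ (absorbed by $N_2\delta^2\gg C_1$), exactly as in the proofs of Lemmas \ref{6.16} and \ref{6.17}.

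The main obstacle I anticipate is correctly handling the interaction of the quasi-rim band with the $a$-cells in the slab between it and the boundary — in particular making sure that after removing $\pazocal{T}$ and the slab, the diagram $\Delta'$ is still $M$-minimal (property (MM1) could conceivably be violated for $\theta$-bands newly exposed to the boundary) and that the removal does not increase $\mu$. One must check that no $a$-cell in the slab has more than half its boundary edges starting $a$-bands that cross a single remaining $\theta$-band; this should follow because the slab $a$-cells' bands were constrained to meet $\pazocal{T}$ or $\partial\Delta$, but it needs care. A secondary subtlety is that $\pazocal{T}$ being quasi-rim (rather than genuinely rim) means the two "end" $q$-edges $\textbf{e}_0,\textbf{e}_m$ lie on $\partial\Delta$ but the $a$-cells intervene, so the bookkeeping of which boundary edges get deleted versus which get replaced must be done at the level of the combined region $\pazocal{T}\cup(\text{slab})$ rather than $\pazocal{T}$ alone. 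Once that bookkeeping is pinned down, the weight and mixture estimates are routine adaptations of Lemma \ref{comb weights} and Lemma \ref{mixtures}(d), and the final parameter inequalities match those already carried out for Lemmas \ref{6.16} and \ref{6.17}.
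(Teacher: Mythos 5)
There is a genuine gap, and it is exactly at the step you call the ``key quantitative point.'' You propose to delete the band $\pazocal{T}$ \emph{together with the slab of $a$-cells} between it and $\partial\Delta$, and to absorb their total $G$-weight by the perimeter decrease (plus a mixture gain), as in Lemmas \ref{6.16} and \ref{6.17}. But the analogy with those lemmas fails: there the removed region is bounded on the inside by the side of a $q$-band, whose length equals the comb's height, so the deleted boundary $a$-edges are not replaced and the perimeter drops proportionally to $\a$; moreover the deleted $q$-band produces a quantitative mixture decrease via Lemma \ref{mixtures}(d). Here the inner boundary of the removed region is $\textbf{bot}(\pazocal{T})$, whose $a$-length is within $O(K)$ of that of $\textbf{top}(\pazocal{T})$, and by (MM1) each slab $a$-cell $\pi$ can have up to $\frac{1}{2}\|\partial\pi\|+b_\pi$ of its edges on $\textbf{top}(\pazocal{T})$ and as few as $\frac{1}{2}\|\partial\pi\|-b_\pi$ on $\partial\Delta$. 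Consequently the exposed bottom re-introduces essentially as many $a$-edges as were deleted, and the perimeter decrease is only $2-O(K\delta)$, a constant; deleting a $\theta$-band also yields no positive mixture gain (only $\mu(\Delta')\leq\mu(\Delta)$ from Lemma \ref{mixtures}). So the inductive gain is linear, $N_2\gamma|\partial\Delta|=O(N_2|\partial\Delta|)$, while a single slab $a$-cell can have $\|\partial\pi\|$ comparable to $\delta^{-1}|\partial\Delta|$ and hence weight $\sim C_1\delta^{-2}|\partial\Delta|^2$; no admissible choice of the constants $N_1,N_2$ (fixed before $|\partial\Delta|$) absorbs that. Your two-case split $\a\leq 4m$ versus $\a>4m$ does not rescue this, because $m$ is itself comparable to $\a$, and the first case in Lemma \ref{6.16} was paid for by the term $N_1h_1h_2$ coming from the mixture decrease, which has no counterpart here.

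The paper's proof sidesteps the problem by never deleting the slab $a$-cells. Using Lemma \ref{a-cell in counterexample} and Lemma \ref{Lemma 6.2}, each slab cell $\pi$ satisfies $\|\textbf{p}_\pi'\|\geq\frac{1}{3}\|\partial\pi\|\geq3$ while $b_\pi\leq2$, so $\pi$ meets the $(\theta,a)$-part of $\pazocal{T}$ along a nontrivial arc $\textbf{p}_\pi''$; cutting along $\textbf{bot}(\pazocal{T})$, removing only $\pazocal{T}$, and re-pasting each $\pi$ onto the copy of $\textbf{p}_\pi''$ in $\textbf{bot}(\pazocal{T})$ produces $\Delta''$ with the same $a$-cells, perimeter smaller by at least $1$ (the two end $\theta$-edges minus an $O(K\delta)$ correction), $\mu(\Delta'')\leq\mu(\Delta)$, and each transferred $a$-cell having at least half its boundary on $\partial\Delta''$ (which both keeps $\Delta''$ $M$-minimal and forces these cells to appear as single cells in a minimal covering). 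Then only the band's own weight $\ell\leq\delta^{-1}|\partial\Delta|$ must be added when assembling a covering of $\Delta$ via Lemma \ref{G-weight subdiagrams}, and this is absorbed by the linear term $N_2|\partial\Delta|$ alone. If you want to salvage your write-up, replace ``remove the slab'' by this transfer-and-re-paste device; as written, the absorption step would fail.
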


\begin{proof}

Suppose $\pazocal{T}$ is a quasi-rim $\theta$-band in $\Delta$ with base of length $s\leq K$. Without loss of generality, say that any cell between $\textbf{top}(\pazocal{T})$ and $\partial\Delta$ is an $a$-cell. Let $\textbf{P}_1$ be the set of such $a$-cells.

Let $\textbf{u}$ be the subpath of $\partial\Delta$ bounded by the two end $\theta$-edges of $\pazocal{T}$ and $\textbf{v}$ be its complement in $\partial\Delta$. For $\pi\in\textbf{P}_1$, factor $\partial\pi=\textbf{p}_\pi\textbf{p}_\pi'$ where $\textbf{p}_\pi$ is a subpath of $\textbf{u}$ and $\textbf{p}_\pi'$ is a subpath of $\textbf{top}(\pazocal{T})$. 

Let $b_\pi$ be the number of edges of $\textbf{p}_\pi'$ that are on the boundary of a $(\theta,q)$-cell of $\pazocal{T}$. 

By Lemma \ref{a-cell in counterexample}, $\|\textbf{p}_\pi'\|\geq\frac{1}{3}\|\partial\pi\|$. Further, by Lemma \ref{Lemma 6.2}, $\|\partial\pi\|\geq(1-\b)n\geq n/2$ by the parameter choice for $\b$ (see Section 2.8). As a result, $\|\textbf{p}_\pi'\|\geq3$ and $b_\pi\leq2$, so that $\textbf{p}_\pi'$ has a maximal subpath $\textbf{p}_\pi''$ consisting of edges on the boundary of $(\theta,a)$-cells of $\pazocal{T}$.

Consider the diagram $\Delta'$ obtained from $\Delta$ by cutting along $\textbf{bot}(\pazocal{T})$, removing $\pazocal{T}$ and the $a$-cells of $\textbf{P}_1$. For $\pi\in\textbf{P}_1$, the subpath $\textbf{p}_\pi''$ can be identified with a subpath of $\textbf{bot}(\pazocal{T})$, so that we may paste $\pi$ to $\Delta'$ along this subpath. 

Let $\Delta''$ be the diagram obtained by pasting all cells of $\textbf{P}_1$ to $\Delta'$. Note that $\textbf{v}$ can be identified with a subpath of $\partial\Delta''$. Let $\textbf{u}''$ be the complement of $\textbf{v}$ in $\partial\Delta''$.

\renewcommand\thesubfigure{\alph{subfigure}}
\begin{figure}[H]
\centering
\begin{subfigure}[b]{0.48\textwidth}
\centering
\includegraphics[scale=1.25]{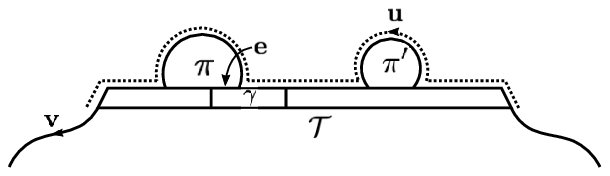}
\caption{$\Delta$, $\gamma$ a $(\theta,q)$-cell}
\end{subfigure}\hfill
\begin{subfigure}[b]{0.48\textwidth}
\centering
\includegraphics[scale=1.25]{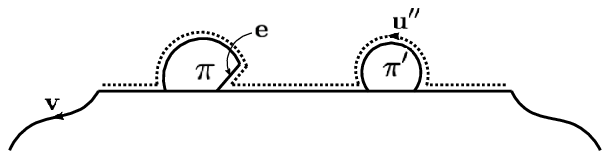}
\caption{$\Delta''$}
\end{subfigure}
\caption{Lemma \ref{6.18}}
\end{figure}

For any $\pi\in\textbf{P}_1$, the edges of $\partial\pi$ contributing to $b_\pi$ belong to $\textbf{u}''$ after this pasting. So, at least $\|\textbf{p}_\pi\|+b_\pi\geq\frac{1}{2}\|\partial\pi\|$ edges of $\partial\pi$ are shared with $\partial\Delta''$. It is thus clear from construction that $\Delta''$ is $M$-minimal.

Meanwhile, by Lemma \ref{simplify rules}, each $(\theta,q)$-cell of $\pazocal{T}$ contributes at most two $a$-edges to $\textbf{bot}(\pazocal{T})$. Any other edge of $\textbf{u}''$ corresponds to an edge of $\textbf{u}$.

As each $a$-edge contributing to $b_\pi$ for some $\pi\in\textbf{P}$ is labelled by a letter from the alphabet of the `special' input sector and each $(\theta,q)$-relation has at most one such letter, $\sum b_\pi\leq s$. So, since two $\theta$-edges are removed from $\textbf{u}$, Lemma \ref{lengths} implies
$$|\textbf{u}|-|\textbf{u}''|\geq2-(2s+2)\delta-\delta\sum b_\pi\geq2-(3s+2)\delta\geq2-(3K+2)\delta\geq1$$
The parameter choice $\delta^{-1}>>K$ and Lemma \ref{lengths} then imply
$$|\partial\Delta|-|\partial\Delta''|\geq(|\textbf{u}|+|\textbf{v}|-\delta)-(|\textbf{u}''|+|\textbf{v}|)\geq2-(3K+3)\delta\geq1$$
Hence, the inductive hypothesis may be applied to $\Delta''$, so that
$$\text{wt}_G(\Delta'')\leq N_2|\partial\Delta''|^2+N_1\mu(\Delta'')\leq N_2(|\partial\Delta|-1)^2+N_1\mu(\Delta'')$$
Note that the necklace corresponding to $\Delta''$ is obtained from that corresponding to $\Delta$ by the removal of two white beads. Lemma \ref{mixtures}(a) then yields $\mu(\Delta'')\leq\mu(\Delta)$.

Let $\textbf{P}''$ be a minimal covering of $\Delta''$. As each $a$-cell of $\textbf{P}_1$ has a boundary edge shared with $\partial\Delta''$, it cannot be contained in a trapezium in $\Delta''$. So, $\textbf{P}_1\subset\textbf{P}''$.

Let $\textbf{P}$ be the covering of $\Delta$ given by $\textbf{P}''$ and the cells of $\pazocal{T}$. Then for $\ell$ the length of $\pazocal{T}$,
$$\text{wt}_G(\Delta)\leq\text{wt}_G(\textbf{P})=\textbf{wt}_G(\textbf{P}'')+\ell\leq N_2|\partial\Delta|^2-N_2|\partial\Delta|+N_1\mu(\Delta)+\ell$$
Hence, it suffices to show that $N_2|\partial\Delta|\geq\ell$. 

For $\pi\in\textbf{P}_1$, (MM1) implies $\|\textbf{p}_\pi'\|\leq\|\textbf{p}_\pi\|+2b_\pi$. So, since $\sum b_\pi\leq s$, $|\textbf{top}(\pazocal{T})|_a\leq|\textbf{u}|_a+2s$.

By Lemma \ref{lengths}(d), $\ell\leq|\textbf{top}(\pazocal{T})|_a+3|\textbf{top}(\pazocal{T})|_q\leq|\textbf{u}|_a+5s$. As each $a$-edge of $\textbf{u}$ contributes at least $\delta$ to $|\Delta|$ and there are $s$ $q$-edges of $\textbf{u}$, $\ell\leq\delta^{-1}(|\partial\Delta|-s)+5s\leq\delta^{-1}|\partial\Delta|$.

But then the statement follows from the parameter choice $N_2>>\delta^{-1}$.

\end{proof}

Thus, Lemmas \ref{tight subcomb} and \ref{6.18} imply that there exists a tight subcomb $\Gamma$ in $\Delta$. By the definition of tight combs, the basic width of $\Gamma$ is at most $K_0$ (see Figure 8.9).

\begin{figure}[H]
\centering
\includegraphics[height=3.5in]{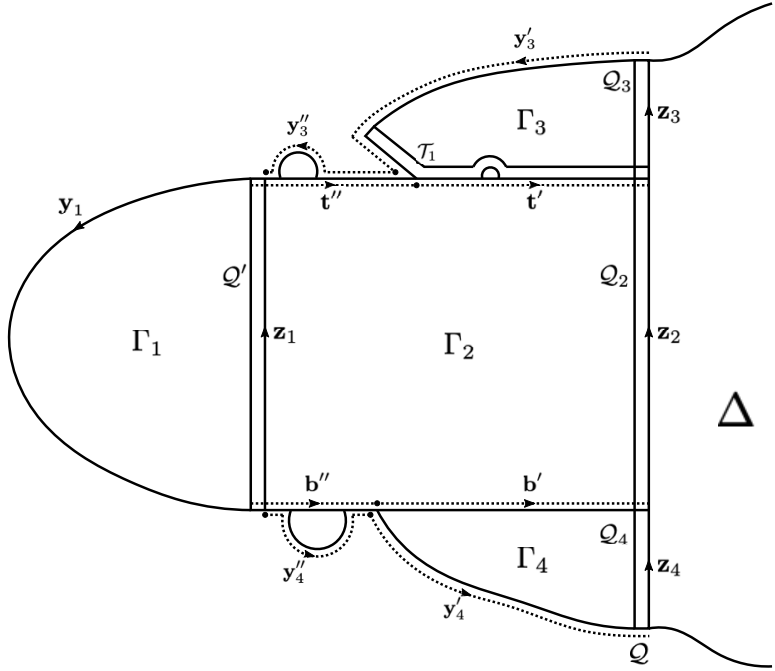}
\caption{Tight subcomb $\Gamma$}
\end{figure}

Let $\pazocal{T}$ be a maximal $\theta$-band in $\Gamma$ with tight base $B$. Then $B$ has the form $uxvx$, where $x$ does not occur in $u$ or $v$ and the final letter corresponds to the handle $\pazocal{Q}$ of $\Gamma$. Let $\pazocal{Q}'$ be the $q$-band corresponding to the first occurrence of $x$ in $B$.

Every maximal $\theta$-band in $\Gamma$ crossing $\pazocal{Q}'$ has a subband connecting $\pazocal{Q}$ and $\pazocal{Q}'$. The $(\theta,q)$-cells of $\pazocal{Q}$ on which these $\theta$-bands end form a subband $\pazocal{Q}_2$ of $\pazocal{Q}$ with length $\ell'$.

Let $\Gamma_2$ be the $a$-trapezium with side $q$-bands $\pazocal{Q}'$ and $\pazocal{Q}_2$ and bounded by the $\theta$-bands connecting these two. By the definition of tight, the base of $\Gamma_2$ is revolving.

Cutting along $\textbf{bot}(\pazocal{Q}')$ separates $\Delta$ into two components, one of which and is a subcomb $\Gamma_1$ with handle $\pazocal{Q}'$. Further, $\Gamma'=\Gamma_1\cup\Gamma_2$ is a comb contained in $\Gamma$ with handle $\pazocal{Q}_2$.

Let $\pazocal{Q}_3$ and $\pazocal{Q}_4$ be the components of $\pazocal{Q}\setminus\pazocal{Q}_2$. Then there exist maximal subdiagrams $\Gamma_3$ and $\Gamma_4$ of $\Gamma$ that are combs with handles $\pazocal{Q}_3$ and $\pazocal{Q}_4$, respectively.

Let $\ell$, $\ell'$, $\ell_3$, and $\ell_4$ be the heights of $\Gamma$, $\Gamma_1$, $\Gamma_3$, and $\Gamma_4$, respectively. By Lemma \ref{6.17}, $\ell'>\ell/2$.

Let $\partial\Gamma=\textbf{yz}$ be the factorization given by $\textbf{z}=\textbf{bot}(\pazocal{Q})$. Similarly, let $\partial\Gamma_1=\textbf{y}_1\textbf{z}_1$ be the factorization given by $\textbf{z}_1=\textbf{bot}(\pazocal{Q}')$.

Note that $\textbf{t}\defeq\textbf{ttop}(\Gamma_2)$ can be factored as $\textbf{t}=\textbf{t}''\textbf{t}'$ such that $\textbf{t}'$ is a maximal subpath shared with $\partial\Gamma_3$. Similarly, $\textbf{b}\defeq\textbf{tbot}(\Gamma_2)$ has a factorization $\textbf{b}=\textbf{b}''\textbf{b}'$ such that $(\textbf{b}')^{-1}$ is a maximal subpath shared with $\partial\Gamma_4$. Note that $\Gamma\setminus(\Gamma'\sqcup\Gamma_3\sqcup\Gamma_4)$ consists of $a$-cells that are attached to $\textbf{t}''$ or $\textbf{b}''$.

Factor $\textbf{y}=\textbf{y}_3\textbf{y}_1\textbf{y}_4$. Then, $\textbf{y}_3$ can be factored as $\textbf{y}_3'\textbf{y}_3''$ where $\textbf{y}_3'$ is a maximal subpath shared with $\partial\Gamma_3$. Note that every edge of $\textbf{y}_3''$ is either shared with $(\textbf{t}'')^{-1}$ or is on the boundary of an $a$-cell attached to $\textbf{t}''$. Similarly, we may factor $\textbf{y}_4=\textbf{y}_4''\textbf{y}_4'$.

Finally, factor $\textbf{z}=\textbf{z}_4\textbf{z}_2\textbf{z}_3$ where $\textbf{z}_i=\textbf{bot}(\pazocal{Q}_i)$.

Let $\pazocal{T}_1$ be the bottom $\theta$-band of $\Gamma_3$. Then, let $\Gamma_3'$ be the comb contained in $\Gamma_3$ obtained by removing any $a$-cells below $\pazocal{T}_1$. So, $\textbf{bot}(\pazocal{T}_1)$ is a subpath of $\partial\Gamma_3'$. 

Similarly, define $\Gamma_4'$ by removing any $a$-cells above the top $\theta$-band of $\Gamma_4$.

\smallskip

\begin{lemma} \label{counterexample combs} \

\begin{enumerate}[label=({\arabic*})] 

\item $|\textbf{t}''|_a\leq|\textbf{y}_3''|_a+4$ and $|\textbf{b}''|_a\leq|\textbf{y}_4''|_a+4$


\item $|\textbf{y}_3|_a\geq|\textbf{t}|_a-2\ell_3K-4$ and $|\textbf{y}_4|_a\geq|\textbf{b}|_a-2\ell_4K-4$.

\end{enumerate}

\end{lemma}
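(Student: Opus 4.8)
The two statements are symmetric (one concerns the top side of $\Gamma_2$ and $\Gamma_3$, the other the bottom side and $\Gamma_4$), so it suffices to prove (1) and (2) for the pair $\textbf{t}'', \textbf{y}_3''$ and $\textbf{y}_3, \textbf{t}$; the other half follows by the mirror argument. The key structural fact to exploit is that, by construction, $\Gamma\setminus(\Gamma'\sqcup\Gamma_3\sqcup\Gamma_4)$ consists only of $a$-cells attached to $\textbf{t}''$ or $\textbf{b}''$, and that $\textbf{y}_3''$ is made up of edges either shared with $(\textbf{t}'')^{-1}$ or lying on the boundary of such an $a$-cell. So the plan for (1) is to trace each $a$-edge of $\textbf{t}''$ through the maximal $a$-band it starts: by Lemma \ref{M_a no annuli 1}(3) (no $a$-annuli) and Lemma \ref{a-band on same a-cell}, together with (MM2), this band must eventually reach $\partial\Delta$, a $(\theta,q)$-cell, or an $a$-cell; since $\textbf{t}''$ lies below the rest of $\Gamma$, and the only cells wedged between $\textbf{t}''$ and $\partial\Delta$ in that region are the $a$-cells attached to $\textbf{t}''$, following such a band across those $a$-cells lands us on $\textbf{y}_3''$ — unless the band instead ends at a $(\theta,q)$-cell of the handle $\pazocal{Q}_3$ of $\Gamma_3$ or of the $q$-band $\pazocal{Q}'$, of which there are at most finitely many relevant ones (bounded by a small absolute constant, the ``$4$'' in the statement, coming from at most two $a$-edges on each of two boundary $(\theta,q)$-cells, cf. Lemma \ref{simplify rules}). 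This gives an injection from all but $\le 4$ of the $a$-edges of $\textbf{t}''$ into the $a$-edges of $\textbf{y}_3''$, which is exactly $|\textbf{t}''|_a \le |\textbf{y}_3''|_a + 4$.

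For (2), the plan is to combine (1) with Lemma \ref{comb weights}(2) applied to the comb $\Gamma_3$ (more precisely to $\Gamma_3'$, so that the bottom $\theta$-band $\pazocal{T}_1$ is genuinely below everything). Write $\textbf{t} = \textbf{t}''\textbf{t}'$ and $\textbf{y}_3 = \textbf{y}_3'\textbf{y}_3''$. The path $\textbf{t}'$ is a subpath of the boundary of $\Gamma_3$ bounding it from below, so $|\textbf{t}'|_a \le |\textbf{bot}(\pazocal{T}_1)|_a$ up to the small correction for $a$-cells removed in passing to $\Gamma_3'$; then Lemma \ref{comb weights}(2), with basic width $\le K_0 \le K$ (tightness) and height $\ell_3$, bounds $|\textbf{bot}(\pazocal{T}_1)|_a \le |\textbf{y}_3'|_a + 4\ell_3 K$. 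Meanwhile (1) bounds $|\textbf{t}''|_a \le |\textbf{y}_3''|_a + 4$. Adding these and using $|\textbf{y}_3|_a = |\textbf{y}_3'|_a + |\textbf{y}_3''|_a$, $|\textbf{t}|_a = |\textbf{t}''|_a + |\textbf{t}'|_a$, we get $|\textbf{t}|_a \le |\textbf{y}_3|_a + 4\ell_3 K + 4$, i.e. $|\textbf{y}_3|_a \ge |\textbf{t}|_a - 2\ell_3 K - 4$ after absorbing the constant $4\ell_3K$ into $2\ell_3K$ with room to spare (replacing the bound in Lemma \ref{comb weights}(2) by the weaker $2\ell_3 K$ is clearly impossible, so actually one should either re-examine the constant or note $\ell_3 \ge 1$ lets $4\ell_3K \le$ something; the cleanest route is to simply carry the constant from Lemma \ref{comb weights}(2) as stated and observe the paper's claimed inequality has slack built in for exactly this purpose — if the constant genuinely is $4$ rather than $2$, one re-runs Lemma \ref{comb weights}(2)'s proof noting that here the relevant $a$-bands cannot end on $a$-cells below $\pazocal{T}_1$ since there are none, tightening $\b_i'$ contributions).

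\textbf{Main obstacle.} The delicate point is the bookkeeping in (1): making precise that every maximal $a$-band issuing from an $a$-edge of $\textbf{t}''$ either crosses the at-most-$4$ troublesome $(\theta,q)$-edges of the handles $\pazocal{Q}'$/$\pazocal{Q}_3$ or genuinely reaches $\textbf{y}_3''$, and that this assignment is injective. Injectivity is where one must invoke that distinct maximal $a$-bands do not intersect (Section 6.2) and that (MM2) forbids a band ending on two different $a$-cells, so the map ``$a$-edge of $\textbf{t}''$ $\mapsto$ terminal $a$-edge on $\textbf{y}_3''$'' is well-defined and one-to-one. One also needs that no $a$-band from $\textbf{t}''$ ``escapes upward'' into $\Gamma'$ or $\Gamma_4$: this is ruled out because $\textbf{t}'' $ together with the $a$-cells attached to it and the portion $\textbf{y}_3''$ of $\partial\Delta$ bound a subdiagram whose only non-$a$ cells on the relevant side are those $(\theta,q)$-cells, by the maximality defining $\Gamma_3$ and $\Gamma_4$. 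I expect the rest — the arithmetic in (2) and the mirror symmetry for $\textbf{b}''$, $\textbf{y}_4$ — to be routine given (1) and Lemma \ref{comb weights}(2).
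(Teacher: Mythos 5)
Your plan for part (2) is essentially the paper's: split $\textbf{t}=\textbf{t}''\textbf{t}'$, handle $\textbf{t}''$ via part (1), and bound the $\Gamma_3$-side via Lemma \ref{comb weights}(2) applied to $\Gamma_3'$ (the paper gets $|\textbf{bot}(\pazocal{T}_1)|_a\le|\textbf{x}'|_a+4K_0\ell_3$ with basic width at most $K_0$, and $4K_0\ell_3\le 2K\ell_3$ since $K>2K_0$, so your worry about the constant is unfounded — the width bound is $K_0$, not $K$). But your treatment of the $a$-cells lying \emph{inside} $\Gamma_3$ below $\pazocal{T}_1$ and attached to $\textbf{t}'$ is hand-waved; the paper has to account for their boundary edges explicitly (the sets $\textbf{E}_3,\textbf{F}_3$ and the inequality $\a_4+\max(0,\a_3-2\a_3')\le\b+|\textbf{bot}(\pazocal{T}_1)|_a$), and the single ``$+4$'' in the statement has to absorb both the $\Gamma_2$-side and $\Gamma_3$-side exceptional edges via $\a_2'+\a_3'\le 2$.

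The real gap is in part (1). Your mechanism — start the maximal $a$-band at each $a$-edge of $\textbf{t}''$ and ``follow it across the $a$-cells'' until it lands on $\textbf{y}_3''$ — does not work. An $a$-band consists only of $(\theta,a)$-cells; it \emph{terminates} on an $a$-cell and cannot be continued through one. Moreover, the $a$-edges of $\textbf{t}''$ lie on the top of the uppermost $\theta$-band of $\Gamma_2$, so the maximal $a$-bands they start run \emph{downward into} $\Gamma_2$, not upward toward $\partial\Delta$; tracing them says nothing about $\textbf{y}_3''$. The region between $\textbf{t}''$ and $\textbf{y}_3''$ contains no $(\theta,a)$-cells at all — only $a$-cells attached to $\textbf{t}''$ — so the correct argument is a boundary-counting argument on those $a$-cells, not a band-tracing argument: for each such $a$-cell $\pi$, write $\partial\pi=\textbf{s}_1\textbf{s}_2$ with $\textbf{s}_1\subset\textbf{y}_3''$ and $\textbf{s}_2\subset\textbf{t}''$; property (MM1) applied to $\pi$ and the top $\theta$-band of $\Gamma_2$ gives $\|\textbf{s}_2\|-b\le\tfrac12\|\partial\pi\|$, where $b$ counts edges of $\textbf{s}_2$ on $(\theta,q)$-cells of that band, hence $\|\textbf{s}_1\|\ge\|\textbf{s}_2\|-2b$. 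Summing, and adding the edges of $\textbf{t}''$ shared directly with $\partial\Delta$, yields $|\textbf{t}''|_a\le|\textbf{y}_3''|_a+2\sum b$. The constant $4$ then comes not from ``two $a$-edges on each of two boundary $(\theta,q)$-cells'' as you suggest, but from the fact that only $(\theta,q)$-cells corresponding to $Q_0(1)^{\pm1}$ can carry an $a$-edge of the `special' input sector (one each), a revolving base contains at most two such letters, so $\sum b\le 2$, and the factor $2$ from the displayed inequality gives $4$. Without replacing your band-tracing step by this (MM1) count, part (1) — and hence the stated constant in part (2) — is not established.
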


\begin{proof}

Let $\textbf{E}$ be the set of (unoriented) $a$-edges of $\textbf{t}$. Then $\textbf{E}=\sqcup_{i=1}^4\textbf{E}_i$, where:

\begin{itemize}

\item $\textbf{E}_1$ is the set of such edges shared with $\partial\Delta$,

\item $\textbf{E}_2$ is the set of such edges shared with the boundary of an $a$-cell not contained in $\Gamma_3$,

\item $\textbf{E}_3$ is the set of such edges shared with the boundary of an $a$-cell contained in $\Gamma_3$, and

\item $\textbf{E}_4$ is the set of such edges shared with $\textbf{bot}(\pazocal{T}_1)$.

\end{itemize}

Further, let $\textbf{E}_2'$ (resp $\textbf{E}_3'$) be the subset of $\textbf{E}_2$ (resp $\textbf{E}_3$) consisting of the edges which are on the boundary of a $(\theta,q)$-cell in $\Gamma_2$.

Let $\a_i=\#\textbf{E}_i$ for all $1\leq i\leq 4$. Similarly, let $\a_i'=\#\textbf{E}_i'$ for $i=2,3$.

Note that $\a_1+\a_2=|\textbf{t}''|_a$ and $\textbf{E}_1$ is a subset of the $a$-edges of $\textbf{y}_3''$.

Further, every edge of $\textbf{E}_2'\sqcup\textbf{E}_3'$ is labelled by a letter from the `special' input sector and is on the boundary of a $(\theta,q)$-cell of the same $\theta$-band (the top $\theta$-band of $\Gamma_2$). Any $(\theta,q)$-cell with such an edge on its boundary must correspond to the base letter $Q_0(1)^{\pm1}$, in which case it has exactly one such letter on its boundary. Hence, by the definition of revolving, $\a_2'+\a_3'\leq2$.

For any $a$-edge $\textbf{e}$ of $\textbf{t}''$, either $\textbf{e}\in\textbf{E}_1$ or $\textbf{e}$ is on the boundary of some $a$-cell $\pi$. In the latter case, $\partial\pi$ can be factored as $\textbf{s}_1\textbf{s}_2$, where $\textbf{s}_1$ is a subpath of $\textbf{y}_3''$ and $\textbf{s}_2$ is a subpath of $\textbf{t}''$. Let $b$ be the number of edges of $\textbf{s}_2$ that are on the boundary of a $(\theta,q)$-cell contained in $\Gamma_2$. Then by condition (MM1), $\|\textbf{s}_2\|-b\leq\frac{1}{2}\|\partial\pi\|$. So, $\|\textbf{s}_1\|\geq\|\textbf{s}_2\|-2b$.

Applying the same reasoning to all such $a$-cells, we have $\a_1+\max(0,\a_2-2\a_2')\leq|\textbf{y}_3''|_a$.

Hence, $|\textbf{t}''|_a=\a_1+\a_2\leq|\textbf{y}_3''|_a+2\a_2'\leq|\textbf{y}_3''|_a+4$.

Next, let $\textbf{F}_3$ be the edges of $\textbf{y}_3'$ on the boundary of an $a$-cell below $\textbf{bot}(\pazocal{T}_1)$ and set $\b=\#\textbf{F}_3$. 

Let $\pi$ be an $a$-cell with a boundary edge contributing to $\a_3$. Then as above, for $b$ the number of edges of $\partial\pi$ on the boundary of a $(\theta,q)$-cell of $\Gamma_2$, at most $\frac{1}{2}\|\partial\pi\|+b$ of the edges of $\partial\pi$ are shared with $\textbf{t}'$. Note that the other edges of $\partial\pi$ are either part of $\textbf{bot}(\pazocal{T}_1)$ or contribute to $\b$.

So, $\a_4+\max(0,\a_3-2\a_3')\leq\b+|\textbf{bot}(\pazocal{T}_1)|_a$.

Hence, $|\textbf{t}|_a=\a_1+\a_2+\a_3+\a_4\leq|\textbf{y}_3''|_a+\b+|\textbf{bot}(\pazocal{T}_1)|_a+2(\a_2'+\a_3')\leq|\textbf{y}_3''|_a+\b+|\textbf{bot}(\pazocal{T}_1)|_a+4$.

Let $\partial\Gamma_3'=\textbf{x}'\textbf{x}\textbf{z}_3$ be the factorization given by $\textbf{x}=\textbf{bot}(\pazocal{T}_1)$. Note that $\textbf{x}'$ is a subpath of $\textbf{y}_3'$ not containing any edges of $\textbf{F}_3$, so that $\b+|\textbf{x}'|_a\leq|\textbf{y}_3'|_a$.

Then, applying Lemma \ref{comb weights} to $\Gamma_3'$, we have $|\textbf{x}|_a\leq|\textbf{x}'|_a+4K_0\ell_3$.

Thus, $|\textbf{t}|_a\leq|\textbf{y}_3''|_a+\b+|\textbf{x}'|_a+4K_0\ell_3+4\leq|\textbf{y}_3|_a+2\ell_3K+4$.

Applying the analogous argument to $\Gamma_4$ yields the inequalities $|\textbf{b}''|_a\leq|\textbf{y}_3''|_a+4$ and $|\textbf{b}|_a\leq|\textbf{y}_4|_a+2\ell_4K+4$.

\end{proof}

\newpage

\begin{lemma} \label{H>M}

Set $M=\max(|\textbf{b}|_a,|\textbf{t}|_a)$. Then $2K\ell>M$.

\end{lemma}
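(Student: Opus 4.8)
The plan is to argue by contradiction: assume $2K\ell \le M$, and show this forces the existence of a quasi-rim $\theta$-band in $\Delta$ with short base, contradicting Lemma \ref{6.18}. The intuition is that $M$ measures the $a$-width of the top or bottom of the revolving $a$-trapezium $\Gamma_2$, while $\ell$ is the height of the handle $\pazocal{Q}$ of the tight subcomb $\Gamma$; if $M$ were too large relative to $\ell$, then one of the combs $\Gamma_3'$ or $\Gamma_4'$ attached along $\textbf{t}'$ or $\textbf{b}'$ would have to be ``tall enough'' to absorb all those $a$-edges, and the tallest maximal $\theta$-band at the relevant end of that comb would be a quasi-rim $\theta$-band whose base has length at most the basic width of $\Gamma_3$ (or $\Gamma_4$), which is at most $K_0 < K$.

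First I would recall the relationships already set up before the statement: $\ell' > \ell/2$ by Lemma \ref{6.17}, $\ell_3 + \ell_4 = \ell - \ell'$ (since $\pazocal{Q}_2$ has length $\ell'$ and $\pazocal{Q}_3, \pazocal{Q}_4$ are the components of $\pazocal{Q}\setminus\pazocal{Q}_2$), and hence $\ell_3 + \ell_4 = \ell - \ell' < \ell/2$. Next I would use Lemma \ref{counterexample combs}(2): $|\textbf{y}_3|_a \ge |\textbf{t}|_a - 2\ell_3 K - 4$ and $|\textbf{y}_4|_a \ge |\textbf{b}|_a - 2\ell_4 K - 4$. Both $\textbf{y}_3$ and $\textbf{y}_4$ are subpaths of $\partial\Delta$, so I would next relate $|\textbf{y}_3|_a + |\textbf{y}_4|_a$ to $|\partial\Delta|$; indeed $\textbf{y}_3\textbf{y}_1\textbf{y}_4$ is a subpath of $\partial\Delta$, so $|\textbf{y}_3|_a + |\textbf{y}_4|_a \le |\partial\Delta|_a \le \delta^{-1}|\partial\Delta|$ by Lemma \ref{lengths}(a). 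This will bound $M$ above in terms of $|\partial\Delta|$, $\ell_3$, $\ell_4$, and $K$. Combined with $\ell_3 + \ell_4 < \ell/2$, this gives $M \le \delta^{-1}|\partial\Delta| + K\ell + O(1)$. The term $\delta^{-1}|\partial\Delta|$ is the obstacle: by itself this does not yield $M < 2K\ell$ unless $\ell$ is large compared to $|\partial\Delta|$, so I would need a separate lower bound on $\ell$.

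The key additional ingredient is a lower bound $2K\ell \gtrsim |\partial\Delta|$ coming from the fact that $\Delta$ is a minimal counterexample with large $G$-weight. Here is where I would invoke the machinery: the tight subcomb $\Gamma$ has handle $\pazocal{Q}$ of height $\ell$ and basic width at most $K_0$, so by Lemma \ref{6.16}(2), $|\partial\Gamma|_q = 2 \cdot (\text{basic width}) \le 2K_0$. I expect the argument to run: if $\ell$ were small, then $\Gamma$ (and in particular $\Gamma_2$, whose $\theta$-band heights are bounded by $\ell$) would be small, one could remove $\Gamma$ or a piece of it and apply the inductive hypothesis to get a contradiction with $\Delta$ being a counterexample — essentially the same removal-and-induction template used in Lemmas \ref{a-cell in counterexample}, \ref{6.16}, \ref{6.17}, and \ref{6.18}. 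So the real content is: either $\ell$ is large enough that the crude estimate $M \le \delta^{-1}|\partial\Delta| + K\ell + O(1)$ already beats $2K\ell$ (using $\ell \gg \delta^{-1}|\partial\Delta|$, equivalently $K\ell > \delta^{-1}|\partial\Delta|$ after a parameter choice $K \gg \delta^{-1}$ — wait, that direction is wrong), or else $\ell$ is small, $\Gamma_2$ is bounded, and removing it contradicts minimality. I would structure the proof into these two cases. The main obstacle I anticipate is handling the $\delta^{-1}|\partial\Delta|$ term correctly: one must be careful that $M$ counts only $a$-edges, that the $a$-edges of $\textbf{t}$ and $\textbf{b}$ are genuinely accounted for by $a$-edges of $\partial\Delta$ plus a controlled error ($2\ell_3 K + 2\ell_4 K + O(1)$) via Lemma \ref{counterexample combs}(2), and that the ``small $\ell$'' case really does let one peel off $\Gamma_2$ and apply induction — which requires checking that $\Gamma\setminus\Gamma_2$ or the relevant removed subdiagram is still $M$-minimal and that the perimeter strictly drops, exactly as in the preceding lemmas. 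Once those bookkeeping points are settled, the inequality $2K\ell > M$ follows from the parameter hierarchy $N_2 \gg \dots \gg K \gg \delta^{-1} \gg K_0$.
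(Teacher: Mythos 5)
Your raw materials are right (Lemma \ref{counterexample combs}(2), $\ell_i\le\ell/2$ from Lemma \ref{6.17}, and the removal-and-induction template), but the argument does not close, and it breaks exactly at the step you flagged yourself. After applying Lemma \ref{counterexample combs}(2) you should not pass to $|\textbf{y}_3|_a+|\textbf{y}_4|_a\le\delta^{-1}|\partial\Delta|$; that discards the comparison the proof actually needs. Keeping $|\textbf{y}_i|_a\le|\textbf{y}|_a$ gives $M\le|\textbf{y}|_a+K\ell+4\le|\textbf{y}|_a+3K\ell/2$, so the contradiction hypothesis $2K\ell\le M$ immediately yields $|\textbf{y}|_a\ge K\ell/2\ge K_0\ell$. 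This lower bound on $|\textbf{y}|_a$ is the whole point: it lets the comb estimate of Lemma \ref{comb weights}(1) (basic width at most $K_0$, height $\ell$, $|\partial\Gamma|_a\le|\textbf{y}|_a+\ell$) be absorbed into $\text{wt}(\Gamma)\le C_2|\textbf{y}|_a^2$, and by Lemma \ref{lengths} the removal of $\Gamma$ (replacing $\textbf{y}$ by $\textbf{z}$ with $|\textbf{z}|=\ell$) decreases the perimeter by $\gamma\ge\max(1,\tfrac12\delta|\textbf{y}|_a)$. Applying the inductive hypothesis to $\Delta'=\Delta\setminus\Gamma$ together with $\mu(\Delta')\le\mu(\Delta)$ then contradicts the counterexample property because $N_2\gamma|\partial\Delta|\ge\tfrac14N_2\delta^2|\textbf{y}|_a^2\ge C_2|\textbf{y}|_a^2$ by the choices $N_2>>C_2>>\delta^{-1}$.

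Your dichotomy cannot be repaired as stated. The first horn needs $K\ell$ to dominate $\delta^{-1}|\partial\Delta|$, which no lemma supplies (and, as you noticed, the parameter hierarchy points the wrong way). The second horn, removing $\Gamma$ or $\Gamma_2$ when $\ell$ is small and "contradicting minimality," also fails: $\Delta$ is only a counterexample to the $G$-weight inequality, so removing a subdiagram yields a contradiction only when its weight is dominated by the gain $N_2\gamma|\partial\Delta|+N_1(\mu(\Delta)-\mu(\Delta'))$; when $|\textbf{y}|_a$ is small the perimeter drop can be as small as $\gamma=1$ while $\text{wt}(\Gamma)$ can be of order $C_1K_0^2\ell^2$ with $\ell$ comparable to $|\partial\Delta|$, which $N_2|\partial\Delta|$ need not dominate (this is precisely why Lemma \ref{diskless} requires a more delicate surgery than plain removal). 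Likewise the quasi-rim $\theta$-band with short base announced in your first sentence is never actually produced by the assumption $2K\ell\le M$. The missing idea is the single inequality $|\textbf{y}|_a\ge K\ell/2$ extracted from $2K\ell\le M$ via Lemma \ref{counterexample combs}(2); with it the standard removal-and-induction argument goes through, and without it neither horn of your case split does.
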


\begin{proof}

As $|\textbf{y}_i|_a\leq|\textbf{y}|_a$ and $\ell_i\leq\ell/2$ for $i=3,4$, Lemma \ref{counterexample combs}(2) implies
$$M\leq|\textbf{y}|_a+K\ell+4\leq|\textbf{y}|_a+3K\ell/2$$
Assuming that $2K\ell\leq M$, we then have $|\textbf{y}|_a\geq \frac{1}{2}K\ell\geq K_0\ell$. 

By Lemma \ref{comb weights}, we have 
\begin{align*}
\text{wt}(\Gamma)&\leq c_0K_0\ell^2+2(|\textbf{y}|_a+\ell)\ell+C_1(K_0\ell+|\textbf{y}|_a+\ell)^2 \\
&\leq(c_0K_0+2+C_1(K_0+1)^2)\ell^2+(2+2C_1(K_0+1))|\textbf{y}|_a\ell+C_1|\textbf{y}|_a^2
\end{align*}
So, as $C_2$ is chosen after $C_1$, $K_0$, and $c_0$, we have:
$$\text{wt}(\Gamma)\leq C_2|\textbf{y}|_a^2$$
Since $|\textbf{y}|_\theta=\ell$ and $|\textbf{y}|_q\geq2$, Lemma \ref{lengths}(a) implies that $|\textbf{y}|\geq2+\ell+(|\textbf{y}|_a-\ell)\delta\geq\ell+2+\frac{1}{2}\delta|\textbf{y}|_a$.

Let $\Delta'$ be the $M$-minimal diagram formed from $\Delta$ by removing $\Gamma$. Then in $\partial\Delta'$, $\textbf{y}$ is replaced with $\textbf{z}$. Lemma \ref{lengths}(b) implies that $|\textbf{z}|=\ell$.

Letting $\textbf{s}$ be the complement of $\textbf{z}$ in $\partial\Delta'$, Lemma \ref{lengths}(c) implies $|\partial\Delta'|\leq|\textbf{s}|+\ell$ and $|\partial\Delta|\geq|\textbf{s}|+|\textbf{y}|-2\delta$. So, $|\partial\Delta|-|\partial\Delta'|\geq\gamma=\max(1,\frac{1}{2}\delta|\textbf{y}|_a)$.

Hence, we may apply the inductive hypothesis to $\Delta'$, yielding
$$\text{wt}_G(\Delta')\leq N_2|\partial\Delta'|^2+N_1\mu(\Delta')\leq N_2(|\partial\Delta|-\gamma)^2+N_1\mu(\Delta')$$
As $\gamma\leq|\partial\Delta|$, $(|\partial\Delta|-\gamma)^2\leq|\partial\Delta|^2-\gamma|\partial\Delta|$. Lemma \ref{mixtures} further implies that $\mu(\Delta')\leq\mu(\Delta)$. So, adding in the weight of $\Gamma$, Lemma \ref{G-weight subdiagrams} implies:
$$\text{wt}_G(\Delta)\leq N_2|\partial\Delta|^2-N_2\gamma|\partial\Delta|+N_1\mu(\Delta)+C_2|\textbf{y}|_a^2$$
So, it suffices to show that $N_2\gamma|\partial\Delta|\geq C_2|\textbf{y}|_a^2$. 

But $\frac{1}{2}\delta|\textbf{y}|_a\leq\gamma\leq|\partial\Delta|$, so that $N_2\gamma|\partial\Delta|\geq\frac{1}{4}N_2\delta^2|\textbf{y}|_a^2$. So, the desired inequality follows from the parameter choices $N_2>>C_2>>\delta^{-1}$.

\end{proof}

\bigskip

\begin{lemma} \label{diskless}

The counterexample diagram $\Delta$ does not exist.

\end{lemma}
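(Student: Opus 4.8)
The plan is to derive a contradiction from the existence of the minimal counterexample $\Delta$ by analyzing the tight subcomb $\Gamma$ constructed after Lemma~\ref{6.18}, together with the various sub-pieces $\Gamma_1,\Gamma_2,\Gamma_3,\Gamma_4$ and the paths $\textbf{t},\textbf{b},\textbf{y}$ introduced in the run-up to Lemma~\ref{counterexample combs}. By Lemma~\ref{H>M} we already have the key geometric estimate $2K\ell > M$ where $M=\max(|\textbf{b}|_a,|\textbf{t}|_a)$ and $\ell$ is the height of $\Gamma$, and by Lemma~\ref{6.17} we have $\ell'>\ell/2$ for the height $\ell'$ of $\Gamma_1$. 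The idea is to bound $\text{wt}_G(\Delta)$ by splitting $\Delta$ into $\Gamma$ and its complement $\Delta'$, but inside $\Gamma$ to further partition into $\Gamma_1$, $\Gamma_2$, $\Gamma_3$, $\Gamma_4$, and the intervening $a$-cells. The crucial point is that $\Gamma_2$ is an $a$-trapezium with \emph{revolving} base, so Lemma~\ref{revolving G-weight} applies and gives $\text{wt}_G(\Gamma_2)\leq C_2 h_2\max(\|\textbf{b}\|,\|\textbf{t}\|) + C_2(\|\textbf{b}\|+\|\textbf{t}\|)^2$ with $h_2$ the height of $\Gamma_2$. Since $\Gamma_1$ is a subcomb of basic width at most $K$ (it is a derivative subcomb sitting inside the tight comb $\Gamma$), Lemma~\ref{6.16}(2) and Lemma~\ref{comb weights}(1) control $\text{wt}_G(\Gamma_1)$ in terms of $\ell'$ and $|\partial\Gamma_1|_a$, and $\Gamma_3,\Gamma_4$ are likewise combs of basic width at most $K_0$ controlled by Lemma~\ref{comb weights}.

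First I would set up the partition/covering of $\Delta$ into $\Delta'$ (the complement of $\Gamma$), $\Gamma_1$, $\Gamma_2$, $\Gamma_3$, $\Gamma_4$, and the finitely many $a$-cells of $\Gamma\setminus(\Gamma_1\sqcup\Gamma_2\sqcup\Gamma_3\sqcup\Gamma_4)$, noting these overlap only along $q$-bands so Lemma~\ref{G-weight subdiagrams} lets me sum $G$-weights. Then I would estimate $|\partial\Delta|-|\partial\Delta'|$ from below: replacing $\textbf{y}$ by $\textbf{z}=\textbf{bot}(\pazocal{Q})$ removes $\ell$ $\theta$-edges and at least the $q$-edges of $\pazocal{Q}'$ (and of $\pazocal{Q}_3,\pazocal{Q}_4$), so by Lemma~\ref{lengths}(a),(c) the perimeter drops by at least some $\gamma\geq\max(1,\tfrac12\delta|\textbf{y}|_a)$ — exactly the kind of bound used in Lemmas~\ref{H>M}, \ref{6.17}, \ref{6.18}. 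I would also track the mixture: removing $\pazocal{Q}'$ (and possibly $\pazocal{Q}_3$, $\pazocal{Q}_4$) decreases the number of $q$-beads separating pairs of $\theta$-beads on $\textbf{y}$, so by Lemma~\ref{mixtures}(d) one gets $\mu(\Delta)-\mu(\Delta')\geq$ a product of heights (on the order of $\ell'(\ell_3+\ell_4)$ or $\ell'\cdot\ell$), which will absorb the $N_1$-terms coming from $\text{wt}_G(\Gamma_1)$ and from the $C_2$-term of Lemma~\ref{revolving G-weight}. Applying the inductive hypothesis to $\Delta'$ (valid since $|\partial\Delta'|<|\partial\Delta|$) then yields $\text{wt}_G(\Delta')\leq N_2|\partial\Delta|^2 - N_2\gamma|\partial\Delta| + N_1\mu(\Delta) - N_1(\text{mixture drop})$.

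The contradiction is reached once I show that the accumulated $G$-weight of $\Gamma_1,\Gamma_2,\Gamma_3,\Gamma_4$, and the stray $a$-cells does not exceed $N_2\gamma|\partial\Delta| + N_1(\text{mixture drop})$. Here Lemma~\ref{H>M}'s inequality $2K\ell>M$ is what makes $\|\textbf{t}\|,\|\textbf{b}\|$ (and hence the revolving-trapezium bound for $\Gamma_2$) comparable to $\ell$ and $|\textbf{y}|_a$ rather than to an uncontrolled quantity: combined with $\ell<2\ell'$ and Lemma~\ref{counterexample combs}, every term is bounded by a constant times $|\textbf{y}|_a^2 + \ell^2 + (\text{products of heights})$. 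The $\ell^2$ and height-product terms are killed by the $N_1$ mixture drop (as in Lemmas~\ref{6.16} and \ref{6.17}), using $N_1\gg C_2\gg c_0,K,K_0,C_1$; the $|\textbf{y}|_a^2$ term is killed by $N_2\gamma|\partial\Delta|\geq\tfrac14 N_2\delta^2|\textbf{y}|_a^2$ exactly as in Lemma~\ref{H>M}, using $N_2\gg C_2\gg\delta^{-1}$. This contradicts $\text{wt}_G(\Delta)>N_2|\partial\Delta|^2+N_1\mu(\Delta)$, so $\Delta$ cannot exist.

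I expect the main obstacle to be the bookkeeping of which $q$-bands are actually removed when passing from $\Delta$ to $\Delta'$ and correctly lower-bounding the corresponding mixture decrease: one must be careful that $\pazocal{Q}'$ and $\pazocal{Q}_2$ (the first occurrence of $x$ and the subband of the handle) genuinely separate the $\theta$-edges on $\textbf{y}_3$, $\textbf{y}_4$, and the portions of $\textbf{y}$ near $\textbf{t}''$, $\textbf{b}''$ — the tightness of $B$ (the letter $x$ does not occur in the prefix $u$) is precisely what guarantees $\pazocal{Q}'$ is distinct from the $q$-bands bounding $\Gamma_3,\Gamma_4$, and Lemma~\ref{6.16}(2) guarantees there are few enough ($\leq K$) $q$-edges on $\textbf{y}$ that all the relevant pairs lie within $J$ black beads of each other. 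The secondary obstacle is verifying that the covering used for $\Delta$ is legitimate, i.e. that $\Gamma_2$ really is either a big $a$-trapezium or (more likely, since we may assume it is not big by an earlier reduction) has revolving base with the bound of Lemma~\ref{revolving G-weight} available, and that the stray $a$-cells contribute only $\sum C_1\|\partial\pi\|^2$, controlled via Lemma~\ref{counterexample combs}(1) by $(|\textbf{y}_3''|_a + |\textbf{y}_4''|_a + O(1))^2 \leq O(|\textbf{y}|_a^2)$.
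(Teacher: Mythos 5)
Your plan breaks down at the treatment of $\Gamma_1$. You propose to cut off the whole tight comb $\Gamma$ (so $\Gamma_1$ included), bound $\text{wt}_G(\Gamma_1)$ by Lemma \ref{comb weights}, and absorb it into $N_2\gamma|\partial\Delta|$ plus the mixture drop. But Lemma \ref{comb weights} gives for $\Gamma_1$ a term of order $c_0K_0(\ell')^2+C_1(K_0\ell'+\alpha_1)^2$, i.e. quadratic in $\ell'$, while the only mixture decrease produced by deleting the black beads at the ends of $\pazocal{Q}'$ is of order $\ell'(\ell_3+\ell_4)$ (Lemma \ref{mixtures}(d) only credits pairs of white beads lying on opposite sides of a removed black bead, so you cannot get $\ell'\cdot\ell$). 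Since $\ell_3+\ell_4=\ell-\ell'<\ell'$ by Lemma \ref{6.17} and may even be zero (when every $\theta$-band of $\Gamma$ crosses $\pazocal{Q}'$), the $N_1$-term need not dominate $(\ell')^2$. The perimeter gain cannot save this either: after Lemma \ref{H>M} we are precisely in the regime $2K\ell>M$, where $|\textbf{y}|_a$ may be much smaller than $\ell$, so $N_2\gamma|\partial\Delta|$ is only linear in $\ell'$ (the quadratic gain $\tfrac14N_2\delta^2|\textbf{y}|_a^2$ that killed the comb weight in Lemma \ref{H>M} was exactly the complementary case $|\textbf{y}|_a\geq K_0\ell$, which is no longer available). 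So the accumulated weight of $\Gamma_1$ cannot be absorbed by your inequality.

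The paper avoids estimating $\Gamma_1$ at all: since the base of $\Gamma_2$ is revolving, the bands $\pazocal{Q}'$ and $\pazocal{Q}_2$ are labelled identically, so $\Gamma_1$ is excised and re-pasted onto $\Delta_1=\Delta\setminus(\Gamma\setminus\pazocal{Q})$ along $\pazocal{Q}_2$, producing a weakly smaller diagram $\Delta_0$ that still contains $\Gamma_1$; the inductive hypothesis is applied to $\Delta_0$, so $\Gamma_1$'s weight is charged to the induction rather than bounded explicitly. Only $\Gamma_2$, $\Gamma_3$, $\Gamma_4$, the stray $a$-cells and the $\ell'$ cells of $\pazocal{Q}_2$ are paid for directly, and their bounds are expressed (via Lemma \ref{H>M}, Lemma \ref{counterexample combs} and Lemma \ref{revolving G-weight}) in terms of $\nu_3$, $|\textbf{y}_3''|_a$, $|\textbf{y}_4''|_a$, which reappear in the perimeter drop $\gamma$, plus $\ell'(\ell_3+\ell_4)$ for the mixture; the tightness of $\Gamma$ is then used again to show that no element of a minimal covering of $\Delta_0$ straddles the pasting seam, so that $\text{wt}_G(\Delta)\leq\text{wt}_G(\Delta_0)+\text{wt}_G(\Gamma_2)+\text{wt}_G(\Gamma_3)+\text{wt}_G(\Gamma_4)+A+\ell'$. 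This surgery step, and the covering argument justifying it, are the missing ideas in your proposal; without them the induction cannot close.
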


\begin{proof}

Let $\Delta_1$ be the diagram obtained from $\Delta$ by removing $\Gamma\setminus\pazocal{Q}$. 

As the base of $\Gamma_2$ is revolving, the bands $\pazocal{Q}'$ and $\pazocal{Q}_2$ are labelled identically. So, we may construct a diagram $\Delta_0$ by pasting $\Gamma_1$ to $\Delta_1$ along $\pazocal{Q}'$ and $\pazocal{Q}_2$ (see Figure 8.10, compare with Figure 8.9).

Since an $a$-band cannot cross a $q$-band, any counterexample to (MM1) or (MM2) in $\Delta_0$ is contained in one of $\Delta_1$ or $\Gamma_1$. But $\Delta_1$ and $\Gamma_1$ are $M$-minimal as subdiagrams of $\Delta$. Hence, $\Delta_0$ must be $M$-minimal.


Let $\textbf{P}$ be a covering of $\Delta_0$. Suppose there exists $P\in\textbf{P}$ that is not completely contained in $\Gamma_1$ or $\Delta_1$. Then $P$ is a big $a$-trapezium containing a maximal $q$-band $\pazocal{B}$ that is a subband of $\pazocal{Q}_2$ (and $\pazocal{Q}'$) and which is not a side $q$-band of $P$. Then, the history of $P$ is a subword of the history of $\Gamma_2$, so that $\Gamma_2$ is itself a big $a$-trapezium by Lemma \ref{M controlled}.

Let $\pazocal{Q}''$ be the maximal $q$-band of $\Gamma$ corresponding to the first letter of the base of $P$. Further, let $\pazocal{T}$ be a $\theta$-band in $\Gamma$ connecting $\pazocal{Q}$ to $\pazocal{Q}''$. Then the base of $\pazocal{T}$ (read toward $\pazocal{Q}$) is $B_1B_2$, where $B_1$ is a prefix of the base of $P$ and $B_2$ is the base of $\Gamma_2$. By Lemma \ref{M controlled}, both the base of $P$ and $B_2$ are reduced. Moreover, since the first letter of $B_2$ appears in the base of $P$, $B_1B_2$ must be reduced.

\begin{figure}[H]
\centering
\includegraphics[height=3.5in]{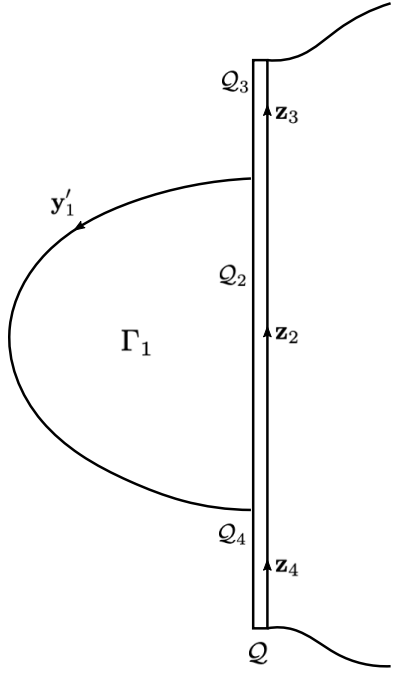}
\caption{The construction of $\Delta_0$}
\end{figure}

As $B_2$ is revolving, the first letter of $B_1$ appears in $B_2$. But then the base of the maximal $\theta$-band of $\Gamma$ containing $\pazocal{T}$ has a tight prefix, contradicting the assumption that $\Gamma$ is a tight comb.

So, for any covering of $\Delta_0$, each element is either contained completely in $\Gamma_1$ or completely in $\Delta_1$. Hence, given a minimal covering $\textbf{P}$ of $\Delta_0$, we may construct coverings $\textbf{P}'$ and $\textbf{P}''$ of $\Gamma_1$ and $\Delta_1$, respectively, by including only the elements belonging to these subdiagrams and perhaps adding in the cells of $\pazocal{Q}'$ or $\pazocal{Q}_2$. As at most the $\ell'$ cells of $\pazocal{Q}_2$ are counted twice in these coverings, we have $\text{wt}_G(\Delta_0)\geq\text{wt}_G(\Gamma_1)+\text{wt}_G(\Delta_1)-\ell'$.

Lemma \ref{G-weight subdiagrams} then implies:
\begin{equation} \label{G-weight}
\text{wt}_G(\Delta)\leq\text{wt}_G(\Delta_1)+\text{wt}_G(\Gamma)\leq\text{wt}_G(\Delta_0)+\text{wt}_G(\Gamma_2)+\text{wt}_G(\Gamma_3)+\text{wt}_G(\Gamma_4)+A+\ell'
\end{equation}
where $A$ is the sum of the weights of the $a$-cells attached to $\textbf{t}''$ or $\textbf{b}''$.

For $i=3,4$, note that the subpath $\textbf{y}_i''$ has no $\theta$-edges, while $\textbf{y}_i'$ consists of $\ell_i$ $\theta$-edges and at least one $q$-edge. So, Lemma \ref{lengths}(a) implies $|\textbf{y}_i|\geq1+\ell_i+\delta\max(0,|\textbf{y}_i|_a-\ell_i,|\textbf{y}_i''|_a-1)$.

Letting $\textbf{s}$ be the complement of $\textbf{y}$ in $\partial\Delta$, Lemma \ref{lengths}(c) then yields
$$|\partial\Delta|\geq|\textbf{s}|+|\textbf{y}|-2\delta\geq|\textbf{s}|+|\textbf{y}_3|+|\textbf{y}_1|+|\textbf{y}_4|-4\delta$$
Next, let $\textbf{y}_1'$ be the subpath of $\textbf{y}_1$ not containing the first or last edge. Note that both of these edges are $q$-edges corresponding to $\pazocal{Q}'$, so that $|\textbf{y}_1|=|\textbf{y}_1'|+2$.

Then, Lemma \ref{lengths}(c) implies $|\partial\Delta_0|\leq|\textbf{s}|+1+|\textbf{top}(\pazocal{Q}_3)|+|\textbf{y}_1'|+|\textbf{top}(\pazocal{Q}_4)|+1$. Further, Lemma \ref{lengths}(b) implies $|\textbf{top}(\pazocal{Q}_i)|=\ell_i$ for $i=3,4$. So, $|\partial\Delta_0|\leq|\textbf{s}|+|\textbf{y}_1|+\ell_3+\ell_4$.

Hence,
\begin{align*}
|\partial\Delta|-|\partial\Delta_0|\geq\gamma&=(|\textbf{y}_3|-\ell_3)+(|\textbf{y}_4|-\ell_4)-4\delta \\
&\geq2-4\delta+\delta\max(0,|\textbf{y}_3|_a-\ell_3,|\textbf{y}_3''|_a-1)+\delta\max(0,|\textbf{y}_4|_a-\ell_4,|\textbf{y}_4''|_a-1)
\end{align*}
So, taking $\delta^{-1}\geq4$, $|\partial\Delta|-|\partial\Delta_0|\geq\gamma\geq2-4\delta\geq1$.


Hence, we may apply the inductive hypothesis to $\Delta_0$, so that
$$\text{wt}_G(\Delta_0)\leq N_2|\partial\Delta_0|^2+N_1\mu(\Delta_0)\leq N_2(|\partial\Delta|-\gamma)^2+N_1\mu(\Delta_0)$$
In $\textbf{y}$, any $\theta$-edge of $\textbf{y}_1$ is separated from a $\theta$-edge of $\textbf{y}_3$ or $\textbf{y}_4$ by a $q$-edge at the end of $\pazocal{Q}'$. Moreover, since the basic width of $\Gamma$ is at most $K_0$, the parameter choice $J>>K_0$ implies that each of these (correctly ordered) pairs contribute to $\mu(\Delta)$. But the black beads corresponding to $\pazocal{Q}'$ are removed in the formation of the necklace for $\Delta_0$, so that Lemma \ref{mixtures}(d) implies
$$\mu(\Delta)-\mu(\Delta_0)\geq\ell'(\ell_3+\ell_4)$$
Noting that $\gamma\leq|\partial\Delta|$, we then have:
$$\text{wt}_G(\Delta_0)\leq N_2|\partial\Delta|^2-N_2\gamma|\partial\Delta|+N_1\mu(\Delta)-N_1\ell'(\ell_3+\ell_4)$$
Hence, by (\ref{G-weight}), it suffices to show that:
\begin{equation} \label{diskless suffices 1}
N_2\gamma|\partial\Delta|+N_1\ell'(\ell_3+\ell_4)\geq\text{wt}_G(\Gamma_2)+\text{wt}_G(\Gamma_3)+\text{wt}_G(\Gamma_4)+A+\ell'
\end{equation}
Setting $\nu_i=|\partial\Gamma_i|_a$ for $i=3,4$, Lemma \ref{comb weights} implies:
$$\text{wt}_G(\Gamma_i)\leq\text{wt}(\Gamma_i)\leq c_0K_0\ell_i^2+2\nu_i\ell_i+C_1(K_0\ell_i+\nu_i)^2\leq C_2(\ell_i+\nu_i)^2$$
By Lemmas \ref{revolving G-weight},
\begin{align*}
\text{wt}_G(\Gamma_2)&\leq C_2\ell'\max(\|\textbf{t}\|,\|\textbf{b}\|)+C_2(\|\textbf{t}\|+\|\textbf{b}\|)^2 \\
&\leq C_2\ell'(M+K_0)+4C_2(M+K_0)^2
\end{align*}
For any $a$-cell $\pi$ whose weight contributes to $A$, $\pi$ is attached to either $\textbf{t}''$ or $\textbf{b}''$. Lemma \ref{a-cell in counterexample} then implies that at least a third of the edges of $\partial\pi$ are shared with $\textbf{t}''$ or $\textbf{b}''$. 

So, $A\leq C_1(3|\textbf{t}''|_a+3|\textbf{b}''|_a)^2\leq C_1(6M)^2\leq36C_1M^2$.

By Lemma \ref{H>M} and \ref{6.17}, $M\leq2K\ell\leq4K\ell'$. So, since $C_2>>C_1>>K$, $A\leq C_2\ell'M$.

Hence, the parameter choices $C_3>>C_2>>K>>K_0$ imply
\begin{align*}
\text{wt}_G(\Gamma_2)+A+\ell'&\leq C_2\ell'M+C_2K_0\ell'+4C_2M^2+8C_2K_0M+4C_2K_0^2+C_2\ell'M+\ell' \\
&\leq 2C_2\ell'M+16C_2K\ell'M+C_2K_0\ell'+32C_2K_0K\ell'+\ell'+4C_2K_0^2 \\
&\leq C_3\ell'M+C_3\ell'+C_3
\end{align*}
So, by (\ref{diskless suffices 1}), it suffices to show that:
\begin{equation} \label{diskless suffices 2}
N_2\gamma|\partial\Delta|+N_1\ell'(\ell_3+\ell_4)\geq C_3\ell'M+C_3\ell'+C_3+C_2(\ell_3+\nu_3)^2+C_2(\ell_4+\nu_4)^2
\end{equation}
Without loss of generality, assume $\nu_4\leq\nu_3$.

Note $M=\max(|\textbf{t}|_a,|\textbf{b}|_a)=\max(|\textbf{t}''|_a+|\textbf{t}'|_a,|\textbf{b}''|_a+|\textbf{b}'|_a)\leq\max(|\textbf{t}''|_a,|\textbf{b}''|_a)+\max(|\textbf{t}'|_a,|\textbf{b}'|_a)$. Since $\textbf{t}'$ and $\textbf{b}'$ are subpaths of $\partial\Gamma_3$ and $\partial\Gamma_4$, respectively, we then have $M\leq\max(|\textbf{t}''|_a,|\textbf{b}''|_a)+\nu_3$.

Lemma \ref{counterexample combs}(1) then yields $M\leq\max(|\textbf{y}_3''|_a,|\textbf{y}_4''|_a)+\nu_3+4$.

So, $C_3\ell'M\leq C_3\ell'\max(|\textbf{y}_3''|_a,|\textbf{y}_4''|_a)+C_3\ell'\nu_3+4C_3\ell'$.

As $\gamma\geq1$, $|\partial\Delta|\geq\ell'$, and $|\partial\Delta|\geq2$, the parameter choice $N_2>>C_3$ allows us to assume that $$\frac{1}{3}N_2\gamma|\partial\Delta|\geq5C_3\ell'+C_3$$
Hence, it suffices to show that:
\begin{equation} \label{diskless suffices 3}
\frac{2}{3}N_2\gamma|\partial\Delta|+N_1\ell'(\ell_3+\ell_4)\geq C_3\ell'\max(|\textbf{y}_3''|_a,|\textbf{y}_4''|_a)+C_3\ell'\nu_3+C_2(\ell_3+\nu_3)^2+C_2(\ell_4+\nu_4)^2
\end{equation}
Suppose $\max(|\textbf{y}_3''|_a,|\textbf{y}_4''|_a)\leq1$. Then since $\gamma\geq1$, $|\partial\Delta|\geq\ell'$, and $N_2>>C_3$, we may take 
\begin{equation} \label{diskless inequality}
\frac{1}{3}N_2\gamma|\partial\Delta|\geq C_3\ell'\max(|\textbf{y}_3''|_a,|\textbf{y}_4''|_a)
\end{equation}
Otherwise, recall that $\gamma\geq\delta(|\textbf{y}_3''|_a-1)+\delta(|\textbf{y}_4''|_a-1)\geq\delta(\max(|\textbf{y}_3''|_a,|\textbf{y}_4''|_a)-1)\geq\frac{1}{2}\delta\max(|\textbf{y}_3''|_a,|\textbf{y}_4''|_a)$. So, since $|\partial\Delta|\geq\ell'$, the parameter choices $N_2>>C_3>>\delta^{-1}$ allow us to again assume (\ref{diskless inequality}) holds.

Thus, by (\ref{diskless suffices 3}), it suffices to show that
\begin{equation} \label{diskless suffices 4}
\frac{1}{3}N_2\gamma|\partial\Delta|+N_1\ell'(\ell_3+\ell_4)\geq C_3\ell'\nu_3+C_2(\ell_3+\nu_3)^2+C_2(\ell_4+\nu_4)^2
\end{equation}

\smallskip

\ \textbf{1.} Suppose $\nu_3\leq 3J(\ell_3+\ell_4)$.

Then, for $i=3,4$, $\ell_i+\nu_i\leq4J(\ell_3+\ell_4)$. As Lemma \ref{6.17} implies $\ell_3+\ell_4\leq\ell'$, this implies $(\ell_i+\nu_i)^2\leq16J^2\ell'(\ell_3+\ell_4)$.

So, the parameter choices $C_3>>C_2>>J$ imply $C_2(\ell_3+\nu_3)^2+C_2(\ell_4+\nu_4)^2\leq C_3\ell'(\ell_3+\ell_4)$.

Hence, as $N_1>>C_3>>J$, we have $$N_1\ell'(\ell_3+\ell_4)\geq C_3\ell'\nu_3+C_2(\ell_3+\nu_3)^2+C_2(\ell_4+\nu_4)^2$$

\smallskip

Thus, we may assume that $\nu_3>3J(\ell_3+\ell_4)$.

As a result, for $i=3,4$, $\ell_i+\nu_i\leq\ell_3+\ell_4+\nu_3\leq2\nu_3$. So, $C_2(\ell_i+\nu_i)^2\leq4C_2\nu_3^2$.

It then follows from (\ref{diskless suffices 4}) that it suffices to show that
\begin{equation} \label{diskless suffices 5}
\frac{1}{3}N_2\gamma|\partial\Delta|+N_1\ell'(\ell_3+\ell_4)\geq C_3\ell'\nu_3+8C_2\nu_3^2
\end{equation}

\smallskip

\ \textbf{2.} Suppose $\nu_3\leq16$.

So, $C_3\ell'\nu_3+8C_2\nu_3^2\leq8C_3\ell'+C_3$ by the parameter choice $C_3>>C_2$.

As $|\partial\Delta|\geq\max(2,\ell')$ and $\gamma\geq1$, the parameter choices $N_2>>C_3>>C_2$ allow us to assume that
$$\frac{1}{3}N_2\gamma|\partial\Delta|\geq C_3\ell'\nu_3+8C_2\nu_3^2$$

\smallskip

\ \textbf{3.} Thus, it suffices to assume that $\nu_3>\max(3J(\ell_3+\ell_4),16)$ and show that (\ref{diskless suffices 5}) holds.

As any $a$-edge of $\partial\Gamma_3$ is part of $\textbf{y}_3'$, $\textbf{t}'$, or $\textbf{bot}(\pazocal{Q}_3)$, we have 
$$\nu_3\leq|\textbf{y}_3|_a+|\textbf{t}|_a+\ell_3$$
By Lemma \ref{counterexample combs}(2), this implies $\nu_3\leq2|\textbf{y}_3|_a+2K\ell_3+\ell_3+4\leq2|\textbf{y}_3|_a+J\ell_3+4$.

Note that $J\ell_3+4<\frac{1}{3}\nu_3+\frac{1}{4}\nu_3=\frac{5}{12}\nu_3$, so that $\frac{7}{12}\nu_3\leq2|\textbf{y}_3|_a$.

Recall that $\gamma\geq\delta(|\textbf{y}_3|_a-\ell_3)$. So, since $\ell_3<\frac{1}{3J}\nu_3\leq\frac{1}{24}\nu_3$ by taking $J\geq8$, we have $\gamma\geq\frac{1}{4}\delta\nu_3$.

As $|\partial\Delta|\geq\gamma$, we then have $\gamma|\partial\Delta|\geq\frac{1}{16}\delta^2\nu_3^2$. So, the parameter choices $N_2>>C_2>>\delta^{-1}$ allow us to assume
$$\frac{1}{6}N_2\gamma|\partial\Delta|\geq8C_2\nu_3^2$$
By (\ref{diskless suffices 5}), it then suffices to show:
\begin{equation} \label{diskless suffices 6}
\frac{1}{6}N_2\gamma|\partial\Delta|+N_1\ell'(\ell_3+\ell_4)\geq C_3\ell'\nu_3
\end{equation}
But $|\partial\Delta|\geq\ell'$, so that the parameter choices $N_2>>C_3>>\delta^{-1}$ give us:
$$\frac{1}{6}N_2\gamma|\partial\Delta|\geq\frac{1}{24}N_2\delta\ell'\nu_3\geq C_3\ell'\nu_3$$

Thus, (\ref{diskless suffices 6}) is satisfied, and so the statement is proved.

\end{proof}

\medskip


\section{Diagrams with disks}

\subsection{Diminished, Minimial, and $D$-minimal diagrams} \

A $q$-letter of the form $t(i)$ for $2\leq i\leq L$ is called a \textit{$t$-letter}. Accordingly, a $(\theta,q)$-relation corresponding to a $t$-letter is called a \textit{$(\theta,t)$-relation}. Note that for each rule $\theta$ and each $t$-letter, the corresponding $(\theta,t)$-relation is of the simple form $\theta_jt(i)=t(i)\theta_{j+1}$.

Now, we modify the definition of a reduced diagram over the canonical presentation of $M_\Omega(\textbf{M})$ or over the disk presentation of $G_\Omega(\textbf{M})$. To this end, we introduce the \textit{signature} of such a diagram $\Delta$ as the four-tuple $s(\Delta)=(\a_1,\a_2,\a_3,\a_4)$ where: 

\begin{addmargin}[1em]{0em}

$\bullet$ $\a_1$ is the number of disks in $\Delta$ (of course, this is zero if $\Delta$ is a diagram over $M_\Omega(\textbf{M})$), 

$\bullet$ $\a_2$ is the number of $(\theta,t)$-cells, 

$\bullet$ $\a_3$ is the total number of $(\theta,q)$-cells, and

$\bullet$ $\a_4$ is the number of $a$-cells.

\end{addmargin}

The signatures of reduced diagrams over the disk presentation of $G_\Omega(\textbf{M})$ are ordered lexicographically. In particular, if $\Delta$ and $\Gamma$ are such diagrams with $s(\Delta)=(\a_1,\a_2,\a_3,\a_4)$ and $s(\Gamma)=(\b_1,\b_2,\b_3,\b_4)$, then $s(\Delta)\leq s(\Gamma)$ if:

\begin{addmargin}[1em]{0em}

$\bullet$ $\a_1\leq\b_1$

$\bullet$ for $i\in\{2,3,4\}$, if $\a_j=\b_j$ for all $j<i$, then $\a_i\leq\b_i$

\end{addmargin}

A reduced diagram $\Delta$ over the disk presentation of $G_\Omega(\textbf{M})$ is called \textit{diminished} if for any reduced diagram $\Gamma$ with $\lab(\partial\Delta)\equiv\lab(\partial\Gamma)$, we have $s(\Delta)\leq s(\Gamma)$.

Given a reduced diagram $\Delta$ over the disk presentation of $G_\Omega(\textbf{M})$ with $s(\Delta)=(\a_1,\a_2,\a_3,\a_4)$, the \textit{2-signature} of $\Delta$ is the ordered pair $s_2(\Delta)=(\a_1,\a_2)$. The \textit{1-signature} $s_1(\Delta)$ is defined similarly and can be interpreted simply as the number of disks in $\Delta$ with the natural order on the natural numbers.

A reduced diagram $\Delta$ over the disk presentation of $G_\Omega(\textbf{M})$ is called \textit{$D$-minimal} if for any reduced diagram $\Gamma$ with $\lab(\partial\Gamma)\equiv\lab(\partial\Delta)$, $s_1(\Delta)\leq s_1(\Gamma)$. By the definition of the lexicographic order, a diminished diagram is necessarily $D$-minimal.

Finally, reduced diagram $\Delta$ over the disk presentation of $G_\Omega(\textbf{M})$ is called \textit{minimal} if:

\begin{addmargin}[1em]{0em}

\begin{enumerate}[label=(M{\arabic*})]


\item for any $a$-cell $\pi$ and any $\theta$-band $\pazocal{T}$, at most half of the edges of $\partial\pi$ mark the start of an $a$-band that crosses $\pazocal{T}$,

\item no maximal $a$-band ends on two different $a$-cells, and

\item for any reduced diagram $\Gamma$ with $\text{Lab}(\partial\Delta)\equiv\text{Lab}(\partial\Gamma)$, $s_2(\Delta)\leq s_2(\Gamma)$.

\end{enumerate}

\end{addmargin}

Note that conditions (M1) and (M2) are equivalent to the conditions (MM1) and (MM2) in the definition of $M$-minimal. As a result, a minimal diagram containing no disks is necessarily $M$-minimal. Further, a diminished diagram necessarily satisfies (M3).

As with $M$-minimal diagrams, a subdiagram of a diminished (resp minimal, $D$-minimal) diagram is necessarily diminished (resp minimal, $D$-minimal).

In what follows, it is taken implicitly that any diminished, minimal, or $D$-minimal diagram over $G_\Omega(\textbf{M})$ is formed over its disk presentation (rather than its canonical presentation).

\begin{lemma} \label{diminished exist}

A word $W$ over $\pazocal{X}$ represents the trivial element of $M_\Omega(\textbf{M})$ if and only if there exists a diminished diagram $\Delta$ over $M_\Omega(\textbf{M})$ such that $\lab(\partial\Delta)\equiv W$ and $\Delta$ contains no $\theta$-annuli.

\end{lemma}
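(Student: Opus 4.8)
The plan is to prove the two directions separately, with the forward direction being essentially trivial and the bulk of the argument devoted to the reverse direction. For the reverse direction, suppose $\Delta$ is a diminished diagram over $M_\Omega(\textbf{M})$ with $\lab(\partial\Delta)\equiv W$; then van Kampen's Lemma immediately gives $W=1$ in $M_\Omega(\textbf{M})$, and this direction requires nothing further. For the forward direction, suppose $W=1$ in $M_\Omega(\textbf{M})$. By van Kampen's Lemma there exists \emph{some} reduced diagram $\Delta_0$ over the canonical presentation of $M_\Omega(\textbf{M})$ with $\lab(\partial\Delta_0)\equiv W$. The first step is then to pass to a diminished diagram: among all reduced diagrams with boundary label $W$, choose one, call it $\Delta$, whose signature $s(\Delta)=(\a_1,\a_2,\a_3,\a_4)$ is minimal in the lexicographic order (such a minimum exists since signatures are tuples of nonnegative integers and the set of reduced diagrams with a given boundary label of bounded area is finite — one should remark that we may restrict to diagrams of area at most that of $\Delta_0$). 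By definition $\Delta$ is diminished. Since $\Delta_0$ is a diagram over $M_\Omega(\textbf{M})$, we have $\a_1=0$, consistent with the parenthetical remark in the statement.

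It remains to show that a diminished diagram $\Delta$ over $M_\Omega(\textbf{M})$ contains no $\theta$-annuli. The key tool is Lemma \ref{M_a no annuli 2}(1): if $\Delta$ contained a $\theta$-annulus $S$ with enclosed subdiagram $\Delta_S$, then $\Delta_S$ contains no $(\theta,q)$-cells (in particular no $(\theta,t)$-cells) and $\lab(\partial\Delta_S)$ is a word over the tape alphabet of the `special' input sector. I would then argue that $S$ together with $\Delta_S$ can be surgically replaced by a diagram with strictly smaller signature, contradicting diminishedness. Concretely: since every cell of $S$ is a $(\theta,a)$-cell (by the proof of Lemma \ref{M_a no annuli 2}(1), as each cell of $S$ cannot be a $(\theta,q)$-cell — else a $q$-band would yield a forbidden $(\theta,q)$-annulus) and since the $\theta$-edges on both contour components of $S$ correspond to the same rule $\theta$ with $Y_{\bullet}(\theta)$ the full tape alphabet, the outer and inner boundary labels of $S$ are equal words over that tape alphabet, say both equal to $V$. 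Thus $\lab$ of the outer contour of $\Delta_S$ equals $\lab$ of the inner contour, both equal to $V$, and $\Delta_S$ is a diagram (over $M_\Omega(\textbf{M})$) with boundary label $V$ containing only $(\theta,a)$-cells and $a$-cells. Now excise the subdiagram $\Delta_S \cup S$ from $\Delta$ and paste $\Delta_S$ back directly in its place: the outer boundary of $S$ matches $\partial\Delta_S$ (both labelled $V$), so the resulting diagram $\Delta'$ has the same boundary label $W$, is still reduced (after removing any cancelling pairs that arise, which only decreases the signature further), and has strictly fewer $(\theta,a)$-cells — namely it has lost exactly the cells of $S$, which number at least one — while gaining no disks and no $(\theta,t)$-cells and no new $(\theta,q)$-cells. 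Hence $s(\Delta') < s(\Delta)$, contradicting the diminishedness of $\Delta$.

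The step I expect to be the main obstacle is making the surgery argument in the previous paragraph fully rigorous — in particular verifying that the inner and outer contour labels of the $\theta$-annulus $S$ genuinely coincide as words (not merely up to the action of $\theta$), that pasting $\Delta_S$ back in place produces a well-defined planar diagram, and that the re-reduction needed afterward strictly decreases the lexicographic signature rather than merely the area. One subtlety worth dwelling on: $\Delta_S$ itself might a priori contain $\theta$-annuli, but since the surgery strictly decreases the count of $(\theta,a)$-cells, an inductive argument on the signature (or on $\a_3+\a_4$) handles the possibility of nested annuli without circularity. A cleaner alternative, which I would likely adopt to streamline the write-up, is to take $\Delta$ to be diminished and then observe directly: if $\Delta$ had a $\theta$-annulus, choose an innermost one $S$ (innermost in the sense of Lemma \ref{M_a no annuli 2}(2)'s partial order); by Lemma \ref{M_a no annuli 2}(1) its interior $\Delta_S$ consists only of $(\theta,a)$-cells and $a$-cells, the annulus $S$ consists only of $(\theta,a)$-cells, and the replacement described above applies verbatim with no nesting issues, immediately contradicting minimality of $s(\Delta)$. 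This gives the cleanest path and I would organize the final proof around it.
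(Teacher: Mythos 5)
Your argument for the reverse direction and for the existence of a diminished diagram is essentially fine (though the restriction to diagrams of area at most $\text{Area}(\Delta_0)$ is unnecessary and does not obviously capture the global minimizer; it is cleaner to note that the lexicographic order on $\mathbb{N}^4$ is a well-order, so the set of signatures of reduced diagrams with boundary label $W$ has a least element). The genuine gap is the final contradiction step. The signature $s(\Delta)=(\a_1,\a_2,\a_3,\a_4)$ counts disks, $(\theta,t)$-cells, $(\theta,q)$-cells and $a$-cells, but \emph{not} $(\theta,a)$-cells. Your surgery removes only the cells of the annulus $S$, all of which are $(\theta,a)$-cells, and by your own accounting gains or loses no disks, $(\theta,t)$-, $(\theta,q)$- or $a$-cells; hence $s(\Delta')=s(\Delta)$, not $s(\Delta')<s(\Delta)$, and no contradiction with diminishedness arises. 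Worse, the statement you are trying to prove -- that \emph{every} diminished diagram is free of $\theta$-annuli -- is false: take a single $a$-cell whose boundary label is a cyclically reduced word $w\in\Omega$ and surround it by the $\theta$-annulus of $(\theta,a)$-cells copying $w$ (for a rule $\theta$ whose domain contains the alphabet of the `special' input sector). This diagram is reduced, has signature $(0,0,0,1)$, which is minimal for the boundary label $w$, hence is diminished, yet it contains a $\theta$-annulus. This is exactly why the lemma asserts only that \emph{some} diminished diagram without $\theta$-annuli exists.

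The correct route, which is what the paper does, is surgery-plus-iteration rather than contradiction. Given a diminished $\Delta$ containing a $\theta$-annulus, choose a minimal one $S$ with respect to the partial order of Lemma \ref{M_a no annuli 2}; by minimality and Lemma \ref{M_a no annuli 2}(1), the subdiagram $\Delta_S$ bounded by the outer contour of $S$ consists of the $(\theta,a)$-cells of $S$ together with $a$-cells only, so $\lab(\partial\Delta_S)$ is a word over $\pazocal{A}$ representing the trivial element of $B(\pazocal{A},n)$, i.e. (a cyclic permutation of) an element of $\pazocal{S}\subseteq\Omega$. One then excises all of $\Delta_S$ and pastes a single $a$-cell in its place; this preserves the boundary label, does not increase the signature (so the resulting diagram is again diminished), and strictly decreases the number of $\theta$-annuli, so iterating terminates in the required diagram. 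Your own surgery (delete $S$ and glue the interior to the exterior along the coinciding labels) can also be organized this way -- each step keeps the signature and strictly decreases the number of cells, so the iteration terminates -- but it cannot be used as a one-step contradiction, and the write-up you propose to organize around that contradiction does not prove the lemma.
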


\begin{proof}

The reverse direction follows immediately from van Kampen's Lemma (see Section 2.1). 

Let $\pazocal{S}_1$ be the set of relators defining the $(\theta,a)$-relations of $M_\Omega(\textbf{M})$, i.e the words $[\theta_i,a]$ for $\theta\in\Theta^+$ and $a\in Y_i(\theta)$. Similarly, let $\pazocal{S}_2$ be the set of relators defining the $a$-relations, $\pazocal{S}_3$ be the set of relators defining the $(\theta,q)$-relations for the $q$-letters that are not $t$-letters, and $\pazocal{S}_4$ be the set of relators defining the $(\theta,t)$-relations. Note that any cyclic permutation of an element of $\pazocal{S}_i^{\pm1}$ is not an element of $\pazocal{S}_j$ for $j\neq i$. So, the partition of the relations given by $\pazocal{S}_1\sqcup\dots\sqcup\pazocal{S}_4$ defines a grading on the presentation of $M_\Omega(\textbf{M})$ (see Section 2.7).

A reduced graded diagram with respect to this grading has minimal signature. Hence, for $W$ a word over $\pazocal{X}$ representing the trivial element of $G_\Omega(\textbf{M})$, the strengthened version of van Kampen's Lemma (Section 2.7) yields a diminished diagram $\Delta$ over $M_\Omega(\textbf{M})$ with $\lab(\partial\Delta)\equiv W$.

Now suppose $\Delta$ contains a $\theta$-annulus. As $\theta$-bands cannot cross, the $\theta$-annuli of $\Delta$ are partially ordered as in the proof of Lemma \ref{M_a no annuli 2}(2). Since $\Delta$ is finite, there exists a minimal $\theta$-annulus $S$ with respect to this partial order. 

Let $\Delta_S$ be the subdiagram bounded by the outer contour of $S$ and suppose $\Delta_S\setminus S$ contains a $(\theta,a)$-cell $\pi$. Then, let $S'$ be the maximal $\theta$-band containing $\pi$. Since $\theta$-bands cannot cross, $S'$ must be a $\theta$-annulus contained in $\Delta_S\setminus S$. But this contradicts the minimality of $S$.

So, since Lemma \ref{M_a no annuli 2}(1) implies that $\Delta_S$ contains no $(\theta,q)$-cells, $\Delta_S\setminus S$ consists entirely of $a$-cells. Hence, $\lab(\partial\Delta_S)\equiv\lab(\partial(\Delta_S\setminus S))$ is trivial in the group $B(\pazocal{A},n)$.

As a result, we may form the reduced diagram $\Gamma$ by excising $\Delta_S$ from $\Delta$, pasting a single $a$-cell in its place, and making any necessary cancellations. Note that $\lab(\partial\Gamma)\equiv\lab(\partial\Delta)$ and $s(\Gamma)\leq s(\Delta)$, so that $\Gamma$ must again be diminished. However, the number of $\theta$-annuli in $\Gamma$ is one less than the number in $\Delta$.

Iterating this process, we remove all $\theta$-annuli in $\Delta$, producing a diminished diagram satisfying the statement.

\end{proof}

\begin{lemma} \label{a-bands between a-cells}

Every diminished diagram satisfies (M2).

\end{lemma}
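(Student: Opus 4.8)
The plan is to argue by contradiction, following the pattern already used for $\theta$-annuli in the proof of Lemma \ref{diminished exist}. Suppose $\Delta$ is a diminished diagram violating (M2), so there is a maximal $a$-band $\pazocal{B}$ ending on two distinct $a$-cells $\pi_1$ and $\pi_2$. Since $\lab(\partial\pi_1)$ and $\lab(\partial\pi_2)$ are reduced words (relators of $B(\pazocal{A},n)$, which are cyclically reduced), $\pazocal{B}$ must have nonzero length, and every cell of $\pazocal{B}$ is a $(\theta,a)$-cell whose $a$-edges are all labelled by the same letter $a$ of the `special' input sector alphabet.

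First I would isolate the subdiagram $\Delta_0$ cut off by one side $\textbf{q}$ of $\pazocal{B}$ together with the subpaths of $\partial\pi_1$ and $\partial\pi_2$ running between the two ends of $\pazocal{B}$ — concretely, $\partial\Delta_0$ consists of $\textbf{q}$ (a path of $\theta$-edges, since the sides of an $a$-band are made of $\theta$-edges) and arcs of $\partial\pi_1$, $\partial\pi_2$ (paths of $a$-edges) and possibly other boundary. The key structural observation is that any maximal $\theta$-band of $\Delta_0$ starting on $\textbf{q}$ cannot end on $\partial\pi_1$ or $\partial\pi_2$ (those boundaries have no $\theta$-edges), so it must end on $\textbf{q}$ again; but then that $\theta$-band together with a subband of $\pazocal{B}$ forms a $(\theta,a)$-annulus, contradicting Lemma \ref{M_a no annuli 1}(2). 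Hence $\Delta_0$ contains no $(\theta,a)$-cells other than those of $\pazocal{B}$; and by the same reasoning no $(\theta,q)$-cells (a $q$-band from $\textbf{q}$ would have to return, giving a $(\theta,q)$-annulus against Lemma \ref{M_a no annuli 1}(1), and actually $\textbf{q}$ has no $q$-edges so there are none). Thus $\Delta_0$ minus $\pazocal{B}$ consists only of $a$-cells, so the label of the boundary arc of $\Delta_0$ lying along $\pi_1$, $\pazocal{B}$, $\pi_2$ is a word over $\pazocal{A}$ trivial in $B(\pazocal{A},n)$.

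Next I would perform the surgery: excise from $\Delta$ the union $\Delta_1$ of $\pi_1$, $\pi_2$, $\pazocal{B}$ and the $a$-cells in the region between them, and paste in a single $a$-cell whose boundary label is the word just described (this is a relator of $B(\pazocal{A},n)$, hence an admissible $a$-relation since $\pazocal{S}\subseteq\Omega$), then make any free/length reductions needed to obtain a reduced diagram $\Gamma$ with $\lab(\partial\Gamma)\equiv\lab(\partial\Delta)$. Comparing signatures, $\Gamma$ has the same number of disks and $(\theta,t)$-cells (none were touched), no more $(\theta,q)$-cells, and strictly fewer $(\theta,a)$-cells if $\pazocal{B}$ had positive length, while the $a$-cell count changed from at least two to one. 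Either way $s(\Gamma)<s(\Delta)$ in the lexicographic order, contradicting the assumption that $\Delta$ is diminished. The main obstacle — and the step requiring the most care — is verifying that the region $\Delta_1$ is genuinely a disk subdiagram (so that excision and regluing produce a legitimate van Kampen diagram) and that no new counterexample to reducedness or to the signature minimality is introduced by the forced cancellations; this is where one must use Lemma \ref{a-band on same a-cell} to rule out $\pazocal{B}$ returning to the same $a$-cell, and Lemma \ref{M_a no annuli 1} to control what sits between $\pazocal{B}$ and the boundary arc.
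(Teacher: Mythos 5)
There is a genuine gap, and it sits exactly at the step you flag as the one needing care. The configuration $\pi_1\cup\pazocal{B}\cup\pi_2$ is a ``dumbbell'': it does not enclose any region of $\Delta$. The subdiagram you call $\Delta_0$, bounded by one side $\textbf{q}$ of $\pazocal{B}$ and arcs of $\partial\pi_1,\partial\pi_2$, does not close up on its own --- its remaining boundary is in general a portion of $\partial\Delta$ (or of arbitrary cells of $\Delta$). Consequently a maximal $\theta$-band starting on $\textbf{q}$ may simply end on $\partial\Delta$; no $(\theta,a)$-annulus arises, and there is no reason the cells on that side of $\pazocal{B}$ are only $a$-cells. (The argument you are importing is the one for Lemma \ref{a-band on same a-cell}, where the band has both ends on the \emph{same} cell and therefore genuinely bounds a region together with an arc of that cell's boundary; that is not the situation here.) The surgery then also fails to type-check: after excising $\pi_1$, $\pazocal{B}$, $\pi_2$ and whatever $a$-cells you include, the boundary of the hole still contains the $\theta$-edges of the far side of $\pazocal{B}$, so its label has the form $uHvH^{-1}$ with $H\in F(R)$ nonempty --- not a word over $\pazocal{A}$ --- and it cannot be filled by a single $a$-cell.

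The paper's proof repairs precisely these two points. It takes $\Delta_0=\pi_1\cup\pazocal{B}\cup\pi_2$ itself, with no claim about neighbouring cells, observes that every rule occurring in $H$ has the entire `special' input alphabet as its domain (because the band's cells are $(\theta,a)$-cells in that sector), and builds an annular diagram of $(\theta,a)$-cells with outer label $uHvH^{-1}$ and inner label $uv$. From this it deduces $uv=1$ in $M_\Omega(\textbf{M})$; Lemma \ref{diminished exist} then yields a diminished diagram for $uv$ which, having no $\theta$- or $q$-edges on its boundary and no $\theta$-annuli, consists of $a$-cells only, so $uv=1$ in $B(\pazocal{A},n)$, hence $uv\in\pazocal{S}\subseteq\Omega$ and the inner hole is filled by exactly one $a$-cell. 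Pasting this assembly in place of the dumbbell keeps the contour label, keeps the numbers of disks, $(\theta,t)$- and $(\theta,q)$-cells, and drops the number of $a$-cells by one, contradicting minimality of $s(\Delta)$. The (possibly many) new $(\theta,a)$-cells introduced by the annulus are harmless because they do not enter the signature --- note your proposal also slips here, since ``strictly fewer $(\theta,a)$-cells'' is not one of the four components of $s(\Delta)$. So the intended conclusion is right, but the load-bearing steps --- producing a boundary word that is a pure $a$-word and exhibiting a legitimate replacement diagram --- require the conjugation annulus and the $\pazocal{S}\subseteq\Omega$ argument, not the annulus-free excision you describe.
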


\begin{proof}

Arguing toward a contradiction, let $\pi_1$ and $\pi_2$ be two $a$-cells in a diminished diagram $\Delta$ connected by an $a$-band. Let $\Delta_0$ be the subdiagram consisting of $\pi_1$, $\pi_2$, and this $a$-band (see Figure 9.1).

As an $a$-band consists only of $(\theta,a)$-cells, the top and bottom of the $a$-band have equivalent labels visually equal to a word $H\in F(R)$.

So, $\lab(\partial\Delta_0)\equiv uHvH^{-1}$ for some words $u,v\in F(\pazocal{A})$. Note that for any rule $\theta$ corresponding to a letter of $H$, the makeup of the $a$-band implies the existence of a $(\theta,a)$-relation corresponding to $\theta$ and an $a$-letter from the `special' input sector. This then implies that the domain of $\theta$ in the `special' input sector is nonempty, which in turn implies that the domain of $\theta$ in this sector is the entire alphabet.

As a result, we can build a reduced annular diagram $\Gamma'$ over the canonical presentation of $M(\textbf{M})$ with outer label $uHvH^{-1}$, inner label $uv$, and made up entirely of $(\theta,a)$-cells. Then, since $\Delta_0$ is a diagram over $M_\Omega(\textbf{M})$, we have $uv=1$ in $M_\Omega(\textbf{M})$.

\begin{figure}[H]
\centering
\includegraphics[scale=1]{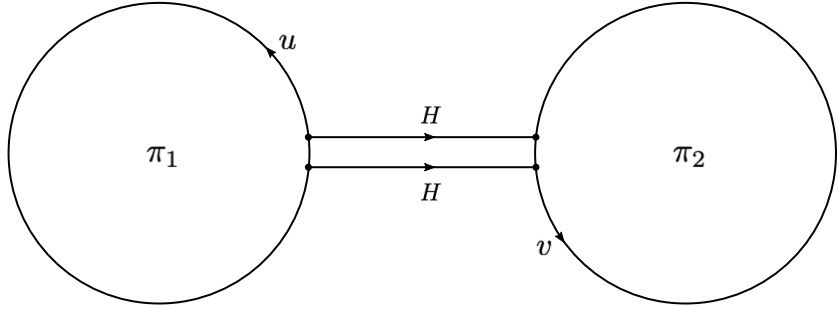}
\caption{The subdiagram $\Delta_0$}
\end{figure}

Let $\Psi$ be the diminished diagram over $M_\Omega(\textbf{M})$ with $\lab(\partial\Psi)\equiv uv$ given by Lemma \ref{diminished exist}. Since $\partial\Psi$ has no $\theta$-edges and $\Psi$ has no $\theta$-annuli, $\Psi$ must consist only of $a$-cells.

By van Kampen's Lemma, we then have $uv=1$ over $B(\pazocal{A},n)$, so that $\Psi$ must consist of exactly one $a$-cell by the minimality of its signature. Pasting $\Psi$ into the middle of $\Gamma'$ then yields a reduced diagram $\Gamma_0$ over $M_\Omega(\textbf{M})$ with contour label $uHvH^{-1}$.

Let $\Gamma$ be the reduced diagram obtained from $\Delta$ by excising $\Delta_0$, pasting $\Gamma_0$ in its place, and making any necessary cancellations. Then $\Gamma$ has the same contour label as $\Delta$, one less $a$-cell, and the same number of disks, $(\theta,t)$-cells, and $(\theta,q)$-cells. Hence, $s(\Gamma)<s(\Delta)$, contradicting the assumption that $\Delta$ is diminished.

\end{proof}

Note that Lemma \ref{a-bands between a-cells} implies that a diminished diagram satisfying (M1) is minimal.

\smallskip


\subsection{$t$-spokes} \

When considering diminished, minimal, or $D$-minimal diagrams in what follows, many arguments rely on the $q$-bands corresponding to $t$-letters. To distinguish these from bands corresponding to other parts of the base, we adopt the convention of [16] and [23] and refer to them as \textit{$t$-bands}. Note that the top and the bottom of such a band are each labelled by a copy of the band's history.

In a diminished, minimal, or $D$-minimal diagram, a maximal $q$-band with one end on a disk $\Pi$ is called a \textit{spoke} of $\Pi$. A \textit{$t$-spoke} is then defined in the natural way.

The pairs $\{t(2),t(3)\},\dots,\{t(L-1),t(L)\},\{t(L),t(2)\}$ are called \textit{adjacent} $t$-letters. Two $t$-spokes of the same disk are called \textit{consecutive} if they correspond to adjacent $t$-letters.

\begin{lemma} \label{extend}

For $i\in\{2,\dots,L\}$, let $\pazocal{C}:A(i)\to\dots\to A(i)$ be a reduced computation of $\textbf{M}$ with history $H$. Then there exists a reduced diagram $\Delta$ over $M_\Omega(\textbf{M})$ with contour label $H(0)^{-1}W_{ac}H(0)W_{ac}^{-1}$, where $H(0)$ is the copy of $H$ in $F(R)$ obtained by adding the subscript 0 to each letter.

\end{lemma}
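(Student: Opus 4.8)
\textbf{Proof plan for Lemma \ref{extend}.}

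The plan is to use the machinery already developed in Sections 5--8, in particular Lemma \ref{extending computations} together with Lemma \ref{computations are trapezia} (and Lemma \ref{trapezia are computations}). The computation $\pazocal{C}$ in the statement is a reduced computation with base $\{t(i)\}B_3(i)$ starting and ending with $A(i)$, but $\textbf{M}$ is a cyclic machine whose rules need not act uniformly on the first coordinate, so $\pazocal{C}$ need not immediately extend to a computation in the standard base. This is precisely the situation handled by Lemma \ref{extending computations}: factor $H\equiv H_1\dots H_\ell$ into maximal one-machine pieces, obtaining for each $j$ a reduced computation in the standard base $\pazocal{C}_j':W_{j-1}^{(z_j)}\to\dots\to W_j^{(z_j)}$ with history $H_j$, where each $W_j^{(z_j)}$ is one of $W_{ac}$, $I(w_j)$, or $J(w_j)$ for $w_j\in\pazocal{L}$, and $W_0^{(z_j)}=W_{ac}$.

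First I would build, for each $j$, a diagram corresponding to $\pazocal{C}_j'$. When $\|H_j\|\ge1$, Lemma \ref{computations are trapezia} gives a trapezium $\Delta_j$ with trimmed bottom label $W_{j-1}^{(z_j)}$, trimmed top label $W_j^{(z_j)}$, and history $H_j$. Since these are computations in the standard base and every rule locks the $Q_4(L)\{t(1)\}$-sector (the sector joining consecutive copies of $B_3$), no trimming is actually necessary: as in the proof of Lemma \ref{disks are relations}, the bottom and top labels of $\Delta_j$ are literally $W_{j-1}^{(z_j)}$ and $W_j^{(z_j)}$, and the two side $q$-bands of $\Delta_j$ are labelled identically by the copy $H_j(0)$ of $H_j$ in $F(R)$ obtained by subscripting every rule with $0$. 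Next I would observe that the endpoints match up so these trapezia can be stacked: whenever $W_j^{(z_j)}$ is not an accepted configuration it is some $I(w_j)$ or $J(w_j)$, and Lemma \ref{M language} provides a one-machine computation accepting it, whose associated trapezium then converts $I(w_j)$ (or $J(w_j)$) back to $W_{ac}$; inserting such a "bridging" trapezium between $\Delta_j$ and $\Delta_{j+1}$ (and handling the $I/J$ discrepancy via the transition rules $\theta(s)_i^{\pm1}$ and, where needed, the $(\theta,q)$-cells that insert/delete the word in the special input sector) produces a diagram whose bottom is $W_{ac}$, whose top is $W_{ac}$, and whose two sides are each labelled by the concatenation of the relevant histories. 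After assembling all pieces and performing any necessary cancellations of mirror-image cell pairs to make the result reduced, the boundary reads $H(0)^{-1}W_{ac}H(0)W_{ac}^{-1}$ up to the contribution of the bridging histories. To kill the latter, I would note that a bridging pair (accept $I(w_j)$, then its inverse) contributes a history of the form $KK^{-1}$, which cancels after reduction without changing the net side labels; alternatively, since we only need \emph{some} reduced diagram with the prescribed boundary word, one can first replace $\pazocal{C}$ by a computation all of whose intermediate "returns to an input configuration" are trivial — but the cleanest route is to take $\ell\le2$ via Lemma \ref{two one-machine} applied to the accepted configurations in play, so that at most one bridging trapezium is needed and its history cancels cleanly.

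Once a (possibly unreduced) diagram with boundary label $H(0)^{-1}W_{ac}H(0)W_{ac}^{-1}$ over $M_\Omega(\textbf{M})$ is in hand, I would pass to a reduced one by the standard removal of cancellable cell pairs (Section 2.1), which does not change the boundary label. This yields the desired reduced diagram $\Delta$ over $M_\Omega(\textbf{M})$.

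The main obstacle is the non-uniformity of $\textbf{M}$ on the first coordinate: the computation $\pazocal{C}$ lives over $\{t(i)\}B_3(i)$ with $i\ge2$ but does \emph{not} lift rule-by-rule to the standard base, so one cannot simply invoke Lemma \ref{computations are trapezia} directly. Getting the bookkeeping right in stitching the one-machine trapezia $\Delta_j$ together — matching the $I$ versus $J$ configurations at the seams, inserting the correct transition cells $\theta(s)_1^{\pm1},\theta(s)_2^{\pm1}$, and checking that the bridging histories cancel so that exactly $H(0)$ survives on each side — is where the real work lies; everything else is a routine application of Lemmas \ref{extending computations}, \ref{computations are trapezia}, \ref{disks are relations}, and \ref{M language}.
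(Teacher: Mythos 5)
Your overall skeleton is the same as the paper's (factor $H\equiv H_1\dots H_\ell$ by Lemma \ref{extending computations}, turn each standard-base computation $\pazocal{C}_j'$ into a trapezium $\Delta_j$ via Lemma \ref{computations are trapezia}, and stack them), but the way you handle the seams is where the proposal breaks down. At the seam between $\Delta_j$ and $\Delta_{j+1}$ the mismatch is between $W_j^{(z_j)}$ and $W_j^{(z_{j+1})}$, i.e.\ between $I(w_j)$ and $J(w_j)$ for some $w_j\in\pazocal{L}$ (consecutive maximal one-machine pieces have different indices). Your ``bridging trapezium'' device does not repair this: an accepting computation of $I(w_j)$ is a one-machine computation of the \emph{first} machine while an accepting computation of $J(w_j)$ is one of the \emph{second}, so inserting ``accept, then un-accept'' across the seam contributes a history of the form $K_1K_2^{-1}$ with $K_1\not\equiv K_2$, not $KK^{-1}$; these $\theta$-letters do not cancel, the side labels become $\dots H_j(0)K_1(0)K_2(0)^{-1}H_{j+1}(0)\dots$, and the contour is no longer $H(0)^{-1}W_{ac}H(0)W_{ac}^{-1}$. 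The parenthetical alternatives also fail: no transition rule $\theta(s)_i^{\pm1}$ converts $I(w_j)$ into $J(w_j)$ (these rules do not touch the tape word of the `special' input sector, and $I(w_j)$ is not even $\theta(s)_2$-admissible for $w_j\neq1$), a $(\theta,q)$-cell inserts/deletes at most one such letter and in any case adds $\theta$-edges to the sides, and you are not free to ``replace $\pazocal{C}$ by a computation with trivial returns'' or to force $\ell\le2$ via Lemma \ref{two one-machine}, because the history $H$ is prescribed by the statement.

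The missing idea — and the reason the lemma is stated over $M_\Omega(\textbf{M})$ rather than $M(\textbf{M})$ — is that the seam configurations differ \emph{only} by the insertion/deletion of $w_j\in\pazocal{L}$ in the `special' input sector, and $\pazocal{L}\subset\pazocal{S}\subseteq\Omega$, so a single $a$-cell with boundary label $w_j$ glues the top of $\Delta_j$ (relabelled from $W_j^{(z_j)}$ to $W_j^{(z_{j+1})}$) to the bottom of $\Delta_{j+1}$. An $a$-cell has no $\theta$-edges, so the side labels concatenate to exactly $H_1(0)\dots H_\ell(0)\equiv H(0)$, the bottom is $W_0^{(z_1)}\equiv W_{ac}$, the top is $W_\ell^{(z_\ell)}\equiv W_{ac}$, and after reducing cancellable pairs one gets the required diagram. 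With this replacement of your bridging step, the rest of your argument (no trimming needed since every rule locks the $Q_4(L)\{t(1)\}$-sector, identical side $q$-band labels, final reduction) goes through as in the paper.
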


\begin{proof}

Consider the factorization $H\equiv H_1\cdots H_\ell$ for $\ell\geq2$ given by Lemma \ref{extending computations}.

Define $H_i(0)$ as the word in $F(R)$ obtained from $H_i$ by adding a subscript $0$ to each letter. By Lemma \ref{computations are trapezia}, for each $1\leq j\leq \ell$, there exists a trapezium $\Delta_j$ with contour label $$H_j(0)^{-1}W_{j-1}^{(z_j)}H_j(0)(W_j^{(z_j)})^{-1}$$ where $W_j^{(z_j)}$ is defined as in Lemma \ref{extending computations}.

Recall that for $1\leq j\leq\ell-1$, $W_j^{(z_j)}$ differs from $W_j^{(z_{j+1})}$ only by the insertion/deletion of words in $\pazocal{L}$ in the `special' input sector, while $W_0^{(z_1)}\equiv W_\ell^{(z_\ell)}\equiv W_{ac}$. Note that every word of $\pazocal{L}$ represents the trivial element of $B(\pazocal{A},n)$, so that $\pazocal{L}\subset\Omega$. For $1\leq j\leq\ell-1$, let $\tilde{\Delta}_j$ be the diagram obtained from pasting the $a$-cell corresponding to this element of $\pazocal{L}$ to the top of $\Delta_j$, so that the `top' label of $\tilde{\Delta}_j$ is $W_j^{(z_{j+1})}$.

Then, letting $\tilde{\Delta}_\ell=\Delta_\ell$, we may glue the top of $\tilde{\Delta}_j$ to the bottom of $\tilde{\Delta}_{j+1}$. Letting $\Delta$ be the reduced diagram that results from these pastings, $\text{Lab}(\partial\Delta)\equiv H(0)^{-1}W_{ac}H(0)W_{ac}^{-1}$.

\end{proof}

\begin{lemma} \label{extend 2}

Let $\pazocal{C}:V_0\to\dots\to V_t$ be a reduced computation of $\textbf{M}$ with history $H$ and base $\{t(i)\}B_3(i)$ for some $i\in\{2,\dots,L\}$. Suppose there exists an accepted configuration $W_0$ such that $W_0(i)\equiv V_0$. Then there exists an accepted configuration $W_t$ with $W_t(i)\equiv V_t$ and a reduced diagram $\Delta$ over $M_\Omega(\textbf{M})$ with contour label $H(0)^{-1}W_0H(0)W_t^{-1}$, where $H(0)$ is the copy of $H$ in $F(R)$ obtained by adding the subscript 0 to each letter.

\end{lemma}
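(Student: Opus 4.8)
The plan is to mimic the proof of Lemma \ref{extend}, adapting it so that the `bottom' accepted configuration is the prescribed $W_0$ rather than $W_{ac}$. The main new ingredient over Lemma \ref{extend} is that $\pazocal{C}$ is not assumed to start and end at the accept configuration, so I must first produce a global accepting computation surrounding $\pazocal{C}$; this is exactly what Lemma \ref{extending computations} (via Lemma \ref{extending one-machine}) lets me do once I know $W_0$ is accepted with $W_0(i)\equiv V_0$.

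First I would apply Lemma \ref{extending one-machine} to factor $H$ as a product $H\equiv H_1\cdots H_m$ of histories of maximal one-machine subcomputations $\pazocal{C}_j:V_{y(j)}\to\dots\to V_{z(j)}$ of $\pazocal{C}$. For each $j$, I would lift $\pazocal{C}_j$ to a one-machine computation $\pazocal{C}_j':W_{y(j)}'\to\dots\to W_{z(j)}'$ in the standard base with history $H_j$ and $W_x'(i)\equiv V_x$, as in Lemma \ref{extending one-machine}. The key point, argued exactly as in the proof of Lemma \ref{M projected long history}(a), is that since $W_0\equiv W_{y(1)}'$ is accepted, each intermediate configuration $W_{z(j)}'$ (and hence $W_{y(j+1)}'$) is accepted: by Lemma \ref{M language} a non-accepted configuration admissible for the last rule of $H_j$ and the first rule of $H_{j+1}$ would have to be a start configuration $I(w)$ or $J(w)$ with $w\notin\pazocal{L}$, contradicting that it is accepted. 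Setting $W_t\equiv W_{z(m)}'$ then gives an accepted configuration with $W_t(i)\equiv V_t$. (Here I am using that passing between the lifts $W_{z(j)}'$ and $W_{y(j+1)}'$ of $V_{z(j)}$ only inserts or deletes a word of $\pazocal{L}$ in the special input sector, again as recorded in Lemmas \ref{extending computations} and \ref{extending one-machine}.)

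Next I would build the diagram. For each $j$, apply Lemma \ref{computations are trapezia} to $\pazocal{C}_j'$ to obtain a trapezium $\Delta_j$ over $M(\textbf{M})$ with contour label $H_j(0)^{-1}W_{j-1}H_j(0)W_j^{-1}$, where $W_{j-1},W_j$ denote the relevant lifts (so $W_0\equiv W_0$ and $W_m\equiv W_t$), and $H_j(0)$ is the copy of $H_j$ obtained by subscripting each letter by $0$; the side labels of $\Delta_j$ are identical and equal to $H_j(0)$ because every rule of $\textbf{M}$ locks the $Q_4\{t\}$-sector, exactly as in Lemma \ref{disks are relations}. For $1\le j\le m-1$, the `top' of $\Delta_j$ and the `bottom' of $\Delta_{j+1}$ differ only by an element of $\pazocal{L}\subset\Omega$ in the special input sector, so I paste the corresponding $a$-cell onto $\Delta_j$ to get $\tilde\Delta_j$ whose top label matches the bottom label of $\tilde\Delta_{j+1}$; set $\tilde\Delta_m=\Delta_m$. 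Gluing the $\tilde\Delta_j$ in order along these matching boundary segments and reducing, I obtain a reduced diagram $\Delta$ over $M_\Omega(\textbf{M})$ with $\lab(\partial\Delta)\equiv H(0)^{-1}W_0H(0)W_t^{-1}$, since the $H_j(0)$ concatenate to $H(0)$ and the intermediate $W_j$'s cancel against the pasted $a$-cells.

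The main obstacle is the bookkeeping in the first paragraph: verifying that the accepted-ness of $W_0$ propagates along the factorization, i.e. that no $W_{z(j)}'$ is a non-accepted start configuration. This is precisely the content already extracted in Lemmas \ref{extending computations}, \ref{subword return to start}, and \ref{subword M language}, so the argument is essentially a citation of Lemma \ref{extending computations} combined with the reasoning in Lemma \ref{M projected long history}(a); once that is in hand, the diagram construction is the same gluing argument as in Lemma \ref{extend}, with the only change being that the bottom configuration is $W_0$ instead of $W_{ac}$. I should also note that $m\geq1$ here (unlike the $\ell\geq2$ in Lemma \ref{extend}), and if $m=1$ the construction degenerates to a single trapezium $\Delta_1$, which is harmless.
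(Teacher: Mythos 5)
There is a genuine gap at the very first step: you identify the given accepted configuration $W_0$ with the lift $W_{y(1)}'$ produced by Lemma \ref{extending one-machine} and then run the whole computation with history $H$ starting from it. But the lift of Lemma \ref{extending one-machine} is a specific configuration (coordinate shifts of $V_0$, with the `special' input sector emptied in the second-machine case), and it need not coincide with the arbitrary accepted $W_0$; equivalently, your argument tacitly assumes $W_0$ is $H_1$-admissible. This can fail: the special input sector of $W_0$ may block a rule even though the $i$-th component is admissible for it — e.g.\ $W_0\equiv I(u^n)$ with $u^n\in\pazocal{L}\setminus\{1\}$ and first rule $\theta(s)_2$, or $W_0$ carrying $u^n$ in the special input sector with the rule $\theta(12)_1$. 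This is precisely the content of Lemma \ref{projection admissible configuration not}, which your proposal never invokes, and it is the main new difficulty of this lemma compared with Lemma \ref{extend} (where the bottom configuration is $W_{ac}$, so no such mismatch can occur).

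The paper's proof handles exactly this case: it factors $H_1\equiv H_1'H_1''$ with $H_1'$ the maximal prefix for which $W_0$ is admissible, applies Lemma \ref{projection admissible configuration not} to the first rule of $H_1''$ to conclude that $W_0\cdot H_1'$ differs from the canonical lift $W_r'$ (which is accepted by Lemma \ref{M language}, being $I(w)\cdot H'$ or $J(w)$ with $w\in\pazocal{L}$) only by an element of $\pazocal{L}$ in the special input sector, and then glues the trapezium over $H_1'$ starting at $W_0$ to the trapezium over $H_1''$ starting at $W_r'$ along an extra $a$-cell recording that discrepancy. Your gluing argument for the remaining factors $H_2,\dots,H_m$, and your propagation of acceptedness along the factorization, match the paper's Steps 1 and 2 and are fine (also note you should cite Lemma \ref{M language} rather than Lemma \ref{extending computations}, which concerns computations $A(i)\to\dots\to A(i)$); but without the admissibility dichotomy and the use of Lemma \ref{projection admissible configuration not} the construction does not get off the ground in the inadmissible case.
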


\begin{proof}

Let $H\equiv H_1\dots H_\ell$ be the factorization such that each $H_j$ is the history of a maximal one-machine subcomputation. 

For $1\leq j\leq\ell$, let $\pazocal{C}_j:V_{y(j)}\to\dots\to V_{z(j)}$ be the subcomputation with history $H_j$. Then, let $\pazocal{C}_j':W_{y(j)}'\to\dots\to W_{z(j)}'$ be the reduced computation in the standard base given by Lemma \ref{extending one-machine}. As in the proof of Lemma \ref{M projected long history}, note that $W_{z(j)}'$ may differ from $W_{y(j+1)}'$.

\smallskip

\ \textbf{1.} Suppose $W_0$ is $H_1$-admissible and $\ell=1$. 

Then, there exists a reduced computation $\pazocal{C}'$ with initial configuration $W_0$ and history $H\equiv H_1$. So, we may let $W_t\equiv W_0\cdot H$ and $\Delta$ be the trapezium corresponding to $\pazocal{C}'$ from Lemma \ref{computations are trapezia}.

\smallskip

\ \textbf{2.} Suppose $W_0$ is $H_1$-admissible and $\ell\geq2$.

Without loss of generality, we may assume that $W_{y(1)}'\equiv W_0$, i.e $\pazocal{C}_1'$ is a reduced computation with initial configuration $W_0$ and history $H_1$.

Then $W_{z(1)}'$ is an accepted start or end configuration. By the construction of the computation given in the proof of Lemma \ref{extending one-machine}, $W_{y(2)}'$ is also an accepted configuration. Continuing in this way, $W_{z(\ell)}'$ is an accepted configuration with $W_{z(\ell)}'(i)\equiv V_t$, so that we may let $W_t\equiv W_{z(\ell)}'$.

For $1\leq j\leq\ell$, let $\Delta_j$ be the trapezium corresponding to $\pazocal{C}_j'$ given by Lemma \ref{computations are trapezia}. Then, for $1\leq j\leq\ell-1$, $\lab(\textbf{top}(\Delta_j))\equiv W_{z(j)}'$ and $\lab(\textbf{bot}(\Delta_{j+1}))\equiv W_{y(j+1)}'$ differ only by the insertion of a word from $\pazocal{L}$ in the `special' input sector. So, we may paste $\Delta_j$ to $\Delta_{j+1}$ along an $a$-cell corresponding to this difference.

The reduced diagram $\Delta$ arising from these pastings then satisfies the statement.

\smallskip

\ \textbf{3.} Suppose $W_0$ is not $H_1$-admissible.

Let $H\equiv H_1'H_1''$ such that $H_1'$ is the maximal (perhaps empty) prefix for which $W_0$ is $H_1'$-admissible. Then for $\theta$ the first letter of $H_1''$, $(W_0\cdot H_1')(i)$ is $\theta$-admissible while $W_0\cdot H_1'$ is not. Lemma \ref{projection admissible configuration not} then implies the following:

\begin{enumerate}[label=({\arabic*})]

\item If $\pazocal{C}_1$ is a one-machine computation of the first machine, then there exists $w\in\pazocal{L}$ such that $W_0\cdot H_1'\equiv J(w)\cdot H'$, where $H'$ is the natural copy of $w$ read right to left in the language of positive rules with step history $(1)_1$

\item If $\pazocal{C}_1$ is a one-machine computation of the second machine, then there exists $w\in\pazocal{L}$ such that $W_0\cdot H_1'\equiv I(w)$.

\end{enumerate}

Let $\|H_1'\|=r$. Then by the construction given in Lemma \ref{extending one-machine}, $W_r'\equiv I(w)\cdot H'$ in case (1) or $W_r'\equiv J(w)$ in case (2). Either way, Lemma \ref{M language} implies $W_r'$ is an accepted configuration. As a result, every configuration of in $\pazocal{C}_1'$ is accepted.

As in Step 2, as $W_{z(1)}'$ is accepted, $W_{y(2)}'$ (and so each configuration of $\pazocal{C}_2'$) is accepted. Continuing, this implies $W_{z(\ell)}'$ is accepted with $W_{z(\ell)}'(i)\equiv V_t$, so that we may let $W_t\equiv W_{z(\ell)}'$.

Let $\Delta_1'$ be the trapezium corresponding to the reduced computation with initial configuration $W_0$ and history $H_1'$. Further, let $\Delta_1''$ be the trapezium corresponding to the reduced computation with initial configuration $W_r'$ and history $H_1''$.

Then, $\lab(\textbf{top}(\Delta_1'))\equiv W_0\cdot H_1'$ and $\lab(\textbf{bot}(\Delta_1''))\equiv W_r'$ differ by the insertion/deleetion of an element of $\pazocal{L}$ in the `special' input sector. Hence, we may construct a reduced diagram $\Delta_1$ by pasting $\Delta_1'$ to $\Delta_1''$ along an $a$-cell corresponding to this difference.

As in previous steps, we may then construct the reduced diagram $\Delta$ satisfying the statement.

\end{proof}

\begin{lemma} \label{t-spokes between disks}

Let $\Delta$ be a $D$-minimal diagram over the disk presentation of $G_\Omega(\textbf{M})$. Suppose there exist two disks $\Pi_1$ and $\Pi_2$ in $\Delta$ so that $\pazocal{Q}_1$ and $\pazocal{Q}_2$ are consecutive $t$-spokes of both. Let $\Psi$ be the subdiagram bounded by the sides of $\pazocal{Q}_i$ and the subpaths of $\partial\Pi_i$ such that $\Psi$ does not contain $\Pi_1$ or $\Pi_2$. Then $\Psi$ contains a disk.

\end{lemma}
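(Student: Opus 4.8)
The statement asserts that between two disks sharing two consecutive $t$-spokes, the enclosed subdiagram $\Psi$ must itself contain a disk. The natural strategy is to argue by contradiction using the $D$-minimality of $\Delta$, namely its $1$-signature: if $\Psi$ contained no disk, I would show that $\Pi_1$ and $\Pi_2$ can be cancelled against each other (or more precisely, excised and replaced by a diskless subdiagram with the same boundary label), reducing the number of disks and contradicting the choice of $\Delta$.

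First I would set up notation: let $t(i)$ and $t(i+1)$ be the adjacent $t$-letters with $\pazocal{Q}_1$ and $\pazocal{Q}_2$ the corresponding consecutive $t$-spokes of both $\Pi_1$ and $\Pi_2$. The subdiagram $\Psi$ is bounded by (sides of) $\pazocal{Q}_1$, $\pazocal{Q}_2$, a subpath $\textbf{p}_1$ of $\partial\Pi_1$ and a subpath $\textbf{p}_2$ of $\partial\Pi_2$. Since the contour label of a disk is a configuration accepted by $\textbf{M}$, and $\pazocal{Q}_1,\pazocal{Q}_2$ are consecutive $t$-spokes, the arcs $\textbf{p}_1$ and $\textbf{p}_2$ are (inverse-)labelled by the $i$-th components $W_0(i)$ and $W_t(i)$ of the two accepted configurations $W_0\equiv\lab(\partial\Pi_1)$ and $W_t\equiv\lab(\partial\Pi_2)$ (up to reading direction and the boundary $t$-edges). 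Assuming $\Psi$ has no disks, $\Psi$ is a diagram over $M_\Omega(\textbf{M})$; by considering the $t$-bands of $\Psi$ emanating from $\pazocal{Q}_1$ and $\pazocal{Q}_2$ and using Lemma \ref{M(S) annuli} (no $(\theta,t)$-annuli, no $\theta$-annuli after passing to a diminished form), I would extract from $\Psi$ a trapezium-like structure whose history $H$ is a reduced word in $F(R)$, so that $\Psi$ (after removing $\theta$-annuli via the argument of Lemma \ref{diminished exist}) corresponds to a reduced computation $\pazocal{C}: V_0\to\dots\to V_t$ with base $\{t(i)\}B_3(i)$, $V_0\equiv W_0(i)$ and $V_t\equiv W_t(i)$. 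The key point is that $W_0$ is an accepted configuration with $W_0(i)\equiv V_0$, so Lemma \ref{extend 2} applies: it produces an accepted configuration $W_t'$ with $W_t'(i)\equiv V_t$ and a reduced diagram $\Delta'$ over $M_\Omega(\textbf{M})$ with contour label $H(0)^{-1}W_0H(0)(W_t')^{-1}$.

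Next I would glue: $\Delta'$ together with a disk for $W_0$ (namely $\Pi_1$) gives an annular-type diagram showing $W_0\cdot H(0)$, read appropriately, equals $W_t'$ in $G_\Omega(\textbf{M})$; but $W_t'$ is also an accepted configuration hence trivial by the hub/disk relation. Thus $W_t\equiv\lab(\partial\Pi_2)$ can be re-expressed so that $\Pi_2$ is replaced by the single disk relation for $W_t'$ — and more to the point, one can surgically replace the subdiagram consisting of $\Pi_1$, $\Psi$, $\Pi_2$ together with the two spokes by a diagram built from $\Delta'$ and a single disk, making any necessary cancellations. Comparing $1$-signatures: the original region contained (at least) two disks, $\Pi_1$ and $\Pi_2$, whereas $\Delta'$ contains no disks and the one disk for $W_t'$ replaces them — so the net count of disks in the whole diagram strictly decreases (using that $W_t'$ is accepted and $\Delta'$ is diskless, exactly as in how $\Pi_2$ gets absorbed). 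This contradicts the $D$-minimality of $\Delta$, completing the proof.

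\textbf{Main obstacle.} The delicate part is the extraction of the computation $\pazocal{C}$ from $\Psi$ and the precise surgery/gluing that decreases the disk count: one must be careful that $\Psi$ may contain $a$-cells and $\theta$-annuli, and that the arcs $\textbf{p}_1,\textbf{p}_2$ of $\partial\Pi_i$ together with the spoke sides genuinely bound a region whose $t$-bands run cleanly from $\pazocal{Q}_1$ to $\pazocal{Q}_2$ (so that $H$ is well-defined and reduced). Handling the possibility that $W_0$ is not $H_1$-admissible is precisely why Lemma \ref{extend 2} is stated with its case analysis via Lemma \ref{projection admissible configuration not}; invoking it cleanly, and checking that the resulting $\Delta'$ really can be substituted back without creating new disks or violating reducedness after cancellation, is where the bookkeeping lies. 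The rest — the signature comparison and the final contradiction — is routine once the substitution is in place.
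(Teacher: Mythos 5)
Your overall strategy is the paper's: assume $\Psi$ is diskless, pass to a diminished copy, read off a reduced computation between the two disks, use the ``extend'' machinery to build a diskless diagram over $M_\Omega(\textbf{M})$ realizing that computation in the standard base, and then excise the region containing $\Pi_1,\Psi,\Pi_2$ and the spokes to lower the disk count, contradicting $D$-minimality. But your decision to apply Lemma \ref{extend 2} directly to the disk labels $W_0\equiv\lab(\partial\Pi_1)$, $W_t\equiv\lab(\partial\Pi_2)$ creates a genuine gap. Lemma \ref{extend 2} only returns \emph{some} accepted configuration $W_t'$ with $W_t'(i)\equiv W_t(i)$; it gives no control over the `special' input sector, so in general $W_t'\not\equiv W_t$. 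Consequently the boundary label of your replacement diagram (the complement in $\Delta'$ of the column between the $t(i)$- and $t(i+1)$-bands, with or without an extra disk) involves the complementary arc of $W_t'$ where $\partial\Psi'$ has the complementary arc of $W_t$, and the substitution cannot be performed as stated. Bridging the discrepancy would require knowing that the two special-input words differ by something fillable by $a$-cells (i.e.\ trivial in $\gen{\pazocal{A}\mid\Omega}$), which is not available at this point of the paper without circularity -- indeed this is exactly why the paper first replaces each disk by a hub plus a trapezium (Lemma \ref{disks are relations}) and then uses Lemma \ref{extend} for a computation $A(i)\to\dots\to A(i)$: with hubs, both boundary arcs are literally portions of $W_{ac}$, so the labels match on the nose and no bridging is needed. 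Your sentence ``$W_t$ can be re-expressed so that $\Pi_2$ is replaced by the single disk relation for $W_t'$'' is precisely the unproved step.

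A second concrete gap is the wrap-around case in which the consecutive $t$-letters are $t(L)$ and $t(2)$. There the subdiagram between the spokes corresponds to the base $\{t(L)\}B_3(L)\{t(1)\}B_3(1)\{t(2)\}$, so it contains the `special' input sector: the no-$a$-cell argument (which in the case $2\le i\le L-1$ rests on the absence of $q$-edges of type $Q_0(1)^{\pm1}$ on $\partial\Lambda$) fails, the region need not be a trapezium, and one cannot read off a single reduced computation with base $\{t(i)\}B_3(i)$ as your setup assumes. The paper handles this by observing that the $q$-spoke for $\{t(1)\}$ must also join the two disks and cutting along it, working instead with the subdiagram between the $t(L)$- and $t(1)$-spokes (base $\{t(L)\}B_3(L)\{t(1)\}$), where the trapezium/computation extraction goes through. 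Your proposal flags ``$a$-cells'' as a concern but supplies no mechanism for this case. Finally, note that you never reduce to hubs; if you keep general disks you must at least note (as the paper does when passing to $\tilde\Delta$) that replacing disks by hub-plus-trapezium preserves $D$-minimality, since that replacement is what ultimately makes both of the above difficulties disappear.
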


\begin{proof}

Assume that $\Pi_1$ and $\Pi_2$ are hubs. Note that if either of these two bands has zero length, then the two hubs are cancellable, contradicting the assumption that $\Delta$ is reduced.

Arguing toward contradiction, suppose $\Psi$ contains no disk. 

First, suppose the pair of adjacent $t$-letters corresponding to $\pazocal{Q}_1$ and $\pazocal{Q}_2$ is $\{t(i),t(i+1)\}$ for some $2\leq i\leq L-1$ (see Figure 9.2(a)).

By Lemma \ref{diminished exist}, there exists a diminished diagram $\Lambda$ over $M_\Omega(\textbf{M})$ with $\lab(\partial\Lambda)\equiv\lab(\partial\Psi)$. 

Suppose there exists an $a$-cell $\pi$ in $\Lambda$. Note that no edge of $\partial\Lambda$ is labelled by an $a$-letter from the `special' input sector. So, by Lemmas \ref{a-band on same a-cell} and \ref{a-bands between a-cells}, any $a$-band starting on $\partial\pi$ must end on a $(\theta,q)$-cell in $\Lambda$. Further, by Lemma \ref{M_a no annuli 1}, the maximal $q$-band $\pazocal{Q}$ containing this $(\theta,q)$-cell must have two ends on $\partial\Lambda$. But the definition of the rules implies that $\pazocal{Q}$ corresponds to $Q_0(1)^{\pm1}$ while no $q$-edge of $\partial\Lambda$ corresponds to such a base letter.

Hence, $\Lambda$ is a reduced diagram over $M(\textbf{M})$, so that Lemma \ref{M(S) annuli} implies that $\Lambda$ is a trapezium with top and bottom labels $A(i)\{t(i+1)\}$ (up to inversion). By Lemma \ref{trapezia are computations}, there exists a corresponding computation $\pazocal{C}:A(i)\to\dots\to A(i)$ with history $H$. Thus, Lemma \ref{extend} yields a reduced diagram $\Gamma_1$ over $M_\Omega(\textbf{M})$ with contour label $H(0)^{-1}W_{ac}H(0)W_{ac}^{-1}$.

The subdiagram $\Gamma_0$ of $\Gamma_1$ bounded by the two $t$-bands corresponding to $t(i)$ and $t(i+1)$ has the same contour label as $\Lambda$, and so as $\Psi$. By cyclic permutation, we may assume that $\Gamma_0$ can be cut from $\Gamma_1$ to produce a reduced diagram $\Gamma$ over $M_\Omega(\textbf{M})$.

Let $\Psi'$ be the smallest subdiagram of $\Delta$ containing $\Pi_1$, $\Pi_2$, $\pazocal{Q}_1$, and $\pazocal{Q}_2$. Then $\lab(\partial\Psi')\equiv\lab(\partial\Gamma)$. Hence, excising $\Psi'$ from $\Delta$ and pasting $\Gamma$ in its place reduces the number of hubs (and so disks) by two, contradicting the assumption that $\Delta$ is $D$-minimal.

\renewcommand\thesubfigure{\alph{subfigure}}
\begin{figure}[H]
\centering
\begin{subfigure}[b]{\textwidth}
\centering
\includegraphics[scale=1]{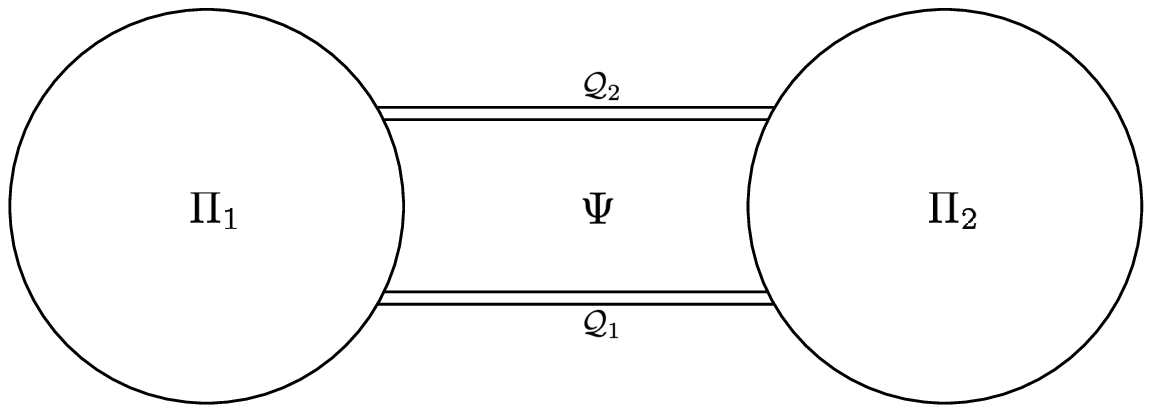}
\caption{Adjacent $t$-letters are $\{t(i),t(i+1)\}$ for $2\leq i\leq L-1$}
\end{subfigure} \\ \vspace{0.2in}
\begin{subfigure}[b]{\textwidth}
\centering
\includegraphics[scale=1]{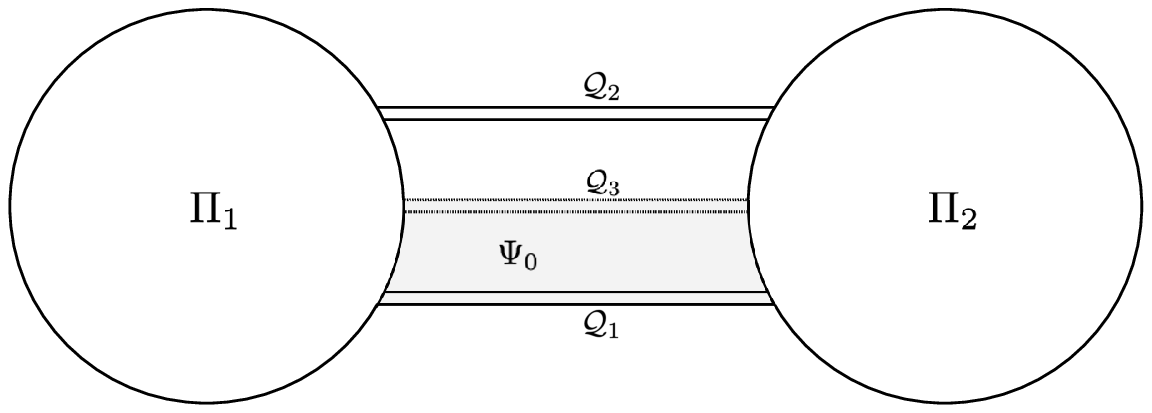}
\caption{Adjacent $t$-letters are $\{t(L),t(2)\}$           }
\end{subfigure}
\caption{Lemma \ref{t-spokes between disks}}
\end{figure}

Now suppose the adjacent $t$-letters corresponding to $\pazocal{Q}_1$ and $\pazocal{Q}_2$ are $t(L)$ and $t(2)$, respectively. Then, the $q$-band $\pazocal{Q}_3$ starting on $\Pi_1$ corresponding to the part $\{t(1)\}$ must end on $\Pi_2$. Let $\Psi_0$ be the subdiagram of $\Psi$ bounded by $\pazocal{Q}_1$ and $\pazocal{Q}_3$ (see Figure 9.2(b)).

Let $\Lambda_0$ be a reduced diagram over $M_\Omega(\textbf{M})$ given by Lemma \ref{diminished exist}, so that $\Lambda_0$ satisfies (M2) and $\lab(\partial\Lambda_0)\equiv\lab(\partial\Psi_0)$. Then, as above, $\Lambda_0$ must be a trapezium with top and bottom labels $A(L)\{t(1)\}$. Hence, Lemma \ref{trapezia are computations} gives a corresponding computation $\pazocal{C}:A(L)\to\dots\to A(L)$, so that we may repeat the argument above to reduce the number of hubs.

Finally, suppose $\Pi_1$ and $\Pi_2$ are not necessarily hubs. Then we can replace these disks with reduced diagrams over $G(\textbf{M})$ consisting of a hub and a trapezium (as formed in Lemma \ref{disks are relations}). Let $\tilde{\Delta}$ be the resulting reduced diagram, $\tilde{\Pi}_1$ and $\tilde{\Pi}_2$ be the two hubs, $\tilde{\pazocal{Q}}_1$ and $\tilde{\pazocal{Q}}_2$ be the consecutive $t$-spokes at these hubs, and $\tilde{\Psi}$ be the subdiagram bounded by the sides of $\tilde{\pazocal{Q}}_i$ and the contours of $\tilde{\Pi}_i$.

As $\tilde{\Delta}$ and $\Delta$ have the same number of disks and the same contour labels, $\tilde{\Delta}$ is $D$-minimal. Thus, the same arguments as outlined above can be applied to remove $\tilde{\Pi}_1$ and $\tilde{\Pi}_2$, yielding a contradiction.

\end{proof}

For each reduced diagram $\Delta$ over the disk presentation of $G_\Omega(\textbf{M})$, there is a corresponding planar graph $\Gamma\equiv\Gamma(\Delta)$ defined by:

\begin{enumerate}[label=({\arabic*})]

\item $V(\Gamma)=\{v_0,v_1,\dots,v_\ell\}$ where each $v_i$ for $i\geq1$ corresponds to one of the $\ell$ disks of $\Delta$ and $v_0$ is one exterior vertex

\item For $i,j\geq1$, each shared $t$-spoke of the disks corresponding to $v_i$ and $v_j$ corresponds to an edge $(v_i,v_j)\in E(\Gamma)$

\item For $i\geq1$, each $t$-spoke of the disk corresponding to $v_i$ which ends on $\partial\Delta$ corresponds to an edge $(v_0,v_i)\in E(\Gamma)$

\end{enumerate}

Note that the degree of each interior vertex of $\Gamma$ is $L-1$. The following statement is a consequence of this fact and Lemma \ref{t-spokes between disks}.

\begin{lemma} \label{graph}

\textit{(Lemma 3.2 of [15])} Suppose $\Delta$ is a $D$-minimal diagram containing at least one disk. Then $\Delta$ contains a disk $\Pi$ such that $L-4$ consecutive $t$-spokes $\pazocal{Q}_1,\dots,\pazocal{Q}_{L-4}$ of $\Pi$ end on $\partial\Delta$ and such that every subdiagram $\Gamma_i$ bounded by $\pazocal{Q}_i$, $\pazocal{Q}_{i+1}$, $\partial\Pi$, and $\partial\Delta$ ($i=1,\dots,L-5$) contains no disks.

\end{lemma}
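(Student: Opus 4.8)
This statement (referred to as Lemma~3.2 of [15]) is essentially a combinatorial lemma about the planar graph $\Gamma(\Delta)$, so the plan is to reduce it to a purely graph-theoretic assertion and then apply an Euler-characteristic / degree count, using Lemma~\ref{t-spokes between disks} as the key structural input.

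First I would recall the setup: $\Gamma = \Gamma(\Delta)$ is a planar graph with one exterior vertex $v_0$ and interior vertices $v_1,\dots,v_\ell$ (one per disk), every interior vertex having degree $L-1$, where edges between interior vertices record shared $t$-spokes and edges from $v_0$ record $t$-spokes ending on $\partial\Delta$. The crucial observation, which is exactly the content of Lemma~\ref{t-spokes between disks}, is the following: if two disks $\Pi_1,\Pi_2$ share two \emph{consecutive} $t$-spokes $\pazocal{Q}_1,\pazocal{Q}_2$, then the region $\Psi$ cut off by those two spokes (and not containing $\Pi_1$ or $\Pi_2$) must contain a disk. I would phrase this as: in the planar embedding of $\Gamma$, there is no bounded face whose boundary consists of exactly two parallel edges between the same pair of interior vertices corresponding to adjacent $t$-letters and nothing else. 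More carefully, I would use this to show that a disk $\Pi$ (vertex $v$) that has a run of $L-4$ consecutive $t$-spokes ending on $\partial\Delta$ with ``empty'' regions between them can be found among the disks lying on the ``outer layer'' of $\Delta$ — namely a disk that is incident to the exterior face of $\Gamma$ in a suitably strong sense.

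The core of the argument is then a counting / extremal argument on $\Gamma$. I would consider an interior vertex $v$ of minimal ``depth'' (for instance, one adjacent to $v_0$, which exists since $\Delta$ has at least one disk and maximal $q$-bands cannot be annuli, so some $t$-spoke of some disk reaches $\partial\Delta$). Each interior vertex has $L-1$ incident $t$-spokes arranged cyclically around $\partial\Pi$ (in the cyclic order of the parts $\{t(2)\},\dots,\{t(L)\},\{t(1)\}$, with the convention of consecutiveness from the ``adjacent $t$-letters'' list). I would argue that among these $L-1$ cyclically ordered spokes, at most $3$ of them fail to be ``boundary spokes with empty adjacent region'': one failure can be charged to a neighbouring disk sharing that spoke, and by Lemma~\ref{t-spokes between disks} two consecutive shared spokes force a disk in between, which (choosing $\Pi$ outermost / innermost appropriately, or minimizing $s_1$ plus some secondary quantity among disks incident to the exterior face) is impossible. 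Hence at least $(L-1)-3 = L-4$ of the $t$-spokes of $\Pi$, and moreover $L-4$ \emph{consecutive} ones, end on $\partial\Delta$ with empty subdiagrams $\Gamma_i$ between consecutive pairs. I would make the ``consecutive'' part precise by noting that the (at most) $3$ bad spokes, being cyclically placed among $L-1$ slots, leave an arc of at least $L-4$ consecutive good slots.

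I expect the main obstacle to be making the choice of disk $\Pi$ rigorous: one cannot just take any disk, and the minimality condition ($D$-minimality, i.e.\ minimal $s_1$) is what must be combined with planarity and Lemma~\ref{t-spokes between disks} to rule out the problematic configurations. Concretely, the delicate point is showing that a disk on the outer layer cannot have two consecutive $t$-spokes both \emph{shared} with other disks (or one shared and the region between still containing a disk): here I would argue that such a configuration, via Lemma~\ref{t-spokes between disks}, traps a disk in a region that is ``closer to the boundary'' than $\Pi$, contradicting the extremality of $\Pi$'s choice, or alternatively contradicts $D$-minimality directly since one could then apply the surgery of Lemma~\ref{t-spokes between disks} to reduce the hub count. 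Once the counting is set up correctly the rest — that the $L-5$ subdiagrams $\Gamma_i$ between consecutive boundary spokes are disk-free — is immediate from the definition of the ``good'' spokes. I would cite [15, Lemma~3.2] for the detailed verification but include the reduction and the degree count above as the essential content.
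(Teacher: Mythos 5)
Your overall route coincides with the paper's: the paper gives no independent proof of Lemma \ref{graph} either, but simply constructs the planar graph $\Gamma(\Delta)$, records that every interior vertex has degree $L-1$, uses Lemma \ref{t-spokes between disks} to guarantee that a region cut off by two consecutive $t$-spokes shared by two disks always contains a disk, and then cites Lemma 3.2 of [15] for the purely graph-theoretic conclusion. Your reduction, and your use of Lemma \ref{t-spokes between disks} as the hypothesis-checker for that citation, are exactly this; if you keep the citation, the write-up matches the paper.

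The problem is the ``essential content'' you propose to include in place of the citation. The claim that a disk of minimal depth (or one incident to the exterior face) has at most $3$ spokes failing to end on $\partial\Delta$ with disk-free adjacent regions is not established by your charging argument, and it cannot be established that locally. Lemma \ref{t-spokes between disks} only forbids an empty region between two consecutive $t$-spokes joining the \emph{same} pair of disks; it says nothing about two consecutive spokes of $\Pi$ running to two \emph{different} disks, nor about a spoke of $\Pi$ that does end on $\partial\Delta$ but whose adjacent region contains disks sharing no spoke with $\Pi$. A disk adjacent to $v_0$ can perfectly well have only one or two spokes reaching $\partial\Delta$, all the others leading to distinct deeper disks, and no contradiction with $D$-minimality or with Lemma \ref{t-spokes between disks} arises from such a configuration, so your extremal choice does not yet isolate the right disk — finding the disk with $L-4$ consecutive exterior spokes and disk-free regions between them is precisely the content of the cited lemma, whose proof in [15] is a global induction/counting (Euler-formula-type estimates over all interior vertices of the planar graph), not a local degree count at one vertex. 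Note also that even the existence of a disk with a spoke ending on $\partial\Delta$ needs an argument (a priori every spoke could join two disks), and that too comes out of the global count. So either retain the reference to [15, Lemma 3.2] as the paper does, or reproduce that counting argument in full; the sketch as written does not replace it.
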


\begin{figure}[H]
\centering
\includegraphics[scale=1]{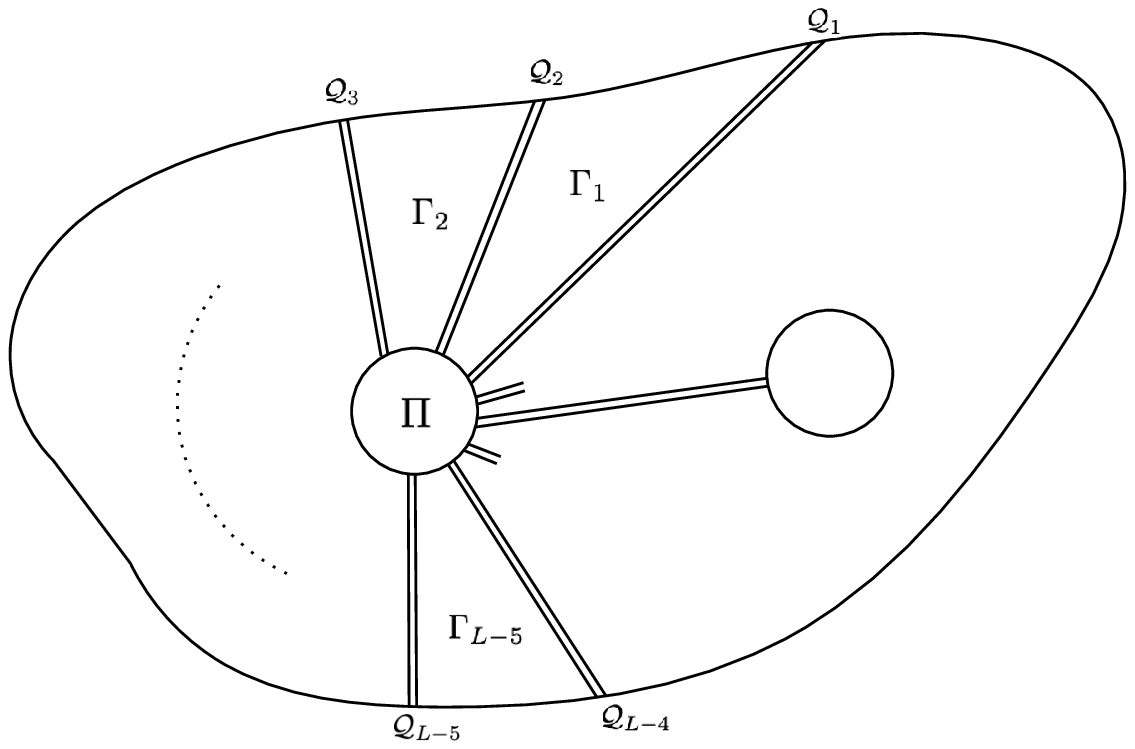}
\caption{Lemma \ref{graph}}
\end{figure}

%
%
%

\smallskip


\subsection{Transposition of a $\theta$-band and a disk} \

We now describe a procedure, similar to the construction in Section 8.3, for moving a $\theta$-band about a disk.

Let $\Delta$ be a $D$-minimal diagram containing a disk $\Pi$ and a $\theta$-band $\pazocal{T}$ subsequently crossing the $t$-spokes $\pazocal{Q}_1,\dots,\pazocal{Q}_\ell$ of $\Pi$. Assume $\ell\geq2$ is maximal for $\Pi$ and $\pazocal{T}$.

First, suppose there are no other cells between $\Pi$ and the bottom of $\pazocal{T}$, i.e there is a subdiagram formed by $\Pi$ and $\pazocal{T}$.

Let $\pazocal{T}'$ be the subband of $\pazocal{T}$ whose bottom path, $\textbf{s}_1^{-1}$, starts with the $t$-edge corresponding to the start of $\pazocal{Q}_1$ and ends with that of $\pazocal{Q}_\ell$. Further, let $\textbf{s}_2$ be the complement of $\textbf{s}_1$ in $\partial\Pi$ so that $\partial\Pi=\textbf{s}_1\textbf{s}_2$. Then as any sector of the standard base containing a $t$-letter has empty tape alphabet, $\lab(\textbf{s}_2)$ is an admissible word.

Let $W\equiv\text{Lab}(\partial\Pi)^{\pm1}$, $V\equiv\text{Lab}(\textbf{s}_1)$, and $\theta$ be the rule corresponding to $\pazocal{T}$. Further, let $\Gamma$ be the subdiagram formed by $\Pi$ and $\pazocal{T}'$. Then, by Lemma \ref{theta-bands are one-rule computations}, $V^{-1}$ is $\theta$-admissible with $V^{-1}\cdot\theta\equiv\lab(\textbf{ttop}(\pazocal{T}'))=\lab(\textbf{top}(\pazocal{T}'))$ 

Suppose $\lab(\textbf{s}_2)$ is $\theta$-admissible. Then $W$ is $\theta$-admissible, so that $W\cdot\theta$ is a disk relator. Let $\bar{\Pi}$ be a disk with contour labelled by $W\cdot\theta$. Let $\pazocal{T}''$ be the auxiliary $\theta$-band corresponding to $\theta$ whose top is labelled by $\lab(\textbf{s}_2)\cdot\theta$. Then, let $\bar{\Gamma}$ be the diagram obtained from attaching $\pazocal{T}''$ to $\bar{\Pi}$. Finally, let $\bar{\Delta}$ be the reduced diagram obtained from excising $\Gamma$ from $\Delta$ and pasting $\bar{\Gamma}$ in its place, attaching the first and last cells of $\pazocal{T}''$ to the complement of $\pazocal{T}'$ in $\pazocal{T}$ and perhaps making cancellations in the resulting $\theta$-band. Note that $\bar{\Delta}$ has the same contour label as that of $\Delta$.

Conversely, suppose $\text{Lab}(\textbf{s}_2)$ is not $\theta$-admissible. Then Lemma \ref{projection admissible configuration not} applies to $W$, so that $\lab(\textbf{s}_2)$ contains the `special' input sector and would be $\theta$-admissible with the insertion/deletion of some $u^n\in\pazocal{L}$. So, after attaching to $\Pi$ an $a$-cell corresponding to $u^n$, we may construct the disk $\bar{\Pi}$ and the auxiliary $\theta$-band $\pazocal{T}''$ as above. Attaching the mirror $a$-cell on the other side of $\pazocal{T}''$ then produces a diagram $\bar{\Delta}$ with the same contour label as $\Delta$.

The procedure of excising $\Gamma$ from $\Delta$ to create $\bar{\Delta}$ is called the \textit{transposition} of the disk $\Pi$ and the $\theta$-band $\pazocal{T}$ in $\Delta$.

\renewcommand\thesubfigure{\alph{subfigure}}
\begin{figure}[H]
\centering
\begin{subfigure}[b]{0.48\textwidth}
\centering
\raisebox{0.2in}{\includegraphics[width=3in]{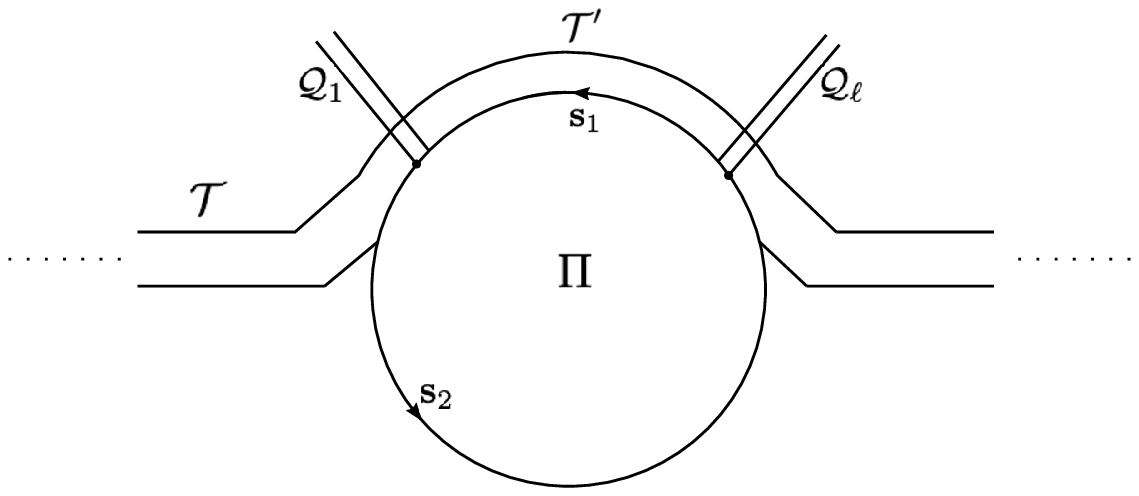}}
\caption{The subdiagram $\Gamma$}
\end{subfigure}\hfill
\begin{subfigure}[b]{0.48\textwidth}
\centering
\includegraphics[width=3in]{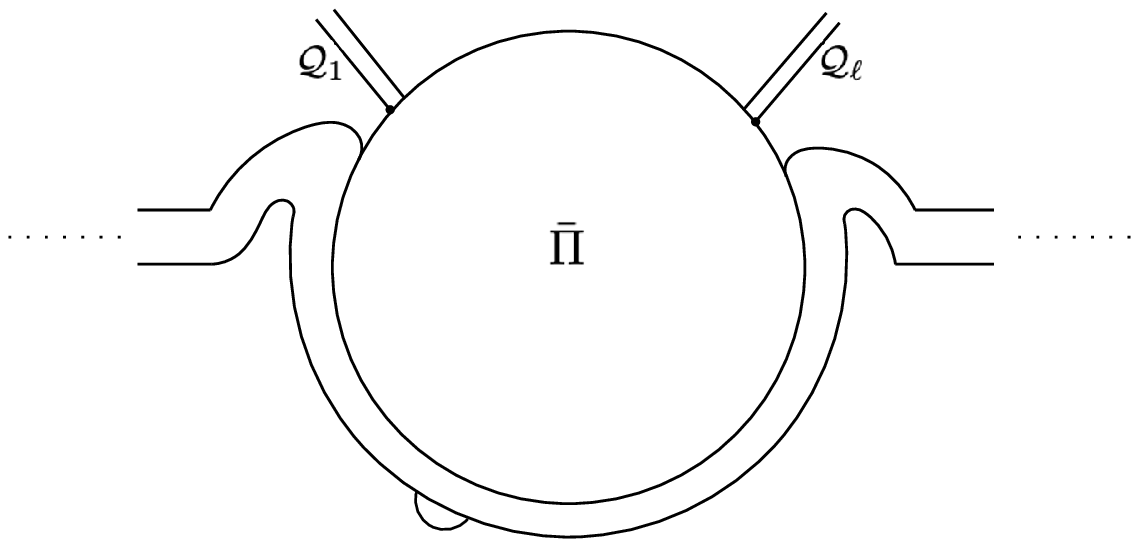}
\caption{The resulting subdiagram $\bar{\Gamma}$}
\end{subfigure}
\caption{The transposition of a $\theta$-band with a disk}
\end{figure}

Now, consider the situation where there are cells between the $\theta$-band and the disk, each of which is an $a$-cell.

Suppose the pair of adjacent $t$-letters corresponding to $\pazocal{Q}_1$ and $\pazocal{Q}_2$ is $\{t(i),t(i+1)\}$ for some $2\leq i\leq L-1$. Let $\pazocal{T}_1'$ be the subband of $\pazocal{T}'$ between $\pazocal{Q}_1$ and $\pazocal{Q}_2$. Then, let $\Psi$ be the subdiagram of $\Delta$ bounded by $\pazocal{T}_1'$ and $\partial\Pi$. By Lemma \ref{diminished exist}, there exists a diminished diagram $\Lambda$ over $M_\Omega(\textbf{M})$ with $\lab(\partial\Psi)\equiv\lab(\partial\Lambda)$. Lemmas \ref{a-band on same a-cell} and \ref{a-bands between a-cells} then imply that $\Lambda$ contains no $a$-cell, so that Lemma \ref{M(S) annuli} implies that $\Lambda$ consists of a single $\theta$-band. Hence, by Lemma \ref{theta-bands are one-rule computations}, $W(i)$ is $\theta$-admissible.

Otherwise, if the pair of adjacent $t$-letters is $\{t(L),t(2)\}$, then the same argument applies to the subdiagram bounded by the $t$-band corresponding to $t(L)$, the $q$-band corresponding to $t(1)$, and $\pazocal{T}'$. As a result, $W(L)$ is $\theta$-admissible.

As above, Lemma \ref{projection admissible configuration not} then implies that, perhaps after attaching an $a$-cell, we may construct a new disk and auxiliary band that, perhaps after attaching another $a$-cell, functions as the transposition of $\Pi$ with $\pazocal{T}$.

The reduced diagram $\Delta'$ arising from the transposition has the number of disks and contour label as $\Delta$, and so is $D$-minimal.

However, the minimality of the 2-signature (and so the signature) need not be preserved by a transposition. This is because many $(\theta,t)$-cells may be added through transposition.

Note that the definition of transposition above differs from that in [16] and [23] only by the presence of $a$-cells.

\newpage

\begin{lemma} \label{G_a theta-annuli}

\textit{(Compare with Lemma 7.5 of [16] and 7.7 of [23])} 

Let $\Delta$ be a reduced diagram over the disk presentation of $G_\Omega(\textbf{M})$ satisfying (M3).

\begin{enumerate}[label=({\arabic*})]

\item Suppose a $\theta$-band $\pazocal{T}$ crosses $\ell$ $t$-spokes of a disk $\Pi$ and there are no disks in the subdiagram bounded by these spokes, $\pazocal{T}$, and $\partial\Pi$. Then $\ell\leq(L-1)/2$.

\item Suppose $\pazocal{T}$ and $\pazocal{T}'$ are disjoint $\theta$-bands crossing $\ell$ and $\ell'$ $t$-spokes, respectively, of a disk $\Pi$. Suppose further that every cell between the bottom of $\pazocal{T}$ (of $\pazocal{T}'$) and $\Pi$ is an $a$-cell. Further, suppose these bands correspond to the same rule $\theta$ if the history is read toward the disk. Then $\ell+\ell'\leq(L-1)/2$.

\item If $S$ is a $\theta$-annulus in $\Delta$ and $\Delta_S$ is the subdiagram bounded by the outer contour of $S$, then $\Delta_S$ is a diagram over $M_\Omega(\textbf{M})$.

\end{enumerate}

\end{lemma}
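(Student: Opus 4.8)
The three statements are proved by the same mechanism — iterated transposition of a $\theta$-band past a disk (Section 9.3), combined with the structural lemmas on $t$-spokes (Lemma \ref{t-spokes between disks}, Lemma \ref{graph}) and the fact that disks carry accepted configurations. For part (1), I would argue by contradiction: suppose $\pazocal{T}$ crosses $\ell>(L-1)/2$ consecutive $t$-spokes of $\Pi$ with no disk in the enclosed subdiagram. Since there are only $L-1$ $t$-spokes on $\Pi$, two of the crossed spokes, say $\pazocal{Q}$ and $\pazocal{Q}'$, must be consecutive in the cyclic sense (i.e.\ correspond to adjacent $t$-letters) — in fact by a counting/pigeonhole argument one gets a whole pair of consecutive spokes crossed by $\pazocal{T}$ with no disk between $\pazocal{T}$, $\partial\Pi$, and those two spokes. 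Now perform the transposition of $\Pi$ with $\pazocal{T}$: this replaces $\pazocal{T}$ by a $\theta$-band on the other side of $\Pi$ and produces a new $D$-minimal diagram with the same number of disks but (potentially) more $(\theta,t)$-cells. Using that no disk lies in the region, the label $\lab(\textbf{s}_2)$ of the complementary arc of $\partial\Pi$ is an admissible word that is $\theta$-admissible (possibly after inserting an element of $\pazocal{L}$, via Lemma \ref{projection admissible configuration not}); then Lemma \ref{theta-bands are one-rule computations} lets the transposition go through. After transposition, the two consecutive $t$-spokes $\pazocal{Q},\pazocal{Q}'$ are no longer crossed, and iterating eventually contradicts Lemma \ref{t-spokes between disks} (or yields a pair of cancellable disks / a diagram of strictly smaller signature that still satisfies (M3), contradicting minimality). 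The cleanest route is to reduce to Lemma \ref{t-spokes between disks}: if $\pazocal{T}$ crossed more than $(L-1)/2$ spokes, one could extract a genuine pair of consecutive $t$-spokes of $\Pi$ whose enclosed region is disk-free, transpose to "absorb" $\pazocal{T}$ into $\Pi$, and find the region between those spokes must then contain a disk — impossible.

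For part (2), the situation is the symmetric version with two disjoint $\theta$-bands $\pazocal{T}$ and $\pazocal{T}'$ both abutting the same disk $\Pi$ (through layers of $a$-cells) and corresponding to the \emph{same} rule $\theta$ when their histories are read toward $\Pi$. The hypothesis on the common rule is exactly what makes the two bands "undo" each other: after transposing $\pazocal{T}$ past $\Pi$ the resulting band, read away from $\Pi$, corresponds to $\theta^{-1}$, and it then meets $\pazocal{T}'$ (also $\theta^{-1}$ on that side) — so they would form a pair of cancellable $(\theta,q)$-cells along each shared $t$-spoke, contradicting reducedness, unless the total number $\ell+\ell'$ of crossed spokes is at most $(L-1)/2$. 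The $a$-cells between the bands and $\Pi$ are handled exactly as in the transposition construction: each intervening region bounded by a subband of $\pazocal{T}$ (or $\pazocal{T}'$) and $\partial\Pi$ has, by Lemma \ref{diminished exist} together with Lemmas \ref{a-band on same a-cell} and \ref{a-bands between a-cells}, a diminished model over $M_\Omega(\textbf{M})$ containing no $a$-cell, hence (by Lemma \ref{M(S) annuli}) consisting of a single $\theta$-band — so $W(i)$ is $\theta$-admissible and the transposition applies. I would then run the same pigeonhole-plus-transposition argument as in (1).

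For part (3), let $S$ be a $\theta$-annulus and $\Delta_S$ the subdiagram bounded by its outer contour. The claim is that $\Delta_S$ contains no disks; equivalently it is a diagram over $M_\Omega(\textbf{M})$ (it a priori cannot contain a hub either, since a hub has $L-1$ $t$-spokes each of which is a $q$-band that by Lemma \ref{M_a no annuli 1}(4) must end on $\partial\Delta_S$ or on another disk — but $\partial\Delta_S$ consists only of $\theta$-edges belonging to $S$, which is impossible, and chasing to another disk reduces to the same problem). More carefully: suppose $\Delta_S$ contains a disk. Since $\theta$-bands cannot cross, among the $\theta$-bands of $\Delta_S$ one can pick (as in the proof of Lemma \ref{M_a no annuli 2}(2)) a minimal $\theta$-annulus and a "rim-most" disk inside; every $t$-spoke of that disk must be crossed by the surrounding $\theta$-annulus $S$ (it cannot escape to $\partial\Delta_S$, which is a $\theta$-edge path). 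But then $S$ crosses all $L-1 > (L-1)/2$ $t$-spokes of that disk with no disk in the enclosed region — contradicting part (1). The final subtlety is the hypothesis (M3): a general reduced diagram need not satisfy (M3), but in every place this lemma is applied $\Delta$ will, and (M3) is exactly what guarantees that the transposition-produced diagrams, which keep the same $1$-signature and contour label, cannot have strictly smaller $2$-signature, so the contradiction lands.

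The main obstacle I anticipate is the careful bookkeeping in the transposition steps when $a$-cells intervene between the $\theta$-band and the disk: one must verify that the admissibility of $\lab(\textbf{s}_2)$ (equivalently of the component $W(i)$ of the disk's accepted configuration) really does follow — this is where Lemma \ref{projection admissible configuration not} is essential, since the only way the band can fail to be applicable to the whole disk label is the exceptional situation where the `special' input sector carries a nontrivial element of $\pazocal{L}$, which is then absorbed by an $a$-cell. Getting the counting right — that "more than $(L-1)/2$ crossed spokes" forces a genuinely consecutive pair whose enclosed region is disk-free, so that Lemma \ref{t-spokes between disks} or reducedness can be invoked — is the other point that needs care but is essentially the same pigeonhole argument used for hubs in [15].
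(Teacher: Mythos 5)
There is a genuine gap: the mechanism that actually produces the bound $(L-1)/2$ never appears in your argument. In the paper, parts (1) and (2) are pure counting arguments against (M3): transposing $\Pi$ with the $\theta$-band (which, by Lemma \ref{M_a no annuli 1}, may be chosen so that only $a$-cells separate it from $\Pi$) removes the $\ell$ $(\theta,t)$-cells where the band crossed the spokes and creates $(L-1)-\ell$ new ones on the other side of the disk; the contour label and the number of disks are unchanged, so if $\ell>(L-1)/2$ the new diagram has strictly fewer $(\theta,t)$-cells, contradicting the minimality of $s_2(\Delta)$ required by (M3). In (2) the same count applies, except that $\ell'$ of the new cells cancel against cells of $\pazocal{T}'$, giving a net change of $(L-1)-2\ell-2\ell'$. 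Your proposal never performs this count. For (1) you instead try to reduce to Lemma \ref{t-spokes between disks}, which is inapplicable: that lemma concerns two disks sharing a pair of consecutive $t$-spokes, and here there is only one disk and a $\theta$-band, so there is no second disk whose intermediate region could be forced to contain a disk. Your pigeonhole observation (that two crossed spokes are consecutive) is automatic for any $\ell\geq 2$ and cannot be the source of the threshold $(L-1)/2$. For (2), your claim that the transposed band and $\pazocal{T}'$ ``would form cancellable cells, contradicting reducedness, unless $\ell+\ell'\leq(L-1)/2$'' is not valid logic: the cancellable pairs arise in the \emph{transposed} diagram, not in $\Delta$, they arise regardless of the size of $\ell+\ell'$, and cancelling them is not a contradiction but exactly the move that lowers the $(\theta,t)$-count; the contradiction is again with (M3), and only when $(L-1)-2\ell-2\ell'<0$.

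Part (3) is closer to the paper in outline (reduce to part (1)), but two points need repair. First, the outer contour of a $\theta$-annulus is a side of the band and therefore consists of $q$- and $a$-edges, not $\theta$-edges, so your claim that spokes ``cannot escape to $\partial\Delta_S$'' is based on a false description; in fact the paper's argument is precisely that spokes \emph{do} end on $\partial\Delta_S$, i.e.\ at $(\theta,t)$-cells of $S$, which is what makes $S$ cross them. Second, when $\Delta_S$ contains several disks, choosing a ``rim-most'' disk does not by itself guarantee that $L-4$ of its spokes reach $\partial\Delta_S$ with disk-free regions between them; this is exactly what Lemma \ref{graph} (applicable since (M3) implies $D$-minimality) provides, and it is the tool the paper uses before invoking part (1) with $L>7$. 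Your handling of the $a$-cells between the band and the disk via Lemma \ref{projection admissible configuration not} and Lemmas \ref{a-band on same a-cell}, \ref{a-bands between a-cells} is the right idea and matches the paper, but without the $(\theta,t)$-cell count the core of (1) and (2) is missing.
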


\begin{figure}[H]
\centering
\includegraphics[scale=1.5]{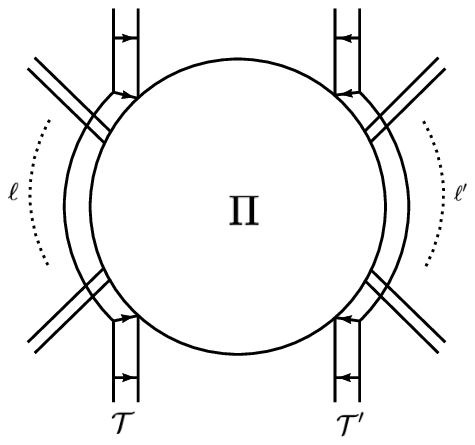}
\caption{Lemma \ref{G_a theta-annuli}(2)}
\end{figure}

\begin{proof}

(1) Lemma \ref{M_a no annuli 1} implies that there exists a $\theta$-band $\pazocal{T}_0$ crossing all $\ell$ spokes such that the only cells between it and $\Pi$ are $a$-cells. If $\ell>(L-1)/2$, then the transposition of $\Pi$ and $\pazocal{T}_0$ in $\Delta$ then yields a diagram with the same contour label, the same number of disks, and strictly less $(\theta,t)$-cells. This contradicts the minimality of $s_2(\Delta)$.

(2) The transposition of $\pazocal{T}$ and $\Pi$ removes $\ell$ $(\theta,t)$-cells and adds $(L-1)-\ell$ new $(\theta,t)$-cells in the resulting band. However, $\ell'$ of these cells form cancellable pairs with cells of $\pazocal{T}'$, so that it is possible to cancel $2\ell'$ cells. Hence, the change in the number of $(\theta,t)$-cells is $(L-1)-2\ell-2\ell'$, so that the relation $\ell+\ell'>(L-1)/2$ would contradict the minimality of $s_2(\Delta)$.

(3) Suppose $\Delta_S$ contains a disk. Then, since $\Delta$ is $D$-minimal, Lemma \ref{graph} gives a disk $\Pi$ in $\Delta_S$ with $L-4$ consecutive $t$-spokes that end on $\partial\Delta_S$ and such that the subdiagram of $\Delta_S$ bounded by these spokes contains no disks. But then taking $L>7$, $S$ and $\Pi$ contradict (1).

\end{proof}

The following is an immediate consequence of Lemmas \ref{M_a no annuli 2}(2) and \ref{G_a theta-annuli}(3).

\begin{lemma} \label{minimal theta-annuli}

A minimal diagram $\Delta$ contains no $\theta$-annuli.

\end{lemma}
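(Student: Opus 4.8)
The statement to prove is Lemma~\ref{minimal theta-annuli}: a minimal diagram $\Delta$ contains no $\theta$-annuli. The plan is to argue by contradiction: suppose $\Delta$ contains at least one $\theta$-annulus. Since $\theta$-bands cannot cross one another, the collection of $\theta$-annuli in $\Delta$ is partially ordered by containment of the bounded subdiagrams (exactly as in the proof of Lemma~\ref{M_a no annuli 2}(2) and Lemma~\ref{diminished exist}), so finiteness of $\Delta$ yields a minimal $\theta$-annulus $S$. Let $\Delta_S$ be the subdiagram of $\Delta$ bounded by the outer contour of $S$.

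First I would invoke Lemma~\ref{G_a theta-annuli}(3): since $\Delta$ is minimal, it satisfies (M3), so $\Delta_S$ is in fact a diagram over $M_\Omega(\textbf{M})$, i.e.\ it contains no disks. Next, I would observe that as a subdiagram of the minimal diagram $\Delta$, the diagram $\Delta_S$ satisfies (M1) and (M2) — which are precisely (MM1) and (MM2) — and contains no disks, hence $\Delta_S$ is $M$-minimal. Now the situation is exactly the one handled by Lemma~\ref{M_a no annuli 2}(2): $\Delta_S$ is an $M$-minimal diagram containing the $\theta$-annulus $S$, and that lemma asserts no such $\theta$-annulus can exist. This contradiction completes the argument.

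The one point that requires a little care — and is the only real obstacle — is making sure the hypotheses of the two cited lemmas line up. Lemma~\ref{G_a theta-annuli} is stated for diagrams over the disk presentation of $G_\Omega(\textbf{M})$ satisfying (M3), which applies to our minimal $\Delta$; its conclusion (3) is what strips the disks out of $\Delta_S$. Lemma~\ref{M_a no annuli 2}(2) is stated for $M$-minimal diagrams, and the definition of $M$-minimal requires only (MM1) and (MM2), both inherited by $\Delta_S$ from $\Delta$ together with the fact (just established) that $\Delta_S$ has no disks and hence is genuinely a diagram over $M_\Omega(\textbf{M})$. So the splice is clean, and no new computation is needed; the lemma is a two-line corollary of the two results cited in the sentence immediately preceding it.
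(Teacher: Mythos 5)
Your proof is correct and is exactly the paper's argument: the paper derives the lemma as an immediate consequence of Lemma~\ref{G_a theta-annuli}(3) (which removes disks from $\Delta_S$) and Lemma~\ref{M_a no annuli 2}(2) (applied to the resulting $M$-minimal subdiagram), just as you do. The only superfluous step is choosing a minimal $\theta$-annulus $S$; since Lemma~\ref{G_a theta-annuli}(3) and the inheritance of (M1)--(M2) by subdiagrams apply to any $\theta$-annulus, no minimality is needed.
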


The following statement gives a strengthened version of van Kampen's Lemma for $G_\Omega(\textbf{M})$, specifically for minimal diagrams.

\begin{lemma} \label{minimal exist}

A word $W$ over $\pazocal{X}$ represents the trivial element of $G_\Omega(\textbf{M})$ if and only if there exists a minimal diagram $\Delta$ such that $\lab(\partial\Delta)\equiv W$.

\end{lemma}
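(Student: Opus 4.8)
The reverse direction is immediate from van Kampen's Lemma, since a minimal diagram is in particular a reduced diagram over the disk presentation of $G_\Omega(\textbf{M})$, and the disk presentation defines a group isomorphic to $G_\Omega(\textbf{M})$ (by Lemma \ref{disks are relations}). So the plan is to prove the forward direction: given $W=1$ in $G_\Omega(\textbf{M})$, produce a minimal diagram with boundary label $W$. First I would invoke the strengthened van Kampen's Lemma for graded presentations (Section 2.7) together with the grading argument used in Lemma \ref{diminished exist}: partitioning the relators of $G_\Omega(\textbf{M})$ into the sets $\pazocal{S}_1$ (the $(\theta,a)$-relators), $\pazocal{S}_2$ (the $a$-relators), $\pazocal{S}_3$ (the $(\theta,q)$-relators for non-$t$ letters), $\pazocal{S}_4$ (the $(\theta,t)$-relators), and $\pazocal{S}_5=\{W_{ac}\}$ (the hub relator), no cyclic permutation of an element of one $\pazocal{S}_i^{\pm1}$ lies in another $\pazocal{S}_j$, so this yields a graded presentation. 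A reduced graded diagram over this presentation therefore exists with $\lab(\partial\Delta)\equiv W$; lexicographic minimality of its type gives a diagram of minimal signature, hence a diminished diagram.

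Next I would upgrade this diminished diagram to a minimal one. A diminished diagram automatically satisfies (M3) (it has minimal signature, so in particular minimal 2-signature among all reduced diagrams, not just diminished ones — here one uses that minimality of the full lexicographic signature forces minimality of $s_2$), and by Lemma \ref{a-bands between a-cells} it satisfies (M2). The one remaining point is (M1): at most half the edges of any $a$-cell $\pi$ may mark the start of an $a$-band crossing a given $\theta$-band $\pazocal{T}$. As noted in the excerpt, a diminished diagram satisfying (M1) is minimal, so it suffices to show that among all reduced diagrams with boundary label $W$ and minimal signature, at least one satisfies (M1). If a diminished diagram $\Delta$ violates (M1), with $a$-cell $\pi$ and $\theta$-band $\pazocal{T}$ such that more than half the edges of $\partial\pi$ start $a$-bands crossing $\pazocal{T}$, I would apply the transposition of the $\theta$-band $\pazocal{T}$ with the $a$-cell $\pi$ described in Section 8.3 (extended appropriately around disks, using the transposition of a $\theta$-band and a disk from Section 9.3 if $\pi$ is instead a disk — but for an $a$-cell the Section 8.3 construction is exactly what is needed). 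The resulting diagram $\tilde\Delta$ has the same boundary label, still satisfies (M2) (the transposition does not change the other endpoint of any $a$-band ending on $\pi$), and — crucially — the number of $(\theta,a)$-cells strictly decreases: the $\theta$-band that previously crossed more than half the edges of $\partial\pi$ is replaced by one meeting fewer than half. Since the transposition changes neither the number of disks, $(\theta,t)$-cells, nor $(\theta,q)$-cells, and strictly decreases $\a_4$, the signature strictly drops, contradicting minimality unless no such violation exists.

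I expect the main obstacle to be bookkeeping the transposition correctly in the presence of disks and of the `special' input sector: the transposition of $\pazocal{T}$ with $\pi$ may introduce or delete $a$-cells (an element of $\pazocal{L}$ in the special input sector, via Lemma \ref{projection admissible configuration not}), so one must check that the net effect on $\a_4$ is still a strict decrease, or handle this by first passing to a diagram where the relevant configuration is genuinely $\theta$-admissible. One clean way to organize this is to induct on the signature: among all reduced diagrams $\Gamma$ with $\lab(\partial\Gamma)\equiv W$, pick one of minimal signature; it is diminished, hence satisfies (M2) and (M3); if it fails (M1), the transposition produces a diagram of strictly smaller signature with the same boundary, contradiction. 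Hence some minimal-signature diagram satisfies (M1), (M2), (M3) and is therefore minimal. A subtlety worth stating explicitly is that the transposition must be chosen to actually reduce $\a_4$ and not merely permute cells; this is where the hypothesis ``more than half the edges of $\partial\pi$'' is used, since after transposition the transposed band meets at most (roughly) the complementary set of edges, giving a strict reduction once $n$ is large (recall $\|\partial\pi\|\geq(1-\beta)n$).
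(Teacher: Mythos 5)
Your overall skeleton (diminished diagram via the grading argument, then transpositions of $\theta$-bands with $a$-cells to repair (M1)) matches the paper, but two steps in your repair argument do not work as written. First, the transposition of Section 8.3 is only defined when the $a$-cell $\pi$ and the $\theta$-band $\pazocal{T}$ form a subdiagram, i.e.\ when there are no other cells between $\partial\pi$ and $\textbf{bot}(\pazocal{T})$. A violation of (M1) only gives you $s>\frac12\|\partial\pi\|$ maximal $a$-bands from $\partial\pi$ crossing $\pazocal{T}$, possibly far away, with disks, $(\theta,q)$-cells, $(\theta,a)$-cells and other $a$-cells in between. The bulk of the paper's proof is exactly the reduction to the adjacent case: one chooses $(\pi,\pazocal{T})$ with $s/\|\partial\pi\|$ maximal, considers the subdiagram $\Delta_0$ bounded by the outermost two $a$-bands, a subpath of $\partial\pi$ and a subband of $\pazocal{T}$, and uses Lemmas \ref{graph}, \ref{G_a theta-annuli}(1), \ref{M_a no annuli 1} and \ref{M_a no annuli 2} to show $\Delta_0$ has no disks and no $(\theta,q)$-cells, then uses maximality of $s/\|\partial\pi\|$ (and (M2)) to dispose of intermediate $a$-cells, concluding $s=\|\partial\pi\|$ and $\textbf{x}^{-1}=\textbf{bot}(\pazocal{T}_0)$, at which point the transposition applies. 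Your proposal skips this reduction entirely, so the transposition you invoke is not yet available.

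Second, your termination argument is incorrect: you claim the transposition strictly decreases the signature because it "strictly decreases $\a_4$". But $\a_4$ counts $a$-cells, not $(\theta,a)$-cells, and the transposition merely moves the $a$-cell $\pi$ to the other side of the band, leaving the numbers of disks, $(\theta,t)$-cells, $(\theta,q)$-cells and $a$-cells unchanged; the quantity that changes is the number of $(\theta,a)$-cells, which does not appear in the signature at all (indeed Section 8.3 warns that the weight "may differ considerably", and that the transposed diagram need not be $M$-minimal). So there is no contradiction with minimality of the signature, and your induction on the signature does not close. The paper instead iterates: after the reduction above, each transposition leaves the diagram diminished (same signature) and removes one counterexample to (M1) while leaving the rest of the diagram unchanged, so the process terminates. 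Your worry about insertions of words of $\pazocal{L}$ via Lemma \ref{projection admissible configuration not} is also misplaced here: that phenomenon belongs to the transposition of a $\theta$-band with a \emph{disk} (Section 9.3), not with an $a$-cell, and the $a$-cell transposition creates no new $a$-cells.
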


\begin{figure}[H]
\centering
\includegraphics[scale=1]{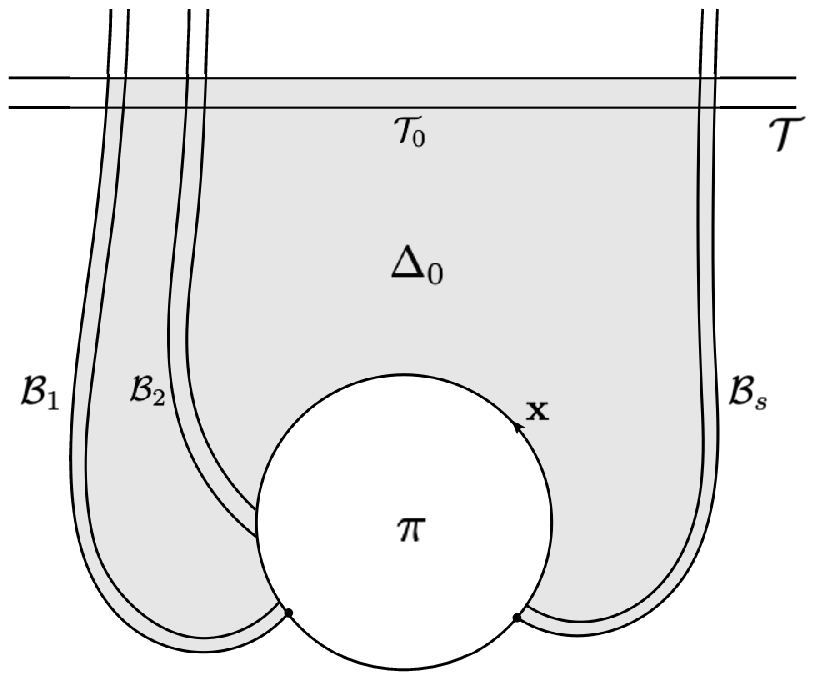}
\caption{Lemma \ref{minimal exist}}
\end{figure}

\begin{proof}

As in the proof of Lemma \ref{diminished exist}, the reverse direction is an immediate consequence of van Kampen's Lemma.

Let $\pazocal{S}_5$ be the set of words defining the disk relations of $G_\Omega(\textbf{M})$. Then, letting $\pazocal{S}_1,\dots,\pazocal{S}_4$ be as defined in the proof of Lemma \ref{diminished exist}, the partition of the relations $\pazocal{S}_1\sqcup\dots\sqcup\pazocal{S}_5$ defines a grading on the disk presentation of $G_\Omega(\textbf{M})$.

By the definition of the grading, a reduced graded diagram has minimal signature. So, the strengthened version of van Kampen's Lemma implies that for any word $W$ representing the trivial element of $G_\Omega(\textbf{M})$, there exists a diminished diagram $\Delta$ with $\lab(\partial\Delta)\equiv W$.

Suppose $\Delta$ is not minimal. As $\Delta$ is diminished, Lemma \ref{a-bands between a-cells} implies that it satisfies (M2). So, $\Delta$ must not satisfy (M1), i.e it contains an $a$-cell $\pi$ and a $\theta$-band $\pazocal{T}$ such that for some $s>\frac{1}{2}\|\partial\pi\|$, $s$ maximal $a$-bands start on $\partial\pi$ and cross $\pazocal{T}$. Without loss of generality, we assume that $\pi$ and $\pazocal{T}$ are chosen so that $s/\|\partial\pi\|$ is maximal amongst such pairs.

Enumerate these $a$-bands $\pazocal{B}_1,\dots,\pazocal{B}_s$ based on where they cross $\pazocal{T}$ and let $\Delta_0$ be the subdiagram containing each of these $s$ bands which is bounded by a side of $\pazocal{B}_1$, a side of $\pazocal{B}_s$, a subpath $\textbf{x}$ of $\partial\pi$, and the top of a subband $\pazocal{T}_0$ of $\pazocal{T}$ (see Figure 9.6).

As $a$-bands consist only of $(\theta,a)$-cells, the sides of $\pazocal{B}_1$ and $\pazocal{B}_s$ consist only of $\theta$-edges. So, any $q$-edge of $\partial\Delta_0$ must be part of the top of $\pazocal{T}_0$.

Suppose $\Delta_0$ contains a disk. By Lemma \ref{graph}, there exists a disk $\Pi$ in $\Delta_0$ with at least $L-4$ $t$-spokes ending on $\textbf{top}(\pazocal{T}_0)$ such that there are no disks in the subdiagram bounded by these spokes. But then the parameter choice $L>7$ means that $\pazocal{T}_0$ and $\Pi$ form a counterexample to Lemma \ref{G_a theta-annuli}(1).

So, as $\Delta_0$ contains no disks, any $q$-edge of $\partial\Delta_0$ must mark the start of a maximal $q$-band which has two ends on $\textbf{top}(\pazocal{T}_1)$. But then this $q$-band bounds a $(\theta,q)$-annulus with some subband of $\pazocal{T}_0$, contradicting Lemma \ref{M_a no annuli 1}(1). So, Lemma \ref{M_a no annuli 1}(4) implies $\Delta_0$ contains no $(\theta,q)$-cells.

As $\Delta_0$ satisfies (M2), each edge of $\textbf{x}$ is the start of an $a$-band which ends on the top of $\pazocal{T}_0$, so that this $a$-band crosses $\pazocal{T}$. Hence, $\textbf{x}$ consists entirely of the $s$ edges of $\partial\pi$ marking the start of $\pazocal{B}_1,\dots,\pazocal{B}_s$.

By Lemmas \ref{M_a no annuli 1} and \ref{M_a no annuli 2}, any maximal $\theta$-band $\pazocal{T}_0'$ of $\Delta_0$ connects the side of $\pazocal{B}_1$ to the side of $\pazocal{B}_s$, so that all $s$ $a$-bands must cross this $\theta$-band. Letting $\pazocal{T}'$ be the maximal $\theta$-band of $\Delta$ containing $\pazocal{T}_0'$, the maximality of $s/\|\partial\pi\|$ implies that $\pazocal{B}_1,\dots,\pazocal{B}_s$ comprise all maximal $a$-bands starting on $\partial\pi$ and crossing $\pazocal{T}'$. So, we may pass to $\pazocal{T}'$, assuming that $\pazocal{T}'=\pazocal{T}$ is the $\theta$-band chosen above. As a result, $\pazocal{T}_0$ is the only maximal $\theta$-band of $\Delta_0$.

So, any cell between $\textbf{bot}(\pazocal{T}_0)$ and $\textbf{x}$ must be an $a$-cell. Supposing such an $a$-cell exists, property (M2) implies that each of the edges on its boundary marks the start of an $a$-band that must cross $\pazocal{T}_0$, forming another counterexample to (M1). The maximality of $s/\|\partial\pi\|$ then implies $s=\|\partial\pi\|$. Since $a$-bands cannot cross, we may find a `minimal' counterexample, i.e an $a$-cell with no cells between it and $\textbf{bot}(\pazocal{T}_0)$. Passing to this cell, we may assume without loss of generality that $\textbf{x}^{-1}=\textbf{bot}(\pazocal{T}_0)$.

As a result, $\pi$ may be transposed with $\pazocal{T}$ to produce a reduced diagram $\tilde{\Delta}$ in which at most $\|\partial\pi\|-s<\frac{1}{2}\|\partial\pi\|$ maximal $a$-bands start on $\partial\pi$ and cross the maximal $\theta$-band arising from $\pazocal{T}$. Since the rest of the diagram remains unchanged throughout this process, $\tilde{\Delta}$ is diminished and contains one less counterexample to property (M1). Hence, iterating the process eliminates any $a$-cell and $\theta$-band violating property (M1), producing the desired minimal diagram.

\end{proof}

\begin{remark}

The proof of Lemma \ref{minimal exist} corresponds to the following sharper statement: A word $W$ over $\pazocal{X}$ represents the trivial element of $G_\Omega(\textbf{M})$ if and only if there exists a diminished diagram $\Delta$ satisfying (M1) such that $\lab(\partial\Delta)\equiv W$. However, the statement above suffices for our purposes.

\end{remark}

Lemma \ref{minimal exist} immedately implies the following strengthened version of van Kampen's Lemma for $M$-minimal diagrams.

\begin{lemma} \label{M-minimal exist}

A word $W$ over $\pazocal{X}$ represents the trivial element of $M_\Omega(\textbf{M})$ if and only if there exists an $M$-minimal diagram $\Delta$ such that $\lab(\partial\Delta)\equiv W$.

\end{lemma}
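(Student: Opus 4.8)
The statement to prove is Lemma \ref{M-minimal exist}: a word $W$ over $\pazocal{X}$ is trivial in $M_\Omega(\textbf{M})$ if and only if there is an $M$-minimal diagram $\Delta$ with $\lab(\partial\Delta)\equiv W$. The plan is to derive this as a quick corollary of Lemma \ref{minimal exist}, which is the genuinely hard statement (already proved in the excerpt), together with the basic observation relating the groups $M_\Omega(\textbf{M})$ and $G_\Omega(\textbf{M})$.

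First I would dispatch the reverse implication: if such a $\Delta$ exists, then by van Kampen's Lemma (Section 2.1) $\lab(\partial\Delta)$ represents the trivial element of $M_\Omega(\textbf{M})$, since an $M$-minimal diagram is in particular a reduced diagram over the canonical presentation of $M_\Omega(\textbf{M})$. For the forward implication, suppose $W=1$ in $M_\Omega(\textbf{M})$. Since $G_\Omega(\textbf{M})\cong M_\Omega(\textbf{M})/\gen{\gen{W_{ac}}}$, the word $W$ also represents the trivial element of $G_\Omega(\textbf{M})$, so Lemma \ref{minimal exist} provides a minimal diagram $\Delta$ over the disk presentation of $G_\Omega(\textbf{M})$ with $\lab(\partial\Delta)\equiv W$. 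Now I would invoke the minimality of the $1$-signature (which is part of being minimal, since (M3) asks for minimal $2$-signature, hence minimal number of disks). The key point is that $\Delta$ contains no disks: the boundary label $W$ is a word over $\pazocal{X}=Q\cup Y\cup R$ that represents the trivial element of $M_\Omega(\textbf{M})$, so by the reverse direction applied to $M_\Omega(\textbf{M})$ (or directly by the strengthened van Kampen Lemma of Section 2.7, exactly as in the proof of Lemma \ref{diminished exist}) there exists a reduced diagram $\Gamma$ over $M_\Omega(\textbf{M})$ with $\lab(\partial\Gamma)\equiv W$ and with no disks at all, i.e. $s_1(\Gamma)=0$. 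By minimality of $s_2$ (and hence of $s_1$) for $\Delta$, we get $s_1(\Delta)\le s_1(\Gamma)=0$, so $\Delta$ has no disks.

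Finally, a minimal diagram with no disks is, by the remark following the definitions, precisely an $M$-minimal diagram: conditions (M1) and (M2) coincide with (MM1) and (MM2), and in the absence of disks the canonical and disk presentations of $G_\Omega(\textbf{M})$ differ only by the (unused) disk relations, so $\Delta$ is a reduced diagram over $M_\Omega(\textbf{M})$ satisfying (MM1) and (MM2). Hence $\Delta$ is the desired $M$-minimal diagram, and $\lab(\partial\Delta)\equiv W$.

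The only subtle point — the step I would flag as requiring care rather than being fully routine — is the claim that the minimality of the signature passes correctly between the two presentations, i.e. that a minimal diagram over $G_\Omega(\textbf{M})$ whose boundary word happens to be $M_\Omega(\textbf{M})$-trivial must be disk-free. This is exactly where one uses that $\pazocal{L}\subset\Omega$ is already built into $M_\Omega(\textbf{M})$, so that no disk relation is ever needed to kill a word over $\pazocal{X}$ that is trivial in $M_\Omega(\textbf{M})$; the comparison diagram $\Gamma$ with $s_1(\Gamma)=0$ is produced exactly as in Lemma \ref{diminished exist}. Everything else is a direct citation of Lemma \ref{minimal exist} and the definitions, so this lemma is genuinely a corollary and the proof is short.
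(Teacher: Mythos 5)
Your proposal is correct and follows essentially the same route as the paper, which presents this lemma as an immediate consequence of Lemma \ref{minimal exist}: pass to $G_\Omega(\textbf{M})$, take the minimal diagram, use minimality of the ($2$-)signature against a disk-free comparison diagram over $M_\Omega(\textbf{M})$ to conclude there are no disks, and invoke the remark that a diskless minimal diagram is $M$-minimal. Your explicit handling of the signature comparison is exactly the routine verification the paper leaves implicit.
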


\smallskip


\subsection{Quasi-trapezia} \

Next, the concept of trapezium is generalized to the setting of minimal diagrams over $G_\Omega(\textbf{M})$. 

A \textit{quasi-trapezium} is a minimal diagram defined in much the same way as an $a$-trapezium (see section 8.4) except that it is permitted to contain disks. In other words, a quasi-trapezium is a minimal diagram whose boundary can be factored as $\textbf{p}_1^{-1}\textbf{q}_1\textbf{p}_2\textbf{q}_2^{-1}$, where each $\textbf{p}_i$ is the side of a $q$-band and each $\textbf{q}_i$ is the maximal subpath of the side of a $\theta$-band where the subpath starts and ends with a $q$-edge.

The \textit{(step) history} of a quasi-trapezium is defined in the same way as for an $a$-trapezium, as are the \textit{base}, the \textit{height}, and the \textit{standard factorization}.

Note that a quasi-trapezium containing no disks is an $a$-trapezium, while one without any disks or $a$-cells is a trapezium. 

Indeed, an $a$-trapezium is necessarily a quasi-trapezium. To see that an $a$-trapezium satisfies (M3), note that Lemmas \ref{M_a no annuli 1} and \ref{M_a no annuli 2} imply that in any minimal diagram with the same contour label, any maximal $\theta$-band must cross each maximal $q$-band exactly once.

\begin{lemma} \label{M_a reduced a-trapezia}

Suppose $\Gamma$ is a reduced diagram over $M_\Omega(\textbf{M})$ with contour $\textbf{p}_1^{-1}\textbf{q}_1\textbf{p}_2\textbf{q}_2^{-1}$ where each $\textbf{p}_j$ is the side of a $q$-band and each $\textbf{q}_j$ is the maximal subpath of the side of a $\theta$-band that starts and ends with a $q$-letter. Then there exists a minimal diagram $\Gamma'$ over $M_\Omega(\textbf{M})$ such that:

\begin{enumerate}[label=({\arabic*})]

\item $\partial\Gamma'=(\textbf{p}_1')^{-1}\textbf{q}_1'\textbf{p}_2'(\textbf{q}_2')^{-1}$, where $\text{Lab}(\textbf{p}_j')\equiv\text{Lab}(\textbf{p}_j)$ and $\text{Lab}(\textbf{q}_j')\equiv\text{Lab}(\textbf{q}_j)$ for $j=1,2$

\item there exists a simple path $\textbf{s}_1$ (respectively $\textbf{s}_2$) connecting the vertices $(\textbf{p}_1')_-$ and $(\textbf{p}_2')_-$ (respectively the vertices $(\textbf{p}_1')_+$ and $(\textbf{p}_2')_+$) such that

\begin{enumerate}

\item $(\textbf{p}_1')^{-1}\textbf{s}_1\textbf{p}_2'\textbf{s}_2^{-1}$ is the standard factorization of the boundary of an $a$-trapezium  $\Gamma_2$ and

\item any cell above $\textbf{s}_2$ or below $\textbf{s}_1$ is an $a$-cell.

\end{enumerate}

\end{enumerate}

\end{lemma}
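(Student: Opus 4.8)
The statement asserts that a reduced diagram $\Gamma$ over $M_\Omega(\textbf{M})$ having the ``shape'' of a trapezium (two $q$-band sides, two $\theta$-band tops/bottoms) can be normalized to a minimal diagram $\Gamma'$ with the same boundary data, in which the $a$-cells are pushed out to the top and bottom, leaving a genuine $a$-trapezium $\Gamma_2$ in the middle. The plan is to first replace $\Gamma$ by a diminished diagram with the same contour label (Lemma \ref{diminished exist}), so that by Lemma \ref{a-bands between a-cells} property (M2) holds, and then push toward (M1) as in the proof of Lemma \ref{minimal exist}; the extra content here is to keep track of \emph{where} the $a$-cells end up, namely as a layer attached to the top $\theta$-band side $\textbf{q}_2$ and a layer attached to the bottom side $\textbf{q}_1$, so that everything between these two layers is $a$-cell-free, hence a trapezium, and together with the two layers forms an $a$-trapezium.

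\textbf{Step 1: set up.} Since $\lab(\partial\Gamma)$ represents the trivial element of $M_\Omega(\textbf{M})$, apply Lemma \ref{diminished exist} to obtain a diminished diagram $\Gamma_0$ over $M_\Omega(\textbf{M})$ with $\lab(\partial\Gamma_0)\equiv\lab(\partial\Gamma)$ and no $\theta$-annuli. By Lemma \ref{M(S) annuli} every maximal $\theta$-band and maximal $q$-band of $\Gamma_0$ ends on $\partial\Gamma_0$ in two places, and because the boundary of $\Gamma_0$ has the standard trapezium-shaped factorization $\textbf{p}_1^{-1}\textbf{q}_1\textbf{p}_2\textbf{q}_2^{-1}$ (the $q$-edges of $\partial\Gamma_0$ lie only on $\textbf{q}_1$ and $\textbf{q}_2$, the $\theta$-edges only on $\textbf{p}_1$ and $\textbf{p}_2$), each maximal $\theta$-band runs from $\textbf{p}_1$ to $\textbf{p}_2$ and each maximal $q$-band from $\textbf{q}_1$ to $\textbf{q}_2$. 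So $\Gamma_0$ already has the combinatorial structure of an ($a$-)trapezium except for possible violations of (M1). Note $\Gamma$ need not have had minimal signature, but that is irrelevant: we only need the boundary labels of $\textbf{p}_j$ and $\textbf{q}_j$ preserved, which Lemma \ref{diminished exist} guarantees. Also record that $\Gamma_0$ contains no disks: a disk would force a maximal $q$-band (a spoke) to end on it, and this spoke must then end on $\partial\Gamma_0$ only on $\textbf{q}_1$ or $\textbf{q}_2$ at both ends — impossible for a $t$-spoke, since $t$-letters do not occur in the base read off $\textbf{q}_1,\textbf{q}_2$; more carefully, apply Lemma \ref{graph} and Lemma \ref{G_a theta-annuli}(1) exactly as in the proof of Lemma \ref{minimal exist} to rule disks out. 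Hence $\Gamma_0$ is $M$-minimal once (M1) is achieved.

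\textbf{Step 2: push $a$-cells to the rim by transpositions.} Now run the transposition argument from the proof of Lemma \ref{minimal exist}, but controlling the direction of the pushes. Enumerate the maximal $\theta$-bands of $\Gamma_0$ from bottom to top as $\pazocal{T}_1,\dots,\pazocal{T}_h$ (bottom meaning the one whose side lies along $\textbf{q}_1$). Whenever an $a$-cell $\pi$ and a $\theta$-band $\pazocal{T}_i$ violate (M1) — more than half the edges of $\partial\pi$ start $a$-bands crossing $\pazocal{T}_i$ — transpose $\pi$ with $\pazocal{T}_i$ using Section 8.3; the transposition moves $\pi$ across $\pazocal{T}_i$, so choosing to transpose with the $\theta$-band \emph{nearer the closer of $\textbf{q}_1,\textbf{q}_2$} pushes $\pi$ monotonically toward that side. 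Each transposition preserves $\lab(\partial\Gamma_0)$, preserves (M2) (Section 8.3 remark: the far end of each $a$-band is unchanged), and does not create disks. Since there are no disks, the signature is $(0,0,\alpha_3,\alpha_4)$ and each transposition leaves $\alpha_3,\alpha_4$ unchanged or decreasing (cancellations in the transposed band only decrease them), so the process terminates. When it halts, (M1) holds, so the resulting diagram $\Gamma'$ is minimal; set $\partial\Gamma'=(\textbf{p}_1')^{-1}\textbf{q}_1'\textbf{p}_2'(\textbf{q}_2')^{-1}$ with the boundary labels as in (1).

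\textbf{Step 3: extract the layers and the middle $a$-trapezium.} In $\Gamma'$, let $\textbf{s}_2$ be the \emph{trimmed top} of the topmost $\theta$-band $\pazocal{T}_h'$, chosen maximal so that every cell strictly above $\textbf{s}_2$ (between it and $\textbf{q}_2'$) is an $a$-cell — such a maximal curve exists because after Step 2 any $a$-cell not already pushed to the top rim would, together with some $\theta$-band, violate (M1). Dually let $\textbf{s}_1$ be the trimmed bottom of $\pazocal{T}_1'$, with every cell below it an $a$-cell. Then the subdiagram $\Gamma_2$ bounded by $(\textbf{p}_1')^{-1}\textbf{s}_1\textbf{p}_2'\textbf{s}_2^{-1}$ contains no $a$-cells between $\textbf{s}_1$ and $\textbf{s}_2$ (by maximality of the two curves and (M1)) and no disks, hence is a trapezium; being an $M$-minimal subdiagram of $\Gamma'$ with maximal $\theta$- and $q$-bands crossing exactly once (Lemmas \ref{M_a no annuli 1}, \ref{M_a no annuli 2}), it satisfies (M1)–(M2), so it is an $a$-trapezium in the sense of Section 8.4 — in fact it has no $a$-cells at all, but the statement only requires it to be an $a$-trapezium. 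This gives (2)(a). For (2)(b), any cell above $\textbf{s}_2$ or below $\textbf{s}_1$ is an $a$-cell by construction.

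\textbf{Main obstacle.} The delicate point is Step 2: ensuring the transpositions terminate \emph{and} genuinely push $a$-cells to a single rim layer rather than shuffling them back and forth. The clean way is to observe (as in Lemma \ref{minimal exist}) that one can always pick a ``minimal'' violating pair — an $a$-cell with no cells between it and one side of the offending $\theta$-band — so that the transposition is the local move of Section 8.3 with $\textbf{bot}(\pazocal{T}_0)$ equal to part of $\partial\pi$; each such move strictly reduces the number of (M1)-violating pairs in $\Gamma_0$, giving a well-founded induction (this is precisely the termination argument already carried out in the proof of Lemma \ref{minimal exist}). Once (M1) holds, Step 3 is essentially bookkeeping: defining $\textbf{s}_1,\textbf{s}_2$ as the outermost curves beyond which only $a$-cells lie, and invoking Lemmas \ref{M_a no annuli 1}–\ref{M_a no annuli 2} to see the middle region is a bona fide trapezium. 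The only subtlety to watch in Step 3 is that $\textbf{s}_1$ and $\textbf{s}_2$ share endpoints correctly with $\textbf{p}_1',\textbf{p}_2'$ — this follows because both are trimmed top/bottom paths of $\theta$-bands whose first and last cells are $(\theta,q)$-cells lying on the two side $q$-bands, so their endpoints are exactly $(\textbf{p}_j')_{\pm}$ as required.
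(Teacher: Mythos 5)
Your high-level route (pass to a minimal diagram with the same contour label, then take the bottom of the lowest $\theta$-band and the top of the highest one as $\textbf{s}_1,\textbf{s}_2$) is essentially the paper's, but two steps that the paper has to work for are asserted without justification in your write-up, and they are exactly the nontrivial content of the lemma. First, you claim that in the new diagram every maximal $\theta$-band runs from $\textbf{p}_1'$ to $\textbf{p}_2'$ and every maximal $q$-band from $\textbf{q}_1'$ to $\textbf{q}_2'$ "because the $\theta$-edges lie only on $\textbf{p}_1,\textbf{p}_2$ and the $q$-edges only on $\textbf{q}_1,\textbf{q}_2$." That does not rule out a maximal $\theta$-band with \emph{both} ends on the same $\textbf{p}_j'$, nor a maximal $q$-band with both ends on the same $\textbf{q}_j'$; the absence of annuli alone does not exclude this. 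The paper excludes the first possibility by observing that an innermost such band would force two adjacent mutually inverse $\theta$-edges on $\textbf{p}_j'$, hence (since $\lab(\textbf{p}_j')\equiv\lab(\textbf{p}_j)$) a cancellable pair of $(\theta,q)$-cells in the side $q$-band of the \emph{original} reduced diagram $\Gamma$ — an argument that crucially uses the reducedness of $\Gamma$, which you never invoke; the second possibility is excluded by a separate $(\theta,q)$-annulus argument. Second, for (2)(a) you need $\textbf{p}_1'$ and $\textbf{p}_2'$ to actually be sides of $q$-bands of $\Gamma'$, and this is not automatic: in the minimal diagram produced by Lemma \ref{minimal exist} there may be $(\theta,a)$-cells between the boundary path $\textbf{p}_j'$ and the nearest maximal $q$-band. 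The paper compares the histories of that $q$-band and of the original side band (using that every maximal $\theta$-band crosses it exactly once) and then removes the intervening $(\theta,a)$-cells; your appeal to "trimmed tops of $\theta$-bands whose first and last cells lie on the two side $q$-bands" presupposes precisely what has to be proved.

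Separately, your Step 2--3 machinery of "directed" transpositions is both unsupported and unnecessary: a transposition is forced by a specific (M1)-violating pair, not by a choice of direction, and an $a$-cell sitting between two intermediate $\theta$-bands need not violate (M1) with anything, so your claim that after the process all $a$-cells lie beyond $\textbf{s}_1,\textbf{s}_2$ and the middle region $\Gamma_2$ is a trapezium with no $a$-cells is false in general. Fortunately the statement only requires $\Gamma_2$ to be an $a$-trapezium (it may well contain $a$-cells), and (2)(b) only needs the trivial observation that a cell above the top $\theta$-band or below the bottom one cannot be a $(\theta,q)$- or $(\theta,a)$-cell; so you should drop the pushing argument and instead supply the two missing structural arguments described above.
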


\begin{proof}

By Lemma \ref{minimal exist}, there exists a minimal diagram $\Gamma'$ with $\lab(\partial\Gamma')\equiv\lab(\partial\Gamma)$. Then $\partial\Gamma'$ can be factored as in (1).

Every $q$-edge of $\textbf{q}_j'$ gives rise to a maximal $q$-band of $\Gamma'$. Suppose such a band $\pazocal{Q}$ starts and ends on $\textbf{q}_j'$ and consider the subdiagram $\Delta$ bounded by a side of $\pazocal{Q}$ and $\textbf{q}_j'$. Since $q$-bands are comprised entirely of $(\theta,q)$-cells, the side of $\pazocal{Q}$ contains $\theta$-edges that give rise to maximal $\theta$-bands in $\Delta$. Lemma \ref{M_a no annuli 1} then implies that no such $\theta$-band can have both ends on the side of $\pazocal{Q}$, so that it must end on $\textbf{q}_j'$. But $\textbf{q}_j'$ contains no $\theta$-edge since $\lab(\textbf{q}_j')\equiv\lab(\textbf{q}_j)$. 

Hence, by Lemma \ref{M_a no annuli 1}, every maximal $q$-band in $\Gamma'$ connects an edge of $\textbf{q}_1'$ with an edge of $\textbf{q}_2'$. 

Now suppose a maximal $\theta$-band of $\Gamma'$ has two ends on $\textbf{p}_j'$. Then, as no two $\theta$-bands can cross, there exists a $\theta$-band connecting adjacent $\theta$-letters of $\textbf{p}_j'$ (with perhaps $a$-letters between them). Then, the corresponding $\theta$-edges of $\textbf{p}_j$ in $\Gamma$ are mutually inverse adjacent $\theta$-edges, so that the corresponding cells of the $q$-band with side $\textbf{p}_j$ are cancellable. But this contradicts the assumption that $\Gamma$ is reduced. 

Hence, by Lemma \ref{G_a theta-annuli}(3), every maximal $\theta$-band in $\Gamma'$ connects an edge of $\textbf{p}_1'$ with an edge of $\textbf{p}_2'$, and so we can enumerate them from bottom to top $\pazocal{T}_1,\dots,\pazocal{T}_h$ for $h=|\textbf{p}_j|$.

Let $\pazocal{Q}_1$ be the maximal $q$-band of $\Gamma$ such that $\textbf{p}_1=\textbf{top}(\pazocal{Q}_1)$ and let $\pazocal{Q}_1'$ be the maximal $q$-band of $\Gamma'$ starting at the first letter of $\textbf{q}_1'$. Then $\pazocal{Q}_1$ and $\pazocal{Q}_1'$ must correspond to the same base letter. Moreover, since every maximal $\theta$-band must cross $\pazocal{Q}_1'$ exactly once, $\pazocal{Q}_1$ and $\pazocal{Q}_1'$ must have the same history. So, $\lab(\textbf{top}(\pazocal{Q}_1'))\equiv\lab(\textbf{p}_1')$.

As $\Gamma'$ is minimal, any cell between $\textbf{top}(\pazocal{Q}_1')$ and $\textbf{p}_1'$ must be a $(\theta,a)$-cell. But removing any such cell from $\Gamma'$ does not affect the minimality of $\Gamma'$. Hence, we may assume that $\textbf{top}(\pazocal{Q}_1')=\textbf{p}_1'$.

By an analogous argument, letting $\pazocal{Q}_2'$ be the maximal $q$-band of $\Gamma'$ starting at the final letter of $\textbf{q}_1'$, we may assume that $\textbf{bot}(\pazocal{Q}_2')=\textbf{p}_2'$.

Now let $\textbf{s}_1=\textbf{bot}(\pazocal{T}_1)$ and $\textbf{s}_2=\textbf{top}(\pazocal{T}_h)$. By definition, (2a) is satisfied. 

Further, as there is no maximal $\theta$-band above $\pazocal{T}_h$ or below $\pazocal{T}_1$, there can be no $(\theta,q)$- or $(\theta,a)$-cells above $\textbf{s}_2$ or below $\textbf{s}_1$. Thus, (2b) is satisfied.

\end{proof}

\begin{lemma} \label{quasi-trapezia}

Let $\Gamma$ be a quasi-trapezium with standard factorization of its contour $\textbf{p}_1^{-1}\textbf{q}_1\textbf{p}_2\textbf{q}_2^{-1}$. Then there exists a reduced diagram $\Gamma'$ such that:

\begin{enumerate}[label=({\arabic*})]

\item $\partial\Gamma'=(\textbf{p}_1')^{-1}\textbf{q}_1'\textbf{p}_2'(\textbf{q}_2')^{-1}$, where $\lab(\textbf{p}_j')\equiv\lab(\textbf{p}_j)$ and $\lab(\textbf{q}_j')\equiv\lab(\textbf{q}_j)$ for $j=1,2$

\item the number of disks 
in $\Gamma'$ is the same as the number of disks in $\Gamma$

\item there exists a simple path $\textbf{s}_1$ (respectively $\textbf{s}_2$) connecting the vertices $(\textbf{p}_1')_-$ and $(\textbf{p}_2')_-$ (respectively $(\textbf{p}_1')_+$ and $(\textbf{p}_2')_+$) such that

\begin{enumerate}

\item $(\textbf{p}_1')^{-1}\textbf{s}_1\textbf{p}_2'\textbf{s}_2^{-1}$ is the standard factorization of the boundary of an $a$-trapezium $\Gamma_2$ and

\item any cell above $\textbf{s}_2$ or below $\textbf{s}_1$ is a disk or an $a$-cell

\end{enumerate}

\item there exists $m\in\N$ such that any maximal $\theta$-band of $\Gamma$ contains $m$ $(\theta,t)$-cells and any maximal $\theta$-band of $\Gamma_2$ contains $m$ $(\theta,t)$-cells.

\end{enumerate}

\end{lemma}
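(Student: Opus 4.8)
The plan is to mimic the proof of Lemma~\ref{M_a reduced a-trapezia}, replacing the appeal to Lemma~\ref{minimal exist} (which produces a diagram over $M_\Omega(\textbf{M})$) with an appeal to its strengthened analogue for the disk presentation of $G_\Omega(\textbf{M})$, namely Lemma~\ref{minimal exist} applied to the word $\lab(\partial\Gamma)$ together with the control on the number of disks coming from the $D$-minimality embedded in the definition of minimal diagram. First I would invoke Lemma~\ref{minimal exist} to obtain a minimal diagram $\Gamma'$ with $\lab(\partial\Gamma')\equiv\lab(\partial\Gamma)$; since $\Gamma$ is itself minimal and $\Gamma'$ has minimal $1$-signature among all reduced diagrams with that contour label, the number of disks in $\Gamma'$ is at most (hence equal to, by minimality of $\Gamma$) the number of disks in $\Gamma$, giving (2). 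Then I factor $\partial\Gamma'$ as in (1).

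Next I would run the band-combinatorics argument exactly as in Lemma~\ref{M_a reduced a-trapezia}: using Lemma~\ref{M_a no annuli 1} to rule out a maximal $q$-band of $\Gamma'$ having both ends on one of $\textbf{q}_1'$ or $\textbf{q}_2'$ (a returning $q$-band would force a $\theta$-band ending twice on its side, impossible since $\textbf{q}_j'$ has no $\theta$-edge), so every maximal $q$-band joins $\textbf{q}_1'$ to $\textbf{q}_2'$; and using the reducedness of $\Gamma$ together with Lemma~\ref{G_a theta-annuli}(3) (to handle the possibility that a returning $\theta$-band bounds a region containing a disk) to conclude every maximal $\theta$-band joins $\textbf{p}_1'$ to $\textbf{p}_2'$. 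Enumerating the maximal $\theta$-bands $\pazocal{T}_1,\dots,\pazocal{T}_h$ from bottom to top, I set $\textbf{s}_1=\textbf{bot}(\pazocal{T}_1)$ and $\textbf{s}_2=\textbf{top}(\pazocal{T}_h)$; after trimming $(\theta,a)$-cells between the extreme $q$-bands and $\textbf{p}_j'$ (which does not affect minimality), the subdiagram $\Gamma_2$ between $\textbf{s}_1$ and $\textbf{s}_2$ has the required standard factorization, and since no maximal $\theta$-band lies above $\pazocal{T}_h$ or below $\pazocal{T}_1$, every cell there is a disk or an $a$-cell, giving (3a) and (3b). One must also check $\Gamma_2$ is genuinely an $a$-trapezium, i.e.\ that it contains no disks: any disk of $\Gamma'$ lies strictly above $\textbf{s}_2$ or strictly below $\textbf{s}_1$ because a $t$-spoke of such a disk would have to cross some $\pazocal{T}_i$, and by Lemma~\ref{G_a theta-annuli}(1) (with $L>7$) a $\theta$-band can cross at most $(L-1)/2$ spokes of a disk while still bounding a disk-free region — but the enumeration forces the disk's spokes either all to lie on one side, so by $D$-minimality via Lemma~\ref{graph} one reduces to the disk-free case, and a careful count shows no disk survives between $\textbf{s}_1$ and $\textbf{s}_2$.

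Finally, for (4): all maximal $\theta$-bands of $\Gamma$ have the same history $H$ (they are the rows of the trapezium-like decomposition of the quasi-trapezium, read off from the rim $q$-band side as in the definition of history of a quasi-trapezium), and this history is unchanged in passing to $\Gamma'$ and then to $\Gamma_2$ since $\lab(\textbf{p}_j')\equiv\lab(\textbf{p}_j)$ and $q$-bands joining $\textbf{p}_1'$ to $\textbf{p}_2'$ each cross every $\theta$-band exactly once. The number $m$ of $(\theta,t)$-cells in a maximal $\theta$-band is exactly the number of occurrences of $t$-letters in the base of that band, which is a property of the base word alone; since $\Gamma$, $\Gamma'$, and $\Gamma_2$ all have the same base (determined by $\lab(\textbf{q}_1)$), this count $m$ is common to all of them. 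I expect the main obstacle to be (3b)/(4) bookkeeping when disks are present: one must verify that the trimming of $(\theta,a)$-cells and the rearrangement producing $\Gamma_2$ does not disturb the disks or create new $(\theta,t)$-cells, and that $\Gamma_2$ really is disk-free — this is where Lemmas~\ref{G_a theta-annuli}(1) and~\ref{graph} do the essential work, exactly paralleling how Lemma~\ref{M_a no annuli 1} was used in the disk-free Lemma~\ref{M_a reduced a-trapezia}, and the argument is otherwise a routine transcription.
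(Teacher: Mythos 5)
Your proposal has a genuine gap at exactly the point you flag as "bookkeeping": the claim that in a minimal diagram $\Gamma'$ obtained directly from Lemma \ref{minimal exist} no disk survives between $\textbf{s}_1=\textbf{bot}(\pazocal{T}_1)$ and $\textbf{s}_2=\textbf{top}(\pazocal{T}_h)$. This is false in general. A disk can perfectly well sit between two consecutive maximal $\theta$-bands of a minimal diagram with this contour: its $t$-spokes split into a group crossing the bands above it and a group crossing the bands below it, and Lemma \ref{G_a theta-annuli}(1) only bounds each group by $(L-1)/2$, so it does not force the spokes "all to lie on one side", nor does Lemma \ref{graph} by itself remove such a disk. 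The paper's proof does not take a fresh minimal diagram at all; it works inside $\Gamma$ itself and uses the transposition of a disk with a $\theta$-band (Section 9.3) to physically move each disk, one transposition at a time, above $\pazocal{T}_h$ or below $\pazocal{T}_1$, with Lemma \ref{graph} (applied to the $D$-minimal diagrams produced along the way) and Lemma \ref{G_a theta-annuli}(1) deciding the direction of the move; only after all disks have been evacuated does it apply Lemma \ref{M_a reduced a-trapezia} to the disk-free middle part. Without this mechanism your $\Gamma_2$ need not be an $a$-trapezium, so (3a)--(3b) are unproven.

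Your argument for (4) is also incorrect. The number of $(\theta,t)$-cells in a maximal $\theta$-band of $\Gamma$ is \emph{not} determined by the base of $\lab(\textbf{q}_1)$: a $\theta$-band also crosses the $t$-spokes of any disks lying between it and the boundary, so different bands of $\Gamma$ may a priori have different counts (that uniformity is precisely the nontrivial content of (4)). The paper obtains it by choosing the band $\pazocal{T}_i$ with the minimal number $m$ of $(\theta,t)$-cells, keeping $\pazocal{T}_i$ untouched by all transpositions so that $\Gamma_2''$ has exactly $hm$ such cells, and then comparing total $(\theta,t)$-cell counts: minimality of $\Gamma_2$ gives at most $hm$, while minimality of the 2-signature of $\Gamma$ (which has at least $hm$) forces exactly $hm$ in both, hence exactly $m$ per band. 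Your route, which never relates the internal structure of $\Gamma'$ back to $\Gamma$, gives no way to reach this conclusion about the bands of the original $\Gamma$. (Your derivation of (2) is fine, but it is the only part that survives.)
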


\begin{proof}

By Lemmas \ref{M_a no annuli 1} and \ref{G_a theta-annuli}(3), every maximal $\theta$-band of $\Gamma$ must connect an edge of $\textbf{p}_1$ with an edge of $\textbf{p}_2$. So, we can enumerate these bands from bottom to top as $\pazocal{T}_1,\dots,\pazocal{T}_h$ for $h=|\textbf{p}_1|=|\textbf{p}_2|$.

Choose $i$ such that the number of $(\theta,t)$-cells in $\pazocal{T}_i$, $m$, is minimal. Note that $\Gamma$ has at least $hm$ $(\theta,t)$-cells.

If $\Gamma$ contains a disk, then by Lemma \ref{graph} there exists a disk $\Pi_1$ such that at least $L-4$ of its $t$-spokes end on $\textbf{q}_1$ or on $\textbf{q}_2$. By Lemma \ref{G_a theta-annuli}(1), at least $L-4-(L-1)/2\geq2$ of these spokes must end on $\textbf{q}_1$ (on $\textbf{q}_2$). So, for any $j\in\{1,\dots,h\}$, the number of $t$-spokes of $\Pi_1$ crossing $\pazocal{T}_j$ is at least 2.

Fix $j_1\in\{1,\dots,h-1\}$ such that $\Pi_1$ lies between $\pazocal{T}_{j_1}$ and $\pazocal{T}_{j_1+1}$. 

If $j_1\geq i$ (i.e $\Pi_1$ lies above $\pazocal{T}_i$), then move $\Pi_1$ upwards by transposing it with $\pazocal{T}_{j_1+1}$. Then iterate this process, moving the resulting disk upward until it is transposed with $\pazocal{T}_h$. 

If $j_1<i$, then move $\Pi_1$ down in the same way until the corresponding disk is transposed with $\pazocal{T}_1$. 

Let $\Lambda_1'$ be the reduced diagram resulting from this process and $\Pi_1'$ be the disk arising from $\Pi_1$. As $\Lambda_1'$ is formed by a sequence of transpositions, $\Lambda_1'$ has the same contour label and number of disks as does $\Gamma$, and so must be $D$-minimal. Factor $\partial\Lambda_1'=\textbf{u}_1^{-1}\textbf{b}_1'\textbf{v}_1(\textbf{t}_1')^{-1}$, where $\lab(\textbf{u}_1)\equiv\lab(\textbf{p}_1)$, $\lab(\textbf{v}_1)\equiv\lab(\textbf{p}_2)$,  $\lab(\textbf{b}_1')\equiv\lab(\textbf{q}_1)$, and $\lab(\textbf{t}_1')\equiv\lab(\textbf{q}_2)$.

Enumerate the maximal $\theta$-bands of $\Lambda_1'$ as $\pazocal{S}_1',\dots,\pazocal{S}_h'$ from bottom to top. Then, letting $\Lambda_1$ be the subdiagram of $\Lambda_1'$ given by removing $\Pi_1'$, we may factor $\partial\Lambda_1=\textbf{u}_1^{-1}\textbf{b}_1\textbf{v}_1\textbf{t}_1^{-1}$ such that $\textbf{b}_1=\textbf{tbot}(\pazocal{S}_1')$ and $\textbf{t}_1=\textbf{ttop}(\pazocal{S}_h')$.

If $\Lambda_1$ contains a disk, then Lemma \ref{graph} may be applied to yield a disk $\Pi_2$ such that $L-4$ of its $t$-spokes end on $\textbf{b}_1$ or on $\textbf{t}_1$. Fix $j_2\in\{1,\dots,h-1\}$ such that $\Pi_2$ lies between $\pazocal{S}_{j_2}'$ and $\pazocal{S}_{j_2+1}'$.

Suppose at least two $t$-spokes of $\Pi_2$ end on each of $\textbf{b}_1$ and $\textbf{t}_1$. Then we repeat the argument above, moving $\Pi_2$ above $\pazocal{S}_h'$ if $j_2\geq i$ or below $\pazocal{S}_1'$ if $j_2<i$. 

Next, suppose that at most one $t$-spoke of $\Pi_2$ in $\Lambda_1$ ends on $\textbf{b}_1$. Then, there is a set $T_2'$ of at least $L-5$ $t$-spokes of $\Pi_2$ in $\Lambda_1$ such that each ends on $\textbf{t}_1$. Note that there is a natural bijection between $T_2'$ and a subset $T_2$ of the $t$-spokes of $\Pi_2$ in $\Gamma$. As each $t$-spoke of $T_2'$ ends on $\textbf{t}_1$, each $t$-spoke in $T_2$ either ends on $\textbf{q}_2$ or on $\Pi_1$. Since $\Gamma$ satisfies (M3), at most $(L-1)/2$ $t$-spokes of $\Pi_2$ cross $\pazocal{T}_{j_2+1}$, so that at least one of the $t$-spokes of $T_2$ does not cross $\pazocal{T}_{j_2+1}$. This spoke must end on $\Pi_1$, so that $\Pi_1$ must lie below $\pazocal{T}_{j_2+1}$. Hence, $j_2\geq j_1\geq i$. In this case, move $\Pi_2$ up by transpositions until it is above $\pazocal{S}_h'$.

Finally, if at most one $t$-spoke of $\Pi_2$ in $\Lambda_1$ ends on $\textbf{t}_1$, then the symmetric argument to the one above yields $j_2\leq j_1<i$. In this case, move $\Pi_2$ down by transpositions until it is below $\pazocal{S}_1'$.

In each case, let $\Lambda_2'$ be the diagram resulting from applying the corresponding transpositions to $\Lambda_1'$. Further, let $\Pi_2'$ be the disk arising from $\Pi_2$. Then $\Lambda_2'$ must be $D$-minimal. Factor $\partial\Lambda_2'=\textbf{u}_2^{-1}\textbf{b}_2'\textbf{v}_2(\textbf{t}_2')^{-1}$ such that $\lab(\textbf{u}_2)\equiv\lab(\textbf{u}_1)$, $\lab(\textbf{v}_2)\equiv\lab(\textbf{v}_1)$, $\lab(\textbf{b}_2')\equiv\lab(\textbf{b}_1')$, and $\lab(\textbf{t}_2')\equiv\lab(\textbf{t}_1')$. Let $\Lambda_2$ be the subdiagram of $\Lambda_2'$ given by removing $\Pi_2'$ and enumerate the maximal $\theta$-bands in $\Lambda_2'$ as $\pazocal{S}_1'',\dots,\pazocal{S}_h''$. Then, we may factor $\partial\Lambda_2=\textbf{u}_2^{-1}\textbf{b}_2\textbf{v}_2\textbf{t}_2^{-1}$ such that $\textbf{b}_2\equiv\textbf{tbot}(\pazocal{S}_1'')$ and $\textbf{t}_2=\textbf{ttop}(\pazocal{S}_h'')$.

This process can then be iterated moving every disk above the top $\theta$-band or below the bottom $\theta$-band. 

The resulting reduced diagram $\Gamma''$ satisfies $\lab(\partial\Gamma'')\equiv\lab(\partial\Gamma)$. Enumerating the maximal $\theta$-bands of $\Gamma''$ as $\pazocal{T}_1'',\dots,\pazocal{T}_h''$, these $\theta$-bands bound a subdiagram $\Gamma_2''$ of $\Gamma''$ containing no disks and such that every cell of $\Gamma''\setminus\Gamma_2''$ is a disk.

Note that the transpositions performed to obtain $\Gamma''$ do not alter the side $q$-bands. So, identifying these $q$-bands with those in $\Gamma$, $\partial\Gamma_2''=\textbf{p}_1^{-1}\textbf{bot}(\pazocal{T}_1'')\textbf{p}_2\textbf{top}(\pazocal{T}_h'')^{-1}$. Hence, we may apply Lemma \ref{M_a reduced a-trapezia} to $\Gamma_2''$, yielding a minimal diagram $\Gamma_2'$. 

Factor $\partial\Gamma_2'$ as $(\textbf{p}_1')^{-1}\textbf{q}_1'\textbf{p}_2'(\textbf{q}_2')^{-1}$ such that $\lab(\textbf{p}_j)\equiv\lab(\textbf{p}_j')$. Further, let $\textbf{s}_1$ and $\textbf{s}_2$ be the simple paths such that the subdiagram $\Gamma_2$ of $\Gamma_2'$ with contour $(\textbf{p}_1')^{-1}\textbf{s}_1\textbf{p}_2'\textbf{s}_2^{-1}$ is a minimal $a$-trapezium.

Pasting $\Gamma_2'$ in place of $\Gamma_2''$ in $\Gamma''$ and making any necessary cancellations then produces a reduced diagram $\Gamma'$ satisfying (1) and (3).

In passing from $\Gamma$ to $\Gamma'$, no disks are added. So, since $\Gamma$ is minimal, (2) must be satisfied.

By Lemmas \ref{M_a no annuli 1}(1) and \ref{minimal theta-annuli}, $\Gamma$ contains no $(\theta,q)$-annuli or $\theta$-annuli. As no such annulus can be created through a transposition, it follows that every maximal $q$-band of $\Gamma_2''$ crosses every maximal $\theta$-band exactly once.

Since the $\theta$-band $\pazocal{T}_i$ did not participate in any of the transpositions in the construction of $\Gamma''$, the resulting maximal $\theta$-band $\pazocal{T}_i''$ in $\Gamma_2''$ also contains $m$ $(\theta,t)$-cells.  Hence, there are exactly $hm$ $(\theta,t)$-cells in $\Gamma_2''$. 

The minimality of $\Gamma_2$ then implies that it contains at most $hm$ $(\theta,t)$-cells. But $\Gamma$ is a minimal diagram containing at least $hm$ $(\theta,t)$-cells, so that both $\Gamma$ and $\Gamma_2$ must contain exactly $hm$ $(\theta,t)$-cells. 

As $\Gamma$ contains $h$ maximal $\theta$-bands and each contains at least $m$ $(\theta,t)$-cells, each of these $\theta$-bands must contain exactly $m$ $(\theta,t)$-cells. Conversely, by Lemmas \ref{M_a no annuli 1} and \ref{minimal theta-annuli}, each maximal $\theta$-band of $\Gamma_2$ must contain the same number of $(\theta,t)$-cells, which again must be $m$.

\end{proof}

\begin{remark}

The concept of $D$-minimal diagram is introduced in this paper specifically to aid in the iterative step in the proof of Lemma \ref{quasi-trapezia}. It is necessary for this goal as it is both sufficient as a hypothesis for Lemma \ref{graph} and preserved under transposition (whereas, for example, (M3) satisfies the first condition but not the second). 

\end{remark}

\bigskip


\subsection{Shafts} \

%
%
%
%
%
%

We now introduce a concept that, as it was in [16] and [23], will be used to define a valuable measure on minimal diagrams.


Let $\Pi$ be a disk contained in a minimal diagram and $\pazocal{B}$ be a $t$-spoke of $\Pi$. Suppose there is a subband $\pazocal{C}$ of $\pazocal{B}$ starting on $\Pi$ whose history $H$ contains a controlled subword. For $W$ the configuration corresponding to $\lab(\partial\Pi)$, suppose $W(i)$ is $H$-admissible for $i\geq2$. Then the $t$-band $\pazocal{C}$ is called a \textit{shaft} of $\Pi$. 

Note that this definition differs from that used in previous sources (for example, [16] and [23]), where it was required that $W$ be $H$-admissible. The change here is to allow for `flexibility' in the `special' input sector, as $W(1)$ need not be $H$-admissible.

For a disk $\Pi$, a shaft $\pazocal{C}$ of $\Pi$ is called a \textit{$\lambda$-shaft} of $\Pi$ if for every factorization $H\equiv H_1H_2H_3$ satisfying $\|H_1\|+\|H_3\|\leq\lambda\|H\|$, $H_2$ contains a controlled subword. Note that a shaft is a 0-shaft.

The following is an adaptation of Lemma 7.8 of [16] and Lemma 7.11 of [23] to this setting.

\begin{lemma} \label{shafts}

Let $\Pi$ be a disk in a minimal diagram $\Delta$ and $\pazocal{C}$ be a $\lambda$-shaft at $\Pi$ with history $H$. Then $\pazocal{C}$ has no factorization $\pazocal{C}=\pazocal{C}_1\pazocal{C}_2\pazocal{C}_3$ such that

\begin{enumerate}[label=({\arabic*})]

\item the sum of the lengths of $\pazocal{C}_1$ and $\pazocal{C}_3$ do not exceed $\lambda\|H\|$ and

\item $\Delta$ contains a quasi-trapezium $\Gamma$ such that the bottom (or top) of $\Gamma$ has $L$ $t$-edges and $\pazocal{C}_2$ starts on the bottom and ends on the top of $\Gamma$.

\end{enumerate}

\end{lemma}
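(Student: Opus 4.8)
The plan is to argue by contradiction. Suppose a factorization $\pazocal{C}=\pazocal{C}_1\pazocal{C}_2\pazocal{C}_3$ and a quasi-trapezium $\Gamma$ with properties (1) and (2) exist, and (reversing $\pazocal{C}$, and swapping top and bottom of $\Gamma$, if necessary) assume the bottom of $\Gamma$ carries $L$ $t$-edges. First I would do the history bookkeeping: the factorization $\pazocal{C}=\pazocal{C}_1\pazocal{C}_2\pazocal{C}_3$ induces a factorization $H\equiv H_1H_2H_3$ with $\|H_1\|+\|H_3\|\leq\lambda\|H\|$ by (1), so the definition of a $\lambda$-shaft forces $H_2$ to contain a controlled subword. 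Since $\pazocal{C}_2$ is a subband of the $t$-spoke $\pazocal{C}$ crossing $\Gamma$ from its bottom to its top, Lemmas \ref{M_a no annuli 1} and \ref{minimal theta-annuli} guarantee that $\pazocal{C}_2$ meets every maximal $\theta$-band of $\Gamma$ in exactly one $(\theta,t)$-cell; hence the history of $\Gamma$ coincides with $H_2$ and so contains a controlled subword.

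Next I would feed $\Gamma$ into Lemma \ref{quasi-trapezia}. This yields a reduced diagram $\Gamma'$ with the same number of disks as $\Gamma$, a decomposition into an $a$-trapezium $\Gamma_2$ together with disks and $a$-cells lying above its top or below its bottom, and an integer $m$ such that every maximal $\theta$-band of $\Gamma$ and every maximal $\theta$-band of $\Gamma_2$ contains exactly $m$ $(\theta,t)$-cells. The $(\theta,t)$-cells of the bottom $\theta$-band of $\Gamma$ are in bijection with the $t$-edges of its trimmed bottom, which is the bottom of $\Gamma$; since the latter has exactly $L$ $t$-edges, $m=L$. Hence every maximal $\theta$-band of $\Gamma_2$ contains $L$ $(\theta,t)$-cells, so the base of $\Gamma_2$ (read along any of its $\theta$-bands) has exactly $L$ occurrences of $t$-letters. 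The history of $\Gamma_2$ still contains a controlled subword, so $\Gamma_2$ is an $a$-trapezium whose base is the base of an admissible word of $\textbf{M}$ carrying $L$ $t$-letters; moreover $\Gamma_2$ inherits from $\pazocal{C}$ being a shaft of $\Pi$ the fact that the component $W(i)$ of $W=\lab(\partial\Pi)$, $i\geq2$, is admissible for the history $H$, hence for the controlled subword in question.

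Finally I would extract the contradiction from the incompatibility of a controlled history with a base carrying $L$ $t$-letters. The rules occurring in a controlled subword — the two $\chi$-rules together with the working rules of the copies of $\textbf{M}_4((4n-1)_j)$ — between them lock every sector of the standard base, so by Lemma \ref{locked sectors} (exactly as in the proof of Lemma \ref{M controlled}) the base $B$ of $\Gamma_2$ must be a reduced word. But a reduced base over the cyclically ordered standard base alphabet that contains $L$ occurrences of the $L-1$ distinct $t$-letters $t(2),\dots,t(L)$ must wrap around the cyclic base, and therefore contains a revolving subword which is not pararevolving. Isolating the corresponding sub-$a$-trapezium of $\Gamma_2$, together with the subdiagram cut out by the $\theta$-bands of the controlled subword (a standard trapezium, by Lemma \ref{M controlled}), one uses the uniqueness of configurations in a controlled computation — and, in the standard base, the fact that such a computation is an accepting one, together with Lemma \ref{disks are relations} and the admissibility of $W(i)$ along $H$ — to replace $\Gamma$ inside $\Delta$ by a diagram with the same boundary label but strictly fewer disks (or, if $\Gamma$ had no disks, fewer $(\theta,t)$-cells), contradicting the $D$-minimality (respectively minimality) of $\Delta$. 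I expect this last step — pinning down precisely how the controlled structure clashes with the wrapped base and producing the explicit simpler replacement diagram — to be the main obstacle; the first two paragraphs are essentially bookkeeping plus one application of Lemma \ref{quasi-trapezia}, and it is exactly the possible presence of disks in $\Gamma$, which prevents treating it directly as an $a$-trapezium, that makes Lemma \ref{quasi-trapezia} the indispensable tool here.
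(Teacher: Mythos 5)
Your first two paragraphs match the paper's opening moves: the factorization $H\equiv H_1H_2H_3$ forces a controlled subword into $H_2$, and Lemma \ref{quasi-trapezia} converts $\Gamma$ into $\Gamma'$ containing an $a$-trapezium $\Gamma_2$ with history $H_2$ and $L$ $t$-letters per $\theta$-band, whose base is then reduced by Lemma \ref{M controlled}, so that (after deleting a side $t$-band) one gets an $a$-trapezium $\Lambda$ whose base is a cyclic permutation of the standard base. Up to this point you are on the paper's track.

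The third step, which you yourself flag as the main obstacle, has a genuine gap, and the fallback you sketch cannot work. There is no ``incompatibility of a controlled history with a base carrying $L$ $t$-edges'': such configurations do occur (e.g.\ a trapezium wrapped around a disk as in Lemma \ref{disks are relations}), so no contradiction can be extracted from the wrapped base alone. More decisively, your plan to ``replace $\Gamma$ inside $\Delta$ by a diagram with the same boundary label but strictly fewer disks (or fewer $(\theta,t)$-cells)'' is impossible in principle: $\Gamma$ is a subdiagram of the minimal diagram $\Delta$, hence itself minimal, so no diagram with the same contour label has smaller $2$-signature. The surgery must swallow the disk $\Pi$ and the subband $\pazocal{C}_1$, which lie \emph{outside} $\Gamma$. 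This is exactly where the shaft hypothesis is used: since $W(i)$ is $H$-admissible for $i\geq2$, Lemma \ref{extend 2} produces an accepted configuration $V$ and a diagram over $M_\Omega(\textbf{M})$ with boundary $H(0)^{-1}WH(0)V^{-1}$; comparing it (via Lemma \ref{M controlled} and Lemma \ref{a-cells sector}) with $\Lambda$, whose top/bottom differ from the corresponding configurations only by $a$-relations in the `special' input sector, one replaces the subdiagram formed by $\Pi$, $\pazocal{C}_1$ and $\Lambda$ (padded, when $\|H_1\|+\|H_3\|>0$, by an auxiliary mirror-pair of trapezia with at most $\lambda\|H\|(L-1)$ $(\theta,t)$-cells) by a single disk labelled by $V$ plus some $a$-cells. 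The number of disks does not increase, while at least $(1-\lambda)\|H\|(L-1)$ $(\theta,t)$-cells of $\Lambda$ are removed against at most $\lambda\|H\|(L-1)$ added; since $\lambda<1/2$ and Lemma \ref{quasi-trapezia}(2),(4) guarantee that passing from $\Delta$ to $\tilde\Delta$ preserved the disk and $(\theta,t)$-counts, this gives $s_2<s_2(\Delta)$, contradicting (M3) — not $D$-minimality, which need not be violated since the disk count may stay the same. This replacement construction, the two cases $\|H_1\|+\|H_3\|=0$ and $>0$, and the cell count where $\lambda<1/2$ enters are the substance of the proof and are absent from your proposal.
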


\begin{proof}

Assuming toward contradiction, let $H_i$ be the history of the subband $\pazocal{C}_i$ for $i\in\{1,2,3\}$. Then, let $\tilde{\Delta}$ be the reduced diagram obtained from $\Delta$ by replacing $\Gamma$ with the reduced diagram $\Gamma'$ given in Lemma \ref{quasi-trapezia} and let $\Gamma_2$ be the minimal $a$-trapezium contained in $\Gamma'$. 

Lemma \ref{quasi-trapezia}(4) implies that the base of $\Gamma_2$ also contains $L$ $t$-letters. Moreover, as the side labels of $\Gamma_2$ and $\Gamma$ are the same, $\Gamma_2$ has history $H_2$.

By the definition of $\lambda$-shaft, $H_2$ must contain a controlled subword $H'$. So, Lemmas \ref{M controlled} and \ref{theta-bands are one-rule computations} imply that the base of $\Gamma_2$ must be reduced. Hence, assuming without loss of generality that the bottom (or top) label of $\Gamma_2$ starts and ends with one of its $L$ $t$-letters, $\Gamma_2$ is a big $a$-trapezium.

Let $\Lambda$ be the minimal diagram obtained from $\Gamma_2$ by removing one of the side $t$-bands. So, $\Lambda$ is an $a$-trapezium whose base $B$ is a cyclic permutation of the standard base (or its inverse). Let $\Lambda'$ be the subdiagram of $\Lambda$ that is an $a$-trapezium with base $B$ and history $H'$. Then, let $\Lambda_1$ and $\Lambda_2$ be the two subdiagrams of $\Lambda$ obtained by cutting along $\textbf{bot}(\Lambda')$.

Let $W$ be the configuration corresponding to $\partial\Pi$. By the definition of shaft, $W(i)$ is $H$-admissible for $i\geq2$. So, by Lemma \ref{extend 2}, there exists an accepted configuration $V$ and a reduced diagram $\Psi$ over $M_\Omega(\textbf{M})$ with $\lab(\partial\Psi)\equiv H(0)^{-1}WH(0)V^{-1}$, where $H(0)\in F(R)$ is the word obtained by adding the subscript $0$ to every letter of $H$.

Recall that $\Psi$ is constructed by pasting together trapezia corresponding to one-machine computations in the standard base along $a$-cells in the `special' input sector. So, any subdiagram bounded by two consecutive maximal $q$-bands not corresponding to the `special' input sector is a trapezium. As such, we may view $\Psi$ as an $a$-trapezium (though it may not be $M$-minimal), referring to its base, history, etc.

Since every rule locks the $Q_4(L)\{t(1)\}$-sector, cutting $\Psi$ along the appropriate $q$-band and pasting the sides together produces such a diagram so that the base of any maximal $\theta$-band is $B^{\pm1}$. Perhaps taking the mirror then produces a reduced diagram $\Psi_0$ with base $B$.

Let $\Psi''$ be the subdiagram of $\Psi_0$ bounded by the maximal $\theta$-bands corresponding to the history $H_2$. Similarly, let $\Psi'$ be the subdiagram of $\Psi''$ corresponding to the history $H'$. 

As computations with controlled history are one-machine, $\Psi'$ is a trapezium. Let $\Psi_1$, $\Psi_2$ be the two subdiagrams of $\Psi''$ obtained by cutting along $\textbf{bot}(\Psi')$, with the `bottom' of $\Psi_1$ corresponding to the `bottom' of $\Psi''$.

By Lemma \ref{M controlled}, $\lab(\textbf{bot}(\Lambda'))\equiv\lab(\textbf{bot}(\Psi'))$, so that $\lab(\textbf{top}(\Psi_1))\equiv\lab(\textbf{top}(\Lambda_1))$ and $\lab(\textbf{bot}(\Psi_2))\equiv\lab(\textbf{bot}(\Lambda_2))$. Further, as the histories of the side $q$-bands of $\Psi_j$ are the same as those of the side $q$-bands of $\Lambda_j$, $\Psi_j$ and $\Lambda_j$ have the same side labels.

Lemma \ref{a-cells sector} then implies that $\lab(\textbf{bot}(\Lambda))$ and $\lab(\textbf{bot}(\Psi''))$ (or $\lab(\textbf{top}(\Lambda))$ and $\lab(\textbf{top}(\Psi''))$) differ only by their projection to the `special' input sector. Gluing together the common contours of $\Psi_j$ and $\Lambda_j$, it then follows that these differences correspond to $a$-relations.

\newpage

\ \textbf{1.} Suppose $\|H_1\|+\|H_3\|=0$. 

Then $\Lambda$ and $\Pi$ have a common edge, so that they form a subdiagram $\Delta'$ of $\tilde{\Delta}$. Perhaps adding two pairs of cancellable $a$-cells to $\Delta'$, we obtain a (perhaps unreduced) diagram $\Delta_0'$ with the same contour label as $\Delta'$ and containing a subdiagram $\Delta_0''$ such that $\lab(\partial\Delta_0'')=V^{\pm1}$ in $F(\pazocal{X})$ and the complement of $\Delta_0''$ in $\Delta_0'$ consists of at most two $a$-cells (see Figure 9.7(a)).

Since $V$ is accepted, there exists a disk relation corresponding to $V$. So, we can replace $\Delta_0''$ in $\Delta_0'$ with one disk, producing the diagram $\Delta_1'$. 

Let $\tilde{\Delta}_1$ be the reduced diagram obtained by excising $\Delta'$ from $\tilde{\Delta}$, pasting $\Delta_1'$ in its place, and making any necessary cancellations. Then the number of disks in $\tilde{\Delta}_1$ is at most the number in $\tilde{\Delta}$, while the number of $(\theta,t)$-cells is strictly smaller. 

By Lemma \ref{quasi-trapezia}, $\tilde{\Delta}$ and $\Delta$ have the same number of disks and $(\theta,t)$-cells. But then $s_2(\tilde{\Delta}_1)< s_2(\Delta)$, contradicting the minimality of $\Delta$.

\renewcommand\thesubfigure{\alph{subfigure}}
\begin{figure}[H]
\centering
\begin{subfigure}[b]{0.48\textwidth}
\centering
\includegraphics[scale=1]{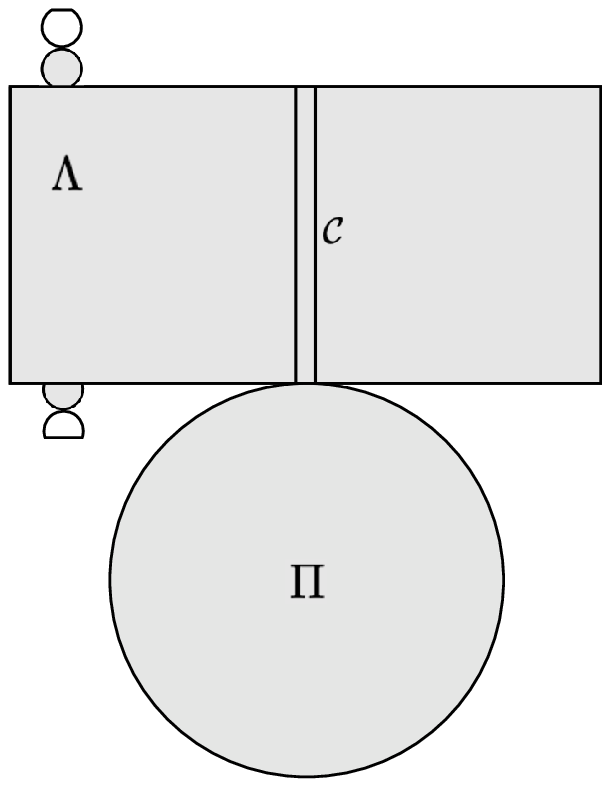}
\caption{The subdiagram $\Delta_0''$ if $\|H_1\|+\|H_3\|=0$}
\end{subfigure}\hfill
\begin{subfigure}[b]{0.48\textwidth}
\centering
\includegraphics[scale=1]{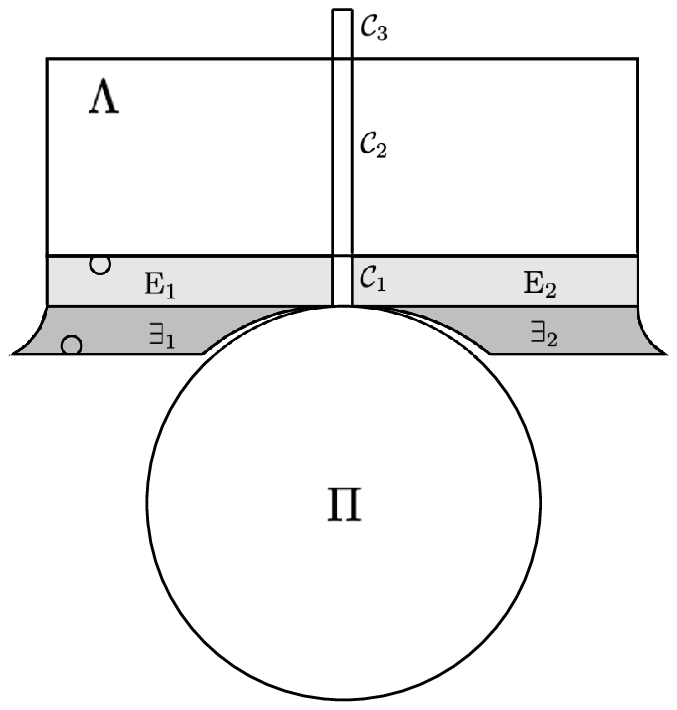}
\caption{The construction of $\tilde{\Delta}''$}
\end{subfigure}
\caption{Lemma \ref{shafts}}
\end{figure}

\ \textbf{2.} Suppose $\|H_1\|+\|H_3\|>0$. 


Let $\Psi_1''$ be the subdiagram of $\Psi_0$ with base $B$ and history $H_1$, so that $\textbf{top}(\Psi_1'')=\textbf{bot}(\Psi_1)$.

Let E be the diagram obtained by attaching the appropriate $a$-cell to the top of $\Psi_1''$ so that the top label is the same as that of $\textbf{bot}(\Lambda_1)$. Further, let $\exists$ be the mirror image of E and $\exists$E be the diagram fomed by gluing $\exists$ to E along the bottom of $\Psi_1''$. Note that there are at most $\lambda\|H\|(L-1)$ $(\theta,t)$-cells in $\exists$. Then let $\tilde{\Delta}''$ be the (unreduced) diagram obtained from $\tilde{\Delta}$ by gluing the proper components of $\exists$E to the bottom of $\Lambda$ and along $\pazocal{C}_1$ (see Figure 9.7(b)).

Let $\Delta'$ be the subdiagram of $\tilde{\Delta}''$ formed by $\Pi$, $\pazocal{C}_1$, $\Lambda$, and the components of E. As in the previous case, we may replace $\Delta'$ with a diagram made of one disk and perhaps some new $a$-cells. 

After necessary cancellations, the resulting reduced diagram $\tilde{\Delta}_1$ at most as many disks as $\tilde{\Delta}''$, and so the same number as $\tilde{\Delta}$. In passing to this diagram, we added at most $\lambda\|H\|(L-1)$ $(\theta,t)$-cells from $\exists$, while removing at least $\|H_2\|(L-1)\geq(1-\lambda)\|H\|(L-1)$ $(\theta,t)$-cells of $\Lambda$. 

Taking $\lambda<1/2$, it follows that $\tilde{\Delta}_1$ has less $(\theta,t)$-cells than $\tilde{\Delta}$. Thus, as in the previous case, Lemma \ref{quasi-trapezia} implies that $s_2(\tilde{\Delta}_1)<s_2(\Delta)$, contradicting the minimality of $\Delta$.

\end{proof}

\smallskip


\subsection{Designs on a Disk} \

In this section, we recall the measure on minimal diagrams, first introduced in [16], that was alluded to in Section 9.5.

Let $\pazocal{D}$ be a disk in the Euclidean plane, $\textbf{T}$ be a finite set of disjoint chords, and $\textbf{Q}$ be a finite set of disjoint simple curves in $\pazocal{D}$, called \textit{arcs} (as to differentiate them from the chords). 

Assume that arcs belong to the open disk $\pazocal{D}^\circ$ and that each chord crosses any arc transversely and at most one, with the intersection not coming at either of the arc's endpoints.

With these assumptions, the pair $(\textbf{T},\textbf{Q})$ is called a \textit{design} on the disk.

The length of an arc $C\in\textbf{Q}$, denoted $|C|$, is the number of chords crossing it. \textit{Subarcs} are defined in the natural way, so that the inequality $|D|\leq|C|$ is clear for $D$ a subarc of $C$.

An arc $C_1$ is \textit{parallel} to an arc $C_2$, denoted $C_1 \ \| \ C_2$, if every chord crossing $C_1$ also crosses $C_2$. Note that this relation is reflexive and transitive, but not symmetric.

For $\lambda\in(0,1/2)$ the parameter listed in Section 3.3 and $m$ a positive integer, a design $(\textbf{T},\textbf{Q})$ is said to satisfy property $P(\lambda,m)$ if for any collection of $m$ distinct arcs $C_1,\dots,C_m\in\textbf{Q}$, there are no subarcs $D_1,\dots,D_m$, respectively, such that $|D_i|>(1-\lambda)|C_i|$ for all $i$ and $D_1 \ \| \ D_2 \ \| \dots \| \ D_m$.

For a design $(\textbf{T},\textbf{Q})$, define the length of $\textbf{Q}$, $\ell(\textbf{Q})$, to be $\ell(\textbf{Q})=\sum\limits_{C\in\textbf{Q}}|C|$.

%

\medskip

\begin{lemma} \label{design}

\textit{(Lemma 8.2 of [16])} There is a constant $c$ dependant on $\lambda$ and $m$ such that for any design $(\textbf{T},\textbf{Q})$ satisfying property $P(\lambda,m)$, $\ell(\textbf{Q})\leq c(\#\textbf{T})$.

\end{lemma}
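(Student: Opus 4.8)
The plan is to translate the purely topological statement into combinatorics on a tree, and then run an induction on $m$ whose base case reduces to a telescoping estimate of the kind used for intervals in a segment.

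\emph{Step 1: the tree model.} Write $N=\#\textbf{T}$. The $N$ pairwise disjoint chords cut $\pazocal{D}$ into $N+1$ regions, and the incidence structure whose vertices are these regions and whose edges are the chords (each chord joining the two regions it separates) is a tree $\mathcal{T}$ with $N$ edges, since $\pazocal{D}$ is connected and this graph has $N+1$ vertices and $N$ edges. Each arc $C$ is a simple curve meeting every chord transversely and at most once, so traversing $C$ we pass through a sequence of regions in which consecutive ones are separated by a chord and no chord is repeated; as a closed walk in a tree must repeat an edge, this sequence is a simple path $P_C$ in $\mathcal{T}$, and $|C|$ is the number of edges of $P_C$. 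Moreover $C'\ \|\ C''$ means precisely that the edge set of $P_{C'}$ is contained in that of $P_{C''}$, which — since both are edge sets of paths in a tree — holds iff $P_{C'}$ is a subpath of $P_{C''}$, and a subarc $D$ of $C$ corresponds to a subpath of $P_C$. Thus $\ell(\textbf{Q})=\sum_{e\in E(\mathcal{T})}\#\{C\in\textbf{Q}:e\in P_C\}$, and $P(\lambda,m)$ becomes: there are no $m$ distinct arcs $C_1,\dots,C_m$ with subpaths $D_i\subseteq P_{C_i}$ satisfying $|D_i|>(1-\lambda)|C_i|$ and $E(D_1)\subseteq\cdots\subseteq E(D_m)$. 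I would also, after rooting $\mathcal{T}$ at a leaf and cutting each $P_C$ at its vertex nearest the root into at most two descending subpaths, assume at the cost of doubling $\ell(\textbf{Q})$ and shrinking $\lambda$ by a bounded factor that every $P_C$ is a descending path; the planarity forced by the disjointness of the arcs then makes the family of descending paths through any fixed vertex non-crossing (hence nested), which is the structure I would exploit in Step 3.

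\emph{Step 2: induction on $m$.} For $m=1$, taking $D_1=C_1$ shows $P(\lambda,1)$ forces $|C|=0$ for every arc, so $\ell(\textbf{Q})=0$. For $m\ge 2$ I would call $C$ \emph{maximal} if no subpath $D$ of $P_C$ with $|D|>(1-\lambda)|C|$ is a subpath of $P_{C'}$ for some $C'\ne C$, and let $\textbf{Q}_1$ be the set of maximal arcs. Any two distinct maximal arcs are not ``$\lambda$-loosely nested'', so $\textbf{Q}_1$ satisfies $P(\lambda,2)$; and since two subpaths of a common $P_C$ each of length exceeding $(1-\lambda)|C|$ overlap in more than $(1-2\lambda)|C|$, a $P(\lambda,m-1)$-violation inside $\textbf{Q}\setminus\textbf{Q}_1$ can be extended, through a non-maximal top arc, to a violation of $P$ with parameters $(2\lambda,m)$ in $\textbf{Q}$. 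Hence, replacing $\lambda$ by $\lambda/2$ from the start, $\textbf{Q}\setminus\textbf{Q}_1$ satisfies $P(\lambda,m-1)$, and the inductive hypothesis gives $\ell(\textbf{Q}\setminus\textbf{Q}_1)\le c(\lambda,m-1)\,N$; it then remains to bound $\ell$ for a family satisfying $P(\lambda,2)$.

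\emph{Step 3: the base estimate.} For a family satisfying $P(\lambda,2)$, no member is ``$\lambda$-loosely contained'' in another, so for two members sharing a long common subpath their two long subpaths cannot be nested; exactly as for intervals in a segment, this forces them to be \emph{staggered} by at least a $\lambda$-fraction of the shorter one's length. Ordering, along each root-to-leaf branch of $\mathcal{T}$, the members that run along it and telescoping the staggering inequality would show that the members meeting a fixed branch have total length at most $\lambda^{-1}$ times the length of that branch; summing over a decomposition of $E(\mathcal{T})$ into at most $N$ branches, and absorbing the bounded losses from Steps 1--2, yields $\ell(\textbf{Q})\le c(\lambda,m)\,N$.

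The hard part will be Step 3 for a general tree as opposed to a single segment: making precise, from the non-crossing structure of the disjoint arcs, the claim that two ``mostly parallel'' descending paths must be staggered, and organizing the telescoping so that each edge of $\mathcal{T}$ is charged only a bounded number of times. The induction on $m$ in Step 2 should be routine once one accepts degrading $\lambda$ by a constant factor at each step, so that $c(\lambda,m)$ comes out of the shape $(C/\lambda)^{m}$; the precise dependence of $c$ on $\lambda$ and $m$ is immaterial for the application.
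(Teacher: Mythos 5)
The paper itself offers no argument here (it simply cites Lemma 8.2 of [16]), so your plan has to stand on its own, and as written it does not: besides the admitted hole in Step~3, both reductions you treat as harmless fail for the same reason, namely arcs of very different lengths. The dual-tree translation at the start of Step~1 is correct, but the passage to descending paths is not: property $P(\lambda,m)$ measures a subarc against the \emph{whole} arc, and after cutting $P_C$ at its apex one half may be an arbitrarily small fraction of $|C|$, so a subpath that is a $(1-\lambda')$-fraction of a half need not be a long subarc of $C$ at all. Hence a chain of $m$ long subpaths of halves does not lift to a violation of $P(\lambda,m)$ for the original family, and no bounded shrinking of $\lambda$ repairs this. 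The same disparity breaks Step~2, which you call routine. Writing your chain as $E(D_1)\subseteq\cdots\subseteq E(D_{m-1})$ with $|D_i|>(1-\lambda)|C_i|$, non-maximality of $C_{m-1}$ gives a long subpath $D$ of $P_{C_{m-1}}$ lying in some $P_{C'}$, but to append $C'$ you must replace $D_{m-1}$ by $D_{m-1}\cap D$; the discarded edges number up to $\lambda|C_{m-1}|$, which is small only relative to the \emph{longest} arc and can exceed $|C_i|$ for every lower arc, so the containments $E(D_i)\subseteq E(\,\cdot\,)$ below the top are destroyed (appending at the bottom instead meets the mirror-image problem, and you also never ensure the appended arc is distinct from the $C_i$). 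The overlap estimate you invoke controls the overlap only as a fraction of $|C_{m-1}|$, which is exactly not what is needed, so ``$\textbf{Q}\setminus\textbf{Q}_1$ satisfies $P(\lambda,m-1)$ after halving $\lambda$'' is not established.

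Step~3 is also more than a technical transfer from the segment case. Even for intervals on a segment, the staggering-plus-telescoping count you describe is scale-by-scale and yields a bound of order $\lambda^{-1}N\log N$, not $cN$; removing the logarithm requires using the no-loose-containment hypothesis \emph{across} scales, which is precisely the content of the lemma and is absent from the sketch. Likewise the claim that disjointness/planarity makes the descending paths through a fixed vertex ``non-crossing hence nested'' is not justified (paths of disjoint arcs can share edges and branch arbitrarily above the vertex). So the proposal founders exactly where the lemma is genuinely hard -- controlling families of arcs of wildly different lengths simultaneously -- and would need a new idea there; as it stands it is a plausible framework, not a proof, and it is not the argument of [16] that the paper relies on.
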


Let $\Delta$ be a minimal diagram and $\pazocal{Q}$ be a $t$-spoke of a disk $\Pi$ in $\Delta$. Let $\pazocal{Q}_\Pi$ be the subband of $\pazocal{Q}$ which is a $\lambda$-shaft at $\Pi$ of maximal length. Then, define $\sigma_\lambda(\Delta)$ as the sum of the lengths of the $\lambda$-shafts $\pazocal{Q}_\Pi$ for all disks $\Pi$ and $t$-spokes $\pazocal{Q}$.

If $\Delta$ is a minimal diagram, then identify $\Delta$ with a disk and construct the design $(\textbf{T},\textbf{Q})$ as follows: Let the middle lines of maximal $\theta$-bands be the chords and the middle lines of maximal $\lambda$-shafts be the arcs. 

Note that there is a subtle hindrance to this construction: If a maximal $t$-spoke connects two disks, then it may contain a $\lambda$-shaft at each disk, and these $\lambda$-shafts may overlap. However, this issue can be remedied simply by `making room' in the spoke for both arcs to fit and be disjoint. 

Note that the length $|C|$ of an arc with respect to this design is the number of cells in the $\lambda$-shaft and $\#\textbf{T}=\frac{1}{2}|\partial\Delta|_\theta\leq\frac{1}{2}|\partial\Delta|$ since every maximal $\theta$-band ends twice on $\partial\Delta$.

\begin{lemma} \label{G_a design}

\textit{(Lemma 8.5 of [16])} If $\Delta$ is a minimal diagram, then $\sigma_\lambda(\Delta)\leq C_1|\partial\Delta|_\theta\leq C_1|\partial\Delta|$.

\end{lemma}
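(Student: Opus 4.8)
The plan is to translate the combinatorial bound on $\sigma_\lambda(\Delta)$ into the language of designs and then invoke Lemma \ref{design}. The setup has already been described in the excerpt: we identify the minimal diagram $\Delta$ with a Euclidean disk $\pazocal{D}$, let the middle lines of the maximal $\theta$-bands of $\Delta$ be the chords $\textbf{T}$, and let the middle lines of the maximal $\lambda$-shafts $\pazocal{Q}_\Pi$ (one for each disk $\Pi$ and each $t$-spoke $\pazocal{Q}$ of $\Pi$) be the arcs $\textbf{Q}$. First I would verify that $(\textbf{T},\textbf{Q})$ is a genuine design: the chords are disjoint because distinct maximal $\theta$-bands cannot intersect (Lemma \ref{M_a no annuli 1}), and a $\theta$-band crosses a $t$-band in at most one $(\theta,t)$-cell, so a chord meets any arc transversely at most once; the arcs lie in the open disk since a $\lambda$-shaft is an interior subband of a $t$-spoke starting on a disk $\Pi$. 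The one subtlety, already flagged in the excerpt, is that a single maximal $t$-spoke joining two disks may carry a $\lambda$-shaft at each end, and these may overlap inside the spoke; this is resolved by perturbing — ``making room'' in the spoke — so that the two middle lines become disjoint simple curves, which does not change the number of chords crossing either one.

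Next I would check that this design satisfies property $P(\lambda,m)$ for the constant $m = L$ (the number of parts of the cyclic base). Suppose to the contrary that there are $L$ distinct arcs $C_1,\dots,C_L\in\textbf{Q}$ with subarcs $D_1,\dots,D_L$ such that $|D_i| > (1-\lambda)|C_i|$ and $D_1 \ \| \ D_2 \ \| \dots \| \ D_L$. Parallelism of the $D_i$ means that every chord crossing $D_1$ crosses all of them; geometrically, the $L$ corresponding $t$-band segments are threaded by a common block of $\theta$-bands, and the region they bound is swept out by a quasi-trapezium $\Gamma$ whose bottom (or top) meets each of these $L$ $t$-edges. Pick the arc $C_1$ coming from a $\lambda$-shaft $\pazocal{C}=\pazocal{Q}_\Pi$ at some disk $\Pi$. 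The condition $|D_1|>(1-\lambda)|C_1|$ gives a factorization $\pazocal{C}=\pazocal{C}_1\pazocal{C}_2\pazocal{C}_3$ with the lengths of $\pazocal{C}_1,\pazocal{C}_3$ summing to at most $\lambda\|H\|$ and with $\pazocal{C}_2$ starting on the bottom and ending on the top of the quasi-trapezium $\Gamma$ (whose bottom or top has $L$ $t$-edges). That is precisely the configuration forbidden by Lemma \ref{shafts}, so such a parallel family of $L$ arcs cannot exist, and $P(\lambda,L)$ holds.

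With $P(\lambda,L)$ established, Lemma \ref{design} supplies a constant $c = c(\lambda,L)$ with $\ell(\textbf{Q})\leq c(\#\textbf{T})$. By construction $\ell(\textbf{Q}) = \sum_{C\in\textbf{Q}}|C|$, and $|C|$ is exactly the number of $(\theta,t)$-cells in the $\lambda$-shaft $\pazocal{Q}_\Pi$, i.e.\ its length; summing over all disks and $t$-spokes this is $\sigma_\lambda(\Delta)$. On the other hand $\#\textbf{T} = \tfrac12|\partial\Delta|_\theta$, since each maximal $\theta$-band ends twice on $\partial\Delta$ (by Lemmas \ref{M_a no annuli 1} and \ref{minimal theta-annuli} there are no $(\theta,q)$-annuli or $\theta$-annuli, so every maximal $\theta$-band runs from boundary to boundary). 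Hence $\sigma_\lambda(\Delta)\leq \tfrac{c}{2}|\partial\Delta|_\theta \leq C_1|\partial\Delta|_\theta \leq C_1|\partial\Delta|$, where the final inequalities use the parameter choice $C_1 \geq c/2$ (legitimate since $C_1$ is chosen after $\lambda$ and $L$) together with $|\partial\Delta|_\theta \leq |\partial\Delta|$, which holds because every $\theta$-edge has length $1$ in the modified length function (Lemma \ref{lengths}(a)).

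The main obstacle is the verification of property $P(\lambda,L)$ — specifically, confirming that a parallel family of $L$ nearly-full subarcs really does produce a quasi-trapezium of the exact shape needed to apply Lemma \ref{shafts}. One must be careful that the common block of $\theta$-bands crossing $D_1,\dots,D_L$ bounds a subdiagram meeting the standard-factorization requirements of a quasi-trapezium (its lateral sides are the relevant $q$-bands, its bottom and top are maximal $\theta$-band sides starting and ending with $q$-edges), and that the bottom or top genuinely carries $L$ distinct $t$-edges so that Lemma \ref{shafts}(2) is applicable. Once that geometric identification is made cleanly, the rest is a direct citation of Lemmas \ref{design} and \ref{shafts} plus the bookkeeping of the length function.
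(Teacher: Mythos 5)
Your overall route is the paper's route: identify $\Delta$ with a disk, take the $\theta$-band middle lines as chords and the $\lambda$-shaft middle lines as arcs, verify property $P$, and finish with Lemma \ref{design} together with $\#\textbf{T}=\frac12|\partial\Delta|_\theta$ and the parameter choice for $C_1$. However, there is a genuine gap in your verification of the property: you check $P(\lambda,L)$, and your argument that a parallel family of $L$ arcs forces a quasi-trapezium whose bottom (or top) has $L$ $t$-edges silently assumes that the $L$ arcs lie on $L$ distinct $t$-bands. That assumption can fail for exactly the reason you yourself flagged when building the design: a maximal $t$-spoke joining two disks carries a $\lambda$-shaft at each end, hence two arcs. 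In the worst case your $L$ parallel arcs pair up on only about $L/2$ distinct $t$-spokes, so the common block of $\theta$-bands crossing $D_1$ need only cross roughly $L/2$ $t$-bands, and the resulting quasi-trapezium does not satisfy the hypothesis of Lemma \ref{shafts}(2), which requires $L$ $t$-edges on its bottom or top. So $P(\lambda,L)$ is not established by your argument, and the contradiction with Lemma \ref{shafts} does not materialize.

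The paper avoids this by proving $P(\lambda,2L-1)$ instead: with $2L-1$ arcs and at most two arcs per $t$-spoke, every $\theta$-band crossing the long subarc of $\pazocal{C}_1$ crosses at least $L$ distinct $t$-bands, which is precisely what Lemma \ref{shafts} needs. The repair to your write-up is just to replace $m=L$ by $m=2L-1$ and insert the counting observation "at most two of these $\lambda$-shafts correspond to any particular $t$-spoke, hence at least $L$ distinct $t$-bands are crossed"; the constant from Lemma \ref{design} then depends on $\lambda$ and $2L-1$, which is still absorbed by $C_1$ since $C_1\gg L\gg\lambda^{-1}$. The rest of your argument (the design is well defined, $\ell(\textbf{Q})=\sigma_\lambda(\Delta)$, $\#\textbf{T}=\frac12|\partial\Delta|_\theta\leq\frac12|\partial\Delta|$) agrees with the paper.
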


\begin{proof}

By Lemma \ref{design} and the parameter choices $C_1>>L>>\lambda^{-1}$, it suffices to prove that the design $(\textbf{T},\textbf{Q})$ satisfies Property $P(\lambda,2L-1)$.

Arguing toward contradiction, there are $2L-1$ maximal $\lambda$-shafts $\pazocal{C}_1,\dots,\pazocal{C}_{2L-1}$ such that for some subband $\pazocal{D}$ of $\pazocal{C}_1$, $|\pazocal{D}|>(1-\lambda)|\pazocal{C}_1|$ and every maximal $\theta$-band crossing $\pazocal{D}$ also crosses each of $\pazocal{C}_2,\dots,\pazocal{C}_{2L-1}$. So, since at most two of these $\lambda$-shafts correspond to any particular $t$-spoke, each of the $\theta$-bands crossing $\pazocal{D}$ crosses at least $L$ $t$-bands. 

But then the $\lambda$-shaft $\pazocal{C}_1$ crosses a quasi-trapezium of height $|\pazocal{D}|>(1-\lambda)|\pazocal{C}_1|$ whose base has at least $L$ $t$-letters, contradicting Lemma \ref{shafts}.

\end{proof}

\medskip


\section{Upper bound on the weight of minimal diagrams}

\subsection{Weakly minimal diagrams} \

\begin{figure}[H]
\centering
\includegraphics[scale=0.75]{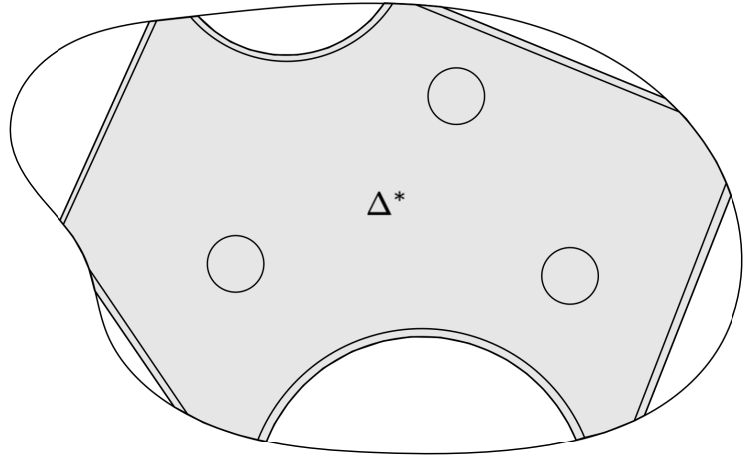}
\caption{The stem of a reduced diagram $\Delta$ containing disks}
\end{figure}

The goal in this section is to bound the $G$-weight of all minimal diagrams $\Delta$ in terms of $|\partial\Delta|^2$. In light of Lemma \ref{diskless}, it suffices to restrict our attention to minimal diagrams containing disks. However, it proves necessary to consider a larger class of diagrams over the disk presentation of $G_\Omega(\textbf{M})$, called weakly minimal.

Let $\Delta$ be a reduced diagram over the disk presentation of $G_\Omega(\textbf{M})$ which contains a disk. Then, let $\pazocal{C}$ be a cutting $q$-band of $\Delta$, i.e $\pazocal{C}$ ends twice on the boundary of $\Delta$. Then $\pazocal{C}$ is called a \textit{stem band} if it is either a rim band of $\Delta$ or both components of $\Delta\setminus\pazocal{C}$ contain disks. The unique maximal subdiagram of $\Delta$ satisfying the property that every cutting $q$-band is a stem band is called the \textit{stem} of $\Delta$ and denoted $\Delta^*$.

If $\pazocal{C}$ is a cutting $q$-band that is not a stem band, then exactly one component $\Gamma$ of $\Delta\setminus\pazocal{C}$ contains no disks. In this situation, the cells of $\Gamma$ are called \textit{crown} cells. Note that one can construct $\Delta^*$ from $\Delta$ simply by cutting off all of the crown cells.

Finally, a reduced diagram $\Delta$ over the disk presentation of $G_\Omega(\textbf{M})$ which contains a disk is called \textit{weakly minimal} if:

\begin{addmargin}[1em]{0em}

\begin{enumerate}[label=(WM{\arabic*})]


\item for any $a$-cell $\pi$ and any $\theta$-band $\pazocal{T}$, at most half of the edges of $\partial\pi$ mark the start of an $a$-band that crosses $\pazocal{T}$,

\item no maximal $a$-band ends on two different $a$-cells, and 

\item its stem $\Delta^*$ is a minimal diagram. 

\end{enumerate}

\end{addmargin}

Note that conditions (WM1) and (WM2) are identical to conditions (MM1) and (MM2) in the definition of $M$-minimal (see Section 8.1). As a result, any subdiagram of a weakly minimal diagram which contains no disks is $M$-minimal. 

Conversely, any minimal diagram containing a disk is weakly minimal.

\begin{lemma} \label{weakly minimal} \textit{(Compare to Lemma 9.3 of [16] and Lemma 7.17 of [23])}

\begin{enumerate}[label=({\alph*})]

\item If $\Delta_1$ is a subdiagram of a weakly minimal diagram $\Delta$ and contains a disk, then $\Delta_1$ is weakly minimal, $\Delta_1^*\subset\Delta^*$, and $\sigma_\lambda(\Delta_1^*)\leq\sigma_\lambda(\Delta^*)$.

\item For every weakly minimal diagram $\Delta$, $\sigma_\lambda(\Delta^*)\leq C_1|\partial\Delta|$.

\item A weakly minimal diagram $\Delta$ contains no $\theta$-annuli.

\item Let $\pazocal{C}$ be a cutting $q$-band of a reduced diagram $\Delta$ over the disk presentation of $G_\Omega(\textbf{M})$ and let $\Delta_1$, $\Delta_2$ be the components of $\Delta\setminus\pazocal{C}$. Suppose $\Delta_1\cup\pazocal{C}$ is $M$-minimal (over $M_\Omega(\textbf{M})$) and $\Delta_2\cup\pazocal{C}$ is weakly minimal. Then $\Delta$ is weakly minimal.

\end{enumerate}

\end{lemma}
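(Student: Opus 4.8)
The four statements should be proved in the order (a), (c), (b), (d), since later parts lean on earlier ones. For part (a), suppose $\Delta_1$ is a subdiagram of the weakly minimal diagram $\Delta$ and contains a disk. Conditions (WM1) and (WM2) are local conditions about $a$-cells, $a$-bands, and $\theta$-bands, so they are automatically inherited by any subdiagram; this is the easy part. The substantive claim is that $\Delta_1^*\subset\Delta^*$ and, in particular, that $\Delta_1^*$ is minimal. The key observation is that a cutting $q$-band of $\Delta_1$ that is not a stem band of $\Delta_1$ cuts off a diskless portion $\Gamma$ of $\Delta_1$; I would argue that $\Gamma$ together with the appropriate diskless ``crown'' region of $\Delta$ shows that the corresponding $q$-band is also a non-stem cutting band of $\Delta$ (or a rim band), hence all crown cells of $\Delta_1$ are crown cells of $\Delta$. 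This gives $\Delta_1^*\subset\Delta^*$ as a subdiagram. Since $\Delta^*$ is minimal by (WM3) and a subdiagram of a minimal diagram is minimal (stated in Section 9.1), $\Delta_1^*$ is minimal, so $\Delta_1$ is weakly minimal. Finally $\sigma_\lambda(\Delta_1^*)\le\sigma_\lambda(\Delta^*)$ because every $\lambda$-shaft of a disk in $\Delta_1^*$ is, by definition (a subband of a $t$-spoke with a controlled history admissible to the disk's configuration in some coordinate $\ge2$), a $\lambda$-shaft of the same disk in $\Delta^*$, and $\sigma_\lambda$ is just the sum of the lengths of these maximal $\lambda$-shafts over all disks and $t$-spokes; the maximal such shaft in $\Delta_1^*$ has length at most that of the maximal one in $\Delta^*$.

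For part (c), suppose toward a contradiction that a weakly minimal diagram $\Delta$ contains a $\theta$-annulus $S$. As in the proofs of Lemmas~\ref{M_a no annuli 2}(2) and \ref{G_a theta-annuli}(3), the $\theta$-annuli of $\Delta$ are partially ordered by containment of the subdiagrams they bound, so there is an innermost one $T$ with bounded subdiagram $\Delta_T$. By Lemma~\ref{G_a theta-annuli}(3) applied to the stem $\Delta^*$ (which is minimal, hence satisfies (M3)), if $\Delta_T$ meets $\Delta^*$ then $\Delta_T\cap\Delta^*$ is a diagram over $M_\Omega(\textbf{M})$; but $\Delta^*$ is minimal and contains no $\theta$-annuli by Lemma~\ref{minimal theta-annuli}, a contradiction if $T\subset\Delta^*$. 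If $T$ (or $\Delta_T$) lies in a crown region, then $\Delta_T$ is a diskless subdiagram of a weakly minimal diagram, hence $M$-minimal, and Lemma~\ref{M_a no annuli 2}(2) rules out $\theta$-annuli there. The mixed case — $T$ straddling the stem and a crown — cannot occur because a $\theta$-annulus crossing the cutting $q$-band $\pazocal{C}$ separating a crown from the stem would, restricted to the crown side together with a subband of $T$, produce a $(\theta,q)$-annulus, contradicting Lemma~\ref{M_a no annuli 1}(1) applied on the $M$-minimal side. This case analysis is where I expect to spend the most care: one must be precise about how $\theta$-annuli interact with the stem/crown decomposition.

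Part (b) then follows quickly: by (c), $\Delta$ has no $\theta$-annuli, and the stem $\Delta^*$ is obtained from $\Delta$ by cutting off crown cells along cutting $q$-bands, none of which destroys the key facts that every maximal $\theta$-band of $\Delta^*$ crosses $\partial\Delta^*$ twice. Since $\Delta^*$ is minimal, Lemma~\ref{G_a design} gives $\sigma_\lambda(\Delta^*)\le C_1|\partial\Delta^*|_\theta\le C_1|\partial\Delta^*|$. It remains to bound $|\partial\Delta^*|$ by $|\partial\Delta|$: each $\theta$-edge of $\partial\Delta^*$ is either a $\theta$-edge of $\partial\Delta$ or lies on a cutting $q$-band $\pazocal{C}$ that was used to trim a crown, but in the latter case the two ends of $\pazocal{C}$ on $\partial\Delta$ contribute $q$-edges to $\partial\Delta$, and since a crown is diskless and $M$-minimal one can account for the trimmed $\theta$-edges; a cleaner route is to observe $|\partial\Delta^*|_\theta\le|\partial\Delta|_\theta$ directly because cutting off a diskless crown along $\pazocal{C}$ replaces a portion of $\partial\Delta$ containing at least as many $\theta$-edges as $\textbf{bot}(\pazocal{C})$ (a $\theta$-band of the crown ends on $\pazocal{C}$ and on $\partial\Delta$), so no new $\theta$-edges on $\partial\Delta^*$ arise beyond those already on $\partial\Delta$. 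Hence $\sigma_\lambda(\Delta^*)\le C_1|\partial\Delta|$.

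For part (d), let $\pazocal{C}$, $\Delta_1$, $\Delta_2$ be as stated, with $\Delta_1\cup\pazocal{C}$ being $M$-minimal (hence diskless) and $\Delta_2\cup\pazocal{C}$ weakly minimal. First check (WM1) and (WM2) for $\Delta$: an $a$-cell $\pi$ and a $\theta$-band $\pazocal{T}$ witnessing a violation would lie wholly within one of $\Delta_1\cup\pazocal{C}$ or $\Delta_2\cup\pazocal{C}$ (since an $a$-band cannot cross the $q$-band $\pazocal{C}$ and a $\theta$-band restricted to one side still carries all the $a$-bands through $\pi$), and both sides satisfy (MM1)/(WM1) and (MM2)/(WM2), a contradiction; similarly no maximal $a$-band of $\Delta$ can end on two $a$-cells since such a band lies on one side. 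For (WM3) I must show $\Delta^*$ is minimal. Since $\Delta_1\cup\pazocal{C}$ is diskless, all disks of $\Delta$ lie in $\Delta_2$, so $\pazocal{C}$ is a non-stem cutting band of $\Delta$ (or a rim band, if $\Delta_1$ is empty), making every cell of $\Delta_1$ a crown cell of $\Delta$; hence $\Delta^*\subset\Delta_2\cup\pazocal{C}$, and in fact $\Delta^*$ coincides with the stem of $\Delta_2\cup\pazocal{C}$, which is minimal by (WM3) for $\Delta_2\cup\pazocal{C}$. The only subtlety is confirming that trimming $\Delta_1$ off of $\Delta$ does not leave behind any cell of $\Delta_1$ or $\pazocal{C}$ in the stem and that no new cutting band appears; this follows because after removing the diskless side across $\pazocal{C}$, every remaining cutting $q$-band that fails to be a stem band of $\Delta$ already fails to be one in $\Delta_2\cup\pazocal{C}$. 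Thus $\Delta^*=(\Delta_2\cup\pazocal{C})^*$ is minimal and $\Delta$ is weakly minimal. The main obstacle overall is the careful bookkeeping in part (c) regarding the stem/crown decomposition and $\theta$-annuli; the other parts are largely a matter of observing that the relevant conditions are local and invoking the already-established annulus lemmas.
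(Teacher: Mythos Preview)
Your treatment of parts (c), (b), and (d) is essentially correct and matches the paper's approach. (One minor point: (b) does not actually depend on (c) --- Lemma~\ref{G_a design} applies directly to the minimal diagram $\Delta^*$, and the perimeter comparison $|\partial\Delta^*|\le|\partial\Delta|$ comes from the same $\theta$-edge count you sketch.)

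There is, however, a genuine gap in part (a): you argue in the wrong direction. You claim to show that crown cells of $\Delta_1$ are crown cells of $\Delta$, and from this conclude $\Delta_1^*\subset\Delta^*$. But that deduction is invalid: knowing $\text{crown}(\Delta_1)\subset\text{crown}(\Delta)$ tells you nothing about whether a cell outside $\text{crown}(\Delta_1)$ lies outside $\text{crown}(\Delta)$. What you actually need is the reverse containment, namely that every crown cell of $\Delta$ lying in $\Delta_1$ is a crown cell of $\Delta_1$; \emph{that} gives $\Delta_1^*\subset\Delta^*$. Worse, your claimed implication is false in general. Suppose $\Delta$ has two disks $\Pi_1,\Pi_2$ separated by a cutting $q$-band $\pazocal{Q}$ (so $\pazocal{Q}$ is a stem band of $\Delta$), and let $\pi$ be a cell on the $\Pi_2$-side of $\pazocal{Q}$. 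Take $\Delta_1$ to be a subdiagram containing $\Pi_1$, $\pazocal{Q}$, and $\pi$, but not $\Pi_2$. Then $\pi\in\Delta^*$, yet in $\Delta_1$ the band $\pazocal{Q}$ cuts off a diskless side containing $\pi$, so $\pi$ is a crown cell of $\Delta_1$ --- it is \emph{not} a crown cell of $\Delta$.

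The paper's fix is short: given a crown cell $\pi$ of $\Delta$ lying in $\Delta_1$, take a cutting $q$-band $\pazocal{Q}$ of $\Delta$ separating $\pi$ from all disks of $\Delta$; the intersection $\pazocal{Q}\cap\Delta_1$ is a cutting $q$-band of $\Delta_1$ separating $\pi$ from all disks of $\Delta_1$, so $\pi$ is a crown cell of $\Delta_1$. This gives $\Delta_1^*\subset\Delta^*$, whence $\Delta_1^*$ is minimal (as a subdiagram of the minimal $\Delta^*$) and the $\sigma_\lambda$ inequality follows as you described.
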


\begin{figure}[H]
\centering
\includegraphics[scale=0.75]{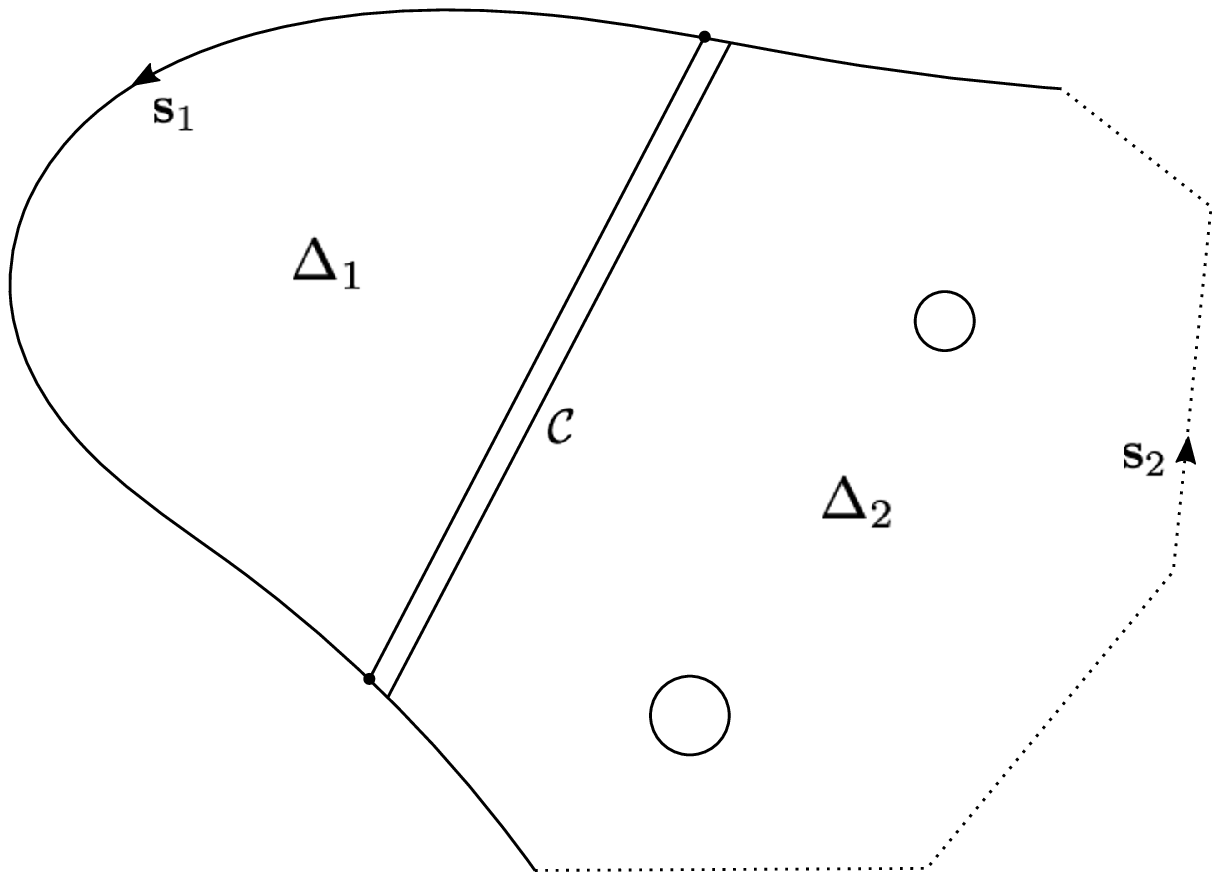}
\caption{Lemma \ref{weakly minimal}(b)}
\end{figure}

\begin{proof}

(a) Let $\pi$ be a crown cell of $\Delta$ contained in $\Delta_1$. Then there exists a cutting $q$-band $\pazocal{Q}$ separating $\pi$ from all disks of $\Delta$. The intersection of $\pazocal{Q}$ with $\Delta_1$ is a cutting $q$-band separating $\pi$ from all disks of $\Delta_1$, so that $\pi$ is a crown cell of $\Delta_1$. Consequently, $\Delta_1^*\subset\Delta^*$, and hence $\Delta_1^*$ is minimal being a subdiagram of a minimal diagram.

By definition, every maximal $\lambda$-shaft $\pazocal{C}_1$ of $\Delta_1^*$ is contained in a maximal $\lambda$-shaft $\pazocal{C}$ of $\Delta^*$. The length of $\pazocal{C}_1$ is then at most as large as the length of $\pazocal{C}$, so that $\sigma_\lambda(\Delta_1^*)\leq\sigma_\lambda(\Delta^*)$.

(b) Suppose $\pazocal{C}$ is a cutting $q$-band that is not a stem band and set $\Delta_1$, $\Delta_2$ as the components of $\Delta\setminus\pazocal{C}$. Then, one of $\Delta_1$ or $\Delta_2$, say $\Delta_1$, is diskless. 

Let $\textbf{s}_1$ be the portion of $\partial\Delta_1$ shared with $\partial\Delta$ and set $\partial\Delta=\textbf{s}_1\textbf{s}_2$. By Lemma \ref{M_a no annuli 1}, every maximal $\theta$-band of $\Delta_1\cup\pazocal{C}$ intersects $\pazocal{C}$ at most once. So, for every $\theta$-edge on the side of $\pazocal{C}$, there is a maximal $\theta$-band of $\Delta_1\cup\pazocal{C}$ with one end on this edge and one end on $\textbf{s}_1$.

So, letting $\ell$ be the number of $\theta$-edges of $\textbf{s}_1$, Lemma \ref{lengths}(b) implies $|\textbf{s}_1|\geq\ell=|\textbf{bot}(\pazocal{C})|=|\textbf{top}(\pazocal{C})|$. Hence, for $\Delta'=\Delta_2\cup\pazocal{C}=\Delta\setminus\Delta_1$, Lemma \ref{lengths}(c) implies $|\partial\Delta'|\leq|\textbf{s}_2|+|\textbf{top}(\pazocal{C})|\leq|\textbf{s}_2|+|\textbf{s}_1|$. 

But $\textbf{s}_2$ starts and ends with $q$-letters, so that $|\textbf{s}_2|+|\textbf{s}_1|=|\partial\Delta|$.

Iterating this process, $|\partial\Delta^*|\leq|\partial\Delta|$. 

Thus. the statement is a consequence of Lemma \ref{G_a design}.

(c) Lemma \ref{M_a no annuli 2}(2) implies that no $\theta$-annulus can be contained in a crown of $\Delta$. Since $\Delta^*$ is minimal, Lemma \ref{minimal theta-annuli} implies that no $\theta$-annulus can be contained in $\Delta^*$. 

Thus, the statement follows from Lemma \ref{M_a no annuli 1}, as no $\theta$ band can cross a rim $q$-band of $\Delta^*$ twice. 

(d) Suppose there exists a counterexample to (WM1) in $\Delta$ and fix $j\in\{1,2\}$ such that the $a$-cell $\pi$ is contained in $\Delta_j$. Let $\pazocal{T}_j$ be the maximal subband of $\pazocal{T}$ contained in $\Delta_j$. Then, since $a$-bands cannot cross $q$-bands, $\pi$ and $\pazocal{T}_j$ form a counterexample to (WM1) in $\Delta_j$. But this contradicts the $M$-minimality of $\Delta_1$ or the weak minimality of $\Delta_2$.

Similarly, any counterexample to (WM2) in $\Delta$ must be contained entirely in $\Delta_1$ or $\Delta_2$, leading to a contradiction.

Finally, it is clear from the definition that $\Delta^*=(\Delta_2\cup\pazocal{C})^*$, and so is minimal.

\end{proof}

\smallskip


\subsection{Definition of the minimal counterexample and cloves} \

The objective of the rest of this section is to exhibit an upper bound for the $G$-weight of a weakly minimal diagram in terms of its perimeter. In particular, we will prove that for any weakly minimal diagram $\Gamma$, the inequality $$\text{wt}_G(\Gamma)\leq N_4(|\partial\Gamma|+\sigma_\lambda(\Gamma^*))^2+N_3\mu(\Gamma)$$ holds for large enough choices of the parameters $N_4$ and $N_3$. The proof of this follows a similar path as that presented in Section 7 of [23] and Section 9 of [16] (taking $F(x)=x^2$ and $g(x)=x$ in that setting). 

Let $\Delta$ be a `minimal counterexample' diagram with respect to $|\partial\Delta|+\sigma_\lambda(\Delta^*)$, i.e a weakly minimal diagram satisfying
$$\text{wt}_G(\Delta)>N_4(|\partial\Delta|+\sigma_\lambda(\Delta^*))^2+N_3\mu(\Delta)$$ 
while for any weakly minimal diagram $\Gamma$ such that $|\partial\Gamma|+\sigma_\lambda(\Gamma^*)<|\partial\Delta|+\sigma_\lambda(\Delta^*)$, we have
$$\text{wt}_G(\Gamma)\leq N_4(|\partial\Gamma|+\sigma_\lambda(\Gamma^*))^2+N_3\mu(\Gamma)$$ 
As with Lemma \ref{a-cell in counterexample}, the following statement is an immediate consequence of the inductive hypothesis.

\begin{lemma} \label{a-cell in counterexample 2}

Let $\pi$ be an $a$-cell contained in $\Delta$. Suppose $\partial\pi$ has a subpath $\textbf{s}$ shared with $\partial\Delta$. Then $\|\textbf{s}\|\leq\frac{2}{3}\|\partial\pi\|$.

\end{lemma}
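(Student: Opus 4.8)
The plan is to argue by contradiction exactly as in the analogous statement for the diskless minimal counterexample (Lemma \ref{a-cell in counterexample}), with the role of $\text{wt}_G$ and the minimality hypothesis replaced by the weakly-minimal setup. Suppose $\pi$ is an $a$-cell in $\Delta$ with $\partial\pi=\textbf{s}\textbf{t}$, where $\textbf{s}$ is shared with $\partial\Delta$, and assume toward a contradiction that $\|\textbf{s}\|>\frac{2}{3}\|\partial\pi\|$. Then $\|\textbf{s}\|>2\|\textbf{t}\|$ and, since $\|\partial\pi\|\geq n$ (the boundary of an $a$-cell is labelled by a nonempty relator of $B(\pazocal{A},n)$), also $\|\textbf{s}\|>\frac{2}{3}n$, which is large by the parameter choices. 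Let $\Delta_0$ be the diagram obtained from $\Delta$ by deleting $\pi$, so $\partial\Delta_0=\textbf{t}^{-1}\textbf{s}_0$ where $\partial\Delta=\textbf{s}\textbf{s}_0$.

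First I would check that $\Delta_0$ is still weakly minimal: $\pi$ is not on the boundary of any $q$-band (its boundary consists of $a$-edges), so removing it changes neither the $q$-band structure of $\Delta$ nor its stem, whence $\Delta_0^*=\Delta^*$ is minimal and (WM3) survives; conditions (WM1) and (WM2) are inherited since removing a cell cannot create a new violation of either. In particular $\sigma_\lambda(\Delta_0^*)=\sigma_\lambda(\Delta^*)$. Next I would estimate perimeters using Lemma \ref{lengths}(c): $|\partial\Delta_0|\leq|\textbf{s}_0|+|\textbf{t}|=|\textbf{s}_0|+\delta\|\textbf{t}\|$, while $|\partial\Delta|\geq|\textbf{s}|+|\textbf{s}_0|-2\delta\geq|\textbf{s}_0|+\delta(\|\textbf{s}\|-2)$, so
$$|\partial\Delta|-|\partial\Delta_0|\geq\delta(\|\textbf{s}\|-\|\textbf{t}\|-2)\geq\tfrac14\delta\|\textbf{s}\|>0.$$
Hence $|\partial\Delta_0|+\sigma_\lambda(\Delta_0^*)<|\partial\Delta|+\sigma_\lambda(\Delta^*)$, and the inductive hypothesis applies to $\Delta_0$.

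Then I would compare $G$-weights and mixtures. Since $\Delta$ is obtained from $\Delta_0$ by pasting a single $a$-cell, Lemma \ref{G-weight subdiagrams} gives $\text{wt}_G(\Delta)\leq\text{wt}_G(\Delta_0)+\text{wt}(\pi)\leq\text{wt}_G(\Delta_0)+C_1\|\partial\pi\|^2$; also the necklaces of $\partial\Delta$ and $\partial\Delta_0$ are the same (no $\theta$- or $q$-edges are added or removed on the boundary), so $\mu(\Delta)=\mu(\Delta_0)$. Writing $P=|\partial\Delta|$, applying the inductive bound to $\Delta_0$, and using $(P-\tfrac14\delta\|\textbf{s}\|)^2\leq P^2-\tfrac14\delta\|\textbf{s}\|\,P$ together with $\sigma_\lambda(\Delta_0^*)=\sigma_\lambda(\Delta^*)$, I get
$$\text{wt}_G(\Delta)\leq N_4(P+\sigma_\lambda(\Delta^*))^2-\tfrac14 N_4\delta\|\textbf{s}\|\,(P+\sigma_\lambda(\Delta^*))+N_3\mu(\Delta)+C_1(\|\textbf{s}\|+\|\textbf{t}\|)^2.$$
So the contradiction with the counterexample hypothesis follows once $\tfrac14 N_4\delta\|\textbf{s}\|(P+\sigma_\lambda(\Delta^*))\geq C_1(\|\textbf{s}\|+\|\textbf{t}\|)^2$. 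Since $\|\textbf{t}\|<\tfrac12\|\textbf{s}\|$ we have $(\|\textbf{s}\|+\|\textbf{t}\|)^2\leq\tfrac94\|\textbf{s}\|^2$, and since $P\geq\tfrac14\delta\|\textbf{s}\|$ it suffices that $N_4\delta^2\geq 36 C_1$, which holds by the parameter choices $N_4\gg C_1\gg\delta^{-1}$. The only genuine subtlety — and the step I would be most careful about — is verifying that deleting $\pi$ preserves weak minimality, specifically that the stem is unchanged; this is where the fact that an $a$-cell meets no $q$-band (hence no cutting $q$-band and no disk-separating structure) is used, and it is exactly the point that would fail if $\pi$ were a $(\theta,q)$- or $(\theta,a)$-cell instead.
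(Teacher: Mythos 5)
Your argument is correct and is exactly the route the paper intends: it repeats the proof of Lemma \ref{a-cell in counterexample}, replacing $|\partial\Delta|$ by $|\partial\Delta|+\sigma_\lambda(\Delta^*)$ and $(N_2,N_1)$ by $(N_4,N_3)$, which is why the paper states the lemma as an immediate consequence of the inductive hypothesis. Your only superfluous step is the claim $\Delta_0^*=\Delta^*$: since $\Delta_0$ is a subdiagram of $\Delta$ still containing a disk, Lemma \ref{weakly minimal}(a) already gives that $\Delta_0$ is weakly minimal with $\sigma_\lambda(\Delta_0^*)\leq\sigma_\lambda(\Delta^*)$, which is all the induction needs.
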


Since $\Delta^*$ contains every disk of $\Delta$ and is minimal, $\Delta$ is a $D$-minimal diagram. So, Lemma \ref{graph} guarantees that it contains a disk $\Pi$ with $L-4$ consecutive $t$-spokes $\pazocal{Q}_1,\dots,\pazocal{Q}_{L-4}$ ending on $\partial\Delta$ and bounding $L-5$ diskless subdiagrams (see Figure 9.3).

For $1\leq i<j\leq L-4$, the subdiagram of $\Delta$ bounded by $\partial\Pi$, $\pazocal{Q}_i$, and $\pazocal{Q}_j$ (and not containing $\Pi$) is called a \textit{clove} and is denoted $\Psi_{ij}$. The maximal clove $\Psi_{1,L-4}$ is simply denoted $\Psi$.

\begin{lemma} \label{rim theta-bands}

\textit{(Compare to Lemma 9.5 of [16] and Lemma 7.19 of [23])} 

Let $\pazocal{T}$ be a quasi-rim $\theta$-band in $\Delta$. Then the base of $\pazocal{T}$ has length $s>K$.

\end{lemma}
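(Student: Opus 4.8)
The plan is to argue by contradiction following the template of Lemma~\ref{6.18} from the diskless setting, now adapted to the presence of disks in the stem $\Delta^*$ of the minimal counterexample $\Delta$. Suppose $\pazocal{T}$ is a quasi-rim $\theta$-band whose base $B$ has length $s\le K$. By definition of quasi-rim band, one of its sides is separated from $\partial\Delta$ only by $a$-cells; call $\textbf{P}_1$ the set of these $a$-cells, and let $\textbf{u}$ be the subpath of $\partial\Delta$ bounded by the two end $\theta$-edges of $\pazocal{T}$ with $\textbf{v}$ its complement. As in Lemma~\ref{6.18}, for $\pi\in\textbf{P}_1$ factor $\partial\pi=\textbf{p}_\pi\textbf{p}_\pi'$ where $\textbf{p}_\pi$ sits on $\textbf{u}$ and $\textbf{p}_\pi'$ sits on $\textbf{top}(\pazocal{T})$; Lemma~\ref{a-cell in counterexample 2} gives $\|\textbf{p}_\pi'\|\ge\frac13\|\partial\pi\|$, and Lemma~\ref{Lemma 6.2} gives $\|\partial\pi\|\ge(1-\b)n\ge n/2$, so each $\textbf{p}_\pi'$ contains a nonempty maximal subpath $\textbf{p}_\pi''$ of edges on $(\theta,a)$-cells of $\pazocal{T}$.

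Next I would perform the same surgery as before: cut along $\textbf{bot}(\pazocal{T})$, delete $\pazocal{T}$ and the cells of $\textbf{P}_1$, then re-paste each $\pi\in\textbf{P}_1$ along $\textbf{p}_\pi''$ (now identified with a subpath of $\textbf{bot}(\pazocal{T})$), obtaining a diagram $\Delta''$. The key points to verify are: (i) $\Delta''$ is still weakly minimal --- conditions (WM1) and (WM2) hold because each re-pasted $\pi$ has at least $\tfrac12\|\partial\pi\|$ edges now on $\partial\Delta''$, and the stem $(\Delta'')^*$ is obtained from $\Delta^*$ by the same local move (a quasi-rim band of $\Delta$ restricts to a quasi-rim band of $\Delta^*$, since all crown cells lie beyond it), so $(\Delta'')^*$ remains minimal; (ii) the perimeter genuinely drops: using Lemma~\ref{lengths}, $|\textbf{u}|-|\textbf{u}''|\ge 2-(3s+2)\delta\ge 2-(3K+2)\delta\ge1$ by the parameter choice $\delta^{-1}\gg K$, hence $|\partial\Delta|-|\partial\Delta''|\ge1$; and (iii) $\sigma_\lambda((\Delta'')^*)\le\sigma_\lambda(\Delta^*)$ --- no $\lambda$-shaft is lengthened by removing a quasi-rim $\theta$-band and shuffling $a$-cells, since the move does not touch any $t$-spoke interior to a disk. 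Thus $|\partial\Delta''|+\sigma_\lambda((\Delta'')^*)<|\partial\Delta|+\sigma_\lambda(\Delta^*)$ and the inductive hypothesis applies to $\Delta''$.

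Then I would combine estimates exactly as in Lemma~\ref{6.18}: the necklace of $\Delta''$ comes from that of $\Delta$ by removing two white beads, so $\mu(\Delta'')\le\mu(\Delta)$ by Lemma~\ref{mixtures}(a); a minimal covering $\textbf{P}''$ of $\Delta''$ necessarily contains all of $\textbf{P}_1$ (each such $a$-cell has a boundary edge on $\partial\Delta''$, so it cannot be absorbed into a trapezium or big $a$-trapezium); adding back the cells of $\pazocal{T}$ gives a covering $\textbf{P}$ of $\Delta$ with $\text{wt}_G(\textbf{P})\le\text{wt}_G(\textbf{P}'')+\ell$ where $\ell$ is the length of $\pazocal{T}$. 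Using $|\partial\Delta|-|\partial\Delta''|\ge1$ and $(|\partial\Delta|-1)^2\le|\partial\Delta|^2-|\partial\Delta|$ (valid since $|\partial\Delta|\ge1$), this yields
\begin{equation*}
\text{wt}_G(\Delta)\le N_4(|\partial\Delta|+\sigma_\lambda(\Delta^*))^2-N_4(|\partial\Delta|+\sigma_\lambda(\Delta^*))+N_3\mu(\Delta)+\ell,
\end{equation*}
so it suffices to show $N_4(|\partial\Delta|+\sigma_\lambda(\Delta^*))\ge\ell$. By (WM1), $\|\textbf{p}_\pi'\|\le\|\textbf{p}_\pi\|+2b_\pi$ where $b_\pi$ counts the edges of $\textbf{p}_\pi'$ on $(\theta,q)$-cells of $\pazocal{T}$, and $\sum b_\pi\le s$ since each $(\theta,q)$-relation has at most one $a$-letter of the special input sector; hence $|\textbf{top}(\pazocal{T})|_a\le|\textbf{u}|_a+2s$, and Lemma~\ref{lengths}(d) gives $\ell\le|\textbf{u}|_a+5s\le\delta^{-1}(|\partial\Delta|-s)+5s\le\delta^{-1}|\partial\Delta|$, so the desired inequality follows from $N_4\gg\delta^{-1}$, contradicting the minimality of $\Delta$.

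The main obstacle I anticipate is not the numerical bookkeeping (which transcribes almost verbatim from Lemma~\ref{6.18}) but verifying that the surgery respects the stem structure and weak minimality --- specifically, confirming that cutting off a quasi-rim $\theta$-band of $\Delta$ induces the analogous operation on $\Delta^*$ so that $(\Delta'')^* $ is obtained from $\Delta^*$ by a move that preserves minimality (using Lemma~\ref{minimal exist} to re-minimize if necessary while not creating new disks), and that $\sigma_\lambda$ does not increase. One subtlety to handle carefully: if $\pazocal{T}$ is a quasi-rim band of $\Delta$ but some of the $a$-cells of $\textbf{P}_1$ are crown cells while others lie in $\Delta^*$, one must check the re-pasted diagram's stem is still exactly the minimal object required; I expect this is resolved by noting that the move is local and the cutting $q$-bands of $\Delta$ are unchanged away from $\textbf{u}$. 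Once this structural point is settled, the quantitative contradiction closes the proof.
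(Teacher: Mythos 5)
Your proposal is correct and follows essentially the same route as the paper: redo the surgery and estimates of Lemma \ref{6.18}, check that $\Delta''$ is weakly minimal and that $\sigma_\lambda$ does not increase, and apply the inductive hypothesis to the quantity $|\partial\Delta''|+\sigma_\lambda((\Delta'')^*)$, which drops by at least $1$. The structural point you flag about the stem is settled in the paper more directly: $\Delta'$ is a subdiagram of $\Delta$, so Lemma \ref{weakly minimal}(a) gives that it is weakly minimal with $\sigma_\lambda((\Delta')^*)\leq\sigma_\lambda(\Delta^*)$, and $\Delta''$ differs from $\Delta'$ only by re-pasted $a$-cells, which leave the $2$-signature of the stem (and the lengths of its $\lambda$-shafts) unchanged.
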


\begin{proof}

Assume toward contradiction that $\pazocal{T}$ is a quasi-rim $\theta$-band with base of length $s\leq K$. Then, define $\Delta'$ and $\Delta''$ as in the proof of Lemma \ref{6.18}. As in that setting, $\Delta''$ satisfies (WM1) and (WM2) and $|\partial\Delta''|\leq|\partial\Delta|-1$.

As $\Delta'$ is a subdiagram of $\Delta$, Lemma \ref{weakly minimal}(a) implies that it is weakly minimal with $\sigma_\lambda((\Delta')^*)\leq\sigma_\lambda(\Delta^*)$. 

Since the diagram $\Delta''$ is formed from $\Delta'$ through the addition of $a$-cells, the 2-signatures of $(\Delta')^*$ and $(\Delta'')^*$ are equal. Hence, $\Delta''$ is a weakly minimal diagram.

Further, every $\lambda$-shaft of $(\Delta'')^*$ is at most as long as the corresponding $\lambda$-shaft of $\Delta^*$, so that $\sigma_\lambda((\Delta'')^*)\leq\sigma_\lambda(\Delta^*)$. Consequently, $|\partial\Delta''|+\sigma_\lambda((\Delta'')^*)\leq|\partial\Delta|+\sigma_\lambda(\Delta^*)-1$, and so the inductive hypothesis may be applied to $\Delta''$.

Thus, the proof of Lemma \ref{6.18} adapts naturally to this setting, providing a contradiction.

\end{proof}

\smallskip


\subsection{Properties of the cloves of $\Delta$} \

The following statement is an adaptation of Lemma \ref{6.16} to this setting and is proved in exactly the same way.

\begin{lemma} \label{7.20} \

\begin{enumerate}[label=({\arabic*})]

\item $\Delta$ has no two disjoint subcombs $\Gamma_1$ and $\Gamma_2$ contained in $\Psi$ with basic widths at most $K$ and handles $\pazocal{B}_1$ and $\pazocal{B}_2$ such that some ends of these handles are connected by a subpath $\textbf{x}$ of $\partial\Delta$ with $|\textbf{x}|_q\leq c_0$.

\item If $\Gamma$ is a subcomb of $\Delta$ contained in $\Psi$ with basic width $s\leq K$, $|\partial\Gamma|_q=2s$.

\end{enumerate}

\end{lemma}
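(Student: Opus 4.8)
The statement to prove is Lemma \ref{7.20}, which is explicitly flagged in the excerpt as an adaptation of Lemma \ref{6.16} to the setting of weakly minimal diagrams (with the minimal counterexample $\Delta$ from Section 10.2). Since the excerpt says it "is proved in exactly the same way," my plan is to mirror the proof of Lemma \ref{6.16} step by step, checking that each ingredient used there is available here.

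Let me sketch the approach.

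\textbf{Proof strategy.} As in Lemma \ref{6.16}, I would prove (1) and (2) simultaneously by induction on $W = \text{wt}(\Gamma_1) + \text{wt}(\Gamma_2)$ for statement (1) and $W = \text{wt}(\Gamma)$ for statement (2), considering a counterexample with minimal $W$. For a minimal counterexample to (1): both $\Gamma_1, \Gamma_2$ lie inside $\Psi$, hence are diskless subcombs (the cloves $\Psi_{ij}$ and in particular $\Psi$ are diskless by the choice of $\Pi$ via Lemma \ref{graph}), so they are $M$-minimal; since $\text{wt}(\Gamma_i) < W$, statement (2) holds for each, giving $|\partial\Gamma_i|_q = 2s_i \le 2K$. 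Let $h_i$ be the heights; WLOG $h_1 \le h_2$ and $\textbf{y}_1\textbf{x}\textbf{y}_2$ a subpath of $\partial\Delta$. Because $|\textbf{x}|_q \le c_0$ and $|\partial\Gamma_i|_q \le 2K$, each $\theta$-edge of $\textbf{y}_1$ is separated from each $\theta$-edge of $\textbf{y}_2$ by at most $4K + c_0 \le J$ $q$-edges (parameter choice $J \gg K$), so all $h_1 h_2$ correctly-ordered pairs of the corresponding white beads contribute to $\mu(\Delta)$. Removing $\Gamma_1$ from $\Delta$ to form $\Delta'$ deletes the handle $\pazocal{B}_1$, decreasing the $q$-edge count separating these pairs, so Lemma \ref{mixtures}(d) gives $\mu(\Delta) - \mu(\Delta') \ge h_1 h_2$. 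Here I must check $\Delta'$ is again weakly minimal with $\sigma_\lambda((\Delta')^*) \le \sigma_\lambda(\Delta^*)$ and $|\partial\Delta'| < |\partial\Delta|$ — this follows from Lemma \ref{weakly minimal}(a) (a subdiagram containing a disk is weakly minimal; and $\Delta'$ still contains $\Pi$) together with the perimeter estimate exactly as in the proof of Lemma \ref{6.18}/\ref{rim theta-bands}. Then apply the inductive hypothesis of Section 10.2 to $\Delta'$, bound $\text{wt}_G(\Gamma_1)$ via Lemma \ref{comb weights}(1), combine with Lemma \ref{G-weight subdiagrams}, and run the same numerical estimates (the chain of inequalities (\ref{6.16 difference in perimeter})--(\ref{6.16 suffices 2})) verbatim, now with $N_4, N_3$ in place of $N_2, N_1$, to reach the contradiction $\text{wt}_G(\Delta) \le N_4(|\partial\Delta| + \sigma_\lambda(\Delta^*))^2 + N_3\mu(\Delta)$.

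For a minimal counterexample to (2): order the derivative subcombs of $\Gamma$ along its handle $\pazocal{B}$; two neighboring derivative subcombs $\Gamma_1, \Gamma_2$ are connected to $\pazocal{B}$ by disjoint collections of $\theta$-bands, and any $\theta$-band between them crosses no $q$-band except $\pazocal{B}$ (else it would meet a derivative subcomb strictly between them), so the subpath $\textbf{x}$ of $\partial\Delta$ between their handles has $|\textbf{x}|_q = 0 \le c_0$. Then $\Gamma_1, \Gamma_2$ contradict (1), but $\text{wt}_G(\Gamma_1) + \text{wt}_G(\Gamma_2) < \text{wt}_G(\Gamma) = W$ since they omit the cells of $\pazocal{B}$ — contradicting minimality of $W$. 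Hence $\Gamma$ has at most one derivative subcomb, which in turn has at most one, and so on; thus $\Gamma$ has exactly $s$ maximal $q$-bands, and Lemma \ref{M_a no annuli 1} gives $|\partial\Gamma|_q = 2s$.

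\textbf{Main obstacle.} The only real content beyond transcription is verifying that the surgery $\Delta \rightsquigarrow \Delta'$ (removing a subcomb $\Gamma_1$ inside $\Psi$) keeps us inside the class for which the Section 10.2 inductive hypothesis applies — i.e. that $\Delta'$ is weakly minimal, still contains a disk (it contains $\Pi$, so yes), satisfies $\sigma_\lambda((\Delta')^*) \le \sigma_\lambda(\Delta^*)$, and has strictly smaller $|\partial\Delta'| + \sigma_\lambda((\Delta')^*)$. This is exactly the package assembled in the proof of Lemma \ref{rim theta-bands}, so I would cite that proof for these bookkeeping points rather than redo them. The mixture estimate via Lemma \ref{mixtures}(d) and the parameter juggling ($N_4 \gg N_3 \gg c_0, K, C_1$ and $N_4 \gg C_1 \gg \delta^{-1} \gg K$, $J \gg K$) are then identical to Lemma \ref{6.16}. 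Given all this, I would simply write: "We prove (1) and (2) simultaneously, inducting on $W$ as in the proof of Lemma \ref{6.16}. The argument is identical, using Lemma \ref{weakly minimal}(a), the perimeter estimates from the proof of Lemma \ref{rim theta-bands}, and the inductive hypothesis of Section 10.2 in place of that of Section 8.6."
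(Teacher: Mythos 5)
Your proposal is correct and follows essentially the same route as the paper, which itself proves Lemma \ref{7.20} by declaring it "proved in exactly the same way" as Lemma \ref{6.16}: simultaneous induction on weight, the mixture/perimeter surgery of removing $\Gamma_1$ for part (1), and the derivative-subcomb reduction to (1) for part (2). Your checks that the cloves are diskless, that $\Delta'$ remains weakly minimal with $\sigma_\lambda((\Delta')^*)\leq\sigma_\lambda(\Delta^*)$ via Lemma \ref{weakly minimal}(a), and that the inductive hypothesis of Section 10.2 (with $N_4,N_3$) replaces that of Section 8.6 are exactly the adjustments the paper intends.
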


\smallskip

\begin{lemma} \label{7.21}

Any subcomb of $\Delta$ contained in $\Psi$ has basic width at most $K_0$.

\end{lemma}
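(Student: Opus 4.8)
The plan is to prove this by contradiction, mirroring the structure of Lemma~\ref{6.17} (the bound $\ell'>\ell/2$ for subcombs of subcombs of small basic width). Assume some subcomb $\Gamma$ of $\Delta$ contained in $\Psi$ has basic width at least $K_0+1$. Since any base word of length $\ge K_0=22L+1$ has a tight prefix, we may find inside $\Gamma$ a maximal $\theta$-band whose base, read toward the handle of $\Gamma$, has a tight prefix. Following the construction in Lemma~\ref{tight subcomb}, this produces a maximal $q$-band $\pazocal{Q}'$ cutting $\Gamma$ into two parts, one of which is a subcomb $\Gamma'$ with handle $\pazocal{Q}'$ whose basic width is strictly smaller; iterating, one extracts a subcomb with small basic width whose handle has length at most half the height of the original. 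The contradiction will then come from removing this small-width, short-handle subcomb from $\Delta$ and applying the inductive hypothesis in exactly the manner of Lemma~\ref{6.17}, using Lemma~\ref{comb weights} to bound its $G$-weight, Lemma~\ref{7.20}(2) to control the number of $q$-edges on its boundary, and Lemma~\ref{mixtures}(d) to extract a quadratic gain in mixture (via the $q$-edges of the removed handle separating $\theta$-edges of the rest of the diagram).

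More concretely, I would first invoke Lemma~\ref{rim theta-bands} to ensure every quasi-rim $\theta$-band of $\Delta$ has base length $>K>2K_0$, so the tight-prefix extraction argument of Lemma~\ref{tight subcomb} applies inside $\Gamma$. This gives a chain of nested derivative-type subcombs $\Gamma\supset\Gamma'\supset\Gamma''\supset\cdots$ with strictly decreasing basic width, all contained in $\Psi$. Picking the one of smallest basic width that still has basic width at most $K$ (which must exist since the chain terminates at width $1$), call it $\Gamma_0$ with handle $\pazocal{B}_0$ of length $\ell_0$. By Lemma~\ref{7.21}'s target being false, $\Gamma$ has basic width $\ge K_0+1$, so its handle $\pazocal{B}$ and the handle $\pazocal{B}_0$ of the extracted narrow subcomb are nested, and the argument of Lemma~\ref{6.17} shows $\ell_0\le \tfrac12\ell_{\Gamma}$ where $\ell_\Gamma$ is the height of $\Gamma$; but this by itself is not the contradiction — rather, the contradiction is extracted by weighing $\Gamma_0$ against the perimeter/mixture bookkeeping.

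The cleaner route, and the one I expect to use, is: among all subcombs of $\Delta$ contained in $\Psi$ with basic width exceeding $K_0$, assume one exists, hence (by extracting tight prefixes as above) there is a subcomb $\Gamma_1$ contained in $\Psi$ of basic width at most $K$ together with a proper subcomb $\Gamma_1'$ of it of basic width $1$; applying the Lemma~\ref{6.17} argument verbatim (its hypothesis ``basic width at most $K_0$'' needs only ``at most $K$'' for the estimates, provided Lemma~\ref{7.20}(2) gives $|\partial\Gamma_1|_q\le 2K$) yields the inequality $\ell_1'>\ell_1/2$ on handle lengths. Iterating the extraction downward through a chain $\Gamma\supsetneq\Gamma_1\supsetneq\Gamma_2\supsetneq\cdots$ of subcombs whose handles have strictly decreasing length, each at least half the previous, produces an infinite strictly decreasing sequence of positive integers — impossible. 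Thus no subcomb contained in $\Psi$ can have basic width exceeding $K_0$.

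The main obstacle I anticipate is verifying that the Lemma~\ref{6.17} estimates genuinely go through with the weaker hypothesis ``basic width $\le K$'' in place of ``$\le K_0$'': one must check that Lemma~\ref{7.20}(2) applies (it does, since it is stated for basic width $\le K$) so that $\textbf{y}$ has at most $2K$ $q$-edges, hence at most $J$ $q$-edges between any relevant pair by the parameter choice $J\gg K$, keeping the mixture-decrease estimate $\mu(\Delta)-\mu(\Delta')\ge \ell'(\ell_1+\ell_2)$ intact. A secondary subtlety is that all combs here live inside the clove $\Psi$, so I must confirm that removing $\Gamma_1'$ from $\Delta$ yields a diagram that is still weakly minimal with $\sigma_\lambda$ non-increasing and perimeter strictly decreasing — this follows from Lemma~\ref{weakly minimal}(a) and the perimeter computation as in Lemma~\ref{rim theta-bands}. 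Once those two points are settled, the proof is a direct transcription of Lemma~\ref{6.17}'s argument, and the descending-chain contradiction closes it.
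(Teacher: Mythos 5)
Your opening moves match the paper: assuming a subcomb of $\Psi$ with basic width exceeding $K_0$, Lemma \ref{rim theta-bands} plays the role of Lemma \ref{6.18}, the argument of Lemma \ref{tight subcomb} produces a tight subcomb $\Gamma$ of $\Delta$ contained in $\Psi$, and the analogue of Lemma \ref{6.17} (with $\sigma_\lambda((\Delta')^*)\leq\sigma_\lambda(\Delta^*)$ from Lemma \ref{weakly minimal}(a) inserted into the inductive hypothesis) gives $\ell'>\ell/2$ for subcombs of $\Gamma$. But the contradiction you then extract does not exist. A chain of nested subcombs $\Gamma\supsetneq\Gamma_1\supsetneq\Gamma_2\supsetneq\cdots$ whose handle lengths strictly decrease, each more than half the previous, is just a finite decreasing sequence of positive integers; nothing forces the chain to be infinite --- the extraction simply terminates once you reach a subcomb of basic width $1$, or one none of whose $\theta$-bands has a base with a tight prefix, and at that point you have contradicted nothing. (You also state the inequality backwards twice, claiming the extracted subcomb's handle has length at most half the original height; Lemma \ref{6.17} is a lower bound $\ell'>\ell/2$, which is precisely why no descent is available. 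The worry about weakening ``basic width $\leq K_0$'' to ``$\leq K$'' is moot: a tight comb automatically has basic width at most $K_0$, since tight bases and bases without tight prefixes have length at most $K_0$.)

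What actually closes the paper's proof is the machinery you gesture at in your second paragraph and then abandon: the analogues of Lemmas \ref{counterexample combs}, \ref{H>M} and, crucially, \ref{diskless}. From the tight subcomb $\Gamma$ one isolates the $a$-trapezium $\Gamma_2$ with revolving base between the $q$-bands $\pazocal{Q}'$ and $\pazocal{Q}_2$, cuts $\Gamma_1$ off along $\pazocal{Q}'$ and pastes it back onto $\Delta_1$ along the identically labelled bands to form $\Delta_0$; here $\Delta_0$ is weakly minimal by Lemma \ref{weakly minimal}(d) and satisfies $\sigma_\lambda(\Delta_0^*)=\sigma_\lambda(\Delta^*)$ because $\pazocal{Q}'$ is a non-stem cutting $q$-band, so the inductive hypothesis applies to it. The counterexample property of $\Delta$ is then contradicted by the bookkeeping of Lemma \ref{diskless}: bounding $\text{wt}_G(\Gamma_2)$ via Lemma \ref{revolving G-weight}, bounding $\text{wt}_G(\Gamma_3)$, $\text{wt}_G(\Gamma_4)$ and the attached $a$-cells via Lemmas \ref{comb weights}, \ref{a-cell in counterexample 2} and \ref{counterexample combs}, and harvesting the mixture decrease $\ell'(\ell_3+\ell_4)$ from the removal of $\pazocal{Q}'$ via Lemma \ref{mixtures}(4), with the $\ell'>\ell/2$ inequality and the Lemma \ref{H>M} analogue feeding the estimates. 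That inequality is only an ingredient of this computation, not a source of contradiction by itself, so as written your proof has no endgame.
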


\begin{proof}

Assume toward contradiction that there exists a subcomb of $\Delta$ contained in $\Psi$ with basic width $s>K_0$. Then, using Lemma \ref{rim theta-bands}, an identical proof to the one presented in Lemma \ref{tight subcomb} implies that there exists a tight subcomb $\Gamma$ of $\Delta$ contained in $\Psi$.

Further, an analogous proof to that presented in Lemma \ref{6.17} implies that any subcomb of $\Gamma$ has height greater than $\ell'/2$. Indeed, other than switching the parameters and using Lemma \ref{7.20} in place of Lemma \ref{6.18}, the only necessary alteration to the proof of Lemma \ref{6.17} is in the application of the inductive hypothesis, where we must use the inequality $\sigma_\lambda((\Delta')^*)\leq\sigma_\lambda(\Delta^*)$ arising from Lemma \ref{weakly minimal}(a).

But then similar analogues of Lemmas \ref{counterexample combs}-\ref{diskless} yield a contradiction in the same way. Only one major alteration is needed: In the adaptation of Lemma \ref{diskless}, the diagram $\Delta_0$ is weakly minimal by Lemma \ref{weakly minimal}(d) and satisfies $\sigma_\lambda(\Delta_0^*)=\sigma_\lambda(\Delta^*)$ since the handle of the tight subcomb $\Gamma$ is a non-stem cutting $q$-band.

\end{proof}

\begin{remark}

The reason for our consideration of weakly minimal diagrams in this section is revealed in the proof of Lemma \ref{7.21}: The adaptation of Lemma \ref{diskless} relies on Lemma \ref{weakly minimal}(d), whose statement fails if one replaces `weakly minimal' with `minimal'.

\end{remark}

\begin{lemma} \label{7.22} \textit{(Compare with Lemma 9.8 of [16] and Lemma 7.22 of [23])}

\begin{enumerate}[label=({\arabic*})]

\item Every maximal $\theta$-band of $\Psi$ crosses either $\pazocal{Q}_1$ or $\pazocal{Q}_{L-4}$

\item There exists an $r$ satisfying $(L-1)/2-3\leq r\leq (L-1)/2$ such that the $\theta$-bands of $\Psi$ crossing $\pazocal{Q}_{L-4}$ do not cross $\pazocal{Q}_r$ and the $\theta$-bands of $\Psi$ crossing $\pazocal{Q}_1$ do not cross $\pazocal{Q}_{r+1}$

\end{enumerate}

\end{lemma}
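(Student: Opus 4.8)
The plan is to exploit the disk $\Pi$ guaranteed by Lemma~\ref{graph} together with the earlier constraints on $\theta$-bands crossing $t$-spokes (Lemma~\ref{G_a theta-annuli}). The key structural fact is that the $L-4$ consecutive $t$-spokes $\pazocal{Q}_1,\dots,\pazocal{Q}_{L-4}$ all end on $\partial\Delta$ and bound diskless subdiagrams $\Gamma_i$ (the cloves $\Psi_{i,i+1}$ restricted below $\Pi$), and these subdiagrams are therefore $M$-minimal. For statement (1): take any maximal $\theta$-band $\pazocal{T}$ of $\Psi=\Psi_{1,L-4}$. If it crossed neither $\pazocal{Q}_1$ nor $\pazocal{Q}_{L-4}$, then $\pazocal{T}$ would be a quasi-rim $\theta$-band of $\Delta$ (or could be trimmed to one), contradicting Lemma~\ref{rim theta-bands}, which forces the base of any quasi-rim $\theta$-band to have length $>K$. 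The subtlety is that a $\theta$-band crossing, say, only the middle spokes $\pazocal{Q}_2,\dots,\pazocal{Q}_{L-5}$ and then re-entering $\Pi$ or ending on $\partial\Delta$ via a short path between two consecutive spokes would have a clove between it and $\partial\Delta$ consisting of $a$-cells and pieces of the $\Gamma_i$'s, so it is a quasi-rim band with base length at most $L-1<K$ (since $K\gg L$ by the parameter ordering), again contradicting Lemma~\ref{rim theta-bands}. Every maximal $\theta$-band of $\Psi$ must cross some spoke, and by planarity the only spokes it can reach without being quasi-rim are $\pazocal{Q}_1$ or $\pazocal{Q}_{L-4}$.

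\textbf{Step for (2).} Let $\ell_1$ be the number of $t$-spokes among $\pazocal{Q}_1,\dots,\pazocal{Q}_{L-4}$ crossed by the maximal $\theta$-band bundle emanating from $\pazocal{Q}_1$ (equivalently, the largest index reached), and similarly $\ell_2$ for the bundle from $\pazocal{Q}_{L-4}$. By part (1) these two bundles together must cross all of $\pazocal{Q}_1,\dots,\pazocal{Q}_{L-4}$, and any $\theta$-band crossing $\pazocal{Q}_1$ and any crossing $\pazocal{Q}_{L-4}$ are disjoint (they lie in disjoint planar regions once they separate), so Lemma~\ref{G_a theta-annuli}(2) applies: $\ell_1+\ell_2\le(L-1)/2$ for $\theta$-bands correspoding to the same rule read toward $\Pi$. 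One must be slightly careful: Lemma~\ref{G_a theta-annuli}(2) is about bands with the same rule; here the $\theta$-bands of a single maximal band read toward the disk all carry the \emph{same} rule, and a given maximal $\theta$-band of $\Psi$ is the union of (at most) two such subbands, one attached to $\pazocal{Q}_1$'s side and one to $\pazocal{Q}_{L-4}$'s side. Feeding the extremal bands of the two bundles into Lemma~\ref{G_a theta-annuli}(2) gives $\ell_1+\ell_2\le(L-1)/2$. Since the bundles jointly cover all $L-4$ spokes, $\ell_1+\ell_2\ge L-4$ is false; rather, because a band from $\pazocal{Q}_1$ that reaches $\pazocal{Q}_j$ crosses all of $\pazocal{Q}_1,\dots,\pazocal{Q}_j$, the union covers $\{1,\dots,\ell_1\}\cup\{L-4-\ell_2+1,\dots,L-4\}$, and covering everything forces $\ell_1+\ell_2\ge L-4$ unless there is a gap — but part (1) rules out a gap. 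Reconciling $\ell_1+\ell_2\ge L-4$ (wait: this is wrong) — actually the correct reading is that the two index sets must overlap or be adjacent, giving $\ell_1+\ell_2\ge L-5$; combined with $\ell_1+\ell_2\le(L-1)/2$ one gets a contradiction unless $L$ is small, so the resolution is that the bands from $\pazocal{Q}_1$ stop at some $\pazocal{Q}_{r+1}$ and those from $\pazocal{Q}_{L-4}$ stop at $\pazocal{Q}_r$, i.e.\ they do \emph{not} jointly cover the middle — meaning the middle spokes $\pazocal{Q}_r$, $\pazocal{Q}_{r+1}$ are crossed by $\theta$-bands that must exit to $\partial\Delta$ and hence (by part (1) again) those middle spokes themselves are where the "split" happens. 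Setting $r=\ell_2$ (the reach of the $\pazocal{Q}_{L-4}$-bundle) and using $\ell_1+\ell_2\le(L-1)/2$ together with the parallel lower bound $\ell_1\le(L-1)/2-3+\text{small}$ forces $(L-1)/2-3\le r\le(L-1)/2$ after absorbing the at-most-3 discrepancy coming from the $L-4$ versus $L-1$ and the two sides of $\Pi$.

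\textbf{Main obstacle and bookkeeping.} The genuinely delicate point is the index arithmetic: correctly defining $r$ so that the $\pazocal{Q}_{L-4}$-bundle misses $\pazocal{Q}_r$ and the $\pazocal{Q}_1$-bundle misses $\pazocal{Q}_{r+1}$, and then showing $(L-1)/2-3\le r\le(L-1)/2$ from the inequality $\ell_1+\ell_2\le(L-1)/2$ of Lemma~\ref{G_a theta-annuli}(2) applied to the disjoint extremal $\theta$-subbands. The ``$-3$'' slack is exactly the loss from passing between the $L-4$ available spokes and the $L-1$ total spokes of $\Pi$, plus the fact that each maximal $\theta$-band of $\Psi$ decomposes into two same-rule subbands across $\Pi$. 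I would carry out the argument by: (i) proving part (1) via Lemma~\ref{rim theta-bands} and the parameter inequality $K\gg L\ge K_0$; (ii) for each maximal $\theta$-band of $\Psi$, recording which of $\pazocal{Q}_1,\pazocal{Q}_{L-4}$ it crosses and its maximal index of penetration; (iii) letting $r$ be the largest index such that some $\theta$-band crossing $\pazocal{Q}_{L-4}$ crosses $\pazocal{Q}_r$, and showing no $\theta$-band crossing $\pazocal{Q}_1$ crosses $\pazocal{Q}_{r+1}$ (else two disjoint $\theta$-bands of the same rule would cross more than $(L-1)/2$ spokes of $\Pi$ between them, contradicting Lemma~\ref{G_a theta-annuli}(2)); (iv) bounding $r$ below by the same lemma applied with the roles of the two bundles, using that together they cover all $L-4$ spokes. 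The parameter choice $L$ large (and in particular $L>7$, already used for Lemma~\ref{graph} and Lemma~\ref{G_a theta-annuli}(3)) guarantees the interval $[(L-1)/2-3,(L-1)/2]$ is nonempty and the estimates are consistent.
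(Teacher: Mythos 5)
Your proof of part (1) has a genuine gap. You assume that a maximal $\theta$-band of $\Psi$ crossing neither $\pazocal{Q}_1$ nor $\pazocal{Q}_{L-4}$ is a quasi-rim band whose base consists only of spokes of $\Pi$, hence has length at most $L-1<K$, contradicting Lemma \ref{rim theta-bands}. But nothing confines its base to spokes: $\Psi$ may contain maximal $q$-bands ending twice on $\partial\Delta$ (handles of subcombs; their contribution is only \emph{bounded}, later, in Lemma \ref{7.23}, not excluded), and the band can cross many of them. Lemma \ref{rim theta-bands} gives the inequality in the opposite direction --- such a band must have base length $>K$ --- so there is no immediate contradiction; one must show a long base is impossible. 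The paper's proof does this: since $K>11L+2K_0$, most of the $>K$ crossed $q$-bands are non-stem cutting $q$-bands, one of which cuts off a crown $\Gamma'$ containing at least $K_0$ $q$-bands crossing the band and no cells of the spokes; if $\Gamma'$ is a comb this contradicts Lemma \ref{7.21}, and otherwise one finds a new quasi-rim band inside $\Gamma'$ and iterates, with strictly decreasing weight, until a subcomb of basic width at least $K_0$ appears, again contradicting Lemma \ref{7.21}. This iterative step (or something playing its role) is missing from your argument.

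Part (2) also does not work as written. You invoke Lemma \ref{G_a theta-annuli}(2), but that lemma requires the two disjoint $\theta$-bands to correspond to the \emph{same} rule when read toward the disk, and the bands closest to $\Pi$ on the $\pazocal{Q}_1$- and $\pazocal{Q}_{L-4}$-sides need not satisfy this; indeed, when they do, that is precisely what is exploited much later (Lemma \ref{7.32}) to show $H_1$ and $H_{L-4}$ begin with different letters. Your bookkeeping is also inconsistent: the two bundles need not jointly cover all $L-4$ spokes, because the middle spokes may have zero length, so the inequality $\ell_1+\ell_2\geq L-5$ that you try to reconcile with $\ell_1+\ell_2\leq(L-1)/2$ is false, and the ``contradiction unless $L$ is small'' (and your own parenthetical ``wait: this is wrong'') is a symptom of this. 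The correct argument needs only Lemma \ref{G_a theta-annuli}(1) plus the fact that $\theta$-bands cannot cross: the band crossing $\pazocal{Q}_1$ closest to $\Pi$ crosses $\pazocal{Q}_1,\dots,\pazocal{Q}_\ell$ with $\ell\leq(L-1)/2$, so no band crossing $\pazocal{Q}_1$ reaches $\pazocal{Q}_{\ell+1}$ and no band crossing $\pazocal{Q}_{L-4}$ reaches $\pazocal{Q}_\ell$; symmetrically the band crossing $\pazocal{Q}_{L-4}$ closest to $\Pi$ crosses $\pazocal{Q}_{s+1},\dots,\pazocal{Q}_{L-4}$ with $L-4-s\leq(L-1)/2$, i.e.\ $s\geq(L-1)/2-3$. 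Taking $r=\max(\ell,(L-1)/2-3)$ (and noting $s\geq r$) gives both assertions with the stated bounds.
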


\begin{proof}

(1) Suppose there exists a maximal $\theta$-band $\pazocal{T}$ of $\Psi$ crossing neither $\pazocal{Q}_1$ nor $\pazocal{Q}_{L-4}$. As $\theta$-bands cannot cross, we may assume that $\pazocal{T}$ is a quasi-rim $\theta$-band. By Lemma \ref{rim theta-bands}, $\pazocal{T}$ must cross more than $K$ maximal $q$-bands of $\Psi$. 

Taking $K>11L+2K_0$, there exists a non-stem cutting $q$-band $\pazocal{C}'$ crossing $\pazocal{T}$ such that for $\Gamma'$ the subdiagram of $\Psi$ consisting of $\pazocal{C}'$ and the corresponding crown, $\Gamma'$ contains no cells of the spokes of $\Pi$ and at least $K_0$ maximal $q$-bands crossing $\pazocal{T}$ (see Figure 10.3).

By Lemma \ref{7.21}, $\Gamma'$ cannot be a comb with handle $\pazocal{C}'$, and so must contain a maximal $\theta$-band $\pazocal{T}'$ not crossing $\pazocal{C}'$. As above, we may assume $\pazocal{T}'$ is a quasi-rim $\theta$-band containing greater than $K$ $(\theta,q)$-cells, yielding a non-stem cutting $q$-band $\pazocal{C}''$ crossing $\pazocal{T}'$ such that the corresponding subdiagram $\Gamma''$ does not contain any cell of $\pazocal{C}'$ but contains at least $K_0$ $q$-bands crossing $\pazocal{T}'$.

\begin{figure}[H]
\centering
\includegraphics[scale=1]{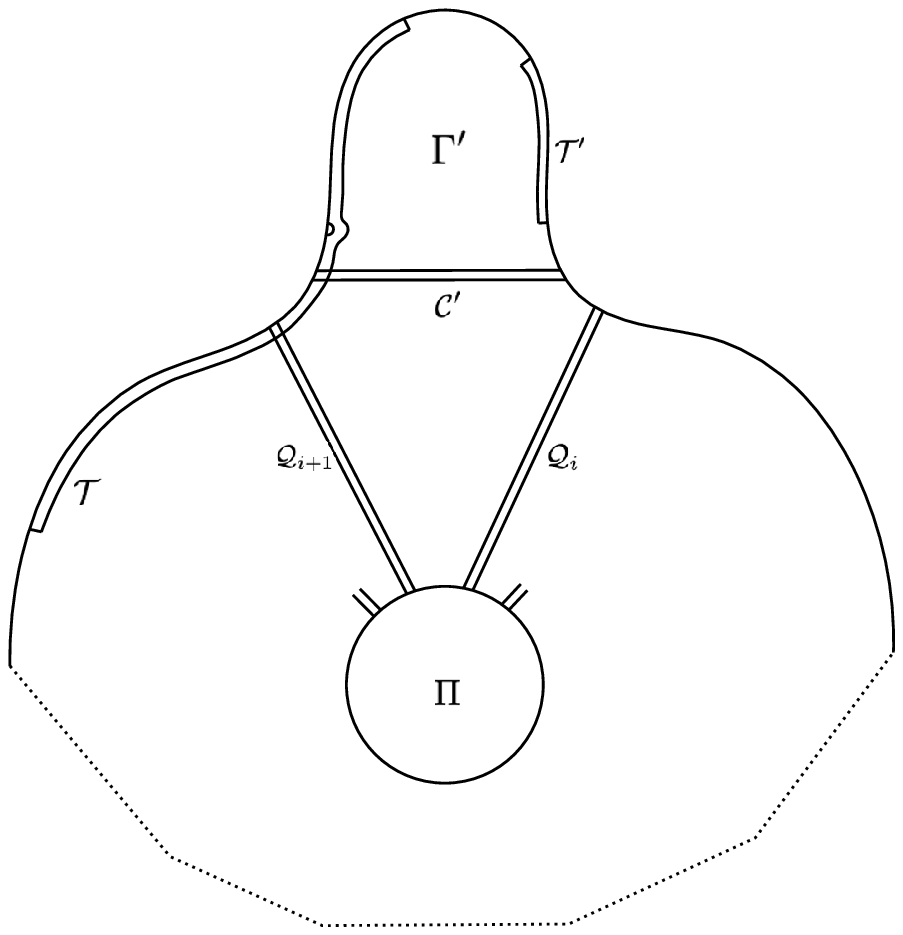}
\caption{Lemma \ref{7.22}(1)}
\end{figure}

Iterating this process, we obtain a series of subdiagrams $\Gamma',\Gamma'',\dots$ with $\text{wt}(\Gamma')>\text{wt}(\Gamma'')>\dots$. Since these diagrams are finite, this process must terminate. But then the resulting subdiagram is a subcomb of $\Delta$ contained in $\Psi$ with basic width at least $K_0$, contradicting Lemma \ref{7.21}.

(2) Let $\pazocal{T}$ be the maximal $\theta$-band of $\Psi$ crossing the $t$-spoke $\pazocal{Q}_1$ closest to $\Pi$, i.e the intersection of $\pazocal{T}$ and $\pazocal{Q}_1$ is the first cell of $\pazocal{Q}_1$. 

Note that all spokes of a disk in $\Delta$ must lie in the minimal diagram $\Delta^*$. So, if $\pazocal{Q}_1,\dots,\pazocal{Q}_\ell$ are the $t$-spokes crossed by $\pazocal{T}$, then $\ell\leq(L-1)/2$ by Lemma \ref{G_a theta-annuli}(1). Since $\pazocal{T}$ does not cross $\pazocal{Q}_{\ell+1}$, no other maximal $\theta$-band of $\Psi$ crossing $\pazocal{Q}_1$ can either. Similarly, no maximal $\theta$-band crossing $\pazocal{Q}_{L-4}$ can cross $\pazocal{Q}_\ell$.

By the symmetric argument, if $\pazocal{Q}_{s+1},\dots,\pazocal{Q}_{L-4}$ are the spokes crossed by the maximal $\theta$-band crossing $\pazocal{Q}_{L-4}$ closest to $\Pi$, then no $\theta$-band crossing $\pazocal{Q}_1$ can cross $\pazocal{Q}_{s+1}$ and $(L-4)-s\leq(L-1)/2$, i.e $s\geq(L-1)/2-3$.

Thus, the statement follows for $r=\max(\ell,(L-1)/2-3)$.

\end{proof}

\smallskip

\subsection{Paths in the cloves} \

For $1\leq i<j\leq L-4$, denote $\textbf{p}_{ij}$ as the shared subpath of $\partial\Psi_{ij}$ and $\partial\Delta$. For simplicity, denote the path $\textbf{p}_{1,L-4}$ associated to the maximal clove simply as $\textbf{p}$.

Let $\bar{\Delta}$ be the subdiagram of $\Delta$ consisting of $\Pi$ and $\Psi$. Then, let $\bar{\textbf{p}}=\textbf{bot}(\pazocal{Q}_1)^{-1}\textbf{u}^{-1}\textbf{top}(\pazocal{Q}_{L-4})$ where $\textbf{u}$ is a subpath of $\partial\Pi$ and such that cutting along $\bar{\textbf{p}}$ separates $\Delta$ into two components, one of which is $\bar{\Delta}$. Denote the other component $\Psi'$.

Similarly, for $1\leq i<j\leq L-4$, define the the path $\bar{\textbf{p}}_{ij}=\textbf{bot}(\pazocal{Q}_i)^{-1}\textbf{u}_{ij}^{-1}\textbf{top}(\pazocal{Q}_j)$ and the subdiagrams $\bar{\Delta}_{ij}$ and $\Psi_{ij}'$ (see Figure 10.4).

\begin{figure}[H]
\centering
\includegraphics[scale=0.75]{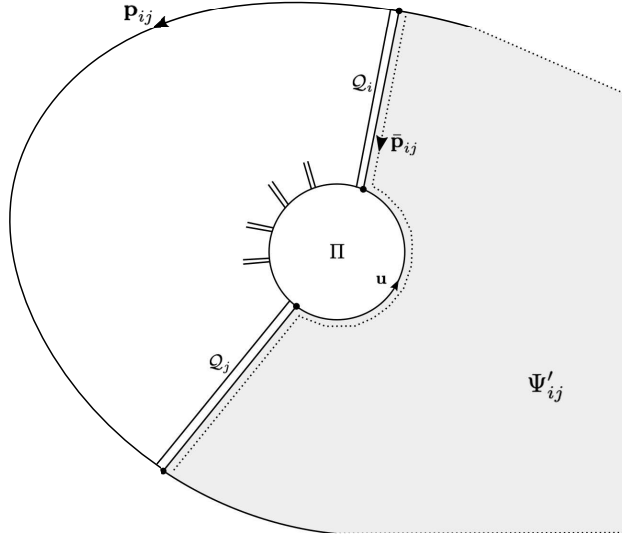}
\caption{Subdiagrams and paths in $\Delta$}
\end{figure}

Let $H_1,\dots,H_{L-4}$ be the histories of the spokes $\pazocal{Q}_1,\dots,\pazocal{Q}_{L-4}$, respectively, read starting from the disk $\Pi$. Further, let $h_i=\|H_i\|$ for all $i$. Lemma \ref{7.22} then implies the inequalities
$$h_1\geq h_2\geq\dots\geq h_r; \ \ h_{r+1}\leq\dots\leq h_{L-4}$$ 

where $(L-1)/2-3\leq r\leq(L-1)/2$. It then follows that $H_{i+1}$ is a prefix of $H_i$ for $i=1,\dots,r-1$ while $H_j$ is a prefix of $H_{j+1}$ for $j=r+1,\dots,L-5$.

Let $W$ be the accepted configuration corresponding to $\lab(\partial\Pi)$. Then, using the notation of Section 5.3, $W\equiv W(1)W(2)\dots W(L)$, where $W(2),\dots,W(L)$ are all copies of the same configuration $V$ of $\textbf{M}_4$. Further, by Lemma \ref{accepted configuration a-length}, $|W(1)|_a\leq2|V|_a$.

\begin{lemma} \label{7.23} 

\textit{(Compare with Lemma 9.9 of [16] and Lemma 7.23 of [23])} For $1\leq i\leq L-5$, $|\textbf{p}_{i,i+1}|_q<3K_0$.

\end{lemma}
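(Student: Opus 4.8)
The plan is to bound the $q$-length of $\textbf{p}_{i,i+1}$ by analyzing the subdiagram $\Psi_{i,i+1}$ — the clove bounded by two consecutive $t$-spokes $\pazocal{Q}_i$ and $\pazocal{Q}_{i+1}$ — and showing that any $q$-edge on $\textbf{p}_{i,i+1}$ gives rise to a maximal $q$-band whose behavior is severely constrained. First I would observe that by Lemma \ref{M_a no annuli 1} and Lemma \ref{weakly minimal}(c), every maximal $\theta$-band of $\Psi_{i,i+1}$ must cross at least one of $\pazocal{Q}_i$ or $\pazocal{Q}_{i+1}$ (they cannot form annuli, and a quasi-rim $\theta$-band not crossing either spoke would, by Lemma \ref{rim theta-bands}, have base of length $>K$, and then iterating the subcomb-extraction argument from Lemma \ref{7.22}(1) would produce a subcomb of $\Delta$ contained in $\Psi$ of basic width $\geq K_0$, contradicting Lemma \ref{7.21}). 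Thus $\Psi_{i,i+1}$ is covered by the $\theta$-bands emanating from the two bounding spokes together with the cells they do not reach — but the cells not reached form subcombs, again of basic width at most $K_0$ by Lemma \ref{7.21}.

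Next I would count $q$-edges on $\textbf{p}_{i,i+1}$ directly. A $q$-edge on $\textbf{p}_{i,i+1}$ is the end of a maximal $q$-band $\pazocal{B}$ of $\Delta$; since $\pazocal{B}$ cannot cross $\pazocal{Q}_i$ or $\pazocal{Q}_{i+1}$ (two maximal $q$-bands are either disjoint or equal), $\pazocal{B}$ stays inside $\Psi_{i,i+1}$ and so must end twice on $\partial\Delta$ (it cannot end on $\Pi$, as its would-be $t$-spoke role is already taken by $\pazocal{Q}_i,\pazocal{Q}_{i+1}$, and a maximal $q$-band ends only on a hub or on $\partial\Delta$). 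Hence $\pazocal{B}$ is a cutting $q$-band of $\Delta$ contained in the diskless region $\Psi_{i,i+1}$, so it is a non-stem cutting $q$-band and cuts off a crown. Grouping these $q$-bands by inclusion of their crowns, the outermost ones are the handles of subcombs of $\Delta$ contained in $\Psi$; by Lemma \ref{7.21} each such subcomb has basic width at most $K_0$, so by Lemma \ref{7.20}(2) it contributes exactly $2s \le 2K_0$ $q$-edges to $\partial\Delta$. The key point is that these subcombs are pairwise disjoint (their handles are disjoint $q$-bands) and, by Lemma \ref{7.20}(1), two such disjoint subcombs cannot be connected along $\partial\Delta$ by a subpath with at most $c_0$ intervening $q$-edges; since $c_0 \ge 1$, this forces all the $q$-edges of $\textbf{p}_{i,i+1}$ to come from a single maximal subcomb, hence $|\textbf{p}_{i,i+1}|_q \le 2K_0 < 3K_0$.

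The main obstacle I anticipate is the careful bookkeeping in the last step: making precise that the $q$-edges on $\textbf{p}_{i,i+1}$ cannot be spread across several disjoint subcombs. One must argue that if there were two disjoint subcombs $\Gamma_1, \Gamma_2$ each contributing $q$-edges to $\textbf{p}_{i,i+1}$, then their handles would have ends on $\partial\Delta$ connected by a subpath $\textbf{x}$ of $\partial\Delta$ lying inside $\textbf{p}_{i,i+1}$ with $|\textbf{x}|_q$ small — and to get $|\textbf{x}|_q \le c_0$ one needs to know that between the two subcombs there are no further $q$-edges, which requires choosing the two subcombs to be \emph{adjacent} (no $q$-band of $\textbf{p}_{i,i+1}$ strictly between them), exactly as in the derivative-subcomb ordering argument inside the proof of Lemma \ref{6.16}(2). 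With that adjacency in hand, any intervening $\theta$-band crossing $\textbf{x}$ cannot cross any $q$-band other than (possibly) those it already met, so $|\textbf{x}|_q = 0 \le c_0$, and Lemma \ref{7.20}(1) delivers the contradiction. I would also need to handle the degenerate case where $\Psi_{i,i+1}$ itself contains no non-stem cutting $q$-band meeting $\textbf{p}_{i,i+1}$, in which case $\textbf{p}_{i,i+1}$ has at most the two $q$-edges coming from $\pazocal{Q}_i$ and $\pazocal{Q}_{i+1}$ themselves, trivially below $3K_0$.
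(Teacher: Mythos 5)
Your overall strategy is the same as the paper's (reduce to non-stem cutting $q$-bands, show via Lemma \ref{7.20}(1) that there can be only one maximal subcomb whose handle ends twice on $\textbf{p}_{i,i+1}$, and bound its contribution by $2K_0$ via Lemmas \ref{7.20}(2) and \ref{7.21}), but there is a genuine error in your key counting step. You claim that a maximal $q$-band $\pazocal{B}$ with an end on $\textbf{p}_{i,i+1}$ "cannot end on $\Pi$" because the $t$-spoke role is already taken by $\pazocal{Q}_i,\pazocal{Q}_{i+1}$. This is false: the disk $\Pi$ is bounded by an accepted configuration whose base contains all $11L$ letters of the standard base, so it emits a $q$-spoke for every base letter, not only for the $t$-letters. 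The non-$t$ spokes starting on the arc of $\partial\Pi$ between $\pazocal{Q}_i$ and $\pazocal{Q}_{i+1}$ cannot cross these $t$-spokes, and the clove is diskless, so they must end on $\textbf{p}_{i,i+1}$, producing $q$-edges there that are neither ends of the bounding $t$-spokes nor handles of subcombs. Consequently three of your assertions fail as written: the dichotomy that every $q$-edge of $\textbf{p}_{i,i+1}$ not on $\pazocal{Q}_i,\pazocal{Q}_{i+1}$ is the end of a band ending twice on $\partial\Delta$; the claim that the subpath $\textbf{x}$ between two adjacent subcombs satisfies $|\textbf{x}|_q=0$; and the degenerate-case claim that $\textbf{p}_{i,i+1}$ could have only the two $q$-edges coming from the bounding spokes.

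The repair is exactly the ingredient the paper uses, and it is also the reason the stated bound is $3K_0$ rather than $2K_0$: at most $23$ spokes of $\Pi$ can end on $\textbf{p}_{i,i+1}$. Hence the connecting subpath $\textbf{x}$ between two adjacent maximal subcombs, all of whose $q$-edges are ends of such spokes, satisfies $|\textbf{x}|_q\leq 23\leq c_0$ (not $0$), so Lemma \ref{7.20}(1) still excludes a second maximal subcomb whose handle ends twice on $\textbf{p}_{i,i+1}$; and the final count must include these spoke ends, giving $|\textbf{p}_{i,i+1}|_q\leq 2K_0+23<3K_0$ by the parameter choice for $K_0$. With this correction your argument coincides with the paper's proof.
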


\begin{proof}

Suppose there exists a maximal $q$-band $\pazocal{B}$ which is not a spoke of $\Pi$ and has one end on $\textbf{p}_{i,i+1}$. Then, since $q$-bands cannot cross, $\pazocal{B}$ must have two ends on $\textbf{p}_{i,i+1}$. So, $\pazocal{B}$ is a non-stem cutting $q$-band. Let $\Gamma$ be the subdiagram of $\Psi$ consisting of $\pazocal{B}$ and the corresponding crown. By Lemma \ref{7.22}(1), any maximal $\theta$-band in $\Gamma$ must cross $\pazocal{B}$, so that $\Gamma$ is a comb with handle $\pazocal{B}$. 

Note that any subcomb whose handle ends twice on $\textbf{p}_{i,i+1}$ lies in a maximal subcomb with this condition. 

Suppose $\Psi_{i,i+1}$ contains two such maximal subcombs. Then, let $\Gamma_1$ and $\Gamma_2$ be two adjacent such subcombs with handles $\pazocal{B}_1$ and $\pazocal{B}_2$, respectively. By Lemma \ref{7.21}(1), $\Gamma_1$ and $\Gamma_2$ are disjoint subcombs of $\Delta$ contained in $\Psi$ with basic widths at most $K_0$. Moreover, as we assume these subcombs are adjacent, there exists a subpath $\textbf{x}$ of $\textbf{p}_{i,i+1}$ connecting $\pazocal{B}_1$ and $\pazocal{B}_2$ such that any $q$-edge of $\textbf{x}$ is the end a spoke of $\Pi$. As at most $23$ spokes of $\Pi$ can end on $\textbf{p}_{i,i+1}$, $|\textbf{x}|_q\leq c_0$. But this contradicts Lemma \ref{7.20}(1).

As a result, $\Psi_{i,i+1}$ contains at most one maximal subcomb whose handle ends twice on $\textbf{p}_{i,i+1}$. By Lemmas \ref{7.20}(2) and \ref{7.21}(1), such a subcomb contributes at most $2K_0$ $q$-edges to $\textbf{p}_{i,i+1}$.

Thus, $|\textbf{p}_{i,i+1}|_q\leq 2K_0+23<3K_0$ by a parameter choice for $K_0$.

\end{proof}

\begin{lemma} \label{7.24} \textit{(Compare with Lemma 9.10 of [16] and Lemma 7.24 of [23])}

\begin{enumerate}[label=({\arabic*})]

\item If $i\leq r$ and $j\geq r+1$, then $|\textbf{p}_{ij}|\geq|\textbf{p}_{ij}|_\theta+|\textbf{p}_{ij}|_q\geq h_i+h_j+11(j-i)+1$

\item $|\bar{\textbf{p}}_{ij}|\leq h_i+h_j+11(L-j+i)+(L-j+i+1)\delta|V|_a-1$

\end{enumerate}

\end{lemma}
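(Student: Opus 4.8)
The idea is to count, edge by edge, the contributions to the boundary paths $\textbf{p}_{ij}$ and $\bar{\textbf{p}}_{ij}$ coming from the three sources available: the spokes $\pazocal{Q}_i$ and $\pazocal{Q}_j$ themselves, the portion of $\partial\Pi$ between them, and the interior $q$-bands landing on the exposed part of $\partial\Delta$. For part (1), fix $i\leq r$ and $j\geq r+1$. Each of the $h_i$ cells of $\pazocal{Q}_i$ contributes one $\theta$-edge to $\textbf{p}_{ij}$ (by Lemma \ref{lengths}(b), the bottom of a $q$-band has $q$-length equal to its height, all of whose edges are $\theta$-edges), and likewise $\pazocal{Q}_j$ contributes $h_j$ $\theta$-edges; these account for the $h_i+h_j$ term. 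Next, for each $k$ with $i\leq k\leq j$ the spoke $\pazocal{Q}_k$ ends on $\partial\Delta$, and reading $\textbf{p}_{ij}$ one passes through the full standard base $\{t(k)\}B_3(k)$ — of length $11$ (the standard base of $\textbf{M}_4$ has length $11$, a fact already used in Section 5.4) — as one moves from $\pazocal{Q}_k$ to $\pazocal{Q}_{k+1}$; this gives at least $11(j-i)$ $q$-edges together with one more $q$-edge at the terminal end, producing $11(j-i)+1$. Summing the $\theta$-length and $q$-length gives the bound, and $|\textbf{p}_{ij}|\geq|\textbf{p}_{ij}|_\theta+|\textbf{p}_{ij}|_q$ follows from Lemma \ref{lengths}(a) since every edge of weight less than $1$ would be an $a$-edge contributing nonnegatively anyway.

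For part (2), the path $\bar{\textbf{p}}_{ij}=\textbf{bot}(\pazocal{Q}_i)^{-1}\textbf{u}_{ij}^{-1}\textbf{top}(\pazocal{Q}_j)$ runs the "short way" around $\Pi$: it consists of the two sides of the spokes $\pazocal{Q}_i$ and $\pazocal{Q}_j$ (contributing $h_i+h_j$ $\theta$-edges, again by Lemma \ref{lengths}(b)) together with the subpath $\textbf{u}_{ij}$ of $\partial\Pi$ going from $\pazocal{Q}_i$ to $\pazocal{Q}_j$ the other way around. The subpath $\textbf{u}_{ij}$ passes through $L-j+i$ copies of the standard base of $\textbf{M}_4$ (one for each index not strictly between $i$ and $j$), contributing $11(L-j+i)$ $q$-edges, plus $L-j+i+1$ internal sectors carrying the tape word; since $W(k)$ is a copy of the configuration $V$ of $\textbf{M}_4$ for each $k\geq2$ and these are the only components met by $\textbf{u}_{ij}$, each such sector contributes at most $|V|_a$ $a$-edges, i.e. length at most $\delta|V|_a$ by the definition of the modified length. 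Adding up the $\theta$-, $q$-, and $a$-contributions and subtracting $1$ for the shared endpoint overlap (the spoke ends coincide with base letters already counted in $\textbf{u}_{ij}$) yields the stated estimate $h_i+h_j+11(L-j+i)+(L-j+i+1)\delta|V|_a-1$.

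The main obstacle I anticipate is the bookkeeping of exactly which $q$-edges lie on $\textbf{u}_{ij}$ versus which are already accounted for by $\textbf{bot}(\pazocal{Q}_i)$ and $\textbf{top}(\pazocal{Q}_j)$, and making sure the "$-1$" and "$+1$" constants come out correctly — this requires a careful picture (the one in Figure 10.4) and a precise reading of the standard factorization of $\bar{\textbf{p}}_{ij}$. A secondary subtlety in part (1) is that interior (non-spoke) $q$-bands landing on $\textbf{p}_{ij}$ only add to $|\textbf{p}_{ij}|_q$, so they can be safely ignored for a lower bound; one just needs to confirm that nothing forces cancellation of the $\theta$-edges of the spokes — but this is immediate since $\textbf{p}_{ij}$ is a subpath of the reduced boundary $\partial\Delta$ and the spokes $\pazocal{Q}_i,\pazocal{Q}_j$ genuinely end on it. Once these two combinatorial counts are set up cleanly, both inequalities are immediate, so I would spend the bulk of the writeup on an unambiguous description of the decomposition of $\textbf{p}_{ij}$ and $\bar{\textbf{p}}_{ij}$ into spoke-sides, disk-boundary arc, and interclove base segments.
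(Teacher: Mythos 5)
Your overall strategy — counting $\theta$-, $q$-, and $a$-contributions along $\textbf{p}_{ij}$ and $\bar{\textbf{p}}_{ij}$ — is the same as the paper's, but two of your key counting steps are justified incorrectly and would fail as written. In part (1), the $h_i+h_j$ term does not come from the cells of $\pazocal{Q}_i$ and $\pazocal{Q}_j$ "contributing" $\theta$-edges to $\textbf{p}_{ij}$: the sides of the spokes lie on $\bar{\textbf{p}}_{ij}$, not on $\textbf{p}_{ij}$, and Lemma \ref{lengths}(b) says nothing about $\theta$-edges appearing on the outer boundary. What is actually needed is Lemma \ref{7.22}: a maximal $\theta$-band crossing $\pazocal{Q}_i$ (with $i\le r$) cannot cross $\pazocal{Q}_j$ (with $j\ge r+1$), cannot end on $\partial\Pi$ or be an annulus (Lemma \ref{M_a no annuli 1}), and so must end on $\textbf{p}_{ij}$; symmetrically for the bands crossing $\pazocal{Q}_j$, and the two families are disjoint, which is what yields $h_i+h_j$ $\theta$-edges on $\textbf{p}_{ij}$. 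This is exactly where the hypothesis $i\le r<r+1\le j$ enters — you state it but never use it, and without it a $\theta$-band crossing both spokes could have both ends outside $\textbf{p}_{ij}$, so your count as written has no justification. (Your $q$-edge count $11(j-i)+1$, via the spokes emanating from the arc of $\partial\Pi$ inside the clove, is essentially the paper's argument and is fine.)

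In part (2), your claim that the components $W(k)$ with $k\ge2$ "are the only components met by $\textbf{u}_{ij}$" is false in general: $\textbf{u}_{ij}$ is the arc of $\partial\Pi$ outside the clove and typically contains the component $W(1)$, whose "special" input sector need not be a copy of the corresponding sector of $V$. Its $a$-length is controlled only by Lemma \ref{accepted configuration a-length}, $|W(1)|_a\le2|V|_a$, and this is precisely the source of the "+1" in the coefficient $(L-j+i+1)$ — a "+1" you insert into your sector count without explanation while simultaneously asserting that every component met is a copy of $V$ (which, if true, would make the "+1" unnecessary). The clean route, as in the paper, is to bound $|\textbf{u}_{ij}|=|\partial\Pi|-|\bar{\textbf{u}}_{ij}|$ from above using $|\partial\Pi|\le11L+\delta(L+1)|V|_a$ (from Lemma \ref{accepted configuration a-length}) and $|\bar{\textbf{u}}_{ij}|\ge11(j-i)+(j-i)\delta|V|_a+1$, and then add $h_i+h_j$ for the spoke sides via Lemma \ref{lengths}(b),(c); this also makes the "$-1$" automatic rather than an ad hoc endpoint correction. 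Repairing these two points would bring your proof in line with the paper's.
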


\begin{proof}

(1) Lemma \ref{7.22}(2) implies that $\textbf{p}_{ij}$ contains $h_i+h_j$ $\theta$-edges. Further, as $q$-bands cannot cross, every spoke starting on the complement $\bar{\textbf{u}}_{ij}$ of $\textbf{u}_{ij}$ in $\partial\Pi$ must end on $\textbf{p}_{ij}$, so that $\textbf{p}_{ij}$ contains at least $11(j-i)+1$ $q$-edges. The inequality thus follows.

(2) By parts (b) and (c) of Lemma \ref{lengths}, it suffices to show that $$|\textbf{u}_{ij}|\leq11(L-j+i)+(L-j+i+1)\delta|V|_a-1$$
As $\partial\Pi$ consists only of $q$-edges and $a$-edges, $|\textbf{u}_{ij}|=|\partial\Pi|-|\bar{\textbf{u}}_{ij}|$. 

By Lemma \ref{accepted configuration a-length}, $|\partial\Pi|=11L+\delta\sum_{i=1}^L|W(i)|_a\leq11L+\delta(L+1)|V|_a$. Further, $\lab(\bar{\textbf{u}}_{ij})$ consists of at least $j-i$ copies of $V^{\pm1}$ and one more $t$-letter, so that $|\bar{\textbf{u}}_{ij}|\geq11(j-i)+(j-i)\delta|V|_a+1$. Thus, the inequality follows.

\end{proof}

\begin{lemma} \label{7.25}

\textit{(Compare with Lemma 9.11 of [16] and Lemma 7.25 of [23])} 

If $1\leq i<j\leq L-4$ such that $j-i\geq L/2$, then $\mu(\Delta)-\mu(\Psi_{ij}')>-2J|\partial\Delta|(h_i+h_j)\geq-2J|\partial\Delta||\textbf{p}_{ij}|$.

\end{lemma}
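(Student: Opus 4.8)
The plan is to compare the mixture $\mu(\Delta)$ with the mixture $\mu(\Psi_{ij}')$ of the subdiagram obtained by cutting $\Delta$ along the path $\bar{\textbf{p}}_{ij}$, tracking exactly which black and white beads are removed and which pairs' separating black-bead counts change. Recall that $\Psi_{ij}'$ is the component of $\Delta \setminus \bar{\textbf{p}}_{ij}$ not containing $\bar{\Delta}_{ij}$, so passing from $\partial\Delta$ to $\partial\Psi_{ij}'$ replaces the subpath $\textbf{p}_{ij}$ of $\partial\Delta$ by $\bar{\textbf{p}}_{ij}^{-1}$. The only $\theta$-edges of $\bar{\textbf{p}}_{ij}$ lie on $\textbf{bot}(\pazocal{Q}_i)$ and $\textbf{top}(\pazocal{Q}_j)$, contributing $h_i + h_j$ white beads, while $\textbf{p}_{ij}$ also contains $h_i + h_j$ $\theta$-edges by Lemma \ref{7.22}(2); so no net change in the number of white beads. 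The $q$-edges are another matter: $\textbf{p}_{ij}$ contains some black beads (coming from spokes of $\Pi$ ending on it and from non-stem $q$-bands, bounded via Lemma \ref{7.23}), and $\bar{\textbf{p}}_{ij}$ contains the black beads of $\textbf{bot}(\pazocal{Q}_i)$, $\textbf{top}(\pazocal{Q}_j)$ together with the black beads coming from the $q$-edges of $\textbf{u}_{ij} \subseteq \partial\Pi$.

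The key estimate is a one-sided bound: we only need a lower bound on $\mu(\Delta) - \mu(\Psi_{ij}')$, i.e.\ we must show this difference is not too negative. Using Lemma \ref{mixtures}(b),(c), removing white or black beads can only decrease the mixture, and by part (b) each removed white bead costs at most $2x$ where $x$ is the number of white beads on the necklace; each removed black bead is free by part (c). Since the total number of white beads on the necklace of $\Delta$ is at most $|\partial\Delta|_\theta \leq |\partial\Delta|$, and since at most $h_i + h_j$ white beads are involved in the symmetric difference of the two necklaces (the $\theta$-edges of $\textbf{p}_{ij}$, which are exactly matched in count by those of $\bar{\textbf{p}}_{ij}$, so in the worst case we think of removing the $h_i+h_j$ white beads of $\textbf{p}_{ij}$ and re-inserting those of $\bar{\textbf{p}}_{ij}$, and re-insertion only helps), we get $\mu(\Delta) - \mu(\Psi_{ij}') \geq -2J|\partial\Delta|(h_i+h_j)$. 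The factor $J$ enters because the $J$-mixture is a sum of $J$ terms $\#P_j$, each of which can drop by at most $2x$ per removed white bead. The final inequality $-2J|\partial\Delta|(h_i+h_j) \geq -2J|\partial\Delta||\textbf{p}_{ij}|$ is immediate from Lemma \ref{7.24}(1), since $|\textbf{p}_{ij}| \geq h_i + h_j$ (indeed $|\textbf{p}_{ij}| \geq h_i + h_j + 11(j-i) + 1$ using $j - i \geq L/2 > 0$, and in particular $i \leq r < r+1 \leq j$ need not literally hold, but the weaker bound $|\textbf{p}_{ij}|_\theta = h_i + h_j$ from Lemma \ref{7.22}(2) suffices here regardless).

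The main obstacle I anticipate is bookkeeping the beads correctly near the cut: one must verify that cutting along $\bar{\textbf{p}}_{ij}$ genuinely yields a valid necklace for $\Psi_{ij}'$ and that the white-bead count is conserved (so that re-insertion of the $h_i + h_j$ white beads of $\bar{\textbf{p}}_{ij}$ is legitimate and, by monotonicity in Lemma \ref{mixtures}(b), only increases the mixture back up). One subtlety is that $\Psi_{ij}'$ must be checked to be a reduced diagram over a group associated to the machine so that $\mu(\Psi_{ij}')$ is defined at all — this follows since $\Psi_{ij}'$ is a subdiagram of the reduced (indeed weakly minimal) diagram $\Delta$, cut along a path, and subdiagrams of weakly minimal diagrams are weakly minimal by Lemma \ref{weakly minimal}(a) when they contain a disk, or $M$-minimal otherwise. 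Another point requiring care: the hypothesis $j - i \geq L/2$ is used only to guarantee that the clove $\Psi_{ij}$ (hence $\bar{\Delta}_{ij}$) is "large enough" that $\Psi_{ij}'$ and the associated paths behave as in Lemma \ref{7.24}; for the mixture estimate itself the crude bound of removing at most $h_i + h_j$ white beads (each costing at most $2x \leq 2|\partial\Delta|$ in each of the $J$ summands) is what drives the result, and this is robust.

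Concretely, the steps in order: (1) identify the necklaces of $\Delta$ and $\Psi_{ij}'$ and the exact set of beads in which they differ, using that cutting along $\bar{\textbf{p}}_{ij}$ replaces $\textbf{p}_{ij}$ by $\bar{\textbf{p}}_{ij}^{-1}$; (2) observe the $\theta$-edge (white bead) counts of $\textbf{p}_{ij}$ and $\bar{\textbf{p}}_{ij}$ agree, both equal to $h_i + h_j$, via Lemma \ref{7.22}(2); (3) apply Lemma \ref{mixtures}(c) to discard black-bead changes (they only decrease $\mu$ when removed, and re-inserted black beads only help the lower bound — but we don't even need them) and Lemma \ref{mixtures}(b) to bound the effect of removing the $\le h_i + h_j$ white beads of $\textbf{p}_{ij}$, each removal costing at most $2x \le 2|\partial\Delta|$ in each of the $J$ summands $\#P_j$, hence at most $2J|\partial\Delta|$ total per bead; (4) re-insert the white beads of $\bar{\textbf{p}}_{ij}$, which by the monotonicity in Lemma \ref{mixtures}(b) can only increase the mixture, giving $\mu(\Delta) - \mu(\Psi_{ij}') \geq -2J|\partial\Delta|(h_i + h_j)$; (5) conclude with $h_i + h_j \leq |\textbf{p}_{ij}|$ from Lemma \ref{7.24}(1).
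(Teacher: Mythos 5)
There is a genuine gap, and it is concentrated in the black beads. Your scheme passes from the necklace of $\Delta$ to that of $\Psi_{ij}'$ by deleting the beads of $\textbf{p}_{ij}$ and inserting the beads of $\bar{\textbf{p}}_{ij}$, and you dismiss the black-bead changes by appealing to Lemma \ref{mixtures}(c). But (c) only says that \emph{removing} a black bead does not increase the mixture; it gives no control over insertions, and the passage to $\Psi_{ij}'$ \emph{inserts} the black beads coming from the $q$-edges of $\textbf{u}_{ij}\subset\partial\Pi$ (on the order of $11L/2$ of them). Each such insertion can push many ordered pairs of white beads over a threshold and so can increase $\mu(\Psi_{ij}')$ by as much as order $|\partial\Delta|^2$ per inserted bead, which is not dominated by the target $2J|\partial\Delta|(h_i+h_j)$, since $h_i+h_j$ may be much smaller than $|\partial\Delta|$. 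Your remark that re-inserted black beads ``only help the lower bound'' has the direction reversed: they increase $\mu(\Psi_{ij}')$ and therefore hurt the lower bound on $\mu(\Delta)-\mu(\Psi_{ij}')$. (There is a similar, though repairable, reversal in the white-bead accounting: the step that needs the $2Jx$ estimate from Lemma \ref{mixtures}(b) is the \emph{insertion} of the $h_i+h_j$ white beads of $\bar{\textbf{p}}_{ij}$, not the removal of those of $\textbf{p}_{ij}$, whose effect only lowers $\mu$ and is harmless.)

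The paper's proof controls exactly this point, and this is where the hypothesis $j-i\ge L/2$ is genuinely used, contrary to your claim that the mixture estimate is robust without it. Since $j-i\ge L/2$, the complement $\bar{\textbf{u}}_{ij}$ of $\textbf{u}_{ij}$ in $\partial\Pi$ carries at least $11L/2+1$ $q$-edges, and every spoke of $\Pi$ starting on $\bar{\textbf{u}}_{ij}$ must end on $\textbf{p}_{ij}$; hence $|\textbf{p}_{ij}|_q\ge|\bar{\textbf{p}}_{ij}|_q$. Consequently, for any pair of $\theta$-edges of $\partial\Delta$ lying outside $\textbf{p}_{ij}$, the arc between them that passes through the replaced region has at least as many $q$-edges in $\partial\Delta$ as in $\partial\Psi_{ij}'$, so such pairs contribute at least as much to $\mu(\Delta)$ as to $\mu(\Psi_{ij}')$; no per-bead estimate is attempted for them, and the inserted black beads are neutralized by the removed ones. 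Only pairs involving one of the $h_i+h_j$ new white beads on the sides of $\pazocal{Q}_i,\pazocal{Q}_j$ remain, and each such ordered pair contributes at most $J$, giving $2J|\partial\Delta|(h_i+h_j)$. Finally, the closing inequality $h_i+h_j\le|\textbf{p}_{ij}|$ also relies on $j-i\ge L/2$: it forces $i\le r<r+1\le j$ in Lemma \ref{7.22}(2), so every $\theta$-band crossing $\pazocal{Q}_i$ or $\pazocal{Q}_j$ ends on $\textbf{p}_{ij}$ and $|\textbf{p}_{ij}|_\theta=h_i+h_j$; without that location of $i,j$ relative to $r$ the count you invoke is not available.
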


\begin{proof}

As $j-i\geq L/2$, the path $\bar{\textbf{u}}_{ij}$ contains at least $11L/2+1$ $q$-edges. So, since every spoke of $\Pi$ starting on $\bar{\textbf{u}}_{ij}$ must end on $\textbf{p}_{ij}$, we have $|\textbf{p}_{ij}|_q\geq|\bar{\textbf{p}}_{ij}|_q$.

Let $\textbf{e}_1$ and $\textbf{e}_2$ be a pair of $\theta$-edges of $\partial\Delta$ such that neither is part of $\textbf{p}_{ij}$. Further, let $\textbf{x}$ be the subarc of $\partial\Delta$ connecting $\textbf{e}_1$ and $\textbf{e}_2$ and containing $\textbf{p}_{ij}$. Then, replacing the subpath $\textbf{p}_{ij}$ with $\bar{\textbf{p}}_{ij}$ produces a subarc $\bar{\textbf{x}}$ of $\partial\Psi_{ij}'$ connecting $\textbf{e}_1$ and $\textbf{e}_2$. Note that there are at least as many $q$-edges in $\textbf{x}$ as in $\bar{\textbf{x}}$. Hence, since the complement of $\textbf{x}$ in $\partial\Delta$ is a subpath of $\partial\Psi_{ij}'$, both ordered pairs of white edges corresponding to $\textbf{e}_1$ and $\textbf{e}_2$ contributes at least as much to $\mu(\Delta)$ as it does to $\mu(\Psi_{ij}')$.

So, we need only consider the contribution to $\mu(\Psi_{ij}')$ from pairs of $\theta$-edges where at least one is part of $\bar{\textbf{p}}_{ij}$. As $\bar{\textbf{p}}_{ij}$ consists of $h_i+h_j$ $\theta$-edges from the sides of $\pazocal{Q}_i$ and $\pazocal{Q}_j$, there are at most $|\partial\Delta|(h_i+h_j)$ such unordered pairs. By definition, each ordered such pair contributes at most $J$ to $\mu(\Psi_{ij}')$. Hence, $\mu(\Delta)-\mu(\Psi_{ij}')\geq-2J|\partial\Delta|(h_i+h_j)$. 

Since $j-i\geq L/2$, Lemma \ref{7.22}(2) implies that $i\leq r<r+1\leq j$ and so every $\theta$ band crossing $\pazocal{Q}_i$ or $\pazocal{Q}_j$ ends on $\textbf{p}_{ij}$. So, Lemma \ref{lengths}(a) yields $|\textbf{p}_{ij}|\geq h_i+h_j$, implying the statement.

\end{proof}

\begin{lemma} \label{7.26}

\textit{(Compare with Lemma 9.12 of [16] and Lemma 7.26 of [23])} 

If $1\leq i<j\leq L-4$ such that $j-i\geq L/2$, then $$|\textbf{p}_{ij}|+\sigma_\lambda(\bar{\Delta}_{ij}^*)\leq|\textbf{p}_{ij}|+\sigma_\lambda(\Delta^*)-\sigma_\lambda((\Psi_{ij}')^*)<(1+\eps)|\bar{\textbf{p}}_{ij}|$$ for $\eps=1/\sqrt{N_4}$.

\end{lemma}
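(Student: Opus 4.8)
The plan is to prove the chain of inequalities in Lemma \ref{7.26} by handling the two pieces separately. The first inequality, $|\textbf{p}_{ij}|+\sigma_\lambda(\bar{\Delta}_{ij}^*)\leq|\textbf{p}_{ij}|+\sigma_\lambda(\Delta^*)-\sigma_\lambda((\Psi_{ij}')^*)$, reduces to showing $\sigma_\lambda(\bar{\Delta}_{ij}^*)+\sigma_\lambda((\Psi_{ij}')^*)\leq\sigma_\lambda(\Delta^*)$. For this I would observe that cutting $\Delta$ along $\bar{\textbf{p}}_{ij}$ separates it into the two subdiagrams $\bar{\Delta}_{ij}$ and $\Psi_{ij}'$ (both of which contain $\Pi$ or other disks, hence are weakly minimal by Lemma \ref{weakly minimal}(a)). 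Each $\lambda$-shaft of $\Delta^*$ is a subband of a $t$-spoke of a disk $\Pi'$ in $\Delta$; since $\Pi'$ lies in exactly one of the two cut pieces, its maximal $\lambda$-shaft at $\Pi'$ is contained in the corresponding stem, $\bar{\Delta}_{ij}^*$ or $(\Psi_{ij}')^*$. The subtlety is the spoke $\pazocal{Q}_i$ and $\pazocal{Q}_j$ themselves, which get split between the two pieces — but the construction in Section 9.7 already `makes room' for overlapping shafts, and in any case a $\lambda$-shaft at $\Pi$ starting on $\pazocal{Q}_i$ (or $\pazocal{Q}_j$) lands in $\bar{\Delta}_{ij}$, so its length is counted there and not in $(\Psi_{ij}')^*$. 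Summing lengths over all disks gives $\sigma_\lambda(\bar{\Delta}_{ij}^*)+\sigma_\lambda((\Psi_{ij}')^*)\leq\sigma_\lambda(\Delta^*)$ since no $\lambda$-shaft is counted twice, which yields the first inequality.

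For the second, strict inequality $|\textbf{p}_{ij}|+\sigma_\lambda(\Delta^*)-\sigma_\lambda((\Psi_{ij}')^*)<(1+\eps)|\bar{\textbf{p}}_{ij}|$, the idea is to bound $\sigma_\lambda(\Delta^*)-\sigma_\lambda((\Psi_{ij}')^*)$ and $|\textbf{p}_{ij}|$ in terms of $|\bar{\textbf{p}}_{ij}|$ and the spoke heights $h_i,h_j$. First, $\sigma_\lambda(\Delta^*)-\sigma_\lambda((\Psi_{ij}')^*)\leq\sigma_\lambda(\bar{\Delta}_{ij}^*)\leq C_1|\partial\bar{\Delta}_{ij}|$ by Lemma \ref{weakly minimal}(b), and $|\partial\bar{\Delta}_{ij}|$ is controlled: it consists of $\bar{\textbf{p}}_{ij}$ together with the outer boundary of the clove $\Psi_{ij}$, whose length is bounded via Lemmas \ref{7.24} and \ref{7.23} (the $q$-edge count of $\textbf{p}_{ij}$ is $O(L)$ since at most $23$ spokes plus $O(K_0)$ extra $q$-edges per strip appear). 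Meanwhile $|\textbf{p}_{ij}|$ itself must be bounded above; since $j-i\geq L/2$, Lemma \ref{7.24}(1) gives the $\theta$-edge count as exactly $h_i+h_j$, and the $q$- and $a$-edge contributions are $O(L^2)$ plus the $a$-edges coming from the $O(L)$ copies of $V$ on $\bar{\textbf{u}}_{ij}$, which is exactly what $|\bar{\textbf{p}}_{ij}|$ also carries (Lemma \ref{7.24}(2)). The key point is that both $|\textbf{p}_{ij}|$ and $|\bar{\textbf{p}}_{ij}|$ have the same dominant term $h_i+h_j$, and the difference plus the shaft term is lower order, of size $O(L^2)+O(C_1L)\cdot(\text{stuff})$; dividing by $\sqrt{N_4}$ via the parameter hierarchy $N_4\gg C_3\gg C_2\gg C_1\gg L$ makes the relative error less than $\eps=1/\sqrt{N_4}$. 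One must be slightly careful that $|\bar{\textbf{p}}_{ij}|$ is not too small, but $|\bar{\textbf{p}}_{ij}|\geq h_i+h_j$ (from the side $q$-bands) rules out degeneracy, or if $h_i=h_j=0$ one checks the residual $O(L^2+|V|_a\cdot L)$ terms directly against $(1+\eps)|\bar{\textbf{p}}_{ij}|\geq(1+\eps)(11(L-j+i)+\cdots)$.

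The main obstacle I anticipate is the bookkeeping in the second inequality: carefully accounting for the $a$-edges. The path $\bar{\textbf{p}}_{ij}$ runs along $\partial\Pi$ and so picks up roughly $(L-j+i)\delta|V|_a$ in $a$-length, while $\textbf{p}_{ij}$ runs along $\partial\Delta$ and its $a$-length is a priori uncontrolled — it could be large if the diagram has many $a$-cells attached near $\textbf{p}_{ij}$. This is precisely where Lemma \ref{a-cell in counterexample 2} (the cubic-root bound on $a$-cell boundary shared with $\partial\Delta$) and the weak-minimality conditions (WM1), (WM2) must be invoked to argue that $a$-edges on $\textbf{p}_{ij}$ are absorbed into the $\theta$- and $q$-edge counts (each $a$-band from such an edge must terminate on a $(\theta,q)$-cell or on $\partial\Delta$, contributing to the comb estimates of Lemma \ref{7.23}); concretely one replaces the naive $a$-length of $\textbf{p}_{ij}$ by a bound of the form $|\textbf{p}_{ij}|_q\cdot O(1)+(h_i+h_j)\cdot O(1)+|V|_a\cdot O(L)$. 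Once that reduction is in place, the remaining estimate is a routine application of the parameter inequalities, exactly as in the proofs of Lemmas 9.12 of [16] and 7.26 of [23].
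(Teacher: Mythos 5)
Your first inequality is fine and matches the paper: since $\bar{\Delta}_{ij}$ and $\Psi_{ij}'$ are disjoint, no maximal $\lambda$-shaft is counted twice, so $\sigma_\lambda(\bar{\Delta}_{ij}^*)+\sigma_\lambda((\Psi_{ij}')^*)\leq\sigma_\lambda(\Delta^*)$. The gap is in the second, strict inequality, where your plan takes a genuinely different route (direct length bookkeeping) that cannot work. The inequality $|\textbf{p}_{ij}|+\sigma_\lambda(\Delta^*)-\sigma_\lambda((\Psi_{ij}')^*)<(1+\eps)|\bar{\textbf{p}}_{ij}|$ is simply not a general fact about weakly minimal diagrams: the clove $\Psi_{ij}$ could in principle carry arbitrarily many $a$-cells and $a$-edges along $\textbf{p}_{ij}$, making $|\textbf{p}_{ij}|$ vastly larger than $|\bar{\textbf{p}}_{ij}|\approx h_i+h_j+O(L)+O(\delta L|V|_a)$. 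Your proposed fix — absorbing the $a$-edges of $\textbf{p}_{ij}$ into $\theta$-, $q$-, and $|V|_a$-counts via (WM1)/(WM2), Lemma \ref{a-cell in counterexample 2} and comb estimates — is not available: an $a$-edge of $\textbf{p}_{ij}$ need not start an $a$-band ending on a $(\theta,q)$-cell of the clove (it can end elsewhere on $\partial\Delta$ or on $a$-cells whose size is uncontrolled at this stage), and in the logical architecture of Section 10 such upper bounds on $|\textbf{p}_{ij}|$-type quantities are \emph{consequences} of Lemma \ref{7.26} (it is exactly what drives Lemmas \ref{7.31}--\ref{upper bound on z_i} and later), not inputs to it. Your auxiliary step $\sigma_\lambda(\Delta^*)-\sigma_\lambda((\Psi_{ij}')^*)\leq\sigma_\lambda(\bar{\Delta}_{ij}^*)\leq C_1|\partial\bar{\Delta}_{ij}|$ is also problematic: the disjointness argument gives the opposite inequality, and $|\partial\bar{\Delta}_{ij}|$ contains $|\textbf{p}_{ij}|$ itself, so the estimate would be circular.

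The paper instead proves the second inequality by contradiction with the choice of $\Delta$ as a minimal counterexample. Setting $x=|\partial\Delta|+\sigma_\lambda(\Delta^*)$, $y=|\textbf{p}_{ij}|+\sigma_\lambda(\Delta^*)-\sigma_\lambda((\Psi_{ij}')^*)$ and $d=y-|\bar{\textbf{p}}_{ij}|$, one assumes $d\geq\eps|\bar{\textbf{p}}_{ij}|>0$; then $|\partial\Psi_{ij}'|+\sigma_\lambda((\Psi_{ij}')^*)\leq x-d<x$, so the inductive hypothesis (or Lemma \ref{diskless} if $\Psi_{ij}'$ is diskless) applies to $\Psi_{ij}'$, giving $\text{wt}_G(\Psi_{ij}')\leq N_4(x-d)^2+N_3\mu(\Psi_{ij}')$. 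One then controls the mixture change via Lemma \ref{7.25}, bounds $\text{wt}(\Pi)\leq C_1(L+1)^2y^2$ and, using $|\partial\Psi_{ij}|\leq(L+3)y$ (via Lemma \ref{7.24}), bounds $\text{wt}_G(\Psi_{ij})$ by Lemma \ref{diskless}; assembling these with Lemma \ref{G-weight subdiagrams} and using $N_4xd\geq\frac{1}{2}\sqrt{N_4}\max(y|\partial\Delta|,y^2)$ together with the parameter hierarchy yields $\text{wt}_G(\Delta)\leq N_4x^2+N_3\mu(\Delta)$, contradicting that $\Delta$ is a counterexample. Without this counterexample/induction mechanism your estimate of the ``lower order'' terms has nothing to stand on, so the second inequality remains unproved in your proposal.
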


\begin{proof}

Set $y=|\textbf{p}_{ij}|+\sigma_\lambda(\Delta^*)-\sigma_\lambda((\Psi_{ij}')^*)$ and $d=y-|\bar{\textbf{p}}_{ij}|$. Suppose $d\geq\eps|\bar{\textbf{p}}_{ij}|>0$.

Then $d\geq y-\eps^{-1}d$, so that $d\geq(1+\eps^{-1})^{-1}y\geq\frac{\eps y}{2}$ as $N_4\geq1$. 

As $\Psi_{ij}'$ and $\bar{\Delta}_{ij}$ are disjoint, the definition of the design on a minimal diagram implies 
$$\sigma_\lambda(\bar{\Delta}_{ij}^*)+\sigma_\lambda((\Psi_{ij}')^*)\leq\sigma_\lambda(\Delta^*)$$
Let $\textbf{s}$ be the complement of $\textbf{p}_{ij}$ in $\partial\Delta$. As $\textbf{p}_{ij}$ starts and ends with $q$-edges, $|\partial\Delta|=|\textbf{p}_{ij}|+|\textbf{s}|$. Further, by Lemma \ref{lengths}(c), $|\partial\Psi_{ij}'|\leq|\bar{\textbf{p}}_{ij}|+|\textbf{s}|$. 

So, these relations imply
\begin{align*}
(|\partial\Delta|+\sigma_\lambda(\Delta^*))-(|\partial\Psi_{ij}'|+\sigma_\lambda((\Psi_{ij}')^*))&\geq|\partial\Delta|-|\partial\Psi_{ij}'|+\sigma_\lambda(\Delta^*)-\sigma_\lambda((\Psi_{ij}')^*)\ \\
&\geq|\textbf{p}_{ij}|-|\bar{\textbf{p}}_{ij}|+\sigma_\lambda(\Delta^*)-\sigma_\lambda((\Psi_{ij}')^*) \\
&=d>0
\end{align*}
Hence, if $\Psi_{ij}'$ contains a disk, then we may apply the inductive hypothesis to it. Conversely, if $\Psi_{ij}'$ contains no disks, then we may apply Lemma \ref{diskless} to it. So, setting $x=|\partial\Delta|+\sigma_\lambda(\Delta^*)$, we have
$$\text{wt}_G(\Psi_{ij}')\leq N_4(x-d)^2+N_3\mu(\Psi_{ij}')$$
Noting that $d\leq x$, Lemma \ref{7.25} then implies
\begin{equation} \label{7.26 G-weight of Psi'}
\text{wt}_G(\Psi_{ij}')\leq N_4x^2-N_4xd+N_3\mu(\Delta)+2N_3J|\partial\Delta||\textbf{p}_{ij}|
\end{equation}
Note that $|\partial\Pi|\leq(L+1)|\bar{\textbf{p}}_{ij}|\leq(L+1)y$, so that
\begin{equation} \label{7.26 weight of Pi}
\text{wt}(\Pi)\leq C_1(L+1)^2y^2
\end{equation}
Further, as $j-i\geq L/2$, we must have $i\leq r<r+1\leq j$ by Lemma \ref{7.22}(2). So, Lemma \ref{7.24} implies 
$$|\bar{\textbf{p}}_{ij}|<|\textbf{p}_{ij}|+11L+(L-1)\delta|V|_a\leq|\textbf{p}_{ij}|+|\partial\Pi|$$ 
and hence $|\partial\Psi_{ij}|<2|\textbf{p}_{ij}|+|\partial\Pi|\leq(L+3)y$ by Lemma \ref{lengths}(c). Thus, by Lemma \ref{diskless},
\begin{equation} \label{7.26 G-weight of Psi}
\text{wt}_G(\Psi_{ij})\leq N_2(L+3)^2y^2+N_1\mu(\Psi_{ij})
\end{equation}

By (\ref{7.26 G-weight of Psi'}), (\ref{7.26 weight of Pi}), and (\ref{7.26 G-weight of Psi}), Lemma \ref{G-weight subdiagrams} implies
$$
\text{wt}_G(\Delta)\leq N_4x^2-N_4xd+N_3\mu(\Delta)+2N_3Jy|\partial\Delta|+N_2(L+3)^2y^2+N_1\mu(\Psi_{ij})+C_1(L+1)^2y^2 
$$
Hence, in order to reach a contradiction, it suffices to show that
\begin{equation} \label{7.26 suffices 1}
N_4xd\geq 2N_3Jy|\partial\Delta|+(N_2+C_1)(L+3)^2y^2+N_1\mu(\Psi_{ij})
\end{equation}
Note that $x=|\partial\Delta|+\sigma_\lambda(\Delta^*)\geq|\textbf{p}_{ij}|+\sigma_\lambda(\Delta^*)$, so that $x\geq\max(|\partial\Delta|,y)$. Hence, $$N_4xd\geq\frac{\eps}{2}N_4y\max(|\partial\Delta|,y)=\frac{1}{2}\sqrt{N_4}\max(y|\partial\Delta|,y^2)$$
The parameter choices $N_4>>N_3>>N_2>>C_1>>J>>L$ then allow us to assume
$$\frac{1}{2}N_4xd\geq2N_3Jy|\partial\Delta|+(N_2+C_1)(L+3)^2y^2$$
Hence, by (\ref{7.26 suffices 1}), it suffices to show that
\begin{equation} \label{7.26 suffices 2}
N_4xd\geq2N_1\mu(\Psi_{ij})
\end{equation}
As each $\theta$-edge of $\partial\Psi_{ij}$ must be in its own factor of any decomposition of $\lab(\Psi_{ij})$, the number of white beads on the necklace corresponding to $\partial\Psi_{ij}$ is at most $|\partial\Psi_{ij}|\leq(L+3)y$. So, Lemma \ref{mixtures}(a) implies
$$\mu(\Psi_{ij})\leq J(L+3)^2y^2$$
\newpage
But as above, $N_4xd\geq\frac{1}{2}\sqrt{N_4}y^2$, so that (\ref{7.26 suffices 2}) follows from the parameter choices $N_4>>J>>L$.

\end{proof}

For $i=1,\dots,L-5$, if the pair of adjacent $t$-letters associated to $\pazocal{Q}_i$ and to $\pazocal{Q}_{i+1}$ are $t(L)$ and $t(2)$ (or vice versa), then $\Psi_{i,i+1}$ is called the \textit{distinguished clove}. As $q$-bands cannot cross, the distinguished clove contains a cutting $q$-band $\pazocal{Q}_i'$ formed by the $q$-spoke of $\Pi$ corresponding to the base letter $t(1)$. Let $\Lambda_{i,i+1}'$ (respectively $\Lambda_{i,i+1}''$) be the subdiagram bounded by $\pazocal{Q}_i'$ and the $t$-spoke corresponding to $t(L)$ (respectively $t(2)$). Define $\textbf{p}_{i,i+1}'$ (respectively $\textbf{p}_{i,i+1}''$) as the subpath of $\partial\Lambda_{i,i+1}'$ (respectively $\partial\Lambda_{i,i+1}''$) shared with $\partial\Delta$, so that $\textbf{p}_{i,i+1}$ is the concatenation of these two paths along a shared $q$-edge.

Suppose $\Psi_{i,i+1}$ is not the distinguished clove. Then, let $\textbf{q}_{i,i+1}$ be the shortest path in $\Psi_{i,i+1}$ homotopic to $\textbf{p}_{i,i+1}$ and having the same first and last edges. 

If $\Psi_{i,i+1}$ is the distinguished clove, then define $\textbf{q}_{i,i+1}'$ and $\textbf{q}_{i,i+1}''$ as the analgous shortest paths in $\Lambda_{i,i+1}'$ and $\Lambda_{i,i+1}''$. Then, let $\textbf{q}_{i,i+1}$ be the concatenation of $\textbf{q}_{i,i+1}'$ and $\textbf{q}_{i,i+1}''$ along their shared $q$-edge.

For $1\leq i<j\leq L-4$, let $\textbf{q}_{ij}$ be the concatenation of the paths $\textbf{q}_{i,i+1},\dots,\textbf{q}_{j-1,j}$ along their shared $q$-edges.

Then, let $\Psi_{ij}^0$ be the diagram obtained from $\Psi_{ij}$ by replacing $\textbf{p}_{ij}$ in the contour with $\textbf{q}_{ij}$, i.e by removing any cells between $\textbf{q}_{ij}$ and $\textbf{p}_{ij}$. Similarly define $\Psi^0$, $(\Lambda_{i,i+1}')^0$,  and $(\Lambda_{i,i+1}'')^0$.

The following is the direct analogue of Lemma \ref{7.24}(1) and is proved in exactly the same way.

\begin{lemma} \label{7.27} \textit{(Compare with Lemma 9.13 of [16] and Lemma 7.27 of [23])} 
\newline
If $i\leq r$ and $j\geq r+1$, then $|\textbf{q}_{ij}|\geq h_i+h_j+11(j-i)+1$.

\end{lemma}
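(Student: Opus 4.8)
The statement of Lemma \ref{7.27} is the exact analogue of Lemma \ref{7.24}(1), with the `taut' path $\textbf{q}_{ij}$ playing the role of $\textbf{p}_{ij}$, so the plan is to reproduce the proof of Lemma \ref{7.24}(1) essentially verbatim, replacing each appeal to a property of $\textbf{p}_{ij}$ with the corresponding property of $\textbf{q}_{ij}$. First I would recall how $\textbf{q}_{ij}$ is defined: for a non-distinguished clove $\Psi_{i,i+1}$ it is the shortest path homotopic to $\textbf{p}_{i,i+1}$ with the same first and last edges (and for the distinguished clove it is the concatenation of the two analogous shortest paths $\textbf{q}_{i,i+1}'$, $\textbf{q}_{i,i+1}''$ along their shared $q$-edge), and then $\textbf{q}_{ij}$ is the concatenation of $\textbf{q}_{i,i+1},\dots,\textbf{q}_{j-1,j}$ along shared $q$-edges. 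The key point is that homotoping $\textbf{p}_{ij}$ to $\textbf{q}_{ij}$ inside $\Psi_{ij}$ does not destroy any $\theta$-edge or $q$-edge that is forced to lie on the boundary by a band argument.

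The main body of the argument splits into counting $\theta$-edges and counting $q$-edges of $\textbf{q}_{ij}$. For the $\theta$-count: by Lemma \ref{7.22}(2), since $i\le r$ and $j\ge r+1$, every maximal $\theta$-band of $\Psi$ crossing $\pazocal{Q}_i$ or $\pazocal{Q}_j$ must end on $\textbf{p}_{ij}$, and hence (since $\theta$-bands cannot cross a $q$-band) each such band still ends on $\textbf{q}_{ij}$: the $\theta$-edge of the side of $\pazocal{Q}_i$ or $\pazocal{Q}_j$ at which the band terminates is the first or last edge of some $\textbf{q}_{k,k+1}$ and is therefore preserved. This gives $h_i+h_j$ $\theta$-edges on $\textbf{q}_{ij}$. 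For the $q$-count: every $t$-spoke of $\Pi$ starting on the complement $\bar{\textbf{u}}_{ij}$ of $\textbf{u}_{ij}$ in $\partial\Pi$ is a maximal $q$-band that must end on $\textbf{p}_{ij}$ (again because $q$-bands cannot cross), and its terminal $q$-edge on $\partial\Delta$ is among the distinguished first/last edges of the pieces $\textbf{q}_{k,k+1}$ (or is the shared $q$-edge of $(\Lambda_{i,i+1}')^0$ and $(\Lambda_{i,i+1}'')^0$ in the distinguished clove), so it survives in $\textbf{q}_{ij}$; there are at least $11(j-i)+1$ such spokes (eleven base letters per copy of $V^{\pm1}$, for $j-i$ copies, plus one extra $t$-letter), giving $\ge 11(j-i)+1$ $q$-edges. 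Adding the two counts and using Lemma \ref{lengths}(a) (or simply $|\textbf{q}_{ij}|\ge|\textbf{q}_{ij}|_\theta+|\textbf{q}_{ij}|_q$) yields $|\textbf{q}_{ij}|\ge h_i+h_j+11(j-i)+1$.

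The one genuinely new point, and the place I expect to spend the most care, is checking that the homotopy/trimming operation that produces $\textbf{q}_{ij}$ really does leave all these forced edges on the boundary — that is, that no $\theta$-edge counted toward $h_i+h_j$ and no $q$-edge counted toward $11(j-i)+1$ can be swallowed into the region between $\textbf{q}_{ij}$ and $\textbf{p}_{ij}$. This is where the careful choice of first and last edges of each piece $\textbf{q}_{k,k+1}$ matters: those edges are precisely the end-edges of the spokes $\pazocal{Q}_k$, $\pazocal{Q}_{k+1}$ (or of $\pazocal{Q}_i'$ in the distinguished clove), which bound the cloves and hence cannot be removed, and the intermediate forced edges are endpoints of bands that cross no $q$-band between themselves and the boundary, hence cannot be in the interior of a homotopy disk whose boundary passes through those endpoints. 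Once this combinatorial bookkeeping is pinned down, the rest is identical to Lemma \ref{7.24}(1) and the estimate follows.
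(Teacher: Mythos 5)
Your proposal is correct and follows the paper's route exactly: the paper proves Lemma \ref{7.27} ``in exactly the same way'' as Lemma \ref{7.24}(1), namely by the same band counting — Lemma \ref{7.22}(2) forces $h_i+h_j$ $\theta$-edges, and the at least $11(j-i)+1$ spokes starting on $\bar{\textbf{u}}_{ij}$ force that many $q$-edges, each such band being compelled to cross $\textbf{q}_{ij}$ since $\textbf{q}_{ij}$ is homotopic to $\textbf{p}_{ij}$ rel its end edges and distinct maximal bands contribute distinct edges. One minor correction to your bookkeeping: for the non-$t$-spokes the $q$-edge contributed to $\textbf{q}_{ij}$ is generally not their terminal edge on $\partial\Delta$ (that edge need not survive the trimming) but simply some $q$-edge of the spoke through which $\textbf{q}_{ij}$ crosses it — i.e., the separation argument in your last paragraph, not the ``first/last edge'' identification, is the correct justification.
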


\begin{lemma} \label{7.28} \textit{(Compare with Lemma 9.14 of [16] and Lemma 7.28 of [23])}

\begin{enumerate}[label=({\arabic*})]

\item Every maximal $q$-band of $\Psi^0$ corresponds to a spoke of $\Pi$.

\item No two $\theta$-edges of $\textbf{q}_{i,i+1}$ are part of the same $\theta$-band of $\Psi_{i,i+1}$.

\end{enumerate}

\end{lemma}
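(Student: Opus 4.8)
\textbf{Proof plan for Lemma \ref{7.28}.}

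The statement asserts two structural facts about the shortened paths $\textbf{q}_{i,i+1}$: every maximal $q$-band met by $\Psi^0$ is a spoke of the central disk $\Pi$, and no $\theta$-band of $\Psi_{i,i+1}$ has two of its $\theta$-edges on $\textbf{q}_{i,i+1}$. Both are geodesic-type arguments: if either failed, one could shorten $\textbf{q}_{i,i+1}$, contradicting its definition as a shortest path homotopic to $\textbf{p}_{i,i+1}$ with the same endpoints. The plan is to run each argument by extracting an appropriate subdiagram between the offending band and $\textbf{q}_{i,i+1}$ and applying the length estimates of Lemma \ref{lengths} to see that replacing the portion of $\textbf{q}_{i,i+1}$ cut off by that band with the opposite side of the band (or its trimmed top/bottom) yields a strictly shorter homotopic path.

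For part (1): suppose $\pazocal{B}$ is a maximal $q$-band of $\Psi^0$ that is not a spoke of $\Pi$. Since $q$-bands cannot cross, $\pazocal{B}$ is a cutting band, and because it is not a spoke, both of its ends must lie on $\textbf{q}_{ij}$ — in fact on a single $\textbf{q}_{i,i+1}$, since distinct $\textbf{q}_{k,k+1}$ meet only at a $q$-edge that is an end of a spoke. Then $\pazocal{B}$ together with the subpath $\textbf{t}$ of $\textbf{q}_{i,i+1}$ between its two ends bounds a diskless subdiagram $\Gamma$ of $\Psi_{i,i+1}$. By Lemma \ref{lengths}(b), $|\textbf{top}(\pazocal{B})|=|\textbf{bot}(\pazocal{B})|$ equals the number of cells of $\pazocal{B}$, so the side of $\pazocal{B}$ facing away from $\Gamma$ has length at most the length of the side facing $\Gamma$; replacing $\textbf{t}$ by that outer side of $\pazocal{B}$ gives a path homotopic to $\textbf{q}_{i,i+1}$ (rel endpoints) of length no greater, and strictly smaller once one accounts for the two $q$-edges of $\textbf{t}$ that are the ends of $\pazocal{B}$ (each contributing $1$ to length, while the outer side of $\pazocal{B}$ contributes nothing extra beyond the shared band cells). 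This contradicts minimality of $\textbf{q}_{i,i+1}$. One subtlety is the distinguished clove, where $\textbf{q}_{i,i+1}$ is a concatenation $\textbf{q}_{i,i+1}'\textbf{q}_{i,i+1}''$ across the spoke $\pazocal{Q}_i'$ of $\Pi$; there the same argument applies inside whichever of $\Lambda_{i,i+1}'$, $\Lambda_{i,i+1}''$ contains $\pazocal{B}$, since $\pazocal{B}$ being non-spoke means it does not cross $\pazocal{Q}_i'$.

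For part (2): suppose a maximal $\theta$-band $\pazocal{T}$ of $\Psi_{i,i+1}$ has two $\theta$-edges $\textbf{e}_1,\textbf{e}_2$ on $\textbf{q}_{i,i+1}$. Since $\textbf{q}_{i,i+1}$ has (up to the two endpoint $q$-edges, or the three $q$-edges of the distinguished case) only $\theta$-edges corresponding to the two bounding spokes, and $\pazocal{T}$ cannot cross a spoke twice by Lemma \ref{M_a no annuli 1}, the band $\pazocal{T}$ together with the subpath of $\textbf{q}_{i,i+1}$ between $\textbf{e}_1$ and $\textbf{e}_2$ bounds a subdiagram $\Gamma$; after $0$-refinement $\textbf{e}_1,\textbf{e}_2$ are the two $\theta$-edges on one side of $\pazocal{T}$. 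If $\pazocal{T}$ has no $(\theta,q)$-cell between $\textbf{e}_1$ and $\textbf{e}_2$ then those two edges already form a cancellable pair (the band would be an $a$-band turning back on itself, excluded by Lemma \ref{M_a no annuli 1}(2) applied to $\Gamma$ via a $(\theta,a)$-annulus argument as in Lemma \ref{a-band on same a-cell}); if it has such cells, one replaces the subpath of $\textbf{q}_{i,i+1}$ between $\textbf{e}_1$ and $\textbf{e}_2$ with the opposite (trimmed) side of the relevant subband of $\pazocal{T}$, using Lemma \ref{lengths}(b) and the fact that a $\theta$-band side label has length equal to the number of its cells, to get a homotopic path that is no longer — and strictly shorter, because the two $\theta$-edges $\textbf{e}_1,\textbf{e}_2$ (total length $2$) are replaced by at most the band's $(\theta,q)$- and $(\theta,a)$-cell contributions on the far side, which one checks is smaller using that $\Psi_{i,i+1}$ is $M$-minimal so condition (MM1)/(M1) bounds how many $a$-bands emanate from an $a$-cell. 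This again contradicts minimality of $\textbf{q}_{i,i+1}$.

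The main obstacle I anticipate is the bookkeeping in part (2): unlike the disk-free analogues in [16] and [23], the band $\pazocal{T}$ here can carry $(\theta,a)$-cells, and the far side of $\pazocal{T}$ can pick up $a$-edges that are not present on $\textbf{q}_{i,i+1}$, so a naive length comparison need not immediately give a strict decrease. The fix is to combine the shortening move with an application of the $M$-minimality conditions (M1), (M2) — precisely as in the transposition arguments of Section 8.3 and in the proof of Lemma \ref{6.18} — so that the $a$-edges appearing on the far side of $\pazocal{T}$ are matched by at least as many $a$-edges of $\partial\pi$ for the $a$-cells involved, which are in turn charged against edges already on $\textbf{q}_{i,i+1}$; this makes the replacement strictly length-decreasing. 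Carrying this matching through carefully, while handling the distinguished clove separately, is the only part that requires genuine care.
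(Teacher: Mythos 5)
Your overall strategy is the paper's: both parts are proved by contradiction with the minimality of $\textbf{q}_{i,i+1}$, replacing the subpath cut off by the offending band with the opposite side of that band, and in part (2) closing the length estimate with the same (M1)-style charging used in Lemma \ref{6.18}. The genuine gap is in part (2), in the distinguished clove, in the case where the two $\theta$-edges lie on different halves $\textbf{q}_{i,i+1}'$ and $\textbf{q}_{i,i+1}''$. There the shortening move is simply unavailable: $\textbf{q}_{i,i+1}$ is not defined as a shortest path in $\Psi_{i,i+1}$, but as the concatenation of two separately minimized paths through the fixed $q$-edge at the end of the spoke $\pazocal{Q}_i'$ on $\partial\Delta$, so replacing the subpath between the two $\theta$-edges by a band side deletes that junction edge and contradicts nothing. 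The paper needs an extra idea here: since $\textbf{q}_{i,i+1}$ contains that end edge of $\pazocal{Q}_i'$, the connecting band lies in $\Psi_{i,i+1}^0$, and by Lemma \ref{7.22}(1) the maximal $\theta$-band containing it must cross $\pazocal{Q}_i$ or $\pazocal{Q}_{i+1}$, hence carries a second $\theta$-edge lying on $\textbf{q}_{i,i+1}'$ alone or on $\textbf{q}_{i,i+1}''$ alone, which reduces this case to the one-half case you do treat. Saying the distinguished clove ``requires genuine care'' without this reduction leaves the case open.

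Two further steps are glossed in a way worth flagging. In part (1), Lemma \ref{lengths}(b) alone gives no comparison between the cut-off subpath $\textbf{x}$ and the side of $\pazocal{B}$: the strict decrease comes from the observation that every maximal $\theta$-band crossing $\pazocal{B}$ must end on $\textbf{x}$ (by Lemmas \ref{M_a no annuli 1} and \ref{G_a theta-annuli}), so $|\textbf{x}|\geq\ell+2$ while either side of $\pazocal{B}$ has length exactly $\ell$. In part (2), your statement that a $\theta$-band side has length equal to its number of cells is false (that is Lemma \ref{lengths}(b) for $q$-bands); the correct comparison is between $\textbf{top}(\pazocal{T})$ and $\textbf{bot}(\pazocal{T})$ via Lemma \ref{simplify rules}, and the charging argument only closes because part (1) forces every $q$-band crossing $\pazocal{T}$ to be a spoke of $\Pi$, so the base of $\pazocal{T}$ is a subword of $B_3(j)^{\pm1}$: this bounds its length by $11$ (whence $|\textbf{bot}(\pazocal{T})|-|\textbf{top}(\pazocal{T})|\leq22\delta$) and, crucially, ensures at most one $(\theta,q)$-cell of $\pazocal{T}$ carries an $a$-edge from the `special' input sector, so the total correction from $a$-cells touching $(\theta,q)$-cells is at most $1$ and the replacement is shorter by at least $2-26\delta\geq1$. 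Without identifying these quantitative inputs the ``matching'' you describe does not by itself yield a strict decrease.
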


\begin{proof}

(1) Assuming there exists a maximal $q$-band $\pazocal{Q}$ in $\Psi^0$ not corresponding to a spoke of $\Pi$, $\pazocal{Q}$ must end twice on $\textbf{q}$. In fact, as $q$-bands cannot cross, there exists $1\leq i\leq L-4$ such that $\pazocal{Q}$ ends twice on $\textbf{q}_{i,i+1}$.

Let $\textbf{x}$ be the subpath of $\textbf{q}_{i,i+1}$ starting and ending with the ends of $\pazocal{Q}$. By Lemmas \ref{M_a no annuli 1} and \ref{G_a theta-annuli}, any maximal $\theta$-band crossing $\pazocal{Q}$ must have one end on $\textbf{x}$. So, for $\ell$ the length of $\pazocal{Q}$, $|\textbf{x}|\geq\ell+2$.

By Lemma \ref{lengths}(b), $|\textbf{bot}(\pazocal{Q})|=|\textbf{top}(\pazocal{Q})|=\ell$. But then replacing $\textbf{x}$ in $\textbf{q}_{i,i+1}$ with a side of $\pazocal{Q}$ produces a homotopic path with shorter length, contradicting the definition of $\textbf{q}_{i,i+1}$.

(2) Assuming the statement is false, there exists a $\theta$-band $\pazocal{T}$ in $\Psi_{i,i+1}$ connecting $\theta$-edges $\textbf{e}$ and $\textbf{f}$ of $\textbf{q}_{i,i+1}$. Perhaps passing to a subband, we may assume that no other $\theta$-edge comprising $\pazocal{T}$ is part of $\textbf{q}_{i,i+1}$.

Let $\textbf{y}$ be the subpath of $\textbf{q}_{i,i+1}$ bounded by $\textbf{e}$ and $\textbf{f}$. As $\theta$-bands cannot cross, we may assume that $\textbf{e}$ and $\textbf{f}$ are the only $\theta$-edges of $\textbf{y}$. So, every cell between a side of $\pazocal{T}$, say $\textbf{top}(\pazocal{T})$, and $\textbf{y}$ is an $a$-cell.

Suppose one of the following holds:

\begin{enumerate}[label=({\roman*})]

\item $\Psi_{i,i+1}$ is not the distinguished clove (see Figure 10.5(i)),

\item $\Psi_{i,i+1}$ is the distinguished clove and $\textbf{y}$ is a subpath of $\textbf{q}_{i,i+1}'$ (see Figure 10.5(ii)), or

\item $\Psi_{i,i+1}$ is the distinguished clove and $\textbf{y}$ is a subpath of $\textbf{q}_{i,i+1}''$ (see Figure 10.5(iii)).

\end{enumerate}

\renewcommand\thesubfigure{\roman{subfigure}}
\begin{figure}[H]
\centering
\begin{subfigure}[b]{0.48\textwidth}
\centering
\includegraphics[scale=1]{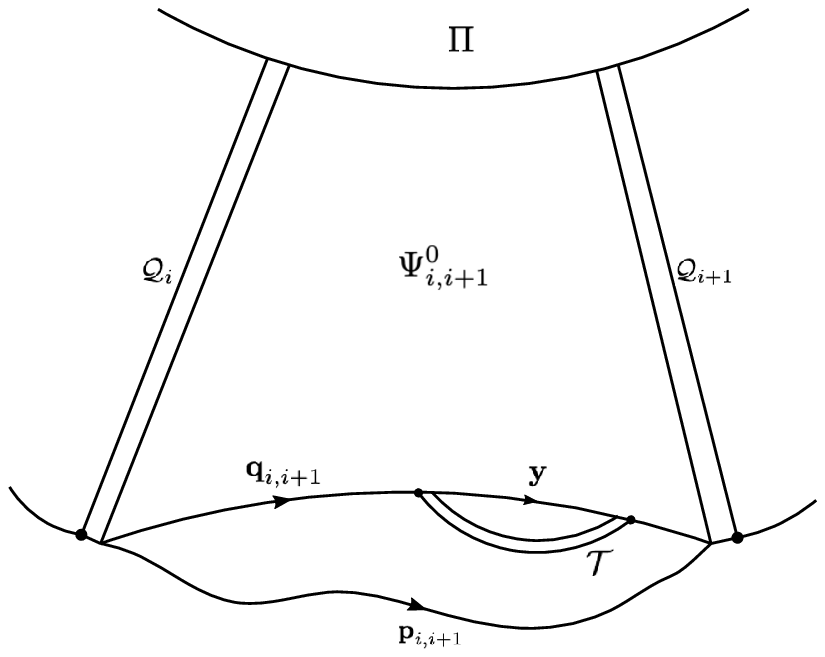}
\caption{}
\end{subfigure}
\\
\vspace{0.4in}
\begin{subfigure}[b]{0.48\textwidth}
\centering
\includegraphics[scale=1]{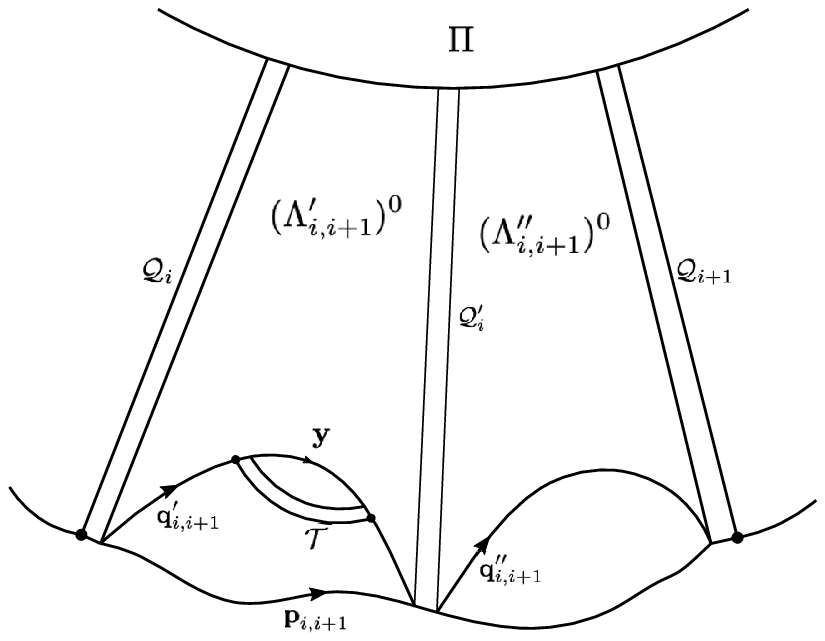}
\caption{}
\end{subfigure}\hfill
\begin{subfigure}[b]{0.48\textwidth}
\centering
\includegraphics[scale=1]{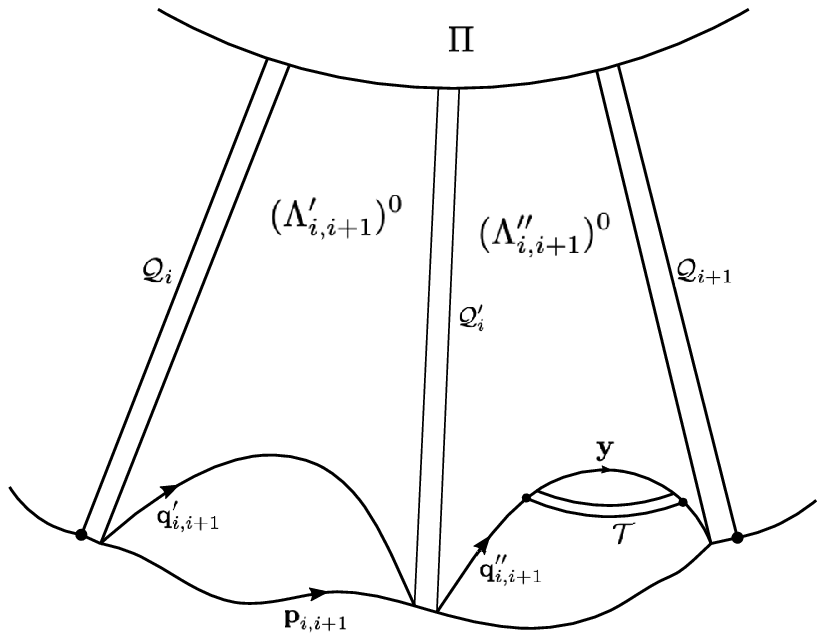}
\caption{}
\end{subfigure}
\caption{Lemma \ref{7.28}(2)}
\end{figure}

Note that every $q$-edge of $\textbf{top}(\pazocal{T})$ must be shared with $\textbf{y}$. So, by (1), every $q$-band crossing $\pazocal{T}$ must be a spoke of $\Pi$ in $\Psi_{i,i+1}$ (respectively $\Lambda_{i,i+1}'$, $\Lambda_{i,i+1}''$) in case (i) (respectively (ii), (iii)). As a result, the base of $\pazocal{T}$ is a subword of $B_3(j)^{\pm1}$ for some $j$.

If there exists an $a$-cell $\pi$ between $\textbf{top}(\pazocal{T})$ and $\textbf{y}$, then let $b_\pi$ be the number of edges of $\partial\pi$ which are on the boundary of a $(\theta,q)$-cell of $\pazocal{T}$. By (WM1), at most $\frac{1}{2}\|\partial\pi\|+b_\pi$ of the edges of $\partial\pi$ are shared with $\textbf{top}(\pazocal{T})$ while all other edges are shared with $\textbf{y}$.

By the definition of the rules of $\textbf{M}$, at most one edge on the boundary of a $(\theta,q)$-cell of $\pazocal{T}$ is labelled with a letter from the alphabet of the `special' input sector (on a cell corresponding to the base letter $Q_0(1)^{\pm1}$). As a result, $\sum b_\pi\leq1$, so that Lemma \ref{lengths}(c) implies $|\textbf{y}|\geq2+|\textbf{top}(\pazocal{T})|-4\delta$.

As the base of $\pazocal{T}$ has length at most $11$, Lemma \ref{simplify rules} implies $|\textbf{bot}(\pazocal{T})|-|\textbf{top}(\pazocal{T})|\leq22\delta$. So, $|\textbf{y}|-|\textbf{bot}(\pazocal{T})|\geq2-26\delta\geq1$ by a parameter choice for $\delta^{-1}$.

But then replacing $\textbf{y}$ in $\textbf{q}_{i,i+1}$ with $\textbf{bot}(\pazocal{T})$ contradicts the definition of $\textbf{q}_{i,i+1}$.

Hence, it suffices to assume that (i), (ii), and (iii) all do not hold. 

So, $\Psi_{i,i+1}$ is the distinguished clove, $\textbf{e}$ is an edge of $\textbf{q}_{i,i+1}'$, and $\textbf{f}$ is an edge of $\textbf{q}_{i,i+1}''$. Further, since $\textbf{q}_{i,i+1}$ contains the $q$-edge of $\pazocal{Q}'$ shared with $\partial\Delta$, $\pazocal{T}$ must be contained in $\Psi_{i,i+1}^0$.

By Lemma \ref{7.22}(1), the maximal $\theta$-band containing $\pazocal{T}$ must cross $\pazocal{Q}_i$ or $\pazocal{Q}_{i+1}$, so that it must contain another $\theta$-edge of $\textbf{q}_{i,i+1}'$ or $\textbf{q}_{i,i+1}''$. 

But then there exists a $\theta$-band satisfying (ii) or (iii), so that a similar contradiction can be reached.

\end{proof}

\begin{lemma} \label{non-distinguished a-cells} \

\begin{enumerate}[label=({\arabic*})]

\item If $\Psi_{i,i+1}$ is not the distinguished clove, then $\Psi_{i,i+1}^0$ contains no $a$-cells. 

\item If $\Psi_{i,i+1}$ is the distinguished clove, then $(\Lambda_{i,i+1}')^0$ contains no $a$-cells.

\end{enumerate}

\end{lemma}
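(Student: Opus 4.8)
The plan is to argue by contradiction in each case, assuming the relevant subdiagram contains an $a$-cell $\pi$ and deriving that $\pi$ cannot be ``trapped'' inside $\Psi_{i,i+1}^0$ (respectively $(\Lambda_{i,i+1}')^0$). The governing principle is the same one used repeatedly in Section 9: by Lemmas \ref{a-band on same a-cell} and \ref{a-bands between a-cells} (which hold since $\Delta$ is weakly minimal and hence satisfies (WM1), (WM2)), every maximal $a$-band starting on $\partial\pi$ must either cross a $\theta$-band or end on a $(\theta,q)$-cell. By Lemma \ref{a-cells sector} (applied to a suitable $a$-trapezium containing $\pi$, obtained via Lemma \ref{M_a reduced a-trapezia}/Lemma \ref{quasi-trapezia}), such a $(\theta,q)$-cell must correspond to the base letter $Q_0(1)^{\pm1}$, i.e.\ to the $q$-band $\pazocal{Q}'$ of the distinguished clove. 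The key geometric input is that $\Psi_{i,i+1}^0$ (for a non-distinguished clove) has \emph{no} maximal $q$-band corresponding to $Q_0(1)^{\pm1}$: by Lemma \ref{7.28}(1) every maximal $q$-band of $\Psi^0$ (hence of $\Psi_{i,i+1}^0$) corresponds to a spoke of $\Pi$, and the spokes $\pazocal{Q}_i,\pazocal{Q}_{i+1}$ bounding a non-distinguished clove are $t$-spokes, so the only $q$-bands present correspond to letters of $B_3(j)^{\pm1}$ that are \emph{not} $Q_0(1)$ (the letter $Q_0(1)$ only appears inside the copy $B_3(1)$, which is separated off only in the distinguished clove by $\pazocal{Q}'$). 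Thus no $a$-band emanating from $\pi$ can end on a $(\theta,q)$-cell, so all of them must cross $\theta$-bands.

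First I would set up (1) precisely: suppose $\pi\subset\Psi_{i,i+1}^0$ is an $a$-cell with $\Psi_{i,i+1}$ not the distinguished clove. Choose $\pi$ so that no cell lies between $\partial\pi$ and some side of a maximal $\theta$-band $\pazocal{T}$ of $\Psi_{i,i+1}^0$ adjacent to it (possible since $a$-bands cannot cross, exactly as in the proof of Lemma \ref{minimal exist}). By the paragraph above, every edge of the subpath $\textbf{x}$ of $\partial\pi$ facing $\pazocal{T}$ marks the start of an $a$-band crossing $\pazocal{T}$; since by (WM1) at most half of the edges of $\partial\pi$ can do this when $\pi$ sits on one side of a single $\theta$-band, we must account for the remaining edges. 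Those remaining edges of $\partial\pi$ must face either another $\theta$-band or $\textbf{q}_{i,i+1}$ itself. Facing another $\theta$-band is ruled out by choosing $\pi$ ``innermost'' (adjacent to a quasi-rim-type $\theta$-band), just as in Lemma \ref{minimal exist}; and facing $\textbf{q}_{i,i+1}$ is ruled out precisely by Lemma \ref{7.28}(2), which says no $\theta$-band of $\Psi_{i,i+1}$ has two $\theta$-edges on $\textbf{q}_{i,i+1}$ — so an $a$-cell pressed against $\textbf{q}_{i,i+1}$ would, after transposing its adjacent $\theta$-band toward $\textbf{q}_{i,i+1}$ (as in the proof of Lemma \ref{7.28}(2)), produce a shorter homotopic path, contradicting the definition of $\textbf{q}_{i,i+1}$. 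Alternatively, and more cleanly, I would mimic the endgame of Lemma \ref{minimal exist} directly: transpose the innermost $\theta$-band past $\pi$ to push $\pi$ against $\textbf{q}_{i,i+1}$, then invoke Lemma \ref{7.28}(2) to get the length contradiction with the minimality of $\textbf{q}_{i,i+1}$.

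For part (2), the argument is identical after replacing $\Psi_{i,i+1}^0$ with $(\Lambda_{i,i+1}')^0$: the clove $\Lambda_{i,i+1}'$ is bounded by $\pazocal{Q}_i'$ (the $t(1)$-spoke) and the $t(L)$-spoke, and by Lemma \ref{7.28}(1) every maximal $q$-band of $(\Lambda_{i,i+1}')^0$ is a spoke of $\Pi$; since $\Lambda_{i,i+1}'$ lies on the side of $\pazocal{Q}'=\pazocal{Q}_i'$ \emph{not} containing the sector $Q_0(1)$ (that sector sits in $\Lambda_{i,i+1}''$), no $q$-band of $(\Lambda_{i,i+1}')^0$ corresponds to $Q_0(1)^{\pm1}$, so again no $a$-band from $\pi$ can terminate on a $(\theta,q)$-cell, and the same transposition-plus-shortest-path contradiction applies using the paths $\textbf{q}_{i,i+1}'$ and Lemma \ref{7.28}(2) restricted to case (ii).

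The main obstacle I anticipate is the bookkeeping needed to justify, in each subdiagram, that the $a$-cell $\pi$ genuinely lies in a sector whose base letter is not $Q_0(1)^{\pm1}$ — this requires carefully tracking which copy $B_3(j)$ the spokes bounding the clove belong to and invoking Lemma \ref{7.28}(1) to exclude stray $q$-bands, together with Lemma \ref{a-cells sector} to pin down that an $a$-cell forces its sector's base letter. Once that structural fact is in hand, the contradiction is a routine adaptation of the transposition argument from Lemmas \ref{minimal exist} and \ref{7.28}(2), so I do not expect the estimates themselves to be delicate.
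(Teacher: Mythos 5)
There is a genuine gap in your argument, at the point where the $a$-bands issuing from $\pi$ are allowed to cross $\theta$-bands. After you rule out their ending on $(\theta,q)$-cells, you try to force a contradiction with (WM1) by choosing $\pi$ ``innermost'', but no contradiction arises from that counting: an $a$-cell sandwiched between two $\theta$-bands, with roughly half of its $a$-bands crossing the band above and half crossing the band below, is perfectly compatible with (WM1)/(M1), and an innermost choice does not exclude this configuration. Your fallback — transposing the adjacent $\theta$-band past $\pi$ and then invoking Lemma \ref{7.28}(2) — does not repair this: the transposition modifies the diagram, whereas $\textbf{q}_{i,i+1}$ and its minimality are defined inside the fixed counterexample $\Delta$ (and transpositions can destroy (WM1), as noted in Section 8.3); moreover, Lemma \ref{7.28}(2) concerns $\theta$-edges of $\textbf{q}_{i,i+1}$ and gives no length inequality when only part of $\partial\pi$ lies on $\textbf{q}_{i,i+1}$ — replacing that part by the complementary arc of $\partial\pi$ shortens the path only if more than half of $\partial\pi$ is already on it, which you have not established.

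The paper closes exactly this hole with a stronger structural fact that your proposal never uses: by Lemma \ref{7.28}(1) every maximal $q$-band of $\Psi_{i,i+1}^0$ is a spoke of $\Pi$, and for a non-distinguished clove these spokes lie in a copy $B_3(\ell)$ with $\ell\geq 2$ (in $(\Lambda_{i,i+1}')^0$ the only coordinate-$1$ letter is $\{t(1)\}$, whose adjacent sectors have empty tape alphabet). Consequently \emph{no} $(\theta,a)$-cell or $(\theta,q)$-cell of the subdiagram carries an $a$-edge labelled by a letter of the `special' input sector; so the $a$-bands issuing from $\pi$ cannot cross any $\theta$-band either — they have length zero, every edge of $\partial\pi$ lies on $\textbf{q}_{i,i+1}$, and deleting that closed subpath produces a strictly shorter homotopic path, contradicting the definition of $\textbf{q}_{i,i+1}$. (A minor additional point: the fact that a special-input $a$-band can only end on a $(\theta,q)$-cell corresponding to $Q_0(1)^{\pm1}$ follows from the form of the $(\theta,q)$-relations, not from Lemma \ref{a-cells sector}, which is a statement about $a$-trapezia.)
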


\begin{proof}

(1) Suppose $\pi$ is an $a$-cell contained in $\Psi_{i,i+1}^0$.

By Lemma \ref{7.28}(1), no maximal $q$-band of $\Psi_{i,i+1}^0$ corresponds to a base letter with coordinate 1. So, the contour of any $(\theta,a)$- or $(\theta,q)$-cell has no $a$-edge labelled by a letter from the alphabet of the `special' input sector.

As a result, any edge of $\partial\pi$ must be shared with $\partial\Psi_{i,i+1}^0$. In particular, $\partial\pi$ must be a subpath of $\textbf{q}_{i,i+1}$. 

But then removing this subpath produces a path homotopic to $\textbf{q}_{i,i+1}$ that contradicts its definition.

(2) is proved analogously, as the only base letter with coordinate 1 present in $(\Lambda_{i,i+1}')^0$ is $\{t(1)\}$.

\end{proof}

\subsection{Trapezia and combs in the cloves} \

For $1\leq i\leq r-1$, suppose $\Psi_{i,i+1}$ is not the distinguished clove. Then Lemma \ref{7.22} implies that all maximal $\theta$-bands of $\Psi_{i,i+1}$ crossing $\pazocal{Q}_{i+1}$ must also cross $\pazocal{Q}_i$. So, these $\theta$-bands bound an $a$-trapezium $\Gamma_i$ in $\Psi_{i,i+1}^0$ with height $h_{i+1}$. The base of $\Gamma_i$ (or its inverse) is $\{t(\ell)\}B_3(\ell)\{t(\ell+1)\}$ for some $2\leq\ell\leq L-1$. Lemma \ref{a-cells sector} then implies that $\Gamma_i$ is a trapezium. Set $\textbf{y}_i=\textbf{bot}(\Gamma_i)$ and $\textbf{z}_i=\textbf{top}(\Gamma_i)$. Note that $\textbf{y}_i^{-1}$ is shared with $\partial\Pi$. 

For $2\leq i\leq r-1$, suppose neither $\Psi_{i-1,i}$ nor $\Psi_{i,i+1}$ is the distinguished clove. Then $\lab(\textbf{y}_{i-1})$ and $\lab(\textbf{y}_i)$ are coordinate shifts of one another while $H_{i+1}$ is a prefix of $H_i$. So, $h_{i+1}$ $\theta$-bands of $\Gamma_{i-1}$ form a copy of $\Gamma_i$, $\Gamma_i'$, contained in $\Gamma_{i-1}$. Set $\textbf{y}_i'=\textbf{bot}(\Gamma_i')$ and $\textbf{z}_i'=\textbf{top}(\Gamma_i')$. Note that $\textbf{y}_i'=\textbf{y}_{i-1}$.

For $1\leq i\leq r-1$, if $\Psi_{i,i+1}$ is not the distinguished clove, then denote by $E_i$ (respectively $E_i^0$) the maximal comb in $\Psi_{i,i+1}$ (respectively $\Psi_{i,i+1}^0$) containing the maximal $\theta$-bands that cross the $t$-spoke $\pazocal{Q}_i$ but not the $t$-spoke $\pazocal{Q}_{i+1}$. The handle $\pazocal{C}_i$ of these combs has height $h_i-h_{i+1}$ and is contained in $\pazocal{Q}_i$. Any cell of $\Psi_{i,i+1}$ (respectively $\Psi_{i,i+1}^0$) not contained in $\Gamma_i$ or $E_i$ (respectively $E_i^0$) must be an $a$-cell attached to either $\textbf{z}_i$ or $\pazocal{Q}_{i+1}$. By the structure of the relations, such an $a$-cell must share every boundary edge with $\partial\Delta$. But this contradicts Lemma \ref{a-cell in counterexample 2}. Hence, $E_i$ (respectively $E_i^0$) is the complement of $\Gamma_i$ in $\Psi_{i,i+1}$ (respectively $\Psi_{i,i+1}^0$). 

Now suppose $\Psi_{i,i+1}$ is the distinguished clove for $1\leq i\leq r-1$. 

First, suppose $\pazocal{Q}_i$ corresponds to the base letter $\{t(L)\}$, so that the subdiagram $\Lambda_{i,i+1}'$ is bounded by $\pazocal{Q}_i$ and $\pazocal{Q}_i'$ (see Figure 10.7(a)). By Lemma \ref{7.22}, every maximal $\theta$-band of $\Lambda_{i,i+1}'$ crossing $\pazocal{Q}_i'$ must also cross $\pazocal{Q}_i$. So, these $\theta$-bands bound an $a$-trapezia $\Gamma_i$ contained in $(\Lambda_{i,i+1}')^0$. As above, Lemma \ref{a-cells sector} implies that $\Gamma_i$ must be a trapezium. The base of $\Gamma_i$ (or its inverse) is $\{t(L)\}B_3(L)\{t(1)\}$, while the height is the length $h_i'$ of the band $\pazocal{Q}_i'$.

\begin{figure}[H]
\centering
\includegraphics[scale=1.25]{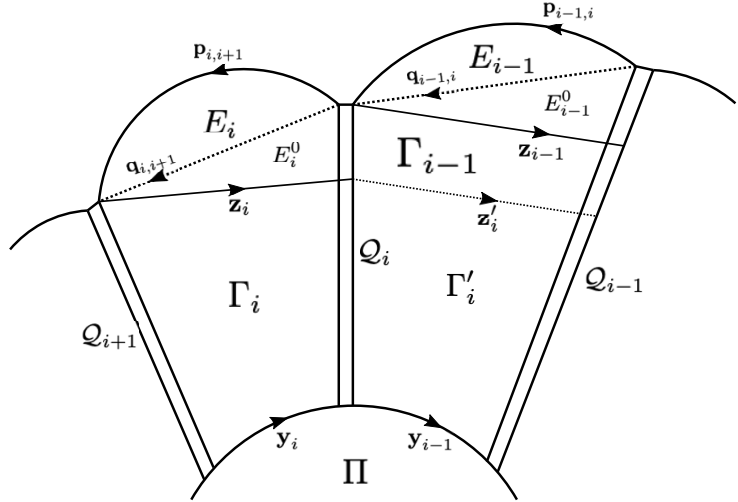}
\caption{Trapezia and combs if neither $\Psi_{i-1,i}$ and $\Psi_{i,i+1}$ are distinguished}
\end{figure}

Otherwise, $\pazocal{Q}_{i+1}$ corresponds to the base letter $\{t(L)\}$, so that the subdiagram $\Lambda_{i,i+1}'$ is bounded by $\pazocal{Q}_i'$ and $\pazocal{Q}_{i+1}$ (see Figure 10.7(2)). Lemma \ref{7.22} then implies that every maximal $\theta$-band of $\Lambda_{i,i+1}'$ crossing $\pazocal{Q}_{i+1}$ must also cross $\pazocal{Q}_i'$, so that these $\theta$-bands bound an $a$-trapezium $\Gamma_i$ contained in $(\Lambda_{i,i+1}')^0$. Again, $\Gamma_i$ must be a trapezium whose base (or its inverse) is $\{t(L)\}B_3(L)\{t(1)\}$. In this case, the height of $\Gamma_i$ is $h_{i+1}$.

In either case, we define $\textbf{y}_i=\textbf{bot}(\Gamma_i)$ and $\textbf{z}_i=\textbf{top}(\Gamma_i)$. If $i\geq2$, then again $\lab(\textbf{y}_i)$ is a coordinate shift of $\lab(\textbf{y}_{i-1})$ and there exists a copy $\Gamma_i'$ of $\Gamma_i$ in $\Gamma_{i-1}$ with $\textbf{bot}(\Gamma_i')=\textbf{y}_i'=\textbf{y}_{i-1}$. Similarly, if $i\leq r-2$, then $\lab(\textbf{y}_i)$ is a coordinate shift of $\lab(\textbf{y}_{i+1})$ and there exists a copy $\Gamma_{i+1}'$ of $\Gamma_{i+1}$ in $\Gamma_i$.

Suppose $\Lambda_{i,i+1}'$ is bounded by $\pazocal{Q}_i$ and $\pazocal{Q}_i'$. Then denote by $E_i$ (respectively $E_i^0$) the maximal comb in $\Lambda_{i,i+1}'$ (respectively $(\Lambda_{i,i+1}')^0$) containing the maximal $\theta$-bands that cross the $t$-spoke $\pazocal{Q}_i$ but not the $q$-spoke $\pazocal{Q}_i'$. The handle $\pazocal{C}_i$ of these combs has height $h_i-h_i'$ and is contained in $\pazocal{Q}_i$. As above, $E_i$ (respectively $E_i^0$) is the complement of $\Gamma_i$ in $\Lambda_{i,i+1}'$ (respectively $(\Lambda_{i,i+1}')^0$).

Otherwise, $\Lambda_{i,i+1}'$ is bounded by $\pazocal{Q}_i'$ and $\pazocal{Q}_{i+1}$. In this case denote by $E_i$ (respectively $E_i^0$) the maximal comb in $\Lambda_{i,i+1}'$ (respectively $(\Lambda_{i,i+1}')^0$) containing the maximal $\theta$-bands that cross the $q$-spoke $\pazocal{Q}_i'$ but not the $t$-spoke $\pazocal{Q}_{i+1}$. The handle $\pazocal{C}_i$ of these combs has height $h_i'-h_{i+1}$ and is contained in $\pazocal{Q}_i'$. Again, $E_i$ (respectively $E_i^0$) is the complement of $\Gamma_i$ in $\Lambda_{i,i+1}'$ (respectively $(\Lambda_{i,i+1}')^0$).

Note that no $a$-trapezium or comb has been defined in the subdiagram $\Lambda_{i,i+1}''$. Though such subdiagrams exist, their consideration is not necessary for the rest of the proof. As a result, one may view the indexing as `skipping over' the portion of the clove between the base letters $\{t(1)\}$ and $\{t(2)\}$.

For $r+1\leq i\leq L-5$, the trapezium $\Gamma_i$, the combs $E_i$ and $E_i^0$, and the paths $\textbf{y}_i$ and $\textbf{z}_i$ are defined symmetrically.

\renewcommand\thesubfigure{\alph{subfigure}}
\begin{figure}[H]
\centering
\begin{subfigure}[b]{0.48\textwidth}
\centering
\includegraphics[scale=1]{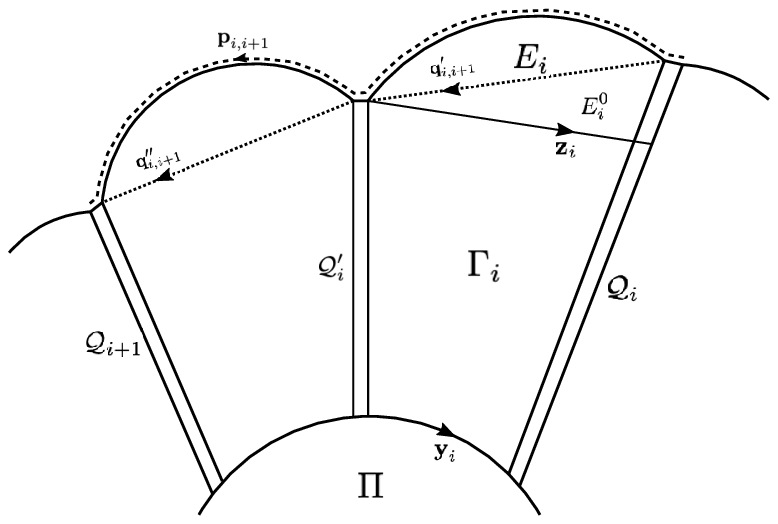}
\caption{$\pazocal{Q}_i$ and $\pazocal{Q}_i'$ bound $\Lambda_{i,i+1}'$}
\end{subfigure}\hfill
\begin{subfigure}[b]{0.48\textwidth}
\centering
\includegraphics[scale=1]{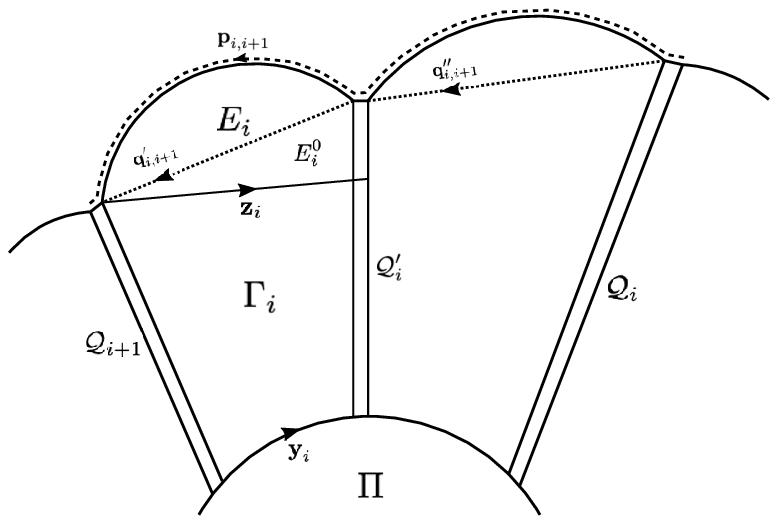}
\caption{$\pazocal{Q}_i$ and $\pazocal{Q}_i'$ bound $\Lambda_{i,i+1}''$}
\end{subfigure}
\caption{Trapezia in the distinguished clove}
\end{figure}

\begin{lemma} \label{7.29}

\textit{(Compare with Lemma 9.15 of [16] and Lemma 7.29 of [23])}
\newline
For $i\in\{2,\dots,r-1\}$, suppose a maximal $a$-band $\pazocal{B}$ of $E_i^0$ starts on $\textbf{z}_i$ and ends on a side of a maximal $q$-band $\pazocal{C}$. Let $\nabla$ be the comb bounded by $\pazocal{B}$, a part of $\pazocal{C}$, and a subpath $\textbf{x}$ of $\textbf{z}_i$. Then there is a copy of the comb $\nabla$ in the trapezium $\Gamma=\Gamma_{i-1}\setminus\Gamma_i'$.

\end{lemma}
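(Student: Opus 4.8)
The plan is to exploit the ``cascading copies'' structure of the trapezia $\Gamma_1\supset\Gamma_2'\supset\Gamma_2\supset\cdots$ that was established immediately before this lemma: since $\lab(\textbf{y}_{i-1})$ is a coordinate shift of $\lab(\textbf{y}_i)$ and $H_{i+1}$ is a prefix of $H_i$, there is a copy $\Gamma_i'$ of $\Gamma_i$ sitting inside $\Gamma_{i-1}$, with $\textbf{bot}(\Gamma_i')=\textbf{y}_i'=\textbf{y}_{i-1}$. The key observation is that the comb $\nabla$ lives in $E_i^0$, which is built from the maximal $\theta$-bands of $\Psi_{i,i+1}^0$ crossing $\pazocal{Q}_i$ but not $\pazocal{Q}_{i+1}$; equivalently, $\nabla$ is built from the $\theta$-bands of $\Gamma_{i-1}$ that lie strictly ``below'' (toward the disk from) the sub-trapezium $\Gamma_i'$, i.e. from the cells of $\Gamma=\Gamma_{i-1}\setminus\Gamma_i'$. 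The band $\pazocal{B}$ starts on $\textbf{z}_i=\textbf{top}(\Gamma_i)$, which is a coordinate shift of $\textbf{z}_i'=\textbf{top}(\Gamma_i')$, so there is a natural candidate starting edge for a copy of $\pazocal{B}$ in $\Gamma$.

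First I would record the precise dictionary between $E_i^0$ and $\Gamma$. Using Lemma \ref{7.22}, every maximal $\theta$-band of $\Psi_{i,i+1}^0$ crosses $\pazocal{Q}_i$, and those crossing $\pazocal{Q}_{i+1}$ bound $\Gamma_i$; the remaining ones (of which there are $h_i-h_{i+1}$) bound $E_i^0$. On the other hand, $\Gamma_{i-1}$ has height $h_i$, its top $h_{i+1}$ bands form $\Gamma_i'$, and its bottom $h_i-h_{i+1}$ bands form exactly $\Gamma$. Because $\lab(\textbf{y}_{i-1})=\lab(\textbf{bot}(\Gamma_i'))$ is the coordinate shift of $\lab(\textbf{y}_i)=\lab(\textbf{bot}(\Gamma_i))$ and the histories of the two bands $\pazocal{Q}_i$ (below $\Gamma_i$) and the corresponding part of $\pazocal{Q}_i$ below $\Gamma_i'$ agree (both are the same suffix of $H_i$), Lemma \ref{trapezia are computations} forces $\Gamma$ and $E_i^0$ to be the ``same'' trapezia up to a coordinate shift in the non-``special''-input parts of the base. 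The subtlety is the ``special'' input sector: $E_i^0$ lies in a clove $\Psi_{i,i+1}^0$, and by Lemma \ref{non-distinguished a-cells}(1) (or (2) in the distinguished case) $\Psi_{i,i+1}^0$ contains no $a$-cells, and by Lemma \ref{7.28}(1) none of its maximal $q$-bands has coordinate $1$; so the base of $E_i^0$ does not involve $P_0(1)Q_0(1)$, and the coordinate shift to $\Gamma$ (which sits near the disk and whose base is $\{t(\ell)\}B_3(\ell)\{t(\ell+1)\}$ for some $\ell\ge 2$) is unproblematic.

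Next I would transfer $\nabla$ itself. Having identified $E_i^0$ with the trapezium $\Gamma$ via the coordinate shift $\varphi$, the comb $\nabla\subseteq E_i^0$ is bounded by the $a$-band $\pazocal{B}$ (starting on $\textbf{z}_i$), a subband of a maximal $q$-band $\pazocal{C}$ of $E_i^0$, and a subpath $\textbf{x}$ of $\textbf{z}_i$. Since $\pazocal{C}$ is a maximal $q$-band of $E_i^0$ it corresponds under $\varphi$ to a maximal $q$-band of $\Gamma$; the $\theta$-bands of $\nabla$ crossing $\pazocal{C}$ correspond to $\theta$-bands of $\Gamma$; and $\pazocal{B}$, being an $a$-band (hence a sequence of $(\theta,a)$-cells whose $a$-edges all carry the same tape letter, and that tape letter is \emph{not} from the ``special'' input sector by the base constraint above), is carried by $\varphi$ to a genuine $a$-band of $\Gamma$ starting on $\textbf{top}(\Gamma)=\textbf{z}_i'$. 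Thus $\varphi$ carries $\nabla$ to a subdiagram of $\Gamma$ with the same boundary label, which is the desired copy. The main obstacle I anticipate is precisely pinning down that the coordinate shift is a genuine isomorphism of labelled diagrams on the relevant portions — i.e. that no $(\theta,q)$-cell or $a$-cell in $\nabla$ ``sees'' the ``special'' input sector, so that the shift of tape alphabets does not change any cell's boundary label. Once Lemmas \ref{7.28}(1) and \ref{non-distinguished a-cells} are invoked to rule out coordinate-$1$ $q$-bands and $a$-cells in the relevant cloves, this reduces to a routine check that every cell of $\nabla$ is either a $(\theta,q)$-cell of a band of coordinate $\ge 2$, a $(\theta,a)$-cell whose $a$-letter is likewise of coordinate $\ge 2$, and that the parallel structure of the rules of $\textbf{M}$ makes such cells invariant (up to coordinate relabelling) under $\varphi$.
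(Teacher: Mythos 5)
Your overall plan is the paper's plan: exploit the copy $\Gamma_i'$ of $\Gamma_i$ inside $\Gamma_{i-1}$, so that $\textbf{z}_i'$ is a copy of $\textbf{z}_i$ over which $\Gamma=\Gamma_{i-1}\setminus\Gamma_i'$ sits with the same history suffix, and your side checks (no coordinate-$1$ $q$-bands in the clove by Lemma \ref{7.28}(1), no $a$-cells by Lemma \ref{non-distinguished a-cells}, so the coordinate shift does not disturb any tape letters) are correct and are also used in the paper. But the pivotal step is a genuine gap: you assert that ``Lemma \ref{trapezia are computations} forces $\Gamma$ and $E_i^0$ to be the same trapezia up to a coordinate shift'' and then transfer $\nabla$ through that identification. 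This is false as stated and, more importantly, presupposes the conclusion. $E_i^0$ is a comb, not a trapezium: by definition its maximal $\theta$-bands do not cross $\pazocal{Q}_{i+1}$ and in general terminate on the boundary path $\textbf{q}_{i,i+1}$ at varying distances from the handle, whereas every maximal $\theta$-band of $\Gamma$ spans the full base of $\Gamma_{i-1}$ (between $\pazocal{Q}_{i-1}$ and $\pazocal{Q}_i$). So $E_i^0$ cannot be isomorphic to $\Gamma$, Lemma \ref{trapezia are computations} says nothing about it, and the statement that $\nabla$ ``is built from the cells of $\Gamma$'' (your ``equivalently'') is exactly what the lemma asks you to prove -- $\nabla$ lives in the clove $\Psi_{i,i+1}$, $\Gamma$ in the clove $\Psi_{i-1,i}$, and an embedding of the former into the latter has to be constructed, not read off.

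What the paper does instead is a cell-by-cell reconstruction, which is the missing content. Since $\nabla$ contains no $a$-cells (Lemma \ref{non-distinguished a-cells}), the $(\theta,q)$-cell of $\nabla$ attached to the final $q$-edge $\textbf{f}$ of $\textbf{x}$ corresponds to the same letter of the history as the $(\theta,q)$-cell of $\Gamma$ attached to the copy $\textbf{f}'$ of $\textbf{f}$ in $\textbf{z}_i'$, hence the two cells are copies; then, moving from $\textbf{f}$ toward $\textbf{e}$, each successive cell of that $\theta$-band of $\nabla$ is forced by its bottom edge and the rule (in the spirit of Lemma \ref{theta-bands are one-rule computations}), so the whole band has a copy in $\Gamma$; and one then climbs band by band to copy all of $\nabla$. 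Your final ``routine check'' only addresses how labels change under the coordinate relabelling, not the existence of the map itself, so it does not substitute for this propagation. (A minor further slip: $\Gamma_i'$ shares its bottom $\textbf{y}_i'=\textbf{y}_{i-1}$ with $\partial\Pi$, so it is the part of $\Gamma_{i-1}$ adjacent to the disk and $\Gamma$ is the part away from the disk, the opposite of your description; this does not affect the pairing you use but indicates the picture should be rechecked.)
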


\begin{proof}

By Lemma \ref{non-distinguished a-cells}, $\nabla$ contains no $a$-cells.

Let the $a$-edge $\textbf{e}$ and the $q$-edge $\textbf{f}$ be the first and last edge of $\textbf{x}$, respectively. Since $\textbf{z}_i'$ is a copy of $\textbf{z}_i$ in the trapezium $\Gamma_{i-1}$, it contains a subpath $\textbf{x}'$ that is a copy of $\textbf{x}$ and starts with an $a$-edge $\textbf{e}'$ and ends with a $q$-edge $\textbf{f}'$. If $\pi$ is the $(\theta,q)$-cell attached to $\textbf{f}$ in $\nabla$, then the $(\theta,q)$-cell $\pi'$ attached to $\textbf{f}'$ is a copy since it corresponds to the same letter of the history. Moving from $\textbf{f}$ to $\textbf{e}$, the whole maximal $\theta$-band of $\nabla$ containing $\pi$ has a copy in $\Gamma_{i-1}$. Moving up, we find a copy of every maximal $\theta$-band of $\nabla$ in $\Gamma_{i-1}$, forming a copy of $\nabla$ in $\Gamma_{i-1}$.

\end{proof}

\begin{lemma} \label{7.30}

\textit{(Compare with Lemma 9.16 of [16] and Lemma 7.30 of [23])} 
\newline
At most $6$ $a$-bands starting on the path $\textbf{y}_i$ (or $\textbf{z}_i$) can end on $(\theta,q)$-cells of the same $\theta$-band.

\end{lemma}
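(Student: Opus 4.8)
The plan is to analyze the possible $\theta$-bands of the machine $\textbf{M}$ whose base letters correspond to sectors of $B_3(i)$ (equivalently, to sectors of $\textbf{M}_4$), and count how many $a$-letters from the \emph{`special'} input sector appear on the contour of a single $\theta$-band. Recall from the definition of the rules that the only sector on which an $a$-relation (and hence an $a$-cell boundary) can sit is the `special' input sector, i.e.\ the $P_0(1)Q_0(1)$-sector; so an $a$-band starting on $\textbf{y}_i$ (or $\textbf{z}_i$) that ends on a $(\theta,q)$-cell $\pi$ of a $\theta$-band $\pazocal{T}$ can only do so if $\pi$ corresponds to a base letter $Q_0(1)^{\pm1}$ (or, in the broader setting of the cloves, the relevant analogue of that base letter under a coordinate shift). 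The key point is therefore to bound the number of occurrences of such a base letter in the base of $\pazocal{T}$, which is a subword of $B_3(i)^{\pm1}$ (or a cyclic/coordinate shift thereof, arising from the structure of $\Psi$ as described just above the statement).

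First I would recall that by Lemma \ref{theta-bands are one-rule computations} the bottom label $\lab(\textbf{tbot}(\pazocal{T}))$ is an admissible word, so its base is the base of an admissible word; in particular the base of $\pazocal{T}$ reads as a (possibly unreduced) subword of the standard base of $\textbf{M}_4$ or of a reverted base built from it. Since the standard base $B_3$ contains the part $Q_0$ exactly once and (after reversion/cyclic shift for a revolving base) a base word of $\textbf{M}_4$ can contain a given base letter at most twice — and in the cloves, the base of $\pazocal{T}$ is a subword of $\{t(\ell)\}B_3(\ell)\{t(\ell+1)\}$ or of $\{t(L)\}B_3(L)\{t(1)\}$ (see the setup preceding Lemma \ref{7.29}) — the letter corresponding to the `special' input sector can occur at most a bounded number of times in the base of $\pazocal{T}$. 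By Lemma \ref{simplify rules} (as used throughout, e.g.\ in the proof of Lemma \ref{6.18}), each $(\theta,q)$-cell carries at most one $a$-edge from that alphabet on its contour, and an $a$-band ends on a $(\theta,q)$-cell precisely at such an edge. Counting these edges, together with the two possible orientations/occurrences of the base letter and the fact that $\textbf{y}_i$ (or $\textbf{z}_i$) lies on one side of $\pazocal{T}$ so at most one edge per relevant $(\theta,q)$-cell is visible from that side, gives the bound of $6$.

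The key steps, in order, would be: (1) reduce to determining which $(\theta,q)$-cells of a $\theta$-band $\pazocal{T}$ can have an $a$-edge labelled by a letter of the `special' input sector on their contour — by the definition of the rules of $\textbf{M}$ and the makeup of $\textbf{M}_4$, only cells corresponding to $Q_0(1)^{\pm1}$ (or its coordinate-shifted analogue in the clove under consideration); (2) bound the number of occurrences of that base letter in the base of $\pazocal{T}$ using Lemma \ref{theta-bands are one-rule computations} together with the explicit form of $B_3$ and of the bases appearing in the cloves of $\Delta$ (cf.\ the discussion preceding Lemma \ref{7.29}); (3) invoke Lemma \ref{simplify rules} to see that each such cell contributes at most one $a$-edge of the relevant alphabet to a given side of $\pazocal{T}$, and that an $a$-band ending on a $(\theta,q)$-cell does so exactly at such an edge; (4) assemble the count, accounting for both ends of $\pazocal{T}$ on $\partial\Delta$ / both orientations, to reach $6$.

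The main obstacle I anticipate is step (2): the base of $\pazocal{T}$ need not be a reduced word (it may be a reverted or cyclically shifted base, and the cloves of $\Delta$ are built from coordinate shifts of computations of $\textbf{M}_4$), so one must carefully track which base letters of $\textbf{M}$ project to the `special' input sector's $a$-alphabet and bound their multiplicity in an unreduced subword of $\{t(\ell)\}B_3(\ell)\{t(\ell+1)\}$ or $\{t(L)\}B_3(L)\{t(1)\}$. Once one observes that in any such word the base letter $Q_0$ (in the appropriate coordinate) occurs with multiplicity at most $2$, and that a side of $\pazocal{T}$ can pick up at most one $a$-edge per such cell, with at most a small constant coming from the two orientations and endpoints, the factor $6$ falls out; the bound is deliberately not tight.
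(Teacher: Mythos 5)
There is a genuine gap: your step (1) rests on a false premise. An $a$-band ending on a $(\theta,q)$-cell has nothing to do with $a$-cells or $a$-relations; it simply means the $(\theta,q)$-relation contains an $a$-edge with the same tape letter on its contour, i.e.\ the corresponding part of the rule writes or erases a letter in that sector. Such bands exist for tape letters of \emph{every} sector, not only the `special' input sector. In fact, in the situation of this lemma the paths $\textbf{y}_i$ and $\textbf{z}_i$ bound trapezia $\Gamma_i$ whose bases are of the form $\{t(\ell)\}B_3(\ell)\{t(\ell+1)\}$ (or the distinguished variants), which contain no base letters of coordinate $1$ at all — the paper uses exactly this in the proof of Lemma \ref{7.31} (``$\textbf{z}_i$ has no $a$-letters from the `special' input sector''). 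So under your premise the bound would be $0$, and the lemma would be vacuous; yet later arguments (Lemmas \ref{7.31}, \ref{7.38}, \ref{no one-step}) need the honest bound of $6$ for $a$-bands in ordinary tape sectors. Consequently your steps (2)--(4), which count occurrences of $Q_0(1)^{\pm1}$ and invoke the ``at most one special-input $a$-edge per $(\theta,q)$-cell'' fact used in Lemmas \ref{6.18} and \ref{7.28}, are counting the wrong objects, and the final assembly of the constant $6$ is not forced by anything you have established.

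The paper's argument runs differently: given $m$ such $a$-bands $\textbf{A}_1,\dots,\textbf{A}_m$, pass to the minimal subband $\pazocal{T}_0$ of $\pazocal{T}$ on which the middle bands $\textbf{A}_2,\dots,\textbf{A}_{m-1}$ end and the minimal subpath $\bar{\textbf{y}}_i$ of $\textbf{y}_i$ where the bands start; Lemma \ref{M_a no annuli 1} forces every $q$-band starting on $\bar{\textbf{y}}_i$ to cross $\pazocal{T}_0$ and vice versa, so the base of $\pazocal{T}_0$ is a subword of a reduced pararevolving base avoiding the `special' input sector and can be identified with a subword of the standard base of $\textbf{M}_4$. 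Since $\pazocal{T}_0$ corresponds to a single application of a rule (Lemma \ref{theta-bands are one-rule computations}) and any rule of $\textbf{M}_4$ inserts or deletes at most $4$ tape letters in a configuration, at most $4$ of the middle bands can end on $(\theta,q)$-cells of $\pazocal{T}_0$, whence $m-2\leq4$. If you want to salvage your write-up, the fix is to replace the special-input-sector counting with this ``one rule changes the $a$-length by at most $4$'' count.
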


\begin{proof}

Assume each of the $a$-bands $\textbf{A}_1,\dots,\textbf{A}_m$ starts from an edge of $\textbf{y}_i$ and ends on some $(\theta,q)$-cell of a $\theta$-band $\pazocal{T}$.  Let $\pazocal{T}_0$ be the minimal subband of $\pazocal{T}$ such that the $a$-bands $\textbf{A}_2,\dots,\textbf{A}_{m-1}$ end on $\pazocal{T}_0$. Then, let $\bar{\textbf{y}}_i$ the minimal subpath of $\textbf{y}_i$ where the $a$-bands $\textbf{A}_1,\dots,\textbf{A}_m$ start (see Figure 10.8).

By Lemma \ref{M_a no annuli 1}, each $q$-band starting on $\bar{\textbf{y}}_i$ has to cross $\pazocal{T}_0$ and vice versa. So, the base of $\pazocal{T}_0$ is a subword of a reduced pararevolving base not containing the `special' input sector. 

As a result, we can identify this base with a subword of the standard base of $\textbf{M}_4$ (or its inverse). By the structure of the rules of $\textbf{M}_4$, an application of any rule inserts/deletes at most $4$ $a$-letters in a configuration. Thus, $m-2\leq4$, so that the statement follows.

An analogous argument applies for $a$-bands starting from $\textbf{z}_i$.

\end{proof}

\begin{figure}[H]
\centering
\includegraphics[scale=2]{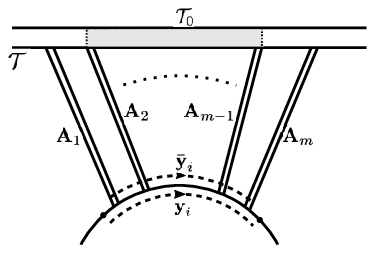}
\caption{}
\end{figure}

By the parameter choice $L>>L_0$ and Lemma \ref{7.22}, we may assume that $L_0+1\leq r$ and $L-L_0-4\geq r+1$. Then, suppose without loss of generality that $h\defeq h_{L_0+1}\geq h_{L-L_0-4}$.

\begin{lemma} \label{7.31}

\textit{(Compare with Lemma 9.17 of [16] and Lemma 7.31 of [23])} 
\newline
Let $I$ be the subset of the set of indices $i\in[L_0+1,r-1]\cup[r+1,L-L_0-5]$ such that $|\textbf{z}_i|_a\geq|V|_a/8c_3$. If $h\leq L_0^2|V|_a$, then $\# I\leq L/5$.

\end{lemma}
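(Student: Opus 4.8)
The statement is a counting lemma: under the hypothesis $h\le L_0^2|V|_a$ (so the spokes $\pazocal{Q}_{L_0+1},\dots,\pazocal{Q}_{L-L_0-4}$ are all relatively short), only a bounded fraction of the middle cloves can have a ``thick'' top path $\textbf{z}_i$. The plan is to argue toward a contradiction: assume $\#I>L/5$ and show that the accumulated weight of the trapezia and combs attached to these indices forces $\mathrm{wt}_G(\Delta)$ to exceed the inductive bound $N_4(|\partial\Delta|+\sigma_\lambda(\Delta^*))^2+N_3\mu(\Delta)$, contradicting the minimality of the counterexample $\Delta$. The two ingredients that drive the estimate are: first, for each $i$ the perimeter $|\partial\Pi|$ of the disk is at least roughly $L\delta|V|_a$ (by Lemma~\ref{accepted configuration a-length}), so a large $|V|_a$ already costs a lot; second, for $i\in I$ the trapezium $\Gamma_{i-1}\setminus\Gamma_i'$ (or the corresponding comb in $E_i^0$) carries $a$-edges of $\textbf{z}_i$ that, by Lemmas~\ref{7.29} and~\ref{7.30}, cannot all be absorbed into the side $q$-bands, hence contribute genuine weight bounded below in terms of $|\textbf{z}_i|_a\ge|V|_a/8c_3$.

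\textbf{Key steps.} First I would set up the comb/trapezium decomposition of $\Psi^0$ exactly as in the paragraphs preceding the lemma: for each $i$ in the relevant range write $\Psi_{i,i+1}^0=\Gamma_i\cup E_i^0$ (with the distinguished clove handled via $\Lambda'_{i,i+1}$), and recall $\textbf{y}_i^{-1}\subset\partial\Pi$, $\textbf{y}_i'=\textbf{y}_{i-1}$, and that $\Gamma_i'$ is a copy of $\Gamma_i$ inside $\Gamma_{i-1}$. Second, for each $i\in I$ I would bound from below the number of $a$-edges of $\textbf{z}_i$ that are \emph{not} shared with the handle $\pazocal{C}_i$ or its neighbors: since $|\textbf{z}_i|_a\ge|V|_a/8c_3$ and the base of $\Gamma_i$ has length at most $11$, Lemma~\ref{7.30} says each maximal $\theta$-band of $E_i^0$ (equivalently, each letter of $H_i$ above level $h_{i+1}$) absorbs at most $6$ such edges, so the comb $E_i^0$ must have height $h_i-h_{i+1}\ge |V|_a/(48c_3)$ \emph{unless} most $a$-bands from $\textbf{z}_i$ run into $\Gamma_{i-1}$; in the latter case Lemma~\ref{7.29} produces a copy inside $\Gamma=\Gamma_{i-1}\setminus\Gamma_i'$, so $\Gamma$ itself has $a$-length at least $|V|_a/(16c_3)$ on its boundary, forcing $h_{i-1}-h_i$ to be comparably large (here I would invoke Lemma~\ref{comb weights}(2) or directly Lemma~\ref{lengths}(d) to turn boundary $a$-length into band length). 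Third, since $h_{L_0+1}\ge h_{L_0+2}\ge\dots\ge h_r$ and $h_{r+1}\le\dots\le h_{L-L_0-4}$, the telescoping sum of the gaps $h_{i-1}-h_i$ over $i\in I\cap[L_0+2,r-1]$ is at most $h_{L_0+1}=h\le L_0^2|V|_a$, and similarly on the right half. If $\#I>L/5$, then at least $L/10-1$ of these indices lie in one of the two monotone halves, and each contributes a gap $\gtrsim |V|_a/(48c_3)$ (or the comb-copy alternative, whose cost I bound the same way), giving
\begin{equation*}
L_0^2|V|_a \;\ge\; h \;\ge\; \big(L/10-1\big)\cdot\frac{|V|_a}{48c_3},
\end{equation*}
which is impossible by the parameter choice $L>>L_0$, $L>>c_3$ (recall $L$ is chosen after $L_0$, and $c_3$ is chosen after $L_0$ but well before $L$, so $L/(480c_3)>L_0^2$). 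That contradiction finishes the proof without ever touching $N_3,N_4$ — it is purely a parameter inequality.

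\textbf{Main obstacle.} The delicate point is the dichotomy in the second step: an $a$-band emanating from $\textbf{z}_i$ can either end on a $(\theta,q)$-cell of a spoke of $\Pi$ inside $E_i^0$, cross a spoke and continue, end on $\partial\Delta$, or (via $\textbf{z}_i=\textbf{z}_{i}$, $\textbf{y}_i'=\textbf{y}_{i-1}$) feed into the trapezium $\Gamma_{i-1}$. Lemma~\ref{non-distinguished a-cells} rules out $a$-cells inside $\Psi^0_{i,i+1}$, and Lemma~\ref{a-cell in counterexample 2} together with the structure of the rules rules out $a$-edges of $\textbf{z}_i$ sitting on $\partial\Delta$ for the middle cloves (they are interior), so the only options are ``absorbed into a $\theta$-band of $E_i^0$'' or ``copied into $\Gamma_{i-1}$ via Lemma~\ref{7.29}''. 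Making this case analysis airtight — in particular checking that the $\Gamma_{i-1}$-copy genuinely forces $h_{i-1}-h_i$ large rather than just re-routing the bound, and handling the two orientations of the distinguished clove uniformly — is where the real work lies; everything downstream is the telescoping argument above.
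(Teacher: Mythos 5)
There is a genuine gap, and it lies exactly where you located the "delicate point": your dichotomy for the fate of the $a$-bands emanating from $\textbf{z}_i$ is wrong. An $a$-band of $E_i^0$ starting on $\textbf{z}_i$ cannot "feed into the trapezium $\Gamma_{i-1}$" at all — $\Gamma_{i-1}$ lies in the neighboring clove, on the other side of the $t$-spoke $\pazocal{Q}_i$, and an $a$-band can never cross a $q$-band. Lemma \ref{7.29} does not describe where bands go; it only asserts that a comb bounded by such a band (one that ends on a $(\theta,q)$-cell) has a \emph{copy} inside $\Gamma_{i-1}\setminus\Gamma_i'$, and it plays no role in this lemma. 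The case you actually omit is the main one: since $\Psi_{i,i+1}^0$ contains no $a$-cells (Lemma \ref{non-distinguished a-cells}) and the only $q$-bands are spokes (Lemma \ref{7.28}(1)), a band starting on $\textbf{z}_i$ either ends on a $(\theta,q)$-cell of a spoke or ends on the boundary path $\textbf{q}_{i,i+1}$. Nothing rules out the second possibility (your appeal to Lemma \ref{a-cell in counterexample 2} concerns edges of $\textbf{z}_i$ lying on $\partial\Delta$, not bands travelling to the boundary), and when it occurs the gap $h_i-h_{i+1}$ need not be large, so your per-index lower bound $h_i-h_{i+1}\gtrsim |V|_a/48c_3$ fails and the telescoping inequality collapses.

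This is why the paper's proof cannot be "purely a parameter inequality" that never touches $N_3,N_4$. The paper counts: the handles have total length $\le 2h$, so by Lemma \ref{7.30} at most $12h$ bands of $\textbf{A}=\cup_i\textbf{A}_i$ end on $(\theta,q)$-cells; if $\#I>L/5$ there are at least $L|V|_a/40c_3$ bands, so (using $14h\le L|V|_a/80c_3$ from the hypothesis $h\le L_0^2|V|_a$ and $L>>L_0>>c_3$) at least $L|V|_a/80c_3$ $a$-edges land on the $\textbf{q}_{i,i+1}$'s, each contributing $\delta$ to $|\textbf{q}_{L_0+1,L-L_0-4}|$. Combined with Lemmas \ref{7.27} and \ref{7.24} this makes $|\textbf{p}_{L_0+1,L-L_0-4}|$ exceed $(1+\eps)|\bar{\textbf{p}}_{L_0+1,L-L_0-4}|$ with $\eps=1/\sqrt{N_4}$, contradicting Lemma \ref{7.26}, which is precisely the isoperimetric comparison extracted from the minimal counterexample (and hence from $N_4$). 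To repair your argument you would have to add this boundary case and run the same comparison, at which point you recover the paper's proof rather than an independent parameter-only one.
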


\begin{proof}


For any $i\in [L_0+1,r-1]\cup[r+1,L-L_0-5]$, denote the set of maximal $a$-bands of $E_i^0$ starting at $\textbf{z}_i$ by $\textbf{A}_i$. Then set $\textbf{A}=\cup\textbf{A}_i$. 

As no base letter of $\Gamma_i$ is of the form $Q_0(1)^{\pm1}$ or $P_0(1)^{\pm1}$, $\textbf{z}_i$ has no $a$-letters from the `special' input sector. So, any $a$-band of $\textbf{A}$ either ends on a $(\theta,q)$-cell or on $\textbf{q}_{i,i+1}$.

Letting $g_i$ be the length of the handle of $E_i^0$, then $\sum g_i\leq2h$, where the sum is taken over the integers in $[L_0+1,r-1]\cup[r+1,L-L_0-5]$. So, by Lemma \ref{7.30}, at most $12h$ maximal $a$-bands of $\textbf{A}$ end on $(\theta,q)$-cells. 

Assuming the statement is false, $\textbf{A}$ contains at least $L|V|_a/40c_3$ $a$-bands. As a result, at least $\max(0,L|V|_a/40c_3-12h)$ bands from $\textbf{A}$ must end on the subpaths $\textbf{q}_{i,i+1}$. Since $\textbf{q}_{i,i+1}$ has at most $2h$ $\theta$-edges by Lemma \ref{7.28}(2), at least $\max(0,L|V|_a/40c_3-14h)$ $a$-edges contribute $\delta$ to $|\textbf{q}_{i,i+1}|$.

By assumption, $14h\leq14L_0^2|V|_a$, so that the parameter choices $L>>L_0>>c_3$ imply that $14h\leq L|V|_a/80c_3$.

It follows from Lemma \ref{7.27} that 
\begin{align*}
|\textbf{p}_{L_0+1,L-L_0-4}|\geq|\textbf{q}_{L_0+1,L-L_0-4}|&\geq h_{L_0+1}+h_{L-L_0-4}+11L/2+\delta(L|V|_a/40c_3-14h) \\
&\geq h_{L_0+1}+h_{L-L_0-4}+11L/2+\delta L|V|_a/80c_3
\end{align*}
Also, by Lemma \ref{7.24}, we have
\begin{align*}
|\bar{\textbf{p}}_{L_0+1,L-L_0-4}|&\leq h_{L_0+1}+h_{L-L_0-4}+11(3L_0)+3L_0\delta|V|_a \\
&\leq h_{L_0+1}+h_{L-L_0-4}+11(3L_0)+\delta L|V|_a/160c_3
\end{align*}
as $L>>L_0>>c_3$. These inequalities imply
\begin{equation} \label{7.31 later}
|\textbf{p}_{L_0+1,L-L_0-4}|-|\bar{\textbf{p}}_{L_0+1,L-L_0-4}|\geq 11L/3+\delta L|V|_a/160c_3
\end{equation}
Since $h_{L_0+1}+h_{L-L_0-4}\leq2h\leq2L_0^2|V|_a<\frac{1}{2}L|V|_a$, it follows that
$$|\bar{\textbf{p}}_{L_0+1,L-L_0-4}|<\frac{1}{2}L|V|_a+11(3L_0)+\delta L|V|_a/160c_3\leq11(3L_0)+L|V|_a$$
which implies that
$$\frac{|\textbf{p}_{L_0+1,L-L_0-4}|-|\bar{\textbf{p}}_{L_0+1,L-L_0-4}|}{|\bar{\textbf{p}}_{L_0+1,L-L_0-4}|}\geq\min\bigg(\frac{11L/3}{11(3L_0)},\frac{\delta L|V|_a/160c_3}{L|V|_a}\bigg)=\delta/160c_3$$
since we have $L>>L_0$. Finally, $\delta/160c_3>\eps=1/\sqrt{N_4}$ for sufficiently large $N_4$, and so
$$\frac{|\textbf{p}_{L_0+1,L-L_0-4}|}{|\bar{\textbf{p}}_{L_0+1,L-L_0-4}|}>1+\eps$$
But $L-L_0-4-(L_0+1)=L-2L_0-5\geq L-3L_0>L/2$ since $L>>L_0$, so that the above inequality contradicts Lemma \ref{7.26}.

\end{proof}

\begin{lemma} \label{7.32}

\textit{(Compare with Lemma 9.18 of [16] and Lemma 7.32 of [23])} 
\newline
If $h\leq L_0^2|V|_a$, then the histories $H_1$ and $H_{L-4}$ have different first letters.

\end{lemma}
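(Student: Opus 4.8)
\textbf{Proof plan for Lemma \ref{7.32}.}

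The plan is to argue by contradiction: suppose $H_1$ and $H_{L-4}$ have the same first letter. Recall that we have arranged $h = h_{L_0+1} \geq h_{L-L_0-4}$ and that the histories are monotone along the fan, so $H_{r+1}$ is a prefix of $H_{r+2}, \dots, H_{L-4}$ and $H_r$ a suffix-reading of... more precisely $H_{i+1}$ is a prefix of $H_i$ for $i \leq r-1$ and $H_j$ is a prefix of $H_{j+1}$ for $j \geq r+1$. Hence every $H_i$ shares its first letter with $H_1$ and $H_{L-4}$ under the contradiction hypothesis, so all spokes $\pazocal{Q}_1, \dots, \pazocal{Q}_{L-4}$ begin (read from $\Pi$) with the same rule $\theta$. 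In particular the $(\theta, t)$-cells adjacent to $\Pi$ along these spokes all correspond to $\theta$, which means there is a $\theta$-band $\pazocal{T}$ in $\Delta$ crossing many of these spokes with the only cells between it and $\partial\Pi$ being $a$-cells — this is the configuration ruled out by Lemma \ref{G_a theta-annuli}(1) once the number of spokes exceeds $(L-1)/2$. First I would make this $\theta$-band precise: starting from the $\theta$-edge of $\partial\Pi$ on the first cell of $\pazocal{Q}_1$, the maximal $\theta$-band $\pazocal{T}$ it generates runs along $\partial\Pi$; since each spoke $\pazocal{Q}_i$ begins with the same rule $\theta$, $\pazocal{T}$ must cross each $\pazocal{Q}_i$ it reaches, and any cell trapped between $\pazocal{T}$ and $\partial\Pi$ is forced to be an $a$-cell (it cannot be a $(\theta,q)$-cell, since the only $q$-bands in that region are spokes of $\Pi$, which $\pazocal{T}$ crosses rather than abuts).

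The main obstacle is controlling \emph{how many} of the $L-4$ consecutive spokes $\pazocal{T}$ actually crosses — a priori $\pazocal{T}$ could turn back after only a few. This is exactly where the hypothesis $h \leq L_0^2 |V|_a$ and the parameter gap $L \gg L_0$ enter. The key point is that $\pazocal{T}$ is the \emph{bottom-most} $\theta$-band (the one adjacent to $\Pi$), so it persists as long as the configuration $\lab(\partial\Pi)$ remains admissible for its rule across the corresponding coordinates; by the parallel structure of $\textbf{M}$ (Lemma \ref{lifted rule} / the fact that the rules act in parallel on coordinates $\geq 2$) and Lemma \ref{projection admissible configuration not}, admissibility in coordinate $i \geq 2$ is automatic once one coordinate is admissible, so $\pazocal{T}$ continues past every spoke $\pazocal{Q}_i$ with $2 \leq \text{coord}(i)$. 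The only place it can stop is at a spoke touching coordinate $1$ (the `special' input sector), and among $L-4$ consecutive spokes only a bounded number (at most two, namely those bounding the distinguished clove) touch coordinate $1$. Hence $\pazocal{T}$ crosses at least $L - 4 - 2 > (L-1)/2$ spokes of $\Pi$ (using $L \gg 1$), with only $a$-cells between $\pazocal{T}$ and $\partial\Pi$; combined with the transposition construction from Section 9.3 this contradicts the minimality of $s_2(\Delta)$ via Lemma \ref{G_a theta-annuli}(1).

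Actually, a cleaner route that sidesteps the admissibility bookkeeping: invoke Lemma \ref{7.28}(2) and the trapezia $\Gamma_i$ of Section 10.5. If $H_1$ and $H_{L-4}$ had the same first letter $\theta$, then in particular $H_{L_0+1}$ and $H_{L-L_0-4}$ do, so every intermediate spoke history begins with $\theta$ and the bottom $\theta$-bands of the trapezia $\Gamma_{L_0+1}, \dots, \Gamma_{L-L_0-5}$ all correspond to $\theta$ and fit together into a single $\theta$-band running along $\partial\Pi$ across all these middle cloves. Since none of these middle cloves is the distinguished clove (the distinguished clove sits between two specific adjacent $t$-letters, and $L \gg L_0$ lets us assume it lies outside $[L_0+1, L-L_0-5]$ after possibly reindexing — or handle it as the at-most-one exceptional slot), this produces a $\theta$-band crossing $\geq L - 2L_0 - 6 > (L-1)/2$ $t$-spokes of $\Pi$ with only $a$-cells between it and $\partial\Pi$, again contradicting Lemma \ref{G_a theta-annuli}(1). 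The hypothesis $h \leq L_0^2 |V|_a$ is used only to guarantee that the earlier structural lemmas (\ref{7.22}, \ref{7.31}) giving the trapezia decomposition apply in the regime we need; the contradiction itself is purely combinatorial. I expect the write-up to be short, with the only delicate point being the careful identification of the spanning $\theta$-band and the verification that the distinguished clove does not obstruct it.
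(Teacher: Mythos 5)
Your central construction does not work: there is no way to produce a \emph{single} $\theta$-band crossing more than $(L-1)/2$ of the $t$-spokes $\pazocal{Q}_1,\dots,\pazocal{Q}_{L-4}$, and in fact Lemma \ref{7.22}(2) already forbids it — any $\theta$-band of $\Psi$ crossing $\pazocal{Q}_1$ fails to cross $\pazocal{Q}_{r+1}$, so the band through the first cell of $\pazocal{Q}_1$ and the band through the first cell of $\pazocal{Q}_{L-4}$ are two \emph{different} maximal bands, and the bottom $\theta$-bands of the trapezia $\Gamma_i$ on the left of $r$ and on the right of $r$ do not ``fit together'' across the middle cloves. Your mechanism for forcing the band to continue past each spoke (admissibility of $\lab(\partial\Pi)$ in coordinates $\geq 2$) is also not a valid diagrammatic argument: admissibility of a configuration for a rule says nothing about which cells actually exist in $\Delta$; the band near the disk can simply turn and end on $\partial\Delta$ inside a clove, and nothing in (M1)--(M3) or Lemma \ref{lifted rule} prevents this. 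Consequently the appeal to Lemma \ref{G_a theta-annuli}(1) cannot be made, and your claim that the hypothesis $h\leq L_0^2|V|_a$ is needed only to ``make the decomposition apply'' is a mischaracterization — the contradiction is not purely combinatorial.

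The correct shape of the argument (and the paper's) is two-band, not one-band. Let $\pazocal{T}$ and $\pazocal{T}'$ be the maximal $\theta$-bands of $\Psi$ crossing $\pazocal{Q}_1$ and $\pazocal{Q}_{L-4}$ closest to $\Pi$, crossing $\ell$ and $\ell'$ spokes respectively (so $\ell\leq r\leq L-\ell'-4$). The hypothesis $h\leq L_0^2|V|_a$ enters quantitatively through Lemma \ref{7.31}: for every index $i$ in the ``gap'' $\ell+1\leq i\leq L-\ell'-5$ the path $\textbf{z}_i$ lies on $\partial\Pi$, so $|\textbf{z}_i|_a=|V|_a\geq|V|_a/8c_3$ and $i\in I$; since $\#I\leq L/5$, first $\ell,\ell'>L_0$ and then $\ell+\ell'\geq 3L/4>(L-1)/2$. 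If $H_1$ and $H_{L-4}$ had the same first letter, $\pazocal{T}$ and $\pazocal{T}'$ would correspond to the same rule read toward the disk, and this configuration of two disjoint bands is exactly what Lemma \ref{G_a theta-annuli}(2) (not (1)) rules out via the minimality of $\Delta^*$. So the missing ideas in your proposal are: (i) replacing the impossible single spanning band by the pair $(\pazocal{T},\pazocal{T}')$ and invoking part (2) of Lemma \ref{G_a theta-annuli}; and (ii) the counting argument via Lemma \ref{7.31} that makes $\ell+\ell'$ large, which is where the hypothesis on $h$ is genuinely used.
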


\begin{proof}

Let $\pazocal{T}$ and $\pazocal{T}'$ be the maximal $\theta$-bands of $\Psi$ crossing $\pazocal{Q}_1$ and $\pazocal{Q}_{L-4}$, respectively, closest to the disk $\Pi$.

Set $\ell,\ell'$ as the maximal integers such that $\pazocal{T}$ crosses the $t$-spokes $\pazocal{Q}_1,\dots,\pazocal{Q}_\ell$ and $\pazocal{T}'$ crosses the $t$-spokes $\pazocal{Q}_{L-\ell'-3},\dots,\pazocal{Q}_{L-4}$. Note that $\ell\leq r\leq L-\ell'-4$.

For any $\ell+1\leq i\leq L-\ell'-5$, $\textbf{z}_i$ is a subpath of $\partial\Pi$. As a result, $|\textbf{z}_i|=|V|_a\geq|V|_a/8c_3$. Hence, if also $i\in[L_0+1,r-1]\cup[r+1,L-L_0-5]$, then $i\in I$ (for $I$ as defined in the statement of Lemma \ref{7.31}).

If $\ell\leq L_0$, then $\# I\geq(r-1)-(L_0+1)\geq (L-1)/2-L_0-5\geq L/3$. Similarly, if $\ell'\leq L_0$, then $\# I\geq L/3$. But these inequalities contradict Lemma \ref{7.31}, so that $\ell,\ell'>L_0$.

This implies $\# I\geq (L-\ell'-5)-(\ell+1)-2\geq L-(\ell+\ell')-8$, so that Lemma \ref{7.31} yields $\ell+\ell'\geq L-L/5-8\geq 3L/4$.

Thus, if the rules corresponding to $\pazocal{T}$ and $\pazocal{T}'$ are same, then the minimality of $\Delta^*$ contradicts Lemma \ref{G_a theta-annuli}(2).

\end{proof}

\begin{lemma} \label{7.33} \textit{(Compare with Lemma 7.33 of [23])} If $h\leq L_0^2|V|_a$, then $\displaystyle|V|_a>\frac{11L}{4\delta L_0}$.

\end{lemma}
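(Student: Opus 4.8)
The plan is to derive the inequality $|V|_a > \frac{11L}{4\delta L_0}$ by combining the geometric estimates on the paths $\textbf{p}$ and $\bar{\textbf{p}}$ (Lemma \ref{7.24}) with the conclusion of Lemma \ref{7.32}, that the histories $H_1$ and $H_{L-4}$ begin with different letters. The key observation enabled by Lemma \ref{7.32} is that the path $\textbf{p} = \textbf{p}_{1,L-4}$ is \emph{not} freely reducible in a way that would let it coincide with $\bar{\textbf{p}} = \bar{\textbf{p}}_{1,L-4}$: the first edges of the two spokes' sides are genuinely distinct $\theta$-edges because the first rules differ. Thus $\Delta$ contains a nondegenerate annular region between $\textbf{p}$ and $\bar{\textbf{p}}$ around the disk $\Pi$, and $|\textbf{p}|$ cannot be too much smaller than $|\partial\Pi|$.

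\textbf{Key steps.} First I would invoke Lemma \ref{7.32}: under the hypothesis $h \le L_0^2|V|_a$, the first letters of $H_1$ and $H_{L-4}$ differ. This means that the $\theta$-band of $\Psi$ crossing $\pazocal{Q}_1$ closest to $\Pi$ and the one crossing $\pazocal{Q}_{L-4}$ closest to $\Pi$ correspond to different rules, so by (the minimality of $\Delta^*$ and) Lemma \ref{G_a theta-annuli}(2) the sum of the numbers of $t$-spokes they cross is at most $(L-1)/2$; in the notation of the proof of Lemma \ref{7.32}, $\ell + \ell' \le (L-1)/2$, which forces $\max(\ell,\ell')$ to be bounded and hence leaves at least a constant fraction of the $t$-spokes $\pazocal{Q}_i$ with $\textbf{z}_i$ a full subpath of $\partial\Pi$. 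Second, I would use Lemma \ref{7.24}(2), $|\bar{\textbf{p}}| \le h_1 + h_{L-4} + 11L + (L+1)\delta|V|_a$ roughly, together with $h_1 + h_{L-4} \le 2h \le 2L_0^2|V|_a$, to bound $|\bar{\textbf{p}}|$ from above in terms of $|V|_a$ and $L$. Third, I would obtain a lower bound on $|\textbf{p}|$: since $\textbf{p}_{ij}$ contains at least $11(j-i)+1$ $q$-edges (every spoke of $\Pi$ starting on the relevant arc of $\partial\Pi$ ends on $\textbf{p}$) and since the $\theta$-edges on both sides of the extreme spokes appear on $\textbf{p}$ (Lemma \ref{7.22}(2)), Lemma \ref{7.24}(1) gives $|\textbf{p}| \ge h_1 + h_{L-4} + 11(L-5) + 1$. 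Fourth — and this is where the difference-of-first-letters is essential — I would argue that because the annular region around $\Pi$ bounded by $\textbf{p}$ and $\bar{\textbf{p}}$ is genuinely nondegenerate, one actually needs a stronger lower bound, forcing $|\textbf{p}| \ge c \cdot |\partial\Pi| / L$ up to lower-order terms, and then compare with $|\partial\Pi| = 11L + \delta\sum_i|W(i)|_a \ge 11L + \delta L |V|_a$ (using $|W(i)|_a = |V|_a$ for $i \ge 2$) from Lemma \ref{accepted configuration a-length}. Chasing the constants, the combination $|\textbf{p}|/|\bar{\textbf{p}}| \le 1+\eps$ from Lemma \ref{7.26} (applicable since $L-5 > L/2$) together with the lower bound on $|\textbf{p}|$ and the upper bound on $|\bar{\textbf{p}}|$ yields a contradiction \emph{unless} $|V|_a$ is large, specifically $|V|_a > \frac{11L}{4\delta L_0}$.

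\textbf{Main obstacle.} I expect the crux to be step four: making precise how the distinctness of the first letters of $H_1$ and $H_{L-4}$ (Lemma \ref{7.32}) forces $\textbf{p}$ to be substantially longer than the naive count, i.e. controlling the `$a$-content' that $\textbf{p}$ must carry around $\Pi$. The structure of this estimate should parallel the argument inside the proof of Lemma \ref{7.31}: if too many of the paths $\textbf{z}_i$ (or $\textbf{q}_{i,i+1}$) were short in $a$-edges, then $|\textbf{p}_{L_0+1,L-L_0-4}|$ would exceed $(1+\eps)|\bar{\textbf{p}}_{L_0+1,L-L_0-4}|$, contradicting Lemma \ref{7.26}. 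The point here is the symmetric companion: because $H_1$ and $H_{L-4}$ have distinct first letters, the subdiagrams $\Gamma_i$ on the two sides of the disk cannot all be coordinate shifts of a single short trapezium, so the $\theta$-bands of $\Psi$ cannot collapse the $a$-content of $\partial\Pi$; quantitatively this means the $11L$-plus-$\delta L|V|_a$ worth of edges on $\partial\Pi$ must be `paid for' along $\textbf{p}$ at a rate that, when divided by the $3L_0$-scale length of $\bar{\textbf{p}}_{L_0+1,L-L_0-4}$, exceeds $\eps$ unless $|V|_a > 11L/(4\delta L_0)$. I would set this up by reorganizing the inequality (\ref{7.31 later})-style estimate from Lemma \ref{7.31} with the bound on $h$ substituted in, solve for the threshold on $|V|_a$, and verify the constant $11/(4\delta L_0)$ comes out as claimed after the parameter choices $N_4 \gg \delta^{-1}$, $L \gg L_0 \gg c_3$ are invoked.
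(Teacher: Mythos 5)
Your proposal has a genuine gap at its central quantitative step. You work with the maximal clove and want to bound $|\bar{\textbf{p}}|=|\bar{\textbf{p}}_{1,L-4}|$ via ``$h_1+h_{L-4}\le 2h\le 2L_0^2|V|_a$'', but this inequality is false: by definition $h=h_{L_0+1}$ (with the convention $h_{L_0+1}\ge h_{L-L_0-4}$), and the monotonicity $h_1\ge h_2\ge\dots\ge h_r$ from Lemma \ref{7.22} gives $h_1\ge h$, so $h_1$ (and likewise $h_{L-4}$) can be far larger than $h$ and is not controlled by the hypothesis $h\le L_0^2|V|_a$ at all. Without an upper bound on $h_1+h_{L-4}$ in terms of $|V|_a$, your upper bound on $|\bar{\textbf{p}}|$ collapses and the comparison with Lemma \ref{7.26} produces no contradiction. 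The remedy is exactly the choice the paper makes: run the argument on the clove $\Psi_{L_0+1,L-L_0-4}$, whose bounding spokes have heights $h_{L_0+1},h_{L-L_0-4}\le h\le L_0^2|V|_a$. Then, assuming $|V|_a\le 11L/4\delta L_0$, Lemma \ref{7.24} gives $|\textbf{p}_{L_0+1,L-L_0-4}|\ge h_{L_0+1}+h_{L-L_0-4}+11(L-3L_0)$ and $|\bar{\textbf{p}}_{L_0+1,L-L_0-4}|\le h_{L_0+1}+h_{L-L_0-4}+3L_0(11+\delta|V|_a)\le 11L_0L/\delta$, and since $L\gg L_0$ the difference exceeds $11L/5$, so the ratio $(|\textbf{p}|-|\bar{\textbf{p}}|)/|\bar{\textbf{p}}|>\delta/5L_0>\eps$, contradicting Lemma \ref{7.26} (applicable because $L-2L_0-5\ge L/2$).

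A second, structural issue: your step four, which you identify as the crux, is both vague and unnecessary. The paper's proof of this lemma makes no use of Lemma \ref{7.32} (distinctness of the first letters of $H_1$ and $H_{L-4}$), nor of any ``nondegenerate annulus'' argument forcing $|\textbf{p}|\gtrsim|\partial\Pi|/L$; the statement follows purely from the length estimates of Lemma \ref{7.24} combined with Lemma \ref{7.26}, once the clove is chosen so that the hypothesis on $h$ actually bounds the spoke heights. As written, your plan hinges on an unproved strengthening of the lower bound on $|\textbf{p}|$ that you do not supply, while the false height bound above is what would have been needed to make even the naive comparison work.
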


\begin{proof}

Assume that $|V|_a\leq 11L/4\delta L_0$. Then Lemma \ref{7.24} implies the inequalities $$|\textbf{p}_{L_0+1,L-L_0-4}|\geq h_{L_0+1}+h_{L-L_0-4}+11(L-3L_0)$$ $$|\bar{\textbf{p}}_{L_0+1,L-L_0-4}|\leq h_{L_0+1}+h_{L-L_0-4}+3L_0(11+\delta|V|_a)$$
Hence, as $L>>L_0$, $$|\textbf{p}_{L_0+1,L-L_0-4}|-|\bar{\textbf{p}}_{L_0+1,L-L_0-4}|\geq11(L-6L_0)-3L_0\delta|V|_a>11(L-6L_0)-33L/4>11L/5$$ 
The inequality $h_{L_0+1}+h_{L-L_0-4}\leq 2h$ then implies $$|\bar{\textbf{p}}_{L_0+1,L-L_0-4}|\leq2h+3L_0(11+11L/4L_0)\leq2L_0^2\frac{11L}{4\delta L_0}+11L<11L_0L/\delta$$
So, since $N_4>>\delta^{-1}>>L_0$, we have $$\frac{|\textbf{p}_{L_0+1,L-L_0-4}|-|\bar{\textbf{p}}_{L_0+1,L-L_0-4}|}{|\bar{\textbf{p}}_{L_0+1,L-L_0-4}|}>\frac{\delta}{5L_0}>\eps$$
But $L-L_0-4-(L_0+1)\geq L/2$, so that the above inequality contradicts Lemma \ref{7.26}.

\end{proof}

\begin{lemma} \label{7.34}

\textit{(Compare with Lemma 9.19 of [16] and Lemma 7.34 of [23])} 
\newline
The inequality $h>L_0^2|V|_a$ must be true.

\end{lemma}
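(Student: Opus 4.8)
The plan is to argue by contradiction, assuming $h \le L_0^2 |V|_a$, and derive that both $H_1$ and $H_{L-4}$ must begin with the same letter, directly contradicting Lemma~\ref{7.32}. The engine of the argument is the same ``long history'' dichotomy that drove the earlier lemmas: either the spoke histories are genuinely short relative to $|V|_a$ (controlled by the geometry of the clove, via Lemmas~\ref{7.24}, \ref{7.27}, \ref{7.26}), or they are long enough to force long $\lambda$-shafts and hence force structure on the $\theta$-bands emanating from $\Pi$. Under the standing assumption $h \le L_0^2|V|_a$, Lemma~\ref{7.33} already tells us $|V|_a > 11L/(4\delta L_0)$, so $|V|_a$ is not too small, which is exactly what is needed to make the $a$-edge counting arguments in the spirit of Lemma~\ref{7.31} bite.

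\textbf{Key steps.} First I would invoke Lemma~\ref{7.32}: under the hypothesis $h \le L_0^2|V|_a$, the histories $H_1$ and $H_{L-4}$ have different first letters, so the maximal $\theta$-bands $\pazocal{T}$ and $\pazocal{T}'$ of $\Psi$ closest to $\Pi$ and crossing $\pazocal{Q}_1$, $\pazocal{Q}_{L-4}$ respectively correspond to distinct rules. Second, I would reuse the argument inside the proof of Lemma~\ref{7.32}: letting $\ell$ (resp.\ $\ell'$) be the number of consecutive $t$-spokes crossed by $\pazocal{T}$ (resp.\ $\pazocal{T}'$), the counting via Lemma~\ref{7.31} forces $\ell + \ell' \ge 3L/4$ once $h \le L_0^2|V|_a$. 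Third --- and this is where the contradiction is manufactured --- I would observe that because $\ell + \ell' \ge 3L/4 > (L-1)/2$, the two bands $\pazocal{T}$ and $\pazocal{T}'$ together cross more than $(L-1)/2$ $t$-spokes of $\Pi$, and every cell between the bottom of each of these bands and $\Pi$ can be taken to be an $a$-cell (by Lemma~\ref{M_a no annuli 1}, as in the proof of Lemma~\ref{7.22}); but if $\pazocal{T}$ and $\pazocal{T}'$ corresponded to the \emph{same} rule when read toward $\Pi$ this would contradict Lemma~\ref{G_a theta-annuli}(2), which is precisely the conclusion $\ell+\ell' \le (L-1)/2$ reached in Lemma~\ref{7.32}. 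So I must instead leverage the \emph{difference} of the first letters: the point is that the very fact that $\pazocal{T}$ and $\pazocal{T}'$ are distinct rules, combined with $\ell + \ell' \ge 3L/4$, places them so close together around $\Pi$ that there is a $t$-spoke crossed by \emph{both}, forcing a $\theta$-band crossing it to turn --- which is impossible since $\theta$-bands cannot cross. Chasing this down: $\ell > r \ge (L-1)/2 - 3$ would already exceed the allowed crossing count from Lemma~\ref{G_a theta-annuli}(1) applied to the minimal diagram $\Delta^*$ (which contains all spokes), giving the contradiction outright once we push the counting to show $\max(\ell,\ell') > (L-1)/2$.

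\textbf{Main obstacle.} The delicate point is bookkeeping the exact relationship between $\ell$, $\ell'$, $r$, and the bound $\# I \le L/5$ from Lemma~\ref{7.31}, to squeeze out $\max(\ell,\ell') > (L-1)/2$ rather than merely $\ell+\ell' \ge 3L/4$; the latter alone is consistent with Lemma~\ref{G_a theta-annuli}(1) applied separately to $\pazocal{T}$ and $\pazocal{T}'$. I expect the resolution is to run the $I$-counting argument of Lemma~\ref{7.31} once more but now using that $\textbf{z}_i$ is a subpath of $\partial\Pi$ (so $|\textbf{z}_i|_a = |V|_a$, well above the threshold $|V|_a/8c_3$) for \emph{every} index $i$ strictly between $\ell$ and $L - \ell' - 5$: this forces $[\ell+1, L-\ell'-5] \cap \big([L_0+1,r-1]\cup[r+1,L-L_0-5]\big)$ to have size $\le L/5$, which when combined with $\ell, \ell' > L_0$ (itself forced exactly as in Lemma~\ref{7.32}) yields $L - (\ell+\ell') - 8 \le L/5$, i.e.\ $\ell + \ell' \ge 3L/4 - 8$. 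Since $\ell \le r \le (L-1)/2$ and symmetrically $\ell' \le L - r - 4 \le (L-1)/2$ is \emph{not} quite enough, I would instead note that one of $\ell, \ell'$ must then be at least $(3L/4 - 8)/2 > (L-1)/2$ once $L$ is large, which directly contradicts Lemma~\ref{G_a theta-annuli}(1) since that band together with the $\ge (L-1)/2 + 1$ consecutive $t$-spokes it crosses and $\partial\Pi$ bounds a diskless subdiagram by the choice of $\pazocal{T}$ (resp.\ $\pazocal{T}'$) as closest to $\Pi$. This finishes the proof; the only real work is confirming the arithmetic with the parameter hierarchy $L \gg L_0 \gg c_3$ and $N_4 \gg \delta^{-1}$.
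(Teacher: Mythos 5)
Your final contradiction does not work, and the arithmetic at the crucial point is simply wrong. From $\ell+\ell'\geq 3L/4-8$ you can only conclude $\max(\ell,\ell')\geq 3L/8-4$, which is well below $(L-1)/2$, not above it; so Lemma~\ref{G_a theta-annuli}(1) is not violated. Nor can you fall back on Lemma~\ref{G_a theta-annuli}(2): that part requires the two bands to correspond to the \emph{same} rule when read toward the disk, and Lemma~\ref{7.32} (which you are invoking) says precisely that under the assumption $h\leq L_0^2|V|_a$ the rules are \emph{different} --- indeed that is exactly how its proof avoids the contradiction with \ref{G_a theta-annuli}(2). So the configuration you end up with (distinct rules, $\ell,\ell'$ each close to but not exceeding $(L-1)/2$, $\ell+\ell'\geq 3L/4$) is internally consistent; no contradiction can be extracted from band-crossing counts alone, and the ``$t$-spoke crossed by both bands forces a $\theta$-band to turn'' idea also fails, since two distinct $\theta$-bands may each cross the same $t$-spoke without crossing each other.

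The paper's proof goes a genuinely different way, and the difference matters. Under the assumption $h\leq L_0^2|V|_a$, Lemma~\ref{7.31} supplies two far-apart indices $i\leq r<r+1\leq j$ (avoiding the distinguished clove) with $|\textbf{z}_i|_a,|\textbf{z}_j|_a<|V|_a/8c_3$, and Lemma~\ref{7.32} is used only to guarantee that $H_{i+1}$ and $H_j$ have different first letters. One then glues $\Gamma_i$ to the mirror of a coordinate shift of $\Gamma_j$ along $\textbf{y}_i$ to form a trapezium $E$ with \emph{reduced} history $H_j^{-1}H_{i+1}$ whose height $t=h_{i+1}+h_j>|V|_a/4$ vastly exceeds the lengths of its top and bottom (here Lemma~\ref{7.33} is what makes $|V|_a$, hence $t$, large compared to $c_3\max(\|W_0\|,\|W_t\|)$). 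Lemma~\ref{M projected long history controlled} then forces a controlled subword in any central portion of $H_{i+1}$, so $\pazocal{Q}_{i+1}$ is a $\lambda$-shaft of length $h_{i+1}$; this inflates $\sigma_\lambda(\bar{\Delta}_{i+1,j}^*)$ by $h_{i+1}$, and together with Lemma~\ref{7.24} one gets
$$\frac{|\textbf{p}_{i+1,j}|+\sigma_\lambda(\bar{\Delta}_{i+1,j}^*)}{|\bar{\textbf{p}}_{i+1,j}|}\geq\frac{2h_{i+1}+h_j}{\tfrac{3}{2}h_{i+1}+h_j}\geq\frac{6}{5}>1+\eps,$$
contradicting Lemma~\ref{7.26}. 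In short, the contradiction lives in the shaft term $\sigma_\lambda$ of the perimeter inequality, not in the $t$-spoke crossing counts; your proposal is missing this shaft-production mechanism entirely, and without it the assumption $h\leq L_0^2|V|_a$ cannot be refuted.
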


\begin{proof}

Assuming the statement is false, Lemma \ref{7.31} implies that for at least $L-5-L/5-2L_0>3L/4$ indices $j\in\{1,\dots,L-5\}$,  $|\textbf{z}_j|_a<|V|_a/8c_3$. So, we can choose two such indices, $i$ and $j$, such that $L_0+1\leq i\leq r<r+1\leq j\leq L-L_0-5$, $j-i\geq 3L/5$, and neither $\Psi_{i,i+1}$ nor $\Psi_{j,j+1}$ is the distinguished clove. 

Since $H_{i+1}$ (respectively $H_j$) is a prefix of $H_1$ (respectively $H_{L-4}$), it follows from Lemma \ref{7.32} that the first letters of $H_{i+1}$ and $H_j$ are different.

Since $\lab(\textbf{y}_i)$ and $\lab(\textbf{y}_j)$ are coordinate shifts of one another (and are copies of $V$), we can construct an auxiliary trapezium $E$ by pasting the mirror of a coordinate shift of $\Gamma_j$ to $\Gamma_i$ along $\textbf{y}_i$. The history of $E$ is $H_j^{-1}H_{i+1}$, which is a reduced word since the first letter of $H_{i+1}$ is different from the first letter of $H_j$.

The top and the bottom of $E$ are copies of $\textbf{z}_i$ and $\textbf{z}_j$, respectively, and so have $a$-lengths less than $|V|_a/8c_3$. Without loss of generality, assume $h_{i+1}\geq h_j$, and so $h_{i+1}\geq t/2$ for $t$ the height of $E$.

Note that $|V|_a-|V|_a/8c_3>|V|_a/2$, and so $h_{i+1},h_j>|V|_a/8$ since any rule of $\textbf{M}_4$ alters the $a$-length of a configuration by at most four.

By Lemma \ref{7.33}, $|V|_a/8>\frac{11L}{32\delta L_0}\geq 12c_3$ since $\delta^{-1}>>L>>L_0>>c_3$. Further, letting $W_0$ and $W_t$ be the bottom and top labels of $E$, $|V|_a/8> c_3\max(|W_0|_a,|W_t|_a)$.

As a result,
$$t=h_{i+1}+h_j>|V|_a/4>c_3\max(|W_0|_a,|W_t|_a)+12c_3\geq c_3\max(\|W_0\|,\|W_t\|)$$
Let $\pazocal{C}$ be the computation associated to $E$ through Lemma \ref{trapezia are computations}. Then the restriction of $\pazocal{C}$ (or its inverse) to $\{t(\ell)\}B_3(\ell)$ for the appropriate $\ell\geq2$ satisfies the hypotheses of Lemma \ref{M projected long history controlled}. 

Setting $\lambda<1/10$, every factorization $H'H''H'''$ of $H_{i+1}$ with $\|H'\|+\|H'''\|\leq\lambda h_{i+1}$ satisfies $\|H''\|>0.4t$. So, applying Lemma \ref{M projected long history controlled}, $H''$ contains a controlled subword. Further, since all $\theta$-bands crossing $\pazocal{Q}_{i+1}$ must cross $\pazocal{Q}_i$, $W(\ell)$ is $H_{i+1}$-admissible. Hence, $\pazocal{Q}_{i+1}$ is a $\lambda$-shaft. 

Lemma \ref{7.24}(1) then implies that $|\textbf{p}_{i+1,j}|+\sigma_\lambda(\bar{\Delta}_{i+1,j}^*)\geq 2h_{i+1}+h_j$.

As $h_{i+1}>|V|_a/8$, it follows that $\delta(L+1)|V|_a\leq8\delta(L+1)h_{i+1}<\frac{1}{4}h_{i+1}$ by the parameter choice $\delta^{-1}>>L$. Similarly, by Lemma \ref{7.33} and $\delta^{-1}>>L_0$, $11L<4\delta L_0|V|_a<32\delta L_0h_{i+1}\leq\frac{1}{4}h_{i+1}$.

So, Lemma \ref{7.24}(2) yields $|\bar{\textbf{p}}_{i+1,j}|\leq \frac{3}{2}h_{i+1}+h_j$.

Hence, $$\frac{|\textbf{p}_{i+1,j}|+\sigma_\lambda(\bar{\Delta}_{i+1,j}^*)}{|\bar{\textbf{p}}_{i+1,j}|}\geq\frac{2h_{i+1}+h_j}{\frac{3}{2}h_{i+1}+h_j}\geq\frac{6}{5}$$ since $h_j\leq h_{i+1}$.

Taking $N_4$ sufficiently large, $\eps=1/\sqrt{N_4}<0.2$. However, as $j-(i+1)\geq3L/5-1\geq L/2$, the above inequality contradicts Lemma \ref{7.26}.

\end{proof}

%
%
%
%
%
%

\begin{lemma} \label{7.35}

\textit{(Compare with Lemma 9.20 of [16] and Lemma 7.35 of [23])} 
\newline
For $i=1,\dots,L_0$, we have $h_i>\delta^{-1}$.

\end{lemma}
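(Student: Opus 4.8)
\textbf{Proof plan for Lemma \ref{7.35}.}

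The plan is to argue by contradiction: suppose that for some $i\in\{1,\dots,L_0\}$ we have $h_i\leq\delta^{-1}$. Since $h_1\geq h_2\geq\dots\geq h_r$ by Lemma \ref{7.22}(2), this forces $h_j\leq\delta^{-1}$ for all $j\in\{i,i+1,\dots,r\}$, and in particular $h=h_{L_0+1}\leq\delta^{-1}$ (using $L_0+1\leq r$). By Lemma \ref{7.34} this would give $\delta^{-1}\geq h>L_0^2|V|_a$, so $|V|_a<\delta^{-1}/L_0^2$, which (since $\delta^{-1}>>L_0$) says $|V|_a$ is extremely small relative to $\delta^{-1}$. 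The first step is therefore to extract these consequences and see that they push us into the regime $h\leq L_0^2|V|_a$ where Lemmas \ref{7.31}--\ref{7.34} apply, most importantly the perimeter comparisons of Lemma \ref{7.24} together with the ``expansion'' contradiction of Lemma \ref{7.26}.

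The heart of the argument will be the same kind of length estimate used in the proofs of Lemmas \ref{7.31}--\ref{7.34}: compare $|\textbf{p}_{i,L-4-i'}|$ (or $|\textbf{p}_{L_0+1,L-L_0-4}|$) with $|\bar{\textbf{p}}_{i,L-4-i'}|$ using Lemma \ref{7.24}. When all of $h_1,\dots,h_{L_0}$ (and hence $h_{r+1},\dots,h_{L-4}$ on the mirror side, up to reindexing — one would reduce to the case $h=h_{L_0+1}\geq h_{L-L_0-4}$ as in the setup before Lemma \ref{7.31}) are bounded by $\delta^{-1}$, the $h_i+h_j$ terms contribute at most $2\delta^{-1}$ to both sides, which is negligible against the $11(j-i)$ term (of order $L$) appearing in Lemma \ref{7.24}(1). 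Thus $|\textbf{p}_{ij}|$ is essentially $11(j-i)+\text{(small)}$ while $|\bar{\textbf{p}}_{ij}|$ is essentially $11(L-j+i)+(L-j+i+1)\delta|V|_a$, and for $j-i\geq L/2$ (which holds for the indices $L_0+1$ and $L-L_0-4$ since $L>>L_0$) the first is strictly larger by a factor exceeding $1+\eps=1+1/\sqrt{N_4}$, using $\delta|V|_a$ small (from $|V|_a<\delta^{-1}/L_0^2$) and $N_4$ large. This is the contradiction with Lemma \ref{7.26}.

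I expect the main obstacle to be a bookkeeping subtlety rather than a conceptual one: one must be careful that the hypothesis ``$h_i\leq\delta^{-1}$ for \emph{some} $i\leq L_0$'' really does propagate to a uniform bound on all the $h_j$ relevant to the estimate. The monotonicity $h_1\geq\dots\geq h_r$ handles the left half of the indices cleanly, so $h_j\leq\delta^{-1}$ for $j\in\{1,\dots,r\}$ automatically. For the right half one uses the mirror monotonicity $h_{r+1}\leq\dots\leq h_{L-4}$; here the normalizing assumption $h=h_{L_0+1}\geq h_{L-L_0-4}$ is what lets us conclude $h_{L-L_0-4}\leq\delta^{-1}$ as well, and then one runs the length comparison on the symmetric pair of spokes. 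A secondary point requiring care: since $|V|_a<\delta^{-1}/L_0^2$, the quantity $(L-j+i+1)\delta|V|_a$ in Lemma \ref{7.24}(2) is bounded by $(L+1)\delta\cdot(\delta^{-1}/L_0^2)=(L+1)/L_0^2$, which is small by $L_0^2>>L$; one has to check the parameter hierarchy $N_4>>\delta^{-1}>>L_0>>L$ supplies exactly what is needed so that the ratio $|\textbf{p}|/|\bar{\textbf{p}}|$ beats $1+\eps$. Once the parameter inequalities are lined up, invoking Lemma \ref{7.26} closes the argument.
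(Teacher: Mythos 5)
Your overall route is the same as the paper's: assume $h_i\le\delta^{-1}$, note $h_i\ge h\ge h_{L-L_0-4}$, invoke Lemma \ref{7.34} to get $\delta|V|_a<1/L_0^2$, and then contradict Lemma \ref{7.26} by comparing $|\textbf{p}_{i,L-L_0-4}|$ with $|\bar{\textbf{p}}_{i,L-L_0-4}|$ via Lemma \ref{7.24}. However, the quantitative step where you claim the ratio beats $1+\eps$ is wrong as written, because you have the parameter hierarchy backwards. In the paper $L_0<<L<<\delta^{-1}<<N_4$, so the bound $h_i+h_{L-L_0-4}\le2\delta^{-1}$ is \emph{not} negligible against the $11(j-i)$ term of order $L$ — it dominates it. Both paths are essentially of size $h_i+h_{L-L_0-4}$, and the advantage of $|\textbf{p}_{i,L-L_0-4}|$ is only of order $1+O(\delta L)$: one must compute, as the paper does, $|\textbf{p}_{i,L-L_0-4}|/|\bar{\textbf{p}}_{i,L-L_0-4}|\ge(8\delta^{-1}+22L)/(8\delta^{-1}+11L)>1+\delta$, and then use $N_4>>\delta^{-1}$ (so $\eps=1/\sqrt{N_4}<\delta$) to conclude. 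Your ``negligible'' reasoning, and your stated hierarchy $N_4>>\delta^{-1}>>L_0>>L$ (with $L_0>>L$), do not survive the actual parameter order.

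A second, related slip: you bound $(L-j+i+1)\delta|V|_a$ by $(L+1)/L_0^2$ and call it small ``by $L_0^2>>L$''. Since in fact $L>>L_0$ (and $L$ may be chosen larger than any function of $L_0$), that inequality is unavailable. The bound is rescued by the correct coefficient: for the pair $(i,L-L_0-4)$ with $i\le L_0$ one has $L-j+i+1\le2L_0+5$, so the term is at most roughly $3/L_0$, which is what underlies the paper's estimate $|\bar{\textbf{p}}_{i,L-L_0-4}|\le h_i+h_{L-L_0-4}+11(4L_0)$. Finally, your remark that the hypothesis ``pushes us into the regime $h\le L_0^2|V|_a$ where Lemmas \ref{7.31}--\ref{7.34} apply'' is confused: Lemma \ref{7.34} asserts unconditionally that $h>L_0^2|V|_a$, and that, combined with $h\le h_i\le\delta^{-1}$, is precisely what yields $\delta|V|_a<1/L_0^2$; Lemmas \ref{7.31}--\ref{7.33} play no role here. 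With these corrections your argument coincides with the paper's proof.
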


\begin{proof}

For such $i$, note that $h_i\geq h\geq h_{L-L_0-4}$. Assuming toward contradiction that $h_i\leq\delta^{-1}$, Lemma \ref{7.34} implies that $\delta^{-1}>L_0^2|V|_a$, and so $\delta|V|_a<1/L_0^2$.

Note that $L-L_0-4-i\geq L-L_0-4-L_0\geq L-(2L_0+4)$. Taking $L>>L_0$, Lemma \ref{7.24} then yields the inequalities
$$|\bar{\textbf{p}}_{i,L-L_0-4}|\leq h_i+h_{L-L_0-4}+3L_0(11+\delta|V_a|)\leq h_i+h_{L-L_0-4}+11(4L_0)$$
$$|\textbf{p}_{i,L-L_0-4}|\geq h_i+h_{L-L_0-4}+11(L-2L_0-4)\geq h_i+h_{L-L_0-4}+11L/2$$
But then $h_i+h_{L-L_0-4}\leq 2h_i\leq 2\delta^{-1}$ and $4L_0<L/4$, so that
$$\frac{|\textbf{p}_{i,L-L_0-4}|}{|\bar{\textbf{p}}_{i,L-L_0-4}|}\geq\frac{h_i+h_{L-L_0-4}+11L/2}{h_i+h_{L-L_0-4}+11L/4}\geq\frac{8\delta^{-1}+22L}{8\delta^{-1}+11L}=1+\delta\frac{L}{\delta L+8/11}>1+\delta$$
As $N_4>>\delta^{-1}$, we may take $1+\delta>1+\eps$. But then noting that $L-L_0-4-i\geq L/2$, the above inequality contradicts Lemma \ref{7.26}.

\end{proof}

\begin{lemma} \label{7.36}

\textit{(Compare with Lemma 9.21 of [16] and Lemma 7.36 of [23])} 
\newline
For $i=1,\dots,L_0$, the spoke $\pazocal{Q}_i$ does not contain a $\lambda$-shaft of $\Pi$ of length at least $\delta h$.

\end{lemma}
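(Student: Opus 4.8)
The plan is to argue by contradiction, supposing that for some $i\in\{1,\dots,L_0\}$ the spoke $\pazocal{Q}_i$ contains a $\lambda$-shaft $\pazocal{S}$ of $\Pi$ of length at least $\delta h$. First I would record the basic geometric facts already available: by Lemma~\ref{7.22}(2) the histories satisfy $h_1\geq\dots\geq h_r$ and $h_{r+1}\leq\dots\leq h_{L-4}$, with $(L-1)/2-3\leq r\leq(L-1)/2$, and by Lemma~\ref{7.34} we have $h>L_0^2|V|_a$, so $h$ is large compared to $|V|_a$. Since $\pazocal{Q}_i$ is a spoke contained in the minimal diagram $\Delta^*$, the shaft $\pazocal{S}$ contributes its full length to $\sigma_\lambda(\Delta^*)$; the point is to convert that contribution, together with the estimate $|\pazocal{S}|\geq\delta h$, into a violation of either Lemma~\ref{7.26} or Lemma~\ref{shafts}.

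The natural target is Lemma~\ref{shafts}. I would consider the maximal clove $\Psi$ and the subdiagram bounded by $\pazocal{Q}_i$ and, say, $\pazocal{Q}_{L-L_0-4}$ (or $\pazocal{Q}_r$): by Lemma~\ref{7.22}(1) every maximal $\theta$-band of $\Psi$ crosses $\pazocal{Q}_1$ or $\pazocal{Q}_{L-4}$, and the $\theta$-bands crossing $\pazocal{Q}_i$ (for $i\leq L_0\leq r$) bound a trapezium-like region. The shaft $\pazocal{S}$, having length at least $\delta h$, is crossed by at least $\delta h$ maximal $\theta$-bands of $\Delta$, and I would like to show that a long subband $\pazocal{S}_2$ of $\pazocal{S}$ (obtained by trimming off at most $\lambda\|H\|$ of the history from each end, which is permissible since $\pazocal{S}$ is a $\lambda$-shaft and $\|H\|\leq h_i$) starts on the bottom and ends on the top of a quasi-trapezium whose base contains all $L$ $t$-letters. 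For this I would use the fact that a long block of consecutive $\theta$-bands crossing $\pazocal{Q}_i$ must, by the monotonicity of the $h_j$ and by Lemma~\ref{7.22}, also cross all the other spokes $\pazocal{Q}_1,\dots,\pazocal{Q}_{L-4}$ plus the $q$-spoke corresponding to $\{t(1)\}$ — i.e.\ they cross all $L$ of the $t$-bands — provided the block is taken among the $\theta$-bands at height below $\min_j h_j$ over the relevant range. Lemma~\ref{7.35} ($h_i>\delta^{-1}$ for $i\leq L_0$, so in particular $h_i$ is comparable to $h$) guarantees that the trimmed subband $\pazocal{S}_2$ is still long enough to reach across such a block. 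This produces exactly the configuration forbidden by Lemma~\ref{shafts}, giving the contradiction.

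The main obstacle, and the step I would spend the most care on, is verifying that the quasi-trapezium $\Gamma$ required by Lemma~\ref{shafts}(2) genuinely has $L$ $t$-edges on its bottom (or top) and that $\pazocal{S}_2$ runs from bottom to top of it. The delicate point is that the $\theta$-bands crossing $\pazocal{Q}_i$ need not all cross the far spokes $\pazocal{Q}_{r+1},\dots,\pazocal{Q}_{L-4}$: by Lemma~\ref{7.22}(2) the bands crossing $\pazocal{Q}_1$ do not cross $\pazocal{Q}_{r+1}$. So the relevant quasi-trapezium must be assembled from the bands crossing $\pazocal{Q}_i$ together with the disk $\Pi$ itself acting as the ``crossing'' of the remaining $t$-spokes — equivalently, one should use the comb/clove structure so that the $t$-spokes $\pazocal{Q}_{r+1},\dots,\pazocal{Q}_{L-4}$ and $\pazocal{Q}_i'$ are replaced by the corresponding spokes emanating directly from $\Pi$ on the other side. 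Concretely, I expect to take $\Gamma$ to be a subdiagram of the union of $\Pi$ with the $a$-trapezia $\Gamma_1,\dots,\Gamma_{r-1}$ (and their distinguished-clove analogues) constructed in Section~10.5, whose maximal $\theta$-bands, read starting at $\pazocal{Q}_i$ and passing around through $\Pi$, do cross a base word containing all $L$ $t$-letters; the height of this region is $\min(h_i,\dots)\geq h - (\text{small})$ by Lemma~\ref{7.22}, which dominates $\delta h$ once $\delta$ is small. Then $\pazocal{S}_2$, being a subband of $\pazocal{Q}_i$ of length $\geq\delta h - 2\lambda\|H\|\geq \delta h/2$, starts below and ends above this region, contradicting Lemma~\ref{shafts}. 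The bookkeeping needed to confirm that the history of $\pazocal{S}_2$ is admissible for the configuration at $\Pi$ (which is part of the definition of shaft and is what lets Lemma~\ref{shafts} apply) is routine given that $\pazocal{S}$ is already a shaft of $\Pi$ and $\pazocal{S}_2$ is a subband.
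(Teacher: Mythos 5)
Your central step fails. You propose to contradict Lemma~\ref{shafts} by exhibiting a quasi-trapezium with $L$ $t$-edges on its bottom that a trimmed subband of the shaft crosses, and you acknowledge that the $\theta$-bands crossing $\pazocal{Q}_i$ do not reach the far spokes, proposing to let the disk $\Pi$ itself ``act as the crossing'' of the remaining $t$-spokes. That is not available: a quasi-trapezium is bounded above and below by $\theta$-bands, and $\theta$-bands do not pass through disks (a disk has no $\theta$-edges on its contour), so $\Pi$ cannot be spliced into the top or bottom of any such $\Gamma$. Worse, no legitimate $\Gamma$ of the required kind exists in this region at all: in the diskless cloves, every maximal $q$-band is a spoke of $\Pi$ (Lemma~\ref{7.28}(1)) and, by Lemma~\ref{G_a theta-annuli}(1), a $\theta$-band with no disks between it and $\Pi$ crosses at most $(L-1)/2$ of its $t$-spokes — this is exactly why $r\le (L-1)/2$ in Lemma~\ref{7.22}(2). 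So no $\theta$-band crossing the shaft on $\pazocal{Q}_i$ crosses $L$ $t$-bands, the trapezia $\Gamma_1,\dots,\Gamma_{r-1}$ you invoke have only two $t$-letters each and concatenate to fewer than $L$, and Lemma~\ref{shafts} simply cannot be triggered. (Also, Lemma~\ref{7.35} gives only $h_i>\delta^{-1}$, not that $h_i$ is ``comparable to $h$''.)

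The paper's proof takes the other route you mentioned in passing but did not develop, and it is much more direct: set $j=L_0+1$, $\ell=L-L_0-4$ and pass from $\Delta$ to $\Psi_{j,\ell}'$. Since $\Pi$ is removed but $\pazocal{Q}_i$ (for $i\le L_0$) survives as a cutting $q$-band of $\Psi_{j,\ell}'$ with both ends on the boundary, it carries no $\lambda$-shaft there, so the assumed shaft of length $\ge\delta h$ forces $\sigma_\lambda(\Delta^*)-\sigma_\lambda((\Psi_{j,\ell}')^*)\ge\delta h$. Feeding this into the length estimates of Lemma~\ref{7.24} for $|\textbf{p}_{j,\ell}|$ and $|\bar{\textbf{p}}_{j,\ell}|$, together with $|V|_a<h/L_0^2$ from Lemma~\ref{7.34}, gives
$$\frac{|\textbf{p}_{j,\ell}|+\sigma_\lambda(\Delta^*)-\sigma_\lambda((\Psi_{j,\ell}')^*)-|\bar{\textbf{p}}_{j,\ell}|}{|\bar{\textbf{p}}_{j,\ell}|}\ \ge\ \min\bigl(L/6L_0,\ \delta/5\bigr)\ =\ \delta/5\ >\ \eps,$$
contradicting Lemma~\ref{7.26} since $\ell-j\ge L/2$. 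If you want to salvage your write-up, replace the Lemma~\ref{shafts} construction with this $\sigma_\lambda$-accounting argument against Lemma~\ref{7.26}.
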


\begin{proof}

Let $j=L_0+1$ and $\ell=L-L_0-4$.

Since $\Pi$ is removed when passing from $\Delta$ to $\Psi_{j,\ell}'$, $\pazocal{Q}_i$ is a cutting $q$-band of $\Psi_{j,\ell}'$. So, $\pazocal{Q}_i$ contains no $\lambda$-shaft in $\Psi_{j,\ell}'$. 

As Lemma \ref{weakly minimal}(1) implies $(\Psi_{j,\ell}')^*\subset\Delta^*$, we then have
$$\sigma_\lambda(\Delta^*)-\sigma_\lambda((\Psi_{j,\ell}')^*)\geq\delta h$$
Lemma \ref{7.24} then yields the inequalities
$$|\textbf{p}_{j,\ell}|\geq h_j+h_\ell+11(L-3L_0)$$
$$|\bar{\textbf{p}}_{j,\ell}|\leq h_j+h_\ell+3L_0(11+\delta|V|_a)$$
By Lemma \ref{7.34}, $\delta|V|_a<\delta h/L_0^2$, so that
$$|\bar{\textbf{p}}_{j,\ell}|<h_j+h_\ell+11(3L_0)+3\delta h/L_0$$
So, taking $L>>L_0\geq6$,
$$|\textbf{p}_{j,\ell}|+\sigma_\lambda(\Delta^*)-\sigma_\lambda((\Psi_{j,\ell}')^*)-|\bar{\textbf{p}}_{j,\ell}|\geq11(L-6L_0)+\delta h(1-3/L_0)\geq\frac{11L+\delta h}{2}$$
Hence, noting that $h_j\leq h=h_\ell$, we have:
\begin{align*}
\frac{|\textbf{p}_{j,\ell}|+\sigma_\lambda(\Delta^*)-\sigma_\lambda((\Psi_{j,\ell}')^*)-|\bar{\textbf{p}}_{j,\ell}|}{|\bar{\textbf{p}}_{j,\ell}|}&\geq\frac{11L+\delta h}{2(2h+11(3L_0)+3\delta h/L_0)} \\
&\geq\frac{11L+\delta h}{11(6L_0)+5h} \\
&\geq\min(L/6L_0,\delta/5)=\delta/5
\end{align*}
since $L>>L_0$. Taking $N_4>>\delta^{-1}$ implies $\eps<\delta/5$. But $\ell-j\geq L-3L_0\geq L/2$, so that the above inequality contradicts Lemma \ref{7.26}.

\end{proof}

\newpage

\begin{lemma} \label{7.37}

\textit{(Compare with Lemma 9.22 of [16] and Lemma 7.37 of [23])} 
\newline
For $i=1,\dots,L_0-1$, $|\textbf{z}_i|_a>h_{i+1}/2c_3$.

\end{lemma}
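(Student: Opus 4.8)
The plan is to argue by contradiction, assuming $|\textbf{z}_i|_a\leq h_{i+1}/2c_3$ for some $i\in\{1,\dots,L_0-1\}$, and to exhibit a $\lambda$-shaft of $\Pi$ inside $\pazocal{Q}_{i+1}$ of length at least $\delta h$, which will contradict Lemma \ref{7.36} (since $i+1\leq L_0$). The setup is entirely analogous to the one used in the proof of Lemma \ref{7.34}: the trapezium $\Gamma_i$ has bottom $\textbf{y}_i$ (a copy of $V$, read off $\partial\Pi$) and top $\textbf{z}_i$, and its side $q$-bands have lengths $h_i$ and $h_{i+1}$; since all $\theta$-bands crossing $\pazocal{Q}_{i+1}$ also cross $\pazocal{Q}_i$, the configuration $W(\ell)$ corresponding to $\partial\Pi$ is $H_{i+1}$-admissible for the appropriate coordinate $\ell\geq2$, so $\pazocal{Q}_{i+1}$ will qualify as a $\lambda$-shaft once we verify the controlled-subword condition.

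First I would record the basic length estimates. By Lemma \ref{7.34}, $h>L_0^2|V|_a$, and by Lemma \ref{7.35}, $h_{i+1}>\delta^{-1}$; since $h_i\geq h_{i+1}$, the trapezium $\Gamma_i$ (viewed via Lemma \ref{trapezia are computations} as a reduced computation with base $\{t(\ell)\}B_3(\ell)\{t(\ell+1)\}$, up to inversion and coordinate shift) has height $t'=h_{i+1}$ and boundary words $W_0,W_t$ with $|W_0|_a=|V|_a$ and $|W_t|_a=|\textbf{z}_i|_a\leq h_{i+1}/2c_3$. I want to check $t'>c_3\max(\|W_0\|,\|W_t\|)$ so that Lemma \ref{M projected long history controlled} applies. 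For the $\|W_t\|$ side this is immediate from the assumed bound on $|\textbf{z}_i|_a$ (the $q$-length contribution $\leq 11$ is absorbed using $h_{i+1}>\delta^{-1}$ and $\delta^{-1}>>c_3$). For the $\|W_0\|$ side, note $\|W_0\|\leq 11+\delta|V|_a$ and $h_{i+1}>|V|_a/8$ would suffice — but the latter is not automatic here (unlike in Lemma \ref{7.34}), so instead I would use Lemma \ref{7.34} directly: $h>L_0^2|V|_a$ forces $|V|_a<h/L_0^2$, and combining with $h_{i+1}\geq h$ gives $t'=h_{i+1}\geq h>L_0^2|V|_a\geq c_3(11+\delta|V|_a)$ for $L_0>>c_3$ and $\delta^{-1}>>1$. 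Thus $t'>c_3\max(\|W_0\|,\|W_t\|)$.

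Next, applying Lemma \ref{M projected long history controlled} to this computation: setting $\lambda<1/10$, any factorization $H'H''H'''$ of $H_{i+1}$ with $\|H'\|+\|H'''\|\leq\lambda h_{i+1}$ has $\|H''\|\geq(1-2\lambda)h_{i+1}>0.4t'$, so $H''$ contains a controlled subword. This shows $\pazocal{Q}_{i+1}$ contains a $\lambda$-shaft $\pazocal{C}$ of $\Pi$ whose history is (a subword of, in fact all of) $H_{i+1}$; in particular this $\lambda$-shaft has length $h_{i+1}$. It remains to see $h_{i+1}\geq\delta h$, which is immediate and with enormous room to spare since $h_{i+1}\geq h$ and $\delta<1$. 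Therefore $\pazocal{Q}_{i+1}$ contains a $\lambda$-shaft of $\Pi$ of length $\geq\delta h$, contradicting Lemma \ref{7.36} (applied with index $i+1\leq L_0$). This contradiction proves $|\textbf{z}_i|_a>h_{i+1}/2c_3$.

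The main obstacle I anticipate is bookkeeping around the admissibility and coordinate-shift issues: one must be careful that the base of $\Gamma_i$ really is a (shifted) copy of $\{t(\ell)\}B_3(\ell)\{t(\ell+1)\}$ with $\ell\geq2$ so that Lemmas \ref{M projected long history controlled} and \ref{extend 2}-style reasoning apply cleanly, and that $W(\ell)$ (not $W(1)$) is the relevant configuration for the shaft definition — this is precisely why the definition of shaft in Section 9.7 was relaxed. A secondary point requiring care is confirming the inequality $t'>c_3\max(\|W_0\|,\|W_t\|)$ in the regime where $|V|_a$ is not directly comparable to $h_{i+1}$; the resolution above routes through Lemma \ref{7.34} rather than attempting a lower bound on $h_{i+1}$ in terms of $|V|_a$ directly, and the parameter inequalities $\delta^{-1}>>L>>L_0>>c_3$ give ample slack. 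Everything else is a routine repetition of the argument structure already established in Lemmas \ref{7.34} and \ref{7.36}.
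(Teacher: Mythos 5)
Your proposal is correct and follows essentially the same route as the paper: assume $|\textbf{z}_i|_a\leq h_{i+1}/2c_3$, bound $\|\textbf{y}_i\|$ and $\|\textbf{z}_i\|$ via Lemmas \ref{7.34} and \ref{7.35}, apply Lemma \ref{M projected long history controlled} to the computation of $\Gamma_i$ to get a $\lambda$-shaft of length $h_{i+1}\geq h>\delta h$ (the paper places it as a subband of $\pazocal{Q}_i$, you use $\pazocal{Q}_{i+1}$ — both indices are $\leq L_0$), contradicting Lemma \ref{7.36}. The only blemish is the bound "$\|W_0\|\leq 11+\delta|V|_a$", which conflates combinatorial with modified length and, as written, leaves the case of very small $|V|_a$ to be absorbed via Lemma \ref{7.35} (exactly as the paper does with $\|\textbf{y}_i\|=12+|V|_a<h_{i+1}/2c_3+h_{i+1}/L_0^2$); this does not affect the argument.
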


\begin{proof}

Suppose to the contrary that $|\textbf{z}_i|_a\leq h_{i+1}/2c_3$. 

Then $\|\textbf{z}_i\|=|\textbf{z}_i|_a+12\leq (h_{i+1}/2c_3)+12$.

Taking $\delta^{-1}>>c_3$, Lemma \ref{7.35} yields $h_{i+1}/2c_3>\delta^{-1}/2c_3>12$. So, $\|\textbf{z}_i\|< h_{i+1}/c_3$.

Further, taking $\delta^{-1}>>L_0>>c_3$, Lemma \ref{7.34} yields 
$$\|\textbf{y}_i\|=12+|V|_a<12+h/L_0^2\leq h_{i+1}/2c_3+h_{i+1}/L_0^2\leq h_{i+1}/c_3$$
Recall that by Lemma \ref{non-distinguished a-cells}, $\Gamma_i$ contains no $a$-cells, and so is a trapezium. By Lemma \ref{trapezia are computations}, there exists a reduced computation $\pazocal{C}'$ corresponding to $\Gamma_i$ with base $(\{t(\ell)\}B_3(\ell)\{t(\ell+1)\})^{\pm1}$ for some $2\leq\ell\leq L$, where $L+1$ is taken to be 1. Let $\pazocal{C}$ be the restriction of $\pazocal{C}^{\pm1}$ to the base $\{t(\ell)\}B_3(\ell)$.

Then, the history of $\pazocal{C}$ has length at least $h_{i+1}>c_3\max(\|\textbf{y}_i\|,\|\textbf{z}_i\|)$. As a result, $\pazocal{C}$ satisfies the hypotheses of Lemma \ref{M projected long history controlled}. Further, since every $\theta$-band crossing $\pazocal{Q}_{i+1}$ also crosses $\pazocal{Q}_i$, $W(\ell)$ is $H_{i+1}$-admissible for some $\ell\geq2$. So, $\pazocal{Q}_i$ contains a $\lambda$-shaft of length at least $h_{i+1}$.

But then $h_{i+1}\geq h>\delta h$, so that this contradicts Lemma \ref{7.36}.

\end{proof}

\begin{lemma} \label{7.38}

\textit{(Compare with Lemma 9.23 of [16] and Lemma 7.38 of [23])} 
\newline
For $i=1,\dots,L_0-1$, $h_{i+1}<(1-\frac{1}{30c_3})h_i$.

\end{lemma}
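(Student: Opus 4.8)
This lemma asserts a geometric decay of the heights of consecutive $t$-spokes near the disk $\Pi$. The plan is to compare $h_i$ and $h_{i+1}$ using the comb $E_i$ (with handle $\pazocal{C}_i$ of height $h_i-h_{i+1}$) that sits between the $\theta$-bands crossing $\pazocal{Q}_i$ but not $\pazocal{Q}_{i+1}$. The key idea is that the maximal $\theta$-bands crossing $\pazocal{Q}_{i+1}$ bound the trapezium $\Gamma_i$, whose top $\textbf{z}_i$ has controlled $a$-length by Lemma \ref{7.37}, namely $|\textbf{z}_i|_a > h_{i+1}/2c_3$. The difference $h_i - h_{i+1}$ measures how many of the $\theta$-bands emanating from $\pazocal{Q}_i$ die before reaching $\pazocal{Q}_{i+1}$, i.e., the height of the comb $E_i$. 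I would bound $h_i - h_{i+1}$ from below in terms of $h_{i+1}$, which gives $h_{i+1} \leq h_i - (h_i - h_{i+1}) \leq (1 - \text{const})h_i$ once we show $h_i - h_{i+1} \geq \frac{1}{30 c_3} h_i$, or equivalently rearranging, that $h_i - h_{i+1}$ is a definite fraction of $h_{i+1}$ (hence of $h_i$).

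First I would set up the comb $E_i^0$ in $\Psi_{i,i+1}^0$ (using Lemma \ref{non-distinguished a-cells}, which tells us $\Psi_{i,i+1}^0$ has no $a$-cells when $\Psi_{i,i+1}$ is not distinguished; the distinguished clove is handled by working inside $(\Lambda_{i,i+1}')^0$ instead). Each edge of $\textbf{z}_i$ that is an $a$-edge marks the start of a maximal $a$-band in $\Psi_{i,i+1}^0$; by Lemma \ref{M_a no annuli 1} such a band cannot end on an $a$-cell (there are none in $\Psi_{i,i+1}^0$) and hence must end either on a $(\theta,q)$-cell of a $q$-band crossing $\pazocal{Q}_i$ within $E_i^0$, or on the boundary path $\textbf{q}_{i,i+1}$. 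Lemma \ref{7.30} caps at $6$ the number of $a$-bands from $\textbf{z}_i$ ending on $(\theta,q)$-cells of any single $\theta$-band; since the handle $\pazocal{C}_i$ of $E_i^0$ has height $h_i - h_{i+1}$, at most $6(h_i - h_{i+1})$ of the $a$-bands from $\textbf{z}_i$ end on $(\theta,q)$-cells of $E_i^0$. Therefore at least $|\textbf{z}_i|_a - 6(h_i - h_{i+1})$ of these $a$-bands end on $\textbf{q}_{i,i+1}$. But $\textbf{q}_{i,i+1}$ has at most $2h$ $\theta$-edges by Lemma \ref{7.28}(2), and its $q$-edges are bounded by $3K_0$ via Lemma \ref{7.23} — so the $a$-edges of $\textbf{q}_{i,i+1}$ are controlled, and in fact the number of $a$-bands from $\textbf{z}_i$ landing there contributes to $|\textbf{q}_{i,i+1}|$, which via Lemma \ref{7.26}/\ref{7.24} comparison of $|\textbf{p}_{ij}|$ versus $|\bar{\textbf{p}}_{ij}|$ cannot be too large relative to $h$.

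The cleaner route, which I expect to be the intended one, is: combine $|\textbf{z}_i|_a > h_{i+1}/2c_3$ (Lemma \ref{7.37}) with the observation that the $a$-bands from $\textbf{z}_i$ either end on $(\theta,q)$-cells of $E_i^0$ (at most $6(h_i - h_{i+1})$ of them by Lemma \ref{7.30} applied across the $h_i - h_{i+1}$ bands of the handle) or end on $\textbf{q}_{i,i+1}$ (at most $O(h_i - h_{i+1}) + O(h)$ of them, the first term from the $\theta$-edges of $\textbf{q}_{i,i+1}$ which by Lemma \ref{7.28}(2) number at most $2(h_i - h_{i+1})$ since $\textbf{q}_{i,i+1}$'s $\theta$-edges all come from $\pazocal{Q}_i$ and $\pazocal{Q}_{i+1}$). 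Wait — I must be careful here: $\textbf{q}_{i,i+1}$ has at most $h_i + h_{i+1} \le 2h_i$ $\theta$-edges in general, not $2(h_i - h_{i+1})$. So I would instead argue that the $a$-bands from $\textbf{z}_i$ ending on $\textbf{q}_{i,i+1}$ are bounded by the number of $a$-edges of $\textbf{q}_{i,i+1}$, and crucially that these $a$-edges are part of the trapezium $E_i^0$'s rim — more precisely, since any such $a$-band has both ends accounted for, and $E_i^0$ is $M$-minimal with its handle of height $h_i - h_{i+1}$, a counting argument like the one in Lemma \ref{comb weights}(2) shows $|\textbf{q}_{i,i+1} \cap E_i^0|_a \le O(h_i - h_{i+1})$. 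Putting the pieces together: $\frac{h_{i+1}}{2c_3} < |\textbf{z}_i|_a \le C'(h_i - h_{i+1})$ for an absolute constant $C'$ (absorbing the $6$, the $2$ from Lemma \ref{7.28}(2), and the basic-width bound $K_0$ from Lemma \ref{comb weights}), which rearranges to $h_{i+1} < 2c_3 C'(h_i - h_{i+1})$, hence $h_{i+1}(1 + 2c_3 C') < 2c_3 C' h_i$, so $h_{i+1} < \frac{2c_3 C'}{1 + 2c_3 C'} h_i$. Choosing the constant so that $\frac{2c_3 C'}{1+2c_3 C'} \le 1 - \frac{1}{30c_3}$ (which holds since $\frac{1}{1+2c_3C'} \ge \frac{1}{30c_3}$ can be arranged by taking $C' \le 14$, or more safely by noting $\frac{1}{1+2c_3C'} \geq \frac{1}{30c_3}$ whenever $30c_3 \geq 1 + 2c_3C'$, i.e. $C' \leq 14 - \frac{1}{2c_3}$) completes the proof.

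The main obstacle will be getting the precise constant in the bound $|\textbf{z}_i|_a \le C'(h_i - h_{i+1})$ with $C'$ small enough — specifically controlling the number of $a$-edges of $\textbf{q}_{i,i+1}$ that belong to $E_i^0$ and receive $a$-bands from $\textbf{z}_i$. I would handle this by invoking the structure of $E_i^0$ as an $M$-minimal comb of basic width at most $K_0$ (Lemma \ref{7.21}) with handle height $h_i - h_{i+1}$, so that by the argument of Lemma \ref{comb weights}(2) (the bound $|\textbf{bot}(\pazocal{T}_1)|_a \le |\textbf{y}|_a + 4bh$ type estimate), the total number of $a$-bands starting on $\textbf{z}_i$ and not ending on $(\theta,q)$-cells is at most a linear function of $(h_i - h_{i+1})K_0$. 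The bookkeeping should mirror Lemma \ref{7.37}'s proof closely, and the parameter hierarchy $\delta^{-1} \gg L_0 \gg c_3 \gg 1$ together with Lemmas \ref{7.34}, \ref{7.35} ensures all the "lower-order" terms ($12$'s, $+4$'s, $|V|_a$ vs. $h$ comparisons) are genuinely negligible.
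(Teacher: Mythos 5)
Your ``cleaner route'' has a genuine gap at its central step: the inequality $|\textbf{z}_i|_a\le C'(h_i-h_{i+1})$ with a small absolute constant $C'$, i.e.\ the claim that only $O(h_i-h_{i+1})$ of the maximal $a$-bands starting on $\textbf{z}_i$ can end on $\textbf{q}_{i,i+1}$. Nothing in the comb structure gives this. Lemma \ref{comb weights}(2) runs in the opposite direction --- it bounds a $\theta$-band's $a$-length by the boundary's $a$-length plus $4bh$, not the boundary's $a$-length by the handle height --- and a priori the boundary arc $\textbf{p}_{i,i+1}$ (hence $\textbf{q}_{i,i+1}$) can absorb arbitrarily many $a$-bands from $\textbf{z}_i$; the best unconditional cardinality bound the paper ever obtains is $|\textbf{z}_i|_a\le 8h_i$ (Lemma \ref{upper bound on z_i}), and even that is proved only by appealing to the minimality of the counterexample. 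The only mechanism that limits how many $a$-edges land on the boundary is the perimeter comparison of Lemma \ref{7.26}, which your committed route never invokes. Moreover, even granting some comb-width bound, its constant would necessarily involve $K_0=22L+1$, and in the parameter hierarchy $c_3\ll L_0\ll L\ll K_0$, so the rearrangement $h_{i+1}<\frac{2c_3C'}{1+2c_3C'}h_i$ could never yield the factor $1-\frac{1}{30c_3}$; your own requirement $C'\le 14$ is unattainable.

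The paper's proof keeps your first two counting steps but finishes by contradiction with Lemma \ref{7.26} rather than by rearrangement. Assume $h_{i+1}\ge(1-\frac{1}{30c_3})h_i$; then the handle of $E_i$ has height at most $h_i/30c_3$, so by Lemma \ref{7.30} at most $h_i/5c_3$ of the $a$-bands from $\textbf{z}_i$ end on $(\theta,q)$-cells, and by Lemma \ref{7.37} at least $|\textbf{z}_i|_a-h_i/5c_3>h_i/15c_3$ of them end on $\textbf{p}_{i,i+1}$, each contributing $\delta$ to its length. Combining this with Lemma \ref{7.24}(1) applied to $\textbf{p}_{i+1,L-L_0-4}$, and bounding $|\bar{\textbf{p}}_{i,L-L_0-4}|$ by Lemma \ref{7.24}(2) together with $|V|_a<h_i/L_0^2$ from Lemma \ref{7.34}, one gets $\bigl(|\textbf{p}_{i,L-L_0-4}|-|\bar{\textbf{p}}_{i,L-L_0-4}|\bigr)/|\bar{\textbf{p}}_{i,L-L_0-4}|\ge\delta/180c_3>\varepsilon$, contradicting Lemma \ref{7.26}. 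Your opening sketch actually gestures at exactly this comparison (``via Lemma \ref{7.26}/\ref{7.24} \dots cannot be too large relative to $h$''), but the route you then declare to be the intended one replaces it with the direct counting bound, and that is where the argument fails; note also that the decay constant $\frac{1}{30c_3}$ is dictated by Lemma \ref{7.37} and the factor $6$ of Lemma \ref{7.30}, with the boundary absorption excluded by minimality, not by any width estimate.
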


\begin{proof}

Assuming $h_{i+1}\geq(1-\frac{1}{30c_3})h_i$, the handle of $E_i$ has height at most $h_i-h_{i+1}\leq h_i/30c_3$. So, by Lemma \ref{7.30}, at most $h_i/5c_3$ maximal $a$-bands of $E_i$ starting on $\textbf{z}_i$ can end on $(\theta,q)$-cells of $E_i$. Hence, at least $\max(0,|\textbf{z}_i|_a-h_i/5c_3)$ of these bands end on $\textbf{p}_{i,i+1}$.

Lemma \ref{7.37} implies that $|\textbf{z}_i|_a>h_{i+1}/2c_3$, so that 
$$|\textbf{z}_i|_a-h_i/5c_3\geq h_{i+1}/2c_3-h_i/5c_3\geq \bigg(1-\frac{1}{30c_3}\bigg)h_i/2c_3-h_i/5c_3>h_i/15c_3$$
By Lemma \ref{7.22}(2), $\textbf{p}_{i,i+1}$ also has $h_i-h_{i+1}\leq h_i/30c_3$ $\theta$-edges. So, Lemma \ref{7.24}(1) implies the inequalities
$$|\textbf{p}_{i,i+1}|\geq h_i-h_{i+1}+\delta h_i/30c_3$$
$$|\textbf{p}_{i+1,L-L_0-4}|\geq h_{i+1}+h_{L-L_0-4}+22L/3$$
As these paths have an overlap of one $q$-edge, this implies
$$|\textbf{p}_{i,L-L_0-4}|=|\textbf{p}_{i,i+1}|+|\textbf{p}_{i+1,L-L_0-4}|-1>h_i+h_{L-L_0-4}+11L/2+\delta h_i/30c_3$$
Meanwhile, Lemma \ref{7.24}(2) gives us
$$|\bar{\textbf{p}}_{i,L-L_0-4}|\leq h_i+h_{L-L_0-4}+11(3L_0)+3L_0\delta|V|_a$$
As Lemma \ref{7.34} implies $|V|_a<h/L_0^2\leq h_i/L_0^2$, we then have
$$|\bar{\textbf{p}}_{i,L-L_0-4}|\leq h_i+h_{L-L_0-4}+11(3L_0)+3\delta h_i/L_0$$
Hence, since $L>>L_0>>c_3$ and $h_{L-L_0-4}\leq h\leq h_i$,
\begin{align*}
\frac{|\textbf{p}_{i,L-L_0-4}|-|\bar{\textbf{p}}_{i,L-L_0-4}|}{|\bar{\textbf{p}}_{i,L-L_0-4}|}&\geq\frac{11(L/2-3L_0)+\delta h_i(1/30c_3-3/L_0)}{h_i+h_{L-L_0-4}+11(3L_0)+3\delta h_i/L_0} \\
&\geq\frac{11L/3+\delta h_i/60c_3}{11(3L_0)+3h_i} \\
&\geq\min(L/9L_0,\delta/180c_3)=\delta/180c_3
\end{align*}
However, taking $N_4>>\delta^{-1}>>c_3$ yields $\eps<\delta/180c_3$, so that the above inequality contradicts Lemma \ref{7.26}.

\end{proof}

\begin{lemma} \label{upper bound on z_i}

\textit{(Compare with Lemma 9.24 of [16] and Lemma 7.39 of [23])} 
\newline
For $i=1,\dots,L_0-1$, $|\textbf{z}_i|_a\leq 8h_i$.

\end{lemma}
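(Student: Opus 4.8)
The goal is to show that the top label $\textbf{z}_i$ of the trapezium $\Gamma_i$ cannot be too long in $a$-letters compared to the height $h_i$ of the spoke $\pazocal{Q}_i$. The natural strategy is to run a counting argument on the maximal $a$-bands of the comb $E_i$ (the complement of $\Gamma_i$ in the clove $\Psi_{i,i+1}$, or in $\Lambda_{i,i+1}'$ if this is the distinguished clove) starting on $\textbf{z}_i$, exactly as in the proofs of Lemmas~\ref{7.37} and~\ref{7.38}, and then to combine the resulting length estimate for $\textbf{p}_{i,i+1}$ with Lemma~\ref{7.26} (the $(1+\eps)$-inequality on $|\textbf{p}_{ij}|$ versus $|\bar{\textbf{p}}_{ij}|$) to derive a contradiction if $|\textbf{z}_i|_a$ is too large. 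So I would argue by contradiction: suppose $|\textbf{z}_i|_a > 8h_i$.

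First, I would recall that by Lemma~\ref{non-distinguished a-cells} the relevant region ($\Psi_{i,i+1}^0$ or $(\Lambda_{i,i+1}')^0$) contains no $a$-cells, so that $E_i^0$ is a genuine comb whose cells are $(\theta,q)$- and $(\theta,a)$-cells. The handle $\pazocal{C}_i$ of $E_i$ has height $h_i - h_{i+1}$ (or $h_i - h_i'$ in the distinguished case). Each maximal $a$-band of $E_i^0$ starting on an edge of $\textbf{z}_i$ either ends on a $(\theta,q)$-cell of $E_i^0$ or ends on the boundary path $\textbf{q}_{i,i+1}$; by Lemma~\ref{7.30}, at most $6$ such bands can end on the $(\theta,q)$-cells of any single $\theta$-band of $E_i^0$, and there are $h_i - h_{i+1} \le h_i$ maximal $\theta$-bands in $E_i^0$, so at most $6h_i$ of these $a$-bands end on $(\theta,q)$-cells. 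Consequently at least $|\textbf{z}_i|_a - 6h_i > 8h_i - 6h_i = 2h_i$ of them end on $\textbf{q}_{i,i+1}$, each contributing a distinct $a$-edge. Since by Lemma~\ref{7.28}(2) no two $\theta$-edges of $\textbf{q}_{i,i+1}$ lie on a common $\theta$-band, and by Lemma~\ref{7.22}(2) these $\theta$-edges number at most $h_i + h_{i+1} \le 2h_i$, we get $|\textbf{q}_{i,i+1}|_a \ge |\textbf{z}_i|_a - 6h_i$. Hence $|\textbf{p}_{i,i+1}| \ge |\textbf{q}_{i,i+1}| \ge (h_i - h_{i+1}) + \delta(|\textbf{z}_i|_a - 6h_i)$, using that $\textbf{q}_{i,i+1}$ has $h_i - h_{i+1}$ $\theta$-edges from Lemma~\ref{7.22}(2) together with Lemma~\ref{lengths}(a).

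Next I would combine this with the lower bound on the rest of the boundary. By Lemma~\ref{7.24}(1) (or its $\textbf{q}$-analogue Lemma~\ref{7.27}), $|\textbf{p}_{i+1,L-L_0-4}| \ge h_{i+1} + h_{L-L_0-4} + 11L/2$, and since $\textbf{p}_{i,i+1}$ and $\textbf{p}_{i+1,L-L_0-4}$ overlap in one $q$-edge, $|\textbf{p}_{i,L-L_0-4}| \ge h_i + h_{L-L_0-4} + 11L/2 + \delta(|\textbf{z}_i|_a - 6h_i)$. On the other hand, Lemma~\ref{7.24}(2) gives $|\bar{\textbf{p}}_{i,L-L_0-4}| \le h_i + h_{L-L_0-4} + 33L_0 + 3L_0\delta|V|_a$, and Lemma~\ref{7.34} with $h \le h_i$ lets me bound $3L_0\delta|V|_a \le 3\delta h_i / L_0$, which is negligible against $\delta|\textbf{z}_i|_a > 8\delta h_i$ once $L_0$ is large. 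Forming the ratio $|\textbf{p}_{i,L-L_0-4}| / |\bar{\textbf{p}}_{i,L-L_0-4}|$: the numerator exceeds the denominator by at least roughly $11L/3 + \delta(|\textbf{z}_i|_a - 6h_i - 3h_i/L_0) \ge 11L/3 + \delta h_i$ (using $|\textbf{z}_i|_a > 8h_i$), while the denominator is at most about $h_i + h_{L-L_0-4} + 33L_0 + 3\delta h_i/L_0 \le 3h_i + 33L_0$ (since $h_{L-L_0-4} \le h \le h_i$). Thus the ratio is at least $\min(L/9L_0, \delta/4)$ or so, which exceeds $1 + \eps = 1 + 1/\sqrt{N_4}$ by the parameter choices $N_4 \gg \delta^{-1} \gg L \gg L_0$. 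Since $L-L_0-4-i \ge L-2L_0-4 > L/2$, this contradicts Lemma~\ref{7.26}.

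The main obstacle I anticipate is bookkeeping the distinguished-clove case cleanly: when $\Psi_{i,i+1}$ is the distinguished clove, $\textbf{z}_i$ lives on $\Gamma_i \subset (\Lambda_{i,i+1}')^0$ and the relevant comb $E_i^0$ sits between $\pazocal{Q}_i$ and the $q$-spoke $\pazocal{Q}_i'$ rather than between $\pazocal{Q}_i$ and $\pazocal{Q}_{i+1}$; I need to check that Lemma~\ref{7.28}(2), Lemma~\ref{7.30}, and Lemma~\ref{non-distinguished a-cells}(2) all still apply (they do, since the handle $\pazocal{C}_i$ is still a subband of a $t$-spoke, the base of any $\theta$-band of $E_i^0$ is still a subword of a pararevolving base without the special sector, and $\textbf{q}_{i,i+1}$ still threads through $\pazocal{Q}_i'$). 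A secondary point of care is that the ends of $a$-bands on $\textbf{q}_{i,i+1}$ might land on the part of $\textbf{q}_{i,i+1}$ lying in $\Lambda_{i,i+1}''$; but since the whole argument only needs a lower bound on $|\textbf{q}_{i,i+1}|_a$ (hence on $|\textbf{p}_{i,i+1}|$), counting the $a$-edges wherever they occur on $\textbf{q}_{i,i+1}$ is harmless. Everything else is routine manipulation of the parameter hierarchy, entirely parallel to Lemmas~\ref{7.37} and~\ref{7.38}.
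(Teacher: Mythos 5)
Your proposal is correct and follows essentially the same route as the paper's proof: assume $|\textbf{z}_i|_a>8h_i$, use Lemma~\ref{7.30} to bound by $6h_i$ the $a$-bands from $\textbf{z}_i$ ending on $(\theta,q)$-cells, conclude via Lemmas~\ref{non-distinguished a-cells}, \ref{7.28}(2), \ref{7.22}(2) and \ref{lengths} that at least about $h_i$ $a$-edges contribute $\delta$ to $|\textbf{q}_{i,i+1}|$, and then contradict Lemma~\ref{7.26} using Lemmas~\ref{7.24}, \ref{7.27} and \ref{7.34}. The only blemish is the momentary citation of ``at most $h_i+h_{i+1}$'' $\theta$-edges on $\textbf{q}_{i,i+1}$ (the correct count, which you use immediately afterwards, is $h_i-h_{i+1}\leq h_i$), and this does not affect the estimates.
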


\begin{proof}

Assume $|\textbf{z}_i|_a>8h_i$. By Lemma \ref{7.30}, at most $6h_i$ maximal $a$-bands of $E_i^0$ starting on $\textbf{z}_i$ can end on the $(\theta,q)$-cells of $E_i$. So, since $a$-bands cannot cross $q$-bands, Lemma \ref{non-distinguished a-cells} implies that at least $|\textbf{z}_i|_a-6h_i>2h_i$ maximal $a$-bands of $E_i^0$ starting on $\textbf{z}_i$ must end on the path $\textbf{q}_{i,i+1}$. Hence, $|\textbf{q}_{i,i+1}|_a>2h_i$.

By Lemma \ref{7.22}(2), $\textbf{q}_{i,i+1}$ has at most $h_i$ $\theta$-edges. As a result, Lemma \ref{lengths} implies that at least $h_i$ $a$-edges of $\textbf{q}_{i,i+1}$ contribute $\delta$ to $|\textbf{q}_{i,i+1}|$, and so also to $|\textbf{q}_{i,L-L_0-4}|$. So, Lemmas \ref{7.24} and \ref{7.27} give the inequalities
$$|\textbf{p}_{i,L-L_0-4}|\geq|\textbf{q}_{i,L-L_0-4}|\geq h_i+h_{L-L_0-4}+11L/2+\delta h_i$$
$$|\bar{\textbf{p}}_{i,L-L_0-4}|\leq h_i+h_{L-L_0-4}+11(3L_0)+3L_0\delta|V|_a$$
Taking $|V|_a<h/L_0^2\leq h_i/L_0^2$ by Lemma \ref{7.34} then gives
$$|\bar{\textbf{p}}_{i,L-L_0-4}|\leq h_i+h_{L-L_0-4}+11(3L_0)+3\delta h_i/L_0$$
so that
$$|\textbf{p}_{i,L-L_0-4}|-|\bar{\textbf{p}}_{i,L-L_0-4}|\geq 11(L/2-3L_0)+\delta h_i(1-3/L_0)\geq 11L/3+\delta h_i/2$$
Then, since $h_{L-L_0-4}\leq h\leq h_i$,
\begin{align*}
\frac{|\textbf{p}_{i,L-L_0-4}|-|\bar{\textbf{p}}_{i,L-L_0-4}|}{|\bar{\textbf{p}}_{i,L-L_0-4}|}&\geq\frac{11L/3+\delta h_i/2}{11(3L_0)+3h_i} \\
&\geq\min(L/9L_0,\delta /6)=\delta /6
\end{align*}
However, again taking $N_4>>\delta^{-1}$, $\eps<\delta/6$ so that the above inequality contradicts Lemma \ref{7.26}.

\end{proof}

Note that if $\Psi_{i,i+1}$ is the distinguished clove for $i\leq r-1$, then $H_{i+1}$ need not be the history of $\Gamma_i$. To account for this, let $H_{i+1}'$ be the history of $\Gamma_i$. Note that $H_{i+1}$ is always a prefix of $H_{i+1}'$.

The following is the analogue of Lemma 9.25 of [16] and Lemma 7.40 of [23].

\begin{lemma} \label{no one-step}

For $2\leq i\leq L_0-2$, let $H_i'=H_{i+1}'H'=H_{i+2}'H''H'$ and $\pazocal{C}$ be the computation corresponding to the trapezium $\Gamma_{i-1}$. Suppose the subcomputation $\pazocal{D}$ of $\pazocal{C}$ with history $H''H'$ has step history of length 1. Then there is no two-letter subword $Q'Q$ of the base of $\Gamma_{i-1}$ such that every rule of $\pazocal{D}$ inserts one letter to the left of $Q$.


\end{lemma}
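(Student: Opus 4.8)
The plan is to argue by contradiction. Suppose such a two-letter subword $Q'Q$ exists and every rule of $\pazocal{D}$ inserts one letter to the left of $Q$ in the $Q'Q$-sector. First I would translate the diagrammatic setup into the language of $S$-machine computations: by Lemma \ref{non-distinguished a-cells}, $\Gamma_{i-1}$ contains no $a$-cells and so is a trapezium, and Lemma \ref{trapezia are computations} yields a reduced computation $\pazocal{C}$ in the appropriate base $\{t(\ell)\}B_3(\ell)$ (or its inverse) with history $H_i'$. The subcomputation $\pazocal{D}$ is the maximal one with history $H''H'$. The hypothesis that the step history of $\pazocal{D}$ has length $1$ means $\pazocal{D}$ is a one-step computation, i.e.\ $H''H'$ lies entirely in one $F(\Phi_j^+)$-type factor (a maximal subcomputation of a single submachine of $\textbf{M}_4$, translated through the identification of the base with a subword of $B_3(\ell)$). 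The crucial extra structure is that $H''H'$ is a \emph{suffix} of $H_i'$: since $H_{i+1}'$ and $H_{i+2}'$ are prefixes of $H_i'$ (mirroring the prefix relations $h_1\geq h_2\geq\dots$ among spoke histories coming from Lemma \ref{7.22}), the heights satisfy $h_{i+1}'\geq h_{i+2}'$ and the relevant pieces nest correctly.

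The key step is then the following: if every rule of $\pazocal{D}$ inserts one $a$-letter to the left of $Q$, then the restriction of $\pazocal{D}$ to the $Q'Q$-sector satisfies the hypotheses of Lemma \ref{multiply one letter} (each rule multiplies the sector by a letter on the left, and within a one-step computation different rules multiply by different letters — this is exactly the content of the one-step machines built from the primitive machines and the $\textbf{M}_1(i)$'s). Consequently, by Lemma \ref{multiply one letter}(a), the history of that restriction is a copy of the reduced form of $u_t u_0^{-1}$ (read right to left), where $u_0,u_t$ are the tape words of the $Q'Q$-sector at the start and end of $\pazocal{D}$. But $\pazocal{D}$ has positive length $\|H''H'\|=\|H''\|+\|H'\|>0$ (since $H''$ corresponds to the genuinely nonempty difference between $H_{i+1}'$ and $H_{i+2}'$, which is nonzero by the strict inequality $h_{i+2}<h_{i+1}$ guaranteed by Lemma \ref{7.38}), so $u_0\not\equiv u_t$ and in fact $\|H''H'\|\leq\|u_0\|+\|u_t\|$ by Lemma \ref{multiply one letter}(b). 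The point is to now bound $\|u_0\|$ and $\|u_t\|$ and reach a contradiction with the size of $\|H''H'\|$: the tape words $u_0,u_t$ are controlled by $|\textbf{z}_i|_a$-type quantities via Lemma \ref{upper bound on z_i} (giving $|\textbf{z}_i|_a\leq 8h_i$) and Lemma \ref{7.37} (giving $|\textbf{z}_i|_a>h_{i+1}/2c_3$), while $\|H''\|+\|H'\|=h_i'-h_{i+2}'$ is large relative to these by the geometric decay of Lemma \ref{7.38} combined with Lemma \ref{7.34} ($h>L_0^2|V|_a$). Alternatively — and this is cleaner — one contradicts the $\lambda$-shaft machinery directly: a long one-step computation in a $Q'Q$-sector forces the corresponding spoke $\pazocal{Q}_i$ to contain a long subband whose history has a specific controlled/uncontrolled structure, which can be played against Lemma \ref{7.36} (no $\lambda$-shaft of length $\geq\delta h$ in $\pazocal{Q}_i$ for $i\leq L_0$) exactly as in the proof of Lemma \ref{7.37}.

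I would carry out the steps in this order: (1) set up the trapezium $\Gamma_{i-1}$ and its computation $\pazocal{C}$, isolate $\pazocal{D}$, and record that $\pazocal{D}$ is a one-step computation that is a suffix of the one-step piece containing it; (2) observe $\|H''H'\|=h_i'-h_{i+2}'$ and invoke Lemma \ref{7.38} twice to get $h_i'-h_{i+2}'\geq(1-(1-\tfrac{1}{30c_3})^2)h_i\gtrsim h_i/15c_3$, a definite fraction of $h_i$; (3) apply Lemma \ref{multiply one letter}(b) to the $Q'Q$-restriction of $\pazocal{D}$ to get $h_i'-h_{i+2}'\leq\|u_0\|+\|u_t\|$; (4) bound $\|u_0\|,\|u_t\|$ in terms of $|\textbf{z}_{i-1}|_a$ or $|V|_a$ using that the bottom of $\Gamma_{i-1}$ is a coordinate shift of $V$ (so $\|u_0\|\leq|V|_a$ up to additive $O(1)$) and that the top is controlled by Lemma \ref{upper bound on z_i}; (5) combine with Lemma \ref{7.34} ($|V|_a<h/L_0^2\leq h_i/L_0^2$) to conclude $h_i/15c_3\lesssim h_i/L_0^2$, which fails since $L_0>>c_3$. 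The main obstacle I anticipate is bookkeeping around the distinguished clove and the prefix/suffix relations among $H_i,H_i',H_{i+1},H_{i+1}',\dots$: one must be careful that $H''$ really is nonempty and that the $Q'Q$-sector in question is one where the primitive-machine/$\textbf{M}_1(i)$ structure guarantees the ``different rules insert different letters'' hypothesis of Lemma \ref{multiply one letter} — this is where the explicit form of the rules of $\textbf{M}$ (and the fact that $\pazocal{D}$ is a \emph{one-step} computation, ruling out the $\sigma$- and $\chi$- and $\theta$-transition rules that would break injectivity) is essential. Everything else is the same arithmetic-of-parameters bookkeeping used throughout Lemmas \ref{7.31}--\ref{upper bound on z_i}.
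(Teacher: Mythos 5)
Your proposal correctly sets up the translation to a computation and correctly isolates the quantity $\|H''\|\geq h_{i+1}-h_{i+2}>h_{i+1}/30c_3$, but the core of your argument (steps 3--5) contains a genuine gap: there is no contradiction to be had by comparing $\|H''H'\|$ with the lengths of the sector words at the two ends of $\pazocal{D}$. First, $u_0$ is not the sector word of the bottom of $\Gamma_{i-1}$ (a copy of $V$); $\pazocal{D}$ starts at the configuration reached after $H_{i+2}'$, i.e.\ at a copy of $\lab(\textbf{z}_{i+1})$. More seriously, $u_t$ is the sector word of $\textbf{z}_{i-1}$, and nothing forces it to be of size $O(|V|_a)$: Lemma \ref{upper bound on z_i} only gives $|\textbf{z}_{i-1}|_a\leq 8h_{i-1}$, which is far larger than $h_i/15c_3$, and Lemma \ref{7.37} even guarantees $|\textbf{z}_{i-1}|_a>h_i/2c_3$. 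Indeed, under the hypothesis you are trying to refute, the terminal sector word is \emph{automatically} at least as long as the number of inserted letters, so Lemma \ref{multiply one letter}(b) is satisfied with room to spare; lengths of the computation alone cannot produce a contradiction. Your ``cleaner'' alternative via $\lambda$-shafts also fails: since every rule of $\pazocal{D}$ changes the tape word of the $Q'Q$-sector, its history contains no transition, $\chi$-, or connecting rules and hence no controlled subword, and since the terminal configuration grows with every step, $\pazocal{D}$ is not long relative to its ends, so Lemma \ref{M projected long history} (the source of the shaft in the proof of Lemma \ref{7.37}) does not apply; thus no long $\lambda$-shaft in $\pazocal{Q}_{i-1}$ or $\pazocal{Q}_i$ is forced and Lemma \ref{7.36} cannot be invoked.

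The paper's proof is instead geometric and uses the minimal-counterexample structure of $\Delta$. The inserted letters materialize as at least $\|H''\|\geq h_{i+1}-h_{i+2}>10L_0|V|_a$ $a$-edges on the subpath $\textbf{x}$ of $\textbf{z}_i$ lying between the $q$-bands $\pazocal{Q}'$ and $\pazocal{Q}$ (this is the top of the trapezium $\Gamma_i\setminus\Gamma_{i+1}'$, whose history is $H''$). The insertion hypothesis, combined with the comb-copy Lemma \ref{7.29}, shows no $a$-band starting on $\textbf{x}$ can end on a $(\theta,q)$-cell of $\pazocal{Q}$, and Lemma \ref{non-distinguished a-cells} excludes $a$-cells; the comb $\nabla$ bounded by $\pazocal{Q}'$, $\pazocal{Q}$, $\textbf{x}$, $\textbf{q}_{i,i+1}$, together with the counting of Lemma \ref{7.28}, then forces at least $|\textbf{x}|_a$ of these bands out to $\textbf{q}_{i,i+1}$, so that $|\textbf{p}_{i,L-L_0-4}|-|\bar{\textbf{p}}_{i,L-L_0-4}|>\delta h_{i+1}/50c_3$. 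Note that this gain, measured against $|\bar{\textbf{p}}_{i,L-L_0-4}|\approx 2h_i$, need not exceed $\eps$, so one cannot simply contradict Lemma \ref{7.26} as in Lemmas \ref{7.31}--\ref{upper bound on z_i}; instead the paper cuts off $\bar{\Delta}_{i,L-L_0-4}$, applies the inductive hypothesis (or Lemma \ref{diskless}) to $\Psi_{i,L-L_0-4}'$, bounds $\text{wt}_G(\bar{\Delta}_{i,L-L_0-4})$ directly, and uses the mixture decrease of order $(h_{i-1}-h_i)(h_i+h_{L-L_0-4})$ from Lemma \ref{mixtures}(d) together with Lemma \ref{7.38} to contradict the assumption $\text{wt}_G(\Delta)>N_4(|\partial\Delta|+\sigma_\lambda(\Delta^*))^2+N_3\mu(\Delta)$. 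This final surgery-plus-mixture step is the essential missing idea in your proposal.
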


\begin{proof}


Let $\pazocal{Q}$ be the maximal $q$-band of $E_i^0$ that is a subband of the $q$-spoke of $\Pi$ corresponding to a coordinate shift the state letter $Q$. Similarly, let $\pazocal{Q}'$ be the maximal $q$-band corresponding to a coordinate shift of $Q'$, so that $\pazocal{Q}'$ and $\pazocal{Q}$ are neighbor $q$-bands. Let $\textbf{x}$ be the subpath of $\textbf{z}_i$ between $\pazocal{Q}'$ and $\pazocal{Q}$.

Since $\Gamma_i$ contains a copy $\Gamma_{i+1}'$ of the trapezium $\Gamma_{i+1}$, the bottom of the trapezium $\Gamma_i\setminus\Gamma_{i+1}'$ is a copy $\textbf{z}_{i+1}'$ of $\textbf{z}_{i+1}$, while the top is $\textbf{z}_i$. This trapezium has history $H''$, so that the corresponding computation inserts one $a$-letter to the left of the state letter corresponding to $\pazocal{Q}$ at each transition. As a result, $|\textbf{x}|_a\geq\|H''\|\geq h_{i+1}-h_{i+2}$. 

By Lemma \ref{7.38}, $h_{i+1}-h_{i+2}>\frac{1}{30c_3}h_{i+1}$. As $h_{i+1}\geq h$, Lemma \ref{7.34} and the parameter choice $L_0>>c_3$ imply
$$|\textbf{x}|_a\geq\frac{h}{30c_3}>\frac{L_0^2|V|_a}{30c_3}>10L_0|V|_a$$
If an $a$-band starting on $\textbf{x}$ ended on a $(\theta,q)$-cell of $\pazocal{Q}$, then Lemma \ref{7.26} implies that there is a copy of this in the trapezium $\Gamma_{i-1}\setminus\Gamma_i'$. By Lemma \ref{trapezia are computations}, though, this would contradict the assumption that rules of $\pazocal{D}$ only write letters in the sector.

\begin{figure}[H]
\centering
\includegraphics[scale=1]{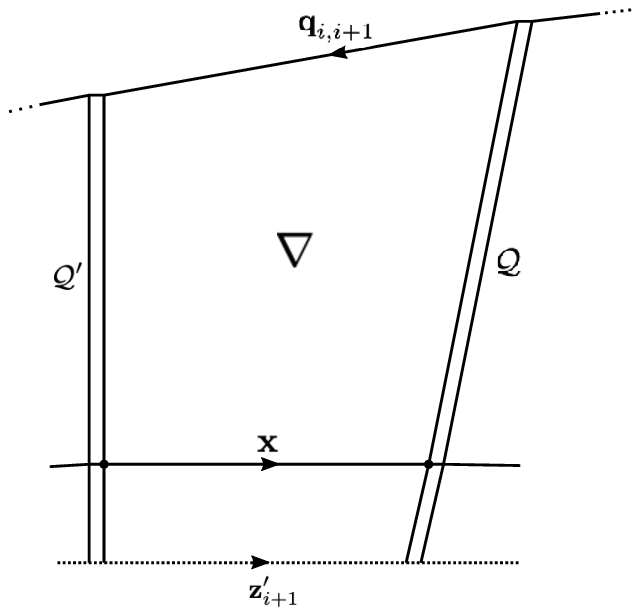}
\caption{}
\end{figure}

Now, consider the comb $\nabla$ contained in $E_i^0$ bounded by $\pazocal{Q}'$, $\pazocal{Q}$, $\textbf{x}$, and $\textbf{q}_{i,i+1}$ (see Figure 10.9). Set $s$ and $s'$ as the lengths of $\pazocal{Q}$ and $\pazocal{Q}'$, respectively. Lemmas \ref{7.22}(2) and \ref{7.28}(2) imply $s'\leq s$. So, by Lemma \ref{7.28}(1), there are $|\textbf{x}|_a+s$ maximal $a$-bands starting on $\textbf{x}$ or $\pazocal{Q}$ and ending on $\pazocal{Q}'$ or on $\textbf{q}_{i,i+1}$. Since only $s'$ $a$-bands can end on $\pazocal{Q}'$, at least $|\textbf{x}|_a+s-s'$ of them end on the segment of $\textbf{q}_{i,i+1}$ between $\pazocal{Q}$ and $\pazocal{Q}'$. By Lemmas \ref{7.22}(2) and \ref{7.28}(2), the same segment contains $s-s'$ $\theta$-edges, meaning at least $|\textbf{x}|_a$ of them contribute $\delta$ to its length. So, by Lemma \ref{7.24}(1),
\begin{align*}
|\textbf{p}_{i,L-L_0-4}|&\geq h_i+h_{L-L_0-4}+11L/2+\delta\frac{h_{i+1}}{30c_3} \\
&\geq h_i+h_{L-L_0-4}+11L/2+10\delta L_0|V|_a
\end{align*}
Also by Lemma \ref{7.24}(2) and \ref{7.34},
$$|\bar{\textbf{p}}_{i,L-L_0-4}|\leq h_i+h_{L-L_0-4}+11(3L_0)+3\delta L_0|V|_a\leq h_i+h_{L-L_0-4}+11(3L_0)+3\delta h/L_0$$
So,
\begin{align*}
|\textbf{p}_{i,L-L_0-4}|-|\bar{\textbf{p}}_{i,L-L_0-4}|&\geq11(L/2-3L_0)+\delta(h_{i+1}/30c_3-3h/L_0) \\
&>\delta h_{i+1}(1/30c_3-3/L_0) \\
&>\delta h_{i+1}/50c_3
\end{align*}
by again taking $L_0>>c_3$.

Let $\textbf{s}$ be the complement of the $\textbf{p}_{i,L-L_0-4}$ in $\partial\Delta$. Then, since $\textbf{p}_{i,L-L_0-4}$ starts and ends with $q$-edges, Lemma \ref{lengths}(c) implies
\begin{align*}
|\partial\Psi'_{i,L-L_0-4}|&\leq|\textbf{s}|+|\bar{\textbf{p}}_{i,L-L_0-4}| \\
&<|\textbf{s}|+|\textbf{p}_{i,L-L_0-4}|-\delta h_{i+1}/50c_3 \\
&=|\partial\Delta|-\delta h_{i+1}/50c_3 \numberthis
\end{align*}
Lemma \ref{weakly minimal}(1) implies that $\Psi'_{i,L-L_0-4}$ is weakly minimal with $\sigma_\lambda((\Psi'_{i,L-L_0-4})^*)\leq\sigma_\lambda(\Delta^*)$.

Hence, if $\Psi_{i,L-L_0-4}'$ contains a disk, then we may apply the inductive hypothesis to it. Otherwise, we may apply Lemma \ref{diskless} to $\Psi_{i,L-L_0-4}'$. In either case, this implies
$$
\text{wt}_G(\Psi'_{i,L-L_0-4})\leq N_4(|\Psi'_{i,L-L_0-4}|+\sigma_\lambda((\Psi'_{i,L-L_0-4})^*))^2+N_3\mu(\Psi_{i,L-L_0-4}')
$$
Taking $x=|\partial\Delta|+\sigma_\lambda(\Delta^*)$, we have
$$0\leq|\Psi_{i,L-L_0-4}'|+\sigma_\lambda((\Psi_{i,L-L_0-4}')^*)\leq x-\delta h_{i+1}/50c_3$$
So, since $\delta h_{i+1}/50c_3\leq x$,
\begin{equation} \label{G-weight Psi'}
\text{wt}_G(\Psi_{i,L-L_0-4}')<N_4x^2-N_4\delta xh_{i+1}/50c_3+N_3\mu(\Psi_{i,L-L_0-4}')
\end{equation}
Next, note that $|V|_a\leq h_i/L_0^2$ by Lemma \ref{7.34}, $h_i>\delta^{-1}>11(100L_0)$ by Lemma \ref{7.35}, and $h_{L-L_0-4}\leq h\leq h_i$. So, for sufficiently large $L_0$, we have
$$|\bar{\textbf{p}}_{i,L-L_0-4}|\leq 2h_i+3h_i/100+3\delta h_i/L_0\leq 2.1h_i$$
As Lemma \ref{7.26} implies $|\textbf{p}_{i,L-L_0-4}|\leq(1+\eps)|\bar{\textbf{p}}_{i,L-L_0-4}|$, taking $N_4$ sufficiently large yields 
$$|\partial\Psi_{i,L-L_0-4}|\leq|\textbf{p}_{i,L-L_0-4}|+|\bar{\textbf{p}}_{i,L-L_0-4}|+|\partial\Pi|\leq4.5h_i+|\partial\Pi|$$
Taking $\delta^{-1}>>L$, note that
$$|\partial\Pi|\leq 11L+(L+1)\delta|V|_a\leq\delta^{-1}/4+h_i/L_0^2\leq h_i/2$$
Since $\Psi_{i,L-L_0-4}$ contains no disks, Lemma \ref{diskless} implies
$$\text{wt}_G(\Psi_{i,L-L_0-4})\leq 25N_2h_i^2+N_1\mu(\Psi_{i,L-L_0-4})$$
while the assignment of weight implies
$$\text{wt}(\Pi)=C_1|\partial\Pi|^2\leq C_1h_i^2$$
Combining these two inequalities, Lemma \ref{G-weight subdiagrams} implies
$$\text{wt}_G(\bar{\Delta}_{i,L-L_0-4})\leq 25N_2h_i^2+N_1\mu(\Psi_{i,L-L_0-4})+C_1h_i^2<26N_2h_i^2+N_1\mu(\Psi_{i,L-L_0-4})$$
By Lemma \ref{7.22}, $|\partial\Psi_{i,L-L_0-4}|_\theta=2(h_i+h_{L-L_0-4})\leq4h_i$. So, by Lemma \ref{mixtures}(1), 
$$\mu(\Psi_{i,L-L_0-4})\leq16Jh_i^2$$
Hence, taking $N_2>>N_1>>J$,
\begin{equation} \label{G-weight bar Delta}
\text{wt}_G(\bar{\Delta}_{i,L-L_0-4})\leq26N_2h_i^2+16N_1Jh_i^2\leq30N_2h_i^2
\end{equation}
Thus, combining (\ref{G-weight Psi'}) and (\ref{G-weight bar Delta}), Lemma \ref{G-weight subdiagrams} implies
$$
\text{wt}_G(\Delta)\leq N_4x^2-N_4\delta xh_{i+1}/50c_3+N_3\mu(\Psi_{i,L-L_0-4}')+30N_2h_i^2
$$
Hence, to reach a contradiction, it suffices to show that
\begin{equation} \label{7.40 suffices 1}
N_4\delta xh_{i+1}/50c_3\geq N_3\mu(\Psi_{i,L-L_0-4}')-N_3\mu(\Delta)+30N_2h_i^2
\end{equation}
Now consider the diagram $\Psi_{i+1,L-L_0-4}'$. When passing from $\partial\Psi_{i+1,L-L_0-4}'$ to $\partial\Psi_{i,L-L_0-4}'$, a subpath $\textbf{t}$ is replaced with $\textbf{bot}(\pazocal{Q}_i)^{-1}$ (see Figure 10.10). The subpath $\textbf{t}$ consists of:

\begin{itemize}

\item the subpath $\textbf{p}_{i,i+1}'$ of $\partial\Delta$ obtained from $\textbf{p}_{i,i+1}$ by removing the end of $\pazocal{Q}_{i+1}$,

\item $\textbf{bot}(\pazocal{Q}_{i+1})^{-1}$, and

\item a subpath of the inverse of $\partial\Pi$

\end{itemize}

\begin{figure}[H]
\centering
\includegraphics[scale=1.4]{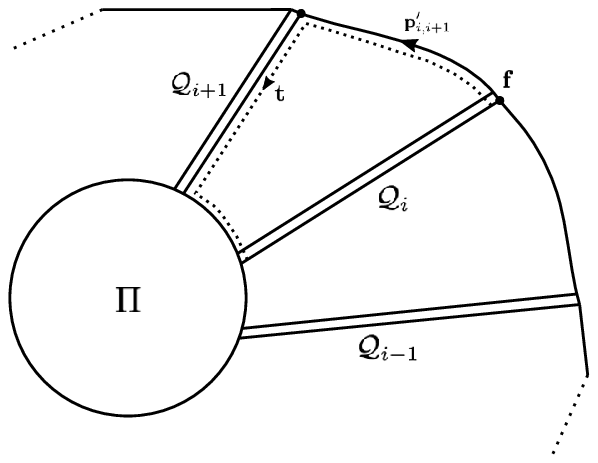}
\caption{}
\end{figure}

By Lemma \ref{7.22}, there is a correspondence between the $\theta$-edges of $\textbf{t}$ and those of $\textbf{bot}(\pazocal{Q}_i)$. So, since $\textbf{bot}(\pazocal{Q}_i)$ contains no $q$-edges, the necklace corresponding to $\Psi_{i,L-L_0-4}'$ may be obtained from that of $\Psi_{i+1,L-L_0-4}'$ by the removal of the black beads corresponding to the $q$-edges of $\textbf{t}$.

Consider the $q$-edge $\textbf{f}$ on the end of $\pazocal{Q}_i$. In $\partial\Psi_{i+1,L-L_0-4}'$, $\textbf{f}$ separates the $h_{i-1}-h_i$ $\theta$-edges of $\textbf{p}_{i-1,i}$ from the $h_i+h_{L-L_0-4}$ $\theta$-edges of the path $\textbf{p}_{i,i+1}'\bar{\textbf{p}}_{i+1,L-L_0-4}$.

By Lemma \ref{7.23}, $|\textbf{p}_{i,i+1}'|_q<3K_0$, while $|\bar{\textbf{p}}_{i+1,L-L_0-4}|_q\leq 11L$. So, as $J>>K>>L$, there are at most $J$ $q$-edges of $\partial\Psi_{i+1,L-L_0-4}'$ between any pair of $\theta$-edges separated by $\textbf{f}$ mentioned above. As such, Lemma \ref{mixtures}(d) implies 
$$\mu(\Psi_{i+1,L-L_0-4}')-\mu(\Psi_{i,L-L_0-4}')\geq (h_{i-1}-h_i)(h_i+h_{L-L_0-4})$$
Meanwhile, Lemma \ref{7.25} implies
$$\mu(\Delta)-\mu(\Psi_{i+1,L-L_0-4}')\geq-2J|\partial\Delta|(h_{i+1}+h_{L-L_0-4})$$
Combining these and noting that $h_{L-L_0-4}\leq h\leq h_{i+1}\leq h_i$, we have
$$\mu(\Delta)-\mu(\Psi_{i,L-L_0-4}')\geq 2h_i(h_{i-1}-h_i)-4J|\partial\Delta|h_{i+1}$$
Hence by (\ref{7.40 suffices 1}), it suffices to show that
\begin{equation} \label{7.40 suffices 2}
N_4\delta xh_{i+1}/50c_3+2N_3h_i(h_{i-1}-h_i)\geq 4N_3J|\partial\Delta|h_{i+1}+30N_2h_i^2
\end{equation}
As $x=|\partial\Delta|+\sigma_\lambda(\Delta^*)\geq|\partial\Delta|$, the parameter choices $N_4>>N_3>>\delta^{-1}>>J>>c_3$ imply
$$N_4\delta xh_{i+1}/50c_3\geq\frac{N_4\delta}{50c_3}|\partial\Delta|h_{i+1}\geq 4N_3J|\partial\Delta|h_{i+1}$$
Moreover, by Lemma \ref{7.38}, $h_{i-1}-h_i>h_{i-1}/30c_3\geq h_i/30c_3$. Hence, the parameter choices $N_3>>N_2>>c_3$ imply
$$2N_3h_i(h_{i-1}-h_i)>\frac{N_3}{15c_3}h_i^2\geq30N_2h_i^2$$
Thus, the statement is proved.

\end{proof}

\begin{remark}

Recall that we have assumed without loss of generality that $h_{L-L_0-4}\leq h_{L_0+1}$. If $h_{L-L_0-4}>h_{L_0+1}$, then the symmetric statement to Lemma \ref{no one-step} will be needed for $L-L_0-2\leq i\leq L-1$. This statement can be proved analogously.

\end{remark}

Finally, we reach the final contradiction of this section, the analogue of Lemma 9.26 of [16] and Lemma 7.41 of [23].

\begin{lemma} \label{contradiction}

The counterexample diagram $\Delta$ cannot exist.

\end{lemma}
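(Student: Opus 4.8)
The goal is to complete the argument begun in Section~10: a ``minimal counterexample'' weakly minimal diagram $\Delta$ to the inequality $\text{wt}_G(\Gamma)\leq N_4(|\partial\Gamma|+\sigma_\lambda(\Gamma^*))^2+N_3\mu(\Gamma)$ cannot exist. The plan is to extract from the structural results of the preceding lemmas enough quantitative control on the spokes $\pazocal{Q}_1,\dots,\pazocal{Q}_{L-4}$ of the disk $\Pi$ given by Lemma~\ref{graph} to conclude that, for $i$ in the range $1\leq i\leq L_0-1$, two consecutive histories $H_i$ and $H_{i+1}$ can differ by only a controlled (one-step) amount, and then to play that off against Lemma~\ref{7.38}, which says $h_{i+1}<(1-\tfrac{1}{30c_3})h_i$. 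Since $h_1\geq h\geq\delta^{-1}$ by Lemma~\ref{7.35} and the drop from $h_i$ to $h_{i+1}$ is multiplicative with a fixed ratio bounded away from $1$, iterating down from $i=1$ to $i=L_0-1$ forces $h_{L_0-1}$ to be bounded by $h_1(1-\tfrac{1}{30c_3})^{L_0-2}$; the parameter choice $L_0>>c_3$ then makes this quantity smaller than $\delta^{-1}$ (in fact smaller than any prescribed constant), contradicting Lemma~\ref{7.35} applied at index $i=L_0-1$.

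More concretely, first I would record the consequences of Lemmas~\ref{7.34}--\ref{7.38}: $h>L_0^2|V|_a$, each $h_i>\delta^{-1}$ for $i\leq L_0$, $|\textbf{z}_i|_a\leq 8h_i$ (Lemma~\ref{upper bound on z_i}), $|\textbf{z}_i|_a>h_{i+1}/2c_3$ (Lemma~\ref{7.37}), and the geometric decay $h_{i+1}<(1-\tfrac{1}{30c_3})h_i$ (Lemma~\ref{7.38}). The point of Lemma~\ref{no one-step} is to rule out the ``degenerate'' way the histories could fail to decay — namely that the incremental history $H'$ (or $H''H'$) of $\Gamma_{i-1}\setminus\Gamma_i'$ consists of a single step of rules that merely write letters into one sector without ever committing to a controlled subcomputation. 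Combining Lemma~\ref{no one-step} with Lemma~\ref{M projected long history controlled} (via the trapezium-to-computation translation of Lemma~\ref{trapezia are computations}), the incremental history between $\pazocal{Q}_i$ and $\pazocal{Q}_{i+1}$ either is short or contains a controlled subword, hence $\pazocal{Q}_i$ contains a $\lambda$-shaft of length comparable to $h_{i+1}$; but Lemma~\ref{7.36} forbids $\pazocal{Q}_i$ from containing a $\lambda$-shaft of length $\geq\delta h$ for $i\leq L_0$. Since $h_{i+1}\geq h>\delta h$, this is the contradiction. I would phrase the final argument as: assume $\Delta$ exists; apply Lemma~\ref{no one-step} and Lemma~\ref{M projected long history controlled} to the trapezia $\Gamma_{i-1}\setminus\Gamma_i'$ for a suitable range of $i$ (using Lemma~\ref{7.38} to guarantee the incremental history has length at least $\tfrac{1}{30c_3}h_{i+1}>c_3\max(\|\textbf{y}_i\|,\|\textbf{z}_i\|)$, exactly as in the proof of Lemma~\ref{7.37}), conclude the existence of a long $\lambda$-shaft in $\pazocal{Q}_i$, and invoke Lemma~\ref{7.36} for the contradiction.

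The main obstacle I anticipate is the bookkeeping of the distinguished clove: when $\Psi_{i,i+1}$ (or $\Psi_{i-1,i}$) is the distinguished clove, $H_{i+1}$ is only a prefix of the history $H_{i+1}'$ of $\Gamma_i$, and the paths $\textbf{z}_i$, $\textbf{y}_i$ are defined relative to $\Lambda_{i,i+1}'$ rather than the whole clove; so one must be careful that the chain of inequalities $h_1\geq h_2\geq\cdots$ together with the decay estimate is applied only at indices where the trapezia $\Gamma_i$ are genuine coordinate shifts of one another, and that Lemma~\ref{no one-step} is invoked with the correct incremental histories $H'=H_{i+1}'^{-1}H_i'$. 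Because $L>>L_0$, there is plenty of room to choose a subinterval of $[2,L_0-1]$ avoiding the single distinguished index on either side, so this is a matter of care rather than a genuine difficulty. A secondary point to get right is that Lemma~\ref{no one-step} as stated handles the case where the step history of the incremental subcomputation has length $1$; if it has length $\geq 2$ one gets a controlled subword more directly from Lemma~\ref{M projected long history controlled}, so the two cases together cover everything. Once these are handled, the contradiction with Lemma~\ref{7.36} closes the argument, and hence (by Lemma~\ref{diskless} for the diskless case and the weakly minimal induction just completed for the case with disks) the bound $\text{wt}_G(\Delta)\leq N_4(|\partial\Delta|+\sigma_\lambda(\Delta^*))^2+N_3\mu(\Delta)$ holds for all minimal diagrams.
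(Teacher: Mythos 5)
Your target contradiction (a long $\lambda$-shaft in some spoke $\pazocal{Q}_i$ with $i\leq L_0$, against Lemma \ref{7.36}) is the right one, but the mechanism you propose for producing the shaft does not work, and this is a genuine gap rather than bookkeeping. You want to apply Lemma \ref{M projected long history controlled} to the incremental trapezia $\Gamma_{i-1}\setminus\Gamma_i'$, claiming their history length, which is at least $\tfrac{1}{30c_3}h_i$ by Lemma \ref{7.38}, exceeds $c_3\max(\|\textbf{y}_i\|,\|\textbf{z}_i\|)$. But Lemma \ref{7.37} gives $\|\textbf{z}_i\|\geq|\textbf{z}_i|_a>h_{i+1}/2c_3$, so $c_3\max(\|\textbf{y}_i\|,\|\textbf{z}_i\|)>h_{i+1}/2$, which is far larger than $\tfrac{1}{30c_3}h_{i+1}$: the hypothesis of Lemma \ref{M projected long history controlled} simply fails for these trapezia (indeed, in the proof of Lemma \ref{7.37} that lemma is invoked only in the contradiction case where $\textbf{z}_i$ is short, a case now excluded; and by Lemma \ref{upper bound on z_i} even the full $\Gamma_i$, of height about $h_{i+1}$, can have $\|\textbf{z}_i\|$ of order $8h_i$, so no direct application is possible). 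The fallback contradiction in your first paragraph — iterating $h_{i+1}<(1-\tfrac{1}{30c_3})h_i$ down to $i=L_0-1$ to fall below $\delta^{-1}$ — also fails, because Lemma \ref{7.35} is only a lower bound and $h_1$ is not bounded above by any parameter, so finitely many multiplicative drops cannot contradict $h_{L_0-1}>\delta^{-1}$. Finally, a subcomputation whose step history has length $\geq2$ does not ``more directly'' yield a controlled subword; controlled subwords sit inside the step $(4n-1)_j$ and must be forced by a long step history.

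What is missing is the multi-index escalation that the paper uses. Fix $\eta$ with $(1-\tfrac{1}{30c_3})^\eta<\tfrac{1}{64c_3}$ and, using $L_0>>\eta,c_0$, choose at least $c_0$ indices $j_1<\dots<j_m$ in $[2,L_0-1]$ spaced by $\eta$; Lemmas \ref{7.37}, \ref{7.38} and \ref{upper bound on z_i} then give $|\textbf{z}_{j_i}|_a>4|\textbf{z}_{j_{i+1}}|_a$ and $h_{j_1+1}\geq64c_3h_{j_2}$. In the computation corresponding to $\Gamma_{j_2}$ this yields words $V_1,\dots,V_m$ with $|V_{i+1}|_a>4|V_i|_a$; Lemma \ref{M one-step} combined with Lemma \ref{no one-step} forbids the subcomputations $V_{i+2}\to\dots\to V_i$ from being one-step, so the computation contains at least $c_0/2\geq8n$ distinct maximal one-step computations, and Lemma \ref{long step history} forces a step-history subword of the form $(4n-2,4n-1)_j(4n-1)_j(4n-1,4n)_j$, hence a controlled subword $H''$ of $H_{j_2+1}'$. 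One must then place it inside $\pazocal{Q}_{j_1}$: since $h_{j_1+1}>64c_3h_{j_2}$ and the controlled pattern repeats $k\geq3$ times, one may arrange $H_{j_2+1}'\equiv H'H''H'''$ with $\|H''\|\leq\|H'\|$, and $H_{j_1+1}'$ has a prefix $K\equiv H'H''H_1'''$ with $\|H_1'''\|=\|H'\|$; with $\lambda<1/3$ every factorization of $K$ with short ends still contains $H''$, so the corresponding subband of $\pazocal{Q}_{j_1}$ is a $\lambda$-shaft of length $\geq h>\delta h$, contradicting Lemma \ref{7.36}. Without this combinatorial step your argument stalls exactly where the controlled subword is supposed to appear.
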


\begin{proof}

First, fix an integer $\eta\geq2$ dependant on $c_3$ such that $(1-\frac{1}{30c_3})^\eta<\frac{1}{64c_3}$. Note that, although $\eta$ is not listed as one of the parameters of Section 3.3, we may take $L_0>>\eta$ since $L_0$ is chosen after $c_3$.

For $i=1,\dots,L_0-1$, Lemma \ref{7.38} implies $h_{i+1}<(1-\frac{1}{30c_3})h_i$. So, if $1\leq i<j\leq L_0-1$ with $j-i-1\geq\eta$, then $h_j<(1-\frac{1}{30c_3})^\eta h_{i+1}<\frac{1}{64c_3}h_{i+1}$.

For such $i,j$, Lemma \ref{7.37} then implies that $|\textbf{z}_i|_a\geq h_{i+1}/2c_3>32h_j$. As Lemma \ref{upper bound on z_i} implies $8h_j\geq|\textbf{z}_j|_a$, we then have $|\textbf{z}_i|_a>4|\textbf{z}_j|_a$.

Now, as $L_0>>\eta$ and $L_0>>c_0$, there exist indices $2\leq j_1<j_2<\dots<j_m\leq L_0-1$ such that $m\geq c_0$ and $j_{i+1}-j_i-1\geq\eta$. So, $|\textbf{z}_{j_i}|_a>4|\textbf{z}_{j_{i+1}}|_a$ and $h_{j_i+1}\geq64c_3h_{j_{i+1}}$.

Let $\pazocal{C}:W_0\to\dots\to W_t$ be the computation corresponding to the trapezium $\Gamma_{j_2}$ by Lemma \ref{trapezia are computations}. As $\Gamma_{j_2}$ contains a copy of $\Gamma_{j_2+1}$, which in turn contains a copy of $\Gamma_{j_2+2}$ and so on, there exist words $V_i$ in $\pazocal{C}$ for $i=1,\dots,m$ that are coordinate shifts of the labels of $\textbf{z}_{j_i}$. By the inequalities above, $|V_{i+1}|_a>4|V_i|_a$.

If for some $i$ the subcomputation $V_{i+2}\to\dots\to V_i$ is a one-step computation, then by Lemma \ref{M one-step} there exists a right-active (or left-active if $h=h_{L-L_0-4}$) sector $Q'Q$ such that the sector's length increases with each transition of the subcomputation. But since $\eta\geq2$, there must exist a subcomputation contradicting Lemma \ref{no one-step}. 

Hence, the subcomputation $\pazocal{C}':V_m\to\dots\to W_t$ of $\pazocal{C}$ must contain at least $c_0/2\geq8n$ distinct one-step computations. Lemma \ref{long step history} then implies that the step history of $\pazocal{C}'$ contains a subword of the form $(4n-2,4n-1)_j(4n-1)_j(4n-1,4n)_j$ or $(4n,4n-1)_j(4n-1)_j(4n-1,4n-2)_j$. Let $\pazocal{C}''$ be the subcomputation of $\pazocal{C}'$ with this step history.

Then, we may factor $H_{j_2+1}'\equiv H'H''H'''$ where $H''$ is a controlled history. Further, since the subcomputation $\pazocal{C}''$ repeats $k$ copies of a controlled history, taking $k\geq3$ allows us to assume $\|H''\|\leq\|H'\|$.

Since $h_{j_1+1}>64c_3h_{j_2}$, $H_{j_1+1}'$ has prefix $K\equiv H'H''H'''_1$ where $\|H'''_1\|=\|H'\|\geq\|H''\|$. Set $\pazocal{B}$ as the subband of the spoke $\pazocal{Q}_{j_1}$ with history $K$. Then, for any factorization $\pazocal{B}_1\pazocal{B}_2\pazocal{B}_3$ such that the sum of the lengths of $\pazocal{B}_1$ and $\pazocal{B}_3$ is at most $\frac{1}{3}\|K\|$, the history of $\pazocal{B}_2$ must contain $H''$. So, since all $\theta$-bands crossing $\pazocal{Q}_{j_1}$ must cross $\pazocal{Q}_{j_1-1}$, taking $\lambda<1/3$ implies $\pazocal{B}$ is a $\lambda$-shaft with length $\|K\|$.

However, note that the subcomputation $W_0\to\dots\to V_m$ has length at least $h_{L_0-1}\geq h$, so that $\|K\|\geq\|H'\|\geq h>\delta h$. Thus, the existence of $\pazocal{B}$ in $\pazocal{Q}_{j_1}$ contradicts Lemma \ref{7.36}.

\end{proof}

\medskip


\section{Proof of Theorem \ref{main theorem}}

We now complete the proof of the main theorem.

The first step toward this is to justify the assignments made throughout the construction.

\subsection{Assignment of $a$-relations and weights} \

As mentioned in the introduction to the groups of interest in Section 6, the set of $a$-relators of interest in this section, $\pazocal{S}$, is the set of words in the letters $\pazocal{A}\cup\pazocal{A}^{-1}$ whose value in the free Burnside group $B(\pazocal{A},n)$ is 1.

The following Lemma sheds some light on why these particular relations are adjoined to the group presentation.

\begin{lemma} \label{a-relations are relations}

For any word $u\in F(\pazocal{A})$, the relation $u^n=1$ holds in the group $G(\textbf{M})$.

\end{lemma}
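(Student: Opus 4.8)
The plan is to realize $u^n$ as the boundary label of an explicit van Kampen diagram over the canonical presentation of $M(\textbf{M})$ (together with the hub relation, passing to $G(\textbf{M})$), built from the machinery of Section 6.3 applied to the accepting computation guaranteed by Lemma \ref{M language}. The key point is that $u^n$ is an accepted input of $\textbf{M}$: by Lemma \ref{M language}, the input configuration $I(u^n)$ is accepted, and in fact there is a (unique) one-machine computation of the first machine accepting $I(u^n)$. Since the accepted word $W_{ac}$ of $\textbf{M}$ has empty tape alphabet in the $Q_4(L)\{t(1)\}$-sector (indeed $\textbf{M}$ is cyclic and the rules lock all sectors at the accept step), we are exactly in the situation of Lemma \ref{disks are relations}: if $W$ is a configuration accepted by $\textbf{M}$ with the relevant sector alphabet empty, then $W$ represents the trivial element of $G(\textbf{M})$.

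First I would apply Lemma \ref{disks are relations} (equivalently, unwind its proof via Lemma \ref{computations are trapezia}) to the accepted configuration $I(u^n)$: there is a trapezium $\Delta$ over $M(\textbf{M})$ whose trimmed bottom label is $I(u^n)$, whose trimmed top label is $W_{ac}$, and whose history $H$ is the history of the accepting computation $\pazocal{C}_1(u)$ (or $\pazocal{C}_2(u)$). The two sides of this trapezium are labelled identically by the copy of $H$ with index $0$ attached to each letter, so $I(u^n)$ and $W_{ac}$ are conjugate in $M(\textbf{M})$; adjoining the hub relation $W_{ac}=1$ then gives $I(u^n)=1$ in $G(\textbf{M})$. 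Next I would remove the spurious state letters: by the construction of $I(u^n)$ (see Section 5.1), the configuration $I(u^n)$ has the natural copy of $u^n$ written in each input sector $P_0(i)Q_0(i)$ and is trivial elsewhere; all the $q$-letters appearing in $I(u^n)$ are themselves generators of $G(\textbf{M})$, and the empty sectors contribute nothing. So projecting onto the input sector — or more precisely, observing that $I(u^n)$ as a word over $\pazocal{X}$ is freely equal to a product of conjugates involving the $q$-letters of $I(u^n)$ times the word $u^n$ in one input sector — reduces the relation $I(u^n)=1$ to $u^n=1$ in $G(\textbf{M})$. Concretely, since the state letters $q_0(1)$ etc.\ and the copies $a_1$ of $a\in\pazocal{A}$ are all generators, and $I(u^n)$ is literally a word spelling out $u^n$ in several disjoint alphabets interspersed with $q$-letters, one checks directly that $I(u^n)$ is conjugate in the free group $F(\pazocal{X})$ to a word equal in $G(\textbf{M})$ to (a copy of) $u^n$; hence $u^n = 1$ in $G(\textbf{M})$.

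The main obstacle — though a mild one — is bookkeeping the passage between the word $u\in F(\pazocal{A})$ (over the ``abstract'' alphabet $\pazocal{A}$) and its various disjoint copies $u_i$ over the tape alphabets $Y_i$ of the sectors of $\textbf{M}$, and making sure the identification of the input sector's tape alphabet with $\pazocal{A}$ is the one under which the statement is meant. I would handle this exactly as the paper does elsewhere (e.g.\ in the proof of Lemma \ref{disks are relations} and the discussion of projections in Section 4): each sector tape alphabet is a fixed copy of $\pazocal{A}$, the natural copy maps are isomorphisms, and the relation proved for one copy transfers to $\pazocal{A}$ by relabelling. A secondary point to be careful about is that Lemma \ref{disks are relations} is stated for $G(\textbf{S})$ and $G_\Omega(\textbf{S})$ for a generic cyclic recognizing $S$-machine $\textbf{S}$ with $Y_{s+1}=\emptyset$; I would verify that $\textbf{M}$ meets this hypothesis (the $Q_4(L)\{t(1)\}$-sector, which plays the role of the ``$s+1$st'' sector for the cyclic base, has empty tape alphabet by construction in Section 4.12 and Section 5.1), so the lemma applies verbatim. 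With these identifications in place the argument is short: accepted $\Rightarrow$ trivial in $G(\textbf{M})$ by Lemma \ref{disks are relations}, and $u^n$ is accepted (as the input of $I(u^n)$) by Lemma \ref{M language}.
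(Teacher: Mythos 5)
There is a genuine gap in the final step. Your first half is fine: by Lemma \ref{M language} the configuration $I(u^n)$ is accepted, and by (the proof of) Lemma \ref{disks are relations} any accepted configuration, read as a word over $\pazocal{X}$, equals $1$ in $G(\textbf{M})$. But the passage from $I(u^n)=1$ to $u^n=1$ does not work as you describe. The word $I(u^n)$ is a full configuration: it contains all the state letters of the standard base and a copy of $u^n$ in \emph{every} input sector $P_0(i)Q_0(i)$, $i=1,\dots,L$. It is simply not the case that $I(u^n)$ is conjugate in $F(\pazocal{X})$ to (a copy of) $u^n$ times something already known to be trivial in $G(\textbf{M})$: the $q$-letters do not cancel freely, no relation of $G(\textbf{M})$ kills individual $q$-letters, and ``projecting onto the input sector'' is not a homomorphism, so it cannot transport the relation $I(u^n)=1$ to a relation among $a$-letters. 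From the single relation $I(u^n)=1$ one cannot isolate $u^n$.

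What is missing is precisely the asymmetry of the construction that the paper builds in for this purpose: the second machine locks the ``special'' input sector, so by Lemma \ref{M language} the configuration $J(u^n)$ (which is $I(u^n)$ with the special input sector emptied) is \emph{also} accepted, hence also trivial in $G(\textbf{M})$ by Lemma \ref{disks are relations}. Writing $I(u^n)\equiv X\,u_1^n\,Y$ and $J(u^n)\equiv XY$, where $u_1^n$ is the copy of $u^n$ in the special input sector, the two relations $Xu_1^nY=1$ and $XY=1$ give $u_1^n=X^{-1}(XY)Y^{-1}\cdot\big(Y(XY)^{-1}Y^{-1}\big)^{\pm1}=1$, i.e.\ $u^n=1$ after the standard identification of the special input alphabet with $\pazocal{A}$. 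This two-configuration comparison is the paper's entire proof of Lemma \ref{a-relations are relations}; using $I(u^n)$ alone, as you do, cannot close the argument.
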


\begin{proof}

Lemmas \ref{M language} and \ref{disks are relations} imply that the words corresponding to the configurations $I(u^n)$ and $J(u^n)$ are trivial over the group $G(\textbf{M})$. These two words differ only by the insertion of the word $u^n$ in the `special' input sector, so that $u^n=1$ in $G(\textbf{M})$.

\end{proof}

\begin{lemma} \label{G isomorphic to G_a}

The groups $G(\textbf{M})$ and $G_\pazocal{S}(\textbf{M})$ are isomorphic.

\end{lemma}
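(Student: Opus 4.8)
\textbf{Proof plan for Lemma \ref{G isomorphic to G_a}.}

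The group $G_\pazocal{S}(\textbf{M})$ is by definition the quotient $G(\textbf{M})/\gen{\gen{\pazocal{S}}}$, so it suffices to show that the normal closure of $\pazocal{S}$ in $G(\textbf{M})$ is trivial, i.e. that every word in $\pazocal{S}$ already represents the identity of $G(\textbf{M})$. The plan is therefore to reduce the statement to the case of the single-relator words $u^n$ (which is exactly Lemma \ref{a-relations are relations}) together with a diagrammatic argument over $B(\pazocal{A},n)$. First I would recall that, by Lemma \ref{Theorem B} and the surrounding discussion, $\pazocal{S}$ consists precisely of the words over $\pazocal{A}\cup\pazocal{A}^{-1}$ that are trivial in $B(\pazocal{A},n)$, and that $B(\pazocal{A},n)$ has the presentation $\gen{\pazocal{A}\mid \{w^n : w\in F(\pazocal{A})\}}$. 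Hence any $W\in\pazocal{S}$ admits a van Kampen diagram over this presentation, meaning $W = \prod_{i=1}^k f_i u_i^{\pm n} f_i^{-1}$ in $F(\pazocal{A})$ for suitable $f_i\in F(\pazocal{A})$ and $u_i\in F(\pazocal{A})$.

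Next I would transport this equality into $G(\textbf{M})$. The tape alphabet of the `special' input sector is identified with $\pazocal{A}$, so each generator $a\in\pazocal{A}$ (more precisely, its copy $a_1'$ or the relevant $a$-letter) is a generator of $G(\textbf{M})$, and the free-group identity $W = \prod f_i u_i^{\pm n} f_i^{-1}$ holds verbatim in $F(\pazocal{X})$ after relabelling. By Lemma \ref{a-relations are relations}, each factor $u_i^{n}$ is trivial in $G(\textbf{M})$, hence so is each conjugate $f_i u_i^{\pm n} f_i^{-1}$, and therefore $W = 1$ in $G(\textbf{M})$. This shows $\pazocal{S}\subseteq\gen{\gen{1}}$ inside $G(\textbf{M})$, so $\gen{\gen{\pazocal{S}}}^{G(\textbf{M})}$ is trivial and the canonical epimorphism $G(\textbf{M})\to G_\pazocal{S}(\textbf{M})$ is an isomorphism.

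One small subtlety to address carefully is the identification of alphabets: the construction uses several disjoint copies of $\pazocal{A}$ (the copies $Y_i$, the letters $a_1$, $a_1'$, etc.), so I would make explicit which copy the $a$-relators $\Omega\supseteq\pazocal{S}$ are written in — namely the alphabet of the `special' input sector $P_0(1)Q_0(1)$ — and observe that Lemma \ref{a-relations are relations}, whose proof compares $I(u^n)$ and $J(u^n)$, produces the relation $u^n=1$ in exactly that alphabet. With this bookkeeping in place the argument is purely formal. The only genuinely non-routine ingredient is Lemma \ref{a-relations are relations} itself, which has already been established; the main (mild) obstacle here is simply to make sure the free-group factorization of $W$ coming from the Burnside presentation is matched correctly against the generators of $G(\textbf{M})$, and that no additional relations of $G(\textbf{M})$ are needed beyond the $\gen{\gen{u^n}}$ already handled.
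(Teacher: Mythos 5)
Your proposal is correct and takes essentially the same route as the paper: both reduce the lemma to Lemma \ref{a-relations are relations} via the presentation of $B(\pazocal{A},n)$ by the relators $u^n$, the only difference being that the paper packages the formal step as two applications of von Dyck's theorem (first extending $\varphi:B(\pazocal{A},n)\to G(\textbf{M})$, then identifying the presentations), whereas you unpack the same fact by writing each $W\in\pazocal{S}$ explicitly as $\prod_i f_iu_i^{\pm n}f_i^{-1}$ in $F(\pazocal{A})$. Your bookkeeping remark about which copy of $\pazocal{A}$ (the `special' input alphabet) carries the relators matches the paper's setup, so no gap remains.
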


\begin{proof}

Identify $B(\pazocal{A},n)$ with the presentation $\gen{\pazocal{A}\mid w=1, w\in\pazocal{L}}$.

Then let $\varphi:\pazocal{A}\to G(\textbf{M})$ be the map sending each letter to its natural copy in the tape alphabet of the `special' input sector. By the theorem of von Dyck (Theorem 4.5 in [14]), Lemma \ref{a-relations are relations} implies that $\varphi$ extends to a homomorphism $B(\pazocal{A},n)\to G(\textbf{M})$. So, for any word $w$ corresponding to an $a$-relation $w=1$, the relation $w=1$ holds in $G(\textbf{M})$. 

The theorem of von Dyck then implies that the map sending each generator of the canonical presentation of $G(\textbf{M})$ to the corresponding generator of the disk presentation of $G_\pazocal{S}(\textbf{M})$ extends to an isomorphism between the two groups.

\end{proof}

\begin{lemma} \label{embedding}

The group $B(\pazocal{A},n)$ embeds in the group $G(\textbf{M})$.

\end{lemma}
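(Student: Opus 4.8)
The plan is to produce the isomorphism $B(\pazocal{A},n)\cong\varphi(B(\pazocal{A},n))\leq G(\textbf{M})$ by showing the homomorphism $\varphi\colon B(\pazocal{A},n)\to G(\textbf{M})$ from Lemma \ref{G isomorphic to G_a} is injective. By Lemma \ref{G isomorphic to G_a} we may (and will) work in $G_\pazocal{S}(\textbf{M})$ instead of $G(\textbf{M})$, since $\varphi$ corresponds to sending each $a\in\pazocal{A}$ to its copy in the tape alphabet of the `special' input sector. So the task reduces to the following: if $w$ is a word over $\pazocal{A}$ with $w=1$ in $G_\pazocal{S}(\textbf{M})$, then $w=1$ in $B(\pazocal{A},n)$, i.e. $w\in\gen{\gen{\pazocal{S}}}^{F(\pazocal{A})}$.

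First I would take $w$ with $w=1$ in $G_\pazocal{S}(\textbf{M})=G_\Omega(\textbf{M})$ (for $\Omega=\pazocal{S}$) and invoke Lemma \ref{minimal exist} to get a minimal diagram $\Delta$ over the disk presentation of $G_\pazocal{S}(\textbf{M})$ with $\lab(\partial\Delta)\equiv w$. Since $w$ is a word over $\pazocal{A}$, its boundary has no $q$-edges and no $\theta$-edges, so $|\partial\Delta|_\theta=0$ and $|\partial\Delta|_q=0$. Then $\Delta$ can contain no disks: a disk would have $t$-spokes, which by Lemma \ref{M(S) annuli} / Lemma \ref{M_a no annuli 1}(4) (together with Lemma \ref{graph} for the $D$-minimal case) must end on $\partial\Delta$ at $q$-edges, contradiction — more carefully, a $D$-minimal diagram with a disk has, by Lemma \ref{graph}, a disk with $L-4$ $t$-spokes ending on $\partial\Delta$, forcing $q$-edges on $\partial\Delta$. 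Likewise $\Delta$ contains no $q$-edges at all (a maximal $q$-band would, by Lemma \ref{M_a no annuli 1}(4), have to end twice on $\partial\Delta$) and no $(\theta,q)$-cells, hence no $(\theta,a)$-cells either (a maximal $\theta$-band cannot be a $\theta$-annulus by Lemma \ref{minimal theta-annuli} / Lemma \ref{M_a no annuli 2}, and must end on $\partial\Delta$, but $\partial\Delta$ has no $\theta$-edges). Therefore $\Delta$ consists entirely of $a$-cells, i.e. of cells whose boundary labels are relators from $\pazocal{S}$ — words trivial in $B(\pazocal{A},n)$.

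It then follows immediately from van Kampen's Lemma applied to the sub-presentation $\gen{\pazocal{A}\mid\pazocal{S}}$ of $B(\pazocal{A},n)$ that $w=1$ in $B(\pazocal{A},n)$: the diagram $\Delta$, viewed purely as a diagram over $\gen{\pazocal{A}\mid\pazocal{S}}$, witnesses $w\in\gen{\gen{\pazocal{S}}}^{F(\pazocal{A})}$. Hence $\varphi$ is injective and $B(\pazocal{A},n)$ embeds in $G(\textbf{M})$.

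The main obstacle is making airtight the claim that a minimal (diskless-forced) diagram with a purely $\pazocal{A}$-labelled boundary has no $q$-, $\theta$-, $(\theta,q)$- or $(\theta,a)$-cells. The $q$-band and $\theta$-band parts are clean consequences of the no-annuli lemmas (Lemmas \ref{M_a no annuli 1}, \ref{M_a no annuli 2}, \ref{minimal theta-annuli}) since bands must terminate on the boundary where the requisite edge types are absent. The genuinely delicate point is ruling out disks: this is exactly where the machinery of Sections 9–10 (the $D$-minimality of $\Delta$, Lemma \ref{graph}, and ultimately the quadratic upper bound of Lemma \ref{contradiction}) is needed, but for the present statement the short route suffices — a disk forces $L-4\geq 1$ $t$-spokes terminating on $\partial\Delta$ at $q$-edges, which cannot exist. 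I would also note in passing that $\Omega$ here must be taken to be exactly $\pazocal{S}$ (as fixed for Section 11), so that the $a$-cells of $\Delta$ are precisely the relators of $B(\pazocal{A},n)$; this is the only place where we use that the $a$-relators coincide with the trivial words of $B(\pazocal{A},n)$ rather than a larger set.
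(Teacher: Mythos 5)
Your proposal is correct and follows essentially the same route as the paper: extend $\varphi$ to $B(\pazocal{A},n)\to G_\pazocal{S}(\textbf{M})$ (von Dyck via Lemma \ref{G isomorphic to G_a}/\ref{a-relations are relations}), take a minimal diagram for a trivial $\pazocal{A}$-word, rule out disks via Lemma \ref{graph} and all $(\theta,q)$-, $(\theta,a)$-, and $q$-cells via the band/annuli lemmas since the boundary has no $q$- or $\theta$-edges, and conclude the diagram consists only of $a$-cells, hence $w=1$ in $B(\pazocal{A},n)$. Your write-up just spells out the band-elimination details that the paper compresses into citations of Lemmas \ref{graph} and \ref{G_a theta-annuli}.
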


\begin{proof}

Consider the natural map $\varphi:\pazocal{A}\to G_\pazocal{S}(\textbf{M})$ sending the elements of $\pazocal{A}$ to their copies in the tape alphabet of the `special' input sector. The theorem of von Dyck implies that this extends to a homomorphism $\varphi:B(\pazocal{A},n)\to G_\pazocal{S}(\textbf{M})$.

Now suppose the reduced word $w$ over $\pazocal{A}$ satisfies $\varphi(w)=1$. Then by Lemma \ref{minimal exist}, there exists a minimal diagram $\Delta$ over $G_\pazocal{S}(\textbf{M})$ satisfying $\text{Lab}(\partial\Delta)\equiv w$. By Lemmas \ref{graph} and \ref{G_a theta-annuli}, every cell of $\Delta$ must be an $a$-cell. But then this is a diagram over $B(\pazocal{A},n)$, so that $w=1$ in $B(\pazocal{A},n)$.

So, $\varphi:B(\pazocal{A},n)\to G_\pazocal{S}(\textbf{M})$ is an embedding. Lemma \ref{G isomorphic to G_a} then implies the statement.

\end{proof}

Now we wish to justify our assignment of weights to $a$-cells and disks over the disk presentation of $G_\pazocal{S}(\textbf{M})$. To do so, we first study areas of a diagram over the canonical presentation of $G(\textbf{M})$ with contour label corresponding to a disk relation.

\begin{lemma} \label{disks are quadratic}

(1) For any configuration $W$ accepted by $\textbf{M}$, there exists a reduced diagram $\Delta$ over the canonical presentation of $G(\textbf{M})$ such that $\lab(\partial\Delta)\equiv W$ and $\text{Area}(\Delta)\leq C_1|W|^2$.

(2) For any $u^n\in\pazocal{L}$, there exists a reduced diagram $\Delta$ over the canonical presentation of $G(\textbf{M})$ with $\lab(\partial\Delta)\equiv u^n$ and $\text{Area}(\Delta)\leq C_1\|u\|^2$.

\end{lemma}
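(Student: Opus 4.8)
The plan is to build the diagram $\Delta$ directly from the accepting computation provided by the machine $\textbf{M}$, using the trapezia machinery of Section 6.3 together with the estimates on computation lengths established in Section 5. For part (1): let $W$ be an accepted configuration and choose an accepting computation $\pazocal{C}:W\equiv W_0\to\dots\to W_t\equiv W_{ac}$ with $\ell(\pazocal{C})=\ell(W)$, which exists by definition of $\ell(W)$. By Lemma \ref{M accepted configurations}, $t\leq c_2\|W(i)\|$ for all $i\in\{2,\dots,L\}$; combining this with Lemma \ref{accepted configuration a-length}, which controls $|W(1)|_a$ in terms of $|W(j)|_a$, we get $t\leq c_2\|W(i)\|\leq C\|W\|$ for an appropriate constant (since $\|W\|=\sum_j\|W(j)\|$ and the components for $j\geq2$ are all coordinate shifts of one another). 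Next apply Lemma \ref{computations are trapezia} to $\pazocal{C}$ (or rather, following the idea of Lemma \ref{extend}/\ref{extend 2}, to the pieces of $\pazocal{C}$ corresponding to its $\ell(W)\leq2$ maximal one-machine subcomputations, gluing along $a$-cells in the `special' input sector as in Lemma \ref{disks are relations}): this yields a trapezium-like diagram whose top label is $W_{ac}$. Since $W_{ac}=1$ is the hub relation in $G(\textbf{M})$, pasting in one hub closes the diagram, producing a reduced $\Delta$ over the canonical presentation of $G(\textbf{M})$ with $\lab(\partial\Delta)\equiv W$.

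The area estimate is then a counting argument on this diagram. A trapezium of height $h$ with base of length $b$ contains at most roughly $h\cdot b'$ cells, where $b'$ bounds the length of each of its $\theta$-bands; here $h=t$ and, by Lemma \ref{M width}, $\|W_j\|\leq c_2\max(\|W_0\|,\|W_t\|)\leq c_2\|W\|$ for every intermediate configuration, so each $\theta$-band has length $O(\|W\|)$. Thus the total number of $(\theta,q)$- and $(\theta,a)$-cells is $O(t\cdot\|W\|)=O(\|W\|^2)$. The $O(L)$ $a$-cells inserted in the `special' input sector between the $\ell(W)\leq 2$ one-machine pieces each have perimeter $O(\|W\|)$ (they correspond to words $u^n$ with $u^n$ appearing in an input sector of an intermediate configuration, hence of length $O(\|W\|)$), contributing $O(\|W\|^2)$ more. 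The single hub has bounded perimeter $\Theta(L)$, contributing $O(1)$. Summing and absorbing all constants into $C_1$ (which is chosen after $L$, $c_2$, etc. in the parameter hierarchy of Section 3.3) gives $\text{Area}(\Delta)\leq C_1|W|^2$, using Lemma \ref{lengths}(a) to pass from the combinatorial length $\|W\|$ to the modified length $|W|$ (these differ only by the factor $\delta$ on $a$-edges, which only helps).

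For part (2): apply Lemma \ref{M language}, which gives a one-machine accepting computation of $I(u^n)$ (resp. $J(u^n)$), and note that $I(u^n)$ and $J(u^n)$ differ precisely by the word $u^n$ written in the `special' input sector. Using part (1) twice, build reduced diagrams $\Delta_I$ and $\Delta_J$ over $G(\textbf{M})$ with contour labels $I(u^n)$ and $J(u^n)$ and areas $O(|I(u^n)|^2)$, $O(|J(u^n)|^2)$; since $\|I(u^n)\|$ and $\|J(u^n)\|$ are $O(\|u^n\|)=O(n\|u\|)$, both areas are $O(n^2\|u\|^2)$. Gluing $\Delta_I$ and $\Delta_J$ along their common portion (everything except the `special' input sector) produces a reduced diagram with contour label $u^n$ and area $O(n^2\|u\|^2)$. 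Absorbing the constant $n^2$ (and all others) into $C_1$ — legitimate since $C_1\gg n$ in the parameter order — yields $\text{Area}(\Delta)\leq C_1\|u\|^2$.

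The main obstacle I anticipate is the bookkeeping around the $\ell(W)$-piece decomposition and the $a$-cells glued between pieces: one must verify that these $a$-cells genuinely correspond to elements of $\pazocal{L}$ (so that they are bona fide $a$-relators, i.e. disk relators are not needed there — indeed $\pazocal{L}\subset\pazocal{S}$) and that their perimeters are linearly bounded by $\|W\|$. This is exactly the content sitting behind Lemmas \ref{two one-machine J}, \ref{extending computations}, and \ref{M accepted configurations}, so the ingredients are all present; the work is in assembling them cleanly. A secondary technical point is making sure the diagram produced is reduced (or can be reduced without increasing area), which follows from the standard observation that removing cancellable pairs only decreases area.
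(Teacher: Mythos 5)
Your skeleton is close to the paper's, but your preferred route for part (1) has a genuine flaw: the $a$-cells you propose to glue between the one-machine pieces of $\pazocal{C}$ are not cells of the canonical presentation of $G(\textbf{M})$. The canonical presentation has only $(\theta,q)$-relations, $(\theta,a)$-relations and the hub; relators $u^n\in\pazocal{L}$ belong to $G_\pazocal{S}(\textbf{M})$, and a diagram containing them is over the wrong presentation — which defeats the purpose of this lemma, since it is exactly the tool used later (Lemma \ref{a-cells are quadratic}, the weight assignments) to replace $a$-cells and disks by genuine $G(\textbf{M})$-diagrams; patching your $a$-cells by part (2) would be circular. Moreover, the $a$-cells are simply not needed: the accepting computation $\pazocal{C}$ furnished by Lemma \ref{M accepted configurations} is a single reduced computation of $\textbf{M}$ in the standard base, and when $\ell(W)=2$ the junction between its two maximal one-machine subcomputations is just the configuration $J(w)$ of Lemma \ref{two one-machine J} — there is no insertion/deletion in the `special' input sector to compensate. (That phenomenon only arises in Lemmas \ref{extend} and \ref{extend 2}, where a computation of a single component $\{t(i)\}B_3(i)$ is extended to the standard base.) So Lemma \ref{computations are trapezia} applies to $\pazocal{C}$ directly, giving one trapezium over $M(\textbf{M})$ with bottom $W$ and top $W_{ac}$; since every rule locks the $Q_4(L)\{t(1)\}$-sector, its two side $q$-bands carry identical labels, so one glues the sides together and pastes a single hub into the resulting inner contour. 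Your area count ($t\leq c_2\|W\|$ by Lemma \ref{M accepted configurations}, band lengths $O(\|W\|)$ by Lemma \ref{M width}, hence area $\leq 3c_2^2\|W\|^2+1$) is then exactly the paper's; just note that the passage to $|W|$ uses $\|W\|\leq\delta^{-1}|W|$, i.e. the modified length is the \emph{smaller} quantity, so the $\delta^{-2}$ must be absorbed into $C_1$ (legitimate, as $\delta^{-1}<<C_1$).

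A secondary slip is in part (2): you invoke part (1) as a black box and then claim to "absorb the constant $n^2$ (and all others) into $C_1$", but the bound from part (1) already carries the factor $C_1$, and in terms of modified length it also carries the additive $q$-contribution $\approx 11L$ of $I(u^n)$, so $C_1|I(u^n)|^2$ is not $\leq\frac{1}{2}C_1\|u\|^2$ when $\|u\|$ is small. The paper avoids this by reusing the intermediate estimate from the construction, $\text{Area}(\Delta_j)\leq 3c_2^2\|I(u^n)\|^2+1$ with $\|I(u^n)\|,\|J(u^n)\|\leq L(11+n\|u\|)$, and only then choosing $C_1$ after $c_2$, $L$ and $n$; with that adjustment your gluing of $\Delta_1$ and $\Delta_2$ along their common contour (everything outside the `special' input sector) is exactly the paper's argument.
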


\begin{proof}

(1) By Lemma \ref{M accepted configurations}, there exists a computation $\pazocal{C}:W\equiv W_0\to\dots\to W_t$ accepting $W$ such that $t\leq c_2\|W(i)\|$ for all $i\geq2$. Further, by Lemma \ref{M width}, $\|W_j\|\leq c_2\|W\|$ for all $j$.

By Lemma \ref{computations are trapezia}, we can then build a trapezium $\Gamma$ over $M(\textbf{M})$ corresponding to $\pazocal{C}$, so that $\lab(\textbf{tbot}(\Gamma))\equiv W$ and $\lab(\textbf{ttop}(\Gamma))\equiv W_{ac}$. 

Given a maximal $\theta$-band $\pazocal{T}$ of $\Gamma$, $\|\textbf{tbot}(\pazocal{T})\|\leq c_2\|W\|$. So, $\text{Area}(\Gamma)\leq 3c_2^2\|W\|^2$.

As the $Q_s(L)\{t(1)\}$-sector is locked by every rule, the sides of $\Gamma$ are labelled identically and no trimming was necessary. So, we may glue these sides together and paste a hub into the middle of the diagram. This produces a reduced diagram $\Delta$ over the canonical presentation of $G(\textbf{M})$ with $\lab(\partial\Delta)\equiv W$ and satisfying $\text{Area}(\Delta)\leq3c_2^2\|W\|^2+1$. 

The statement then follows as we choose the parameter $C_1$ after $c_2$ and $\delta$.

(2) Clearly, we may assume that $u^n$ is nontrivial in $F(\pazocal{A})$.

As in the previous case, we can build diagrams $\Delta_1$ and $\Delta_2$ over the canonical presentations of $G(\textbf{M})$ where $\Delta_j$ is made of a hub and a trapezium satisfying: 
\begin{itemize}
\item $\text{Lab}(\Delta_1)\equiv I(u^n)$ and $\text{Area}(\Delta_1)\leq3c_2^2\|I(u^n)\|^2+1$

\item $\text{Lab}(\Delta_2)\equiv J(u^n)$ and $\text{Area}(\Delta_2)\leq3c_2^2\|J(u^n)\|^2+1$
\end{itemize}

Note that $\|I(u^n)\|,\|J(u^n)\|\leq L(11+n\|u\|)$. So, since $C_1$ is chosen after $c_2$, $L$, and $n$, we can assume that $\text{Area}(\Delta_j)\leq \frac{1}{2}C_1\|u\|^2$ for $j=1,2$.

Gluing $\Delta_1$ and $\Delta_2$ along their common contours (and making any possible cancellations) then yields a diagram $\Delta$ satisfying the statement.

\end{proof}

\begin{lemma} \label{a-cells are quadratic} If $w$ is a reduced word over the alphabet $\pazocal{A}$ such that $w=1$ in $B(\pazocal{A},n)$, then there exists a reduced diagram $\Delta$ over the canonical presentation of $G(\textbf{M})$ with $\lab(\partial\Delta)\equiv w$ and satisfying $\text{Area}(\Delta)\leq C_1\|w\|^2$.

\end{lemma}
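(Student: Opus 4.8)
The plan is to reduce the statement to Lemma \ref{a-cells are quadratic B(m,n)} (the quadratic bound on the mass of a strictly reduced graded diagram over $\textbf{B}(\infty)$) together with Lemma \ref{disks are quadratic}(2). First I would recall that since $w=1$ in $B(\pazocal{A},n)$, the strengthened van Kampen Lemma for $\textbf{B}(\infty)$ (Section 2.8) provides a strictly reduced graded diagram $\Delta_0$ over $\textbf{B}(i)$ for some $i\geq0$ with $\lab(\partial\Delta_0)\equiv w$. By Lemma \ref{a-cells are quadratic B(m,n)}, $\rho(\Delta_0)\leq|\partial\Delta_0|^2$; and since $\partial\Delta_0$ has no $\theta$-edges, $|\partial\Delta_0|$ is $\|w\|$ up to the constant $\delta$ (in fact $|\partial\Delta_0|=\delta\|w\|$ in the modified length, but one should just use the combinatorial length here, so $\rho(\Delta_0)\leq\|w\|^2$ after the obvious adjustment — I will simply state the mass bound in combinatorial terms since that is what the proof of \ref{a-cells are quadratic B(m,n)} gives).

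Next I would convert $\Delta_0$ into a diagram over the canonical presentation of $G(\textbf{M})$. Each $\pazocal{R}$-cell $\Pi$ of $\Delta_0$ has boundary label a cyclic permutation of $A^{\pm n}$ for some freely cyclically reduced $A$, hence corresponds to a word $A^n\in\pazocal{L}$; its mass is $\rho(\Pi)=|A|^2$. By Lemma \ref{disks are quadratic}(2), there is a reduced diagram $\Sigma_\Pi$ over the canonical presentation of $G(\textbf{M})$ with $\lab(\partial\Sigma_\Pi)\equiv A^n$ and $\text{Area}(\Sigma_\Pi)\leq C_1\|A\|^2=C_1\rho(\Pi)$. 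Replacing every $\pazocal{R}$-cell of $\Delta_0$ by the corresponding $\Sigma_\Pi$ (and noting that the $0$-cells of $\Delta_0$, which only serve to identify letters in the free group, contribute nothing to the area over $G(\textbf{M})$) produces a diagram $\Delta$ over the canonical presentation of $G(\textbf{M})$ with $\lab(\partial\Delta)\equiv w$ and
$$\text{Area}(\Delta)\leq\sum_{\Pi}\text{Area}(\Sigma_\Pi)\leq C_1\sum_\Pi\rho(\Pi)=C_1\rho(\Delta_0)\leq C_1\|w\|^2.$$
Finally, passing to a reduced diagram with the same boundary label only decreases the area, so we may assume $\Delta$ is reduced, giving the claim.

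The only genuinely delicate point — and the one I would treat carefully rather than wave at — is the bookkeeping between the two length conventions and between masses and areas: Lemma \ref{a-cells are quadratic B(m,n)} is stated with $|\partial\Delta|^2$ in the combinatorial length of a diagram over $\textbf{B}(\infty)$ (where there are only $a$-edges), so I must be sure that `$|\partial\Delta_0|$' there equals the ordinary word length $\|w\|$, and that the mass $\rho(\Pi)=|A|^2$ of a rank-$j$ cell with relator $A^n$ really is an upper bound (up to the fixed constant $C_1$) for the $G(\textbf{M})$-area of the corresponding disk diagram via Lemma \ref{disks are quadratic}(2), whose hypothesis requires exactly a word of the form $u^n\in\pazocal{L}$ with $u=A$. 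Since the relators of $\textbf{B}(\infty)$ are by construction of the form $A_i^n$ with $A_i$ freely cyclically reduced, and since $\|A_i\|$ is precisely the `$\|u\|$' appearing in Lemma \ref{disks are quadratic}(2), this matches on the nose; the constant $C_1$ absorbs any discrepancy coming from the substitution. I would also remark that this is the place where periodicity of the relators of $B(\pazocal{A},n)$ is used implicitly — it is what makes each $\pazocal{R}$-cell of $\Delta_0$ a word of $\pazocal{L}$ and hence amenable to Lemma \ref{disks are quadratic}(2) — consistent with the remark in the introduction about the role of the Burnside variety.
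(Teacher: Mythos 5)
Your proposal is correct and follows essentially the same route as the paper: take a van Kampen diagram over the (graded) presentation of $B(\pazocal{A},n)$ with boundary label $w$, replace each cell labelled by a word $u^n\in\pazocal{L}$ with the $G(\textbf{M})$-diagram of area at most $C_1\|u\|^2$ from Lemma \ref{disks are quadratic}(2), and bound the sum $\sum\|u(\Pi_0)\|^2=\rho(\Delta_0)\leq\|w\|^2$ via Lemma \ref{a-cells are quadratic B(m,n)}. The bookkeeping points you flag (cyclic permutations/inverses of relators staying in $\pazocal{L}$, combinatorial length, passing to a reduced diagram) are handled exactly as you describe and cause no difficulty.
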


\begin{proof}

Let $\Delta_0$ be a van Kampen diagram over the presentation $\gen{\pazocal{A}\mid\pazocal{R}}$ of $B(\pazocal{A},n)$ (see Section 2.8) with $\lab(\partial\Delta_0)\equiv w$. For each cell $\Pi_0$ in $\Delta_0$, $\lab(\partial\Pi_0)\in\pazocal{R}\subset\pazocal{L}$. Setting $\lab(\partial\Pi_0)\equiv (u(\Pi_0))^n$, Lemma \ref{disks are quadratic}(2) then implies that there exists a diagram $\Pi$ over the canonical presentation of $G(\textbf{M})$ satisfying $\lab(\partial\Pi)\equiv (u(\Pi_0))^n$ and $\text{Area}(\Pi)\leq C_1\|u(\Pi_0)\|^2$.

Pasting $\Pi$ in place of $\Pi_0$ for each cell of $\Delta_0$ then produces a diagram $\Delta$ over the canonical presentation of $G(\textbf{M})$ satsifying $\lab(\partial\Delta)\equiv w$ and $$\text{Area}(\Delta)=\sum\text{Area}(\Pi)\leq\sum\limits_{\Pi_0\in\Delta_0} C_1\|u(\Pi_0)\|^2$$

But defining $\rho(\Pi_0)=\|u(\Pi_0)\|^2$ as in the definition of mass in Section 2.9, Lemma \ref{a-cells are quadratic B(m,n)} implies $$\sum\limits_{\Pi_0\in\Delta_0}\|u(\Pi_0)\|^2=\sum\limits_{\Pi_0\in\Delta_0}\rho(\Pi_0)\defeq\rho(\Delta_0)\leq\|\partial\Delta_0\|^2$$
Hence, $\text{Area}(\Delta)\leq C_1\|\partial\Delta_0\|^2=C_1\|w\|^2$.

\end{proof}

Note that the proof of Lemma \ref{a-cells are quadratic} relies on the assumption that $n$ satisfies $(*)$.

\subsection{Assignment of $G$-weight} \

\begin{lemma} \label{impeding G-weight}

Let $\Delta$ be an impeding $a$-trapezium. Then there exists a reduced diagram $\tilde{\Delta}$ over $G(\textbf{M})$ such that $\lab(\partial\tilde{\Delta})\equiv\lab(\partial\Delta)$ and $\text{Area}(\tilde{\Delta})\leq2\text{wt}_G(\Delta)$.

\end{lemma}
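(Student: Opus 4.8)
The goal is to realize an impeding $a$-trapezium $\Delta$ by a diagram over $G(\textbf{M})$ (with no $a$-relations, hence with disks genuinely built from $\textbf{M}$-computations) whose ordinary area is bounded by twice $\text{wt}_G(\Delta)$. Recall that by definition $\Delta$ has base of the form $(P_0(1)Q_0(1))^{\pm1}$ or $Q_0(1)^{-1}Q_0(1)$, step history $(1)_1$, and history $H$ admitting a factorization $H_1H_2^\ell H_3$ for some $\ell\ge0$; here $\eta=\|H_1\|+n\|H_2\|+\|H_3\|$ and $h=\|H\|$, and $\text{wt}_G(\Delta)$ is the minimum of $\tfrac12\text{wt}(\Delta)$ and
$$3h\max(\|\textbf{tbot}(\Delta)\|,\|\textbf{ttop}(\Delta)\|)+3C_1h\eta+C_1(|\textbf{tbot}(\Delta)|_a+|\textbf{ttop}(\Delta)|_a+2\eta)^2.$$
If the minimum is attained by $\tfrac12\text{wt}(\Delta)$ there is nothing to do: $\Delta$ itself is a reduced diagram (after pushing the $a$-cells, which are over $\pazocal{S}\subseteq\Omega$, into genuine disk-and-trapezium subdiagrams via Lemma \ref{disks are quadratic}, keeping area controlled), so I would concentrate on the case where the bound is the displayed quantity.

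\textbf{Main construction.} First I would peel off the $\theta$-bands of $\Delta$ into a trapezium part and an $a$-cell part: by Lemma \ref{a-cells sector} every $a$-cell of $\Delta$ sits in the $(P_0(1)Q_0(1))^{\pm1}$- or $Q_0(1)^{-1}Q_0(1)$-sector, and by Lemma \ref{M_a reduced a-trapezia}/\ref{quasi-trapezia}-type reasoning one may assume (after transpositions pushing $a$-cells out past $\textbf{tbot}$ and $\textbf{ttop}$) that $\Delta$ decomposes into a genuine trapezium $\Delta'$ of history $H$ over $M(\textbf{M})$ together with boundary $a$-cells. The trapezium $\Delta'$ corresponds via Lemma \ref{trapezia are computations} to a reduced computation; restricting to the `special' input sector, the tape word there is manipulated by exactly the rules of step $(1)_1$, i.e. copies of $\zeta^{(1)}(a)/\zeta^{(2)}(a)$ up to the $H_1H_2^\ell H_3$ pattern, so that Lemma \ref{unreduced base} (or Lemma \ref{multiply one letter}) controls the $a$-length of every intermediate tape word by $\max(|\textbf{tbot}|_a,|\textbf{ttop}|_a)+2\eta$ — this is exactly where the parameter $\eta$ enters. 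Each maximal $\theta$-band of $\Delta'$ therefore has length at most $\max(\|\textbf{tbot}(\Delta)\|,\|\textbf{ttop}(\Delta)\|)+2\eta+O(1)$, contributing at most $h\big(\max(\|\textbf{tbot}(\Delta)\|,\|\textbf{ttop}(\Delta)\|)+2\eta\big)$ cells. The $a$-cells attached along $\textbf{tbot}$ and $\textbf{ttop}$ (and those ending on $(\theta,q)$-cells of the bounding $q$-bands) have total combinatorial perimeter at most $|\textbf{tbot}(\Delta)|_a+|\textbf{ttop}(\Delta)|_a+2h$, and by Lemma \ref{a-cells are quadratic} each such $a$-cell — being a word trivial in $B(\pazocal{A},n)$ — can be replaced by a reduced subdiagram over $G(\textbf{M})$ of area at most $C_1$ times the square of its perimeter. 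Summing via the quadratic estimate (sum of squares $\le$ square of sum) gives a total $a$-cell contribution of at most $C_1(|\textbf{tbot}(\Delta)|_a+|\textbf{ttop}(\Delta)|_a+2\eta+2h)^2$, which after absorbing the $2h$ term using the parameter inequalities (and the $3C_1h\eta$, $3h\max(\dots)$ slack in the definition of $G$-weight) is at most the displayed quantity. This yields a reduced diagram $\tilde\Delta$ over $G(\textbf{M})$ with $\lab(\partial\tilde\Delta)\equiv\lab(\partial\Delta)$ and $\text{Area}(\tilde\Delta)\le 2\,\text{wt}_G(\Delta)$.

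\textbf{Expected obstacle.} The delicate point is not the trapezium part but bookkeeping the $a$-cells: a single maximal $a$-band in $\Delta$ may run from $\textbf{tbot}$ or $\textbf{ttop}$ to a $(\theta,q)$-cell of one of the two bounding $Q_0(1)$-bands rather than to the opposite horizontal side, and since each rule of step $(1)_1$ reads/writes at most one $a$-letter of the special sector, the number of such attachments is controlled by $h$ (indeed by $\|H_1\|$, the length of the relevant maximal subword of $H$ with step $(1)_1$) — this is the analogue of Step 3 of Lemma \ref{a-trapezia hyperfaulty}, and the factor $n$ in $\eta=\|H_1\|+n\|H_2\|+\|H_3\|$ is exactly what is needed to dominate the $\|H_2\|^\ell$-many repeated applications after closing up the periodic middle via an $a$-relation $w^n=1$. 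I expect the bulk of the work to be verifying that the perimeters of the replacement disk-subdiagrams, summed against Lemma \ref{a-cells are quadratic B(m,n)}'s quadratic bound, really do fit under the $C_1(|\textbf{tbot}(\Delta)|_a+|\textbf{ttop}(\Delta)|_a+2\eta)^2$ term with the remaining slack $3h\max(\dots)+3C_1h\eta$ absorbing everything linear in $h$; this is a routine but somewhat lengthy chain of parameter-principle estimates of the same flavor as those in Section 8, and it is the step where care is required rather than any new idea.
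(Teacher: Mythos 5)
There is a genuine gap, and it sits at the center of your main construction. You decompose $\Delta$ into a trapezium $\Delta'$ with the full history $H\equiv H_1H_2^{\ell}H_3$ plus boundary $a$-cells, and then claim that Lemma \ref{multiply one letter}/\ref{unreduced base} bounds every intermediate tape word by $\max(|\textbf{tbot}(\Delta)|_a,|\textbf{ttop}(\Delta)|_a)+2\eta$, so that each $\theta$-band has length $\max(\dots)+2\eta+O(1)$. This fails precisely in the case that matters, $\ell>n$: those lemmas bound the intermediate words only by $\max(\|u_0\|,\|u_t\|)$, where $u_t$ is the terminal tape word of the computation with history $H$ starting from $\lab(\textbf{tbot}(\Delta))$ --- and $u_t$ is \emph{not} the tape word of $\textbf{ttop}(\Delta)$ (the $a$-cells of $\Delta$ account for the difference). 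Generically $\|u_t\|$ and the intermediate words grow like $\ell\|H_2\|$, which is unbounded in terms of $\eta=\|H_1\|+n\|H_2\|+\|H_3\|$. So the honest count for your construction is of order $h(\max(\dots)+2h)$ $(\theta,a)$-cells and $a$-cells of total perimeter of order $\max(\dots)+2h$, i.e.\ quantities quadratic in $h$; your final step of ``absorbing the $2h$ term'' into the slack $3h\max(\dots)+3C_1h\eta$ cannot work, since that slack is only linear in $h$ while $C_1(\dots+2h)^2$ is quadratic. In this regime the naive bound only recovers (roughly) $\tfrac12\mathrm{wt}(\Delta)$, not the displayed expression defining $\mathrm{wt}_G(\Delta)$.

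The paper's proof avoids this by not estimating (a rearrangement of) $\Delta$ at all in the hard case: after checking that $V_0\equiv\lab(\textbf{tbot}(\Delta))$ is $H$-admissible, it treats $\ell\le n$ directly, and for $\ell>n$ it builds a \emph{new} diagram by splitting $H_2^{\ell}$ into $q=\lfloor\ell/n\rfloor$ blocks $H_2^{n}$, capping each block trapezium with an $a$-cell labelled $u^{\pm n}$ (where $u$ is the copy of $H_2$ in $\pazocal{A}$) so that the special-sector tape word resets to its value after $H_1$; this is exactly where the periodicity of the Burnside relators enters, the inserted cells cost only $2C_1qn^2\|H_2\|^2\le 2C_1h\eta$, all tape words stay bounded by $\|V_0\|+2\eta$, and one final $a$-cell --- shown to be an $a$-relation via Lemma \ref{embedding} --- matches the top with $\textbf{ttop}(\Delta)$ before the $a$-cells are converted using Lemmas \ref{disks are quadratic} and \ref{a-cells are quadratic}. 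Your closing remark about ``closing up the periodic middle via $w^n=1$'' gestures at this idea, but it is not carried out, and without it (and without the case split $\ell\le n$ versus $\ell>n$ and the admissibility/boundary-matching step) the construction as written does not give $\mathrm{Area}(\tilde\Delta)\le 2\,\mathrm{wt}_G(\Delta)$.
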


\begin{proof}

Suppose $\text{wt}_G(\Delta)=\frac{1}{2}\text{wt}(\Delta)$. Then, let $\tilde{\Delta}$ be the diagram constructed from $\Delta$ by replacing each $a$-cell with the corresponding reduced diagram over $G(\textbf{M})$ constructed in Lemma \ref{disks are quadratic}. Then $\text{Area}(\tilde{\Delta})\leq\text{wt}(\Delta)=2\text{wt}_G(\Delta)$.

So, it suffices to assume that:
$$\text{wt}_G(\Delta)=3h\max(\|\textbf{tbot}(\Delta)\|,\|\textbf{ttop}(\Delta)\|)+3C_1h\eta+C_1(|\textbf{tbot}(\Delta)|_a+|\textbf{ttop}(\Delta)|_a+2\eta)^2$$ 
for $\eta=\|H_1\|+n\|H_2\|+\|H_3\|$.

Let $\pazocal{T}_0$ be the maximal $\theta$-band of $\Delta$ such that $\textbf{bot}(\pazocal{T}_0)=\textbf{bot}(\Delta)$. Letting $\theta_0$ be the rule corresponding to $\pazocal{T}_0$, Lemma \ref{theta-bands are one-rule computations} implies that the admissible word $V_0\equiv\lab(\textbf{tbot}(\Delta))$ is $\theta_0$-admissible.

If the base of $\Delta$ is $(P_0(1)Q_0(1))^{\pm1}$, then $V_0$ is $H$-admissible by the definition of the rules. Otherwise, the base of $\Delta$ is $Q_0(1)^{-1}Q_0(1)$. As $V_0$ is $\theta_0$-admissible, its tape word must be nonempty. The application of each rule conjugates the tape word of this sector, so that $V_0$ must again be $H$-admissible.

Suppose $\ell\leq n$. Let $\pazocal{C}$ be the reduced computation starting with $V_0$ and with history $H$. Then, let $\Gamma$ be the trapezium corresponding to $\pazocal{C}$ by Lemma \ref{computations are trapezia}. By Lemmas \ref{multiply one letter} and \ref{unreduced base}, for any maximal $\theta$-band $\pazocal{T}$ of $\Gamma$, $\|\textbf{tbot}(\pazocal{T})\|\leq\max(\|\textbf{tbot}(\Gamma)\|,\|\textbf{ttop}(\Gamma)\|)$, so that $\text{wt}(\Gamma)\leq3h\max(\|\textbf{tbot}(\Gamma)\|,\|\textbf{ttop}(\Gamma)\|)$. 

Note that $|\textbf{ttop}(\Gamma)|_a\leq|V_0|_a+2h$ and $h\leq\|H_1\|+n\|H_2\|+\|H_3\|=\eta$, so that
$$\text{wt}(\Gamma)\leq3h(\|\textbf{tbot}(\Delta)\|+2\eta)$$
Further, the bottom and side labels of $\Gamma$ are the same as those of $\Delta$, while the top labels differ only by a word $w$ from the `special' input sector with $\|w\|\leq|\textbf{ttop}(\Gamma)|_a+|\textbf{ttop}(\Delta)|_a$. Pasting $\Gamma$ and $\Delta$ along their shared contour then yields a diagram over $M_a(\textbf{M})$ with contour label $w$. By Lemma \ref{embedding}, $w$ must be an $a$-relation. So, we may paste an $a$-cell corresponding to $w$ to the top of $\Gamma$ to produce a diagram $\tilde{\Gamma}$ with the same contour label as $\Delta$ and
$$\text{wt}(\tilde{\Gamma})\leq3h(\|\textbf{tbot}(\Delta)\|+2\eta)+C_1(|\textbf{tbot}(\Delta)|_a+2\eta+|\textbf{ttop}(\Delta)|_a)^2\leq\text{wt}_G(\Delta)$$
Now suppose $\ell>n$.

Let $\pazocal{C}_1$ be the reduced computation starting with $V_0$ and having history $H_1$ and $\Gamma_1$ be the trapezium corresponding to $\pazocal{C}_1$ by Lemma \ref{computations are trapezia}. Set $V_1\equiv V_0\cdot H_1\equiv\lab(\textbf{ttop}(\Gamma_1))$.

As $V_0$ is $H$-admissible, there exists a reduced computation $\pazocal{C}_2$ starting with $V_1$ and having history $H_2^n$. For $q\in\N$ such that $\ell=qn+r$ with $0\leq r<n$, let $\Gamma_2(1),\dots,\Gamma_2(q)$ be $q$ copies of the trapezium corresponding to $\pazocal{C}_2$ by Lemma \ref{computations are trapezia}.

Let $v_1$ be the tape word of $V_1$. If the base of $\Delta$ is $P_0(1)Q_0(1)$ (or $Q_0(1)^{-1}P_0(1)^{-1}$), then the tape word written on $\textbf{ttop}(\Gamma_2(i))$ is equal to $v_1u^{-n}$ (or $u^nv_1$) in $F(\pazocal{A})$, where $u$ is the natural copy of $H_2$ over the alphabet $\pazocal{A}$. Otherwise, the base of $\Delta$ is $Q_0(1)^{-1}Q_0(1)$, so that the tape word written on $\textbf{ttop}(\Gamma_2(i))$ is equal in $F(\pazocal{A})$ to $u^nv_1u^{-n}$.

In each case, the projection of $\lab(\textbf{ttop}(\Gamma_2(i)))$ onto $F(\pazocal{A})$ is equivalent to $v_1$ in $B(\pazocal{A},n)$. In particular, for $1\leq i\leq q$ we may attach $a$-cells corresponding to $u^{\pm n}$ to $\textbf{top}(\Gamma_2(i))$ so that the top of the resulting diagram $\tilde{\Gamma}_2(i)$ is equivalent to $V_1$. By the assignment of weights, each of these $a$-cells has weight at most $C_1n^2\|H_2\|^2$.

Finally, let $\tilde{\Gamma}_2(q+1)$ be the trapezium corresponding to the reduced computation starting with $V_1$ and having history $H_2^r$.

Then, we paste the top of $\tilde{\Gamma}_2(i)$ to the bottom of $\tilde{\Gamma}_2(i+1)$ for each $1\leq i\leq q$ to form the diagram $\Gamma_2$.

Let $V_2\equiv\lab(\textbf{ttop}(\tilde{\Gamma}_2(q+1)))$. Note that $V_2$ is $\theta_2$-admissible, where $\theta_2$ is the first rule of $H_2$. So as above, $V_2$ is $H_3$-admissible, i.e there exists a reduced computation $\pazocal{C}_3$ starting with $V_2$ and having history $H_3$. Let $\Gamma_3$ be the corresponding trapezium.

Finally, we form the the diagram $\Gamma$ by pasting together $\Gamma_1$, $\Gamma_2$, and $\Gamma_3$.

Note that $\|V_1\|\leq\|V_0\|+2\|H_1\|$, while $\|V_2\|\leq\|V_1\|+2r\|H_2\|\leq\|V_0\|+2\|H_1\|+2r\|H_2\|$. So, for any maximal $\theta$-band $\pazocal{T}$ of $\Gamma$, $|\textbf{tbot}(\pazocal{T})|_a\leq|V_0|_a+2\eta$.

In particular, $|\textbf{ttop}(\Gamma)|_a\leq|V_0|_a+2\eta$ and
$$\text{wt}(\Gamma)\leq3h(\|\textbf{tbot}(\Delta)\|+2\eta)+2C_1qn^2\|H_2\|^2$$
As above, there exists an $a$-relation $w$ with $\|w\|\leq|\textbf{ttop}(\Gamma)|_a+|\textbf{ttop}(\Delta)|_a$ such that if we paste the $a$-cell corresponding to $w$ to the top of $\Gamma$, we obtain a diagram $\tilde{\Gamma}$ with $\lab(\partial\tilde{\Gamma})\equiv\lab(\partial\Delta)$ and
$$\text{wt}(\tilde{\Gamma})\leq3h(\|\textbf{tbot}(\Delta)\|+2\eta)+2C_1qn^2\|H_2\|^2+C_1(|\textbf{tbot}(\Delta)|_a+2\eta+|\textbf{ttop}(\Delta)|_a)^2$$
As $qn\|H_2\|\leq\ell\|H_2\|\leq h$, we also have $2C_1qn^2\|H_2\|^2\leq 2C_1hn\|H_2\|\leq2C_1h\eta$. Hence, taking $C_1\geq6$,
$$\text{wt}(\tilde{\Gamma})\leq3h\|\textbf{tbot}(\Delta)\|+(2C_1+6)h\eta+C_1(|\textbf{tbot}(\Delta)|_a+|\textbf{ttop}(\Delta)|_a+2\eta)^2\leq\text{wt}_G(\Delta)$$
Thus, the reduced diagram $\tilde{\Delta}$ obtained from $\tilde{\Gamma}$ by replacing any $a$-cell with the diagram constructed in Lemma \ref{disks are quadratic} (and making any necessary cancellations) satisfies the statement.

\end{proof}

\begin{lemma} \label{big G-weight}

For every big $a$-trapezium $\Delta$, there is a reduced diagram $\tilde{\Delta}$ over the finite presentation of $G(\textbf{M})$ such that $\lab(\partial\tilde{\Delta})\equiv\lab(\partial\Delta)$ and $\text{Area}(\tilde{\Delta})\leq2\text{wt}_G(\Delta)$.

\end{lemma}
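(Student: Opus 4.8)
The strategy mirrors the proof of Lemma \ref{impeding G-weight}, but is applied to a big $a$-trapezium rather than an impeding one. Recall that a big $a$-trapezium $\Delta$ has revolving (hence reduced) base $B$, contains at least one $a$-cell, and contains a subdiagram that is a standard trapezium; in particular, its history has a controlled subword, so Lemma \ref{M controlled} applies to the embedded trapezium. As in Lemma \ref{impeding G-weight}, the first move is to dispose of the easy case where $\text{wt}_G(\Delta)=\tfrac12\text{wt}(\Delta)$: here we simply replace every $a$-cell of $\Delta$ with the reduced diagram over $G(\textbf{M})$ furnished by Lemma \ref{disks are quadratic}(2) (each $a$-cell has contour label $u^n\in\pazocal{L}$, so this applies and contributes area at most $C_1\|u\|^2$, exactly the weight of the cell), yielding $\text{Area}(\tilde\Delta)\le\text{wt}(\Delta)=2\text{wt}_G(\Delta)$. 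So we may assume
$$\text{wt}_G(\Delta)=c_5\max(\|\textbf{ttop}(\Delta)\|,\|\textbf{tbot}(\Delta)\|)+4C_1(\|\textbf{tbot}(\Delta)\|+\|\textbf{ttop}(\Delta)\|)^2.$$

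\textbf{Main construction.} The point of a big $a$-trapezium is that its base is revolving, so one of the two side $q$-bands can be removed to obtain an $a$-trapezium $\Lambda$ whose base is a cyclic permutation of the standard base (or its inverse), to which Lemma \ref{extend 2} or the disk-building machinery of Lemma \ref{disks are quadratic}(1) applies. Concretely, I would: (i) use the controlled subword of the history together with Lemma \ref{M controlled} to recognize that the trimmed bottom label $V_0\equiv\lab(\textbf{tbot}(\Delta))$, read as an admissible word of $\textbf{M}$, projects to an accepted configuration after possibly adjusting the `special' input sector by an element of $\pazocal{L}$; (ii) invoke Lemma \ref{M accepted configurations} (via Lemma \ref{M width} for width control) to get an accepting computation of that configuration of length $O(\|V_0\|)$ and bounded width; (iii) feed this into Lemma \ref{computations are trapezia} to build a trapezium $\Gamma_0$ over $M(\textbf{M})$ with trimmed bottom $V_0$ and trimmed top $W_{ac}$, whose area is $O(c_2^2\|V_0\|^2)$ by Lemma \ref{lengths}(d) applied band by band; (iv) do the same symmetrically from $\textbf{ttop}(\Delta)$; (v) glue $\Gamma_0$, the cyclically-permuted copy of $\Delta$ (or rather $\Lambda$ together with the removed $q$-band), and the mirror of the top trapezium along their common contours, inserting $a$-cells in the `special' input sector wherever two configurations differ only by an element of $\pazocal{L}$ (these differences are $a$-relations by Lemma \ref{embedding}, hence legitimate $a$-cells, and each has weight at most $C_1\eta^2$ for $\eta$ bounded by the perimeters involved). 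Finally, replace all $a$-cells in the resulting diagram by the quadratic-area diagrams of Lemma \ref{disks are quadratic}(2) and reduce. The accumulated area is bounded by a constant times $c_2^2(\|\textbf{tbot}(\Delta)\|+\|\textbf{ttop}(\Delta)\|)^2$ plus the weight of $\Delta$'s own $a$-cells, and the parameter choices $c_5\gg c_2$ and $C_1\gg c_2$ absorb the constants so that the total is at most $2\text{wt}_G(\Delta)$.

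\textbf{Where the difficulty lies.} The routine parts are the area estimates for the auxiliary trapezia (Lemma \ref{lengths}(d) plus Lemmas \ref{M accepted configurations} and \ref{M width}) and the bookkeeping of perimeters when gluing. The genuinely delicate point — exactly as in Lemma \ref{impeding G-weight} — is controlling the tape word in the `special' input sector: the configuration $V_0$ need not itself be admissible for the accepting computation, only its projection onto the non-special sectors is, and Lemma \ref{projection admissible configuration not} must be used to pin down the precise discrepancy (an element of $\pazocal{L}$) so that the interfacing $a$-cells can be inserted. One must also check that, after removing a side $q$-band to pass from the big $a$-trapezium to an $a$-trapezium with (cyclically permuted) standard base, the controlled subword of the history survives, so that Lemma \ref{M controlled} still guarantees the base is reduced and the configurations are determined — this is why the hypothesis that $\Delta$ contains a \emph{standard} trapezium (not merely that its base is revolving) is essential. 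The constant $c_5$ is chosen precisely large enough that the linear term $c_5\max(\|\textbf{ttop}(\Delta)\|,\|\textbf{tbot}(\Delta)\|)$ dominates the length contributions of the auxiliary trapezia's $\theta$-bands, and $C_1$ large enough for the quadratic term to dominate the total $a$-cell weight.
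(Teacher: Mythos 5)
The decisive flaw is in step (v): you keep (a cyclic permutation of) $\Delta$ itself inside $\tilde{\Delta}$ and explicitly count ``the weight of $\Delta$'s own $a$-cells'' in the accumulated area, claiming the parameter choices absorb it. They cannot. The $G$-weight of a big $a$-trapezium is defined as the \emph{minimum} of half its weight and the expression $c_5\,(\cdot)\max(\|\textbf{tbot}(\Delta)\|,\|\textbf{ttop}(\Delta)\|)+4C_1(\ldots)^2$ precisely because the actual weight can be far larger than that expression; the case you must handle is the one where the formula is the minimum. There the height $h$ can be arbitrarily large compared with $\|\textbf{tbot}(\Delta)\|+\|\textbf{ttop}(\Delta)\|$ (long reduced computations with short end configurations are exactly the shaft phenomenon), while the only available bound on the $a$-cells of $\Delta$ (via Lemma \ref{a-band on same a-cell} and (MM2): their $a$-bands end on $\textbf{tbot}(\Delta)$, $\textbf{ttop}(\Delta)$ or the $Q_0(1)^{\pm1}$-bands) is $C_1(|\textbf{tbot}(\Delta)|_a+|\textbf{ttop}(\Delta)|_a+2h)^2$, and the $(\theta,a)$-cells of $\Delta$ are likewise only bounded by a quantity of order $h^2$. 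No choice of $c_5$ or $C_1$ makes $h^2\leq \mathrm{const}\cdot\bigl(h\max(\|\textbf{tbot}(\Delta)\|,\|\textbf{ttop}(\Delta)\|)+(\|\textbf{tbot}(\Delta)\|+\|\textbf{ttop}(\Delta)\|)^2\bigr)$ when $h$ is much larger than the top and bottom lengths. (Secondary problems: gluing $\Gamma_0$ and the mirrored top trapezium onto $\Delta$ changes the boundary label rather than preserving it, and a trapezium with top $W_{ac}$ must be capped by a hub before $W_{ac}$ can leave the boundary; but these could be repaired, whereas the retention of $\Delta$ cannot.)

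The missing idea is that in the hard case one must discard $\Delta$ entirely. The paper splits on whether $h\leq c_3\max(\|\textbf{tbot}(\Delta_2)\|,\|\textbf{ttop}(\Delta_2)\|)$ for a coordinate subtrapezium $\Delta_2$. If yes, a direct band count (using Lemma \ref{M controlled} to see that all non-special coordinates are coordinate shifts of $\Delta_2$) shows $\text{wt}(\Delta)\leq\text{wt}_G(\Delta)$, and then your easy-case replacement of $a$-cells via Lemma \ref{disks are quadratic} finishes. If no, Lemma \ref{M projected long history} applied to the computation of $\Delta_2$ produces accepted configurations $W_0',W_h'$ agreeing with $\lab(\textbf{bot}(\Delta_-))$, $\lab(\textbf{top}(\Delta_-))$ outside the `special' input sector, and -- this is the quantitative key you never invoke -- accepting histories with $\|H_0\|+\|H_h\|\leq h$. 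One then builds $\tilde{\Delta}$ from scratch as two caps joined by the single side $q$-band of length $h$: each cap is a hub plus an accepting trapezium (Lemmas \ref{M accepted configurations} and \ref{computations are trapezia}) glued to a Burnside-relation diagram (Lemmas \ref{embedding} and \ref{a-cells are quadratic}) filling the special-sector discrepancy $w_0^{-1}w_0'$, which is trivial in $B(\pazocal{A},n)$ because both $W_0$ and $W_0'$ are trivial in $G_{\pazocal{S}}(\textbf{M})$. The resulting area is at most $2c_3(\|H_0\|+\|H_h\|)(\|\textbf{tbot}(\Delta)\|+\|\textbf{ttop}(\Delta)\|)+C_1(|\textbf{tbot}(\Delta)|_a+|\textbf{ttop}(\Delta)|_a)^2+h$, which $c_5\gg c_3$ absorbs into $\text{wt}_G(\Delta)$. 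Your proposal has several correct ingredients (the $\pazocal{L}$-discrepancy handled by an $a$-cell, the role of the controlled subword), but without the replacement of $\Delta$ by the two caps the bound fails exactly where the lemma has content.
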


\begin{proof}

As in the proof of Lemma \ref{impeding G-weight}, if $\text{wt}_G(\Delta)=\frac{1}{2}\text{wt}(\Delta)$, then we may construct $\tilde{\Delta}$ simply by replacing all $a$-cells with the corresponding diagram constructed in Lemma \ref{disks are quadratic}. Hence, it suffices to assume that $$\text{wt}_G(\Delta)=c_5h\max(\|\textbf{tbot}(\Delta)\|,\|\textbf{ttop}(\Delta)\|)+4C_1(|\textbf{tbot}(\Delta)|_a+|\textbf{ttop}(\Delta)|_a)^2$$

Without loss of generality, suppose the base of $\Delta$ begins and ends with $\{t(1)\}$. Then, let $\Delta_1,\dots,\Delta_L$ be the maximal subdiagrams of $\Delta$ bounded by the maximal $t$-bands, so that each is an $a$-trapezium with pararevolving base.

By Lemma \ref{a-cells sector}, only $\Delta_1$ may contain $a$-cells, so that $\Delta_2,\dots,\Delta_L$ are trapezia.

For $\pazocal{T}$ a maximal $\theta$-band of $\Delta$, let $\pazocal{T}_i$ be the subband which is a maximal $\theta$-band of $\Delta_i$. 

Suppose $h\leq c_3\max(\|\textbf{tbot}(\Delta_2)\|,\|\textbf{ttop}(\Delta_2)\|)$. Then, the parameter choice $c_4>>c_3$ implies $\|\textbf{tbot}(\pazocal{T}_2)\|\leq c_4\max(\|\textbf{tbot}(\Delta_2)\|,\|\textbf{ttop}(\Delta_2)\|)$.

Since $\Delta$ is big, its history contains a controlled subword. So, by Lemma \ref{M controlled}, each $\Delta_i$ must be a coordinate shift of $\Delta_2$ for $i\geq2$, so that $h\leq c_3\max(\|\textbf{tbot}(\Delta_i)\|,\|\textbf{ttop}(\Delta_i)\|)$ and $\|\textbf{tbot}(\pazocal{T}_i)\|\leq c_4\max(\|\textbf{tbot}(\Delta_i)\|,\|\textbf{ttop}(\Delta_i)\|)$. As $L>>c_3$, this implies $h\leq\max(\|\textbf{tbot}(\Delta)\|,\|\textbf{ttop}(\Delta)\|)$.

Moreover, the only sector of $\Delta_1$ that may not be a coordinate shift of the corresponding sector of $\Delta_2$ is the `special' input sector. 

For any $a$-edge $\textbf{e}$ of $\textbf{tbot}(\pazocal{T}_1)$ in the `special' input sector, let $\pazocal{B}$ be the maximal $a$-band containing $\textbf{e}$. Then Lemma \ref{a-band on same a-cell} and (MM3) imply that $\pazocal{B}$ must have one end on $\textbf{ttop}(\Delta_1)$, on $\textbf{tbot}(\Delta_1)$, or on the $q$-band corresponding to $Q_0(1)$.

So, $\|\textbf{tbot}(\pazocal{T}_1)\|\leq\|\textbf{tbot}(\pazocal{T}_2)\|+\|\textbf{tbot}(\Delta_1)\|+\|\textbf{ttop}(\Delta_1)\|+h$. It then follows that: 
\begin{align*}
\|\textbf{tbot}(\pazocal{T})\|&\leq\sum_{i=1}^L\|\textbf{tbot}(\pazocal{T}_i)\|\leq\left(\sum_{i=2}^L 2\|\textbf{tbot}(\pazocal{T}_i)\|\right)+\|\textbf{tbot}(\Delta_1)\|+\|\textbf{ttop}(\Delta_1)\|+h \\
&\leq\left(\sum_{i=2}^L 2c_4\max(\|\textbf{tbot}(\Delta_i)\|,\|\textbf{ttop}(\Delta_i)\|)\right)+\|\textbf{tbot}(\Delta_1)\|+\|\textbf{ttop}(\Delta_1)\|+h \\
&\leq 2c_4\left(\sum_{i=1}^L\|\textbf{tbot}(\Delta_i)\|+\|\textbf{ttop}(\Delta_i)\|\right)+h \\
&\leq 4c_4(\|\textbf{tbot}(\Delta)\|+\|\textbf{ttop}(\Delta)\|)+h\leq(8c_4+1)\max(\|\textbf{tbot}(\Delta)\|,\|\textbf{ttop}(\Delta)\|)
\end{align*}
So, by the parameter choice $c_5>>c_3$, the sum of the lengths, and so the weights, of the maximal $\theta$-bands of $\Delta$ is at most $c_5h\max(\|\textbf{tbot}(\Delta)\|,\|\textbf{ttop}(\Delta)\|)$.

Any other cell of $\Delta$ is an $a$-cell. As above, Lemma \ref{a-band on same a-cell} and (MM2) then imply that each maximal $a$-band starting from an $a$-cell must end on the $q$-band corresponding to $Q_0(1)$, on $\textbf{ttop}(\Delta)$, or on $\textbf{tbot}(\Delta)$. So, the sum of the combinatorial perimeters of the $a$-cells of $\Delta$ is at most $h+|\textbf{ttop}(\Delta)|_a+|\textbf{tbot}(\Delta)|_a$, i.e at most $2(\|\textbf{tbot}(\Delta)\|+\|\textbf{ttop}(\Delta)\|)$. As a result,
$$\text{wt}(\Delta)\leq c_5h\max(\|\textbf{tbot}(\Delta)\|,\|\textbf{ttop}(\Delta)\|)+4C_1(\|\textbf{tbot}(\Delta)\|+\|\textbf{ttop}(\Delta)\|)^2=\text{wt}_G(\Delta)$$
The reduced diagram $\tilde{\Delta}$ constructed from $\Delta$ by replacing any $a$-cell with the corresponding reduced diagram from Lemma \ref{disks are quadratic} then satisfies the statement.

Hence, we may assume that $h>c_3\max(\|\textbf{tbot}(\Delta_2)\|,\|\textbf{ttop}(\Delta_2)\|)$.

As $\Delta$ is big, its history must contain a controlled subword $H'$. Let $\Delta'$ be the subtrapezium whose history is $H'$.

Let $\Delta_-$ be the subdiagram of $\Delta$ obtained by removing the maximal $q$-band $\pazocal{Q}$ corresponding to the final letter of the base of $\Delta$. So, $\Delta_-$ is an $a$-trapezium with the standard base. Similarly define $\Delta_-'$ as the corresponding subdiagram of $\Delta'$. Lemma \ref{M controlled} then implies that $\lab(\textbf{tbot}(\Delta_-'))=\lab(\textbf{bot}(\Delta_-'))$ is an accepted configuration.

As $\Delta_2$ is a trapezium, Lemma \ref{trapezia are computations} yields a corresponding computation $\pazocal{C}_2:V_0\to\dots\to V_h$ with base $\{t(2)\}B_3(2)$ satisfying $h>c_3\max(\|V_0\|,\|V_h\|)$. Hence, $\pazocal{C}_2$ satisfies the hypotheses of Lemma \ref{M projected long history}, so that there exist accepted configurations $W_0'$ and $W_h'$ with $W_0'(2)\equiv V_0$ and $W_h'(2)\equiv V_h$.

Let $\Delta_0$ be the subdiagram of $\Delta_-$ which is an $a$-trapezium with $\textbf{bot}(\Delta_0)=\textbf{bot}(\Delta_-)$ and $\textbf{top}(\Delta_0)=\textbf{bot}(\Delta_-')$. Then $\Delta_0$ is an $a$-trapezium with sides labelled identically and top labelled by an accepted configuration. So, $W_0\equiv\lab(\textbf{bot}(\Delta_-))$ must be a configuration which is trivial in $G_\pazocal{S}(\textbf{M})$.

Similarly, $W_h\equiv\lab(\textbf{top}(\Delta_-))$ is a configuration which is trivial in $G_\pazocal{S}(\textbf{M})$.

By Lemma \ref{a-cells sector}, any sector of $\Delta_0$ other than the `special' input sector is a trapezium. So, since $\lab(\textbf{top}(\Delta_0))$ is an accepted configuration, the parallel nature of the rules implies that $W_0(i)$ and $W_0(j)$ are coordinate shifts of one another for $i,j\geq2$ while the corresponding coordinate shift of $W_0(i)$ differs from $W_0(1)$ only in the `special' input sector. Hence, since $W_0'$ is an accepted configuration with $W_0(2)\equiv W_0'(2)$, $W_0$ and $W_0'$ can differ only in the `special' input sector. 

Let $w_0$ and $w_0'$ be the tape words of $W_0$ and $W_0'$, respectively, in this sector. Then, since $W_0$ and $W_0'$ are each trivial over $G_\pazocal{S}(\textbf{M})$, it follows that $w_0^{-1}w_0'$ is trivial over $G_\pazocal{S}(\textbf{M})$. Lemma \ref{embedding} then implies that $w_0^{-1}w_0'$ is trivial over $B(\pazocal{A},n)$, so that it corresponds to an $a$-relation.

Let $H_0$ be the history of an accepting computation $\pazocal{C}_0$ of $W_0'$ with $\ell(\pazocal{C}_0)=\ell(W_0')$. Then using Lemmas \ref{M accepted configurations} and \ref{computations are trapezia}, we may construct a reduced diagram $\Psi_0'$ over $G(\textbf{M})$ with $\lab(\partial\Psi_0')\equiv W_0'$ consisting of one hub and a trapezium with $\text{Area}(\Psi_0')\leq c_3\|H_0\|\|W_0'\|$ ($c_3>>c_2$).

By Lemma \ref{a-cells are quadratic}, we also construct a reduced diagram $\Psi_0''$ over $G(\textbf{M})$ with $\lab(\partial\Psi_0'')\equiv w_0^{-1}w_0'$ with $\text{Area}(\Psi_0'')\leq C_1(\|w_0\|+\|w_0'\|)^2$.

Lemma \ref{accepted configuration a-length} implies that $\|w_0'\|\leq|W_0'(1)|_a\leq2|W_0'(2)|_a=2|W_0(2)|_a$. So, $$\|W_0'\|\leq\|W_0\|+\|w_0'\|\leq\|W_0\|+|W_0(2)|_a+|W_0(3)|_a\leq2\|W_0\|$$
Further, $\|w_0\|\leq|\textbf{tbot}(\Delta_1)|_a$ and, since $L\geq3$, $\|w_0'\|\leq|\textbf{tbot}(\Delta_2)|_a+|\textbf{tbot}(\Delta_3)|_a$.

Let $\Psi_0$ be the diagram obtained from pasting $\Psi_0'$ and $\Psi_0''$ along their common boundary labels. Then, $\lab(\partial\Psi_0)\equiv W_0$ and $\text{Area}(\Psi_0)\leq 2c_3\|H_0\|\|W_0\|+C_1(|W_0|_a)^2$.

Similarly, we may construct a reduced diagram $\Psi_h$ over the finite presentation of $G(\textbf{M})$ satisfying $\lab(\partial\Psi_h)\equiv W_h$ and $\text{Area}(\Psi_h)\leq2c_3\|H_h\|\|W_h\|+C_1(|W_h|_a)^2$, where $H_h$ is the history of an accepting computation $\pazocal{C}_h$ of $W_h'$ with $\ell(\pazocal{C}_h)=\ell(W_h')$.

Attaching the corresponding ends of $\pazocal{Q}$ to $\Psi_0$ and $\Psi_h$, we then obtain a reduced diagram $\tilde{\Delta}$ with $\lab(\partial\tilde{\Delta})\equiv\lab(\partial\Delta)$ and
\begin{align*}
\text{Area}(\tilde{\Delta})&\leq 2c_3(\|H_0\|\|W_0\|+\|H_h\|\|W_h\|)+C_1(|W_0|_a+|W_h|_a)^2+h \\
&\leq 2c_3(\|H_0\|+\|H_h\|)(\|\textbf{tbot}(\Delta)\|+\|\textbf{ttop}(\Delta)\|)+C_1(|\textbf{tbot}(\Delta)|_a+|\textbf{ttop}(\Delta)|_a)^2+h
\end{align*}
By Lemma \ref{M projected long history}, $\|H_0\|+\|H_h\|\leq h$. 

Thus, the parameter choice $c_5>>c_3$ implies $\text{Area}(\tilde{\Delta})\leq \text{wt}_G(\Delta)$.

\end{proof}

\subsection{Quadratic upper bound} \

Finally, we complete the proof of Theorem \ref{main theorem}.

As Lemma \ref{embedding} implies that $G(\textbf{M})$ contains an infinite torsion subgroup, the Dehn function of $G(\textbf{M})$ is at least quadratic. Thus, it suffices to prove a quadratic upper bound bound.

Let $w\in F(\pazocal{X})$ such that $w=1$ in $G(\textbf{M})$. By Lemma \ref{G isomorphic to G_a}, $w$ is also trivial over the group $G_\pazocal{S}(\textbf{M})$, so that Lemma \ref{minimal exist} yields a minimal diagram $\Delta_a$ over $G_\pazocal{S}(\textbf{M})$ with $\lab(\partial\Delta_a)\equiv w$. By Lemma \ref{contradiction}, we have
$$\text{wt}_G(\Delta_a)\leq N_4(|w|+\sigma_\lambda(\Delta_a^*))^2+N_3\mu(\Delta_a)$$
Lemma \ref{G_a design} implies that $\sigma_\lambda(\Delta_a^*)\leq C_1|w|$, while Lemma \ref{mixtures}(a) implies $\mu(\Delta_a)\leq J|w|^2$. So, as $|w|\leq\|w\|$, we can choose $N_5$ large enough so that
$$\text{wt}_G(\Delta_a)\leq\frac{1}{2}N_5\|w\|^2$$
Now, let $\textbf{P}$ be a minimal covering of $\Delta_a$ and construct the reduced diagram $\Delta$ over the canonical presentation of $G(\textbf{M})$ by:

\begin{itemize}

\item excising any impeding $a$-trapezium $P\in\textbf{P}$ and pasting in its place the reduced diagram given in Lemma \ref{impeding G-weight} with the same contour label and area at most $2\text{wt}_G(P)$

\item excising any big $a$-trapezium $P\in\textbf{P}$ and pasting in its place the reduced diagram given in Lemma \ref{big G-weight} with the same contour label and area at most $2\text{wt}_G(P)$

\item excising any disk $\Pi\in\textbf{P}$ and pasting in its place the reduced diagram given in Lemma \ref{disks are quadratic} with the same contour label and area at most $C_1|\partial\Pi|^2$

\item excising any $a$-cell $\pi\in\textbf{P}$ and pasting in its place the reduced diagram given in Lemma \ref{a-cells are quadratic} with the same contour label and area at most $C_1\|\partial\pi\|^2$

\end{itemize}

By the definition of $G$-weight, it follows that $\text{Area}(\Delta)\leq2\text{wt}_G(\Delta_a)\leq N_5\|w\|^2$.

Therefore, the Dehn function of $G(\textbf{M})$ is at most quadratic, and so the proof of Theorem \ref{main theorem} is complete.

\medskip


\section{Proof of Theorem \ref{distortion}}

\subsection{$g$-diagrams and $g$-minimal diagrams} \

By Lemma \ref{embedding}, every $g\in B(\pazocal{A},n)$ can be identified with an element of $G_\pazocal{S}(\textbf{M})$, namely $\varphi(g)$. For $g\in B(\pazocal{A},n)$, define $|g|_\pazocal{A}$ as the smallest number of letters comprising a word over $\pazocal{A}$ whose value in $B(\pazocal{A},n)$ is $g$.

For $g\in B(\pazocal{A},n)$, a minimal diagram $\Delta$ is called a \textit{$g$-diagram} if $\partial\Delta=\textbf{st}$, $\lab(\textbf{t})$ is a word over $\pazocal{A}$ whose value in $B(\pazocal{A},n)$ is $g^{-1}$, and $\|\textbf{t}\|=|g|_\pazocal{A}$. 

A $g$-diagram $\Delta$ is called \textit{$g$-minimal} if $|\partial\Delta|+\sigma_\lambda(\Delta)$ is minimal amongst all $g$-diagrams.

\begin{lemma} \label{distortion bound}

For $g\in B(\pazocal{A},n)$, if $\Delta$ is a $g$-minimal diagram, then $|\partial\Delta|+\sigma_\lambda(\Delta)\leq2\delta|g|_\pazocal{A}$.

\end{lemma}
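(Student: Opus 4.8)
The plan is to produce an explicit $g$-diagram whose perimeter (in the modified length) is at most $2\delta|g|_\pazocal{A}$ and which contains no disks, so that its $\lambda$-shafts all have length zero; minimality of a $g$-minimal diagram will then force the claimed bound. First I would take a geodesic word $v$ over $\pazocal{A}$ representing $g$, so that $\|v\|=|g|_\pazocal{A}$. By construction of the map $\varphi$ (Lemma \ref{embedding}), $v\equiv1$ in $B(\pazocal{A},n)$ would mean $g=1$; in general $v$ represents $g$, so $vv^{-1}$ trivially represents $1$ in $B(\pazocal{A},n)$, and more to the point the word $\textbf{t}$ with $\lab(\textbf{t})$ a geodesic representative of $g^{-1}$ bounds, together with the trivial path, the ``diagram'' consisting of no cells at all when $g$ is viewed inside $B(\pazocal{A},n)$ — but to get a genuine $g$-diagram over $G_\pazocal{S}(\textbf{M})$ we need a diagram with contour $\textbf{s}\textbf{t}$ where $\textbf{s}$ may be taken trivial. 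Concretely: take $\Delta$ to be the van Kampen diagram over the presentation of $B(\pazocal{A},n)$ with boundary label $vv^{-1}$, which is the diagram with no cells (a ``spur'' whose contour is $vv^{-1}$), regarded as a diagram over $G_\pazocal{S}(\textbf{M})$; here $\textbf{t}$ carries $\lab(\textbf{t})\equiv v^{-1}$ (value $g^{-1}$, length $|g|_\pazocal{A}$) and $\textbf{s}$ carries $v$.

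Next I would check this $\Delta$ is $g$-minimal-competitive: it is a minimal diagram (it has no cells, hence vacuously satisfies (M1)–(M3)), it is a $g$-diagram by construction, it has no disks so $\Delta^*$ is empty and $\sigma_\lambda(\Delta)=0$, and its perimeter in the modified length is $|\partial\Delta|=|\textbf{s}|+|\textbf{t}|-$ (a correction of at most $\delta$ by Lemma \ref{lengths}(c)). Since $\textbf{s}$ and $\textbf{t}$ each consist of $|g|_\pazocal{A}$ $a$-edges and no $\theta$- or $q$-edges, Lemma \ref{lengths}(a) gives $|\textbf{s}|,|\textbf{t}|\le\delta|g|_\pazocal{A}$, hence $|\partial\Delta|\le2\delta|g|_\pazocal{A}$. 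Therefore $|\partial\Delta|+\sigma_\lambda(\Delta)\le2\delta|g|_\pazocal{A}$. Finally, since a $g$-minimal diagram $\Delta_0$ minimizes $|\partial\Delta_0|+\sigma_\lambda(\Delta_0)$ over all $g$-diagrams and $\Delta$ is a $g$-diagram, we get $|\partial\Delta_0|+\sigma_\lambda(\Delta_0)\le|\partial\Delta|+\sigma_\lambda(\Delta)\le2\delta|g|_\pazocal{A}$, which is exactly the assertion.

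The one subtlety to address carefully is whether the cell-free diagram really qualifies as a ``$g$-diagram'' and a ``minimal diagram'' under the definitions in the excerpt — in particular whether the path $\textbf{t}$ with $\|\textbf{t}\|=|g|_\pazocal{A}$ is forced to be the geodesic representative (it is, by the definition) and whether the degenerate boundary $vv^{-1}$ is admissible as a contour. If one wants to avoid any worry about degenerate diagrams, an equivalent clean route is: let $\Delta$ be \emph{any} minimal $g$-diagram and observe that by folding (reducing cancelling pairs of $a$-edges along $\textbf{s}$) one may always arrange $\textbf{s}$ to be a reduced word over $\pazocal{A}$ of length at most $|g|_\pazocal{A}$ — since $\lab(\textbf{s})$ must represent $g$ in $B(\pazocal{A},n)$ — so $|\textbf{s}|\le\delta|g|_\pazocal{A}$, and then the same length estimate applies. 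I expect the main (minor) obstacle to be nailing down these definitional points about degenerate/cell-free diagrams and confirming that passing to such a diagram does not increase $\sigma_\lambda$; once that is settled, the length bookkeeping via Lemma \ref{lengths} is immediate.
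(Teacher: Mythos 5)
Your proposal is correct and follows essentially the same route as the paper: the paper also takes a geodesic word $v$ with $\|v\|=|g|_\pazocal{A}$, forms the van Kampen diagram over $F(\pazocal{A})$ with boundary label $vv^{-1}$ consisting only of $0$-cells, views it as a disk-free $g$-diagram over $G_\pazocal{S}(\textbf{M})$ with $\sigma_\lambda=0$ and perimeter $2\delta|g|_\pazocal{A}$, and concludes by minimality. Your worry about the degenerate contour is resolved exactly as you suggest, by allowing $0$-cells ($0$-refinement), so no further argument is needed.
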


\begin{proof}

Let $v$ be a word over $\pazocal{A}$ whose value in $B(\pazocal{A},n)$ is $g$ and such that $\|v\|=|g|_\pazocal{A}$.

By van Kampen's Lemma, there exists a diagram $\Phi$ over $F(\pazocal{A})$ (in which every cell is a 0-cell) with $\lab(\partial\Phi)\equiv vv^{-1}$. Viewing $\Phi$ as a diagram over $G_\pazocal{S}(\textbf{M})$, it is clear that $\Phi$ is a $g$-diagram containing no disks, so that $\sigma_\lambda(\Phi)=0$.

Hence, for any $g$-minimal diagram $\Delta$,
$$|\partial\Delta|+\sigma_\lambda(\Delta)\leq|\partial\Phi|=2\delta\|v\|=2\delta|g|_\pazocal{A}$$

\end{proof}

\begin{lemma} \label{distortion q-bands}

If $\Delta$ is a $g$-minimal diagram for some $g\in B(\pazocal{A},n)$, then no $q$-band of $\Delta$ has two ends on $\partial\Delta$.

\end{lemma}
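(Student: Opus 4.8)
The statement asserts that in a $g$-minimal diagram $\Delta$, every maximal $q$-band must end on a disk rather than having both ends on $\partial\Delta$. The natural strategy is a cutting argument driven by the minimality of $|\partial\Delta| + \sigma_\lambda(\Delta)$, entirely parallel to the proof of Lemma \ref{weakly minimal}(b) and the cutting arguments in Section 10. First I would argue toward a contradiction: suppose some maximal $q$-band $\pazocal{Q}$ has both ends on $\partial\Delta$. Since $q$-bands cannot cross (Lemma \ref{M_a no annuli 1}), $\pazocal{Q}$ is a cutting $q$-band, so $\Delta \setminus \pazocal{Q}$ has two components; I would choose $\pazocal{Q}$ so that one of these components, say $\Delta_1$, is as small as possible (say, minimal weight), which forces $\Delta_1$ to contain no cutting $q$-band parallel to $\pazocal{Q}$ other than rim subbands — in particular, I want to reduce to the case where $\Delta_1 \cup \pazocal{Q}$ is a comb with handle $\pazocal{Q}$, using Lemmas \ref{M_a no annuli 1} and \ref{G_a theta-annuli} to ensure every maximal $\theta$-band of $\Delta_1 \cup \pazocal{Q}$ crosses $\pazocal{Q}$ exactly once.

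The key geometric estimate is the one already used in Lemma \ref{weakly minimal}(b): if $\textbf{s}_1$ is the part of $\partial\Delta_1$ shared with $\partial\Delta$ and $\partial\Delta = \textbf{s}_1\textbf{s}_2$, then by Lemma \ref{lengths}(b) the number of $\theta$-edges $\ell$ on $\textbf{s}_1$ satisfies $|\textbf{s}_1| \geq \ell = |\textbf{bot}(\pazocal{Q})| = |\textbf{top}(\pazocal{Q})|$, so for $\Delta' = \Delta \setminus \Delta_1$ one gets $|\partial\Delta'| \leq |\textbf{s}_2| + |\textbf{top}(\pazocal{Q})| \leq |\textbf{s}_2| + |\textbf{s}_1|$. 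Since $\Delta'$ retains the subpath $\textbf{t}$ of $\partial\Delta$ (the $\pazocal{A}$-labelled side) — I should make sure that $\Delta_1$ can be chosen on the side of $\pazocal{Q}$ not containing $\textbf{t}$, or handle the edge case where $\textbf{t}$ lies in $\Delta_1$ separately (in that case $\Delta_1$ is a diskless subdiagram and a similar accounting applies) — $\Delta'$ is again a $g$-diagram. Moreover $\Delta' \subseteq \Delta$ is minimal and $\sigma_\lambda(\Delta') \leq \sigma_\lambda(\Delta)$ since removing a diskless piece can only shorten $\lambda$-shafts (compare Lemma \ref{weakly minimal}(a)). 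Hence $|\partial\Delta'| + \sigma_\lambda(\Delta') \leq |\partial\Delta| + \sigma_\lambda(\Delta)$, contradicting $g$-minimality — unless equality holds, in which case I need the inequality $|\partial\Delta'| < |\partial\Delta|$ to be strict, which follows because $\textbf{s}_2$ starts and ends with $q$-letters so that $|\textbf{s}_1| + |\textbf{s}_2| = |\partial\Delta|$ exactly and $\pazocal{Q}$ contributes at least one extra cell's worth of perimeter, or more carefully by observing the handle contributes $q$-edges to $\partial\Delta$ that are removed.

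The main obstacle I anticipate is the bookkeeping around the $\pazocal{A}$-labelled side $\textbf{t}$ of $\partial\Delta$: a $g$-diagram has a distinguished boundary decomposition $\partial\Delta = \textbf{st}$ with $\textbf{t}$ a geodesic word over $\pazocal{A}$ of length exactly $|g|_\pazocal{A}$, and the cutting operation must not disturb $\textbf{t}$ — otherwise the result may fail to be a $g$-diagram (if $\|\textbf{t}\|$ were to change) or the minimality comparison breaks. Since $\textbf{t}$ consists only of $\pazocal{A}$-edges, it contains no $q$-edges, so the ends of $\pazocal{Q}$ lie on $\textbf{s}$; thus $\pazocal{Q}$ together with the arc of $\textbf{s}$ between its ends bounds a subdiagram disjoint from $\textbf{t}$, and this is the component $\Delta_1$ I cut off. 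This observation removes the obstacle cleanly. A secondary point to verify is that $\Delta_1$ genuinely contains no disks: if it did, the $t$-spokes of those disks would have to end somewhere, and by Lemmas \ref{graph} and \ref{G_a theta-annuli} (taking $L > 7$) one could extract a diskless comb anyway, or alternatively one argues that any disk in $\Delta_1$ can instead be used to produce a smaller $g$-diagram via the stem/crown analysis. I would phrase the conclusion by iterating: cutting off $\Delta_1$ strictly decreases $|\partial\Delta| + \sigma_\lambda(\Delta)$ (or at worst keeps it equal while strictly decreasing perimeter, still contradicting minimality of $|\partial\Delta|$ within the minimizing family), so no such $\pazocal{Q}$ exists.
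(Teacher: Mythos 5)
Your proposal is correct and follows essentially the same cutting argument as the paper: both ends of $\pazocal{Q}$ lie on $\textbf{s}$ since $\textbf{t}$ has only $a$-edges, one cuts off the subdiagram bounded by $\pazocal{Q}$ and the arc $\textbf{s}_0$ of $\textbf{s}$ between its ends, the remaining diagram is again a $g$-diagram with $\sigma_\lambda$ not increased, and the perimeter comparison via Lemma \ref{lengths} contradicts $g$-minimality. The only point where the paper is sharper is the strictness of the decrease, which you hedge on: since no $\theta$-band can end twice on the side of $\pazocal{Q}$ (Lemmas \ref{M_a no annuli 1} and \ref{minimal theta-annuli}), each of the $\ell$ $\theta$-edges of $\textbf{top}(\pazocal{Q})$ is matched by a $\theta$-edge of $\textbf{s}_0$, and $\textbf{s}_0$ also contains the two $q$-edges at the ends of the band, so $|\textbf{s}_0|\geq\ell+2$ and the perimeter drops by at least $2-4\delta\geq1$ — your auxiliary reductions (choosing a minimal-weight component, comb structure, disklessness of the cut-off piece) are not needed.
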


\begin{proof}

Decompose $\partial\Delta=\textbf{st}$ as in the definition of $g$-diagram. 

Suppose $\pazocal{Q}$ is a $q$-band with two ends on $\partial\Delta$. Then, since $\textbf{t}$ consists entirely of $a$-edges, both ends of $\pazocal{Q}$ must be edges of $\textbf{s}$.

Let $\textbf{s}_0$ be the subpath of $\textbf{s}$ bounded by the two ends of $\pazocal{Q}$. So, $\textbf{s}_0$ and a side of $\pazocal{Q}$, say $\textbf{top}(\pazocal{Q})$, bound a subdiagram $\Delta_0$ of $\Delta$ containing $\pazocal{Q}$.

By Lemma \ref{lengths}(b), $|\textbf{top}(\pazocal{Q})|=\ell$, where $\ell$ is the length of $\pazocal{Q}$. 

Further, by Lemma \ref{minimal theta-annuli}, every maximal $\theta$-band of $\Delta_0$ must have two ends on $\partial\Delta_0$. Lemma \ref{M_a no annuli 1} implies that no $\theta$-band can end twice on $\textbf{top}(\pazocal{Q})$. As a result, each of the $\ell$ $\theta$-edges of $\textbf{top}(\pazocal{Q})$ correspond to a $\theta$-edge of $\textbf{s}_0$, so that Lemma \ref{lengths}(a) implies $|\textbf{s}_0|\geq\ell+2$.

Consider the diagram $\Delta'$ obtained from $\Delta$ by cutting off $\Delta_0$. As $\Delta'$ is a subdiagram of $\Delta$, it is minimal. Moreover, $\partial\Delta'$ can be decomposed as $\textbf{s}_1(\textbf{top}(\pazocal{Q}))\textbf{s}_2\textbf{t}$, where $\textbf{s}=\textbf{s}_1\textbf{s}_0\textbf{s}_2$. As a result, $\Delta'$ is a $g$-diagram.

By Lemma \ref{lengths}(c), 
\begin{align*}
|\partial\Delta'|&\leq|\textbf{s}_1|+|\textbf{top}(\pazocal{Q})|+|\textbf{s}_2|+|\textbf{t}|=|\textbf{s}_1|+\ell+|\textbf{s}_2|+|\textbf{t}| \\
&\leq|\textbf{s}_1|+|\textbf{s}_0|-2+|\textbf{s}_2|+|\textbf{t}|\leq|\partial\Delta|-(2-4\delta)
\end{align*}
A parameter choice for $\delta$ then implies $|\partial\Delta'|\leq|\partial\Delta|-1$.

Finally, as $\Delta_0$ and $\Delta'$ are disjoint, $\sigma_\lambda(\Delta)\geq\sigma_\lambda(\Delta_0)+\sigma_\lambda(\Delta')$. In particular, this implies $$|\partial\Delta'|+\sigma_\lambda(\Delta')\leq|\partial\Delta|+\sigma_\lambda(\Delta)-1$$
But this contradicts the assumption that $\Delta$ is a $g$-minimal diagram.

\end{proof}

Let $\Delta$ be a $g$-minimal diagram for some $g\in B(\pazocal{A},n)$ and decompose $\partial\Delta=\textbf{st}$ as in the definition of $g$-diagram. Suppose $\Delta$ contains a quasi-rim $\theta$-band $\pazocal{T}$. Since $\textbf{t}$ is comprised entirely of $a$-edges, $\pazocal{T}$ must end twice on $\textbf{s}$. Let $\textbf{s}_0$ be the subpath of $\partial\Delta$ bounded by the two ends of $\pazocal{T}$ such that, per the definition of quasi-rim $\theta$-band, any cell between $\textbf{bot}(\pazocal{T})$ (or $\textbf{top}(\pazocal{T})$) and $\textbf{s}_0$ is an $a$-cell. If $\textbf{s}_0$ is a subpath of $\textbf{s}$, then $\pazocal{T}$ is called a \textit{$g$-rim $\theta$-band}.

\begin{lemma} \label{distortion theta-bands}

Let $\Delta$ be a $g$-minimal diagram for some $g\in B(\pazocal{A},n)$. If $\pazocal{T}$ is an $g$-rim $\theta$-band in $\Delta$, then the base of $\pazocal{T}$ has length $s>K$.

\end{lemma}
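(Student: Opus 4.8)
The statement is the exact analogue of Lemma \ref{6.18} (and of Lemma \ref{rim theta-bands}) in the setting of $g$-minimal diagrams, so the plan is to adapt that proof almost verbatim, replacing the inductive hypothesis on the counterexample diagram with the minimality of $|\partial\Delta|+\sigma_\lambda(\Delta)$ among $g$-diagrams. First I would argue by contradiction: suppose $\pazocal{T}$ is a $g$-rim $\theta$-band whose base has length $s\leq K$. Writing $\partial\Delta=\textbf{st}$ as in the definition of $g$-diagram, the two ends of $\pazocal{T}$ lie on $\textbf{s}$ (since $\textbf{t}$ consists only of $a$-edges), and the subpath $\textbf{s}_0$ of $\textbf{s}$ between them has the property that every cell between one side of $\pazocal{T}$ (say $\textbf{top}(\pazocal{T})$) and $\textbf{s}_0$ is an $a$-cell. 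Let $\textbf{P}_1$ denote this set of $a$-cells.

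Next I would perform the same surgery as in Lemma \ref{6.18}: cut along $\textbf{bot}(\pazocal{T})$, remove $\pazocal{T}$ and the $a$-cells of $\textbf{P}_1$, and re-paste each $\pi\in\textbf{P}_1$ along the maximal subpath $\textbf{p}_\pi''$ of $\partial\pi\cap\textbf{top}(\pazocal{T})$ consisting of edges on $(\theta,a)$-cells of $\pazocal{T}$; call the result $\Delta''$. The estimates from the proof of Lemma \ref{6.18} carry over: using Lemma \ref{a-cell in counterexample 2} in place of Lemma \ref{a-cell in counterexample} one gets $\|\textbf{p}_\pi'\|\geq\frac13\|\partial\pi\|$ and $b_\pi\leq 2$, and since each $(\theta,q)$-cell of $\pazocal{T}$ contributes at most one $a$-edge from the special input sector, $\sum b_\pi\leq s\leq K$; Lemma \ref{lengths} then gives $|\partial\Delta|-|\partial\Delta''|\geq 2-(3K+3)\delta\geq 1$ by the parameter choice $\delta^{-1}>>K$. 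One must also check that $\Delta''$ is again a $g$-diagram: the path $\textbf{t}$ is untouched by the surgery (it borders no cell of $\pazocal{T}$ or of $\textbf{P}_1$ in a way affected by the cut, since the ends of $\pazocal{T}$ are on $\textbf{s}$), so $\lab(\textbf{t})$ and $\|\textbf{t}\|=|g|_\pazocal{A}$ are preserved, and $\Delta''$ is minimal because, as in the proof of Lemma \ref{rim theta-bands}, it is obtained from a subdiagram of $\Delta$ by adding $a$-cells, which changes neither the signature nor the validity of (M1), (M2). Finally, since $\Delta''$ arises from $\Delta$ by removing cells (and re-attaching $a$-cells, which carry no $\lambda$-shafts), every $\lambda$-shaft of $\Delta''$ is a subband of a $\lambda$-shaft of $\Delta$, whence $\sigma_\lambda(\Delta'')\leq\sigma_\lambda(\Delta)$; combining, $|\partial\Delta''|+\sigma_\lambda(\Delta'')\leq|\partial\Delta|+\sigma_\lambda(\Delta)-1$, contradicting $g$-minimality of $\Delta$.

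The main obstacle, and the only place requiring genuine care beyond transcription, is verifying that $\Delta''$ is truly a $g$-diagram — in particular that the surgery never touches the distinguished subpath $\textbf{t}$ and never removes an $a$-edge of $\textbf{t}$, since the ends of $\pazocal{T}$ could in principle be close to the junction of $\textbf{s}$ and $\textbf{t}$. This is handled by the hypothesis that $\pazocal{T}$ is a \emph{$g$-rim} band, i.e.\ that $\textbf{s}_0$ is a subpath of $\textbf{s}$ rather than meeting $\textbf{t}$; thus the cells being excised all lie in the subdiagram cut off by $\textbf{s}_0$ and $\textbf{top}(\pazocal{T})$, disjoint from $\textbf{t}$. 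Once this is in place, the perimeter bookkeeping and the $\sigma_\lambda$ inequality are exactly as in Lemma \ref{6.18}, and no new numerical estimates are needed. (Note one does not even need the full strength of the Lemma \ref{6.18} argument about the diagram $\Delta'$ obtained before re-pasting, since $g$-minimality is a single scalar comparison rather than an induction on perimeter; the reduced diagram $\Delta''$ is compared directly to $\Delta$.)
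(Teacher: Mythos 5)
Your overall strategy — surgery along $\pazocal{T}$ producing a $g$-diagram with strictly smaller $|\partial\Delta|+\sigma_\lambda(\Delta)$ — is the right one, but the paper's proof is noticeably simpler than your transcription of Lemma \ref{6.18}, and the extra machinery you import creates obligations you have not discharged. Since there is no weight or $G$-weight to account for here, the paper does not re-paste the $a$-cells at all: taking the $a$-cells to lie between $\textbf{bot}(\pazocal{T})$ and $\textbf{s}_0$, it cuts off the entire subdiagram $\Delta_0$ bounded by $\textbf{s}_0$ and $\textbf{top}(\pazocal{T})$ — band and $a$-cells together, as in Lemma \ref{distortion q-bands} — and proves $|\textbf{s}_0|\geq|\textbf{top}(\pazocal{T})|+1$ using only (M1) (each such $a$-cell $\pi$ has at most $\frac{1}{2}\|\partial\pi\|+b_\pi$ edges on $\textbf{bot}(\pazocal{T})$, with $\sum b_\pi\leq s$), Lemma \ref{simplify rules}, and Lemma \ref{lengths}. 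The resulting diagram $\Delta'$ is a subdiagram of $\Delta$, hence automatically minimal, with $\sigma_\lambda(\Delta')\leq\sigma_\lambda(\Delta)$ and $\textbf{t}$ untouched, so the contradiction with $g$-minimality is immediate; in particular no analogue of Lemma \ref{a-cell in counterexample} is ever needed.

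Your re-pasting version has two concrete gaps. First, the bound $\|\textbf{p}_\pi'\|\geq\frac{1}{3}\|\partial\pi\|$ cannot be quoted from Lemma \ref{a-cell in counterexample 2}: that lemma is a statement about the particular weakly minimal counterexample diagram of Section 10, proved from that section's inductive hypothesis, and says nothing about $g$-minimal diagrams. The analogous fact does hold here for $a$-cells whose outer boundary lies on $\textbf{s}$ (cut the cell off: $\textbf{t}$ is untouched, the result is a subdiagram and hence minimal, and the perimeter drops), but you would have to prove it. Second, and more seriously, a $g$-diagram must be minimal, i.e. satisfy (M1)--(M3), and your $\Delta''$ is not a subdiagram of $\Delta$ and has a different boundary label, so (M3) — minimality of the $2$-signature among diagrams with that boundary label — is not inherited; the argument you point to in Lemma \ref{rim theta-bands} only concerns weak minimality, a condition on the stem. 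This is not a cosmetic point: if $\Delta''$ fails (M3) you cannot simply replace it by a minimal diagram with the same boundary label via Lemma \ref{minimal exist}, because you then lose all control of $\sigma_\lambda$, which is exactly the quantity entering the $g$-minimality comparison. The gap can be closed (e.g. by attaching a copy of $\pazocal{T}$ and of the excised portions of the $a$-cells to a hypothetical diagram beating $\Delta''$ in $2$-signature, contradicting (M3) for $\Delta$), but the cleaner route — and the one the paper takes — is to discard the $a$-cells along with the band and never leave the class of subdiagrams of $\Delta$.
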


\begin{proof}

Suppose to the contrary that the base of $\pazocal{T}$ has length $s\leq K$.

Decompose $\partial\Delta=\textbf{st}$ as in the definition of $g$-diagram and let $\textbf{s}_0$ be the subpath of $\textbf{s}$ as in the definition of $g$-rim $\theta$-band.

Suppose every cell between $\textbf{bot}(\pazocal{T})$ and $\textbf{s}_0$ is an $a$-cell. As in the proof of Lemma \ref{distortion q-bands}, let $\Delta_0$ be the subdiagram bounded by $\textbf{top}(\pazocal{T})$ and $\textbf{s}_0$ and let $\Delta'$ be the diagram obtained from $\Delta$ by cutting off $\Delta_0$.

Then, any cell of $\Delta_0$ not comprising $\pazocal{T}$ is an $a$-cell. Letting $\pi$ be such an $a$-cell, for any edge $\textbf{e}$ of $\partial\pi$, either $\textbf{e}$ is shared with $\textbf{s}_0$ or $\textbf{e}^{-1}$ is shared with $\textbf{bot}(\pazocal{T})$. Similarly, any edge of $\textbf{bot}(\pazocal{T})$ is either shared with $\textbf{s}_0$ or its inverse is on the boundary of an $a$-cell in $\Delta_0$.

For an $a$-cell $\pi$ in $\Delta_0$, let $\partial\pi=\textbf{p}_\pi\textbf{q}_\pi$, where $\textbf{p}_\pi$ is a maximal subpath shared with $\partial\Delta$. By (M1), at most $\frac{1}{2}\|\partial\pi\|$ edges of $\partial\pi$ are shared with the boundary of a $(\theta,a)$-cell of $\pazocal{T}$. So, $\|\textbf{q}_\pi\|\leq\frac{1}{2}\|\partial\pi\|+b_\pi$, where $b_\pi$ is the number of edges of $\partial\pi$ shared with the boundary of a $(\theta,q)$-cell of $\pazocal{T}$. As a result, $\|\textbf{p}_\pi\|\geq\frac{1}{2}\|\partial\pi\|-b_\pi\geq\|\textbf{q}_\pi\|-2b_\pi$.

Since $\pazocal{T}$ contains $s$ $(\theta,q)$-cells, the boundary of any of which contains at most one $a$-edge labelled by a letter from the `special' input sector, $\sum b_\pi\leq s$. Hence, $|\textbf{s}_0|_a\geq|\textbf{bot}(\pazocal{T})|_a-2s\geq|\textbf{bot}(\pazocal{T})|_a-2K$.

As every $q$-edge of $\textbf{bot}(\pazocal{T})$ is shared with $\textbf{s}_0$, it then follows from Lemma \ref{lengths} that 
\begin{align*}
|\textbf{s}_0|&\geq2+|\textbf{s}_0|_q+\delta(|\textbf{s}_0|_a-2)\geq2-2\delta+|\textbf{bot}(\pazocal{T})|_q+\delta(|\textbf{bot}(\pazocal{T})|_a-2K) \\
&\geq2+|\textbf{bot}(\pazocal{T})|-(2K+2)\delta
\end{align*}

Further, by Lemma \ref{simplify rules}, $|\textbf{top}(\pazocal{T})|_a\leq|\textbf{bot}(\pazocal{T})|_a+2s\leq|\textbf{bot}(\pazocal{T})|_a+2K$.

Thus, $|\textbf{s}_0|\geq2-(2K+2)\delta+|\textbf{top}(\pazocal{T})|-2K\delta\geq|\textbf{top}(\pazocal{T})|+2-(4K+2)\delta\geq|\textbf{top}(\pazocal{T})|+1$ by the parameter choice $\delta^{-1}>>K$.

But then Lemma \ref{lengths} and a parameter choice for $\delta$ implies that $\Delta'$ is a $g$-diagram satisfying $|\partial\Delta'|+\sigma_\lambda(\Delta')<|\partial\Delta|+\sigma_\lambda(\Delta)$, contradicting the assumption that $\Delta$ is $g$-minimal.

\end{proof}

\begin{lemma} \label{diskless distortion}

Let $\Delta$ be a $g$-minimal diagram for some $g\in B(\pazocal{A},n)$ and decompose $\partial\Delta=\textbf{st}$ as in the definition of $g$-diagram. If $\Delta$ contains no disks, then $|\textbf{s}|=|\textbf{t}|=\delta|g|_\pazocal{A}$.

\end{lemma}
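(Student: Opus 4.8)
\textbf{Proof proposal for Lemma \ref{diskless distortion}.}

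The plan is to argue that a diskless $g$-minimal diagram can be reduced to a diagram over $F(\pazocal{A})$, so that its only cells are $a$-cells, and then to conclude that $\textbf{s}$ itself must be a word over $\pazocal{A}$ of minimal length. First I would use Lemmas \ref{M_a no annuli 1}, \ref{M_a no annuli 2}, and \ref{minimal theta-annuli} together with Lemma \ref{distortion q-bands} and Lemma \ref{distortion theta-bands} to show that $\Delta$ contains no $q$-edges at all: indeed, $\textbf{t}$ is a word over $\pazocal{A}$, so any $q$-edge of $\partial\Delta$ would lie on $\textbf{s}$, and a $q$-edge anywhere in $\Delta$ gives rise to a maximal $q$-band which (as $\Delta$ is diskless) must end twice on $\partial\Delta$ by Lemma \ref{M_a no annuli 1}, hence twice on $\textbf{s}$, contradicting Lemma \ref{distortion q-bands}. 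With no $q$-edges, there are no $(\theta,q)$-cells, so every maximal $\theta$-band is a quasi-rim $\theta$-band (any $\theta$-band reaches the boundary by Lemma \ref{M_a no annuli 2}, and since $\textbf{t}$ has no $\theta$-edges it ends twice on $\textbf{s}$, with only $a$-cells between it and the boundary). Such a $\theta$-band has empty base, contradicting Lemma \ref{distortion theta-bands} unless there are no $\theta$-bands at all. Hence $\Delta$ consists entirely of $a$-cells.

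Next I would observe that an $a$-cell has boundary labelled by a word over $\pazocal{A}$ (from the `special' input sector), so $\Delta$ is in fact a van Kampen diagram over the presentation $\gen{\pazocal{A}\mid\pazocal{S}}$ of $B(\pazocal{A},n)$. Consequently $\lab(\partial\Delta)\equiv\lab(\textbf{s})\lab(\textbf{t})$ is a word over $\pazocal{A}$ representing $1$ in $B(\pazocal{A},n)$, and since $\lab(\textbf{t})$ represents $g^{-1}$, the word $\lab(\textbf{s})$ represents $g$ in $B(\pazocal{A},n)$. By minimality of $|g|_\pazocal{A}$ we get $\|\textbf{s}\|\geq|g|_\pazocal{A}=\|\textbf{t}\|$, so $|\textbf{s}|=\delta\|\textbf{s}\|\geq\delta\|\textbf{t}\|=|\textbf{t}|=\delta|g|_\pazocal{A}$ (using that $\textbf{s}$ and $\textbf{t}$ consist solely of $a$-edges, so the modified length equals $\delta$ times the combinatorial length).

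For the reverse inequality I would invoke $g$-minimality: if $\|\textbf{s}\|>|g|_\pazocal{A}=\|\textbf{t}\|$, then replacing $\Delta$ by a diagram over $F(\pazocal{A})$ with boundary $vv^{-1}$, where $v$ is a shortest word over $\pazocal{A}$ representing $g$, produces a $g$-diagram $\Phi$ with $|\partial\Phi|=2\delta|g|_\pazocal{A}<\delta\|\textbf{s}\|+\delta\|\textbf{t}\|=|\partial\Delta|$ and $\sigma_\lambda(\Phi)=0\leq\sigma_\lambda(\Delta)$, contradicting minimality of $|\partial\Delta|+\sigma_\lambda(\Delta)$. Hence $\|\textbf{s}\|=|g|_\pazocal{A}=\|\textbf{t}\|$, which gives $|\textbf{s}|=|\textbf{t}|=\delta|g|_\pazocal{A}$. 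I do not expect a serious obstacle here; the only point requiring care is the bookkeeping in the first paragraph, ensuring that the absence of disks genuinely forces every $\theta$-band and $q$-band to violate Lemma \ref{distortion q-bands} or Lemma \ref{distortion theta-bands} rather than hiding somewhere in the interior — but this is exactly what those two lemmas, combined with the no-annuli results, are designed to rule out.
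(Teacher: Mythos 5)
Your first paragraph and your final paragraph follow the paper's own proof essentially step for step: $q$-edges are excluded by combining Lemma \ref{M_a no annuli 1} with Lemma \ref{distortion q-bands}, a hypothetical $\theta$-band produces (via Lemma \ref{M_a no annuli 2}) a $g$-rim $\theta$-band with empty base contradicting Lemma \ref{distortion theta-bands}, and the inequality $\|\textbf{s}\|\leq|g|_\pazocal{A}$ comes from comparing with the diskless diagram with contour $vv^{-1}$, which is exactly Lemma \ref{distortion bound}. (One small imprecision: not \emph{every} maximal $\theta$-band is quasi-rim --- a band nested inside others is not --- but you only need an outermost one, so this is harmless.)

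The genuine gap is in your middle step. From ``every cell of $\Delta$ is an $a$-cell'' you conclude that $\Delta$ is a van Kampen diagram over $\gen{\pazocal{A}\mid\pazocal{S}}$ and hence that $\lab(\textbf{s})$ is a word over $\pazocal{A}$ representing $g$ in $B(\pazocal{A},n)$. This does not follow: the generating set $\pazocal{X}$ of $G_\pazocal{S}(\textbf{M})$ contains many disjoint copies of $\pazocal{A}$ (the tape alphabets of the other sectors), and an $a$-edge of $\textbf{s}$ need not lie on the boundary of any $a$-cell --- it can be a ``bridge'' edge whose inverse also lies on $\partial\Delta$ --- so its label may be a tape letter from a sector other than the `special' input sector. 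In that case $\lab(\textbf{s})$ is not a word over $\pazocal{A}$, and the inequality $\|\textbf{s}\|\geq|g|_\pazocal{A}$, which in your argument rests on $\lab(\textbf{s})$ representing $g$ in $B(\pazocal{A},n)$, is not yet justified. The paper closes exactly this hole with a second appeal to $g$-minimality: if an edge $\textbf{e}$ of $\textbf{s}$ lies on no $a$-cell, then (since there are no $(\theta,a)$-cells) its maximal $a$-band has length zero, so $\textbf{e}^{-1}$ is also a boundary edge; if $\textbf{e}^{-1}$ lay on $\textbf{s}$, cutting off the subpath of $\textbf{s}$ between $\textbf{e}$ and $\textbf{e}^{-1}$ would give a $g$-diagram with strictly smaller $|\partial\Delta|+\sigma_\lambda(\Delta)$, a contradiction; hence $\textbf{e}^{-1}$ lies on $\textbf{t}$ and $\lab(\textbf{e})\in\pazocal{A}^{\pm1}$ after all. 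Only then does excising the $a$-cells (as in Lemma \ref{a-cells are quadratic}) yield a diagram over $B(\pazocal{A},n)$ and the identity $\lab(\textbf{s})=g$. Your argument could also be repaired without this cleanup by noting that $\gen{\pazocal{X}\mid\pazocal{S}}$ is the free product of $B(\pazocal{A},n)$ with a free group and retracting onto the $B(\pazocal{A},n)$-factor, but some such argument must be supplied; as written, the assertion that $\Delta$ is a diagram over $\gen{\pazocal{A}\mid\pazocal{S}}$ is unsupported.
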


\begin{proof}

By the definition of the design, $\sigma_\lambda(\Delta)=0$. Further, Lemma \ref{distortion q-bands} implies that $\Delta$ contains no $q$-bands, so that Lemma \ref{M_a no annuli 1} implies that $\partial\Delta$ contains no $q$-edges. 

If $\Delta$ contains a $\theta$-band, then Lemma \ref{M_a no annuli 2} implies that $\Delta$ contains a $g$-rim $\theta$-band $\pazocal{T}$. But then $\pazocal{T}$ has base of length zero, contradicting Lemma \ref{distortion theta-bands}.

Hence, any cell of $\Delta$ is an $a$-cell and every edge of $\partial\Delta$ is an $a$-edge.

Suppose $\textbf{e}$ is an edge of $\textbf{s}$ that is not on the boundary of an $a$-cell. Then the maximal $a$-band of $\Delta$ starting at $\textbf{e}$ must be of length zero, i.e $\textbf{e}^{-1}$ is an edge of $\partial\Delta$. If $\textbf{e}^{-1}$ is part of $\textbf{s}$, then deleting the subpath of $\textbf{s}$ bounded by $\textbf{e}$ and $\textbf{e}^{-1}$ results in a $g$-diagram with smaller perimeter, contradicting the assumption that $\Delta$ is $g$-minimal. So, $\textbf{e}^{-1}$ must be an edge of $\textbf{t}$. 

As a result, $\lab(\partial\Delta)$ must be a word over $\pazocal{A}$.

Now, as in the proof of Lemma \ref{a-cells are quadratic}, excise any $a$-cell of $\Delta$ and paste in its place an appropriate reduced diagram over the presentation $\gen{\pazocal{A}\mid\pazocal{R}}$ of $B(\pazocal{A},n)$. This produces a reduced diagram $\Psi$ over $B(\pazocal{A},n)$ with $\lab(\partial\Psi)\equiv\lab(\partial\Delta)$. Hence, $\lab(\textbf{s})=g$ in $B(\pazocal{A},n)$.

By the definition of the word norm, this implies that $|\textbf{s}|=\delta\|\textbf{s}\|\geq\delta|g|_\pazocal{A}$.

Hence, $|\partial\Delta|=|\textbf{s}|+|\textbf{t}|\geq2\delta|g|_\pazocal{A}$.

But then Lemma \ref{distortion bound} implies $|\partial\Delta|=2\delta|g|_\pazocal{A}$, so that $|\textbf{s}|=\delta|g|_\pazocal{A}$.

\end{proof}

\subsection{$g$-minimal diagrams containing disks} \

\begin{lemma} \label{distortion graph}

Let $\Delta$ be a $g$-minimal diagram for some $g\in B(\pazocal{A},n)$ containing at least one disk. Decompose $\partial\Delta=\textbf{st}$ as in the definition of $g$-diagram. Then $\Delta$ contains a disk $\Pi$ such that:

\begin{enumerate}[label=({\alph*})]

\item $L-6$ consecutive $t$-spokes $\pazocal{Q}_1,\dots,\pazocal{Q}_{L-6}$ of $\Pi$ end on $\partial\Delta$

\item for $i=1,\dots,L-7$, the subdiagram $\Psi_{i,i+1}$ of $\Delta$ bounded by $\pazocal{Q}_i$, $\pazocal{Q}_{i+1}$, $\partial\Pi$, and $\partial\Delta$ contains no disks, and

\item $\textbf{t}$ is not a subpath of $\partial\Psi_{i,i+1}$ for any $i=1,\dots,L-7$.

\end{enumerate}

\end{lemma}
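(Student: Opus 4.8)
The plan is to combine Lemma~\ref{graph}, which is the disk-counting graph argument for $D$-minimal diagrams, with the special structure of a $g$-diagram, namely that the part $\textbf{t}$ of the contour consists entirely of $a$-edges and so cannot meet any $q$-band or $\theta$-band. First I would observe that, since $\Delta^*$ contains every disk of $\Delta$ (the disks of a $g$-minimal diagram all lie in the stem by the usual crown argument) and $\Delta^*$ is minimal, $\Delta$ is a $D$-minimal diagram; alternatively one just checks directly that $s_1(\Delta)$ is minimal among $g$-diagrams since any $g$-diagram with fewer disks would contradict $g$-minimality. Hence Lemma~\ref{graph} applies and provides a disk $\Pi$ with $L-4$ consecutive $t$-spokes $\pazocal{Q}_1,\dots,\pazocal{Q}_{L-4}$ all ending on $\partial\Delta$ and bounding $L-5$ diskless subdiagrams.

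Next I would use the fact that $\textbf{t}$ has no $q$-edges: every $t$-spoke of $\Pi$, having one end on $\partial\Delta$, must therefore end on an edge of $\textbf{s}$. Now the $L-4$ consecutive spokes divide the annular region between $\partial\Pi$ and $\partial\Delta$ into $L-5$ cloves $\Psi_{i,i+1}$, and the subpaths $\textbf{p}_{i,i+1}$ of $\partial\Delta$ lying in these cloves are consecutive arcs whose concatenation is a subpath $\textbf{p}_{1,L-4}$ of $\partial\Delta$. The path $\textbf{t}$ is a single arc of $\partial\Delta$, so at most two of the arcs $\textbf{p}_{i,i+1}$ can meet $\textbf{t}$ (the two containing the endpoints of $\textbf{t}$, if $\textbf{t}$ is contained in $\textbf{p}_{1,L-4}$; if $\textbf{t}$ is disjoint from $\textbf{p}_{1,L-4}$ then none do; if $\textbf{t}$ straddles an endpoint of $\textbf{p}_{1,L-4}$ then exactly one does). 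In all cases at most two consecutive cloves among $\Psi_{1,2},\dots,\Psi_{L-5,L-4}$ have $\textbf{t}$ as (part of) a subpath of their boundary arc. Discarding one spoke from each end to remove the (at most two, and in fact at most one after relabelling, but two is harmless) offending cloves, I retain $L-4-2 = L-6$ consecutive $t$-spokes $\pazocal{Q}_1,\dots,\pazocal{Q}_{L-6}$ (after reindexing) whose $L-7$ cloves $\Psi_{i,i+1}$ are diskless and none of which has $\textbf{t}$ as a subpath of its boundary. This is conclusions (a), (b), (c).

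The only point requiring a little care is the claim that $\textbf{t}$ touches at most two of the consecutive arcs $\textbf{p}_{i,i+1}$; this is purely a statement about arcs on the circle $\partial\Delta$: the $\textbf{p}_{i,i+1}$ are pairwise-disjoint-interior consecutive arcs, $\textbf{t}$ is one arc, and a connected arc can overlap at most two members of a family of consecutive disjoint arcs unless it engulfs one of them entirely — but $\textbf{t}$ cannot properly contain any $\textbf{p}_{i,i+1}$ because $\textbf{p}_{i,i+1}$ contains the $q$-edges at the ends of $\pazocal{Q}_i$ and $\pazocal{Q}_{i+1}$ while $\textbf{t}$ has no $q$-edges. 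I would spell this out with the decomposition $\partial\Delta=\textbf{s}\textbf{t}$, noting every $\textbf{p}_{i,i+1}$ begins and ends with a $q$-edge (the ends of the bounding spokes, which are $q$-edges since $t$-spokes are $q$-bands), hence $\textbf{p}_{i,i+1}\subseteq\textbf{s}$ is impossible to have as a subarc of $\textbf{t}$, and the arcs $\textbf{p}_{1,2},\dots,\textbf{p}_{L-5,L-4}$ are consecutive along $\partial\Delta$. I expect this combinatorial bookkeeping — keeping the reindexing consistent and checking the three cases for how $\textbf{t}$ sits relative to $\textbf{p}_{1,L-4}$ — to be the only mildly delicate part; the substantive input, Lemma~\ref{graph}, is already available and the $D$-minimality of $\Delta$ is immediate from $g$-minimality together with the stem/crown discussion of Section~10.1.
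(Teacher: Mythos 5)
Your reduction to Lemma \ref{graph} is the right starting point (and the applicability of that lemma is fine, since a $g$-diagram is by definition a minimal diagram, hence $D$-minimal --- note, though, that your alternative justification ``a $g$-diagram with fewer disks would contradict $g$-minimality'' is not right, as $g$-minimality minimizes $|\partial\Delta|+\sigma_\lambda(\Delta)$, not the number of disks). The gap is in how you dispose of condition (c). Since $\textbf{t}$ contains no $q$-edges and each arc $\textbf{p}_{i,i+1}$ is delimited by the $q$-edges at the ends of the spokes, the arc $\textbf{t}$ lies entirely inside a single $\textbf{p}_{j,j+1}$ or inside the complementary arc --- but that single offending clove $\Psi_{j,j+1}$ can perfectly well be a \emph{middle} clove of the family produced by Lemma \ref{graph}. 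In that case ``discarding one spoke from each end'' does nothing: the retained $L-6$ consecutive spokes still have $\Psi_{j,j+1}$ among their cloves, and splitting the family on either side of the bad clove leaves runs of far fewer than $L-6$ consecutive spokes. So a single application of Lemma \ref{graph} together with trimming the ends does not prove (c).

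The paper's proof handles exactly this case by iteration rather than trimming: if $\textbf{t}$ is a subpath of the boundary of the union $\Psi_1$ of the cloves of the first disk $\Pi_1$, one passes to the complementary subdiagram $\Psi_1'$ on the other side of $\Pi_1$ (which may be assumed to contain a disk, else $\Pi_1$ is the only disk), decomposes $\partial\Psi_1'=\textbf{s}_1\textbf{t}_1$ with $\textbf{s}_1\subset\textbf{s}$, and applies Lemma \ref{graph} again inside $\Psi_1'$ to get a new disk $\Pi_2$. Now the new spokes end either on $\textbf{s}_1\subset\partial\Delta$ or on $\textbf{t}_1$, and the latter means ending on $\partial\Pi_1$; Lemma \ref{t-spokes between disks} then forces any such spokes to be the first or last in the consecutive family, so at most two can be dropped, leaving $L-6$ consecutive spokes ending on $\textbf{s}$ whose cloves avoid $\textbf{t}$ --- unless $\textbf{t}_1$ again sits inside the new clove region, in which case one repeats with $\Psi_2'$, and finiteness of $\Delta$ terminates the process. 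Your write-up is missing both this iteration and the use of Lemma \ref{t-spokes between disks} to control spokes ending on the previous disk, and without them the middle-clove case is not covered.
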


\begin{proof}

Let $\Pi_1$ be a disk in $\Delta$ guaranteed by Lemma \ref{graph}. As $\textbf{t}$ consists entirely of $a$-edges, every $t$-spoke of $\Pi$ ending on $\partial\Delta$ must end on $\textbf{s}$.

Let $\Psi_1$ be the subdiagram of $\Delta$ bounded by $\partial\Pi_1$, $\partial\Delta$, and the $L-4$ consecutive $t$-spokes of $\Pi_1$ ending on $\partial\Delta$. Then, one may assume that $\partial\Psi_1$ contains $\textbf{t}$ as a subpath, as otherwise choosing $\Pi=\Pi_1$ satisfies the statement. Similarly, one may assume that the complement of $\Psi_1\cup\Pi_1$ in $\Delta$, $\Psi_1'$, contains a disk, as otherwise $\Pi_1$ is the only disk in $\Delta$.

Then, decompose $\partial\Psi_1'=\textbf{s}_1\textbf{t}_1$ where $\textbf{s}_1$ is the maximal subpath of $\textbf{s}$.

Next, apply Lemma \ref{graph} to $\Psi_1'$, yielding a disk $\Pi_2$. Let $\Psi_2$ be the subdiagram of $\Delta$ bounded by $\partial\Pi_2$, $\partial\Psi_2$, and the $L-4$ consecutive $t$-spokes of $\Pi_2$ ending on $\partial\Psi_1'$.

Suppose $\partial\Psi_2$ does not contain $\textbf{t}_1$. Then, by Lemma \ref{t-spokes between disks}, at most two of the $L-4$ $t$-spokes of $\Pi_2$ ending on $\partial\Psi_1'$ end on $\textbf{t}_1$, in which case such a spoke is the first or last in the sequence. As such, choosing $\Pi=\Pi_2$ satisfies the statement. So, one may assume that $\partial\Psi_2$ contains $\textbf{t}_1$. 

Similarly, one may assume that the complement of $\Psi_2\cup\Pi_2$ in $\Psi_1'$, $\Psi_2'$, contains a disk, as otherwise $\Pi_2$ is the only disk in $\Psi_1'$.

Now decompose $\partial\Psi_2'=\textbf{s}_2\textbf{t}_2$ where $\textbf{s}_2$ is a maximal subpath of $\textbf{s}$ and apply Lemma \ref{graph} to $\Psi_2'$, yielding a disk $\Pi_3$.

Continuing in this way, the finiteness of $\Delta$ implies that the process must terminate. Hence, there exists $\ell$ such that $\Pi_\ell$ satisfies the statement.

\end{proof}

The goal throughout the rest of this subsection is to prove that for any $g\in B(\pazocal{A},n)$, a $g$-minimal diagram must be diskless. This is done by arguing toward contradiction in much the same way as proof presented in Section 10.

As such, we adopt much of the same notation of Section 10 for a $g$-minimal diagram $\Delta$ containing a disk $\Pi$ satisfying Lemma \ref{distortion graph}. So, for $1\leq i<j\leq L-6$, we define the the subdiagrams $\Psi_{ij}$, $\Psi_{ij}'$, and $\bar{\Delta}_{ij}$ as well as the paths $\textbf{p}_{ij}$ and $\bar{\textbf{p}}_{ij}$. As in Section 10, the subscripts are suppressed in the case that $j-i$ is maximal; in other words, $\Psi=\Psi_{1,L-6}$, $\textbf{p}=\textbf{p}_{1,L-6}$, etc.

Further, for each $i\in\{1,\dots,L-6\}$, define $H_i$ as the history of the $t$-spoke $\pazocal{Q}_i$ and define $h_i=\|H_i\|$.

Finally, let $W$ be the accepted configuration of $\textbf{M}$ corresponding to $\lab(\partial\Pi)$ and let $V$ be the accepted configuration of $\textbf{M}_4$ such that $W(i)$ is a copy of $V$ for each $i\geq2$.

Note that Lemma \ref{distortion q-bands} can function as an analogue of Lemma \ref{7.21} in this setting. The following statement can similarly be viewed as an analogue of Lemma \ref{7.22}.

\begin{lemma} \label{distortion r} \

\begin{enumerate}[label=({\arabic*})]

\item Every maximal $\theta$-band of $\Psi$ crosses either $\pazocal{Q}_1$ or $\pazocal{Q}_{L-6}$.

\item There exists an $r$ satisfying $(L-1)/2-5\leq r\leq (L-1)/2$ such that the $\theta$-bands of $\Psi$ crossing $\pazocal{Q}_{L-6}$ do not cross $\pazocal{Q}_r$ and the $\theta$-bands of $\Psi$ crossing $\pazocal{Q}_1$ do not cross $\pazocal{Q}_{r+1}$.

\end{enumerate}

\end{lemma}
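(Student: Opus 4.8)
The plan is to mirror the proof of Lemma \ref{7.22} essentially verbatim, since the geometric content is identical and only the bookkeeping constants change (we have lost two more $t$-spokes to the distinguished-clove considerations in Lemma \ref{distortion graph}, hence $L-6$ in place of $L-4$, and correspondingly the window for $r$ widens from $(L-1)/2-3\le r\le(L-1)/2$ to $(L-1)/2-5\le r\le(L-1)/2$). Throughout I would decompose $\partial\Delta=\textbf{st}$ as in the definition of $g$-diagram and use that $\textbf{t}$ consists only of $a$-edges, so every $t$-spoke ending on $\partial\Delta$ ends on $\textbf{s}$, and by Lemma \ref{distortion q-bands} no maximal $q$-band has two ends on $\partial\Delta$.

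For part (1), suppose toward contradiction that some maximal $\theta$-band $\pazocal{T}$ of $\Psi$ crosses neither $\pazocal{Q}_1$ nor $\pazocal{Q}_{L-6}$. Since $\theta$-bands cannot cross, I may take $\pazocal{T}$ to be a quasi-rim $\theta$-band of $\Psi$; in fact, since all the spokes $\pazocal{Q}_1,\dots,\pazocal{Q}_{L-6}$ lie on one side and $\partial\Pi$ on the other, $\pazocal{T}$ is a $g$-rim $\theta$-band of $\Delta$ (here is where I use that $\textbf{t}$ carries no $q$-edges, so nothing forces $\pazocal{T}$ to reach $\textbf{t}$). By Lemma \ref{distortion theta-bands} the base of $\pazocal{T}$ has length $>K$, so $\pazocal{T}$ crosses more than $K$ maximal $q$-bands of $\Psi$. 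Taking $K>11L+2K_0$, I extract a non-stem cutting $q$-band $\pazocal{C}'$ crossing $\pazocal{T}$ such that the crown subdiagram $\Gamma'$ it bounds contains no cell of the spokes $\pazocal{Q}_i$ and still contains at least $K_0$ maximal $q$-bands crossing $\pazocal{T}$. By Lemma \ref{distortion q-bands} (in the role of Lemma \ref{7.21}, via the $g$-minimal analogue) $\Gamma'$ cannot be a comb with handle $\pazocal{C}'$, so it contains a $g$-rim $\theta$-band $\pazocal{T}'$ with more than $K$ $(\theta,q)$-cells, and I iterate, producing $\Gamma'\supsetneq\Gamma''\supsetneq\cdots$ with strictly decreasing weight; finiteness gives a terminal subcomb of $\Delta$ with basic width at least $K_0$, contradicting the $g$-minimal analogue of Lemma \ref{7.21}. (I should double-check whether the cleanest route is to first establish a standalone ``$g$-minimal diagrams contain no wide subcombs'' lemma, as is done for $M$-minimal and weakly minimal diagrams; if Lemma \ref{7.21}'s analogue is not literally available here, I will need to prove it, which is the main point requiring care.)

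For part (2), let $\pazocal{T}$ be the maximal $\theta$-band of $\Psi$ whose intersection with $\pazocal{Q}_1$ is the first cell of $\pazocal{Q}_1$, and let $\pazocal{Q}_1,\dots,\pazocal{Q}_\ell$ be the $t$-spokes it crosses. Since every spoke of a disk lies in the minimal diagram $\Delta$ itself (there are no crowns separating the spokes of $\Pi$ in $\Delta$, as they end on $\partial\Delta$), Lemma \ref{G_a theta-annuli}(1) gives $\ell\le(L-1)/2$; and since $\pazocal{T}$ does not cross $\pazocal{Q}_{\ell+1}$, by part (1) no maximal $\theta$-band of $\Psi$ crossing $\pazocal{Q}_1$ crosses $\pazocal{Q}_{\ell+1}$, while none crossing $\pazocal{Q}_{L-6}$ crosses $\pazocal{Q}_\ell$. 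Symmetrically, letting $\pazocal{Q}_{s+1},\dots,\pazocal{Q}_{L-6}$ be the spokes crossed by the maximal $\theta$-band meeting $\pazocal{Q}_{L-6}$ in its first cell, I get $(L-6)-s\le(L-1)/2$, i.e.\ $s\ge(L-1)/2-5$, and no $\theta$-band crossing $\pazocal{Q}_1$ crosses $\pazocal{Q}_{s+1}$. Setting $r=\max(\ell,(L-1)/2-5)$ then yields $(L-1)/2-5\le r\le(L-1)/2$ with the stated separation property.

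The genuine obstacle, as flagged above, is supplying the two ``infrastructure'' lemmas in the $g$-minimal setting that the $M$-minimal/weakly-minimal arguments rely on: the no-wide-subcomb statement (analogue of Lemmas \ref{6.16}/\ref{7.20} and \ref{7.21}) and, implicitly, that the inductive ``minimal counterexample'' machinery of Section 10 adapts with the $g$-diagram boundary decomposition (using $|\partial\Delta|+\sigma_\lambda(\Delta)$ as the measure and Lemma \ref{distortion bound} for the base estimate). I expect these to go through with the same proofs as in Section 10, with $\partial\Delta=\textbf{st}$ in place of the clove boundary paths and Lemma \ref{distortion theta-bands} in place of Lemma \ref{rim theta-bands}, but writing the reductions carefully — in particular verifying that cutting off a wide subcomb strictly decreases $|\partial\Delta|+\sigma_\lambda(\Delta)$ and keeps the diagram a $g$-diagram — is where the real work lies.
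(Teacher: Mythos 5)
Part (2) of your proposal is fine and is exactly the paper's argument (the paper proves it "in much the same way as Lemma \ref{7.22}(2)", and your constant $s\geq(L-1)/2-5$ and choice $r=\max(\ell,(L-1)/2-5)$ are correct).

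Part (1), however, has a genuine gap, and the gap is precisely the piece you flag as "the real work": the no-wide-subcomb statement (the analogue of Lemmas \ref{7.20}/\ref{7.21}) is not available in the $g$-minimal setting and does not transfer "with the same proofs as in Section 10." Those lemmas are proved inside the minimal-counterexample induction on $N_4(|\partial\Gamma|+\sigma_\lambda(\Gamma^*))^2+N_3\mu(\Gamma)$, using mixtures and $G$-weight; in Section 12 no such counterexample structure exists, and the only contradiction mechanism is a strict decrease of $|\partial\Delta|+\sigma_\lambda(\Delta)$ while remaining a $g$-diagram, which the subcomb surgeries of Lemmas \ref{6.16}--\ref{6.17} do not obviously produce. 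So your plan rests on unproven infrastructure whose adaptation is not routine. Moreover, the detour is unnecessary and in fact runs backwards: the paper's observation (stated just before the lemma) is that Lemma \ref{distortion q-bands} itself plays the role of Lemma \ref{7.21}. Concretely, if a maximal $\theta$-band $\pazocal{T}$ of $\Psi$ crosses neither $\pazocal{Q}_1$ nor $\pazocal{Q}_{L-6}$, both its ends lie on $\textbf{p}\subset\textbf{s}$, and after passing to a quasi-rim band (which, since $\textbf{t}$ has no $q$- or $\theta$-edges, is a $g$-rim band) every maximal $q$-band crossing it must be a spoke of $\Pi$ by Lemma \ref{distortion q-bands}; hence its base has length at most $11(L-7)+1<K$ (as $K>>L$), which directly contradicts Lemma \ref{distortion theta-bands}. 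In other words, the inequality "base length $>K$" that you use as the launching point of a crown/comb iteration is exactly the inequality that is violated, and the proof ends there with no comb machinery at all.
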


\begin{proof}

(1) Suppose there exists a $\theta$-band $\pazocal{T}$ of $\Psi$ not crossing either $\pazocal{Q}_1$ or $\pazocal{Q}_{L-6}$. Then $\pazocal{T}$ must have both ends on $\textbf{p}$ which is a subpath of $\textbf{s}$. So, $\pazocal{T}$ is a maximal $\theta$-band of $\Delta$.

As $\Psi$ is diskless, perhaps passing to $\theta$-bands contained in the subdiagram bounded by $\pazocal{T}$ and $\textbf{p}$, it is no loss of generality to assume that $\pazocal{T}$ is a quasi-rim $\theta$-band. By definition, this means that $\pazocal{T}$ is a $g$-rim $\theta$-band.

By Lemma \ref{distortion q-bands}, every $q$-band crossing $\pazocal{T}$ must be a spoke of $\Pi$. But then the base of $\pazocal{T}$ has length at most $11(L-7)+1<K$ by the parameter choice $K>>L$, contradicting Lemma \ref{distortion theta-bands}.

(2) This is proved in much the same way as Lemma \ref{7.22}(2). Note that the difference in the bounds is due to the difference in the number of relevant $t$-spokes.

\end{proof}

%
%
%
%
%
%


By Lemma \ref{distortion r}(2), it follows that, as in Section 10,
$$h_1\geq h_2\geq\dots\geq h_{r-1}\geq h_r \ ;$$ 
$$h_{r+1}\leq h_{r+2}\leq\dots\leq h_{L-7}\leq h_{L-6}$$

The next statement is the analogue of Lemma \ref{7.24} in this setting. Its proof is an exact copy of the one provided in Section 10.

\begin{lemma} \label{distortion path lengths} \

\begin{enumerate}[label=({\arabic*})]

\item If $i\leq r$ and $j\geq r+1$, then 
\begin{align*}
|\textbf{p}_{ij}|&\geq|\textbf{p}_{ij}|_\theta+|\textbf{p}_{ij}|_q+\delta(|\textbf{p}_{ij}|_a-|\textbf{p}_{ij}|_\theta) \\
&\geq h_i+h_j+11(j-i)+\delta(|\textbf{p}_{ij}|_a-h_i-h_j)+1
\end{align*}

\item $|\bar{\textbf{p}}_{ij}|\leq h_i+h_j+11(L-j+i)+(L-j+i+1)\delta|V|_a-1$.

\end{enumerate}

\end{lemma}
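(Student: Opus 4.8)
The plan is to follow the model of Lemma~\ref{7.24}, whose proof is essentially a two-line computation, and adapt it to the slightly altered counting here. The key structural facts that make the argument work are already in place: Lemma~\ref{distortion r}(1) says every maximal $\theta$-band of $\Psi$ crosses $\pazocal{Q}_1$ or $\pazocal{Q}_{L-6}$, and Lemma~\ref{distortion r}(2) pins the ``turning index'' $r$. The only genuinely new feature compared to Section~10 is that $\partial\Delta$ has no $q$-edges on the $\textbf{t}$ side, so I must keep track of $a$-edges along $\textbf{p}_{ij}$ explicitly (hence the extra $\delta(|\textbf{p}_{ij}|_a - |\textbf{p}_{ij}|_\theta)$ term in part~(1)), whereas in Lemma~\ref{7.24} that contribution was absorbed.

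For part~(1), I would argue as follows. Since $i\le r$ and $j\ge r+1$, Lemma~\ref{distortion r}(2) gives that every maximal $\theta$-band crossing $\pazocal{Q}_i$ or $\pazocal{Q}_j$ must reach $\partial\Delta$, and in fact end on $\textbf{p}_{ij}$: the $\theta$-bands crossing $\pazocal{Q}_i$ cannot cross $\pazocal{Q}_{r+1}$ (so they stay on the $\pazocal{Q}_i$ side and terminate on $\textbf{p}_{ij}$), and symmetrically for $\pazocal{Q}_j$. Counting $\theta$-edges this contributes $h_i+h_j$ to $|\textbf{p}_{ij}|_\theta$. For the $q$-edges: every $t$-spoke of $\Pi$ starting on the subpath $\bar{\textbf{u}}_{ij}$ of $\partial\Pi$ between $\pazocal{Q}_i$ and $\pazocal{Q}_j$ must, since $q$-bands cannot cross, end on $\textbf{p}_{ij}$; there are at least $11(j-i)+1$ such base letters, giving $|\textbf{p}_{ij}|_q\ge 11(j-i)+1$. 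Then Lemma~\ref{lengths}(a) gives $|\textbf{p}_{ij}|\ge |\textbf{p}_{ij}|_\theta+|\textbf{p}_{ij}|_q+\delta(|\textbf{p}_{ij}|_a-|\textbf{p}_{ij}|_\theta)$, and substituting the two lower bounds and $|\textbf{p}_{ij}|_\theta\le h_i+h_j$ (which is actually an equality here) yields the displayed inequality.

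For part~(2), the argument mirrors Lemma~\ref{7.24}(2) verbatim. By Lemma~\ref{lengths}(b),(c) it suffices to bound $|\textbf{u}_{ij}|$, where $\bar{\textbf{p}}_{ij}=\textbf{bot}(\pazocal{Q}_i)^{-1}\textbf{u}_{ij}^{-1}\textbf{top}(\pazocal{Q}_j)$ and $\textbf{u}_{ij}$ is a subpath of $\partial\Pi$; the $\textbf{bot}/\textbf{top}$ pieces contribute exactly $h_i+h_j$. Since $\partial\Pi$ consists only of $q$-edges and $a$-edges, $|\textbf{u}_{ij}|=|\partial\Pi|-|\bar{\textbf{u}}_{ij}|$. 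Using Lemma~\ref{accepted configuration a-length}, $|\partial\Pi|\le 11L+(L+1)\delta|V|_a$, and $\bar{\textbf{u}}_{ij}$ contains at least $j-i$ copies of $V^{\pm1}$ plus one extra $t$-letter, so $|\bar{\textbf{u}}_{ij}|\ge 11(j-i)+(j-i)\delta|V|_a+1$. Subtracting gives $|\textbf{u}_{ij}|\le 11(L-j+i)+(L-j+i+1)\delta|V|_a-1$, and combining with $h_i+h_j$ via Lemma~\ref{lengths}(c) yields the claim.

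I do not anticipate a serious obstacle here: the lemma is a routine adaptation of Lemma~\ref{7.24} with the book-keeping adjusted for the absence of $q$-edges on $\textbf{t}$ and the slightly larger number of missing $t$-spokes ($L-6$ rather than $L-4$), which only affects constants. The one point to be careful about is making sure the $a$-edge term in part~(1) is stated as a genuine lower bound (so one cannot simply discard $|\textbf{p}_{ij}|_a$), and that the direction of the inequality $|\textbf{p}_{ij}|_\theta\le h_i+h_j$ is used correctly when replacing it inside the $\delta(\cdots)$ term. If anything, the mild subtlety is confirming that \emph{all} $\theta$-bands crossing $\pazocal{Q}_i$ and $\pazocal{Q}_j$ land on $\textbf{p}_{ij}$ rather than on $\bar{\textbf{u}}_{ij}$ or the $\textbf{t}$-part of the boundary; this follows because $\textbf{t}$ is a word over $\pazocal{A}$ (no $\theta$-edges) and from the non-crossing constraints in Lemma~\ref{distortion r}(2).
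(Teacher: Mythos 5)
Your proposal is correct and is essentially the paper's own argument: the paper explicitly proves this lemma as "an exact copy" of the proof of Lemma \ref{7.24}, with the same $\theta$-edge and $q$-edge counts from Lemma \ref{distortion r} and the same bound on $|\textbf{u}_{ij}|$ via $|\partial\Pi|-|\bar{\textbf{u}}_{ij}|$, the only change being the explicit $\delta(|\textbf{p}_{ij}|_a-|\textbf{p}_{ij}|_\theta)$ book-keeping that you handle correctly.
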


The following statement is the analogue of Lemma \ref{7.26} in this setting.

\newpage

\begin{lemma} \label{distortion eps}

For any $1\leq i<j\leq L-6$, $|\textbf{p}_{ij}|+\sigma_\lambda(\bar{\Delta}_{ij})\leq|\textbf{p}_{ij}|+\sigma_\lambda(\Delta)-\sigma_\lambda(\Psi_{ij}')\leq
|\bar{\textbf{p}}_{ij}|$.

\end{lemma}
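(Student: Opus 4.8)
\textbf{Proof plan for Lemma \ref{distortion eps}.}

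The plan is to mimic the strategy of Lemma \ref{7.26}, but in the much cleaner setting of $g$-minimal diagrams, where the target function is linear rather than quadratic and where the role of the ``counterexample inequality'' is played directly by the minimality of $|\partial\Delta|+\sigma_\lambda(\Delta)$ among $g$-diagrams. The second inequality, $\sigma_\lambda(\bar\Delta_{ij})\le\sigma_\lambda(\Delta)-\sigma_\lambda(\Psi_{ij}')$, is immediate from the definition of the design on a minimal diagram: $\bar\Delta_{ij}$ and $\Psi_{ij}'$ are disjoint subdiagrams of the minimal diagram $\Delta$ (one is cut off from the other by the path $\bar{\textbf p}_{ij}$), so the middle lines of the $\lambda$-shafts counted in $\sigma_\lambda(\bar\Delta_{ij})$ and in $\sigma_\lambda(\Psi_{ij}')$ lie in disjoint parts of $\Delta$ and hence sum to at most $\sigma_\lambda(\Delta)$. (One should note that $\bar\Delta_{ij}$, being the subdiagram consisting of $\Pi$ together with the clove $\Psi_{ij}$, and $\Psi_{ij}'$ are indeed disjoint and together exhaust $\Delta$ up to the two $t$-spokes $\pazocal Q_i,\pazocal Q_j$.) So the real content is the first inequality, which I would prove by contradiction.

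First I would set $y=|\textbf{p}_{ij}|+\sigma_\lambda(\Delta)-\sigma_\lambda(\Psi_{ij}')$ and $d=y-|\bar{\textbf p}_{ij}|$, and assume $d>0$. The key observation is that $\Psi_{ij}'$ is again a $g$-diagram: its boundary is obtained from $\partial\Delta=\textbf{st}$ by replacing the subpath $\textbf{p}_{ij}$ of $\textbf{s}$ with $\bar{\textbf p}_{ij}$, and by Lemma \ref{distortion graph}(c) the path $\textbf{t}$ is \emph{not} contained in $\partial\Psi_{ij}$, so $\textbf{t}$ survives intact as a subpath of $\partial\Psi_{ij}'$. Since $\Psi_{ij}'$ is a subdiagram of a minimal diagram it is minimal, and its $\textbf{t}$-boundary is the original geodesic word of length $|g|_\pazocal{A}$, so $\Psi_{ij}'$ is a $g$-diagram. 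Letting $\textbf{s}_0$ be the complement of $\textbf{p}_{ij}$ in $\partial\Delta$, so that $\partial\Psi_{ij}'$ decomposes as (a subpath built from) $\textbf{s}_0$ together with $\bar{\textbf p}_{ij}$, Lemma \ref{lengths}(c) gives $|\partial\Psi_{ij}'|\le|\textbf{s}_0|+|\bar{\textbf p}_{ij}|$, while $|\partial\Delta|=|\textbf{s}_0|+|\textbf{p}_{ij}|$ because $\textbf{p}_{ij}$ starts and ends with $q$-edges. Subtracting and adding the shaft terms,
\[
\bigl(|\partial\Delta|+\sigma_\lambda(\Delta)\bigr)-\bigl(|\partial\Psi_{ij}'|+\sigma_\lambda(\Psi_{ij}')\bigr)\ \ge\ |\textbf{p}_{ij}|-|\bar{\textbf p}_{ij}|+\sigma_\lambda(\Delta)-\sigma_\lambda(\Psi_{ij}')\ =\ d\ >\ 0 ,
\]
which contradicts the $g$-minimality of $\Delta$. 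Hence $d\le0$, i.e.\ $|\textbf{p}_{ij}|+\sigma_\lambda(\Delta)-\sigma_\lambda(\Psi_{ij}')\le|\bar{\textbf p}_{ij}|$, and combining with the disjointness inequality $\sigma_\lambda(\bar\Delta_{ij})\le\sigma_\lambda(\Delta)-\sigma_\lambda(\Psi_{ij}')$ yields the lemma.

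The main obstacle — really the only point that needs care — is the bookkeeping that $\Psi_{ij}'$ is genuinely a $g$-diagram, in particular that replacing $\textbf{p}_{ij}$ by $\bar{\textbf p}_{ij}$ does not touch $\textbf{t}$ and does not change $\|\textbf{t}\|$; this is exactly what Lemma \ref{distortion graph}(c) was arranged to guarantee, and it is the reason the disk $\Pi$ was chosen with the extra two $t$-spokes of slack (so that $\textbf{t}$ lives outside the maximal clove $\Psi$). A secondary subtlety is checking that the design-theoretic inequality $\sigma_\lambda(\bar\Delta_{ij})+\sigma_\lambda(\Psi_{ij}')\le\sigma_\lambda(\Delta)$ holds even though a $t$-spoke of $\Pi$ might contain a $\lambda$-shaft counted on the $\bar\Delta_{ij}$ side and another on a disk inside $\Psi_{ij}'$; but this is handled exactly as in the construction of the design in Section 9.8 (``making room'' in the spoke so the two arcs are disjoint), and in any case $\bar\Delta_{ij}$ and $\Psi_{ij}'$ share no cells, so no chord or arc is double-counted. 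No numerical parameter juggling is needed here — unlike Lemma \ref{7.26}, the inductive/minimality hypothesis is used directly and the constant is $1$.
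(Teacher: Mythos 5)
Your proposal is correct and follows essentially the same route as the paper's proof: disjointness of $\bar{\Delta}_{ij}$ and $\Psi_{ij}'$ gives $\sigma_\lambda(\bar{\Delta}_{ij})+\sigma_\lambda(\Psi_{ij}')\leq\sigma_\lambda(\Delta)$, and then the observation that $\Psi_{ij}'$ is itself a $g$-diagram (since $\textbf{t}$ survives in its boundary) combined with $|\partial\Delta|=|\textbf{p}_{ij}|+|\textbf{p}_{ij}'|$, the estimate $|\partial\Psi_{ij}'|\leq|\bar{\textbf{p}}_{ij}|+|\textbf{p}_{ij}'|$ from Lemma \ref{lengths}(c), and the $g$-minimality of $\Delta$ yields the first inequality. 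Your contradiction framing with $d=y-|\bar{\textbf{p}}_{ij}|$ is just a cosmetic repackaging of the paper's direct combination of the same three inequalities.
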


\begin{proof}

As $\bar{\Delta}_{ij}$ is the complement of $\Psi_{ij}'$ in $\Delta$, $\sigma_\lambda(\Psi_{ij}')+\sigma_\lambda(\bar{\Delta}_{ij})\leq\sigma_\lambda(\Delta)$. Hence, it suffices to show that $|\textbf{p}_{ij}|+\sigma_\lambda(\Delta)\leq|\bar{\textbf{p}}_{ij}|+\sigma_\lambda(\Psi_{ij}')$.


Let $\textbf{p}_{ij}'$ be the complement of $\textbf{p}_{ij}$ in $\partial\Delta$. As $\textbf{p}_{ij}$ starts and ends with $t$-edges, $|\partial\Delta|=|\textbf{p}_{ij}|+|\textbf{p}_{ij}'|$.

Further, since $\partial\Psi_{ij}'=\bar{\textbf{p}}_{ij}\textbf{p}_{ij}'$, Lemma \ref{lengths}(c) implies $|\partial\Psi_{ij}'|\leq|\bar{\textbf{p}}_{ij}|+|\textbf{p}_{ij}'|$.

Since $\partial\Psi_{ij}'$ contains $\textbf{t}$ as a subpath, $\Psi_{ij}'$ is a $g$-diagram. But then $\Delta$ is $g$-minimal, so that $|\partial\Delta|+\sigma_\lambda(\Delta)\leq|\partial\Psi_{ij}'|+\sigma_\lambda(\Psi_{ij}')$. Combining these inequalities then yields
$$|\textbf{p}_{ij}|+|\textbf{p}_{ij}'|+\sigma_\lambda(\Delta)\leq|\bar{\textbf{p}}_{ij}|+|\textbf{p}_{ij}'|+\sigma_\lambda(\Psi_{ij}')$$
yielding the desired inequality.

\end{proof}

\begin{lemma} \label{distortion |V|}

The inequality $\displaystyle|V|_a>\frac{11L}{16\delta}$ must be true.

\end{lemma}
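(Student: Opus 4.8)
The plan is to argue by contradiction: assume $|V|_a\leq\frac{11L}{16\delta}$ and derive a violation of Lemma \ref{distortion eps} for the pair of spokes $\pazocal{Q}_1$ and $\pazocal{Q}_{L-6}$, i.e.\ a ratio $|\textbf{p}|/|\bar{\textbf{p}}| > 1$ after accounting for $\sigma_\lambda$. This is the direct analogue of Lemma \ref{7.33}, with $\eps$ replaced by $0$ since Lemma \ref{distortion eps} gives the sharp inequality $|\textbf{p}|+\sigma_\lambda(\Delta)-\sigma_\lambda(\Psi') \leq |\bar{\textbf{p}}|$ (rather than $(1+\eps)|\bar{\textbf{p}}|$). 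I would apply Lemma \ref{distortion path lengths} to the maximal clove: part (1) with $i=1$, $j=L-6$ gives
$$|\textbf{p}| \geq h_1 + h_{L-6} + 11(L-7) + 1,$$
and part (2) gives
$$|\bar{\textbf{p}}| \leq h_1 + h_{L-6} + 11\cdot 7 + 8\delta|V|_a - 1 = h_1 + h_{L-6} + 77 + 8\delta|V|_a - 1.$$
Subtracting, and using the assumed bound $8\delta|V|_a \leq \frac{11L}{2}$ together with $L \gg 1$ (so that $11(L-7) - 77 - 8\delta|V|_a$ is a large positive multiple of $L$, say at least $\tfrac{11L}{5}$), we get $|\textbf{p}| - |\bar{\textbf{p}}| > 0$.

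The remaining issue is the $\sigma_\lambda$ term: Lemma \ref{distortion eps} allows $\sigma_\lambda(\Delta) - \sigma_\lambda(\Psi')$ to be as large as $|\bar{\textbf{p}}| - |\textbf{p}|$, which could in principle absorb the gap we just produced if $|\bar{\textbf{p}}|$ itself were enormous. So the key step is to bound $|\bar{\textbf{p}}|$ from above in terms of the quantity $|\textbf{p}| - |\bar{\textbf{p}}|$. Here I would use $h_1 + h_{L-6} \leq 2h$ (where $h$ is the common value of the larger of the two, adopting the convention from Section 10 that $h_{L-L_0-4} \leq h_{L_0+1}$, adapted to the $L-6$ setting), combined with the assumed bound $|V|_a \leq \frac{11L}{16\delta}$, to get $|\bar{\textbf{p}}| \leq 2h + 77 + \frac{11L}{2}$. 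The trouble is that $h$ is unbounded a priori. This is exactly the obstacle that Lemma \ref{7.33} and its predecessors navigate: in the diskless-counterexample arguments of Section 10 one has already established (via Lemma \ref{7.34}) that $h > L_0^2|V|_a$ forces a different branch, so under the hypothesis of the current lemma one is in the regime $h \leq L_0^2 |V|_a$. I would invoke the analogue of that dichotomy here — either the present lemma holds outright, or we may assume $h \leq L_0^2|V|_a \leq L_0^2\cdot\frac{11L}{16\delta}$, giving $|\bar{\textbf{p}}| \leq 2L_0^2\cdot\frac{11L}{16\delta} + 77 + \frac{11L}{2} < \frac{11 L_0^2 L}{\delta}$ for $L$ large.

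With both bounds in hand, I would conclude
$$\frac{|\textbf{p}| + \sigma_\lambda(\Delta) - \sigma_\lambda(\Psi') - |\bar{\textbf{p}}|}{|\bar{\textbf{p}}|} \;\geq\; \frac{|\textbf{p}| - |\bar{\textbf{p}}|}{|\bar{\textbf{p}}|} \;\geq\; \frac{11L/5}{11L_0^2 L/\delta} \;=\; \frac{\delta}{5L_0^2} \;>\; 0,$$
and since $L-6-1 \geq L/2$ for $L$ large, the positivity of this ratio contradicts Lemma \ref{distortion eps} applied with $i=1$, $j=L-6$. I expect the main obstacle to be the bookkeeping around the $h \leq L_0^2|V|_a$ reduction — in the present section there may be no pre-established analogue of Lemma \ref{7.34}, so I would need to either prove that reduction first (mimicking the chain Lemmas \ref{7.31}–\ref{7.34} in the $g$-minimal setting, which should go through verbatim using Lemmas \ref{distortion r}, \ref{distortion path lengths}, \ref{distortion eps} in place of \ref{7.22}, \ref{7.24}, \ref{7.26}) or, more efficiently, restructure so that the case $h > L_0^2|V|_a$ is disposed of by a direct application of Lemma \ref{distortion eps} before even stating this lemma. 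Everything else is routine arithmetic with the parameters $\delta^{-1} \gg L \gg L_0$.
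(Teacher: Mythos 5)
Your first paragraph is already the paper's entire proof, and it is complete as it stands: Lemma \ref{distortion eps} gives $|\textbf{p}|+\sigma_\lambda(\bar{\Delta})\leq|\bar{\textbf{p}}|$, and since $\sigma_\lambda(\bar{\Delta})\geq0$ this yields $|\textbf{p}|\leq|\bar{\textbf{p}}|$ outright; plugging in the two estimates of Lemma \ref{distortion path lengths}, the terms $h_1+h_{L-6}$ cancel and you are left with $11(L-7)\leq 11\cdot 7+8\delta|V|_a$, i.e. $|V|_a\geq 11(L-14)/(8\delta)>11L/(16\delta)$ by the choice of $L$. No contradiction structure, no lower bound on a ratio, and in particular no control of $h$ is needed (the paper states this as a direct derivation rather than a reductio).

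The ``remaining issue'' you then raise rests on a misreading of the direction of Lemma \ref{distortion eps}: the quantity $\sigma_\lambda(\Delta)-\sigma_\lambda(\Psi')$ sits on the same side as $|\textbf{p}|$, the side bounded above by $|\bar{\textbf{p}}|$, and it is nonnegative (it dominates $\sigma_\lambda(\bar{\Delta})$), so it can never ``absorb'' a gap $|\textbf{p}|-|\bar{\textbf{p}}|>0$; a large value only strengthens the violation. This is precisely what distinguishes the present setting from Section 10: there Lemma \ref{7.26} only gives the multiplicative bound $(1+\eps)|\bar{\textbf{p}}_{ij}|$ (and only for $j-i\geq L/2$), which is what forces the quantitative ratio estimates and the $h\leq L_0^2|V|_a$ dichotomy of Lemmas \ref{7.31}--\ref{7.34}; here the additive, $\eps$-free inequality makes all of that machinery unnecessary. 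Your fallback plan would moreover fail: no analogue of Lemma \ref{7.34} is available at this point of Section 12, and rebuilding the chain would be circular, since the corresponding later statements (Lemmas \ref{distortion big z_i} and \ref{distortion first L_0}) themselves rely on the present lemma --- and Lemma \ref{distortion first L_0} in fact proves $h_i>L_0^2|V|_a$, the opposite of the regime you wanted to reduce to. Delete the second half: your opening computation, combined with the observation that Lemma \ref{distortion eps} already forces $|\textbf{p}|\leq|\bar{\textbf{p}}|$, is the whole proof.
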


\begin{proof}

As $\sigma_\lambda(\bar{\Delta})\geq0$, Lemma \ref{distortion eps} implies that $|\textbf{p}|\leq|\bar{\textbf{p}}|$. 

So, by Lemma \ref{distortion path lengths},
$$h_1+h_{L-6}+11(L-7)\leq h_1+h_{L-6}+11(7)+8\delta|V|_a$$
As a result, $\displaystyle|V|_a\geq\frac{11(L-14)}{8\delta}>\frac{11L}{16\delta}$ by a parameter choice for $L$.

\end{proof}

Note that the previous statement serves as an analogue (though a strengthened version) of Lemma \ref{7.33}.

\begin{lemma} \label{distortion rules}

Let $\pazocal{T}$ (resp $\pazocal{T}'$) be the maximal $\theta$-band of $\Psi$ crossing $\pazocal{Q}_1$ (resp $\pazocal{Q}_{L-6}$) closest to $\Pi$. Let $\ell$ and $\ell'$ be the maximal integers for which $\pazocal{T}$ crosses $\pazocal{Q}_1,\dots,\pazocal{Q}_\ell$ and $\pazocal{T}'$ crosses $\pazocal{Q}_{L-5-\ell'},\dots,\pazocal{Q}_{L-6}$. Then:

\begin{enumerate}[label=({\arabic*})]

\item $\ell+\ell'>L-L_0$

\item $H_1$ and $H_{L-6}$ have different first letters.

\end{enumerate}

\end{lemma}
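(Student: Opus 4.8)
\textbf{Proof plan for Lemma \ref{distortion rules}.}

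The plan is to mirror the arguments in Lemmas \ref{7.31}--\ref{7.32} (i.e., Lemmas 9.17 and 9.18 of [16] and Lemmas 7.31--7.32 of [23]), but using the cleaner inequalities available in the $g$-minimal setting. The key simplification here is Lemma \ref{distortion eps}, which gives $|\textbf{p}_{ij}|+\sigma_\lambda(\Delta)-\sigma_\lambda(\Psi_{ij}')\leq|\bar{\textbf{p}}_{ij}|$ with \emph{no} $(1+\eps)$ slack; this means any strict gain $|\textbf{p}_{ij}|>|\bar{\textbf{p}}_{ij}|$ is already a contradiction (after controlling the $\sigma_\lambda$ terms via disjointness), which is why the bound in (1) is $L-L_0$ rather than the weaker $3L/4$ appearing in Section 10.

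For (1), I would first run the $a$-counting argument of Lemma \ref{7.31} to show that, among the indices $i\in[L_0+1,r-1]\cup[r+1,L-L_0-7]$, only few can have $|\textbf{z}_i|_a$ large; more precisely, defining the combs $E_i^0$ and the paths $\textbf{z}_i$ exactly as in Section 10.5 (adjusted for the $L-6$ spokes here), the maximal $a$-bands of the $E_i^0$ starting on the $\textbf{z}_i$ that do not terminate on $(\theta,q)$-cells must end on the boundary paths $\textbf{q}_{i,i+1}$, forcing $\delta$-contributions to $|\textbf{p}|$. Combining with Lemma \ref{distortion path lengths} and using $|V|_a>11L/16\delta$ from Lemma \ref{distortion |V|} (to absorb the $3L_0\delta|V|_a$ error into a fraction of the main $11L$ term), I get $|\textbf{p}_{L_0+1,L-L_0-6}|>|\bar{\textbf{p}}_{L_0+1,L-L_0-6}|$, contradicting Lemma \ref{distortion eps}. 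This bounds the number of ``large $|\textbf{z}_i|_a$'' indices by roughly $L/5$. Now for $\ell+1\leq i\leq L-5-\ell'-1$ the path $\textbf{z}_i$ is a subpath of $\partial\Pi$, so $|\textbf{z}_i|_a=|V|_a$ is large; hence there can be at most $\approx L/5$ such indices in the relevant range, forcing $\ell+\ell'>L-L/5-O(L_0)>L-L_0$ after a parameter choice $L\gg L_0$. (One must also rule out $\ell\leq L_0$ or $\ell'\leq L_0$ separately: if, say, $\ell\leq L_0$ then all of $[L_0+1,r-1]$ would be large-$|\textbf{z}_i|_a$ indices, again exceeding $L/5$, contradiction.)

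For (2): since $H_{i+1}$ is a prefix of $H_1$ whenever $\Psi_{i,i+1}$ is a non-distinguished clove with $i\le r-1$, and similarly $H_j$ is a prefix of $H_{L-6}$, the first letters of $H_1$ and $H_{L-6}$ coincide with the first letters of $H_{\ell+1}$ and $H_{L-6-\ell'}$, which in turn are the rules corresponding to $\pazocal{T}$ and $\pazocal{T}'$. If these two rules were the same when the history is read toward $\Pi$, then since $\ell+\ell'>L-L_0>(L-1)/2$ by part (1) (using $L\gg L_0$), the disk $\Pi$ together with the disjoint $\theta$-bands $\pazocal{T}$, $\pazocal{T}'$ (whose bottoms are separated from $\Pi$ only by $a$-cells, by Lemma \ref{M_a no annuli 1}) would contradict the minimality of $s_2(\Delta^*)$ via Lemma \ref{G_a theta-annuli}(2). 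Hence the first letters differ.

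The main obstacle I expect is the bookkeeping in part (1): carefully tracking which of the indices $i\in[L_0+1,L-L_0-7]$ contribute a full $|V|_a$ worth of $a$-edges to $\textbf{z}_i$ versus which contribute only $O(|V|_a)$ to $\textbf{q}_{i,i+1}$, and then assembling these local $\delta$-contributions into the global inequality $|\textbf{p}|>|\bar{\textbf{p}}|$ with all the error terms ($3L_0\delta|V|_a$ from the $\bar{\textbf{p}}$ side, the $\theta$-edge counts on the $\textbf{q}_{i,i+1}$ via Lemma \ref{7.28}(2)) genuinely dominated — this is exactly where the hypothesis $|V|_a>11L/16\delta$ and the parameter hierarchy $L\gg L_0\gg c_3$ must be invoked with care. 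The rest is a faithful transcription of the Section 10 machinery with the constant $3L/4$ improved to $L-L_0$ thanks to the sharper Lemma \ref{distortion eps}.
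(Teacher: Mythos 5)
Your part (2) is fine and is exactly the paper's argument. But part (1) has a genuine gap, and it sits precisely at the final arithmetic. A Lemma \ref{7.31}-style count of "indices with large $|\textbf{z}_i|_a$" can at best bound the number of such indices by something like $L/5$ (or, even exploiting the $\eps$-free Lemma \ref{distortion eps}, by a quantity of order $c_3L_0$, since the $\delta$-contributions you harvest must dominate the error term $3L_0\delta|V|_a$ on the $\bar{\textbf{p}}$ side — note that Lemma \ref{distortion |V|} bounds $\delta|V|_a$ from \emph{below}, so it cannot be used to absorb $3L_0\delta|V|_a$ into the $11L$ term as you suggest). Either way you end with $\ell+\ell'\gtrsim L-L/5$ or $\ell+\ell'\gtrsim L-O(c_3L_0)$, and your closing inequality $L-L/5-O(L_0)>L-L_0$ is false under the hierarchy $L\gg L_0$: it would require $L_0>L/5$. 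So the route as written does not reach the stated bound $\ell+\ell'>L-L_0$. (A secondary issue: a faithful transcription of Lemma \ref{7.31} also needs the hypothesis $h\leq L_0^2|V|_a$, which is unavailable here and in fact contradicted later by Lemma \ref{distortion first L_0}.)

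The missing idea is that in the $g$-minimal setting the middle cloves are not merely "cells with large $\textbf{z}_i$" — they are empty of cells altogether. For $\ell\leq i\leq L-6-\ell'$ no maximal $\theta$-band enters $\Psi_{i,i+1}$ (it would be a $g$-rim $\theta$-band whose base consists only of spokes of $\Pi$, hence of length at most $11(L-7)+1<K$, contradicting Lemma \ref{distortion theta-bands}), and any $a$-cell there would have all its boundary on $\partial\Delta$, contradicting $g$-minimality. Hence $\textbf{p}_{i,i+1}$ coincides with the corresponding arc of $\partial\Pi$ and contributes a full $\delta|V|_a$ (and no $\theta$-edges) to $|\textbf{p}|$. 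Feeding this into Lemma \ref{distortion path lengths} gives $|\textbf{p}|\geq h_1+h_{L-6}+11(L-7)+(L-6-\ell-\ell')\delta|V|_a$, while $|\bar{\textbf{p}}|\leq h_1+h_{L-6}+77+8\delta|V|_a$; Lemma \ref{distortion eps} then forces $11(L-14)+(L-14-\ell-\ell')\delta|V|_a\leq0$, i.e. $\ell+\ell'>L-14\geq L-L_0$. This is both simpler and much stronger than what the comb-counting machinery can deliver, and it needs only $|V|_a>0$, not Lemma \ref{distortion |V|}.
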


\begin{proof}

(1) By Lemma \ref{distortion theta-bands}, for $\ell\leq i\leq L-6-\ell'$, $\Psi_{i,i+1}$ contains no cells. In particular, $\textbf{p}_{i,i+1}$ is shared with $\partial\Pi$, and so contains $|V|_a$ $a$-edges and no $\theta$-edges. Applying Lemma \ref{lengths}, this implies
$$|\textbf{p}|\geq h_1+h_{L-6}+11(L-7)+(L-6-\ell-\ell')\delta|V|_a$$
By Lemma \ref{distortion path lengths}(2), $|\bar{\textbf{p}}|\leq h_1+h_{L-6}+11(7)+8\delta|V|_a$.

By Lemma \ref{distortion eps}, these inequalities yield $$11(L-14)+(L-14)\delta|V|_a\leq(\ell+\ell')\delta|V|_a$$
As $L>0$, this necessitates $|V|_a>0$. But then $\ell+\ell'>L-14$, so that a parameter choice for $L_0$ implies the statement.

(2) The parameter choice $L>>L_0$ allows one to assume that $L-L_0>(L-1)/2$. But then if the statement were false, then (1) would contradict Lemma \ref{G_a theta-annuli}(2).

\end{proof}

Note that Lemma \ref{distortion rules}(2) is the natural analogue of Lemma \ref{7.32} in this setting.

Since Lemma \ref{G_a theta-annuli}(1) implies that $\ell,\ell'\leq (L-1)/2$, it follows from Lemma \ref{distortion rules}(1) and the parameter choice $L>>L_0$ that $\ell,\ell'>L_0$.

The distinguished clove $\Psi_{i,i+1}$ is defined here in the analogous way as it is in Section 10, as are its subdiagrams $\Lambda_{i,i+1}'$ and $\Lambda_{i,i+1}''$.

Further, for $i\in[1,r-1]\cup[r+1,L-7]$, define the trapezium $\Gamma_i$, the comb $E_i$, and the paths $\textbf{y}_i$ and $\textbf{z}_i$ in the same way they were defined in Section 10.5.

\begin{lemma} \label{distortion a-cells}

For each $i$, the comb $E_i$ contains no $a$-cell.

\end{lemma}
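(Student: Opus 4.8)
The plan is to mirror the argument used for $a$-trapezia contained in the cloves of a minimal diagram (as in the reasoning surrounding Lemma~\ref{a-cells sector} and the consequences of Lemma~\ref{distortion theta-bands}), adapting it to the $g$-minimal setting. First I would recall the setup: $E_i$ is the maximal comb in $\Psi_{i,i+1}$ (or, in the distinguished-clove case, in $\Lambda_{i,i+1}'$) containing the maximal $\theta$-bands that cross the $t$-spoke $\pazocal{Q}_i$ but not $\pazocal{Q}_{i+1}$, so its handle $\pazocal{C}_i$ is a subband of a $t$-spoke of $\Pi$. Since every maximal $\theta$-band of $E_i$ crosses its handle, the base of each such $\theta$-band, read toward $\pazocal{C}_i$, is a subword of $B_3(j)^{\pm 1}$ for the appropriate coordinate $j\ge 2$ — in particular, it involves no base letter of the form $Q_0(1)^{\pm1}$ or $P_0(1)^{\pm1}$, so none of the $(\theta,q)$-cells of $E_i$ has an $a$-edge from the `special' input sector on its contour.

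The main step is the following: suppose toward contradiction that $E_i$ contains an $a$-cell $\pi$. By Lemma~\ref{a-band on same a-cell} and property (M2) (which holds since $\Delta$, and hence every subdiagram, is minimal), every edge of $\partial\pi$ marks the start of a maximal $a$-band that must end either on $\partial\Delta$, on an $a$-cell, or on a $(\theta,q)$-cell. Lemma~\ref{a-bands between a-cells} (equivalently (M2) again) rules out ending on another $a$-cell, so each such band ends on $\partial\Delta$ or on a $(\theta,q)$-cell. But a maximal $a$-band ending on a $(\theta,q)$-cell forces, by Lemma~\ref{theta-bands are one-rule computations}, the corresponding $\theta$-band to have step history $(1)_1$ and the relevant sector to be the `special' input sector — contradicting the base description above. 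Hence every edge of $\partial\pi$ starts a maximal $a$-band ending on $\partial\Delta$, i.e.\ on $\textbf{t}$ (since $q$-edges of $\partial\Delta$ lie on $\textbf{s}$ and the $a$-bands emanating from $\pi$ cannot cross the $t$-spokes bounding $E_i$). Following these bands around shows a subpath of $\partial\pi$ — in fact, by Lemma~\ref{a-cell in counterexample 2} applied in this setting, more than one third of $\|\partial\pi\|$ — is shared with $\partial\Delta$, and then the inductive (i.e.\ $g$-minimality) argument used in Lemma~\ref{a-cell in counterexample 2} applies verbatim: removing $\pi$ (together with the $a$-bands it spawns, which are forced to hit $\textbf{t}$) yields a $g$-diagram $\Delta_0$ with $|\partial\Delta_0| + \sigma_\lambda(\Delta_0) < |\partial\Delta| + \sigma_\lambda(\Delta)$, contradicting $g$-minimality.

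The cleanest route, however, is probably to sidestep the direct length estimate and instead argue as in the paragraph preceding Lemma~\ref{7.29}: any cell of $\Psi_{i,i+1}$ not lying in $\Gamma_i$ or $E_i$ must be an $a$-cell attached to $\textbf{z}_i$ or to $\pazocal{Q}_{i+1}$, every boundary edge of which is shared with $\partial\Delta$ — and this already contradicts Lemma~\ref{distortion a-cells}'s predecessor, namely Lemma~\ref{a-cell in counterexample 2} (which bounds such a shared subpath by $\tfrac{2}{3}\|\partial\pi\|$). For $a$-cells genuinely inside $E_i$ one repeats this: by Lemma~\ref{non-distinguished a-cells}'s analogue in the $g$-setting (no base letter of coordinate $1$ appears in $E_i$), such an $a$-cell would have all of $\partial\pi$ on the boundary path of $E_i$, hence on $\partial\Delta$, again contradicting Lemma~\ref{a-cell in counterexample 2}. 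The one genuine obstacle is the bookkeeping in the distinguished-clove case, where $E_i$ is defined inside $\Lambda_{i,i+1}'$ and its handle may be the $q$-spoke $\pazocal{Q}_i'$ corresponding to the base letter $\{t(1)\}$ rather than a $t$-spoke; there one must check that the only coordinate-$1$ base letter appearing in $\Lambda_{i,i+1}'$ is $\{t(1)\}$ itself (which carries an empty tape alphabet), so no $a$-edge from the `special' input sector can arise on a $(\theta,q)$-cell of $E_i$, exactly as in the proof of Lemma~\ref{non-distinguished a-cells}(2). Once that case is handled, the contradiction with Lemma~\ref{a-cell in counterexample 2} (or with $g$-minimality directly) completes the proof.
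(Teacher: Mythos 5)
Your overall strategy is the paper's: use Lemma \ref{distortion q-bands} to see that every $q$-band of $E_i$ is a spoke of $\Pi$ (so no base letter of coordinate $1$ occurs, apart from $\{t(1)\}$ with its empty tape alphabet in the distinguished clove), conclude from the structure of the relations and (M2) that the boundary of any $a$-cell of $E_i$ lies on $\partial\Delta$, and then contradict $g$-minimality by cutting the cell off. Your handling of the distinguished clove also matches the paper. However, two steps as you state them are wrong or unavailable. First, the claim that the $a$-bands emanating from $\pi$ are ``forced to hit $\textbf{t}$'' is false: by Lemma \ref{distortion graph}(c) the path $\textbf{t}$ is not even a subpath of $\partial\Psi_{i,i+1}$, and since these bands cannot leave the clove they end on $\textbf{p}_{i,i+1}$, which is a subpath of $\textbf{s}$. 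This is not a harmless slip, because the cut-off argument depends on it: if the shared edges lay on $\textbf{t}$, excising $\pi$ would alter the $\pazocal{A}$-labelled part of the boundary, and the resulting diagram would not obviously satisfy $\|\textbf{t}\|=|g|_{\pazocal{A}}$, so it need not be a $g$-diagram at all. It is precisely because the cell's boundary sits on $\textbf{s}$ that removing it leaves $\textbf{t}$ intact and produces a $g$-diagram with strictly smaller $|\partial\Delta|+\sigma_\lambda(\Delta)$.

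Second, you repeatedly invoke Lemma \ref{a-cell in counterexample 2} ``applied in this setting'' or ``verbatim,'' but that lemma is a statement about the weakly minimal counterexample diagram of Section 10, proved from the inductive hypothesis involving $N_3,N_4$; no analogue is established for $g$-minimal diagrams, and none is needed. The correct replacement is the direct argument you only gesture at: once every edge of $\partial\pi$ is shared with $\textbf{p}_{i,i+1}\subset\textbf{s}$, cutting $\pi$ off yields a $g$-diagram with smaller inductive parameter, contradicting $g$-minimality (this is exactly the paper's proof, and it makes the appeal to any $\tfrac{2}{3}\|\partial\pi\|$ bound unnecessary). With the landing path corrected from $\textbf{t}$ to $\textbf{p}_{i,i+1}$ and the citation of Lemma \ref{a-cell in counterexample 2} replaced by this direct cut-off argument, your proof coincides with the paper's.
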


\begin{proof}

Suppose $\Psi_{i,i+1}$ is not the distinguished clove. 

By Lemma \ref{distortion q-bands}, every $q$-band of $E_i$ corresponds to a spoke of $\Pi$ contained in $\Psi_{i,i+1}$. So, by the structure of the relations, every edge of any $a$-cell of $E_i$ must be shared with $\textbf{p}_{i,i+1}$. But then cutting this $a$-cell off of $\Delta$ produces a $g$-diagram with smaller inductive parameter, contradicting the assumption that $\Delta$ is $g$-minimal.

If $\Psi_{i,i+1}$ is the distinguished clove, then the same argument applies, as the spokes of $\Pi$ contained in $\Lambda_{i,i+1}'$ cannot correspond to the `special' input sector.

\end{proof}

The following two statements are the analogues of Lemmas \ref{7.29} and \ref{7.30} and are proved in exactly the same way.

\begin{lemma} \label{distortion combs}

For $i\in\{2,\dots,r-1\}$, suppose a maximal $a$-band $\pazocal{B}$ of $E_i$ starts on $\textbf{z}_i$ and ends on a side of a maximal $q$-band $\pazocal{C}$. Let $\nabla$ be the comb bounded by $\pazocal{B}$, a part of $\pazocal{C}$, and a subpath $\textbf{x}$ of $\textbf{z}_i$. Then there is a copy of the comb $\nabla$ in the trapezium $\Gamma=\Gamma_{i-1}\setminus\Gamma_i'$, where $\Gamma_i'$ is the natural copy of $\Gamma_i$ in $\Gamma_{i-1}$.

\end{lemma}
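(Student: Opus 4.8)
The plan is to mimic exactly the proof of Lemma \ref{7.29} (which in turn follows Lemma 9.15 of [16] and Lemma 7.29 of [23]), making only the bookkeeping changes forced by the slightly different clove structure of a $g$-minimal diagram. First I would invoke Lemma \ref{distortion a-cells} to record that $E_i$ contains no $a$-cell, so that $\nabla$, being a subdiagram of $E_i$, contains no $a$-cell and is therefore an honest comb built only of $(\theta,q)$- and $(\theta,a)$-cells. Let $\textbf{e}$ be the first edge of $\textbf{x}$ (an $a$-edge, the start of $\pazocal{B}$) and $\textbf{f}$ its last edge (a $q$-edge, lying on $\pazocal{C}$).

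Next I would transfer the picture to the copy $\textbf{z}_i'=\textbf{z}_{i-1}$ of $\textbf{z}_i$ sitting inside $\Gamma_{i-1}$. Since $\Gamma_i$ (hence $\textbf{z}_i$) occurs as the copy $\Gamma_i'$ inside $\Gamma_{i-1}$ — this is exactly the structural fact recorded in Section 10.5, valid because $\lab(\textbf{y}_i)$ is a coordinate shift of $\lab(\textbf{y}_{i-1})$ and $H_{i+1}$ (resp. $H_{i+1}'$ in the distinguished case) is a prefix of $H_i$ — the top of the trapezium $\Gamma=\Gamma_{i-1}\setminus\Gamma_i'$ carries a subpath $\textbf{x}'$ that is a letter-for-letter copy of $\textbf{x}$, starting with an $a$-edge $\textbf{e}'$ and ending with a $q$-edge $\textbf{f}'$. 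The $(\theta,q)$-cell $\pi'$ of $\Gamma$ attached to $\textbf{f}'$ is a copy of the $(\theta,q)$-cell $\pi$ of $\nabla$ attached to $\textbf{f}$, because both correspond to the same letter of the common history. Reading the maximal $\theta$-band of $\nabla$ through $\pi$ from $\textbf{f}$ toward $\textbf{e}$, every cell along it has a copy in $\Gamma$ (the $\theta$-bands of $\Gamma$ and $\nabla$ carry the same rules in the same order, since the histories agree), so this whole $\theta$-band of $\nabla$ has a copy in $\Gamma$. Moving upward band by band along $\pazocal{B}$ — using that $\pazocal{B}$, being an $a$-band of length $|\pazocal{B}|$, crosses exactly the consecutive $\theta$-bands of $\nabla$ and that each such crossing is reproduced in $\Gamma$ — one reconstructs a copy of every maximal $\theta$-band of $\nabla$ inside $\Gamma$, i.e. a copy of $\nabla$ in $\Gamma$.

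The one point needing a little care, and the main (minor) obstacle, is to make sure the copy actually stays inside $\Gamma_{i-1}\setminus\Gamma_i'$ rather than spilling past $\pazocal{C}$ or out of $\Gamma_{i-1}$: this is where one uses that $\pazocal{C}$ is a $q$-spoke of $\Pi$ (Lemma \ref{distortion q-bands} forbids non-spoke $q$-bands ending on $\partial\Delta$, and here $\textbf{z}_i$ consists of $a$-edges so any $q$-band met is a spoke), that $\nabla$ lies between two neighbouring spokes, and that the height of $\nabla$ equals $|\pazocal{B}|$, which is bounded by the height of $\Gamma$ because $H_{i+1}$ (or $H_{i+1}'$) is a prefix of $H_i$ and the relevant portion of the history is common to both. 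Since the distinguished-clove case only changes which history prefix ($H_{i+1}$ vs. $H_{i+1}'$) plays the role of the common history — and the copy $\Gamma_i'\subset\Gamma_{i-1}$ exists in that case too, as noted in Section 10.5 — no new argument is required there. This completes the proof.
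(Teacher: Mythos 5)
Correct, and essentially the paper's own argument: the paper proves this lemma word-for-word as Lemma \ref{7.29} (with Lemma \ref{distortion a-cells} playing the role of Lemma \ref{non-distinguished a-cells}), namely copying the $(\theta,q)$-cell attached at $\textbf{f}'$, then the whole $\theta$-band from $\textbf{f}$ to $\textbf{e}$, then the successive $\theta$-bands along the common continuation of the history, exactly as you do. One bookkeeping slip to fix: $\textbf{z}_i'$ is not $\textbf{z}_{i-1}$ (Section 10.5 gives $\textbf{y}_i'=\textbf{y}_{i-1}$); the copy $\textbf{x}'$ of $\textbf{x}$ lies on $\textbf{z}_i'=\textbf{top}(\Gamma_i')$, which is the \emph{bottom} of $\Gamma=\Gamma_{i-1}\setminus\Gamma_i'$ (its top is $\textbf{z}_{i-1}$), and the copied comb is built from there into $\Gamma$ — this orientation correction does not affect your argument.
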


\begin{lemma} \label{distortion a-bands}

At most 6 $a$-bands starting on the path $\textbf{y}_i$ (or $\textbf{z}_i$) can end on $(\theta,q)$-cells of the same $\theta$-band.

\end{lemma}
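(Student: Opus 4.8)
\textbf{Proposal for the proof of Lemma \ref{distortion a-bands}.} The plan is to mimic the proof of Lemma \ref{7.30} almost verbatim, since the key mechanism — bounding the number of $a$-insertions/deletions that a single rule of an $S$-machine can make in a sector whose base letters are all present in a reduced pararevolving base — is identical in this setting. Suppose the maximal $a$-bands $\textbf{A}_1,\dots,\textbf{A}_m$ all start from edges of $\textbf{y}_i$ and each ends on some $(\theta,q)$-cell of a single $\theta$-band $\pazocal{T}$. First I would pass to the minimal subband $\pazocal{T}_0$ of $\pazocal{T}$ on which the bands $\textbf{A}_2,\dots,\textbf{A}_{m-1}$ end, and to the minimal subpath $\bar{\textbf{y}}_i$ of $\textbf{y}_i$ where $\textbf{A}_1,\dots,\textbf{A}_m$ start.

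Next I would argue that the base of $\pazocal{T}_0$ is a subword of a reduced pararevolving base not involving the `special' input sector. By Lemma \ref{M_a no annuli 1}, every $q$-band of the subdiagram bounded by $\bar{\textbf{y}}_i$, $\pazocal{T}_0$, and the side arcs of $\textbf{A}_1,\textbf{A}_m$ that crosses $\pazocal{T}_0$ must be a spoke of the disk $\Pi$ (using Lemma \ref{distortion q-bands} to rule out a $q$-band returning to $\partial\Delta$, and the fact, established in the construction of the $\Gamma_i$'s, that $\textbf{y}_i^{-1}$ lies on $\partial\Pi$); hence each such $q$-band corresponds to a base letter of $B_3(\ell)^{\pm1}$ for a single $\ell\geq2$, so its tape alphabet is a copy of one of $Y_1,\dots,Y_4$ and in particular not the `special' input alphabet. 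Consequently the base of $\pazocal{T}_0$ can be identified with a subword of the standard base of $\textbf{M}_4$ (or its inverse).

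Then I would invoke Lemma \ref{simplify rules} (equivalently, the structure of the rules of $\textbf{M}_4$): an application of any single rule inserts or deletes at most four $a$-letters in a given sector, since each part of a rule writes at most one $a$-letter on each side of the state letter and a sector is bounded by two state letters. Since each of the bands $\textbf{A}_2,\dots,\textbf{A}_{m-1}$ corresponds to a distinct $a$-edge on the trimmed bottom (or top) of a single $(\theta,a)$- or $(\theta,q)$-cell produced by the rule $\theta$ of $\pazocal{T}_0$, we get $m-2\leq 4$, hence $m\leq 6$. The symmetric argument with $\textbf{z}_i$ in place of $\textbf{y}_i$ gives the same bound, completing the proof.

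I do not expect any real obstacle here: the only point that requires a moment's care is the identification of the base of $\pazocal{T}_0$ with a subword of the standard base of $\textbf{M}_4$ avoiding the `special' input sector, which rests on the location of $\textbf{y}_i$ on $\partial\Pi$ and on Lemma \ref{distortion q-bands}; everything else is a direct transcription of the proof of Lemma \ref{7.30}.
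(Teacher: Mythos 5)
Your proposal is correct and follows essentially the same route as the paper, which states that Lemma \ref{distortion a-bands} is proved exactly as Lemma \ref{7.30}: pass to the minimal subband $\pazocal{T}_0$ and minimal subpath of $\textbf{y}_i$, note that the base of $\pazocal{T}_0$ is a subword of a reduced pararevolving base avoiding the `special' input sector (so it identifies with a subword of the standard base of $\textbf{M}_4$), and use that one application of a rule of $\textbf{M}_4$ inserts/deletes at most four $a$-letters, giving $m-2\leq4$. Your extra care in justifying the identification via Lemma \ref{distortion q-bands} and the location of $\textbf{y}_i$ on $\partial\Pi$ is consistent with the paper's argument.
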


\begin{lemma} \label{distortion big z_i}

There exist no two indices $i\in\{1,\dots,r-1\}$ and $j\in\{r+1,\dots,L-7\}$ such that $|\textbf{z}_i|_a,|\textbf{z}_j|_a<|V|_a/8c_3$.

\end{lemma}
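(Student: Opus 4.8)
\textbf{Proof plan for Lemma \ref{distortion big z_i}.} The plan is to mimic the proof of Lemma \ref{7.34}, exploiting the fact that if $|\textbf{z}_i|_a$ is small for indices on both sides of $r$ then we can build a long auxiliary trapezium whose base contains no $t$-letter from the special sector, and whose history must therefore contain a controlled subword by Lemma \ref{M projected long history controlled}; this controlled subword forces a $\lambda$-shaft that is too long to be allowed. Arguing toward contradiction, suppose there are indices $i\leq r-1$ and $j\geq r+1$ with $|\textbf{z}_i|_a,|\textbf{z}_j|_a<|V|_a/8c_3$. First I would arrange, using the monotonicity $h_1\geq\dots\geq h_r$ and $h_{r+1}\leq\dots\leq h_{L-6}$ together with the bounds $\ell,\ell'>L_0$ from Lemma \ref{distortion rules} (so there is room), that $i$ and $j$ can be chosen with $L_0+1\leq i\leq r<r+1\leq j\leq L-L_0-7$, with $j-i\geq L/2$, and with neither $\Psi_{i,i+1}$ nor $\Psi_{j,j+1}$ the distinguished clove. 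Since $H_{i+1}$ is a prefix of $H_1$ and $H_j$ is a prefix of $H_{L-6}$, Lemma \ref{distortion rules}(2) gives that the first letters of $H_{i+1}$ and $H_j$ differ.

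Next I would glue the mirror of a coordinate shift of $\Gamma_j$ to $\Gamma_i$ along $\textbf{y}_i$ (these labels are coordinate shifts of one another, both copies of $V$), producing an auxiliary trapezium $E$ with reduced history $H_j^{-1}H_{i+1}$. The top and bottom labels of $E$ are copies of $\textbf{z}_j$ and $\textbf{z}_i$, so each has $a$-length less than $|V|_a/8c_3$; hence, since any rule of $\textbf{M}_4$ changes the $a$-length of a configuration by at most four, $h_{i+1},h_j>|V|_a/8$. Combined with Lemma \ref{distortion |V|} (giving $|V|_a/8>11L/(128\delta)\geq 12c_3$ by $\delta^{-1}\gg L\gg c_3$) this yields $t=h_{i+1}+h_j > c_3\max(\|W_0\|,\|W_t\|)$ where $W_0,W_t$ are the bottom/top labels of $E$, so the restriction of the computation associated to $E$ to the appropriate base $\{t(\ell)\}B_3(\ell)$ (for $\ell\geq2$, since $\Psi_{i,i+1}$ is not distinguished) satisfies the hypotheses of Lemma \ref{M projected long history controlled}. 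Taking $\lambda<1/10$, any factorization $H'H''H'''$ of $H_{i+1}$ with $\|H'\|+\|H'''\|\leq\lambda h_{i+1}$ has $\|H''\|>0.4t$, so $H''$ contains a controlled subword; moreover every $\theta$-band crossing $\pazocal{Q}_{i+1}$ also crosses $\pazocal{Q}_i$, so $W(\ell)$ is $H_{i+1}$-admissible and $\pazocal{Q}_{i+1}$ is a $\lambda$-shaft of $\Pi$ of length $h_{i+1}$.

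Finally I would derive the contradiction from Lemma \ref{distortion eps}. By Lemma \ref{distortion path lengths}(1) applied to the indices $i+1$ and $j$, and using that the $\lambda$-shaft $\pazocal{Q}_{i+1}$ of length $h_{i+1}$ lies in $\bar{\Delta}_{i+1,j}$, we get $|\textbf{p}_{i+1,j}|+\sigma_\lambda(\bar{\Delta}_{i+1,j})\geq 2h_{i+1}+h_j$. Meanwhile, since $h_{i+1}>|V|_a/8$, Lemma \ref{distortion |V|} and the parameter choices $\delta^{-1}\gg L\gg L_0$ force $\delta(L+1)|V|_a<\tfrac14 h_{i+1}$ and $11L<\tfrac14 h_{i+1}$, so Lemma \ref{distortion path lengths}(2) gives $|\bar{\textbf{p}}_{i+1,j}|\leq \tfrac32 h_{i+1}+h_j$. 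Since $h_j\leq h_{i+1}$ (WLOG, swapping roles of $i$ and $j$ if necessary — note the argument is symmetric under reflection), the ratio $(|\textbf{p}_{i+1,j}|+\sigma_\lambda(\bar{\Delta}_{i+1,j}))/|\bar{\textbf{p}}_{i+1,j}|\geq(2h_{i+1}+h_j)/(\tfrac32 h_{i+1}+h_j)\geq 6/5$, which violates Lemma \ref{distortion eps} (which says this ratio is at most $1$). The main obstacle I expect is the bookkeeping in the first step: verifying that one can simultaneously force $j-i\geq L/2$, avoid the distinguished clove at both ends, and stay within $[L_0+1,r-1]$ and $[r+1,L-L_0-7]$ — this requires combining Lemma \ref{distortion rules}(1) (which controls $\ell+\ell'$ and hence how many cloves are "trivial") with a counting argument analogous to Lemma \ref{7.31}, or else directly arguing that among the $\ell$ spokes on the left and $\ell'$ on the right there are enough indices with $\textbf{z}$-paths of the required size; the rest of the argument is a routine transcription of the proof of Lemma \ref{7.34} with the parameters $L-4$ replaced by $L-6$.
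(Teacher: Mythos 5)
The engine of your argument (the auxiliary trapezium $E$ with reduced history $H_j^{-1}H_{i+1}$, the bound $h_{i+1},h_j>|V|_a/8$, the forced controlled subword via Lemma \ref{M projected long history controlled}, the resulting $\lambda$-shaft, and the ratio $\geq 6/5$ against Lemma \ref{distortion eps}) is exactly the paper's argument. The genuine gap is your first step. The lemma is proved by contradiction from the existence of \emph{one} pair $(i,j)$ with $|\textbf{z}_i|_a,|\textbf{z}_j|_a<|V|_a/8c_3$; you are not free to re-choose $i$ and $j$ so that $j-i\geq L/2$, so that both lie in $[L_0+1,r-1]\cup[r+1,L-L_0-7]$, and so that neither $\Psi_{i,i+1}$ nor $\Psi_{j,j+1}$ is distinguished. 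No analogue of Lemma \ref{7.31} is available here: that counting argument ran under the hypothesis $h\leq L_0^2|V|_a$, which is absent (indeed Lemma \ref{distortion first L_0} gives the opposite inequality for small indices), and the cloves between the bands $\pazocal{T}$ and $\pazocal{T}'$ of Lemma \ref{distortion rules} have $|\textbf{z}|_a=|V|_a$, i.e.\ \emph{large} $a$-length, so "enough indices with small $\textbf{z}$" simply cannot be produced. The proposed resolution of your flagged obstacle therefore does not go through.

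The fix is two-fold, and it is what the paper does. First, the constraints you import from Lemma \ref{7.34} are unnecessary: Lemma \ref{distortion eps} (unlike Lemma \ref{7.26}) holds for \emph{every} pair $1\leq i<j\leq L-6$ and with the sharper bound (ratio at most $1$, no $(1+\eps)$), so your computation applies verbatim to the originally given pair whenever neither clove is distinguished; there is no need for $j-i\geq L/2$ or for keeping $i,j$ away from the extremes. Second, the distinguished clove cannot be avoided and must be handled head-on: if $\Psi_{i,i+1}$ or $\Psi_{j,j+1}$ is distinguished, replace $h_{i+1}$ and $h_j$ by the heights $h_{i+1}'$ and $h_j'$ of $\Gamma_i$ and $\Gamma_j$ (of which $H_{i+1}$, $H_j$ are prefixes, so $h_{i+1}'\geq h_{i+1}$, $h_j'\geq h_j$), build $E$ from these trapezia, and run the same shaft-plus-ratio argument; the resulting $\lambda$-shaft has length at least $h_{i+1}'\geq h_{i+1}$ and the same contradiction with Lemma \ref{distortion eps} follows.
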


\begin{proof}

Suppose neither $\Psi_{i,i+1}$ nor $\Psi_{j,j+1}$ is the distinguished clove.

As in the proof of Lemma \ref{7.34}, we may use Lemma \ref{distortion rules}(2) to construct a trapezium $E$ with history $H_j^{-1}H_{i+1}$ by pasting the mirror of a coordinate shift of $\Gamma_j$ to $\Gamma_i$. Without loss of generality, suppose $h_{i+1}\geq h_j$.

Note that $|V|_a-|V|_a/8c_3>|V|_a/2$, so that $h_{i+1},h_j>|V|_a/8$ since any rule of $\textbf{M}_4$ alters the $a$-length of a configuration by at most four. By Lemma \ref{distortion |V|}, $|V|_a/8>\frac{11L}{128\delta}\geq12c_3$ since $\delta^{-1}>>L>>c_3$. Further, for $t$ the height of $E$, $W_0\equiv\textbf{tbot}(E)$, and $W_t\equiv\textbf{ttop}(E)$, $|V|_a/8>c_3\max(|W_0|_a,|W_t|_a)$. So, $t>c_3\max(\|W_0\|,\|W_t\|)$.

Taking $\lambda<1/10$, Lemmas \ref{trapezia are computations} and \ref{M projected long history controlled} imply that $\pazocal{Q}_i$ contains a $\lambda$-shaft of $\Pi$ of length at least $h_{i+1}$. So, Lemma \ref{distortion path lengths}(1) yields the inequality $|\textbf{p}_{i,j}|+\sigma_\lambda(\bar{\Delta}_{i,j})\geq h_i+h_j+h_{i+1}$.

By Lemma \ref{distortion |V|} and a parameter choice for $\delta$, $11L<16\delta|V|_a<128\delta h_{i+1}\leq\frac{1}{4}h_{i+1}$. Similarly, $(L+1)\delta|V|_a\leq8\delta h_{i+1}<\frac{1}{4}h_{i+1}$. Hence, $|\bar{\textbf{p}}_{i,j}|<h_i+h_j+\frac{1}{2}h_{i+1}$.

But then $|\bar{\textbf{p}}_{i,j}|<|\textbf{p}_{i,j}|+\sigma_\lambda(\bar{\Delta}_{i,j})$, contradicting Lemma \ref{distortion eps}.

Now suppose either $\Psi_{i,i+1}$ or $\Psi_{j,j+1}$ is the distinguished clove. Then letting $h_{i+1}'$ and $h_j'$ be the heights of $\Gamma_i$ and $\Gamma_j$, respectively, we may construct the trapezium $E$ as above. Assuming $h_{i+1}'\geq h_j'$, then the same arguments as above imply that $\pazocal{Q}_i$ contains a $\lambda$-shaft of $\Pi$ of length at least $h_{i+1}'\geq h_{i+1}$. But then a contradiction is reached as above.

\end{proof}

As a result of Lemma \ref{distortion big z_i}, we may assume without loss of generality that for all $i=1,\dots,r-1$, $|\textbf{z}_i|_a\geq|V|_a/8c_3$.

\begin{lemma} \label{distortion number of cloves}

Let $I$ be the subset of the indices $\{1,\dots,r-1\}$ defined by the property that for any $i\in I$, $h_i-h_{i+1}\leq|\textbf{z}_i|_a/8$. Then $\#I\leq c_4$.

\end{lemma}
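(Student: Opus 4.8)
The plan is to argue by contradiction: assume $\#I>c_4$ and derive a violation of Lemma \ref{distortion eps} applied to the maximal clove $\Psi=\Psi_{1,L-6}$, by showing that the indices in $I$ force so many $a$-edges onto $\textbf{p}=\textbf{p}_{1,L-6}$ that $|\textbf{p}|$ exceeds $|\bar{\textbf{p}}|=|\bar{\textbf{p}}_{1,L-6}|$. Recall that after Lemma \ref{distortion big z_i} we may assume $|\textbf{z}_i|_a\geq|V|_a/8c_3$ for every $i\in\{1,\dots,r-1\}\supseteq I$, and note that at most one index of $I$ belongs to the distinguished clove.

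First I would prove a local estimate: for $i\in I$ with $\Psi_{i,i+1}$ not the distinguished clove, $|\textbf{p}_{i,i+1}|_a-|\textbf{p}_{i,i+1}|_\theta\geq|\textbf{z}_i|_a/8$. Indeed $|\textbf{p}_{i,i+1}|_\theta=h_i-h_{i+1}\leq|\textbf{z}_i|_a/8$ by the definition of $I$. The comb $E_i$ has handle of height $h_i-h_{i+1}$, and by Lemma \ref{distortion a-cells} it contains no $a$-cells; hence by Lemma \ref{distortion a-bands} at most $6(h_i-h_{i+1})\leq\tfrac34|\textbf{z}_i|_a$ of the maximal $a$-bands of $E_i$ starting on $\textbf{z}_i$ end on $(\theta,q)$-cells. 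Since an $a$-band cannot cross a $q$-band and cannot terminate on $\pazocal{Q}_i$ or $\pazocal{Q}_{i+1}$ (which carry no $a$-edges), and since $E_i$ contains no $a$-cells, the remaining at least $\tfrac14|\textbf{z}_i|_a$ of these $a$-bands must end on $\textbf{p}_{i,i+1}\subseteq\partial\Delta$, this arc being a genuine boundary arc by Lemma \ref{distortion graph}(c). Thus $|\textbf{p}_{i,i+1}|_a\geq\tfrac14|\textbf{z}_i|_a$, and the claimed excess follows.

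Next I would sum over all cloves. Writing $\textbf{p}=\textbf{p}_{1,2}\cdots\textbf{p}_{L-7,L-6}$, applying Lemma \ref{lengths}(c) at the $L-8$ concatenation points, and applying Lemma \ref{lengths}(a) on each piece (its $q$-edges contributing their combinatorial count, its $\theta$- and $a$-edges contributing at least $|\textbf{p}_{i,i+1}|_\theta+\delta\max(0,|\textbf{p}_{i,i+1}|_a-|\textbf{p}_{i,i+1}|_\theta)$), I obtain
$$|\textbf{p}|\ \geq\ \big(|\textbf{p}|_\theta+|\textbf{p}|_q\big)+(\#I-1)\,\frac{\delta|V|_a}{64c_3}-(L-8)\delta,$$
where the first term is $\geq h_1+h_{L-6}+11(L-7)+1$ by Lemma \ref{distortion path lengths}(1) and the second uses the local estimate of the previous paragraph together with $|\textbf{z}_i|_a\geq|V|_a/8c_3$ (the factor $\#I-1$ discards the at most one distinguished index). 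On the other hand Lemma \ref{distortion path lengths}(2) gives $|\bar{\textbf{p}}|\leq h_1+h_{L-6}+77+8\delta|V|_a-1$, while Lemma \ref{distortion eps} (with $\sigma_\lambda(\bar{\Delta})\geq0$) gives $|\textbf{p}|\leq|\bar{\textbf{p}}|$. Cancelling the common $h_1+h_{L-6}$ and using that $11(L-7)$ dominates $77+(L-8)\delta$ (as $\delta<1$ and $L>>1$), one is left with $(\#I-1)\,\delta|V|_a/64c_3\leq8\,\delta|V|_a$, hence $\#I\leq512c_3+1$. Since $c_4$ is assigned after $c_3$ with $c_4>>c_3$, this contradicts $\#I>c_4$, proving the lemma.

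The step I expect to be the main obstacle is keeping the length estimate \emph{clove by clove} in the third paragraph: one must sum the \emph{lengths} $|\textbf{p}_{i,i+1}|$, so that the $\theta$-edge counts $h_i-h_{i+1}$ reassemble into $h_1+h_{L-6}$ and cancel against the $\theta$-edge count of $\bar{\textbf{p}}$, rather than invoking the global bound of Lemma \ref{distortion path lengths}(1) directly, in which the term $\delta(|\textbf{p}|_a-h_1-h_{L-6})$ would subtract the a priori unbounded quantity $\delta(h_1+h_{L-6})$ from the gain produced by $I$ and render the estimate useless. A secondary point requiring care is the precise routing of the $a$-bands emanating from $\textbf{z}_i$ in the second paragraph, which rests on Lemmas \ref{distortion a-cells} and \ref{distortion a-bands}, on the fact that an $a$-band never crosses a $q$-band, and on Lemma \ref{distortion graph}(c).
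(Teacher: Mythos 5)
Your proof is correct and follows essentially the same route as the paper: for each $i\in I$ the combination of Lemmas \ref{distortion a-cells} and \ref{distortion a-bands} with the defining inequality $h_i-h_{i+1}\leq|\textbf{z}_i|_a/8$ forces at least $|\textbf{z}_i|_a/8\geq|V|_a/64c_3$ $a$-edges of $\textbf{p}_{i,i+1}$ to contribute $\delta$ to $|\textbf{p}|$, and comparing the resulting lower bound on $|\textbf{p}|$ with the upper bound on $|\bar{\textbf{p}}|$ from Lemma \ref{distortion path lengths}(2) contradicts Lemma \ref{distortion eps} once $\#I>c_4>>c_3$. Your extra care about the clove-by-clove summation and the distinguished clove only sharpens the bookkeeping the paper does implicitly (its first inequality in Lemma \ref{distortion path lengths}(1) subtracts only $\delta|\textbf{p}|_\theta$, so the issue you flag does not actually arise), and the conclusion is the same.
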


\begin{proof}

Note that the length of the handle $E_i$ is at most $h_i-h_{i+1}$. So, by Lemma \ref{distortion a-bands}, at most $6(h_i-h_{i+1})$ $a$-bands beginning on $\textbf{z}_i$ can end on a $(\theta,q)$-cell of $E_i$. 

Since $\textbf{p}_{i,i+1}$ consists of exactly $h_i-h_{i+1}$ $\theta$-edges, for any $i\in I$, at least $|\textbf{z}_i|_a-7(h_i-h_{i+1})\geq|\textbf{z}_i|_a/8$ $a$-edges contribute $\delta$ to $|\textbf{p}_{i,i+1}|$, and so to $|\textbf{p}|$.

As $|\textbf{z}_i|_a\geq|V|_a/8c_3$ for all $i\in I$, it follows that at least $|V|_a/64c_3$ $a$-edges contribute $\delta$ to $|\textbf{p}|$.

Suppose $\#I>c_4$. Then, by Lemma \ref{distortion path lengths} and the parameter choice $c_4>>c_3$,
$$|\textbf{p}|\geq h_1+h_{L-6}+11(L-7)+8\delta|V|_a$$
$$|\bar{\textbf{p}}|\leq h_1+h_{L-6}+11(7)+8\delta|V|_a$$
But then $|\bar{\textbf{p}}|<|\textbf{p}|$, contradicting Lemma \ref{distortion eps}.

\end{proof}

\begin{lemma} \label{distortion first L_0}

For $i=1,\dots,L_0$, $h_i> L_0^2|V|_a$.

\end{lemma}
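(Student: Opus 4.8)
\textbf{Proof plan for Lemma \ref{distortion first L_0}.}

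The plan is to argue by contradiction in the now-familiar style of the surrounding section: assume $h_i \le L_0^2 |V|_a$ for some $i \in \{1,\dots,L_0\}$, and derive a contradiction with Lemma \ref{distortion eps} by showing that $|\textbf{p}_{i,L-6}|$ would have to exceed $|\bar{\textbf{p}}_{i,L-6}|$. The key observation driving the argument is that $\sigma_\lambda(\bar{\Delta}_{i,L-6}) \ge 0$, so Lemma \ref{distortion eps} gives $|\textbf{p}_{i,L-6}| \le |\bar{\textbf{p}}_{i,L-6}|$, and this must be contradicted. Since $i \le L_0$, we have $j - i \ge L - 6 - L_0 \ge L - (L_0 + 6)$, which is at least $L/2$ by the parameter choice $L >> L_0$, so Lemma \ref{distortion eps} is indeed applicable to the pair $(i, L-6)$.

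First I would apply Lemma \ref{distortion path lengths}(2) to bound $|\bar{\textbf{p}}_{i,L-6}|$ from above: since $L - (L-6) + i = i + 6 \le L_0 + 6$, we get
$$|\bar{\textbf{p}}_{i,L-6}| \le h_i + h_{L-6} + 11(L_0 + 6) + (L_0 + 7)\delta|V|_a - 1.$$
Under the assumption $h_i \le L_0^2 |V|_a$, together with Lemma \ref{distortion |V|} (which gives $|V|_a > 11L/16\delta$, hence $\delta|V|_a > 11L/16$), and the parameter ordering $\delta^{-1} >> L >> L_0$, all the error terms $11(L_0+6)$ and $(L_0+7)\delta|V|_a$ are dwarfed by a small fraction of $11L$; concretely I would show $|\bar{\textbf{p}}_{i,L-6}| \le h_i + h_{L-6} + 11L/4$ or similar, after noting $(L_0+7)\delta|V|_a$ can be absorbed since $\delta|V|_a < \delta h_i / L_0^2 \cdot$ (something) via the hypothesis — wait, more directly, $\delta |V|_a$ is bounded using $h_i \le L_0^2|V|_a$ rearranged as $|V|_a \ge h_i/L_0^2$, which goes the wrong way; instead I bound $(L_0+7)\delta|V|_a$ directly by comparing $\delta|V|_a$ against $L$: since we also need $h_i$ to be controlled, the cleanest route is to use the hypothesis to bound $h_i + h_{L-6} \le 2h_i \le 2L_0^2|V|_a$ and then note $|V|_a$ itself is the dominant scale.

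Then I would bound $|\textbf{p}_{i,L-6}|$ from below using Lemma \ref{distortion path lengths}(1) with $j = L-6 \ge r+1$: since every $t$-spoke of $\Pi$ starting on $\bar{\textbf{u}}_{i,L-6}$ ends on $\textbf{p}_{i,L-6}$, and there are $L - 6 - i \ge L - (L_0+6)$ such spokes, the path contains at least $11(L-6-i) + 1 \ge 11(L - L_0 - 6) + 1$ $q$-edges, giving
$$|\textbf{p}_{i,L-6}| \ge h_i + h_{L-6} + 11(L - L_0 - 6) + 1.$$
The difference $|\textbf{p}_{i,L-6}| - |\bar{\textbf{p}}_{i,L-6}| \ge 11(L - L_0 - 6) - 11(L_0+6) - (L_0+7)\delta|V|_a$; since $L >> L_0$ and $\delta^{-1} >> L$, the term $11(L - 2L_0 - 12) \ge 11L/2$ swamps $(L_0+7)\delta|V|_a < 11L/8$ (using $\delta|V|_a$ vs $L$ — here I would need the complementary fact that $\delta|V|_a$ is not too large, which actually does \emph{not} follow from the hypothesis; so the honest approach is to also invoke that $|V|_a$ satisfies the bound from Lemma \ref{distortion |V|} only from below). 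The main obstacle is exactly this: controlling $(L_0+7)\delta|V|_a$ when the hypothesis only bounds $|V|_a$ from below via $h_i$. I expect the resolution is that one argues in two sub-cases as in Lemma \ref{7.35}: if $\delta|V|_a$ is large compared to $L$, then $h_i \le L_0^2|V|_a$ is huge and one uses $|\textbf{p}| - |\bar{\textbf{p}}| \ge$ (the $a$-edge contribution) rather than the $q$-edge contribution — but in a $g$-minimal diagram the $t$ side has no $a$-edges relevant here; alternatively, one mimics Lemma \ref{7.35} exactly, where the $\delta|V|_a$ term was handled by $\delta|V|_a < 1/L_0^2$ derived \emph{from} a bound like $h_i \le \delta^{-1}$. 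So I would first show, as in Lemma \ref{7.35}, that $h_i > \delta^{-1}$ is forced (a short sub-argument with the same contradiction structure), and then the assumption $h_i \le L_0^2|V|_a$ combined with $h_i > \delta^{-1}$ gives $\delta|V|_a > \delta^{-1}/(\delta L_0^2) = \delta^{-2}/L_0^2$, which is large — and then $h_i + h_{L-6} \le 2L_0^2|V|_a$ while $|\bar{\textbf{p}}_{i,L-6}| \le h_i + h_{L-6} + 11(L_0+6) + (L_0+7)\delta|V|_a \le 2L_0^2|V|_a + \text{(lower order)} + (L_0+7)\delta|V|_a$, and the ratio $|\textbf{p}_{i,L-6}|/|\bar{\textbf{p}}_{i,L-6}|$ can be shown to exceed $1$ (in fact $1 + \delta$) by comparing the $11L$ gain against the $(L_0+7)\delta|V|_a$ loss using $L >> L_0$. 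This final ratio comparison, contradicting Lemma \ref{distortion eps} via $|\textbf{p}_{i,L-6}| > |\bar{\textbf{p}}_{i,L-6}|$, completes the proof; the bookkeeping of which parameter dominates which is the only real work, and it parallels Lemma \ref{7.35} closely enough that I would model the writeup on that proof.
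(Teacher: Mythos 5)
There is a genuine gap, and it is exactly the one you flag mid-proposal but never actually resolve: the loss term $(L_0+7)\delta|V|_a$ in the upper bound for $|\bar{\textbf{p}}_{i,L-6}|$ cannot be absorbed by the $q$-edge gain of roughly $11L$. In this section $\delta|V|_a$ is not small — Lemma \ref{distortion |V|} forces $\delta|V|_a>11L/16$ — so the loss is at least of order $L_0L$, which dominates $11L$ no matter how the parameters $L>>L_0$ are tuned. Your proposed repair (first prove $h_i>\delta^{-1}$ "as in Lemma \ref{7.35}" and then deduce something about $\delta|V|_a$) inverts the logic of Section 10: there, the smallness $\delta|V|_a<1/L_0^2$ came from the already-established Lemma \ref{7.34}, i.e. from an upper bound $|V|_a<h/L_0^2$, combined with $h_i\le\delta^{-1}$. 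Here the hypothesis you are negating is $h_i\le L_0^2|V|_a$, which gives only a \emph{lower} bound on $|V|_a$ (your own computation $\delta|V|_a>1/L_0^2$ — note also the arithmetic slip producing $\delta^{-2}/L_0^2$ — goes the wrong way), and a large $\delta|V|_a$ makes the comparison worse, not better. Moreover, the auxiliary claim $h_i>\delta^{-1}$ would itself need the same uncontrolled $\delta|V|_a$ term handled, so the sub-argument does not bootstrap. The only mechanism in this section for offsetting $\delta|V|_a$ on the $\bar{\textbf{p}}$ side is gaining $a$-edges on $\textbf{p}$ from empty cloves, as in Lemma \ref{distortion rules}(1), and that is not available for the pair $(i,L-6)$ here.

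The paper proves the lemma by a different and much more elementary route, with no appeal to Lemma \ref{distortion eps}: since $h_i\geq h_{i+1}\geq\dots\geq h_r$, one telescopes $h_i\geq\sum_{j=i}^{r-1}(h_j-h_{j+1})$; by Lemma \ref{distortion number of cloves} at most $c_4$ of these indices lie in $I$, so more than $L_0^3$ of them (using $r\geq(L-1)/2-5$ from Lemma \ref{distortion r} and $L>>L_0>>c_4$) satisfy $h_j-h_{j+1}>|\textbf{z}_j|_a/8$, and the standing assumption after Lemma \ref{distortion big z_i} gives $|\textbf{z}_j|_a\geq|V|_a/8c_3$; hence $h_i>L_0^3|V|_a/64c_3>L_0^2|V|_a$ by $L_0>>c_3$. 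You should redo the proof along these lines rather than trying to force the $|\textbf{p}|$-versus-$|\bar{\textbf{p}}|$ comparison.
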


\begin{proof}

As $h_i\geq h_{i+1}\geq\dots\geq h_{r-1}\geq h_r$, we have $h_i\geq h_i-h_{r}=\sum\limits_{j=i}^{r-1}(h_j-h_{j+1})$.

By Lemma \ref{distortion number of cloves}, the number of $j\in\{i,\dots,r-1\}$ such that $j\notin I$ is at least $r-1+i-c_4$. Since $r\geq(L-1)/2-5$ by Lemma \ref{distortion r}, the parameter choices $L>>L_0>>c_4$ imply that $r-1+i-c_4>L_0^3$.

As a result, $h_i>L_0^3|\textbf{z}_i|_a/8c_3>L_0^2|V|_a$ by the parameter choice $L_0>>c_3$.

\end{proof}

\begin{lemma} \label{distortion no shafts}

For $i=1,\dots,L_0$, $\pazocal{Q}_i$ contains no $\lambda$-shaft of length at least $h_{L_0}$.

\end{lemma}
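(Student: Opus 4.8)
The plan is to argue by contradiction in the same spirit as Lemmas \ref{7.36} and \ref{distortion big z_i}: suppose some $\pazocal{Q}_i$ with $i\le L_0$ contains a $\lambda$-shaft $\pazocal{B}$ of $\Pi$ of length at least $h_{L_0}$. Set $j=L_0+1$ and $\ell=L-L_0-6$ (using Lemma \ref{distortion rules}(1) together with $L\gg L_0$ to guarantee $L_0+1\le r<r+1\le L-L_0-6$, so these indices make sense and $\ell-j\ge L/2$). The key point is that when we pass from $\Delta$ to $\Psi_{j,\ell}'$, the disk $\Pi$ is removed, so $\pazocal{Q}_i$ becomes a cutting $q$-band of $\Psi_{j,\ell}'$ and hence (by Lemma \ref{distortion q-bands} applied to the $g$-minimal subdiagram, or directly since a shaft needs a disk to be anchored at) $\pazocal{Q}_i$ contributes no $\lambda$-shaft to $\Psi_{j,\ell}'$. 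Therefore $\sigma_\lambda(\Delta)-\sigma_\lambda(\Psi_{j,\ell}')\ge h_{L_0}$.

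Next I would compare path lengths. By Lemma \ref{distortion path lengths}, since $i\le L_0\le r$ and $\ell\ge r+1$,
$$|\textbf{p}_{i,\ell}|\ge h_i+h_\ell+11(\ell-i)\ge h_i+h_\ell+11(L-2L_0-6),$$
$$|\bar{\textbf{p}}_{i,\ell}|\le h_i+h_\ell+11(L-\ell+i)+(L-\ell+i+1)\delta|V|_a\le h_i+h_\ell+11(2L_0+6)+(2L_0+7)\delta|V|_a.$$
Then I would invoke Lemma \ref{distortion first L_0} to bound $\delta|V|_a< h_i/L_0^2$ (valid since $i\le L_0$ gives $h_i>L_0^2|V|_a$), so that the ``$a$-tail'' term is at most $\sim 2h_i/L_0$, which is negligible compared to $11L$ because $L\gg L_0$. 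Combining,
$$|\textbf{p}_{i,\ell}|+\sigma_\lambda(\Delta)-\sigma_\lambda(\Psi_{j,\ell}')-|\bar{\textbf{p}}_{i,\ell}|\ \ge\ 11(L-4L_0-12)-2h_i/L_0+h_{L_0}.$$
Here I would use $h_{L_0}\le h_i$ (monotonicity of the $h$'s for $i\le L_0$), Lemma \ref{distortion first L_0} again to know $h_{L_0}$ and $h_i$ are large, and the choices $L\gg L_0\ge 6$, $\delta^{-1}\gg L_0$ to conclude this quantity is strictly positive, i.e. $|\textbf{p}_{i,\ell}|+\sigma_\lambda(\Delta)-\sigma_\lambda(\Psi_{j,\ell}')>|\bar{\textbf{p}}_{i,\ell}|$, directly contradicting Lemma \ref{distortion eps}. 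One small bookkeeping item: if $\Psi_{i,\ell}$ happens to be (or contain as a boundary piece) the distinguished clove, the same argument goes through since the disk $\Pi$ and all its $t$-spokes still lie in the part removed, and Lemma \ref{distortion path lengths} is stated for the relevant range of indices regardless.

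The main obstacle I anticipate is making the additive constants line up cleanly: I need the gain $11L + h_{L_0}$ coming from the removed $\theta$-bands and the removed shaft to dominate the loss $11(2L_0+6) + (2L_0+7)\delta|V|_a$ coming from replacing $\textbf{p}_{i,\ell}$ by the shorter $\bar{\textbf{p}}_{i,\ell}$ around $\partial\Pi$. The term $(2L_0+7)\delta|V|_a$ is the delicate one, and it is exactly why Lemma \ref{distortion first L_0} (rather than merely Lemma \ref{distortion |V|}) is needed: it converts $\delta|V|_a$ into something much smaller than $h_i$, hence absorbable. Everything else — the fact that removing $\Pi$ kills any shaft on $\pazocal{Q}_i$, the monotonicity $h_{L_0}\le h_i$, and the final appeal to Lemma \ref{distortion eps} — is routine and parallels Lemma \ref{7.36} almost verbatim, with the $(L-1)/2$-type bounds replaced by the $L-6$ bookkeeping appropriate to this section.
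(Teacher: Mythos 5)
Your overall strategy is the paper's: remove the part of $\Delta$ containing $\Pi$ and the alleged shaft, use the shaft length to boost the left-hand side of the inequality in Lemma \ref{distortion eps}, estimate $|\textbf{p}|$ and $|\bar{\textbf{p}}|$ via Lemma \ref{distortion path lengths}, and use Lemma \ref{distortion first L_0} to control the $\delta|V|_a$ tail. But the absorption step, as you execute it, fails quantitatively. You convert $(2L_0+7)\delta|V|_a$ into roughly $2h_i/L_0$ via $|V|_a<h_i/L_0^2$ and then claim this is ``negligible compared to $11L$ because $L\gg L_0$.'' That is false: by Lemma \ref{distortion first L_0} and Lemma \ref{distortion |V|}, $h_i>L_0^2|V|_a>L_0^2\cdot 11L/(16\delta)$, so $h_i/L_0\gg 11L$, not the reverse. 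Your fallback, monotonicity $h_{L_0}\le h_i$, points the wrong way: to make your displayed lower bound $11(L-4L_0-12)-2h_i/L_0+h_{L_0}$ positive you would need something like $h_i\lesssim L_0h_{L_0}$, and at this stage of the argument nothing bounds $h_i$ from above in terms of $h_{L_0}$ (the decay estimate of Lemma \ref{distortion h_i+1} comes later and in any case goes in the opposite direction). So the ``strictly positive'' conclusion does not follow as written. The repair is to not convert the tail into $h_i$ at all: keep it as $(2L_0+7)\delta|V|_a\le 3L_0\delta|V|_a$ and absorb it into the shaft gain using $h_{L_0}>L_0^2|V|_a$ (even $h_{L_0}\ge L_0|V|_a$ together with $\delta\le 1/2$ suffices), while the $q$-edge discrepancy $11(2L_0+6)$ is absorbed by $11(L-2L_0-6)\ge 11L/2$ since $L\gg L_0$. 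This is exactly how the paper closes the estimate.

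A secondary, fixable issue: you establish the shaft gain for $\Psi_{j,\ell}'$ with $j=L_0+1$ but then write the contradiction with the paths $\textbf{p}_{i,\ell}$, $\bar{\textbf{p}}_{i,\ell}$; Lemma \ref{distortion eps} compares $|\textbf{p}_{i\ell}|+\sigma_\lambda(\Delta)-\sigma_\lambda(\Psi_{i\ell}')$ with $|\bar{\textbf{p}}_{i\ell}|$, so the indices of the removed subdiagram must match those of the paths. Either work with the pair $(i,L-6)$ throughout (then $\pazocal{Q}_i$ itself is cut away with $\bar{\Delta}_{i,L-6}$, so its shaft certainly does not count in $\sigma_\lambda(\Psi_{i,L-6}')$ — this is the paper's choice), or keep $(j,\ell)$ for both the paths and the removed subdiagram as in Lemma \ref{7.36}; mixing them requires an extra monotonicity remark $\sigma_\lambda(\Psi_{i,\ell}')\le\sigma_\lambda(\Psi_{j,\ell}')$ that you did not state.
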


\begin{proof}

Assuming to the contrary that $\pazocal{Q}_i$ contains such a $\lambda$-shaft, Lemma \ref{distortion first L_0} implies $$\sigma_\lambda(\Delta)-\sigma_\lambda(\Psi_{i,L-6}')\geq h_{L_0}\geq L_0|V|_a$$
Then, Lemma \ref{distortion path lengths} and the parameter choices $L>>L_0$ yield the inequalities 
\begin{align*}
|\textbf{p}_{i,L-6}|+\sigma_\lambda(\Delta)-\sigma_\lambda(\Psi_{i,L-6}')&>h_i+h_{L-6}+11(L-i-6)+L_0|V|_a \\
&>h_i+h_{L-6}+11L/2+L_0|V|_a
\end{align*}
\begin{align*}
|\bar{\textbf{p}}_{i,L-6}|&\leq h_i+h_{L-6}+11(i+6)+(i+7)\delta|V|_a \\
&<h_i+h_{L-6}+11(2L_0)+2L_0\delta|V|_a
\end{align*}
But then a parameter choice for $\delta$ implies that $|\textbf{p}_{i+1,L-6}|+\sigma_\lambda(\Delta)-\sigma_\lambda(\Psi_{i+1,L-6}')>|\bar{\textbf{p}}_{i+1,L-6}|$, contradicting Lemma \ref{distortion eps}.

\end{proof}

\newpage

\begin{lemma} \label{distortion big h_i+1}

For all $i=1,\dots,L_0-1$, $h_{i+1}\leq 2c_3|\textbf{z}_i|_a$.

\end{lemma}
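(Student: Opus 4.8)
The plan is to run the proof of Lemma \ref{7.37} essentially verbatim, substituting the distortion-section versions of the auxiliary estimates. Assume toward a contradiction that $h_{i+1}>2c_3|\textbf{z}_i|_a$, i.e. $|\textbf{z}_i|_a<h_{i+1}/2c_3$. Since $i+1\leq L_0$, Lemma \ref{distortion first L_0} gives $h_{i+1}>L_0^2|V|_a$, and Lemma \ref{distortion |V|} gives $|V|_a>11L/16\delta$; together with the ordering $\delta^{-1}\gg L\gg L_0\gg c_3$ these yield both $h_{i+1}/2c_3>12$ and $|V|_a<h_{i+1}/L_0^2$.

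First I would bound the two horizontal sides of the trapezium $\Gamma_i$. As in Section 10.5, $\textbf{y}_i=\textbf{bot}(\Gamma_i)$ with $\textbf{y}_i^{-1}$ a subpath of $\partial\Pi$ whose label is a copy of $V$, and $\textbf{z}_i=\textbf{top}(\Gamma_i)$; since the base of $\Gamma_i$ (or its inverse) is $\{t(\ell)\}B_3(\ell)\{t(\ell+1)\}$, each of these paths carries exactly $12$ $q$-edges and no $\theta$-edges, so $\|\textbf{y}_i\|=|V|_a+12$ and $\|\textbf{z}_i\|=|\textbf{z}_i|_a+12$. Using the two displayed inequalities of the previous paragraph, $\|\textbf{z}_i\|<h_{i+1}/2c_3+12<h_{i+1}/c_3$ and $\|\textbf{y}_i\|=|V|_a+12<h_{i+1}/L_0^2+h_{i+1}/2c_3<h_{i+1}/c_3$, the last step using $L_0\gg c_3$.

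Next I would produce a long $\lambda$-shaft and contradict Lemma \ref{distortion no shafts}. By Lemma \ref{distortion a-cells} (together with Lemma \ref{a-cells sector}), $\Gamma_i$ contains no $a$-cells and hence is a trapezium, so Lemma \ref{trapezia are computations} gives a corresponding reduced computation; its restriction $\pazocal{C}$ to the base $\{t(\ell)\}B_3(\ell)$ has history of length equal to the height of $\Gamma_i$, namely $h_{i+1}$, and $h_{i+1}>c_3\max(\|\textbf{y}_i\|,\|\textbf{z}_i\|)$ by the previous paragraph, so Lemma \ref{M projected long history controlled} applies to $\pazocal{C}$. Thus every factorization $H_{i+1}\equiv H'H''H'''$ with $\|H'\|+\|H'''\|\leq\lambda\|H_{i+1}\|$ has $\|H''\|\geq(1-\lambda)h_{i+1}>0.4h_{i+1}$, so $H''$ contains a controlled subword provided $\lambda<1/10$; moreover every $\theta$-band crossing $\pazocal{Q}_{i+1}$ also crosses $\pazocal{Q}_i$ (Lemma \ref{distortion r}(2), using $i+1\leq L_0<r$), so the relevant $W(\ell)$ is $H_{i+1}$-admissible, and $H_{i+1}$ is a prefix of the history $H_i$ of $\pazocal{Q}_i$. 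Hence the subband of $\pazocal{Q}_i$ with history $H_{i+1}$ is a $\lambda$-shaft of $\Pi$ of length at least $h_{i+1}\geq h_{L_0}$ (monotonicity of the $h_j$ for $j\leq r$), contradicting Lemma \ref{distortion no shafts}.

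The only slightly delicate bookkeeping is the distinguished-clove case: there the history of $\Gamma_i$ is $H_{i+1}'$ rather than $H_{i+1}$, but $H_{i+1}$ is a prefix of $H_{i+1}'$ and $\|H_{i+1}'\|\geq h_{i+1}$, so the argument goes through after replacing $H_{i+1}$ by $H_{i+1}'$ throughout, exactly as in Section 10. I expect the main obstacle to be not conceptual but the verification that the chain of parameter inequalities ($\|\textbf{y}_i\|,\|\textbf{z}_i\|<h_{i+1}/c_3$, and the consequent applicability of Lemma \ref{M projected long history controlled} with $\lambda$ small enough to force the controlled subword) is consistent with the fixed ordering of the constants; once that is confirmed the contradiction with Lemma \ref{distortion no shafts} is immediate.
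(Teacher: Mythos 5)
Your proposal is correct and follows essentially the same route as the paper: the paper's proof likewise derives $h_{i+1}>c_3\max(\|\textbf{y}_i\|,\|\textbf{z}_i\|)$ from Lemmas \ref{distortion first L_0} and \ref{distortion |V|} and the parameter ordering, then invokes the argument of Lemma \ref{distortion big z_i} (i.e.\ Lemmas \ref{trapezia are computations} and \ref{M projected long history controlled}) to produce a $\lambda$-shaft of $\pazocal{Q}_i$ of length at least $h_{i+1}\geq h_{L_0}$, contradicting Lemma \ref{distortion no shafts}. Your explicit treatment of the distinguished-clove case via $H_{i+1}'$ is a detail the paper leaves implicit but handles the same way.
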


\begin{proof}

Assume toward contradiction that $h_{i+1}>2c_3|\textbf{z}_i|_a$.

Then, for the computation $\pazocal{C}:W_0\to\dots\to W_t$ corresponding to the trapezium $\Gamma_i$ through Lemma \ref{trapezia are computations}, $t>2c_3|W_t|_a$. Further, Lemma \ref{distortion first L_0} implies $t>L_0|V|_a$. The parameter choice $L_0>>c_3$ then implies $t>2c_3\max(|W_0|_a,|W_t|_a)$.

Lemma \ref{distortion |V|} then implies that $t\geq|V|_a\geq11L/16\delta$, so that the parameter choices $\delta^{-1}>>L>>c_3$ implies $t>c_3\max(\|W_0\|,\|W_t\|)$.

As in the proof of Lemma \ref{distortion big z_i}, this implies that $\pazocal{Q}_i$ contains a $\lambda$-shaft of length at least $h_{i+1}$. But this contradicts Lemma \ref{distortion no shafts}.


%
%
%

\end{proof}

\begin{lemma} \label{distortion h_i+1}

For $i=1,\dots,L_0-1$, $h_{i+1}<\left(1-\frac{1}{30c_3}\right)h_i$.

\end{lemma}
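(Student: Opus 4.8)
The plan is to argue by contradiction in exactly the manner established by Lemmas \ref{7.38} and \ref{distortion h_i+1}'s predecessor Lemma \ref{7.37}, adapted to the $g$-minimal setting using Lemma \ref{distortion eps} in place of Lemma \ref{7.26}. So assume $h_{i+1}\geq\left(1-\frac{1}{30c_3}\right)h_i$ for some $i\in\{1,\dots,L_0-1\}$. Then the handle of the comb $E_i$ has length $h_i-h_{i+1}\leq h_i/30c_3$, so by Lemma \ref{distortion a-bands} at most $6(h_i-h_{i+1})\leq h_i/5c_3$ of the maximal $a$-bands starting on $\textbf{z}_i$ can end on $(\theta,q)$-cells of $E_i$. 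By Lemma \ref{distortion a-cells}, $E_i$ contains no $a$-cells, so every other such $a$-band must end on $\textbf{p}_{i,i+1}$, giving at least $\max(0,|\textbf{z}_i|_a-h_i/5c_3)$ of them ending there.

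The next step is to lower-bound $|\textbf{z}_i|_a$. From Lemma \ref{distortion big h_i+1}, $|\textbf{z}_i|_a\geq h_{i+1}/2c_3\geq\left(1-\frac{1}{30c_3}\right)h_i/2c_3$, so
$$|\textbf{z}_i|_a-h_i/5c_3\geq\left(1-\frac{1}{30c_3}\right)\frac{h_i}{2c_3}-\frac{h_i}{5c_3}>\frac{h_i}{15c_3}$$
for $c_3$ large. Hence at least $h_i/15c_3$ $a$-edges of $\textbf{p}_{i,i+1}$ contribute $\delta$ to $|\textbf{p}_{i,i+1}|$, and therefore to $|\textbf{p}_{i,L-6}|$. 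Since $\textbf{p}_{i,i+1}$ has exactly $h_i-h_{i+1}$ $\theta$-edges, Lemma \ref{distortion path lengths}(1) applied with the extra $\delta$-contribution gives
$$|\textbf{p}_{i,L-6}|\geq h_i+h_{L-6}+11(L-i-6)+\delta h_i/15c_3>h_i+h_{L-6}+11L/2+\delta h_i/15c_3,$$
using the parameter choice $L\gg L_0$. On the other hand, Lemma \ref{distortion path lengths}(2) together with Lemma \ref{distortion first L_0} (which gives $|V|_a<h_i/L_0^2$) and $L\gg L_0$ yields
$$|\bar{\textbf{p}}_{i,L-6}|\leq h_i+h_{L-6}+11(i+6)+(i+7)\delta|V|_a<h_i+h_{L-6}+11(2L_0)+2\delta h_i/L_0.$$

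Since $\sigma_\lambda(\bar{\Delta}_{i,L-6})\geq 0$, Lemma \ref{distortion eps} forces $|\textbf{p}_{i,L-6}|\leq|\bar{\textbf{p}}_{i,L-6}|$. Subtracting the two estimates and using $h_{L-6}\leq h_i$ (which follows from the monotonicity $h_1\geq\cdots\geq h_r$ together with $h_r\geq\cdots$; more precisely, $h_{L-6}\leq h_{L_0}\leq h_i$ as long as $L_0+1\leq r$, which holds by $L\gg L_0$ and Lemma \ref{distortion r} — alternatively invoke Lemma \ref{distortion rules} to see the relevant heights are comparable) gives $11(L/2-2L_0)+\delta h_i(1/15c_3-2/L_0)\leq 0$, i.e.\ $11L/3+\delta h_i/30c_3\leq 0$ for $L\gg L_0\gg c_3$, an outright contradiction since the left side is positive. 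I expect the main obstacle to be bookkeeping the exact relation between $h_{L-6}$ (or whichever far-end height appears) and $h_i$: one must confirm, via Lemma \ref{distortion r} and the monotonicity of the $h_j$, that the far-end spoke closest to $\pazocal{Q}_{L-6}$ that is actually being used has height $\leq h_i$, exactly as the analogous point is handled with $h_{L-L_0-4}\leq h_i$ in Lemma \ref{7.38}; once the correct index is pinned down the arithmetic is routine and the parameter hierarchy $N_4\gg\delta^{-1}\gg L\gg L_0\gg c_3$ closes the argument.
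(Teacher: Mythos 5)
Your proposal is correct and follows essentially the same route as the paper's proof: assume $h_i-h_{i+1}\leq h_i/30c_3$, use Lemma \ref{distortion a-bands} and Lemma \ref{distortion a-cells} to force at least $|\textbf{z}_i|_a-h_i/5c_3\geq h_i/15c_3$ $a$-bands from $\textbf{z}_i$ onto $\textbf{p}_{i,i+1}$, bound $|\textbf{z}_i|_a$ below via Lemma \ref{distortion big h_i+1}, compare $|\textbf{p}_{i,L-6}|$ and $|\bar{\textbf{p}}_{i,L-6}|$ via Lemma \ref{distortion path lengths} and $|V|_a<h_i/L_0^2$ (Lemma \ref{distortion first L_0}), and contradict Lemma \ref{distortion eps}. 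The only cosmetic differences are that the extra $\delta$-contribution should be $\delta\left(h_i/15c_3-(h_i-h_{i+1})\right)\geq\delta h_i/30c_3$ rather than $\delta h_i/15c_3$ (immaterial to the contradiction), and the relation $h_{L-6}\leq h_i$ you worry about is not actually needed, since the term $h_i+h_{L-6}$ appears in both estimates and cancels.
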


\begin{proof}

Assuming the statement is false, $h_i-h_{i+1}\leq h_i/30c_3$. By Lemma \ref{distortion a-bands}, at most $h_i/5c_3$ maximal $a$-bands of $E_i$ starting on $\textbf{z}_i$ can end on $(\theta,q)$-cells. So, at least $\max(0,|\textbf{z}_i|_a-h_i/5c_3)$ of these bands end on $\textbf{p}_{i,i+1}$.

Lemma \ref{distortion big h_i+1} implies $|\textbf{z}_i|_a\geq h_{i+1}/2c_3$, so that $|\textbf{z}_i|_a-h_i/5c_3\geq h_i/15c_3$.

By Lemma \ref{distortion r}, $\textbf{p}_{i,i+1}$ has $h_i-h_{i+1}\leq h_i/30c_3$ $\theta$-edges. So, Lemma \ref{distortion path lengths}(1) implies
$$|\textbf{p}_{i,i+1}|\geq h_i-h_{i+1}+12+\delta h_i/30c_3$$
$$|\textbf{p}_{i+1,L-6}|\geq h_{i+1}+h_{L-6}+11(L-i-7)$$
As in the proof of Lemma \ref{7.38}, this implies
\begin{align*}
|\textbf{p}_{i,L-6}|&=|\textbf{p}_{i,i+1}|+|\textbf{p}_{i+1,L-6}|-1 \\
&\geq h_i+h_{L-6}+11(L-i-6)+\delta h_i/30c_3
\end{align*}
Meanwhile, Lemma \ref{distortion path lengths} implies
$$|\bar{\textbf{p}}_{i,L-6}|\leq h_i+h_{L-6}+11(i+6)+(i+7)\delta|V|_a$$
Lemma \ref{distortion first L_0} yields $|V|_a<h_i/L_0^2$, so that
$$|\bar{\textbf{p}}_{i,L-6}|\leq h_i+h_{L-6}+11(2L_0)+2\delta h_i/L_0$$
But then the parameter choices $L>>L_0>>c_3$ imply $|\bar{\textbf{p}}_{i,L-L_0-6}|<|\textbf{p}_{i,L-L_0-6}|$, contradicting Lemma \ref{distortion eps}.

\end{proof}

%
%
%
%
%
%
%

Recall that if $\Psi_{i,i+1}$ is the distinguished clove for $i\leq r-1$, then the history of $\Gamma_i$ need not be $H_{i+1}$, but rather could be a proper prefix. As in Section 10, set $H_{i+1}'$ as the history of $\Gamma_i$.


\begin{lemma} \label{distortion no one-step}

For $2\leq i\leq L_0-2$, let $H_i'=H_{i+1}'H'=H_{i+2}'H''H'$ and $\pazocal{C}$ be the computation corresponding to the trapezium $\Gamma_{i-1}$. Suppose the subcomputation $\pazocal{D}$ of $\pazocal{C}$ with history $H''H'$ has step history of length 1. Then there is no two-letter subword $Q'Q$ of the base of $\Gamma_{i-1}$ such that every rule of $\pazocal{D}$ inserts one letter to the left of $Q$.

\end{lemma}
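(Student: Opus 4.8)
The plan is to mimic closely the argument of Lemma \ref{no one-step} (and its ancestors Lemma 9.25 of [16] and Lemma 7.40 of [23]), adapting the estimates to the $g$-minimal setting where the inductive parameter is $|\partial\Delta|+\sigma_\lambda(\Delta)$ and the key replacement for ``Lemma \ref{7.26}'' is the already-established Lemma \ref{distortion eps}. So I would argue toward a contradiction: assume there \emph{is} such a two-letter subword $Q'Q$ of the base of $\Gamma_{i-1}$ along which every rule of $\pazocal{D}$ inserts one $a$-letter on the left of $Q$. The goal is to show that the resulting growth of the tape word in that sector forces the path $\textbf{p}_{i,L-6}$ to be strictly longer than $\bar{\textbf{p}}_{i,L-6}$ (after accounting for $\sigma_\lambda$), contradicting Lemma \ref{distortion eps}.

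First I would set up the geometry exactly as in Lemma \ref{no one-step}: let $\pazocal{Q}$ be the maximal $q$-band of $E_i$ that is a subband of the spoke of $\Pi$ corresponding to (a coordinate shift of) $Q$, and $\pazocal{Q}'$ the neighbouring $q$-band for $Q'$; let $\textbf{x}$ be the subpath of $\textbf{z}_i$ between $\pazocal{Q}'$ and $\pazocal{Q}$. Since $\Gamma_i$ contains a copy $\Gamma_{i+1}'$ of $\Gamma_{i+1}$, the trapezium $\Gamma_i\setminus\Gamma_{i+1}'$ has history $H''$ and its corresponding computation inserts one $a$-letter to the left of the $Q$-letter at each transition; hence $|\textbf{x}|_a\ge\|H''\|\ge h_{i+1}-h_{i+2}$. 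By Lemma \ref{distortion h_i+1}, $h_{i+1}-h_{i+2}>h_{i+1}/30c_3\ge h/30c_3$, and by Lemma \ref{distortion |V|} together with $L_0\gg c_3$ this gives $|\textbf{x}|_a>10L_0|V|_a$, say. Next, exactly as in Lemma \ref{no one-step}, I would rule out that any $a$-band starting on $\textbf{x}$ ends on a $(\theta,q)$-cell of $\pazocal{Q}$: Lemma \ref{distortion combs} would produce a copy of the resulting comb inside $\Gamma_{i-1}\setminus\Gamma_i'$, and Lemma \ref{trapezia are computations} would then contradict the assumption that rules of $\pazocal{D}$ only write (never read) letters in that sector. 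So all $|\textbf{x}|_a+|\pazocal{Q}|$ maximal $a$-bands starting on $\textbf{x}$ or $\pazocal{Q}$ must end either on $\pazocal{Q}'$ or on the segment of $\textbf{p}_{i,i+1}$ between $\pazocal{Q}$ and $\pazocal{Q}'$; since $\pazocal{Q}'$ is no longer than $\pazocal{Q}$ (Lemmas \ref{distortion r} and — in the $g$-minimal analogue — the fact that $\textbf{p}_{i,i+1}\subseteq\textbf{s}$ so no two $\theta$-edges of it lie in the same $\theta$-band, which I would state and prove as a one-line analogue of Lemma \ref{7.28}(2)), at least $|\textbf{x}|_a$ of these bands terminate on that segment of $\textbf{p}_{i,i+1}$, and between $\pazocal{Q}$ and $\pazocal{Q}'$ that segment has only $|\pazocal{Q}|-|\pazocal{Q}'|$ $\theta$-edges, so at least $|\textbf{x}|_a$ of its edges are $a$-edges contributing $\delta$ to its length.

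With this, I would run the length comparison. By Lemma \ref{distortion path lengths}(1) and the preceding,
$$|\textbf{p}_{i,L-6}|\ge h_i+h_{L-6}+11(L-i-6)+\delta\,|\textbf{x}|_a\ge h_i+h_{L-6}+11(L-i-6)+\tfrac{\delta h_{i+1}}{30c_3},$$
while by Lemma \ref{distortion path lengths}(2) and Lemma \ref{distortion first L_0} (giving $|V|_a<h_i/L_0^2$),
$$|\bar{\textbf{p}}_{i,L-6}|\le h_i+h_{L-6}+11(i+6)+(i+7)\delta|V|_a\le h_i+h_{L-6}+11(2L_0)+\tfrac{2\delta h_i}{L_0}.$$
Since $h_{i+1}\ge h$ and, via Lemma \ref{distortion first L_0}, $h_i,h_{i+1}$ dominate $|V|_a$, the parameter choices $L\gg L_0\gg c_3$ make the difference $|\textbf{p}_{i,L-6}|-|\bar{\textbf{p}}_{i,L-6}|$ strictly positive (of order $11L/3+\delta h_{i+1}/50c_3$). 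Because $\sigma_\lambda(\bar{\Delta}_{i,L-6})\ge0$, this yields $|\textbf{p}_{i,L-6}|+\sigma_\lambda(\bar{\Delta}_{i,L-6})>|\bar{\textbf{p}}_{i,L-6}|$, contradicting Lemma \ref{distortion eps}.

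The main obstacle, as in the original Lemma \ref{no one-step}, is getting the bookkeeping of the $a$-bands right: carefully showing that no $a$-band emanating from $\textbf{x}$ can be ``absorbed'' back into a $(\theta,q)$-cell of $\pazocal{Q}$ (this is where Lemma \ref{distortion combs} plus the no-reading hypothesis on $\pazocal{D}$ is essential), and that the surplus $|\textbf{x}|_a$ $a$-edges genuinely survive onto $\textbf{p}_{i,i+1}$ rather than onto $\pazocal{Q}'$ or onto $(\theta,q)$-cells — which needs the inequality $|\pazocal{Q}'|\le|\pazocal{Q}|$ and the analogue of Lemma \ref{7.28}(2) in the $g$-minimal setting. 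One simplification relative to Lemma \ref{no one-step} is that here I do not need the full $G$-weight machinery or the mixture estimates: the contradiction is purely metric (lengths of boundary paths plus $\sigma_\lambda$), so the argument terminates as soon as $|\textbf{p}_{i,L-6}|+\sigma_\lambda(\bar{\Delta}_{i,L-6})>|\bar{\textbf{p}}_{i,L-6}|$ via Lemma \ref{distortion eps}, without invoking the inductive hypothesis on $G$-weight. I would present the distinguished-clove case (where $H_{i+1}'$ is a proper prefix of what one naively expects) in one sentence at the end, noting that $|\textbf{x}|_a\ge\|H''\|$ still holds with $H_{i+1}',H_{i+2}'$ in place of $H_{i+1},H_{i+2}$ and every estimate goes through verbatim.
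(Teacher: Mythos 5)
Your proposal is correct and follows essentially the same route as the paper's proof: the same bands $\pazocal{Q},\pazocal{Q}'$ and subpath $\textbf{x}$ with $|\textbf{x}|_a\geq\|H''\|\geq h_{i+1}-h_{i+2}$, the comb $\nabla$ with the $a$-band bookkeeping handled by Lemmas \ref{distortion a-cells} and \ref{distortion combs}, and the final metric contradiction via Lemmas \ref{distortion path lengths} and \ref{distortion eps} (indeed, no $G$-weight or mixture estimates are needed here). Two small bookkeeping corrections: the inequality $|\textbf{x}|_a\geq10L_0|V|_a$ comes from Lemma \ref{distortion first L_0} (not Lemma \ref{distortion |V|}); and in the final comparison you should bound $(i+7)\delta|V|_a\leq2\delta h_{i+1}/L_0$ using $|V|_a<h_{i+1}/L_0^2$ (valid since $i+1\leq L_0$), or compare directly in terms of $|V|_a$ (surplus $10\delta L_0|V|_a$ versus $2\delta L_0|V|_a$), because with your term $2\delta h_i/L_0$ the positivity of the difference is not immediate, as the ratio $h_{i+1}/h_i$ is not bounded below.
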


\begin{proof}

Let $\pazocal{Q}$ be the maximal $q$-band of $E_i$ that is a subband of the $q$-spoke of $\Pi$ corresponding to a coordinate shift of the state letter $Q$. Similarly, let $\pazocal{Q}'$ be the $q$-band corresponding to a coordinate shift of $Q'$, so that $\pazocal{Q}'$ and $\pazocal{Q}$ are neighbor $q$-bands. Let $\textbf{x}$ be the subpath of $\textbf{z}_i$ between $\pazocal{Q}'$ and $\pazocal{Q}$.

As in the proof of Lemma \ref{no one-step}, $|\textbf{x}|_a\geq\|H''\|\geq h_{i+1}-h_{i+2}$.

Further, Lemma \ref{distortion h_i+1} implies that $h_{i+1}-h_{i+2}>\frac{1}{30c_3}h_{i+1}$, so that Lemma \ref{distortion first L_0} and the parameter choice $L_0>>c_3$ yield $|\textbf{x}|_a\geq10L_0|V|_a$.

Consider the comb $\nabla$ contained in $E_i$ bounded by $\pazocal{Q}'$, $\pazocal{Q}$, $\textbf{x}$, and $\textbf{p}_{i,i+1}$. By Lemma \ref{distortion a-cells}, at least $|\textbf{x}|_a$ $a$-edges contribute $\delta$ to $\textbf{p}_{i,i+1}$.

Lemma \ref{distortion path lengths} then gives the inequalities:
$$|\textbf{p}_{i,L-6}|\geq h_i+h_{L-6}+11L/2+10\delta L_0|V|_a$$
$$|\bar{\textbf{p}}_{i,L-6}|\leq h_i+h_{L-6}+11(2L_0)+2\delta L_0|V|_a$$
But then the parameter choice $L>>L_0$ implies $|\bar{\textbf{p}}_{i,L-6}|<|\textbf{p}_{i,L-6}|$, contradicting Lemma \ref{distortion eps}.

\end{proof}

Finally, the following statement yields the desired contradiction.

\begin{lemma} \label{distortion contradiction}

For any $g\in B(\pazocal{A},n)$, a $g$-minimal diagram contains no disks.

\end{lemma}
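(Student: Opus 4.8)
The statement is the culmination of the long chain of lemmas just developed, so the proof is a relatively short "assembling" argument by contradiction. Suppose for some $g\in B(\pazocal{A},n)$ a $g$-minimal diagram $\Delta$ contains at least one disk. First I would invoke Lemma \ref{distortion graph} to produce a disk $\Pi$ with $L-6$ consecutive $t$-spokes $\pazocal{Q}_1,\dots,\pazocal{Q}_{L-6}$ ending on $\partial\Delta$, and set up all the notation of the preceding subsection: the cloves $\Psi_{i,i+1}$, the trapezia $\Gamma_i$, the combs $E_i$, the paths $\textbf{z}_i$, and the index $r$ from Lemma \ref{distortion r}. After possibly passing to the "mirror" case (replacing the enumeration of spokes by its reverse, as was done just before Lemma \ref{7.33} in Section 10), Lemma \ref{distortion big z_i} lets me assume $|\textbf{z}_i|_a\geq|V|_a/8c_3$ for all $i\in\{1,\dots,r-1\}$, and then Lemmas \ref{distortion first L_0}, \ref{distortion big h_i+1}, \ref{distortion h_i+1}, and \ref{distortion no one-step} apply verbatim.

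The core of the argument is then the exact analogue of Lemma \ref{contradiction}. I would fix $\eta\geq2$ depending on $c_3$ with $(1-\tfrac{1}{30c_3})^\eta<\tfrac{1}{64c_3}$; since $L_0$ is chosen after $c_3$ we may take $L_0\gg\eta$. Using Lemma \ref{distortion h_i+1} exactly as in Lemma \ref{contradiction}, for indices $i<j\leq L_0-1$ with $j-i-1\geq\eta$ we get $h_j<\tfrac{1}{64c_3}h_{i+1}$, hence by Lemmas \ref{distortion big h_i+1} and \ref{upper bound on z_i} (the analogue of which holds here by the same argument, since $\Psi_{i,i+1}$-combs contain no $a$-cells by Lemma \ref{distortion a-cells}) we get $|\textbf{z}_i|_a>4|\textbf{z}_j|_a$. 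Choosing $2\leq j_1<\dots<j_m\leq L_0-1$ with $m\geq c_0$ and $j_{i+1}-j_i-1\geq\eta$ (possible since $L_0\gg\eta,c_0$), the computation $\pazocal{C}$ associated to $\Gamma_{j_2}$ contains words $V_1,\dots,V_m$, coordinate shifts of the labels of $\textbf{z}_{j_i}$, with $|V_{i+1}|_a>4|V_i|_a$. Lemma \ref{distortion no one-step} together with Lemma \ref{M one-step} forbids any three-consecutive subcomputation $V_{i+2}\to\dots\to V_i$ from being a one-step computation, so the tail $V_m\to\dots\to W_t$ of $\pazocal{C}$ has at least $c_0/2\geq 8n$ distinct maximal one-step computations; Lemma \ref{long step history} then forces a subword of its step history of the form $(4n-2,4n-1)_j(4n-1)_j(4n-1,4n)_j$ or its reverse, and (taking $k\geq 3$) this yields a factorization $H_{j_2+1}'\equiv H'H''H'''$ with $H''$ controlled and $\|H''\|\leq\|H'\|$. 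Since $h_{j_1+1}>64c_3 h_{j_2}$, a prefix $K\equiv H'H''H_1'''$ of $H_{j_1+1}'$ with $\|H_1'''\|\geq\|H''\|$ determines (taking $\lambda<1/3$ and using that every $\theta$-band crossing $\pazocal{Q}_{j_1}$ crosses $\pazocal{Q}_{j_1-1}$) a $\lambda$-shaft of $\Pi$ inside $\pazocal{Q}_{j_1}$ of length $\|K\|\geq\|H'\|\geq h_{L_0-1}\geq h\geq h_{L_0}$.

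This contradicts Lemma \ref{distortion no shafts}, which asserts no spoke $\pazocal{Q}_i$ for $i\leq L_0$ carries a $\lambda$-shaft of length $\geq h_{L_0}$. Hence $\Delta$ has no disk, completing the proof.

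\textbf{Anticipated main obstacle.} The genuine content has already been front-loaded into Lemmas \ref{distortion big z_i} through \ref{distortion no one-step}; the remaining work is bookkeeping, but two points need care. First, I must verify that the "$|\textbf{z}_i|_a\leq 8h_i$" bound (the analogue of Lemma \ref{upper bound on z_i}) actually holds in the distortion setting — its proof in Section 10 used Lemma \ref{non-distinguished a-cells} and Lemma \ref{7.24}, whose analogues here are Lemma \ref{distortion a-cells} and Lemma \ref{distortion path lengths}, together with Lemma \ref{distortion eps} in place of Lemma \ref{7.26}; I expect this goes through but it should be stated. Second, one must track the mirror/non-mirror dichotomy consistently (the $h_{L-L_0-4}\leq h_{L_0+1}$-type normalization), invoking the symmetric version of Lemma \ref{distortion no one-step} when the monotonicity of the $h_i$ runs the other way. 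Neither is conceptually hard, so I would keep this final proof to a paragraph or two of citations.
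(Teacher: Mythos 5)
Your proposal is correct and follows essentially the same route as the paper: the paper's proof of this lemma is exactly the distortion-setting rerun of Lemma \ref{contradiction}, combining Lemmas \ref{distortion h_i+1}, \ref{distortion big h_i+1}, \ref{distortion no one-step}, \ref{M one-step}, and \ref{long step history} to produce a $\lambda$-shaft in $\pazocal{Q}_{j_1}$ of length at least $h_{L_0}$, contradicting Lemma \ref{distortion no shafts}. The only divergence is your treatment of the bound $|\textbf{z}_j|_a\leq 8h_j$: you propose to re-prove an analogue of Lemma \ref{upper bound on z_i} (which does go through with Lemmas \ref{distortion a-cells}, \ref{distortion a-bands}, \ref{distortion path lengths}, \ref{distortion first L_0}, and \ref{distortion eps}), whereas the paper extracts it from Lemma \ref{distortion number of cloves} by choosing the indices $j_1,\dots,j_m$ outside the exceptional set $I$ of size at most $c_4\ll L_0$ — a bookkeeping difference only.
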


\begin{proof}

The proof follows the same outline as that of Lemma \ref{contradiction}.

For $\eta\geq2$ an integer such that $\left(1-\frac{1}{30c_3}\right)^\eta<\frac{1}{64c_3}$, Lemma \ref{distortion h_i+1} implies that if $1\leq i<j\leq L_0-1$ with $j-i-1\geq\eta$, then $h_j<\frac{1}{64c_3}h_{i+1}$.

For such $i,j,$ Lemma \ref{distortion big h_i+1} implies that $|\textbf{z}_i|_a>32h_j$, while Lemma \ref{distortion number of cloves} implies $8h_j\geq|\textbf{z}_j|_a$. As a result, $|\textbf{z}_i|_a>4|\textbf{z}_j|_a$.

Taking $L_0>>\eta$ and $L_0>>c_0$, there exist indices $2\leq j_1<j_2<\dots<j_m\leq L_0-1$ such that $m\geq c_0$ and $j_{i+1}-j_i-1\geq\eta$. So, $|\textbf{z}_{j_i}|_a>4|\textbf{z}_{j_{i+1}}|_a$ and $h_{j_i+1}\geq64c_3h_{j_i+1}$.

Let $\pazocal{C}:W_0\to\dots\to W_t$ be the computation corresponding to the trapezium $\Gamma_{j_2}$. As $\Gamma_{j_2}$ contains a copy of $\Gamma_{j_2+1}$, which in turn contains a copy of $\Gamma_{j_2+2}$ and so on, there exist words $V_i$ in $\pazocal{C}$ for $i=1,\dots,m$ that are coordinate shifts of $\lab(\textbf{z}_{j_i})$. Note that $|V_{i+1}|_a>4|V_i|_a$.

As in the proof of Lemma \ref{contradiction}, the subcomputations $V_{i+2}\to\dots\to V_i$ cannot be one-step, as an application of Lemma \ref{M one-step} would lead to a contradiction of Lemma \ref{distortion no one-step}.

But then this implies that $\pazocal{Q}_{j_1}$ contains a $\lambda$-shaft of length at least $h_{L_0}$, contradicting Lemma \ref{distortion no shafts}.

\end{proof}

\smallskip

\subsection{Upper bound} \

As referenced in the Introduction, to prove Theorem \ref{distortion}, it suffices to find a constant $M>0$ such that for any $g\in B(\pazocal{A},n)$, $|g|_\pazocal{A}\leq M|g|_\pazocal{X}$.

For $g\in B(\pazocal{A},n)$, let $\Delta$ be an $g$-minimal diagram. 

Fix $w\in F(\pazocal{X})$ so that the value of $w$ in $G_\pazocal{S}(\textbf{M})$ is $g$ and $|w|$ is minimal for all such words. Further, let $\Gamma$ be a minimal diagram over $G_\pazocal{S}(\textbf{M})$ such that $\lab(\partial\Gamma)\equiv wv^{-1}$ for $v\in F(\pazocal{A})$ such that the value of $v$ in $B(\pazocal{A},n)$ is $g$ and $\|v\|=|g|_\pazocal{A}$.

Then, $\Gamma$ is a $g$-diagram, so that $|\partial\Gamma|+\sigma_\lambda(\Gamma)\geq|\partial\Delta|+\sigma_\lambda(\Delta)$.

By Lemmas \ref{distortion contradiction} and \ref{diskless distortion}, $|\partial\Delta|+\sigma_\lambda(\Delta)=|\partial\Delta|=2\delta|g|_\pazocal{A}$. Further, Lemma \ref{lengths}(c) implies $|\partial\Gamma|\leq|w|+\delta|g|_\pazocal{A}$. Hence, $|w|+\sigma_\lambda(\Gamma)\geq\delta|g|_\pazocal{A}$.

Further, by Lemma \ref{G_a design}, $\sigma_\lambda(\Gamma)\leq C_1|\partial\Gamma|_\theta$. Noting that $v$ consists entirely of $a$-letters, it follows that $|\partial\Gamma|_\theta\leq|w|_\theta\leq|w|$.

As a result, $|w|\geq\delta(C_1+1)^{-1}|g|_\pazocal{A}$.

But for any word $u$ in the alphabet $\pazocal{X}\cup\pazocal{X}^{-1}$, $|u|_\pazocal{X}=\|u\|\geq\delta^{-1}|u|$. So, since $|w|$ is minimal for all words over $\pazocal{X}$ whose value in $G_\pazocal{S}(\textbf{M})$ is $g$, $|g|_\pazocal{X}\geq(C_1+1)^{-1}|g|_\pazocal{A}$.

Thus, taking $M=C_1+1$ completes the proof of Theorem \ref{distortion}.

\bigskip


\section{Proof of Theorem \ref{CEP}}

Let $T$ be some subset of $B(\pazocal{A},n)$. Clearly, $\gen{\gen{T}}^{B(\pazocal{A},n)}\subseteq B(\pazocal{A},n)\cap\gen{\gen{T}}^{G_\pazocal{S}(\textbf{M})}$, so that it suffices just to show the opposite inclusion.

Let $\Sigma_1$ be a set of words over $\pazocal{A}$ so that for each $g\in T$, there exists a word $w\in\Sigma_1$ such that the value of $w$ in $B(\pazocal{A},n)$ is $g$. Then, set $\Sigma=\Sigma_1\cup\pazocal{S}$.

Then, $G_\Sigma(\textbf{M})=G_\pazocal{S}(\textbf{M})/\gen{\gen{\Sigma_1}}^{G_\pazocal{S}(\textbf{M})}\cong G_\pazocal{S}(\textbf{M})/\gen{\gen{T}}^{G_\pazocal{S}(\textbf{M})}$.

Fix $g\in B(\pazocal{A},n)\cap\gen{\gen{T}}^{G_\pazocal{S}(\textbf{M})}$ and let $w$ be a word over $\pazocal{A}$ whose value in $G_\pazocal{S}(\textbf{M})$ is $g$. Then $w$ represents the trivial element of $G_\Sigma(\textbf{M})$, so that Lemma \ref{minimal exist} implies that there exists a minimal diagram $\Delta$ over $G_\Sigma(\textbf{M})$ such that $\lab(\partial\Delta)\equiv w$.

If $\Delta$ were to contain a disk, then Lemma \ref{graph} implies that at least $L-4$ $t$-spokes end on $\partial\Delta$. But $\partial\Delta$ contains no $t$-edges, so that this is impossible. Hence, $\Delta$ must be an $M$-minimal diagram.

By Lemmas \ref{M_a no annuli 1} and \ref{M_a no annuli 2}, each maximal $q$-band and each maximal $\theta$-band of $\Delta$ end twice on $\partial\Delta$. But again, $\partial\Delta$ contains no $q$-edge or $\theta$-edge, so that $\Delta$ can contain no $q$-band or $\theta$-band.

As a result, each cell of $\Delta$ must be an $a$-cell, i.e $\Delta$ is a reduced diagram over the group with presentation $\gen{\pazocal{A}\mid\Sigma}\cong B(\pazocal{A},n)/\gen{\gen{T}}^{B(\pazocal{A},n)}$. So, the value of $w$ in $B(\pazocal{A},n)$ is an element of $\gen{\gen{T}}^{B(\pazocal{A},n)}$. But by Lemma \ref{embedding}, the value of $w$ in $B(\pazocal{A},n)$ is $g$, so that $g\in\gen{\gen{T}}^{B(\pazocal{A},n)}$. 

Thus, $B(\pazocal{A},n)\cap\gen{\gen{T}}^{G_\pazocal{S}(\textbf{M})}\subseteq\gen{\gen{T}}^{B(\pazocal{A},n)}$, and so $B(\pazocal{A},n)\leq_{CEP}G_\pazocal{S}(\textbf{M})$.

\bigskip


\section{References}

[1] S. I. Adian, \textit{The Burnside Problem and Identities in Groups}, Springer-Verlag, (1979).

[2] J.-C. Birget, A. Yu. Ol'shanskii, E. Rips, M. Sapir, \textit{Isoperimetric functions of groups and combinatorial complexity of the word problem}, Annals of Mathematics , 156 (2002), no. 2, 467–518.

[3] B. H. Bowditch, \textit{Notes on Gromov's hyperbolicity criterion for path-metric spaces}, "Group theory from a geometrical viewpoint (Trieste, 1990)", (E Ghys, A Haefliger, A Verjovsky, editors), World Sci. Publ., River Edge, NJ (1991)

[4] M. Bridson, A. Haefliger, \textit{Metric Spaces of Non- Positive Curvature}, Grundlehren der mathematischen Wis- senschaften, Volume 319, Springer (1999).




[5] E. Ghys, P. de la Harpe, \textit{Sur les Groupes Hyperboliques d'apr\`{e}s Mikhael Gromov}, Springer, (1990).

[6] E. S. Golod, I. R. Shafarevich, \textit{On the class field tower}, Izv. Akad. Nauk SSSR Ser. Mat., 28:2 (1964), 261–272 

[7] M. Gromov, \textit{Hyperbolic groups}, Essays in Group Theory (S.M.Gersten, ed.), M.S.R.I. Pub. 8, Springer, (1987), 75–263.


[8] S. V. Ivanov, \textit{The free Burnside groups of sufficiently large exponents}, International Journal of Algebra and Computation, 4(1-2), (1994) ii+308.

[9] S. V. Ivanov, A. Yu. Ol'shanskii, \textit{On finite and locally finite subgroups of free burnside groups of large even exponents}. Journal of Algebra, 195(1), (1997) 241-284. 



[10] R. C. Lyndon and P. E. Schupp, \textit{Combinatorial group theory}, Springer-Verlag, 1977.

[11] P. S. Novikov, S. I. Adian, \textit{Defining relations and the word problem for free periodic groups of odd order}, Izv. Akad. Nauk SSSR Ser. Mat., 32:4 (1968)

[12] A. Yu. Ol'shanskii, \textit{Groups of bounded period with subgroups of prime order}, Algebra and Logic 21 (1983), 369–418; translation of Algebra i Logika 21 (1982)

[13] A. Yu. Ol'shanskii, \textit{Hyperbolicity of groups with subquadratic isoperimetric inequality} Internat. J. Algebra Comput. 1 (1991), no. 3, 281–289.

[14] A. Yu. Ol'shanskii, \textit{Geometry of Defining Relations in Groups}, Springer Netherlands, (1991)

[15] A. Yu. Ol'shanskii, \textit{On subgroup distortion in finitely presented groups} Mat. Sb., 188:11 (1997), 51–98; Sb. Math., 188:11 (1997), 1617–1664 

[16] A. Yu. Ol'shanskii, \textit{Polynomially-bounded Dehn functions of groups}, Journal of Combinatorial Algebra, 2. (2018) 311-433

[17] A. Yu. Ol'shanskii, M. V. Sapir, \textit{Embeddings of relatively free groups into finitely presented groups}, (2000)

[18] A. Yu. Ol'shanskii, M. V. Sapir, \textit{Length and area functions in groups and quasiisometric Higman embeddings}, Intern. J. Algebra and Comput., 11 (2001), no. 2, 137–170.

[19] A. Yu. Ol'shanskii, M. V. Sapir, \textit{The Conjugacy Problem and Higman Embeddings}. Memoirs of the American Mathematical Society. 170. (2003). 

[20] A. Yu. Ol’shanskii, M. V. Sapir, \textit{Non-Amenable Finitely Presented Torsion-by-Cyclic Groups}, Publ. math., Inst. Hautes Étud. Sci. (2003)

[21] A. Yu. Ol’shanskii, M. V. Sapir, \textit{Groups with Small Dehn functions and Bipartite Chord Diagrams} GAFA, Geom. funct. anal. 16 (2006), 1324

[22] A. Yu. Ol'shanskii, M. V. Sapir, \textit{Groups with undecidable word problem and almost quadratic Dehn function}, Journal of Topology. 5. (2012) 785-886. 10.1112/jtopol/jts020

[23] A. Yu. Ol'shanskii, M. V. Sapir, \textit{Conjugacy problem in groups with quadratic Dehn function}, (2018). 

[24] M. V. Sapir, J. C. Birget, E. Rips, \textit{Isoperimetric and Isodiametric Functions of Groups}, Annals of Mathematics, 156(2), second series, (1998), 345-466 

[25] M. V. Sapir. \textit{Algorithmic and asymptotic properties of groups}, International Congress of Mathematicians, ICM (2006). 

[26] M. V. Sapir, \textit{Combinatorial algebra: Syntax and Semantics. With contributions by Victor S. Guba and Mikhail V. Volkov}, Springer Monographs in Mathematics, Springer, Cham, (2014).

[27] V. L. Shirvanyan, \textit{Embedding the group $B(\infty,n)$ in the group $B(2,n)$}, Izv. Akad. Nauk SSR Ser. Mat. 40 (1976), 190–208.

[28] D. Sonkin, \textit{CEP-Subgroups of Free Burnside Groups of Large Odd Exponents}, Communications in Algebra Vol. 31. No. 10. (2003) 4687-4695. 10.1081/AGB-120023127. 

[29] E. van Kampen, \textit{On Some Lemmas in the Theory of Groups}, American Journal of Mathematics
Vol. 55, No. 1 (1933), pp. 268-273.

\end{document}